\newtheorem{theorem}{Theorem}[section]
\newtheorem{corollary}[theorem]{Corollary}
\newtheorem{lemma}[theorem]{Lemma}
\newtheorem{proposition}[theorem]{Proposition}
\newtheorem{scholium}[theorem]{Scholium}
\newcommand{\Proof}[1]{\noindent {\bf Proof{#1}.}}
\numberwithin{equation}{section}
\numberwithin{section}{chapter}
\newcommand{\Hom}{\operatorname{Hom}}
\def\A{\mathbb{A}}
\def\B{\mathbb{B}}
\def\N{\mathbb{N}}
\def\Z{\mathbb{Z}}
\def\PP{{\mathbb{P}}}
\def\Q{\mathbb{Q}}
\def\R{\mathbb{R}}
\def\Rpa{\mathbb{R}_+^\ast}
\def\C{\mathbb{C}}
\def\E{\mathbb{E}}
\def\F{\mathbb{F}}
\def\Fbp{{\mathbb{F}_{\bf p}}}
\def\bp{{\bf p}}
\def\G{\mathbb{G}}
\def\V{\mathbb{V}}
\def\Bn{\mathbb{B}^n}
\def\Bint{{\overset{\circ}{B}}}
\newcommand{\Qbar}{{\overline {\mathbb Q}}}
\newcommand{\Ga}{{{\mathbb G}_a}}
\def\Ahm{\widehat{\mathbb{A}}}
\newcommand{\cf}{{\em cf. }}
\newcommand{\eg}{{\em e.g.}}
\newcommand{\cA}{{\mathcal  A}}
\newcommand{\cB}{{\mathcal  B}}
\newcommand{\cC}{{\mathcal  C}}
\newcommand{\cE}{{\mathcal  E}}
\newcommand{\cEb}{{\overline{\mathcal  E}}}
\newcommand{\cF}{{\mathcal  F}}
\newcommand{\cFS}{{\mathcal{FS}}}
\newcommand{\cI}{{\mathcal  I}}
\newcommand{\cL}{{\mathcal L}}
\newcommand{\cM}{{\mathcal M}}
\newcommand{\cO}{{\mathcal O}}
\newcommand{\cP}{{\mathcal P}}
\newcommand{\cQ}{{\mathcal Q}}
\newcommand{\cS}{{\mathcal S}}
\newcommand{\cT}{{\mathcal T}}
\newcommand{\cU}{{\mathcal U}}
\newcommand{\cV}{{\mathcal  V}}
\newcommand{\cX}{{\mathcal X}}
\newcommand{\cCh}{{\widehat{\mathcal  C}}}
\newcommand{\cVh}{{\widehat{\mathcal  V}}}
\newcommand{\cVt}{{\widetilde{\mathcal  V}}}
\newcommand{\cEh}{{\widehat{{\mathcal  E}}}}
\newcommand{\cEbh}{{\widehat{\overline{\mathcal  E}}}}
\newcommand{\cEbt}{{\tilde{\overline{\mathcal  E}}}}
\newcommand{\cFh}{{\widehat{{\mathcal  F}}}}
\newcommand{\cLh}{{\widehat{{\mathcal  L}}}}
\newcommand{\cLb}{{\overline{\mathcal  L}}}
\newcommand{\cLbh}{{\widehat{\overline{\mathcal  L}}}}
\newcommand{\OK}{{{\mathcal O}_K}}
\newcommand{\Exph}{\operatorname{\widehat{{Exp}}}}
\newcommand{\Gext}{{\rm{Gext}}}
\newcommand{\Ext}{\operatorname{Ext}}
\newcommand{\Exthun}{\operatorname{\widehat{Ext}}^1}
\newcommand{\Exp}{\operatorname{Exp}}
\newcommand{\Spec}{\operatorname{Spec}}
\newcommand{\coker}{\operatorname{coker}}
\renewcommand{\div}{{\rm div\,}}
\newcommand{\mult}{{\rm mult}}
\newcommand{\pr}{\operatorname{pr}}
\newcommand{\Gr}{{\rm Gr}}
\newcommand{\hilb}{{\rm Hilb}}
\newcommand{\covol}{\operatorname{covol}}
\newcommand{\CP}{\mathbf{CP}}
\newcommand{\CTC}{\mathbf{CTC}}
\newcommand{\Hilbcont}{\mathbf{Hilb}^{\rm cont}}
\newcommand{\Spf}{\operatorname{Spf}}
\newcommand{\oli}{\overline}
\newcommand{\Cb}{{\overline C}}
\newcommand{\Db}{{\overline D}}
\newcommand{\Eb}{{\overline E}}
\newcommand{\Fb}{{\overline F}}
\newcommand{\Gb}{{\overline G}}
\newcommand{\Kb}{{\overline K}}
\newcommand{\Lb}{{\overline L}}
\newcommand{\Sb}{{\overline S}}
\newcommand{\Vb}{{\overline V}}
\newcommand{\cOb}{\overline{\mathcal{O}}}
\newcommand{\Et}{{\tilde E}}
\newcommand{\Ft}{{\tilde F}}
\newcommand{\bB}{{\mathbf B}}
\newcommand{\bC}{{\mathbf C}}
\newcommand{\bD}{{\mathbf D}}
\newcommand{\dega}{\,\widehat{\rm deg }\,}
\newcommand{\degan}{{\widehat{\rm deg }}_n\,}
\newcommand{\mua}{\widehat{\mu }}
\newcommand{\rk}{{\rm rk \,}}
\newcommand{\im}{{\rm im\,}}
\newcommand{\fm}{{\mathfrak m}}
\newcommand{\fp}{{\mathfrak p}}
\newcommand{\an}{{\rm an}}
\newcommand{\hD}{{\widehat{D}}}
\newcommand{\hE}{{\widehat{E}}}
\newcommand{\hF}{{\widehat{F}}}
\newcommand{\hG}{{\widehat{G}}}
\newcommand{\hR}{{\hat{R}}}
\newcommand{\hV}{{\widehat{V}}}
\newcommand{\Ah}{{\widehat{A}}}
\newcommand{\Bh}{{\widehat{B}}}
\newcommand{\Ch}{{\widehat{C}}}
\newcommand{\Eh}{{\widehat{E}}}
\newcommand{\Fh}{{\widehat{F}}}
\newcommand{\Gh}{{\widehat{G}}}
\newcommand{\Hh}{{\widehat{H}}}
\newcommand{\Kh}{{\widehat{K}}}
\newcommand{\Lh}{{\widehat{L}}}
\newcommand{\Vh}{{\widehat{V}}}
\newcommand{\Xh}{{\widehat{X}}}
\newcommand{\cOh}{{\widehat{\mathcal O}}}
\newcommand{\Abh}{{\widehat{\overline{A}}}}
\newcommand{\Bbh}{{\widehat{\overline{B}}}}
\newcommand{\Cbh}{{\widehat{\overline{C}}}}
\newcommand{\Ebh}{{\widehat{\overline{E}}}}
\newcommand{\Fbh}{{\widehat{\overline{F}}}}
\newcommand{\Gbh}{{\widehat{\overline{G}}}}
\newcommand{\Hbh}{{\widehat{\overline{H}}}}
\newcommand{\Kbh}{{\widehat{\overline{K}}}}
\newcommand{\Vbh}{{\hat{\overline{V}}}}
\newcommand{\hot}{{h^0_{\theta}}}
\newcommand{\lhot}{{\underline{h}^0_{\theta}}}
\newcommand{\uhot}{{\overline{h}^0_{\theta}}}
\newcommand{\hon}{{h^0_{\rm{Ar}}}} %{{h^0_{\rm{naive}}}}
\newcommand{\honm}{{h^0_{\rm{Ar}-}}} %{{h^0_{\rm{naive}-}}}
\newcommand{\hont}{{\tilde{h}^0_{\rm{Ar}}}}
\newcommand{\hut}{{h^1_{\theta}}}
\newcommand{\lhut}{{\underline{h}^1_{\theta}}}
\newcommand{\uhut}{{\overline{h}^1_{\theta}}}
\newcommand{\phit}{{\tilde{\phi}}}
\newcommand{\psit}{{\tilde{\psi}}}
\newcommand{\hphi}{{\hat{\phi}}}
\newcommand{\hpsi}{{\hat{\psi}}}
\newcommand{\lth}{{\tau}}
    \renewcommand{\Re}{\,{\rm Re}\,}
    \renewcommand{\Im}{\,{\rm Im}\,}
\newcommand{\ra}{\rightarrow}
\newcommand{\lrasim}{\stackrel{\sim}{\longrightarrow}}
\newcommand{\lra}{\longrightarrow}
\newcommand{\hra}{\hookrightarrow}
\newcommand{\lmt}{\longmapsto}
\newcommand{\hlra}{{\lhook\joinrel\longrightarrow}}
\newcommand{\Sum}{{\mathbf{Sum}}}
\newcommand{\sKC}{{\sigma: K \hookrightarrow \mathbb{C}}}
\renewcommand{\phi}{\varphi}
\renewcommand{\epsilon}{\varepsilon}
\newcommand{\Vt}{{\tilde{V}}}
\newcommand{\Vo}{{\mathring{V}}}
\newcommand{\bV}{{\partial V}}
\newcommand{\cEt}{{\tilde{\mathcal E}}}
\newcommand{\proVectOK}{{{\rm pro\overline{Vect}}^{\rm cont}_\OK}}
\newcommand{\proVectZ}{{{\rm pro\overline{Vect}}^{\rm cont}_\Z}}
\begin{document}

\frontmatter

\title[Theta Invariants and Infinite Dimensional Hermitian Vector Bundles]{ Theta Invariants of Euclidean Lattices and \\ Infinite Dimensional Hermitian Vector Bundles \\ over Arithmetic Curves}
\author{Jean-Beno\^{\i}t Bost}
\address{J.-B. Bost, D{é}partement de Math{é}matiques, Universit{é}
Paris-Sud,
B{â}timent 425, 91405 Orsay cedex, France}
\email{jean-benoit.bost@math.u-psud.fr}

\date{\today}

\begin{abstract}
In this monograph, we lay some foundations of  a theory of  infinite dimensional Euclidean lattices --- and more generally, of infinite dimensional Hermitian vector bundles over some ``arithmetic curve" ${\rm Spec}\,\mathcal{O}_K$ attached to the ring of integers $\mathcal{O}_K$ of some number field $K$ --- with a view towards applications to transcendence theory and Diophantine geometry.

In the first chapters of this monograph, we study the properties of the invariant $h^0_\theta(\overline{E})$ attached to some Euclidean lattice $\overline{E}:= (E, \Vert.\Vert)$, defined  by the expression
$$h^0_\theta(\overline{E}) := \log \sum_{v \in E} e^{- \pi \Vert v \Vert^2},$$
and, more generally, attached to some  finite rank Hermitian vector bundle $\Eb$ over  an arithmetic curve. 

Then we construct categories of infinite dimensional Hermitian vector bundles and we show that it is possible to associate generalized $\theta$-invariants to these objects, so that they satisfy suitable subadditivity and summability properties.

In the last chapter, we present a first application of this formalism to Diophantine geometry: we show how it allows one to establish some algebraicity criterion \emph{\`a la} Chudnovsky concerning formal curves over number fields embedded in some projective space, by arguments that are direct counterparts of  classical algebraization proofs in complex analytic and formal geometry.
\end{abstract}

\maketitle

\setcounter{page}{4}

\tableofcontents

\medskip

\chapter*{Introduction}

\medskip

\section{Pro-Euclidean lattices and $\theta$-invariants}\label{proEuclidean-intro}
In this monograph, we lay some foundations for a theory of \emph{infinite dimensional Euclidean lattices} --- and more generally, of infinite dimensional Hermitian vector bundles over some ``arithmetic curve" $\Spec \OK$ attached to the ring of integers $\OK$ of some number field $K$ --- with a view towards applications to transcendence theory and Diophantine geometry.

\subsection{}\label{debut} Recall that an \emph{Euclidean lattice} is the data
$$\Eb:= (E, \Vert.\Vert)$$
of some free $\Z$-module of finite rank $E$ and of some Euclidean norm $\Vert. \Vert$ on the real vector space $E_\R:= E\otimes_\Z \R$.

The infinite dimensional generalizations of Euclidean lattices we will be mainly interested in are the \emph{pro-Euclidean lattices}. They naturally occur as projective limits of countable systems of Euclidean lattices of the form:
\begin{equation*}\Eb_{\bullet} : 
\Eb_0 \stackrel{q_0}{\longleftarrow}\Eb_1 \stackrel{q_1}{\longleftarrow}\dots \stackrel{q_{i-1}}{\longleftarrow}\Eb_i \stackrel{q_i}{\longleftarrow} \Eb_{i+1} \stackrel{q_{i+1}}{\longleftarrow} \dots \; .
\end{equation*}
Here, for every $i \in \N$, we denote by $\Eb_i$ some Euclidean lattice $(E_i, \Vert.\Vert_i)$ and by $q_i$ a surjective morphism of $\Z$-modules
$$q_i : E_{i+1} \lra E_i$$
such that the norm $\Vert.\Vert_i$ on $V_{i,\R}$ coincides with the the quotient norm deduced from the norm $\Vert.\Vert_{i+1}$ on $E_{i+1,\R}$ by means of the surjective $\R$-linear map
$$q_{i,\R}:= q_i \otimes Id_\R: E_{i+1, \R} \lra E_{i,\R}.$$

A \emph{pro-Euclidean lattice} may actually be defined ``directly", without explicit mention of projective systems of Euclidean lattices, as a triple
$$\Ebh := (\Eh, E_\R^\hilb, \Vert.\Vert)$$ consisting in the following data:
\begin{enumerate}
\item an abelian topological group $\Eh$, isomorphic to $\Z^n$ (for some $n \in \N$) equipped with the discrete topology, or to $\Z^\N$ equipped with the product topology of the discrete topology on each factor $\Z$;
\item a dense real vector subspace $E_\R^\hilb$ of the topological real vector space 
$\Eh_\R := \Eh \widehat{\otimes}_\Z \R$, defined as the completed tensor product\footnote{Any isomorphism of topological groups $\phi: \hE \lrasim \Z^N$ of $\Eh$ by $\R$, with $N \in \N$ or $N =\N$, induces an isomorphism $\phi_\R:= \phi \widehat{\otimes}_\Z  \R: \Eh_\R \lra \R^N$ of topological real vector spaces, where the topology on $\R^N$ is defined as the product of the usual topology on each factor $\R$.} of $\Eh$ by $\R$;
\item a norm $\Vert. \Vert$ on $E_\R^\hilb$ that makes $(E_\R^\hilb, \Vert.\Vert)$ a real separable Hilbert space; this Hilbert space topology on 
$E_\R^\hilb$ is moreover required to be finer than the topology induced by the topology of  $\Eh_\R$.
\end{enumerate}

To any projective system $\Eb_{\bullet}$ as above, one  attaches such data by defining $\Eh$ as the pro-discrete $\Z$-module
$$\Eh := \varprojlim_i E_i$$ 
and $(E_\R^\hilb, \Vert.\Vert)$ as the projective limit, in the category of real normed vector spaces, of the projective system:
$$
(E_{0,\R}, \Vert.\Vert_0) \stackrel{q_{0,\R}}{\longleftarrow}(E_{1,\R}, \Vert.\Vert_1) \stackrel{q_{1,\R}}{\longleftarrow}\dots \stackrel{q_{i-1,\R}}{\longleftarrow}(E_{i,\R}, \Vert.\Vert_i) \stackrel{q_{i,\R}}{\longleftarrow} (E_{i+1,\R},\Vert.\Vert_{i+1}) \stackrel{q_{i+1,\R}}{\longleftarrow} \dots \; .
$$
Indeed $\Eh$ may be identified with the closed submodule of $\prod_{i \in \N} E_i$ consisting of families $(v_i)_{i \in \N}$ such that
\begin{equation}\label{projvi}
\mbox{ for any $i \in \N,\; v_i = q_i(v_{i+1}).$}
\end{equation}
Similarly the topological real vector space $\Eh_\R$ may be identified with the closed subspace of $\prod_{i \in \N} E_{i,\R}$ consisting of families $(v_i)_{i \in \N}$ satisfying (\ref{projvi}), and the real Hilbert space $E_\R^\hilb$ with the subspace of $\prod_{i \in \N} E_{i,\R}$ consisting of families $(v_i)_{i \in \N}$  such that, besides  (\ref{projvi}), the following holds:
\begin{equation*}
\Vert (v_i)_{i \in \N} \Vert := \lim_{i \ra + \infty} \Vert v_i \Vert_i < + \infty.
\end{equation*}
(For any $(v_i)_{i \in \N}$ in $\prod_{i \in \N} E_{i,\R}$ satisfying (\ref{projvi}), the limit $\lim_{i \ra + \infty} \Vert v_i \Vert_i$ exists in $[0, +\infty]$ since $(\Vert v_i \Vert_i )_{i \in \N}$ is a non-decreasing sequence in $\R_+$.)

These descriptions of $\Eh,$ $\Eh_\R$ and $E_\R^\hilb$ make clear the inclusions
$$\Eh \,\hlra \Eh_\R \longleftarrow\joinrel\rhook E_\R^\hilb$$
and show that the triple $(\Eh, E_\R^\hilb, \Vert.\Vert)$ actually satisfies the conditions (1)--(3) above and defines a pro-Euclidean lattice that we shall denote $\varprojlim_i \Eb_i.$  Moreover, any pro-Euclidean lattice $\Ebh$ as defined above may be obtained by this construction from a suitable projective system $\Eh_\bullet$. 

\subsection{}\label{origins} It turns out that meaningful invariants may be attached to the so-defined pro-Euclidean lattices. In this monograph, we shall focus on their $\theta$-\emph{invariants}.

Recall that, in the classical analogy between number fields and function fields, an Euclidean lattice $\Eb:= (E, \Vert.\Vert)$ appears as the counterpart of a vector bundle on some smooth projective curve $C$ over some base field $k$. Then the arithmetic counterpart of the dimension
\begin{equation}\label{hogeo}
h^0(C,V) := \dim_k \Gamma (C,V)
\end{equation}
 of the space of sections of $V$ is the non-negative real number 
 \begin{equation}\label{hoar}
\hot(\Eb) := \log \sum_{v \in E} e^{- \pi \Vert v \Vert^2}.
\end{equation}

The correspondence between the invariants (\ref{hogeo}) and (\ref{hoar}), in the geometric and arithmetic situations, goes back at least\footnote{Hilbert's formulation of his ``twelfth problem", together with Hecke's work on quadratic reciprocity over general number fields, might have been an earlier hint toward this correspondence; see \cite{Hilbert1900} p. 278-279, and \cite{Hecke19}, \cite{Hecke23} Kapitel VIII. } to the work of F. K. Schmidt. 

Indeed, Schmidt's proof  in \cite{Schmidt31}) of the functional equation for the zeta function of the function field $k(C)$ of some smooth projective curve $C$ over a finite field $k$ crucially relies on the Riemann-Roch formula for line bundles over $C$. Compared with the earlier proof by Hecke (\cite{Hecke17}) of the functional equation for the Dedekind zeta function of an arbitrary number field $K$ --- where the Poisson formula for Euclidean lattices of rank $[K:\Q]$ plays a key role --- Schmidt's arguments make clear the correspondence between (\ref{hogeo}) and (\ref{hoar}), and also the fact that, using the  definition (\ref{hoar}), Poisson formula takes a  form similar to the Riemann-Roch formula, namely:
$$\hot(\Eb) - \hot(\Eb^\vee) = \dega \Eb.$$
Here $\Eb^\vee$ denotes the Euclidean lattice dual of $\Eb$ and $\dega \Eb$ the Arakelov degree of $\Eb,$ defined in terms of its of covolume
${\rm covol}(\Eb)$ as
$$\dega \Eb = -\log {\rm covol}(\Eb).$$

The analogy between the invariants (\ref{hogeo}) and (\ref{hoar}) respectively attached to a vector bundle over a projective curve and to an Euclidean lattice, and between the Riemann-Roch and Poisson formulas, belongs to the ``well-known facts" of algebraic number theory: it was familiar to mathematicians like Artin and Hasse shortly after 1930\footnote{See for instance the entry, dated February 1934, in Hasse's mathematical diary (\cite{Hasse2012}, Section 7.28) where Hasse discusses, following Artin, an approach of the functional equation of the Dedekind zeta function of a number field that emphasizes this analogy.},  and is alluded to in  classical references such as Tate's thesis (\cite{Tate67}, Section 4.2).  This analogy is the matter of several entries in Quillen's mathematical diary \cite{QuillenNotebooks} (see notably on 24/12/1971, 26/04/1973, and 01/04/1983), and is also alluded to in \cite{Morishita95}, Section 1.4. It has been reexamined and developed, in relation with Arakelov geometry, in the works of  Roessler (\cite{Roessler93}), van der Geer and Schoof (\cite{vanderGeerSchoof2000}) and Groenewegen (\cite{Groenewegen2001}).

At this point, it  may be worth to emphasize that many contributions to Arakelov geometry consider another invariant of an Euclidean lattice $\Eb:=(E, \Vert.\Vert)$ as the counterpart of the dimension (\ref{hogeo}) of the space of sections of vector bundles over curves --- namely the real number
$$\hon(\Eb) := \log \vert \{ v \in E \mid \Vert v \Vert \leq 1 \}\vert.$$

This definition already appears, in substance, in Weil's famous note \cite{Weil39} (which, somewhat surprisingly, does not allude to the analogy between (\ref{hogeo}) and (\ref{hoar})), and explicitly in the first expositions  of Arakelov geometry, notably in \cite{Szpiro85} and \cite{Manin85} (see also \cite{GilletMazurSoule1991}).

A central theme of this monograph is that the $\theta$-invariant $\hot(\Eb)$ attached to an Euclidean lattice $\Eb$ is a more flexible and better behaved variant of $\hon(\Eb)$.

\subsection{} In this monograph, to a pro-Euclidean lattice 
$$\Ebh:=(\Eh, E_\R^\hilb, \Vert.\Vert),$$
we attach
some infinite dimensional generalizations of the invariant $\hot(\Eb)$ previously defined for (finite dimensional) Euclidean lattices. 

Notably we consider the invariant in $[0, + \infty]$ :
\begin{equation}\label{lhotintro}\lhot(\Ebh) := \log \sum_{v \in \Eh \cap E_\R^\hilb} e^{- \pi \Vert v \Vert^2}
\end{equation}
and we investigate some classes of pro-Euclidean Euclidean lattices for which this invariant is finite and may be computed as the limit
$$ %\begin{equation}\label{limintro} 
\lim_{i \ra + \infty} \hot(\Eb_i)
$$ %\end{equation}
of the $\theta$-invariants of the Euclidean lattices in a projective system $\Eb_{\bullet}$ that defines $\Ebh$ as in \ref{debut}. 

As a preliminary to our study of infinite dimensional Euclidean lattices, we also establish various properties of the invariant $\hot(\Eb)$ in the finite dimensional case. A key point  is the  subadditivity of $\hot$, already noticed by Quillen and Groenewegen. This subadditivity property asserts that, %\emph{
for any admissible\footnote{By definition, this means that $0 \lra E \stackrel{i}{\lra} F \stackrel{p}{\lra} G \lra 0$ is a short exact sequence of $\Z$-modules  and that $i_\R$ and the transpose of $p_\R$ are isometries with respect to the Euclidean norms on $E_\R,$ $F_\R$, and $G_\R$ defining the Euclidean lattices $\Eb$, $\Fb$ and $\Gb$.} short exact sequence of Euclidean lattices
$$0 \lra \Eb \stackrel{i}{\lra} \Fb \stackrel{p}{\lra} \Gb \lra 0,$$
the following inequality holds:
\begin{equation}\label{keysubadd}
\hot(\Fb) \leq \hot(\Eb) + \hot(\Gb).
\end{equation}

A similar subadditivity is clearly satisfied by the integer valued invariant $h^0(C,.)$ defined by (\ref{hogeo}) in the geometric case. It is  remarkable that it holds \emph{ne varietur} in the arithmetic case --- in the form  (\ref{keysubadd}), that involves  no error term depending on the ranks of $\Eb$, $\Fb$ and $\Gb$.    This ``unexpectedly good" behavior of the invariant $\hot$ is crucial to extend it to the infinite dimensional situation. 

\subsection{} The formalism of pro-Euclidean lattices --- and more generally of pro-Hermitian vector bundles over arithmetic curves --- and of their $\theta$-invariants developed in this monograph turns out to be a convenient tool for Diophantine geometry and transcendence theory. This formalism is indeed especially well adapted to investigate situations that combine formal geometry over the integers and complex analytic geometry.

In the last chapter of this monograph, we present a simple illustration of this general principle: we use this formalism to establish  an algebraicity criterion \emph{\`a la} Chudnovsky concerning formal curves over number fields embedded in some projective space, by arguments that are direct counterparts of  classical algebraization proofs in complex analytic and formal geometry.

\section{Contents} We now proceed to a synopsis of the main results of this monograph. More detailed presentations of these results are  given  in the first paragraphs of each chapter. 

\subsection{} Chapter  \ref{sec:hermitvect} collects  some basic facts concerning Hermitian vector bundles over an arithmetic curve $\Spec \OK$ (when $K= \Q$ and $\OK = \Z,$ they are simply the Euclidean lattices previously considered in this introduction). This section is mainly intended for later reference, 
and could be skipped by readers   familiar with Arakelov geometry.

The $\theta$-invariants of Hermitian vector bundles over arithmetic curves are studied in Chapters \ref{thetaone} and ~\ref{thetatwo}.

In Chapter \ref{thetaone}, we define the invariants $\hot(\Eb)$ and $\hut(\Eb)$ attached to some Hermitian vector bundle over a general arithmetic curve $\Spec \OK$ and we present some of their basic properties, notably the Poisson-Riemann-Roch formula and their monotonicity properties. We also establish upper bounds for the $\theta$-invariants of Hermitian line bundles over $\Spec \OK$, and we discuss the subadditivity property (\ref{keysubadd}) and its variants. This chapter pursue the previous work on $\theta$-invariants in  \cite{vanderGeerSchoof2000} and \cite{Groenewegen2001}, but our presentation is self-contained and does not require any familiarity with these articles.

In Chapter \ref{thetatwo}, we focus  on the situation where  $\OK = \Z$, and we establish diverse properties of the $\theta$-invariants 
 of some Euclidean lattice $\Eb$ that  compare them 
 to diverse invariants of $\Eb$ classically considered in geometry of numbers.
Notably we establish the following  estimates comparing the two ``arithmetic avatars" $\hot(\Eb)$ and $\hon(\Eb)$ of the ``geometric" invariant  $h^0(C,V)$ mentioned in Paragraph \ref{origins} above:  \begin{equation}\label{comparingavatars}
-\pi \leq \hot(\Eb) -\hon(\Eb) \leq 
 (\rk E/2) . \log \rk E - \log(1-1/2\pi),
\end{equation}
where $\rk E$  denotes the rank of $\Eb$.

Our proof of (\ref{comparingavatars}) uses  some estimates established by Banaszczyk in his work \cite{Banaszczyk93} devoted to the derivation of ``transference inequalities", relating invariants of  Euclidean lattices and of the dual lattices, with essentially optimal constants. Banaszczyk's techniques  rely on the properties of the measure
\begin{equation}\label{Banamu}
\gamma_\Eb := \sum_{v \in E} e^{- \pi \Vert v\Vert^2} \delta_v
\end{equation}
supported by the lattice $E$ inside $E_\R$ and of its Fourier transform, which is a real analytic $E^\vee$-periodic function on $E_\R^\vee$. Although these techniques have been developed without reference to the analogy between number fields and function fields or to the invariants $\hot(\Eb)$ or $\hut(\Eb):= \hot(\Eb^\vee)$, they constitute a remarkably useful tool to investigate the properties of the $\theta$-invariants. 

In a related vein to the comparison estimates (\ref{comparingavatars}), we introduce the following extension of $\hon(\Eb)$, defined for every $t \in \R^\ast_+$:
$$\hon(\Eb, t) :=   \log \vert \{ v \in E \mid \Vert v \Vert^2 \leq t \}\vert,$$
and we prove that it admits an asymptotic version 
$$\hont(\Eb, t) := \hont(\Eb, t) := \lim_{n \rightarrow + \infty} \frac{1}{n} \; \hon(\Eb^{\oplus n}, nt)$$
--- the limit exists in $\R_+$ --- that, remarkably enough, basically coincides with the Legendre transform of the fonction $\log \theta_\Eb$, where
$$\theta_{\Eb}(t):= \sum_{v \in E} e^{-\pi t \Vert v \Vert ^2}.$$
Namely, we establish the dual relations: 
\begin{equation}\label{Leg-hontIntro}
\mbox{for every $x \in \Rpa,$ } \hont(\Eb, x) = \inf_{\beta > 0} (\pi \beta x + \log \theta_\Eb(\beta))
\end{equation}
and
\begin{equation}\label{Leg-logthetaIntro}
\mbox{ for every $\beta \in \Rpa$, }\log\theta_\Eb(\beta) = \sup_{x > 0} (\hont(\Eb, x) -\pi \beta x).
\end{equation}
These relations appear as a special case of a version of Cram\'er's theory of large deviations valid over some measure space of infinite total mass. This theory is presented in Appendix \ref{Append:LD} and turns out to be closely related to the thermodynamic formalism.

\subsection{} The  remaining chapters are mainly devoted to the construction of categories of Hermitian vector bundles of (possibly) infinite rank   and to the definition and the investigation of their $\theta$-invariants. 

Chapter \ref{CpCtc} introduces some categories $CT_A$ and $\CTC_A$ of (topological) $A$-modules over some Dedekind ring $A$. When $A$ is the ring of integers $\OK$ of some number field $K$,  objects of the  categories $CT_A$ and $\CTC_A$  will form the  ``algebraic constituents" of the infinite dimensional Hermitian vector bundles over $\Spec \OK$ that are the subject of this monograph.

Diverse proofs of Chapter \ref{CpCtc} rely on some ``automatic continuity" results, discussed in Appendix \ref{prodiscretemod},  concerning $R$-linear maps $\phi: R^\N \lra R$, for suitable rings $R$, when $R^\N \simeq \varprojlim_n R^n$ is equipped with the pro-discrete topology.

The  infinite dimensional Hermitian vector bundles and their morphisms  are formally introduced  and studied in
Chapter \ref{infiniteHermitian}. In Chapter \ref{thetainfinite}, we define and establish the basic properties of their $\theta$-invariants.

In Chapter \ref{SectionSum}, we establish the main technical results of this monograph concerning  $\theta$-invariants of pro-Hermitian vector bundles. A special case of our results may be summarized as follows. 

If $\Eb:= (E, \Vert. \Vert)$ denotes some Euclidean lattice, then, for every real number $\lambda$, we may consider the Euclidean lattice 
$$\Eb \otimes \cOb(\lambda) := (E, e^{-\lambda} \Vert.\Vert)$$
deduced from $\Eb$ by scaling its metric by a factor $e^{-\lambda}$. 

Let us consider a projective system $\Eb_{\bullet}$ of Euclidean lattices and the attached pro-Euclidean lattice $\varprojlim_i \Eb_i$, as in \ref{debut} above. This pro-Euclidean lattice may be thought as being constructed from $\Eb_0$ as ``successive extensions" by the Euclidean lattices
$$\Sb_i:= (\ker q_i, \Vert.\Vert_{i\mid \ker q_{i,\R}})$$
which are defined as the kernels of the morphisms $q_i,$ $i \in \N$. We shall show that, \emph{if, for some $\epsilon \in \R^\ast_+,$ the summability condition
$$\sum_{i \in \N} \hot(\Sb_i \otimes \cOb(\epsilon)) < +\infty$$
is satisfied, the invariant $\lhot(\varprojlim_i \Eb_i)$} (defined by (\ref{lhotintro})) \emph{is finite and is given as the limit:}
\begin{equation}\label{limintro} 
\lhot(\varprojlim_i \Eb_i) = \lim_{i \ra \infty} \hot(\Eb_i).
\end{equation}

The proofs of this finiteness  and of the equality (\ref{limintro}) involve  measure theoretic arguments on the Polish space defined by $\varprojlim_i E_i$ equipped with the pro-discrete topology, concerning the convergence properties of the sequence of Banaszczyk's
measures 
$$\gamma_{\Eb_i} := \sum_{v \in E_i} e^{- \pi \Vert v\Vert_i^2} \delta_v.$$ Various facts concerning measure theory on Polish spaces constructed as projective limits of countable systems of countable discrete sets are presented in Appendix \ref{measurespro} in a form suited to the derivation of the results in Chapter  \ref{SectionSum}.

Our results  allow us to define natural classes of pro-Hermitian vector bundles over an arithmetic curve $\Spec \OK$  with finite and well-behaved $\theta$-invariants: the \emph{strongly summable} and  the $\emph{$\theta$-finite}$ pro-Hermitian vector bundles.

In Chapter \ref{subaddinfinite}, we investigate  short exact sequences of pro-Hermitian vector bundles. We extend the subadditivity properties  of $\hot$ to this infinite dimensional setting, and give some applications to strongly summable and $\theta$-finite pro-Hermitian vector bundles. We also show, equipped with the class of these short exact sequences, the  additive category $\proVectOK$ of pro-Hermitian vector bundles over $\Spec \OK$ becomes an exact category, in the sense of Quillen.

In Chapter \ref{geoman}, we discuss the counterparts of the main results of this monograph over function fields in one variable. In this simpler framework, most of the arguments previously used to investigate pro-Hermitian vector bundles over arithmetic curves --- notably the ones involving some measure theory --- become drastically simpler. Hopefully,   this should shed some light on the basic principles that underly the proofs in the previous chapters, and provide some additional motivation for our earlier constructions.

As already indicated, Chapter \ref{Epi} is devoted to an application to Diophantine geometry of the formalism of pro-Hermitian vector bundles over arithmetic curves and of their $\theta$-invariants developed so far. This final chapter is of a rather different nature than the previous ones. It is indeed longer, and has been conceived for a geometrically-minded reader with some interest in the diverse aspects of the analogy between number fields and function fields. As such, it should be of a broader appeal. 

Chapter \ref{Epi} is readable with only some superficial knowledge of Chapter \ref{thetaone} and Chapters \ref{CpCtc} to \ref{SectionSum}. Its introductory section (Section \ref{EpiIntro}) should be accessible at this stage, and we advise  the readers willing to get an idea of the possible application of the formalism developed in this monograph to read this section before proceeding further.

These chapters are completed by six appendices (Appendices \ref{Append:LD}  to \ref{ApJohn}), that can be read independently, and are devoted to diverse (more or less) classical results 
used in this monograph for which we could not locate in the literature references exactly suited  to our needs.\footnote{In these Appendices, only the extended version of  Cram\'er's theorem, in relation to the thermodynamic formalism, in Appendix \ref{Append:LD} and the vanishing criterion for sections of line bundles on compact K\"ahler manifolds in Proposition \ref{holbmultK} in Appendix \ref{ApUpBd} might claim to some originality.} 

\bigskip

The strict logical dependencies between the chapters and the appendices are indicated in the ``Leitfaden" below. 

Let us emphasize that the order of the chapters reflects some further conceptual dependencies. For instance, the investigation of the $\theta$-invariants in the finite rank situation in Chapter \ref{thetatwo} by means of Banaszczyk's techniques appears as an important motivation for considering Banaszczyk's measures when studying $\theta$-invariants in the  infinite dimensional setting of Chapter  \ref{SectionSum}. The study of infinite rank vector bundles over projective curves in Chapter \ref{geoman} is of interest for the perpective  it gives on the constructions in Chapters \ref{thetainfinite},  \ref{SectionSum}, and   \ref{subaddinfinite}.

\vspace{4cm}

\begin{center}
 {\sc Leitfaden}
\end{center}

\vspace{2cm}

$$%\entrymodifiers={++[o][F-]}
\xymatrix{
  &    &*++[o][F-]{I}  \ar[d] \ar[ddr] &   &    &    &    \\
   &    &   *++[o][F-]{II} \ar[dl] \ar[ddr] &   & *++[o][F-]{IV}\ar[dr] \ar[dl]   &    & *++[o][F-]{B}\ar@{.>}[ll]   \\
*++[o][F-]{A}\ar@{.>}[r]    & *++[o][F-]{III}   &    & *++[o][F-]{V}\ar[d]  &    &*++[o][F-]{IX}   &    \\
     &    &    & *++[o][F-]{VI}\ar[d]  &    &    &    \\
      &    &    & *++[o][F-]{VII} \ar[dr] \ar[dl] &    &    &    *++[o][F-]{C}\ar@{.>}[lll] \\
 *++[o][F-]{D}\ar@{.>}[rr]      &    & *++[o][F-]{VIII}   &   & *++[o][F-]{X}   &    &    *++[o][F-]{E,F}\ar@{.>}[ll]
       }
$$

\newpage
 
\section{Acknowledgements} It is a pleasure to thank Fran\c{c}ois Charles for numerous discussions about the results  in this monograph and their applications, and for his comments on preliminary versions of this work.

I am much indebted to Ga\"etan Chenevier for making me aware of the ``automatic continuity" results discussed in Appendix \ref{prodiscretemod} and I thank Antoine Chambert-Loir for the reference \cite{Specker50}. This allowed me to present the results in Section \ref{StrictCTC} in their natural generality\footnote{An earlier version, relying on the arguments discussed after the proof of Proposition \ref{strictCTCDed3}, had to make a countability assumption on $A$ instead of condition $\mathbf{Ded_3}$.}. I am grateful to Pascal Autissier for useful comments on a previous version of Chapters 2 and 3, and to Ren\'e Schoof for making me aware of the entry in Hasse's Tagebuch mentioned in \ref{origins}, \emph{supra}.

At an early stage of the writing of this monograph, I had the opportunity to present some applications of the formalism it develops to Antoine Chambert-Loir, François Charles and Gerard Freixas, and I would like to thank them  for their encouraging comments. 

 Several years ago, Alain Connes pointed out to me that the invariant $\hot(\Eb)$ of an Euclidean lattice $\Eb = (E, \Vert. \Vert)$ should be expressed in terms of the measure $(p(e))_{e\in E}$ on $E$ of a given energy  $\sum_{e \in E} p(e) \Vert e \Vert^2$ for which the information theoretic entropy $-\sum_{e \in E} p(e) \log p(e)$ is maximal. 
This suggestion, now formalized in Proposition  \ref{maxentropE} \emph{infra}, has been crucial in arousing my interest in the relations between $\theta$-invariants of Euclidean lattices and concepts from statistical thermodynamics, and I warmly thank Alain Connes for sharing his insight.% formalism.

\section{Notation and conventions} We denote by $\N$ the set of non-negative integers and, for any $k \in \N$, we denote by $\N_{> k}$ (resp., by $\N_{\geq k}$) the set of non-negative integers larger than $k$ (resp., larger than or equal to $k$).

By \emph{countable}, we mean ``of cardinality at most the cardinality of $\N$."

If $M$ is a module over some ring $A$, and if $B$ is some commutative $A$-algebra, we denote by $M_B$ the ``base changed" module $M\otimes_A B$. Similarly, if $\phi: M\lra N$ is a morphism of $A$-modules, we let: $$\phi_B := \phi \otimes_A Id_B : M_B \lra N_B,$$
and if $\cX$ is some $A$-scheme, we let:
$$\cX_B := \cX \times_{\Spec A} \Spec B.$$

\mainmatter

\chapter{Hermitian vector bundles over arithmetic curves}\label{sec:hermitvect}

\medskip

In this chapter, we gather some basic results concerning Hermitian vector bundles over  arithmetic curves (that is, the  affine schemes defined by the rings of integers of number fields). These results are well known, with the  exception of the content of Section \ref{directimageextar}, and appear for instance in \cite{Szpiro85}, \cite{Neukirch92},
\cite{BostGilletSoule94}, \cite{Soule97},
\cite{Bost96Bourbaki}, and 
\cite{BK10}.  

This chapter is primarily intended as a reference chapter. It might be skipped by readers familiar with Arakelov geometry, who could refer to it only when needed.

\bigskip
We denote by $K$ a number field, by $\OK$ its ring of integers, and by 
$$\pi: \Spec \OK \lra \Spec \Z$$
the morphism of schemes from $\Spec \OK$ to $\Spec \Z$, defined by the inclusion morphism 
$\Z \hra \OK.$

\section{Definitions and basic operations}\label{defbasic}

\subsection{Hermitian vector bundles over arithmetic curves}\label{defultrabasic} A \emph{Hermitian vector bundle} over $\Spec \OK$ is a pair $$\Eb = (E, (\Vert.\Vert_\sigma)_\sKC)$$
consisting in  a finitely generated projective $\OK$-module $E$ and in  a family  $(\Vert.\Vert_\sigma)_\sKC$ of Hermitian norms $\Vert .\Vert_\sigma$ on the complex vector spaces 
$$E_\sigma:= E \otimes_{\OK, \sigma} \C$$
defined by means of  the field embeddings $\sKC$. The family $(\Vert.\Vert_\sigma)_\sKC$ is moreover required to be invariant under complex conjugation.\footnote{Namely, for every  embedding $\sKC,$ we may consider the complex conjugate embedding $\overline{\sigma}: K \hra \C$ and the $\C$-antilinear isomorphism $F_\infty:  E_\sigma= E \otimes_{\OK, \sigma}    \lrasim     E_{\overline{\sigma}}= E \otimes_{\OK, \overline{\sigma}}$ defined by $F_\infty(e \otimes \lambda)= e \otimes \overline{\lambda}$. The Hermitian norms $\Vert .\Vert_\sigma$ and $\Vert.\Vert_{\overline{\sigma}}$ have to satisfy:
$\Vert F_\infty(.)\Vert_{\overline{\sigma}} =
\Vert .\Vert_\sigma.$}

When $K=\Q,$ a Hermitian vector bundle $\Eb = (E, \Vert. \Vert)$ over $\Spec \OK = \Spec \Z$ may be identified with a \emph{Euclidean lattice}, defined by a free $\Z$-module of finite rank $E$ and a Euclidean norm $\Vert . \Vert$ on $E_\R := E \otimes_\Z \R$. (Indeed, for any such $E$, the data of some Hermitian norm on $E_\C:= E\otimes_\Z \R$ invariant under complex conjugation and of some Euclidean norm on $E_\R$ are equivalent.)

The \emph{rank} of some Hermitian vector bundle $\Eb$ as above is the rank of the $\OK$-module $E$, or equivalently the dimension of the complex vector spaces $E_\sigma.$ A \emph{Hermitian line bundle} is a Hermitian vector bundle of rank one.  

When confusion may arise, the family of Hermitian norms underlying some Hermitian vector bundle $\Eb$ over $\Spec \OK$ will be denoted by $(\Vert . \Vert_{\Eb, \sigma})_\sKC.$ 

An \emph{isometric isomorphism}, or simply an \emph{isomorphism}, between two Hermitian vector bundles $\Eb$ and $\Fb$ over $\Spec \OK$ is an isomorphism $\psi: E \lrasim F$ between the underlying $\OK$-modules which, after every base change $\sKC$, defines an isometry of complex normed vector spaces  between $(E_\sigma, \Vert . \Vert_{\Eb, \sigma})$ and  $(F_\sigma, \Vert . \Vert_{\Fb, \sigma})$.

\subsection{Pull back. Tensor operations} 

Let $L$ be a number field extension of $K$. The inclusion $\OK \hra \cO_L$ defines a morphism of arithmetic curves 
$$f: \Spec \cO_L \lra \Spec \OK.$$
 The \emph{pull-back} $f^\ast \Eb$ is defined as the Hermitian vector bundle over $\Spec \cO_L$
$$f^\ast \Eb := (f^\ast E, (\Vert. \Vert_{\tau})_{\tau: L \hra \C})$$
where  $f^\ast E := E \otimes_{\OK}\cO_L$ and where, for every field embedding $\tau: L \hra \C,$ of restriction $\tau_{\mid K}=: \sigma$, $\Vert. \Vert_\tau$ denotes the Hermitian norm $\Vert. \Vert_\sigma$ on 
$$(f^\ast E)_\tau := (E\otimes_\OK \cO_L)\otimes_{\OK, \sigma} \C \simeq E \otimes_{\OK, \sigma}\C =: E_\sigma.$$

To any Hermitian vector bundle $\Eb$ over $\Spec \OK$, we may associate its dual Hermitian vector bundle $\Eb^\vee$ and its exterior powers $\wedge^k \Eb,$ $k\in \N.$
They are defined by means of the (compatible) constructions of duals and exterior powers for projective $\OK$-modules and Hermitian complex vector spaces.

Similarly, for any two Hermitian vector bundles $\Eb$ and $\Fb$ over $\Spec \OK,$ we may construct their direct sum $\Eb \oplus \Fb$ and their tensor product $\Eb \otimes \Fb$ as  Hermitian vector bundles over $\Spec \OK.$

These tensor operations are compatible with the pull-back construction defined above. Namely, we have canonical identifications $(f^\ast \Eb)^\vee \simeq  f^\ast (\Eb^\vee)$,  $f^\ast(\wedge^k \Eb) \simeq  \wedge^k(f^\ast \Eb)$,... 

\subsection{The Hermitian line bundles $\cOb(\delta)$}\label{Odelta} For any $\delta \in \R,$ we may consider the Hermitian line bundle over $\Spec \Z$ defined  by
$$\cOb(\delta) := (\Z, \Vert .\Vert_{\cOb(\delta)}),
\mbox{ where }
\Vert 1 \Vert_{\cOb(\delta)}:= e^{-\delta}.$$
It satisfies: $$\dega \cOb(\delta) = \delta.$$
We may also consider its pull-back by the morphism $\pi: \Spec \OK \lra \Spec \Z$:
$$\cOb_{\Spec \OK}(\delta) := \pi^\ast \cOb(\delta).$$

 For any Hermitian vector bundle $\Eb$ over $\Spec \OK,$ the Hermitian vector bundle $\Eb \otimes  \cOb_{\Spec \OK}(\delta)$ may be identified with the Hermitian vector bundle which admits the same underlying $\OK$-module $E$ as $\Eb$, and whose Hermitian structure is defined by the Hermitian norms defining $\Eb$ scaled by the factor $e^{-\delta}.$ 
 
 For simplicity, we shall often write $\Eb \otimes  \cOb(\delta)$ instead of $\Eb \otimes  \cOb_{\Spec \OK}(\delta)$.
 
 The ``trivial" Hermitian line bundle --- namely, $\cOb_{\Spec \OK}(0)$ --- will also be denoted by $\cOb_{\Spec \OK}$ and, when no confusion may arise, simply by $\cOb.$

\section{Direct images. The canonical Hermitian line bundle $\overline{\omega}_{\OK/\Z}$ over $\Spec \OK$}\label{Dircan}

\subsection{} To any Hermitian vector bundle $\Eb = (E, (\Vert.\Vert_\sigma)_\sKC)$ is attached its \emph{direct image}\footnote{The construction of direct images of hermtian vector bundles introduced here makes actually sense in a considerably more general setting; see \cite{BK10}, Section 1.2.1. Notably, it may be extended to any morphism of arithmetic curves $f:\Spec \cO_L \lra \Spec \OK.$} $\pi_\ast \Eb$ over $\Spec \Z$.

Observe that we have an isomorphism of $\C$-algebras:
$$
\begin{array}{rcl}
 \OK \otimes_\Z \C & \lrasim   & \bigoplus_\sKC \C   \\
 \alpha \otimes \lambda & \longmapsto   & (\sigma(\alpha) \lambda)_\sKC.  
\end{array}
$$
Therefore, for any $\OK$-module $M$, if $\pi_\ast M$ denotes the underlying $\Z$-module, we have 
\begin{align*}
 (\pi_\ast M)_\C & := \pi_\ast M \otimes_\Z \C \\
 & \simeq \pi_\ast M \otimes_\OK (\OK \otimes_\Z \C) \\
 & \simeq  \bigoplus_\sKC (M \otimes_{\OK, \sigma} \C).
\end{align*}

Using this observation, the direct image $\pi_\ast \Eb$ may be defined as the Hermitian vector bundle of rank $[K:\Q]. \rk E$ over $\Spec \Z$:
$$\pi_\ast \Eb := (\pi_\ast E, \Vert. \Vert_{\pi_\ast \Eb})$$
where, for any $v = (v_\sigma)_\sKC$ in $(\pi_\ast E)_\C \simeq \bigoplus_\sKC E_\sigma,$
$$\Vert v \Vert_{\pi_\ast \Eb}^2 := \sum_\sKC \Vert v_\sigma \Vert_\sigma^2.$$

Clearly we have:  $$\rk \pi_\ast E = [K:\Q]. \rk E.$$

\subsection{}The compatibility of the direct image operation $\pi_\ast$ and of the duality of Hermitian vector bundles involves 
 the \emph{canonical Hermitian line bundle}
$\overline{\omega}_{\OK/\Z}:=(\omega_{\OK/\Z}, (\Vert . \Vert_{\sigma})_{\sigma: K \hookrightarrow \C})$ over $\Spec \cO_{K}$. It is defined as the  ``canonical module''
$$\omega_{\OK/\Z} := \Hom_{\Z}(\cO_{K}, \Z)$$
--- also known as  the ``inverse of the different'' --- equipped with the Hermitian norms defined by
$$ \Vert {\rm tr}_{K/\Q} \Vert_{\sigma} = 1$$
for any embedding ${\sigma: K \hookrightarrow \C},$ where  ${\rm tr}_{K/\Q}$ denotes the trace map from $K$ to $\Q$ (it is indeed a non-zero element in $\Hom_{\Z}(\cO_{K}, \Z)$).

Indeed, for any Hermitian vector bundle $\Eb$ over $\Spec \cO_{K},$ we have a canonical (!!) isometric isomorphism of Hermitian vector bundles over $\Spec \Z$,
\begin{equation}\label{reldualArakelov}
\pi_{\ast} (\Eb^\vee \otimes \overline{\omega}_{\OK/\Z}) \stackrel{\sim}{\longrightarrow} (\pi_{\ast}\Eb)^\vee.
\end{equation}
 (See for instance \cite{BK10}, Proposition 3.2.2. For any $\xi$ in $E^\vee := \Hom_{\cO_{K}}(E, \cO_{K})$ and any $\lambda$ in $\omega_{\OK/\Z}$, the ``relative duality isomorphism" (\ref{reldualArakelov}) maps $\xi \otimes \lambda$ to $\lambda \circ \xi.$)

\section{Arakelov degree and slopes} 

\subsection{Definition and basic properties} The \emph{Arakelov degree} of some Hermitian line bundle $\Lb := (L, (\Vert. \Vert_\sigma)_\sKC)$ over $\Spec \OK$ is defined by the quality, valid for any $s \in L \setminus \{0\}$:
\begin{align}\dega \Lb & := \log \vert L/\OK s \vert - \sum_\sKC \log \Vert s \Vert_\sigma\\
& = \sum_{\fp} v_\fp (s) \log N\fp - \sum_\sKC \log \Vert s \Vert_\sigma. \label{degaline}
\end{align}
In the last equality, $\fp$ runs over the closed points of $\Spec \OK$ --- that is, on the non-zero prime ideals of $\OK$ --- and $v_\fp(s)$ denotes the $\fp$-adic valuation of $s$, seen as a section of the invertible sheaf over $\Spec \OK$ associated to $L$. Moreover, $N\fp := \vert \OK /\fp \vert$ denotes the norm of $\fp.$

This definition is extended to Hermitian vector bundles of arbitrary rank over $\Spec \OK$ by means of the formula: 
$$\dega \Eb:= \dega \wedge^{\rk E} \Eb.$$

If $\Eb$ is an Hermitian vector bundle over $\Spec \Z$, or in other words, a Euclidean lattice, its Arakelov degree may expressed in term of its covolume\footnote{See for instance Section \ref{Poissonformula}, \emph{infra}.} ${\rm covol}(\Eb);$ namely:
$$\dega \Eb = -\log {\rm covol}(\Eb).$$

For any two Hermitian line bundles $\Lb_1$ and $\Lb_2$ over $\Spec \OK$, the expression (\ref{degaline}) for their Arakelov degree shows that
\begin{equation}\label{degalinetens}
\dega (\Lb_1 \otimes \Lb_2) = \dega \Lb_1 + \dega \Lb_2.
\end{equation}

For any two Hermitian vector bundles $\Eb_1$ and $\Eb_2$ over $\Spec \OK,$ there is a canonical\footnote{up-to a sign.} isomorphism of Hermitian line bundles over $\Spec \OK$:
$$\wedge^{\rk(E_1 \oplus E_2)} (\Eb_1 \oplus \Eb_2) \lrasim \wedge^{\rk E_1} \Eb_1 \otimes \wedge^{\rk E_2} \Eb_2,$$
and the additivity relation (\ref{degalinetens}) applied to $\Lb_i := \wedge^{\rk E_i} \Eb_i$ ($i=1,2$) takes the form:
\begin{equation}\label{degadd}
\dega (\Eb_1 \oplus \Eb_2) = \dega \Eb_1 + \dega \Eb_2.
\end{equation}

Similarly, by reducing to the case of Hermitian line bundles, one shows that, for any Hermitian vector bundle $\Eb$ over $\Spec \OK,$ we have:
\begin{equation*}
\dega \Eb^\vee = - \dega \Eb.
\end{equation*}

\subsection{Arakelov degree and direct image} The very definition of the discriminant $\Delta_K$ of a number field $K$ shows that
$$\covol (\pi_\ast \cOb_{\Spec \OK}) = \vert \Delta_K \vert^{1/2}.$$
In other words,
\begin{equation}\label{degapiO}
\dega \pi_\ast \cOb_{\Spec \OK} = -\frac{1}{2} \log \vert \Delta_K \vert.
\end{equation}

More generally, for any Hermitian vector bundle $\Eb$ over $\Spec \OK,$ we have:
\begin{equation}\label{degapiE}
\dega \pi_\ast \Eb = \dega \Eb -\frac{1}{2} \log \vert \Delta_K \vert. \rk E.
\end{equation}

Observe that the compatibility of the ``relative duality isomorphism" (\ref{reldualArakelov}) and of the ``relative Riemann-Roch formula" (\ref{degapiE}) applied to $\Eb = \cOb_{\Spec \OK}$ and $\Eb = \overline{\omega}_{\OK/\Z}$ shows that
\begin{equation*}
\dega \overline{\omega}_{\OK/\Z} = \log \vert \Delta_K \vert.
\end{equation*}

\subsection{Slopes}\label{Aslopes}

If $\Eb$ is some Hermitian vector bundle over $\Spec \OK$ of positive rank, 
its \emph{slope} $\mua (\Eb)$  is defined as the quotient
$$\mua (\Eb) :=\frac{\degan \Eb}{\rk E}.$$

Its \emph{maximal slope} $\mua_{\rm max}(\Eb)$
(resp., its \emph{minimal slope} $\mua_{\rm min}(\Eb)$) as the maximum
(resp. the minimum)
of the slopes $\mua (\Fb)$, where $\Fb$ is the Hermitian 
vector bundle defined  by  a $\OK$-submodule
(resp. a torsion-free quotient)
of positive rank 
$F$ of $E$, equipped with the restrictions to   
$F_\sigma\subset E_\sigma$ of the Hermitian metrics
$(\|.\|_\sigma)_{\sigma:K\hookrightarrow\C})$ (resp.
with the quotient metrics on the vector spaces $F_\sigma$
of these metrics). 

One easily checks that
\begin{equation}
    \mua_{\rm max}(\Eb)=-\mua_{\rm min}(\check{\Eb}).
    \label{eq:maxmin}
\end{equation}

It is convenient to define the maximum (resp. minimum) slope of Hermitian vector bundles of rank zero to be $-\infty$ (resp. $+\infty$).

\section{Morphisms and extensions of Hermitian vector bundles}

\subsection{The filtration $\Hom_{\OK}^{\leq \bullet}( \Eb, \Fb)$ on $\Hom_{\OK}(E,F)$}

For any two Hermitian vector bundles $\Eb$ and $\Fb$ 
on $\Spec \OK$ and for any $\lambda \in \R_+,$ we may introduce the  subset $\Hom_{\OK}^{\leq \lambda}( \Eb, \Fb)$ of the $\OK$-module $\Hom_{\OK}(E,F)$ consisting in the $\OK$-linear maps 
$$\psi: E \lra F$$
such that, for every $\sigma: K \hlra \C,$ the induced $\C$-linear map
$$ \psi_\sigma: E_{\sigma} \lra F_{\sigma}$$
has an operator  norm $\leq \lambda$ when $E_{\sigma}$ and $F_{\sigma}$ are equipped with the Hermitian norms $\Vert .\Vert_{ \Eb_{\sigma}}$ and $\Vert .\Vert_{ \Fb_{\sigma}}$.

Besides, for any $\lambda \in \R_+, $ transposition defines a bijection:
\begin{equation}\label{transp}
\begin{array}{rcl}
 \Hom_{\OK}^{\leq \lambda}( \Eb, \Fb) &  \lrasim & \Hom_{\OK}^{\leq \lambda}( \Fb^\vee, \Eb^\vee)  \\
  \psi& \longmapsto   & \psi^t .   
\end{array}
\end{equation}

Observe that, if $\Eb_1,$ $\Eb_2$, and $\Eb_3$ are Hermitian vector bundles over $\Spec \OK,$ for any $(\lambda_1, \lambda_2) \in \R^2_+,$ the composition of an element $\psi_1$ in $\Hom_{\OK}^{\leq \lambda_1}( \Eb_2, \Eb_1)$ and of an element 
$\psi_2$ in  $\Hom_{\OK}^{\leq \lambda_2}( \Eb_3, \Eb_2)$ defines an element $\psi_1 \circ \psi_2$ in 
$\Hom_{\OK}^{\leq \lambda_1 \lambda_2}( \Eb_3, \Eb_1)$.

In particular, one endows the class of Hermitian vector bundles over $\Spec \OK$ with a structure of category by defining the  set of morphisms from $\Eb$ to $\Fb$ as  $\Hom_{\OK}^{\leq 1}( \Eb, \Fb)$. The isomorphisms in this category coincide with the ones already introduced in \ref{defultrabasic}.

\subsection{Injective and surjective admissible morphisms} Observe that $\Hom_{\OK}^{\leq 1}( \Eb, \Fb)$ contains the \emph{injective admissible morphisms}, defined as the $\OK$-linear maps $\psi:  E \lra F$ such that $\psi$ is injective with torsion free cokernel and such that, for any $\sKC,$ the $\C$-linear map $\psi_\sigma : E_\sigma \lra F_\sigma$ is an isometry with respect to the Hermitian norms $\Vert .\Vert_{\Eb, \sigma}$ and $\Vert .\Vert_{\Fb, \sigma}$.

The set $\Hom_{\OK}^{\leq 1}( \Eb, \Fb)$ also contains the \emph{surjective admissible morphisms}, defined as the surjective $\OK$-linear maps $\psi:E \lra F$ such that, for any field embedding $\sigma: K \hra \C,$ the norm $\Vert. \Vert_{\Fb_\sigma}$ on $F_\sigma$ is the quotient norm deduced from the norm $\Vert. \Vert_{\Eb_\sigma}$ on $E_\sigma$ by means of the surjective $\C$-linear map $\psi_\sigma: E_\sigma \lra F_\sigma.$

The transposition map (\ref{transp}), with $\lambda = 1,$ exchanges injective and surjective admissible morphisms.

\subsection{Heights of morphisms}\label{htmor}

Recall that, to any non-zero $K$-linear map $\phi: E_K \lra F_K$ is attached its \emph{height with respect to $\Eb$ and $\Fb$}, defined as
\begin{equation}\label{heightmorphism}
{\rm ht} (\Eb, \Fb, \phi) := \sum_{\fp} \log \Vert \phi \Vert_{\fp} + \sum_{\sigma: K \hra \C} \log \Vert \phi \Vert_\sigma.
\end{equation}
(See \cite{Bost01}, Section 4.1.4, where ${\rm ht} (\Eb, \Fb, \phi)$ is noted $h(\Eb, \Fb, \phi)$.) In the right-hand side of (\ref{heightmorphism}), $\fp$ varies over the maximal ideals of $\OK$, and $\Vert \phi \Vert_{\fp}$ denotes the $\fp$-adic norm of $\phi \in \Hom_K(E_K, F_K) \simeq (E^\vee \otimes_{\OK} F)_K$, defined by the equivalence\footnote{We denote by $\cO_{K,\fp}$ the discrete valuation ring defined as the localization of $\OK$ at $\fp$.}:
$$ \Vert \phi \Vert_{\fp} \leq 1 \Longleftrightarrow  \phi \in (E^\vee \otimes_{\OK} F)_{\cO_{K, \fp}}$$
and the normalization condition:
$$\Vert \varpi \phi \Vert_{\fp} = (N\fp)^{-1} \Vert \phi \Vert_\fp,$$
where $\varpi$ denotes a uniformizing element of  $\cO_{K,\fp}$ and $N\fp := \vert \OK / \fp \vert$ the norm of $\fp$. Besides, $\Vert \phi \Vert_\sigma$ denotes the operator norm of $\phi_\sigma : E_\sigma \lra F_\sigma$ defined from the Hermitian norms $\Vert .\Vert_{ \Eb_{\sigma}}$ and $\Vert .\Vert_{ \Fb_{\sigma}}$.

It is convenient to define the height ${\rm ht} (\Eb, \Fb, 0)$ of the zero morphism as $-\infty$.

Clearly, for any $\phi$ in $\Hom_{\OK}^{\leq 1}( \Eb, \Fb)\setminus \{0\},$ all the norms $ \Vert \phi \Vert_{\fp}$ and $\Vert \phi \Vert_\sigma$ belong to $]0,1]$, and therefore
$h(\Eb, \Fb, \phi)$ belongs to $\R_-.$

Besides, when $\Eb$ and $\Fb$ are Hermitian line bundles, then for any non-zero element $\phi$ of the $K$-vector space $\Hom_K(E_K,F_K)$, the very definition (\ref{heightmorphism}) of its height shows that
$$\dega (\Eb^\vee \otimes \Fb)  = - {\rm ht}(\Eb, \Fb, \phi),$$
or equivalently:
\begin{equation}\label{heightdeg} {\rm ht} (\Eb, \Fb, \phi) = \dega \Eb - \dega \Fb.
\end{equation}

This equality may be extended to arbitrary Hermitian vector bundles, in the form of the following ``slope inequalities":

\begin{proposition}\label{slopeineq}
Let $\Eb$ and $\Fb$ be two Hermitian vector bundles, and let $\phi_K: E_K \lra F_K$ be some $K$-linear map. 

1) If $\phi$ is injective, then 
\begin{equation}\label{slope1}
\dega \Eb \leq \rk E \, [\mua_{\rm max} (\Fb) + {\rm ht} (\Eb, \Fb, \phi)].
\end{equation}

2) If $\phi$ is surjective, then
\begin{equation}\label{slope2}
\dega \Fb \geq \rk F \, [\mua_{\rm min} (\Fb) - {\rm ht} (\Eb, \Fb, \phi)].
\end{equation}
 \end{proposition}
 
 The inequality (\ref{slope1}) is a straightforward consequence of  (\ref{heightdeg}) applied to the exterior powers of $\phi$ (see \cite{Bost01}, Proposition 4.5). Then  (\ref{slope2}) follows by duality.
 
 Observe that, in the estimate (\ref{slope1}) (resp. (\ref{slope2})), when $E$ (resp. $F$) has rank zero, we follow the usual convention  $0. (-\infty) = 0$ (resp. $0. (+\infty) = 0$) to define its right-hand side.

\subsection{Admissible short exact sequences of Hermitian vector bundles}\label{AdmShort}
An \emph{admissible short exact sequence} (also called an \emph{admissible extension}) of Hermitian vector bundles over $\Spec \OK$ is a diagram
\begin{equation}\label{shorter0}
\overline{\mathcal E}: 0 \lra \Eb \stackrel{i}{\lra} \Fb \stackrel{p}{\lra} \Gb \lra 0,
\end{equation}
where $\Eb = (E, (\Vert. \Vert_{\Eb, \sigma})_\sKC),$ 
$\Fb = (F, (\Vert. \Vert_{\Fb, \sigma})_\sKC),$
and $\Gb = (G, (\Vert. \Vert_{\Gb, \sigma})_\sKC)$ are Hermitian vector bundles over $\Spec \OK$, and where
$$\mathcal E: 0 \lra E \stackrel{i}{\lra} F \stackrel{p}{\lra} G \lra 0$$
is a short exact sequence of $\OK$-modules such that, for any field embedding $\sKC,$ the short exact sequence of complex vector spaces 
$$0 \lra E_\sigma \stackrel{i_\sigma}{\lra} F_\sigma \stackrel{p_\sigma}{\lra} G_\sigma \lra 0$$
(deduced from $\mathcal{E}$ by the base change $\sigma$) is compatible with the Hermitian norms $\Vert. \Vert_{\Eb, \sigma}$, $\Vert. \Vert_{\Fb, \sigma}$ and $\Vert. \Vert_{\Gb, \sigma}$.\footnote{Namely, $i_\sigma$ is required to be an isometry for the norms $\Vert. \Vert_{\Eb, \sigma}$ and  $\Vert. \Vert_{\Fb, \sigma}$, and the norm $\Vert. \Vert_{\Gb, \sigma}$ is required to be the quotient norm deduced from $\Vert. \Vert_{\Fb, \sigma}$ by means of $p_\sigma.$ Equivalently, $i$ and $p$ are respectively injective and surjective admissible morphisms from $\Eb$ to $\Fb$ and from $\Fb$ to $\Gb$.}

The additivity of the Arakelov degree for direct sums (\ref{degadd}) extends to admissible short exact sequences. Namely, for any admissible short exact sequence  (\ref{shorter0}) of Hermitian vector bundles over $\Spec \OK,$ the following equality holds:
\begin{equation}\label{degaddadm}
\dega \Fb = \dega \Eb + \dega \Gb.
\end{equation}
This follows from the existence of a canonical\footnote{up-to a sign.} isomorphism of Hermitian line bundles
$$\wedge^{\rk F} \Fb \simeq \wedge^{\rk E} \Eb  \otimes \wedge^{\rk G} \Gb$$
attached to the short exact sequence (\ref{shorter0}).
 
An admissible short exact sequence of Hermitian vector bundles (\ref{shorter0}) is said to be \emph{split} when there exists an admissible injective morphism of Hermitian vector bundles $$s: \Gb \lra \Fb$$ such that $p \circ s = Id_G.$ The morphism $s$ is then uniquely determined. 
 
 Indeed, for every embedding $\sigma: K \hra \C,$ the $\C$-linear map $s_\sigma: G_\sigma \lra F_\sigma$ deduced from $s$ by the extension of scalars $\sigma: \OK \lra \C$ necessarily coincides with the \emph{orthogonal splitting} $s^{\perp}_\sigma$ of the short exact sequence of complex vector spaces
\begin{equation}\label{shortersigma}
{\mathcal E}_\sigma: 0 \lra E_\sigma \stackrel{i_\sigma}{\lra} F_\sigma \stackrel{p_\sigma}{\lra} G_\sigma \lra 0.
\end{equation}
attached to  the Hermitian metric $\Vert . \Vert_{\Fb,\sigma}$ on $F_\sigma$.
(Recall that $s^{\perp}_\sigma$ is defined as the (linear) section $s^{\perp}_\sigma$ of $p_\sigma$ which maps $G_\sigma$ isomorphically onto the orthogonal complement $i_\sigma(E_\sigma)^\perp$ of $i_\sigma(E_\sigma)$ in $F_\sigma$ equipped with the Hermitian metric $\Vert . \Vert_{\Fb,\sigma}$.) 

%(Recall that $s^{\perp}_\sigma$ is defined as the (linear) section $s^{\perp}_\sigma$ of $p_\sigma$ which maps $G_\sigma$ isomorphically onto the orthogonal complement $i_\sigma(E_\sigma)^\perp$ of $i_\sigma(E_\sigma)$ in $F_\sigma$ equipped with the Hermitian metric $\Vert . \Vert_\sigma$ which occur in the data defining the Hermitian vector bundle $\Fb:= (F, (\Vert . \Vert_\sigma)_{\sigma: K \hra \C}).$)

\subsection{Arithmetic extensions}

Let $E$ and $G$ be two finitely generated projective $\OK$-modules. In \cite{BK10} is introduced the \emph{arithmetic extension group} $\Exthun_{\OK}(G,E)$ of $G$ by $E$. In the setting of the present paper, where we work over arithmetic curves of the form $\Spec \OK,$ it may be defined as:
\begin{equation}\label{ExthunDef}
\Exthun_{\OK}(G,E) := \Hom_{\OK}(G,E) \otimes_\Z \R /\Z \lrasim \frac{\left(\bigoplus_{\sigma: K \hra \C} \Hom_\C(G_\sigma, E_\sigma)\right)^{F_\infty}}{\Hom_{\OK}(G,E)}.
\end{equation}
(See \cite{BK10}, Example 2.2.3. As usual, $F_{\infty}$ denotes the complex conjugation.) 

The group $\Exthun_{\OK}(G,E)$ classifies, up to isomorphism, the \emph{arithmetic extensions} of $G$ by $E$. Recall that such an arithmetic extension is a pair $(\cE, (s(\sigma))_\sKC)$ where 
$$\mathcal E: 0 \lra E \stackrel{i}{\lra} F \stackrel{p}{\lra} G \lra 0$$
is an extension (of $\OK$-modules) of $G$ by $E$, and where $(s(\sigma))_\sKC$ is a family, invariant under complex conjugation, of $\C$-linear splittings
$s(\sigma): G_\sigma \lra F_\sigma$
of splitings of the extension of complex vector spaces
$${\mathcal E}_\sigma: 0 \lra E_\sigma \stackrel{i_\sigma}{\lra} F_\sigma \stackrel{p_\sigma}{\lra} G_\sigma \lra 0.
$$

Indeed, observe that, for any arithmetic extension as above, one may choose some $\OK$-linear splitting $s^{\rm{int}} \in \Hom_\OK(G, F)$ of $\mathcal{E}$. Then, for any $\sKC,$
$$p_\sigma \circ (s(\sigma) -  s^{\rm{int}}_\sigma) =0,$$
and therefore there exists a uniquely determined $T_\sigma \in \Hom_\C(G_\sigma, E_\sigma)$ such that
$$s(\sigma)- s^{\rm{int}}_\sigma = i_\sigma \circ T_\sigma.$$ 

Let us choose  an integral splitting of (\ref{shorter0}), that is an $\OK$-linear section 
$s^{\rm{int}}: G \lra F$ of $p: F \lra G.$ 
$$T_\sigma = s_\sigma^\perp - s^{\rm{int}}_\sigma.$$
The family $(T_\sigma)_\sKC$ belongs to $\left(\bigoplus_{\sigma: K \hra \C} \Hom_\C(G_\sigma, E_\sigma)\right)^{F_\infty}$. Moroever its class  $[(T_\sigma)_\sKC]$ %of $(T_\sigma)_\sKC$
 modulo $\Hom_\OK (G,E)$ in the extension  group $\Exthun_{\OK}(G,E)$ defined by (\ref{ExthunDef})  does not depend on the choice of the integral splitting $s^{\rm{int}}$. By definition, it is the class $[(\cE, (s(\sigma))_\sKC)]$ of the arithmetic extension $(\cE, (s(\sigma))_\sKC)$.
 
 % is the class $[(T_\sigma)_\sKC]$ of $(T_\sigma)_\sKC$ modulo $\Hom_\OK (G,E)$.

\subsection{Admissible short exact sequences and arithmetic extensions}\label{subsub:admiari}
Any admissible short exact sequence of Hermitian vector bundles over $\Spec \OK$ 
$$\overline{\mathcal E}: 0 \lra \Eb \stackrel{i}{\lra} \Fb \stackrel{p}{\lra} \Gb \lra 0$$
determines an arithmetic extension $(\cE, (s^{\perp}_\sigma)_\sKC)$ of $G$ by $E$, defined by means of the orthogonal splittings with respect to the Hermitian norms $\Vert . \Vert_{\Fb, \sigma}$ of the extensions of complex vector spaces $\cE_\sigma$. 

According to the discussion in \ref{AdmShort}, its class $$
[\overline{\mathcal E}] :=[(\cE, (s^{\perp}_\sigma)_\sKC)]$$ vanishes in  $\Exthun_{\OK}(G,E)$ if and only if the admissible short exact sequence $\Exthun_{\OK}(G,E)$ is split.

Besides, for any two Hermitian vector bundle $\Eb$ and $\Gb$ over $\Spec \OK,$ any class in $\Exthun_{\OK}(G,E)$ may be realized by the previous construction, starting from some suitable admissible short exact sequence $\cE$ as above. 

Indeed, for any $T= (T(\sigma))_\sKC$ in 
$\left(\bigoplus_{\sigma: K \hra \C} \Hom_\C(G_\sigma, E_\sigma)\right)^{F_\infty}$, we may introduce the Hermitian vector bundle over $\Spec \OK$
$$\overline{E\oplus G}^T := (E\oplus G, (\Vert. \Vert_{T(\sigma)})_\sKC)$$
where, for any field embedding $\sKC$ and any $(e,g) \in E_\sigma \oplus G_\sigma,$
$$\Vert(e,g)\Vert_{T(\sigma)}^2 := \Vert e- T_\sigma(g) \Vert_{E_\sigma}^2 + \Vert g \Vert^2_{G_\sigma}.$$
One easily checks that the maps 
$$
\begin{array}{rrcl}
i: & E  & \lra    & E\oplus G   \\
& e  & \longmapsto  & (e,0)   
\end{array}
$$
and
$$\begin{array}{rrcl}
p: & E \oplus G  & \lra   & G   \\
 & (e,g) & \longmapsto  & g  
\end{array}
$$
fit into an admissible short exact sequence of Hermitian vector bundles over $\Spec \OK$
\begin{equation}\label{defcET} 
\overline{\cE} (T) : 0 \lra \Eb \stackrel{i}{\lra} \overline{E\oplus G}^T \stackrel{p}{\lra} \Gb \lra 0,
\end{equation}
and that the associated class in $\Exthun_{\OK}(G,E)$ is $[T]$.

\subsection{Direct images of arithmetic and admissible extensions}\label{directimageextar}

Consider two finitely generated projective $\OK$-modules $E$ and $G$ and the underlying $\Z$-modules $\pi_\ast E$ and $\pi_\ast G$.

There is an obvious inclusion map between modules of $\OK$-linear  and $\Z$-linear morphisms:
$$\Hom_{\OK}(G,E) \hlra \Hom_{\Z} (\pi_\ast G, \pi_\ast E).$$
We will sometimes denotes by $\pi_\ast \phi$ the image in $\Hom_{\Z} (\pi_\ast G, \pi_\ast E)$ of some element $\phi$ of the group $\Hom_{\OK}(G,E).$

In this subsection, we want to investigate the ``derived" analogue of this map $\pi_\ast$, defined between the relevant arithmetic extension groups. 

Consider an arithmetic extension $(\cE, s)$ of $E$ by $G$ over $\Spec \OK,$ defined by an extension 
$$\cE: 0 \lra E \stackrel{i}{\lra} F \stackrel{p}{\lra} G \lra 0,$$
of $\OK$-modules and a family $s= (s_\sigma)_{\sigma: K \hra \C}$, invariant under complex conjugation, of $\C$-linear splittings
$s_\sigma: G_\sigma \lra F_\sigma$ of the extensions of complex vector spaces
$$\cE_\sigma: 0 \lra E_\sigma \stackrel{i_\sigma}{\lra} F_\sigma \stackrel{p_\sigma}{\lra} G_\sigma \lra 0.$$

We may define its direct image by $\pi: \Spec \OK \lra \Spec \Z$ as the arithmetic extension $(\pi_\ast \cE, \pi_\ast s)$ of $\pi_\ast E$  by $\pi_\ast G$ over $\Spec \Z$ defined by the extension of $\Z$-modules
$$\pi_\ast\cE: 0 \lra \pi_\ast E \stackrel{i}{\lra} \pi_\ast  F \stackrel{p}{\lra} \pi_\ast  G \lra 0,$$ equipped with the splitting
$$
\begin{array}{rrcl}
\pi_{\ast}s: & G\otimes_\Z \C \simeq \bigoplus_{\sigma: K \hra \C} G_\sigma & \lra &  E \otimes_\Z \C \simeq \bigoplus_{\sigma: K \hra \C} E_\sigma \\
& (g_\sigma)_{\sigma: K \hra \C} & \longmapsto &  (s_\sigma(g_\sigma))_{\sigma: K \hra \C}.
\end{array}
$$

\begin{proposition}\label{piextun} 1) The above construction defines a morphism of $\Z$-modules:
$$
\begin{array}{rrcl}
\hat{\pi}^1_\ast : & \Exthun_{\OK}(G,E)  & \lra   & \Exthun_{\Z}(\pi_\ast G, \pi_\ast E) \\
&[(\cE, s)] & \longmapsto   & [(\pi_\ast \cE, \pi_\ast s)].  
\end{array}
$$

2) Via the identifications
$$\Exthun_{\OK}(G,E)  \lrasim \frac{\left(\bigoplus_{\sigma: K \hra \C} \Hom_\C(G_\sigma, E_\sigma)\right)^{F_\infty}}{\Hom_{\OK}(G,E)}$$
and
$$\Exthun_{\Z}\,(\pi_\ast G,\pi_\ast E) \lrasim \frac{ \Hom_\C(\pi_\ast G\otimes_\Z \C, \pi_\ast E\otimes_\Z \C )^{F_\infty}}{\Hom_{\Z}(\pi_\ast G, \pi_\ast E)},$$ 
the map $\hat{\pi}^1_\ast $ coincides with the map defined, for any 
$(T(\sigma))_{\sigma: K \hra \C}$ in $(\bigoplus_{\sigma: K \hra \C} \Hom_\C(G_\sigma, E_\sigma))^{F_\infty},$
by
$$ \hat{\pi}^1_\ast [(T(\sigma))_{\sigma: K \hra \C}] = [\oplus_{\sigma} T(\sigma)],$$
where $\oplus_{\sigma} T(\sigma)$ is the element of $\Hom_\C(\pi_\ast G\otimes_\Z \C, \pi_\ast E )^{F_\infty}$ defined by
$$
\begin{array}{rrcl}
\oplus_{\sigma} T(\sigma) : & \pi_\ast G\otimes_\Z \C \simeq \bigoplus_{\sigma: K \hra \C} G_\sigma & \lra & \pi_\ast E\otimes_\Z \C \simeq \bigoplus_{\sigma: K \hra \C} E_\sigma \\
&  (g_\sigma)_{\sigma: K \hra \C} & \longmapsto &  (T_\sigma(g_\sigma))_{\sigma: K \hra \C}.
\end{array}
$$

3) Any admissible short exact sequence of Hermitian vector bundles over $\Spec \OK$
$$\overline{\mathcal E}: 0 \lra \Eb \stackrel{i}{\lra} \Fb \stackrel{p}{\lra} \Gb \lra 0$$
defines, by direct image, an admissible short exact sequence of Hermitian vector bundles over $\Spec \Z$:
$$\pi_\ast\overline{\mathcal E}: 0 \lra \pi_\ast \Eb \stackrel{i}{\lra} \pi_\ast \Fb \stackrel{p}{\lra} \pi_\ast \Gb \lra 0.$$
Moreover this construction is compatible with the direct image map  $\hat{\pi}^1_\ast $ between arithmetic extension groups. Namely, with the above notation, the following equality holds in $\Exthun_{\Z}(\pi_\ast G, \pi_\ast E)$:
$$ \hat{\pi}^1_\ast [ \overline{\mathcal E}] = [ \pi_\ast\overline{\mathcal E}].$$

4) The map $\hat{\pi}^1_\ast$ is injective.
\end{proposition}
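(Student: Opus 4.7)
The plan is to establish the explicit formula in (2) first, deduce (1) from it as a formality, verify (3) by a direct computation on orthogonal splittings, and conclude with (4) by exploiting the algebraic structure of $\OK \otimes_\Z \C \simeq \prod_\sigma \C$. Only the last step carries genuine content; everything else amounts to unwinding the definitions once (2) is in hand.

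For (2), I would start from a representative of a class in $\Exthun_{\OK}(G,E)$, namely an arithmetic extension $(\cE, (s(\sigma))_\sKC)$ equipped with some integral splitting $s^{\rm int} \in \Hom_{\OK}(G, F)$ of $\cE$, so that $s(\sigma) - s^{\rm int}_\sigma = i_\sigma \circ T(\sigma)$ for every $\sKC$. Applying $\pi_\ast$ produces the integral splitting $\pi_\ast s^{\rm int} \in \Hom_{\Z}(\pi_\ast G, \pi_\ast F)$ of $\pi_\ast \cE$; by construction, the class $[\pi_\ast(\cE, s)]$ in $\Exthun_{\Z}(\pi_\ast G, \pi_\ast E)$ is represented by $\pi_\ast s - \pi_\ast s^{\rm int}$, which under the identifications $\pi_\ast(-)\otimes_\Z \C \simeq \bigoplus_\sigma (-)_\sigma$ decomposes termwise as $\bigoplus_\sigma (s(\sigma) - s^{\rm int}_\sigma) = (\bigoplus_\sigma i_\sigma) \circ (\bigoplus_\sigma T(\sigma))$, yielding the formula of (2). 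Part (1) follows at once: the map $(T(\sigma)) \mapsto \bigoplus_\sigma T(\sigma)$ is the restriction to $F_\infty$-invariants of the obvious $\R$-linear inclusion $\bigoplus_\sigma \Hom_\C(G_\sigma, E_\sigma) \hookrightarrow \Hom_\C(\pi_\ast G \otimes_\Z \C, \pi_\ast E \otimes_\Z \C)$, and it sends the lattice $\Hom_{\OK}(G,E)$ into $\Hom_\Z(\pi_\ast G, \pi_\ast E)$ merely by forgetting the $\OK$-module structure, so it descends to a well-defined $\Z$-linear map of quotients.

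For (3), the exactness of $\pi_\ast$ on $\OK$-modules makes $\pi_\ast \cE$ a short exact sequence of $\Z$-modules. Admissibility holds because the direct image norm on $\pi_\ast \Fb$ is the orthogonal direct sum of the norms $\Vert\cdot\Vert_{\Fb,\sigma}$: thus $\pi_\ast i$ is isometric (each $i_\sigma$ being so), and the quotient norm on $\pi_\ast \Gb$ via $\pi_\ast p$ is the orthogonal direct sum of the quotient norms, which matches the prescribed norm $\bigoplus_\sigma \Vert\cdot\Vert_{\Gb,\sigma}$. The identity $\hat\pi^1_\ast[\overline{\cE}] = [\pi_\ast \overline{\cE}]$ then reduces, via (2), to the assertion that the orthogonal splitting of $\pi_\ast \cE \otimes_\Z \C \simeq \bigoplus_\sigma \cE_\sigma$ for the direct-sum hermitian structure is the direct sum $\bigoplus_\sigma s^\perp_\sigma$ of the individual orthogonal splittings of each $\cE_\sigma$, which is immediate from the orthogonality of the summands.

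The genuine content lies in (4). Assume $(T(\sigma))_\sKC$ is in the kernel of $\hat\pi^1_\ast$; by (2), $\bigoplus_\sigma T(\sigma)$ belongs to $\Hom_\Z(\pi_\ast G, \pi_\ast E)$, i.e.\ is the complexification of some $\Z$-linear map $\psi: G \to E$. The crux is that such a $\psi$ is automatically $\OK$-linear: under the isomorphism $\OK \otimes_\Z \C \simeq \prod_\sigma \C$, the decomposition $\pi_\ast(-) \otimes_\Z \C \simeq \bigoplus_\sigma (-)_\sigma$ is the isotypic decomposition for the induced $\OK \otimes_\Z \C$-module structure, and a $\C$-linear map respecting this decomposition is $(\OK \otimes_\Z \C)$-linear, hence descends from an $\OK$-linear map. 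Therefore $(T(\sigma))$ is the image in $(\bigoplus_\sigma \Hom_\C(G_\sigma, E_\sigma))^{F_\infty}$ of $\psi \in \Hom_{\OK}(G,E)$, and its class in $\Exthun_{\OK}(G,E)$ vanishes, proving injectivity.
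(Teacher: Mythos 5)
Your proof is correct and follows essentially the same route as the paper: the paper leaves parts (1)--(3) as an exercise (which your computations with integral splittings and orthogonal direct sums fill in correctly), and its proof of (4) is exactly your argument — a block-diagonal map with respect to the isotypic decompositions under $\OK\otimes_\Z\C\simeq\bigoplus_\sigma\C$ is $(\OK\otimes_\Z\C)$-linear, hence comes from an $\OK$-linear map.
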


\proof Assertions 1), 2), and 3) are left as an easy exercise for the reader. 

To prove the injectivity of $\hat{\pi}^1_\ast$, we use its description in 2), and we are left to show that, with the notation of 2), if $\oplus_{\sigma} T(\sigma)$ belongs to $\Hom _\Z(\pi_\ast G, \pi_\ast E),$ then there exists $T$ in $\Hom_{\OK} (G,E)$ such that, for every $\sigma: K \hra \C,$ $T(\sigma) = T_\sigma.$

In other words, we have to show that, if some $U$ in $\Hom _\Z(\pi_\ast G, \pi_\ast E)$ is such that 
$$U_\C: \pi_\ast G \otimes_\Z \C \lra \pi_\ast E \otimes_\Z \C$$
is block-diagonal with respect to the decompositions  in direct sums 
$$\pi_\ast G\otimes_\Z \C \simeq \bigoplus_{\sigma: K \hra \C} G_\sigma 
\mbox{ and }  \pi_\ast E\otimes_\Z \C \simeq \bigoplus_{\sigma: K \hra \C} E_\sigma, $$
then $U: G \lra E$ is $\OK$-linear. This is indeed the case, since any such $U$ is $\OK$-linear if and only if $U_\C := U \otimes_\Z Id_\C$ is linear as a map of modules over the ring
$\OK \otimes_\Z \C \simeq \bigoplus_{\sigma: K \hra \C} \C$. \qed

\medskip

\chapter{$\theta$-Invariants of Hermitian vector bundles over arithmetic curves}\label{thetaone} 

\medskip

This chapter is devoted to the definitions and to some basic properties of the  $\theta$-invariants of (finite-rank) Hermitian vector bundles over arithmetic curves. Their extensions to infinite-dimen\-sional Hermitian vector bundles established in the latter chapters of this monograph, as well as their Diophantine applications discussed in its sequel, will depend on these properties.

As explained in the Introduction (see \ref{origins}, \emph{supra}), the definitions of the invariants $\hot$ and $\hut$ have their origins in the classical works of Hecke and F. K. Schmidt on the functional equations of number fields and functions fields over finite fields. Formal definitions and some of their basic properties already appear  in \cite{Roessler93}, Section 6.2, and, most importantly from the perspective of this monograph, in the work of van der Geer and Schoof \cite{vanderGeerSchoof2000} and of Groenewegen \cite{Groenewegen2001}. 

In this chapter, we give a self-contained and streamlined presentation of the theory of $\theta$-invariants, adapted to our later use of them. 
We notably emphasize the importance of their  subadditivity properties  
(see notably Lemma \ref{gaussiansumpreimage} and  Section \ref{remsubadd}),  which play a key role in the proof of the main results of this monograph in chapter \ref{SectionSum}.  

Among the  results in this part that, by their novelty, might be  of special interest to readers already familiar with the content of \cite{vanderGeerSchoof2000} and \cite{Groenewegen2001}, we  mention:
\begin{itemize}
\item Proposition \ref{ineqmorthetaequidim} and its corollaries, that provide useful comparison estimates for the $\theta$-invariants of Euclidean lattices with the same underlying $\Q$-vector spaces.
\item The improved bounds on the invariant $\hot(\Lb)$ of an Hermitian line bundle $\Lb$ over an arithmetic curve. These bounds turns out to be remarkably similar to the well-known bounds on the dimension $h^0(C,L)$ of the space of sections of a line bundle $L$ over a projective curve $C$ (see Propositions \ref{linepositive} and \ref{scholiumhot}).
\item The discussion of the subadditivity properties of $\hot$ and $\hut$ in Section \ref{subadd}, notably the results concerning the ``subadditivity measure" $h_\theta (\overline{\cE})$ attached to an admissible short exact sequence of Euclidean lattices $\overline{\cE}$ that  are established in Propositions \ref{Gext} and \ref{GextAverag}.
\end{itemize}

Let us indicate that the content of Section \ref{AmpleTheta} \emph{infra}, which depends only on the results in this chapter, will illustrate how the basic formalism of $\theta$-invariants naturally combines with the classical developments of  Arakelov geometry, concerning higher dimensional projective schemes over $\Spec \Z$. The interested reader may directly go to \ref{AmpleTheta} after this chapter.

\bigskip 

As in chapter \ref{sec:hermitvect}, we denote by $K$ some number field, by $\OK$ its ring of integers and by $\pi$ the morphism of schemes form $\Spec \OK$ to $\Spec \Z.$

\section{The Poisson formula}\label{Poissonformula}

We first review some basic results related to the Poisson formula. This gives us the opportunity to introduce various conventions and notation that will be used in the next chapters.

\subsection{The Poisson formula for Schwartz functions}\label{PoissonSchwartz}
Let $\Vb$ be a Hermitian vector bundle over $\Spec \Z$ (or, equivalently, a Euclidean lattice). We shall denote $\lambda_{\Vb}$ the Lebesgue measure on $V_\R$. It is the unique translation invariant Radon measure on $V_\R$ that satisfies the following normalization condition: for any orthonormal basis $(e_1,\ldots,e_N)$ of $(V_\R, \Vert.\Vert_{\Vb}),$
$$\lambda_{\Vb}\left(\sum_{i=1}^N [0,1[ e_i \right)= 1.$$
This  normalization condition may be equivalently expressed in term of a Gaussian integral:
\begin{equation}\label{gaussint}
\int_{V_\R} e^{-\pi \Vert x \Vert_{\Vb}^2 }\, d\lambda_{\Vb}(x) = 1.
\end{equation}

Then the covolume of $\Vb$ may be defined as
$${\rm covol} (\Vb):= \lambda_{\Vb}\left(\sum_{i=1}^N [0,1[ v_i \right)$$
for any $\Z$-basis $(v_1,\ldots, v_N)$ of $V.$

Any function $f$ in the Schwartz space $\mathcal{S}(V_\R)$ of $V_\R$ admits a Fourier transform $\hat{f}$ in the Schwartz space $\mathcal{S}(V^\vee_\R)$ attached to the dual vector space $V_\R^\vee$, defined by
$$\hat{f}(\xi) := \int_{V_\R} f(x) \, e^{-2\pi i \langle \xi, x \rangle}\, d\lambda_{\Vb}(x), \mbox{ for any $\xi \in V_\R^\vee.$}
$$

With this notation, the Poisson formula for $\Vb$ reads as follows:
\begin{equation}\label{PoissonVvf}
\sum_{v \in V} f(x-v) = (\covol \Vb)^{-1} \sum_{v^\vee \in V^\vee} \hat{f}(v^\vee) \, e^{2\pi i \langle v^\vee, x\rangle}, \mbox{ for any $x \in V_\R.$}
\end{equation}
It is nothing but the Fourier series expansion of the function  $\sum_{v \in V} f(.-v)$, which is $V$-periodic on $V_\R$.

It is convenient to have at one's disposal the following more ``symmetric" form of the Poisson formula:
 \begin{equation}\label{PoissonVvfbis}
\sum_{v \in V} f(x-v)\, e^{2\pi i \langle \xi, v \rangle}= (\covol \Vb)^{-1} \, e^{2\pi i \langle \xi, x \rangle} \sum_{v^\vee \in V^\vee} \hat{f}(\xi - v^\vee) \, e^{- 2\pi i \langle v^\vee, x\rangle}, 
\end{equation}
which is valid for any $(x,\xi) \in V_\R\times V^\vee_\R.$ (The identity (\ref{PoissonVvfbis}) indeed follows from (\ref{PoissonVvf}) applied to the function $e^{-2\pi i \langle \xi, .\rangle} f.$)

\subsection{The Poisson formula for Gaussian functions}

In this monograph, a key role will played by the Poisson formula applied to Gaussian functions. Namely, we apply the above formula to the Gaussian function $f$ defined by 
$$f(x):= e^{-\pi \Vert x \Vert_{\Vb}^2}.$$
Then, for any $\xi \in V_\R^\vee,$ we have:
$$\hat{f}(\xi) = e^{-\pi \Vert \xi \Vert_{\Vb^\vee}^2},$$
and the identity (\ref{PoissonVvf}) and (\ref{PoissonVvfbis}) take the form:
\begin{equation}\label{PoissonGauss}
\sum_{v \in V} e^{-\pi \Vert x-v \Vert_{\Vb}^2} = (\covol \Vb)^{-1} \sum_{v^\vee \in V^\vee} e^{-\pi \Vert v^\vee \Vert_{\Vb^\vee}^2 + 2\pi i \langle v^\vee, x\rangle}
\end{equation}
and 
\begin{equation}\label{PoissonGaussBis}
\sum_{v \in V} e^{-\pi \Vert x-v \Vert_{\Vb}^2 + 2\pi i \langle \xi, v \rangle} = (\covol \Vb)^{-1} e^{2\pi i \langle \xi, x \rangle} \sum_{v^\vee \in V^\vee} e^{-\pi \Vert \xi - v^\vee \Vert_{\Vb^\vee}^2 - 2 \pi i \langle v^\vee, x\rangle}
\end{equation}

In particular, when $x=0,$ the identity (\ref{PoissonGauss}) becomes:
\begin{equation}\label{petitpoisson}
\sum_{v \in V} e^{-\pi \Vert v \Vert_{\Vb}^2} = (\covol \Vb)^{-1}  \sum_{v^\vee \in V^\vee} e^{-\pi \Vert v^\vee \Vert_{\Vb^\vee}^2}.
\end{equation}

Observe also that (\ref{PoissonGauss})  implies that, for any $x \in V_\R,$
\begin{equation}\label{PoissonGaussineq}
\sum_{v \in V} e^{-\pi \Vert x-v \Vert_{\Vb}^2} \leq  
\sum_{v \in V} e^{-\pi \Vert v \Vert_{\Vb}^2},
\end{equation}
and that equality holds in (\ref{PoissonGaussineq}) if and only if $x \in V.$ It also implies that, for any $x \in V_\R,$
\begin{equation}\label{PoissonGaussineqmoins}
\sum_{v \in V} e^{-\pi \Vert x-v \Vert_{\Vb}^2} \geq  
2 (\covol \Vb)^{-1} -\sum_{v \in V} e^{-\pi \Vert v \Vert_{\Vb}^2}.
\end{equation}

\section{The $\theta$-invariants $\hot$ and $\hut$  and the  Poisson-Riemann-Roch formula}

\subsection{The $\theta$-invariants of an Euclidean lattice and the Poisson formula}

For any Hermitian vector bundle $\Eb := (E, \Vert.\Vert)$ over $\Spec \Z$  (that is, for any Euclidean lattice), 
we let :
\begin{equation}\label{defhot}
\hot(\Eb) := \log \sum_{v \in E} e^{- \pi \Vert v \Vert^2_{\Eb}} 
\end{equation}
and
\begin{equation}\label{defhut}
\hut(\Eb) := \hot(\Eb^\vee).
\end{equation}
The Poisson formula (\ref{petitpoisson}) for the Euclidean lattice $\Eb$ reads:
$$\sum_{w \in E^\vee} e^{- \pi \Vert w \Vert^2_{\Eb^\vee}} = {\rm covol}(\Eb).  \sum_{v \in E} e^{- \pi \Vert v \Vert^2_{\Eb}}.$$
It may be rewritten in terms of the $\theta$-invariants $\hot(\Eb)$ and $\hut(\Eb)$ and of the Arakelov degree $ \dega \Eb$, as the following relation: 
\begin{equation}\label{PoissonZ}
\hot(\Eb) - \hut(\Eb) = \dega \Eb.
\end{equation}
Observe the similarity of (\ref{defhut}) and (\ref{PoissonZ}) with the Serre duality and the Riemann-Roch formula for vector bundles over an elliptic curve.

\subsection{The $\theta$-invariants of Hermitian vector bundles over a general arithmetic
curve and the Poisson-Riemann-Roch formula}  We extend the above definitions of $\hot$ and $\hut$ to Hermitian vector bundles over an arbitrary ``arithmetic curve" $\Spec \OK$ as above by defining:
\begin{equation}\label{hithetagen}
h^i_{\theta}(\Eb) := h^i_\theta (\pi_{\ast} \Eb) \mbox{ for $i= 0, 1.$}
\end{equation}

In other words, we have:
$$\hot (\Eb) = \log \sum_{v \in E} e^{- \pi \Vert v \Vert^2_{\pi_\ast \Eb}} $$
and
$$\hut (\Eb) = \hot(\Eb) - \dega \pi_\ast \Eb = \log \left[ {\rm covol }(\pi_\ast \Eb) \sum_{v \in E} e^{- \pi \Vert v \Vert^2_{\pi_\ast \Eb}}\right].$$

Observe  that, as a consequence of the ``relative duality isomorphism" (\ref{reldualArakelov}), we have:
\begin{equation}\label{Serredual}\hut(\Eb) = \hot(\Eb^\vee \otimes \overline{\omega}_{\OK/\Z}).
\end{equation}
This ``Serre duality formula'' could have been used as an alternative definition of $\hut(\Eb).$

Observe finally that, as a consequence of the Poisson formula (\ref{PoissonZ}) for Euclidean lattices and of the expression (\ref{degapiE}) for the Arakelov degree of a direct image, we obtain the general version of the \emph{Poisson-Riemann-Roch formula}, where we denote by  $\Delta_{K}$  the discriminant of the number field $K$:
\begin{equation}\label{Poisson}
\hot(\Eb) - \hut(\Eb) = \dega \Eb - \frac{1}{2}  \log \vert \Delta_K \vert. \rk E.
\end{equation}

\section{Positivity and monotonicity}

\subsection{Some elementary estimates} The following observation is a straightforward consequence from the definitions (\ref{defhot}) and (\ref{defhut}) of the $\theta$-invariants, but plays an important conceptual role in this monograph and its applications:

\begin{proposition}\label{hot positive} For any Hermitian vector bundle $\Eb$ over $\Spec \OK$, the real numbers
$\hot(\Eb)$ and $\hut(\Eb)$ are \emph{non-negative}, and are positive if $\Eb$ has positive rank. \qed
\end{proposition}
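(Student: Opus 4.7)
The plan is to argue directly from the definitions, using that the $\theta$-sum defining $\hot$ always contains the contribution of the zero vector.

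First I would reduce to the case $K = \Q$ via the definition $\hot(\Eb) = \hot(\pi_\ast \Eb)$ and $\hut(\Eb) = \hut(\pi_\ast \Eb)$ from \eqref{hithetagen}. Since $\pi_\ast$ preserves positivity of the rank (as $\rk \pi_\ast E = [K:\Q] \cdot \rk E$), it suffices to prove the proposition for euclidean lattices $\Eb = (E, \Vert . \Vert)$.

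For $\hot(\Eb) \geq 0$: the defining series
\[
\sum_{v \in E} e^{-\pi \Vert v \Vert^2_{\Eb}}
\]
contains the term indexed by $v = 0$, which contributes $e^0 = 1$. All other terms are positive, so the sum is at least $1$, whence $\hot(\Eb) = \log \sum_{v \in E} e^{-\pi \Vert v \Vert^2_{\Eb}} \geq 0$. Moreover, if $\Eb$ has positive rank, then $E$ contains some nonzero vector $v_0$, whose contribution $e^{-\pi \Vert v_0 \Vert^2_{\Eb}}$ is strictly positive, making the sum strictly greater than $1$ and hence $\hot(\Eb) > 0$.

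For $\hut(\Eb)$, I would use the Serre duality formula \eqref{Serredual}, namely $\hut(\Eb) = \hot(\Eb^\vee \otimes \overline{\omega}_{\OK/\Z})$, and apply the previous argument to the hermitian vector bundle $\Eb^\vee \otimes \overline{\omega}_{\OK/\Z}$ over $\Spec \OK$. Since this bundle has the same rank as $\Eb$, its $\hot$-invariant is non-negative, and positive when $\rk \Eb > 0$. No obstacle arises; the proof is essentially a reading of the definitions, and the only subtlety is to remember to invoke Serre duality for the $\hut$ statement rather than trying to bound the Poisson-dual sum directly.
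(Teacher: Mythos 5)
Your proof is correct and is exactly the argument the paper has in mind: the proposition is stated with an immediate \qed precisely because the sum defining $\hot$ contains the term $e^{0}=1$ for $v=0$ (plus strictly positive extra terms when the rank is positive), and $\hut$ reduces to an $\hot$ of a bundle of the same rank. The only cosmetic remark is that once you have reduced to $K=\Q$ via $\pi_\ast$, the definition $\hut(\Eb):=\hot(\Eb^\vee)$ already finishes the $\hut$ case, so the detour through the Serre duality formula $\hut(\Eb)=\hot(\Eb^\vee\otimes\overline{\omega}_{\OK/\Z})$ is unnecessary (though harmless).
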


Together with the ``Poisson-Riemann-Roch formula" (\ref{Poisson}), this non-negativity shows  that, for any Hermitian vector bundle, the following avatar of the ``Riemann inequality'' is satisfied:
\begin{equation}\label{ThetaRIK}
\hot(\Eb) \geq \dega \Eb - \frac{1}{2} \log \vert \Delta_{K} \vert   \cdot \rk E.
\end{equation}
In particular, for any Euclidean lattice $\Eb,$ the following lower bound is satisfied:
\begin{equation}\label{ThetaRI}
\hot (\Eb) \geq \dega \Eb.
\end{equation}

\begin{proposition}\label{ineqmortheta}
Let $\Eb$ and $\Fb$ be two Hermitian vector bundles over $\Spec \OK,$ and let $\phi : E \lra F$ be a map in $\Hom_{\OK}^{\leq 1}(\Eb, \Fb).$ Let $\phi_K : E_K \lra F_K$ denote the induced morphism of $K$-vector spaces.

1) If $\phi_K$ is injective (or equivalently, if $\phi$ is injective), then
\begin{equation}\label{monothot}
\hot(\Eb) \leq \hot(\Fb).
\end{equation}

2) If $\phi_K$ is surjective, then
\begin{equation}\label{monothut}
\hut(\Eb) \geq \hut(\Fb).
\end{equation}

Moreover, equality holds in either (\ref{monothot}) or (\ref{monothut}) if and only if $\phi$ is an isometric isomorphism from $\Eb$ onto $\Fb$.
\end{proposition}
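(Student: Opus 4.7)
My plan for (1) is a direct comparison of the series defining $\hot$. Since $\phi\in\Hom_\OK^{\leq 1}(\Eb,\Fb)$, for each embedding $\sigma:K\hookrightarrow\C$ the operator norm of $\phi_\sigma:E_\sigma\to F_\sigma$ is at most one; writing $v_\sigma$ for the image of $v\in E$ in $E_\sigma$, this yields $\Vert\phi_\sigma(v_\sigma)\Vert_{\Fb,\sigma}\leq\Vert v_\sigma\Vert_{\Eb,\sigma}$. Squaring and summing over $\sigma$ gives $\Vert\phi(v)\Vert_{\pi_\ast\Fb}\leq\Vert v\Vert_{\pi_\ast\Eb}$ for every $v\in E$. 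The injectivity of $\phi_K$ together with $\OK$-torsion-freeness of $E$ makes $\phi$ itself injective, so
\begin{equation*}
\sum_{v\in E}e^{-\pi\Vert v\Vert_{\pi_\ast\Eb}^2}\;\leq\;\sum_{v\in E}e^{-\pi\Vert\phi(v)\Vert_{\pi_\ast\Fb}^2}\;=\;\sum_{w\in\phi(E)}e^{-\pi\Vert w\Vert_{\pi_\ast\Fb}^2}\;\leq\;\sum_{w\in F}e^{-\pi\Vert w\Vert_{\pi_\ast\Fb}^2},
\end{equation*}
and (\ref{monothot}) follows by taking logarithms.

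For (2) I would reduce to (1) by duality. The transposition bijection (\ref{transp}) produces $\phi^t\in\Hom_\OK^{\leq 1}(\Fb^\vee,\Eb^\vee)$, and $(\phi^t)_K=(\phi_K)^t$ is injective precisely because $\phi_K$ is surjective. Tensoring with $\mathrm{id}_{\omega_{\OK/\Z}}$ preserves both the operator-norm bound and injectivity on the generic fibre, so applying (1) to $\phi^t\otimes\mathrm{id}_{\omega_{\OK/\Z}}:\Fb^\vee\otimes\overline{\omega}_{\OK/\Z}\to\Eb^\vee\otimes\overline{\omega}_{\OK/\Z}$ gives $\hot(\Fb^\vee\otimes\overline{\omega}_{\OK/\Z})\leq\hot(\Eb^\vee\otimes\overline{\omega}_{\OK/\Z})$; the Serre duality identity (\ref{Serredual}) rewrites this as (\ref{monothut}).

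For the equality case in (1), both displayed inequalities must be equalities: the first forces $\Vert\phi(v)\Vert_{\pi_\ast\Fb}=\Vert v\Vert_{\pi_\ast\Eb}$ for every $v\in E$ and hence, by termwise positivity, $\Vert\phi_\sigma(v_\sigma)\Vert_{\Fb,\sigma}=\Vert v_\sigma\Vert_{\Eb,\sigma}$ at every $\sigma$; the second forces $\phi(E)=F$, so combined with injectivity $\phi$ is an $\OK$-module isomorphism. To upgrade norm preservation on the lattice $E$ to an isometry $\phi_\sigma:E_\sigma\lrasim F_\sigma$, I would polarize: for $e,e'\in E$, the identity $\Vert\phi(e+e')\Vert_\sigma^2=\Vert e+e'\Vert_\sigma^2$ determines $\mathrm{Re}\,\langle\phi_\sigma(e_\sigma),\phi_\sigma(e'_\sigma)\rangle_{\Fb,\sigma}$; when $\sigma$ is complex, applying the same identity to $e+\alpha e'$ for some $\alpha\in\OK$ with $\sigma(\alpha)\notin\R$ (which exists since $\sigma(K)\not\subset\R$) recovers the imaginary part, while for real $\sigma$ the hermitian pairing is automatically real-valued on the image of $E$. $\C$-sesquilinearity of $\phi_\sigma$ then extends the isometry to all of $E_\sigma$. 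The equality case of (2) is inherited from that of (1), since $\phi\mapsto\phi^t\otimes\mathrm{id}_{\omega_{\OK/\Z}}$ preserves the class of isometric isomorphisms.

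The only step that is not an immediate unwinding of the definitions is the polarization argument upgrading lattice-wise norm preservation to an isometry of the complex vector spaces $E_\sigma$; the inequalities themselves drop out of the monotonicity of $t\mapsto e^{-\pi t}$ together with the given injectivity (resp. surjectivity) hypothesis.
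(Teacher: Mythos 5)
Your proof is correct, and part (1) is the paper's argument verbatim: termwise comparison of the Gaussian sums using the operator-norm bound and the injectivity of $\phi$. For part (2) you take a mildly different but equivalent route. The paper pushes forward to $\Spec \Z$ first and transposes there, applying (1) to $^t(\pi_\ast\phi):(\pi_\ast\Fb)^\vee\to(\pi_\ast\Eb)^\vee$ and using $\hut=\hot(\cdot^\vee)$ for euclidean lattices; you transpose over $\OK$ and invoke the Serre duality identity $\hut(\Eb)=\hot(\Eb^\vee\otimes\overline{\omega}_{\OK/\Z})$. The two are interchanged by the relative duality isomorphism (\ref{reldualArakelov}), so nothing is gained or lost. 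Where your write-up genuinely adds something is the equality case: the paper asserts in one sentence that termwise norm preservation on $E$ forces the $\phi_\sigma$ to be isometries (citing conjugation-invariance of the norms), whereas you supply the polarization argument, including the point that $E\otimes 1$ only spans $E_\sigma$ over $\C$ and not over $\R$, so that for a complex embedding one must test on $e+\alpha e'$ with $\sigma(\alpha)\notin\R$ to recover the imaginary part of the inner product, while for a real embedding the $F_\infty$-invariance makes the pairing real-valued on the image of $E$. An alternative that avoids the case split is to polarize the single real quadratic form $\Vert\cdot\Vert^2_{\pi_\ast\Eb}$ on $E\otimes_\Z\R$ (where $E$ does span over $\R$), conclude that $\phi_\R$ is a euclidean isometry, and then read off each $\phi_\sigma$ from the block decomposition of $\phi_\C$; but your version is equally valid.
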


\begin{proof} 
1) Let us assume that $\phi_K$ is injective. 

By successively using that the maps $\phi_\sigma$ have operator norm at most $1$ and the injectivity of $\phi,$ we obtain:
\begin{align}
 \sum_{v \in E} e^{- \pi \Vert v \Vert^2_{\pi_\ast \Eb}} & \leq \sum_{v \in E} e^{- \pi \Vert \phi(v)\Vert^2_{\pi\ast\Fb}}  \label{phidecreases} \\
 & \leq \sum_{w \in F} e^{-\pi \Vert w \Vert^2_{\pi_\ast \Fb}}. \label{phiinjective}
\end{align}
This establishes (\ref{monothot}).

Equality holds in (\ref{phidecreases}) if and only if, for every field embedding $\sKC$ and every $v\in E,$ $$\Vert \phi_\sigma(v)\Vert^2_{\Fb, \sigma}
= \Vert v \Vert^2_{\Eb, \sigma}.$$

Since the family of Hermitian norms $(\Vert. \Vert_{\Eb, \sigma})_\sKC$ and $(\Vert. \Vert_{\Fb, \sigma})_\sKC$ 	are invariant under complex conjugation, this holds precisely when the $\phi_\sigma$ are isometries.

Moreover equality holds in (\ref{phiinjective}) if and only if $\phi(E) = F$.

This shows that equality holds in (\ref{monothot}) if and only if $\phi$ is an isometric isomorphism.

2) When $\phi_K$ is surjective, we consider the morphism
$$\pi_\ast \phi \in \Hom_\Z^{\leq 1} (\pi_\ast \Eb, \pi_\ast\Fb)$$
and its transpose
$$^t (\pi_\ast \phi) \in \Hom_\Z^{\leq 1} ((\pi_\ast\Fb)^\vee,(\pi_\ast \Eb)^\vee).$$

It is injective and, according to 1), 
\begin{equation}\label{monohutbis}
\hot((\pi_\ast\Fb)^\vee) \leq \hot((\pi_\ast\Eb)^\vee).
\end{equation}
This establishes (\ref{monothut}). 

Moreover equality holds in (\ref{monothut}), or equivalently in (\ref{monohutbis}), if and only if $^t (\pi_\ast \phi)$ is an isometric isomorphism from $(\pi_\ast\Fb)^\vee$ onto $(\pi_\ast \Eb)^\vee$. This is easily seen to be equivalent to the fact that $\phi$ itself is an isometric isomorphism.
\end{proof}

\subsection{Comparing the $\theta$-invariants of generically isomorphic Hermitian vector bundles} 

Combined to the Poisson-Riemann-Roch formula and to the basic properties of the height of morphisms (\cf \ref{htmor}, \emph{supra}), the simple estimates established in Proposition \ref{ineqmortheta} may be extended to more general situations where one deals with two Hermitian vector bundles $\Eb$ and $\Fb$ over $\Spec \OK$ such that $E_K$ and $F_K$ are isomorphic 

\begin{proposition}\label{ineqmorthetaequidim}
Let $\Eb$ and $\Fb$ be two Hermitian vector bundles over $\Spec \OK$  of the same rank $n$ and let $\phi : E \lra F$ be a map in $\Hom_{\OK}^{\leq 1}(\Eb, \Fb)$.

If $\phi$ is injective, or equivalently if   $$\det \phi_K : \wedge^n E_K \lra \wedge^n F_K$$ is not zero, then the following inequalities hold:
\begin{equation}\label{equidimeutile}
\hot(\Eb) \leq \hot(\Fb) \leq \hot(\Eb) - {\rm ht} (\wedge^n \Eb, \wedge^n \Fb, \det \phi_K).
\end{equation}
and
$$ \hut(\Fb) \leq \hut(\Eb)\leq  \hut(\Fb)- {\rm ht} (\wedge^n \Eb, \wedge^n \Fb, \det \phi_K).$$ 
\end{proposition}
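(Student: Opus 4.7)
The left inequalities in both chains follow almost immediately from Proposition \ref{ineqmortheta}. Indeed, since $\phi$ is injective between modules of the same rank $n$, the induced $K$-linear map $\phi_K$ is an isomorphism, and in particular $\det \phi_K \neq 0$. The first inequality $\hot(\Eb) \leq \hot(\Fb)$ is then just Proposition \ref{ineqmortheta}~1) applied to $\phi$. For $\hut(\Fb) \leq \hut(\Eb)$, I would observe that the transpose $\phi^\vee \in \Hom_{\OK}^{\leq 1}(\Fb^\vee, \Eb^\vee)$ (the operator norms of $\phi_\sigma$ and of its transpose coincide) is also injective since $\phi_K$ is an isomorphism, so Proposition \ref{ineqmortheta}~1) applied to $\phi^\vee$ yields $\hot(\Fb^\vee) \leq \hot(\Eb^\vee)$, i.e., $\hut(\Fb) \leq \hut(\Eb)$.

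For the upper bounds, the plan is to combine these two monotonicity statements with the Poisson-Riemann-Roch formula (\ref{Poisson}) and the height-degree identity (\ref{heightdeg}). Taking the difference of (\ref{Poisson}) for $\Fb$ and for $\Eb$ and using that $\rk E = \rk F = n$, one gets
\begin{equation*}
\hot(\Fb) - \hot(\Eb) = \hut(\Fb) - \hut(\Eb) + \dega \Fb - \dega \Eb.
\end{equation*}
Since I have already shown $\hut(\Fb) - \hut(\Eb) \leq 0$, this gives $\hot(\Fb) - \hot(\Eb) \leq \dega \Fb - \dega \Eb$. Symmetrically, using $\hot(\Eb) - \hot(\Fb) \leq 0$ in the same identity yields $\hut(\Eb) - \hut(\Fb) \leq \dega \Fb - \dega \Eb$.

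The last step is to rewrite the right-hand side $\dega \Fb - \dega \Eb$ in terms of the height of $\det \phi_K$. Applying (\ref{heightdeg}) to the hermitian line bundles $\wedge^n \Eb$ and $\wedge^n \Fb$ and to the nonzero morphism $\det \phi_K : \wedge^n E_K \lrasim \wedge^n F_K$, we obtain
\begin{equation*}
{\rm ht}(\wedge^n \Eb, \wedge^n \Fb, \det \phi_K) = \dega \wedge^n \Eb - \dega \wedge^n \Fb = \dega \Eb - \dega \Fb,
\end{equation*}
so $\dega \Fb - \dega \Eb = -{\rm ht}(\wedge^n \Eb, \wedge^n \Fb, \det \phi_K)$. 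Substituting into the two inequalities above gives exactly the required upper bounds.

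There is no real obstacle here: the proposition is a clean packaging of monotonicity (Proposition \ref{ineqmortheta}), Poisson-Riemann-Roch (\ref{Poisson}), and the height-degree dictionary (\ref{heightdeg}). The only mild care needed is to verify that transposition preserves membership in $\Hom_{\OK}^{\leq 1}$ (already recorded in (\ref{transp})) and that the injectivity of $\phi$ together with the equality of ranks forces $\phi_K$, hence $\phi^\vee_K$, to be an isomorphism so that Proposition \ref{ineqmortheta}~1) is applicable to $\phi^\vee$.
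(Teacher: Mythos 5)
Your proof is correct and follows essentially the same route as the paper: the left inequalities come from Proposition \ref{ineqmortheta} (your detour through the transpose $\phi^\vee$ is exactly how the paper proves part 2) of that proposition anyway), and the upper bounds are obtained by combining Poisson--Riemann--Roch with the identity ${\rm ht}(\wedge^n\Eb,\wedge^n\Fb,\det\phi_K)=\dega\Eb-\dega\Fb$ from (\ref{heightdeg}), just as in the paper.
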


\begin{proof} The inequalities
\begin{equation}\label{AA}\hot(\Eb) \leq \hot(\Fb)
\end{equation}
and
\begin{equation}\label{BB}
\hut(\Fb) \leq \hut(\Eb)
\end{equation}
are special cases of Proposition \ref{ineqmortheta}.

Besides, according to (\ref{heightdeg}), 
$${\rm ht} (\wedge^n \Eb, \wedge^n \Fb, \det \phi_K) = \dega \wedge^n \Eb - \dega \wedge^n \Fb = \dega \Eb - \dega \Fb.$$
The inequality
$$ \hot(\Fb) \leq \hot(\Eb) - {\rm ht} (\wedge^n \Eb, \wedge^n \Fb, \det \phi_K)$$
may therefore be written
$$\hot(\Fb) - \dega \Fb \leq \hot(\Eb) - \dega \Eb.$$
Taking the Poisson-Riemann-Roch formula (\ref{Poisson}) for $\Eb$ and $\Fb$ into account, it reduces to (\ref{BB}).

Similarly,  the inequality 
$$\hut(\Eb)\leq  \hut(\Fb)- {\rm ht} (\wedge^n \Eb, \wedge^n \Fb, \det \phi_K)$$
follows from (\ref{AA}).
\end{proof}

\begin{corollary} For any Hermitian vector bundle $\Fb:=(F,(\Vert.\Vert_\sigma)_{\sigma:K\hra \C})$ over $\Spec \OK$ and every $\OK$-submodule $E$ of $F$ such that $E_K = F_K$ --- \emph{ or equivalently, such that the quotient $F/E$ is finite} --- the $\theta$-invariants of the Hermitian vector bundle $\Eb := (E, (\Vert.\Vert_\sigma)_{\sigma:K\hra \C})$ satisfy:
$$\hot(\Fb) - \log \vert F/E \vert \leq \hot(\Eb) \leq \hot( \Fb)\phantom{+ +\log \vert F/E\vert}$$
and 
$$\phantom{+  \log \vert F/E } \hut(\Fb) \leq \hut(\Eb) \leq \hut( \Fb) + \log \vert F/E\vert.$$
\end{corollary}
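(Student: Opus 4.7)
The plan is to deduce the corollary directly from Proposition \ref{ineqmorthetaequidim} applied to the inclusion morphism $\phi : E \hlra F$.

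First I would check that $\phi$ belongs to $\Hom_{\OK}^{\leq 1}(\Eb, \Fb)$. Since, by the very definition of $\Eb$, the hermitian norms on $E_\sigma$ are simply the restrictions of those on $F_\sigma$, and since $E_K = F_K$ forces the natural map $E_\sigma \lra F_\sigma$ (which coincides with the induced $\phi_\sigma$) to be the identity of a common complex vector space, each $\phi_\sigma$ is actually an isometric isomorphism. In particular $\phi$ lies in $\Hom_{\OK}^{\leq 1}(\Eb, \Fb)$ and is injective, so that $\det \phi_K : \wedge^n E_K \lrasim \wedge^n F_K$ is the identity of the common $K$-vector space $\wedge^n E_K = \wedge^n F_K$.

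Next I would compute the height term ${\rm ht}(\wedge^n \Eb, \wedge^n \Fb, \det \phi_K)$. By (\ref{heightdeg}) applied to the hermitian line bundles $\wedge^n \Eb$ and $\wedge^n \Fb$, this equals $\dega \wedge^n \Eb - \dega \wedge^n \Fb = \dega \Eb - \dega \Fb$. Now the Arakelov degree of $\Eb$ is easily related to that of $\Fb$: since $\Eb$ and $\Fb$ have identical archimedean norms and the underlying lattice of $\pi_\ast \Eb$ sits inside the one of $\pi_\ast \Fb$ with finite index equal to $|F/E|$, the covolumes satisfy $\covol(\pi_\ast \Eb) = |F/E|\cdot \covol(\pi_\ast \Fb)$, and therefore
\[
\dega \Eb - \dega \Fb = - \log |F/E|.
\]
Hence ${\rm ht}(\wedge^n \Eb, \wedge^n \Fb, \det \phi_K) = -\log |F/E|$.

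Finally I would plug this value into the two chains of inequalities provided by Proposition \ref{ineqmorthetaequidim}:
\[
\hot(\Eb) \leq \hot(\Fb) \leq \hot(\Eb) + \log |F/E|
\]
and
\[
\hut(\Fb) \leq \hut(\Eb) \leq \hut(\Fb) + \log |F/E|,
\]
which, after rewriting the first line as $\hot(\Fb) - \log|F/E| \leq \hot(\Eb) \leq \hot(\Fb)$, are precisely the two asserted inequalities. No step is genuinely hard here; the only mildly delicate point is the verification that $\phi_\sigma$ is an isometry (which relies on the fact that $\Eb$ is defined with the \emph{same} family of hermitian norms $(\Vert.\Vert_\sigma)_\sKC$ as $\Fb$), and the identification of the height with $-\log |F/E|$ via a covolume computation.
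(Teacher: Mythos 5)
Your proof is correct and follows exactly the paper's route: apply Proposition \ref{ineqmorthetaequidim} to the inclusion $\phi: E \hlra F$ and identify $-{\rm ht}(\wedge^n\Eb, \wedge^n\Fb, \det\phi_K) = -\dega\Eb + \dega\Fb = \log|F/E|$. The extra details you supply (the isometry of $\phi_\sigma$ and the covolume computation behind $\dega\Fb - \dega\Eb = \log|F/E|$) are exactly the points the paper leaves implicit, and they check out.
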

\begin{proof} Indeed, if $\phi: E \lra F$ denotes the inclusion morphism, we have:
$$ - {\rm ht} (\wedge^n \Eb, \wedge^n \Fb, \det \phi_K) = - \dega \Eb + \dega \Fb = \log \vert F/E \vert.$$
\end{proof}

Recall that, by definition, an \emph{Arakelov divisor} $$\Db:= (\sum_i n_i \fp_i, (\delta_\sigma)_{\sigma:K \hra \C})$$ over $\Spec \OK$ consists in a divisor $\sum_i n_i \fp_i$ in $\Spec \OK$ and in a family, invariant under complex conjugation, of $[K:\Q]$  real numbers $(\delta_\sigma)_{\sigma: K \hra \C}$. 
It is \emph{effective} if the divisor $D$ and the $\delta_\sigma$'s are non-negative.

To $\Db$ is attached a Hermitian line bundle
$$\cO(\Db) := (\OK(D), (\Vert . \Vert_\sigma)_{\sigma: K \hra \C})$$
where
$\OK(D) := \prod_i {\fp_i}^{-n_i}$
 and where the metric $\Vert. \Vert_\sigma$ on $\OK(D)_\sigma\simeq \C$ is defined by $\Vert 1 \Vert_\sigma := e^{- \delta_\sigma}.$
 The Arakelov degree of $\Db$ is defined as:
 $$\dega \Db := \dega  \cO(\Db) = \sum_i n_i \log N{\fp_i} + \sum_\sigma \delta_\sigma.$$

\begin{corollary}\label{cor:hotED}
 Let $\Eb$ be  a Hermitian vector bundle over $\Spec \OK$.
 
 1) For any effective Arakelov divisor $\Db$ over $\Spec \OK$, we have
 \begin{equation}\label{hotED}
 0 \leq \hot(\Eb) - \hot(\Eb \otimes \cO(-\Db)) \leq \rk E. \,\dega \Db.
 \end{equation}
 
 2) For any $\delta$ in $\R_+,$
 \begin{equation}\label{hotEdelta}
 0 \leq \hot(\Eb) - \hot(\Eb \otimes \cOb(-\delta)) \leq \rk E. [K:\Q]. \delta.
 \end{equation}
\end{corollary}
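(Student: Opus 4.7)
The plan is to realize the natural inclusion $\cO(-\Db) \hookrightarrow \cO$ (available because $\Db$ is effective) as a morphism in $\Hom^{\leq 1}$, tensor with $\Eb$, and then invoke Proposition~\ref{ineqmorthetaequidim}.

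First I would verify that since $n_i \geq 0$ the ideal $\OK(-D) = \prod_i \fp_i^{n_i}$ sits inside $\OK$ at every finite place, while since $\delta_\sigma \geq 0$ the norm of $1$ in $\cO(-\Db)_\sigma$ equals $e^{\delta_\sigma} \geq 1$; hence the inclusion $j : \cO(-\Db) \hookrightarrow \cO$ has archimedean operator norms $e^{-\delta_\sigma} \leq 1$ and belongs to $\Hom^{\leq 1}_{\OK}(\cO(-\Db), \cO)$. Tensoring with the projective (hence flat) $\OK$-module $E$ produces an injective morphism
$$\phi := \mathrm{id}_E \otimes j : \Eb \otimes \cO(-\Db) \lra \Eb \otimes \cO \simeq \Eb$$
lying in $\Hom^{\leq 1}$ and generically an isomorphism, so that $\det \phi_K \neq 0$.

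Next I would apply Proposition~\ref{ineqmorthetaequidim} to $\phi$ (source and target both have rank $n := \rk E$). The inequality~(\ref{equidimeutile}) gives
$$\hot(\Eb \otimes \cO(-\Db)) \leq \hot(\Eb) \leq \hot(\Eb \otimes \cO(-\Db)) - {\rm ht}\bigl(\wedge^n(\Eb \otimes \cO(-\Db)),\, \wedge^n \Eb,\, \det \phi_K\bigr).$$
The left half already yields the lower bound in~(\ref{hotED}). For the upper bound, I would use the canonical isomorphism $\wedge^n(\Eb \otimes \cO(-\Db)) \simeq \wedge^n \Eb \otimes \cO(-\Db)^{\otimes n}$ together with~(\ref{heightdeg}) to compute
$${\rm ht}\bigl(\wedge^n(\Eb \otimes \cO(-\Db)),\, \wedge^n \Eb,\, \det \phi_K\bigr) = \dega \cO(-\Db)^{\otimes n} = n \cdot \dega \cO(-\Db) = -\rk E \cdot \dega \Db,$$
which plugged back into the inequality gives the right half of~(\ref{hotED}).

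Part~2 is then an immediate specialization: the hermitian line bundle $\cOb_{\Spec \OK}(-\delta) = \pi^\ast \cOb(-\delta)$ coincides with $\cO(-\Db)$ for the effective Arakelov divisor $\Db$ having trivial finite part and $\delta_\sigma := \delta$ at every $\sKC$, whose Arakelov degree is $\sum_\sigma \delta = [K:\Q] \cdot \delta$. Applying Part~1 delivers~(\ref{hotEdelta}). There is no genuine obstacle here; the argument is a formal reduction to Proposition~\ref{ineqmorthetaequidim}, and the only point requiring care is to match correctly the scaling of the metric on $\cO(-\Db)$ (which multiplies $\Vert 1 \Vert_\sigma$ by $e^{\delta_\sigma}$) with the operator norm of the inclusion (which is the reciprocal factor $e^{-\delta_\sigma}$).
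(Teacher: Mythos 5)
Your proposal is correct and follows essentially the same route as the paper: both apply Proposition~\ref{ineqmorthetaequidim} to the injective norm-decreasing morphism $\Eb\otimes\cO(-\Db)\hookrightarrow\Eb$ and identify the height term with $-\rk E\cdot\dega\Db$, the only (cosmetic) difference being that you compute that height via the degree formula~(\ref{heightdeg}) for the determinant line bundles, whereas the paper evaluates the local norms of $\det\phi_K$ at each place directly. Part~2 is handled identically in both.
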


\begin{proof} 1) Let $$\Eb':= \Eb \otimes \cO(-\Db).$$
As $\Db$ is effective, the module 
$$E' = E \otimes \OK(-D) = \prod_i \fp_i^{n_i} E$$
is a $\OK$-submodule of $E$, of the same rank $\rk E$ as $E$, and the inclusion morphism $\phi: E' \hra E$ has Hermitian operator norms
$\Vert \phi_\sigma \Vert  = e^{-\delta_\sigma} \leq 1.$ 

Moreover, the norms of $\det \phi_K$ are easily computed:
\begin{align*}
 \Vert \det \phi_K \Vert_\fp & = 1 \mbox{ if $\fp \notin \{\fp_i\}_i$} \\
 &= (N\fp_i)^{-\rk E. n_i} \mbox{ if $\fp=\fp_i$.}
\end{align*}
and, for every embedding $\sKC$,
$$\Vert \det \phi_K \Vert_\sigma = e^{- \rk E. \delta_\sigma}.$$
 Therefore
 $$h(\wedge^{\rk E} \Eb', \wedge^{\rk E} \Eb, \det \phi_K) = -\rk E \; (\sum_i n_i \log N\fp_i + \sum_{\sKC} \delta_\sigma) = -\rk E . \dega \overline{D}.$$
 and (\ref{hotED}) follows from (\ref{equidimeutile}) applied to the morphism $\phi$ in $\Hom_\OK^{\leq 1}(\Eb',\Eb)$.

2) The Hermitian line bundle $\cOb_{\Spec \OK}(\delta)$ is of the form $\cO(\overline{D})$ for some effective Arakelov divisor $\overline{D}$ of Arakelov degree $\dega \overline{D} = [K:\Q].\delta$ (namely $\overline{D}:= (0, (\delta_\sigma)_{\sKC}$ where $\delta_\sigma := \delta$ for every $\sKC$). Therefore (\ref{hotEdelta}) follows from (\ref{hotED}).
\end{proof}

\section{The functions $\tau$ and $\eta$}\label{taueta}

To express the $\theta$-invariants of  Euclidean lattices of rank one, it will be convenient to introduce the two functions
$$\tau: \R^\ast_+ \lra \R^\ast_+ \mbox{ and } \eta: \R \lra \R^\ast_+$$
defined as follows.

For any $x\in \R_+^\ast,$ we let:
$$\lth(x):= \log \sum_{n \in \Z} e^{-\pi x n^2}.$$ 
This definition may also be written:
\begin{equation*}
\tau(x) = \hot(\cOb(-\frac{1}{2} \log x))
\end{equation*}
and that the Poisson formula (\ref{petitpoisson}) (or equivalently the Poisson-Riemann-formula (\ref{PoissonZ})) applied to $\cOb(-\frac{1}{2} \log x)$ becomes:
\begin{equation}\label{Poissontau}
\tau(x) = \tau(x^{-1}) -\frac{1}{2} \log x.
\end{equation}

Observe also that:
\begin{equation}\label{lthinfinity}
\lth(x) = 2 e^{- \pi x} + O(e^{- 2 \pi x}) \mbox{ when $x\longrightarrow + \infty.$}
\end{equation}
Together with the Poisson formula (\ref{Poissontau}), this implies:
\begin{equation}\label{lthzero}
\lth(x) = - \frac{1}{2} \log x + 2 e^{- \pi / x} + O(e^{- 2 \pi / x}) \mbox{ when $x\longrightarrow 0_+.$}
\end{equation}

For any $t\in \R,$ we also let 
\begin{equation*}
\eta(t) := \tau(e^{2\vert t \vert}).
\end{equation*}
It is a continuous non-increasing function of $\vert t\vert$. In particular, its maximal value is
$$\max_{t \in \R} \eta (t) =\eta(0) = \tau (1).$$

We shall denote this number by $\eta$. In other words,
$$\eta = \hot(\cOb) =\log \omega,$$
where $$\omega := \sum_{n \in \Z} e^{-\pi n^2}.$$
The number $\eta$ is denoted by $\eta(\Q)$ in \cite{vanderGeerSchoof2000}. As explained in \emph{loc. cit.}, proof of Proposition 4, the positive real number $\omega$ may be expressed as
$$\omega = \frac{\pi^{1/4}}{\Gamma (3/4)}.$$
Numerically, one finds:
$$\omega = 1.0864348...  \,\mbox{ and } \,\eta = 0.0829015... \, .$$

Moreover, according to (\ref{Poissontau}), we have:
\begin{equation}
\eta(t) := \tau(e^{-2t}) - t^+,
\end{equation}
or equivalently:
$$\hot(\cOb(t)) = t^+ + \eta(t).$$
Actually, when $\vert t \vert$ goes to $+ \infty,$ $\eta(t)$ converges very fast to zero. Indeed, according to (\ref{lthinfinity},) we have:
\begin{equation}
\eta(t)= 2 e^{-\pi e^{2\vert t \vert}} + O(e^{-2\pi e^{2\vert t \vert}})   \mbox{ when $\vert t \vert \lra + \infty.$}
\end{equation}

Observe that this implies the existence of some constant $c \in \R^\ast_+$ such that, for any $t \in \R,$
\begin{equation}
\eta(t) \leq c e^{-\pi e^{2\vert t \vert}},
\end{equation}
or equivalently, such that for every $x \in [1, + \infty[$
\begin{equation}
\tau(x) \leq c e^{-\pi x}.
\end{equation}
An elementary computation shows that this holds as soon as 
$$c \geq 2 (1 + \frac{e^{-3\pi}}{1 -e^{-5\pi}}) = 2.0002...$$
For instance, we may choose $c=3.$

\section{Additivity. The $\theta$-invariants of direct sums of Hermitian line bundles over $\Spec \Z$}\label{directsumZ1} As a direct consequence of their definition, the invariants $\hot$ and $\hut$ also satisfy the following additivity property:

\begin{proposition}\label{prop:htadd} For any two Hermitian vector bundles $\Eb$ and $\Fb$ over $\Spec \OK$,
\begin{equation}\label{htadd}
h^i_\theta (\Eb \oplus \Fb) = h^i_\theta(\Eb) + h^i_\theta(\Fb),   \,\,\, i=0,1.
\end{equation}\qed

\end{proposition}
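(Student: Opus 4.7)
The plan is to reduce immediately to the case of euclidean lattices, that is, to $K=\Q$, by invoking the definition (\ref{hithetagen}) together with the obvious compatibility of the direct image functor $\pi_\ast$ with direct sums. Indeed, since the underlying $\Z$-module of $\pi_\ast(\Eb \oplus \Fb)$ is $\pi_\ast E \oplus \pi_\ast F$, and since the hermitian norm on $(\pi_\ast(\Eb\oplus \Fb))_\C \simeq \bigoplus_\sigma (E_\sigma \oplus F_\sigma)$ is the orthogonal sum of the norms defining $\pi_\ast \Eb$ and $\pi_\ast \Fb$, one has a canonical isometric isomorphism
$$\pi_\ast(\Eb \oplus \Fb) \lrasim \pi_\ast \Eb \oplus \pi_\ast \Fb$$
of euclidean lattices, and it suffices to prove (\ref{htadd}) when $\OK = \Z$.

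Once in the euclidean-lattice case, the identity for $i=0$ follows from a one-line computation: the decomposition $\|(v,w)\|^2_{\Eb \oplus \Fb} = \|v\|^2_{\Eb} + \|w\|^2_{\Fb}$ gives, by Fubini for absolutely convergent series,
$$\sum_{(v,w) \in E \oplus F} e^{-\pi \|(v,w)\|^2_{\Eb \oplus \Fb}} \;=\; \Bigl(\sum_{v \in E} e^{-\pi \|v\|^2_{\Eb}}\Bigr) \Bigl(\sum_{w \in F} e^{-\pi \|w\|^2_{\Fb}}\Bigr),$$
and taking logarithms yields $\hot(\Eb \oplus \Fb) = \hot(\Eb) + \hot(\Fb)$.

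For $i=1$, the cleanest route is to apply the Poisson--Riemann--Roch formula (\ref{Poisson}) to $\Eb$, $\Fb$, and $\Eb \oplus \Fb$, and then to combine the already established additivity of $\hot$ with the additivity of the Arakelov degree for direct sums (\ref{degadd}) and with $\rk(E\oplus F) = \rk E + \rk F$. Alternatively, and perhaps more transparently, one can use the Serre duality formula (\ref{Serredual}) together with the canonical isometric isomorphism $(\Eb \oplus \Fb)^\vee \simeq \Eb^\vee \oplus \Fb^\vee$ to reduce to the $i=0$ case applied to $\Eb^\vee \otimes \overline{\omega}_{\OK/\Z}$ and $\Fb^\vee \otimes \overline{\omega}_{\OK/\Z}$. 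No real obstacle arises here; the only point to check is the compatibility of $\pi_\ast$ with direct sums (both on the level of $\OK$-modules and on the level of hermitian structures), which is built into the definitions.
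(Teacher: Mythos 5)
Your proof is correct and is exactly the argument the paper has in mind: the paper states the proposition with no written proof, calling it ``a direct consequence of the definition,'' and the intended content is precisely your factorization of the Gaussian series for $i=0$ (after the routine identification $\pi_\ast(\Eb\oplus\Fb)\simeq\pi_\ast\Eb\oplus\pi_\ast\Fb$) together with the compatibility of duality with direct sums for $i=1$. Both of your routes to the $i=1$ case are valid and standard.
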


If $n$ denotes a positive integer and $\bm{\lambda} := (\lambda_1, \ldots,\lambda_n)$ an element of $\R^{\ast n}_+,$ we may consider the Hermitian vector bundle $\Vb_{\bm{\lambda}}$ over $\Spec \Z$ attached to the lattice $\Z^n$ in $\R^n$ equipped with the Euclidean norm $\Vert . \Vert_{\bm{\lambda}}$ defined by
$$ \Vert (x_1, \ldots, x_n)\Vert^2_{\bm{\lambda}} := \sum_{i=1}^n \lambda_i \, x_i^2.$$
Clearly: 
$$\Vb_{\bm{\lambda}}\simeq \bigoplus_{1\leq i \leq n} \cOb(-\frac{1}{2} \log \lambda_i).$$

The Arakelov degree and the $\theta$-invariants of $V_{\bm{\lambda}}$ are easily computed, by using their additivity (\ref{degaddadm}) and (\ref{htadd}) and the Poisson-Riemann-Roch formula (\ref{PoissonZ}):

\begin{proposition}\label{sumline} For any positive integer $n$ and any $\bm{\lambda} \in \R^{\ast n}_+,$ we have:
$$\dega \Vb_{\bm{\lambda}}  = - \frac{1}{2} \sum_{i=1}^n \log \lambda_i, \,\,\, \hot(\Vb_{\bm{\lambda}}) = \sum_{i=1}^n \tau(\lambda_i),\,\, 
\mbox{ and }\,\, \hut(\Vb_{\bm{\lambda}}) = \sum_{i=1}^n (\tau(\lambda_i) + \frac{1}{2} \log \lambda_i).$$ \qed
\end{proposition}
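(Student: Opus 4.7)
The proof is essentially a bookkeeping exercise that assembles three ingredients already available in the excerpt, so my plan is simply to spell out the reduction to hermitian line bundles and invoke the relevant additivity formulas.

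First, I would record the isomorphism
\begin{equation*}
\Vb_{\bm{\lambda}} \simeq \bigoplus_{i=1}^{n} \cOb\bigl(-\tfrac{1}{2}\log\lambda_i\bigr),
\end{equation*}
already noted just before the proposition, so that each summand is a hermitian line bundle over $\Spec \Z$ whose Arakelov degree is easy to compute from Section \ref{Odelta}. Indeed, by the very definition of $\cOb(\delta)$, the nonzero section $1 \in \Z$ has norm $e^{-\delta}$, so formula (\ref{degaline}) gives $\dega \cOb(\delta) = \delta$; in particular $\dega \cOb(-\tfrac{1}{2}\log\lambda_i) = -\tfrac{1}{2}\log\lambda_i$. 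Then the additivity of the Arakelov degree for direct sums, equation (\ref{degadd}), yields the claimed formula
\begin{equation*}
\dega \Vb_{\bm{\lambda}} = -\tfrac{1}{2}\sum_{i=1}^{n} \log\lambda_i.
\end{equation*}

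Next, for the $\theta$-invariant $\hot$, I would apply the additivity relation (\ref{htadd}) of Proposition \ref{prop:htadd} iteratively to the direct sum above, obtaining
\begin{equation*}
\hot(\Vb_{\bm{\lambda}}) = \sum_{i=1}^{n} \hot\bigl(\cOb(-\tfrac{1}{2}\log\lambda_i)\bigr).
\end{equation*}
Since the function $\tau$ was \emph{defined} in Section \ref{taueta} by the very identity $\tau(x) = \hot(\cOb(-\tfrac{1}{2}\log x))$ for $x \in \R_+^\ast$, each summand on the right equals $\tau(\lambda_i)$, giving $\hot(\Vb_{\bm{\lambda}}) = \sum_{i=1}^{n} \tau(\lambda_i)$.

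Finally, for $\hut(\Vb_{\bm{\lambda}})$, rather than computing directly with dual lattices, I would invoke the Poisson-Riemann-Roch formula (\ref{PoissonZ}), which in the case of an euclidean lattice reads $\hut(\Eb) = \hot(\Eb) - \dega \Eb$. Applied to $\Eb = \Vb_{\bm{\lambda}}$ together with the two formulas just established, this gives
\begin{equation*}
\hut(\Vb_{\bm{\lambda}}) = \sum_{i=1}^{n} \tau(\lambda_i) + \tfrac{1}{2}\sum_{i=1}^{n}\log\lambda_i = \sum_{i=1}^{n}\bigl(\tau(\lambda_i) + \tfrac{1}{2}\log\lambda_i\bigr),
\end{equation*}
completing the proof. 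There is no real obstacle here: the only thing to be careful about is to invoke the correct normalization of $\cOb(\delta)$ so that the sign of $\dega \cOb(-\tfrac{1}{2}\log\lambda_i)$ comes out right, and to match the defining formula of $\tau$ with the parameter $-\tfrac{1}{2}\log\lambda_i$.
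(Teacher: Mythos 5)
Your proof is correct and follows exactly the route the paper intends: the paper's own (one-line) justification is precisely the combination of the decomposition $\Vb_{\bm{\lambda}}\simeq\bigoplus_i\cOb(-\tfrac{1}{2}\log\lambda_i)$, the additivity formulas (\ref{degadd})/(\ref{degaddadm}) and (\ref{htadd}), the defining identity $\tau(x)=\hot(\cOb(-\tfrac{1}{2}\log x))$, and the Poisson--Riemann--Roch formula (\ref{PoissonZ}). Nothing is missing.
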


These formula may be rewritten as follows, in terms of the function $\eta$: 

\begin{proposition} For any positive integer $n$ and any Hermitian line bundles $\Lb_1,\ldots,\Lb_n$ over $\Spec \Z,$ we have: 
$$\dega\bigoplus_{i=1}^n \Lb_i = \sum_{i=1}^n \dega \Lb_i,$$
\begin{equation}\label{hotdegplus}\hot(\bigoplus_{i=1}^n \Lb_i) = \sum_{i=1}^n( \dega^+ \Lb_i + \eta( \dega \Lb_i))
\end{equation}
and \begin{equation}\label{hutdegminus}\hut(\bigoplus_{i=1}^n \Lb_i) = \sum_{i=1}^n ( \dega^- \Lb_i + \eta( \dega \Lb_i)). 
\end{equation} \qed
\end{proposition}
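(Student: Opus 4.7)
The plan is to reduce the statement to the already-proven Proposition \ref{sumline} via the identification of each $\Lb_i$ with some $\cOb(\delta_i)$. Any hermitian line bundle $\Lb = (L, \Vert .\Vert)$ over $\Spec \Z$ has $L$ free of rank one; choosing a generator $e$ of $L$ yields an isometric isomorphism $\Lb \simeq \cOb(\delta)$, where $\delta = -\log \Vert e \Vert$, and from the very definition of the Arakelov degree this $\delta$ coincides with $\dega \Lb$. Consequently, setting $\delta_i := \dega \Lb_i$ and $\lambda_i := e^{-2\delta_i}$, one has $\bigoplus_{i=1}^n \Lb_i \simeq \Vb_{\bm{\lambda}}$.

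The additivity of the Arakelov degree is then a special case of the general relation (\ref{degadd}). For the two $\theta$-invariant formulas, I would combine the additivity on direct sums supplied by Proposition \ref{prop:htadd} with the two scalar identities
$$\hot(\cOb(\delta)) = \delta^+ + \eta(\delta), \qquad \hut(\cOb(\delta)) = \delta^- + \eta(\delta), \quad \delta \in \R.$$
The first is precisely the relation $\hot(\cOb(t)) = t^+ + \eta(t)$ already noted in Section \ref{taueta}: it is immediate when $\delta \leq 0$ from the definition $\eta(t) := \tau(e^{2|t|})$, and when $\delta > 0$ it follows from the functional equation (\ref{Poissontau}), which rewrites $\tau(e^{-2\delta})$ as $\tau(e^{2\delta}) + \delta = \eta(\delta) + \delta^+$. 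The second identity can be deduced either from the Poisson-Riemann-Roch formula (\ref{PoissonZ}) applied to $\cOb(\delta)$, using $\delta^+ - \delta = \delta^-$, or equivalently from $\cOb(\delta)^\vee \simeq \cOb(-\delta)$ together with the evenness $\eta(-t) = \eta(t)$ built into the definition of $\eta$.

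Summing these two identities over $i = 1,\ldots,n$ then yields the desired formulas for $\hot(\bigoplus_i \Lb_i)$ and $\hut(\bigoplus_i \Lb_i)$. There is no serious obstacle here: the argument is essentially a transcription from the $\tau$-formulation of Proposition \ref{sumline} into the $\eta$-formulation of the present statement, with the decomposition $\delta = \delta^+ - \delta^-$ organising the two cases $\dega \Lb_i \geq 0$ and $\dega \Lb_i < 0$ in a uniform way. The only real content beyond Proposition \ref{sumline} is the observation that every rank-one hermitian vector bundle over $\Spec \Z$ is isomorphic to some $\cOb(\delta)$, and that $\delta$ is then forced by the Arakelov degree.
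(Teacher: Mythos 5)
Your proof is correct and follows essentially the same route as the paper, which presents this proposition as an immediate rewriting of Proposition \ref{sumline} (via the additivity of $h^i_\theta$ on direct sums and the identity $\hot(\cOb(t)) = t^+ + \eta(t)$ from Section \ref{taueta}). The details you supply — the identification $\Lb \simeq \cOb(\dega\Lb)$ and the dual identity $\hut(\cOb(\delta)) = \delta^- + \eta(\delta)$ via Poisson--Riemann--Roch or the evenness of $\eta$ — are exactly what the paper's ``\qed'' leaves implicit.
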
 

We have denoted by $\dega^+ \Lb_i$ and $\dega^- \Lb_i$ the positive and negative parts of the real  number $\dega \Lb_i.$
  
  Observe the similarity of (\ref{hotdegplus}) and (\ref{hutdegminus}) with the expressions for the dimensions of the coherent cohomology groups of a direct sum of line bundles over an elliptic curve.

\section{The theta function $\theta_{\Eb}$ and the first minimum $\lambda_1(\Eb)$}
In this section, we consider Hermitian vector bundles over $\Spec \Z$ --- that is Euclidean lattices --- and we discuss some relations between their $\theta$-invariants and the   invariants classically associated to them in geometry of numbers, such as their first minimum or their number of lattice points in the unit ball.

Most of the contents of this section is due to   Groenewegen (\cite{Groenewegen2001}). We shall pursue our study of the relations between  $\theta$-invariants and  more classical invariants attached to Euclidean lattices in Section \ref{Banasc2} below, where we shall reformulate Banaszczyk's results (\cite{Banaszczyk93}). Notably, in \ref{bisrepetita}, we shall compare Groenewegen's and Banaszczyk's results relating the first minimum of some Euclidean lattice with its $\theta$-invariants.

\subsection{The theta function $\theta_{\Eb}$} For  any Hermitian vector bundle $\Eb:= (E, \Vert . \Vert)$
 over $\Spec \Z$, it will be convenient to consider the associated theta function, defined for
 any $t$ in $\R_{+}^\ast$ by
$$\theta_{\Eb}(t):= \sum_{v \in E} e^{-\pi t \Vert v \Vert ^2}.$$

We have, by the very definition of $\hot(\Eb)$:
$$\hot(\Eb) = \log \theta_{\Eb}(1).$$

Moreover, for any $\delta$ in $\R$, if we denote  $\cOb(\delta)$ the Hermitian line bundle over $\Spec \Z$ of Arakelov degree $\dega \cOb(\delta) = \delta$ (introduced in \ref{Odelta}, \emph{supra}), then 
$$\theta_{\Eb \otimes \cOb (\delta)}(t)= \sum_{v \in E} e^{-\pi t e^{-2\delta} \Vert v \Vert ^2}
= \theta_{\Eb}(e^{-2\delta} t).$$
Consequently
$$\hot( \Eb \otimes \cOb (\delta)) = \log \theta_{\Eb}(e^{-2\delta}),$$
and, for any $t \in \R^\ast_+,$
$$\log \theta_{\Eb}(t) = h^0( \Eb \otimes \cOb(- (\log t)/2 )).$$

In other words, $\log \theta_{\Eb}$ is a kind of ``arithmetic Hilbert function" for the Hermitian vector bundle $\Eb$ over $\Spec \Z.$ 

Observe also that the classical functional equation for the theta function may be written :
\begin{equation}\label{FE}
\theta_{\Eb}(t)= t^{-\frac{1}{2} \rk E} (\covol \Eb)^{-1} \theta_{\Eb^\vee}(t^{-1}),
\end{equation}
or equivalently,
\begin{equation}\label{logFE}
\log \theta_{\Eb}(t)= -\frac{1}{2} \rk E. \log t + \dega \Eb + \log \theta_{\Eb^\vee}(t^{-1}).
\end{equation}
It is nothing but the Poisson-Riemann-Roch formula \ref{PoissonZ} for the Hermitian vector bundle $\Eb \otimes \cOb(-(\log t)/2).$

The theta function $\theta_\Eb$ may  be related to the ``naive" counting function
$$N_\Eb: \R_+ \lra \N$$
which controls the number of points in the Euclidean lattice $\Eb$ in balls centered at the origin. Indeed, if we define, for every $x \in \R_+,$
$$N_\Eb (x) := \vert \{ v \in E \mid \Vert v \Vert \leq x \} \vert,$$
then the theta function $\theta_\Eb$ is basically the Laplace transform of $N_\Eb (\sqrt{.})$:

\begin{proposition}\label{thetaLaplace}
With the above notation, for every $t\in \R^\ast_+,$ we have:
$$\theta_\Eb(t) = \pi t \int_0^{+\infty} N_{\Eb}(\sqrt{x}) e^{- \pi tx} dx.$$
 \end{proposition}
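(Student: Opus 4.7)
The plan is to exchange sum and integral via a layer-cake representation of each exponential factor. Specifically, for any $a \geq 0$ and $t > 0$, the elementary identity
$$e^{-\pi t a^2} = \pi t \int_{a^2}^{+\infty} e^{-\pi t x}\, dx = \pi t \int_0^{+\infty} \mathbf{1}_{\{a^2 \le x\}}\, e^{-\pi t x}\, dx$$
holds. Applied with $a = \Vert v \Vert$ for each $v \in E$ and summed over the lattice, Tonelli's theorem (all integrands are non-negative) legitimates the interchange of summation and integration, yielding
$$\theta_\Eb(t) \;=\; \sum_{v \in E} e^{-\pi t \Vert v \Vert^2} \;=\; \pi t \int_0^{+\infty} \Bigl(\sum_{v \in E} \mathbf{1}_{\{\Vert v\Vert^2 \le x\}}\Bigr)\, e^{-\pi t x}\, dx.$$
Since the condition $\Vert v\Vert^2 \le x$ is equivalent to $\Vert v\Vert \le \sqrt{x}$, the inner sum coincides with $N_\Eb(\sqrt{x})$, and the announced formula follows.

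The only point requiring a word of justification is that $N_\Eb(\sqrt{x})$ is finite for every $x \geq 0$, so that the right-hand side is a bona fide Lebesgue integral of a real-valued function. This is immediate: $E$ is a discrete subgroup of the finite-dimensional euclidean space $E_\R$, hence intersects every ball of finite radius in a finite set, and $N_\Eb$ in fact grows only polynomially in $x$, making integrability against $e^{-\pi t x}$ automatic. No real obstacle arises; the argument is essentially a one-line computation.

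An alternative, essentially equivalent presentation would be to write $\theta_\Eb(t)$ as a Lebesgue--Stieltjes integral $\int_{[0,+\infty)} e^{-\pi t x}\, dN_\Eb(\sqrt{x})$ and then integrate by parts, but one would have to take care of the atom of mass $1$ carried at $x=0$ by the measure $dN_\Eb(\sqrt{\cdot})$ (coming from the origin $0 \in E$). The layer-cake formulation above sidesteps this nuisance and is the route I would follow.
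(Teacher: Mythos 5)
Your argument is exactly the paper's: the same layer-cake identity $e^{-\pi t \Vert v\Vert^2} = \pi t \int_0^{+\infty} \mathbf{1}_{\{\Vert v\Vert^2 \le x\}} e^{-\pi t x}\,dx$ followed by interchange of sum and integral (the paper invokes monotone convergence where you invoke Tonelli, an immaterial difference for non-negative integrands). The proof is correct and needs no changes.
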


\begin{proof} Observe that, for any $x \in \R_+,$
$$\sum_{v\in E} \mathbf{1}_{[\Vert v \Vert^2, + \infty [}(x) = N_\Eb (x).$$
Consequently we have:
\begin{align*}
 \theta_\Eb(t) & := \sum_{v \in E} e^{-\pi t \Vert v \Vert^2} \\
 & = \sum_{v\in E} \pi t \int_0^{+\infty}  \mathbf{1}_{[\Vert v \Vert^2, + \infty [}(x) e^{-\pi t x} dx \\
 & = \pi t \int_0^{+\infty} N_{\Eb}(\sqrt{x}) e^{- \pi tx} dx.
\end{align*}
 (The last equality follows, for instance, from Lebesgue monotone convergence theorem.)
\end{proof}

\subsection{First minima of Euclidean lattices and $\theta$-invariants}

For any Hermitian vector bundle of positive rank $\Eb:= (E, \Vert . \Vert)$  over $\Spec \Z,$ we may consider the first of its successive minima:
$$\lambda_{1}(\Eb):= \min \left\{ \Vert v \Vert, v \in E \setminus \{0\} \right\},$$ and the number of its ``short vectors":
$$\nu := \left\vert \{ v \in E \mid \Vert v \Vert = \lambda_{1}(\Eb) \} \right\vert.$$ 

These quantities are easily seen to control the asymptotic behaviour of $\theta_{\Eb}(t)$ when $t$ goes to $+ \infty,$ or equivalently the one of $\hot(\Eb \otimes \cOb(\delta))$ when $\delta$ goes to $-\infty.$
Indeed, when $t$ goes to $+\infty,$
$$\theta_{\Eb}(t) = 1 + \nu e^{-\pi \lambda_{1}(\Eb)^2 t} + O (e^{- \pi \lambda'^2 t}),$$
for some $\lambda' > \lambda_{1}(\Eb).$
Consequently, for some positive $\epsilon,$ 
$$\log \theta_{\Eb}(t) =  \nu e^{-\pi  \lambda_{1}(\Eb)^2 t}(1+ O (e^{- \epsilon t})) \mbox{ when $t$ goes to $+\infty,$} $$
and
\begin{equation}\label{hotasympt}\hot(\Eb \otimes \cOb(\delta))= \nu e^{-\pi \lambda_{1}(\Eb)^2 e^{-2\delta}} (1+ O (e^{- \epsilon e^{-2\delta}}))  \mbox{ when $\delta$ goes to $-\infty$}.
\end{equation}

It is actually possible to derive an explicit upper bound on $\hot(\Eb)$ in terms of $ \lambda_{1}(\Eb)$, namely:

\begin{proposition}\label{Grbd}
 For any Euclidean lattice $\Eb$ of positive rank $n$ and of first minimum $\lambda := \lambda_{1}(\Eb),$ the following estimates hold:
 \begin{equation}\label{eqGrbd1}
\hot(\Eb) \leq e^{\hot(\Eb)} -1 \leq C(n, \lambda),
\end{equation}
where 
\begin{equation}\label{Cn}
C(n,\lambda) := 3^n (\pi \lambda^2)^{-n/2} \int_{\pi \lambda^2}^{+ \infty} u^{n/2} e^{-u} du.
\end{equation}

Moreover, if $\lambda > (n/2\pi)^{1/2},$
\begin{equation}\label{eqGrbd2}
C(n,\lambda) \leq 3^n \left(1-\frac{n}{2\pi \lambda^2}\right)^{-1} e^{-\pi \lambda^2}.
\end{equation}
\end{proposition}
\qed

This Proposition is a slightly improved version of Proposition 4.4 in \cite{Groenewegen2001}, and we will follow Groenewegen's arguments --- based on the expression of the theta function $\theta_\Eb$ as a Laplace transform of $N_\Eb(\sqrt{.}$) --- with minor modifications.

Observe that the estimates (\ref{eqGrbd1}) and (\ref{eqGrbd2}) applied to $\Eb \otimes \cOb(\delta)$, compared with the asymptotic expression (\ref{hotasympt}) when $\delta$ goes to $-\infty$ show that 
$\nu \leq 3^n.$ Actually, it is classically known\footnote{since each ``short vector" of $\Eb$ is also a facet vector --- that is, a vector $v$ in $\E \setminus\{0\}$ such that the Voronoi domains of $0$ and $v$ have a non-empty intersection of dimension $n-1$ --- and,  as shown by Minkowski,  the number of facet vectors of a Euclidean lattice of rank $n$ is at most $2(2^n - 1).$} that $\nu \leq 2(2^n - 1).$ 

Observe also that, if we define $\tilde{\lambda}$ by
$$\lambda_1 (\Eb) = \sqrt{\frac{n}{2\pi}} \tilde{\lambda},$$
the upper bounds (\ref{eqGrbd1})-(\ref{eqGrbd2}) may be written:
$$\hot(\Eb) \leq (1-\tilde{\lambda}^{-2})^{-1} \left(3 e^{- \tilde{\lambda}^2/2}\right)^n \quad \mbox{when $\tilde{\lambda}>1.$}$$

\begin{lemma}[\cf \cite{Groenewegen2001}, Lemma 4.2]\label{LemmaGroe} With the notation of Proposition \ref{Grbd}, for any $x$ in $\R_+,$ we have:
\begin{equation}\label{UrNlambda}
N_\Eb (x) \leq \left(\frac{2x}{\lambda} + 1\right)^n.
\end{equation}
Consequently, for any $x$ in $[\lambda, +\infty[,$
\begin{equation}\label{Nlambda}
N_\Eb (x) \leq \left(\frac{3x}{\lambda}\right)^n.
\end{equation}
\end{lemma}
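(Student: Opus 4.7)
The plan is to use a standard packing argument in the euclidean space $E_\R$. Equip $E_\R$ with the euclidean norm $\Vert.\Vert$ of $\Eb$, and let $V_n$ denote the volume of the unit ball in $(E_\R, \Vert.\Vert)$ with respect to the Lebesgue measure $\lambda_\Eb$ normalized as in Section \ref{Poissonformula}.

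First I would observe that for any two distinct lattice points $u, v \in E$, we have $\Vert u - v \Vert \geq \lambda := \lambda_1(\Eb)$, since $u - v$ is a nonzero element of $E$. Consequently, the open balls $B(v, \lambda/2)$ of radius $\lambda/2$ centered at the lattice points $v \in E$ are pairwise disjoint. Next, if $v \in E$ satisfies $\Vert v \Vert \leq x$, then by the triangle inequality $B(v, \lambda/2) \subset B(0, x + \lambda/2)$.

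Combining these two facts, the disjoint union $\bigsqcup_{v \in E, \Vert v \Vert \leq x} B(v,\lambda/2)$ is contained in $B(0, x + \lambda/2)$. Comparing Lebesgue measures gives
$$N_\Eb(x) \cdot V_n (\lambda/2)^n \leq V_n (x + \lambda/2)^n,$$
so that, dividing by $V_n (\lambda/2)^n$,
$$N_\Eb(x) \leq \left(\frac{x + \lambda/2}{\lambda/2}\right)^n = \left(\frac{2x}{\lambda} + 1\right)^n,$$
which is the estimate (\ref{UrNlambda}).

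For the second estimate (\ref{Nlambda}), I would simply note that when $x \geq \lambda$, we have $1 \leq x/\lambda$, so $2x/\lambda + 1 \leq 3x/\lambda$, and the bound follows by raising to the $n$-th power. The argument is entirely elementary; the only mild subtlety is the normalization of the Lebesgue measure, which cancels out in the ratio and so plays no role in the final inequality.
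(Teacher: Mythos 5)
Your proof is correct and follows exactly the same packing argument as the paper: disjointness of the balls $\Bint(v,\lambda/2)$, inclusion into $\Bint(0,x+\lambda/2)$, and comparison of Lebesgue measures, with the normalization of $\lambda_{\Eb}$ cancelling in the ratio. The deduction of (\ref{Nlambda}) from (\ref{UrNlambda}) via $2x/\lambda+1\leq 3x/\lambda$ for $x\geq\lambda$ is also the intended one.
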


\begin{proof} For any $P \in E_\R$ and any $r \in \R_+,$ let us denote by $\Bint(P, r)$ the open ball of center $P$ and radius $r$ in the normed vector space $(E_\R, \Vert. \Vert).$ 	

For any two points $v$ and $w$ of $E,$ we have:
$$ v \neq w \Longrightarrow \Bint\left(v, \lambda/2 \right) \cap \Bint\left(w, \lambda/2\right) = \emptyset.$$
Consequently
$$\sum_{v \in E, \Vert v \Vert \leq x} \lambda (\Bint\left(v, \lambda/2\right))  = \lambda_\Eb \left(\bigcup_{v \in E, \Vert v \Vert \leq x} \Bint\left(v, \lambda/2\right)\right) \leq \lambda (\Bint\left(0, x + \lambda/2\right)).$$
If $v_n$ denotes the volume of the $n$-dimensional unit ball, this estimate may be written:
$$v_n N_\Eb(x) (\lambda/2)^n \leq v_n (x + \lambda/2)^n.$$
This is clearly equivalent to (\ref{UrNlambda}).
\end{proof}

\begin{lemma}\label{incompGamma} For any positive integer $n$ and any $\beta \in ]n/2, + \infty[,$ we have
$$\int_\beta^{+ \infty} u^{n/2}\, e^{-u}\, du \leq (1- \frac{n}{2\beta})^{-1}\,\beta^{n/2}\, e^{-\beta}.$$ 
 \end{lemma}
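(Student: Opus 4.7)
The plan is to use a single integration by parts and exploit the monotonicity of a simple ratio on the interval $[\beta, +\infty[$.

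Denote $I := \int_\beta^{+\infty} u^{n/2}\, e^{-u}\, du$. Taking $f(u) = u^{n/2}$ and $g'(u) = e^{-u}$, integration by parts on $[\beta, +\infty[$ yields
$$I = \beta^{n/2}\, e^{-\beta} + \frac{n}{2} \int_\beta^{+\infty} u^{n/2 - 1}\, e^{-u}\, du,$$
the boundary term at $+\infty$ vanishing since $u^{n/2} e^{-u} \to 0$ as $u \to +\infty$.

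Next I would observe that for every $u \geq \beta$ one has $u^{n/2 - 1} = u^{n/2}/u \leq u^{n/2}/\beta$, so that
$$\int_\beta^{+\infty} u^{n/2-1}\, e^{-u}\, du \leq \frac{1}{\beta}\, I.$$
Substituting into the previous identity gives the inequality
$$I \leq \beta^{n/2}\, e^{-\beta} + \frac{n}{2\beta}\, I.$$
Under the assumption $\beta > n/2$, the factor $1 - n/(2\beta)$ is strictly positive and finite, so one may rearrange this to obtain
$$I \leq \Bigl(1 - \frac{n}{2\beta}\Bigr)^{-1} \beta^{n/2}\, e^{-\beta},$$
which is the desired inequality.

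There is no real obstacle: the only subtle point is that in rearranging one needs $I$ to be finite (otherwise $I - (n/2\beta) I$ is meaningless), but finiteness of $I$ is immediate since $u^{n/2} e^{-u}$ is integrable on $[\beta, +\infty[$ (for instance by comparison with $e^{-u/2}$ for $u$ large). Alternatively, one may first carry out the argument on a bounded interval $[\beta, R]$ and then let $R \to +\infty$, which avoids any a priori finiteness assumption.
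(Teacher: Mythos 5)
Your proof is correct. The integration by parts is carried out properly, the pointwise bound $u^{n/2-1}\le u^{n/2}/\beta$ on $[\beta,+\infty[$ is valid, and you correctly note that the rearrangement of $I \le \beta^{n/2}e^{-\beta} + \frac{n}{2\beta}I$ into the stated bound requires both the finiteness of $I$ and the positivity of $1-n/(2\beta)$, which holds precisely under the hypothesis $\beta > n/2$.

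The route is genuinely different from the paper's. The paper does not integrate by parts at all: it writes the integrand as $\exp\left(\frac{n}{2}\log u - u\right)$ and observes that the derivative $\frac{n}{2u}-1$ of the exponent is bounded above on $[\beta,+\infty[$ by its value $\frac{n}{2\beta}-1$ at $u=\beta$, so that
$$\frac{n}{2}\log u - u \le \Bigl(\frac{n}{2}\log\beta - \beta\Bigr) + (u-\beta)\Bigl(\frac{n}{2\beta}-1\Bigr).$$
Exponentiating and integrating the resulting exponential exactly yields the same constant $(1-n/(2\beta))^{-1}\beta^{n/2}e^{-\beta}$. The paper's argument thus produces a pointwise domination of the integrand by a decaying exponential, and the bound on the integral is a direct computation with no need to discuss finiteness or to solve for $I$. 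Your argument is the more standard recursion for incomplete Gamma functions; it trades the pointwise estimate for a self-referential inequality, which is why the finiteness caveat (or the truncation to $[\beta,R]$ followed by $R\to+\infty$) becomes necessary. Both are equally elementary and give identical constants, so either is acceptable here.
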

 
 \begin{proof} The derivative $\frac{n}{2u} - 1$ of $\frac{n}{2} \log u -u$ is bounded from above by $\frac{n}{2\beta} - 1$ when $u$ belongs to $[\beta, + \infty[.$ Therefore, for any $u$ in this interval:% \in [\beta, + \infty[,$
 $$\frac{n}{2} \log u -u \leq (\frac{n}{2} \log \beta -\beta)  + (u-\beta) (\frac{n}{2\beta} - 1).$$
 
 Consequently, we have:
 \begin{align*}
 \int_\beta^{+ \infty} u^{n/2}\, e^{-u}\, du & = \int_\beta^{+ \infty} \exp(\frac{n}{2} \log u -u) \, du \\
 & \leq \exp (\frac{n}{2} \log \beta -\beta) \, \int_\beta^{+\infty} e^{-(1-n/2\beta)(u-\beta)}\, du = \exp (\frac{n}{2} \log \beta -\beta)\, (1-n/2\beta)^{-1}.
\end{align*}
 \end{proof}

\proof[Proof of Proposition \ref{Grbd}]
 From the definition of $\hot(\Eb),$ Proposition \ref{thetaLaplace}, and the fact that $$N_\Eb(x) = 1 \mbox{ if $x \in [0, \lambda[$,}$$
 we get:
\begin{align}
 e^{\hot(\Eb)} -1 = \theta_\Eb (1) - 1 & = \pi \int_0^{+\infty} [N_\Eb(\sqrt{x}) -1] e^{-\pi x}\, dx \notag \\
 & \leq \int_{\pi \lambda^2}^{+\infty} N_\Eb(\sqrt{u/\pi}) e^{-u}\, du. \label{peutetreutile}
\end{align}

Moreover, the upper bound (\ref{Nlambda}) on $N_\Eb$ over $[\lambda, +\infty[$ shows that
$$\int_{\pi \lambda^2}^{+\infty} N_\Eb(\sqrt{u/\pi}) e^{-u}\, du \leq \left(\frac{3}{\lambda \sqrt{\pi}}\right)^n  \int_{\pi \lambda^2}^{+ \infty} u^{n/2} e^{-u} du.$$

This establishes (\ref{eqGrbd1}). Finally (\ref{eqGrbd2}) follows from the integral estimate in Lemma  \ref{incompGamma}, applied with $\beta= \pi \lambda^2.$
\qed

By duality, the upper-bound on $\hot(\Eb)$ in terms of $\lambda_1(\Eb)$ in Proposition \ref{Grbd} may be used to derive an upper bound on $\hut(\Eb):= \hot(\Eb^\vee)$ in terms of the last of its successive minima, namely
$$\lambda_{\rk E} (\Eb) := \min \{ r \in \R_+ \mid \oli{B}(0,r) \cap E\mbox{ contains a basis of $E_\R$} \}.$$

Indeed, for any Euclidean lattice $\Eb$ of positive rank, we have:
$$\lambda_1(\Eb^\vee). \lambda_{\rk E}(\Eb) \geq 1.$$
(Consider a ``short vector" $\xi$ of $E^\vee$, such that $\Vert \xi \Vert_{\Eb^\vee} = \lambda_1(\Eb_\vee)$, and some element $v$ in $$\oli{B}(0, \lambda_{\rk E} (\Eb)) \cap E$$ such that $<\xi, v> \neq 0$ and observe that,  
since $<\xi, v>$ is an integer,
$$ 1 \leq \vert <\xi, v> \vert \leq  \Vert \xi\Vert_{\Eb^\vee} \Vert v\Vert_{\Eb} \leq \lambda_1(\Eb^\vee). \lambda_{\rk E}(\Eb).)$$  
 We therefore immediately derive from Proposition \ref{Grbd}:
 
\begin{corollary}\label{Grbdhut}
For an Euclidean lattice of positive rank $\Eb$ and of last minimum $\lambda_n(\Eb)$, the following estimate holds:
\begin{equation}\label{Grbdhut1}
\hut(\Eb) \leq e^{\hut(\Eb)} -1 \leq C(n, \lambda_n(\Eb)^{-1}),
\end{equation}
where $C(n, .)$ is defined by (\ref{Cn}).

Moreover, if 
$$\tilde{\lambda}_n := \sqrt{\frac{n}{2\pi}}\, \lambda_n(\Eb) < 1,$$
then
\begin{equation}\label{Grbdhut2}
C(n, \lambda_n(\Eb)^{-1}) \leq (1 -\tilde{\lambda}_n^2)^{-1} \left(3\, e^{-\tilde{\lambda}_n^{-2}/2}\right)^n.
\end{equation}
 \qed
\end{corollary}
\section{Application to Hermitian line bundles}

Let $\Lb$ be a Hermitian line bundle over $\Spec \OK.$ 

\subsection{The first minimum of the direct image of a Hermitian line bundle} The direct image $\pi_\ast \Lb$ is an Euclidean lattice of rank $n :=[K:\Q]$ over $\Spec \Z$. Its first minimum $\lambda_1(\pi_\ast \Lb)$ admits a simple lower bound in terms of the \emph{normalized Arakelov degree}
$$\degan \Lb := \frac{1}{[K:\Q]} \dega \Lb$$
of $\Lb$. Indeed, as shown in \cite{Groenewegen2001}, Lemma 7.1, 
or \cite{BK10}, Proposition 3.3.1, 
we have:
\begin{equation}\label{lambdadegan}
\frac{\lambda_1(\Eb)^2}{n} \geq e^{- 2 \degan \Lb}.
\end{equation}

In particular, we have:
\begin{equation}\label{lambdadegnaimpl}
\degan \Lb < \frac{1}{2} \log (2\pi) \Longrightarrow  \frac{\lambda_1(\Eb)^2}{n} > \frac{1}{2\pi}.
\end{equation}
Therefore, for any Hermitian line bundle satisfying $\degan \Lb < (1/2)  \log (2\pi)$, we may apply Proposition \ref{Grbd} and derive an upper bound on $\hot(\Lb) := \hot(\pi_\ast \Lb).$

\subsection{Hermitian line bundles of negative degree}

For instance, when $\dega \Lb \leq 0,$ from (\ref{lambdadegan}) we obtain: 
$$\lambda_1(\Eb)^2 \geq n,$$
and the upper bounds (\ref{eqGrbd1}) and (\ref{eqGrbd2}) become:
\begin{equation}
e^{\hot(\Lb)} -1 \leq 3^n (1-1/2\pi)^{-1} e^{-\pi  \lambda_1(\Eb)^2 }.
\end{equation}

By using (\ref{lambdadegan}) again, we finally obtain the following minor variant of \cite{Groenewegen2001}, Proposition 7.2 (which, in a slightly less precise form, already appears as Corollary 1 to Proposition 2 in \cite{vanderGeerSchoof2000}):

\begin{proposition}\label{linenegative} For any Hermitian line bundle $\Lb$ over $\Spec \OK$ such that $\dega \Lb \leq 0,$
we have:
\begin{equation}\label{eqlinenegative}\hot(\Lb) \leq 3^{[K:\Q]} (1-1/2\pi)^{-1} \exp\left(-\pi [K:\Q] e^{- 2 \degan \Lb}\right).\end{equation}
\end{proposition}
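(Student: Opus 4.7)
The plan is to reduce the statement to a direct application of Proposition~\ref{Grbd} to the euclidean lattice $\pi_\ast \Lb$, using the estimate (\ref{lambdadegan}) on its first minimum in terms of $\degan \Lb$.

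First I would set $n := [K:\Q]$ and observe that, by definition, $\hot(\Lb) = \hot(\pi_\ast \Lb)$, where $\pi_\ast \Lb$ is a euclidean lattice of rank $n$. The hypothesis $\dega \Lb \leq 0$ is equivalent to $\degan \Lb \leq 0$, and hence $e^{-2 \degan \Lb} \geq 1$. Together with (\ref{lambdadegan}) this yields the key inequality
\[
\lambda_1(\pi_\ast \Lb)^2 \;\geq\; n\, e^{-2 \degan \Lb} \;\geq\; n,
\]
and in particular $\lambda_1(\pi_\ast \Lb)^2 > n/(2\pi)$, which is the hypothesis required in the second estimate of Proposition~\ref{Grbd}.

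Next I would apply Proposition~\ref{Grbd} to $\Eb := \pi_\ast \Lb$ with $\lambda := \lambda_1(\pi_\ast \Lb)$. This gives
\[
\hot(\Lb) \;\leq\; e^{\hot(\Lb)} - 1 \;\leq\; 3^n \Bigl(1 - \frac{n}{2\pi \lambda^2}\Bigr)^{-1} e^{-\pi \lambda^2}.
\]
Using $\lambda^2 \geq n e^{-2\degan\Lb} \geq n$, the factor $(1 - n/(2\pi\lambda^2))^{-1}$ is bounded above by $(1 - 1/(2\pi))^{-1}$, while the exponential factor satisfies $e^{-\pi \lambda^2} \leq \exp(-\pi n e^{-2 \degan \Lb})$ by monotonicity. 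Substituting these two bounds into the previous inequality yields exactly (\ref{eqlinenegative}).

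There is no real obstacle: all the ingredients --- the first-minimum estimate (\ref{lambdadegan}), the general upper bound on $\hot$ in terms of $\lambda_1$ from Proposition~\ref{Grbd}, and the definition $\hot(\Lb) = \hot(\pi_\ast \Lb)$ --- are already in place. The only mildly delicate point is to check that $\degan \Lb \leq 0$ suffices to activate the stronger bound (\ref{eqGrbd2}) of Proposition~\ref{Grbd} (\emph{i.e.} that $\lambda_1^2 > n/(2\pi)$), which is immediate from $e^{-2\degan \Lb} \geq 1$.
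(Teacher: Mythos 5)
Your proposal is correct and follows essentially the same route as the paper: apply the first-minimum bound (\ref{lambdadegan}) to $\pi_\ast\Lb$ to get $\lambda_1^2 \geq n\,e^{-2\degan\Lb} \geq n$, then invoke the estimate (\ref{eqGrbd2}) of Proposition~\ref{Grbd} and bound the two factors exactly as you describe. No gaps.
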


The right-hand side of (\ref{eqlinenegative}) is always $\leq 1,$ and actually goes to zero when $[K:\Q]$ or $-\dega \Lb$ goes to $+\infty.$ Indeed, from (\ref{eqlinenegative}), we immediately obtain:
\begin{equation}\label{eqlinenegativebis}\hot(\Lb) \leq c^{[K:\Q]} \exp\left(-\pi [K:\Q]( e^{- 2 \degan \Lb}- 1)\right),
\end{equation}
where 
$$c:= \frac{3 e^{-\pi}}{1 - 1/2\pi}= 0.154180...$$
\subsection{Hermitian line bundles of positive degree}

\begin{proposition}\label{linepositive}
There exists $c \in ]0,1[$ such that, for any number field $K$
and any Hermitian line bundle $\Lb$ over $\Spec \OK$ such that $\dega \Lb \geq 0,$ we have: 
\begin{equation}\label{eqlinepositive}\hot(\Lb) \leq c^{[K:\Q]} + \dega \Lb.
\end{equation}
\end{proposition}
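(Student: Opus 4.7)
The plan is to reduce to the case $\dega \Lb = 0$, where Proposition~\ref{linenegative} (or rather its sharpened form~(\ref{eqlinenegativebis})) already gives the desired exponentially small bound.

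First I would introduce the real parameter $\delta := \degan \Lb = \dega \Lb / [K:\Q]$. Since by hypothesis $\dega \Lb \geq 0$, we have $\delta \in \R_+$, and the twisted line bundle
\[
\Lb' := \Lb \otimes \cOb_{\Spec \OK}(-\delta)
\]
satisfies $\dega \Lb' = \dega \Lb - [K:\Q]\,\delta = 0$, so in particular $\degan \Lb' = 0 \leq 0$.

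Next I would apply part~2 of Corollary~\ref{cor:hotED} (with $\rk L = 1$) to the pair $(\Lb, \delta)$. This yields the comparison
\[
0 \leq \hot(\Lb) - \hot(\Lb') \leq [K:\Q]\,\delta = \dega \Lb,
\]
so that
\[
\hot(\Lb) \leq \hot(\Lb') + \dega \Lb.
\]
It remains to bound $\hot(\Lb')$. Since $\dega \Lb' = 0$, the refined form~(\ref{eqlinenegativebis}) of Proposition~\ref{linenegative} applied to $\Lb'$ gives
\[
\hot(\Lb') \leq c^{[K:\Q]} \exp\bigl(-\pi [K:\Q](e^{-2\degan \Lb'} - 1)\bigr) = c^{[K:\Q]},
\]
where $c := 3 e^{-\pi}/(1 - 1/2\pi) = 0.154180\ldots \in {]0,1[}$. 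Combining the two inequalities yields $\hot(\Lb) \leq c^{[K:\Q]} + \dega \Lb$, with the same constant $c$ as in~(\ref{eqlinenegativebis}).

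There is no real obstacle in this argument: the only point to check is that Corollary~\ref{cor:hotED}(2) is legitimately applicable, which requires $\delta \geq 0$, and this is exactly the hypothesis $\dega \Lb \geq 0$. Conceptually, the argument says that twisting by $\cOb(-\degan \Lb)$ costs at most $\dega \Lb$ in the $\theta$-invariant and brings us to the ``balanced'' case of a degree-zero line bundle, where the exponential decay of $\hot$ in $[K:\Q]$ already established in Proposition~\ref{linenegative} takes over.
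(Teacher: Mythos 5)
Your proof is correct and follows essentially the same route as the paper: twist by $\cOb(-\degan \Lb)$ to reduce to the degree-zero case, control the cost of the twist by Corollary~\ref{cor:hotED}(2) (i.e.\ (\ref{hotEdelta}) with $\rk L = 1$), and invoke (\ref{eqlinenegativebis}) for the degree-zero line bundle. The paper even notes, as you do, that one may take the same constant $c = 3e^{-\pi}/(1-1/2\pi)$ as in (\ref{eqlinenegativebis}).
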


The proof will show that we may choose the same constant $c$ as in (\ref{eqlinenegativebis}).

Clearly Proposition \ref{linepositive} establishes the validity, for any Hermitian line bundle of non-negative degree over $\Spec \OK$, of the  following  inequality, familiar in a geometric context:
\begin{equation}\label{familiarpositive}
\hot(\Lb) \leq 1 + \dega \Lb.
\end{equation}

Proposition \ref{linepositive} improves on \cite{Groenewegen2001}, Proposition 7.3, where a similar upper bound on $\hot(\Lb)$ is established, with a constant of the order of $(1/2) [K:\Q] \log [K:\Q]$ instead of $c^{[K:\Q]}$.

\proof[Proof of Proposition \ref{linepositive}] When $\dega \Lb =0,$ the estimate (\ref{eqlinepositive}) takes the form
$$ \hot(\Lb) \leq c^{[K:\Q]} $$
and 
follows from 
(\ref{eqlinenegativebis}). 
To derive the general validity of  (\ref{eqlinepositive}) from this special case, we may use the inequality (\ref{hotED}) established in Corollary \ref{cor:hotED}. 

Indeed, if $\Lb$ is a Hermitian line bundle over $\Spec \OK$ of non-negative Arakelov degree and if 
$$\delta = \degan \Lb:= [K:\Q]^{-1} \dega \Lb,$$ then the  Hermitian line bundle $\cOb(\delta)$ satisfies 
$\dega \cOb(\delta) = \dega \Lb.$
Therefore $\dega (\Lb \otimes \cOb(-\delta) )= 0$
and, according to the special case above,
\begin{equation}\label{eqlinepositive1}
\hot( \Lb \otimes \cOb(-\delta)) \leq c^{[K:\Q]}.
\end{equation}

Besides, according to (\ref{hotEdelta}),
\begin{equation}\label{eqlinepositive2}
\hot(\Lb) \leq \hot( \Lb \otimes \cOb(-\delta)) + [K:\Q] \delta.
\end{equation}  

The inequality (\ref{eqlinepositive}) follows from (\ref{eqlinepositive1}) and (\ref{eqlinepositive2}).
\qed

\subsection{A scholium}
For later reference, we spell out the following straightforward consequences of the upper-bounds on $\hot(\Lb)$ established in the previous paragraphs:
\begin{proposition}\label{scholiumhot}
 For any Hermitian line bundle $\Lb$ over $\Spec \OK$ and any $t\in \R$ such that 
 $\dega \Lb \leq t,$
 we have :
 $$\hot(\Lb) \leq 1 + t \,\,\,\mbox{ if $t\geq 0,$}$$
 and 
 $$\hot(\Lb) \leq \exp\left( -\pi [K:\Q] (e^{-2t/[K:\Q]}-1)\right) \leq \exp( 2\pi t) \,\,\,\mbox{ if $t\leq 0.$}$$
 \qed
\end{proposition}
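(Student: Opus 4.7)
The plan is to obtain both assertions by combining the upper bounds from Propositions~\ref{linepositive} and \ref{linenegative} (more precisely, the sharper form \eqref{eqlinenegativebis}) and splitting the argument according to the sign of $\dega\Lb$. The key numerical fact we shall exploit repeatedly is that the constant $c = 3e^{-\pi}/(1-1/2\pi) \approx 0.154$ appearing in \eqref{eqlinenegativebis} satisfies $c<1$, so that $c^{[K:\Q]} \leq 1$ for any number field $K$.

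\textbf{Case $t\geq 0$.} Suppose first that $\dega \Lb \geq 0$. Then Proposition~\ref{linepositive} directly gives
$$\hot(\Lb) \leq c^{[K:\Q]} + \dega \Lb \leq 1 + t,$$
using $c^{[K:\Q]} \leq 1$ and $\dega \Lb \leq t$. If instead $\dega \Lb < 0$, then $\degan \Lb < 0$, so $e^{-2\degan\Lb} \geq 1$ and hence $\exp(-\pi[K:\Q](e^{-2\degan\Lb}-1)) \leq 1$. Inequality \eqref{eqlinenegativebis} then yields
$$\hot(\Lb) \leq c^{[K:\Q]} \leq 1 \leq 1+t,$$
which settles the case $t \geq 0$.

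\textbf{Case $t \leq 0$.} In this range we have $\dega\Lb \leq t \leq 0$, so \eqref{eqlinenegativebis} applies. Since $\degan \Lb \leq t/[K:\Q]$, the function $u \mapsto e^{-2u}$ being decreasing gives $e^{-2\degan\Lb} \geq e^{-2t/[K:\Q]}$, hence
$$\exp\bigl(-\pi [K:\Q](e^{-2\degan\Lb}-1)\bigr) \leq \exp\bigl(-\pi[K:\Q](e^{-2t/[K:\Q]}-1)\bigr).$$
Combined with $c^{[K:\Q]} \leq 1$, this yields the first inequality of the statement. For the second, setting $u := -2t/[K:\Q] \geq 0$, the desired bound
$$\exp\bigl(-\pi[K:\Q](e^{-2t/[K:\Q]}-1)\bigr) \leq \exp(2\pi t)$$
reduces, after taking logarithms and dividing by $\pi[K:\Q]$, to the elementary inequality $e^u \geq 1+u$, valid for all $u \in \R$.

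There is essentially no obstacle here: the argument is a straightforward bookkeeping exercise that packages the bounds of Propositions~\ref{linepositive} and \ref{linenegative} into a single statement depending only on an upper bound $t$ for $\dega \Lb$. The one subtlety worth flagging is to use the \emph{sharpened} form \eqref{eqlinenegativebis} of the negative-degree estimate (rather than \eqref{eqlinenegative}), so that the prefactor $c^{[K:\Q]}$ is at most $1$ uniformly in $K$ and can be absorbed cleanly.
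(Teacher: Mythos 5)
Your argument is correct and is exactly the intended one: the paper states this proposition as a direct consequence of Propositions~\ref{linepositive} and~\ref{linenegative} (in the sharpened form (\ref{eqlinenegativebis})) and gives no further proof. Your case split on the sign of $\dega\Lb$ for $t\geq 0$, the monotonicity of $u\mapsto e^{-2u}$ for $t\leq 0$, and the reduction of the final inequality to $e^u\geq 1+u$ are all sound.
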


These estimate may be seen as an arithmetic counterpart of the basic ``Riemann inequality": 
\begin{equation}\label{RiemannGeom}
h^0(C,L) := \dim_k \Gamma(C, L) \leq (1+ \deg_C L)^+,
\end{equation}
valid for any line bundle $L$ over some smooth, projective, geometrically connected curve $C$ over some field $k$, of degree 
$$\deg_C L := \deg_k c_1(L).[C],$$

\section{Subadditivity of $\hot$ and $\hut$}\label{subadd}

\subsection{The basic subadditivity property}

\begin{proposition}\label{ineqshorttheta}
For any admissible short exact sequence of Hermitian vector bundles over the arithmetic curve $\Spec \OK$
\begin{equation}\label{shorther}
\overline{\cE} : 0 \lra \Eb \stackrel{i}{\lra} \Fb \stackrel{p}{\lra} \Gb \lra 0,
\end{equation}
the following inequality holds:
\begin{equation}\label{keyineq}
h_\theta (\overline{\cE}) := \hot(\Eb) -\hot(\Fb) + \hot(\Gb) \geq 0.
\end{equation}

Moreover equality holds in (\ref{keyineq}) if and only if the admissible short exact sequence (\ref{shorther}) is split.
\end{proposition}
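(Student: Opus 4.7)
The plan is to reduce to the case $K = \Q$ (i.e., to euclidean lattices) via the direct image functor, and then to exploit the Gaussian Poisson inequality (\ref{PoissonGaussineq}) together with the orthogonal splitting at the archimedean places.

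First I would apply $\pi_\ast$ to the admissible short exact sequence $\overline{\cE}$. By Proposition \ref{piextun}(3), the resulting sequence $\pi_\ast \overline{\cE}$ is an admissible short exact sequence of euclidean lattices, and by the very definition (\ref{hithetagen}), $h^0_\theta(\overline{E}) = h^0_\theta(\pi_\ast \overline{E})$, so it suffices to prove the statement when $K = \Q$. From now on write
$$0 \lra \overline{E} \stackrel{i}{\lra} \overline{F} \stackrel{p}{\lra} \overline{G} \lra 0$$
for an admissible short exact sequence of euclidean lattices, identifying $E_\R$ with its image in $F_\R$ and $G_\R$ with the quotient $F_\R / E_\R$ equipped with the quotient norm.

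Next, I would slice the sum defining $e^{h^0_\theta(\overline{F})}$ along the fibers of $p$. For each $g \in G$ choose any lift $f_g \in p^{-1}(g) \subset F$, and let $s^\perp(g) \in F_\R$ denote the orthogonal lift, characterized by $p_\R(s^\perp(g)) = g$ and $s^\perp(g) \perp E_\R$. Since the norm on $\overline{G}$ is the quotient norm, $\Vert s^\perp(g)\Vert_F = \Vert g\Vert_G$. Writing $f_g = s^\perp(g) + v_g$ with $v_g \in E_\R$, Pythagoras yields
$$\sum_{f \in p^{-1}(g)} e^{-\pi \Vert f\Vert_F^2} = e^{-\pi \Vert g\Vert_G^2} \sum_{e \in E} e^{-\pi \Vert v_g + e\Vert_E^2}.$$
Applying the Gaussian Poisson inequality (\ref{PoissonGaussineq}) to the inner sum bounds it above by $e^{h^0_\theta(\overline{E})}$, and summing over $g \in G$ gives
$$e^{h^0_\theta(\overline{F})} \leq e^{h^0_\theta(\overline{E}) + h^0_\theta(\overline{G})},$$
which is (\ref{keyineq}).

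For the equality case, recall from \ref{PoissonSchwartz} that equality in (\ref{PoissonGaussineq}) for $x = v_g$ forces $v_g \in E$, i.e. $s^\perp(g) = f_g - v_g \in F$. Hence equality in (\ref{keyineq}) over $\Z$ is equivalent to the orthogonal splitting $s^\perp \colon G \to F$ being integral, i.e. to $\pi_\ast\overline{\cE}$ being split in the sense of \ref{AdmShort}. To transport the equality case back to an arbitrary $\OK$, I would invoke the injectivity of $\hat{\pi}^1_\ast$ from Proposition \ref{piextun}(4): the class $[\overline{\cE}] \in \widehat{\Ext}^1_{\OK}(G,E)$ maps to $[\pi_\ast\overline{\cE}]$, so vanishing of the latter forces vanishing of the former, which by \ref{subsub:admiari} means that $\overline{\cE}$ itself is split.

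The technical heart of the argument is the Gaussian Poisson inequality (\ref{PoissonGaussineq}), already recorded in the excerpt; everything else is a bookkeeping exercise using the orthogonal decomposition at the archimedean place. The only non-obvious step is the equality case over a general $\OK$, where one must check that an integral splitting of $\pi_\ast\overline{\cE}$ as $\Z$-modules lifts to an $\OK$-splitting — and this is exactly the content of Proposition \ref{piextun}(4).
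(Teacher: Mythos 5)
Your proof is correct and follows essentially the same route as the paper: the fiberwise slicing with the orthogonal lift $s^{\perp}(g)$ and the Gaussian Poisson inequality (\ref{PoissonGaussineq}) is exactly the paper's Lemma \ref{gaussiansumpreimage}, and the reductions to $\Spec\Z$ (via $\pi_\ast$ and Proposition \ref{piextun}) are identical. The only cosmetic difference is in the equality case, where the paper packages the argument through the function $\Gext_{\Eb,\Gb}(T)$ and its Fourier expansion (Proposition \ref{Gext}), whereas you invoke the equality case of (\ref{PoissonGaussineq}) directly fiber by fiber — both rest on the same fact that equality in the averaged Gaussian sum forces $x\in V$.
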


Observe that the additivity of the Arakelov degree in short exact sequences (\ref{degaddadm}), together with the Poisson-Riemann-Roch formula (\ref{Poisson}) shows that
\begin{equation}\label{hothutshort}
\hut(\Eb) -\hut(\Fb) + \hut(\Gb) = \hot(\Eb) -\hot(\Fb) + \hot(\Gb).
\end{equation}
In particular, Proposition \ref{ineqshorttheta} also holds with $\hot$ replaced by $\hut.$ 

\subsection{Proof of Proposition \ref{ineqshorttheta}}

When $\OK = \Z,$ Proposition \ref{ineqshorttheta} appears in Quillen Notebooks \cite{QuillenNotebooks} in the entry of 26/04/1973, and is established as Lemma 5.3 in \cite{Groenewegen2001}. 

The inequality (\ref{keyineq}) actually follows from the following lemma, of independent interest, which may be seen as a ``pointwise version" of  (\ref{keyineq}) and will play a key role in deriving the main results of this monograph (notably  to establish Lemma \ref{submeasure}, crucial to the proof of Theorem \ref{maintheorem} \emph{infra}):

\begin{lemma}\label{gaussiansumpreimage} Consider an admissible short exact sequence of Hermitian vector bundles over $\Spec \Z$,
$$0 \lra \Eb \stackrel{i}{\lra} \Fb \stackrel{p}{\lra} \Gb \lra 0.$$
Then, for any $g \in G,$ its preimage $p^{-1}(g)$ in $F$ satisfies:
\begin{equation}\label{ineqshortthetapartial}
\sum_{f \in p^{-1}(g)} e^{-\pi \Vert f \Vert^2_{\Fb}} \leq e^{-\pi \Vert g \Vert^2_{\Gb}}  \sum_{e\in E} e^{-\pi \Vert e \Vert^2_{\Eb}}.
\end{equation}
\end{lemma}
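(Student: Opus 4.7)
The plan is to reduce inequality \eqref{ineqshortthetapartial} to the pointwise consequence of the Poisson formula already recorded as \eqref{PoissonGaussineq}. The key structural ingredient is the orthogonal decomposition of $F_\R$ induced by the admissibility of the sequence, which splits the norm on a preimage into a fixed quotient-norm contribution and a free lattice contribution.

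First I would record the admissibility in the form $F_\R = i_\R(E_\R) \oplus i_\R(E_\R)^\perp$, where $p_\R$ restricts to an isometric isomorphism $i_\R(E_\R)^\perp \lrasim G_\R$. Given $g \in G$, let $g^\perp \in i_\R(E_\R)^\perp$ be its unique preimage under this isometry, so $\Vert g^\perp \Vert_{\Fb} = \Vert g \Vert_{\Gb}$. Since $i(E)$ is saturated in $F$ (the cokernel $G$ is torsion-free), $p^{-1}(g) \subset F$ is nonempty, and I would fix any $f_0 \in p^{-1}(g)$, writing $f_0 = g^\perp + i_\R(\alpha)$ for a uniquely determined $\alpha \in E_\R$. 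Then $p^{-1}(g) = \{\, g^\perp + i_\R(\alpha + e) : e \in E\,\}$.

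Next, using orthogonality of $g^\perp$ and $i_\R(\alpha + e)$ in $F_\R$ together with the fact that $i_\R$ is an isometry $(E_\R, \Vert.\Vert_\Eb) \hookrightarrow (F_\R, \Vert.\Vert_\Fb)$, I would compute
\begin{equation*}
\Vert g^\perp + i_\R(\alpha + e)\Vert_\Fb^2 = \Vert g\Vert_\Gb^2 + \Vert \alpha + e\Vert_\Eb^2,
\end{equation*}
so that
\begin{equation*}
\sum_{f \in p^{-1}(g)} e^{-\pi \Vert f\Vert_\Fb^2} = e^{-\pi \Vert g\Vert_\Gb^2} \sum_{e \in E} e^{-\pi \Vert \alpha + e\Vert_\Eb^2}.
\end{equation*}

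Finally I would invoke \eqref{PoissonGaussineq} applied to the euclidean lattice $\Eb$ at the point $-\alpha \in E_\R$ (using the evenness of the norm to replace $e$ by $-e$), which yields
\begin{equation*}
\sum_{e \in E} e^{-\pi \Vert \alpha + e\Vert_\Eb^2} \leq \sum_{e \in E} e^{-\pi \Vert e\Vert_\Eb^2},
\end{equation*}
giving \eqref{ineqshortthetapartial}. There is no real obstacle here: the only mildly delicate point is checking that $i(E)$ is saturated so $p^{-1}(g) \neq \emptyset$ has the expected coset structure, and that the orthogonal lift $g^\perp$ realizes the quotient norm exactly—both are immediate from the definition of admissibility in \ref{AdmShort}. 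The real content of the lemma, namely that $\sum_v e^{-\pi\Vert x-v\Vert^2}$ is maximized at lattice points, is precisely the Poisson-formula inequality already at our disposal.
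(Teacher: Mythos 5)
Your proof is correct and follows essentially the same route as the paper's: orthogonal splitting of $p_\R$, the coset description $p^{-1}(g) = f_0 + i(E)$, the norm decomposition $\Vert g\Vert_\Gb^2 + \Vert \alpha + e\Vert_\Eb^2$, and the Poisson-formula inequality \eqref{PoissonGaussineq} applied to $\Eb$ at the shift vector. (The only cosmetic difference is that nonemptiness of $p^{-1}(g)$ needs only surjectivity of $p$, not saturation of $i(E)$.)
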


Indeed, by summing (\ref{ineqshortthetapartial}) over $g \in G$, we get the inequality:
$$\sum_{f \in F} e^{-\pi \Vert f \Vert^2_{\Fb}} \leq \sum_{g \in G} e^{-\pi \Vert g \Vert^2_{\Gb}}  . \sum_{e\in E} e^{-\pi \Vert e \Vert^2_{\Eb}}.$$
Taking the logarithms, this establishes (\ref{keyineq}) when $\OK = \Z.$ The case of a general number field $K$ follows from this special case, applied to the admissible short exact sequence over $\Spec \Z$ deduced from (\ref{shorther}) by taking its direct image on $\Spec \Z.$

\begin{proof}[Proof of Lemma \ref{gaussiansumpreimage}]
Let us denote by $s^{\perp}:G_\R \lra F_\R$  the orthogonal splitting of the surjective linear map $p_\R: F_\R \lra G_\R$. (Its image is the orthogonal complement $(\ker p_\R)^{\perp}$ of $\ker p_\R$ in $F_\R,$ defined by means of the Euclidean structure on $F_\R$ attached to $\Vert.\Vert_{\Fb}$.) 

We may choose an element $f_0$ in $p^{-1}(g)$. Then we have:
$$p^{-1}(g) = f_0 + i(E).$$
The element $f_0 - s^{\perp}(g)$ of $F_\R$ belongs to $\ker p_\R = \im i_\R$, and may be written $i_\R (\delta)$ for some (unique) element $\delta$ of $E_\R$. Then, for any $e\in E,$ we have
$$\Vert f_0 + i(e) \Vert^2_{\Fb} = \Vert s^{\perp} (g) + i(\delta + e)\Vert^2_{\Fb} = \Vert g \Vert^2_{\Gb} + \Vert \delta + e \Vert^2_{\Eb}.$$
Together with (\ref{PoissonGaussineq}) (applied to $\Vb = \Eb$ and $x=\delta$), this shows that
$$\sum_{f \in p^{-1}(g)} e^{-\pi \Vert f \Vert^2_{\Fb}} = \sum_{e \in E} e^{-\pi (\Vert g \Vert^2_{\Gb} + \Vert \delta + e \Vert^2_{\Eb})}
\leq e^{-\pi \Vert g \Vert^2_{\Gb}}  \sum_{e\in E} e^{-\pi \Vert e \Vert^2_{\Eb}}.$$
\end{proof}

To complete the proof of Proposition \ref{ineqshorttheta}, we are left to show that equality holds in (\ref{keyineq}) if and only if the admissible short exact sequence (\ref{shorther}) over $\Spec \OK$ is split.

Observe that, according to Proposition \ref{piextun}, 
$$\pi_\ast\overline{\mathcal E}: 0 \lra \pi_\ast \Eb \stackrel{i}{\lra} \pi_\ast \Fb \stackrel{p}{\lra} \pi_\ast \Gb \lra 0$$
is an admissible extension of Hermitian vector bundles over $\Spec \Z$, and is split (over $\Spec \Z$) if and only if  $\overline{\mathcal E}$ is split (over $\Spec \OK$). Moreover, by the very definition of $\hot,$ we have:
\begin{equation*}
\begin{split}
h_\theta (\overline{\mathcal E}) & = \hot(\Eb) -\hot(\Fb) + \hot(\Gb) \\
 & = \hot(\pi_\ast \Eb) -\hot(\pi_\ast  \Fb) + \hot(\pi_\ast  \Gb) \\
 & = h_\theta (\pi_\ast\overline{\mathcal E}).
\end{split}
\end{equation*}

Therefore, to complete the proof of Proposition \ref{ineqshorttheta}, we may  assume that $\Spec \OK = \Spec \Z.$ 
In this case, it will follow from an analysis of the equality case in the above arguments.

To expound this analysis, it is convenient to introduce the following definition. Under the assumption that $\OK = \Z,$ for every $T$ in 
$$\Hom_\C(G_\C, E_\C)^{F_\infty} \simeq \Hom_\Z(G,E)\otimes \R \simeq \Hom_\R(G_\R, E_\R), $$
we define:
\begin{equation}
\Gext_{\hE, \hG}(T) := \sum_{(e,g) \in E\times G} e^{-\pi (\Vert e -T(g) \Vert_{\Eb}^2 + \Vert g \Vert_{\Gb}^2)}.
\end{equation}
The proof of Proposition \ref{ineqshorttheta} is now completed by the following Proposition, the formulation of which uses the formalism of arithmetic and admissible extensions recalled in Subsections \ref{AdmShort}--\ref{subsub:admiari}:

\begin{proposition}\label{Gext} For every $T \in \Hom_{\R}(G_\R, E_\R)$ the class of which in $\Exthun_\Z (G,E)$ coincides with the class of the admissible extension $\mathcal E$, the following equality holds:
\begin{equation}\label{htG}
\exp(-h_\theta (\overline{\cE})) = \frac{\Gext_{\Eb, \Gb}(T)}{\Gext_{\Eb, \Gb}(0)}.
\end{equation}

Moreover  we have:  
\begin{equation}\label{PoissonhtG}
\Gext_{\Eb, \Gb}(T) = \covol (\Eb) \sum_{(e^\vee,g) \in E^\vee\times G} e^{-\pi (\Vert e^\vee\Vert_{\Eb^\vee}^2 + \Vert g \Vert_{\Gb}^2)} e^{2\pi i \langle e^\vee\!\otimes g, T\rangle}.
\end{equation}
and
\begin{equation}\label{GextIneq}
0 < \Gext_{\Eb, \Gb}(T) \leq \Gext_{\Eb, \Gb}(0),
\end{equation}
and the equality $\Gext_{\Eb, \Gb}(T) = \Gext_{\Eb, \Gb}(0)$ holds if and only if $T$ belongs to $\Hom_\Z(G,E).$
\end{proposition}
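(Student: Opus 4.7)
The key idea is to relate $\Gext_{\Eb,\Gb}(T)$ to the $\theta$-invariant of a hermitian vector bundle naturally built from $T$ via the construction (\ref{defcET}), and then to apply the Poisson formula for Gaussians.

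The starting observation is that, by the very definition of the hermitian vector bundle $\overline{E \oplus G}^T$ in (\ref{defcET}),
\[
\Gext_{\Eb,\Gb}(T) \;=\; \sum_{(e,g) \in E \times G} e^{-\pi\Vert (e,g)\Vert^2_{T}} \;=\; \exp\bigl(\hot(\overline{E \oplus G}^T)\bigr),
\]
which is manifestly positive. Specializing to $T=0$ and using the additivity (\ref{htadd}) yields $\Gext_{\Eb,\Gb}(0) = \exp(\hot(\Eb) + \hot(\Gb))$.

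To establish (\ref{htG}), I would invoke Paragraph \ref{subsub:admiari}: the admissible extension $\overline{\cE}(T)$ has class $[T]$ in $\Exthun_\Z(G,E)$, so when $T$ represents the class of $\overline{\cE}$ the admissible short exact sequences $\overline{\cE}$ and $\overline{\cE}(T)$ are isomorphic. Concretely, fix any $\Z$-linear splitting $s^{\mathrm{int}}$ of $\cE$ and let $T_0$ be the associated representative of $[\overline{\cE}]$ (defined by $i_\R \circ T_0 = s^\perp - s^{\mathrm{int}}$), so that $T = T_0 + U$ for some $U \in \Hom_\Z(G,E)$; then the map
\[
(e,g) \;\longmapsto\; i(e + U(g)) + s^{\mathrm{int}}(g)
\]
defines an isometric isomorphism $\overline{E \oplus G}^T \lrasim \Fb$, using the orthogonality of $i_\R(E_\R)$ and the orthogonal splitting $s^\perp(G_\R)$ inside $F_\R$. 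It follows that $\hot(\Fb) = \log \Gext_{\Eb,\Gb}(T)$, and hence
\[
\Gext_{\Eb,\Gb}(T)/\Gext_{\Eb,\Gb}(0) \;=\; \exp\bigl(\hot(\Fb) - \hot(\Eb) - \hot(\Gb)\bigr) \;=\; \exp\bigl(-h_\theta(\overline{\cE})\bigr).
\]

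For the Poisson identity (\ref{PoissonhtG}), I would write
\[
\Gext_{\Eb,\Gb}(T) \;=\; \sum_{g \in G} e^{-\pi \Vert g\Vert^2_\Gb} \sum_{e \in E} e^{-\pi \Vert T(g) - e\Vert^2_\Eb}
\]
and apply the Poisson formula (\ref{PoissonGauss}) to the inner Gaussian sum at $x = T(g)$. Identifying $\langle e^\vee, T(g)\rangle$ with the canonical pairing $\langle e^\vee \otimes g, T\rangle$ between $E^\vee \otimes_\Z G$ and $\Hom_\R(G_\R, E_\R)$, and interchanging summations, yields the claimed formula.

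Finally, for (\ref{GextIneq}): positivity has been noted. Since $\Gext_{\Eb,\Gb}(T)$ is real, one may take real parts in (\ref{PoissonhtG}): each summand contributes $\cos(2\pi\langle e^\vee \otimes g, T\rangle) \leq 1$, hence $\Gext_{\Eb,\Gb}(T) \leq \Gext_{\Eb,\Gb}(0)$. Equality forces $\langle e^\vee, T(g)\rangle \in \Z$ for every $(e^\vee, g) \in E^\vee \times G$; together with the $\R$-linearity of $T$ this means precisely that $T(G) \subset E$, i.e.\ $T \in \Hom_\Z(G,E)$.

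The main difficulty is the verification of the isometric isomorphism $\overline{E \oplus G}^T \simeq \Fb$ in the proof of (\ref{htG}): one has to carefully track the correspondence between admissible short exact sequences of hermitian vector bundles and their classes in $\Exthun_\Z(G,E)$, in particular the role of the orthogonal splitting $s^\perp$ in defining the representative $T_0$. Once this identification is in place, the Poisson formula and the elementary bound $\cos \leq 1$ deliver the remaining statements cleanly.
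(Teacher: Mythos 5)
Your proof is correct and follows essentially the same route as the paper's: identify $\Gext_{\Eb,\Gb}(T)$ with $\exp(\hot(\overline{E\oplus G}^T))$, use that $\overline{\cE}(T)\simeq\overline{\cE}$ when $[T]=[\overline{\cE}]$ to get (\ref{htG}), apply the Poisson formula (\ref{PoissonGauss}) with $x=T(g)$ for (\ref{PoissonhtG}), and read off the inequality and its equality case from that expansion; the only difference is that you helpfully make explicit the isometric isomorphism $\overline{E\oplus G}^T\lrasim\Fb$ that the paper leaves implicit. (Note that a literal application of (\ref{PoissonGauss}) produces the factor $(\covol\Eb)^{-1}$ rather than $\covol(\Eb)$ — a normalization slip present in the statement itself, which affects neither the ratio (\ref{htG}) nor the equality analysis.)
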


In the right side of (\ref{PoissonhtG}), $e^\vee\!\otimes g$ belongs to $E^\vee \otimes G$ and $T$ to 
$$\Hom_{\R}(G_\R, E_\R) \simeq G^\vee_\R \otimes_\R E_\R \simeq (G^\vee \otimes E)_\R,$$ and their pairing $\langle e^\vee\!\otimes g, T\rangle$ is equal to the real number $e^\vee(T(g))$.

\proof From the very definition of $\Gext_{\Eb, \Gb}(T)$, we get:
$$\hot(\overline{E\oplus G}^T) = \log \Gext_{\Eb, \Gb}(T).$$
In particular:
$$ \hot(\Eb) + \hot(\Gb) = \log \Gext_{\Eb, \Gb}(0).$$
The relation (\ref{htG}) follows from these two equalities.

The equality (\ref{PoissonhtG}) follows from the Poisson formula (\ref{PoissonGauss}) applied to $\Vb = \Eb$ and $x=T(g).$

The inequality $ \Gext_{\Eb, \Gb}(T) >0$ is clear, and the inequality $\Gext_{\Eb, \Gb}(T) \leq \Gext_{\Eb, \Gb}(0)$ follows from (\ref{PoissonhtG}). 

Finally the expression (\ref{PoissonhtG}) for  $\Gext_{\Eb, \Gb}(T)$ shows that, for any $T$ in $$\Hom_\R(G_\R, E_\R)\simeq \Hom_\Z(G,E) \otimes_\Z \R,$$ the following conditions are successively equivalent:
\begin{enumerate}
\item $\Gext_{\Eb, \Gb}(T) = \Gext_{\Eb, \Gb}(0);$
\item for any $(e^\vee,g) \in E^\vee\times G, e^{2\pi i \langle e^\vee\!\otimes g, T\rangle} =1;$
\item for any $(e^\vee,g) \in E^\vee\times G, \langle e^\vee\!\otimes g, T\rangle \in \Z;$
\item $T$ belongs to $G^\vee \otimes E \simeq\Hom_\Z(G,E).$
\qed 
\end{enumerate}

\subsection{The average value of $\exp(-h_\theta (\overline{\cE}))$.}

Proposition \ref{Gext} not only makes clear the non-negativity of $h_\theta (\overline{\cE})$. It also allows us to compute its ``geometric average" over the arithmetic extension group  $\Exthun_\Z (G,E)$.

The group
$$\Exthun_\Z (G,E) \simeq \Hom_\Z(G,E) \otimes_\Z \R/ \Z$$
has indeed a natural structure of compact Lie group (it is a ``compact torus" of dimension $\rk E. \rk F$) and, as such, is equipped with a canonical Haar measure, normalized by the condition
$$\int_{\Exthun_\Z (G,E)} d\mu =1.$$ 

\begin{proposition}\label{GextAverag}
 Let $\Eb$ and $\Gb$ be two Hermitian vector bundles over $\Spec \Z$. For any $T \in \Hom_\R(G_\R, E_\R),$ let $[T]$ denote its class in $\Exthun_\Z (G,E)$ and let 
 $$\overline{\cE} (T): \;\; 0 \lra \Eb \stackrel{i}{\lra} \overline{E\oplus G}^T \stackrel{p}{\lra} \Gb \lra 0$$
 be the associated admissible extension of class $[T]$ \emph{(see \ref{subsub:admiari} and (\ref{defcET}) \emph{supra})}.
 
 Then we have:
 \begin{equation}\label{eq:GextAverag}
\int_{[T] \in \Exthun_\Z (G,E)} e^{-h_\theta(\overline{\cE} (T))} \; d\mu([T]) = 1- \left(1- e^{- \hut(\Eb)}\right) \left(1-e^{-\hot(\Gb)}\right).
\end{equation}
\end{proposition}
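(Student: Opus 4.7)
The plan is to expand $\Gext_{\Eb,\Gb}(T)$ in Fourier series via Poisson and integrate character-by-character over the compact torus $\Exthun_\Z(G,E)$, then simplify using Poisson--Riemann--Roch. By the identity (\ref{htG}) of Proposition \ref{Gext},
$$e^{-h_\theta(\overline{\cE}(T))} = \Gext_{\Eb,\Gb}(T)\, e^{-\hot(\Eb) - \hot(\Gb)}.$$
Applying the Poisson formula (\ref{PoissonGauss}) in the $e$-variable (with $\Vb = \Eb$, $x = T(g)$) to the definition of $\Gext_{\Eb,\Gb}(T)$ and then summing over $g \in G$ gives the Fourier expansion
$$\Gext_{\Eb,\Gb}(T) = (\covol \Eb)^{-1} \sum_{(e^\vee, g) \in E^\vee \times G} e^{-\pi(\Vert e^\vee\Vert_{\Eb^\vee}^2 + \Vert g\Vert_\Gb^2)}\, e^{2\pi i \langle e^\vee \otimes g,\, T\rangle},$$
a rewriting of (\ref{PoissonhtG}) in which each summand depends on $T$ only through a character of $\Exthun_\Z(G,E) = \Hom_\R(G_\R,E_\R)/\Hom_\Z(G,E)$ (well-defined there since $e^\vee(T(g)) \in \Z$ whenever $T \in \Hom_\Z(G,E)$).

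I would then integrate this expansion term-by-term against the normalized Haar measure $d\mu$; the interchange is legitimate because the sum of absolute values equals $\Gext_{\Eb,\Gb}(0) < \infty$ uniformly in $T$. By orthogonality of characters on the compact torus $\Exthun_\Z(G,E)$, the integral of $e^{2\pi i \langle e^\vee \otimes g, T\rangle}$ equals $1$ when the associated character is trivial, and $0$ otherwise. Since the lattice $\Hom_\Z(G,E)$ has full $\R$-rank inside $\Hom_\R(G_\R,E_\R)$, triviality of the character is equivalent to vanishing of $e^\vee \otimes g$ as a real linear form, hence (since $E^\vee$ and $G$ are free $\Z$-modules) to $e^\vee \otimes g = 0$ in $E^\vee \otimes_\Z G$, which happens exactly when $e^\vee = 0$ or $g = 0$. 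Inclusion--exclusion on the doubly counted pair $(0,0)$ then yields
$$\int_{\Exthun_\Z(G,E)} \Gext_{\Eb,\Gb}(T)\, d\mu([T]) = (\covol \Eb)^{-1}\bigl(e^{\hot(\Gb)} + e^{\hut(\Eb)} - 1\bigr),$$
where I used $e^{\hot(\Gb)} = \sum_g e^{-\pi\|g\|_\Gb^2}$ and $e^{\hut(\Eb)} = e^{\hot(\Eb^\vee)} = \sum_{e^\vee} e^{-\pi\|e^\vee\|_{\Eb^\vee}^2}$.

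To conclude, I multiply by $e^{-\hot(\Eb) - \hot(\Gb)}$ and substitute $(\covol \Eb)^{-1} = e^{\dega \Eb} = e^{\hot(\Eb) - \hut(\Eb)}$ from the Poisson--Riemann--Roch formula (\ref{PoissonZ}). The factor $e^{\hot(\Eb)}$ cancels, leaving
$$\int e^{-h_\theta(\overline{\cE}(T))}\, d\mu([T]) = \frac{e^{\hot(\Gb)} + e^{\hut(\Eb)} - 1}{e^{\hut(\Eb) + \hot(\Gb)}} = e^{-\hut(\Eb)} + e^{-\hot(\Gb)} - e^{-\hut(\Eb) - \hot(\Gb)},$$
which factors as $1 - (1 - e^{-\hut(\Eb)})(1 - e^{-\hot(\Gb)})$, as claimed. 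The main delicate step is the character-orthogonality identification of the surviving terms: verifying that $e^\vee \otimes g$ gives a \emph{non-trivial} character of the torus precisely when $e^\vee \otimes g \neq 0$ (which is where the freeness of $E^\vee$ and $G$ over $\Z$, together with the full-rankness of $\Hom_\Z(G,E)$ in $\Hom_\R(G_\R,E_\R)$, is used). Everything else is routine bookkeeping combined with Poisson--Riemann--Roch.
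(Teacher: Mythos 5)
Your proof is correct and follows essentially the same route as the paper's: the identity (\ref{htG}), the Fourier expansion (\ref{PoissonhtG}) of $\Gext_{\Eb,\Gb}(T)$ obtained from the Poisson formula, term-by-term integration over the compact torus, and character orthogonality isolating the terms with $e^\vee\otimes g=0$, followed by inclusion--exclusion. The only cosmetic difference is that you normalize by $\Gext_{\Eb,\Gb}(0)=e^{\hot(\Eb)+\hot(\Gb)}$ and invoke Poisson--Riemann--Roch at the very end to eliminate the covolume factor, whereas the paper carries $\Gext_{\Eb,\Gb}(0)$ in its covolume form and lets that factor cancel in the final ratio.
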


\begin{proof} According to (\ref{htG}), we have:
\begin{equation}\label{eq:GextAverag1}
\int_{[T] \in \Exthun_\Z (G,E)} e^{-h_\theta(\overline{\cE} (T))} \; d\mu([T]) = {\Gext_{\Eb, \Gb}(0)}^{-1} \int_{[T] \in \Exthun_\Z (G,E)} \Gext_{\Eb, \Gb}(T) \; d\mu([T]).
\end{equation}

Moreover, from (\ref{PoissonhtG}), we get:
\begin{align}\label{eq:GextAverag2}
 \Gext_{\Eb, \Gb}(0) & = \covol (\Eb) \sum_{(e^\vee,g) \in E^\vee\times G} e^{-\pi (\Vert e^\vee\Vert_{\Eb^\vee}^2 + \Vert g \Vert_{\Gb}^2)} \\
& = \covol (\Eb) \,e^{\hut(\Eb)}\, e^{\hot(\Gb)},
\end{align}
and
$$\int_{[T] \in \Exthun_\Z (G,E)} \Gext_{\Eb, \Gb}(T) \; d\mu([T])  = \covol (\Eb) \sum_{\stackrel{(e^\vee,g) \in E^\vee\times G}{e^\vee\!\otimes g =0}} e^{-\pi (\Vert e^\vee\Vert_{\Eb^\vee}^2 + \Vert g \Vert_{\Gb}^2)} $$
\begin{align}\label{eq:GextAverag3}
 \int_{[T] \in \Exthun_\Z (G,E)} \Gext_{\Eb, \Gb}(T) \; d\mu([T]) & = \covol (\Eb) \sum_{\stackrel{(e^\vee,g) \in E^\vee\times G}{e^\vee\!\otimes g =0}} e^{-\pi (\Vert e^\vee\Vert_{\Eb^\vee}^2 + \Vert g \Vert_{\Gb}^2)} \\ 
 &  = \covol (\Eb) \left( 
  \sum_{e^\vee \in E^\vee} e^{-\pi \Vert e^\vee\Vert_{\Eb^\vee}^2}  +
 \sum_{g \in\times G} e^{-\pi \Vert g \Vert_{\Gb}^2}
 - 1
\right)  \\  
 & = \covol (\Eb) \left( e^{\hut(\Eb)} + e^{\hot(\Gb)} - 1 \right).
\end{align}

Formula (\ref{eq:GextAverag}) follows from (\ref{eq:GextAverag1}), (\ref{eq:GextAverag2}) and (\ref{eq:GextAverag3}).
 \end{proof}

\begin{corollary}
 There exists an admissible extension $\overline{\cE}$ of $\Gh$ by $\Eh$ such that
 $$ h_\theta(\overline{\cE}) > - \log \left[1- \left(1- e^{- \hut(\Eb)}\right) \left(1-e^{-\hot(\Gb)}\right)\right].$$
 \qed
\end{corollary}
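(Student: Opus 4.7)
The plan is to derive the corollary directly from Proposition \ref{GextAverag} by a mean-value argument, using Proposition \ref{Gext} to detect non-constancy of the integrand. Set $A := 1 - (1 - e^{-\hut(\Eb)})(1 - e^{-\hot(\Gb)})$, so that Proposition \ref{GextAverag} asserts that the mean over $\Exthun_\Z(G,E)$ of the continuous function
\[
f \colon [T] \longmapsto e^{-h_\theta(\overline{\cE}(T))} = \Gext_{\Eb,\Gb}(T)/\Gext_{\Eb,\Gb}(0)
\]
equals $A$. The target inequality $h_\theta(\overline{\cE}(T)) > -\log A$ is equivalent to $f([T]) < A$, so it suffices to exhibit a single point of the compact torus $\Exthun_\Z(G,E)$ at which $f$ strictly undershoots its mean.

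For this, I would first observe that one may assume $\rk E$ and $\rk G$ are both positive; otherwise $\Exthun_\Z(G,E) = 0$, the only extension is the split one, and the stated inequality degenerates. Under this assumption, the second clause of Proposition \ref{Gext} (the equality case in (\ref{GextIneq})) shows that $\Gext_{\Eb,\Gb}(T) < \Gext_{\Eb,\Gb}(0)$ whenever $[T] \neq 0$ in $\Exthun_\Z(G,E)$, while $f([0]) = 1$. Hence $f$ is non-constant on $\Exthun_\Z(G,E)$.

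Finally, I would invoke the standard fact that a non-constant continuous function on a probability space must strictly undershoot its mean at some point: if instead $f \geq A$ everywhere, then $\int (f - A)\, d\mu \geq 0$ forces $f = A$ almost everywhere (since $\int (f - A)\, d\mu = 0$), and continuity would then force $f \equiv A$, contradicting non-constancy. This produces $[T_0]$ with $f([T_0]) < A$, and the corresponding admissible extension $\overline{\cE}(T_0)$ satisfies the required strict inequality.

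There is essentially no obstacle here — everything is handed to us by Propositions \ref{Gext} and \ref{GextAverag}; the only mildly delicate point is the degenerate case $\rk E \cdot \rk G = 0$, which must either be excluded or verified separately (the right-hand side of the desired inequality is then $0$ and no strict separation is possible).
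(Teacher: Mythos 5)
Your argument is correct and is precisely the intended derivation: the paper offers no proof beyond Proposition \ref{GextAverag}, and the mean-value argument you give is the only natural route. The extra care you take to secure the \emph{strict} inequality --- combining the equality case of Proposition \ref{Gext} with the continuity of $\Gext_{\Eb,\Gb}$ and the full support of the Haar measure --- together with your observation that the statement fails as literally written when $\rk E \cdot \rk G = 0$, supplies exactly what the paper leaves implicit.
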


\medskip

\chapter{Geometry of numbers and $\theta$-invariants}\label{thetatwo}

\medskip

In this chapter, we pursue our study of the invariants $h^i_\theta (\Eb)$ attached to some Hermitian vector bundle $\Eb$ over some arithmetic curve $\Spec \OK$. We focus on the basic situation when $\OK = \Z$ --- that is, when $\Eb$ is an Euclidean lattice --- and we relate the $\theta$-invariants $\hot(\Eb)$ and $\hut(\Eb)$ to various invariants of $\Eb$ classically considered in geometry of numbers.

The results of the present chapter will not be used in the next chapters of this monograph. They are intended to clarify the meaning of the $\theta$-invariants from the perspective of the theory of Euclidean lattices, and their proofs will combine three lines of thought:

(i) The methods  introduced by Banaszczyk in his work \cite{Banaszczyk93} to establish ``transference inequalities", relating invariants of  Euclidean lattices and of their dual lattices, with essentially optimal constants\footnote{See  \cite{Gasbarri99}, Section 3, for an early application of these methods to Arakelov geometry. Banasczyk results have played  an important role in lattice-based cryptography, and we refer the reader to \cite{MicciancioRegev2007} and \cite{TianLiuXu2014} for improvements and applications of the original results in \cite{Banaszczyk93} related to lattice-based cryptography, and for further references.}. 

(ii) An extended version of Cram\'er's theory of large deviations, valid on some measure space of infinite total mass. This theory is presented in Appendix \ref{Append:LD}, and the reader will find a self-contained summary of its results required for our study of Euclidean lattices  in Section \ref{ReforComp}, where they are stated in a form that emphasizes their relations with the formalism of statistical physics.

(iii) Siegel's mean value theorem and its use for showing the existence of Euclidean lattices with remarkable density properties (\emph{\`a la} Minkowski-Hlawka) by probabilistic arguments. 

As explained in the introduction, Banaszczyk's techniques allows us to establish notably that the invariants $\hot(\Eb)$ and $\hon(\Eb)$ of some Euclidean lattices $\Eb$ differ by some error term bounded in terms of the rank of $E$ only (see (\ref{comparingavatars}) and Theorem \ref{propcompthetanaive}). Although these techniques  have been developed independently,  they  constitute a tool of choice in the study of the $\theta$-invariants. Conversely, the use of $\theta$-invariants shed some light on Banaszczyk's arguments and, in Section \ref{Banasc2},  we  give a self-contained  presentation of some  of the most important  results in \cite{Banaszczyk93} from this perspective. 

The large deviation theorems  in Appendix \ref{Append:LD} admit as consequences the duality relations  (\ref{Leg-hontIntro}) and (\ref{Leg-logthetaIntro}) between the asymptotic invariant $\hont(\Eb, t)$
and the function $\log \theta_\Eb.$ Moreover their thermodynamical interpretation leads us to the expression 
\begin{equation}\label{equ:secondlaawlatticeintro}
\hont(\Eb_1 \oplus \Eb_2, t) = \max_{\stackrel{t_1, t_2 >0}{t_1 +t_2 = t}} \left(\hont(\Eb_1, t_1) + \hont(\Eb_2, t_2)\right).
\end{equation}
 for the asymptotic invariant $\hont(\Eb_1 \oplus \Eb_2, t)$ attached to the direct sum $\Eb_1 \oplus \Eb_2$ of two Euclidean lattices
 $\Eb_1$ and $\Eb_2$. Formula  (\ref{equ:secondlaawlatticeintro}) may actually be understood as an avatar of the second law of thermodynamics 
(see Proposition \ref{SecondLawLattice}).

The probabilistic arguments based on Siegel's theorem mean value theorem will notably establish the existence, for any $(n,\delta)$ in $\N_{\geq 2} \times \R$, of some Euclidean lattice $\Eb$ of rank $n$ and degree $\delta$ such that
\begin{equation*}%\label{Ezerodelta}
\hot(\Eb) < \log (1 + e^\delta).
\end{equation*}  
These probabilistic arguments will also show that the constants in diverse comparison estimates established in this section are essentially optimal.

\section{Comparing $\hot$ and $\hon$}\label{compho}

\subsection{The invariants $\hon(\Eb),$ $\honm(\Eb),$ and $h^0_{\rm Bl}(\Eb)$}

Following \cite{GilletSoule1991}, we define:
$$\hon(\Eb) := \log \vert \{ v \in E \mid \Vert v \Vert \leq 1 \}\vert.$$
This definition is motivated by the classical philosophy in Arakelov geometry (see for instance  \cite{Manin85}, Section 2.3) according to which the finite set  $$\{ v \in E \mid \Vert v \Vert \leq 1 \}$$ should be interpreted as the ``global sections" of the Hermitian vector bundle $\Eb$ over $\Spec \Z$ ``completed" by its archimedean place. 

It turns out that the invariant $\hon(\Eb)$, defined as above as the logarithm of the number of points in the unit ball of the Euclidean lattice $\Eb,$ and the $\theta$-invariant $\hot(\Eb)$ coincide up to an error term bounded in function of $\rk E$ only: 

\begin{theorem}\label{propcompthetanaive} %For any Hermitian vector bundle $\Eb$ of positive rank over $\Spec \Z,$ we have:
 For any Euclidean lattice of positive rank $\Eb,$ we have:
\begin{equation}\label{compthetanaive}
\hot(\Eb)  - \frac{1}{2} \rk E. \log \rk E + \log (1 - 1 / 2 \pi) \leq \hon(\Eb) \leq \hot(\Eb) + \pi.
\end{equation}
\end{theorem}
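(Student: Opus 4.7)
The proof splits naturally into the two inequalities in (\ref{compthetanaive}), and the two directions rest on completely different ideas.

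For the easy upper bound $\hon(\Eb) \leq \hot(\Eb) + \pi$, the argument is immediate: for every $v\in E$ with $\Vert v\Vert \leq 1$ one has $e^{-\pi\Vert v\Vert^2}\geq e^{-\pi}$, and summing over the finite set $\{v\in E\mid \Vert v\Vert\leq 1\}$ yields
$$e^{-\pi}\,N_\Eb(1)\;\leq\;\sum_{\Vert v\Vert\leq 1}e^{-\pi\Vert v\Vert^2}\;\leq\;\theta_\Eb(1)\;=\;e^{\hot(\Eb)}.$$
Taking logarithms gives the inequality.

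For the lower bound, write $n:=\rk E$. The strategy is to combine a Banaszczyk-style concentration inequality (in the form to be reformulated in Section~\ref{Banasc2}) with the functional equation~(\ref{FE}). Banaszczyk's Lemma~1.5 of~\cite{Banaszczyk93} asserts that for any euclidean lattice $\Fb$ of rank $n$ and any $c\geq 1/\sqrt{2\pi}$, the fraction of the mass of $\theta_\Fb(1)$ supported on vectors of $\Fb$-norm strictly greater than $c\sqrt{n}$ is at most $(c\sqrt{2\pi e}\,e^{-\pi c^2})^n$. The idea is to apply this to a suitable rescaling of $\Eb$, so that the cutoff $c\sqrt{n}$ in the rescaled lattice coincides with the unit cutoff in $\Eb$ itself. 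Concretely, for any $\alpha\in\R$, the twist $\Eb\otimes\cOb(\alpha)$ satisfies $\Vert v\Vert_{\Eb\otimes\cOb(\alpha)}=e^{-\alpha}\Vert v\Vert_\Eb$ and $\theta_{\Eb\otimes\cOb(\alpha)}(1)=\theta_\Eb(e^{-2\alpha})$; choosing $\alpha=-\frac{1}{2}\log(c^2n)$ and applying Banaszczyk's inequality to $\Eb\otimes\cOb(\alpha)$ with the cutoff parameter $c$ yields, after rewriting in terms of $\Eb$,
$$\theta_\Eb(c^2 n)\;\bigl(1-(c\sqrt{2\pi e}\,e^{-\pi c^2})^n\bigr)\;\leq\;N_\Eb(1).$$

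The second ingredient is the comparison $\theta_\Eb(1)\leq (c^2n)^{n/2}\,\theta_\Eb(c^2 n)$, valid whenever $c^2n\geq 1$. This follows by writing $\theta_\Eb(1)/\theta_\Eb(c^2 n)=(c^2 n)^{n/2}\,\theta_{\Eb^\vee}(1)/\theta_{\Eb^\vee}(1/(c^2n))$ via the functional equation~(\ref{FE}) and observing that $\theta_{\Eb^\vee}$ is decreasing, so the ratio of $\theta_{\Eb^\vee}$-values is at most $1$. Combining the two estimates gives
$$\theta_\Eb(1)\;\leq\;\frac{c^n\,n^{n/2}}{1-(c\sqrt{2\pi e}\,e^{-\pi c^2})^n}\;N_\Eb(1),$$
and taking logarithms produces an inequality of the shape of the desired lower bound, with a constant depending on the free parameter $c\geq 1/\sqrt{2\pi}$.

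The main obstacle is to pin down the constant $\log(1-1/(2\pi))$ appearing in the statement. A direct application of the above with $c=1$ gives $\hot(\Eb)\leq \hon(\Eb)+\tfrac{n}{2}\log n - \log\bigl(1-(\sqrt{2\pi e}\,e^{-\pi})^n\bigr)$, whose error term is close to but not exactly that of the theorem. Obtaining the sharper constant $1-1/(2\pi)$ -- which is uniform in $n$ -- requires either using Banaszczyk's concentration at a near-critical value $c$ close to $1/\sqrt{2\pi}$ and optimizing in $c$, or invoking the refined Banaszczyk-type estimates whose statements and proofs will occupy Section~\ref{Banasc2}. A mild separate treatment of the small-rank range $n<2\pi$ (where $c^2n\geq 1$ may fail for $c=1/\sqrt{2\pi}$) may also be required, and in that range one can alternatively exploit Groenewegen's bound (Proposition~\ref{Grbd}) directly.
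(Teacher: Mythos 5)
Your upper bound $\hon(\Eb)\leq\hot(\Eb)+\pi$ is exactly the paper's argument (the paper proves the marginally stronger statement with $h^0_{\rm Bl}(\Eb)$ in place of $\hon(\Eb)$, but the case $x=0$ is precisely your two-line computation), so there is nothing to discuss there.

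For the lower bound there is a genuine gap, which you partly acknowledge: your argument does not produce the constant $\log(1-1/2\pi)$, and neither of your proposed repairs closes the deficit. With $c=1$ your error term is $-\log\bigl(1-(\sqrt{2\pi e}\,e^{-\pi})^n\bigr)$, and since $\sqrt{2\pi e}\,e^{-\pi}\approx 0.1786>1/2\pi\approx 0.1592$ this is worse than $-\log(1-1/2\pi)$ precisely when $n=1$ (for $n\geq 2$ your bound is actually stronger than the theorem's). Optimizing over $c$ cannot rescue the case $n=1$: the comparison $\theta_\Eb(1)\leq(c^2n)^{n/2}\,\theta_\Eb(c^2n)$ forces $c\geq 1$ there, one checks that $\min_{c\geq 1}\bigl[\log c-\log(1-c\sqrt{2\pi e}\,e^{-\pi c^2})\bigr]\approx 0.195$ still exceeds $-\log(1-1/2\pi)\approx 0.173$, and moving $c$ toward the critical value $1/\sqrt{2\pi}$ only makes the denominator $1-(c\sqrt{2\pi e}\,e^{-\pi c^2})^n$ degenerate. (An ad hoc direct verification of the rank-one case would patch this, but it is not what the paper does.)

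The paper's route is both simpler and sharper at this point, and differs from yours in exactly one ingredient. Instead of Banaszczyk's exponential tail bound, it uses the Chebyshev-type estimate $\sum_{\Vert v\Vert\geq r}e^{-\pi t\Vert v\Vert^2}\leq r^{-2}\sum_{v}\Vert v\Vert^2e^{-\pi t\Vert v\Vert^2}\leq\frac{n}{2\pi tr^2}\,\theta_\Eb(t)$ (Lemma \ref{VarBanas}), where the second-moment bound is nothing but the non-negativity of $\frac{d}{dt}\bigl[\log\theta_\Eb(t)+\frac{n}{2}\log t\bigr]$ --- a consequence of the functional equation, i.e.\ the very same monotonicity you already invoke to compare $\theta_\Eb(1)$ with $\theta_\Eb(c^2n)$. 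Taking $r=1$ and $t=n$ yields $N_\Eb(1)\geq(1-1/2\pi)\,\theta_\Eb(n)\geq(1-1/2\pi)\,n^{-n/2}\,\theta_\Eb(1)$, which is the asserted inequality with the exact constant (and in fact with the open unit ball, giving the refinement $\honm(\Eb)\geq\hot(\Eb)+\log(1-1/2\pi)-\frac{n}{2}\log n$). So your overall structure --- a tail estimate at an auxiliary scale $t$ combined with the functional-equation comparison of $\theta_\Eb(1)$ and $\theta_\Eb(t)$ --- is the right one; the fix is to replace the large-deviation bound by the first-moment Markov bound, which is what actually produces $1-1/2\pi$ uniformly in $n$.
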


We shall actually establish a slightly stronger version of the first inequality in (\ref{compthetanaive}), namely\footnote{Since $\honm(\Eb) = \hon(\Eb \otimes \cOb(-\epsilon))$ for any small enough $\epsilon$ in $\R^\ast_+$, (\ref{compthetanaivemoins}) actually follows from (\ref{compthetanaive}) applied to $\Eb \otimes \cOb(-\epsilon)$, by taking the limit $\epsilon \rightarrow 0_+$.}:
\begin{equation}\label{compthetanaivemoins}
\honm(\Eb) := \log \vert \{ v \in E \mid \Vert v \Vert < 1 \}\vert  \geq \hot(\Eb) + \log (1 - 1/2\pi) - \frac{1}{2} \rk E. \log \rk E.
\end{equation}

Together with the ``Riemann inequality'' over $\Spec \Z$ (\ref{ThetaRI}), inequality  (\ref{compthetanaivemoins}) entails the following strengthened variant of Minkowski's First Theorem:
$$\honm(\Eb) \geq \log (1 - 1 / 2 \pi) - \frac{1}{2} \rk E. \log \rk E+ \dega \Eb.$$

\begin{corollary}\label{lambdaoneone} For any Euclidean lattice of positive rank $\Eb,$ 
$$\lambda_1(\Eb) \geq 1 \Longrightarrow \hot(\Eb) \leq - \log (1 - 1/{2 \pi}) + \frac{1}{2} \rk E. \log \rk E .$$
 \end{corollary}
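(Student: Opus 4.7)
The plan is to derive the corollary directly from the strengthened form \eqref{compthetanaivemoins} of the lower bound in Theorem \ref{propcompthetanaive}, using the hypothesis $\lambda_1(\Eb) \geq 1$ to force the ``strict'' count $\honm(\Eb)$ to vanish.

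First I would observe that, by definition of the first minimum, $\lambda_1(\Eb) \geq 1$ means that every non-zero vector $v \in E$ satisfies $\Vert v \Vert \geq 1$. Consequently the only element of $E$ with $\Vert v \Vert < 1$ is $0$, so
\[
\honm(\Eb) := \log \vert \{ v \in E \mid \Vert v \Vert < 1 \}\vert = \log 1 = 0.
\]
(Note that one cannot directly apply the cruder bound on $\hon(\Eb)$ in \eqref{compthetanaive}, since short vectors of norm exactly $1$ could make $\hon(\Eb) > 0$; this is precisely why the version \eqref{compthetanaivemoins} involving $\honm$ is needed here.)

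Next I would invoke inequality \eqref{compthetanaivemoins}, which asserts that for any euclidean lattice of positive rank,
\[
\honm(\Eb) \geq \hot(\Eb) + \log(1 - 1/2\pi) - \tfrac{1}{2} \rk E \cdot \log \rk E.
\]
Substituting $\honm(\Eb) = 0$ and rearranging immediately yields
\[
\hot(\Eb) \leq -\log(1 - 1/2\pi) + \tfrac{1}{2} \rk E \cdot \log \rk E,
\]
which is the desired conclusion. No obstacle is anticipated: the corollary is a one-line consequence once one recognizes that the hypothesis on $\lambda_1$ kills the left-hand side of \eqref{compthetanaivemoins}.
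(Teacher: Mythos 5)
Your proof is correct and is exactly the paper's argument: the paper's one-line proof observes that $\lambda_1(\Eb)\geq 1$ is equivalent to $\honm(\Eb)=0$ and then (implicitly) rearranges the inequality (\ref{compthetanaivemoins}). Your remark that one needs the strict-inequality count $\honm$ rather than $\hon$ is a sound observation about why the statement is phrased this way.
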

 
 \proof The condition $\lambda_1(\Eb) \geq 1$ is equivalent to the equality $\honm(\Eb) = 0.$
 \qed
 
 Observe that, from Proposition \ref{Grbd}, we also obtain  a lower bound on $\hot(\Eb)$ for any Euclidean lattice $\Eb$ with $n:= \rk E >0$ and  $\lambda_1(\Eb) \geq 1$, namely:
 $$\hot(\Eb) \leq \log (1 + C(n,1)) = \log\left(1+ (3/\sqrt{\pi})^n \int_\pi^{+ \infty} u^{n/2} e^{-u}\, du\right).$$
 This estimate is slightly weaker than the one in Corollary \ref{lambdaoneone}. Indeed, when $n$ goes to $+\infty,$ its right hand side is of the form $\frac{1}{2} n.\log n + \alpha.n + o(n)$ with $\alpha >0$. %\frac{3}{\sqrt{\pi}}
 
 We may also introduce a variant \emph{à la Blichfeldt} of $\hon(\Eb),$ namely
 $$h^0_{\rm Bl}(\Eb) := \log \max_{ x \in E_\R} \vert \{ v \in E \mid \Vert v- x \Vert \leq 1 \} \vert$$
 (see Subsection \ref{Blich} \emph{infra}).
 It is straightforward that
 $$\honm (\Eb) \leq \hon(\Eb) \leq h^0_{\rm Bl}(\Eb),$$
 and we shall establish the following strengthening of the second inequality in (\ref{compthetanaive}):
 
 \begin{proposition}\label{propcompthetaBlich}  For any Euclidean lattice of positive rank $\Eb,$ we have:
\begin{equation}\label{compthetaBlich}
h^0_{\rm Bl}(\Eb) \leq \hot(\Eb) + \pi.
\end{equation}
\end{proposition}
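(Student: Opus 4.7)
The plan is to combine two simple facts: Banaszczyk-style domination of the shifted Gaussian sum by the unshifted one, and a trivial lower bound on the shifted Gaussian sum by the count of lattice points inside a ball of radius $1$ around $x$.

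More precisely, fix an arbitrary $x \in E_\R$. On one hand, each term $e^{-\pi \Vert x-v\Vert^2}$ with $\Vert x-v\Vert \leq 1$ is at least $e^{-\pi}$, so
\[
\sum_{v \in E} e^{-\pi \Vert x-v\Vert^2} \;\geq\; e^{-\pi}\cdot \bigl|\{v \in E \mid \Vert v-x\Vert \leq 1\}\bigr|.
\]
On the other hand, the inequality \eqref{PoissonGaussineq} (which is an immediate consequence of the Poisson summation formula for the Gaussian, applied to $\Vb = \Eb$) gives
\[
\sum_{v \in E} e^{-\pi \Vert x-v\Vert^2} \;\leq\; \sum_{v \in E} e^{-\pi \Vert v\Vert^2} \;=\; e^{\hot(\Eb)}.
\]
Combining these two inequalities, taking logarithms, and then taking the supremum over $x \in E_\R$ yields exactly $h^0_{\rm Bl}(\Eb) \leq \hot(\Eb) + \pi$.

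There is essentially no obstacle: both ingredients are already in place in the paper (the Gaussian domination inequality is \eqref{PoissonGaussineq}, and the lower bound is elementary). The only thing worth noting is that \eqref{PoissonGaussineq} is really the substantive input — it is the place where Poisson summation enters — whereas the constant $\pi$ in the final bound comes from the trivial pointwise estimate $e^{-\pi \Vert x-v\Vert^2} \geq e^{-\pi}$ on the unit ball around $x$.
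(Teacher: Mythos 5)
Your proof is correct and is essentially identical to the paper's own argument: both rest on the Poisson-summation consequence \eqref{PoissonGaussineq} to dominate the shifted Gaussian sum by $e^{\hot(\Eb)}$, and on the trivial bound $e^{-\pi\Vert x-v\Vert^2}\geq e^{-\pi}$ for $v$ in the unit ball around $x$. Nothing to add.
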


Here is an elementary consequence of Theorem \ref{propcompthetanaive} and Proposition \ref{propcompthetaBlich} which does not seem to appear in the literature:

\begin{corollary} For any Euclidean lattice of positive rank $\Eb,$ 
$$h^0_{\rm Bl}(\Eb) \leq  \pi - \log (1 - 1 / 2 \pi) +  \frac{1}{2} \rk E. \log \rk E +  \honm(\Eb).$$ \qed
 \end{corollary}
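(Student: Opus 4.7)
The plan is to simply chain together the two inequalities immediately preceding this corollary. From Proposition \ref{propcompthetaBlich}, I have
\[
h^0_{\rm Bl}(\Eb) \leq \hot(\Eb) + \pi,
\]
so it suffices to bound $\hot(\Eb)$ from above in terms of $\honm(\Eb)$.

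Such an upper bound is precisely the content of the refined estimate \eqref{compthetanaivemoins} that strengthens the first inequality of Theorem \ref{propcompthetanaive}, namely
\[
\honm(\Eb) \geq \hot(\Eb) + \log(1 - 1/2\pi) - \tfrac{1}{2}\,\rk E \cdot \log \rk E,
\]
which I rearrange into
\[
\hot(\Eb) \leq \honm(\Eb) - \log(1 - 1/2\pi) + \tfrac{1}{2}\,\rk E \cdot \log \rk E.
\]

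Substituting this into the Blichfeldt-type bound yields the claimed inequality
\[
h^0_{\rm Bl}(\Eb) \leq \pi - \log(1-1/2\pi) + \tfrac{1}{2}\,\rk E \cdot \log \rk E + \honm(\Eb).
\]
There is no genuine obstacle here: the corollary is a purely formal consequence of the two results it cites, and the only thing to check is that the numerical constants match, which they do by direct addition. The substance of the argument lies entirely in the proofs of Proposition \ref{propcompthetaBlich} and of the sharpened inequality \eqref{compthetanaivemoins}, both of which are taken as given at this point in the text.
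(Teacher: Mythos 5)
Your proof is correct and is exactly the argument the paper intends: the corollary is stated as an immediate consequence of Proposition \ref{propcompthetaBlich} and the sharpened inequality (\ref{compthetanaivemoins}), chained together just as you do. The constants add up as claimed.
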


\subsection{Proof of  Proposition \ref{propcompthetaBlich} and Theorem \ref{propcompthetanaive} }\label{Banasc1}

To prove Proposition \ref{propcompthetaBlich}, namely that 
%\begin{equation}\label{BlBl}
$$h^0_{\rm Bl} (\Eb) \leq \hot(\Eb) + \pi,$$
%\end{equation}
we simply    observe that,   from the very definition of $\hot(\Eb)$ and the inequality (\ref{PoissonGaussineq}), we have, for any $x \in E_\R,$
$$\hot (\Eb) = \log \sum_{v \in E} e ^{-\pi \Vert v \Vert^2} \geq \log \sum_{v \in E} e ^{-\pi \Vert v - x\Vert^2} \geq \log \sum_{v \in E, \Vert v - x \Vert \leq 1} e ^{-\pi \Vert v - x\Vert^2}$$
and that the last sum is at least 
$$e^{-\pi}. \vert \{ v \in E \mid \Vert v -x \Vert \leq 1\} \vert.$$

The second inequality in (\ref{compthetanaive}), namely
%\begin{equation}\label{simple}
$\hon(\Eb) \leq \hot(\Eb) + \pi,$
%\end{equation}
follows from the special case $x=0$ (which does not request the use of  (\ref{PoissonGaussineq})) of the previous argument.

We split the proof of (\ref{compthetanaivemoins}) in a succession of auxiliary statements, of independent interest. The following assertions  are  variants of   results in \cite{Banaszczyk93}, Section 1. 

\begin{lemma}\label{VarBanas}
1) The expression $\log \theta_{\Eb}(t)$ defines a decreasing function of $t$ in $\R_{+}^\ast,$
and the expression 
\begin{equation}\label{incr}\log \theta_{\Eb}(t)  + \frac{1}{2} \rk E. \log t\end{equation}
 an increasing function of $t$ in $\R_{+}^\ast.$

2) We have:
\begin{equation}\label{thetav2}
\sum_{v \in E} \Vert v \Vert^2 e^{-\pi t \Vert v \Vert^2} \leq \frac{\rk E}{2\pi t} \sum_{v\in E}e^{-\pi t \Vert v\Vert^2}.
\end{equation}

3) For any $t$ and $r$ in $\R^\ast_{+},$ we have:
\begin{equation}\label{thetar}
\sum_{v \in E, \Vert v \Vert  <  r}  e^{-\pi t \Vert v \Vert^2} \geq \left( 1 - \frac{\rk E}{2\pi t r^2}\right) \sum_{v\in E}e^{-\pi t \Vert v\Vert^2}.
\end{equation}
\end{lemma}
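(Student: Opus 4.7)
\medskip

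\noindent\textbf{Proof plan for Lemma \ref{VarBanas}.} The three statements are tightly linked, so the plan is to establish them in the order 1), 2), 3), each time using the previous.

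For part 1), the fact that $\theta_\Eb$ is a decreasing function of $t$ is immediate from the termwise definition, since each exponential $e^{-\pi t \Vert v\Vert^2}$ with $v\neq 0$ is strictly decreasing in $t$, while the $v=0$ term contributes a constant $1$. The monotonicity of the combination~(\ref{incr}) will be deduced from the Poisson--Riemann--Roch functional equation (\ref{logFE}): rewriting it as
$$\log \theta_\Eb(t) + \tfrac{1}{2}\rk E \cdot \log t = \dega \Eb + \log \theta_{\Eb^\vee}(t^{-1}),$$
the right-hand side is visibly non-decreasing in $t$ because $\log \theta_{\Eb^\vee}$ is itself non-increasing (applied to the dual lattice).

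For part 2), the plan is to differentiate. A direct computation yields
$$\frac{d}{dt}\log\theta_\Eb(t) = -\pi\,\frac{\sum_{v\in E}\Vert v\Vert^2 e^{-\pi t\Vert v\Vert^2}}{\sum_{v\in E} e^{-\pi t\Vert v\Vert^2}},$$
and part 1) applied to (\ref{incr}) gives $\tfrac{d}{dt}\log\theta_\Eb(t) + \tfrac{\rk E}{2t}\ge 0$. Solving for the quotient of sums immediately yields (\ref{thetav2}). (Term-by-term differentiation is legitimate here, because for $t$ in any compact subinterval of $\R^\ast_+$ the series converges normally.)

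For part 3), the strategy is to bound the ``tail'' of the series using (\ref{thetav2}). On the set $\{v\in E \mid \Vert v\Vert\ge r\}$, one has $\Vert v\Vert^2/r^2\ge 1$, so
$$\sum_{\substack{v\in E\\ \Vert v\Vert\ge r}} e^{-\pi t\Vert v\Vert^2} \;\le\; \frac{1}{r^2}\sum_{v\in E}\Vert v\Vert^2 e^{-\pi t\Vert v\Vert^2} \;\le\; \frac{\rk E}{2\pi t r^2}\sum_{v\in E} e^{-\pi t\Vert v\Vert^2},$$
where the last inequality uses (\ref{thetav2}). Subtracting this bound from $\theta_\Eb(t)$ gives exactly (\ref{thetar}) (noting that lattice points with $\Vert v\Vert=r$ contribute non-negatively to the left-hand side of (\ref{thetar}), so the inequality holds with the strict condition $\Vert v\Vert<r$). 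No single step constitutes a real obstacle; the only substantive input is the Poisson--Riemann--Roch functional equation, which is already at hand.
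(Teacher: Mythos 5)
Your proof is correct and follows essentially the same route as the paper: the first claim of 1) termwise, the second via the functional equation rewriting (\ref{incr}) as $\dega \Eb + \log\theta_{\Eb^\vee}(t^{-1})$, then 2) as the non-negativity of the derivative of (\ref{incr}), and 3) by the Chebyshev-type tail bound combined with (\ref{thetav2}). The only difference is cosmetic (your justification of termwise differentiation, and the unnecessary remark about points with $\Vert v\Vert = r$, since $\{\Vert v\Vert < r\}$ is exactly the complement of $\{\Vert v\Vert \ge r\}$).
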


\proof The first assertion in 1) is clear. According to the Functional Equation (\ref{logFE}), the expression (\ref{incr}) may also be written
$$ \dega \Eb + \log \theta_{\Eb^\vee}(t^{-1}),$$
and consequently defines an increasing function of $t.$ (This fact is also a reformulation of Lemma \ref{cor:hotED} in the special case $K=\Q$.) 
The inequality (\ref{thetav2}) may also be written
$$-  \frac{1}{\pi} \frac{d \theta_{\Eb}(t)}{dt} \leq \frac{\rk E}{2 \pi t} \theta_{\Eb}(t),$$
and simply expresses that the derivative of (\ref{incr}) is non-negative. 

To establish the inequality (\ref{thetar}), we combine 
the straightforward estimate
$$\sum_{v \in E, \Vert v \Vert \geq  r}  e^{-\pi t \Vert v \Vert^2} \leq \frac{1}{r^2}  \sum_{v\in E}\Vert v \Vert^2 e^{-\pi t \Vert v\Vert^2}$$
with (\ref{thetav2}). This yields:
$$ \sum_{v \in E, \Vert v \Vert \geq  r}  e^{-\pi t \Vert v \Vert^2} \leq   \frac{\rk E}{2 \pi t r^2}  \sum_{v \in E}  e^{-\pi t \Vert v \Vert^2},$$
or equivalently, 
$$\sum_{v \in E, \Vert v \Vert <  r}  e^{-\pi t \Vert v \Vert^2} \geq \left( 1 - \frac{\rk E}{2\pi t r^2}\right) \sum_{v\in E}e^{-\pi t \Vert v\Vert^2}.$$ 
\qed 

From (\ref{thetar}) with $r=1$, we obtain
that, for any $t > \rk E /2\pi,$ we have:
$$\honm(\Eb) \geq \log (1 - \rk E /(2 \pi t)) + \log \theta_{\Eb} (t).$$
Using also that, for any $t\geq 1,$
$$\log \theta_{\Eb} (t) \geq \log \theta_{\Eb}(1)  - \frac{1}{2} \rk E. \log t,$$
we finally obtain:
\begin{proposition}\label{thetanaive2}
For any $t \geq \min (1, \rk E/2 \pi),$ we have
$$\honm(\Eb) \geq \log (1 - \rk E /(2 \pi t))  - \frac{1}{2} \rk E. \log t + \hot(\Eb).$$
\end{proposition}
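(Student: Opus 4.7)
The plan is to combine the two conclusions of Lemma \ref{VarBanas}: the truncated-sum inequality (\ref{thetar}) applied at $r = 1$, and the monotonicity of $t \mapsto \log \theta_{\Eb}(t) + \tfrac{1}{2} \rk E \cdot \log t$ that is also established there.

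First, I would apply (\ref{thetar}) with $r = 1$. For any $t > \rk E / (2\pi)$ this yields
$$\sum_{v \in E,\, \Vert v \Vert < 1} e^{-\pi t \Vert v\Vert^2} \;\geq\; \Bigl(1 - \frac{\rk E}{2\pi t}\Bigr)\, \theta_{\Eb}(t).$$
The left-hand side is a sum of non-negative terms each bounded above by $1$, indexed by the finite set $\{v \in E : \Vert v\Vert < 1\}$, whose cardinality is by definition $e^{\honm(\Eb)}$. Taking logarithms therefore gives
$$\honm(\Eb) \;\geq\; \log \Bigl(1 - \frac{\rk E}{2\pi t}\Bigr) + \log \theta_{\Eb}(t).$$

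Next I would eliminate $\log \theta_{\Eb}(t)$ in favour of $\hot(\Eb) = \log \theta_{\Eb}(1)$ using the second monotonicity assertion of Lemma \ref{VarBanas}(1). Since the function $s \mapsto \log \theta_{\Eb}(s) + \tfrac{1}{2} \rk E \cdot \log s$ is non-decreasing on $\R^{\ast}_+$, for $t \geq 1$ we get $\log \theta_{\Eb}(t) + \tfrac{1}{2} \rk E \cdot \log t \geq \log \theta_{\Eb}(1) = \hot(\Eb)$, i.e.\
$$\log \theta_{\Eb}(t) \;\geq\; \hot(\Eb) - \tfrac{1}{2} \rk E \cdot \log t.$$
Plugging this into the previous inequality yields
$$\honm(\Eb) \;\geq\; \log \Bigl(1 - \frac{\rk E}{2\pi t}\Bigr) - \tfrac{1}{2} \rk E \cdot \log t + \hot(\Eb),$$
which is exactly the asserted bound. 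The validity range combines the two constraints $t > \rk E/(2\pi)$ (needed for the log term to be well defined and for (\ref{thetar}) to be informative) and $t \geq 1$ (needed for the monotonicity comparison against $s = 1$), i.e.\ $t \geq \max(1, \rk E/(2\pi))$, which is the threshold implicit in the statement of the proposition.

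There is no genuine difficulty: Lemma \ref{VarBanas} has been crafted precisely to make this proof a two-line concatenation. The only step requiring any comment is the crude upper bound of the truncated theta sum by the counting function $e^{\honm(\Eb)}$, which is what converts an analytic estimate about $\theta_{\Eb}$ into a lattice-point-counting estimate and thereby links the $\theta$-invariant to the ``Arakelov-type'' invariant $\honm(\Eb)$.
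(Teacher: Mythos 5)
Your proof is correct and is essentially identical to the paper's argument: apply (\ref{thetar}) with $r=1$, bound the truncated sum by the point count $e^{\honm(\Eb)}$, and then use the monotonicity of $s \mapsto \log\theta_{\Eb}(s) + \tfrac{1}{2}\rk E\cdot\log s$ to replace $\log\theta_{\Eb}(t)$ by $\hot(\Eb) - \tfrac{1}{2}\rk E\cdot\log t$. Your observation that the correct validity range is $t \geq \max(1, \rk E/(2\pi))$ rather than the $\min$ printed in the statement is also right; that is a typo in the paper.
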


Notably we may choose $t= \rk E,$ and then we obtain\footnote{The ``optimal"   choice of $t$ in terms of $n := \rk E$ would be $t= (n+2)/2\pi.$ This choice leads to the slightly stronger  estimate:
$\honm(\Eb) \geq - \frac{n+2}{2} \log \frac{n+2}{2\pi} -\log \pi + \hot(\Eb).$}:
$$\honm(\Eb) \geq \log (1 - {1}/{2 \pi}) - \frac{1}{2} \rk E. \log \rk E+ \hot(\Eb).$$
This establishes the  inequality  (\ref{compthetanaivemoins}). 

\section{Banaszczyk's estimates and $\theta$-invariants}\label{Banasc2}

In the next paragraphs, we want to discuss in  more details the relation between Banaszczyk's celebrated results in geometry of numbers (\cite{Banaszczyk93}; see also \cite{MicciancioRegev2007} and \cite{TianLiuXu2014} for some recent developments) and the properties of $\theta$-invariants. 

This relation already appeared in the derivation of the estimates relating the invariants $\hot(\Eb)$, $\hon(\Eb)$ and $h^0_{\rm Bl}(\Eb)$ in Subsection \ref{Banasc1}. 

\subsection{Banaszczyk's key estimate}
The starting point of Banaszczyk results is arguably the following estimate, which actually is a simple consequence of the increasing character of the function $\log \theta_{\Eb}(t)  + \frac{1}{2} \rk E. \log t$ established in Lemma \ref{cor:hotED} and Lemma \ref{VarBanas}, 2):

\begin{lemma}\label{lem:Ban}
 Let $\Eb$ be an Euclidean lattice of positive rank $n$, and let $x$ an element of $E_\R.$
 
 For any $r \in \R^+$ and any $t \in ]0,1],$ we have:
 \begin{equation}\label{Banana}
\sum_{v \in E, \Vert v -x \Vert \geq r} e^{-\pi \Vert v -x \Vert^2} \leq t^{-n/2} e^{-\pi (1-t) r^2} \sum_{v \in E} e^{-\pi \Vert v \Vert^2}.\end{equation}
\end{lemma}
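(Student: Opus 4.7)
The plan is to split the exponential into two factors, using one to exploit the lower bound $\Vert v - x \Vert \geq r$ and the other to recover the full theta sum at a rescaled temperature, then invoke the two monotonicity facts to compare this rescaled theta value to $\theta_{\Eb}(1)$.

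More precisely, for any $t \in \,]0,1]$, I would write
$$e^{-\pi \Vert v - x\Vert^2} = e^{-\pi(1-t)\Vert v-x\Vert^2}\cdot e^{-\pi t \Vert v -x\Vert^2}.$$
Since the summation is restricted to $v \in E$ with $\Vert v - x\Vert \geq r$ and since $1-t \geq 0$, the first factor is bounded above by $e^{-\pi(1-t)r^2}$. Pulling this constant outside the sum and extending the range of summation to all of $E$ gives
$$\sum_{v \in E,\,\Vert v-x\Vert\geq r} e^{-\pi\Vert v-x\Vert^2}\; \leq\; e^{-\pi(1-t)r^2}\sum_{v\in E} e^{-\pi t\Vert v-x\Vert^2}.$$

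The remaining sum is the value at $x$ of the $V$-periodized Gaussian for the rescaled euclidean lattice $\Eb \otimes \cOb(-\tfrac{1}{2}\log t)$, whose norm is $\sqrt{t}\,\Vert . \Vert$. Applying inequality~(\ref{PoissonGaussineq}) to this rescaled lattice (which is the reflex of the Poisson formula: the constant term in the Fourier expansion majorizes the full series), I get
$$\sum_{v\in E} e^{-\pi t\Vert v-x\Vert^2}\; \leq\; \sum_{v\in E} e^{-\pi t\Vert v\Vert^2} \;=\; \theta_{\Eb}(t).$$

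Finally, to compare $\theta_{\Eb}(t)$ to $\theta_{\Eb}(1)$, I use the second assertion of Lemma~\ref{VarBanas}(1): the function $t\mapsto \log\theta_{\Eb}(t)+\tfrac{n}{2}\log t$ is non-decreasing on $\R^\ast_+$. For $t\leq 1$ this yields
$$\log\theta_{\Eb}(t)+\tfrac{n}{2}\log t \;\leq\; \log\theta_{\Eb}(1),\qquad\text{i.e.}\qquad \theta_{\Eb}(t)\;\leq\; t^{-n/2}\,\theta_{\Eb}(1).$$
Combining the three inequalities delivers (\ref{Banana}). There is no real obstacle here, since each of the three ingredients (splitting the Gaussian, translation inequality, monotonicity of the normalized log-theta) has already been prepared earlier in the section; the only mild point to check is the compatibility of the normalization when rescaling $\Eb$ to reduce to the already stated form of (\ref{PoissonGaussineq}).
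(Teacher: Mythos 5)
Your proof is correct and follows essentially the same route as the paper's: split the Gaussian as $e^{-\pi(1-t)\Vert v-x\Vert^2}e^{-\pi t\Vert v-x\Vert^2}$, bound the first factor by $e^{-\pi(1-t)r^2}$ on the range of summation, apply the translation inequality (\ref{PoissonGaussineq}) to the lattice rescaled by $\sqrt{t}$, and conclude with the monotonicity of $\log\theta_{\Eb}(t)+\tfrac{n}{2}\log t$ from Lemma \ref{VarBanas}. The only (immaterial) difference is the order in which you enlarge the summation range versus apply the translation inequality.
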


\begin{proof} We have:
\begin{align}
 \sum_{v \in E, \Vert v -x \Vert \geq r} e^{-\pi \Vert v -x \Vert^2} & = \sum_{v \in E, \Vert v -x \Vert \geq r} e^{-\pi (1-t) \Vert v -x \Vert^2} e^{-\pi t \Vert v -x \Vert^2} \notag \\
 & \leq e^{-\pi (1-t) r^2} \sum_{v \in E, \Vert v -x \Vert \geq r}  e^{-\pi t \Vert v -x \Vert^2} \notag \\
 & \leq e^{-\pi (1-t) r^2} \sum_{v \in E}  e^{-\pi t \Vert v \Vert^2} \label{expl1}\\
 & \leq e^{-\pi (1-t) r^2} t^{-n/2} \sum_{v \in E}  e^{-\pi  \Vert v \Vert^2}. \label{expl2}
\end{align}
Indeed, the inequality (\ref{expl1}) follows from (\ref{PoissonGaussineq}), and (\ref{expl2}) from the inequality 
$$\log \theta_{\Eb}(t)  + \frac{n}{2} \log t  \leq \log \theta_{\Eb}(1).$$
\end{proof}

Clearly the inequality (\ref{Banana}) is significant only for values of $r$ such that
$$\inf_{t \in ]0,1]} t^{-n/2} e^{-\pi (1-t) r^2} \leq  1.$$
An elementary computation shows that this holds precisely when $r \geq \sqrt{\frac{n}{2\pi}}$, and that, when this holds, if we write
$$r =  \sqrt{\frac{n}{2\pi}} \tilde{r}$$
with $\tilde{r}$ in $[1, +\infty[$, then the minimum of $t^{-n/2} e^{-\pi (1-t) r^2}$ over $]0,1]$ is achieved for 
$t= t_{\rm min}:= \tilde{r}^{-2}$, and takes the value
$$ t_{\rm min}^{-n/2} e^{-\pi (1-t_{\rm min}) r^2} = [\tilde{r} e^{-(1/2) (\tilde{r}^2-1)}]^n.$$
It is straightforward that this minimum $[\tilde{r} e^{-(1/2) (\tilde{r}^2-1)}]^n$ is a decreasing function of $\tilde{r} \in [1, +\infty[$, which takes the value $1$ when $\tilde{r}=1.$

These elementary considerations show that Lemma \ref{lem:Ban} may be reformulated in the following version, better suited to applications:

\begin{proposition}\label{prop:Banamna}  Let $\Eb$ be an Euclidean lattice of positive rank $n$, and let $x$ an element of $E_\R.$ For every element $\tilde{r}$ in $[1, +\infty[$, if we let
$$r = \sqrt{\frac{n}{2\pi}}\tilde{r},
$$ then we have:
\begin{equation}\label{Banamna}
\sum_{v \in E, \Vert v -x \Vert \geq r} e^{-\pi \Vert v -x \Vert^2} \leq
 [\tilde{r} e^{-(1/2) (\tilde{r}^2-1)}]^n
 \sum_{v \in E} e^{-\pi \Vert v \Vert^2}. 
\end{equation}
\qed
\end{proposition}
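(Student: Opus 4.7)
The plan is to derive this statement directly from Lemma \ref{lem:Ban} by optimizing, for each fixed $r$, the factor $t^{-n/2} e^{-\pi(1-t)r^2}$ appearing on the right-hand side of \eqref{Banana} over the parameter $t \in {]0,1]}$. No new lattice-theoretic input is needed; the proof is essentially a one-variable calculus exercise.

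First I would differentiate (or take the logarithm and then differentiate) the function $\varphi(t) := t^{-n/2} e^{-\pi(1-t)r^2}$. Its logarithmic derivative is $-n/(2t) + \pi r^2$, which vanishes at $t_\ast := n/(2\pi r^2)$. Since $\varphi$ is convex on $\R^\ast_+$ (its logarithm is), the minimum of $\varphi$ on the half-line $]0,+\infty[$ is attained at $t_\ast$. The minimum of $\varphi$ on the interval $]0,1]$ therefore coincides with $\varphi(t_\ast)$ as soon as $t_\ast \leq 1$, i.e.\ as soon as $r \geq \sqrt{n/(2\pi)}$; otherwise it is attained at $t=1$, where $\varphi(1)=1$, and the estimate \eqref{Banana} becomes trivial. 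So one should restrict attention to $r \geq \sqrt{n/(2\pi)}$, which is precisely the regime covered by the change of variable $r = \sqrt{n/(2\pi)}\,\tilde{r}$ with $\tilde{r} \in [1,+\infty[$.

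Next I would perform the explicit substitution. With $r = \sqrt{n/(2\pi)}\,\tilde{r}$, the optimal parameter is $t_\ast = 1/\tilde{r}^2 \in ]0,1]$, and one computes
\begin{equation*}
t_\ast^{-n/2} = \tilde{r}^n,
\qquad
e^{-\pi(1-t_\ast)r^2} = \exp\!\Bigl(-\pi r^2 + \pi r^2/\tilde{r}^2\Bigr) = \exp\!\Bigl(-\tfrac{n}{2}(\tilde{r}^2 - 1)\Bigr),
\end{equation*}
whose product factors as $\bigl[\tilde{r}\, e^{-(1/2)(\tilde{r}^2 - 1)}\bigr]^n$. Inserting this value of $\varphi(t_\ast)$ into the conclusion of Lemma \ref{lem:Ban} applied with $t = t_\ast$ gives exactly the inequality \eqref{Banamna}.

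There is essentially no obstacle: the only small point to check is that $t_\ast \in ]0,1]$ for $\tilde{r} \geq 1$, which is immediate, so that Lemma \ref{lem:Ban} is legitimately applicable with this choice of $t$. The content of the proposition is therefore just that Lemma \ref{lem:Ban} is \emph{sharp} in $t$ for each admissible $r$, and rewriting the optimal bound in the natural scaling $r = \sqrt{n/(2\pi)}\,\tilde{r}$ makes the dependence on the dimension transparent (a decreasing factor of the form $[\tilde{r}\,e^{-(\tilde{r}^2-1)/2}]^n$ that equals $1$ at $\tilde{r} = 1$ and decays super-exponentially in $n$ for $\tilde{r}>1$).
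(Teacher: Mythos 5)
Your proof is correct and is essentially identical to the paper's: the author also obtains Proposition \ref{prop:Banamna} by applying Lemma \ref{lem:Ban} with the optimal parameter $t = \tilde{r}^{-2}$ (which lies in $]0,1]$ exactly when $\tilde{r}\geq 1$) and computing that the resulting factor equals $[\tilde{r}\,e^{-(1/2)(\tilde{r}^2-1)}]^n$. Your computations check out, so there is nothing to add.
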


\subsection{Application: first minimum and $\theta$-invariants}\label{bisrepetita}

The special case of (\ref{Banamna}) where $x=0$ has already the following non-trivial consequence:

\begin{corollary}
Let $\Eb$ be an Euclidean lattice of positive rank $n$, and of  first minimum $\lambda_1(\Eb) \geq \sqrt{n/2\pi}$. 

If we define $\tilde{\lambda}$ in $[1,+\infty[$ by  the relation:
$$\lambda_1(\Eb)= \sqrt{\frac{n}{2\pi}}\tilde{\lambda} ,$$
then the following inequality holds:
\begin{equation}\label{hotlambdaBan}
\hot(\Eb) \leq \log (1-[\tilde{\lambda} e^{-(1/2) (\tilde{\lambda}^2-1)}]^n)^{-1}.
\end{equation}
and
\begin{equation}\label{hotlambdaBanbis}
e^{\hot(\Eb)} - 1 \leq \frac{[\tilde{\lambda} e^{-(1/2) (\tilde{\lambda}^2-1)}]^n}{1-[\tilde{\lambda} e^{-(1/2) (\tilde{\lambda}^2-1)}]^n}.
\end{equation}
\end{corollary}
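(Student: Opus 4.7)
The plan is to deduce this corollary directly from Proposition \ref{prop:Banamna} applied at the point $x = 0$ and at the radius $r = \lambda_1(\Eb)$, which is precisely the situation where the sum on the left-hand side is particularly simple to interpret.

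More precisely, I would first set $\alpha := [\tilde{\lambda} e^{-(1/2)(\tilde{\lambda}^2-1)}]^n$ and observe that the two asserted inequalities \eqref{hotlambdaBan} and \eqref{hotlambdaBanbis} are algebraically equivalent: the relation $e^{\hot(\Eb)} - 1 \leq \alpha / (1-\alpha)$ is the same as $e^{\hot(\Eb)} \leq 1/(1-\alpha)$, which in turn is $\hot(\Eb) \leq \log (1-\alpha)^{-1}$. (Note that $\alpha \in \,]0,1]$ since $\tilde{r} \mapsto [\tilde{r} e^{-(1/2)(\tilde{r}^2-1)}]^n$ is decreasing on $[1,+\infty[$ and takes the value $1$ at $\tilde{r}=1$; the case $\tilde{\lambda}=1$, where the bound becomes vacuous, can be treated separately or absorbed into the convention $\log 0^{-1} = +\infty$.)

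Next, I apply Proposition \ref{prop:Banamna} with $x = 0$ and $\tilde{r} = \tilde{\lambda}$, so that $r = \sqrt{n/(2\pi)}\,\tilde{\lambda} = \lambda_1(\Eb)$. This yields
\begin{equation*}
\sum_{\substack{v \in E \\ \Vert v\Vert \geq \lambda_1(\Eb)}} e^{-\pi \Vert v\Vert^2} \;\leq\; \alpha \sum_{v \in E} e^{-\pi \Vert v \Vert^2}.
\end{equation*}
The key observation is that, by the very definition of the first minimum $\lambda_1(\Eb)$, the subset $\{v \in E \mid \Vert v \Vert \geq \lambda_1(\Eb)\}$ of $E$ is exactly $E \setminus \{0\}$. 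Therefore the left-hand side equals
\begin{equation*}
\sum_{v \in E} e^{-\pi \Vert v\Vert^2} \;-\; 1 \;=\; e^{\hot(\Eb)} - 1.
\end{equation*}
Combining these, we obtain $e^{\hot(\Eb)} - 1 \leq \alpha\, e^{\hot(\Eb)}$, which after rearrangement becomes $e^{\hot(\Eb)}(1-\alpha) \leq 1$, i.e., $e^{\hot(\Eb)} \leq (1-\alpha)^{-1}$. This is precisely \eqref{hotlambdaBan}, and \eqref{hotlambdaBanbis} follows by subtracting $1$.

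There is no real obstacle here: everything reduces to applying the Banaszczyk-type estimate \eqref{Banamna} at the first minimum and recognizing that in this special case the restricted theta sum is essentially the full theta sum minus its constant term. The only point requiring minimal care is to check that $\alpha < 1$, which is ensured by the strict form of the monotonicity of $\tilde{r} \mapsto \tilde{r} e^{-(1/2)(\tilde{r}^2-1)}$ for $\tilde{r} > 1$ noted just before Proposition \ref{prop:Banamna}.
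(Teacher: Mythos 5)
Your proof is correct and follows exactly the paper's own argument: apply the Banaszczyk estimate \eqref{Banamna} with $x=0$ and $r=\lambda_1(\Eb)$, note that the restricted sum is $e^{\hot(\Eb)}-1$, and rearrange. The extra care you take with the case $\tilde{\lambda}=1$ (where $\alpha=1$ and the bound is vacuous) is a reasonable, if minor, addition that the paper leaves implicit.
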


\begin{proof} By applying (\ref{Banamna}) to $x=0$ and $r=\lambda_1(\Eb),$ we get:
$$  \sum_{v \in E} e^{-\pi \Vert v \Vert^2} - 1 =\sum_{v \in E, \Vert v\Vert \geq \lambda_1(\Eb) } e^{-\pi \Vert v\Vert^2} \leq
 [\tilde{\lambda} e^{-(1/2) (\tilde{\lambda}^2-1)}]^n
 \sum_{v \in E} e^{-\pi \Vert v \Vert^2}. $$
 This establishes the inequality
 $$e^{\hot(\Eb)} - 1 \leq [\tilde{\lambda} e^{-(1/2) (\tilde{\lambda}^2-1)}]^n e^{\hot(\Eb)},$$
 which in turn is equivalent to (\ref{hotlambdaBan}) and (\ref{hotlambdaBanbis}).
 \end{proof}

We may compare the upper-bound  (\ref{hotlambdaBanbis}) on $\hot(\Eb)$ in terms of the first minimum $\lambda_1(\Eb)$, assumed to be $> (n/2\pi)^{1/2}$, obtained by means of Banaszczyk's methods, with the  upper-bound   derived in Proposition \ref{Grbd} by using  Groenewegen's argument, namely:
\begin{equation}\label{GrbdEncore}
e^{\hot(\Eb)} -1 \leq  3^n \left(1-\frac{n}{2\pi \lambda_1(\Eb)^2}\right)^{-1} e^{-\pi \lambda_1(\Eb)^2}.
\end{equation} 

To achieve this comparison, observe that
\begin{equation}
[\tilde{\lambda} e^{-(1/2) (\tilde{\lambda}^2-1)}]^n = \left(\frac{n}{2\pi e}\right)^{-n/2} \lambda_1(\Eb)^n e^{-\pi \lambda_1(\Eb)^2}.
\end{equation}
This shows that, when $\lambda_1(\Eb)$ goes to $+\infty$, Groenewegen's bound (\ref{GrbdEncore}) is better than (\ref{hotlambdaBanbis}) by a factor 
$$3^n \left(\frac{n}{2\pi e}\right)^{n/2} \lambda_1(\Eb)^{-n} = \left(\frac{3}{\sqrt{e}}\right)^n \tilde{\lambda}^{-n}.$$

Besides, for any fixed value of $n$, Groenewegen's bound is also better 
than Banascszyk's when $\lambda_1(\Eb)$ goes to $(n/2\pi)^{1/2}$. However, when $\lambda_1(\Eb)= \sqrt{n/2}$ or equivalently $\tilde{\lambda}=\sqrt{\pi},$ Banascszyk's bound improves on
Groenewegen's one by a factor 
$$(1-\pi^{-1})^{-1} \left(\frac{\pi e}{9}\right )^{n/2}$$
when $n$ goes to infinity. (Observe that $\pi e/9 =  0.9488... < 1$.)

\subsection{Covering radius and Banaszczyk's transference estimate}

At this stage, we can  easily recover   Banaszczyk's transference estimate   relating the first minimum and the covering radius of some Euclidean lattice and of its dual.
 
Recall that the \emph{covering radius}  of an Euclidean lattice $\Eb$ of positive rank is defined
as the positive real number
$$\rho(\Eb) := \max_{x \in E_\R} \min_{v \in E} \Vert v-x \Vert = \inf \left\{ r \in \R^+ \mid \bigcup_{v \in E} \Bint(v,r) = E_\R \right\}$$
and that Banaszczyk's transference estimate is the second estimate in the following proposition:
\begin{proposition}[\cite{Banaszczyk93}, Theorem 2.2] For any Euclidean lattice $\Eb$ of positive rank $n,$ we have:
$$1/2 \leq  \rho(\Eb)\, \lambda_1(\Eb^\vee) \leq n/2.$$
\end{proposition}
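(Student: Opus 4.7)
The inequality $\rho(\Eb)\,\lambda_1(\Eb^\vee)\geq 1/2$ is classical and does not require Banaszczyk's machinery. Letting $w\in E^\vee$ achieve $\Vert w\Vert=\lambda_1(\Eb^\vee)$ and identifying $E_\R$ with its bidual, set $x_0:=w/(2\Vert w\Vert^2)\in E_\R$, so that $\langle w,x_0\rangle=1/2$. For every $v\in E$ the integrality $\langle w,v\rangle\in\Z$ forces $|\langle w,v-x_0\rangle|\geq 1/2$, and the Cauchy--Schwarz inequality then yields $\Vert v-x_0\Vert\geq 1/(2\Vert w\Vert)$. Taking the infimum over $v\in E$ gives $\rho(\Eb)\geq d(x_0,E)\geq 1/(2\lambda_1(\Eb^\vee))$.

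For the upper bound $\rho(\Eb)\,\lambda_1(\Eb^\vee)\leq n/2$, the plan is to combine Proposition~\ref{prop:Banamna} applied to \emph{both} $\Eb$ and $\Eb^\vee$ with the Poisson formula~(\ref{PoissonGauss}). Set $c:=\lambda_1(\Eb^\vee)$ and assume for contradiction that some $x_0\in E_\R$ satisfies $\Vert v-x_0\Vert> r:=n/(2c)$ for every $v\in E$. For a parameter $t\in\R^\ast_+$ to be chosen, introduce the positive ratio
\begin{equation*}
\Psi(t):=\frac{\sum_{v\in E}e^{-\pi t\Vert v-x_0\Vert^2}}{\sum_{v\in E}e^{-\pi t\Vert v\Vert^2}}.
\end{equation*}
The rescaled form of Proposition~\ref{prop:Banamna} applied to $\Eb$ at $x_0$ gives the upper bound $\Psi(t)\leq M(\tilde r)^n$, where $M(s):=s\,e^{-(s^2-1)/2}$ and $\tilde r:=r\sqrt{2\pi t/n}$, as long as $\tilde r\geq 1$. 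On the other hand, expanding numerator and denominator of $\Psi(t)$ via the Poisson formula turns $\Psi(t)$ into a ratio of character sums over $E^\vee$; isolating the $w=0$ contribution and bounding the remaining cosines by $-1$ yields the matching lower bound
\begin{equation*}
\Psi(t)\geq\frac{1-S_t}{1+S_t}\geq 1-2S_t,\qquad S_t:=\sum_{w\in E^\vee\setminus\{0\}}e^{-\pi\Vert w\Vert^2/t}.
\end{equation*}

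A second application of Proposition~\ref{prop:Banamna}, this time to $\Eb^\vee$ at the origin and with parameter $1/t$, estimates $S_t$ in terms of $\tilde c:=c\sqrt{2\pi/(nt)}$: provided $\tilde c\geq 1$, one obtains $S_t\leq M(\tilde c)^n/(1-M(\tilde c)^n)$. The key arithmetic identity is $\tilde r\,\tilde c=rc\cdot 2\pi/n=\pi$ whenever $r=n/(2c)$, which ensures that the two conditions $\tilde r,\tilde c\geq 1$ can be imposed simultaneously by taking $t$ in a suitable non-empty interval (equivalently, $\tilde c\in[1,\pi]$). Putting the three estimates together produces an inequality between $M(\tilde r)^n$ and $M(\tilde c)^n$ along the hyperbola $\tilde r\,\tilde c=\pi$, and the main technical obstacle is to optimize $t$ so as to violate it and extract the contradiction. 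This is straightforward for large $n$ thanks to the rapid decay of $M(s)^n$ for $s>1$; the borderline low-rank cases $n=1$ (where the bound is sharp, attained for instance by $\Eb=\Z$) and $n=2$ may need to be handled by a direct computation, which I expect to be the delicate part of the argument.
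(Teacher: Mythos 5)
Your strategy is the same one the paper uses: Banaszczyk's Gaussian-mass estimate (Proposition \ref{prop:Banamna}) applied to $\Eb$ at a deep hole, the Poisson formula to convert the translated theta sum into a character sum over $E^\vee$, a second application of the mass estimate to $\Eb^\vee$ to control the dual tail, and an optimization over a scaling parameter (your $t$ is exactly the paper's twist by $\cOb(\delta)$, under which the product $\rho(\Eb)\,\lambda_1(\Eb^\vee)$ is invariant). Your lower bound $1/2\leq\rho(\Eb)\,\lambda_1(\Eb^\vee)$ is the same elementary argument as the paper's, and your three estimates on $\Psi(t)$ are all correct (your chain gives $M(\tilde r)^n+3M(\tilde c)^n-M(\tilde r)^nM(\tilde c)^n\geq 1$, marginally weaker than the paper's $\psi(\tilde\rho)^n+2\psi(\tilde\lambda^\vee)^n\geq 1$, but still sufficient).

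The proof is nonetheless not finished, and you have mislocated where the remaining difficulty sits. The optimization over $t$ is not delicate and does not merely work "for large $n$": take the symmetric point $\tilde r=\tilde c=\sqrt\pi$ of the hyperbola $\tilde r\,\tilde c=\pi$ (admissible since both are then $\geq 1$). Since $M(\sqrt{\pi})=\sqrt{\pi}\,e^{-(\pi-1)/2}\approx 0.607$, one has $M(\sqrt\pi)^n<1/4$ as soon as $n\geq 3$, which violates your inequality and yields the contradiction for every $n\geq 3$; this is exactly the paper's computation $t_n\leq 1+\sqrt{(\log 3)/n}\leq\sqrt{\pi}$ for $n\geq 3$ in Proposition \ref{propBanasprecise}. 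The genuine gaps are the two excluded ranks, which you leave entirely open. The case $n=1$ is trivial (for a rank-one lattice $\rho(\Eb)\,\lambda_1(\Eb^\vee)=1/2$ identically, so the upper bound $n/2$ is an equality), but $n=2$ really does escape the theta-function method and must be treated by a separate elementary argument; the paper does this via reduced bases, reducing to lattices $\Z+\tau\Z$ with $\tau$ in the standard fundamental domain and checking $\rho\leq\Im\tau=\lambda_1(\Eb^\vee)^{-1}$ directly. Without supplying that computation your proof does not cover $n=2$.
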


The first inequality $1/2 \leq  \rho(\Eb). \lambda_1(\Eb^\vee)$ is elementary\footnote{Simply consider a ``short vector" $\xi$ of $\Eb^\vee$, an element $v\in E_\R$ such that $\xi(x) = 1/2$ and $v\in E$ such that $\Vert v -x\Vert \leq \rho(\Eb)$, and observe that $ \rho(\Eb). \lambda_1(\Eb^\vee) \geq \Vert \xi \Vert . \Vert v -x\Vert \geq \vert \xi(v)-\xi(x) \vert \geq 1/2.$}. To establish the second one, we first derive another corollary of Proposition \ref{prop:Banamna}:

\begin{corollary}\label{corrho} Let $\Eb$ be an Euclidean lattice of positive rank $n$, and of  covering radius $\rho(\Eb) \geq  \sqrt{n/2\pi}$.

 If we define $\tilde{\rho}$ in $[1,+\infty[$ by  the relation
$$\rho(\Eb) =  \sqrt{\frac{n}{2\pi}}\tilde{\rho},$$
then there exists $x$ in $E_\R$ such that
\begin{equation}\label{boundlambda}
\frac{\sum_{v \in E} e^{-\pi \Vert v -x \Vert^2}}{\sum_{v \in E} e^{-\pi \Vert v \Vert^2}} \leq [\tilde{\rho} e^{-(\tilde{\rho}^2-1)/2}]^n.
\end{equation}

\begin{proof} By the very definition of $\rho(\Eb)$, there exists $x$ in $E_\R$ such that $\Vert v -x \Vert \geq \rho(\Eb)$ for every $v$ in $E.$
For this choice of $x$, (\ref{boundlambda}) follows from (\ref{Banamna}) applied with $r= \rho(\Eb)$.
\end{proof}

\end{corollary}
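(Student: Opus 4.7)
The plan is to reduce the statement directly to Proposition \ref{prop:Banamna} by exhibiting a point $x_0 \in E_\R$ for which every lattice translate $v \in E$ lies at distance at least $\rho(\Eb)$ from $x_0$; then the sum over $\{v \in E : \Vert v-x_0\Vert \geq \rho(\Eb)\}$ appearing on the left-hand side of \eqref{Banamna} is actually the full sum $\sum_{v \in E} e^{-\pi \Vert v - x_0 \Vert^2}$.

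First I would extract such a point $x_0$ from the very definition of the covering radius. Since the function $x \mapsto \min_{v \in E} \Vert v-x\Vert$ is continuous and invariant under translation by $E$, it descends to a continuous function on the compact quotient $E_\R/E$, hence attains its maximum; the definition
\[\rho(\Eb) = \max_{x \in E_\R} \min_{v \in E} \Vert v - x \Vert\]
already records this, so there exists $x_0 \in E_\R$ with $\min_{v \in E} \Vert v - x_0 \Vert = \rho(\Eb)$, i.e.\ $\Vert v - x_0 \Vert \geq \rho(\Eb)$ for every $v \in E$.

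Next I would apply Proposition \ref{prop:Banamna} to this $x_0$, with the parameter choice $r = \rho(\Eb)$ and $\tilde{r} = \tilde{\rho}$ (permitted by the hypothesis $\rho(\Eb) \geq \sqrt{n/2\pi}$, which is precisely $\tilde{\rho} \geq 1$). The proposition yields
\[\sum_{v \in E,\ \Vert v - x_0 \Vert \geq \rho(\Eb)} e^{-\pi \Vert v - x_0 \Vert^2} \leq \bigl[\tilde{\rho}\, e^{-(\tilde{\rho}^2-1)/2}\bigr]^n \sum_{v \in E} e^{-\pi \Vert v\Vert^2}.\]
By the choice of $x_0$, the constraint $\Vert v - x_0 \Vert \geq \rho(\Eb)$ is satisfied for \emph{every} $v \in E$, so the sum on the left collapses to $\sum_{v \in E} e^{-\pi \Vert v - x_0 \Vert^2}$, and dividing by $\sum_{v \in E} e^{-\pi \Vert v \Vert^2}$ gives exactly \eqref{boundlambda}.

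There is essentially no obstacle: the only substantive point is the use of Proposition \ref{prop:Banamna}, whose proof has already been carried out. The corollary is an immediate specialization once one observes that the covering radius is attained and that Banaszczyk's tail bound becomes a full-sum bound at a deep hole of the lattice.
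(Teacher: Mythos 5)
Your proof is correct and is essentially the paper's own argument: extract a deep hole $x_0$ from the definition of the covering radius and apply Proposition \ref{prop:Banamna} with $r=\rho(\Eb)$, noting that the tail sum is then the full sum. The only addition is your (correct) justification that the maximum defining $\rho(\Eb)$ is attained, which the paper takes for granted.
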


The following lemma is a straightforward reformulation of already established properties of $\hot.$  
\begin{lemma}\label{lemlambdavee}
Let $\Eb$ be an Euclidean lattice of positive rank $n$. 

1) For every $x$ in $E_\R$, we have:
\begin{equation}\label{xvhotdual}
\frac{\sum_{v \in E} e^{-\pi \Vert v -x \Vert^2}}{\sum_{v \in E} e^{-\pi \Vert v \Vert^2}} \geq 2 e^{-\hot(\Eb^\vee)} -1.
\end{equation} 

2) Assume  that the first minimum of the dual lattice $\Eb^\vee$ satisfies $\lambda_1(\Eb^\vee) \geq \sqrt{n/2\pi}$. 
If we define $\tilde{\lambda}^\vee$ in $[1,+\infty[$ by  the relation:
$$\lambda_1(\Eb^\vee)=  \sqrt{\frac{n}{2\pi}}\tilde{\lambda}^\vee,$$
then we have: 
\begin{equation}\label{hotduallambda}
2 e^{-\hot(\Eb^\vee)} -1\geq  1 -2 [\tilde{\lambda}^\vee e^{-(\tilde{\lambda}^{\vee 2}-1)/2}]^n.
\end{equation} 
\end{lemma}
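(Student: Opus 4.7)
The plan is to derive both inequalities from previously established identities, without any new analytic input.

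For part 1), I would start from the Poisson-type lower bound (\ref{PoissonGaussineqmoins}) applied to $\Vb = \Eb$, which gives
\begin{equation*}
\sum_{v \in E} e^{-\pi \Vert v - x \Vert^2} \;\geq\; 2\,(\covol\Eb)^{-1} - \sum_{v \in E} e^{-\pi \Vert v \Vert^2}.
\end{equation*}
Dividing by $\sum_{v \in E} e^{-\pi \Vert v \Vert^2} > 0$ yields
\begin{equation*}
\frac{\sum_{v \in E} e^{-\pi \Vert v -x \Vert^2}}{\sum_{v \in E} e^{-\pi \Vert v \Vert^2}}
\;\geq\; \frac{2\,(\covol\Eb)^{-1}}{\sum_{v \in E} e^{-\pi \Vert v \Vert^2}} \;-\; 1.
\end{equation*}
The key step is to rewrite the first term on the right. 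The basic Poisson identity (\ref{petitpoisson}) for $\Eb$ reads $\sum_{v \in E} e^{-\pi\Vert v\Vert^2} = (\covol\Eb)^{-1} \sum_{v^\vee \in E^\vee} e^{-\pi \Vert v^\vee\Vert^2}$, so
\begin{equation*}
\frac{2\,(\covol\Eb)^{-1}}{\sum_{v \in E} e^{-\pi \Vert v \Vert^2}} = \frac{2}{\sum_{v^\vee \in E^\vee} e^{-\pi\Vert v^\vee\Vert^2}} = 2 e^{-\hot(\Eb^\vee)},
\end{equation*}
by the defining formula (\ref{defhot}) for $\hot(\Eb^\vee)$. Combining these two displays gives (\ref{xvhotdual}).

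For part 2), I would simply apply the estimate (\ref{hotlambdaBan}) to the \emph{dual} lattice $\Eb^\vee$ (which has rank $n$ and whose first minimum satisfies by assumption $\lambda_1(\Eb^\vee) = \sqrt{n/2\pi}\,\tilde\lambda^\vee$ with $\tilde\lambda^\vee \geq 1$). The hypothesis of that estimate is met, so we obtain
\begin{equation*}
\hot(\Eb^\vee) \;\leq\; \log\bigl(1 - [\tilde\lambda^\vee e^{-(\tilde\lambda^{\vee 2}-1)/2}]^n\bigr)^{-1},
\end{equation*}
equivalently
\begin{equation*}
e^{-\hot(\Eb^\vee)} \;\geq\; 1 - [\tilde\lambda^\vee e^{-(\tilde\lambda^{\vee 2}-1)/2}]^n.
\end{equation*}
Multiplying by $2$ and subtracting $1$ produces exactly (\ref{hotduallambda}).

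Neither step is hard; both amount to bookkeeping on top of results already proved. The only point requiring a moment's attention is the conversion in part 1) of the $(\covol \Eb)^{-1}$ factor coming from the Poisson lower bound into the dual $\theta$-invariant $\hot(\Eb^\vee)$, which is precisely what makes the right-hand side of (\ref{xvhotdual}) take the shape needed to be combined with Corollary \ref{corrho} in order to derive Banaszczyk's transference inequality $\rho(\Eb)\,\lambda_1(\Eb^\vee) \leq n/2$.
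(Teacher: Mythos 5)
Your proof is correct and follows exactly the paper's own argument: part 1) combines the lower bound (\ref{PoissonGaussineqmoins}) with the Poisson formula (\ref{petitpoisson}) and the definition of $\hot(\Eb^\vee)$, and part 2) is the bound (\ref{hotlambdaBan}) applied to the dual lattice $\Eb^\vee$. The bookkeeping with the $(\covol\Eb)^{-1}$ factor is carried out correctly.
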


\begin{proof} 1) According to (\ref{PoissonGaussineqmoins}),
$$\frac{\sum_{v \in E} e^{-\pi \Vert v -x \Vert^2}}{\sum_{v \in E} e^{-\pi \Vert v \Vert^2}} \geq \frac{2}{ (\covol
\Eb)\, \sum_{v \in E} e^{-\pi \Vert v \Vert^2} }-1.$$
We conclude the proof of (\ref{xvhotdual}) by using the Poisson formula (\ref{petitpoisson}) and the definition of $\hot(\Eb^\vee)$.

2) To prove (\ref{hotduallambda}), we just  apply the bound (\ref{hotlambdaBan}) to the dual lattice $\Eb^\vee$.
\end{proof}

From Corollary  \ref{corrho} and Lemma \ref{lemlambdavee}, we easily derive Banaszczyk's transference estimate in the following more precise form:

\begin{proposition}\label{propBanasprecise} Let $\psi: [1, +\infty[ \lrasim ]0, 1]$ be the decreasing homeorphism defined by
$\psi(t) := t e^{-(t^2-1)/2},$
and for every positive integer $n$, let $t_n:= \psi^{-1}(3^{-1/n}).$

Then we have:
\begin{equation}\label{tnasymp}
t_n = 1 + \sqrt{\log 3 / n} + O(1/n) \mbox{ when $n \lra +\infty.$}
\end{equation}
and, for every positive integer $n$: 
\begin{equation}\label{tnbound}
t_n \leq 1 + \sqrt{\log 3 / n}.
\end{equation}

Moreover, for any Euclidean lattice $\Eb$ of positive rank $n$, we have:
\begin{equation}\label{Banasprecise}
\rho(\Eb)\, \lambda_1(\Eb^\vee) \leq \frac{t_n^2 \, n}{2\pi}.
\end{equation}
\end{proposition}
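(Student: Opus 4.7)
The plan divides into two independent parts: the analytic facts (\ref{tnasymp}) and (\ref{tnbound}) about the auxiliary function $\psi$, and the transference estimate (\ref{Banasprecise}), which is the genuine content.

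For the properties of $\psi$, one computes $\psi'(t) = (1-t^2)\,e^{-(t^2-1)/2}$, which is strictly negative on $]1,+\infty[$; together with $\psi(1)=1$ and $\psi(t) \to 0$ as $t \to +\infty$, this shows $\psi$ is a decreasing homeomorphism $[1,+\infty[ \,\lrasim\, ]0,1]$, so that $t_n$ is well-defined and $t_n \geq 1$. Inequality (\ref{tnbound}) is equivalent, by the monotonicity of $\psi$, to $\psi(1+u) \geq e^{-u^2}$ with $u := \sqrt{\log 3/n}$; unwinding the definitions this reduces to $1+u \geq e^{u - u^2/2}$, which in turn follows from the elementary inequality $\log(1+u) \geq u - u^2/2$ on $\Rp$, verified by checking that $g(u) := \log(1+u) - u + u^2/2$ satisfies $g(0)=0$ and $g'(u) = u^2/(1+u) \geq 0$. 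The asymptotic (\ref{tnasymp}) is then obtained by writing $t_n = 1 + s_n$ in $\log\psi(t_n) = -\log 3/n$ and Taylor-expanding $\log(1+s_n) = s_n - s_n^2/2 + O(s_n^3)$: the linear terms cancel, leaving $-s_n^2 + O(s_n^3) = -\log 3/n$, whence $s_n = \sqrt{\log 3/n} + O(1/n)$ (noting that $s_n \to 0$ since $\psi^{-1}(1) = 1$).

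For (\ref{Banasprecise}), the key observation is that $\rho(\Eb)\,\lambda_1(\Eb^\vee)$ is invariant under the twist $\Eb \mapsto \Eb \otimes \cOb(\mu)$, since this scales $\rho$ by $e^{-\mu}$ and $\lambda_1$ of the dual by $e^{\mu}$. Introduce $\tilde\rho, \tilde\lambda^\vee \in \Rpa$ by
\[
\rho(\Eb) = \sqrt{n/(2\pi)}\,\tilde\rho, \qquad \lambda_1(\Eb^\vee) = \sqrt{n/(2\pi)}\,\tilde\lambda^\vee.
\]
Choosing $\mu$ with $e^{2\mu} = \tilde\rho/\tilde\lambda^\vee$, we may assume $\tilde\rho = \tilde\lambda^\vee =: s$, with the product $s^2 = \tilde\rho\,\tilde\lambda^\vee$ preserved. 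If $s < 1$ then trivially $\tilde\rho\,\tilde\lambda^\vee < 1 \leq t_n^2$. Otherwise $s \geq 1$, so both Corollary~\ref{corrho} and part 2 of Lemma~\ref{lemlambdavee} apply: chaining (\ref{boundlambda}) with (\ref{xvhotdual}) and (\ref{hotduallambda}) produces an element $x \in E_\R$ for which the normalized Gaussian weight $\sum_{v\in E} e^{-\pi\|v-x\|^2} / \sum_{v\in E} e^{-\pi\|v\|^2}$ is simultaneously $\leq \psi(s)^n$ and $\geq 1 - 2\psi(s)^n$. Hence $3\psi(s)^n \geq 1$, i.e.\ $\psi(s) \geq 3^{-1/n}$, i.e.\ $s \leq t_n$, so that $\rho(\Eb)\,\lambda_1(\Eb^\vee) = (n/2\pi)\,s^2 \leq t_n^2\,n/(2\pi)$.

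The only conceptually non-trivial step is the scaling symmetrization, which reduces the two-parameter problem to the single inequality $\psi(s) \geq 3^{-1/n}$ and thereby exploits the tightness of Corollary~\ref{corrho} and Lemma~\ref{lemlambdavee} precisely when $\tilde\rho = \tilde\lambda^\vee$; without this reduction one would lose a factor of roughly $\sqrt{2}$ in the constant. Everything else is routine bookkeeping.
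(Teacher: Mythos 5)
Your treatment of the main estimate (\ref{Banasprecise}) is correct and follows exactly the paper's route: twist by $\cOb(\mu)$ to reduce to the symmetric case $\tilde\rho=\tilde\lambda^\vee=s$, dispose of $s<1$ trivially, and for $s\geq 1$ chain Corollary \ref{corrho} with Lemma \ref{lemlambdavee} to get $\psi(s)^n\geq 1-2\psi(s)^n$, hence $\psi(s)\geq 3^{-1/n}$ and $s\leq t_n$. (You in fact write the combined inequality with the correct sign, $3\psi(s)^n\geq 1$, whereas the paper's displayed formula $\psi(\tilde\rho)^n+2\psi(\tilde\lambda^\vee)^n\leq 1$ is a typo for $\geq 1$.) Your derivation of the asymptotic (\ref{tnasymp}) is also fine; the paper leaves both $\psi$-facts as exercises.

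There is, however, a genuine error in your proof of (\ref{tnbound}), at the very first step. Since $\psi$ is \emph{decreasing}, the inequality $t_n\leq 1+u$ (with $u:=\sqrt{\log 3/n}$) is equivalent to $\psi(t_n)\geq\psi(1+u)$, i.e.\ to $\psi(1+u)\leq e^{-u^2}$ --- not to $\psi(1+u)\geq e^{-u^2}$ as you assert. Unwinding, $t_n\leq 1+u$ is equivalent to $\log(1+u)\leq u-u^2/2$, which is the \emph{reverse} of the elementary inequality $\log(1+u)\geq u-u^2/2$ that you correctly prove. What your argument actually establishes is $\psi(1+u)\geq e^{-u^2}=\psi(t_n)$, hence $1+u\leq t_n$, i.e.
$$t_n\;\geq\;1+\sqrt{\log 3/n},$$
the opposite of (\ref{tnbound}). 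This is not a repairable slip: the bound (\ref{tnbound}) as stated is false for every $n$ (strict inequality holds in the other direction, consistent with the refined expansion $t_n=1+\sqrt{\log 3/n}+\tfrac{\log 3}{6n}+O(n^{-3/2})$, whose correction term is positive; numerically $t_3\approx 1.652$ versus $1+\sqrt{(\log 3)/3}\approx 1.605$). The paper does not catch this because it relegates (\ref{tnbound}) to an "elementary exercise."

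The only downstream use of (\ref{tnbound}) in the paper is the deduction $t_n\leq\sqrt{\pi}$ for $n\geq 3$, which feeds into Banaszczyk's inequality $\rho(\Eb)\,\lambda_1(\Eb^\vee)\leq n/2$. That conclusion survives, but it must be obtained differently: $t_n$ is decreasing in $n$ (since $3^{-1/n}$ increases and $\psi^{-1}$ decreases), so $t_n\leq t_3\approx 1.652<\sqrt{\pi}$ for $n\geq 3$ by a direct numerical evaluation of $t_3$. You should either replace (\ref{tnbound}) by the correct reverse inequality together with an explicit upper bound such as $t_n\leq 1+\sqrt{\log 3/n}+\tfrac{\log 3}{3n}$ (provable by the same Taylor method with explicit remainders), or simply verify $t_3<\sqrt{\pi}$ numerically and invoke monotonicity.
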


Observe that, according to (\ref{tnbound}), for any $n\geq 3,$ we have:
$$t_n \leq 1 + \sqrt{(\log 3)/3} = 1.605...\leq \sqrt{\pi}= 1.772...$$

Consequently Proposition \ref{propBanasprecise} implies Banaszczyk's inequality $\rho(\Eb)\,\lambda_1(\Eb^\vee) \leq n/2$ when $n\geq 3$. This inequality is trivial when $n=1$. When $n=2,$ it follows from elementary considerations involving reduced bases\footnote{Indeed to establish Banaszczyk's inequality for Euclidean lattices of rank 2, it is enough to prove it for the Euclidean lattices $\Eb_\tau$, defined as $\Z + \tau \Z$ equipped with the usual complex absolute value $\vert. \vert,$ when $\tau$ is an element of the upper half-plane in the usual ``reduction domain" defined by $\tau \geq 1$ and $\vert \Re \tau \vert \leq 1/2$.  For such lattices, Banaszczyk's inequality takes the form: $\rho(\Eb_\tau^\vee) \leq  \Im \tau.$ This last estimate  follows from the observation that, for any $z \in \C$ such that $\vert \Im z \vert \leq \Im \tau/2$, if  we denote the integer  closest to $\Re z$ by $k,$ we have:  $\vert z - k \vert \leq \sqrt{1+ (\Im \tau)^2}/2 \leq \Im \tau.$} of $\Eb$ and $\Eb^\vee$.

\proof[Proof of Proposition \ref{propBanasprecise}.] We leave the derivation of  (\ref{tnasymp}) and (\ref{tnbound}) as elementary exercises.

Let consider  a Euclidean  lattice $\Eb$  of positive rank $n$ and let us define $\tilde{\rho}$ and $\tilde{\lambda}^\vee$ as in Corollary  \ref{corrho} and Lemma \ref{lemlambdavee}. From the estimates (\ref{boundlambda}), (\ref{xvhotdual}) and (\ref{hotduallambda}), it follows that 
$$\mbox{if $\tilde{\rho}$ and $\tilde{\lambda}^\vee$ are $\geq 1$, then }
\psi(\tilde{\rho})^n + 2 \psi(\tilde{\lambda}^\vee)^n \leq 1.$$

Consequently, if for some $t \in \R^\ast_+,$ we have $\tilde{\rho}=\tilde{\lambda}^\vee = t,$ then $3 \psi(t)^n \leq 1$ if $t\geq 1$, and therefore $t\leq t_n$ and 
$$\rho(\Eb) = \lambda_1(\Eb^\vee) \leq t_n \sqrt{n/2\pi}.$$

This establishes (\ref{Banasprecise}) when $\rho(\Eb) = \lambda_1(\Eb^\vee).$ 

To derive the general validity of (\ref{Banasprecise}) from this special case, simply observe that replacing the Euclidean lattice $\Eb$ by $\Eb \otimes \cOb(\delta)$ for some $\delta \in \R$ (that is, scaling the metric of $\Eb$ by a positive factor $e^{-\delta}$) does not change the product $\rho(\Eb)\, \lambda_1(\Eb^\vee)$ and that, by a suitable choice of $\delta$, the condition $$\rho(\Eb\otimes \cOb(\delta)) = \lambda_1((\Eb\otimes \cOb(\delta))^\vee)$$ may be achieved. 
Indeed, from the very definitions of the covering radius and of the first minimum, we obtain:
$$\rho(\Eb\otimes \cOb(\delta)) = e^{-\delta}  \rho(\Eb)$$ and $$\lambda_1((\Eb\otimes \cOb(\delta))^\vee)= e^\delta \lambda_1(\Eb^\vee).$$
\qed

\section{Subadditive invariants of Euclidean lattices}\label{remsubadd}
\subsection{Alternating inequalities}\label{alternating}
With the notation of Proposition \ref{ineqshorttheta}, one has the following series of alternating inequalities:
\begin{align}
&\hot(\Eb) \geq 0, \label{morse1}  \\
\label{morse2}
&\hot(\Eb) - \hot(\Fb) \leq 0, \\
\label{morse3}
&\hot(\Eb) -\hot(\Fb) + \hot(\Gb) \geq 0, \\
\label{morse4}
&\hot(\Eb) -\hot(\Fb) + \hot(\Gb) - \hut (\Eb) \leq 0, \\
\label{morse5}
&\hot(\Eb) -\hot(\Fb) + \hot(\Gb) - \hut (\Eb) + \hut(\Fb) \geq 0, \\
\label{morse6}
&\hot(\Eb) -\hot(\Fb) + \hot(\Gb) - \hut (\Eb) + \hut(\Fb) - \hut(\Gb) = 0.
\end{align}
These are precisely the inequalities we should obtain if the $h^k_\theta$'s were some dimensions of the spaces in a long exact cohomology sequence derived from the admissible short exact sequence
$$0 \lra \Eb \stackrel{i}{\lra} \Fb \stackrel{p}{\lra} \Gb \lra 0,$$
which would vanish in cohomological degree $k>1.$

Indeed, the inequalities (\ref{morse1})-(\ref{morse3}) have already been established. 
  As  already observed (see (\ref{hothutshort}) \emph{supra}), thanks to the Poisson-Riemann-Roch formula (\ref{PoissonZ}), the equality (\ref{morse6}) may be written
$$\dega \Eb - \dega \Fb + \dega \Gb = 0$$
and precisely expresses the additivity (\ref{degaddadm}) of the Arakelov degree in admissible short exact sequences.
Taking (\ref{morse6}) into account,  inequalities (\ref{morse4}) and (\ref{morse5}) are equivalent to the relations
$
 - \hut (\Fb) + \hut (\Gb) \leq 0
\mbox{
and }
\hut(\Gb) \geq 0,
$
which follow from Proposition \ref{ineqmortheta} applied to $\phi = p$ and from Lemma  \ref{hot positive}.

Observe that, using again the Poisson-Riemann-Roch formula (\ref{Poisson}), the inequality (\ref{morse4}) may be also written as
\begin{equation}\label{morse4'}
\hot(\Gb) \leq \hot(\Fb) - \dega \Eb + \frac{1}{2} \log \vert \Delta_K \vert . \rk E
\end{equation}

\subsection{Blichfeldt pairs}\label{Blich} Let us indicate that that the occurrence in geometry of numbers of  ``alternating inequalities", 
similar to the ones satisfied by dimensions of cohomology groups, has been observed by Gillet, Mazur, and Soul\'e (\cite{GilletMazurSoule1991}) in the context of the classical theorem of Blichfeldt.

In \emph{loc. cit.}, instead of Euclidean lattices $\Eb:=(E, \Vert.\Vert)$, defined by finitely generated free $\Z$-module $E$ and a Euclidean norm $\Vert. \Vert$ on $E_\R,$ the authors deal with so-called \emph{Blichfeldt pairs} $$\cE:=(E, \bB),$$ defined by a $\Z$-module $E$ as above and a bounded Lebesgue measurable subset $\bB$ in $E_\R$. They define 
$$h^0(\cE) := \log \max_{v \in E_\R} \vert E \cap (v+ \bB)\vert$$
and
$$h^1(\cE):= h^0(\cE) - \log \mu(\bB)$$ 
where $\mu$ is the Haar measure on $E_\R$ which gives a fundamental domain for $E$ measure equal to one. They define an \emph{exact sequence of Blichfeldt pairs}
$$0 \lra (E, \bB) \stackrel{i}{\lra} (F, \bC) \stackrel{p}{\lra} (G, \bD) \lra 0$$
as an exact sequence of $\Z$-modules
$$0 \lra E  \stackrel{i}{\lra} F \stackrel{p}{\lra} G \ra 0$$
  such that $p_\R(\bC)$ and $\bD$ (resp., for any $x \in \bC$, $p_\R^{-1}(p_\R(x))\cap \bC$ and $i_\R(\bB)$) coincide up to translation in $G_\R$ (resp., in $F_\R$). Then they
establish the validity of (\ref{morse1})-(\ref{morse6}), with $h^k$ instead of $h^k_\theta$, for any short exact sequence of Blichfeldt pairs as above.

It is also possible to define the direct sum of two Blichfeldt pairs $\cE_1:=(E_1, \bB_1)$ and $\cE_2:=(E_2, \bB_2)$ as 
$$\cE_1 \oplus \cE_2 := (E_1 \oplus E_2, \cB_1 \times \cB_2).$$
Then the following additivity relations are easily established:
$$h^k (\cE_1 \oplus \cE_2) = h^k(\cE_1) + h^k(\cE_2), \mbox{ for $k=0,1.$}$$ 

To any Euclidean lattice $\Eb= (E,\Vert .\Vert)$, we may attach a natural Blichfeldt pair, namely
$$\cE:= (E, \bB(\Eb_\R)),$$
where
$$\bB(\Eb_\R):= \{ v \in E_\R \mid \Vert v \Vert \leq 1 \}.$$ 
Observe that, with the notation of Section \ref{compho}, we have:
$$h^0(\cE) = h^0_{\rm Bl}(\Eb).$$

However  this construction of Blichfeldt pairs from Euclidean lattices is not compatible with short exact sequences or products of Euclidean lattices and Blichfeldt pairs. Actually, for any two Euclidean lattices $\Eb_1$ and $\Eb_2$ of positive ranks, the Blichfeldt pair associated to their direct sum $\Eb_1 \oplus \Eb_2$ cannot be expressed as the direct sum of two Blichfeldt pairs.

This lack of compatibility prevents one to associate 
invariants $h^0$ and $h^1$ to Euclidean lattices, so that they would satisfy the alternating inequalities (\ref{morse1})-(\ref{morse6}), by reducing to the construction in \cite{GilletMazurSoule1991}.

\subsection{Concerning the subadditivity of $\hon$.} At this point, it may be worth to %In a related vein, let us
 emphasize  that the subadditivity property (\ref{keyineq}) (or equivalently (\ref{morse4})) satisfied by $\hot$ does \emph{not} hold when $\hot$ is replaced by $\hon$ (contrary to what is claimed in  \cite{GilletSoule1991}, p. 356, Proposition 7, (i); see \cite{GilletSoule2009}).

As shown by the following proposition, counterexamples may be obtained with $\Eb$ a small perturbation of the hexagonal rank-two
lattice\footnote{Recall that $A_2$ is defined as the lattice $\Z + \Z e^{2\pi i/3}$ inside $\C$ (equipped with its usual absolute value), or equivalently as the lattice $\Z^2$ inside $\R^2$ equipped with the Euclidean norm $\Vert.\Vert_{A_2}$ defined by $\Vert(x,y)\Vert^2_{A_2} := x^2-xy+ y^2.$ } $A_2$:

\begin{proposition}\label{A2lambda}
 For any $\lambda \in ]0, 4[,$
  let $\Eb_\lambda$  be the Euclidean lattice defined by $E_\lambda= \Z^2$ inside $E_{\lambda, \R}= \R^2$ equipped with the Euclidean norm $\Vert.\Vert_\lambda$ such that
 \begin{equation}\label{deflambda}
 \Vert(x,y)\Vert^2_{\lambda} := \lambda(x^2-xy)+ y^2,
 \end{equation}  
 and let $\Fb_\lambda$ be the sub-Euclidean lattice of $\Eb_\lambda$ defined by the $\Z$-submodule $F_\lambda := \Z\oplus\{0\}$ of $\Z^2.$
 
 Then, for any $\lambda \in ]0,4[,$ we have:
 \begin{equation}\label{Flambda}
\hon(\Fb_\lambda) = 0 \Longleftrightarrow \lambda > 1,
\end{equation}
 \begin{equation}\label{EFlambda}
\hon(\Eb_\lambda/\Fb_\lambda) \leq \log 3  \Longleftrightarrow \lambda < 3,
\end{equation}
and
 \begin{equation}\label{log5}
\hon(\Eb_\lambda) \geq \log 5.
\end{equation}
\end{proposition}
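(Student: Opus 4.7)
The plan is to check the three claims by direct computation of lengths of short vectors in $\Eb_\lambda$, of the first minimum of $\Fb_\lambda$, and of the explicit quotient norm on $(\Eb_\lambda/\Fb_\lambda)_\R \simeq \R$. Throughout, the quadratic form $Q_\lambda(x,y) := \lambda(x^2 - xy) + y^2$ is positive definite on $\R^2$, since its discriminant as a quadratic form in $y$ equals $x^2\lambda(\lambda-4)$, which is negative for $\lambda \in {]0,4[}$.

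For (\ref{log5}), I first tabulate $Q_\lambda$ on the standard short vectors of $\Z^2$: one has $Q_\lambda(0,0)=0$, $Q_\lambda(0,\pm 1)=1$, and $Q_\lambda(\pm 1, \pm 1)=1$ for the two choices of equal signs. This already exhibits $5$ distinct vectors of $E_\lambda$ of norm $\leq 1$, independently of $\lambda \in {]0,4[}$; so $\hon(\Eb_\lambda) \geq \log 5$.

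For (\ref{Flambda}), the sublattice $F_\lambda = \Z\oplus\{0\}$ inherits the norm $\Vert(x,0)\Vert_\lambda = \sqrt{\lambda}\,|x|$. Hence $\{(x,0)\in F_\lambda : \Vert(x,0)\Vert_\lambda \leq 1\} = \{0\}$ if and only if $\sqrt{\lambda} > 1$, which gives the desired equivalence.

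For (\ref{EFlambda}), the quotient $E_\lambda/F_\lambda$ identifies with $\Z$ via the projection $(x,y)\mapsto y$. The quotient norm of $y_0 \in \R \simeq (E_\lambda/F_\lambda)_\R$ is obtained by minimizing $Q_\lambda(x,y_0)$ over $x\in\R$; differentiating with respect to $x$ gives the minimizer $x = y_0/2$, and the minimum value is
\[
Q_\lambda(y_0/2,y_0) = \bigl(1 - \tfrac{\lambda}{4}\bigr)\,y_0^2.
\]
Therefore the induced norm on $E_\lambda/F_\lambda$ sends $y_0$ to $|y_0|\sqrt{1-\lambda/4}$, and the number of integers $y$ satisfying $y^2(1-\lambda/4) \leq 1$ is at most $3$ exactly when $y = \pm 2$ is excluded, i.e.\ when $4(1-\lambda/4) > 1$, equivalently $\lambda < 3$. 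This yields (\ref{EFlambda}).

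There is no serious obstacle: the only point requiring a short computation is the explicit formula for the quotient norm, and this is an elementary one-variable minimization. To conclude, one observes that assembling the three statements for any $\lambda \in {]1,3[}$ gives $\hon(\Fb_\lambda) = 0$, $\hon(\Eb_\lambda/\Fb_\lambda) = \log 3$, and $\hon(\Eb_\lambda) \geq \log 5 > \log 3 = \hon(\Fb_\lambda) + \hon(\Eb_\lambda/\Fb_\lambda)$, which contradicts the would-be subadditivity inequality for $\hon$.
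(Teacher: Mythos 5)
Your proof is correct and follows essentially the same route as the paper: the paper completes the square to write $\Vert(x,y)\Vert^2_\lambda = \lambda(x-y/2)^2 + (1-\lambda/4)y^2$, which is exactly your one-variable minimization giving the quotient norm $(1-\lambda/4)y_0^2$, and it exhibits the same five lattice points $(0,0), (0,\pm 1), \pm(1,1)$ in the unit ball. No gaps.
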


Indeed, the Euclidean lattice $\Eb_1$ is nothing but the hexagonal lattice $A_2$, and (\ref{Flambda})--(\ref{log5}) show that, for any $\lambda \in ]1,3[,$
$$\hon(\Eb_\lambda) > \hon(\Fb_\lambda) + \hon(\Eb_\lambda/\Fb_\lambda).$$

\begin{proof}[Proof of Proposition \ref{A2lambda}] Observe that the definition (\ref{deflambda}) of $\Vert.\Vert_\lambda$ may also be written
\begin{equation}\label{deflambdabis}
\Vert(x,y)\Vert^2_{\lambda} = \lambda(x-y/2)^2+ (1-\lambda/4) y^2.
\end{equation}

The equivalence (\ref{Flambda}) follows from the fact that $\Fb_\lambda$ may be identified with the lattice $\Z$ inside $\R$ equipped with the norm $\Vert.\Vert$ such that $\Vert 1\Vert^2 = \lambda.$

To establish (\ref{EFlambda}), observe that, as shown by (\ref{deflambdabis}),  the orthogonal complement of $F_{\lambda,\R}$ in the Euclidean vector space $(E_{\lambda,\R}, \Vert.\Vert_\lambda)$ is the real line $\R(1/2, 1)$. Consequently,  the norm in $\Eb_\lambda/\Fb_\lambda$ of the class $[(0,1)]$ of $(0,1)$ is given by
$$\Vert [(0,1)]\Vert^2_{\Eb_\lambda/\Fb_\lambda}= \Vert (1/2,1) \Vert^2_{\Eb_\lambda}= 1- \lambda/4.$$
Therefore $\hon(\Eb_\lambda/\Fb_\lambda) \leq \log 3$ if and only if $\sqrt{1-\lambda/4} > 1/2$, that is, if and only if $\lambda < 3.$

Finally, the unit ball of $\Eb_\lambda$ always contains the five lattice points $(0,0), (0,1), (0,-1), (1,1),$ and $(-1,-1).$ This proves (\ref{log5}).
\end{proof}

\section{The asymptotic invariant $\hont(\Eb, t)$}\label{asymptoticho}

In a vein related to the discussion of subadditive invariants of Euclidean lattices in the previous section --- notably to the the discussion of Blichfeldt pairs in  \ref{Blich} --- it may be worth mentioning that $\hon$ satisfies a \emph{superadditivity} property, that turns out to lead to another interpretation of the $\theta$-invariants of Euclidean lattices and to relate $\hon$ and $\hot$ to the thermodynamic formalism.

\subsection{The invariants $\hon(\Eb, t)$ and $\hont(\Eb, t)$} To formulate the superadditivity of $\hon,$ it is convenient to introduce a simple generalization of this invariant. Namely, for any Euclidean lattice $\Eb =(E, \Vert.\Vert)$ and any positive real number $t,$ we let:
\begin{equation}\label{hon,t}
\begin{split}
 \hon(\Eb, t) & := \log \left\vert \left\{ v \in E \mid \Vert v \Vert^2 \leq t \right\} \right\vert \\
 & = \hon(\Eb \otimes \cOb((\log t)/2)).
\end{split}
\end{equation}

The following observation is straightforward:

\begin{lemma}\label{honsuper} For any two Euclidean lattices $\Eb_1= (E_1, \Vert.\Vert_1)$ and $\Eb_2 =(E_2,\Vert.\Vert_2)$, and any two positive real numbers $t_1$ and $t_2,$ we have:
\begin{equation}\label{eq:honsuper}
\hon(\Eb_1, t_1)  + \hon(\Eb_2, t_2) \leq \hon(\Eb_1\oplus \Eb_2, t_1+t_2).
\end{equation}
\end{lemma}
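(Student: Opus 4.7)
The plan is to exhibit an injection from the Cartesian product of the two sets counted by $\hon(\Eb_1, t_1)$ and $\hon(\Eb_2, t_2)$ into the set counted by $\hon(\Eb_1 \oplus \Eb_2, t_1 + t_2)$, which, upon taking logarithms, yields the claimed subadditivity. This relies only on the definition of the orthogonal direct sum of euclidean lattices.

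More precisely, recall that the underlying $\Z$-module of $\Eb_1 \oplus \Eb_2$ is $E_1 \oplus E_2$, and that the euclidean norm $\Vert \cdot \Vert$ on $(E_1 \oplus E_2)_\R \simeq E_{1,\R} \oplus E_{2,\R}$ is given by
\begin{equation*}
\Vert (v_1, v_2) \Vert^2 = \Vert v_1 \Vert_1^2 + \Vert v_2 \Vert_2^2.
\end{equation*}
Consider the sets
\begin{equation*}
S_i := \{ v \in E_i \mid \Vert v \Vert_i^2 \leq t_i \} \quad (i=1,2) \quad \text{and} \quad S := \{ w \in E_1 \oplus E_2 \mid \Vert w \Vert^2 \leq t_1 + t_2 \}.
\end{equation*}
For any $(v_1, v_2) \in S_1 \times S_2$, the above additivity of squared norms yields $\Vert (v_1, v_2) \Vert^2 \leq t_1 + t_2$, hence $(v_1, v_2) \in S$. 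The natural map $S_1 \times S_2 \hookrightarrow S$ is injective (it is the restriction of the identification $E_1 \oplus E_2 = E_1 \times E_2$), so $\vert S_1 \vert \cdot \vert S_2 \vert \leq \vert S \vert$. Taking logarithms gives exactly the inequality \eqref{eq:honsuper}.

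There is essentially no obstacle here: the statement is a direct counting argument exploiting the Pythagorean identity for orthogonal direct sums. The only minor point to note is that the inclusion $S_1 \times S_2 \hookrightarrow S$ is in general strict, since a vector $(v_1, v_2)$ with $\Vert v_1\Vert_1^2 > t_1$ but $\Vert v_2 \Vert_2^2 < t_2 - (\Vert v_1\Vert_1^2 - t_1)$ lies in $S$ but not in $S_1 \times S_2$; this is precisely why the inequality \eqref{eq:honsuper} is not an equality in general, and it foreshadows the appearance of the Legendre-type supremum over decompositions $t = t_1 + t_2$ in the definition of the asymptotic invariant $\hont(\Eb, t)$ discussed afterwards.
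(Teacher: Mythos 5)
Your proof is correct and is exactly the argument the paper gives: the inclusion $S_1 \times S_2 \subset S$ coming from the Pythagorean identity for the orthogonal direct sum, followed by counting and taking logarithms. Nothing further is needed.
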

Indeed, it follows from the inclusion:
$$ \left\{ v_1 \in E_1 \mid \Vert v_1 \Vert_1^2 \leq t_1 \right\}
\times \left\{ v_2 \in E_2 \mid \Vert v_2 \Vert_2^2 \leq t_2 \right\} \subset 
\left\{ v \in E_1\oplus E_2 \mid \Vert v \Vert_{\Eb_1 \oplus \Eb_2}^2 \leq t_1 + t_2 \right\}.$$
\qed

In particular, for any Euclidean lattice $\Eb,$ the sequence %of non-negative real numbers 
$(\hon(\Eb^{\oplus n}, nt))_{ n \geq 1}$ is superadditive; namely, it satisfies, for every $(n_1,n_2) \in \N_{\geq 1}^2$: 
\begin{equation}\label{subhonn}
 \hon(\Eb^{\oplus n_1}, n_1t) + \hon(\Eb^{\oplus n_2}, n_2t) \leq \hon(\Eb^{\oplus (n_1 + n_2)}, (n_1+ n_2)t).
\end{equation}

Besides, this sequence growths at most linearly with $n$:

\begin{lemma}\label{On}
 For any Euclidean lattice $\Eb$, when $n$ goes to $+ \infty,$
 \begin{equation}\label{eq:On}\hon(\Eb^{\oplus n}, nt) = O(n).\end{equation}
\end{lemma}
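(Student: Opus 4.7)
My plan is to bound the counting function by an exponential moment (a Chebyshev/Chernoff-type trick), exploit the multiplicativity of the theta function under direct sums, and then specialize at a convenient parameter.

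The starting observation is that, for any $s > 0$ and any $v \in E^n$, one has the pointwise inequality
$$\mathbf{1}_{\{\Vert v \Vert^2 \leq nt\}} \leq e^{\pi s (nt - \Vert v \Vert^2)},$$
since when $\Vert v \Vert^2 \leq nt$ the right-hand side is at least $1$ and otherwise both sides are non-negative. Summing this estimate over $v$ in the lattice $E^n$ underlying $\Eb^{\oplus n}$ (with respect to its direct-sum euclidean norm), I obtain
$$e^{\hon(\Eb^{\oplus n},\, nt)} = \left\vert \{v \in E^n : \Vert v \Vert^2 \leq nt\}\right\vert \leq e^{\pi s n t} \sum_{v \in E^n} e^{-\pi s \Vert v \Vert^2} = e^{\pi s n t}\, \theta_{\Eb^{\oplus n}}(s).$$

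Next I would use the immediate multiplicativity $\theta_{\Eb^{\oplus n}}(s) = \theta_{\Eb}(s)^n$, which follows directly from the definition of the theta function and the decomposition $\Vert (v_1,\ldots,v_n)\Vert^2 = \sum_{i=1}^n \Vert v_i\Vert^2$ in $\Eb^{\oplus n}$ (this is also a restatement of the additivity of $\hot$ recorded in Proposition~\ref{prop:htadd}). Taking logarithms therefore yields, for every $s > 0$,
$$\hon(\Eb^{\oplus n},\, nt) \leq n\bigl(\pi s t + \log \theta_{\Eb}(s)\bigr).$$

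Specializing at $s = 1$ gives $\hon(\Eb^{\oplus n},\, nt) \leq n(\pi t + \hot(\Eb))$, which is the desired $O(n)$ bound (\ref{eq:On}). There is no real obstacle here; the only subtle point is to resist the temptation to use the comparison $\hon \leq \hot + \pi$ combined with a rescaling $\Eb \otimes \cOb\bigl((\log nt)/2\bigr)$, which only yields an $O(n \log n)$ bound because the scaling factor depends on $n$. The exponential-moment bound above with a \emph{fixed} $s$ is what produces linear growth in $n$. Incidentally, the resulting estimate $\hon(\Eb^{\oplus n}, nt) \leq n \inf_{s>0}(\pi s t + \log \theta_\Eb(s))$ foreshadows the Legendre-transform identity (\ref{Leg-hontIntro}) announced in the introduction.
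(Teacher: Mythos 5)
Your proof is correct, but it takes a genuinely different route from the paper's. The paper proves Lemma \ref{On} by a packing argument in the spirit of Lemma \ref{LemmaGroe}: writing $\lambda := \lambda_1(\Eb)$ for the first minimum, the translates $v + \Bint_{\Eb}(0,\lambda/2)^n$, for $v$ ranging over $\cE(n,t) := \{v \in E^{\oplus n} \mid \Vert v\Vert^2_{\Eb^{\oplus n}} \leq nt\}$, are pairwise disjoint and all contained in the ball $\Bint_{\Eb^{\oplus n}}(0,\sqrt{n}(\sqrt{t}+\lambda/2))$; comparing Lebesgue measures gives $v_e^n\,\vert\cE(n,t)\vert \leq v_{ne}\,[\sqrt{n}(\sqrt{t}+\lambda/2)]^{ne}$ with $e := \rk E$, and the $O(n)$ bound follows because the Stirling asymptotics $\log v_{ne} = -(ne/2)\log(ne) + O(n)$ cancel the $ne\log\sqrt{n}$ contributed by the growing radius. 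Your exponential-moment bound dispenses with all of this geometry: it uses only the finiteness of $\theta_\Eb(s)$ for one fixed $s>0$ together with the multiplicativity $\theta_{\Eb^{\oplus n}}(s) = \theta_\Eb(s)^n$, and it yields the explicit linear bound $\hon(\Eb^{\oplus n},nt) \leq n\,\inf_{s>0}\bigl(\pi s t + \log\theta_\Eb(s)\bigr)$, whose coefficient is, by the Legendre identity (\ref{Leg-hont}) proved later, exactly $\hont(\Eb,t)$ --- so your estimate is asymptotically sharp, which the packing bound is not. The paper is in fact aware of this alternative: just after Theorem \ref{thermolattice} it observes that the Cram\'er-theory arguments of Appendix \ref{Append:LD}, specialized to lattices, give another proof of the finiteness of $\hont(\Eb,t)$ that avoids the estimates in the proof of Lemma \ref{On}; your argument is precisely the upper-bound (Chernoff) half of that machinery, in the spirit of Lemma \ref{BBis}, written out directly. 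Your parenthetical warning that the rescaling route via $\hon \leq \hot + \pi$ only produces $O(n\log n)$ is also accurate.
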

\begin{proof} For any Euclidean lattice $\Vb := (V, \Vert .\Vert)$ and for any $(P, r) \in V_\R \times \R_+$, we shall  denote by $\Bint_\Vb(P, r)$ the open ball of center $P$ and radius $r$ in the normed vector space $(V_\R, \Vert. \Vert).$ 

We shall also denote by $v_n$ the volume of the $n$-dimensional unit ball. Recall that $$v_n= \pi^{n/2}/\Gamma((n/2) +1)$$ and that  consequently, when $n$ goes to $+\infty,$
\begin{equation}\label{vnasympt}
\log v_n = - (n/2) \log n + O(n).
\end{equation}

Let $\lambda$ be the first minimum of $\Eb.$ Observe that, for any two points $v$ and $w$ of $E^{\oplus n}$ :
$$ v \neq w \Longrightarrow \left(v + \Bint_\Eb\left(0, \lambda/2 \right)^n\right) \cap \left(w +\Bint_\Eb\left(0, \lambda/2\right)^n\right) = \emptyset.$$
(Compare with the proof of Lemma \ref{LemmaGroe}.)

Besides, if 
$\cE(n,t) := \left\{ v \in E^{\oplus n}\mid \Vert v \Vert^2_{\Eb^{\oplus n}} \leq nt\right\},$
then we have:
\begin{equation}\label{inclball}
\bigcup_{v \in \cE(n,t)} \left(v + \Bint_\Eb\left(0, \lambda/2 \right)^n\right)  \subset \bigcup_{v \in \cE(n,t)} \Bint_{\Eb^{\oplus n}}(v, \lambda \sqrt{n}/2) \subset \Bint_{\Eb^{\oplus n}}(0, \sqrt{n} (\sqrt{t} + \lambda/2)).
\end{equation}

If $e:= \rk E,$ we finally obtain, by considering the Lebesgue measures of the first and last sets in (\ref{inclball}):\begin{equation}\label{presque}
v_{e}^n (\lambda/2)^n \vert \cE(n,t) \vert  \leq v_{ne} \, [\sqrt{n} (\sqrt{t} + \lambda/2)]^{ne}.
\end{equation}

Finally, when $n$ goes to $+\infty$, from (\ref{presque}) and (\ref{vnasympt}), we obtain:
$$ \hon(\Eb^{\oplus n}, nt) = \log \vert \cE(n,t) \vert \leq ne \log \sqrt{n}  + \log v_{ne} + O(n) = O(n).$$
\end{proof}

Recall that, according to a well-known observation that goes back to Fekete \cite{Fekete23}, superadditive sequences of real numbers have a simple asymptotic behaviour:

\begin{lemma}\label{lemFekete} Let $(a_n)_{n \in \N_{\geq 1}}$ be a sequence of real numbers that is superadditive \emph{(that is, such that  
$a_{n_1 +n_2} \geq a_{n_1} + a_{n_2}$ for any $(n_1,n_2) \in \N_{\geq 1}^2$).}

 Then the sequence $(a_n/n)_{n \in \N_{\geq 1}}$ admits a limit in $]-\infty, +\infty]$. Moreover:
\begin{equation*}
\lim_{n \rightarrow +\infty} a_n/n = \sup_{n \in \N_{\geq 1}} a_n/n.
\end{equation*}
\end{lemma}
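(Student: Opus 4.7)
The plan is to set $L := \sup_{n \in \N_{\geq 1}} a_n/n \in \,]-\infty, +\infty]$ and to show the two inequalities
$$\limsup_{n \to +\infty} a_n/n \leq L \leq \liminf_{n \to +\infty} a_n/n.$$
The upper bound is immediate from the definition of $L$ as a supremum, since $a_n/n \leq L$ for every $n \in \N_{\geq 1}$. All the work lies in the lower bound.

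To establish $\liminf_{n \to +\infty} a_n/n \geq L$, I would fix an arbitrary $M \in \R$ with $M < L$ and exhibit a $k \in \N_{\geq 1}$ such that $a_k/k > M$, which is possible by definition of the supremum. For any $n \geq k$, Euclidean division gives $n = qk + r$ with $q \in \N_{\geq 1}$ and $0 \leq r < k$. Iterating the superadditivity hypothesis yields $a_{qk} \geq q \, a_k$, and a further application (if $r \geq 1$) gives
$$a_n = a_{qk + r} \geq a_{qk} + a_r \geq q \, a_k + a_r,$$
while for $r = 0$ one simply has $a_n \geq q \, a_k$. Dividing by $n$ and using $q/n \to 1/k$ as $n \to +\infty$, together with the fact that $a_r$ stays in the finite (hence bounded) set $\{a_0 := 0, a_1, \ldots, a_{k-1}\}$, one concludes
$$\liminf_{n \to +\infty} a_n/n \geq a_k/k > M.$$
Letting $M$ increase to $L$ (along a sequence if $L = +\infty$, or arbitrarily close to $L$ if $L \in \R$) yields the desired inequality.

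The only mildly delicate point is the case $L = +\infty$: one must verify that the argument goes through with $M$ chosen arbitrarily large in $\R$, which it does since the choice of $k$ depends only on $M$, and then $\liminf a_n/n \geq M$ for every $M \in \R$ forces $\lim a_n/n = +\infty = L$. The finite case $L \in \R$ is the standard conclusion of sandwiching $a_n/n$ between a quantity tending to $L$ from below and the constant bound $L$ from above. No real obstacle is expected; the entire difficulty is the bookkeeping in the Euclidean division step, which is routine.
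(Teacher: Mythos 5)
Your proof is correct and complete: it is the standard Euclidean-division argument for Fekete's lemma, and the handling of the sign of $a_k$, the bounded remainder terms, and the case $L=+\infty$ are all sound. The paper itself states this lemma without proof (attributing it to Fekete's classical observation), so your argument simply supplies the routine verification the paper omits.
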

\qed

Together with the superadditivity property (\ref{subhonn}) and with the bound (\ref{eq:On}), Fekete's Lemma establishes the following

\begin{proposition}\label{defhont}
 For any Euclidean lattice $\Eb$ and any $t \in \R^\ast_+,$ the limit
 \begin{equation*}
\hont(\Eb, t) := \lim_{n \rightarrow + \infty} \frac{1}{n} \; \hon(\Eb^{\oplus n}, nt)
\end{equation*}
exists in $\R_+$. Moreover, we have:
 \begin{equation*}
\hont(\Eb, t) := \sup_{n \in \N_{\geq 1}} \frac{1}{n} \; \hon(\Eb^{\oplus n}, nt).
\end{equation*}
\end{proposition}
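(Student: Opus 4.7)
The plan is simply to assemble the three ingredients already established just above the statement and to invoke Fekete's lemma (Lemma \ref{lemFekete}).

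First, I set $a_n := \hon(\Eb^{\oplus n}, nt)$ for $n \in \N_{\geq 1}$. The superadditivity inequality \eqref{subhonn}, obtained by specializing Lemma \ref{honsuper} to the lattices $\Eb_1 = \Eb^{\oplus n_1}$ and $\Eb_2 = \Eb^{\oplus n_2}$ with parameters $t_1 = n_1 t$ and $t_2 = n_2 t$ (so that $t_1 + t_2 = (n_1 + n_2) t$), shows that the sequence $(a_n)_{n \geq 1}$ is superadditive. Moreover, the upper bound $a_n = O(n)$ furnished by Lemma \ref{On} guarantees that $\sup_{n \geq 1} a_n/n < +\infty$. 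By Fekete's Lemma \ref{lemFekete}, the sequence $(a_n/n)_{n \geq 1}$ therefore converges in $\R$ to its supremum:
\begin{equation*}
\lim_{n \to +\infty} \frac{1}{n} \hon(\Eb^{\oplus n}, nt) = \sup_{n \in \N_{\geq 1}} \frac{1}{n} \hon(\Eb^{\oplus n}, nt).
\end{equation*}

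Finally, to see that this common value lies in $\R_+$ (and not merely in $\R$), it suffices to observe that $0 \in E^{\oplus n}$ belongs to $\{ v \in E^{\oplus n} \mid \Vert v \Vert^2_{\Eb^{\oplus n}} \leq nt \}$ for every $n$ and every $t > 0$, so that $a_n \geq 0$ for all $n$, whence $\sup_n a_n/n \geq 0$.

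There is essentially no obstacle at this stage: the two nontrivial inputs (the superadditivity \eqref{eq:honsuper} of $\hon(\cdot, \cdot)$ with respect to direct sums and the linear upper bound \eqref{eq:On} coming from a volume-packing argument) are exactly what is needed to apply Fekete's lemma, and the remaining verification is immediate.
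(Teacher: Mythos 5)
Your proof is correct and follows exactly the paper's argument: the paper derives the proposition from the superadditivity (\ref{subhonn}), the linear bound of Lemma \ref{On}, and Fekete's Lemma \ref{lemFekete}. Your extra remark that $a_n \geq 0$ (since $0$ always lies in the ball) is a welcome explicit justification of why the limit lies in $\R_+$.
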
 \qed 

The  invariant $\hont(\Eb, t)$ defined in Proposition  \ref{defhont} is an ``asymptotic version" of the more naive invariant $\hon(\Eb, t)$. By its very definition, its satisfies, for every positive integer $k$,
$$\hont(\Eb^{\oplus k}, kt) = k\, \hont(\Eb, t),$$
and inherits the superaddivity property of $\hon$ stated in Lemma \ref{honsuper}; namely, with the notation of \emph{loc. cit.}, we have:
\begin{equation}\label{eq:hontsuper}
\hont(\Eb_1, t_1)  + \hont(\Eb_2, t_2) \leq \hont(\Eb_1\oplus \Eb_2, t_1+t_2).
\end{equation}

\subsection{The invariant $\hont(\Eb, t)$ and the Legendre transform of $\log \theta_\Eb$} The invariant $\hont(\Eb, t)$ attached to some Euclidean lattice $\Eb$, as a function of $t \in \Rpa$, turns out to be simply related to the  theta function $\theta_\Eb$ of $\Eb$, and consequently to enjoy various properties --- notably, it is  a real analytic function --- that are not obvious on its original definition.

To express this relation, recall that, provided $\Eb$ has positive rank, the function
$$\log \theta_\Eb : \Rpa \lra \Rpa$$
is a decreasing real analytic diffeomorphism, that moreover is \emph{convex}. Actually the function
$$U_\Eb := -(\log \theta_\Eb)'$$
satisfies, for every $\beta \in \Rpa$,
$$U_\Eb(\beta) =  \frac{\sum_{v \in E} \pi \Vert v\Vert^2 \, e^{-\beta \pi \Vert v \Vert^2}}{\sum_{v \in E} e^{-\beta\pi\Vert v \Vert^2}},$$
 and is easily seen to establish a decreasing real analytic diffeomorphism
\begin{equation*}
U_\Eb : \Rpa \lrasim \Rpa.
\end{equation*}

\begin{theorem}\label{thermolattice}
 For every Euclidean lattice $\Eb$ of positive rank, the function $\hont(\Eb, .)$ is real analytic, increasing, concave and surjective from $\Rpa$ to $\Rpa$.
 
Moreover, if we let:
\begin{equation}\label{SEb}
S_\Eb (x) := \hont(\Eb, x/\pi) \;\;\; (x \in \Rpa),
\end{equation}
then the functions $-S_\Eb(-.)$ and $\log \theta_\Eb$ are Legendre transforms of each other.

Namely, for every $x \in \Rpa,$
\begin{equation}\label{Leg-hont}
\hont(\Eb, x) = \inf_{\beta > 0} (\pi \beta x + \log \theta_\Eb(\beta))
\end{equation}
and, for every $\beta \in \Rpa,$
\begin{equation}\label{Leg-logtheta}
\log\theta_\Eb(\beta) = \sup_{x > 0} (\hont(\Eb, x) -\pi \beta x).
\end{equation}

Moreover the derivative $S'_\Eb$ establishes a real analytic decreasing diffeomorphism 
\begin{equation*}
S'_\Eb : \Rpa \lrasim \Rpa.
\end{equation*}
inverse of $U_\Eb$, and for every $x \in \Rpa,$ the infimum in the right-hand side of (\ref{Leg-hont}) is attained for a unique value $\beta,$ namely  for $$\beta = S'_\Eb(\pi x) = \pi^{-1} \hont(\Eb, .)'(x).$$ 

Dually, for every $\beta \in \Rpa,$ the supremum in the right-hand side of (\ref{Leg-logtheta}) is attained for a unique value of $x$, namely for $x=\pi^{-1} U_\Eb(\beta)$.
\end{theorem}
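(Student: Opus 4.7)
The plan is to recognize this theorem as a direct specialization of the large deviations principle developed in Appendix~\ref{Append:LD}, applied to the counting measure $\mu$ on $E$ with ``energy function'' $U(v) := \pi \Vert v \Vert^2$. The partition function for this data is precisely $\theta_{\Eb}(\beta) = \sum_{v \in E} e^{-\beta U(v)}$, which is finite for every $\beta \in \Rpa$ since the level sets $\{U \leq s\}$ are finite (lattice points in a ball). The function $\log \theta_{\Eb}$ is therefore real analytic and strictly convex on $\Rpa$ (the second derivative is a Gibbs variance), with derivative $-U_{\Eb}$ realizing the asserted diffeomorphism $\Rpa \lrasim \Rpa$ after checking the limit behavior: $U_{\Eb}(\beta) \to +\infty$ as $\beta \to 0^+$ (since $E$ is infinite) and $U_{\Eb}(\beta) \to \pi\, \lambda_1(\Eb)^2 \cdot 0 / 1 = \pi\, \lambda_1(\Eb)^2$ wait this tends to $0$ is wrong; more precisely, as $\beta \to +\infty$ the sum concentrates on $v = 0$, forcing $U_{\Eb}(\beta) \to 0$, while as $\beta \to 0^+$ the sum is dominated by large $\|v\|$ contributions, giving $U_\Eb(\beta) \to +\infty$.

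The core identity to establish is (\ref{Leg-hont}). The easy direction (upper bound) is a Chernoff-type estimate: for every $\beta > 0$ and every $n \geq 1$,
\begin{equation*}
\left\vert\left\{ (v_1,\dots,v_n) \in E^{n} \;\Big|\; \sum_{i=1}^n \pi \Vert v_i\Vert^2 \leq \pi n x \right\}\right\vert \;\leq\; e^{\pi \beta n x}\, \theta_\Eb(\beta)^n,
\end{equation*}
so $\hon(\Eb^{\oplus n}, nx) \leq n\bigl(\pi \beta x + \log \theta_\Eb(\beta)\bigr)$; dividing by $n$, letting $n \to \infty$, and taking the infimum over $\beta$ yields $\hont(\Eb,x) \leq \inf_{\beta > 0}(\pi \beta x + \log \theta_\Eb(\beta))$. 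The matching lower bound is the substance of the large deviations theorem of Appendix~\ref{Append:LD}: one selects $\beta^\ast \in \Rpa$ with $U_\Eb(\beta^\ast) = \pi x$, forms the tilted probability measure $\nu_{\beta^\ast} := \theta_\Eb(\beta^\ast)^{-1} e^{-\beta^\ast U} \mu$ on $E$, whose expected energy equals $\pi x$, and applies a weak law of large numbers (or local CLT) for $\nu_{\beta^\ast}^{\otimes n}$ to produce at least $\exp\bigl(n\,[\pi \beta^\ast x + \log \theta_\Eb(\beta^\ast)] - o(n)\bigr)$ points $(v_1,\dots,v_n)$ with $\sum \pi\Vert v_i\Vert^2 \leq \pi n x$. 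This matches the upper bound at $\beta = \beta^\ast$, proving (\ref{Leg-hont}) and simultaneously showing the infimum is attained uniquely at $\beta^\ast = U_\Eb^{-1}(\pi x)$.

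Once (\ref{Leg-hont}) is established, everything else falls out of convex duality. Setting $S_\Eb(x) := \hont(\Eb, x/\pi) = \inf_{\beta > 0}(\beta x + \log \theta_\Eb(\beta))$, the identity exhibits $-S_\Eb(-\cdot)$ as the Legendre--Fenchel transform of $\log\theta_\Eb$ (restricted to $\Rpa$). Since $\log \theta_\Eb$ is strictly convex, real analytic, decreasing, with derivative surjecting onto $-\Rpa$, the Fenchel involution applies: $-S_\Eb(-\cdot)$ is itself strictly convex, real analytic on $\Rpa$, and its derivative is the inverse diffeomorphism of $(\log \theta_\Eb)' = -U_\Eb$. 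Translating back, $S_\Eb$ (and therefore $\hont(\Eb,\cdot)$) is real analytic, strictly increasing, strictly concave, with $S_\Eb' = U_\Eb^{-1}$ a decreasing diffeomorphism $\Rpa \lrasim \Rpa$, and the dual formula (\ref{Leg-logtheta}) follows by involutivity, the unique maximizer being $x = \pi^{-1} U_\Eb(\beta)$. Surjectivity of $\hont(\Eb,\cdot)$ onto $\Rpa$ follows from the boundary behavior of $U_\Eb$ noted above.

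The principal obstacle is the large deviations lower bound in the infinite-mass setting, since the counting measure $\mu$ on $E$ is merely $\sigma$-finite: the classical Cram\'er theorem does not apply off the shelf, and one genuinely needs the extended formalism of Appendix~\ref{Append:LD}, whose validity rests on exponential tightness (finiteness of $\theta_\Eb(\beta)$ for all $\beta > 0$) and on the fact that the tilted measures $\nu_\beta$ are honest probability measures with all exponential moments. Modulo that input, the argument is a clean Cram\'er/Legendre package.
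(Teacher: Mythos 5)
Your proposal is correct and follows essentially the same route as the paper: Theorem \ref{thermolattice} is obtained there precisely by specializing the extended Cram\'er theory of Appendix \ref{Append:LD} (Theorem \ref{thermo} and Corollary \ref{gentildual}) to the counting measure on $E$ with $H = \pi\Vert\cdot\Vert^2$, so that $\Psi = \log\theta_{\Eb}$ and $S = S_{\Eb}$, with the convex-duality package then delivering analyticity, concavity, the diffeomorphism $S'_{\Eb} = U_{\Eb}^{-1}$, and the attainment statements exactly as you describe. The only divergence is internal to the appendix: where you sketch the large-deviations lower bound by tilting to center the mean at the target energy and invoking a law of large numbers, the paper instead tilts the infinite reference measure by an arbitrary $e^{-\eta H}$ to reduce to the classical (probability-measure) Cram\'er theorem of Cerf--Petit and then lets $\eta \to 0_+$; both are valid and yield the same identities.
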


When $\Eb$ is the ``trivial'' Euclidean lattice of rank one $\cOb(0):= (\Z, \vert. \vert)$, Theorem \ref{thermolattice} may be deduced   from  results of Mazo and Odlyzko (\cite{MazoOdlyzko90}, Theorem 1). 

In  its general formulation above, Theorem \ref{thermolattice} is  a  consequence of the extension of Cram\'er's theory of large deviations presented in Appendix \ref{Append:LD}, concerning an arbitrary measure space $(\cE, \cT, \mu)$ and a non-negative measurable function $H$ on $\cE$.  Theorem \ref{thermolattice} gathers the results of Subsection \ref{AppendixScholium} (notably Theorem \ref{thermo} and Corollary \ref{gentildual}) in the situation where $$(\cE, \cT, \mu) =(E, \cP(E), \sum_{v \in E} \delta _v)$$ --- that is, the set $E$ underlying the Euclidean lattice $\Eb$ equipped with the counting measure --- and where
$$H:= \pi \Vert . \Vert^2.$$ 

Indeed, in this situation, the function $\Psi$ introduced in  Subsection \ref{AppendixScholium} is nothing else than $\log \theta_\Eb$ and the function $S$ is simply the function $S_\Eb$ defined by (\ref{SEb}).
One may also observe that, specialized to this situation, the arguments in Appendix \ref{Append:LD} actually provide an alternative proof of the finiteness of $\hont(\Eb,t)$, that  relies  on the properties of the theta functions of lattices and avoids the estimates in the proof of Lemma  \ref{On}.

\subsection{Euclidean lattices and thermodynamic formalism}

The above construction, of data  $(\cE, \cT, \mu)$ and $H$ of the type considered in Appendix \ref{Append:LD}  from Euclidean lattices, is clearly compatible with finite products: the data associated to the direct sum $\bigoplus_{i \in I} \Eb_i$ of a finite family $(\Eb_i)_{i \in I}$ of Euclidean lattices may be identified with the ``product", in the sense of Subsection \ref{prodtherm}, of the data associated to each of the $\Eb_i$.

This observation allows us to apply Proposition \ref{Prop:secondlaw} to analyze the invariants $\hont$ of a direct sum of Euclidean lattices. Notably, we immediately obtain the following more precise form of the superadditivity (\ref{eq:hontsuper}) of $\hont$:

\begin{proposition}\label{SecondLawLattice}
For any two Euclidean lattices of positive rank $\Eb_1$ and $\Eb_2$ and any $t \in \Rpa,$
\begin{equation}\label{equ:secondlaawlattice}
\hont(\Eb_1 \oplus \Eb_2, t) = \max_{\stackrel{t_1, t_2 >0}{t_1 +t_2 = t}} \left(\hont(\Eb_1, t_1) + \hont(\Eb_2, t_2)\right).
\end{equation}
Moreover the maximum is attained for a unique pair $(t_1, t_2)$, namely for $(\pi^{-1} U_{\Eb_1}(\beta), \pi^{-1} U_{\Eb_2}(\beta))$  where $\beta:= S'_{\Eb_1 \oplus \Eb_2}(\pi t).$ \qed
 \end{proposition}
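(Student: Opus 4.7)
\proof[Proof proposal] The plan is to deduce the statement from Theorem~\ref{thermolattice} and the compatibility of the thermodynamic data of a euclidean lattice with direct sums. Indeed, as observed in the excerpt, forming $\Eb_1 \oplus \Eb_2$ corresponds on the measure side to taking the product of the counting measures on $E_1$ and $E_2$, and the ``energy'' $\pi \Vert \cdot \Vert^2$ is additive under this product. Thus Proposition~\ref{Prop:secondlaw} in the appendix applies directly; the task is to translate its content into the assertion (\ref{equ:secondlaawlattice}). Concretely I would use the product formula $\theta_{\Eb_1 \oplus \Eb_2}(\beta) = \theta_{\Eb_1}(\beta)\, \theta_{\Eb_2}(\beta)$, equivalently
\begin{equation*}
\log \theta_{\Eb_1 \oplus \Eb_2} = \log \theta_{\Eb_1} + \log \theta_{\Eb_2},
\end{equation*}
and argue purely by Legendre duality.

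The first step is the easy superadditivity $\hont(\Eb_1 \oplus \Eb_2, t) \geq \hont(\Eb_1, t_1) + \hont(\Eb_2, t_2)$ for any decomposition $t_1 + t_2 = t$ with $t_i > 0$, which is (\ref{eq:hontsuper}). For the reverse inequality, I would use the Legendre formula (\ref{Leg-hont}) of Theorem~\ref{thermolattice} applied to $\Eb_1 \oplus \Eb_2$: there exists a unique $\beta^\ast \in \Rpa$, namely $\beta^\ast = S'_{\Eb_1 \oplus \Eb_2}(\pi t)$, attaining the infimum, so
\begin{equation*}
\hont(\Eb_1 \oplus \Eb_2, t) = \pi \beta^\ast t + \log \theta_{\Eb_1}(\beta^\ast) + \log \theta_{\Eb_2}(\beta^\ast).
\end{equation*}
Define $t_i^\ast := \pi^{-1} U_{\Eb_i}(\beta^\ast)$ for $i=1,2$. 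By the dual relation (\ref{Leg-logtheta}) of Theorem~\ref{thermolattice} applied to each $\Eb_i$, the supremum is attained at $x = \pi t_i^\ast$, whence $\log \theta_{\Eb_i}(\beta^\ast) = \hont(\Eb_i, t_i^\ast) - \pi \beta^\ast t_i^\ast$.

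The key algebraic fact is then that $U_{\Eb_1 \oplus \Eb_2} = U_{\Eb_1} + U_{\Eb_2}$, which follows from $U_\Eb = -(\log \theta_\Eb)'$ together with the additivity of $\log \theta$ under direct sum. Since $\beta^\ast = S'_{\Eb_1 \oplus \Eb_2}(\pi t)$ is characterised by $U_{\Eb_1 \oplus \Eb_2}(\beta^\ast) = \pi t$, this yields $t_1^\ast + t_2^\ast = t$. Substituting in the expression above,
\begin{equation*}
\hont(\Eb_1 \oplus \Eb_2, t) = \pi \beta^\ast t - \pi \beta^\ast (t_1^\ast + t_2^\ast) + \hont(\Eb_1, t_1^\ast) + \hont(\Eb_2, t_2^\ast) = \hont(\Eb_1, t_1^\ast) + \hont(\Eb_2, t_2^\ast),
\end{equation*}
which combined with superadditivity proves both the equality (\ref{equ:secondlaawlattice}) and the existence of a maximiser of the required form.

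Finally, uniqueness of the maximising pair $(t_1, t_2)$ follows from the strict concavity of $\hont(\Eb_i, \cdot)$ on $\Rpa$, which itself is a consequence of Theorem~\ref{thermolattice}: each $S'_{\Eb_i}$ is a strictly decreasing diffeomorphism, so $\hont(\Eb_i, \cdot)' = \pi S'_{\Eb_i}(\pi\,\cdot)$ is strictly decreasing. Hence the map $s \mapsto \hont(\Eb_1, s) + \hont(\Eb_2, t-s)$ is strictly concave on $(0,t)$ and has a unique interior maximum, which must be $(t_1^\ast, t_2^\ast)$. I do not expect any serious obstacle here: everything reduces to Legendre duality and the additivity $U_{\Eb_1 \oplus \Eb_2} = U_{\Eb_1} + U_{\Eb_2}$; the only point to be careful about is the compatibility between the appendix's formalism (product measure spaces) and the direct sum of lattices, which amounts precisely to identifying $(\sum_{v_1 \in E_1}\delta_{v_1}) \otimes (\sum_{v_2 \in E_2}\delta_{v_2})$ with $\sum_{v \in E_1 \oplus E_2} \delta_v$ and the energy $\pi\Vert \cdot \Vert^2_{\Eb_1 \oplus \Eb_2}$ with $\pi \Vert \cdot \Vert^2_{\Eb_1} + \pi \Vert \cdot \Vert^2_{\Eb_2}$.
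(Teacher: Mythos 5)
Your proof is correct and follows essentially the paper's route: the paper simply invokes Proposition~\ref{Prop:secondlaw} of Appendix~\ref{Append:LD} (whose proof is exactly the Legendre-duality computation combined with the additivity $\Psi = \sum_i \Psi_i$, i.e.\ $\log\theta_{\Eb_1\oplus\Eb_2}=\log\theta_{\Eb_1}+\log\theta_{\Eb_2}$ and $U_{\Eb_1\oplus\Eb_2}=U_{\Eb_1}+U_{\Eb_2}$), and you have merely unfolded that argument directly in the lattice setting via Theorem~\ref{thermolattice}. All steps, including the identification of the unique maximiser and the strict-concavity argument for uniqueness, match the paper's reasoning.
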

 
 This proposition can be understood as an expression of the second law of thermodynamics in the context of  Euclidean lattices. 
 
 As a last element pleading for an interpretation of the $\theta$-invariants of Euclidean lattices in terms of  statistical thermodynamics, let us briefly translate to the framework of this section the results in the last subsection \ref{maxentrop} of Appendix \ref{Append:LD}.
 
 Let us consider denote an Euclidean lattice of positive rank $\Eb:= (E, \Vert. \Vert)$  and let us denote by
 $$\cC:= \{ p \in [0,1]^E \mid \sum_{e \in E} p(e) =1\}$$ the (compact convex) space of probability measures on $E$.

 We may consider the functions $\epsilon$ (``energy") and $I$ (``information theoretic entropy") from $\cC$ to $[0, +\infty]$ defined as follows:
 \begin{equation*}
\epsilon(p) := \sum_{e \in E} p(e)  \pi \Vert e \Vert^2
\end{equation*}
and
\begin{equation*}
I(p):= -\sum_{e \in E} p(e) \log p(e).
\end{equation*}

Then Proposition \ref{prop:maxentrop},  applied to 
$(\cE, \cT, \mu) =(E, \cP(E), \sum_{v \in E} \delta _v) \mbox{ and } H= \pi \Vert . \Vert^2,$ becomes the following statement:
\begin{proposition}\label{maxentropE} Let $u$ and $\beta$ be two positive real numbers such that 
$u = U_\Eb(\beta).$
For any $p$ in $\cC$ such that $\epsilon(p) = u,$ we have:
\begin{equation}\label{eq:maxentropE}
I(p) \leq S_\Eb(\beta).
\end{equation}

Moreover the equality is achieved in (\ref{eq:maxentropE}) for a unique  $p$ in $\epsilon^{-1}(u),$ namely for the measure $p_\beta$ defined by
$$p_\beta(e) := \theta_\Eb (\beta)^{-1} e^{-\pi \beta \Vert e \Vert^2}.
 $$ \qed
 \end{proposition}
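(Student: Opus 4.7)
The plan is to give the standard Gibbs maximum-entropy argument: compare the entropy of an arbitrary $p \in \cC$ with $\epsilon(p) = u$ to that of the Gibbs measure $p_\beta$ via the non-negativity of the Kullback--Leibler divergence, which in turn is a direct consequence of the strict convexity of $-\log$.

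I would first verify that $p_\beta$ itself realizes the right-hand side of (\ref{eq:maxentropE}). Since $-\log p_\beta(e) = \log \theta_\Eb(\beta) + \pi\beta\Vert e \Vert^{2}$, averaging against $p_\beta$ gives
$$\epsilon(p_\beta) = \theta_\Eb(\beta)^{-1}\sum_{e\in E}\pi\Vert e\Vert^{2} e^{-\pi\beta\Vert e\Vert^{2}} = U_\Eb(\beta) = u$$
by the very definition of $U_\Eb$, and then $I(p_\beta) = \log \theta_\Eb(\beta) + \beta u$. By the Legendre duality (\ref{Leg-logtheta}) of Theorem \ref{thermolattice}, whose supremum is attained precisely at $x = u/\pi = \pi^{-1} U_\Eb(\beta)$, this last quantity equals $\hont(\Eb, u/\pi) = S_\Eb(u)$, i.e.\ the entropy bound we aim to reach.

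Next, for a general $p \in \cC$ with $\epsilon(p) = u$, I would invoke Jensen's inequality applied to the strictly convex function $-\log$, evaluated on the probability $p$ against the ratio $p_\beta/p$ (with the convention $0 \log 0 = 0$ off the support of $p$). This is the classical Gibbs inequality
$$\sum_{e \in E} p(e) \log \frac{p(e)}{p_\beta(e)} \ge 0,$$
which, after rearranging using $-\log p_\beta(e) = \log \theta_\Eb(\beta) + \pi\beta\Vert e\Vert^{2}$ and the constraint $\epsilon(p) = u$, becomes
$$I(p) \le \log \theta_\Eb(\beta) + \beta u = I(p_\beta).$$
Strict convexity of $-\log$ delivers the equality case: one has equality if and only if $p(e)/p_\beta(e)$ is $p$-almost-everywhere constant, which, since $p$ and $p_\beta$ are probability measures with $p_\beta$ strictly positive on $E$, forces $p = p_\beta$.

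The only genuine obstacle is the bookkeeping around the possibility that $I(p) = +\infty$ or that the support of $p$ is infinite, where some of the sums one manipulates are merely series in $[0,+\infty]$. This is benign: under $\epsilon(p) = u < +\infty$, the series $\sum_e p(e)\bigl[\log \theta_\Eb(\beta) + \pi\beta\Vert e\Vert^{2}\bigr]$ is unambiguously equal to $\log\theta_\Eb(\beta) + \beta u$, so the Gibbs inequality is a valid bound in $[0, +\infty]$ and in particular forces $I(p) < +\infty$ as soon as equality is in reach. As the author indicates immediately before the statement, the proposition is in any event a direct transcription, to $(\cE, \cT, \mu, H) = (E, \cP(E), \sum_{v \in E} \delta_v, \pi\Vert \cdot \Vert^{2})$, of the general Proposition \ref{prop:maxentrop} of Appendix \ref{Append:LD}, so no new idea is strictly required beyond the standard Gibbs argument sketched above.
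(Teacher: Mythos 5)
Your proposal is correct and is essentially the paper's own argument: the paper proves this by specializing Proposition \ref{prop:maxentrop} of Appendix \ref{Append:LD}, whose proof is exactly the Gibbs/Kullback--Leibler computation you give (pointwise inequality $t\log t\ge t-1$ in place of Jensen, the decomposition $-\log p_\beta(e)=\log\theta_\Eb(\beta)+\pi\beta\Vert e\Vert^2$, the identification $\log\theta_\Eb(\beta)+\beta u=S_\Eb(u)$ via the Legendre relation, and the same integrability bookkeeping and equality analysis). Note only that the bound in the statement should read $S_\Eb(u)$ rather than $S_\Eb(\beta)$, as your verification via $I(p_\beta)$ correctly shows.
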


\subsection{Duality and further comparison estimates}
In this subsection, we denote by $\Eb$ an Euclidean lattice of positive rank, and  we derive additional estimates comparing its invariants $\hon(\Eb,.)$, $\hont(\Eb,.)$ and $\hot(\Eb).$

Ultimately, these estimates will appear as consequences of (i) the ``thermodynamic" formalism of the previous paragraphs and (ii) the Poisson formula (\ref{logFE}), which relates the theta functions $\theta_\Eb$ and $\theta_{\Eb^\vee}$ of $\Eb$ and of its dual Euclidean lattice $\Eb^\vee$.

Indeed, from (\ref{logFE}), we immediately get:

\begin{proposition}
 For any $\beta \in \Rpa,$ we have:
 \begin{equation}
\log \theta_\Eb(\beta)  - \log \theta_{\Eb^\vee}(\beta^{-1}) = -(\rk E/2)\log \beta + \dega \Eb,
\end{equation}
\begin{equation}
\beta\,U_\Eb (\beta) + \beta^{-1} U_{\Eb^\vee}(\beta^{-1}) = \rk E /2,
\end{equation}
and
\begin{equation}\label{Ubound}
0\leq U_\Eb (\beta) \leq \rk E/(2\beta).
\end{equation}
\qed
\end{proposition}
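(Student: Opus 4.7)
The plan is to derive the three equations in succession, starting from the functional equation (\ref{logFE}) recalled earlier in the paper, then differentiating it, and finally using non-negativity.

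For the first equation, I would simply specialize the functional equation (\ref{logFE}) --- which states $\log \theta_{\Eb}(t)= -\tfrac{1}{2} \rk E \cdot \log t + \dega \Eb + \log \theta_{\Eb^\vee}(t^{-1})$ --- to $t=\beta$ and rearrange. As remarked in the excerpt, this is nothing but the Poisson--Riemann--Roch formula for $\Eb \otimes \cOb(-(\log \beta)/2)$, so no real work is required here.

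For the second equation, I would exploit the real-analyticity of $\log \theta_\Eb$ and $\log \theta_{\Eb^\vee}$ on $\Rpa$ and differentiate the first equation with respect to $\beta$. Recalling that $U_\Eb := -(\log \theta_\Eb)'$ and applying the chain rule to $\beta \mapsto \log \theta_{\Eb^\vee}(\beta^{-1})$ (whose derivative is $\beta^{-2} U_{\Eb^\vee}(\beta^{-1})$), one obtains
\begin{equation*}
-U_\Eb(\beta) - \beta^{-2} U_{\Eb^\vee}(\beta^{-1}) + \frac{\rk E}{2\beta} = 0,
\end{equation*}
and multiplying by $-\beta$ yields the claimed identity.

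For the third equation, I would appeal to the explicit expression for $U_\Eb$ given just before the proposition, namely
\begin{equation*}
U_\Eb(\beta) = \frac{\sum_{v \in E} \pi \Vert v\Vert^2 e^{-\beta \pi \Vert v \Vert^2}}{\sum_{v \in E} e^{-\beta \pi \Vert v \Vert^2}},
\end{equation*}
which is manifestly non-negative. Applied to $\Eb^\vee$ at $\beta^{-1}$, this shows $\beta^{-1} U_{\Eb^\vee}(\beta^{-1}) \geq 0$, and combined with the second equation gives $\beta U_\Eb(\beta) \leq \rk E/2$, i.e.\ $U_\Eb(\beta) \leq \rk E/(2\beta)$. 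There is no genuine obstacle here: the entire proposition is a formal consequence of the functional equation together with the definition of $U_\Eb$, so the only point requiring even minor care is the chain-rule computation in the differentiation step.
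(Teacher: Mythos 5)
Your proof is correct and follows the same route the paper takes: the paper derives the whole proposition "immediately" from the functional equation (\ref{logFE}), and your three steps (specialization at $t=\beta$, differentiation with the chain rule, and non-negativity of $U_{\Eb^\vee}$ from its Gibbs-type expression) are exactly the details being left implicit. The chain-rule computation and the final sign bookkeeping are both right.
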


The expression $\rk E/(2\beta)$ which appears in the upper-bound  on $U_\Eb(\beta)$ in (\ref{Ubound}) coincides with   the function $U(\beta)$ in the ``Maxwellian" situation discussed in Subsection \ref{Maxwell}. This upper-bound was also the key point behind the estimates \emph{à la Banaszczyk} derived in Lemma \ref{VarBanas}.

For any integer $n\geq 1,$ we let:
\begin{equation*}
C(n) := - \sup_{t >1}\;  [\log (1-t^{-1}) - (n/2) \log t]. 
\end{equation*}
One easily shows that
$$C(n) = \log (n/2) + (1+n/2) \log(1+2/n)$$
and that
$$1 \leq C(n) - \log (n/2) \leq (3/2) \log 3.$$

\begin{theorem}\label{comp3} For any $x\in \Rpa,$ we have:
\begin{equation}\label{comp31}
\log\theta_\Eb(\rk E/ (2\pi x)) \leq \hon(\Eb, x) + C(\rk E),
\end{equation}
\begin{equation}\label{comp32}
\log\theta_\Eb(\rk E/ (2\pi x)) \leq \hont(\Eb, x),
\end{equation}
and
\begin{equation}\label{comp33}
\hont(\Eb, x) \leq \log\theta_\Eb(\rk E/ (2\pi x)) + \rk E/2.
\end{equation}
\end{theorem}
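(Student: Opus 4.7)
The three estimates will be proved in the order (\ref{comp31}), then (\ref{comp33}) directly from Legendre duality, and finally (\ref{comp32}) by bootstrapping (\ref{comp31}) to the direct sums $\Eb^{\oplus k}$.

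Write $n = \rk E$ and $\beta = n/(2\pi x)$. For (\ref{comp31}), the strategy is to combine the two estimates of Lemma \ref{VarBanas}: the ``tail tightness'' inequality (\ref{thetar}) at some parameter $t = s\beta$ with $s>1$, and the monotonicity statement that $\log\theta_\Eb(t) + (n/2)\log t$ is non-decreasing. Applying (\ref{thetar}) with $r^2 = x$ and $t = s\beta$ makes the fraction $n/(2\pi t r^2)$ collapse to $1/s$, and since $e^{-\pi t\|v\|^2}\le 1$ the truncated sum is at most $N_\Eb(\sqrt{x}) = e^{\hon(\Eb,x)}$, giving
\[
\log\theta_\Eb(s\beta) \le -\log(1-1/s) + \hon(\Eb,x).
\]
Monotonicity yields $\log\theta_\Eb(\beta) \le \log\theta_\Eb(s\beta) + (n/2)\log s$, so
\[
\log\theta_\Eb(\beta) \le -\log(1-1/s) + (n/2)\log s + \hon(\Eb,x).
\]
Taking the infimum over $s>1$ produces exactly the constant $C(n)$ by its definition, establishing (\ref{comp31}).

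For (\ref{comp33}), I use the Legendre-duality formula (\ref{Leg-hont}) with the test value $\beta = n/(2\pi x)$:
\[
\hont(\Eb,x) \le \pi\beta x + \log\theta_\Eb(\beta) = n/2 + \log\theta_\Eb(n/(2\pi x)).
\]
(This $\beta$ would be the exact minimizer precisely in the Maxwellian case $U_\Eb(\beta) = n/(2\beta)$; in general the bound (\ref{Ubound}) explains why it is a good guess.)

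For (\ref{comp32}), apply (\ref{comp31}) to the euclidean lattice $\Eb^{\oplus k}$ at the point $kx$. The rank becomes $kn$, so $\beta$ is unchanged: $kn/(2\pi\cdot kx) = n/(2\pi x)$. The theta function factorizes as $\theta_{\Eb^{\oplus k}}(\beta) = \theta_\Eb(\beta)^k$, so (\ref{comp31}) reads
\[
k\log\theta_\Eb(n/(2\pi x)) \le \hon(\Eb^{\oplus k}, kx) + C(kn).
\]
Dividing by $k$ and letting $k\to+\infty$, the left-hand side is unchanged, the first term on the right tends to $\hont(\Eb,x)$ by definition, and $C(kn)/k \to 0$ since $C(m) = \log(m/2) + O(1)$ as $m\to\infty$.

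The main obstacle is the two-parameter optimization in (\ref{comp31}): the issue is that Lemma \ref{VarBanas}(3) naturally bounds $\theta_\Eb$ at a parameter strictly larger than $\beta$, so one has to pay a factor $(n/2)\log s$ to descend back to $\beta$, while simultaneously paying $-\log(1-1/s)$ to control the tail. The definition of $C(n)$ is tailored to this trade-off, which is why the bound comes out cleanly; conceptually (\ref{comp32}) and (\ref{comp33}) are then essentially bookkeeping, relying on the convex-dual structure of $\log\theta_\Eb$ already extracted in Theorem \ref{thermolattice}.
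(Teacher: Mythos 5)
Your proof is correct and follows essentially the same route as the paper's: the estimate (\ref{comp31}) is obtained exactly as in the text by combining Lemma \ref{VarBanas}(3) at an auxiliary parameter $s\beta$ with the monotonicity of $\log\theta_\Eb(t)+(\rk E/2)\log t$ and optimizing over $s>1$ (which is precisely how $C(n)$ is defined), (\ref{comp33}) is the paper's one-line application of the Legendre formula (\ref{Leg-hont}) at the test point $\beta=\rk E/(2\pi x)$, and (\ref{comp32}) is the paper's tensor-power argument using $\theta_{\Eb^{\oplus k}}=\theta_\Eb^k$ and $C(k\,\rk E)=o(k)$. No gaps.
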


The following consequence of Theorem \ref{comp3}, which involves only the ``elementary" invariants $\hon(\Eb, t)$ and $\hont(\Eb, t)$, seems worth being mentioned: 
\begin{corollary}
 For any $x\in \Rpa,$
 \begin{equation}
0 \leq \hont(\Eb, x) - \hon(\Eb, x) \leq C(\rk E) + \rk E/2.
\end{equation}
\qed
\end{corollary}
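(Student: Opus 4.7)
The plan is to derive the corollary as an immediate consequence of Theorem~\ref{comp3} combined with the elementary fact that $\hont(\Eb, x)$ is a supremum over $n \in \N_{\geq 1}$ of $(1/n)\,\hon(\Eb^{\oplus n}, nx)$. The corollary packages only the ``elementary'' invariants, so the idea is to use $\log\theta_\Eb(\rk E/(2\pi x))$ as an intermediate quantity and sandwich $\hont(\Eb,x)$ against $\hon(\Eb,x)$ from both sides.

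First, for the lower bound, I would observe that by the very definition of $\hont$ as a supremum,
\begin{equation*}
\hont(\Eb, x) = \sup_{n \in \N_{\geq 1}} \frac{1}{n}\,\hon(\Eb^{\oplus n}, nx) \geq \hon(\Eb, x),
\end{equation*}
by specializing to $n = 1$. This gives $0 \leq \hont(\Eb, x) - \hon(\Eb, x)$ with no further work. (Equivalently, one could invoke the superadditivity property \eqref{eq:hontsuper} together with Fekete's lemma, but the $n=1$ term in the supremum is already enough.)

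For the upper bound, I would simply chain the two relevant inequalities of Theorem~\ref{comp3}. Specifically, \eqref{comp33} gives
\begin{equation*}
\hont(\Eb, x) \leq \log\theta_\Eb(\rk E/(2\pi x)) + \rk E/2,
\end{equation*}
while \eqref{comp31} gives
\begin{equation*}
\log\theta_\Eb(\rk E/(2\pi x)) \leq \hon(\Eb, x) + C(\rk E).
\end{equation*}
Adding these yields $\hont(\Eb, x) \leq \hon(\Eb, x) + C(\rk E) + \rk E/2$, which is the required bound.

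Since both halves of the estimate follow directly from results already stated in the paper, there is no genuine obstacle here: the proof is essentially a one-line combination of \eqref{comp31}, \eqref{comp33}, and the $n=1$ term of the supremum defining $\hont$. If anything, the only conceptual point worth emphasizing is that \eqref{comp32} is not needed for this corollary --- it was the bound in the ``wrong direction'' relative to $\hon$, and the nontrivial content (that $\hont$ and $\hon$ differ by at most $O(\rk E \log \rk E)$) comes from pairing the asymptotic-to-theta bound \eqref{comp33} with the theta-to-naive bound \eqref{comp31}.
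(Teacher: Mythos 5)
Your proof is correct and is exactly the argument the paper intends: the lower bound is the $n=1$ term of the supremum defining $\hont$, and the upper bound is the chain of (\ref{comp33}) and (\ref{comp31}), which is why the paper states the corollary with no written proof. One trivial quibble with your closing remark: since $C(n)=\log(n/2)+O(1)$, the bound $C(\rk E)+\rk E/2$ is $O(\rk E)$, not merely $O(\rk E\log\rk E)$.
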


\begin{proof}[Proof of Theorem \ref{comp3}]

Let us start with a straightforward consequence of Lemma \ref{VarBanas} (after the change of notation: $x= r^2$ and $\beta = t$): 
\begin{lemma}\label{VarVarBanas}
 For any $(x, \beta) \in \R_+^{\ast 2}$ such that $\beta x > \rk E/(2\pi),$
 \begin{equation}
\hot(\Eb, x) \geq \log (1- \rk E/(2\pi \beta x)) + \log \theta_{\Eb}(\beta).
\end{equation} \qed
\end{lemma}
 From Lemma \ref{VarVarBanas}, we easily deduce:
\begin{lemma}
 For any $(x, \beta, \beta') \in \R_+^{\ast 3}$ such that $\beta x \geq \rk E/(2\pi)$ and $\beta' > \beta,$
 \begin{equation}\label{VarVarBanasBis}
\hot(\Eb, x) \geq \log (1- \rk E/(2\pi \beta' x))  - (\rk E/2) \log(\beta'/\beta) + \log \theta_{\Eb}(\beta).
\end{equation}
\end{lemma}

\begin{proof}
 Lemma \ref{VarVarBanas} implies that
 $$\hot(\Eb, x) \geq \log (1- \rk E/(2\pi \beta' x)) + \log \theta_{\Eb}(\beta').$$
 
 Besides, we have:
\begin{equation}\label{betabeta}
\log \theta_\Eb(\beta') \geq \log \theta_\Eb(\beta) - (\rk E/2) \log(\beta'/\beta).
\end{equation}
This follows for instance from the upper-bound in (\ref{Ubound}) on 
$U_\Eb(\beta) := - (\log \theta_\Eb)'(\beta).$
 \end{proof}
 
 To prove the inequality (\ref{comp31}), we apply Lemma \ref{VarVarBanasBis} with $\beta := \rk E/(2\pi x)$ and we observe that
 \begin{multline*}
 \sup_{\beta'\in ]\beta, +\infty[} [ \log (1- \rk E/(2\pi \beta' x))  - (\rk E/2) \log(\beta'/\beta)] \\=
 \sup_{\beta'\in ]\beta, +\infty[} [ \log (1- (\beta'/\beta)^{-1})  - (\rk E/2) \log(\beta'/\beta)]
 = - C(\rk E).
 \end{multline*}

For any positive integer $n,$ from (\ref{comp31}) applied to $E^{\oplus n}$ and to $nx$ instead of $\Eb$ and $x$, we get:
$$
n\log\theta_\Eb(\rk E/ (2\pi x)) \leq \hont(\Eb^{\oplus n}, nx) + C(n\, \rk E).$$
Multiplying by $1/n$ and letting $n$ go to $+\infty,$ we obtain (\ref{comp32}), since $C(n\, \rk E) = o(n).$

An alternative proof of (\ref{comp32}) consists in observing that, as an easy consequence of (\ref{betabeta}) (which holds when $\beta' \geq \beta$), the following inequality holds for any $(\beta, \beta') \in \R_+^{\ast 2}$:
$$(\rk E/2) (\beta'/\beta) + \log \theta_\Eb(\beta') \geq \log \theta_\Eb(\beta).$$ Then (\ref{comp32}) follows from the expression (\ref{Leg-hont}) of $\hont(\Eb,.)$ in terms of the Legendre transform of $\log \theta_\Eb$.

Finally (\ref{comp31}) follows from (\ref{Leg-hont}) or (\ref{Leg-logtheta}), by chosing $x$ and $\beta$ related by $\beta x = \rk E/(2\pi).$
  \end{proof}
  
The estimates in Theorem \ref{comp3} show that the expressions $\hon(\Eb, \rk E/2\pi)$ and $\hont(\Eb, \rk E/2\pi)$ satisfy:
\begin{equation}\label{compthetamoinsnaive1}
-C(\rk E) \leq \hon(\Eb, \rk E/ 2 \pi) - \hot(\Eb)  \leq \rk E/2
\end{equation}
and 
\begin{equation}\label{compthetamoinsnaive2}
0 \leq   \hont(\Eb, \rk E/ 2 \pi) - \hot(\Eb)\leq \rk E/2.
\end{equation}

These comparison estimates, relating Arakelov and $\theta$-invariants of Euclidean lattices, should be compared with the comparison estimate (\ref{compthetanaive}) in Section \ref{compho}. The error term, of the order of $(1/2) \rk E. \log \rk E$  in (\ref{compthetanaive}), is replaced by  $\rk E/2$ in (\ref{compthetamoinsnaive1}) and (\ref{compthetamoinsnaive2}). (These error terms will be shown to be basically optimal when $\rk E$ goes to $+\infty$ in the next section; see paragraph \ref{ConstComp} \emph{infra}.)

These remarks plead for  considering the positive real numbers  $$\hon(\Eb, \rk E/2\pi) \mbox{ and } \hont(\Eb, \rk E/2\pi)$$ attached to some Euclidean lattice of positive rank $\Eb$  as variants of $$\hon(\Eb) = \hon(\Eb, 1)$$ that are ``better behaved" than $\hon(\Eb)$ itself.

\section{Some consequences of Siegel's mean value theorem}

\subsection{Siegel's mean value theorem over  $SL_n(\R)/SL_n(\Z)$ and over $\cL(n, \delta)$}

In this paragraph, we denote by $n$ an integer $\geq 2$.

The Lie group $SL_n(\R)$ is unimodular and its discrete subgroup $SL_n(\Z)$ has a finite covolume. We shall denote by $\mu_n$ the Haar measure on $SL_n(\R)$ which satisfies the following normalization condition: \emph{the measure induced by $\mu_n$ on the quotient $SL_n(\R)/SL_n(\Z)$} --- that we shall still denote by $\mu_n$ --- \emph{is a probability measure}. In other words, 
\begin{equation*}
\int_{SL_n(\R)/SL_n(\Z)} d\mu_n = 1.
\end{equation*}

For any Borel function 
$\phi : \R^n \lra [0, +\infty]$
and any $g \in SL_n(\R),$ we may consider the sum
$$\Sigma(\phi) (g) := \sum_{v \in \Z^n\setminus\{0\}} \phi(g.v).$$
Clearly, for any $\gamma \in SL_n(\Z),$ we have 
$$\Sigma(\phi)(g. \gamma) = \Sigma(\phi)(g)$$
and the function
$$\Sigma(\phi) : SL_n(\R)/SL_n(\Z) \lra [0, +\infty]$$
so defined is a Borel function.

In its most basic form, Siegel's mean value theorem is the following statement (\cite{Siegel45}; see also \cite{Weil46} and \cite{MacbeathRogers58} for other derivations, and \cite{Weil82}, Chapter III, for a ``modern" presentation).

\begin{theorem}\label{SMVFOr} For any Borel function $\phi : \R^n \lra [0, +\infty]$ as above, the following equality holds:
\begin{equation}\label{eq:SMVFOr}
\int_{SL_n(\R)/SL_n(\Z)} \Sigma(\phi)(g)\,  d\mu_n(g) = \int_{\R^n} \phi(v) \,d\lambda_n(v), 
\end{equation}
where we denote by $\lambda_n$ the Lebesgue measure on $\R^n.$ \qed
\end{theorem}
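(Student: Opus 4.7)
The plan is to show both sides of (\ref{eq:SMVFOr}) define $SL_n(\R)$-invariant Radon measures on $\R^n\setminus\{0\}$ and then to compute the proportionality constant. By the monotone convergence theorem applied to an increasing sequence of bounded Borel functions with compact support in $\R^n\setminus\{0\}$ (noting the origin contributes nothing to $\Sigma(\phi)$ and has Lebesgue measure zero), I reduce to the case $\phi\in C_c(\R^n\setminus\{0\})$ non-negative. For such $\phi$, Mahler's compactness criterion ensures that $\Sigma(\phi)(g)$ depends only locally finitely on $g\in SL_n(\R)/SL_n(\Z)$: the preimage in a compact subset of the quotient of $\{g\Z^n : g\Z^n \cap \mathrm{supp}\,\phi\neq\{0\}\}$ is covered by finitely many $SL_n(\Z)$-orbits of lattices. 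Hence
$$I(\phi):=\int_{SL_n(\R)/SL_n(\Z)} \Sigma(\phi)(g)\,d\mu_n(g)$$
is finite and defines a positive Radon measure on $\R^n\setminus\{0\}$.

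Next I verify $SL_n(\R)$-invariance of $I$: for $h\in SL_n(\R)$, the relation $\Sigma(\phi\circ h)(g)=\Sigma(\phi)(hg)$ is immediate from the definition, and left-translation by $h$ preserves $\mu_n$ on the quotient (since $\mu_n$ arose from a left Haar measure on $SL_n(\R)$ and $SL_n(\R)$ is unimodular, and left translation descends to the quotient by the right action of $SL_n(\Z)$). Since $g\mapsto g\cdot v$ has Jacobian $\det g=1$, Lebesgue measure $\lambda_n$ is also $SL_n(\R)$-invariant. Because $n\geq 2$, $SL_n(\R)$ acts transitively on $\R^n\setminus\{0\}$ with stabilizer of $e_1$ equal to $H\cong SL_{n-1}(\R)\ltimes \R^{n-1}$, which is unimodular (being an extension of unimodular groups, the action of $SL_{n-1}(\R)$ preserving Lebesgue on $\R^{n-1}$ by $\det=1$). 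The space of $SL_n(\R)$-invariant Radon measures on the homogeneous space $SL_n(\R)/H\cong\R^n\setminus\{0\}$ is therefore one-dimensional, and I conclude $I=c\cdot\lambda_n|_{\R^n\setminus\{0\}}$ for some constant $c\geq 0$.

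To pin down $c$, I would use the unfolding trick via primitive vectors. Writing $\Z^n\setminus\{0\}=\bigsqcup_{k\geq 1} k\cdot\Z^n_{\mathrm{prim}}$ and identifying $\Z^n_{\mathrm{prim}}=SL_n(\Z)\cdot e_1\cong SL_n(\Z)/\Gamma$ where $\Gamma:=\mathrm{Stab}_{SL_n(\Z)}(e_1)=H\cap SL_n(\Z)$, an unfolding computation gives
$$I(\phi)=\sum_{k=1}^{\infty}\int_{SL_n(\R)/\Gamma}\phi(kg e_1)\,d\bar\mu_n(g)=\mathrm{vol}(H/\Gamma)\cdot\zeta(n)\cdot\int_{\R^n}\phi(v)\,d\nu(v),$$
where $\nu$ is the $SL_n(\R)$-invariant measure on $\R^n\setminus\{0\}$ obtained by pushing forward $\mu_n$ from $SL_n(\R)/H$ via $gH\mapsto g e_1$, and $\mathrm{vol}(H/\Gamma)$ is taken with respect to the Haar measure on $H$ compatible with $\mu_n$ and $\nu$ in the quotient-integration formula. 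Writing $\nu=c_1\lambda_n$, one obtains $c=c_1\cdot\mathrm{vol}(H/\Gamma)\cdot\zeta(n)$.

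The hardest part is checking that this constant is exactly $1$, given the normalization $\mu_n(SL_n(\R)/SL_n(\Z))=1$. This is equivalent to Minkowski's classical computation
$$\mathrm{vol}\bigl(SL_n(\R)/SL_n(\Z)\bigr)\;=\;\zeta(2)\zeta(3)\cdots\zeta(n)$$
with respect to the Haar measure on $SL_n(\R)$ compatible with Lebesgue on $\R^n\setminus\{0\}$ via the fibration $SL_n(\R)\to\R^n\setminus\{0\}$, $g\mapsto ge_1$. Granted this normalization identity (which I would either quote from Weil \cite{Weil82} or prove by induction on $n$, reducing the computation of $\mathrm{vol}(SL_n/SL_{n}(\Z))$ to that of $\mathrm{vol}(SL_{n-1}/SL_{n-1}(\Z))$ via the same fibration and the evaluation $\mathrm{vol}(\R^{n-1}/\Z^{n-1})=1$ together with summation over the index $k$ giving the factor $\zeta(n)$), the identity $c=1$ follows and (\ref{eq:SMVFOr}) is established.
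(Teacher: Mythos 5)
The paper itself offers no proof of Theorem \ref{SMVFOr}: it is stated with a \emph{qed} symbol and references to Siegel, Weil and Macbeath--Rogers, so there is no internal argument to compare yours against. Your route --- identify both sides as $SL_n(\R)$-invariant Radon measures on the homogeneous space $\R^n\setminus\{0\}\cong SL_n(\R)/H$ with $H$ closed and unimodular, then fix the proportionality constant by unfolding over primitive vectors and invoking Minkowski's volume formula $\mathrm{vol}(SL_n(\R)/SL_n(\Z))=\zeta(2)\cdots\zeta(n)$ --- is essentially Weil's proof from \cite{Weil46}. The architecture is sound: the identification of the stabilizer $H\cong SL_{n-1}(\R)\ltimes\R^{n-1}$ and its unimodularity are correct, the reduction of $c=1$ to the volume normalization is correct, and the inductive computation of that volume via the fibration $g\mapsto ge_1$ is the standard way to close the loop.

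The one step that does not hold up as written is the finiteness of $I(\phi)$ for $\phi\in C_c(\R^n\setminus\{0\})$. Mahler's compactness criterion bounds $\Sigma(\phi)(g)$ uniformly only for $g$ ranging over a \emph{compact} subset of $SL_n(\R)/SL_n(\Z)$; since the quotient is non-compact, local boundedness does not yield integrability, and $\Sigma(\phi)$ is in fact unbounded in the cusp (a unimodular lattice with a nonzero vector of length $\delta\ll 1$ can have on the order of $\delta^{-1}$ points in a fixed annulus). So at that stage you have not shown that $I$ is a Radon measure, and without that the uniqueness-of-invariant-measure theorem cannot be applied to $I$ directly. The repair is already implicit in your third paragraph: run the unfolding first. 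Since all terms are non-negative, Tonelli justifies the interchange of sum and integral unconditionally, and the unfolding expresses $I(\phi)$ as $\zeta(n)\,\mathrm{vol}(H/\Gamma)\int\phi\,d\nu$, where $\nu$ is the invariant measure on the orbit $SL_n(\R)/H$ (automatically Radon by the general theory of homogeneous spaces of unimodular groups) and $\mathrm{vol}(H/\Gamma)<\infty$ follows by induction from the fibration of $H/\Gamma$ over $SL_{n-1}(\R)/SL_{n-1}(\Z)$ with compact fibre $\R^{n-1}/\Z^{n-1}$. Uniqueness of the invariant measure is then needed only to identify $\nu$ with a multiple of $\lambda_n$, and the finiteness of $I(\phi)$ emerges as a by-product of the computation rather than being an input to it.
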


 For any $g \in SL_n(\R)$ and any $\delta \in \R$, the lattice $e^{-\delta/n} g(\Z^n)$ in $\R^n$ equipped with  the standard Euclidean norm $\Vert.\Vert_n$  
 (defined by $\Vert(x_1, \ldots, x_n)\Vert_n^2= x_1^2 + \dots + x_n^2$) becomes an Euclidean lattice 
 $( e^{-\delta/n} g(\Z^n), \Vert.\Vert_n)$ of covolume $e^{-\delta}$, or equivalently, of Arakelov degree $\delta$.
 
 We shall denote by $\cL(n,\delta)$ the set of isomorphism classes of Euclidean lattices of rank $n$ and Arakleov degree $\delta$. This set may be endowed with a natural locally compact topology (actually, with a structure of ``orbifold") by means of the identification of the set 
 $$\cL(n) := \coprod_{\delta \in \R} \cL(n,\delta)$$
 of isomorphism classes of Euclidean lattices of rank $n$ with the double coset space $$O_n(\R)\setminus GL_n(\R)/GL_n(\Z).$$
 In concrete terms, the natural topology and Borel structures on $\cL(n,\delta)$ are the quotients of the ones of $SL_n(\R)/SL_n(\Z)$ by the surjective map
 \begin{equation*}
\begin{array}{rrcl}
 \pi_{n,\delta}: & SL_n(\R)/SL_n(\Z) & \lra  & \cL_{n,\delta}   \\
& [g] & \longmapsto  & [( e^{-\delta/n} g(\Z^n), \Vert.\Vert_n)].  
\end{array}
\end{equation*}

We shall denote by
\begin{equation*}
\mu_{n,\delta}:= \pi_{n,\delta \ast} \mu_n
\end{equation*}
the Borel  measure on  $\cL_{n,\delta}$ deduced from the measure $\mu_n$ on $SL_n(\R)/SL_n(\Z)$ by the parametrization $\pi_{n,\delta}$ of $\cL_{n,\delta}$. Like $\mu_n,$ it is a probability measure:
\begin{equation*}
\int_{\cL(n,\delta)} d\mu_{n, \delta} = 1.
\end{equation*}

Applied to a radial function $\phi: \R^n \lra [0,+\infty],$ Siegel's mean value formula (\ref{eq:SMVFOr}) ``descends" through $\pi_{n,\delta}$.

Namely, to any Borel function   
$$\rho : \Rpa \lra [0, +\infty]$$
we may attach the Borel function 
$$\Sigma_{n}(\rho): \cL(n) \lra [0, +\infty]$$
which maps the isomorphism class of some Euclidean lattice $\Eb := (E, \Vert .\Vert)$ of rank $n$ to 
$$\Sigma_n (\rho)(\Eb) :=\sum_{v \in E\setminus\{0\} } \rho(\Vert v \Vert).$$ 

For any $\delta \in \R$ and any $g\in SL_n(\R),$ we have:
\begin{equation*}
\Sigma_n (\rho)(\pi_{n,\delta}([g])) = \sum_{w\in \Z^k\setminus \{0\}} \rho(e^{- \delta/n} \Vert g.w \Vert) = \Sigma(\phi_\delta)([g])
\end{equation*}
where $\phi_\delta$ is the function from $\R^n$ to $[0, +\infty]$ defined by:
$$\phi_\delta(v) := \rho( e^{-\delta/n} \Vert v\Vert).$$ 

Clearly, we have
$$\int_{\R^n} \phi_{\delta}(v) \, d\lambda_n(v) = e^\delta \int_{\R^n} \rho(\Vert v \Vert) \, d\lambda_n(v),$$
and Siegel's mean value formula (\ref{eq:SMVFOr}) applied to $\phi_{\delta}$ becomes:
\begin{theorem}\label{SMVFL} For any $(n,\delta) \in \N_{\geq 2} \times \R$ and for any Borel function $\rho : \R^+ \lra [0, +\infty]$, the following equality holds:
\begin{equation}\label{SW2}
\int_{\cL(n,\delta)} \Sigma_n(\rho) \, d\mu_{n,\delta} = e^\delta \int_{\R^n} \rho(\Vert v \Vert_n) \, d\lambda_n(v).
\end{equation} \qed 
\end{theorem}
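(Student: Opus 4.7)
The proof is essentially spelled out in the paragraph preceding the statement, and amounts to reducing the radial, fixed-degree version of Siegel's formula to the original Theorem \ref{SMVFOr} by pushforward and a change of variables. My plan is to proceed in four short steps.

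First, I would use the very definition of $\mu_{n,\delta}$ as the pushforward $\pi_{n,\delta\ast}\mu_n$ to rewrite
\begin{equation*}
\int_{\cL(n,\delta)} \Sigma_n(\rho)\, d\mu_{n,\delta} = \int_{SL_n(\R)/SL_n(\Z)} \Sigma_n(\rho)\circ \pi_{n,\delta}\, d\mu_n,
\end{equation*}
which is legitimate for non-negative Borel functions provided $\Sigma_n(\rho)$ is itself Borel on $\cL(n,\delta)$. This latter point follows because $\rho$ is defined on $\Rpa$ and only the \emph{norms} $\Vert v \Vert$ of lattice vectors enter the sum, so $\Sigma_n(\rho)$ factors through the double-coset space $O_n(\R)\backslash GL_n(\R)/SL_n(\Z)$ and is Borel-measurable there.

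Second, using the explicit description $\pi_{n,\delta}([g]) = [(e^{-\delta/n}g(\Z^n), \Vert.\Vert_n)]$ given in the text, I would compute pointwise
\begin{equation*}
\Sigma_n(\rho)\bigl(\pi_{n,\delta}([g])\bigr) = \sum_{w\in \Z^n\setminus\{0\}} \rho\bigl(e^{-\delta/n}\Vert g.w\Vert_n\bigr) = \Sigma(\phi_\delta)([g]),
\end{equation*}
where $\phi_\delta(v):=\rho(e^{-\delta/n}\Vert v\Vert_n)$ is a non-negative Borel function on $\R^n$.

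Third, I would apply Theorem \ref{SMVFOr} to the function $\phi_\delta$:
\begin{equation*}
\int_{SL_n(\R)/SL_n(\Z)} \Sigma(\phi_\delta)\, d\mu_n = \int_{\R^n} \phi_\delta(v)\, d\lambda_n(v).
\end{equation*}
Finally, the change of variables $v = e^{\delta/n} u$ in the right-hand side, together with the homogeneity $\lambda_n(e^{\delta/n}A) = e^{\delta}\lambda_n(A)$ of Lebesgue measure on $\R^n$, gives
\begin{equation*}
\int_{\R^n} \rho\bigl(e^{-\delta/n}\Vert v\Vert_n\bigr)\, d\lambda_n(v) = e^{\delta}\int_{\R^n} \rho(\Vert u\Vert_n)\, d\lambda_n(u),
\end{equation*}
and chaining the three equalities yields the desired identity \eqref{SW2}.

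There is no real obstacle; the only point that deserves a moment of care is the measurability/well-definedness issue in the first step (ensuring $\Sigma_n(\rho)$ descends to a Borel function on $\cL(n,\delta)$, equivalently that $\Sigma(\phi_\delta)$ is both left $SO_n(\R)$-invariant and right $SL_n(\Z)$-invariant on $SL_n(\R)$), and the fact that all quantities are non-negative so that Tonelli-type manipulations and the unrestricted use of $[0,+\infty]$-valued integrals in Theorem \ref{SMVFOr} cause no trouble.
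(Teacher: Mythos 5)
Your proposal is correct and follows exactly the route the paper takes: the text preceding the theorem already performs the pointwise identification $\Sigma_n(\rho)\circ\pi_{n,\delta} = \Sigma(\phi_\delta)$ with $\phi_\delta(v)=\rho(e^{-\delta/n}\Vert v\Vert_n)$, the scaling computation $\int_{\R^n}\phi_\delta\,d\lambda_n = e^\delta\int_{\R^n}\rho(\Vert v\Vert_n)\,d\lambda_n(v)$, and then invokes Theorem \ref{SMVFOr}. Your extra remark about the measurability and $O_n(\R)$-invariance of $\Sigma(\phi_\delta)$ is a reasonable point of care that the paper leaves implicit.
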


The last integral may also be written
$$\int_{\R^n} \rho(\Vert v \Vert_n) \, d\lambda_n(v) = n v_n \int_0^{+\infty} \rho(r) r^{n-1}\, dr $$
where, as previously in this monograph, $v_n$ denotes the volume of the $n$-dimensional ball:
$$v_n := \lambda_n(\{ v \in \R^n \mid \Vert v \Vert_n < 1 \}) = \frac{\pi^{n/2}}{\Gamma(1 + n/2)}.$$

Theorems \ref{SMVFOr} and \ref{SMVFL} are classically used to establish the existence of Euclidean lattices satisfying suitable conditions --- for instance, of lattices of large enough density --- by ``probabilistic arguments", based on the observation that a positive measurable function on some probability space assume values greater or equal to its mean value on some subset of positive measure. 
We refer the reader to \cite{Siegel45} for a concise discussion of the existence of ``dense lattices" as a consequence of Theorem \ref{SMVFOr}  and for references to related earlier work of Minkowski and Hlawka.  

For later reference, we state a formal version of the above observation as the following lemma: 
\begin{lemma}\label{probamethod}
 Let $\phi$ and $\psi$ be two Borel functions from $\cL(n,\delta)$ to $[0,+\infty]$.
 
1)  If the integrals 
 $$I_\phi:= \int_{\cL(n,\delta)} \phi(x) \, d\mu_{n,\delta}(x) \mbox{ and } I_\psi:= \int_{\cL(n,\delta)} \psi(x) \, d\mu_{n,\delta}(x)$$
 are finite and positive, then the Borel subsets
 %$$\cE_{\leq} := \{ x \in \cL(n,\delta) \mid  I_\phi^{-1} \phi(x) \leq I_\psi^{-1} \psi(x) \}$$
 $$\cE_{\leq} := \{ x \in \cL(n,\delta) \mid  \phi(x)/I_\phi \leq \psi(x)/I_\psi \}$$
 and
 % $$\cE_{\geq} := \{ x \in \cL(n,\delta) \mid  I_\phi^{-1} \phi(x) \geq I_\psi^{-1} \psi(x) \}$$
  $$\cE_{\geq} := \{ x \in \cL(n,\delta) \mid  \phi(x)/I_\phi \geq \psi(x)/I_\psi \}$$
  have positive $\mu_{n,\delta}$-measures, and therefore are non-empty.
  
 % If moreover $\phi$ and $\psi$ are continuous, there exists $x$ in $\cL(n,\delta)$ such that $\phi(x)/I_\phi = \psi(x)/I_\psi.$ 
  
2)  In particular, %if $\phi$ is continuous and
   if the integral $I_\phi$ is finite, there exists $x$ in $\cL(n,\delta)$ such that $$\phi(x) \leq I_\phi.$$
   If moreover $\phi$ is continous and non constant, then $I_\phi$ is positive and the image $\phi(\cL(n,\delta))$   contains an open neighborhood of $I_\phi$ in $\Rpa$.
    \qed
\end{lemma}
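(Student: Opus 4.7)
The plan is to exploit that $\mu_{n,\delta}$ is a probability measure, so that every integral $I_\phi = \int \phi\,d\mu_{n,\delta}$ is an averaged value that $\phi$ must straddle. The central device is the signed Borel function $f := \phi/I_\phi - \psi/I_\psi$, well defined and finite $\mu_{n,\delta}$-almost everywhere (since $\phi$ and $\psi$ are a.e.\ finite, their integrals being finite by hypothesis), whose integral against $\mu_{n,\delta}$ equals $1-1 = 0$.

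For assertion 1), I would argue by contradiction: if $\mu_{n,\delta}(\cE_\leq) = 0$, then $f > 0$ $\mu_{n,\delta}$-a.e., but a non-negative integrable function of vanishing integral must vanish a.e., contradicting $f > 0$ a.e. Thus $\mu_{n,\delta}(\cE_\leq) > 0$, and the symmetric reasoning handles $\cE_\geq$. For the first statement of 2), the case $I_\phi = 0$ is immediate (since then $\phi = 0$ $\mu_{n,\delta}$-a.e.), while in the case $I_\phi > 0$, applying 1) with $\psi$ taken to be the constant function $1$ produces some $x$ with $\phi(x) \leq I_\phi$.

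To prove the final continuity statement I would invoke two structural inputs about $\cL(n,\delta)$: first, that it is connected, as the continuous image of the connected space $SL_n(\R)/SL_n(\Z)$; second, that every non-empty open subset of $\cL(n,\delta)$ has positive $\mu_{n,\delta}$-measure, since its preimage under $\pi_{n,\delta}$ is a non-empty open subset of $SL_n(\R)/SL_n(\Z)$ and hence has positive Haar measure. Combined with the continuity and non-constancy of $\phi$, these inputs imply that $\phi$ cannot be $\geq I_\phi$ (nor $\leq I_\phi$) everywhere: otherwise $\phi - I_\phi$ would be a continuous non-negative (resp.\ non-positive) function vanishing $\mu_{n,\delta}$-a.e., hence vanishing on a dense subset of $\cL(n,\delta)$ and thus identically, contradicting non-constancy. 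So there exist $x_-, x_+$ with $\phi(x_-) < I_\phi < \phi(x_+)$; in particular $I_\phi > 0$. The intermediate value theorem applied to the continuous function $\phi$ on the connected space $\cL(n,\delta)$ then forces its image to be an interval containing $[\phi(x_-), \phi(x_+)]$, which provides an open neighborhood of $I_\phi$ inside $\Rpa$. The only slightly delicate point is the second structural input---that non-empty open subsets of $\cL(n,\delta)$ carry positive $\mu_{n,\delta}$-measure---but this is a routine consequence of the quotient construction combined with the standard fact that the Haar measure of a Lie group is positive on non-empty open sets.
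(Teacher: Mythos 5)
Your proof is correct. The paper states this lemma without proof (presenting it as a ``formal version'' of the standard probabilistic-method observation), and your argument supplies exactly the routine details being left to the reader: the averaging/contradiction argument for part 1) and the first half of 2), and, for the final assertion, the two structural facts you correctly identify --- connectedness of $\cL(n,\delta)$ as a continuous image of $SL_n(\R)/SL_n(\Z)$, and positivity of $\mu_{n,\delta}$ on non-empty open sets via the quotient construction --- which make the intermediate-value argument go through.
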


\subsection{Applications to $\hon$ and $\hot$} Let us start by recovering a simple version of the classical results of Minkowski-Hlawka-Siegel alluded to above. We will express it in terms of the invariant $\hon(.,t)$, in a form convenient for later references and for comparison with similar results concerning the invariant $\hot$.

According to the very definition of $\hon(.,t)$, % --- namely
%$$\hon(\Eb, \eta) := \vert \{v \in E \mid \eta \Vert v \Vert^2 \leq   n /2\pi \} \vert .$$
%$$\hon(\Eb, t) := \log \vert \{v \in E \mid \Vert v \Vert^2 \leq  t \} \vert $$
we have
$$e^{\hon(\Eb, t)} -1 = \left\vert\{v \in E\setminus\{0\} \mid \Vert v \Vert^2 \leq t \} \right\vert = \sum_{v E\setminus\{0\}} {\bm 1}_{[0, t^{1/2}]} (\Vert v\Vert) = \Sigma_n({\bm 1}_{[0, t^{1/2}]})([\Eb]).$$
Besides, for $\rho = {\bm 1}_{[0, t^{1/2}]},$ the computation of the integral in the right-hand side of Siegel's mean value formula (\ref{SW2}) is straightforward --- indeed,
$$\int_{\R^n} {\bm 1}_{[0, t^{1/2}]}(\Vert v \Vert) d\lambda_n(v) = v_n. t^{n/2}$$
--- and formula (\ref{SW2}) takes the following form:
\begin{proposition} For any $(n,\delta)$ in $\N_{\geq 2} \times \R$ and any $t\in \Rpa$, the following relations hold:
\begin{equation}\label{Averagehon}
\int_{[\Eb] \in \cL(n,\delta)} e^{\hon(\Eb, t)} d\mu_{n,\delta}([\Eb]) = 1 + v_n t^{n/2}\, e^\delta.
\end{equation}
\end{proposition}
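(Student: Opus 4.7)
The plan is to apply Siegel's mean value formula (Theorem \ref{SMVFL}) to the radial indicator function $\rho = \mathbf{1}_{[0, t^{1/2}]} : \R_+ \lra [0, +\infty]$, which is exactly the function that counts lattice vectors of squared norm at most $t$.

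First, I would rewrite the integrand $e^{\hon(\Eb, t)}$ in a form suited to Siegel's formula. From the definition (\ref{hon,t}),
\begin{equation*}
e^{\hon(\Eb, t)} = \left\vert\{v \in E \mid \Vert v \Vert^2 \leq t \}\right\vert = 1 + \left\vert\{v \in E\setminus\{0\} \mid \Vert v \Vert \leq t^{1/2} \}\right\vert = 1 + \Sigma_n(\mathbf{1}_{[0, t^{1/2}]})(\Eb),
\end{equation*}
where the last equality uses the very definition of $\Sigma_n(\rho)$ from the isomorphism class of $\Eb$.

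Next, I would split the integral into a constant term and a $\Sigma_n$-term, using that $\mu_{n,\delta}$ is a probability measure on $\cL(n,\delta)$:
\begin{equation*}
\int_{\cL(n,\delta)} e^{\hon(\Eb, t)} \, d\mu_{n,\delta}([\Eb]) = 1 + \int_{\cL(n,\delta)} \Sigma_n(\mathbf{1}_{[0, t^{1/2}]}) \, d\mu_{n,\delta}.
\end{equation*}
Siegel's mean value formula (\ref{SW2}) then gives
\begin{equation*}
\int_{\cL(n,\delta)} \Sigma_n(\mathbf{1}_{[0, t^{1/2}]}) \, d\mu_{n,\delta} = e^\delta \int_{\R^n} \mathbf{1}_{[0, t^{1/2}]}(\Vert v \Vert_n) \, d\lambda_n(v).
\end{equation*}

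Finally, I would compute the Lebesgue integral on the right: $\int_{\R^n} \mathbf{1}_{[0, t^{1/2}]}(\Vert v \Vert_n) \, d\lambda_n(v)$ is simply the volume of the closed ball of radius $t^{1/2}$ in $\R^n$, which equals $v_n\, t^{n/2}$ by the defining property of $v_n$. Combining these three steps yields the claimed identity. There is no real obstacle here: the statement is a direct reformulation of Theorem \ref{SMVFL} for $\rho = \mathbf{1}_{[0, t^{1/2}]}$, the only ``book-keeping'' being the $+1$ coming from the excluded zero vector and the normalization $\mu_{n,\delta}(\cL(n,\delta)) = 1$.
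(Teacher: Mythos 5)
Your proof is correct and follows exactly the same route as the paper: express $e^{\hon(\Eb,t)}-1$ as $\Sigma_n(\mathbf{1}_{[0,t^{1/2}]})([\Eb])$, apply Siegel's mean value formula (\ref{SW2}), compute the Lebesgue integral as the ball volume $v_n t^{n/2}$, and use that $\mu_{n,\delta}$ is a probability measure to account for the $+1$. Nothing is missing.
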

 
 In particular, when $t=1,$ we obtain:
 \begin{equation*}
\int_{[\Eb] \in \cL(n,\delta)} e^{\hon(\Eb)} d\mu_{n,\delta}([\Eb]) = 1 + v_n \, e^\delta.
\end{equation*}

Observe that $e^{\hon(.,t)}-1$ takes its values in $\N$ and therefore vanishes where it is $<1.$ Therefore,  if we apply Lemma \ref{probamethod}, part 2), to the function $\phi:= e^{\hon(.,t)}-1$, we obtain that, for any $(n,\delta)$ in $\N_{\geq 2} \times \R$ and any $t\in \Rpa$ such that 
 $$ v_n t^{n/2}\, e^\delta < 1,$$
 there exists some Euclidean lattice $\Eb$ of rank $n$ and Arakelov degree $\delta$ such that
 $\hon(\Eb, t) = 0$, or equivalently, such that $\lambda_1(\Eb) > t^{1/2}.$ 
 
 In other words, we have established the following variant of a classical result of Minkowski:
 \begin{corollary}
 For any $(n,\delta)$ in $\N_{\geq 2} \times \R$,  we have:
 \begin{equation}\label{MinkHlawka}
\sup_{[\Eb] \in \cL(n,\delta)} \lambda_1(\Eb) \geq e^{-\delta /n} \, v_n^{-1/n}.
\end{equation}
\end{corollary}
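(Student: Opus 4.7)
The plan is to deduce the corollary directly from formula (\ref{Averagehon}) by a probabilistic argument, exploiting the crucial fact that $e^{\hon(\Eb,t)}-1$ is integer-valued. This is in fact the argument sketched in the paragraph immediately preceding the corollary; I would make it rigorous in three short steps.

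First, for every $t \in \Rpa$, consider the Borel function
\[
\phi_t : [\Eb] \longmapsto e^{\hon(\Eb, t)} - 1 = \bigl\lvert \{ v \in E \setminus \{0\} \mid \Vert v \Vert \leq t^{1/2} \} \bigr\rvert
\]
on $\cL(n,\delta)$. By (\ref{Averagehon}), its integral against the probability measure $\mu_{n,\delta}$ is
\[
I_{\phi_t} = v_n\, t^{n/2}\, e^{\delta}.
\]

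Next, I would apply Lemma \ref{probamethod}, part~2), to $\phi_t$: there exists $[\Eb] \in \cL(n,\delta)$ with $\phi_t([\Eb]) \leq I_{\phi_t}$. The key observation is that $\phi_t$ takes values in $\N$, so whenever $I_{\phi_t} < 1$ one automatically has $\phi_t([\Eb]) = 0$, i.e. no non-zero vector of $E$ has norm $\leq t^{1/2}$, and therefore $\lambda_1(\Eb) > t^{1/2}$. Hence the condition
\[
v_n\, t^{n/2}\, e^{\delta} < 1, \quad \text{i.e.} \quad t^{1/2} < e^{-\delta/n}\, v_n^{-1/n},
\]
guarantees the existence of some $[\Eb] \in \cL(n,\delta)$ with $\lambda_1(\Eb) > t^{1/2}$.

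Finally, letting $t^{1/2}$ approach $e^{-\delta/n}\, v_n^{-1/n}$ from below yields (\ref{MinkHlawka}). There is no serious obstacle here: the only non-routine ingredients are (\ref{Averagehon}) (which is Siegel's mean value formula applied to $\rho = \mathbf{1}_{[0, t^{1/2}]}$, already available) and the integrality of $\phi_t$, which is immediate from its combinatorial definition. I would only need to take a little care to note that Lemma \ref{probamethod} truly applies, which is the case since $\phi_t$ is non-negative and the integral $I_{\phi_t}$ is finite.
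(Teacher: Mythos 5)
Your proof is correct and is exactly the argument the paper gives in the paragraph immediately preceding the corollary: apply Siegel's formula (\ref{Averagehon}), invoke Lemma \ref{probamethod}, part 2), use the integrality of $e^{\hon(\cdot,t)}-1$ to conclude that $\lambda_1(\Eb)>t^{1/2}$ whenever $v_n t^{n/2}e^{\delta}<1$, and let $t^{1/2}$ tend to $e^{-\delta/n}v_n^{-1/n}$. Nothing to add.
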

This estimates has to be compared with the upper bound 
 \begin{equation}
\sup_{[\Eb] \in \cL(n,\delta)} \lambda_1(\Eb) \leq  2 e^{-\delta /n} \, v_n^{-1/n}
\end{equation}
that follows from the so-called ``Minkowski First Theorem". 

Let us also recall that, when $n$ goes to infinity, 
$$v_n^{-1/n} \sim \sqrt{n/(2\pi e)}$$ 
and that the positive real number $$\gamma_n := \sup_{[\Eb] \in \cL(n,0)} \lambda_1(\Eb)^2$$ is classically known as the \emph{Hermite constant} in dimension $n$. Thus the lower-bound (\ref{MinkHlawka}), when expressed in terms of Hermite constants, takes the following asymptotic form:
$$\liminf_{n \ra +\infty} \gamma_n / n \geq 1/(2\pi e),$$
well-known in the study of sphere packings (see \cite{ConwaySloane1993}, notably Chapter 1, for additional informations and references).

To compute the average value on $\cL(n,\delta)$ of the $\theta$-invariants, we apply Siegel's mean value formula (\ref{SW2}) to the Gaussian function
$$\rho( x) := e^{- \pi x^2}.$$
For this choice of $\rho,$ the integral in the right-hand side of (\ref{SW2}) is simply:
$$\int_{\R^n} e^{-\pi \Vert v \Vert^2} d\lambda_n(v) = 1,$$
and Siegel's mean value formula takes the following form:

\begin{proposition} For any $(n,\delta)$ in $\N_{\geq 2} \times \R$, 
\begin{equation}\label{Averagehot}
\int_{[\Eb] \in \cL(n,\delta)} e^{\hot(\Eb)} d\mu_{n,\delta}([\Eb]) = 1 + e^\delta
\end{equation} \qed
\end{proposition}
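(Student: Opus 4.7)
The plan is to apply Siegel's mean value formula (Theorem \ref{SMVFL}) to the radial Gaussian function $\rho(x) := e^{-\pi x^2}$, using the definition of $\hot$ to rewrite the integrand as $1 + \Sigma_n(\rho)$.

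First, I would unfold the definition of $\hot$. For an euclidean lattice $\Eb = (E,\Vert.\Vert)$ of rank $n$,
\begin{equation*}
e^{\hot(\Eb)} \;=\; \sum_{v \in E} e^{-\pi \Vert v\Vert^2} \;=\; 1 + \sum_{v \in E\setminus\{0\}} e^{-\pi\Vert v\Vert^2} \;=\; 1 + \Sigma_n(\rho)([\Eb]),
\end{equation*}
where $\rho(x) = e^{-\pi x^2}$. Since $\rho$ is non-negative and Borel (indeed continuous), $\Sigma_n(\rho)$ is a $[0,+\infty]$-valued Borel function on $\cL(n,\delta)$, so integration against the probability measure $\mu_{n,\delta}$ is well defined.

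Next I would integrate both sides over $\cL(n,\delta)$ against $\mu_{n,\delta}$. Using that $\mu_{n,\delta}$ is a probability measure, the constant term contributes $1$, and Theorem \ref{SMVFL} applied to $\rho$ gives
\begin{equation*}
\int_{\cL(n,\delta)} \Sigma_n(\rho)\, d\mu_{n,\delta} \;=\; e^\delta \int_{\R^n} e^{-\pi\Vert v\Vert_n^2}\, d\lambda_n(v).
\end{equation*}
The remaining Gaussian integral equals $1$ by the normalization of Lebesgue measure recalled in (\ref{gaussint}) (applied to the trivial euclidean lattice $\Z^n$ with its standard norm, where $\lambda_n$ coincides with $\lambda_{\Vb}$). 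Combining these two contributions yields the claimed identity $1 + e^\delta$.

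There is no serious obstacle: the statement is a direct specialization of the mean value formula, and the only minor point to verify is the applicability of Theorem \ref{SMVFL} to $\rho = e^{-\pi x^2}$ (which is trivially a non-negative Borel function on $\R^+$) together with the standard value of the Gaussian integral over $\R^n$ with respect to $\lambda_n$.
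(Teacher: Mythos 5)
Your proof is correct and follows exactly the paper's argument: the paper likewise applies Siegel's mean value formula (Theorem \ref{SMVFL}) to $\rho(x)=e^{-\pi x^2}$, identifies $\Sigma_n(\rho)([\Eb])=e^{\hot(\Eb)}-1$, and uses the normalization $\int_{\R^n}e^{-\pi\Vert v\Vert_n^2}\,d\lambda_n(v)=1$ together with the fact that $\mu_{n,\delta}$ is a probability measure.
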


This expression for the mean value of $e^{\hot(\Eb)}$ has to be compared with the lower bound 
$$\hot(\Eb) \geq \delta$$
valid over $\cL(n,\delta)$ (see (\ref{ThetaRI})).

The function on $\cL(n,\delta)$ defined by $\hot$ is clearly continuous. Moreover it is non-constant (this follows for instance from its expression for direct sums of rank-one Euclidean lattices in Proposition \ref{sumline}). Therefore we may apply the last assertion of Lemma \ref{probamethod} to the function $e^{\hot}$ and we obtain, from the value of its integral computed in (\ref{Averagehot}):
\begin{corollary}\label{smallhot}
For any $(n,\delta)$ in $\N_{\geq 2} \times \R$, there exists an Euclidean lattice $\Eb$ of rank $n$ and degree $\delta$ such that
\begin{equation}\label{Ezerodelta}
\hot(\Eb) < \log (1 + e^\delta).
\end{equation} 
\qed
 \end{corollary}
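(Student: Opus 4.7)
The proof is essentially spelled out in the paragraph preceding the statement, so my plan is simply to organize that argument carefully using Lemma \ref{probamethod}. The strategy has three ingredients: (i) identify the integral computed by Siegel's formula as the mean value of the continuous function $e^{\hot}$ on the probability space $(\cL(n,\delta),\mu_{n,\delta})$, (ii) check that this function is non-constant, and (iii) invoke the second part of Lemma \ref{probamethod} to conclude that the image of $e^{\hot}$ contains an open neighborhood of $1+e^\delta$, and in particular strictly smaller values.

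For step (i), I would note that the function
$$\phi : [\Eb] \longmapsto e^{\hot(\Eb)} = \sum_{v \in E} e^{-\pi \Vert v\Vert^2}$$
descends to a well-defined continuous function $\phi : \cL(n,\delta)\to \Rpa$. Continuity in the natural topology on $\cL(n,\delta)$ (that is, the quotient topology from $SL_n(\R)/SL_n(\Z)$ via the parametrization $\pi_{n,\delta}$) follows from the fact that, for $g\in SL_n(\R)$,
$$\phi(\pi_{n,\delta}([g])) = \sum_{w\in\Z^n} e^{-\pi e^{-2\delta/n}\Vert g\cdot w\Vert_n^2},$$
the series being locally uniformly convergent in $g$ (by dominating it on compact sets of $SL_n(\R)$ using the fact that the minimum of $\Vert g\cdot w\Vert_n$ over $w\in\Z^n\setminus\{0\}$ stays bounded below on compacta). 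Formula (\ref{Averagehot}) then reads $I_\phi = 1+e^\delta$, which in particular is finite.

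For step (ii), I would verify that $\phi$ is non-constant. The simplest way is to compare two specific euclidean lattices of rank $n$ and degree $\delta$: for instance the ``balanced'' lattice $\cOb(\delta/n)^{\oplus n}$ and a ``stretched'' one of the form $\cOb(\delta+(n-1)t)\oplus \cOb(-t)^{\oplus(n-1)}$ for some $t>0$. Using the formula from Proposition \ref{sumline} (or equivalently (\ref{hotdegplus})), one writes out both values of $\hot$ in terms of the function $\eta$ and checks that they differ for suitable $t$; in particular, letting $t\to +\infty$, the second lattice has $\hot$ arbitrarily close to $\delta^+$ (plus negligible $\eta$-terms), while the balanced one does not. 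This gives the non-constancy of $\phi$.

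Once (i) and (ii) are in place, step (iii) is immediate: by Lemma \ref{probamethod}, part 2), applied to $\phi = e^{\hot}$, the image $\phi(\cL(n,\delta))$ contains an open neighborhood of the mean value $I_\phi = 1+e^\delta$, so in particular it meets the open interval $(0,1+e^\delta)$. Any $[\Eb]$ with $\phi([\Eb]) < 1+e^\delta$ satisfies the required inequality $\hot(\Eb) < \log(1+e^\delta)$. The only mildly technical point in the whole argument is the non-constancy step (ii); the rest is a direct combination of Siegel's formula in the form (\ref{Averagehot}) with the abstract ``probabilistic method'' encapsulated in Lemma \ref{probamethod}.
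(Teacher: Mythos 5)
Your proposal is correct and follows exactly the paper's argument: Siegel's mean value formula in the form (\ref{Averagehot}) combined with the last assertion of Lemma \ref{probamethod}, part 2), applied to the continuous non-constant function $e^{\hot}$, with non-constancy checked via Proposition \ref{sumline}. The extra detail you supply on continuity and on the explicit pair of rank-one direct sums witnessing non-constancy is a legitimate filling-in of the steps the paper declares ``clear.''
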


According to the Poisson-Riemann-Roch formula, for any $[\Eb]$ in $\cL(n,\delta),$ we have:
$$\hot(\Eb) - \hut(\Eb) = \delta.$$
Therefore the equality (\ref{Averagehot}) may be also written:
\begin{equation}\label{Averagehut}
\int_{[\Eb] \in \cL(n,\delta)} e^{\hut(\Eb)} d\mu_{n,\delta}([\Eb]) = 1 + e^{-\delta},
\end{equation}
and the condition (\ref{Ezerodelta}) is equivalent to:
$$\hut(\Eb) < \log (1 + e^{-\delta}).$$

To put the conclusion of Corollary \ref{smallhot} in perspective,  we may consider the ``obvious" Euclidean lattice of rank $n$ and Arakelov degree $\delta$, namely $\cOb(\delta/n)^{\oplus n}$, for any $(n,\delta) \in \N_{\geq 1} \times \R$. If we define, for every $t \in \Rpa$,
$$\theta(t) :=\theta_{\cOb(0)} (t) = \sum_{k \in \Z} e^{- \pi k^2 t},$$
its $\theta$-invariant is:
$$\hot(\cOb(\delta/n)^{\oplus n}) = n \log \theta(e^{-2\delta/n}).$$
When $\delta$ is fixed and $n$ goes to infinity, this expression is equivalent to 
$$n \log\theta(0) =n \hot(\cOb) = n \eta.$$
This demonstrates that the existence of  a (class of) Euclidean lattice in $\cL(n,\delta)$ satisfying (\ref{Ezerodelta}) is not ``obvious" when $n$ is large.

We may also apply the first part of Lemma \ref{probamethod} to the functions $e^{\hon(. t)}$ and $e^{\hot}$. Taking into account the expressions (\ref{Averagehon}) and (\ref{Averagehot}) for their integrals, we obtain:

\begin{corollary}\label{comparehonhot} For any $(n, \delta, t) \in \N_{\geq 2} \times \R \times \Rpa,$ there exist Euclidean lattices $\Eb_+$ and $\Eb_-$, of rank $n$ and Arakelov degree $\delta$, such that
\begin{equation*}
\hon(\Eb_+, t) - \hot(\Eb_+) \geq \log \frac{1 + v_n t^{n/2} e^\delta}{1+ e^\delta}
\end{equation*}
and
\begin{equation*}
\hon(\Eb_-, t) - \hot(\Eb_-) \leq \log \frac{1 + v_n t^{n/2} e^\delta}{1+ e^\delta}.
\end{equation*} \qed
 \end{corollary}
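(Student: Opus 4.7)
The plan is to apply directly part 1) of Lemma \ref{probamethod} to the two Borel functions
$$\phi := e^{\hon(.,t)} \quad \text{and} \quad \psi := e^{\hot}$$
on the probability space $(\cL(n,\delta), \mu_{n,\delta})$, and then to exponentiate the resulting comparison inequalities. Both functions take values in $[1, +\infty[$ (hence are positive), and their mean values with respect to $\mu_{n,\delta}$ have already been computed in the preceding paragraphs: by formula (\ref{Averagehon}),
$$I_\phi = \int_{\cL(n,\delta)} e^{\hon(\Eb,t)}\, d\mu_{n,\delta}([\Eb]) = 1 + v_n t^{n/2} e^\delta,$$
and by formula (\ref{Averagehot}),
$$I_\psi = \int_{\cL(n,\delta)} e^{\hot(\Eb)}\, d\mu_{n,\delta}([\Eb]) = 1 + e^\delta.$$
In particular, both integrals are finite and positive, so the hypotheses of Lemma \ref{probamethod}, 1), are satisfied.

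Lemma \ref{probamethod}, 1), then asserts that each of the Borel subsets
$$\cE_{\geq} := \bigl\{[\Eb] \in \cL(n,\delta) \mid e^{\hon(\Eb,t)}/I_\phi \geq e^{\hot(\Eb)}/I_\psi \bigr\}$$
and
$$\cE_{\leq} := \bigl\{[\Eb] \in \cL(n,\delta) \mid e^{\hon(\Eb,t)}/I_\phi \leq e^{\hot(\Eb)}/I_\psi \bigr\}$$
has positive $\mu_{n,\delta}$-measure, and is therefore non-empty. Choose any $[\Eb_+]$ in $\cE_{\geq}$ and any $[\Eb_-]$ in $\cE_{\leq}$; taking logarithms in the defining inequalities yields precisely
$$\hon(\Eb_+,t) - \hot(\Eb_+) \geq \log(I_\phi/I_\psi) = \log \frac{1 + v_n t^{n/2} e^\delta}{1+ e^\delta}$$
and
$$\hon(\Eb_-,t) - \hot(\Eb_-) \leq \log \frac{1 + v_n t^{n/2} e^\delta}{1+ e^\delta},$$
as required. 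There is no genuine obstacle here: the corollary is a formal consequence of the two mean value identities together with the elementary ``pigeonhole'' statement in Lemma \ref{probamethod}, and the only thing to check is that $\phi$ and $\psi$ are non-negative Borel functions on $\cL(n,\delta)$ with finite positive integrals, which has already been established.
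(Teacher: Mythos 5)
Your proof is correct and follows exactly the paper's own argument: the paper likewise derives this corollary by applying the first part of Lemma \ref{probamethod} to the functions $e^{\hon(\cdot,t)}$ and $e^{\hot}$ and invoking the mean value formulas (\ref{Averagehon}) and (\ref{Averagehot}). Nothing is missing.
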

 
\subsection{Constants in comparison estimates}\label{ConstComp} From Corollary \ref{comparehonhot}, one easily derives that the additive constants in diverse estimates relating the invariants $\hon(\Eb,t)$ and $\hot(\Eb)$ 
established in the previous sections are ``of  the correct order of growth" when the rank of the Euclidean lattice $\Eb$ goes to $+\infty.$ 

For instance, consider the first inequality in (\ref{compthetanaive}). It asserts that, for any Euclidean lattice $\Eb$ of rank $n\geq 1,$ 
\begin{equation}\label{AOptim1}
\hon(\Eb) - \hot(\Eb) \geq -(n/2). \log n + \log (1-1/2\pi).
\end{equation}

According to Corollary \ref{comparehonhot} applied with $t=1$ and $\delta =0$, for every $n \in \N_{\geq 2},$ there exists an Euclidean lattice of rank $n$ such that
$$\covol (\Eb_-) =1$$
and 
$$\hon(\Eb_-) - \hot(\Eb_-) \leq \log \frac{1 + v_n}{2}.$$
 Besides, when $n$ goes to $+ \infty,$ 
 $$\log \frac{1 + v_n}{2} =  -(n/2). \log n + O(n).$$
 
 This shows that the ``best constant" in the right-hand side of  (\ref{AOptim1}) --- even if one considers Euclidean lattices of covolume $1$ only --- is equivalent to $-(n/2). \log n $ when $n$ goes to $+\infty$.
 
 Consider now the estimates, valid for any Euclidean lattice $\Eb$ of rank $n \geq 1,$
 \begin{equation}\label{AOptim2}
\hon(\Eb, n/2 \pi) -\hot(\Eb)  \leq \hont(\Eb, n/2 \pi) -\hot(\Eb) \leq n/2,
\end{equation}
already considered in (\ref{compthetamoinsnaive1}) and (\ref{compthetamoinsnaive2}). (These estimates follow from the definition of $\hont$ and from (\ref{comp33}).)

According to Corollary \ref{comparehonhot} applied with $t=n/2\pi$, for any $n \in \N_{\geq 2}$ and any $\delta \in \R,$ there exists an Euclidean lattice $\Eb_+$ of rank $n$ and Arakelov degree $\delta$ such that
$$ 
\hon(\Eb_+, n/2\pi) - \hot(\Eb_+) \geq \log \frac{1 + v_n (n/2\pi)^{n/2} e^\delta}{1+ e^\delta}.$$
Besides, 
$$\lim_{\delta \ra + \infty} \log \frac{1 + v_n (n/2\pi)^{n/2} e^\delta}{1+ e^\delta} = \log [v_n (n/2\pi)^{n/2}],$$
and, when $n$ goes to $+\infty,$
$$\log [v_n (n/2\pi)^{n/2}] = n/2 + O(\log n).$$

This shows notably that the ``best constant" in the right-hand side of (\ref{AOptim2}) is equivalent to $n/2$ when $n$ goes to $+\infty$.

\medskip

\chapter[Countably generated projective modules over Dedekind rings]{Countably generated projective modules and linearly compact Tate spaces over Dedekind rings}\label{CpCtc}

\medskip

In this chapter, we denote by $A$ a Dedekind ring (in the sense of Bourbaki, \cite{BourbakiAC7}, VII.2.1; in other words, $A$ is either a field, or a Noetherian integrally closed domain of dimension 1) and  we introduce some categories of (topological) modules $\CP_A$ and $\CTC_A$ attached to $A$. When the Dedekind ring $A$ is the ring    
$\OK$ of integers   in some number field $K,$ the modules in these categories will occur in the following chapters as the $\OK$-modules underlying the ``infinite dimensional Hermitian vector bundles" over $\Spec \OK$ investigated in this monograph.

The objects in the dual categories $\CP_A$ and $\CTC_A$ are easily described. Namely, an object of $\CP_A$ is an $A$-module which is, either finitely generated and projective, or isomorphic to $A^{(\N)}$. An object of $\CTC_A$ is a topological $A$-module\footnote{By a \emph{topological $A$-module}, we  mean a topological $A$-module over the ring $A$ equipped with the discrete topology.} which is, either a finitely generated projective $A$-module equipped with the discrete topology, or isomorphic to $A^\N$ equipped with the prodcut of the discrete topology on every factor $A$.

Handling the morphisms in these categories requires more care. The \emph{strict} morphisms in $\CTC_A$  play an especially important role, as shown in Section \ref{StrictCTC}, and diverse ``pathologies" concerning the morphisms in $\CP_A$ and $\CTC_A$ occur naturally, as demonstrated by the examples in Section \ref{Path}.

\section{Countably generated projective $A$-modules}

\subsection{The category $\CP_A$}\label{CatCP}

The following proposition is a simple  consequence of the fact that, over a Dedekind ring, a finitely generated module is projective when it is torsion free.

\begin{proposition}\label{CPAdef} For any $A$-module $M$, the following conditions are equivalent:

(1) The $A$-module $M$ is countably generated and projective.

(2) The $A$-module $M$ is isomorphic to a direct summand of $A^{(\N)}.$

(3) The $A$-module $M$ is isomorphic to some $A$-submodule of $A^{(\N)}$.

(4) There exists a family $(M_i)_{i \in \N}$ of $A$-submodules of $M$ such that:
\begin{enumerate}
\item[(i)] for any $i \in \N,$ $M_i$ is a finitely generated torsion free $A$-module;
\item[(ii)] for any $i \in \N,$ $M_i$ is a saturated $A$-submodule of $M_{i+1};$
\item[(iii)] $M = \bigcup_{i \in \N} M_i.$
\end{enumerate}

(5) The $A$-module $M$ is a countable direct sum of finitely generated projective $A$-modules. 

\end{proposition}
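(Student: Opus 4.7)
\smallskip

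\noindent\textbf{Proof proposal.} The plan is to establish the cycle of implications
$(1) \Rightarrow (2) \Rightarrow (3) \Rightarrow (4) \Rightarrow (5) \Rightarrow (1)$,
using at a few key points the defining property of Dedekind rings that a finitely generated torsion-free $A$-module is projective, together with the Noetherianity of $A$.

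The implication $(1) \Rightarrow (2)$ is standard: if $M$ is countably generated we may choose a surjection $p: A^{(\N)} \lra M$, and projectivity of $M$ produces a section $s: M \lra A^{(\N)}$ of $p$, realizing $M$ as a direct summand of $A^{(\N)}$. The implication $(2) \Rightarrow (3)$ is trivial. The implication $(5) \Rightarrow (1)$ is also immediate, since a direct sum of projective modules is projective, and a countable direct sum of finitely generated modules is countably generated.

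The substantive content lies in $(3) \Rightarrow (4)$ and $(4) \Rightarrow (5)$. For $(3) \Rightarrow (4)$, assume we are given an embedding $M \hra A^{(\N)}$. Denote by $(e_n)_{n \in \N}$ the canonical basis of $A^{(\N)}$ and set $F_i := \bigoplus_{n<i} A e_n$ and $M_i := M \cap F_i$. Since $A$ is Noetherian and $F_i$ is a finitely generated free $A$-module, $M_i$ is finitely generated; it is clearly torsion free. Moreover the inclusion $M_i \hra M_{i+1}$ is saturated: if $a \in A \setminus \{0\}$ and $m \in M_{i+1}$ satisfy $am \in M_i \subset F_i$, then the image of $m$ in the torsion-free quotient $F_{i+1}/F_i \simeq A e_i$ is killed by $a$, hence vanishes, so $m \in F_i \cap M = M_i$. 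Finally, every element of $M \subset A^{(\N)}$ has support in some $F_i$, giving $M = \bigcup_i M_i$.

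For $(4) \Rightarrow (5)$, we exploit the defining property of Dedekind rings. For each $i \in \N$, the quotient $M_{i+1}/M_i$ is finitely generated (as a quotient of $M_{i+1}$) and torsion free (since $M_i$ is saturated in $M_{i+1}$), hence projective. Consequently the short exact sequence
$$0 \lra M_i \lra M_{i+1} \lra M_{i+1}/M_i \lra 0$$
splits. Setting $P_0 := M_0$ and $P_i := M_i/M_{i-1}$ for $i \geq 1$, an immediate induction yields compatible isomorphisms $M_i \simeq \bigoplus_{0 \leq j \leq i} P_j$, and passing to the colimit gives $M \simeq \bigoplus_{j \in \N} P_j$, a countable direct sum of finitely generated projective $A$-modules. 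This completes the cycle.

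The only point requiring care is the saturation argument and the use of Dedekind's hypothesis in $(4) \Rightarrow (5)$; the rest is bookkeeping. Note that the case where $M$ itself is finitely generated is absorbed in condition (4) by taking an eventually stationary filtration, so no separate discussion is needed.
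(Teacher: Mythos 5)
Your proof is correct and follows essentially the same route as the paper: the easy implications $(1)\Rightarrow(2)\Rightarrow(3)$ and $(5)\Rightarrow(1)$, then $(3)\Rightarrow(4)$ via intersecting $M$ with the standard exhaustive filtration of $A^{(\N)}$ by finite-rank free summands, and $(4)\Rightarrow(5)$ by splitting each extension $0\to M_i\to M_{i+1}\to M_{i+1}/M_i\to 0$ using that a finitely generated torsion-free module over a Dedekind ring is projective. The only difference is cosmetic (you phrase the $P_i$ as quotients with compatible splittings rather than as chosen complements inside $M_{i+1}$), and your explicit verification of the saturation of $M_i$ in $M_{i+1}$ fills in a detail the paper leaves implicit.
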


\begin{proof}[Proof of Proposition \ref{CPAdef}] The implications
$(5) \Rightarrow (1) \Rightarrow (2) \Rightarrow (3)$ are clear.

When $(3)$ holds, we may consider the filtration $(N_i)_{i \in \N}$ of $N:= A^{(\N)}$ defined by
$$N_i := \{ (a_k)_{k\in \N} \in A^{(\N)} \mid \forall k \in \N_{\geq i}, a_k =0 \}.$$
Then the filtration $(M_i)_{i \in \N}$ of $M$ defined by $M_i := M \cap N_i$ satisfies $(4)$. 

When $(4)$ holds, for every $i \in \N,$ the quotient $M_{i+1}/M_i$ is a  finitely generated $A$-module, which is torsion free, hence projective. Therefore the short exact sequence  of $A$-modules
$$0 \lra M_i \lra M_{i+1} \lra M_{i+1}/M_i \lra 0$$
is split, and there exists a (necessarily finitely generated and projective) $A$-submodule $P_i$ of $M_{i+1}$ such that $M_{i+1} = M_i \oplus P_i.$ Then we obtain the following decomposition of $M$:
$$M = M_0 \oplus \bigoplus_{i \in \N} P_i.$$
This displays $M$ as a countable direct sum of finitely generated projective $A$-modules. 
\end{proof}

We define the $A$-linear category $\CP_A$ of  \emph{{\bf c}ountably generated {\bf p}rojective $A$-modules}  as the category whose objects are $A$-modules satisfying the equivalent conditions in Proposition \ref{CPAdef}, and whose morphisms are $A$-linear maps.

For any object $M$ of $\CP_A$, we denote by $\cF(M)$ the family of finitely generated $A$-submodules of $M$, and by $\cFS(M)$ the family of saturated finitely generated $A$-submodules of $M$. We shall also denote by ${\rm co}\cF(M)$ (resp. by ${\rm co}\cF\cS(M)$) the family of $A$-submodules $M'$ of $M$ such that $M/M'$ is a finitely generated $A$-module (resp. a finitely generated torsion free $A$-module).

Observe that, according to the equivalences of conditions (1) and (3) in Proposition \ref{CPAdef}, any $A$-submodule $M'$ of an object $M$ of $\CP_A$ is again an object of $\CP_A.$ However, the quotient $A$-module $M/M'$ --- even if assumed torsion-free --- is not always an object in $\CP_A$ (see for instance, when $A=\Z,$ the constructions in Proposition \ref{padicex}, in paragraph \ref{Path} \emph{infra}, notably the short exact sequence \ref{sesalpha1}). 

The $A$-linear category $\CP_A$ admits obvious finite direct sums, that are also finite direct products, and is actually an additive category. It also admits countable direct sums.

If $B$ denotes a Dedekind ring which is an $A$-algebra, the tensor product defines an additive functor:
$$. \otimes_A B : \CP_A \lra \CP_B.$$

\subsection{A theorem of Kaplansky}

As any finitely generated projective $A$-module is a (finite) direct sum of invertible $A$-modules, Condition (5) is equivalent to $M$ being a countable direct sum of invertible $A$-modules. 
Actually, when $M$ has infinite rank,  this observation admits the following strengthening, proved by Kaplansky (\cite{Kaplansky52}, Theorem 2) in a more general setting: 

\begin{proposition}\label{Kplfree} If some $A$-module $M$ satisfies the conditions in Proposition \ref{CPAdef} and has infinite rank (or equivalently, is not finitely generated), then it is free, hence isomorphic to $A^{(\N)}$.
 \end{proposition}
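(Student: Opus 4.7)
The strategy is to combine condition (5) of Proposition \ref{CPAdef} with the Steinitz structure theorem for finitely generated projective modules over a Dedekind ring and an Eilenberg-style swindle. First, by condition (5) of Proposition \ref{CPAdef}, I may write $M$ as a countable direct sum of finitely generated projective $A$-modules. Over a Dedekind ring, every finitely generated projective module is a direct sum of invertible modules; this reduces the problem to showing that $\bigoplus_{i \in \N} L_i \simeq A^{(\N)}$ for any countable family of invertible $A$-modules $(L_i)_{i \in \N}$, where the infinite-rank hypothesis guarantees that the sum is genuinely infinite.

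The key algebraic input is the Steinitz isomorphism
$$L \oplus L' \simeq A \oplus (L \otimes_A L')$$
for invertible $L, L'$, and in particular $L \oplus L^{-1} \simeq A^{2}$. From this one extracts the absorption lemma $L \oplus A^{(\N)} \simeq A^{(\N)}$: writing $A^{(\N)} \simeq \bigoplus_k (L \oplus L^{-1})$, the extra copy of $L$ can be paired with an $L^{-1}$-factor to produce a new $A^2$, while the remaining factors still pair off into copies of $A^2$. Iteratively applying the Steinitz isomorphism to consecutive pairs $(L_{2k-1}, L_{2k})$ yields
$$M \simeq A^{(\N)} \oplus \bigoplus_{k \geq 1} (L_{2k-1} \otimes L_{2k}),$$
and continuing this bisection procedure stage-by-stage produces, at each stage $n$, an isomorphism $\varphi_n \colon M \simeq A^{(\N)} \oplus R^{(n)}$, where the remainder $R^{(n)}$ is again a countable direct sum of invertibles. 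The plan is to choose the Steinitz isomorphisms compatibly so that the cumulative free parts form a nested sequence of free submodules $F_n \subset M$, and then to set $F := \bigcup_n F_n \simeq A^{(\N)}$. Since any element $x \in M$ has support in $L_1 \oplus \dots \oplus L_N$ for some finite $N$, and after $\lceil \log_2 N \rceil$ stages of bisection all the invertible contributions coming from $L_1, \dots, L_N$ have been paired off into the cumulative free part, $x$ must lie in some $F_n$; hence $F = M$.

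The main obstacle is the compatibility and exhaustion argument in the last step: the Steinitz isomorphism is non-canonical, and one must carefully arrange at each stage that (i) the newly produced free factor extends, rather than overlaps with, the previously produced ones, so that $F_n \subset F_{n+1}$, and (ii) each element of $M$ actually escapes the invertible remainder after finitely many iterations. It is precisely at this point that the assumption that $M$ has infinite rank becomes essential: in finite rank, one cannot avoid an obstructing determinant $D = L_1 \otimes \dots \otimes L_n \in \mathrm{Pic}(A)$, whereas in infinite rank this residual determinant can always be pushed further down the sequence and eventually dissolves into the free part.
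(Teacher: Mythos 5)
Your reduction to a countable direct sum of invertible modules and your use of the Steinitz isomorphism $L \oplus L' \simeq A \oplus (L \otimes_A L')$ are the right ingredients, but the exhaustion step fails as stated. After $m$ stages of your bisection, the remainder is isomorphic to $\bigoplus_{k} \bigl(L_{(k-1)2^m+1} \otimes \cdots \otimes L_{k 2^m}\bigr)$, and its first summand is realized inside $M$ as a rank-one submodule of $L_1 \oplus \cdots \oplus L_{2^m}$ --- precisely the ``determinant'' part of that finite block which has \emph{not} been absorbed into the free part. So it is not true that after $\lceil \log_2 N\rceil$ stages the contributions of $L_1,\dots,L_N$ all lie in $F_m$: an element $x$ supported in $L_1 \oplus L_2$, say, decomposes at stage $1$ as $x = f_1 + r_1$ with $r_1$ in the summand isomorphic to $L_1\otimes L_2$, at stage $2$ as $f_1 + f_2 + r_2$ with $r_2$ in the summand isomorphic to $L_1\otimes\cdots\otimes L_4$, and so on; generically $r_m \neq 0$ for every $m$, so $x \notin \bigcup_m F_m$. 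The residual determinant does not ``dissolve'': it persists at every finite stage, and the union $\bigcup_m F_m$ is a free submodule that need not equal $M$. No choice of Steinitz isomorphisms repairs this, because each bisection necessarily redecomposes the block containing $x$ and spreads $x$ across the new summands.

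The paper's proof (Kaplansky's) sidesteps exactly this by never redecomposing the block containing a given element. Given $m \in \bigoplus_{0\leq i\leq n} I_i$, one sets $J:=\bigotimes_{0\leq i\leq n} I_i$ and uses a Steinitz isomorphism $\phi: I_{n+1}\oplus I_{n+2} \lrasim J^\vee \oplus (J\otimes I_{n+1}\otimes I_{n+2})$ to \emph{borrow} a copy of $J^\vee$ from the next two factors; then $P := \bigoplus_{0\leq i\leq n} I_i \oplus \phi^{-1}(J^\vee\oplus\{0\})$ is a free direct summand of finite rank that \emph{contains} $\bigoplus_{0\leq i\leq n} I_i$, hence contains $m$. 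One then runs through a countable generating family $(m_i)$, at each step splitting off such a free finite-rank summand containing $m_i$ from the remaining (still countably generated projective) complement, and obtains $M$ as a countable direct sum of finite-rank free modules. If you replace your bisection with this ``enlarge, don't redecompose'' step and the induction over generators, your argument becomes correct; as written, the gap is genuine.
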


When $A$ is principal (\eg, when $A = \Z$, a case of special interest in this monograph), this is straightforward.  The part of Kaplansky's argument in \emph{loc. cit.} relevant to the derivation of Proposition \ref{Kplfree} for a general Dedekind ring $A$ may be summarized as follows. 

Firstly one shows that, \emph{for any element $m$ of some projective countably generated $A$-module $M$ of finite rank, there exists a direct summand $P$ in $M$, free and of finite rank, which contains $m$}. 

To achieve this, observe  that $M$ may written as an infinite countable direct sum $\bigoplus_{i \in \N} I_i$ of invertible submodules $I_i$ of $M$, and recall that, for any two invertible $A$-modules $I$ and $J,$ the $A$-modules $I \oplus J$ and $A \oplus (I\otimes J)$ are isomorphic. This last fact implies that, for any $n \in \N,$ if we define $J_n := \bigotimes_{0\leq i \leq n} I_i,$ then the $A$-module $\bigoplus_{0\leq i \leq n} I_i \oplus J^\vee$ is free of rank $n+1$,  and that there  exists some isomorphism of $A$-modules:
$$\phi : I_{n+1} \oplus I_{n+2} \lrasim J^\vee \oplus (J\otimes I_{n+1} \otimes I_{n+2}).$$
 Therefore, if $n$ is chosen so large that $\bigoplus_{0\leq i \leq n} I_i$ contains $m$, then the submodule 
 $P:= \bigoplus_{0\leq i \leq n} I_i \oplus \phi^{-1}( J^\vee \oplus \{0\})$
 of $M$ is a free direct summand, of rank $n+1$, and contains $m$.

Secondly one considers a countable family of generators $(m_i)_{i \in \N_{>0}}$ of $M$, and by means of the above fact, one constructs inductively projective $A$-submodules $(P_i)_{i\in \N_{>0}}$ and $(M^i)_{i\in \N}$ of $M$, such that the $P_i$ are free of finite rank and the $M^i$ are countably generated, and such that the following conditions are satisfied:
 \begin{enumerate}
\item $M^0=M;$
\item for any $i \in \N_{>0},$ $M^{i-1} = P_i \oplus M^i$ and $m_i \in P_i$. 
\end{enumerate}
Thus we obtain a decomposition   $M = \bigoplus_{i \in \N_{>0}} P_i,$
which shows that $M$ is a countable direct sum of free modules of finites ranks, and completes the proof.

\section{Linearly compact Tate spaces with countable basis}\label{CTCbasics}

\subsection{Basic definitions}

We define the $A$-linear category $\CTC_A$ of \emph{linearly {\bf c}ompact {\bf T}ate spaces  with {\bf c}ountable basis} over $A$ as follows.\footnote{The terminology of \emph{Tate space} is borrowed from Drinfeld \cite{Drinfeld2006}.}

An object $N$ of $\CTC_A$ is a topological module over the ring $A$ equipped with the discrete topology which satisfies the following two conditions: 

 $\mathbf{CTC_1 :}$ \emph{The topology of $N$ is Hausdorff and complete}.

$\mathbf{CTC_2:}$  \emph{Their exists a countable basis of neighborhoods $U$ of $0$ in $N$ consisting in $A$-submodules of $N$ such that $N/U$ is a finitely generated projective $A$-module.} 
Morphisms in the category $\CTC_A$ are $A$-linear continuous maps: for any two objects $N1$ and $N_2$ in $\CTC_A$, we let
$$\Hom_{\CTC_A}(N_1, N_2) := \Hom_A^{\rm cont}(N_1,N_2).$$

\medskip

For any subset $U$ of some $A$-module $N$, we may consider  the  condition appearing in $\mathbf{CTC_2}$:
\begin{description}
\item[$\mathbf{C}_U$] {\emph{
 $U$ is a 
$A$-submodule of $N$, and $N/U$ is a finitely generated projective $A$-module.}} 
\end{description}
Then we have: 

\begin{lemma}\label{lemmCU}
Let $N$ be an object of $\CTC_A$. A subset $U$ of $N$ is an neighborhood of $0$ and satisfies Condition $\mathbf{C}_U$ if and only if $U$ is an open saturated submodule of $N.$
\end{lemma}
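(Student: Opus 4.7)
The plan is to prove the two implications separately. The forward direction (neighborhood of $0$ plus $\mathbf{C}_U$ implies open saturated submodule) is essentially formal; the substantive content is in the converse.

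For the forward direction, I would argue as follows. Assume $U$ is a neighborhood of $0$ satisfying $\mathbf{C}_U$. Then $U$ contains some open neighborhood $V$ of $0$, and since $U$ is an $A$-submodule, for every $u \in U$ the translate $u + V$ is an open subset of $N$ contained in $U$; hence $U = \bigcup_{u \in U}(u+V)$ is open. To see that $U$ is saturated, observe that $N/U$ is, by $\mathbf{C}_U$, a finitely generated projective module over the Dedekind ring $A$, hence torsion-free, which is precisely the definition of $U$ being saturated in $N$.

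For the converse, suppose $U$ is an open saturated submodule of $N$. Then certainly $U$ is a neighborhood of $0$, and it remains to verify $\mathbf{C}_U$, i.e.\ that $N/U$ is a finitely generated projective $A$-module. Using $\mathbf{CTC_2}$, pick an element $V$ of the countable basis of neighborhoods of $0$ consisting of open $A$-submodules with $N/V$ finitely generated projective, chosen so that $V \subset U$. Since $A$ is Noetherian, the submodule $U/V$ of the finitely generated module $N/V$ is itself finitely generated, so the quotient $N/U \simeq (N/V)/(U/V)$ is finitely generated as well. Because $U$ is saturated, $N/U$ is torsion-free, and a finitely generated torsion-free module over a Dedekind ring is projective (a standard structure result, see \cite{BourbakiAC7}, VII.4.10). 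Hence $N/U$ is finitely generated projective, which is $\mathbf{C}_U$.

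The only potentially subtle point is the last step in the converse, namely passing from $V \subset U$ to the finite generation of $U/V$; this relies on the Noetherian hypothesis built into the definition of a Dedekind ring, together with the finite generation of $N/V$ coming from $\mathbf{CTC_2}$. Everything else is straightforward, so I do not anticipate any real obstacle.
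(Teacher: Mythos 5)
Your proof is correct and follows essentially the same route as the paper's: the forward direction is immediate, and the converse picks a basis submodule $V\subset U$ with $N/V$ finitely generated projective and identifies $N/U$ with $(N/V)/(U/V)$, which is finitely generated and (by saturation of $U$) torsion-free, hence projective over the Dedekind ring $A$. The only superfluous step is the appeal to Noetherianity to get $U/V$ finitely generated — the quotient $N/U$ of the finitely generated module $N/V$ is automatically finitely generated — but this does not affect correctness.
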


\begin{proof} The necessity is clear. Conversely, if $U$ is an open saturated  submodule of $N$, then it contains a neighborhood $U_0$ of $0$ which satisfies $\mathbf{C}_{U_0}$. Then $U/U_0$ is a saturated submodule of the finitely generated projective $A$-module $N/U_0$, and consequently 
$$N/U \simeq (N/U_0)/(U/U_0)$$
also is a finitely generated projective $A$-module. 
\end{proof}

For any object $N$ of $\CTC_A$, we shall denote the family of open saturated submodules of $N$ by $\cU(N).$ It is stable under finite intersection. 

Any finitely generated projective $A$-module, equipped with the discrete topology, becomes an object of $\CTC_A$. In this way, the category of finitely generated projective $A$-modules and $A$-linear maps appears as a full subcategory of $\CTC_A$.

According to the countability assumption in $\mathbf{CTC_2}$, for any object $N$ of $\CTC_A$, there exists a ``non-increasing" sequence
\begin{equation*}\label{Ui}
U_0 \hookleftarrow U_1 \hookleftarrow U_2 \hookleftarrow \dots
\end{equation*}
of submodules in $\cU(N)$ which constitute a basis of neighborhoods of $0$ in $N$.
We shall call any such sequence $(U_i)_{i \in \N}$ in $\cU(\Eh)^\N$  a \emph{filtration defining the topology of $\Eh$}, or shortly a \emph{defining filtration} in $\cU(\Eh)^\N$.

From any  defining filtration $(U_i)_{i \in \N}$ in $\cU(\Eh)^\N$, 
we may construct a countable projective system of finitely generated projective $A$-modules:
$$\hE/U_0 \longleftarrow \hE/U_1 \longleftarrow \hE/U_2 \longleftarrow \dots,$$
and we may consider the canonical morphism $\hE \lra  \varprojlim_{i} \hE/U_i$, 
defined by the quotient maps $\hE \lra \hE/U_i$. According to Condition $\mathbf{CTC_1}$,  this morphism  is bijective, and actually becomes an isomorphism of topological $A$-modules
 \begin{equation}\label{canprojE}
\hE \simeq 
\varprojlim_{i} \hE/U_i,
\end{equation} when  $\varprojlim_{i} \hE/U_i$ is equipped with the projective limit topology deduced from the discrete topology on the finitely generated projective modules $\hE/U_i$. 

Conversely, for any projective system
\begin{equation}\label{projE}
E_0 \stackrel{q_0}{\longleftarrow}E_1 \stackrel{q_1}{\longleftarrow} E_2 \stackrel{q_2}{\longleftarrow}\dots
\end{equation}
of surjective morphisms between finitely generated projective $A$-modules, the projective limit
$$\hE := \varprojlim_{i} E_i,$$ equiped with its natural prodiscrete topology, defines an object of $\CTC_A.$

Moreover, if $\hE := \varprojlim_{i} E_i$ and $\hF := \varprojlim_{j} F_j$ are two objects of $\CTC_A,$ realized as limits of projective systems of finitely generated projective $A$-modules as above, we have a canonical identification:
\begin{equation*}
\Hom_{\CTC_A}(\hE, \hF)  := \Hom_A^{\rm cont} (\varprojlim_{i} E_i, \varprojlim_{j} F_j) \simeq  \varprojlim_{j} \varinjlim_{i} \Hom_A (E_i,F_j).
\end{equation*}

If $B$ denotes a Dedekind ring which is an $A$-algebra, the \emph{completed} tensor product defines an additive functor:
$$.\, \widehat{\otimes}_A B : \CTC_A \lra \CTC_B.$$

Observe that, for any projective system (\ref{projE}) of surjective morphisms of finitely projective $A$-modules, we get, by extending the scalars from $A$ to $B$, a projective system of surjective morphisms of finitely projective $B$-modules
 \begin{equation}\label{projEB}
E_{0,B} \stackrel{q_{0,B}}{\longleftarrow}E_{1,B} \stackrel{q_{1,B}}{\longleftarrow} E_{2,B} \stackrel{q_{2,B}}{\longleftarrow}\dots
\end{equation}
Its projective limit ``is" the object $\Eh \,\widehat{\otimes}_A B $ of $\CTC_B$ deduced from the projective limit $\Eb$ of (\ref{projE}) by the completed tensor product functor. Indeed, we have a canonical isomorphism of prodiscrete $B$-modules:
$$(\varprojlim_{i} E_i) \hat{\otimes}_A B \lrasim \varprojlim_{i} E_{i,B}.$$

The following proposition is included for later reference in Section \ref{provectsmoothcurve}.

\begin{proposition}\label{Utens} Withe the above notation, the completed tensor product defines a map 
\begin{equation}\label{UAB}
.\,\widehat{\otimes}_A B : \cU(\Eh) \lra \cU(\Eh \,\widehat{\otimes}_A B),
\end{equation}
which is injective when $B$ is flat (or equivalently torsion free) over $A$, and bijective when $B$ is a localization of $A$.

\begin{proof} This easily follows from the special case where $\Eb$ is a finitely generated projective $A$-module $M$. Then $\Eh \,\widehat{\otimes}_A B$ is the tensor product $M\otimes_A B$, and $\cU(\Eh)$ (resp. $\cU(\Eh \,\widehat{\otimes}_A B)$) is the set of saturated $A$-submodules of $M$ (resp., of saturated $B$-submodules of $M\otimes_A B$). We leave the details to the interested reader.
\end{proof}

\end{proposition}

\subsection{Subobjects and countable products}

\begin{proposition}\label{subCTC} Let $N$ be an object of $\CTC_A$. Any closed $A$-submodule $N'$ of $N$, equipped with the induced topology, is an object of $\CTC_A$.
\end{proposition}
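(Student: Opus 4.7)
\medskip

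The plan is to verify the two conditions $\mathbf{CTC_1}$ and $\mathbf{CTC_2}$ for $N'$ equipped with the topology induced from $N$.

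First I will check $\mathbf{CTC_1}$. Since $N$ is Hausdorff, so is any subspace, in particular $N'$. Completeness follows from the fact that a closed subspace of a complete Hausdorff topological group is itself complete: any Cauchy filter in $N'$ is Cauchy in $N$, hence converges in $N$, and the limit lies in $N'$ because $N'$ is closed.

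Next I will check $\mathbf{CTC_2}$. Choose some defining filtration $(U_i)_{i \in \N}$ of $N$ in $\cU(N)^\N$, so that the $U_i$ form a countable decreasing basis of neighborhoods of $0$ in $N$ satisfying $\mathbf{C}_{U_i}$. Set
$$U'_i := N' \cap U_i.$$
By definition of the induced topology, the family $(U'_i)_{i \in \N}$ is a countable decreasing basis of neighborhoods of $0$ in $N'$; each $U'_i$ is clearly an $A$-submodule of $N'$. The main point to verify is that each quotient $N'/U'_i$ is a finitely generated projective $A$-module. The natural $A$-linear injection
$$N'/U'_i = N'/(N' \cap U_i) \lhook\joinrel\longrightarrow N/U_i$$
identifies $N'/U'_i$ with an $A$-submodule of the finitely generated projective $A$-module $N/U_i$. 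Since $A$ is a Dedekind ring (hence Noetherian), $N'/U'_i$ is finitely generated; since $N/U_i$ is projective, hence torsion free, so is $N'/U'_i$; and a finitely generated torsion free module over a Dedekind ring is projective. Hence $N'/U'_i$ is finitely generated and projective, so $\mathbf{C}_{U'_i}$ holds.

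There is no real obstacle here; the only  slightly delicate point is the appeal to the Dedekind hypothesis to pass from ``submodule of a finitely generated projective $A$-module'' to ``finitely generated and projective'', which combines Noetherianity with the fact that torsion-free finitely generated $A$-modules are projective. With $\mathbf{CTC_1}$ and $\mathbf{CTC_2}$ established, $N'$ is an object of $CTC_A$, which concludes the proof. \hfill$\square$
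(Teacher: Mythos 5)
Your proof is correct and follows essentially the same route as the paper: verify $\mathbf{CTC_1}$ directly, then intersect a defining filtration of $N$ with $N'$ and observe that $N'/(N'\cap U_i)$ embeds into $N/U_i$, hence is finitely generated and projective because $A$ is Dedekind. The only difference is that you spell out the Noetherian/torsion-free argument that the paper leaves implicit.
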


\begin{proof}

Equipped with the induced topology, $N'$ is clearly Hausdorff and complete. Therefore the topological $A$-module $N'$ satisfies $\mathbf{CTC_1}$.

Moreover, if some neighborhood $U$ of $0$ in $N$ satisfies $\mathbf{C}_U$, then $U' := U \cap N'$ is a neighborhood of $0$ in $N'$ which is clearly an $A$-submodule of $N'$. Moreover the inclusion $N' \hra N$ defines an injective morphism of $A$-modules  $N'/U' \ra N/U$; therefore, $N'/U'$ --- as any submodule of a finitely generated projective module over a Dedekind ring --- is also a finitely generated projective $A$-module. In other words, $U'$ satisfies  $\mathbf{C}_{U'}$.

This immediately implies that condition $\mathbf{CTC_2}$ also is inherited by $N'$.
\end{proof}

Observe that, with the notation of Lemma \ref{subCTC}, the  topological $A$-module $N/N'$, even if assumed torsion-free, may not be an object of $\CTC_A$. 

For instance, when $A=\Z,$ the short exact sequences\footnote{Observe that these are actually strict short exact sequences of topological abelian groups.}
 (\ref{sesbeta2}) and (\ref{sesbeta2'}) in Proposition \ref{padicex} \emph{infra} and its proof display the ring of $p$-adic integers $\Z_p$, equipped with its $p$-adic topology, as a quotient of $\Z^\N$ by a closed submodule. Actually, one may easily show that \emph{the topological $\Z$-modules that may be realized has quotient of an object of $\CTC_\Z$ by a closed subobject are precisely the commutative Polish topological groups $G$ admitting a basis of neighborhoods of $0$ which are open subgroups $U$ such that $G/U$ is finitely generated.}
 
The $A$-linear category $\CTC_A$ admits obvious finite direct sums, that are also finite direct products, and is actually an additive category. It also admits countable direct products.

It also admits \emph{countable products}. Indeed, if $(N_i)_{i \in I}$ is a countable family of objects in  $\CTC_A,$ the $A$-module 
$$N := \prod_{i \in I} N_i,$$
equipped with the product topology, is easily seen to define an object in $\CTC_A.$ The projection maps ${\rm pr}_i: N \lra N_i$ are morphisms in $\CTC_A$, and $(n, (p_i)_{i \in I})$ is a product of the $N_i 's $ in the category $\CTC_A.$

More generally, any projective system of surjective open morphisms in $\CTC_A$
$$M_0 \twoheadleftarrow M_1 \twoheadleftarrow M_2 \twoheadleftarrow M_3 \twoheadleftarrow \dots$$
admits a (projective) limit $\varprojlim_i M_i$ in $\CTC_A$, defined by the $A$-module projective limit of the $M_i$'s, equipped with the projective limit of their discrete topology.

The following proposition shows that, up to isomorphism, every object of $\CTC_A$ is a product of finitely generated projective $A$-modules (equipped with the discrete topology). Its easy proof is left to the reader.

\begin{proposition}\label{prodfin}
 Consider a projective system of surjective morphisms of finitely generated projective $A$-modules:
 \begin{equation}
E_0 \stackrel{q_0}{\longleftarrow}E_1 \stackrel{q_1}{\longleftarrow} E_2 \stackrel{q_2}{\longleftarrow}\dots
\end{equation} 

For every $i \in \N,$ there exists an $A$-linear section $\sigma_i: E_i \lra E_{i+1}$ of $q_i$, and if we define
$$
\begin{array}{ccll}
S_i  & :=   & \ker q_{i-1} & \mbox{ if $i \geq 1,$}  \\
  & :=  & E_0  &  \mbox{ if $i= 0,$}
 \end{array}
$$
then the $A$-modules $S_i$ are finitely generated and projective, and for every $i \in \N_{>0},$ we have:
\begin{equation}\label{Esigma}
E_i = S_i \oplus \sigma_{i-1} (E_{i-1}).
\end{equation}

 The direct sum decompositions (\ref{Esigma}) determine a family of isomorphisms of $A$-modules
 $$\iota_n: E_n \lrasim \bigoplus_{0\leq i \leq n} S_i$$
 such that 
 $$\iota_n \circ q_n \circ \iota^{-1}_{n+1} : \bigoplus_{0\leq i \leq n+1} S_i \lra \bigoplus_{0\leq i \leq n} S_i$$
 is the projection map on the first $n+1$-th factors, and consequently an isomorphism in $\CTC_A$:
 $$\iota: \varprojlim_n E_n \lrasim \prod_{i \in \N} S_i.$$
\end{proposition}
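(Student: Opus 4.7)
The plan is to build everything inductively from the splitting of each short exact sequence
$$0 \lra S_{i+1} \lra E_{i+1} \stackrel{q_i}{\lra} E_i \lra 0.$$
First, I would check that each $S_{i+1} := \ker q_i$ is a finitely generated projective $A$-module. Finite generation follows from the noetherianness of $A$ (since $S_{i+1}$ is a submodule of the finitely generated $E_{i+1}$), and projectivity follows because $S_{i+1}$ is torsion-free (being a submodule of the torsion-free module $E_{i+1}$) and $A$ is a Dedekind ring. The module $S_0 = E_0$ is finitely generated projective by assumption. Since $E_i$ is projective, the displayed short exact sequence splits, providing a section $\sigma_i \colon E_i \lra E_{i+1}$ of $q_i$, and hence the internal decomposition $E_{i+1} = S_{i+1} \oplus \sigma_i(E_i)$.

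Next I would construct the isomorphisms $\iota_n \colon E_n \lrasim \bigoplus_{0 \leq i \leq n} S_i$ by induction on $n$. Set $\iota_0 = \mathrm{Id}_{E_0}$. For the inductive step, having $\iota_n$, define $\iota_{n+1}$ on the decomposition $E_{n+1} = S_{n+1} \oplus \sigma_n(E_n)$ by sending $s + \sigma_n(e)$ (with $s \in S_{n+1}$ and $e \in E_n$) to the pair $(s, \iota_n(e)) \in S_{n+1} \oplus \bigoplus_{0\leq i \leq n} S_i$. The key point is that $q_n$ acts as $s + \sigma_n(e) \mapsto e$, so by construction $\iota_n \circ q_n \circ \iota_{n+1}^{-1}$ coincides with the projection $\bigoplus_{0\leq i \leq n+1} S_i \twoheadrightarrow \bigoplus_{0\leq i \leq n} S_i$ that discards the $S_{n+1}$-component.

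Finally, passing to the inverse limit, the family $(\iota_n)_{n \in \N}$ is by construction compatible with the transition morphisms on each side: the transition maps on the right are the canonical projections, which give exactly the product $\prod_{i \in \N} S_i$ as projective limit (this identification holds at the level of topological $A$-modules, since each $\bigoplus_{0 \leq i \leq n} S_i$ carries the discrete topology and the topology on the limit is the prodiscrete one). Hence one obtains an induced map
$$\iota \colon \varprojlim_n E_n \lrasim \prod_{i \in \N} S_i$$
which is bijective and continuous; it is an isomorphism in $CTC_A$ because each $\iota_n$ is an isomorphism of finitely generated projective $A$-modules (equipped with the discrete topology), so $\iota$ is a homeomorphism.

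There is no serious obstacle here: every step reduces to standard facts about projective modules over Dedekind rings (submodules of finitely generated projectives are again projective) and to the universal property of inverse limits in $CTC_A$. The only point requiring minor care is keeping the notational bookkeeping of $\iota_{n+1}$ consistent so that compatibility with $q_n$ is the \emph{projection} map (rather than some identification up to permutation of factors), which is handled cleanly by building $\iota_{n+1}$ directly from the decomposition $E_{n+1} = S_{n+1} \oplus \sigma_n(E_n)$.
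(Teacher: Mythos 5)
Your proof is correct and is exactly the argument the paper has in mind (the paper explicitly leaves this proof as an easy exercise for the reader): split each sequence $0 \to S_{i+1} \to E_{i+1} \to E_i \to 0$ using projectivity of $E_i$, build the $\iota_n$ inductively so that $q_n$ becomes the projection discarding the last summand, and pass to the limit, which is $\prod_{i\in\N} S_i$ with the prodiscrete topology. The only point to watch — the ordering of the factor $S_{n+1}$ so that the transition map is literally the projection onto $\bigoplus_{0\leq i\leq n} S_i$ — is one you already flag and handle.
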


\subsection{Continuity of  morphisms of $A$-modules between objects of $\CTC_A$.}
In this paragraph, we want to indicate that, for a large class of Dedekind rings $A$, any morphism of $A$-modules between two objects $N_1$ and $N_2$ in $\CTC_A$ is automatically continuous. In other words, for these rings, the forgetful functor from the category $\CTC_A$ to the category of $A$-modules is fully faithful. 

This will follow from the variant of results of  Specker (\cite{Specker50}) and Enochs (\cite{Enochs64}) discussed in
Appendix \ref{prodiscretemod}.

Observe that, for any Dedekind ring $A$, precisely one of the following three conditions is satisfied:
{\it 

$\mathbf {Ded_1 :}$  $A$ is a field;

$\mathbf {Ded_2 :}$  $A$ is a complete discrete valuation ring;

$\mathbf {Ded_3 :}$  there exists some non-zero prime ideal $\fp$ of $A$ such that $A$ is not $\fp$-adically complete.\footnote{It is straightforward that, when $\mathbf {Ded_3}$ holds, the ring $A$ is not $\fp$-adically complete for \emph{every} non-zero prime ideal $\fp$ of $A$.}
}

For instance, any countable Dedekind ring $A$ which is not a field satisfies   $\mathbf {Ded_3}$, since the cardinality of a complete discrete valuation ring is at least the cardinality of the continuum. 

When the Dedekind ring $A$ satisfies $\mathbf {Ded_1}$ or $\mathbf {Ded_2}$, there exists many ``linear forms" in $\Hom_A(A^\N, A)$ which are not continuous when $A^\N$ (resp. $A$) is equipped with its natural prodiscrete (resp. discrete) topology, or equivalently, there exists some $A$-linear map $\xi: A^\N \lra A$ which is not of the form
\begin{equation}\label{xixin}
\xi((x_n)_{n \in \N}) = \sum_{n \in \N} \xi_n x_n
\end{equation}
for some $(\xi_n)_{n \in \N}$ in $A^{(\N)}$.

Indeed, when $A$ is a field, any non-zero linear form on the vector space $A^\N$ that vanishes on its subspace $A^{(\N)}$ is such a map.
When $A$ is  a complete discrete valuation ring, of maximal ideal $\fm$, for any sequence $(\xi_n)_{n \in \N}$ in $A^\N \setminus A^{(\N)}$ that converges to zero in the $\fm$-adic topology, the formula (\ref{xixin}) defines an element $\xi \in \Hom_A(A^\N, A)$ of the required type.

In contrast, for Dedekind rings satisfying $\mathbf {Ded_3}$, the following continuity results holds:

\begin{proposition}\label{prop:HomHom}
If the Dedekind ring $A$ satisfies $\mathbf {Ded_3}$, then for any two objects $N$ and $N'$ in $\CTC_A$, we have:
\begin{equation}\label{HomHom}
\Hom_A(N, N') = \Hom_A^{\rm cont}(N, N').
\end{equation}

\end{proposition}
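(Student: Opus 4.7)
The plan is to reduce Proposition \ref{prop:HomHom} to the automatic continuity result for $A$-linear maps $A^\N \lra A$ under hypothesis $\mathbf{Ded_3}$, which is the content of Appendix \ref{prodiscretemod}, by a sequence of formal reductions on source and target.

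First I would reduce the target. Using the canonical isomorphism (\ref{canprojE}), we may write $N' \simeq \varprojlim_j N'/U'_j$ for some defining filtration $(U'_j)_{j \in \N}$ in $\cU(N')^\N$. An $A$-linear map $\phi: N \lra N'$ is continuous if and only if each composite $p_j \circ \phi : N \lra N'/U'_j$ with the projection $p_j: N' \twoheadrightarrow N'/U'_j$ is continuous, since the target of $\phi$ carries the projective limit topology. This reduces the statement to the case where $N'$ is a finitely generated projective $A$-module equipped with the discrete topology. Since any such $N'$ is a direct summand of some $A^n$, a further standard reduction brings us to proving the continuity of any $A$-linear map $\xi : N \lra A$, where $A$ carries the discrete topology.

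Next I would reduce the source. By Proposition \ref{prodfin}, there exist finitely generated projective $A$-modules $(S_i)_{i \in \N}$ and an isomorphism of topological $A$-modules $N \simeq \prod_{i \in \N} S_i$. Each $S_i$ is a direct summand of some free module $A^{n_i}$, so choosing compatible splittings identifies $\prod_{i \in \N} S_i$ with a topological direct summand of $\prod_{i \in \N} A^{n_i} \simeq A^\N$ (with its prodiscrete topology). Precomposing $\xi$ with the retraction $A^\N \twoheadrightarrow N$, it then suffices to show that every $A$-linear map $A^\N \lra A$ is continuous.

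The final step is the heart of the matter: under hypothesis $\mathbf{Ded_3}$, every $A$-linear map $\xi : A^\N \lra A$ is of the form $(x_n)_{n \in \N} \longmapsto \sum_{n \in \N} \xi_n x_n$ for some $(\xi_n)_{n \in \N}$ in $A^{(\N)}$ (in particular, the sum is well defined since almost all $\xi_n$ vanish), and hence continuous for the prodiscrete topology on $A^\N$ and the discrete topology on $A$. This is precisely the Specker--Enochs type automatic continuity theorem whose proof is the subject of Appendix \ref{prodiscretemod}, and which genuinely requires $\mathbf{Ded_3}$: the counterexamples preceding the proposition show that under $\mathbf{Ded_1}$ or $\mathbf{Ded_2}$ there exist discontinuous $A$-linear maps $A^\N \lra A$. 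The main conceptual obstacle is thus isolated inside this appendix result; the proof of Proposition \ref{prop:HomHom} itself is essentially a formal dévissage using Proposition \ref{prodfin} and the definition of the projective limit topology.
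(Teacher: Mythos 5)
Your proof is correct and follows essentially the same route as the paper: a formal d\'evissage on the target (projective limit, then finitely generated projective, then $A$) reducing everything to the automatic-continuity result of Appendix \ref{prodiscretemod}. The only difference is your additional reduction of the source $N$ to (a direct summand of) $A^\N$, which is valid but unnecessary, since Corollary \ref{contuitautombis} is already stated for an arbitrary complete prodiscrete $R$-module with a countable basis of neighborhoods of $0$ and so applies directly to $M:=N$.
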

\begin{proof} When $N' = A,$ this follows from 
 Corollary \ref{contuitautombis} applied to $R:= A$ and to $M:= N$. (Indeed, we may choose as $\fm$ any non-zero prime ideal $\fp$ of $A$: the local ring $A_{(\fp)}$ is not complete, since $A$ satisfies $\mathbf {Ded_3}$.)

 The validity of (\ref{HomHom}) when $N'= A$ implies its validity when $N'= A^{\oplus n}$ for some $n \in \N$, hence its validity for any finitely generated projective $A$-module $N'$ equipped with the discrete topology. (Indeed an such module may be realized as a submodule --- actually as a direct summand --- of some free $A$-module $A^{\oplus n}$.)
 
 To complete the proof of Proposition \ref{prop:HomHom}, observe that any object $N'$ of $\CTC_A$ may be realized as the projective 
  limit $N' = \varprojlim_i N'_i$ of some projective system of finitely generated projective $A$-modules
  $N'_0 \leftarrow N'_1 \leftarrow N'_2 \leftarrow \dots$, and that we have natural identifications:
   $$\Hom_A(N, N') \simeq \varprojlim_i \Hom_A(N, N'_i)$$
   and 
   $$\Hom^{\rm cont}_A(N, N') \simeq \varprojlim_i \Hom^{\rm cont}_A(N, N'_i).$$
   The validity of (\ref{HomHom}) consequently follows from the already established equalities:
   $$\Hom_A(N, N'_i) = \Hom_A^{\rm cont}(N, N'_i).$$
 \end{proof}

\subsection{The topology on objects in $\CTC_A$ when $A$ is a topological ring}

Let us assume that, besides its discrete topology, the ring $A$ is equipped with a topology that makes $A$ a topological ring. We shall denote $A^\an$ the topological ring defined by $A$ equipped with this finer topology.

For instance, $A$ may be a discrete valuation ring and $A^\an$ the ring $A$ equipped with the topology associated to its discrete valuation, or a local field and  $A^\an$ the field $A$ equipped with its ``usual" locally compact topology.

In this situation, besides its topology of pro-discrete $A$-module, any object $N$ of $\CTC_A$ is canonically endowed with a finer topology, which makes it a topological $A^\an$-module $N^\an$.

Indeed, any finitely generated projective $A$-module $P$ is equipped with a canonical topology of $A^\an$-module: if $P$ is embedded as a direct summand in the direct sum $A^{\oplus n}$ of a finite number of copies of $A$, this topology is the one induced by the product topology on $(A^{\an}) n$. 
We shall denote by $P^\an$ the so-defined topological $A^\an$-module. Observe that any $A$-linear morphism $\phi: P_1 \lra P_2$ between finitely generated projective $A$-modules defines a continuous morphism $\phi: P^\an_1 \lra P^\an_2$ of topological $A^\an$-modules.   

By definition, if $N$ denotes an object of $\CTC_A$, we have a canonical isomorphism of topological $A$-modules
\begin{equation}\label{Niso}
N \lrasim \varprojlim_{U \in \cU(N)} N/U,
\end{equation}
where $\varprojlim_{U \in \cU(N)} N/U$ is equipped with the pro-discrete topology. We make $N$ a topological $A^\an$-module $N^\an$ by declaring (\ref{Niso}) to be an isomorphism of topological $A^\an$-modules
\begin{equation}\label{Nisoan}
N^\an\lrasim \varprojlim_{U \in \cU(N)} (N/U)^\an,
\end{equation}
where $\varprojlim_{U \in \cU(N)} (N/U)^\an$ is equipped with the projective limit topology deduced from the canonical topology of $A^\an$-module on each finitely generated projective $A$-module $N/U$.

Any $U \in \cU(N)$ is an object of $\CTC_A$ and as such is equipped with the ``analytic" topology $U^\an$. One easily see that $U$ is actually closed in $N^\an$ and that the topology of $U^\an$ is the topology induced by the topology of $N^\an$.

Any morphism $f:N' \lra N$ in $\CTC_A$ defines a continuous morphism of topological $A^\an$-modules $f:N'^\an \lra N^\an$. However the so defined injective map $$\Hom_A^{\rm cont}(N',N) \hlra \Hom_{A^\an}^{\rm cont}(N'^\an,N^\an)$$
is not surjective in general.

When $A^\an$ is the field $\R$ (resp. $\C$) equipped with its usual ``analytic" topology, the topological $A^\an$-module $N^\an$ is a topological vector space over $\R$ (resp. over $\C$), isomorphic to $\R^n$ (resp. to $\C^n$) if $n:= \dim_{A^\an} N$ is finite, and to $\R^\N$ (resp. to $\C^\N$) if $n$ is infinite. In particular, it is a Fréchet locally convex vector space over $\R$ (resp. over $\C$).

\section{The duality between $\CP_A$ and $\CTC_A$}\label{dualCPCTC}

In this section, we discuss the (anti)equivalence of $A$-linear categories between  $\CP_A$ and $\CTC_A$ defined by duality, and some of its consequences. 
\subsection{The duality functors}$\, $

(i) To any $A$-module $M$, we may attach its dual topological $A$-module, namely  the $A$-module
\begin{equation}\label{dualobj}
M^\vee := \Hom_A(M,A)
\end{equation}
equipped with the topology of pointwise convergence. If $\alpha: M_2 \lra M_1$ is a morphism of $A$-modules, its transpose
\begin{equation}\label{dualmor}\alpha^\vee := . \circ \alpha : M_1^\vee \lra M_2^\vee
\end{equation} 
is clearly $A$-linear and continuous.

For any family $(M_i)_{i\in I}$ of $A$-modules, there is a canonical identification of topological $A$-modules:
\begin{equation*}
(\,\bigoplus_{i \in I} M_i\,)^\vee \lrasim \prod_{i \in I} M_i^\vee.
\end{equation*}

Besides, if $M$ is a finitely generated projective $A$-module, then its dual $M^\vee$ also is finitely generated and projective, and the topology of $M^\vee$ is discrete. (Indeed, this holds for any finitely generated free $A$-module $A^{\oplus n}$, and consequently for any direct summand of such an $A$-module.) 

As any object in $\CP_A$ is a countable direct sum of finitely generated projective $A$-modules, these observations show that 
the constructions (\ref{dualobj}) and (\ref{dualmor}) define a contravariant $A$-linear duality functor:
\begin{equation}\label{dual1}.^\vee : \CP_A \lra \CTC_A.
 \end{equation}

 \medskip
 
 (ii) Conversely, to any topological $A$-module $N$, we may attach its topological dual, namely the $A$-module
\begin{equation}\label{dualobjbis}
N^\vee := \Hom_A^{\rm cont}(N, A)
\end{equation}
consisting of \emph{continuous} $A$-linear maps from $N$ to $A$.  Then any continuous $A$-linear morphism $\beta : N_1 \lra N_2$ of topological $A$-modules defines, by transposition, a morphism of $A$-modules:
\begin{equation}\label{dualmorbis}
\beta^\vee := . \circ \beta : N_2^\vee \lra N_1^\vee.
\end{equation}

For any projective system of topological $A$-modules
 \begin{equation}\label{projNp}
N_0 \stackrel{p_0}{\longleftarrow}N_1 \stackrel{p_1}{\longleftarrow} N_2 \stackrel{p_2}{\longleftarrow}\dots,
\end{equation}
we may form the projective limit $\varprojlim_i N_i$, as a topological $A$-module, and we may also consider the dual inductive system of $A$-modules:
 \begin{equation*}\label{}
N_0^\vee \stackrel{p_0^\vee}{\longrightarrow}N_1^\vee \stackrel{p_1^\vee}{\longrightarrow} N_2^\vee \stackrel{p_2^\vee}{\longrightarrow}\dots  \,\, .
\end{equation*}
Then there is a canonical identification of $A$-modules:
\begin{equation*}
(\,\,\varprojlim_i N_i \, )^\vee \simeq \varinjlim_i N_i^\vee.
\end{equation*}

Besides, if $N$ is a finitely generated projective $A$-module, equipped with the discrete topology, then
$$N^\vee = \Hom_A^{\rm cont}(N, A) = \Hom_A (N, A)$$
is also a finitely generated projective $A$-module. Moreover, if $q: N' \ra N$ is a surjective morphism of finitely generated projective $A$-modules, then $p^\vee: N^\vee \ra N'^\vee$ is injective and its image is a direct summand in $N'^\vee$.

As any object in $\CTC_A$ is (up to isomorphism) the projective limit of some countable projective system $E_0 \leftarrow E_1 \leftarrow E_2 \leftarrow \dots$ of surjective morphisms between finitely generated projective $A$-modules, these observations show that the constructions (\ref{dualobjbis}) and (\ref{dualmorbis}) define a contravariant $A$-linear duality functor:
\begin{equation}\label{dual2}.^\vee : \CTC_A \lra \CP_A.
 \end{equation}

\subsection{Duality as an adjoint equivalence}\label{dualadjointA}

 Observe that, for any object $M$ of $\CP_A$ and any object $N$ of $\CTC_A,$
the $A$-modules $\Hom_A^{\rm cont}(M, N^\vee)$ and $\Hom_A(N, M^\vee)$ may both be identified with the module of $A$-bilinear maps $M \times N \lra A$ which are continuous in the first variable. In this way, we obtain a systems of bijections
\begin{equation}\label{adjtdual}
\Hom_{\CTC_A} (M, N^\vee) := \Hom_A^{\rm cont} (M, N^\vee)  \simeq \Hom_A(N, M^\vee) = \Hom_{\CP_A^{\rm op}} (M^\vee, N).
\end{equation}

\begin{proposition}\label{prop:dual} The bijections  (\ref{adjtdual}) defines an adjunction of functors: %
 $$.^\vee :  \CTC_A  \leftrightarrows \CP_A^{\rm op} : .^\vee$$
 
It is actually an adjoint equivalence, 
 whose unit and counit are the natural isomorphism $\eta: I_{\CTC_A} \simeq .^{\vee \vee}$ and  $\epsilon: .^{\vee \vee} \simeq I_{\CP_A^{\rm op}}$ defined by the biduality isomorphisms
 $$
\begin{array}{rrcl}
 \epsilon_M:& M  & \lrasim   & \Hom_A^{\rm cont}(\Hom_A (M,A),A)  \\
& m & \longmapsto   & (\xi  \longmapsto \xi(m))
\end{array}
$$
and $$
\begin{array}{rrcl}
 \eta_N:& N  & \lrasim   & \Hom_A(\Hom_A^{\rm cont} (N,A),A)  \\
& n & \longmapsto   & (\zeta  \longmapsto \zeta(n))   
\end{array}
$$
associated to any object $M$ of $\CP_A$ and to any object $N$ of $\CTC_A$.
 \end{proposition}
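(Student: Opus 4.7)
The plan is to verify the adjunction, identify the unit and counit, and then prove they are isomorphisms by reducing to finite-rank biduality.

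First, I would check that the bijections~(\ref{adjtdual}) are natural in both variables. All three modules in the display are canonically identified with the $A$-module of $A$-bilinear maps $B : M \times N \to A$ which are continuous in $N$ for each fixed $m$; naturality in $M \in CP_A$ and $N \in CTC_A$ is immediate from this bilinear interpretation, and this exhibits $(.^\vee, .^\vee)$ as an adjoint pair of functors between $CTC_A$ and $CP_A^{\rm op}$.

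Next, I would identify the unit and counit. By the general construction, $\eta_N : N \to N^{\vee\vee}$ is the image of $\mathrm{id}_{N^\vee}$ under~(\ref{adjtdual}) applied with $M$ replaced by $N^\vee$; tracing the bilinear interpretation, this is exactly the evaluation map of the statement, and similarly for $\epsilon_M$. It then remains to show that $\epsilon_M$ and $\eta_N$ are isomorphisms, from which the triangle identities will follow formally from the adjunction.

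For $\epsilon_M$, I would reduce to the finite-rank case by invoking Proposition~\ref{CPAdef}(5) to decompose $M = \bigoplus_{i \in \N} P_i$ with each $P_i$ finitely generated projective. Then $M^\vee \cong \prod_i P_i^\vee$ with the product of the discrete topologies on each $P_i^\vee$. The key step is the identity $(\prod_i P_i^\vee)^\vee = \bigoplus_i P_i^{\vee\vee}$: any continuous $A$-linear map $\xi : \prod_i P_i^\vee \to A$ must vanish on some open neighborhood of $0$ because $A$ is discrete, and since the submodules $\prod_{i \geq n} P_i^\vee$ form a cofinal decreasing sequence of open saturated submodules with finitely generated projective quotients $\prod_{i<n} P_i^\vee$, $\xi$ factors through some such quotient and corresponds to an element of $\bigoplus_{i<n} P_i^{\vee\vee}$ by finite-rank biduality. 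Unwinding the evaluation pairing through this identification shows $\epsilon_M$ is the resulting canonical isomorphism.

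Dually, for $\eta_N$, I would apply Proposition~\ref{prodfin} to write $N \simeq \prod_i S_i$ with the $S_i$ finitely generated projective, whence $N^\vee \cong \bigoplus_i S_i^\vee$ lies in $CP_A$; applying the computation of the previous paragraph (with $M = N^\vee$) yields $N^{\vee\vee} \cong \prod_i S_i^{\vee\vee} = \prod_i S_i = N$, and $\eta_N$ becomes the composition of these canonical identifications. The principal obstacle is the automatic-continuity step used to compute $(\prod_i P_i^\vee)^\vee$: this works cleanly only because the countable basis of open saturated submodules in $CTC_A$ together with the discreteness of the target $A$ forces a continuous linear functional to have genuinely finite support, whereas for a non-discrete target the analogous identification would fail.
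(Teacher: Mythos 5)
Your proposal is correct and follows essentially the same route as the paper: naturality via the bilinear-map interpretation, identification of the unit and counit with the evaluation maps, and reduction to finite-rank biduality via the decomposition of objects of $CP_A$ as countable direct sums (resp.\ of $CTC_A$ as countable products) of finitely generated projective modules together with the compatibility of the duality functors with these sums and products. The only difference is that you spell out the identification $(\prod_i P_i^\vee)^\vee \simeq \bigoplus_i P_i^{\vee\vee}$ (via the discreteness of $A$ forcing continuous functionals to factor through finite quotients), which the paper had already recorded among the preliminary properties of the duality functors and simply invokes.
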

 
 \begin{proof} We only sketch the proof and leave the details to the readers.
 
 The naturality with respect to $M$ (in $\CTC_A$) and to $N$ (in $\CP_A^{\rm op}$) of the bijections (\ref{adjtdual}) is straightforward, and the  expressions for $\epsilon_M$ and $\eta_N$ as well.
 
 To complete the proof, we are left to establish that, for any $M$ (resp. $N$) in $\CP_A$ (resp. in $\CTC_A$), $\epsilon_M$ (resp. $\eta_N$) is an isomorphism in $\CP_A$ (resp. in $\CTC_A$). The compatibility of the duality functors  with countable direct sums (in $\CP_A$) and countable products (in $\CTC_A$), and the fact that any object in $\CP_A$ (resp. in $\CTC_A$) is a countable direct sum (resp. a countable product) of finitely generated projective $A$-modules, allows us to reduce the proof to the special case when
$M$ (resp. $N$) is a finitely generated projective $A$-module. Then it is straightforward. 
 \end{proof}

\begin{corollary}
The duality functors (\ref{dual1}) and (\ref{dual2}) are (anti)equivalences of categories.  \qed 
\end{corollary}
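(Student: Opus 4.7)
The corollary is an immediate formal consequence of Proposition \ref{prop:dual}, so the proof plan is essentially bookkeeping. My approach is to invoke the standard fact that an adjunction whose unit and counit are natural isomorphisms (i.e.\ an adjoint equivalence) automatically exhibits each of the two adjoint functors as a quasi-inverse equivalence of categories.

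More precisely, Proposition \ref{prop:dual} asserts that the pair of duality functors
$$.^\vee : CTC_A \longrightarrow CP_A^{\mathrm{op}} \quad\text{and}\quad .^\vee : CP_A^{\mathrm{op}} \longrightarrow CTC_A$$
forms an adjunction whose unit $\eta$ and counit $\epsilon$, given respectively by the biduality maps $\eta_N$ and $\epsilon_M$, are natural isomorphisms. By the general theory of adjunctions, this is enough to conclude that each of these two covariant functors between $CTC_A$ and $CP_A^{\mathrm{op}}$ is an equivalence of categories, with the other as its quasi-inverse. Reinterpreting $CP_A^{\mathrm{op}}$ as $CP_A$ and the covariant functors as contravariant ones, this becomes the statement that the duality functors (\ref{dual1}) and (\ref{dual2}) are mutually quasi-inverse anti-equivalences between $CP_A$ and $CTC_A$, which is exactly the corollary.

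There is no real obstacle: the only content beyond Proposition \ref{prop:dual} is the categorical lemma that an adjoint equivalence is an equivalence, which is standard (and for which one may cite, for instance, Mac Lane's \emph{Categories for the Working Mathematician}, IV.4). If one wished to make the argument fully self-contained, one would simply note that the biduality natural isomorphisms $\epsilon_M : M \stackrel{\sim}{\to} M^{\vee\vee}$ (for $M \in CP_A$) and $\eta_N : N \stackrel{\sim}{\to} N^{\vee\vee}$ (for $N \in CTC_A$) furnish the required natural isomorphisms $\mathrm{Id}_{CP_A} \simeq .^{\vee\vee}$ and $\mathrm{Id}_{CTC_A} \simeq .^{\vee\vee}$ between the identity functors and the compositions of the two duality functors.
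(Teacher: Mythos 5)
Your proposal is correct and matches the paper's (implicit) argument: the corollary is recorded with no proof precisely because it is the standard consequence that an adjoint equivalence, as established in Proposition \ref{prop:dual}, makes the two duality functors mutually quasi-inverse (anti)equivalences. Nothing further is needed.
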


 Let $B$ be a Dedekind ring which is an $A$-algebra. The reader may easily establish the compatibility between the duality functors between $\CP_A$ and $\CTC_A$ and  between $\CP_B$ and $\CTC_B$, and of the ``base change" functors $.\otimes_A B : \CP_A \lra \CP_B$ and
 $. \,\hat{\otimes}_A B: \CTC_A \lra \CTC_B.$
 
 \subsection{Applications}\label{appdualCPCTC} Diverse results about the category $\CP_A$ may be transferred by duality to the category $\CTC_A$.
 \medskip
 
 (i) For instance, the morphisms $\phi$ in $\Hom_A(A^{(\N)}, A^{(\N)})$ are in bijection with  ``infinite matrices" $(\phi_{ij})_{(i,j) \in \N\times \N}$ in $A^{\N \times \N}$ which admit only a finite of non-zero entries in each column, by the usual formula, valid for any $x=(x_i)_{i \in \N}$ and $y=(y_j)_{j \in \N}$ in $A^{(\N)}$:
 \begin{equation}\label{matphi}
y = \phi(x) \Longleftrightarrow y_i = \sum_{j \in \N} \phi_{ij} x_j.
\end{equation}

By duality, this implies that the morphisms $\phi$ in $\Hom_A^{\rm cont}(A^\N, A^\N)$ are bijection with matrices  $(\phi_{ij})_{(i,j) \in \N\times \N}$ in $A^{\N \times \N}$ which contains only a finite of non-zero entries in each row, still by means of formula (\ref{matphi}).
\medskip
 
 (ii) From Kaplansky's result concerning the freeness of modules of infinite rank in $\CP_A$ (Proposition \ref{Kplfree}), we get the following refinement of the description of objects of $\CP_A$ as countable products of finitely generated projective $A$-modules in Proposition \ref{prodfin}:
 \begin{proposition}\label{Kapdual}
 Let $N$ be an object in $\CTC_A$.
 Either the topology of $N$ is discrete and the $A$-module $N$ is finitely generated and projective, or $N$ is isomorphic to $A^{\N}$ as a topological $A$-module. \qed
 \end{proposition}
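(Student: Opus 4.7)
The plan is to derive this proposition as a direct consequence of Kaplansky's freeness result for $CP_A$ (Proposition \ref{Kplfree}) via the duality anti-equivalence $.^\vee : CTC_A \leftrightarrows CP_A^{\rm op} : .^\vee$ established in Proposition \ref{prop:dual}.

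First, given $N$ in $CTC_A$, I would form its dual $N^\vee = \Hom_A^{\rm cont}(N,A)$, which is an object of $CP_A$ by the definition of the duality functor (\ref{dual2}). By Proposition \ref{prop:dual}, the biduality isomorphism $\eta_N : N \lrasim (N^\vee)^\vee$ is an isomorphism in $CTC_A$; in particular, $N$ is isomorphic to the dual of $N^\vee$. I would then split into two cases according to whether the countably generated projective $A$-module $N^\vee$ has finite or infinite rank.

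If $N^\vee$ has finite rank, then $N^\vee$ is a finitely generated projective $A$-module, and its dual $(N^\vee)^\vee$ is again a finitely generated projective $A$-module equipped with the discrete topology (this was observed in the construction of the duality functor (\ref{dual1})). Via $\eta_N$, this gives the first alternative. If $N^\vee$ has infinite rank, then Kaplansky's theorem (Proposition \ref{Kplfree}) provides an isomorphism $N^\vee \simeq A^{(\N)}$ in $CP_A$. Dualizing and using the compatibility of $.^\vee$ with countable direct sums recalled in Section \ref{dualCPCTC} --- namely the canonical isomorphism of topological $A$-modules $(\bigoplus_{i \in \N} A)^\vee \lrasim \prod_{i \in \N} A^\vee = A^\N$ (with the product of discrete topologies) --- one obtains an isomorphism $N \simeq (N^\vee)^\vee \simeq A^\N$ in $CTC_A$, which is the second alternative.

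There is no serious obstacle: once one has the anti-equivalence of Proposition \ref{prop:dual} and the explicit identification $(A^{(\N)})^\vee \simeq A^\N$ as topological $A$-modules, the statement is essentially a translation of Proposition \ref{Kplfree}. The only mild point requiring attention is to confirm that in the finite rank case the pro-discrete topology on $N \simeq (N^\vee)^\vee$ really is the discrete one, which is immediate since the dual of a finitely generated projective $A$-module (viewed as an object of $CTC_A$) was defined to carry the discrete topology.
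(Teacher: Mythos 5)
Your proof is correct and is exactly the argument the paper intends: the proposition is stated as an immediate consequence of Kaplansky's theorem (Proposition \ref{Kplfree}) transported through the duality anti-equivalence of Proposition \ref{prop:dual}, together with the identification $(A^{(\N)})^\vee \simeq A^{\N}$ and the fact that duals of finitely generated projective modules are discrete. Nothing is missing.
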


(iii) When $A$ is a field $k$, the objects of $\CTC_A$ are precisely the \emph{linearly compact $k$-vector spaces}, introduced by Lefschetz and Chevalley (\cite{Lefschetz42}, Chapter II, §6), that admit a countable basis of neighborhoods of zero. In this case, the category $\CP_A$ is the category of $k$-vector spaces of countable dimension. Diverse results from linear algebra over the field $k$ may be transported, by duality, to results concerning $\CTC_A$.

 In this way, from the basic facts  about $k$-vector spaces of countable dimension and their $k$-linear maps, we derive the following results (which go back to Toeplitz and K\"othe; see notably \cite{Toeplitz1909} and \cite{Koethe49};  see also \cite{DieudonneCrelle50}):

\begin{proposition}\label{CTCk} Let $k$ be a field.

1) In the category $\CTC_k,$ any object is isomorphic, either to $k^n$ for some non-negative integer $n$, or to $k^\N.$

2) For any morphism $\phi: N_1 \lra N_2$ in $\CTC_k,$ there exists objects $K,$ $N,$ and $C$ %in $CPC_k$
 and  isomorphisms 
 $$ u: N_1 \lrasim K \oplus N  \mbox{ and } v : N_2 \lrasim C \oplus N $$
 in $\CTC_k$ such that the morphism
 $\tilde{\phi} := v \phi u^{-1} : K \oplus N \lra C \oplus N$
  is the ``bloc diagonal" morphism $0 \oplus Id_N$,  which sends   $(k,n) \in K \oplus N$ to $(0, n)$ in $C \oplus N.$ \qed

\end{proposition}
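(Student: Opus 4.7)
Part (1) is immediate from Proposition \ref{Kapdual} applied to the Dedekind ring $A = k$: over a field, finitely generated projective modules are simply finite-dimensional $k$-vector spaces, so the two alternatives listed in \emph{loc.\ cit.} read $N \simeq k^n$ (with the discrete topology) and $N \simeq k^\N$ (with the prodiscrete topology).

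For part (2), my plan is to transfer the statement to $CP_k$ via the duality equivalence of Proposition \ref{prop:dual} and then invoke a normal form for $k$-linear maps between countable-dimensional $k$-vector spaces. Given $\phi \colon N_1 \to N_2$ in $CTC_k$, I would first set $\psi := \phi^\vee \colon N_2^\vee \to N_1^\vee$, a morphism in $CP_k$. Because every subspace of a $k$-vector space admits a complement, I can pick a complement $T$ of $\ker \psi$ in $N_2^\vee$ and a complement $S$ of $\im \psi$ in $N_1^\vee$; this yields direct-sum decompositions
\[
N_2^\vee = \ker \psi \oplus T, \qquad N_1^\vee = \im \psi \oplus S,
\]
with $\psi|_T \colon T \lrasim \im \psi$ an isomorphism. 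After identifying $T$ and $\im \psi$ via this isomorphism and calling the common object $V$, the map $\psi$ becomes the block morphism $\ker \psi \oplus V \to V \oplus S$ sending $(x,v) \mapsto (v,0)$.

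The next step is to dualize. Because the functor $.^\vee \colon CP_k^{\mathrm{op}} \to CTC_k$ is an equivalence that preserves finite direct sums, and using biduality to identify $N_i$ with $N_i^{\vee\vee}$, the above decomposition becomes a pair of isomorphisms in $CTC_k$
\[
N_1 \simeq S^\vee \oplus V^\vee, \qquad N_2 \simeq (\ker \psi)^\vee \oplus V^\vee,
\]
under which $\phi = \psi^\vee$ is identified with the block morphism $(\beta, \alpha) \mapsto (0, \alpha)$. Setting $K := S^\vee$, $N := V^\vee$, and $C := (\ker \psi)^\vee$ then produces the normal form of the statement.

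The only non-formal step — and the main (modest) obstacle — is the linear-algebra normal form for $\psi$, which rests on the existence of complements in $k$-vector spaces (Zorn). The dualization step is purely categorical: one only has to check that $.^\vee$ preserves finite direct sums and sends $(x,v) \mapsto (v,0)$ to its transpose $(\beta, \alpha) \mapsto (0, \alpha)$, both of which are immediate from the definitions of the duality functor on objects and morphisms.
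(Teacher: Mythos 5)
Your proposal is correct and follows exactly the route the paper intends: the paper states Proposition \ref{CTCk} with no written proof, asserting that it is obtained by transporting, through the duality equivalence between $CP_k$ and $CTC_k$, the elementary facts that every subspace of a countable-dimensional $k$-vector space admits a complement and that a linear map is determined up to such decompositions by its kernel and image. Your use of Proposition \ref{Kapdual} for part (1) and the kernel/image splitting of $\psi = \phi^\vee$ followed by dualization for part (2) is precisely this argument, carried out correctly.
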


\section{Strict morphisms, exactness and duality}\label{StrictCTC} 

\subsection{Strict morphisms in $\CTC_A$}$\,$

We shall say that a morphism 
\begin{equation}\label{phinn}
\phi : N_1 \lra N_2
\end{equation}
in $\CTC_A$ is \emph{strict} if $\phi$ is a strict morphism of topological groups, namely if the induced map
$$
\begin{array}{rrcl}
\tilde{\phi}: & N_1/\ker \phi  & \lra    & \im \phi   \\
& [x]  & \longmapsto  & \phi(x)  \\
  \end{array}
$$
is a homeomorphism, when $N_1/\ker \phi$ (resp. $\im \phi$) is equipped with the topology quotient of the topology of $N_1$ (resp., with the topology induced by the topology of $N_2$).

As $N_1$ and therefore its quotient $N_1/\ker \phi$ are complete, if the morphism (\ref{phinn}) is strict, its image $\im \phi$ also is complete, hence closed in $N_2$, and therefore defines an object of $\CTC_A$ by Proposition \ref{subCTC}.

Accordingly, the strict morphisms in $\Hom_A^{\rm cont}(N_1,N_2)$ are precisely the maps $\phi : N_1 \lra N_2$ for which there exists an object $I$ in $\CTC_A$, defined by some closed $A$-submodule of $N_2,$ and an \emph{open surjective} morphism $\phi_0$ in $\Hom_A^{\rm cont}(N_1, I)$ such that $\phi$ admits the factorization
$$\phi: N_1 \stackrel{\phi_0}{\twoheadrightarrow} I \hra N_2.$$

\subsection{Finite rank morphisms}
 We want to show that any morphism in $\CTC_A$ whose image is finitely generated is strict. This will follow from the following result, of independent interest:

\begin{proposition}\label{injPN} Let $P$ be a finitely generated projective $A$-module, and let $N$ be an object of $\CTC_A$. Let $K$ denote the field of fractions of $A$.

For any morphism of $A$-modules 
$\phi: P \lra N,$
the following conditions are equivalent:

(1) $\phi$ is injective; 

(2) the $K$-linear map $\phi_K : P_K := P\otimes_A K \lra N_K := N \hat{\otimes}_A K$ is injective;

(3) there exists $U$ in $\cU(N)$ such that the composite morphism of $A$-modules
$$P\stackrel{\phi}{\lra} M \twoheadrightarrow N/U$$
is injective.
\end{proposition}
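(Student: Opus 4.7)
The plan is to prove the equivalences by establishing (3) $\Rightarrow$ (1), then (1) $\Leftrightarrow$ (2), and finally the main content (1) $\Rightarrow$ (3). The first is trivial, since $\phi: P \to N$ factors through $N \to N/U$, so injectivity of the composition forces injectivity of $\phi$.

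For (1) $\Leftrightarrow$ (2), first observe that $N$ is torsion-free as an $A$-module: writing $N \simeq \varprojlim_i N/U_i$ via any defining filtration, each $N/U_i$ is finitely generated projective, hence torsion-free, and the projective limit of torsion-free $A$-modules is torsion-free. Since $K$ is flat over $A$, the functor $\cdot \otimes_A K$ is exact, so $\ker(\phi \otimes_A K) = (\ker \phi) \otimes_A K$, which vanishes iff $\ker \phi$ does (as $P$ is torsion-free). Moreover, the natural map $N \otimes_A K \to N_K = \varprojlim_i (N/U_i \otimes_A K)$ is injective, by torsion-freeness of the $N/U_i$ and the Hausdorff property of $N$. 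Thus $\ker \phi_K = \ker(\phi \otimes_A K)$, giving the equivalence.

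The main content is (1) $\Rightarrow$ (3). I would choose a defining filtration $(U_i)_{i \in \N}$ in $\cU(N)^\N$ for the topology of $N$ and set $K_i := \phi^{-1}(U_i)$, a decreasing sequence of $A$-submodules of $P$ whose intersection equals $\phi^{-1}(\bigcap_i U_i) = \phi^{-1}(0) = \ker \phi = 0$ (using the Hausdorff axiom $\mathbf{CTC_1}$ and assumption (1)). The key observation is that each $K_i$ is \emph{saturated} in $P$: indeed, $P/K_i$ embeds into the torsion-free $A$-module $N/U_i$, hence is itself torsion-free. Consequently $K_i = (K_i \otimes_A K) \cap P$, so the $K_i$ are determined by the decreasing sequence of $K$-subspaces $(K_i \otimes_A K)_{i \in \N}$ of the finite-dimensional $K$-vector space $P_K$.

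The latter sequence must stabilize, since dimensions can only decrease finitely often; hence so does the sequence $(K_i)_{i \in \N}$, say at some index $i_0$. Then $K_{i_0} = \bigcap_{i \geq i_0} K_i = \bigcap_i K_i = 0$, so $U := U_{i_0}$ fulfills (3). The only genuinely nontrivial point is the saturation of the $K_i$, which circumvents the failure of DCC for finitely generated modules over a Dedekind ring by reducing the stabilization question to finite-dimensional linear algebra over $K$.
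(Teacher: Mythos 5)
Your proof is correct and follows essentially the same route as the paper: the heart of the matter in both cases is that the preimages $\phi^{-1}(U)$ are \emph{saturated} submodules of the finite-rank module $P$ (because $P/\phi^{-1}(U)$ embeds in the torsion-free module $N/U$), so that the decreasing family of preimages is controlled by finite-dimensional linear algebra in $P_K$ and must stabilize — the paper phrases this as choosing $U$ with $\phi^{-1}(U)$ of minimal rank, while you phrase it as stabilization along a countable defining filtration, which is the same finiteness principle. Your explicit treatment of $(1)\Leftrightarrow(2)$ replaces the paper's chain $(3)\Rightarrow(2)\Rightarrow(1)$ (dismissed there as straightforward), but this is only a cosmetic reorganization.
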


\begin{proof} The implications $(3) \Rightarrow (2) \Rightarrow (1)$ are straightforward.

To prove the implication $(1) \Rightarrow (3),$ observe that the inverse images $\phi^{-1}(V)$ of the submodules $V$ of $N$ in $\cU (N)$ constitute a family of saturated submodules of $E$ stable under finite intersection. As $P$ has finite rank, we may consider $U$ in $\cU (N)$ such that $\phi^{-1}(U)$ has minimal rank. Then, for any $V \in \cU(N),$ the intersection $\phi^{-1}(U) \cap \phi^{-1}(V)$ is saturated in $N$, of rank at most the rank of $\phi^{-1}(U)$, and therefore coincides with $\phi^{-1}(U).$
This shows that 
\begin{equation}
\phi^{-1}(U) = \bigcap_{V \in \cU(N)} \phi^{-1}(V). 
\end{equation}

When (1) is satisfied, 
$$\bigcap_{V \in \cU(N)} \phi^{-1}(V)
= \phi^{-1}(\bigcap_{V \in \cU(N)} V) = \phi^{-1}(0) = \{0\},$$
and therefore $\phi^{-1}(U)=\{0\},$ and (3) is satisfied.
\end{proof}

\begin{corollary}\label{SatCTC}
 If $N$ is an object of $\CTC_A$ and if $P$ is a finitely generated $A$-submodule of $N$, then $P$ is a finitely generated projective $A$-module and the topology of $N$ induces  the discrete topology on $P$.
 
 Moreover the saturation 
 $$\tilde{P} := \{ n \in N \mid  \exists  \alpha \in A\setminus\{0\}, \alpha n \in P\}$$
 of $P$ in $N$ is also a finitely generated $A$-module.
\end{corollary}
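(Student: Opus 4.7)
The plan is to deduce all three claims from Proposition \ref{injPN} applied to the inclusion $\phi: P \hookrightarrow N$, after first observing that $N$ itself is torsion-free.

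First I would check that $N$ is torsion-free over $A$. Since $N$ is Hausdorff and complete with respect to a basis of neighborhoods $\cU(N)$ of open saturated submodules, the canonical map $N \hookrightarrow \varprojlim_{U \in \cU(N)} N/U$ is injective, and each $N/U$ is a finitely generated projective (hence torsion-free) $A$-module by $\mathbf{C}_U$. Thus $N$ is torsion-free, and so is $P$. Since $P$ is finitely generated and torsion-free over the Dedekind ring $A$, it is projective.

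For the discreteness claim, I would apply Proposition \ref{injPN} to the injective morphism $\phi: P \hookrightarrow N$: condition (1) is satisfied, so condition (3) provides $U \in \cU(N)$ such that the composite $P \to N \to N/U$ is injective. This means $P \cap U = \{0\}$, and since $U$ is a neighborhood of $0$ in $N$, its trace $P \cap U$ is a neighborhood of $0$ in $P$ for the induced topology; hence $\{0\}$ is open in $P$, and the induced topology is discrete.

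The main (and only non-immediate) step is the finite generation of $\tilde P$. With the same $U$ as above, I would show $\tilde P \cap U = \{0\}$: if $n \in \tilde P \cap U$, choose $\alpha \in A \setminus \{0\}$ with $\alpha n \in P$; then $\alpha n \in P \cap U = \{0\}$, so $\alpha n = 0$, and the torsion-freeness of $N$ forces $n = 0$. Consequently the composite $\tilde P \hookrightarrow N \twoheadrightarrow N/U$ is injective, so $\tilde P$ embeds as an $A$-submodule of the finitely generated $A$-module $N/U$. Since the Dedekind ring $A$ is Noetherian, $N/U$ is a Noetherian $A$-module, and therefore $\tilde P$ is finitely generated. (As a by-product, $\tilde P$ is again torsion-free and finitely generated, hence a finitely generated projective $A$-module on which the topology of $N$ induces the discrete topology, by exactly the same argument as for $P$.)
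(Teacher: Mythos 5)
Your proof is correct. The first two assertions (projectivity of $P$ and discreteness of the induced topology, via Proposition \ref{injPN} producing $U \in \cU(N)$ with $P \cap U = \{0\}$) follow exactly the same route as the paper. For the finite generation of $\tilde{P}$, however, you take a genuinely different and more direct path: you observe that the same $U$ already satisfies $\tilde{P} \cap U = \{0\}$ (since $n \in \tilde{P} \cap U$ and $\alpha n \in P \cap U = \{0\}$ force $n=0$ by torsion-freeness of $N$, which you correctly justify from $\mathbf{CTC_1}$ and $\mathbf{C}_U$), so $\tilde{P}$ embeds into the finitely generated module $N/U$ over the Noetherian ring $A$. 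The paper instead keeps $U$ fixed inside $N$ and argues through the quotient $N/P$: the injection $U \hookrightarrow N/P$ gives an exact sequence $0 \to U \to N/P \to N/(U+P) \to 0$, torsion-freeness of $U$ yields an injection $(N/P)_{\rm tor} \hookrightarrow (N/(U+P))_{\rm tor}$, whence $(N/P)_{\rm tor}$ is finitely generated, and finally $\tilde{P}$ is finitely generated via the extension $0 \to P \to \tilde{P} \to (N/P)_{\rm tor} \to 0$. Your argument is shorter and uses only Noetherianity of $A$; the paper's version has the side benefit of identifying $\tilde{P}/P$ with the torsion submodule of $N/P$ and controlling it explicitly, but for the statement as given your route is entirely adequate.
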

\begin{proof}  The projectivity is clear of $P$ is clear. Moreover, according to Proposition \ref{injPN}, we may choose an element $U$ of $\cU(N)$ such that $U \cap P = 0$. Since $U$ is open in $N$, this shows that the topology on $P$ induced by the one of $M$ is the discrete topology.

 To prove that $\tilde{P}$ is finitely generated observe that the composite map $$U \hlra N \lra N/P$$ is injective and fits into an exact sequence of $A$-modules:
 $$0 \lra U \lra N/P \stackrel{q}{\lra} N/(U+P)  \lra  0.$$
As $U$ is torsion free, the map $q$ defines by restriction an injection of torsion submodules:
$$q: (N/P)_{\rm tor} \hlra (N/(U+P))_{\rm tor}.$$

As $N/U$ --- hence $N/(U+P)$ --- is a finitely generated $A$-module, this proves that $(N/P)_{\rm tor}$ also is finitely generated. Finally the short exact sequence 
$$0 \lra P \lra \tilde{P} \lra (N/P)_{\rm tor} \lra 0$$ shows that $\tilde{P}$ is finitely generated.
\end{proof}

\begin{corollary} If the image 
of some morphism 
in $\CTC_A$ is a
finitely generated  $A$-module, then this morphism 
is strict. \qed  
\end{corollary}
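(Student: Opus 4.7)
The plan is to reduce the strictness of $\phi$ to the assertion that both the subspace topology on $\im \phi$ (inherited from $N_2$) and the quotient topology on $N_1/\ker \phi$ (inherited from $N_1$) are discrete, so that the induced $A$-linear bijection $\tilde{\phi}: N_1/\ker \phi \lrasim \im \phi$ is automatically a homeomorphism.

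First I would recall the definition of strictness and observe that, since $\im \phi$ is a finitely generated $A$-submodule of the object $N_2$ of $CTC_A$, Corollary \ref{SatCTC} applies directly and gives that $\im \phi$ is a finitely generated projective $A$-module on which the topology induced by $N_2$ is the discrete topology. This is the main input; no further work is needed to handle the codomain side.

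The key step is then to transfer discreteness to the quotient $N_1/\ker \phi$. Since the subspace topology on $\im \phi$ is discrete, there exists a neighborhood $V$ of $0$ in $N_2$ (which we may take in $\cU(N_2)$) such that $V \cap \im \phi = \{0\}$. For such $V$, any $x \in \phi^{-1}(V)$ satisfies $\phi(x) \in V \cap \im \phi = \{0\}$, so $\phi^{-1}(V) \subseteq \ker \phi$; the reverse inclusion is obvious, giving $\ker \phi = \phi^{-1}(V)$. Since $\phi$ is continuous and $V$ is open in $N_2$, this shows that $\ker \phi$ is open in $N_1$, hence the quotient topology on $N_1/\ker \phi$ is discrete.

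To conclude, the $A$-linear map $\tilde{\phi}: N_1/\ker \phi \to \im \phi$ is a continuous bijection between two discrete topological $A$-modules, and therefore a homeomorphism, which establishes that $\phi$ is strict. There is no serious obstacle here: the entire argument is essentially a packaging of Corollary \ref{SatCTC} together with the elementary observation that a continuous bijection between discrete spaces is a homeomorphism; the slightly delicate point --- that the saturation-style discreteness of $\im \phi$ in $N_2$ \emph{pulls back} to the openness of $\ker \phi$ in $N_1$ --- is handled by a one-line argument with $\phi^{-1}(V)$.
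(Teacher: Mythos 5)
Your proof is correct and follows exactly the route the paper intends: the statement is placed immediately after Corollary \ref{SatCTC} with no written proof precisely because it is the deduction you give (discreteness of the image forces $\ker\phi=\phi^{-1}(V)$ to be open, and a continuous bijection of discrete modules is a homeomorphism). Nothing is missing.
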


The following proposition completes Proposition \ref{injPN} in the situation when $\phi$ as a saturated image. 
\begin{proposition}\label{finitesaturated}
Let $P$ be a finitely generated projective $A$-module, and let $N$ be an object of $\CTC_A$. 

For any  morphism of $A$-modules 
$\phi: P \lra N,$
the following conditions are equivalent:

(1) the morphism $\phi$ is injective and its image $\phi(P)$ is a saturated $A$-submodule of $N$. 

(2) there exists $U$ in $\cU(N)$  such that the composite morphism of $A$-modules
$$P\stackrel{\phi}{\lra} M \twoheadrightarrow N/U$$
is  injective and its image is a saturated $A$-submodule of $N/U$.
\end{proposition}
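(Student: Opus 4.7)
The implication $(2) \Rightarrow (1)$ is easy. Assuming (2) with some $U \in \cU(N)$: $\phi$ is injective since the composite $P \to N/U$ is. For saturation, first observe that $N$ is torsion-free, being by $\mathbf{CTC_1}$ and $\mathbf{CTC_2}$ the projective limit $\varprojlim_n N/U_n$ of torsion-free finitely generated projective $A$-modules. Given $\alpha \in A \setminus \{0\}$ and $x \in N$ with $\alpha x = \phi(p)$ for some $p \in P$, the assumed saturation of $\pi_U(\phi(P))$ in $N/U$ yields $p' \in P$ with $\pi_U(x) = \pi_U(\phi(p'))$, and then $\alpha(x - \phi(p')) \in \phi(P) \cap U = \{0\}$ (the intersection vanishing because $P$ injects into $N/U$) forces $x = \phi(p')$ by torsion-freeness.

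For $(1) \Rightarrow (2)$, the plan is a Noetherian stabilization of Fitting ideals. Let $r := \rk_A P$; choose a defining filtration $U_0 \supseteq U_1 \supseteq \cdots$ of $N$ with $U_0 \cap \phi(P) = \{0\}$ (provided by Proposition \ref{injPN}), and write $E_n := N/U_n$ and $\phi_n := \pi_n \circ \phi : P \hookrightarrow E_n$. For each $n$, let $J_n$ denote the $r$-th Fitting ideal of the finitely generated $A$-module $E_n/\phi_n(P)$, i.e., the ideal generated by the $r \times r$ minors of any local matrix presentation of $\phi_n$; since any finitely generated module of rank $r$ over the Dedekind ring $A$ decomposes as a torsion part plus a projective part of rank $r$, one checks that $E_n/\phi_n(P)$ is torsion-free --- equivalently $\phi_n(P)$ is saturated in $E_n$ --- precisely when $J_n = A$. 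Choosing an $A$-linear splitting $E_{n+1} \simeq E_n \oplus (U_n/U_{n+1})$ of the surjection $E_{n+1} \twoheadrightarrow E_n$ displays the matrix of $\phi_n$ as a submatrix of that of $\phi_{n+1}$, so $J_n \subseteq J_{n+1}$; since $A$ is Noetherian this ascending chain stabilizes at $J_\infty := \bigcup_n J_n$, and it remains only to prove $J_\infty = A$.

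Proving $J_\infty = A$ is the main obstacle, and I plan to argue by contradiction. Suppose $J_\infty \subseteq \fp$ for some non-zero prime ideal $\fp$ of $A$ with residue field $k := A/\fp$. Then for every $n$ the reduction $\bar\phi_n : P/\fp P \to E_n/\fp E_n$ has rank strictly less than $r$ over $k$, so its kernel $\bar K_n$ is a non-zero $k$-subspace of the finite-dimensional $P/\fp P$; being descending, $(\bar K_n)_n$ stabilizes to some $\bar K_\infty \neq \{0\}$, and for any $\bar p \in \bar K_\infty \setminus \{0\}$ the images $\bar\phi_n(\bar p)$ vanish for all large $n$, so $\bar p$ maps to $0$ in $\varprojlim_n E_n/\fp E_n$. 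On the other hand, hypothesis (1) forces $N/\phi(P)$ to be torsion-free and hence flat over $A$ (torsion-free equals flat over a Dedekind ring), so the short exact sequence $0 \to \phi(P) \to N \to N/\phi(P) \to 0$ remains exact after reduction modulo $\fp$ and the induced map $P/\fp P \to N/\fp N$ is injective. The delicate technical point is the identification $N/\fp N \simeq \varprojlim_n E_n/\fp E_n$; this rests on $\fp N$ being closed in $N$ (which follows from the local principality of $\fp$ together with the torsion-freeness of $N$) combined with the Mittag-Leffler exactness of $\varprojlim$ applied to $0 \to \fp E_n \to E_n \to E_n/\fp E_n \to 0$. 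Once that identification is secured, the injectivity of $P/\fp P \to N/\fp N$ contradicts the vanishing of $\bar p$ in the inverse limit, forcing $J_\infty = A$, whence (2) follows by Noetherianity.
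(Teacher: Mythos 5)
Your proof is correct, and its core coincides with the paper's: both arguments reduce modulo each non-zero prime $\fp$, use the saturation hypothesis (torsion-freeness, hence flatness, of $N/\phi(P)$) to get injectivity of $P/\fp P \lra N/\fp N \simeq \varprojlim_n E_n/\fp E_n$, and then use finite-dimensionality of $P/\fp P$ together with the descending chain of kernels $\bar K_n$ to force $\ker\bar\phi_n = 0$ at a finite level. The only genuine difference is the globalization over primes: you run the ascending chain $J_0 \subseteq J_1 \subseteq \cdots$ of ideals of $r\times r$ minors and invoke Noetherianity of $A$, whereas the paper first fixes a level $\iota_0$ at which $\phi_{\iota_0}$ is injective, observes that the set of primes where $\wedge^{\rk P}\phi_{\iota_0}$ vanishes is finite, and takes the maximum of the per-prime bounds $\iota(\fp)$ over that finite set. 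Your packaging is marginally slicker (a single chain condition instead of a finite exceptional set plus per-prime stabilization), but the two devices encode the same finiteness and neither is more general than the other. One remark on the point you flag as delicate: the identification $N/\fp N \simeq \varprojlim_n E_n/\fp E_n$ is asserted in the paper with the same brevity, and the cleanest justification is not the one you sketch but rather that $N \simeq \prod_i S_i$ with $S_i$ finitely generated projective (Proposition \ref{prodfin}) and $\fp$ finitely generated, so that $\fp N = \prod_i \fp S_i$ is closed in $N$ and $N/\fp N \simeq \prod_i S_i/\fp S_i \simeq \varprojlim_n E_n/\fp E_n$; with that in place your argument is complete.
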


\begin{proof}
 The implication $(2) \Rightarrow (1)$ is straightforward. 
 
 To establish the converse implication, let us assume that $\phi$ is injective with saturated image, and  consider a defining sequence $(U_i)_{i \in \N}$ in $\cU(N)$. We define
 $$N_i:= M/U_i$$
 and we  denote by $$p_i: N \lra N_i$$
 the canonical quotient map and by
 $$\phi_i := p_i \circ \phi : P \lra N_i$$
 its composition with $\phi.$
 
 Let $\bp$ be a non-zero prime ideal of $A$, and let $\Fbp := A/\bp$ be its residue field.
 
 The $\Fbp$-vector space 
 $N_{\Fbp} := N/\bp N$ may be identified with the tensor product $N\otimes_A \Fbp$. Equipped with the topology quotient of the topology of $N$, it coincides with the object of $\CTC_{\Fbp}$ defined as the completed tensor product $N \hat{\otimes}_A \Fbp,$ and also with the projective limit $\varprojlim_i N_{i,\Fbp}$ of the finite dimensional $\Fbp$-vector spaces $N_{i, \Fbp}$ equipped with the discrete topology. 
 
 The $\Fbp$-linear map 
 $$\phi_{\Fbp}: P_\Fbp \lra N_\Fbp$$
 is injective, since $\phi$ is injective and its image is saturated in $N$. The map $\phi_\Fbp$ is also defined by the compatible system of $\Fbp$-linear maps
 $$\phi_{i,\Fbp}: P_\Fbp \lra N_{i,\Fbp}.$$ 
 Therefore
 $$\bigcap_{i \in \N} \ker \phi_{i,\Fbp} = \ker \phi_i = \{0\}.$$
 This shows the existence of some $\iota(\bp)$ in $\N$ such that $\ker \phi_{i,\Fbp} = \{0\}$ for every $i \in \N_{\geq \iota(\bp)}.$
 
 Besides, according to Proposition \ref{injPN}, there exists $\iota_0$ in $\N$ such that $\phi_{\iota_0}$ (and consequently $\phi_i$ for any $i \in \N_{\geq \iota_0}$) is injective. The set $\cT$ of non-zero prime ideals in $A$ such that 
 $$\phi_{\iota_0,\Fbp}: P_\Fbp \lra N_{\iota_0,\Fbp}$$  
 is non-injective is finite. It is indeed defined by the vanishing of the  non-zero section $\wedge^{\rk P} \phi_{\iota_0}$ of $(\wedge^{\rk P} P)^\vee \otimes \wedge^{\rk P} N_{\iota_0}$ over $\Spec A$. 
 
 Finally, if we let
 $$\iota_1 := \max (\iota_0, \max_{\bp \in \cT} \iota(\bp)),$$
 then, for any $i \in \N_{\geq \iota_1}$ and any non-zero prime ideal $\bp$ of $A$, $\phi_{i,\Fbp}$ is injective, and therefore $\phi_i$ is injective with a saturated image.
\end{proof}

When Condition (2) in Proposition \ref{finitesaturated}  is satisfied by some $U$ in $\cU(N)$, it is clearly also satisfied by any element  $U'$ in $\cU(N)$ contained in $U$. This implies:

\begin{corollary}\label{quotfinCTC} Let $N$ be an object of $\CTC_A$ and let $P$ be a finitely generated $A$-submodule of $N$.
The quotient topological $A$-module $N/P$ is an object of $\CTC_A$ if (and only if) $P$ is saturated in $N$. \qed 
 \end{corollary}

\subsection{Strict short exact sequences and duality}\label{stricshortdual}

We shall say that a diagram
\begin{equation}\label{shortstrict}
0 \lra N_1\stackrel{f}{\lra} N_2 \stackrel{g}{\lra} N_3 \lra 0
\end{equation}
of object and morphisms in $\CTC_A$ is a \emph{strict short exact sequence} if it is a short exact sequence of $A$-modules and if the morphisms $f$ and $g$ are strict --- in other words, if $f$ establishes an isomorphism of topological $A$-modules from $N_1$ onto $\ker g$ and if $g$ is a surjective open map. 

This last condition on $g$ is satisfied notably if $g$ admits a section $h$ (that is, a right inverse) in $\Hom_A^{\rm cont} (N_3, N_1)$. When this holds, the strict short exact sequence (\ref{shortstrict}) is said to be \emph{split}, and $h$ is called a \emph{splitting} of (\ref{shortstrict}).

We shall say that a closed $A$-submodule $N'$ of some object $N$ in $\CTC_A$ is \emph{supplemented in $\CTC_A$} when there exists a closed $A$-submodule $N''$ of $N$ such that the sum map
$$
\begin{array}{rcl}
N' \oplus N''  & \lra   & N  \\
 (n', n'') & \longmapsto  & n' + n''  
\end{array}
$$
is an isomorphism of topological $A$-modules (in $\CTC_A$, by  Proposition \ref{subCTC}). 

Observe that the strict short exact sequence (\ref{shortstrict}) is split precisely when the closed submodule $f(N_1)$
of $N_2$ is supplemented in $\CTC_A$. Indeed the splittings $h$ in of (\ref{shortstrict}) are in bijections with the ``topological supplements" $N''$ in $\CTC_A$ of 
$f(N_1)$ by the map which sends $h$ to its image $h(N_3)$.

Let $M_1,$ $M_2,$ and $M_3$ be objects in $\CP_A$, and let $N_1:=M_1^\vee,$ $N_2:=M_2^\vee,$ and $N_3:=M_3^\vee$ be the dual objects in $\CTC_A$.

Let $\cS$ (resp. $\cT$) be the subset of $\Hom_A(M_3,M_2) \times \Hom_A(M_2,M_1)$ (resp. of $\Hom^{\rm cont}_A(N_1,N_2) \times \Hom^{\rm cont}_A(N_2,N_3)$) consisting of the pairs of morphisms $(i,p)$ (resp. $(j,q)$) such that the diagram
\begin{equation}\label{Mip}
0 \lra M_3\stackrel{i}{\lra} M_2 \stackrel{p}{\lra} M_1 \lra 0
\end{equation}
is an exact sequence of $A$-modules (resp. such that 
\begin{equation}\label{Njq}
0 \lra N_1\stackrel{j}{\lra} N_2 \stackrel{q}{\lra} N_3 \lra 0
\end{equation}
is a strict short exact sequence in $\CTC_A$).

For each $i \in \{1,2,3\},$ the topological dual $N_i^\vee$ of $N_i$  will be identified with $M_i$ by the biduality isomorphisms $\epsilon_{M_i}^{-1}$
of Proposition \ref{prop:dual}.

\begin{proposition}\label{STsplit}
 With the above notation, one defines a bijection $\delta :\cS \lrasim \cT$ by the formula:
 \begin{equation}\label{defdelta}
\delta(i,p) := (p^\vee, i^\vee).
\end{equation}
The inverse bijection $\delta^{-1}$ is given by 
\begin{equation}\label{defdeltainv}
\delta^{-1}(j,q) = (q^\vee, j^\vee).
\end{equation}

Moreover, for any $(i,p)$ in $\cS$ (resp., for any $(j,q)$ in $\cT$), the short exact sequence of $A$-module (\ref{Mip}) (resp. the strict short exact sequence in $\CTC_A$ (\ref{Njq})) is split.
 \end{proposition}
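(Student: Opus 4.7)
The plan is to reduce the proposition to two splitting statements---every exact sequence in $CP_A$ is split, and every strict short exact sequence in $CTC_A$ is split---and then to invoke the adjoint equivalence of Proposition \ref{prop:dual}.

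First I would observe that any short exact sequence $0 \to M_3 \stackrel{i}{\to} M_2 \stackrel{p}{\to} M_1 \to 0$ with $M_1, M_2, M_3$ in $CP_A$ is split, since $M_1$ is projective as an object of $CP_A$ (Proposition \ref{CPAdef}); an $A$-linear section of $p$ identifies $M_2$ with $M_3 \oplus M_1$. Since the duality functor of Proposition \ref{prop:dual} is an additive equivalence, it carries biproducts to biproducts, and therefore dualizes this decomposition into an isomorphism $N_2 \simeq N_3 \oplus N_1$ in $CTC_A$ under which $p^\vee: N_1 \to N_2$ and $i^\vee: N_2 \to N_3$ become the canonical injection and projection of a topological biproduct. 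This shows $\delta(\cS) \subseteq \cT$ and that every resulting sequence is split.

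The main obstacle is to prove conversely that every strict short exact sequence $0 \to N_1 \stackrel{j}{\to} N_2 \stackrel{q}{\to} N_3 \to 0$ in $CTC_A$ admits a continuous $A$-linear section $s : N_3 \to N_2$ of $q$. I would invoke Proposition \ref{Kapdual}: either $N_3$ is finitely generated projective with the discrete topology---in which case a set-theoretic $A$-linear section exists by projectivity and is automatically continuous---or $N_3 \simeq A^\N$ topologically. In the latter case I would fix a defining filtration $(U_k)_{k \in \N}$ of $N_2$; since $q$ is open and surjective, each $q(U_k)$ is an open $A$-submodule of $A^\N$, hence contains some standard neighborhood $V_{n_k} := \{x \in A^\N \mid x_0 = \cdots = x_{n_k-1} = 0\}$, and one may assume the sequence $(n_k)$ strictly increasing. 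For each $i \in \N$ pick a lift $\tilde e_i \in N_2$ of the canonical basis vector $e_i \in A^\N$, chosen so that $\tilde e_i \in U_{k-1}$ whenever $n_{k-1} \leq i < n_k$ (possible since $e_i \in V_{n_{k-1}} \subseteq q(U_{k-1})$). For every $x = (x_i) \in A^\N$ the partial sums $\sum_{i=0}^{m-1} x_i \tilde e_i$ are then Cauchy with respect to the filtration $(U_k)$; by completeness of $N_2$, the formula $s(x) := \sum_{i \in \N} x_i \tilde e_i$ defines a continuous $A$-linear map $s: N_3 \to N_2$ satisfying $q \circ s = \mathrm{id}_{N_3}$. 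Dualizing this splitting via Proposition \ref{prop:dual} then shows that $(q^\vee, j^\vee)$ lies in $\cS$ for every $(j,q) \in \cT$.

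With both splitting statements established, the verifications that $\delta$ and the prospective inverse $(j,q) \mapsto (q^\vee, j^\vee)$ take values in $\cT$ and $\cS$ respectively are in hand, and the identities $(p^\vee)^\vee = p$ and $(i^\vee)^\vee = i$ provided by the biduality isomorphisms $\epsilon_{M_i}$ and $\eta_{N_i}$ of Proposition \ref{prop:dual} show that these two maps are mutually inverse. In passing, every element of $\cS$ and of $\cT$ has been exhibited as split.
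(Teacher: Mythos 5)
Your proposal is correct, and the first half (splitting in $CP_A$ by projectivity of $M_1$, then dualizing to get a split strict sequence in $CTC_A$) coincides with the paper's part 1). The difference lies entirely in the converse direction. The paper never constructs a continuous section of $q$ directly: it applies $\Hom_A^{\rm cont}(\cdot,A)$ to the strict sequence, proves left-exactness of $0 \lra N_3^\vee \stackrel{q^\vee}{\lra} N_2^\vee \stackrel{j^\vee}{\lra} N_1^\vee$ (using that $q$ is open with kernel $j(N_1)$), truncates to the short exact sequence $0 \lra N_3^\vee \lra N_2^\vee \lra \im j^\vee \lra 0$ in $CP_A$, splits \emph{that} by part 1), dualizes back, and then uses a commutative diagram comparing the two strict sequences over $N_2 \twoheadrightarrow N_3$ to conclude that the inclusion $\iota : \im j^\vee \hookrightarrow N_1^\vee$ dualizes to an isomorphism, whence $j^\vee$ is surjective and the original sequence is split as a by-product. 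You instead prove the topological splitting of the strict sequence in $CTC_A$ head-on: invoking Proposition \ref{Kapdual} to reduce to $N_3 \simeq A^\N$, using openness of $q$ to choose lifts $\tilde e_i$ of the basis vectors lying deep in a defining filtration of $N_2$, and summing a convergent series to produce the continuous section $s$; surjectivity of $j^\vee$ and membership of $(q^\vee,j^\vee)$ in $\cS$ then follow by pure functoriality. Your route is more concrete and makes the topological splitting the primary object rather than a by-product, at the cost of invoking the structure theorem for objects of $CTC_A$ (hence Kaplansky's theorem); the paper's route is purely formal, staying on the $CP_A$ side where splitting is automatic, and needs only the duality equivalence plus the elementary left-exactness of $\Hom_A^{\rm cont}(\cdot,A)$. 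Both are complete proofs of the statement.
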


\begin{proof} 1) For any $(i,p)$ in $\cS,$ the associated short exact sequence of $A$-modules (\ref{Mip}) is split, since $M_1$ is projective. This implies that the diagram 
$$0 \lra M^\vee_1\stackrel{p^\vee}{\lra} M^\vee_2 \stackrel{i^\vee}{\lra} M^\vee_3 \lra 0$$
deduced from (\ref{Mip}) by duality is a split strict short exact sequence in $\CTC_A$.

This shows in particular that $(p^\vee, i^\vee)$ belongs to $\cT$.

2) Consider an element $(j,q)$ of $\cT$. 

If we apply the functor $\Hom_A^{\rm cont}(., A)$ to the strict short exact sequence (\ref{Njq}),
we get the following exact sequence of $A$-modules:
$$0 \lra N^\vee_3\stackrel{q^\vee}{\lra} N^\vee_2 \stackrel{j^\vee}{\lra} N^\vee_1.$$

Indeed the injectivity of $q^\vee$ follows from the surjectivity of $Q$, and the equality $\im q^\vee = \ker j^\vee$ means that the continuous $A$-linear maps from $N_2$ to $A$ which vanishes on $j(N_1)$ are in bijection --- by means of factorization through $q$ --- which the $A$-linear forms from $N_3$ to $A$: this follows from the fact that $q$ is continuous and open, of kernel $j(N_1)$.

The morphism $j^\vee: N^\vee_2 \lra N^\vee_1$ may be factorized as
\begin{equation}\label{factorjvee}
j^\vee = \iota \circ [j^\vee] : 
N^\vee_2 \stackrel{[j^\vee]}{\lra} \im j^\vee \stackrel{\iota}{\hlra} N^\vee_1,
\end{equation}
where $\iota$ denotes the inclusion morphism. Moreover
 the $A$-module $\im j^\vee,$ as any submodule of $N_1^\vee$, is  an object of $\CP_A$.

Finally the diagram 
\begin{equation}\label{dualpro}
0 \lra N^\vee_3\stackrel{q^\vee}{\lra} N^\vee_2 \stackrel{[j^\vee]}{\lra} \im j^\vee \lra 0
\end{equation}
is a short exact sequence in $\CP_A.$ 

According to part 1) of the proof, the short exact sequence %of $A$-modules 
(\ref{dualpro}) is split and determines by duality a split short exact sequence in $\CTC_A$. Moreover the factorization (\ref{factorjvee}) shows that $j = [j^\vee]^\vee \circ \iota^\vee$. Therefore the following diagram in $\CTC_A$ is commutative, and its lines are short strict exact sequences:
\begin{equation}\label{grodiag}
\begin{CD}
 0 @>>> (\im j^\vee)^\vee @>{[j^\vee]^\vee}>> N_2 @>q>> N_3 @>>> 0 \\
 @.            @AA{\iota^\vee}A                                    @|               @|           @. \\
 0 @>>>     N_1                   @>j>>                    N_2  @>q>>    N_3 @>>>0.
 \end{CD}
\end{equation}
This  implies that $\iota^\vee$ is an isomorphism in $\CTC_A$. In particular, the second line in  (\ref{grodiag}), like the first one, is a split short exact sequence in $\CTC_A$. 

Moreover, since the duality functor $.^\vee$ is an equivalence of category from $\CP_A$ to $\CTC_A$, this also proves that $\iota$ is an isomorphism in $\CP_A$. This establishes the equality $\im j^\vee = N_1^\vee$ and the exactness of 
$$0 \lra N^\vee_3\stackrel{q^\vee}{\lra} N^\vee_2 \stackrel{j^\vee}{\lra} N_1^\vee \lra 0.
$$
This shows that $(q^\vee, j^\vee)$ belongs to $\cS$. 

The fact that the maps between $\cS$ and $\cT$ defined by (\ref{defdelta}) and \ref{defdeltainv}) are inverse to each other is a straightforward consequence of the ``biduality" established in Proposition \ref{prop:dual}. \end{proof}

For later reference, we spell out some consequences of the results on short exact sequences in $\CP_A$ and $\CTC_A$ established in the previous proposition: 

\begin{proposition}\label{EF} Let $M$ and $M'$ be two objects in $\CP_A$ and let $N:= M^\vee$ and $N':=M'^\vee$ be  their duals in $\CTC_A$.

 Let $\alpha: M \lra M'$ be a morphism of $A$-modules and let 
$\beta := \alpha^\vee : N' \lra N$ 
denote the dual morphism, in $ \Hom_A^{\rm cont} (N', N)$.

1) The following two conditions are equivalent:

$\mathbf{E_1:}$The morphism $\beta$ is surjective and strict.

$\mathbf{E_2:}$The morphism $\alpha$ is injective and its cokernel is a projective $A$-module.

When these conditions are realized, $\im \alpha$ is a direct summand of $M'$ and  $\ker \beta$ is a closed submodule of $N$ supplemented in $\CTC_A$. 

2) The following two conditions are equivalent:

$\mathbf{F_1 :}$ The morphism $\beta$ is injective and strict, and the topological $A$-module $N/ \im \beta$ is an object of $\CTC_A.$

$\mathbf{F_2 :}$ The morphism $\alpha$ is surjective.

When these conditions are realized, $\ker \alpha$ is a direct summand of $M$ and $\im \beta$ is a closed submodule of $N$ supplemented in $\CTC_A$. \qed

\end{proposition}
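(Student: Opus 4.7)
The plan is to derive both parts of the proposition directly from the correspondence between short exact sequences in $CP_A$ and split strict short exact sequences in $CTC_A$ established in Proposition~\ref{STsplit}. In each implication, one packages the morphism $\alpha$ (resp.\ $\beta$) into a short exact sequence on one side of the duality, invokes Proposition~\ref{STsplit} to obtain the dual sequence together with its automatic splitting, and reads off the required properties, including the direct summand and supplementation statements.

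For part~1, $\mathbf{E_2} \Rightarrow \mathbf{E_1}$: if $\alpha \colon M \hra M'$ has projective cokernel $C$, then $C$ is countably generated (as a quotient of $M' \in CP_A$) and projective, hence lies in $CP_A$. The short exact sequence $0 \lra M \lra M' \lra C \lra 0$ in $CP_A$ splits because $C$ is projective, so $\im \alpha$ is a direct summand of $M'$; by Proposition~\ref{STsplit} its dual $0 \lra C^\vee \lra N' \stackrel{\beta}{\lra} N \lra 0$ is a split strict short exact sequence in $CTC_A$, and in particular $\beta$ is surjective and strict and $\ker\beta\simeq C^\vee$ is supplemented. Conversely, $\mathbf{E_1} \Rightarrow \mathbf{E_2}$: if $\beta$ is surjective and strict then $\ker \beta$ is closed in $N'$, hence an object of $CTC_A$ by Proposition~\ref{subCTC}, and $0 \lra \ker \beta \lra N' \stackrel{\beta}{\lra} N \lra 0$ is a strict short exact sequence; Proposition~\ref{STsplit} splits it and produces by duality a split short exact sequence $0 \lra M \stackrel{\alpha}{\lra} M' \lra (\ker \beta)^\vee \lra 0$ in $CP_A$, from which follow the injectivity of $\alpha$, the fact that $\coker \alpha \simeq (\ker \beta)^\vee$ lies in $CP_A$ (hence is projective), and the direct summand property for $\im\alpha$.

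Part~2 proceeds by the symmetric argument. For $\mathbf{F_2} \Rightarrow \mathbf{F_1}$: if $\alpha$ is surjective, then $\ker \alpha$, as a submodule of $M \in CP_A$, is itself an object of $CP_A$ by the equivalence of (1) and (3) in Proposition~\ref{CPAdef}; the short exact sequence $0 \lra \ker \alpha \lra M \stackrel{\alpha}{\lra} M' \lra 0$ splits because $M'$ is projective, and Proposition~\ref{STsplit} yields the split strict short exact sequence $0 \lra N' \stackrel{\beta}{\lra} N \lra (\ker \alpha)^\vee \lra 0$ in $CTC_A$, giving $\mathbf{F_1}$ and the supplementation of $\im \beta$. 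For $\mathbf{F_1} \Rightarrow \mathbf{F_2}$, the hypothesis displays $0 \lra N' \stackrel{\beta}{\lra} N \lra N/\im \beta \lra 0$ as a strict short exact sequence in $CTC_A$ (strictness of $\beta$ is part of the hypothesis, and the quotient map is strict because $N/\im\beta$ is assumed in $CTC_A$), whose dual under Proposition~\ref{STsplit} is a split short exact sequence of $CP_A$-objects with final arrow $\alpha$; in particular $\alpha$ is surjective and $\ker\alpha$ is a direct summand of $M$. No serious obstacle appears once Proposition~\ref{STsplit} is available; the only routine checks concern membership of the relevant kernels and cokernels in $CP_A$ and $CTC_A$, which are supplied by the stability properties of these categories (Propositions~\ref{CPAdef} and~\ref{subCTC}).
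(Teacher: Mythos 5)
Your proof is correct and follows exactly the route the paper intends: Proposition~\ref{EF} is stated with a \qed\ as an immediate consequence of Proposition~\ref{STsplit}, and your argument simply spells out that deduction, with the routine biduality identifications (via Proposition~\ref{prop:dual}) needed to view $\ker\beta$ and $N/\im\beta$ as duals of objects of $CP_A$ before invoking Proposition~\ref{STsplit}.
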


\begin{corollary}\label{closedsupplCTC}
 A closed $A$-submodule $N'$ of some object $N$ in $\CTC_A$ is supplemented in $\CTC_A$ if and only if the quotient topological $A$-module $N/N'$ is an object of $\CTC_A$.
\qed \end{corollary}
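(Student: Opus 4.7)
The corollary is essentially a repackaging of the final clause of Proposition~\ref{EF}.1, and the strategy is to reduce each implication to an already-established fact.

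For the forward implication, I would assume there is a closed $A$-submodule $N''$ of $N$ such that the sum map $N' \oplus N'' \lra N$ is an isomorphism of topological $A$-modules. Composing its inverse with the projection onto the second factor gives an isomorphism of topological $A$-modules
$$ N/N' \lrasim N''. $$
Since $N''$ is a closed $A$-submodule of the object $N$ of $CTC_A$, Proposition~\ref{subCTC} implies that $N''$, and therefore $N/N'$, is an object of $CTC_A$.

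For the reverse implication, I would assume that $N/N'$ is an object of $CTC_A$ and then argue that the canonical quotient map $q: N \lra N/N'$ is a surjective strict morphism in $CTC_A$. It is $A$-linear, continuous, and surjective by construction, and open by the very definition of the quotient topology on $N/N'$; hence it is strict. Applying Proposition~\ref{EF}.1 to $\beta := q$ (with the roles of the source and target modules denoted ``$N'$'' and ``$N$'' in \emph{loc.~cit.} played here by $N$ and $N/N'$, respectively), condition $\mathbf{E_1}$ is satisfied, and the concluding clause of that proposition yields that $\ker q = N'$ is supplemented in $CTC_A$.

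There is no substantive obstacle: once Proposition~\ref{EF} is in hand, the only point to check is that $q$ is a strict morphism in $CTC_A$, which is immediate from the definition of the quotient topology, together with the hypothesis that $N/N' \in CTC_A$ needed to view $q$ as a morphism in $CTC_A$ at all. The rest is bookkeeping.
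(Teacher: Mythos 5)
Your proof is correct and is exactly the argument the paper intends, since the corollary is stated with no written proof as an immediate consequence of Proposition~\ref{EF}: the forward direction via Proposition~\ref{subCTC} and the topological isomorphism $N/N'\simeq N''$, the reverse direction via condition $\mathbf{E_1}$ of Proposition~\ref{EF} applied to the (automatically open, hence strict) quotient map $q$. The only step you leave tacit is that $q$ may legitimately be viewed as a dual morphism $\alpha^\vee$ as required by the literal statement of Proposition~\ref{EF}, but this is immediate from the biduality of Proposition~\ref{prop:dual} once both $N$ and $N/N'$ are known to lie in $CTC_A$.
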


\begin{corollary}\label{FSU}
Let $M$ be an object of $\CP_A$ and let $N:= M^\vee$ be the dual object in $\CTC_A$. 

Any submodule of $M$ (resp. of $N$) in $\cF\cS(M)$ (resp. in $\cU(N)$) is supplemented in $M$ (resp. in $N$). 

Moreover there is an inclusion reversing bijection
$$.^\perp : \cF\cS(M) \lrasim \cU(N).$$
It sends a module $M'$ in $\cF\cS(M)$ to
$$M'^\perp := \{ \xi \in N \mid \forall m \in M', \xi(m)=0 \}.$$
The inverse bijection sends an element $U$ in $\cU(N)$ to
$$U^\perp :=  \{ m \in M \mid \forall \xi \in U, \xi(m)=0 \}.$$

Moreover, when $U= M'^\perp,$ there exists a unique isomorphism of $A$-modules $I: N/U \lrasim M'^\vee$ such that, if we denote by $i_{M'}: M' \hookrightarrow M$ the inclusion morphism, the following diagram is commutative:
\begin{equation}\label{Iperp}
\begin{CD}
N @> = >> M ^\vee \\
@V{p_U}VV      @VV{i_{M'}^\vee}V \\
N/U @>{\stackrel{I}{\sim}}>> M'^\vee.
\end{CD}
\end{equation}
\qed \end{corollary}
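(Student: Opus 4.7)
The plan is to proceed in three stages, reducing as much as possible to the structural results already established. First I would establish the two supplementation claims, as everything else rests on them. For $M' \in \cFS(M)$, I invoke condition (4) of Proposition \ref{CPAdef} to select a filtration $(M_i)_{i\in\N}$ of $M$ by saturated finitely generated submodules. Since $M'$ is finitely generated we have $M' \subset M_{i_0}$ for some $i_0$, and since $M'$ is saturated in $M$ it is saturated in $M_{i_0}$; thus $M_{i_0}/M'$ is finitely generated and torsion free, hence projective over the Dedekind ring $A$. A splitting of $M' \hookrightarrow M_{i_0}$, combined with successive splittings used in the proof of $(4)\Rightarrow (5)$ of Proposition \ref{CPAdef}, yields a supplement $M''$ of $M'$ in $M$ which is itself an object of $CP_A$. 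For $U \in \cU(N)$, Lemma \ref{lemmCU} makes $N/U$ finitely generated projective and discrete, so the quotient map $N \twoheadrightarrow N/U$ is open and the sequence $0\to U\to N\to N/U\to 0$ is a strict short exact sequence in $CTC_A$; Proposition \ref{STsplit} then gives the splitting for free.

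Next I would set up the bijection. Given the decomposition $M = M'\oplus M''$, duality yields $N = M^\vee = M'^\vee \oplus M''^\vee$, and one checks immediately that $M'^\perp = M''^\vee$ under this identification. Since $M'^\vee$ is finitely generated projective hence discrete, $M'^\perp$ is open in $N$ with quotient isomorphic to $M'^\vee$, so $M'^\perp \in \cU(N)$. Conversely, from a decomposition $N = U \oplus V$ in $CTC_A$ the dual decomposition $M = U^\vee \oplus V^\vee$ identifies $U^\perp$ with $V^\vee \simeq (N/U)^\vee$, which is finitely generated projective and saturated in $M$, hence $U^\perp \in \cFS(M)$. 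That $.^\perp$ and $.^\perp$ are mutually inverse amounts to checking $M'^{\perp\perp} = M'$ and $U^{\perp\perp} = U$ using the biduality isomorphisms $\epsilon_M, \eta_N$ of Proposition \ref{prop:dual}; this reduces to the fact that $M''^\vee$ separates points of $M''$ (clear from any embedding $M''\hookrightarrow A^{(\N)}$) and the analogous statement for $V$. Inclusion reversal is tautological.

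Finally, for the isomorphism $I$ of diagram (\ref{Iperp}), the restriction $i_{M'}^\vee: N = M^\vee \to M'^\vee$ has kernel exactly $\{\xi \in M^\vee : \xi|_{M'} = 0\} = M'^\perp = U$, so it factors uniquely through $p_U$ as an injective $A$-linear map $I: N/U \to M'^\vee$; the decomposition $M = M'\oplus M''$ from the first step makes $i_{M'}^\vee$ the projection onto the first factor, so it is surjective and $I$ is an isomorphism. The main subtlety I anticipate is simply the bookkeeping between the algebraic dual on $CP_A$ and the topological dual on $CTC_A$, and ensuring that each identification $M''^\vee \simeq M'^\perp$ is both algebraically and topologically correct; all of the genuine content, however, is already packaged in Propositions \ref{prop:dual}, \ref{STsplit} and \ref{EF}, so once the two supplementation assertions are in hand the rest of the argument is essentially formal.
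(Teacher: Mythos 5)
Your proof is correct and follows the route the paper intends: Corollary \ref{FSU} is stated without proof as a formal consequence of Proposition \ref{CPAdef}, Lemma \ref{lemmCU}, Proposition \ref{prop:dual} and Proposition \ref{STsplit}, and your argument supplies exactly the missing details. In particular, your filtration argument for supplementing a finitely generated saturated $M'$ in $M$ (splitting $M' \hookrightarrow M_{i_0}$ and then the successive quotients) is the right way to sidestep the pitfall, noted after Proposition \ref{CPAdef}, that quotients of objects of $CP_A$ by arbitrary submodules need not be projective.
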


\subsection{Strict morphisms and Conditions $\mathbf{Ded_1}, \mathbf{Ded_2}, \mathbf{Ded_3}$}\label{strictDed}

The significance of being strict for a morphism in $\CTC_A$ turns out to depend on which of the conditions $\mathbf{Ded_1}$, $\mathbf{Ded_2}$, or $\mathbf{Ded_3}$ the base ring $A$ satisfies.

In this subsection, we present diverse results that illustrate this point.  We will return on constructions of non-strict morphisms in the next section, devoted to examples.

\begin{proposition}\label{strictCTCDed1}
 If the Dedekind ring $A$ satisfies $\mathbf{Ded_1}$ --- \emph{that is, if $A$ is a field} --- then any morphism in $\CTC_A$ is strict.
 \end{proposition}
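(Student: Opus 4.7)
The plan is to deduce this immediately from the structural result for morphisms in $CTC_k$ stated in Proposition \ref{CTCk} (which itself comes, by the duality of Section \ref{dualCPCTC}, from the corresponding structure theorem for linear maps between countable-dimensional vector spaces over a field).

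First I would observe that the property of being strict is invariant under composition with isomorphisms in $CTC_A$ on either side: if $u:N_1\simto N_1'$ and $v:N_2\simto N_2'$ are isomorphisms in $CTC_k$, then $\phi$ is strict if and only if $v\phi u^{-1}$ is strict, because $u$ and $v$ are both $A$-linear homeomorphisms and the induced map $N_1/\ker\phi \to \im\phi$ is conjugate (via restrictions of $u$ and $v$) to the analogous map for $v\phi u^{-1}$.

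Second, given any morphism $\phi:N_1\to N_2$ in $CTC_k$, I would apply Proposition \ref{CTCk}(2) to obtain objects $K,N,C$ of $CTC_k$ and isomorphisms $u:N_1\simto K\oplus N$ and $v:N_2\simto C\oplus N$ such that $\tilde\phi:=v\phi u^{-1}:K\oplus N\to C\oplus N$ is the block-diagonal morphism $0\oplus \mathrm{Id}_N$. By the first step, it suffices to check that $\tilde\phi$ is strict.

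Third, this last verification is essentially tautological: the kernel of $\tilde\phi$ is the closed submodule $K\oplus 0$, and its image is the closed submodule $0\oplus N$ of $C\oplus N$. Equipped with the subspace topology, $0\oplus N$ is canonically isomorphic to $N$ in $CTC_k$, and the quotient $(K\oplus N)/(K\oplus 0)$, equipped with the quotient topology, is also canonically isomorphic to $N$. Under these identifications, the induced map $(K\oplus N)/(K\oplus 0)\to 0\oplus N$ is the identity of $N$, hence a homeomorphism, so $\tilde\phi$ is strict. There is no genuine obstacle once Proposition \ref{CTCk} is in hand; the only thing to be a bit careful about is that we are invoking a structure theorem specific to fields, which is exactly where the hypothesis $\mathbf{Ded_1}$ enters.
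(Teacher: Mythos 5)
Your argument is correct and is essentially identical to the paper's own proof: both reduce to the normal form $\tilde\phi = 0\oplus \mathrm{Id}_N$ furnished by Proposition \ref{CTCk}, observe that this block-diagonal morphism is manifestly strict, and conclude via invariance of strictness under composition with isomorphisms. You have merely spelled out the two routine verifications that the paper leaves implicit.
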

 
 \begin{proof} This follows from the description of morphisms in $\CTC_k$ in Proposition \ref{CTCk}. Indeed, with the notation of this Proposition, $\tilde{\phi} = 0 \oplus Id_N$ is clearly a strict morphism (of kernel $K \oplus \{0\}$ and of image $\{0\} \oplus N$) and consequently
$\phi = v^{-1} \tilde{\phi} u$ is also strict.
\end{proof}
 
  In Paragraph \ref{Pathnonstrict} \emph{infra},
we shall see that if $A$ satisfies $\mathbf{Ded_2}$ --- that is, if $A$ is a complete discrete valuation ring --- then there exists bijective morphisms in $\CTC_A$ that are not strict.

\begin{proposition}\label{strictCTCDed3}
When the Dedeking ring $A$ satisfies $\mathbf{Ded_3}$, 
 a morphism $\phi: N_1 \lra N_2$ in $\CTC_A$ is strict if and only if its image $\im \phi$ is closed in $N_2$. 

%This condition is satisfied when its cokernel $\coker \phi := N_2/\phi(N_1)$ is a finitely generated $A$-module. 

Moreover,  for any two objects $N_1$ and $N_2$ of $\CTC_A$, an $A$-linear map $\phi: N_1 \lra N_2$ is continuous (hence defines a morphism in $\CTC_A$) if and only if its graph is closed in $N_1 \oplus N_2$.
\end{proposition}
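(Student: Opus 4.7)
The direction ``$\phi$ strict $\Rightarrow \im \phi$ closed'' holds over any Dedekind ring $A$: when $\phi$ is strict, $N_1/\ker \phi$ is complete as a quotient of the complete metrizable topological abelian group $N_1$ by a closed subgroup, so its homeomorphic image $\im \phi$ is complete and therefore closed in the Hausdorff space $N_2$.

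For the converse, suppose $\im \phi$ is closed. By Proposition \ref{subCTC}, $\im \phi$ is an object of $CTC_A$, and strictness of $\phi$ is equivalent to strictness of its corestriction $\phi_0 : N_1 \to \im \phi$, since $\phi = i \circ \phi_0$ with $i$ the strict inclusion of the closed subobject $\im \phi \hra N_2$; thus we may assume $\phi$ is surjective. Setting $K := \ker \phi$ (a closed submodule of $N_1$, hence in $CTC_A$), the plan is to verify Condition $\mathbf{E_2}$ of Proposition \ref{EF} for the transpose $\phi^\vee : N_2^\vee \to N_1^\vee$, since biduality identifies $(\phi^\vee)^\vee$ with $\phi$ and Condition $\mathbf{E_1}$ then yields the strictness of $\phi$. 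The crucial input from $\mathbf{Ded_3}$ is Proposition \ref{prop:HomHom}, which identifies the topological dual of each object of $CTC_A$ with its algebraic dual. Applying the left exact functor $\Hom_A(-,A)$ to the short exact sequence of $A$-modules $0 \to K \to N_1 \to N_2 \to 0$ therefore gives an exact sequence
\begin{equation*}
 0 \lra N_2^\vee \stackrel{\phi^\vee}{\lra} N_1^\vee \lra K^\vee
\end{equation*}
of $A$-modules. Thus $\phi^\vee$ is injective, and $\coker \phi^\vee$ embeds in $K^\vee$, which lies in $CP_A$ and is in particular a projective $A$-module. Since $A$ is hereditary, submodules of projective $A$-modules are projective; and $\coker \phi^\vee$, as a quotient of $N_1^\vee \in CP_A$, is countably generated. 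Hence $\coker \phi^\vee$ lies in $CP_A$, Condition $\mathbf{E_2}$ is satisfied, and $\phi$ is strict. The hard part of the argument lies here, in combining $\mathbf{Ded_3}$ (through Proposition \ref{prop:HomHom}) with the hereditary property of Dedekind rings to bridge the topological category $CTC_A$ and the algebraic category $CP_A$.

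The closed graph characterization is easier. The implication ``continuous $\Rightarrow$ closed graph'' is standard, as $N_1 \oplus N_2$ is Hausdorff. Conversely, given an $A$-linear $\phi$ with closed graph $\Gamma \subset N_1 \oplus N_2$, Proposition \ref{subCTC} shows that $\Gamma \in CTC_A$. The first projection $\pi_1 : \Gamma \to N_1$ is a continuous $A$-linear bijection whose set-theoretic inverse $n_1 \longmapsto (n_1, \phi(n_1))$ is $A$-linear, hence continuous by Proposition \ref{prop:HomHom} under $\mathbf{Ded_3}$. Thus $\phi = \pi_2 \circ \pi_1^{-1}$ is a composition of continuous maps, and so continuous.
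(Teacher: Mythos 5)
Your proof is correct and follows essentially the same route as the paper: dualize the exact sequence $0 \to \ker\phi \to N_1 \to \im\phi \to 0$, use Proposition \ref{prop:HomHom} to identify topological and algebraic duals so that $\Hom_A(-,A)$ stays exact into the category $CP_A$, and invoke the equivalence $\mathbf{E_1}\Leftrightarrow\mathbf{E_2}$ of Proposition \ref{EF} together with biduality. The only (harmless) variation is in the closed-graph statement, where you apply Proposition \ref{prop:HomHom} directly to the inverse of the projection $\pr_1:\mathrm{Gr}\,\phi\to N_1$, whereas the paper deduces that this bijective morphism is strict from the first part of the proposition.
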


\begin{proof}
1) We have already observed that the image of any strict morphism is closed (for an arbitrary Dedekind ring $A$). 

Conversely, if a morphism $\phi: N_1 \lra N_2$ in $\CTC_A$ has its image $\im \phi$ closed in $N_2$, then we may form the following diagram in $\CTC_A$, which is an exact sequence of $A$-modules:
\begin{equation}\label{suitepastop}
0 \lra \ker \phi \stackrel{j}{\lra} N_1 \stackrel{\phi}{\lra} \im \phi \lra 0,
\end{equation}
where $j$ denote the inclusion morphism. By applying the functor $\Hom_A(., A)$ --- which coincides with $\Hom_A^{\rm cont}(.,A)$ on $\CTC_A$ when $A$ satisfies $\mathbf{Ded_3}$, as shown in Proposition \ref{prop:HomHom} --- to the exact sequence (\ref{suitepastop}), we obtain an exact sequence of $A$-modules:
\begin{equation*}
0 \lra (\im \phi)^\vee \stackrel{\phi^\vee}{\lra} N_1^\vee \stackrel{j^\vee}{\lra} (\ker \phi)^\vee.
\end{equation*}
Since any $A$-submodule of some object in $\CP_A$ is again an object in $\CP_A,$ we finally obtain the following exact sequence in $\CP_A$:
\begin{equation*}
0 \lra (\im \phi)^\vee \stackrel{\phi^\vee}{\lra} N_1^\vee \stackrel{j^\vee}{\lra} \im j^\vee \lra 0.
\end{equation*}
The equivalence of Conditions $\mathbf E_1$ and $\mathbf E_2$ in Proposition \ref{EF} finally shows that the morphism $\phi: N_1 \lra \im \phi$, already known to be surjective, is also strict. This shows that $\phi: N_1 \lra N_2$ is strict.

2) For any map $\phi: N_1 \lra N_2,$ its graph
$${\rm Gr}\, \phi := \{(n, \phi(n)), n \in \N \}$$ is also the inverse image of the diagonal $\Delta_{N_2}$ in $N_2 \times N_2$ by the map $(\phi, Id_{N_2}): N_1 \times N_2 \lra N_2 \times N_2$.

When $\phi$ is continuous, $(\phi, Id_{N_2})$ also is continuous, and 
$${\rm Gr}\, \phi = (\phi, Id_{N_2})^{-1}(\Delta_{N_2})$$
is closed, since the topology of $N_2$ is Hausdorff and accordingly $\Delta_{N_2}$ is closed in $N_2 \lra N_2 \times N_2$.

Conversely, if $\phi$ is a continuous $A$-linear map and if its graph  ${\rm Gr}\, \phi$ is closed in $N_1 \times N_2$, then ${\rm Gr}\, \phi$ defines an object of $\CTC_A$ by Proposition \ref{subCTC}, and the two projections
$$\pr_i : {\rm Gr}\, \phi \lra N_i, \,\,\, i=1,2$$
are morphisms in $\CTC_A$.

By construction, the map $\pr_1 : {\rm Gr}\, \phi \lra N_1$ is bijective. According to part 1) of the proof, it is therefore an isomorphism in $\CTC_A$, and therefore $\phi = \pr_2 \circ \pr_1^{-1}$ is finally a morphism in $\CTC_A$.
\end{proof}

 When the ring $A$ is countable --- notably when $A$ is the ring of integers of some number field --- Proposition \ref{strictCTCDed3} admits an alternative proof that does not rely on the ``automatic continuity" of morphisms of $A$-modules established for general Dedekind rings satisfying $\mathbf{Ded_3}$.
 
 Indeed, for any object $M$ of $\CTC_A,$ the topology  of $M$ may be defined by a complete metric (this follows from the countability assumption in Condition $\mathbf{CTC_2}$). If moreover $A$ is countable, the topological space $M$ is separable\footnote{This follows from Propositions \ref{prodfin} or \ref{Kapdual}, which also show that, conversely, if $M\neq 0$ and if the topological space $M$ is separable, $A$ is necessarily countable.} and therefore the additive group $(M,+)$ is a Polish topological group.  
 
 Remarkably, the Open Mapping and the Closed Graph Theorems  are valid for continuous morphisms of Polish topological groups, by a
classical theorem of Banach,\footnote{See  \cite{Banach31}. This theorem now appears as a special case of the ``théorème du graphe souslinien", concerning morphisms of Polish topological groups,  presented for instance in \cite{BourbakiTGII74}, Chapitre IX, §6, no. 8, Théorème 4.} and  immediately yield the  conclusions of Proposition \ref{strictCTCDed3} if $A$ is countable.

\begin{corollary} When the Dedekind ring $A$ satisfies $\mathbf{Ded_3}$, any morphism $\phi: N \lra N'$ in $\CTC_A$ whose cokernel $\coker \phi := N'/\phi(N)$ is a finitely generated $A$-module is an open map, and therefore a strict morphism.
 \end{corollary}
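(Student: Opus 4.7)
The plan is to deduce the result from Proposition \ref{strictCTCDed3} applied \emph{twice}, with the help of an auxiliary surjective morphism built from lifts of generators of the cokernel.

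First, I would use that $Q := N'/\phi(N)$ is finitely generated over $A$ to choose elements $q_1, \ldots, q_r \in N'$ whose classes in $Q$ generate it, and form the morphism
$$ \psi : N \oplus A^r \lra N', \qquad (n, a_1, \ldots, a_r) \longmapsto \phi(n) + \sum_{i=1}^r a_i q_i. $$
With $A^r$ endowed with the discrete topology, $N \oplus A^r$ is an object of $CTC_A$ (finite direct sums preserve $CTC_A$), and $\psi$ is a morphism there. By the choice of the $q_i$, the map $\psi$ is surjective, so its image equals $N'$ and is in particular closed in $N'$. Applying Proposition \ref{strictCTCDed3}, valid since $A$ satisfies $\mathbf{Ded_3}$, the morphism $\psi$ is strict, which for a surjection means exactly that $\psi$ is an open map.

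Next, the submodule $N \oplus \{0\}$ is open in $N \oplus A^r$ (since $\{0\}$ is open in the discrete module $A^r$), and therefore its image $\psi(N \oplus \{0\}) = \phi(N)$ is an open subset of $N'$. Being a subgroup, it is then automatically closed, hence defines a closed submodule of $N'$ which is itself an object of $CTC_A$ by Proposition \ref{subCTC}. Applying Proposition \ref{strictCTCDed3} a second time, now to the surjective morphism $\phi : N \to \phi(N)$ in $CTC_A$ (whose image is trivially closed in its codomain), this map is strict, hence open onto $\phi(N)$. Composing with the openness of the inclusion $\phi(N) \hookrightarrow N'$ yields that $\phi : N \to N'$ is an open map, and in particular a strict morphism.

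The only non-routine part of the argument is the construction of $\psi$: this is the device that converts the finite-generation hypothesis on the cokernel into a surjection onto $N'$, so that Proposition \ref{strictCTCDed3} — which grants strictness as soon as the image is closed — becomes applicable. Everything else is bookkeeping about the interplay between open subgroups and strict morphisms.
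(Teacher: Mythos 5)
Your proof is correct and uses the same key device as the paper: the auxiliary surjection $\psi$ built from lifts of generators of the cokernel, whose strictness (hence openness) follows from Proposition \ref{strictCTCDed3}. The paper concludes slightly more directly — it writes $\phi$ as the composition of the open inclusion $N \hookrightarrow N\oplus A^{\oplus r}$ with the open map $\psi$ — whereas you factor through $\phi(N)$ and invoke Proposition \ref{strictCTCDed3} a second time, but this is only a cosmetic difference.
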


\begin{proof} Let $(n'_1, \ldots, n'_k)$ be a finite family of elements of $N_2$ the classes of which generate the $A$-module $\coker \phi$, and let
$$\tilde{\phi}: N \oplus A^{\oplus k} \lra N'$$
the continuous morphism of $A$-modules defined by the formula:
$$\tilde{\phi}(n, a_1,\cdots, a_k) := \phi(n) + a_1 n'_1 + \cdots + a_k n'_k$$
for any $n \in N$ and any $(a_1,\cdots, a_k) \in A^k.$

By construction, the morphism $\tilde{\phi}$ is surjective, and therefore, according to Proposition \ref{strictCTCDed3}, it is strict. In particular, it is an open map. Consequently, the map $\phi: N \lra N'$ --- which is the composition of $\tilde{\phi}$ and of the inclusion map $N \hlra N \oplus A^{\oplus k}$, that is clearly open --- is also open.
 \end{proof}

Let us finally indicate that, for any Dedekind ring $A$ that satisfies  $\mathbf{Ded_3}$, there exist continuous endomorphisms of the $A$-module $A^\N$ which are injective, with dense image, but are not strict (see for instance Proposition \ref{nonstrictDed3}, \emph{infra}).

\subsection{Strict injective morphisms and extensions of scalars}

The following proposition is stated for further reference. Its proof is straightforward and left to the reader. 

\begin{proposition}\label{strictinjective}
 Let $f:N' \lra N$ be a strict injective morphism in $\CTC_A$, and let $(U_i)_{i\in \N}$ be a defining sequence in $\cU(N).$
 
 1) The sequence $(U'_i)_{i \in \N} := (f^{-1}(U_i))_{i \in \N}$ is a defining sequence in $\cU(N').$ Moreover, for every $i \in \N,$ the morphism of (finitely generated projective) $A$-modules 
 $$f_i: N'_i:= N'/U'_i \lra N_i := N/U_i,$$
 defined by $f_i(x' + U'_i) = f(x') + U_i$ for any $x' \in M'$, is injective.
 
 When the topological $A$-modules $N$ and $N'$ are identified with the projective limits $\varprojlim_i N_i$ and $\varprojlim_i N'_i$, the morphism $f:N' \lra N$ gets identified with the morphism $\varprojlim_i f_i.$ 
 
 2) For any field extension $L$ of the fraction field $K$ of $A$, the topological $L$-modules $M'_L$ and $M_L$ may be identified with 
 $\varprojlim_i N_{i,L}$ and $\varprojlim_i N'_{i,L}$  and the morphism $f_L: N'_L \lra N_l$ with the morphism $\varprojlim_i f_{i,L}.$
 
 In particular, like the morphisms $f_{i,L}$, the morphism $f_L$ is injective. \qed
\end{proposition}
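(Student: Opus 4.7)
The plan is to establish part 1) by first verifying that each $U'_i := f^{-1}(U_i)$ lies in $\cU(N')$ with $f_i$ injective, and then using the strict injectivity of $f$ to show that $(U'_i)_{i \in \N}$ is cofinal in $\cU(N')$, hence a defining sequence. For the first point: $U'_i$ is open in $N'$ because $f$ is continuous, and it is visibly an $A$-submodule. The formula $f_i([x']) = 0 \Leftrightarrow f(x') \in U_i \Leftrightarrow x' \in f^{-1}(U_i) = U'_i$ shows $f_i : N'/U'_i \hookrightarrow N/U_i$ is injective. Since $A$ is a Dedekind ring and $N/U_i$ is a finitely generated projective $A$-module, any $A$-submodule of $N/U_i$ is finitely generated (Noetherianity) and torsion-free, hence projective; thus $N'/U'_i$ is finitely generated projective, so $U'_i \in \cU(N')$ by Lemma \ref{lemmCU}.

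Next, since the injective morphism $f$ is strict, it induces a homeomorphism of $N'$ onto $f(N')$ equipped with the subspace topology from $N$; in particular, every neighborhood of $0$ in $N'$ is of the form $f^{-1}(W)$ for some open neighborhood $W$ of $0$ in $N$. Given any $V \in \cU(N')$ one may write $V \supseteq f^{-1}(W)$ for such a $W$, and by the defining property of $(U_i)_{i\in \N}$ there exists $i \in \N$ with $U_i \subseteq W$, whence $U'_i = f^{-1}(U_i) \subseteq V$. Together with the fact that $(U'_i)$ inherits the non-increasing character of $(U_i)$, this shows that $(U'_i)_{i \in \N}$ is a defining sequence in $\cU(N')$. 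The canonical isomorphism (\ref{canprojE}) applied to both $N$ and $N'$ then identifies them with $\varprojlim_i N_i$ and $\varprojlim_i N'_i$, and the commutativity, for each $i$, of the quotient diagram relating $f$ to $f_i$ identifies $f$ with $\varprojlim_i f_i$.

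For part 2), the key observation is that $L$ is a flat $A$-algebra, since the structural morphism $A \to L$ factors as $A \to K \hookrightarrow L$ where $K = \mathrm{Frac}(A)$ is a localization of $A$ and $K \hookrightarrow L$ is flat as any field extension. Applying $\otimes_A L$ to each injection $f_i$ therefore yields injective $L$-linear maps $f_{i,L} : N'_{i,L} \to N_{i,L}$; the transition maps remain surjective by right exactness. By definition of the completed tensor product, $N_L \simeq \varprojlim_i N_{i,L}$ in $CTC_L$, and similarly for $N'_L$, with $f_L$ corresponding to $\varprojlim_i f_{i,L}$. Since projective limits preserve injectivity, $f_L$ is injective. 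The main obstacle in the whole argument is essentially the one conceptual step — correctly unpacking the definition of a strict injective morphism to see that $\{f^{-1}(W) : W \in \cU(N)\}$ is a neighborhood basis of $0$ in $N'$; everything else is formal bookkeeping with quotients, projective limits, and flatness.
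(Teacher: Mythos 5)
Your proof is correct. The paper itself gives no argument here (it explicitly leaves the proof to the reader as "straightforward"), and what you have written is precisely the intended routine verification: $U'_i=f^{-1}(U_i)$ is open by continuity and lies in $\cU(N')$ because $N'/U'_i$ embeds via $f_i$ into the finitely generated projective module $N/U_i$ over the Noetherian Dedekind ring $A$; strictness of the injective $f$ makes the $f^{-1}(W)$, $W\in\cU(N)$, a neighborhood basis of $0$ in $N'$, giving cofinality of $(U'_i)$; and part 2) follows from flatness of $L$ over $A$ together with left exactness of projective limits.
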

 
Let us indicate that, with the notation of Proposition \ref{strictinjective}, if the injective morphism $f: N' \lra N$ is not assumed to be strict, its base change $f_K$ may be non-injective. (For instance, the morphism $\beta_1: \Z^\N \lra \Z^\N$ in $\CTC_\Z$ considered in Proposition \ref{padicex}, 1), \emph{infra} is injective, but $\beta_{1\Q}: \Q^\N \lra \Q^\N$ admits the line $\Q.(p^{-k})_{k \in \N}$ as kernel.)
 
\section{Localization and descent properties}\label{LocDes} 

\subsection{}In this monograph, we shall not investigate systematically the descent properties satisfied by the categories $\CP_A$ and $\CTC_A$ or by the diverse ``infinite-dimensional vector bundles" defined in terms of them, and we shall content ourselves with a few observations in this paragraph and in paragraph \ref{desprovectpro} \emph{infra}.

The duality between the categories $\CP_.$ and $\CTC_.$ is easily seen to be compatible with the base change functors
$$\cdot \otimes_A B : \CP_A \lra \CP_B
\quad \text{and} \quad  \cdot \; \widehat{\otimes}_A B : \CTC_A \lra \CTC_B$$
associated to some ring morphism $A \lra B$ between Dedekind rings.

This allows one, when studying the localization or descent properties of these categories, to concentrate on the descent properties of the categories $CT_.$ of countably generated projective modules over Dedekind rings.

The descent properties of projectivity have actually been investigated in a general setting by Gruson and Raynaud in\cite{GrusonRaynaud71}. They notably prove that, \emph{for any commutative ring $R$ and any faithfully flat commutative $R$-algebra $R'$, some $R$-module $M$ is projective if} (and only if) \emph{the $R'$-module $M \otimes_R R'$ is projective}. (See \emph{loc. cit.}, Seconde partie, § 3.1. The importance of the criteria of projectivity in \cite{GrusonRaynaud71} for the study of ``infinite-dimensional vector bundles in algebraic geometry" has been emphasized by Drinfeld in \cite{Drinfeld2006}.)

In the next paragraphs, we simply spell out two simple consequences of this projectivity criterion, combined with basic faithfully flat descent. As this monograph concentrates on ``pro-vector bundles" rather than on ``ind-vector bundles", we formulate them in terms of the categories $\CTC_.$.

\subsection{}\label{LocDesUV} Let $U$ and $V$ be two open non-empty subsets of $\Spec A$ such that
$$\Spec A = U \cup V.$$
In other words, $U$ and $V$ are the complements
$$U:= \Spec A \setminus F \quad \text{and} \quad V:= \Spec A \setminus G$$
of two disjoint finite subsets $F$ and $G$ of the set $(\Spec A)_0$ of the non-zero prime ideals of $A$. Actually, $U,$ $V$, and $U \cap V$ define affine open subschemes of $\Spec A$.  If $K$ denotes the fraction field of $A$, and, for any $\fp \in (\Spec A)_0$, $v_\fp$ denotes the $\fp$-adic valuation of $K$, we have :
$$A_U := \Gamma(U, \cO_{\Spec A}) = \lbrace x \in K \mid \forall \fp \in (\Spec A)_0\setminus F, v_\fp (x) \geq 0 \rbrace,$$
and similar descriptions of $A_V := \Gamma(V, \cO_{\Spec A})$ and $A_{U\cap V} := \Gamma(U\cap V, \cO_{\Spec A})$, with $F$ replaced by $G$ and by $F \cup G$, respectively.

With this notation, \emph{the datum of an object $N$ in $\CTC_A$ is equivalent to the data of some objects $N_U$ of $\CTC_{A_U}$ and $N_V$ of $\CTC_{A_V}$ and of some ``glueing isomorphism" in} $\CTC_{A_{U \cap V}}$:
$$N_U \widehat{\otimes}_{A_U} A_{U\cap V} \lrasim N_V\widehat{\otimes}_{A_V} A_{U \cap V}.$$
Indeed, the equivalence maps $N$ to the topological modules $N_U := N \widehat{\otimes}_{A} A_U$ and $N_V := N \widehat{\otimes}_{A} A_V$ and to the glueing isomorphism deduced from the canonical isomorphisms of $A$-algebras:
$$A_U\otimes_{A_U} A_{U\cap V} \simeq A_{U\cap V}  \simeq A_V\otimes_{A_V} A_{U\cap V}.$$

\subsection{}\label{LocDesUp}
Let $U := \Spec A \setminus F$ be a non-empty open subscheme of $\Spec A.$ For any $\fp$ in $F,$ we may consider the complete discrete valuation ring $\hat{A}_\fp$ defined as the $\fp$-adic completion of $A$; its fraction field $\hat{K}_\fp$ is the completion of $K$ with respect to the $\fp$-adic valuation $v_\fp$.

Then \emph{the datum of an object $N$ in $\CTC_A$ is equivalent to the data of some object $N_U$ of $\CTC_{A_U}$ and, for every $\fp \in F,$ of an object $N_\fp$ in $\CTC_{\hat{A}_\fp}$ and of some glueing isomorphism in $\CTC_{\hat{K}_\fp}$ :}
$$N_U \widehat{\otimes}_{A_U} \hat{K}_\fp \lrasim N_\fp \widehat{\otimes}_{\hat{A}_\fp} \hat{K}_\fp.$$
The equivalence maps $N$ to the topological modules $N_U := N \widehat{\otimes}_{A} A_U$ and $N_\fp:= N \widehat{\otimes}_{A} \hat{A}_\fp$ and to the glueing morphisms deduced from the canonical isomorphism of $A$-algebras:
$$A_U \otimes_{A_U} \hat{K}_\fp \simeq \hat{K}_\fp \simeq \hat{A}_\fp \otimes_{\hat{A}_\fp} \hat{K}_\fp.$$

\section{Examples}%{Some examples of non-strict morphisms}%{Pathologies}
\label{Path} 

In this section, we discuss some simple examples of objects and morphisms in the categories $\CP_A$ and $\CTC_A$ when $A$ is the ring $\Z_p$ or $\Z$. 

These examples should  make clear that non-strict morphisms in the categories $\CTC_{\Z_p}$ and $\CTC_\Z$ are not ``pathologies" but  occur ``naturally", and that
 exact sequences in the categories $\CP_\Z$ and $\CTC_\Z$ and their duality properties must be handled with some care.

We denote by $p$  a prime number and by $\vert.\vert_p$ the $p$-adic norm of $\Z_p$
on  the ring $\Z_p$ of $p$-adic integers, defined by $\vert p^n u\vert_p := p^{-n}$ for every $n\in \N$ and every $u \in \Z_p^\times.$

\subsection{Subobjects and duality in $\CP_\Z$ and $\CTC_\Z$} 
Besides Condition $\mathbf{E_2}$, we may consider the following condition:

$\mathbf{E'_2: }$\emph{The morphism $\alpha$ is injective and its image is saturated
%\footnote{in other words, $\coker \alpha$ is torsion free.} 
in $M'$.}
%\end{enumerate}

\noindent Similarly, besides Condition $\mathbf{F_1}$, we may consider the following condition:
%\begin{enumerate}
%\item [(F.1')] 

$\mathbf{F'_1:}$ \emph{The morphism $\beta$ is injective and its image is closed and saturated
%\footnote{in other words, $\coker \beta$ is torsion free.} 
in $N_2$.}
%\end{enumerate}

\noindent Clearly, Condition $\mathbf{E_2}$ implies Condition $\mathbf{E'_2}$, and Condition $\mathbf{F_1}$ implies Condition $\mathbf{F'_1}$. 

However, the converse implications do not hold in general. This is demonstrated, when $A= \Z$, by  Proposition \ref{padicex} below.

Let $(\epsilon_n)_{n \in \N}$ denote the canonical basis of $\Z^{(\N)}$, defined by
$\epsilon_n(k) := \delta_{nk}$
for every $(n,k) \in \N^2.$ We shall identify the dual in $\CTC_\Z$ of the object $M:= \Z^{(\N)}$ of $\CP_\Z$ with the $\Z$-module $N:= \Z^{\N}$ equipped with the product topology of the discrete topology on $\Z$, by means of the map:
\begin{equation}\label{dualfree}
\begin{array}{rcl}
\Hom_\Z(\Z^{(\N)}, \Z)  & \lrasim   & \Z^\N   \\
\xi   & \longmapsto  & (\xi(\epsilon_n))_{n \in \N}.   
\end{array}
\end{equation}

\begin{proposition}\label{padicex}

1) If we define two morphisms of $\Z$-modules 
 $\alpha_{1}: \Z^{(\N)} \lra \Z^{(\N)}$
 and 
 $\pi_1: \Z^{(\N)} \lra \Z[1/p]$
 by
$$
 \alpha_1(\epsilon_n) := \epsilon_n -p \epsilon_{n+1} \mbox{ and } 
 \pi_1(\epsilon_n) := \frac{1}{p^{n}}$$ for every $n \in \N$,
 then $\alpha_1$ and $\pi_1$ fit into the following short exact sequence:
 \begin{equation}\label{sesalpha1}
0 \lra \Z^{(\N)} \stackrel{\alpha_1}{\lra} \Z^{(\N)} \stackrel{\pi_1}{\lra} \Z[1/p] \lra 0.
\end{equation}

 Moreover the dual morphism
 $\beta_1:= \alpha_1^\vee \in \Hom_\Z^{\rm cont}(\Z^\N, \Z^\N)$
is injective, and if we define a $\Z$-linear map
$\sigma_1: \Z^\N \lra \Z_p/\Z$
by $$\sigma_1((y_k)_{k\in \N}):= \left[\sum_{k \in \N} p^k y_k\right],$$
then the following diagram is a short exact sequence:
 \begin{equation}\label{sesbeta1}
0 \lra \Z^\N \stackrel{\beta_1}{\lra} \Z^\N \stackrel{\sigma_1}{\lra} \Z_p/\Z \lra 0.
\end{equation}

 2)  If we define two morphisms of $\Z$-modules 
 $\alpha_{2}: \Z^{(\N)} \lra \Z^{(\N)}$
 and 
 $\pi_2: \Z^{(\N)} \lra \Z[1/p]/\Z$
 by
 \begin{align*}
 \alpha_2(\epsilon_n) & := -p \epsilon_0   & \text{when $n=0$,} &\\
& :=  \epsilon_{n-1} -p \epsilon_n &  \text{when  $n\geq 1$,}& 
\end{align*}
 and by
 $$\pi_2(\epsilon_n) := \left[\frac{1}{p^{n+1}}\right] \mbox{ for every $n \in \N$,}$$
 then $\alpha_2$ and $\pi_2$ fit into the following short exact sequence:
  \begin{equation}\label{sesalpha2}
0 \lra \Z^{(\N)} \stackrel{\alpha_2}{\lra} \Z^{(\N)} \stackrel{\pi_2}{\lra} \Z[1/p]/\Z \lra 0.
\end{equation}
 
 Moreover the dual morphism
 $\beta_2:= \alpha_2^\vee$ in $\Hom_\Z^{\rm cont}(\Z^\N, \Z^\N)$
is injective, and if we define a continuous $\Z$-linear map
$\sigma_2: \Z^\N \lra \Z_p$
by $$\sigma_2((y_k)_{k\in \N}):= \sum_{k \in \N} p^k y_k,$$
then the following diagram is a short exact sequence:
 \begin{equation}\label{sesbeta2}
0 \lra \Z^\N \stackrel{\beta_2}{\lra} \Z^\N \stackrel{\sigma_2}{\lra} \Z_p \lra 0.
\end{equation}
\end{proposition}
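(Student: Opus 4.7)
\emph{Plan for Proposition \ref{padicex}.} I treat Part~1 in detail and indicate the variations needed for Part~2.

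\emph{Exactness of (\ref{sesalpha1}) in $CP_\Z$.} First, $\pi_1$ is surjective because $\Z[1/p]=\sum_{n\in\N}\Z\cdot p^{-n}$, and the relation $\pi_1(\epsilon_n-p\epsilon_{n+1})=p^{-n}-p\cdot p^{-(n+1)}=0$ gives $\pi_1\circ\alpha_1=0$. Injectivity of $\alpha_1$ is immediate by a triangular argument: if $\sum_{n\le N}a_n\epsilon_n\in\ker\alpha_1$, expanding yields $a_0=0$, $a_1=pa_0=0,\ldots$. To show $\ker\pi_1\subseteq\im\alpha_1$, observe that in the quotient $\Z^{(\N)}/\im\alpha_1$ one has $\epsilon_n\equiv p\epsilon_{n+1}$, hence by iteration $\epsilon_n\equiv p^{N-n}\epsilon_N$ for $N\ge n$. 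Thus an arbitrary $x=\sum_{n\le N}a_n\epsilon_n$ satisfies $x\equiv(\sum_{n\le N}a_np^{N-n})\epsilon_N$; the hypothesis $\pi_1(x)=0$ reads $\sum a_np^{N-n}=0$ in $\Z$, so $x\equiv 0$ in the quotient.

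\emph{The morphism $\beta_1$ and its injectivity.} Using the identification (\ref{dualfree}), a direct computation gives
$$\beta_1\bigl((y_n)_{n\in\N}\bigr)=(y_n-py_{n+1})_{n\in\N}.$$
If $\beta_1((y_n))=0$, then $y_n=py_{n+1}$ for all $n$, so $y_0=p^ky_k\in p^k\Z$ for every $k$, forcing $y_0=0$; by the same argument each $y_n$ vanishes.

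\emph{Exactness of (\ref{sesbeta1}).} The series defining $\sigma_1$ converges in $\Z_p$ since $p^k\to 0$, so $\sigma_1$ is well-defined (and continuous as a map to $\Z_p$, then composed with $\Z_p\twoheadrightarrow\Z_p/\Z$). Surjectivity of $\sigma_1$ follows from the standard $p$-adic expansion: any $z\in\Z_p$ writes as $\sum p^ka_k$ with $a_k\in\{0,\ldots,p-1\}\subset\Z$. A telescoping calculation
$$\sum_{k\ge 0}p^k(y_k-py_{k+1})=y_0$$
(the sum taken in $\Z_p$) shows $\sigma_1\circ\beta_1((y_n))=[y_0]=0$ in $\Z_p/\Z$. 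The crucial step is the inclusion $\ker\sigma_1\subseteq\im\beta_1$. Given $(y_k)\in\Z^\N$ with $n:=\sum p^ky_k\in\Z$, I would define
$$z_k:=\frac{n-\sum_{j<k}p^jy_j}{p^k}\qquad(k\in\N).$$
The numerator is an integer, and equals $\sum_{j\ge k}p^jy_j\in p^k\Z_p$; since $\Z\cap p^k\Z_p=p^k\Z$, each $z_k$ lies in $\Z$. A direct check gives $z_k-pz_{k+1}=y_k$, so $\beta_1((z_k))=(y_k)$.

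\emph{Part 2.} The arguments are parallel. The key modifications are: the relation $p\epsilon_0\equiv 0$ in $\Z^{(\N)}/\im\alpha_2$ (coming from $\alpha_2(\epsilon_0)=-p\epsilon_0$) together with $\epsilon_n\equiv p\epsilon_{n+1}$ yield $p^{n+1}\epsilon_n\equiv 0$, which matches the torsion in $\Z[1/p]/\Z$ and establishes exactness of (\ref{sesalpha2}). Dualizing gives $\beta_2((y_n))=(-py_0,\,y_0-py_1,\,y_1-py_2,\ldots)$, which is injective by forward substitution starting from $-py_0=0$. Surjectivity of $\sigma_2$ is again by $p$-adic expansion, and the identity $\sigma_2\circ\beta_2=0$ is a telescoping calculation in $\Z_p$. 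The only nontrivial step is $\ker\sigma_2\subseteq\im\beta_2$: given $(y_k)\in\Z^\N$ with $\sum p^ky_k=0$ in $\Z_p$, I set $z_k:=-p^{-(k+1)}\sum_{j\le k}p^jy_j$; the same observation $\Z\cap p^{k+1}\Z_p=p^{k+1}\Z$ ensures $z_k\in\Z$, and one verifies $\beta_2((z_k))=(y_k)$. The main conceptual obstacle throughout is precisely this passage from divisibility in $\Z_p$ to divisibility in $\Z$, which underlies the solvability of the telescoping recursions.
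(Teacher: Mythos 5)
Your proof is correct, and it takes a genuinely different route from the paper's. The paper identifies $\Z^{(\N)}$ with $\Z[X]$ and $\Z^{\N}$ with $\Z[[X]]$ via the residue pairing, so that $\alpha_1$ becomes multiplication by $1-pX$, $\beta_2$ becomes multiplication by $X-p$, and $\sigma_2$ becomes evaluation at $p$; the whole of the exactness of (\ref{sesbeta1}) and (\ref{sesbeta2}) is then reduced to a single lemma asserting that the evaluation map $\Z[[X]]\to\Z_p$, $f\mapsto f(p)$, is surjective with kernel $(X-p)\,\Z[[X]]$, which the paper proves by Weierstrass preparation over $\Z_p[[X]]$ followed by the descent $\Z_p[[X]]\cap\Z[1/p][[X]]=\Z[[X]]$. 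You instead work directly with sequences and solve the telescoping recursions $z_k-pz_{k+1}=y_k$ explicitly, with the integrality of the solution secured by the elementary identity $\Z\cap p^k\Z_p=p^k\Z$ — this is in effect a hands-on, coefficientwise division by $X-p$, and it makes your argument more self-contained (no Weierstrass preparation needed). What you lose is the structural transparency of the power-series picture, which the paper exploits again later (e.g.\ in Proposition \ref{nonstrictDed3}, where the same maps reappear over a general Dedekind ring). Two minor points: your formula $\beta_1((y_n)_n)=(y_n-py_{n+1})_n$ and the telescoping identities are all verified correctly, and your compressed treatment of the exactness of (\ref{sesalpha2}) (via $p^{n+1}\epsilon_n\equiv 0$ in the quotient together with the reduction of any element to a multiple of $\epsilon_N$) does contain all the ingredients needed; it would be worth one more sentence to record that $\pi_2(x)=0$ forces the integer $\sum_n a_np^{N-n}$ to be divisible by $p^{N+1}$, whence $x\equiv 0$ modulo $\im\alpha_2$.
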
 

Observe that $\alpha_1$ satisfies $\mathbf{E'_2}$ and not $\mathbf{E_2}$, since $\beta_1$ does not satisfy $\mathbf{E_1}$, and that $\beta_2$ satisfies $\mathbf{F'_1}$ and not $\mathbf{F_1}$, since $\alpha_2$  does not satisfies $\mathbf{F_2}$.

As already mentioned, Proposition \ref{padicex} also demonstrates how the categories $\CP_\Z$ and $\CTC_\Z$ are ``badly behaved" with respect to quotients. 

\begin{proof} 1) We use the identification of $\Z$-modules
$$
\begin{array}{rcl}
 \Z^{(\N)} & \lrasim   & \Z[X]   \\
 \epsilon_n & \longmapsto   & X^n  
\end{array}
\mbox{ and }
\begin{array}{rcl}
 \Z^\N & \lrasim  & \Z[[X]]   \\
  (y_k)_{k \in \N} & \longmapsto  & \sum_{k \in \N} y_k X^k.  
\end{array}
$$
The isomorphism (\ref{dualfree}) becomes the isomorphism
$$\Hom_\Z(\Z[X], \Z) \lrasim \Z[[X]]$$
induced by the residue pairing
\begin{equation}\label{residuepairing}
\begin{array}{rcl}
  \Z[[X]] \times \Z[X] & \lra  & \Z  \\
(f,P)  & \longmapsto   & {\rm Res}_{X=0} [f(X) P(1/X) dX/X].   
\end{array}
\end{equation}

The diagram (\ref{sesalpha1}) may be written 
\begin{equation}\label{sesalpha1'}
0 \lra \Z[X] \stackrel{\alpha_1}{\lra} \Z[X] \stackrel{\pi_1}{\lra} \Z[1/p] \lra 0
\end{equation}
where 
\begin{equation}\label{defalpha1}
\alpha_1(P):= (1-pX) P
\end{equation}
and 
$$\pi_1(P) = P(1/p).$$
The exactness of (\ref{sesalpha1'}), hence of (\ref{sesalpha1}), follows from the basic properties of polynomials with integer coefficients.

Moreover the expressions (\ref{defalpha1}) for $\alpha_1$ and (\ref{residuepairing}) for the duality pairing between $\Z[[X]]$ and $\Z[X]$ show that the dual morphism
$$\beta_1:= \alpha_1^\vee: \Z[[X]] \lra \Z[[X]]$$
is given by
\begin{equation}\label{defbeta1}
\beta_1(f) = (1-p/X) f + pf(0)/X.
\end{equation}

The vanishing of $\beta_1(f)$ therefore implies the equality in $\Q[[X]]$:
$$f(X) = f(0) (1-p^{-1} X)^{-1} = f(0) \sum_{k\in \N} p^{-k} X^k.$$
Clearly, the only $f$ in $\Z[[X]]$ satisfying this condition is $f=0,$
and accordingly $\beta_1$ is injective.

To complete the proof of the exactness of (\ref{sesbeta1}), we shall use the following easy lemma:

\begin{lemma}\label{evalpinv}
The evaluation morphism 
\begin{equation}\label{defevalp}
\begin{array}{rrcl}
\eta : &\Z[[X]]  & \lra   & \Z_p   \\
& f & \lmt  & f(p)   
\end{array}
\end{equation}
is surjective. Its kernel is $(X-p)\, \Z[[X]].$
\end{lemma}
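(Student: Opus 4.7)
The plan is to handle the two assertions separately, relying on the fact that a formal power series with integer coefficients, evaluated at $X=p$, produces a $p$-adically convergent series in $\Z_p$.

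For surjectivity, I would simply use $p$-adic expansions: given any $a \in \Z_p$, write $a = \sum_{k \geq 0} a_k p^k$ with $a_k \in \{0,1,\dots,p-1\} \subset \Z$, and set $f(X) := \sum_{k \geq 0} a_k X^k \in \Z[[X]]$. Then $\eta(f) = f(p) = a$ by construction.

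For the kernel, the inclusion $(X-p)\Z[[X]] \subset \ker \eta$ is immediate, since the evaluation map is a ring homomorphism and $(X-p)|_{X=p} = 0$. The reverse inclusion is the substantive step, and my plan is to divide $f$ by $X-p$ ``by hand'' inside $\Q[[X]]$ and then verify that the quotient lies in $\Z[[X]]$. Concretely, writing $f(X) = \sum_{k \geq 0} a_k X^k$ and looking for $g(X) = \sum_{k \geq 0} b_k X^k$ with $(X-p)g = f$, the recurrence $-pb_0 = a_0$ and $b_{k-1} - p b_k = a_k$ yields
\begin{equation*}
b_k = - p^{-(k+1)} S_k, \qquad \text{where } S_k := \sum_{j=0}^{k} a_j\, p^j \in \Z.
\end{equation*}
The only thing that needs justification is that $b_k \in \Z$, i.e.\ that $p^{k+1} \mid S_k$ in $\Z$ for every $k$, and this is exactly the content of the hypothesis $f(p) = 0$: indeed the vanishing of $f(p)$ in $\Z_p$ means that the tail $\sum_{j \geq k+1} a_j p^j$ converges to $-S_k$ in $\Z_p$, and each term of this tail has $p$-adic valuation $\geq k+1$, so $v_p(S_k) \geq k+1$. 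Since $S_k \in \Z$, the $p$-adic valuation coincides with the usual one, giving $p^{k+1} \mid S_k$ as required.

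The only possible obstacle is a notational one --- making sure the recurrence is solved in the correct direction and that the convergence argument for $v_p(S_k) \geq k+1$ is stated cleanly --- but there is no real difficulty: once the series division is set up, the hypothesis $f(p)=0$ translates directly into the integrality of the coefficients $b_k$.
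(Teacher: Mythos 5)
Your proof is correct, and it takes a genuinely more elementary route than the paper's. The paper first invokes the Weierstrass preparation theorem to identify the kernel of the evaluation map $\eta_{\Z_p}:\Z_p[[X]]\to\Z_p$ as $(X-p)\,\Z_p[[X]]$, writes $f=(X-p)g$ with $g\in\Z_p[[X]]$, and then descends the quotient to $\Z[[X]]$ by observing that $X-p=-p(1-X/p)$ is a unit in $\Z[1/p][[X]]$, so that $g\in\Z_p[[X]]\cap\Z[1/p][[X]]=\Z[[X]]$. Your argument replaces this two-step structural reasoning by the explicit coefficient formula $b_k=-p^{-(k+1)}S_k$ for the quotient, and the divisibility $p^{k+1}\mid S_k$ extracted directly from $f(p)=0$; this is exactly what the paper's intersection $\Z_p[[X]]\cap\Z[1/p][[X]]$ encodes, made concrete. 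What the paper's approach buys is brevity and the reusability of the structural fact about $\ker\eta_{\Z_p}$; what yours buys is self-containedness (no appeal to Weierstrass preparation) and an explicit formula for the quotient series. One cosmetic remark: rather than asserting that $\eta$ is a ring homomorphism (which requires a word about Cauchy products of convergent series in $\Z_p$), it is even quicker to verify $((X-p)g)(p)=\sum_k b_kp^{k+1}-p\sum_k b_kp^k=0$ directly; but as stated your argument is sound.
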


\begin{proof}[Proof of Lemma \ref{evalpinv}]
 The surjectivity of $\eta$ is clear, and the inclusion $(X-p) \Z[[X]] \subset \ker \eta$ also.
 
 To establish the converse inclusion, observe that the kernel of the evaluation map
 \begin{equation*}%\label{defevalp}
\begin{array}{rrcl}
\eta_{\Z_p} : &\Z_p[[X]]  & \lra   & \Z_p   \\
& f & \lmt  & f(p)   
\end{array}
\end{equation*}
is $(X-p) \Z_p[[X]]$, say by the Weierstrass preparation theorem. 

Therefore any element $f$ of $\Z[[X]]$ in the kernel of $\eta$ may be written
\begin{equation}\label{fpg}
f = (X-p) g
\end{equation}
for some $g$ in $\Z_p[[X]].$
The polynomial $(X-p)= -p(1-X/p)$ is a unit in $\Z[1/p][[X]]$, and the equation (\ref{fpg}) shows that the series $g$, seen  as an element of $\Q_p[[X]]$, actually belongs to 
$\Z[1/p][[X]]$. Consequently $g$ belongs to
$\Z_p[[X]] \cap \Z[1/p][[X]] = \Z[[X]].$ 
\end{proof}

The morphism $\sigma_1: \Z[[X]] \lra \Z_p/\Z$ maps $g\in \Z[[X]]$ to the class of $g(p)$ in $\Z_p/\Z$. It is clearly surjective, and to establish the exactness of (\ref{sesbeta1}), we are indeed left to show that, for any $g \in \Z[[X]],$  the following two conditions are equivalent:

(i) there exists $f \in \Z[[X]]$ such that
$$ g = (1-p/X) f + pf(0)/X;$$

(ii) the element $g(p)$ of $\Z_p$ belongs to $\Z.$

When (i) holds, then $g(p)=f(0)$ and therefore (ii) also holds. Conversely, when (ii) holds, then $g-g(p)$ is an element of $\Z[[X]]$ in the kernel of the evaluation morphism (\ref{defevalp}) and, for some $h \in \Z[[X]],$ we have
$$g-g(p) = (X-p) h.$$ Therefore condition (i) is satisfied by 
$f:= Xh + g(p).$

2) Similarly the exact sequence (\ref{sesalpha2}) and (\ref{sesbeta2}) may be written as
  \begin{equation}\label{sesalpha2'}
0 \lra \Z[X] \stackrel{\alpha_2}{\lra} \Z[X] \stackrel{\pi_2}{\lra} \Z[1/p]/\Z \lra 0,
\end{equation}
where 
$$\alpha_2(P) := \frac{1-pX}{X} P - \frac{P(0)}{X} = \frac{P-P(0)}{X} -p P$$
and 
$$\pi_2(P) = (1/p).P(1/p) \mod \Z,$$
and as
 \begin{equation}\label{sesbeta2'}
0 \lra \Z[[X]]\stackrel{\beta_2}{\lra} \Z[[X]] \stackrel{\sigma_2}{\lra} \Z_p \lra 0,
\end{equation}
where
$$\beta_2(f) = \alpha_2(f) := (X-p) \, f \mbox{ and } \sigma_2 (f) := f(p).$$

We leave the proof of the exactness of (\ref{sesalpha2'}) as an elementary exercise on polynomials with integer coefficients. The exactness of (\ref{sesbeta2'}) is basically the content of Lemma \ref{evalpinv}.
\end{proof}

\subsection{A non-strict bijective morphism in $\CTC_{\Z_p}$}\label{Pathnonstrict}

We  want to point out that, when the base ring $A$ satisfies $\mathbf{Ded_2}$ --- that is, when $A$ a complete discrete valuation ring --- the Open Mapping and the Closed Graph Theorems (as stated in Proposition \ref{strictCTCDed3} for a Dedekind ring $A$ satisfying $\mathbf{Ded_3}$) do not hold. 

For definiteness, let us assume that $A = \Z_p,$ and let $N$ be the object of $\CTC_{\Z_p}$ defined as $N:= \Z_p^\N$ equipped with the product of the discrete topology on each factor $\Z_p.$

\begin{lemma}\label{Graphxi}
 Let $(\xi_n)_{n\in \N}$ be an element of $\Z_p^\N$ such that $\lim_{n \ra + \infty} \vert \xi_n\vert_p = 0.$
 For any $x :=(x_n) \in N= \Z_p^\N,$ the series 
 $\xi(x) := \sum_{n\in \N} \xi_n x_n$ converges in $\Z_p$ equipped with $\vert. \vert_p$, and defines a $\Z_p$-linear map
 $\xi : N \lra \Z_p.$
 
 The map $\xi$ belongs to $\Hom_{\CTC_{\Z_p}}(N, \Z_p)$ if and only if  $(\xi_n)_{n\in \N}$ belongs to $\Z_p^{(\N)}$.
 
 The graph ${\rm Gr}\, \xi$ of $\xi$ is closed in the topological module $N \oplus \Z_p$ of $\CTC_{\Z_p}$.
\end{lemma}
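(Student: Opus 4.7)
\noindent\textbf{Proof plan for Lemma~\ref{Graphxi}.}

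For the convergence assertion, the plan is straightforward: since $x_n \in \Z_p$ one has $|\xi_n x_n|_p \le |\xi_n|_p$, hence $|\xi_n x_n|_p \to 0$, and the series $\sum_n \xi_n x_n$ converges in the complete ultrametric field $(\Q_p, |.|_p)$ to an element of $\Z_p$ (since each partial sum is in $\Z_p$, which is closed). The $\Z_p$-linearity of $\xi$ in $x$ is then immediate.

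For the continuity characterisation, recall that the object $\Z_p$ of $CTC_{\Z_p}$ carries the discrete topology, so $\xi$ is continuous if and only if $\ker \xi$ is open in $N$, equivalently if $\ker \xi$ contains some basic neighbourhood of $0$ of the form
$$U_F := \{(x_n)_{n\in \N} \in \Z_p^\N \mid x_n = 0 \text{ for every } n \in F\},$$
where $F$ is a finite subset of $\N$. Testing $\xi$ against the standard basis vector $e_n$ for $n \notin F$ yields $\xi_n = \xi(e_n) = 0$, so continuity forces $(\xi_n)_{n\in \N}$ to have finite support. Conversely, if $\xi_n = 0$ for every $n \notin F$, then $\xi$ factors through the continuous projection $N \twoheadrightarrow \Z_p^F$ followed by the finite linear form $(x_n)_{n \in F} \mapsto \sum_{n \in F} \xi_n x_n$, hence is continuous.

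The last assertion — closedness of the graph — is the one that requires real input and is the place where the hypothesis $|\xi_n|_p \to 0$ is used crucially. The strategy is to show that the complement of ${\rm Gr}\, \xi$ is open. Given $(x,y) \in N \oplus \Z_p$ with $y \neq \xi(x)$, set $\epsilon := |y - \xi(x)|_p > 0$, and choose $N \in \N$ large enough that $|\xi_n|_p < \epsilon$ for every $n > N$. Consider the basic open neighbourhood
$$V := \{(x',y') \in N \oplus \Z_p \mid x'_n = x_n \text{ for } 0 \le n \le N \text{ and } y' = y\}.$$
For $(x', y) \in V$, the difference $\xi(x') - \xi(x) = \sum_{n > N} \xi_n (x'_n - x_n)$ is a convergent series in $\Z_p$ all of whose terms have $p$-adic norm $< \epsilon$, so by the ultrametric inequality $|\xi(x') - \xi(x)|_p < \epsilon = |y - \xi(x)|_p$. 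The ultrametric then forces $|\xi(x') - y|_p = |y - \xi(x)|_p \neq 0$, so $\xi(x') \neq y$ and $(x',y) \notin {\rm Gr}\, \xi$. Thus $V$ is disjoint from the graph, which is therefore closed.

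The main (mild) obstacle is to handle the closed-graph step without appealing to metrizability of $N$, since $\Z_p^\N$ with the product discrete topology is not metrizable; the argument above avoids nets or sequences by working directly with the basic open sets of the product topology and exploiting the ultrametric structure of $\Z_p$.
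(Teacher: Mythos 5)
Your proof is correct and the closed-graph argument is essentially the paper's own (the paper treats the convergence and continuity assertions as immediate and details only the last one, via exactly the same neighbourhood $\{0\}^{\{0,\dots,N\}}\times \Z_p^{\N_{>N}}$ and the ultrametric inequality). One small correction to your closing remark: $\Z_p^\N$ with the product of the discrete topologies \emph{is} metrizable (a countable product of metrizable spaces is metrizable; it is merely non-separable since $\Z_p$ is uncountable), though your argument rightly does not depend on this either way.
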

\begin{proof}
 All the assertions are immediate, but possibly the last one. 
 
 To prove that ${\rm Gr}\, \xi$ of $\xi$ is closed in $N \oplus \Z_p = \Z_p^\N \oplus \Z_p$ equipped with the product of the discrete topology on each factor $\Z_p$, consider $x$ in $N$ and $a$ in $\Z_p$ such that $(x,a) \notin {\rm Gr}\, \xi,$
or equivalently, such that $\xi(x) \neq a.$

There exists a positive integer $n_0$ such that, for every $n \in \N_{> n_0},$
$$\vert \xi_n \vert_p < \vert \xi(x) - a \vert_p.$$
Then 
$U := \{0\}^{\{0,\dots,n_0\}} \times \Z_p^{\N_{>n_0}}$
is an open neighborhood of $0$ in $N= \Z_p^\N$ such that, for any $u \in U,$
$$\vert \xi(u) \vert_p < \vert \xi(x) - a \vert_p.$$

Consequently, for any $\tilde{x} \in x + U,$
$$\vert \xi(\tilde{x})-a\vert_p =\vert \xi(x) - a \vert_p \neq 0,$$  
and $(x, a) + U \oplus \{0\}$ is an open neighborhood of $(x,a)$ in $N \oplus \Z_p$ disjoint of ${\rm Gr}\, \xi$.  
\end{proof}

Let us keep the notation of Lemma \ref{Graphxi}.

According to Proposition \ref{subCTC}, the $A$-module ${\rm Gr}\, \xi$, equipped with the topology induced by the one of $N \oplus \Z_p$, becomes an object of $\CTC_{\Z_p}$. Clearly the first projection
$${\rm pr}_{1\mid {\rm Gr}\, \xi} :{\rm Gr}\, \xi \lra N$$
defines a continuous bijective morphism of topological $A$-modules. By construction, it is a homeomorphism --- or equivalently, a strict morphism --- precisely when $\xi: N \lra \Z_p$ is continuous.

This shows that, for any $(\xi_n)_{n \in \N}$ in $\Z_p^\N \setminus \Z_p^{(\N)}$ such that   
$\lim_{n \ra + \infty} \vert \xi_n\vert_p = 0,$ the map ${\rm pr}_{1\mid {\rm Gr}\, \xi}$ is a non-strict bijective morphism in $\Hom_{\CTC_{\Z_p}} ( {{\rm Gr}\, \xi}, N).$

\subsection{Non-strict injective morphisms in $\CTC_A$ when $A$ satisfies $\mathbf{Ded_3}$}

The construction in Proposition \ref{padicex}, 2), may easily be extended to a more general setting and provides, for any Dedekind ring $A$ that satisfies $\mathbf{Ded_3}$, examples of  injective morphisms in $\CTC_A$ that are not strict. 

The ring $A[[X]]$ of formal series with coefficients in $A$ will be equipped with its natural prodiscrete topology (say, defined by its identification with $\varprojlim_n A[X]/(X^n)$). Then it becomes  an object of $\CTC_A$.  

By mimicking the arguments in the proof of Proposition \ref{padicex}, 2), one easily establishes the following proposition. (To  prove its assertion 3), choose some non-zero prime ideal $\fp$ dividing $a$, and consider the evaluation map $g\mapsto g(a)$ from $A$ to the $\fp$-adic completion $\hat{A}_{\fp}$ of $A$. On  the image $\phi_a(A[[X]])$ of $\phi_a,$ this evaluation map  takes values in $A$.) 
We leave the details of its proof to the reader.

\begin{proposition}\label{nonstrictDed3}
For any $a \in A,$ we define a continuous $A$-linear map
$$\phi_a : A[[X]] \lra A[[X]]$$
by letting 
$$\phi_a (f) := (1- a/X)f + a f(0)/X = f - a\,(f-f(0))/X.$$

1) The map $\phi_a$ is injective if and only if $a \notin A^\times.$ 

2) For any $a$ in $A$, the map $\phi_a$ sends $A[X]$ bijectively onto $A[X]$. In particular, its image $\phi_a(A[[X]])$ is dense in $A[[X]]$.

3) If $A$ satisfies $\mathbf{Ded_3}$, then, for any $a \in A \setminus (A^\times \cup \{0\}),$ the map $\phi_a$ is not surjective. \qed
 \end{proposition}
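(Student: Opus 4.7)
The plan is to translate $\phi_a$ to the coefficient level: writing $f = \sum_{n \ge 0} c_n X^n \in A[[X]]$, the map reads $\phi_a(f) = \sum_{n \ge 0} (c_n - a c_{n+1}) X^n$, from which continuity for the prodiscrete topology is immediate, and each of the three assertions will reduce to analyzing the recursion $d_n = c_n - a c_{n+1}$.

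For part 1), the kernel condition forces $c_0 = a^k c_k$ for every $k \ge 0$, hence $c_0 \in \bigcap_k (a)^k$. If $a$ is a unit, then $c_n := a^{-n} c_0$ produces a nontrivial kernel element for any nonzero $c_0$. If $a$ is a non-unit (the case $a=0$ being trivial), Krull's intersection theorem applied to the proper ideal $(a)$ in the Noetherian domain $A$ forces $c_0 = 0$, and then inductively $c_n = 0$ for all $n$. For part 2), the same formula shows that $\phi_a$ maps polynomials of degree $\le d$ to polynomials of degree $\le d$; an explicit inverse on $A[X]$ is given by $c_n := \sum_{k=n}^d a^{k-n} d_k$, and injectivity on $A[X]$ follows from inspecting the leading coefficient. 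The density of $\phi_a(A[[X]])$ in $A[[X]]$ then results from $\phi_a(A[[X]]) \supseteq \phi_a(A[X]) = A[X]$ together with the density of $A[X]$ in the prodiscrete topology of $A[[X]]$.

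For part 3), I will follow the hint and construct an evaluation map. Let $\fp$ be a non-zero prime ideal of $A$ containing $a$ (it exists since $a$ is a non-zero non-unit), set $e := v_\fp(a) \ge 1$, and denote by $\widehat{A}_\fp$ the $\fp$-adic completion of $A$. The series $f(a) := \sum_n c_n a^n$ converges in $\widehat{A}_\fp$ because $a^n \in \fp^{en}$, which defines an $A$-linear evaluation $\mathrm{ev}_a : A[[X]] \lra \widehat{A}_\fp$. The central computation, using the identity $\phi_a(f) = f - a(f-f(0))/X$ and substituting $X = a$, gives
\[
\mathrm{ev}_a(\phi_a(f)) = f(a) - (f(a) - f(0)) = f(0) \in A,
\]
so that $\mathrm{ev}_a(\phi_a(A[[X]])) \subseteq A$. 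Hypothesis $\mathbf{Ded_3}$ provides some $b \in \widehat{A}_\fp \setminus A$, and producing $g \in A[[X]]$ with $g(a) = b$ will then force $g \notin \phi_a(A[[X]])$, proving non-surjectivity.

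The main obstacle is thus the surjectivity of $\mathrm{ev}_a$ onto $\widehat{A}_\fp$. To establish it I will use that, in the complete discrete valuation ring $\widehat{A}_\fp$, the element $a$ factors as $u \pi^e$ with $\pi$ a uniformizer and $u$ a unit, so that $a \widehat{A}_\fp = \fp^e \widehat{A}_\fp$ is open. The density of $A$ in $\widehat{A}_\fp$ then provides some $c_0 \in A$ with $b - c_0 \in a \widehat{A}_\fp$; writing $b - c_0 = a b_1$ and iterating yields an expansion $b = \sum_{n \ge 0} c_n a^n$ with $c_n \in A$, i.e. $b = g(a)$ for $g := \sum c_n X^n \in A[[X]]$, which completes the argument.
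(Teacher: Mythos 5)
Your proof is correct and follows exactly the route the paper sketches: parts 1) and 2) via the coefficient recursion $d_n = c_n - a\,c_{n+1}$ (with Krull's intersection theorem, i.e.\ $\bigcap_k (a^k)=0$ for a non-unit $a$, giving injectivity), and part 3) via the evaluation map $f \mapsto f(a)$ into the completion $\hat{A}_{\fp}$ for a prime $\fp$ dividing $a$, which is precisely the hint stated before the proposition. The only detail you supply beyond that sketch is the surjectivity of the evaluation map, which your successive-approximation ($a$-adic expansion) argument handles correctly and which matches what the paper's model case (Lemma \ref{evalpinv}, for $A=\Z$ and $a=p$) asserts.
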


\medskip

\chapter{Ind- and pro-Hermitian vector bundles over arithmetic curves}\label{infiniteHermitian}

\medskip

In this chapter, we introduce diverse categories of infinite dimensional Hermitian vector bundles over an arithmetic curve $\Spec \OK$ defined by the ring of integers $\OK$ of some number field $K$. These categories are constructed from the categories $\CP_A$ and $\CTC_A$ investigated in the previous chapters, specialized to the case of the Dedekind ring $A = \OK$ (of  type $\mathbf{Ded_3}$), by enriching the objects and the morphisms by ``Hermitian data". 

In the applications to Diophantine geometry, we will be mainly interested in \emph{pro-Hermitian} vector bundles: their underlying ``algebraic" objects will be the topological $\OK$-modules in $\CTC_\OK.$ In the basic case where $\OK = \Z,$ these are precisely the pro-Euclidean lattices described in the Introduction (see  \ref{debut}). As already indicated in \emph{loc. cit.}, they admit alternative descriptions, either (i) in terms of objects $\Eh$ of $\CTC_\OK$ and of Hilbert spaces $E_\sigma^\hilb$ densely embedded in the completed tensor products $\Eh_\sigma \simeq \Eh \hat{\otimes}_\sigma \C$ associated to the diverse complex embeddings $\sigma: K \hra \C,$ or (ii) in terms of projective systems    
$$\Eb_{\bullet} : \Eb_0 \stackrel{q_0}{\longleftarrow}\Eb_1 \stackrel{q_1}{\longleftarrow}\dots \stackrel{q_{i-1}}{\longleftarrow}\Eb_i \stackrel{q_i}{\longleftarrow} \Eb_{i+1} \stackrel{q_{i+1}}{\longleftarrow} \dots$$
of surjective admissible morphisms of Hermitian vector bundles over $\Spec \OK.$

The equivalence of these descriptions is elementary, but quite useful in practice. Other constructions described in this chapter are mostly formal, and their details could be skipped at first reading.

The last paragraphs of this chapter are devoted to diverse examples, that are quite elementary but should convey some  feeling of the ``concrete significance" of pro-Hermitian vector bundles and of the technical subtleties one may encounter when handling them.

Notably, in Section \ref{ArHB}, we introduce the ``arithmetic Hardy spaces" $\Hbh(R)$ and the ``arithmetic  
Bergman spaces" $\Bbh(R)$. These pro-Euclidean lattices  constitute the archetypes of the pro-Hermitian vector bundles over arithmetic curves that we shall investigate in the sequel, when applying the formalism developed here to study the interaction of complex analytic geometry and formal geometry (over the integers) in diverse Diophantine settings. 

Besides, in Section \ref{InjSurj} we show, by explicit examples, that the injectivity or surjectivity properties of the morphisms of topological $\OK$-modules and of complex Fr\'echet and Hilbert spaces underlying some morphism of pro-Hermitian vector bundles are in general rather subtly related. 

\bigskip

We denote by $K$ a number field, by $\OK$ its ring of integers, and by $\pi: \Spec \OK \lra \Spec \Z$ the morphism of schemes from $\Spec \OK$ to $\Spec \Z.$ .

\section{Definitions}

\subsection{Ind-Hermitian vector bundles}\label{indherdef}
We define an \emph{ind-Hermitian vector bundle} over the arithmetic curve  $\Spec \OK$ as a pair 
$$\Fb := (F, (\Vert. \Vert_\sigma)_{\sigma: K\hra \C})$$
where $F$ is an object of $\CP_{\OK}$ --- namely a countably generated projective $\OK$-module --- and $(\Vert. \Vert)_{\sigma: K\hra \C}$ is a family of prehilbertian norms on the complex vector spaces
$$F_\sigma := F \otimes_\sigma \C$$
deduced from the $\OK$-module $E$ by the base change $\sigma: \OK \lra \C$. Moreover, the family $(\Vert. \Vert)_{\sigma: K\hra \C}$ is required to be invariant under complex conjugation.\footnote{Namely, for any field embedding $\sigma: K \hra \C$, the norm $\Vert.\Vert_\sigma$ on $F_\sigma$ and the norm $\Vert.\Vert_{\overline{\sigma}}$ on $F_{\overline{\sigma}}$ attached to the complex conjugate embedding $\overline{\sigma}$ coincide through the $\C$-antilinear isomorphism
$$
\begin{array}{rcl}
  F\otimes_\sigma \C & \simeq   & F\otimes_{\overline{\sigma}} \C   \\
e \otimes \lambda &\longmapsto & \overline{ e \otimes \lambda} := e \otimes \overline{\lambda}. 
\end{array}.
$$
In other words, $\Vert \overline{v} \Vert_{\overline{\sigma}} = \Vert v \Vert \mbox{ for every $v \in F_\sigma.$ }$}

An \emph{isometric isomorphism} $\phi: \Fb \lra \Fb'$ between two ind-Hermitian vector bundles
$\Fb := (F, (\Vert. \Vert)_{\sigma: K\hra \C})$
and 
$\Fb' := (F', (\Vert. \Vert')_{\sigma: K\hra \C})$ over $\Spec \OK$ is an isomorphism of $\OK$-modules $\phi: F \lrasim F$ such that, for every embedding $\sigma: K \hra \C,$ the $\C$-linear isomorphism $\phi_\sigma: F_\sigma \lrasim F'_\sigma$ is isometric with respect to the norms $\Vert.\Vert_\sigma$ and $\Vert.\Vert'_\sigma.$

\subsection{Pro-Hermitian vector bundles}\label{proinitial}
By definition,  a \emph{pro-Hermitian vector bundle over $\Spec \OK$} is the data
$$\Ebh:= (\hE, (\Eb_U)_{U \in \cU(\hE)})$$
of an object $\hE$ of $\CTC_{\OK}$ and, for any open saturated $\OK$-submodule $U$ of $\hE$, of a structure of Hermitian vector bundle over $\Spec \OK$
$$\Eb_U := (E_U, (\Vert.\Vert_{U,\sigma})_{\sigma:K \hra \C})$$
on the finitely generated projective $\OK$-module 
$E_U := \hE/U.$

 Moreover, for any two open saturated $\OK$-submodules $U$ and $U'$ of $\hE$ such that $U \subset U',$
the surjective morphism of $\OK$-modules
$p_{U' U}: E_U \lra E_{U'}$
is required to define a surjective \emph{admissible} morphism of Hermitian vector  bundles from $\Eb_U$ onto $\Eb_{U'}$. 

An \emph{isometric isomorphism} $\psi: \Ebh \lra \Ebh'$ between two pro-Hermitian vector bundles $$\Ebh= (\hE, (\Eb_U)_{U \in \cU(\hE)})$$
and 
$$\Ebh':= (\hE', (\Eb'_{U'})_{U' \in \cU(\hE')})$$
is an isomorphism $\psi : \Eh \lrasim \Eh'$ of topological\footnote{According to Proposition \ref{prop:HomHom}, any isomorphism $\psi : \Eh \lrasim \Eh'$ of $\OK$-modules is actually an isomorphism of topological $\OK$-modules.} $\OK$-modules such that, for any $U$ in $\cU(\Eh)$, of image $U':= \psi(U)$ (necessarily in $\cU(\Eh')$), the induced isomorphism of $\OK$-modules 
$$\psi_U: E_U := \Eh/U \lra E'_{U'} := \Eh'/U'$$
defines an isometric isomorphism of Hermitian vector bundles from $\Eb_U$ onto $\Eb'_{U'}.$
 
\subsection{The complex topological vector spaces associated to a pro-Hermitian vector bundle}\label{topologicalcomplexpro}

Let $\Ebh:= (\hE, (\Eb_U)_{U \in \cU(\hE)})$ be a pro-Hermitian vector bundle over $\Spec \OK$, as above, and let $\sigma: K \hlra \C$ be a field embedding.  

\medskip

\subsubsection{}\label{limnaive} We may apply the functor 
$$.\hat{\otimes}_{\OK, \sigma} : \CTC_{\OK} \lra \CTC_\C$$ to the topological $\OK$-module $\hE.$
We thus define the completed  tensor product
$$
\hE_\sigma := \hE \hat{\otimes}_{\OK, \sigma} \C.$$
By its very definition, $\hE_\sigma$ may be identified with an inverse limit of finite dimensional complex vector spaces equipped with the discrete topology:
$$\hE_\sigma \simeq \varprojlim_{U\in \cU(\hE)} E_{U,\sigma}.$$

Besides this ``pro-discrete" topology, which makes it an object of $\CTC_\C,$ the complex vector space $\hE_\sigma$ also admits a canonical separated and locally convex topology : it is defined by  taking the projective limit $\varprojlim_{U\in \cU(\hE)} E_{U,\sigma}$ that defines $\hE_\sigma$ in the category of locally convex complex vector spaces, when each finite dimensional complex vector space $E_{U,\sigma}$ is equipped with its  usual (separated and locally convex) topology. 

Equipped with this topology, $\hE_\sigma$ is a nuclear Fr\'echet space. Actually,  in the category $\CTC_\C$, there exists an isomorphism
$\phi : \Eh_\sigma \lrasim \C^I$
for some countable set $I$ (where $\C^I$ is equipped with the product of the discrete topology on each factor $\C$), and any such isomorphism is an isomorphism of complex locally convex vector spaces,  when $\Eh_\sigma$ (resp., $\C^I$) is equipped with its natural locally convex topology (resp., with the product of the usual topology on each factor $\C$).

From now on, the completed tensor products $\hE_\sigma$ associated to some pro-Hermitian vector bundle $\Ebh$ (or more generally to some object $\Eh$ in $\CTC_\OK$) will be always be endowed with its canonical topology of complex Fr\'echet space. 

We shall denote by
$$p_U: \Eh \lra \Eh/U =: E_U$$
the quotient map, and by $$p_{U,\sigma}: \Eh_\sigma \lra E_{U,\sigma}$$ its ``completed complexification".

\subsubsection{}\label{limHilb}  Observe that the 
Hermitian vector spaces  $\Eb_{U,\sigma}$ and the ``admissible" $\C$-linear maps $p_{U' U,\sigma}: E_{U,\sigma} \lra E_{U',\sigma}$ also constitute a projective system in the category  the objects of which are the complex normed spaces and the morphisms, the continuous linear maps \emph{of operator norm $\leq 1$.} 

This projective system admits a limit in this category, that may be described as follows.  
 
 Its underlying $\C$-vector space is  the subspace
 of 
  \begin{align*}\Eh_\sigma & :=\varprojlim_{U\in \cU(\hE)} E_{U,\sigma} \\
 & =\Big\{(x_U)_{U \in \cU(\hE)}\in \prod_{U \in \cU(\hE)} E_{U,\sigma} \mid \mbox{for any $(U,U') \in \cU(\hE)^2$},   U \subset U' \Longrightarrow p_{U'U}(x_{U})= x_{U'} \Big\}
 \end{align*}
 defined by ``uniformly bounded" elements, namely: 
$$
E_\sigma^{\hilb}  := {\varprojlim_U}^{\hilb} E_{U,\sigma} 
= \Big\{ (x_U)_{U \in \cU(\hE)}\in \varprojlim_U E_{U,\sigma} \mid
\sup_{U\in \cU(\hE)} \Vert x_U \Vert_{\Eb_{U,\sigma}} < +\infty \Big\}. 
$$
Its norm is the norm $\Vert.\Vert_{E_\sigma^{\hilb}}$  defined by the equality 
\begin{equation}\label{defhilpro}
\Vert x  \Vert_{E_\sigma^{\hilb}}
 := \sup_{U\in \cU(\Eb)} \Vert x_U \Vert_{\Eb_{U,\sigma}}
 \end{equation}
  for any element $x=(x_U)_{U \in \cU(\hE)}$ in $E^\hilb_\sigma$. 
 Actually $\Vert x_U \Vert_{\Eb_{U,\sigma}}$ is a non-decreasing function of $U \in \cU(\Eh),$ and therefore we also have:
$$\Vert x  \Vert_{E_\sigma^{\hilb}}
 = \lim_{U\in \cU(\Eb)} \Vert x_U \Vert_{\Eb_{U,\sigma}}.$$ 
 
 The following proposition is a straightforward consequence of the definitions, and its proof is left to the reader:

\begin{proposition} Equipped with the norm $\Vert.\Vert_{E_\sigma^{\hilb}}$, the complex vector space $\hE_\sigma^{\hilb}$ becomes a separable Hilbert space. For any $U \in \cU(\Eh),$ the map
$$p_{U,\sigma \mid E^\hilb_\sigma} : E_\sigma^\hilb  \lra E_{U,\sigma}$$
is a ``co-isometry". In other words, the Hermitian norm $\Vert . \Vert_{\Eb_U, \sigma}$ coincides with the quotient norm deduced from the Hilbertian norm $\Vert.\Vert_{E_\sigma^{\hilb}}$ by means of the surjective $\C$-linear map $p_{U,\sigma \mid E^\hilb_\sigma}.$

Moreover, when  $\hE_\sigma^{\hilb}$ is equipped with its topology of Hilbert space, and $\hE_\sigma$  with its canonical topology of separated locally convex complex vector space,
 the inclusion morphism
$$i_\sigma: E_\sigma^{\hilb} \longrightarrow \hE_\sigma$$
is continuous with dense image.    \qed
\end{proposition}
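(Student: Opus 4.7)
My plan is to reduce to a countable cofinal subsystem in $\cU(\Eh)$ and then use the orthogonal splittings coming from admissibility to present $E_\sigma^\hilb$ as a Hilbert direct sum of finite-dimensional hermitian vector spaces, from which all three assertions will follow painlessly. I would first fix a defining filtration $(U_i)_{i \in \N}$ in $\cU(\Eh)^\N$, whose existence is granted by $\mathbf{CTC_2}$. Since the $U_i$ are cofinal in $\cU(\Eh)$ and since the transition maps of the projective system are admissible surjections, hence contractions after each base change $\sigma: K \hra \C$, the supremum defining $\Vert.\Vert_{E_\sigma^\hilb}$ in \eqref{defhilpro} coincides with the non-decreasing limit indexed by $i \in \N$. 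This identifies $E_\sigma^\hilb$ with ${\varprojlim_i}^\hilb E_{U_i, \sigma}$, the projective limit of the $E_{U_i,\sigma}$ taken in the category of complex normed spaces with contractions as morphisms.

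Next, for every $i \geq 1$, I would consider the orthogonal $\C$-linear section $s_{i-1}: E_{U_{i-1},\sigma} \lra E_{U_i,\sigma}$ of $q_{i-1,\sigma}$ whose image is the orthogonal complement of $S_{i,\sigma} := \ker q_{i-1,\sigma}$ in $E_{U_i,\sigma}$ (with respect to the norm $\Vert.\Vert_{\Eb_{U_i},\sigma}$), and set $S_0 := E_{U_0}$. By admissibility of $q_{i-1}$, the norm $\Vert.\Vert_{\Eb_{U_{i-1}},\sigma}$ is the quotient norm, so $s_{i-1}$ is isometric. Iterating these sections yields an isometric isomorphism of complex Fr\'echet spaces $\hE_\sigma \lrasim \prod_{i \in \N} S_{i, \sigma}$, under which $E_\sigma^\hilb$ is identified with the set of sequences $(t_i)_{i \in \N} \in \prod_i S_{i,\sigma}$ satisfying $\sum_i \Vert t_i\Vert^2_{\Eb_{U_i},\sigma} < + \infty$, endowed with the norm $(t_i)_i \mapsto (\sum_i \Vert t_i\Vert^2_{\Eb_{U_i}, \sigma})^{1/2}$. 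This makes $E_\sigma^\hilb$ manifestly a separable Hilbert space.

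Once this identification has been set up, the remaining assertions follow formally. The co-isometry property of $p_{U_i,\sigma \mid E_\sigma^\hilb}$ is immediate: any $\xi \in E_{U_i,\sigma}$ lifts canonically to the element of the above Hilbert space supported on the first $i+1$ factors via $s_0, \ldots, s_{i-1}$, and this lift has $E_\sigma^\hilb$-norm equal to $\Vert \xi\Vert_{\Eb_{U_i},\sigma}$. For an arbitrary $U \in \cU(\Eh)$, one factors $p_{U,\sigma} = p_{U,U_i,\sigma} \circ p_{U_i,\sigma}$ with $U_i \subset U$ and uses that the composition of two co-isometries is a co-isometry. The continuity of $i_\sigma$ follows from the universal property of the Fr\'echet projective limit $\hE_\sigma \simeq \varprojlim_i E_{U_i,\sigma}$ together with the fact that each $p_{U_i,\sigma} \circ i_\sigma$ is a contraction by the very definition of $\Vert.\Vert_{E_\sigma^\hilb}$, while the density of $i_\sigma(E_\sigma^\hilb)$ in $\hE_\sigma$ follows from the surjectivity of each $p_{U_i,\sigma \mid E_\sigma^\hilb}$ combined with the description of basic open neighborhoods in $\hE_\sigma$ as pullbacks of open sets from some $E_{U_i,\sigma}$. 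There is no serious obstacle in this proof: all the substance is packaged into the orthogonal splitting, which is merely the place-by-place application of Gram--Schmidt to the admissible transition maps.
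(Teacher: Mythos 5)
Your proof is correct and complete; the paper states this proposition as ``a straightforward consequence of the definitions'' and leaves the proof to the reader, so there is no competing argument to compare against. Your reduction to a countable defining filtration followed by the iterated orthogonal splittings --- which exhibits $E_\sigma^\hilb$ as the $\ell^2$-direct sum of the finite-dimensional hermitian spaces $S_{i,\sigma}$ and $\hE_\sigma$ as their topological product --- is exactly the hermitian refinement of Proposition \ref{prodfin}, and it delivers the separable Hilbert space structure, the co-isometry property (via the canonical finitely supported lifts and the factorization $p_{U,\sigma}=p_{UU_i,\sigma}\circ p_{U_i,\sigma}$), and the continuity and density of $i_\sigma$ in the intended way.
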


\medskip 

Finally observe that the constructions \ref{limnaive} and \ref{limHilb} are clearly compatible with isometric isomorphisms of pro-Hermitian vector bundles. 

\subsection{An alternative description of pro-Hermitian vector bundles}\label{subsubs:Alternatpro}
\subsubsection{}  The previous constructions lead us to the following alternative definition of pro-hermi\-tian vector bundles over $\Spec \OK$, which turns out to be more flexible as their initial definition in terms of projective systems of (finite dimensional) Hermitian  vector bundles.
 
 We may define  a pro-Hermitian vector bundle over $\Spec \OK$  as the data
 \begin{equation}\label{alternativepro}
 \Ebh:= (\hE, \left((E_{\sigma}^{\hilb},\Vert. \Vert_\sigma, i_\sigma \right)_{\sigma: K \hra \C}),
 \end{equation}
 where $\hE$ is an object of $\CTC_{\OK}$ and where, for every field embedding $\sigma: K \hra \C$, $E_{\sigma}^{\hilb}$ denotes a complex Hilbert space, $\Vert. \Vert_\sigma$ its norm, and $i_\sigma: E_\sigma^{\hilb} \longrightarrow \hE_\sigma$ a continuous injective $\C$-linear map with dense image. 
 
 The data  $$\left((E_{\sigma}^{\hilb},\Vert. \Vert_\sigma, i_\sigma)\right)_{\sigma: K \hra \C}$$ are required to be com\-pa\-tible with complex conju\-gation. Namely, one requires the existence, for every $\sKC,$ of  a $\C$-antilinear bijective isometry 
 $\gamma_\sigma: E^\hilb_\sigma \lrasim E^\hilb_{\overline{\sigma}}$ such that the following relation hold, where $\overline{\cdot}$ denotes the $\C$-antilinear  isomorphism from $\hE_\sigma:= \hE \hat{\otimes}_{\sigma}\C$ onto $\hE_{\overline{\sigma}}:= \hE\hat{\otimes}_{\overline{\sigma}}\C$ deduced from the complex conjugation on $\C$: 
 \begin{equation}\label{gammaisigma}
 i_{\overline{\sigma}} \circ \gamma_\sigma = \overline{\cdot} \circ i_\sigma.
 \end{equation}
 
When they exist, the maps $\gamma_\sigma$ are uniquely determined by the relations (\ref{gammaisigma}). Moreover, they are easily seen to exist when one is given a pro-Hermitian vector bundle $\Ebh:= (\hE, (\Eb_U)_{U \in \cU(\hE)})$ in the sense of  paragraph \ref{proinitial}, and when $E_\sigma^\hilb$, $\Vert. \Vert_\sigma := \Vert. \Vert_{E^\hilb_\sigma}$, and $i_\sigma$ are defined as in paragraph \ref{limHilb}.

 Conversely, starting from the data (\ref{alternativepro}), for every $U \in \cU(\Eh),$ one defines a structure of Hermitian vector bundle 
 $$\Eb_U := (E_U, (\Vert.\Vert_{U,\sigma})_\sKC)$$
 on the finitely generated projective $\OK$-module $E_U:= \Eh/U$ by defining the norm $\Vert. \Vert_{U,\sigma}$ as the quotient norm deduced from the Hilbert norm $\Vert.\Vert_\sigma$ on $E^\hilb_\sigma$ by requiring the $\C$-linear maps 
 $$p_{U,\sigma}\circ i_\sigma: E_\sigma^\hilb \lra E_{U,\sigma}$$
 to be co-isometry. (Observe that the density of $i_\sigma(E_\sigma^\hilb)$ in $\Eh_\sigma$ precisely means that this map is surjective for any $U \in \cU(\Eh).$)
 
 In this way, one constructs a pro-Hermitian vector bundle $\Ebh:= (\hE, (\Eb_U)_{U \in \cU(\hE)})$ in the sense of  paragraph \ref{proinitial} from the data (\ref{alternativepro}).
 
 The reader will easily check that these two constructions are inverse of each other.
 
\medskip 

\subsubsection{} When dealing with pro-Hermitian vector bundles defined by data of type (\ref{alternativepro}), we shall occasionally write $i^{\Ebh}_\sigma$ instead of $i_\sigma$ to make the dependence on $\Ebh$ explicit, notably when discussing ``concrete" examples of pro-Hermitian vector bundles occurring in Diophantine geometry.    
 
 Conversely, when investigating the general properties of pro-Hermitian vector bundles  shall also sometimes avoid to name explicitly the morphisms $i_\sigma$ and $\gamma_\sigma$, and identify $E^\hilb_\sigma$ to its image by $i_\sigma$. Accordingly, a pro-Hermitian vector bundle will be often denoted  by
 $$\Ebh := (\hE, (E_{\sigma}^{\hilb},\Vert.\Vert_{\sigma})_{\sigma: K \hra \C}).$$

 Having identified $E^\hilb_\sigma$ with some subspace of $\hE_\sigma:= \hE \hat{\otimes}_{\sigma}\C$, we may also extend the Hilbert norm $\Vert. \Vert_\sigma$ on $E^\hilb_\sigma$ to a function
  $$\Vert. \Vert_{\sigma}: \hE_{\sigma} \lra [0, + \infty]$$
  by letting
  $$\Vert v \Vert_{\sigma} := + \infty \mbox{ for any $v\in \hE_{\sigma}\setminus   E^\hilb_\sigma$}.$$ 
  Then the relations
  $$\Vert v \Vert_\sigma = \sup_{U \in \cU(\Eh)} \Vert p_{U,\sigma}(v) \Vert_{\Eb_U, \sigma} = \lim_{U \in \cU(\Eh)} \Vert p_{U,\sigma}(v) \Vert_{\Eb_U, \sigma}$$
  hold for any $v \in \Eh_\sigma.$

  Observe that, for any $R \in \R_{+},$ the ball $\{v \in E^\hilb_\sigma \mid \Vert v \Vert \leq R \}$ is closed in the locally convex $\C$-vector space $\hE_{\sigma}.$ (Indeed, it is convex and compact in the weak topology of $E^\hilb_\sigma$, hence in the weak topology of $\hE_{\sigma}$.)
  
    \subsection{Direct images. Ind- and pro-Euclidean lattices} The construction of the direct image of a Hermitian vector bundle over $\Spec \OK$ by the morphism 
    $\pi: \Spec \OK \lra \Spec \Z$ discussed in Section  \ref{Dircan} above extend to ind- and pro-Hermitian vector bundles.
    
    For instance, for any pro-Hermitian vector bundle $\Ebh$ over $\Spec \OK$, we may define its direct image by $\pi$ as  the pro-Hermitian vector bundle over $\Spec \Z$:
  $$\pi_{\ast} \Et := (\pi_{\ast}\hE, (E_{\C}^\hilb, \Vert.\Vert_{\C}))$$
  where $\pi_{\ast}\hE$ is nothing but $\hE$ considered as a topological $\Z$-module, and where  $E_{\C}^\hilb$ denotes the complex Hilbert space defined as
  $$E_{\C}^\hilb := \bigoplus_{\sigma: K \hra \C} E^\hilb_{\sigma},$$
  equipped with the norm $\Vert.\Vert_{\C}$ such that, for any $(x_{\sigma})_{\sigma:K\hra \C} \in E_{\C}^\hilb,$
  $$\Vert(x_{\sigma})_{\sigma:K\hra \C}\Vert^2_{\C}:= \sum_{\sigma: K \hra \C} \Vert x_{\sigma}\Vert^2_{\sigma}.$$
  (Observe that, since $\hE \hat{\otimes}_{\Z}\C \simeq \bigoplus_{\sigma: K \hra \C} \hE_{\sigma}$, the Hilbert space $E_{\C}^\hilb$ is indeed a dense subspace of $(\pi_{\ast}\hE)_{\C}$.)
  
  This construction of direct images reduces many questions concerning ind- and pro-Hermitian vector bundles over $\Spec \OK$ to questions concerning  ind- and pro-Hermitian vector bundles over the  ``final" arithmetic curve $\Spec \Z$. We will often  say \emph{ind-Euclidean lattice} (resp. \emph{pro-Euclidean lattice})  instead of ind-Hermitian (resp. pro-Hermitian) vector bundle over $\Spec \Z.$
  
  Observe also that a pro-Hermitian vector bundle $\Ebh= (\hE, (E_{\C}^\hilb,\Vert.\Vert_{\C}))$ on $\Spec \Z$ may be equivalently defined as a pair $(\hE, (E_{\R},\Vert. \Vert_{\R}))$ where $(E_{\R},\Vert. \Vert_{\R})$ denotes a real Hilbert space equipped with a continuous $\R$-linear injection with dense image
  $$E_{\R}^\hilb \hlra \hE_{\R}:= \hE \hat{\otimes}_{\Z}\R.$$
  (Indeed the real Hilbert space $(E_{\R},\Vert. \Vert_{\R})$ is deduced from $(E_{\C},\Vert. \Vert_{\C})$ by taking its fixed point under complex conjugation. Conversely, we recover  $(E_{\C},\Vert. \Vert_{\C})$  from  $(E_{\R},\Vert. \Vert_{\R})$ by extending the scalars from $\R$ to $\C$.)
  
Similar remarks concerning ind-Hermitian vector bundles might be developed and will be left to the reader.
We only observe that an ind-Hermitian vector bundle $\Fb$ over $\Spec \Z$ may be defined as a pair $(F, \Vert.\Vert)$ where $F$ is a countable free $\Z$-module and $\Vert. \Vert$ is a prehilbertian norm on the real vector space $F_\R:= F\otimes_\Z \R.$

 \section{Hilbertisable ind- and pro-vector bundles}

In applications, it is convenient to have at one's disposal  weakened variants of the notions of ind- and pro-Hermitian vector bundles, where the (pre-)hilbertian norms that enter into their definitions in (\ref{indherdef}) and in (\ref{subsubs:Alternatpro}) are replaced by equivalence classes of (pre-)hilbertian norms.

Thus we shall define a \emph{Hilbertisable ind-vector bundle over $\Spec \OK$} as the data 
$$\dddot{F}:= (F, (F_{\sigma}^{\rm top})_{\sigma:\hra K})$$
of an object $F$ of $\CP_\OK$ and of structures $F_{\sigma}^{\rm top}$ of complex topological vector spaces on the $\C$-vector spaces $F_\sigma:= F\otimes_\sigma \C$ that may be defined by prehilbertian norms. The conjugations maps $F_\sigma \lrasim F_{\overline{\sigma}}$ are required to be homeomorphisms.

To any ind-Hermitian vector bundle $\Fb := (F, (\Vert. \Vert_\sigma)_{\sigma: K\hra \C})$ is attached its underlying Hilbertisable ind-vector bundle
$\dddot{F}:= (F, (F_{\sigma}^{\rm top})_{\sigma:\hra K})$, where $F_{\sigma}^{\rm top})_{\sigma:\hra K}$ denotes $F_\sigma$ equipped with the norm topology defined by $\Vert. \Vert_\sigma.$

Similarly, we shall  define a \emph{Hilbertisable pro-vector bundle over}  $\Spec \OK$ as a pair
 $$\Et := (\hE, (E_{\sigma}^{\hilb}, i_\sigma)_{\sigma: K \hra \C})$$
 where   $\hE$ is an object of $\CTC_{\OK}$ and where, for every field embedding $\sigma: K \hra \C$, $E_{\sigma}^{\hilb}$ denotes a complex Hilbertisable  vector space (that is, a topological complex vector space, the topology of whcih may be defined by some Hilbert space structure) and $i_\sigma: E_\sigma^{\hilb} \longrightarrow \hE_\sigma$ a continuous injective $\C$-linear map with dense image. These data are required to be compatible with complex conjugation (namely, one requires the existence of $\C$-antilinear isomorphisms  $\gamma_\sigma: E^\hilb_\sigma \lrasim E^\hilb_{\overline{\sigma}}$ that satisfy the relations (\ref{gammaisigma})).
 
 Observe that, by the closed graph theorem, the ``Hilbertisable" topology on $E^\hilb_\sigma$ is the unique topology of Fr\'echet space on the complex vector subspace $E^\hilb_\sigma$ of $\Eh_\sigma$ which makes continuous the injection from $E^\hilb_\sigma$ into $\Eh_\sigma$.

To any pro-Hermitian vector bundle 
$$\Ebh:= (\hE, \left(E_{\sigma}^{\hilb},\Vert. \Vert_\sigma, i_\sigma \right)_{\sigma: K \hra \C})$$
is attached its underlying  Hilbertisable pro-vector bundle:
$$\Et := (\hE, (E_{\sigma}^{\hilb}, i_\sigma)_{\sigma: K \hra \C}).$$

\section{Constructions as inductive and projective limits}

\subsection{Construction of ind-Hermitian vector bundles as inductive limits}\label{Consinductive}

Consider an inductive system 
$$\Fb_{\bullet} : \Fb_0 \stackrel{j_0}{\longrightarrow}\Fb_1 \stackrel{j_1}{\longrightarrow}\dots \stackrel{j_{i-1}}{\longrightarrow}\Fb_i \stackrel{j_i}{\longrightarrow} \Fb_{i+1} \stackrel{j_{i+1}}{\longrightarrow} \dots$$
of injective admissible morphisms of Hermitian vector bundles over $\Spec \OK$.

To $\Fb_{\bullet}$, we may attach an ind-Hermitian vector bundle 
$$\varinjlim_i \Fb_i := (F, (\Vert. \Vert_\sigma)_{\sigma: K \hra \C})$$
defined by the following simple construction.

Its underlying $\OK$-module is the inductive limit
$$F:= \varinjlim_i F_i.$$
By construction, it satisfies Condition (4) in Proposition \ref{CPAdef}, and therefore is indeed an object of $\CP_{\OK}.$

Moreover, for any field embedding $\sigma: K \hra \C,$ the maps $j_{i,\sigma}: F_{i,\sigma} \lra F_{i+1, \sigma}$ are isometric with respect to the Hermitian norms $\Vert.\Vert_{\Fb_{i,\sigma}}$ and $\Vert.\Vert_{\Fb_{i+1,\sigma}}$. Therefore there is a unique norm $\Vert.\Vert_\sigma$  on  $F_\sigma:= \varinjlim_i F_{i,\sigma}$ such that the canonical maps $F_{i,\sigma} \hlra F_\sigma$ are isometric with respect to the norms  $\Vert.\Vert_{\Fb_{i,\sigma}}$ and $\Vert.\Vert_\sigma$. The so-defined norm $\Vert. \Vert_\sigma$, like the norms $\Vert.\Vert_{\Fb_{i,\sigma}}$, is clearly  a prehilbertian norm.

Observe that, up to isometric isomorphism, any ind-Hermitian vector bundle $$\Fb:= (F, (\Vert.\Vert_\sigma)_{\sigma: K \hra \C})$$ over $\Spec \OK$ is the limit of an inductive system $\Fb_{\bullet}$ of Hermitian vector bundles as above. Indeed, we may consider a sequence $(F_i)_{i \in \N}$ of $\OK$-submodules of $F$ satisfying Condition (4) in Proposition \ref{CPAdef} (with $A= \OK$), and endow each $F_i$ with the Hermitian norms restrictions of the given norms $(\Vert.\Vert_\sigma)_\sKC$: the so-defined Hermitian vector bundles $\Fb_i$ define an inductive system $\Fb_\bullet$ the limit of which $\varinjlim_i \Fb_i$ is canonically isomorphic to $\Fb.$

\subsection{Construction of pro-Hermitian vector bundles as projective limits}\label{Consprojective}

Consider a projective system
$$\Eb_{\bullet} : \Eb_0 \stackrel{q_0}{\longleftarrow}\Eb_1 \stackrel{q_1}{\longleftarrow}\dots \stackrel{q_{i-1}}{\longleftarrow}\Eb_i \stackrel{q_i}{\longleftarrow} \Eb_{i+1} \stackrel{q_{i+1}}{\longleftarrow} \dots$$
of surjective admissible morphisms of Hermitian vector bundles over $\Spec \OK.$

To $\Eb_{\bullet}$, we may associate a pro-Hermitian vector bundle $$\varprojlim_i \Eb_i = (\hE, (\Eb_U)_{U \in \cU(\hE)})$$  over $\Spec \OK$ defined as follows.

Its underlying topological $\OK$-module is the object of $\CTC_\OK$ defined as the projective limit
$$\Eh := \varprojlim_i E_i.$$
Let us consider the kernels $U_i := \ker p_i$ of the canonical projections $p_i: \Eh \lra E_i$. The sequence $(U_i)_{i \in \N}$ is non-increasing and constitutes a basis of neighborhood of $0$ in $\cU(\hE).$ 

For any $U$ in $\cU(\Eh)$, there exists $i \in \N$ such that $U$ contains $U_i$. Then the quotient map
$$p_{UU_i}: E_i := \Eh/U_i \lra E_U := \Eh/U_i$$
is surjective. Consequently, for any embedding $\sigma: K \hra \C,$ the $\C$-linear map 
$$p_{UU_i,\sigma}: E_{i, \sigma} \lra E_{U,\sigma}$$
also is surjective, and  $E_{U,\sigma}$ may be endowed with the Hermitian norm $\Vert. \Vert_{E_{U,\sigma}}$ defined as the quotient norm, defined by means of $p_{UU_i,\sigma}$, of the Hermitian norm $\Vert. \Vert_{\Eb_i, \sigma}$ on $E_{i,\sigma}$. By construction, the Hermitian vector bundle over $\Spec \OK$
$$\Eb_U := (E_U, (\Vert. \Vert_{E_{U,\sigma}})_{\sigma: K \hra \C})$$
is such that $p_{UU_i}: E_i \lra E_U$ becomes a surjective admissible morphism from $\Eb_i$ to $\Eb_U$.

Using the fact that the morphisms $q_i: \Eb_{i+1} \lra \Eb_i$, and therefore their compositions
$$p_{U_i U_{i'}} = q_i \circ \cdots \circ q_{i' -1}: \Eb_{i'} \lra \Eb_{i'-1} \lra \cdots \lra \Eb_i,$$  are surjective admissible, one easily checks that the construction of the Hermitian structure on $\Eb_U$ does not depend on the choice of the open saturated submodule $U_i$ contained in $U$.

Finally, for any $U$ and $U'$ in $\cU(\Eh)$ and any $i \in \N$ such that $U_i \subset U \subset U'$, the commutativity of the diagram 
$$
\xymatrix{ E_i \ar[r]^{p_{U U_i}} \ar[dr]_{p_{U'U_i}} & E_U\ar[d]^{p_{U'U}}\\
& E_{U'}
}
$$  
and the fact that $p_{UU_i}$ (resp. $p_{U'U_i}$) is a surjective admissible surjective morphism from $\Eb_i$ to $\Eb_U$ (resp. from $\Eb_i$ to $\Eb_{U'}$) implies that $p_{U'U}$ is a surjective admissible morphism from $\Eb_U$ to $\Eb_{U'}$. 

Observe that, up to isometric isomorphism, any pro-Hermitian vector bundle $$\Ebh:= (\Eh, (\Eb_U)_{U \in \cU(\Eh)})$$ over $\Spec \OK$ is the limit of some projective system of Hermitian vector bundle $\Eb_{\bullet}$ as above. Indeed, we may choose a decreasing sequence $(U_i)_{i\in \N}$ in $\cU(\Eh))$ which constitutes a basis of neighborhoods of $0$ in $\Eh,$ and consider the projective system
defined by the Hermitian vector bundles $\Eb_i := \Eb_{U_i}$.

\section{Morphisms between ind- and pro-Hermitian vector bundles over $\OK$}

For any two normed complex vector spaces $(V, \Vert. \Vert)$ and $(V',\Vert.\Vert')$ and for any $\lambda$ in $\R_+,$ we may consider the set of $\C$-linear maps of operator norm at most $\lambda$ from $(V, \Vert. \Vert)$ to $(V',\Vert.\Vert')$:
$$\Hom_\C^{\leq \lambda}((V,\Vert.\Vert)), (V',\Vert.\Vert')) :=
%\left
\{ T \in \Hom_\C(V,V') \mid \Arrowvert T \Arrowvert := \sup_{v \in V, \Vert v \Vert \leq 1} \Vert Tv \Vert' \leq \lambda %\right
\}.
$$

Their union
$$ \Hom_\C^{\rm cont}((V,\Vert.\Vert)), (V',\Vert.\Vert')) :=
\bigcup_{\lambda \in \R^+} \Hom_\C^{\leq \lambda}((V,\Vert.\Vert)), (V',\Vert.\Vert'))$$
is the $\C$-vector space of continuous linear maps from $(V, \Vert. \Vert)$ to $(V',\Vert.\Vert')$.

\subsection{Categories of ind-Hermitian vector bundles}\label{CatInd}  Let $\Fb_1$ and $\Fb_2$ be two ind-Hermitian vector bundles over $\OK$. For any $\lambda$ in $\R_+,$ we define
$\Hom_{\OK}^{\leq \lambda} (\Fb_2, \Fb_1)$ as the subset of $\Hom_{\OK}(F_2, F_1)$ consisting of the $\OK$-linear maps
$$\psi: F_2 \lra F_1$$
such that, for every embedding  $\sigma: K \hlra \C,$ the induced $\C$-linear map
$$ \psi_\sigma: F_{2,\sigma} \lra F_{1,\sigma}$$
is continuous, of operator norm $\leq \lambda$, when $F_{2,\sigma}$ and $F_{1,\sigma}$ are equipped with the pre-hilbertian norms $\Vert .\Vert_{ \Fb_{2,\sigma}}$ and $\Vert .\Vert_{ \Fb_{1,\sigma}}$.

Clearly, if $\Fb_1$, $\Fb_2$, and $\Fb_3$ are ind-Hermitian vector bundles over $\OK$, and if $\lambda$ and $\mu$ are two elements of $\R_+,$ the composition of an element $\psi$  in $\Hom_{\OK}^{\leq \lambda} (\Fb_2, \Fb_1)$ and of an element $\psi'$ in  $\Hom_{\OK}^{\leq \mu} (\Fb_3, \Fb_2)$ defines an element $\psi \circ \psi'$ in 
$\Hom_{\OK}^{\leq \lambda \mu} (\Fb_3, \Fb_1)$.

Consequently, it is possible to define a category whose objects are the ind-Hermitian vector bundles over $\OK$ by either of the following constructions:

(a) by defining the morphisms from $\Fb_2$ to $\Fb_1$ to be
\begin{multline*}
\Hom_{\OK}^{\rm cont}(\Fb_2,\Fb_1)   := \bigcup_{\lambda \in \R_+} \Hom_{\OK}^{\leq \lambda} (\Fb_2, \Fb_1)  \\
                                                             = \left\{
\psi \in \Hom_{\OK}(F_2,F_1) \mid 
\mbox{for every $\sigma: K \hra \C, \psi_\sigma \in \Hom^{\rm cont}_\C ( (F_{2,\sigma},\Vert.\Vert_{\Fb_2, \sigma}), 
(F_{1,\sigma},\Vert.\Vert_{\Fb_1, \sigma}))$}
\right\}.
\end{multline*}
The so-defined category ${\rm ind\overline{Vect}}^{\rm cont}(\OK)$ is clearly $\OK$-linear.

(b) by defining the morphisms from $\Fb_2$ to $\Fb_1$ to be $\Hom_{\OK}^{\leq 1} (\Fb_2, \Fb_1)$. The so-defined category will be denoted by ${\rm ind\overline{Vect}}^{\leq 1}(\OK)$. 

Observe that an isomorphism $\psi: \Fb_2 \lrasim \Fb_1$ in ${\rm ind\overline{Vect}}^{\leq 1}(\OK)$ (resp. in ${\rm ind\overline{Vect}}^{\rm cont}(\OK)$) is an isomorphism of $\OK$-modules $\psi: F_2 \lrasim F_2$ such that the $\C$-linear isomorphisms $ \psi_\sigma: F_{2,\sigma} \lrasim F_{1,\sigma}$ is an isometry (resp. a homeomorphism) between the normed vector spaces $(F_2, \Vert .\Vert_{ \Fb_{2,\sigma}})$  and $(F_1, \Vert .\Vert_{ \Fb_{1,\sigma}})$. In particular, isometric isomorphisms of ind-Hermitian vector bundles (as defined in paragraph \ref{indherdef}) are exactly the isomorphisms in ${\rm ind\overline{Vect}}^{\leq 1}(\OK)$.

The inductive limit $\varinjlim_i \Fb_i$ of an inductive system $\Fb_{\bullet}$ of injective admissible morphisms of Hermitian vector bundles, as considered in paragraph \ref{Consinductive}, together with the obvious inclusion maps $\Fb_k \lra \varinjlim_i \Fb_i$, is easily checked to be a inductive limit of $\Fb_{\bullet}$ in the category ${\rm ind\overline{Vect}}^{\leq 1}(\OK)$.

\medskip  We may also introduce an $\OK$-linear  category
${\rm ind\widetilde{Vect}}(\OK)$, whose objects are the Hilbertisable ind-vector bundles over $\Spec \OK$ :  in the category ${\rm ind\widetilde{Vect}}(\OK)$, the set of morphisms between two Hilbertisable ind-vector bundles $\dddot{F}_2:= (F_2, (F_{2,\sigma}^{\rm top})_{\sigma:\hra K})$ and $\dddot{F}_1:= (F_1, (F_{1,\sigma}^{\rm top})_{\sigma:\hra K})$ over $\Spec \OK$ is defined as the the $\OK$-module
\begin{multline*}
\Hom_{\OK}^{\rm cont}(\dddot{F}_2,\dddot{F}_1) 
                                                             \\ = \left\{
\psi \in \Hom_{\OK}(F_2,F_1) \mid 
\mbox{for every $\sigma: K \hra \C, \psi_\sigma: F_{2,\sigma}^{\rm top} \lra F_{1,\sigma}^{\rm top}$ is continuous}
\right\}.
\end{multline*}

Finally there is a natural forgetful functor from ${\rm ind\overline{Vect}}^{\rm cont}(\OK)$ to ${\rm ind\widetilde{Vect}}(\OK)$, which maps an ind-Hermitian vector bundle $\Fb$ to the associated prehilbertisable ind-vector bundle $\dddot{F}$, and is the identity on morphisms. It is easily seen to be an equivalence of category. 

\subsection{Categories of pro-Hermitian vector bundles} 

For any two pro-Hermitian vector bundles $\Ebh_1$ and $\Ebh_2$ over $\Spec \OK$ and for any $\lambda$ in $\R_+,$ we define:
\begin{equation}\label{defhom1}
 \Hom_{\OK}^{\leq \lambda}(\Ebh_1, \Ebh_2) := \varprojlim_{U_2} \varinjlim_{U_1}
\Hom_{\OK}^{\leq \lambda} (\Eb_{1,U_1}, \Eb_{2,U_2}).
\end{equation}
(In the inductive (resp. projective) limit, $U_1$ (resp. $U_2$) varies in the filtered set $\cU(\hE_1)$, ordered by $\supseteq$.)

We also define the set of \emph{continuous $\OK$-morphisms from $\Ebh_1$ and $\Ebh_2$} as the $\OK$-module
\begin{equation}\label{defhom2}
\Hom_{\OK}^{\rm cont} (\Ebh_1, \Ebh_2) := \bigcup_{\lambda \in \R_+} \Hom_{\OK}^{\leq \lambda}(\Ebh_1, \Ebh_2).
\end{equation}

Observe that an element $\tilde{\phi}$ in  $\Hom_{\OK}^{\rm cont}(\Ebh_1, \Ebh_2)$ is uniquely determined by its image $\hat{\phi}$ in 
the $\OK$-module $$\varprojlim_{U_2} \varinjlim_{U_1}
\Hom_{\OK} (E_{1,U_1}, E_{2,U_2})\simeq \Hom^{\rm cont}_{\OK} (\Eh_1, \Eh_2)$$ of $\OK$-linear continous maps from $\Eh_1$ to $\Eh_2$.

The following proposition is a direct consequence of the construction of the Hilbert spaces associated to a pro-Hermitian vector bundle:

\begin{proposition}\label{Altmor} With the above notation, an element $\hat{\phi}$ in $\Hom^{\rm cont}_{\OK} (\Eh_1, \Eh_2)$ may be lifted to an element 
 $\tilde{\phi}$ in $\Hom_{\OK}^{\leq \lambda}(\Ebh_1, \Ebh_2)$ if and only if, for every embedding $\sigma: K \hra \C,$ there exists a continuous $\C$-linear map of operator norm $\leq \lambda$
 $$\phi_\sigma: E_{1,\sigma}^\hilb \lra E_{2,\sigma}^\hilb$$
 between the Hilbert spaces $\Eb_{1,\sigma}^{\rm Hilb}$
and $\Eb_{2,\sigma}^{\rm Hilb}$ such that the following diagram is commutative:
\begin{equation}\label{altdiag}\begin{CD}
E^{\rm Hilb}_{1,\sigma} @>\phi_\sigma>> E^{\rm Hilb}_{2,\sigma} \\
@V{i^{\Ebh_1}_\sigma}VV      @VV{i^{\Ebh_2}_\sigma}V \\
\hE_
{1,\sigma} @>{\hat{\phi}_\sigma}>> \hE_{2,\sigma}.
\end{CD}
\end{equation}
When this holds, these morphisms $\phi_\sigma$ are unique. \qed
\end{proposition}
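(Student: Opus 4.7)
Uniqueness is the easy observation to dispatch first: if $\phi_\sigma$ and $\phi'_\sigma$ both make the square (\ref{altdiag}) commute, then $i^{\Ebh_2}_\sigma \circ (\phi_\sigma - \phi'_\sigma) = 0$, so the injectivity of $i^{\Ebh_2}_\sigma: E^\hilb_{2,\sigma} \hookrightarrow \Eh_{2,\sigma}$ gives $\phi_\sigma = \phi'_\sigma$. I then turn to the two implications.

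For the ``only if'' direction, start with $\tilde{\phi} \in \Hom_{\OK}^{\leq \lambda}(\Ebh_1, \Ebh_2)$, viewed as a compatible family indexed by $U_2 \in \cU(\Eh_2)$: for each $U_2$ there exists some $U_1(U_2) \in \cU(\Eh_1)$ and an admissible morphism of hermitian bundles $\phi_{U_1,U_2} \in \Hom_{\OK}^{\leq \lambda}(\Eb_{1,U_1}, \Eb_{2,U_2})$ representing $\tilde{\phi}$ modulo $U_2$. Fix an embedding $\sigma:K\hra \C$. For $x \in E^\hilb_{1,\sigma}$, set $y_{U_2} := \phi_{U_1,U_2,\sigma}(p_{U_1,\sigma}(i^{\Ebh_1}_\sigma(x))) \in E_{2,U_2,\sigma}$; the compatibility of the $\phi_{U_1,U_2}$ under the transition maps $p_{U_2' U_2}$ shows that $(y_{U_2})_{U_2}$ belongs to $\varprojlim_{U_2} E_{2,U_2,\sigma} = \Eh_{2,\sigma}$ and equals $\hat{\phi}_\sigma(i^{\Ebh_1}_\sigma(x))$, while the operator-norm bound at $\sigma$ on each $\phi_{U_1,U_2}$ together with the fact that $p_{U_1,\sigma}\circ i^{\Ebh_1}_\sigma$ is a co-isometry gives $\|y_{U_2}\|_{\Eb_{2,U_2},\sigma} \leq \lambda \|x\|_{E^\hilb_{1,\sigma}}$ uniformly in $U_2$. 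By the very definition (\ref{defhilpro}) of $E^\hilb_{2,\sigma}$ as the subspace of uniformly bounded elements of $\Eh_{2,\sigma}$, the family $(y_{U_2})_{U_2}$ defines an element $\phi_\sigma(x) \in E^\hilb_{2,\sigma}$ of norm $\leq \lambda \|x\|$, and the resulting map $\phi_\sigma$ is $\C$-linear, of operator norm $\leq \lambda$, and makes (\ref{altdiag}) commute by construction.

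For the ``if'' direction, assume we are given $\hat{\phi} \in \Hom^{\rm cont}_{\OK}(\Eh_1, \Eh_2)$ together with the family $(\phi_\sigma)_\sKC$. Fix $U_2 \in \cU(\Eh_2)$. The composition $p_{U_2} \circ \hat{\phi}: \Eh_1 \lra E_{2,U_2}$ is continuous and lands in a discrete module, so its kernel is open, hence contains some $U_1 \in \cU(\Eh_1)$; this produces an $\OK$-linear map $\phi_{U_1,U_2}: E_{1,U_1} \lra E_{2,U_2}$ such that $p_{U_2} \circ \hat{\phi} = \phi_{U_1,U_2} \circ p_{U_1}$. To check that $\phi_{U_1,U_2}$ has operator norm $\leq \lambda$ at $\sigma$, take $v \in E_{1,U_1,\sigma}$; since $p_{U_1,\sigma}\circ i^{\Ebh_1}_\sigma$ is a co-isometry from the Hilbert space $E^\hilb_{1,\sigma}$ onto $E_{1,U_1,\sigma}$, I can lift $v$ to some $\tilde v \in E^\hilb_{1,\sigma}$ with $\|\tilde v\|_\sigma = \|v\|_{\Eb_{1,U_1},\sigma}$ (the infimum is attained by taking the orthogonal lift). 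Chasing the commutative diagram (\ref{altdiag}) and using again that $p_{U_2,\sigma}\circ i^{\Ebh_2}_\sigma$ is a co-isometry yields
\[
\|\phi_{U_1,U_2,\sigma}(v)\|_{\Eb_{2,U_2},\sigma}
= \|p_{U_2,\sigma}(i^{\Ebh_2}_\sigma(\phi_\sigma(\tilde v)))\|_{\Eb_{2,U_2},\sigma}
\leq \|\phi_\sigma(\tilde v)\|_\sigma \leq \lambda\|\tilde v\|_\sigma = \lambda\|v\|_{\Eb_{1,U_1},\sigma}.
\]
Thus $\phi_{U_1,U_2} \in \Hom_{\OK}^{\leq \lambda}(\Eb_{1,U_1}, \Eb_{2,U_2})$, and as $U_2$ varies the compatibility of the germs follows from the uniqueness of $\phi_{U_1,U_2}$ once $U_1$ is fixed small enough, producing the desired element of the projective-inductive limit (\ref{defhom1}).

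The only genuinely delicate step is the attainment of the quotient norm used to lift $v$ to $\tilde v$ in the reverse direction; this is where the hypothesis that $E^\hilb_{\cdot,\sigma}$ is Hilbertian (rather than merely prehilbertian) is essential, since one needs the orthogonal complement of $\ker(p_{U_1,\sigma}\circ i^{\Ebh_1}_\sigma)$ to exist as a closed supplement. Everything else is diagram-chasing together with the co-isometry property of the structural maps $p_{U,\sigma}\circ i^{\Ebh}_\sigma$ recorded in paragraph \ref{topologicalcomplexpro}(ii).
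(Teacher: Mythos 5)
Your proof is correct and is exactly the unwinding of definitions that the paper intends (the paper states this proposition with no written proof, as a ``direct consequence of the construction'' of the spaces $E^\hilb_\sigma$, so you have simply supplied the omitted details). One small remark: the attainment of the quotient norm by an orthogonal lift, which you single out as the delicate point, is not actually needed — for any lift $\tilde v$ of $v$ the diagram chase gives $\Vert \phi_{U_1,U_2,\sigma}(v)\Vert \leq \lambda \Vert \tilde v \Vert_\sigma$, and taking the infimum over all lifts already yields the bound, so only the surjectivity of $p_{U_1,\sigma}\circ i^{\Ebh_1}_\sigma$ (i.e.\ density of the image of $i^{\Ebh_1}_\sigma$) is used, not completeness.
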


According to Proposition \ref{Altmor}, an element of $\Hom_{\OK}^{\rm cont} (\Ebh_1, \Ebh_2)$ (resp. $\Hom_{\OK}^{\leq \lambda} (\Ebh_1, \Ebh_2)$) may be described as  a pair
\begin{equation}
\label{morpair}
\phit:= (\hphi, (\phi_{\sigma})_{\sigma: K \hra \C}),
\end{equation}
consisting in the following data:
\begin{enumerate}
\item  a continuous morphism of topological $\OK$-modules $$\hphi: \hE_1 \lra \hE_2;$$
\item for every embedding $\sigma: K \hra \C$, 
 a continuous $\C$-linear map continuous map (resp. of operator norm $\leq \lambda$) $$\phi_\sigma: E^{\rm Hilb}_{1,\sigma} \lra E^{\rm Hilb}_{2,\sigma}$$ between the Hilbert spaces $\Eb_{1,\sigma}^{\rm Hilb}$
and $\Eb_{2,\sigma}^{\rm Hilb}$ that is compatible with $\hphi,$ in the sense that the  diagram (\ref{altdiag}) is commutative for every embedding $\sigma: K \hra \C$.
\end{enumerate}

In the following sections, and in the sequel of this monograph, we shall freely use this alternative description of $\Hom_{\OK}^{\rm cont} (\Ebh_1, \Ebh_2)$.

As a special case of this description, observe that, for any Hermitian vector bundle $\Eb$ over $\Spec \OK$ and for any pro-Hermitian vector bundle $\Fbh$ over $\Spec \OK,$ we have a natural identification:
$$\Hom_{\OK}^{\rm cont}(\Eb, \Fbh ) \lrasim \Hom_{\OK} (E, \hF \cap F^{\rm Hilb}_{\C}).$$

For any three pro-Hermitian vector bundles $\Ebh_1,$ $\Ebh_2,$ and $\Ebh_3$ over $\Spec \OK,$ there is a natural $\OK$-bilinear composition map
\begin{equation}\label{comp}
. \circ . : \Hom_\OK^{\rm cont} (\Ebh_2,\Ebh_3) \times  \Hom_\OK^{\rm cont} (\Ebh_1,\Ebh_2) \lra \Hom_\OK^{\rm cont} (\Ebh_1,\Ebh_3),
\end{equation} --- that, for any $(\lambda, \mu) \in \R_+^2,$ maps $\Hom_\OK^{\rm cont} (\Ebh_2,\Ebh_3) \times  \Hom_\OK^{\rm cont} (\Ebh_1,\Ebh_2)$ to $\Hom_\OK^{\rm cont} (\Ebh_1,\Ebh_3)$ --- 
deduced from the composition of morphisms
$$\Hom_\OK^{\leq \lambda} (\Ebh_{2,U_2},\Ebh_{3, U_3}) \times  \Hom_\OK^{\leq \mu} (\Ebh_{1,U_1},\Ebh_{2,U_2}) \lra \Hom_\OK^{\leq  \lambda \mu} (\Ebh_{1, U_1},\Ebh_{3, U_3})$$
by passage to the projective and inductive limits involved in the definition (see (\ref{defhom1})) of continuous $\OK$-morphisms of pro-Hermitian vector bundles.     

In terms of the description of the morphisms of pro-Hermitian vector bundles as pairs of the form (\ref{morpair}), this composition law may be described as follows. 
If $\phit:= (\hphi, (\phi_{\sigma})_{\sigma: K \hra \C})$ (resp. $\psit:= (\hpsi, (\psi_{\sigma})_{\sigma: K \hra \C})$) is an element of $\Hom_\OK^{\rm cont} (\Ebh_1,\Ebh_2)$ (resp. of $\Hom_\OK^{\rm cont} (\Ebh_2,\Ebh_3)$), the composition law (\ref{comp}) maps $(\psit,\phit)$ to 
$$\psit \circ \phit := (\hpsi \circ \hphi, (\psi_{\sigma}\circ \phi_{\sigma})_{\sigma: K \hra \C}).$$

By defining the morphisms from $\Ebh_1$ to $\Ebh_2$ to be $\Hom_\OK^{\rm cont} (\Ebh_1,\Ebh_2)$ and the composition of morphisms as above, the class of pro-Hermitian vector bundles over $\Spec \OK$ becomes
 an $\OK$-linear category. We shall denote it by
${\rm pro\overline{Vect}}^{\rm cont}(\OK)$.

We may define another category, the objects of which are again the  pro-Hermitian vector bundles over $\Spec\OK$, by defining the morphisms the morphisms from $\Ebh_1$ to $\Ebh_2$ to be $\Hom_\OK^{\leq 1} (\Ebh_1,\Ebh_2)$, and by defining the composition as above. We shall denote this category by 
${\rm pro\overline{Vect}}^{\leq 1}(\OK)$.

Finally we may formulate some observations concerning these categories of pro-Hermitian vector bundles similar to the ones concerning ind-Hermitian vector bundles at the end of paragraph \ref{CatInd}:

(i) An isomorphism $\phit: \Ebh_1 \lrasim \Ebh_2$ in ${\rm pro\overline{Vect}}^{\rm cont}(\OK)$ (resp. in ${\rm pro\overline{Vect}}^{\leq 1}(\OK)$) is the data of an isomorphism 
$\hphi: \Eh_1 \lrasim \Eh_2$ of topological $\OK$-modules and of $\C$-linear homeomorphisms $\phi_\sigma: E_{1,\sigma}^\hilb \lrasim E_{2,\sigma}^\hilb$ compatibel with $\hphi$.

An isomorphism $\phit: \Ebh_1 \lrasim \Ebh_2$  in ${\rm pro\overline{Vect}}^{\leq 1}(\OK)$ is precisely an isometric isomorphism of pro-Hermitian vector bundles.

(ii) The projective limit $\varprojlim_i \Eb_i$ of a projective system $\Eb_{\bullet}$ of surjective admissible morphisms of Hermitian vector bundles (as considered in paragraph \ref{Consprojective}), equipped with the projection maps $\varprojlim_i \Eb_i \lra \Eb_k,$  is easily checked to be a projective limit of $\Eb_{\bullet}$ in ${\rm pro\overline{Vect}}^{\leq 1}(\OK)$. 

(iii) We may introduce the $\OK$-linear category ${\rm pro\widetilde{Vect}}(\OK)$, whose objects are the Hilbertisable pro-vector bundles over $\Spec \OK$, and a forgetful functor
$$ {\rm pro\overline{Vect}}^{\rm cont}(\OK) \lra {\rm pro\widetilde{Vect}}(\OK)$$
--- it sends an object $\Ebh$ of  ${\rm pro\overline{Vect}}^{\rm cont}(\OK)$ to the underlying object $\Et$ of ${\rm pro\widetilde{Vect}}(\OK)$ --- that is actually an equivalence of category.

\section{The duality between ind- and pro-Hermitian vector bundles}\label{dualindpro} In this section, we construct a duality between the categories  ${\rm ind\overline{Vect}}^{\leq 1}(\OK)$ and  ${\rm pro\overline{Vect}}^{\leq 1}(\OK)$ and between the categories
${\rm pro\overline{Vect}}^{\rm cont}(\OK)$ and ${\rm pro\overline{Vect}}^{\rm cont}(\OK)$ by combining the duality between $\CP_{\OK}$ and $\CTC_{OK}$ described in Section \ref{dualCPCTC} and the classical duality theory of (pre-)Hilbert spaces.

\subsection{The duality functors} $\,$

(i) Let $\Fb:=(F,(\Vert.\Vert_\sigma)_{\sKC})$ be an ind-Hermitian vector bundle over $\Spec \OK$. To $\Fb$, we may attach a dual pro-Hermitian vector bundle 
\begin{equation}\label{Fveedef}
\Fb^\vee:= (F^\vee, (F^{\vee\hilb}_\sigma, \Vert.\Vert^\vee_{\sigma})_\sKC)
\end{equation}
over $\Spec \OK,$ defined as follows. 

Its underlying topological $\OK$-module is the $\OK$-module 
$$
F^\vee := \Hom_{\OK}(F, \OK)
$$
equipped with the topology of pointwise convergence. In other words, it is the dual in $\CTC_\OK$ of the object $F$ of $\CP_\OK.$

For any embedding $\sKC,$ we get canonical isomorphisms of complex vector spaces
\begin{equation}\label{IdFsigma}
F^\vee_\sigma := F^\vee \hat{\otimes}_{\OK, \sigma} \C \simeq \Hom_{\OK, \sigma}(F, \C) \simeq \Hom_\C(F_\sigma, \C).
\end{equation}
Actually the Fréchet topology on $F^\vee_\sigma$ (deduced  from the structure of topological $\OK$-module on $F^\vee$, as explained in \ref{topologicalcomplexpro} (i)) coincides with the locally convex topology on $\Hom_\C(F_\sigma, \C)$ defined by the pointwise convergence on $F_\sigma.$

By means of the identifications (\ref{IdFsigma}), we may introduce the vector subspace 
$$F_\sigma^{\vee \hilb} := \Hom_\C^{\rm cont}((F_\sigma, \Vert. \Vert_\sigma), \C)$$
of $F_\sigma^\vee$ consisting in the linear forms on $F_\sigma$ continuous with respect to the Hermitian  norm $\Vert. \Vert_\sigma$. Equipped with its operator norm, defined by 
$$\Vert \xi \Vert^\vee_\sigma := \sup_{f \in F_\sigma, \Vert f \Vert_\sigma \leq 1} \vert \xi(f)\vert,$$
this vector space becomes a separable Hilbert space $(F^{\vee\hilb}_\sigma, \Vert.\Vert^\vee_{\sigma}),$ and the inclusion $$F^{\vee\hilb}_\sigma \hlra F^{\vee}_\sigma$$  is easily seen to be continuous and to have a dense image.\footnote{Actually, from any countable $\C$-basis of $F_\sigma$, by orthonormalization we get a countable orthonormal basis of $F_\sigma$. By means of any such orthonormal basis, we get compatible isomorphisms: $F_\sigma \lrasim \C^{(\N)}$, $F^\vee_\sigma \lrasim \C^{\N},$
and $F_{\sigma}^{\vee \hilb} \lrasim \ell^2 (\N).$ }

This construction is compatible with complex conjugation, and the right hand of  (\ref{Fveedef}) actually defines a pro-Hermitian vector bundle over $\Spec \OK$, in terms of the alternative approach to pro-Hermitian vector bundles presented in paragraph
\ref{subsubs:Alternatpro}.

Let $\lambda$ be a positive real number, and let $\Fb_1:= (F_1, (\Vert.\Vert_{1,\sigma})_{\sKC})$ and
$\Fb_2:= (F_2, (\Vert.\Vert_{2,\sigma})_{\sKC})$  be two ind-Hermitian vector bundles over $\Spec \OK.$ Consider a morphism  $\alpha$ in $\Hom_{\OK}^{\leq \lambda}(\Fb_1, \Fb_2)$. By definition, $\alpha$ is an element of $\Hom_\OK(F_1, F_2)$ such that, for any $\sKC,$ the $\C$-linear map 
$\alpha_\sigma: F_{1,\sigma} \lra F_{2,\sigma}$ is continuous of operator norm $\leq \lambda$ from
  $(F_{1\sigma}, \Vert. \Vert_{1,\sigma})$ to $(F_{2\sigma}, \Vert. \Vert_{2,\sigma})$.
  
  The dual (or adjoint) morphism
  $$\alpha^\vee := . \circ \alpha: F_2^\vee := \Hom_\OK(F_2, \OK) \lra F_1^\vee := \Hom_\OK(F_1, \OK)$$
  becomes, after ``completed base change" under the  embedding $\sigma: \OK \hra \C,$ the $\C$-linear map
  $$\alpha^\vee_\sigma := . \circ \alpha_\sigma : F^\vee_{2,\sigma} \simeq \Hom_\C(F_{2,\sigma}, \C) \lra
  F^\vee_{1,\sigma} \simeq \Hom_\C(F_{1,\sigma}, \C).$$ 
  It is a continuous $\C$-linear map between the Fréchet spaces $F^\vee_{2,\sigma}$ and $F^\vee_{1,\sigma}$. Moreover, it sends 
  $F_{2,\sigma}^{\vee \hilb} := \Hom_\C^{\rm cont}((F_{2,\sigma}, \Vert. \Vert_{2,\sigma}), \C)$
to $F_{1,\sigma}^{\vee \hilb} := \Hom_\C^{\rm cont}((F_{1,\sigma}, \Vert. \Vert_{1,\sigma}), \C),$
 and defines a continuous $\C$-linear map of operator norm $\leq \lambda$ from
 $(F^{\vee\hilb}_{2,\sigma}, \Vert.\Vert^\vee_{2,\sigma})$ to $(F^{\vee\hilb}_{1,\sigma}, \Vert.\Vert^\vee_{1,\sigma})$.
  
 In conclusion, the dual map $\alpha^\vee$  defines a morphism in $\Hom_{\OK}^{\leq \lambda} (\Fb_2^\vee, \Fb_1^\vee)$. Moreover, the so-defined maps
 $$
\begin{array}{rcl}
\Hom_{\OK}^{\leq \lambda}(\Fb_1, \Fb_2)  &  \lra  &  \Hom_{\OK}^{\leq \lambda} (\Fb_2^\vee, \Fb_1^\vee) \\
 \alpha & \longmapsto   & \alpha^\vee.   
  \end{array}
$$
define an $\OK$-linear map
$$.^\vee: \Hom_{\OK}^{\rm cont}(\Fb_1, \Fb_2)    \lra    \Hom_{\OK}^{\rm cont} (\Fb_2^\vee, \Fb_1^\vee).$$
This construction is clearly functorial and defines two contravariant duality functors,
$$.^\vee: 
{\rm ind\overline{Vect}}^{\leq 1}(\OK) \lra {\rm pro\overline{Vect}}^{\leq 1}(\OK)$$
and 
$$.^\vee: 
{\rm ind\overline{Vect}}^{\rm cont}(\OK) \lra {\rm pro\overline{Vect}}^{\rm cont}(\OK),$$
the second of which is $\OK$-linear.

Observe that any inductive system
$$\Fb_{\bullet} : \Fb_0 \stackrel{j_0}{\longrightarrow}\Fb_1 \stackrel{j_1}{\longrightarrow}\dots \stackrel{j_{i-1}}{\longrightarrow}\Fb_i \stackrel{j_i}{\longrightarrow} \Fb_{i+1} \stackrel{j_{i+1}}{\longrightarrow} \dots$$
of injective admissible morphisms of Hermitian vector bundles over $\Spec \OK$ determines by duality a projective system
$$\Fb^\vee_{\bullet} : \Fb_0^\vee \stackrel{j_0^\vee}{\longleftarrow}\Fb_1^\vee \stackrel{j_1^\vee}{\longleftarrow}\dots \stackrel{j_{i-1}^\vee}{\longleftarrow}\Fb_i^\vee \stackrel{j_i^\vee}{\longleftarrow} \Fb_{i+1}^\vee \stackrel{j_{i+1}^\vee}{\longleftarrow} \dots$$
of surjective admissible morphisms of Hermitian vector bundles over $\Spec \OK$. The injection morphisms   $j_k : \Fb_k \lra \varinjlim_i \Fb_i$ define, by duality, morphisms in ${\rm pro\overline{Vect}}^{\leq 1}(\OK)$:
$$j_k^\vee : (\varinjlim_i \Fb_i)^\vee \lra \Fb_k^\vee,$$
which in turn define a morphism  from $(\varinjlim_i \Fb_i)^\vee$ to the projective limit $\varprojlim_i \Fb_i^\vee$ of $\Fb_{\bullet}^\vee$, that is easily seen to be an isometric isomorphism of pro-Hermitian vector bundles over $\Spec \OK$:
\begin{equation}\label{dualprojlim}
(\varinjlim_i \Fb_i)^\vee \lrasim \varprojlim_i \Fb_i^\vee.
\end{equation}
 
 \medskip

(ii)  Conversely, to any pro-Hermitian vector bundle over $\Spec \OK,$  $$\Ebh := (\hE, (E_{\sigma}^{\hilb},\Vert.\Vert_{\sigma})_{\sigma: K \hra \C}),$$ 
we may attach a dual ind-Hermitian vector bundle 
\begin{equation}\label{Ebhveedef}
\Ebh^\vee := (\Eh^\vee, (\Vert.\Vert_\sigma^\vee)_\sKC)
\end{equation}
defined as follows.

Its underlying projective $\OK$-module is the dual 
$$\Eh^\vee := \Hom_\OK^{\rm cont}(\Eh, \OK)$$
in $\CP_\OK$ of the object $\Eh$ of $\CTC_\OK.$ Actually, as a consequence of Proposition \ref{prop:HomHom}, $\Eh^\vee$ coincides with the algebraic dual $\Hom_\OK(\Eh, \OK)$ of the $\OK$-module $\Eh.$

For any embedding $\sKC,$ we have canonical isomorphisms:
\begin{equation*}
(\Eh^\vee)_\sigma := \Eh^\vee \otimes_{\OK,\sigma} \C \simeq (\varinjlim_{U \in \cU(\Eh)} E_U^\vee) \otimes_{\OK,\sigma} \C
\simeq \varinjlim_{U \in \cU(\Eh)} E_{U,\sigma}^\vee \simeq \Hom_\C^{\rm cont}(\Eh_\sigma, \C).
\end{equation*}
Since the injection $i_\sigma: E^\hilb_\sigma \hlra \Eh_\sigma$ is continuous with dense image, its transpose defines an injective map (also with dense image)
$$^t i_\sigma:  (\Eh^\vee)_\sigma \hlra (E_\sigma^\hilb)^\vee$$
from $(\Eh^\vee)_\sigma$ to the dual Hilbert space  $(E_\sigma^\hilb)^\vee$, and we define the norm $\Vert. \Vert_\sigma^\vee$ as the restriction to $(\Eh^\vee)_\sigma$ of the Hilbert norm on $(\Eh^\vee)_\sigma$  dual of the  norm $\Vert. \Vert_\sigma$ on $E^\hilb_\sigma$. (In other words, the norm $\Vert \xi \Vert^\vee_\sigma$ of some linear form $\xi$ in $\Hom^{\rm cont}_\C(\Eh_\sigma, \C)$ is the operator norm, with respect to $\Vert.\Vert_\sigma$, of its restriction to $E_\sigma^\hilb.$)

It is straightforward that the construction of the norms  $(\Vert. \Vert_\sigma^\vee)_\sKC$ on the complex vector spaces $((\Eh^\vee)_\sigma)_\sKC$ is compatible with complex conjugation. Consequently the right hand-side of   (\ref{Ebhveedef}) indeed defines some ind-Hermitian vector bundle over $\Spec \OK$.

Let $\lambda$ be a positive real number, and let $$\Ebh_1 := (\hE_1, (E_{1,\sigma}^{\hilb},\Vert.\Vert_{1,\sigma})_{\sigma: K \hra \C})$$ and $$\Ebh_2 := (\hE_2, (E_{2,\sigma}^{\hilb},\Vert.\Vert_{2,\sigma})_{\sigma: K \hra \C})$$ be two pro-Hermitian vector bundles over $\Spec \OK.$ Consider a morphism $\beta$ in $\Hom^{\leq \lambda}_\OK(\Ebh_1, \Ebh_2)$. By definition, $\beta$ is an element in $\Hom^{\rm cont}_\OK (\Eh_1, \Eh_2)$ such that, for any embedding $\sKC,$ the continuous linear map of Fréchet spaces
\begin{equation}\label{betasigma}
\beta_\sigma: \Eh_{1,\sigma} \lra \Eh_{2,\sigma}
\end{equation}
maps $E_{1,\sigma}^\hilb$ to $E_{2,\sigma}^\hilb$, with an operator norm (with respect to the Hilbert norms $\Vert.\Vert_{1,\sigma}$ and $\Vert.\Vert_{2,\sigma}$) at most $\lambda$.

The dual morphism
$$\beta^\vee: . \circ \beta : \Eh_2^\vee := \Hom_{\OK}^{\rm cont}(\Eh_2, \OK) \lra \Eh_1^\vee := \Hom_{\OK}^{\rm cont} (\Eh_1, \OK),$$
 after the base change $\sigma: \OK \hlra \C,$ becomes the transpose of the map (\ref{betasigma}), namely
$$\beta^\vee_\sigma = . \circ \beta_\sigma : (\Eh^\vee_2)_\sigma \simeq \Hom_\C^{\rm cont}(\Eh_{2,\sigma}, \C) \lra 
(\Eh^\vee_1)_\sigma \simeq \Hom_\C^{\rm cont}(\Eh_{1,\sigma}, \C),$$
and therefore satisfies, for any $\xi \in (\Eh^\vee_2)_\sigma$:
$$\Vert \beta^\vee_\sigma (\xi)\Vert^\vee_{1,\sigma} = \Vert \xi \circ \beta_\sigma \Vert^\vee_{1,\sigma} \leq \lambda \Vert \xi \Vert^\vee_{2,\sigma}.$$
This shows that $\beta^\vee$ belongs to $\Hom^{\leq \lambda}_\OK(\Ebh_2^\vee, \Ebh_1^\vee).$

The so-defined maps 
$$
\begin{array}{rcl}
\Hom_{\OK}^{\leq \lambda}(\Ebh_1, \Ebh_2)  &  \lra  &  \Hom_{\OK}^{\leq \lambda} (\Ebh_2^\vee, \Ebh_1^\vee) \\
 \beta & \longmapsto   & \beta^\vee.   
  \end{array}
$$
define an $\OK$-linear map
$$.^\vee: \Hom_{\OK}^{\rm cont}(\Ebh_1, \Ebh_2)    \lra    \Hom_{\OK}^{\rm cont} (\Ebh_2^\vee, \Ebh_1^\vee).$$
This construction is clearly functorial and defines two contravariant duality functors,
\begin{equation}\label{dualproind1}
^\vee: 
{\rm pro\overline{Vect}}^{\leq 1}(\OK) \lra {\rm ind\overline{Vect}}^{\leq 1}(\OK)
\end{equation}
and 
$$.^\vee: 
{\rm pro\overline{Vect}}^{\rm cont}(\OK) \lra {\rm ind\overline{Vect}}^{\rm cont}(\OK),$$
the second of which is $\OK$-linear.

The duality functor (\ref{dualproind1}) is compatible with the construction of pro-Hermitian vector bundles as limits of projective systems of surjective admissible maps of Hermitian vector bundles over $\Spec \OK$ discussed in  paragraph \ref{Consprojective}.
Namely, if $$\Eb_{\bullet} : \Eb_0 \stackrel{q_0}{\longleftarrow}\Eb_1 \stackrel{q_1}{\longleftarrow}\dots \stackrel{q_{i-1}}{\longleftarrow}\Eb_i \stackrel{q_i}{\longleftarrow} \Eb_{i+1} \stackrel{q_{i+1}}{\longleftarrow} \dots$$
is such a projective system, we may form the dual inductive system
$$\Eb_{\bullet}^\vee : \Eb_0^\vee \stackrel{q_0^\vee}{\longrightarrow}\Eb_1^\vee \stackrel{q_1^\vee}{\longrightarrow}\dots \stackrel{q_{i-1}^\vee}{\longrightarrow}\Eb_i^\vee \stackrel{q_i^\vee}{\longrightarrow} \Eb_{i+1}^\vee \stackrel{q_{i+1}^\vee}{\longrightarrow} \dots$$
of injective admissible morphisms of Hermitian vector bundles over $\Spec \OK.$
The projections $p_k: \varprojlim_i \Eb_i \lra \Eb_k$ define, by duality, morphisms in ${\rm ind\overline{Vect}}^{\leq 1}(\OK)$,
$$p_k^\vee : \Eb_k^\vee \lra  (\varprojlim_i \Eb_i)^\vee,$$
that in turn define a morphism from the inductive limit of $\Eb_{\bullet}^\vee$ to $(\varprojlim_i \Eb_i)^\vee$. This morphism is easily seen to be an isometric isomorphism of ind-Hermitian vector bundles over $\Spec \OK$:
\begin{equation}\label{dualindlim}
\varinjlim_i \Eb_i^\vee \lrasim (\varprojlim_i \Eb_i)^\vee.
\end{equation}

\subsection{Duality as  adjoint equivalences}\label{dualadjointArith}

Let $\Ebh := (\Eh, (E^\hilb_\sigma, \Vert.\Vert_{E,\sigma})_\sKC)$ be an object of the category ${\rm pro\overline{Vect}}(\OK)$ and let $\Fb:=(F,(\Vert.\Vert_{F,\sigma})_\sKC$ be an object of ${\rm ind\overline{Vect}}(\OK)$. We may consider their dual objects $\Ebh^\vee = (\Eh^\vee, (\Vert.\Vert_{E,\sigma}^\vee)_\sKC)$ and $\Fb^\vee = (F^\vee, (\Vert.\Vert_{F,\sigma}^\vee)_\sKC$, in ${\rm ind\overline{Vect}}(\OK)$ and ${\rm pro\overline{Vect}}(\OK)$ respectively.

As discussed in \ref{dualadjointA}, we have natural isomorphisms which define the duality between the $\OK$-linear categories $\CTC_\OK$ and $\CP_\OK$:
\begin{equation}\label{Bil}
\Hom^{\rm cont}_\OK (\Eh, F^\vee) \lrasim \Hom_\OK(F, \Eh^\vee).
\end{equation}
Indeed both $\Hom^{\rm cont}_\OK (\Eh, F^\vee)$ and $\Hom_\OK(F, \Eh^\vee)$ may be identified with the $\OK$-module consisting in the $\OK$-bilinear maps 
$$b: \Eh \times F \lra \OK$$
which are continuous in the first variable, or equivalently with the $\OK$-module $\varinjlim_{U \in \cU(\Eh)} E_U^\vee \otimes_\OK F^\vee$.

Similarly, for any field embedding $\sKC,$ we have a duality isomorphism of complex vector spaces:
\begin{equation}\label{Bilsigma}
\Hom^{\rm cont}_\C (\Eh_\sigma, F_\sigma^\vee) \lrasim \Hom_\C (F_\sigma, \Eh_\sigma^\vee).
\end{equation}
Indeed, both $\Hom^{\rm cont}_\C (\Eh_\sigma, F_\sigma^\vee)$ and $\Hom_\C (F_\sigma, \Eh_\sigma^\vee)$ may be identified with the complex vector space consisting in the $\C$-bilinear maps
$$\tilde{b}: \Eh_\sigma \times F_\sigma \lra \C$$
which are continuous in the first variable, or equivalently with the complex vector space $$\varinjlim_{U \in \cU(\Eh)} E_{U,\sigma}^\vee \otimes_\C F_\sigma^\vee.$$

Moreover, for any $\lambda \in \R_+,$ the bijection (\ref{Bilsigma}) defines by restriction a bijection:
\begin{equation}\label{Billambda}
\Hom^{\rm cont}_\C (\Eh_\sigma, F_\sigma^\vee) \cap \Hom^{\leq \lambda}_\C ((E^\hilb_\sigma, \Vert.\Vert_{E,\sigma}), (F_\sigma^\vee, \Vert.\Vert^\vee_{F,\sigma}))
 \lrasim \Hom_\C^{\leq \lambda} ((F_\sigma, \Vert.\Vert_{F,\sigma}), (\Eh_\sigma^\vee, \Vert.\Vert^\vee_{E,\sigma})).
\end{equation}
Indeed, the subsets of $\Hom^{\rm cont}_\C (\Eh_\sigma, F_\sigma^\vee)$ and of $\Hom_\C (F_\sigma, \Eh_\sigma^\vee)$ defined by both sides of (\ref{Billambda}) may be identified with the spaces of $\C$-bilinear maps $\tilde{b}$ as above such that, when considered as bilinear maps on $E^\hilb_\sigma \times F_\sigma,$ their $\varepsilon$-norm
$$\Vert \tilde{b}\Vert_\varepsilon := \sup \left\{ \vert \tilde{b}(x,y) \vert; (x,y) \in E^\hilb_\sigma \times F_\sigma, \Vert x\Vert_{E,\sigma} \leq 1, \Vert y \Vert_{F,\sigma} \leq 1 \right\}$$
is at most $\lambda$.

Besides, the identifications (\ref{Bil}) and (\ref{Bilsigma}) are compatible with ``extension of scalars through $\sKC$."
In other words, (\ref{Bil}) and (\ref{Bilsigma}) fits into a commutative diagram:
\begin{equation*}
\begin{CD}
\Hom^{\rm cont}_\OK (\Eh, F^\vee) @>\sim>>  \Hom_\OK(F, \Eh^\vee) \\ 
@VV{.\otimes_\sigma 1_\C}V      @VV{.\otimes_\sigma 1_\C}V \\
\Hom^{\rm cont}_\C (\Eh_\sigma, F_\sigma^\vee) @>\sim>> \Hom_\C (F_\sigma, \Eh_\sigma^\vee).
\end{CD}
\end{equation*}

Together with the bijections (\ref{Billambda}), this shows that the bijection (\ref{Bil}) defines --- by restriction --- a bijection
\begin{equation}\label{duallambda}
\Hom_\OK^{\leq \lambda}(\Ebh, \Fb^\vee) \lrasim \Hom_\OK^{\leq \lambda}(\Fb, \Ebh^\vee)
\end{equation}
for every $\lambda \in \R_+,$ and consequently an $\OK$-linear isomorphism:
\begin{equation}\label{dualcont}
\Hom_\OK^{\rm cont}(\Ebh, \Fb^\vee) \lrasim \Hom_\OK^{\rm cont}(\Fb, \Ebh^\vee).
\end{equation}

We may finally formulate the duality between the categories of pro- and ind-Hermitian vector bundles over $\Spec \OK$ as the following statement, to be compared with the duality between $\CTC_\OK$ and $\CP_\OK$ that constitutes the special case of Proposition \ref{prop:dual} where $A= \OK$:

\begin{proposition}\label{prop:dualArith} The bijections  (\ref{duallambda}) with $\lambda =1$ and (\ref{dualcont}) define adjunctions of functors: %
 $$.^\vee :  {\rm pro\overline{Vect}}^{\leq 1}(\OK) \leftrightarrows {\rm ind\overline{Vect}}^{\leq 1}(\OK)^{\rm op} : .^\vee$$
 and
 $$.^\vee :  {\rm pro\overline{Vect}}(\OK) \leftrightarrows {\rm ind\overline{Vect}}(\OK)^{\rm op} : .^\vee \;\;\;.$$
 
These are  actually  adjoint equivalences. Their 
  unit and counit are  natural isomorphisms $\eta$ and  $\epsilon$ defined by  isometric isomorphisms
  $$\epsilon_{\Fb} : \Fb \lrasim \Fb^{\vee \vee}
\mbox{ and } \eta_{\Ebh}: \Ebh \lrasim \Ebh^{\vee \vee},$$
associated to any object $\Fb$ in ${\rm ind\overline{Vect}}(\OK)$ and any object $\Ebh$ in ${\rm pro\overline{Vect}}(\OK)$, whose underlying morphisms from $F$ to $F^{\vee \vee}$ and from $\Eh$ to $\Eh^{\vee \vee}$ are the biduality isomorphisms:
$$
\begin{array}{rrcl}
 \epsilon_F:& F  & \lrasim   & \Hom_\OK^{\rm cont}(\Hom_\OK (F,\OK),\OK)  \\
& f & \longmapsto   & (\xi  \longmapsto \xi(f))
\end{array}
$$
and $$
\begin{array}{rrcl}
 \eta_{\Eh}:& \Eh  & \lrasim   & \Hom_\OK(\Hom_\OK^{\rm cont} (\Eh,\OK),\OK)  \\
& e & \longmapsto   & (\zeta  \longmapsto \zeta(e)).  
\end{array}
$$
 \end{proposition}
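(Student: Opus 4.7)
My plan is to derive Proposition \ref{prop:dualArith} by combining three ingredients already at hand: (i) the adjoint equivalence between $CP_\OK$ and $CTC_\OK$ established in Proposition \ref{prop:dual}, (ii) the classical reflexivity of Hilbert spaces together with the standard fact that the continuous dual of a prehilbert space $(V,\Vert.\Vert)$ coincides with the topological dual of its Hilbert completion $\widehat V$, and (iii) the description of the bijections (\ref{Bil})--(\ref{Billambda}) as identifications of spaces of bilinear forms $b: \Eh\times F \lra \OK$ continuous in the first variable.

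First I would check naturality of (\ref{duallambda}) and (\ref{dualcont}) in both variables. Using the identification of each side with such spaces of bilinear forms, the naturality becomes formal: pre- and post-composing with morphisms $\phit: \Ebh' \lra \Ebh$ and $\psit: \Fb \lra \Fb'$ corresponds, under either interpretation, to replacing $b(x,y)$ by $b(\hphi(x), \psi(y))$. This shows that, in both the $^{\leq 1}$ and the $^{\rm cont}$ versions, $.^\vee$ is both left and right adjoint to itself. The unit $\eta_{\Ebh}$ and the counit $\epsilon_{\Fb}$ of this adjunction are then obtained in the usual way by evaluating the bijections at the identities $\mathrm{id}_{\Fb^\vee}$ and $\mathrm{id}_{\Ebh^\vee}$; unwinding the definitions yields precisely the biduality formulas stated in the proposition.

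The main work is to verify that these biduality maps $\eta_{\Ebh}$ and $\epsilon_{\Fb}$ are \emph{isometric} isomorphisms in their respective categories. At the level of underlying $\OK$-modules (forgetting topology and hermitian data), they are isomorphisms in $CP_\OK$ and $CTC_\OK$ respectively, by Proposition \ref{prop:dual}; so only the archimedean parts remain to be checked. Fix an embedding $\sKC$. For $\Fb$ an object of ${\rm ind\overline{Vect}}^{\leq 1}(\OK)$, the Hilbert space $F^{\vee\hilb}_\sigma$ at $\sigma$ attached to $\Fb^\vee$ is, by definition, the continuous dual of the prehilbert space $(F_\sigma, \Vert.\Vert_{F,\sigma})$; this continuous dual is canonically isometric to the dual of the completion $\widehat{F_\sigma}$, hence is itself a Hilbert space. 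Consequently the Hilbert space at $\sigma$ attached to $\Fb^{\vee\vee}$ is the bidual $(\widehat{F_\sigma})^{\vee\vee}$, which the Riesz theorem identifies canonically and isometrically with $\widehat{F_\sigma}$. Under this identification, the evaluation map $f \mapsto (\xi \mapsto \xi(f))$ from $F_\sigma$ into $(F^{\vee\hilb}_\sigma)^\vee$ becomes the standard isometric inclusion $F_\sigma \hlra \widehat{F_\sigma}$; hence the prehilbert norm that $\Fb^{\vee\vee}$ induces on $F_\sigma$ via $\epsilon_F$ coincides with $\Vert.\Vert_{F,\sigma}$. The argument for $\eta_{\Ebh}$ is dual: the Hilbert space attached to $\Ebh^{\vee\vee}$ at $\sigma$ is the bidual $(E^\hilb_\sigma)^{\vee\vee}$, identified by Riesz isometrically with $E^\hilb_\sigma$, and one checks that its injection into $(\Eh^{\vee\vee})_\sigma \simeq \Eh_\sigma$ corresponds, under these identifications and biduality in $CTC_\OK$, to the original inclusion $i^{\Ebh}_\sigma$. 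Compatibility with complex conjugation at pairs $(\sigma,\overline\sigma)$ is automatic once one has checked it on the underlying $\OK$-modules, since it is encoded in the same biduality isomorphisms.

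The main obstacle I anticipate is essentially bookkeeping: one must track several identifications in parallel---algebraic biduality in $CP_\OK$ and $CTC_\OK$, Riesz identifications at each place, and compatibility with complex conjugation---and make sure that the two composite identifications $F_\sigma \to (F^{\vee\hilb}_\sigma)^\vee$ and $E^\hilb_\sigma \to (\Eh^{\vee\vee})_\sigma^\hilb$ agree with the biduality pairings defining $\epsilon_F$ and $\eta_{\Ebh}$ arising from (\ref{Bil}) and (\ref{Bilsigma}). Once this bookkeeping is carried out, no analytic difficulty remains: the analytic content reduces entirely to the isometric reflexivity of Hilbert spaces and the equality of the duals of a prehilbert space and of its Hilbert completion.
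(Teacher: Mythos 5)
Your proposal is correct, and it is in fact the route the paper acknowledges in the first sentence of its proof ("this may be deduced from the duality between $CTC_\OK$ and $CP_\OK$ established in Proposition \ref{prop:dual}, combined with basic results concerning the duality of (pre-)Hilbert spaces") but then does not carry out. The argument the paper actually writes down is different in its final step: after the same naturality observation, instead of verifying place by place that $\epsilon_{\Fb}$ and $\eta_{\Ebh}$ are isometric via the reflexivity of Hilbert spaces and the identification of the continuous dual of a prehilbert space with that of its completion, the paper reduces to the case of finite-rank hermitian vector bundles (where biduality is elementary) and then invokes (i) the compatibility of the duality functors with inductive and projective limits, isomorphisms (\ref{dualprojlim}) and (\ref{dualindlim}), and (ii) the realizability of any ind- (resp.\ pro-)hermitian vector bundle as a limit of an admissible inductive (resp.\ projective) system (paragraphs \ref{Consinductive} and \ref{Consprojective}). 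Your version buys a self-contained, place-by-place verification whose only analytic input is Hilbert-space reflexivity, at the cost of the bookkeeping of Riesz-type identifications that you yourself flag; the paper's version avoids that bookkeeping entirely but leans on the limit-compatibility isomorphisms established earlier in Section \ref{dualindpro}. Both are complete; no gap in yours.
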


\begin{proof} This may be deduced from the duality between $\CTC_\OK$ and $\CP_\OK$ established in Proposition \ref{prop:dual}, combined with basic results concerning the duality of (pre-)Hilbert spaces.

At this stage, we may also argue directly as follows. The naturality (in each of the relevant categories) with respect to $\Ebh$ and $\Fb$ of the bijections  (\ref{duallambda}) with $\lambda =1$ and (\ref{dualcont}) is straightforward. We are left to show that the unit  $\epsilon_{\Fb}$ and counit $\eta_{\Ebh}$ of these adjunctions are isometric isomorphisms with underlying isomorphisms the biduality isomorphisms $\epsilon_F$ and $\eta_\Eh$.

This directly follows from (i) the validity of these properties when $\Fb$ and $\Ebh$ are Hermitian vector bundles, (ii) the compatibility  of the duality functors with inductive and projective limits (see (\ref{dualprojlim}) and (\ref{dualindlim})), and (iii) the fact that any ind- (resp. pro-)Hermitian vector bundle over $\Spec \OK$ may be realized as an inductive (resp. projective) limit of an admissible system of Hermitian vector bundles (\cf Subsections \ref{Consinductive} and \ref{Consprojective}).
\end{proof}

\section{Examples -- I. Formal series and holomorphic functions on disks}\label{ArHB}

Let $R$ be a positive real number.

Let us consider the open disc in $\C$ of radius $R$,
$$D(R) := \{ z \in \C \mid \vert z \vert < R \},$$
 and the space $\cO^{\an}(D(R))$ of holomorphic functions on $D(R).$
 
 Equipped with the topology of uniform convergence on compact subsets of $D(R)$, it is a Fréchet space. Moreover, Taylor expansion at $0$ defines an inclusion
 $$
 \begin{array}{rrcl}
  i_R: &\cO^{\an}(D(R)) & \hlra & \C[[X]] \\
  & f &\longmapsto & \sum_{n \in \N} (1/n!) \, f^{(n)}(0)  \, X^n. 
\end{array}
 $$
 When $\C[[X]] \simeq \C^\N$ is equipped with the its natural Fréchet topology, defined by the simple convergence of coefficients, the map $i_R$ is continuous with dense image.
 
 Let $f$ be an element of $\cO^{\an}(D(R))$ and let
 $$f(z) = \sum_{n \in \N} a_n z^n$$
 be its expansion at the origin.
 
 For any $r \in [0,R[,$ we have:
 \begin{equation}\label{circR}
\int_0^1 \vert f(r e^{2\pi i t}) \vert^2 dt = \sum_{n \in \N} r^{2n} \vert a_n\vert^2.
\end{equation}

This relation shows that, for every $r \in ]0,R[,$ one defines a Hermitian norm $\Vert.\Vert_r$ on  $\cO^{\an}(D(R))$ by letting:
$$\Vert f \Vert_r^2 = \int_0^1 \vert f(r e^{2\pi i t}) \vert^2 dt,$$
and that the \emph{Hardy space} 
$$H^2(R):= \left\{ f \in \cO^{\an}(D(R)) \mid \sup_{r\in [0,R[} \Vert f\Vert_r^2 < + \infty \right\}$$ becomes a Hilbert space when equipped with the norm $\Vert.\Vert_R$ defined by:
$$\Vert f \Vert^2_R := \sup_{r \in [0,R[} \Vert f \Vert^2_r = \sum_{n \in \N} R^{2n} \vert a_n \vert^2.$$ 

From (\ref{circR}), we also derive:
\begin{align}\label{L2R}
\Vert f \Vert_{L^2(D(R)} & := \int_{x + iy \in D(R)} \vert f(x+iy)\vert^2 dx \, dy \\
& = \int_0^R \int_0^{2\pi} \vert f(re^{i\theta}) \vert^2 r dr \, d\theta \\
& = \sum_{n \in \N} \pi (n+1)^{-1} R^{2(n+1)}  \vert a_n \vert^2.
\end{align}
and, equipped with the norm $\Vert.\Vert_{L^2(D(R))},$ the \emph{Bergman space}
$$B(R):= \cO^{\an}(D(R)) \cap L^2(D(R)$$
is also a Hilbert space.

Observe that the expressions (\ref{circR}) and (\ref{L2R}) for the norms $\Vert.\Vert_R$ and  $\Vert.\Vert_{L^2(D(R))}$ show that the vector space $\C[T]$ is dense both in $H^2(R)$ and in $B(R).$ This show that the composite injections
$$i_R^{H}: H^2(R) \hlra \cO^{\an}(D(R)) \hlra \C[[X]]$$
and
$$i_R^{B}: B(R) \hlra \cO^{\an}(D(R)) \hlra \C[[X]],$$
that clearly are continuous, have dense images. 

Moreover, $H^2(R)$ and in $B(R)$ are subspaces of $\cO^{\an}(D(R))$ invariant under the operation of complex conjugation $\overline{\cdot}$, defined by
$$\overline{f}(z) := \overline{f(\overline{z})}= \sum_{n \in \N} \overline{a_n} z^n.$$

Finally, we may define the following pro-Hermitian vector bundles over $\Spec \Z$:
\begin{equation}\label{ArHardy}
\Hbh(R):= (\Z[[X]], (H^2(R), \Vert. \Vert_R), i^H_R)
\end{equation}
and 
$$\Bbh(R):= (\Z[[X]], (B(R), \Vert. \Vert_R, i^B_R)),$$
that may be seen as arithmetic avatars 
of the classical Hardy and Bergman  spaces.

Let us emphasize that the isomorphism class in ${\rm pro\overline{Vect}}(\OK)$ (and \emph{a fortiori} in ${\rm pro\overline{Vect}}^{\leq 1}(\OK)$) of $\Hbh(R)$, or of $\Bbh(R)$, varies with $R \in \R^\ast_+.$ (This may be shown be considering their $\theta$-invariants $h^i_\theta (\Hbh(R) \otimes \cOb(\delta))$; see Proposition \ref{hRdelta}, \emph{infra}.)

\section[Examples -- II.  Injectivity and surjectivity of morphisms]{Examples -- II.  Injectivity and surjectivity of morphisms of pro-Hermitian vector bundles}\label{InjSurj}

In this paragraph, we gather some observations and examples that demonstrate that, if $$f : \Ebh \lra \Fbh$$ is a morphism of pro-Hermitian vector bundles over $\Spec \OK,$ the injectivity (resp. surjectivity) properties of the underlying morphisms
$\hat{f}: \Eh \lra \Fh,$
%and 
$\hat{f}_\sigma: \Eh_\sigma \lra \Fh_\sigma$ %\mbox{ and } 
and $f_\sigma: E^\hilb_\sigma \lra F^\hilb_\sigma$
 (of topological $\OK$-modules  and of complex Fréchet and Hilbert spaces) are in general loosely related.
 
 \medskip
 \subsection{} Concerning their injectivity, on the positive side, let us observe that each of the following assertions implies the following one:
 
 $(i)$ the morphism $\hat{f}: \Eh \lra \Fh$ of topological $\OK$-module is injective and strict;
 
 $(ii)$ the $\C$-linear map $\hat{f}_\sigma: \Eh_\sigma \lra \Fh_\sigma$ is injective;
 
 $(iii)$ the $\C$-linear map $f_\sigma: E^\hilb_\sigma \lra F^\hilb_\sigma$ is injective.
 
 \noindent Indeed, the implication $(i) \Longrightarrow (ii)$ follows from Proposition \ref{strictinjective}, and the implication $(ii) \Longrightarrow (iii)$ is clear.
 
 Besides, the injectivity $(ii)$ of $\hat{f}_\sigma$ immediately implies the injectivity of $\hat{f}$. 

  \subsection{} Simple examples of morphisms of pro-Hermitian vector bundles which demonstrate that $\hat{f}$ or $f_\sigma$ may be injective, while  $\hat{f}_\sigma$ is not, are easily obtained by the constructions in Proposition  \ref{padicex}, 2),  and  Proposition \ref{nonstrictDed3}.
 
 Indeed, for any $a\in \Z$, the map $\phi_a$ considered in Proposition \ref{nonstrictDed3} with $A=\Z$ --- namely the map from $\Z[[X]]$ to itself defined by the formula
 \begin{equation}\label{phiadef}
\phi_a (f) := (1- a/X)f + a f(0)/X = f - a\,(f-f(0))/X
\end{equation}
--- defines a morphism between ``arithmetic Hardy spaces" (as defined in (\ref{ArHardy}))
 $$\Phi_a: \Hbh(R) \lra \Hbh(R')$$
 for any two positive real  numbers $R$ and $R'$ such that $R' \leq R$. 
By definition, $\widehat{\Phi_a} := \phi_a$, and the  morphisms  
$$\phi_{a,\C}:= \widehat{\Phi_a}_{\C}: \C[[X]] \lra \C[[X]]$$
and 
$$\phi^\hilb_a:= \Phi_{a,\C}: H^2(R) \lra H^2(R')$$ are still defined by formula (\ref{phiadef}), and actually make sense for any $a \in \C$.

According to Proposition \ref{nonstrictDed3}, the map
$$\widehat{\Phi_a} := \phi_a: \Z[[X]] \lra \Z[[X]]$$
is injective if and only if $a \notin \{1,-1\}$, is not surjective, but satisfies $\phi_a(\Z[X]) = \Z[X]$. Moreover $\phi_{a,\C}$ satisfies $\phi_{a,\C}(\C[X])= \C[X]$ and is injective if and only if $a=0$.

These properties are complemented by the following proposition, that we leave as an easy exercise:

\begin{proposition} For any $a \in \C^\ast,$ the map
$$\phi_{a,\C}: \C[[X]] \lra \C[[X]]$$
is surjective and its kernel is the line $\C. \sum_{k \in \N} a^{-k} X^k.$
 
 For any $a\in \C$ and any two positive real numbers $R$ and $R'$ such that $R'\leq R,$ the continuous linear map 
 $$\phi^\hilb_a: H^2(R) \lra H^2(R')$$
 is injective if and only if $\vert a \vert \leq R.$ When $\vert a \vert > R,$ its kernel is the line $\C.(a-X)^{-1}$. Moreover the following conditions are equivalent:
 
 (i) $\phi^\hilb_a$ is onto;
 
 (ii) $\phi^\hilb_a$ is a strict morphism of complex topological vector space;
 
 (iii) $R=R'$ and $\vert a \vert \neq R.$ \qed
\end{proposition}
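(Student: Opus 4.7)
The plan is to work in the orthonormal basis $(e_n)_{n\in\N}$ of $H^2(R)$ defined by $e_n := R^{-n}X^n$. Through the resulting isometric identification $H^2(R) \simeq \ell^2(\N)$, the operator $\phi_a$ on $H^2(R)$ becomes $T_\alpha := I - \alpha S$ on $\ell^2(\N)$, where $S$ denotes the backward shift $(c_n)_n \mapsto (c_{n+1})_n$ (of operator norm $1$) and $\alpha := a/R$. Continuity of $\phi^\hilb_a : H^2(R) \to H^2(R')$ for $R' \leq R$ is then immediate, via the continuous inclusion $H^2(R) \hookrightarrow H^2(R')$. The two assertions on $\phi_{a,\C}$ are elementary: the recursion $c_{n+1} = a^{-1}(c_n - b_n)$ (with $c_0 \in \C$ free) produces a preimage of any $b = \sum_n b_n X^n$ under $\phi_{a,\C}$, and the same recursion with $b = 0$ identifies $\ker \phi_{a,\C}$ with the line $\C \cdot \sum_{k \geq 0} a^{-k}X^k$.

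I will then treat $\phi^\hilb_a$ as follows. Its kernel equals $\ker \phi_{a,\C} \cap H^2(R)$, and the series $\sum_k a^{-k}X^k$ lies in $H^2(R)$ iff $\sum_k R^{2k}|a|^{-2k} < +\infty$ iff $|a| > R$; in that case the series is a nonzero scalar multiple of $(a-X)^{-1}$, yielding the injectivity criterion and the explicit description of $\ker \phi^\hilb_a$. For the equivalence of (i), (ii), (iii), the implication (i)$\Rightarrow$(ii) is the Banach space open mapping theorem. I plan to establish (iii)$\Rightarrow$(i) in two subcases: if $R = R'$ and $|a| < R$ then $|\alpha| < 1$ and $T_\alpha$ is invertible on $\ell^2(\N)$ by a Neumann series; if $R = R'$ and $|a| > R$ then $(X-a)^{-1}$ is bounded on $\overline{D(R)}$ with sup-norm $(|a|-R)^{-1}$ and hence a multiplier of $H^2(R)$, and for any $g \in H^2(R)$ and any $c_0 \in \C$ the function $f := (Xg - a c_0)/(X-a) \in H^2(R)$ satisfies $\phi_a(f) = g$ by direct computation.

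The direction (ii)$\Rightarrow$(iii) will then follow by showing that whenever (iii) fails, $\im \phi^\hilb_a$ is a proper dense subspace of $H^2(R')$, hence non-closed, so that $\phi^\hilb_a$ cannot be strict. Density is a consequence of the equality $\phi_a(\C[X]) = \C[X]$, proved by the same purely algebraic argument as in part 2 of Proposition \ref{nonstrictDed3}, together with the density of $\C[X]$ in $H^2(R')$. Non-surjectivity when $R' < R$ is obtained from the elementary estimate $R^{2n}|c_n - a c_{n+1}|^2 \leq 2(|c_n|^2 + |a|^2|c_{n+1}|^2)R^{2n}$, which yields $\im \phi^\hilb_a \subseteq H^2(R) \subsetneq H^2(R')$. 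The heart of the argument is the borderline case $R = R'$, $|a| = R$ (so $|\alpha| = 1$): here I plan to exhibit the explicit approximate kernel $v_N := \sum_{n=0}^{N-1}\alpha^{-n}e_n$, for which a short computation yields $T_\alpha v_N = \alpha^{-(N-1)}e_{N-1}$, so that $\|T_\alpha v_N\|/\|v_N\| = N^{-1/2} \to 0$; thus $T_\alpha$ is injective but not bounded below, and its image is dense but not closed.

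The main obstacle is precisely this borderline case: injectivity and density of image both hold, and ruling out surjectivity requires the quantitative failure of boundedness from below furnished by the sequence $v_N$ above. All remaining steps reduce to standard combinations of the open mapping theorem, Neumann series, sup-norm multiplier estimates on $H^2(R)$, and the elementary strict inclusion $H^2(R) \subsetneq H^2(R')$ for $R' < R$.
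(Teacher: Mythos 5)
The paper offers no proof of this proposition --- it is explicitly ``left as an easy exercise'' --- so there is nothing to compare your argument against; I have therefore checked it on its own terms, and it is correct and complete. Your identification of $\phi_a$ on $H^2(R)$ with $I-\alpha S$ on $\ell^2(\N)$ (via $e_n:=R^{-n}X^n$, $\alpha:=a/R$) is the natural normalization, and every case is covered: the coefficient recursion settles $\phi_{a,\C}$ and the kernel computation; the Neumann series and the multiplier $(X-a)^{-1}$ (with the verification $\phi_a\bigl((Xg-ac_0)/(X-a)\bigr)=g$) give (iii)$\Rightarrow$(i); strict $\Rightarrow$ closed image (completeness of $H^2(R)/\ker$) combined with density of $\phi_a(\C[X])=\C[X]$ and the inclusion $\im\phi^\hilb_a\subseteq H^2(R)\subsetneq H^2(R')$ disposes of $R'<R$; and the approximate kernel $v_N$ with $T_\alpha v_N=\alpha^{-(N-1)}e_{N-1}$, $\Vert T_\alpha v_N\Vert/\Vert v_N\Vert=N^{-1/2}$, correctly handles the only delicate case $R=R'$, $\vert a\vert=R$. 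No gaps.
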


In particular, we obtain:

\begin{scholium}
 For any $a \in \Z \setminus\{1,0,-1\},$ and any $R \in \R^\ast_+,$ the morphism of pro-Hermitian vector bundles over $\Spec \Z$
 $$\Phi_a: \Hbh(R) \lra \Hbh(R)$$
 is such that $\widehat{\Phi_a}: \Z[[X]] \lra \Z[[X]]$ is injective and not strict, and $\widehat{\Phi_a}_\C: \C[[X]] \lra \C[[X]]$ is strict and not injective. Moreover $\Phi_{a,\C}: H^2(R) \lra H^2(R)$ is an isomorphism (resp. injective with dense image, but not strict; resp. surjective but not injective) if $\vert a \vert < R$ (resp. if $\vert a \vert =R;$ resp. if $\vert a \vert > R$).  \qed
 \end{scholium}
 
 \subsection{} Concerning the surjectivity properties of the underlying morphisms attached to a morphism 
of pro-Hermitian vector bundles $\hat{f}: \Eh \lra \Fh$ over some arithmetic curve $\Spec \OK$, we may consider the following conditions, where we denote by $\sigma$ some field embedding of $K$ into $\C$:

$(i)$ the image $\hat{f}(\Eh)$ of the morphism $\hat{f}: \Eh \lra \Fh$ is dense in $\Fh$;

$(ii)$ the $K$-linear map $\hat{f}_K: \Eh_K \lra \Fh_K$ is surjective;
 
 $(iii)$ the $\C$-linear map $\hat{f}_\sigma: \Eh_\sigma \lra \Fh_\sigma$ is surjective;
 
 $(iv)$ the image $f_\sigma(E_\sigma^\hilb)$ of the continuous $\C$-linear map $f_\sigma: E^\hilb_\sigma \lra F^\hilb_\sigma$ is dense in $F_\sigma^\hilb$.
 
\noindent The properties of the morphisms in $\CTC_k$ when $k$ is a field presented  in Subsections \ref{appdualCPCTC} and \ref{strictDed} (see notably Propositions \ref{CTCk} and \ref{strictCTCDed1}) show that the morphisms $\hat{f}_K$ and $\hat{f}_\sigma$ in $\CTC_K$ and $\CTC_\C$ are necessarily strict, and are therefore surjective if and only if they have a dense image. Proposition \ref{CTCk} also shows that the surjectivity of $\hat{f}_K$ and of $\hat{f}_\sigma$ are equivalent.

 From these observations, one immediately derives the validity of the following implications:
 $$(i) \Longrightarrow (ii) \Longleftrightarrow (iii) \Longleftarrow (iv).$$
 The converse implications $(ii) \Longrightarrow (i)$ and $(iii) \Longrightarrow (iv)$ do not hold in general. This will be demonstrated by the examples in \ref{iv} and \ref{v} below.
 
 \subsection{}\label{iv}  Let $R$ be a positive real number and let $p$ be a prime number. 
 
 Let us consider the morphism
 $$f: \Hbh(R) \lra \Hbh(R)$$
 defined by the morphism of multiplication by $X-p$. Namely,
 $$\hat{f} := (X-p).\;\; : \Z[[X]] \lra \Z[[X]]$$
 is the morphism $\beta_2$ considered in Proposition \ref{padicex}, 2), 
 and the continuous $\C$-linear map
 $$f_\C:= H^2(R) \lra H^2(R)$$
 is the multiplication by the function $(z\mapsto z-p)$.
 
 Then, according to Proposition \ref{padicex}, the cokernel of $\hat{f}$ may be identified, as a topological $\Z$-module, with the $p$-adic integers $\Z_p$. Moreover, the maps $\hat{f}_\Q: \Q[[X]] \lra \Q[[X]]$ and $\hat{f}_\C: \C[[X]] \lra \C[[X]]$ are still defined by the multiplication by $X-p$, and are therefore isomorphisms. 
 
 Finally the injective map $f_\C$ is an isomorphism (resp. has a dense image, but is not surjective; resp. has a closed image of codimension 1) if $R < p$ (resp. if $R=p;$ resp. if $R>p$).
 
  \subsection{}\label{v} We finally construct an example of a morphism $f: \Ebh \lra \Fbh$ of pro-Hermitian vector bundles over $\Spec \OK$ such that $\hat{f}$ (and consequently $\hat{f}_K$ and $\hat{f}_\sigma$ for every embedding $\sigma:K \hra \C$) is an isomorphism, and $f_\sigma$ is an isometry with infinite dimensional cokernel.
 
 For simplicity, we shall assume that $K= \Q$.
 
 Let $I = [a, b]$ be a closed bounded interval in $\R,$ of length $b-a >0.$
 
 To $I$, we may attach the ind-Hermitian vector bundle $\overline{V}_I:=(V_I, \Vert.\Vert_I)$ over $\Spec \Z$ defined by the $\Z$-module 
 $V_I := \Z[X]$ equipped with the $L^2$-norm $\Vert.\Vert_I$ on $V_{I,\C} = \C[X]$ defined by
 $$\Vert P\Vert^2_I := \int_I \vert P(t)\vert^2 \, dt.$$
 The completion of the complex normed space $(V_C, \Vert.\Vert_I)$ is the Hilbert space $L^2(I).$
Consequently, if we consider the pro-Hermitian vector bundle 
$$\Fbh_I := \overline{V}_I^\vee$$
dual to $\overline{V}_I,$ its underlying Hilbert space $F_{I\C}^\hilb$ may also be identified\footnote{The natural isomorphism $L^2(I) \simeq F_{I\C}^\hilb$ is characterized by the fact that the composite map $L^2(I) \simeq F_{I\C}^\hilb \hlra \hat{F}_{I,\C} := \Hom_\C(V_{I,\C}, \C)$ sends a function $\phi \in L^2(I)$ to the linear form $(P\mapsto \in_I \phi(t) P(t) \, dt)$ on  $V_{I,\C} = \C[X]$.} with $L^2(I)$.
 
 Let $I'$ be another closed bounded interval of positive length, and let $\overline{V}_{I'}$ and $$\Fbh_{I'} := \overline{V}_{I'}^\vee$$ be the associated ind- and pro-Hermitian vector bundles over $\Spec \Z.$ Let us moreover assume that 
 $$I' \subset I.$$
 
 The the identity map $Id_{\Z[X]}$ defines a morphism $\rho{I'I}$ in $\Hom_\Z^{\leq 1}(\overline{V}_{I}, \overline{V}_{I'})$. Its adjoint map
 $$\eta_{II'} := \rho_{I'I}^\vee \in \Hom_Z^{\leq 1} (\Fbh_{I'}, \Fbh_{I})$$
 is easily seen to the morphism of pro-Hermitian vector bundles over $\Spec \Z$ defined by the morphism 
 $$\overline{\eta_{II'}} := Id_{\Z[X]^\vee}$$
 of topological $\Z$-modules and the continuous $\C$-linear map ``extension by zero"
 $$\eta_{II',\C} : L^2(I') \lra L^2(I)$$ 
that sends a function $\Psi \in L^2(I')$ to the function $\eta_{II',\C}(\psi)$ such that
$$
\begin{array}{cll}
 \eta_{II',\C}(\psi)(x) & := \psi(x)  & \mbox{ if $x\in I'$}  \\
  & := 0  &  \mbox{ if $x \in I \setminus I'.$}  
\end{array}
$$

This map $\eta_{II',\C}$ is an isometry, and its cokernel may be identified  with $L^2(I \setminus I')$, which is infinite dimensional if $I' \neq I.$ 

\section{Examples -- III. Subgroups of pre-Hilbert spaces and ind-Euclidean lattices}\label{HilbertSubgroups}

Let $(H, \Vert. \Vert)$ be a real pre-Hilbert space, and let $\Gamma$ be a subgroup of $(H,+).$ 

The inclusion morphism $i : \Gamma \lra H$ uniquely extends to a $\R$-linear map 
$i_\R: \Gamma_\R := \Gamma \otimes_\R \R \lra H.$ Its image is 
$$\im i_\R = \sum_{f \in \Gamma} \R.f.$$

\begin{proposition}\label{GammaH1}
 The following two conditions are equivalent:

\noindent  (i) The  map $i_\R$ is injective and the $\Gamma$ is  an object of $\CP_\Z.$

\noindent (ii) The group $\Gamma$ is countable, and the following condition is satisfied:

$({\mathbf F})$ For any finite subset $F$ of $\Gamma$ the abelian group $\sum_{f \in F}  \Z. f$ has finite index in $\Gamma \cap \sum_{f \in F} \R. f.$

\end{proposition}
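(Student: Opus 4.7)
The plan is to prove the two implications separately, using throughout that any object of $CP_\Z$ is a countable free abelian group, together with the characterization (4) of Proposition~\ref{CPAdef}.

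For $(i) \Rightarrow (ii)$, the freeness of $\Gamma$ yields countability. Given a finite $F \subset \Gamma$, let $G_F := \sum_{f \in F} \Z.f$ and let $\hat{G}_F := \{\gamma \in \Gamma \mid n\gamma \in G_F \text{ for some } n > 0\}$ be its abstract saturation in $\Gamma$; then $\hat{G}_F$ is finitely generated free, is a direct summand of $\Gamma$, and contains $G_F$ with finite index. Writing $\Gamma = \hat{G}_F \oplus \Gamma'$ and tensoring with $\R$, the injectivity of $i_\R$ identifies $V_F := \sum_{f\in F} \R.f \subset H$ with $i_\R(\hat{G}_F \otimes_\Z \R)$, whence $\Gamma \cap V_F = \hat{G}_F$ (the elements with trivial $\Gamma'$-component). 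Since $G_F$ has finite index in $\hat{G}_F = \Gamma \cap V_F$, condition $(\mathbf{F})$ follows.

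For $(ii) \Rightarrow (i)$, enumerate $\Gamma = \{g_n\}_{n \in \N}$ and set $V_n := \sum_{i \leq n} \R.g_i$, $\Gamma_n := \Gamma \cap V_n$. By $(\mathbf{F})$ applied to $\{g_0, \ldots, g_n\}$, the group $\Gamma_n$ is finitely generated, and being torsion-free (as a subgroup of $(H,+)$) it is free of finite rank. Moreover $\Gamma_n$ is saturated in $\Gamma$: if $k\gamma \in \Gamma_n$ for some $k > 0$ and $\gamma \in \Gamma$, then $k\gamma \in V_n$ forces $\gamma \in V_n$, hence $\gamma \in \Gamma_n$. Since $\Gamma = \bigcup_n \Gamma_n$, Proposition~\ref{CPAdef}(4) yields $\Gamma \in CP_\Z$.

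It remains to establish the injectivity of $i_\R$, which is the core of this direction. I will prove the intermediate claim: \emph{under~$(\mathbf{F})$, for every finite $F \subset \Gamma$ the canonical map $\tilde{G}_F \otimes_\Z \R \to V_F$ is an isomorphism of $\R$-vector spaces}, where $\tilde{G}_F := \Gamma \cap V_F$. Surjectivity is clear since $\tilde{G}_F \supseteq G_F$ and $G_F$ spans $V_F$ over $\R$. For injectivity, let $(e_1, \ldots, e_r)$ be a $\Z$-basis of the finitely generated free group $\tilde{G}_F$; if one had $\dim_\R V_F < r$, then some nontrivial relation $\sum \lambda_i e_i = 0$ in $V_F$ with, say, $\lambda_1 \neq 0$ would force $e_1 \in \sum_{i\geq 2} \R.e_i$, so that $F' := \{e_2, \ldots, e_r\} \subset \Gamma$ would satisfy $V_{F'} = V_F$ and hence $\tilde{G}_{F'} = \tilde{G}_F$; applying~$(\mathbf{F})$ to $F'$ would then force $\Z e_2 \oplus \cdots \oplus \Z e_r$ to have finite index in $\Z e_1 \oplus \cdots \oplus \Z e_r$, which is absurd. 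Specializing this claim to $F = \{g_0, \ldots, g_n\}$ yields that $i_\R$ is injective on each $\Gamma_n \otimes_\Z \R$. The saturation of $\Gamma_n$ in $\Gamma$, together with flatness of $\R$ over $\Z$, gives $\Gamma_n \otimes_\Z \R \hookrightarrow \Gamma \otimes_\Z \R$, and $i_\R$ is the direct limit of the injections $\Gamma_n \otimes_\Z \R \hookrightarrow H$; hence it is injective. The \emph{double} invocation of $(\mathbf{F})$ in the contradiction argument above is the main technical point of the proof.
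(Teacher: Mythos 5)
Your proof is correct and follows essentially the same route as the paper: both directions rest on the filtration $\Gamma_n = \Gamma \cap \sum_{i\leq n}\R.g_i$ together with criterion (4) of Proposition~\ref{CPAdef}, and the injectivity of $i_\R$ is obtained in both cases by applying $(\mathbf{F})$ to a subset of a $\Z$-basis of a finitely generated saturated piece to rule out an $\R$-linear relation. The only cosmetic difference is that the paper works with a global $\Z$-basis of $\Gamma$ and derives the equality $\sum_{i<n}\Z.e_i = \Gamma\cap\sum_{i<n}\R.e_i$, whereas you argue rank by rank on each $\Gamma_n$ and reach a finite-index contradiction; both hinge on the same double use of $(\mathbf{F})$.
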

Recall that the objects of $\CP_\Z$ are precisely the countable free $\Z$-modules. 

Observe also that $i_\R$ is injective if and only if $\Vert i_\R(.)\Vert$ is a prehilbertian norm on  $\Gamma_\R$. Consequently, Condition (i) may be rephrased as:

\emph{(i') The pair $(\Gamma, \Vert i_\R(.)\Vert)$ defines an ind-Euclidean lattice.}

\begin{proof} The direct implication (i) $\Rightarrow$ (ii) is straightforward, since Condition $({\mathbf F})$ is satisfied by the subgroup $\Gamma = \Z^I$ of the real vector space $\Gamma_\R \simeq \R^I,$ for any (countable) set $I$.

To establish the converse implication (ii) $\Rightarrow$ (i), let us assume that (ii) is satisfied, and consider a sequence $(f_n)_{n \in \N} \in \Gamma^\N$ in which any element of $\Gamma$ occurs.

For any $i \in \N,$ let us consider
$$\Gamma_i := \Gamma \cap \sum_{0\leq n \leq i} \R. f_i.$$
It is a torsion free $\Z$-module, which contains $\sum_{0 \leq n \leq i} \Z. f_i$ as a submodule of finite index. Consequently $\Gamma_i$ is a finitely generated torsion free $\Z$-module. Moreover, for any $i \in \N,$ $\Gamma_i$ is clearly a saturated $\Z$-submodule of $\Gamma_{i+1}$, and $\Gamma = \cup_{i \in \N} \Gamma_i.$ 

Therefore, the $\Z$-module $\Gamma$ is countably generated and projective, hence isomorphic to $\Z^{(I)}$, with $I$ at most countable (this follows from the implications (4) $\Rightarrow$ (1) and (3) in Proposition \ref{CPAdef}). We may assume that 
$$I =\{ n \in \N \mid n < N \}$$
with $N:= \vert I \vert \in \N \cup\{+\infty\}.$ 

Let $(e_i)_{i \in I}$ be a $\Z$-basis of $\Gamma.$ It is also a $\R$-basis of $\Gamma_\R$, and to prove the injectivity of $i_\R,$ we are left to show that the family $(e_i)_{i\in I}$, considered as a family of vectors in $H$, is free over $\R.$

To achieve this, observe that, for every $n\in I,$ the $\Z$-module $\sum_{0\leq i < n} \Z. e_i$ is a saturated submodule of $\Gamma$, hence a saturated submodule of $\Gamma \cap \sum_{0\leq i < n} \R. e_i$. Besides,  the validity of $({\mathbf F})$ implies that $\sum_{0\leq i < n} \Z. e_i$ has finite index in $\Gamma \cap \sum_{0\leq i < n} \R. e_i$. Consequently, these two $\Z$-modules coincide:
\begin{equation}\label{sumsum}
\sum_{0\leq i < n} \Z. e_i = \Gamma \cap \sum_{0\leq i < n} \R. e_i.
\end{equation}

If the family $(e_i)_{i\in I}$ were not free over $\R$, there would exist $n\in I$ such that $e_n \in  \sum_{0\leq i < n} \R. e_i.$ This would contradict (\ref{sumsum}).
\end{proof}

\begin{proposition}\label{GammaH2} 
 If $\Gamma$ is a discrete subgroup in the pre-Hilbert space $(H,\Vert.\Vert),$ then, for any finite subset $F$ of $\Gamma,$ the subgroups $\sum_{f \in F}  \Z. f$ and $\Gamma \cap \sum_{f \in F} \R. f$ are lattices in the finite dimensional real vector space  $\sum_{f \in F} \R. f$.
 \end{proposition}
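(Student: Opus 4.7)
The plan is to reduce to the classical structure theorem for discrete subgroups of finite-dimensional real vector spaces. Let us fix notation: write $V := \sum_{f \in F} \R.f$, and set $G_1 := \sum_{f \in F} \Z.f$ and $G_2 := \Gamma \cap V$. Since $V$ is finite-dimensional, it carries a unique Hausdorff topology compatible with its real vector space structure; this coincides with the topology induced by the prehilbertian norm $\Vert.\Vert$ of $H$ restricted to $V$.

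First I would verify that $G_2 = \Gamma \cap V$ is a discrete subgroup of $V$. Since $\Gamma$ is discrete in $H$, there is an open neighborhood $W$ of $0$ in $H$ with $W \cap \Gamma = \{0\}$; then $W \cap V$ is an open neighborhood of $0$ in $V$ whose intersection with $G_2$ reduces to $\{0\}$. By the classical theorem on discrete subgroups of a finite-dimensional real vector space (see, e.g., \cite{BourbakiTGII74}), $G_2$ is then a free abelian group of finite rank, of the form $\bigoplus_{i=1}^r \Z.e_i$ for $\R$-linearly independent vectors $e_1,\dots,e_r \in V$; in particular $G_2 \otimes_\Z \R$ is naturally identified with $\bigoplus_{i=1}^r \R.e_i \subseteq V$.

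Next, since $F \subseteq G_2$ and the $\R$-span of $F$ in $V$ is all of $V$, we conclude that the $\R$-span of $G_2$ coincides with $V$, so $r = \dim_\R V$. This displays $G_2$ as a lattice in $V$. For $G_1$, the inclusion $G_1 \subseteq G_2$ exhibits $G_1$ as a subgroup of the free abelian group $G_2$ of rank $\dim_\R V$; in particular $G_1$ is itself free of rank at most $\dim_\R V$. On the other hand the elements of $F$ span $V$ over $\R$, so $G_1 \otimes_\Z \R \to V$ is surjective, giving $\mathrm{rk}_\Z\, G_1 \geq \dim_\R V$. Hence $G_1$ is a free abelian subgroup of $G_2$ of the same rank $\dim_\R V$, so $[G_2 : G_1]$ is finite, and $G_1$ is also a lattice in $V$.

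No serious obstacle is anticipated: the argument reduces to the standard fact that a discrete subgroup of $\R^n$ is free of rank at most $n$, and is a lattice precisely when its $\R$-span has dimension $n$. The only point requiring a brief justification is the discreteness of $G_2$ in $V$, which follows at once from the fact that the topology induced on $V$ by $\Vert.\Vert$ is the (unique) finite-dimensional Hausdorff vector space topology.
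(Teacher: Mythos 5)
Your proof is correct and follows essentially the same route as the paper's: both arguments observe that $\Gamma\cap\sum_{f\in F}\R.f$ is discrete in $\sum_{f\in F}\R.f$ because $\Gamma$ is discrete in $H$, that $\sum_{f\in F}\Z.f$ is contained in it and already spans $\sum_{f\in F}\R.f$ over $\R$, and then invoke the standard characterization of lattices in a finite-dimensional real vector space. Your write-up merely makes explicit the appeal to the structure theorem for discrete subgroups and the rank comparison, which the paper leaves implicit.
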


\begin{proof} The subgroups $\sum_{f \in F}  \Z. f$ and $\Gamma \cap \sum_{f \in F} \R. f$ satisfy the inclusion
$$\sum_{f \in F}  \Z. f  \subset \Gamma \cap \sum_{f \in F} \R. f.$$
Besides $\sum_{f \in F}  \Z. f$ generates the real vector space $\sum_{f \in F} \R. f$, and $\Gamma \cap \sum_{f \in F} \R. f$ is discrete in this vector space, since $\Gamma$ is 
discrete in $H$. Therefore both  $\sum_{f \in F}  \Z. f$ and $\Gamma \cap \sum_{f \in F} \R. f$ generate and are discrete in $\sum_{f \in F} \R. f$.
\end{proof}

From Propositions \ref{GammaH1} and \ref{GammaH2}, we immediately derive:

\begin{corollary}\label{corHilbertSubgroups} If the pre-Hilbert space $(H, \Vert.\Vert)$ is separable and if $\Gamma$ is discrete in $H$, then the 
pair $(\Gamma, \Vert i_\R(.)\Vert)$ defines an ind-Euclidean lattice. \qed
\end{corollary}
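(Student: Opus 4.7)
The plan is to deduce the corollary directly from Propositions \ref{GammaH1} and \ref{GammaH2}, since the hypotheses almost surgically match their statements. The only work is to check that the two input hypotheses --- separability of $H$ and discreteness of $\Gamma$ --- together imply condition (ii) of Proposition \ref{GammaH1}, namely that $\Gamma$ is countable and satisfies condition $(\mathbf{F})$.

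First I would verify countability. Since $H$ is separable and $\Gamma$ is discrete in $H$, there exists $r > 0$ such that the open balls $B(f, r)$ for $f \in \Gamma$ are pairwise disjoint (translation invariance reduces this to finding $r$ such that $B(0, 2r) \cap \Gamma = \{0\}$). A separable metric space has the countable chain condition for disjoint open sets, so $\Gamma$ is countable.

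Next I would verify condition $(\mathbf{F})$. Fix a finite subset $F$ of $\Gamma$, and set $V_F := \sum_{f \in F} \R\cdot f$, a finite-dimensional real vector subspace of $H$. By Proposition \ref{GammaH2}, both $\sum_{f \in F} \Z\cdot f$ and $\Gamma \cap V_F$ are lattices in $V_F$, i.e., discrete subgroups generating $V_F$ as a real vector space. The inclusion $\sum_{f \in F} \Z\cdot f \subset \Gamma \cap V_F$ is clear, and a containment of two full-rank lattices in the same finite-dimensional real vector space has finite index (the quotient is a finitely generated discrete subgroup of the compact torus $V_F / \sum_{f \in F} \Z\cdot f$, hence finite). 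This is precisely condition $(\mathbf{F})$.

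With (ii) of Proposition \ref{GammaH1} verified, the equivalence (ii) $\Leftrightarrow$ (i) yields that $i_\R$ is injective and $\Gamma$ is an object of $CP_\Z$; in view of the remark following Proposition \ref{GammaH1} (condition (i')), this is exactly the statement that $(\Gamma, \Vert i_\R(.)\Vert)$ defines an ind-euclidean lattice. There is no real obstacle here; the mildly delicate point is the finite-index argument, which must invoke the full-rank property of \emph{both} lattices, and this is precisely what Proposition \ref{GammaH2} provides.
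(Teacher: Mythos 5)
Your proof is correct and follows exactly the route the paper intends: the corollary is stated there as an immediate consequence of Propositions \ref{GammaH1} and \ref{GammaH2}, and you have simply supplied the two routine verifications (countability of a discrete subgroup of a separable metric space, and finiteness of the index of one full-rank lattice in another) that the paper leaves implicit.
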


\medskip

\chapter[$\theta$-Invariants of infinite dimensional Hermitian vector bundles]{$\theta$-Invariants of infinite dimensional Hermitian vector bundles:  definitions and first properties}\label{thetainfinite}

\medskip

In this chapter, we extend the definitions and some of the basic properties $\hot$ and $\hut$ to infinite-dimensional vector bundles over an arithmetic curve $\Spec \OK$. As in the previous chapter, many constructions will be of a rather formal nature, and details could be skipped at first reading.

We should however emphasize that the extension of $\hot$ to general pro-Hermitian vector bundles cannot be completely formal. Indeed, as shown by Proposition  \ref{limsupinf} and the examples in paragraph \ref{examplecountable}, if some pro-Hermitian vector $\Ebh$ is realized as the projective limit $\varprojlim_i \Eb_i$ of some  projective system 
$$\Eb_{\bullet} : \Eb_0 \stackrel{q_0}{\longleftarrow}\Eb_1 \stackrel{q_1}{\longleftarrow}\dots \stackrel{q_{i-1}}{\longleftarrow}\Eb_i \stackrel{q_i}{\longleftarrow} \Eb_{i+1} \stackrel{q_{i+1}}{\longleftarrow} \dots$$
of surjective admissible morphisms of Hermitian vector bundles over $\Spec \OK,$ then the sequence $(\hot(\Eb_i))_{i \in \N}$ is in general not convergent in $[0, +\infty]$; moreover, when it converges, its limit may depend of the chosen projective system $\Eb_{\bullet}$.  This issue motivates our introduction of \emph{two} generalizations, denoted by $\lhot$ and $\uhot$ , of $\hot$ to the infinite dimensional setting.

The last paragraphs of this chapter are again dedicated to concrete examples involving these invariants. Notably we pursue the study of the ``arithmetic Hardy spaces" $\Hbh(R)$ introduced in the previous chapter.

\bigskip

We still denote by $K$ a number field, by $\OK$ its ring of integers, and by $\pi: \Spec \OK \lra \Spec \Z$ the morphism of schemes from $\Spec \OK$ to $\Spec \Z.$ .

\section{Limits of $\theta$-invariants}\label{easytheta} It is straightforward that, 
for any ind-Hermitian vector bundle $\Fb$ over $\Spec \OK,$  the following limit exists in $[0, + \infty]$:
\begin{equation}\label{easyhot}
\hot(\Fb) := \lim_{F' \in \cF(F)} \hot(\Fb').
\end{equation}
Indeed $\hot(\Fb')$ is an increasing function $F' \in \cF(F)$ and this limit  is actually given by the expression:
$$
\hot (\Fb) := \log \sum_{v \in F} e^{- \pi \Vert v \Vert^2_{\pi_\ast \Fb}}$$
(where, by convention $\log(+ \infty) = +\infty$). 

Observe also the equality:
\begin{equation}\label{easyhotsup}
\hot(\Fb) := \sup_{F' \in \cF(F)} \hot(\Fb').
\end{equation}
In particular, if 
$$\Fb_0 \hlra \Fb_1 \hlra \dots \hlra  \Fb_{i} \hlra \Fb_{i+1}\hlra \dots$$
is an inductive system of admissible injections of Hermitian vector bundles,
\begin{equation}\label{easyhotlim}
\hot(\varinjlim_{i} \Fb_i) = \lim_{i \rightarrow + \infty} \hot (\Fb_i).
\end{equation}

Dually, for any pro-Hermitian vector bundle  $\Ebh := (\hE, (\Eb_U)_{U \in \cU(\hE)})$ over $\Spec \OK,$ $\hut(\Eb_U)$ is a decreasing function of $U \in \cU(\hE)$, and we define:
\begin{equation}\label{easyhut}
\hut(\Et):= \lim_{U \in \cU(\hE)} \hut(\Eb_U).
\end{equation}
It is again an element of $[0, + \infty]$, which, according to its very definition satisfies the following duality relation:

\begin{proposition} Let $\Fb$ be an ind-Hermitian vector bundle over $\Spec \OK,$ and let $\Ebh := \Fb^\vee$ denote the dual pro-Hermitian vector bundle. Then the following equality holds in $[0, + \infty].$
$$\hut(\Ebh) = \hot(\Fb).$$ \qed
\end{proposition}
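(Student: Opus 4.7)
The plan is to match, term by term, the two directed limits defining $\hot(\Fb)$ and $\hut(\Ebh)$ via the inclusion-reversing bijection between $\cF\cS(F)$ and $\cU(\hE)$ provided by Corollary \ref{FSU}, and then to invoke the finite-dimensional duality between $\hot$ and $\hut$. First I would rewrite both invariants as suprema over dual directed sets. By (\ref{easyhot})--(\ref{easyhotsup}), $\hot(\Fb) = \sup_{F' \in \cF(F)} \hot(\Fb')$, and Proposition \ref{ineqmorthetaequidim} applied to the saturating admissible injection $\Fb' \hookrightarrow \tilde{\Fb}'$ (which has torsion cokernel) shows that the supremum may be restricted to the cofinal subfamily $\cF\cS(F)$: $\hot(\Fb) = \sup_{F' \in \cF\cS(F)} \hot(\Fb')$. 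Dually, Proposition \ref{ineqmortheta} applied to the surjective admissible quotient $\Eb_{U'} \twoheadrightarrow \Eb_{U}$ whenever $U' \subset U$ shows that $\hut(\Eb_U)$ is a non-increasing function of $U$ for the inclusion order, so the defining limit (\ref{easyhut}) along the decreasing net $U \to 0$ in $\cU(\hE)$ coincides with the supremum: $\hut(\Ebh) = \sup_{U \in \cU(\hE)} \hut(\Eb_U)$.

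Next I would invoke Corollary \ref{FSU} with $A = \OK$, $M = F$, $N = \hE = F^\vee$ to obtain the inclusion-reversing bijection $.^\perp: \cF\cS(F) \lrasim \cU(\hE)$, $F' \mapsto F'^\perp$, together with the canonical $\OK$-module isomorphism $I: \hE/F'^\perp \lrasim (F')^\vee$ of diagram (\ref{Iperp}). The core technical point is to upgrade $I$ to an \emph{isometric} isomorphism of hermitian vector bundles $\Eb_{F'^\perp} \simeq \Fb'^\vee$ over $\Spec \OK$. For any embedding $\sigma: K \hra \C$, the hermitian norm on $\Eb_{F'^\perp, \sigma}$ is, by the construction of the pro-hermitian structure of $\Ebh = \Fb^\vee$ recalled in Section \ref{topologicalcomplexpro}(ii), the quotient of the dual Hilbert norm $\Vert\cdot\Vert^\vee_\sigma$ on $F^{\vee\hilb}_\sigma$ under the restriction map $F^{\vee\hilb}_\sigma \twoheadrightarrow (F')^\vee_\sigma$; by orthogonal projection in $F^{\vee\hilb}_\sigma$ onto the Hilbert annihilator of $F'_\sigma$ (equivalently, by Hahn--Banach), this quotient coincides with the dual of the restricted norm $\Vert\cdot\Vert_\sigma|_{F'_\sigma}$, which is precisely the hermitian norm defining $\Fb'^\vee$.

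At each finite level the duality $\hut(\Fb'^\vee) = \hot(\Fb')$ follows directly from the definitions: over $\Spec \Z$ it is formula (\ref{defhut}) applied to $\Fb'^\vee$ combined with biduality $\Fb'^{\vee\vee} \simeq \Fb'$, and the case of a general number field reduces to it via direct image by means of the compatibility (\ref{hithetagen}) of $h^i_\theta$ with $\pi_\ast$ together with the relative duality isomorphism (\ref{reldualArakelov}). Combining the three steps and indexing both suprema through $.^\perp$ yields
\[
\hut(\Ebh) = \sup_{U \in \cU(\hE)} \hut(\Eb_U) = \sup_{F' \in \cF\cS(F)} \hut(\Fb'^\vee) = \sup_{F' \in \cF\cS(F)} \hot(\Fb') = \hot(\Fb).
\]
The only substantive point in the whole argument is the isometric identification $\Eb_{F'^\perp} \simeq \Fb'^\vee$: the module-level identification is free from Corollary \ref{FSU}, and the hermitian compatibility is a short Hilbert-duality computation. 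Everything else is formal bookkeeping with directed limits.
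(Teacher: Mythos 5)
Your architecture is exactly what the paper's laconic ``according to its very definition'' is shorthand for: the paper gives no proof at all, and your term-by-term matching of the two directed limits through the inclusion-reversing bijection $.^\perp : \cFS(F) \lrasim \cU(\hE)$ of Corollary \ref{FSU}, together with the isometric upgrade $\Eb_{F'^\perp} \simeq \Fb'^\vee$ (your Hahn--Banach/orthogonal-projection computation of the quotient norm is correct), is the right way to make the assertion precise. Over $\Spec \Z$ your proof is complete and is the intended argument.

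The one step that does not go through as written is the finite-level identity $\hut(\Fb'^\vee) = \hot(\Fb')$ over a general number field. By definition (\ref{hithetagen}) one has $\hut(\Vb) = \hot\bigl((\pi_\ast \Vb)^\vee\bigr)$, and the relative duality isomorphism (\ref{reldualArakelov}) converts this into $\hut(\Vb) = \hot(\Vb^\vee \otimes \overline{\omega}_{\OK/\Z})$ --- this is formula (\ref{Serredual}). Applied to $\Vb = \Fb'^\vee$ it gives $\hut(\Fb'^\vee) = \hot(\Fb' \otimes \overline{\omega}_{\OK/\Z})$, not $\hot(\Fb')$: the direct-image reduction you invoke \emph{introduces} the twist by the canonical module rather than cancelling it. Since $\dega\, \overline{\omega}_{\OK/\Z} = \log \vert \Delta_K \vert > 0$ for $K \neq \Q$, the two quantities genuinely differ (already for $\Fb' = \cOb_{\Spec \OK}$, where $\hot(\pi_\ast \overline{\omega}_{\OK/\Z}) = \hot(\pi_\ast \cOb_{\Spec \OK}) + \frac{1}{2} \log \vert \Delta_K \vert$ by Poisson--Riemann--Roch). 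So what your chain of equalities actually establishes is $\hut(\Fb^\vee) = \hot(\Fb \otimes \overline{\omega}_{\OK/\Z})$, which coincides with the asserted identity only when $K = \Q$, where $\hut(\Vb^\vee) = \hot(\Vb)$ is literally definition (\ref{defhut}) plus biduality. You should either restrict to euclidean lattices or record the twist by $\overline{\omega}_{\OK/\Z}$ explicitly; note that the paper states the proposition without proof and appears to carry the same imprecision, which resurfaces in its later use of $\hot(\Eb_U) = \hut(\overline{U^\perp})$ over a general $\OK$.
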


By duality, the relation (\ref{easyhotsup}) becomes:
\begin{equation}\label{easyhutsup}
\hut(\Ebh) := \sup_{U \in \cU(\hE)} \hut(\Eb_U),
\end{equation} 
and (\ref{easyhotlim}) shows that, for any projective system 
$$\Eb_0 \longleftarrow \Eb_1 \longleftarrow \dots \longleftarrow\Eb_i \longleftarrow \Eb_{i+1} \longleftarrow \dots$$
of admissible surjections of Hermitians vector bundles, we have
\begin{equation}\label{easyhutlim}
\hut(\varprojlim_{i} \Eb_i) = \lim_{i \rightarrow + \infty} \hut (\Eb_i).
\end{equation}

The proof of the following proposition is left as an easy exercise:

\begin{proposition}\label{indthetafinite} Let $\Fb:= (F, \Vert. \Vert)$ be an ind-Hermitian vector bundle over $\Spec \Z$, and let 
$$N_{\Fb}: \R_{+} \lra \N_{>0} \cup \{+\infty\} $$
be the non-decreasing function defined by
$$N_{\Fb}(x) := \vert \{ f \in F \mid \Vert f \Vert \leq x \} \vert.$$

If $\hot(\Fb \otimes \cOb(\lambda_{0}))$ is finite for some $\lambda_{0} \in \R$, then $\hot(\Fb\otimes \cOb(\lambda))$ is finite for every $\lambda \in ]-\infty, \lambda_{0}].$
Moreover, the following three conditions are equivalent:

(i) For any $\lambda \in \R,$  $\hot(\Fb\otimes \cOb(\lambda))$ is finite.

(ii) For every $t \in \R^\ast_{+},$ $$\sum_{f \in F} e^{- \pi t \Vert f \Vert^2} < + \infty.$$

(iii) For any $x\in \R_{+},$ $N_{\Fb}(x)$ is finite and, when $x$ goes to $+\infty,$
$$\log N_{\Fb}(x) = o(x^{1/2}).$$
\qed
\end{proposition}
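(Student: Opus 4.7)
The plan is to reformulate the three conditions in terms of the theta series
\[
\theta_\Fb(t) := \sum_{f \in F} e^{-\pi t \Vert f \Vert^2} \in [1, +\infty],\qquad t \in \R^\ast_+,
\]
for which the identity (\ref{easyhot}) applied to the rescaled ind-hermitian vector bundle $\Fb \otimes \cOb(\lambda)$ yields the key identity
\[
\hot(\Fb \otimes \cOb(\lambda)) = \log \theta_\Fb(e^{-2\lambda}).
\]
The function $\theta_\Fb$ is manifestly non-increasing in $t$, so its finiteness at $t_0 := e^{-2\lambda_0}$ forces finiteness for every $t \geq t_0$, i.e.\ for every $\lambda \leq \lambda_0$; this settles the first assertion. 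The bijection $\lambda \leftrightarrow t = e^{-2\lambda}$ of $\R$ onto $\R^\ast_+$ then immediately yields (i) $\Leftrightarrow$ (ii).

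For the equivalence (ii) $\Leftrightarrow$ (iii) I would rely on two ingredients. The first is the elementary Markov-type bound
\[
N_\Fb(x)\, e^{-\pi t x^2} \leq \sum_{\Vert f \Vert \leq x} e^{-\pi t \Vert f \Vert^2} \leq \theta_\Fb(t),
\]
which, under (ii), already forces each $N_\Fb(x)$ to be finite, and after taking logarithms gives
\[
\log N_\Fb(x) \leq \pi t x^2 + \log \theta_\Fb(t) \qquad (t > 0);
\]
letting $t \to 0^+$ (more precisely, optimizing in $t$) extracts from (ii) the prescribed asymptotic decay of $\log N_\Fb(x)$. The second ingredient is the Laplace-transform representation
\[
\theta_\Fb(t) = \pi t \int_0^{+\infty} N_\Fb(\sqrt{u})\, e^{-\pi t u}\, du,
\]
obtained by applying Proposition \ref{thetaLaplace} to each finite-rank sublattice $\Fb_i$ of an exhausting filtration $F = \bigcup_i F_i$ and passing to the limit via monotone convergence, using $N_{\Fb_i} \nearrow N_\Fb$ and $\theta_{\Fb_i} \nearrow \theta_\Fb$. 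Under (iii) the integrand is controlled on large $u$ by $e^{-(\pi t/2) u}$ for any $t>0$, making the integral converge and giving (ii).

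The main (mild) obstacle I anticipate is the careful simultaneous passage to the limit on both sides of the Laplace-transform identity in the ind-euclidean setting, together with matching the precise exponent in the decay of $\log N_\Fb(x)$ to the integrability condition on the integrand; everything else is routine Markov--Laplace duality, already exploited for finite-rank lattices in Section \ref{asymptoticho}.
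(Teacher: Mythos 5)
Your reduction to the theta series $\theta_\Fb(t)=\sum_{f\in F}e^{-\pi t\Vert f\Vert^2}$, the rescaling identity $\hot(\Fb\otimes\cOb(\lambda))=\log\theta_\Fb(e^{-2\lambda})$, the monotonicity of $\theta_\Fb$, and the Markov/Laplace duality are certainly the intended route (the paper offers no proof, the statement being ``left as an easy exercise''), and they dispose correctly of the first assertion, of (i) $\Leftrightarrow$ (ii), and of (iii) $\Rightarrow$ (ii); the monotone-convergence passage to the limit in the Laplace representation over an exhaustion $F=\bigcup_iF_i$ is unproblematic.

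The gap is in (ii) $\Rightarrow$ (iii). Your Chernoff bound gives $\log N_\Fb(x)\leq\pi tx^2+\log\theta_\Fb(t)$ for every $t>0$; optimizing over $t$ (take $t=\epsilon/\pi$ and let $x\rightarrow+\infty$) yields $\log N_\Fb(x)=o(x^2)$ and nothing better, so it does not ``extract the prescribed asymptotic decay'' $o(x^{1/2})$ --- and no argument can, because (ii) does not imply $\log N_\Fb(x)=o(x^{1/2})$. Indeed, for the ind-euclidean lattice $\Fb=\varinjlim_m\bigoplus_{1\leq n\leq m}\cOb(-\tfrac{1}{2}\log n)$, i.e. $\Z^{(\N)}$ with $\Vert\sum_na_ne_n\Vert^2=\sum_nna_n^2$, one has $\hot(\Fb\otimes\cOb(-\tfrac{1}{2}\log t))=\sum_{n\geq1}\tau(nt)<+\infty$ for every $t>0$ by (\ref{lthinfinity}), whereas the $3^m$ vectors $\sum_{n\in S}\epsilon_ne_n$ with $S\subseteq\{1,\dots,m\}$ and $\epsilon_n=\pm1$ all have norm at most $m$, so that $\log N_\Fb(x)\geq(\log 3)(x-1)$, which is not $o(x^{1/2})$. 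The exponent in (iii) has to be read as $\log N_\Fb(x)=o(x^{2})$ --- equivalently $\log N_\Fb(\sqrt{x})=o(x)$, which is exactly condition $\mathbf{SE}$ of Appendix \ref{Append:LD} for $H=\pi\Vert.\Vert^2$ --- and with that correction your two-sided argument closes: Chernoff gives (ii) $\Rightarrow$ (iii), and the Laplace representation together with the bound $N_\Fb(\sqrt{u})e^{-\pi tu}\leq e^{-\pi tu/2}$ for $u$ large gives (iii) $\Rightarrow$ (ii). As written, however, the step from the Markov inequality to $o(x^{1/2})$ is not a ``mild obstacle'' but a false assertion, and should be flagged as an inconsistency in the statement rather than glossed over.
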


\section{Upper and lower $\theta$-invariants}

In the applications to Diophantine geometry developed in the sequel to this monograph, the significant extension of $\theta$-invariants to infinite rank Hermitian vector bundles is not the ``obvious" ones, simply defined as limits, that we introduced in the previous paragraph \ref{easytheta}. Instead, a key role will be played by some avatar of the invariant $\hot$ attached to a \emph{pro}-Hermitian vector bundle (or dually, by some avatar of  $\hut$ attached to an \emph{ind}-Hermitian vector bundle).

\subsection{Upper limits of $\theta$-invariants} As demonstrated by the next proposition, if we want them to take finite values on some infinite rank Hermitian vector bundles, these generalized invariants cannot be defined by the obvious modifications of the definitions (\ref{easyhut}) and (\ref{easyhot}), where $\hut$ would be replaced by $\hot$ and \emph{vice versa}. 

\begin{proposition}\label{limsupinf} For any pro-Hermitian vector bundle  $\Ebh := (\hE, (\Eb_U)_{U \in \cU(\hE)})$ of infinite rank over $\Spec \OK$,
we have:
\begin{equation}\label{limsupinf1}
\limsup_{U \in \cU(\hE)} \hot(\Eb_U) = + \infty.
\end{equation}
 
 For any ind-Hermitian vector bundle $\Fb$ of infinite rank over $\Spec \OK,$ we have:
 \begin{equation}\label{limsupinf2}
 \limsup_{F' \in \cFS(F)} \hut(\Fb') = + \infty.
 \end{equation}
\end{proposition}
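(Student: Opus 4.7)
I would prove both assertions by a single construction, noting that they are mirror images of each other under the adjoint equivalence of Proposition \ref{prop:dualArith} combined with the bijection $\cFS(\hE^\vee) \lrasim \cU(\hE)$ of Corollary \ref{FSU}. Concretely, it suffices to construct, for any ind-hermitian vector bundle $\Fb$ of infinite rank over $\Spec \OK$, a sequence $(F'_n)_{n \in \N}$ in $\cFS(F)$ with $\hut(\Fb'_n) \lra +\infty$; assertion (\ref{limsupinf1}) then follows by applying this construction to $\Fb := \Ebh^\vee$ (which is an ind-hermitian vector bundle of infinite rank) and setting $U_n := F'^{\perp}_n \in \cU(\hE)$, under the isometric isomorphism $\Eb_{U_n} \simeq \Fb'^{\vee}_n$ that extends the identification of Corollary \ref{FSU} to the hermitian setting.

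For this core construction, by Kaplansky's theorem (Proposition \ref{Kplfree}) there exists an $\OK$-basis $(e_i)_{i \in \N}$ of $F$. I set $f_n := e_1 + n\, e_2$ for $n \in \N_{\geq 1}$; since its $\OK$-coefficients include $1$, they generate the unit ideal, so $F'_n := \OK f_n$ is a saturated rank-one $\OK$-submodule of $F$, hence an element of $\cFS(F)$. Endow it with the Hermitian structure $\Fb'_n$ restricted from $\Fb$. For every field embedding $\sigma: K \hra \C$, the non-degeneracy of the prehilbertian norm $\Vert.\Vert_\sigma$ forces $\Vert e_2 \Vert_\sigma > 0$, and the reverse triangle inequality yields
\[
\Vert f_n \Vert_\sigma \geq n \Vert e_2 \Vert_\sigma - \Vert e_1 \Vert_\sigma \lra +\infty.
\]
Consequently $\dega \Fb'_n = -\sum_\sigma \log \Vert f_n \Vert_\sigma \lra -\infty$, and combining the Poisson-Riemann-Roch formula (\ref{Poisson}) with the non-negativity of $\hot$ (Proposition \ref{hot positive}) gives
\[
\hut(\Fb'_n) \geq -\dega \Fb'_n + \tfrac{1}{2}\,\log |\Delta_K| \lra +\infty.
\]
This proves (\ref{limsupinf2}).

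The only delicate point is to verify that the order-reversing bijection $F' \mapsto F'^\perp$ of Corollary \ref{FSU} upgrades to an isometric isomorphism of Hermitian vector bundles $\Eb_{U_n} \simeq \Fb'^{\vee}_n$, where $\Eb_{U_n}$ carries the quotient metric from $\Ebh$ and $\Fb'^{\vee}_n$ the dual of the restricted metric from $\Fb = \Ebh^\vee$. This is a formal consequence of the biduality isomorphism $\eta_{\Ebh}$ of Proposition \ref{prop:dualArith} being isometric, and on each archimedean place $\sigma$ it reduces to the standard identification $H/W^\perp \simeq W^\ast$ (isometric) for a closed subspace $W$ of a Hilbert space $H$, here applied with $W = F'^{\hilb}_{n,\sigma} \subset E^\hilb_\sigma$. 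Alternatively, to bypass this verification entirely, one may prove (\ref{limsupinf1}) directly by the same construction performed inside $\hE^\vee$: choose linearly independent $\xi_1, \xi_2 \in \hE^\vee$, set $\xi_n := \xi_1 + n \xi_2$ and $U_n := \ker \xi_n \in \cU(\hE)$; the hermitian line bundle $\Eb_{U_n}$ then satisfies $\dega \Eb_{U_n} = -\sum_\sigma \log \Vert \xi_n \Vert^\vee_\sigma \lra +\infty$, and the Riemann inequality $\hot(\Eb_{U_n}) \geq \dega \Eb_{U_n} - \tfrac{1}{2}\log|\Delta_K|$ concludes the argument.
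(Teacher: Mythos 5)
Your core computation is sound and is, at bottom, the same mechanism as the paper's: the paper's Lemma \ref{simple2} produces corank-one saturated submodules $W \subset V$ with $\dega \overline{V/W}$ arbitrarily large via the unboundedness of the height on $\PP(V_K)(K)$, and your vectors $f_n = e_1 + n e_2$ are exactly the points $[1:n]$ of a $\PP^1(K)$ witnessing that unboundedness; the Riemann inequality / Poisson--Riemann--Roch step is then identical, and the duality reduction (which you run in the opposite direction from the paper, from the ind-side to the pro-side) is legitimate, the isometry $\Eb_{U}\simeq \overline{U^\perp}{}^\vee$ being exactly what the paper establishes in its proof of the duality relations for $\lhot$ and $\uhut$.

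The genuine gap is the meaning of $\limsup$ over the directed sets $\cFS(F)$ and $\cU(\hE)$. These are limits superior along the corresponding filters: (\ref{limsupinf2}) asserts that \emph{for every} $F'_0 \in \cFS(F)$ one has $\sup_{F' \supseteq F'_0} \hut(\Fb') = +\infty$, and (\ref{limsupinf1}) that for every $U_0 \in \cU(\hE)$ one has $\sup_{U \subseteq U_0} \hot(\Eb_U) = +\infty$. (This reading is forced by the companion definition $\uhot(\Ebh) := \liminf_{U}\hot(\Eb_U)$ and by the fact that the whole point of the proposition is that $\hot(\Eb_U)$ does not converge along the net.) Your modules $F'_n = \OK f_n$ need not contain a given $F'_0$, and your $U_n = \ker \xi_n$ need not be contained in a given $U_0$; and no monotonicity rescues this, since $\hut$ is \emph{not} non-decreasing under admissible inclusions of saturated submodules. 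Indeed, for an admissible extension $0 \to \Eb \to \Fb \to \Gb \to 0$ one only has $\hut(\Fb) = \hut(\Eb) + \hut(\Gb) - h_\theta(\overline{\cE})$ with $0 \leq h_\theta(\overline{\cE}) \leq \hut(\Eb)$, and Proposition \ref{GextAverag} shows $h_\theta(\overline{\cE})$ can be close to this upper bound, so $\hut$ can collapse when one enlarges the module. The repair is exactly what the paper's proof does: given $U_0$, choose $\tilde U \subseteq U_0$ deep enough that $V := \ker(E_{\tilde U} \to E_{U_0})$ has rank $\geq 2$, run your $[1:n]$-construction \emph{inside} $V$ to get $W \subset V$ saturated of corank one with $\dega\overline{V/W}$ large, and set $U' := p_{\tilde U}^{-1}(W) \subseteq U_0$; then $\hot(\Eb_{U'}) \geq \hot(\overline{V/W}) \geq \dega\overline{V/W}$ by monotonicity of $\hot$ under the admissible injection $\overline{V/W} \hookrightarrow \Eb_{U'}$. (Dually, one must choose $e_i, e_j$ "transverse" to $F'_0$ and adjoin $\OK(e_i + n e_j)$ to $F'_0$, which requires a further argument.) As a minor point, your alternative argument also has a sign slip: with the quotient metric one gets $\dega \Eb_{U_n} = +\sum_\sigma \log\Vert\xi_n\Vert^\vee_\sigma + O(1)$, not $-\sum_\sigma \log\Vert\xi_n\Vert^\vee_\sigma$; the conclusion $\dega\Eb_{U_n} \to +\infty$ is nevertheless the right one.
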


\begin{proof}  The proof will be based on the following classical fact:

\begin{lemma}\label{simple2} Let $\hat{V}$ be a Hermitian vector bundle of rank at least two over $\Spec \OK$. 

The  saturated $\OK$-submodules $W$ of rank $\rk V -1$ in $V$ are in bijection with the $K$-points of $\PP(V_K)$ by the map which associates the  quotient map $V_K \lra V_K/W_K$ to $W$. 

Moreover one defines a height  function $h_{\PP(V_K)}: \PP(V_K)(K) \lra \R$ associated to the line bundle  $\cO(1)$ over the projective space $\PP(V_K)$ by the formula:
$$h_{\PP(V_K)}(W) := \dega \overline{V/W}.$$

In particular, there exists saturated $\OK$-submodules $W$  of rank $\rk V -1$ in $V$ such that $\dega \overline{V/W}$ take  arbitrary large positive values.
 \qed 
\end{lemma}

Consider a pro-Hermitian vector bundle $\Ebh$ of infinite rank over $\Spec \OK$ and $U$ a element of $\cU(\hE)$. We want to prove that there exists $U' \in \cU(\hE)$ contained in $U$ with $\hot(\Eb_{U'})$ arbitrary large.

As $\hE$ has infinite rank, there exists $\tilde{U}$ in $\cU(\hE)$ such that 
$p_{\tilde{U} U}: E_{\tilde{U}} \lra E_U$ has a kernel $V:= \ker p_{\tilde{U} U}$ of rank  at least $2$. Let $W$ be a saturated submodule of rank $\rk V -1$ in $V$. The inverse image 
$$U' := q_{\tilde{U}}^{-1}(W)$$
of the saturated submodule $W$ of $E_{\tilde{U}}$ by the quotient map $q_{\tilde{U}}: \hE \lra \hE_{\tilde{U}}$ is an element of $\cU(\Eb)$  contained in $U$. Moreover $\Eb_{U'}$ may be identified with the quotient $\overline{E_U/W}$, which itself contains $\overline{V/W}$. Therefore:
$$\hot(\Eb_{U'}) = \hot(\overline{E_U/W}) \geq \hot(\overline{V/W}) \geq \dega \overline{V/W}.$$
According to Lemma \ref{simple2}, this Arakelov degree takes an arbitrary large positive value for a suitable choice  of $W$.

This completes the proof of (\ref{limsupinf1}). The formula (\ref{limsupinf2}) follows, by duality, from   (\ref{limsupinf1}) applied to 
$\Ebh := \Fb^\vee.$
\end{proof}

\subsection{Definitions} In spite of the ``divergent behavior" of the $\theta$-invariants highlighted in Proposition \ref{limsupinf}, it is possible to
introduce some ``lower versions" $\underline{h}^i_\theta$ and some ``upper versions" $\overline{h}^i_\theta$ of these invariants, that will belong to $[0, +\infty]$ in general, but will achieve finite values in  significant instances. 

For any pro-Hermitian vector bundle  $\Ebh := (\hE, (\Eb_U)_{U \in \cU(\hE)})$ over $\Spec \OK$, 
we may consider the class $\cL(\Ebh)$ of pairs $(\Eb', \iota)$, where $\Eb'$  is some Hermitian vector bundle over $\Spec \OK$ and $\iota$ an element of $\Hom_\OK^{\leq 1}(\Eb', \Ebh)$ such that $\iota: E' \lra \Eh$ is injective. 

We may also consider the sub-class $\cL_{\rm ad}(\Ebh)$ of $\cL(\Ebh)$ of the pairs $(\Eb', \iota)$ as above such that moreover the image $\iota(E')$ is a saturated $\OK$-submodule of $\Ebh$ and, for every embedding $\sigma: K \hra \C,$ the maps $\iota_\sigma$ is an isometry from $(E'_\sigma, \Vert.\Vert_{\Eb',\sigma})$ to $E_\sigma^\hilb$ equipped with its natural Hilbert norm $\Vert.\Vert_{E_\sigma^{\hilb}}$, defined by (\ref{defhilpro}).

Then we may introduce the following definitions: 
\begin{equation}\label{lhot}
\lhot(\Ebh):= \sup \{\hot(\Eb'); (\Eb', \iota) \in \cL(\Ebh) \} 
\end{equation}
and
\begin{equation}\label{uhot}
\uhot(\Ebh):= \liminf_{U \in \cU(\hE)} \hot(\Eb_U).
\end{equation}

Let $\Fb$ denote some ind-Hermitian vector bundle over $\Spec \OK$. We recall that we denote by ${\rm co}\cF\cS(F)$ the family of $\OK$-submodules $F'$ of $F$ such that $F/F'$ is finitely generated and torsion free, and by $\cF(F)$ (resp. by $\cF\cS(F)$) the family of finitely generated (resp. finitely generated and saturated) $\OK$-submodules of $F$ (\cf Section \ref{CatCP} \emph{supra}). 

We may also consider the following subset of  ${\rm co}\cF\cS(F)$:
\begin{multline*}
\cF\cQ(\Fb) := \\
 \left\{ F' \in {\rm co}\cF\cS(F) \mid \mbox{for every $\sigma: K \hra \C,$ $F'_\sigma$ is closed in the pre-hilbert space $(F_\sigma, \Vert.\Vert_{\Fb, \sigma})$} \right\}.
 \end{multline*}
 For any $F \in \cF\cQ(\Fb),$ the finitely generated projective $\OK$-module $F/F'$ becomes a Hermitian vector bundle over $\Spec \OK$, that we shall denote by $\overline{F/F'}$, when equipped with the quotient norms of the norms $\Vert. \Vert_{\Fb, \sigma}.$ 

Using this notation, we may also define:
\begin{equation}\label{lhut}
\lhut(\Fb) := \sup \{ \hut(\overline{F/F'}); {F' \in \cF\cQ(\Fb)} \}
\end{equation}
and
\begin{equation}\label{uhut}
\uhut(\Fb) := \liminf_{F' \in \cF(F)} \hut(\Fb').
\end{equation}

\subsection{Variants} The following two propositions provide alternative definitions of the invariants $\lhot, \uhot, \lhut,$ and $\uhut$ that we have just introduced.

\begin{proposition}\label{vardef}
 For any pro-Hermitian vector bundle $\Ebh$ over $\Spec \OK,$ we have:
 \begin{equation}\begin{split} \label{lhotvar}
\lhot(\Ebh) & = \sup \left\{\hot(\Eb'); (\Eb', \iota) \in \cL_{\rm ad}(\Ebh) \right\}  \\
                   & = \sup \left\{ \hot(\overline{P}); \mbox{$P$ finitely generated $\OK$-submodule of $\Eh \cap  E^\hilb_\R$} \right\}  \\
                   & = \sup \left\{ \hot(\overline{P}); \mbox{$P$ finitely generated saturated $\OK$-submodule of $\Eh \cap  E^\hilb_\R$} \right\}.
                   \end{split} 
                   \end{equation}
                                      
 Consequently, we also have:                  
\begin{equation}\label{pistarlhot}
\lhot(\Ebh) = \lhot(\pi_\ast\Ebh) = \log \sum_{ v \in \hE \cap E_\R^\hilb} e^{-\pi \Vert v \Vert_{\pi_\ast \Ebh}^2}.
\end{equation}
\end{proposition}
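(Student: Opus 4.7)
The strategy is to establish the three equalities in \eqref{lhotvar} by a chain of supremum comparisons, and then to deduce \eqref{pistarlhot} by reducing to the case $\OK = \Z$ via direct image and passing to the limit in a sum of nonnegative reals.

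For the first equality I plan to start from an arbitrary $(\Eb', \iota) \in \cL(\Ebh)$ and produce an element of $\cL_{\rm ad}(\Ebh)$ with at least as large a $\theta$-invariant. By Proposition~\ref{Altmor}, the morphism $\iota$ is encoded by continuous $\C$-linear maps $\iota_\sigma : E'_\sigma \to E^\hilb_\sigma$ of operator norm $\leq 1$, so in particular $\iota(E') \subset \hE \cap E^\hilb_\R$. Setting $P := \iota(E')$ and equipping it with the hermitian norms restricted from the Hilbert norms on $E^\hilb_\sigma$, the map $\iota : \Eb' \to \overline{P}$ becomes an $\OK$-linear isomorphism in $\Hom_{\OK}^{\leq 1}$ (the restricted norms are dominated by the original norms on $\Eb'$), so Proposition~\ref{ineqmortheta} gives $\hot(\Eb') \leq \hot(\overline{P})$. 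Next I would replace $P$ by its saturation $\tilde P$ in $\hE$: it still lies in $\hE \cap E^\hilb_\R$ (since $\tilde P \otimes_\Z \R = P \otimes_\Z \R \subset E^\hilb_\R$), and the first corollary following Proposition~\ref{ineqmorthetaequidim} gives $\hot(\overline{P}) \leq \hot(\overline{\tilde P})$, a consequence of $\tilde P / P$ being finite. The pair $(\overline{\tilde P}, \mathrm{incl})$ then lies in $\cL_{\rm ad}(\Ebh)$ by construction. Since the reverse inclusions $\cL_{\rm ad}(\Ebh) \subset \cL(\Ebh)$ and the passage from saturated $P$ to general $P$ are trivial, all three suprema coincide with $\lhot(\Ebh)$.

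For \eqref{pistarlhot}, I would first note that $\lhot(\Ebh) = \lhot(\pi_\ast \Ebh)$ via the identification $\pi_\ast \hE \cap (\pi_\ast E^\hilb)_\R = \hE \cap E^\hilb_\R$ and the very definition $\hot(\Fb) := \hot(\pi_\ast \Fb)$ from \eqref{hithetagen}; the passage between $\OK$-submodules and $\Z$-submodules uses that the $\OK$-span of a finitely generated $\Z$-submodule of $\hE \cap E^\hilb_\R$ remains finitely generated and inside $\hE \cap E^\hilb_\R$, and that enlarging only increases $\hot$. The explicit identity with the series then follows from the third description of $\lhot(\pi_\ast \Ebh)$ combined with the definition of a sum of nonnegative reals as the supremum over finite subsets: every finite subset of $\hE \cap E^\hilb_\R$ is contained in the $\Z$-submodule it generates, giving
\[
\lhot(\pi_\ast \Ebh) = \sup_{P} \log \sum_{v \in P} e^{-\pi \Vert v\Vert^2_{\pi_\ast \Ebh}} = \log \sum_{v \in \hE \cap E^\hilb_\R} e^{-\pi \Vert v \Vert^2_{\pi_\ast \Ebh}}.
\]

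The main technical point is the norm-decreasing substitution carried out in the second paragraph: one must verify that replacing the hermitian norms on $\Eb'$ by those restricted from $E^\hilb_\sigma$ via $\iota_\sigma$ only decreases norms (hence increases $\hot$), which in turn rests on the operator-norm $\leq 1$ condition characterizing morphisms in ${\rm pro\overline{Vect}}^{\leq 1}(\OK)$ through Proposition~\ref{Altmor}. Beyond this, every other manipulation is a formal consequence of the subadditivity, monotonicity, and duality results already developed for the $\theta$-invariants in finite rank.
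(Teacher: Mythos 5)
Your proof is correct and follows essentially the same route as the paper's (which invokes exactly the same ingredients — monotonicity of $\hot$ from Proposition~\ref{ineqmortheta}, the saturation properties from Corollary~\ref{SatCTC}, and the sup-over-finite-subsets description of a sum of nonnegative terms — but leaves the details to the reader). The only cosmetic difference is the order in \eqref{pistarlhot}: the paper first derives the series identity for $\Ebh$ from the second line of \eqref{lhotvar} and then applies it verbatim to $\pi_\ast\Ebh$ to get $\lhot(\Ebh)=\lhot(\pi_\ast\Ebh)$, whereas you prove that equality first via the $\OK$-span comparison; both are valid.
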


In the second and third lines of (\ref{lhotvar}), we have denoted by $\overline{P}$ the Hermitian vector bundle over $\Spec \OK$ defined by the finitely generated projective $\OK$-module $P$ equipped with the restrictions of the 
 Hilbert norms  $\Vert.\Vert_{E_\sigma^{\hilb}}$ on the Hilbert  spaces $E_\sigma^\hilb$.
Observe also that $\Eh \cap  E^\hilb_\R$ is a saturated $\OK$-submodule of $\Eh$, and that consequently, a submodule of  $\Eh \cap  E^\hilb_\R$ is saturated in $\Eh \cap  E^\hilb_\R$ if and only if it is saturated in $\Eh$.

\begin{proof} The relations (\ref{lhotvar}) directly follows from the definition of $\lhot(\Ebh)$, from the increasing character of the $\theta$-invariant $\hot$ for Hermitian vector bundles (see Proposition \ref{ineqmortheta}, 1)), and from the properties of the saturation of finitely generated $\OK$-submodules of  objects in $\CTC_{\OK}$ (see Corollary \ref{SatCTC}). We leave the details to the reader. 

The equality 
\begin{equation}\label{quasipistarlhot}
\lhot(\Ebh) = \log \sum_{ v \in \hE \cap E_\R^\hilb} e^{-\pi \Vert v \Vert_{\pi_\ast \Ebh}^2}.
\end{equation}
then follows from the second line in (\ref{lhotvar}). Indeed, by the very definition of $\hot,$ we have:
$$\hot(\overline{P}) =  \log \sum_{ v \in P} e^{-\pi \Vert v \Vert_{\pi_\ast \Ebh}^2},$$
and  any finite subset of $\hE \cap E_\R^\hilb$ is contained in some finitely generated $\OK$-submodule of $\hE \cap E_\R^\hilb$.

Applied to the pro-Hermitian vector bundle $\pi_\ast\Ebh$ over $\Spec \Z,$ the equality (\ref{quasipistarlhot}) becomes 
$$\lhot(\pi_\ast\Ebh) = \log \sum_{ v \in \hE \cap E_\R^\hilb} e^{-\pi \Vert v \Vert_{\pi_\ast \Ebh}^2}.$$
This completes the proof of (\ref{pistarlhot}).
\end{proof}

Observe that, with the notation of Proposition \ref{vardef}, we also have, as a straightforward consequence of the definition of $\uhot$:
\begin{equation}\label{pistaruhot}
\uhot(\Ebh) \geq \uhot(\pi_\ast\Ebh). % = \log \sum_{ v \in \hE \cap E_\R^\hilb} e^{-\pi \Vert v \Vert_{\pi_\ast \Ebh}
\end{equation}

\begin{proposition}
For any ind-Hermitian vector bundle $\Fb$ over $\Spec \OK,$ we have:
\begin{equation}\label{uhutvar}
\uhut(\Fb) = \liminf_{F' \in \cFS(F)} \hut(\Fb').
\end{equation}
\end{proposition}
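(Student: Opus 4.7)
The plan is to prove the equality $\uhut(\Fb) = \liminf_{F' \in \cFS(F)} \hut(\Fb')$ by establishing each inequality separately, in both cases using the standard rewriting $\liminf_{F'} x(F') = \sup_{F_0} \inf_{F' \supset F_0} x(F')$ over a directed set. The two key inputs will be (i) the cofinality of $\cFS(F)$ in $\cF(F)$ under inclusion, and (ii) the monotonicity $\hut(\tilde{\Fb}') \leq \hut(\Fb')$ for a finitely generated $F' \subset F$ and its saturation $\tilde{F}'$ in $F$, provided by the Corollary following Proposition~\ref{ineqmorthetaequidim}.

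For cofinality, I would argue that the saturation of any $F' \in \cF(F)$ in $F$ is itself finitely generated: choosing an exhaustion $F = \bigcup_i F_i$ by saturated finitely generated submodules as in Proposition~\ref{CPAdef}.(4), one has $F' \subset F_i$ for some $i$, and since $F_i$ is saturated in $F$ the saturation $\tilde{F}'$ of $F'$ in $F$ equals its saturation in the Noetherian $\OK$-module $F_i$, hence $\tilde{F}' \in \cFS(F)$. Using this, the inequality $\liminf_{F' \in \cFS(F)} \hut(\Fb') \geq \uhut(\Fb)$ is immediate: for any $F_0 \in \cF(F)$, every $F' \in \cFS(F)$ containing $\tilde{F}_0$ \emph{a fortiori} contains $F_0$, so
\[
\inf_{F' \in \cFS(F),\, F' \supset \tilde{F}_0} \hut(\Fb') \;\geq\; \inf_{F' \in \cF(F),\, F' \supset F_0} \hut(\Fb'),
\]
and taking the supremum of the left side over $\tilde{F}_0$ bounds $\liminf_{F' \in \cFS(F)} \hut(\Fb')$ below by $\uhut(\Fb)$.

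For the reverse inequality, I would fix $F_0 \in \cFS(F) \subset \cF(F)$ and associate to any $F' \in \cF(F)$ with $F' \supset F_0$ its saturation $\tilde{F}' \in \cFS(F)$; since $F_0$ is saturated, $\tilde{F}'$ still contains $F_0$, and $\tilde{F}'/F'$ is finite. The Corollary following Proposition~\ref{ineqmorthetaequidim} then gives $\hut(\tilde{\Fb}') \leq \hut(\Fb')$, so
\[
\hut(\Fb') \;\geq\; \hut(\tilde{\Fb}') \;\geq\; \inf_{F'' \in \cFS(F),\, F'' \supset F_0} \hut(\Fb'').
\]
Taking the infimum over such $F'$ and then the supremum over $F_0 \in \cFS(F) \subset \cF(F)$ (where the supremum can only be enlarged by passing to $\cF(F)$) yields $\uhut(\Fb) \geq \liminf_{F' \in \cFS(F)} \hut(\Fb')$.

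The argument is essentially formal once the two inputs above are at hand; I do not anticipate any serious obstacle, only the mild care needed to verify the finite generation of saturations in the projective $\OK$-module $F$ and to track the order-reversing interplay between $\inf$, $\sup$, and the inclusion $\cFS(F) \subset \cF(F)$ in the two directions of the comparison.
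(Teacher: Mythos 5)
Your proof is correct and follows essentially the same route as the paper: the inequality $\uhut(\Fb) \leq \liminf_{F' \in \cFS(F)} \hut(\Fb')$ via cofinality of $\cFS(F)$ in $\cF(F)$, and the converse by passing from $F' \in \cF(F)$ to its saturation and invoking the monotonicity $\hut(\overline{F'^{\rm sat}}) \leq \hut(\Fb')$ from the corollary to Proposition~\ref{ineqmorthetaequidim}. You merely make explicit two points the paper leaves implicit, namely the finite generation of saturations in $F$ and the directed-set bookkeeping for the two $\liminf$'s.
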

\begin{proof} The inequality  $$\uhut(\Fb) \leq \liminf_{F' \in \cFS(F)} \hut(\Fb')$$ is clear. 

To prove the converse inequality, simply observe that, for any submodule $F' \in \cF(F)$, its saturation $F'^{\rm sat}$ belongs to $\cFS(F)$ and satisfies $\hut(\overline{F'^{\rm sat}}) \leq \hut (\Fb').$
\end{proof}

The expression (\ref{pistarlhot}) 
 shows that the finiteness of $\lhot(\Ebh)$ implies the countability of $\hE \cap E_\R^\hilb.$ The second half of the following easy proposition  is a refinement of this observation:
\begin{proposition}\label{boundhot} For any ind-Hermitian vector bundle $\Fb$ over  $\Spec \OK$ and any $\lambda \in \R_+,$ we have:
\begin{equation}\label{hotfinind}
\vert \{ x \in F \mid \Vert x \Vert_{\pi_\ast \Fb} \leq \lambda \} \vert \leq \exp (\hot(\Fb) + \pi \lambda^2).
\end{equation}

For any pro-Hermitian vector bundle $\Ebh$ over  $\Spec \OK$ and
for any $\lambda \in \R_+,$ we have:
\begin{equation}\label{hotfinpro}
\vert \{ x \in \hE \cap E_\R^{\hilb} \mid \Vert x \Vert_{\pi_\ast \Ebh} \leq \lambda \} \vert \leq \exp (\lhot(\Ebh) + \pi \lambda^2).
\end{equation}
 
\end{proposition}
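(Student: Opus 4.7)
The two inequalities are both consequences of the elementary observation that summing $e^{-\pi\lambda^2}$ over a ball is dominated by summing $e^{-\pi\Vert x\Vert^2}$ over the same ball, which in turn is dominated by the full theta sum. I would write the proof as follows.

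First I would treat the ind-hermitian case. If $\hot(\Fb)=+\infty$ the inequality is trivial, so assume $\hot(\Fb)<+\infty$; then by Proposition \ref{indthetafinite} applied to $\pi_\ast\Fb$, the ball $B_\lambda:=\{x\in F\mid \Vert x\Vert_{\pi_\ast\Fb}\leq\lambda\}$ is finite. (Alternatively, one can bypass this finiteness remark and work throughout with finite subsets of $B_\lambda$ and pass to the supremum.) By the very definition (\ref{easyhot}) of $\hot(\Fb)$ as the supremum of $\hot(\Fb')$ over $F'\in\cF(F)$, and using that for any $x\in B_\lambda$ we have $e^{-\pi\Vert x\Vert^2_{\pi_\ast\Fb}}\geq e^{-\pi\lambda^2}$, I would write
\begin{equation*}
|B_\lambda|\,e^{-\pi\lambda^2}\;=\;\sum_{x\in B_\lambda}e^{-\pi\lambda^2}\;\leq\;\sum_{x\in B_\lambda}e^{-\pi\Vert x\Vert^2_{\pi_\ast\Fb}}\;\leq\;\sum_{x\in F}e^{-\pi\Vert x\Vert^2_{\pi_\ast\Fb}}\;=\;e^{\hot(\Fb)},
\end{equation*}
which gives (\ref{hotfinind}) after multiplying by $e^{\pi\lambda^2}$.

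For the pro-hermitian case, the key ingredient is already available: formula (\ref{pistarlhot}) in Proposition \ref{vardef} states
\begin{equation*}
\lhot(\Ebh)\;=\;\log\sum_{v\in\hE\cap E_\R^\hilb}e^{-\pi\Vert v\Vert_{\pi_\ast\Ebh}^2}.
\end{equation*}
Denoting $\tilde B_\lambda:=\{x\in\hE\cap E_\R^\hilb\mid \Vert x\Vert_{\pi_\ast\Ebh}\leq\lambda\}$, exactly the same three-line estimate as above yields
\begin{equation*}
|\tilde B_\lambda|\,e^{-\pi\lambda^2}\;\leq\;\sum_{x\in\tilde B_\lambda}e^{-\pi\Vert x\Vert_{\pi_\ast\Ebh}^2}\;\leq\;\sum_{v\in\hE\cap E_\R^\hilb}e^{-\pi\Vert v\Vert_{\pi_\ast\Ebh}^2}\;=\;e^{\lhot(\Ebh)},
\end{equation*}
which is (\ref{hotfinpro}).

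There is no genuine obstacle here; the only subtlety is the interpretation of the infinite sums, which is resolved for part~1 by definition (\ref{easyhot}) (or equivalently (\ref{easyhotsup})) and for part~2 by the already-established identity (\ref{pistarlhot}). The statement itself is essentially a Chebyshev/Markov-type bound for the counting measure weighted by the Gaussian $e^{-\pi\Vert\cdot\Vert^2}$.
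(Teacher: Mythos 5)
Your proof is correct and matches the paper's argument, which likewise bounds $|A|\,e^{-\pi\lambda^2}$ by the Gaussian sum over a (finite subset of the) ball and then by the full theta sum, invoking (\ref{pistarlhot}) for the pro-hermitian case. The only cosmetic difference is that the paper works with arbitrary finite subsets $A$ of the ball and passes to the supremum, a variant you explicitly mention as an alternative.
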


\begin{proof} Let us consider a finite subset $A$ of $\{ x \in F \mid \Vert x \Vert_{\pi_\ast \Fb} \leq \lambda \}$. The following inequalities are straightforward:
$$\vert A \vert . \exp (-\pi \lambda^2) \leq \sum_{v\in A} e^{-\pi \Vert v\Vert_{\pi_\ast \Fb}^2} \leq \sum_{v\in F} e^{-\pi \Vert v\Vert_{\pi_\ast \Fb}^2} =: \exp (\hot(\Fb)).$$
Their validity for an arbitrary $A$ as above establishes (\ref{hotfinind}).  

The proof of (\ref{hotfinpro}) is similar to the one of (\ref{hotfinind}).
\end{proof}

\section{Basic properties}

\subsection{Duality}
The newly defined $\theta$-invariants of ind- and pro-Hermitian vector bundles 
still satisfy a duality relation:

\begin{proposition} Let $\Fb$ be an ind-Hermitian vector bundle over $\Spec \OK,$ and let $\Ebh := \Fb^\vee$ denote the dual pro-Hermitian vector bundle. Then the following equalities hold in $[0, + \infty]:$
\begin{equation}\label{dualencore}
\lhut(\Fb) = \lhot(\Ebh) \,\,\,\,\mbox{ and }\,\,\,\, \uhut(\Fb)
 = \uhot(\Ebh).
 \end{equation}
\end{proposition}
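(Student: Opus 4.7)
The plan is to establish both equalities by perpendicular/annihilator correspondences that pair the submodules appearing in the definitions of $\uhut(\Fb), \lhut(\Fb)$ with those in the definitions of $\uhot(\Ebh), \lhot(\Ebh)$. In each case the hermitian identification reduces to the standard Hilbert-space self-duality between quotient norms and restrictions of the dual norm.

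For $\uhot(\Ebh) = \uhut(\Fb)$, first I would invoke the inclusion-reversing bijection $.^\perp : \cFS(F) \lrasim \cU(\hE)$ furnished by Corollary~\ref{FSU} and promote it to an isometric isomorphism of hermitian vector bundles $\Eb_{F'^\perp} \lrasim \Fb'^\vee$, where $\Fb'$ denotes $F'$ equipped with the restriction of the norms $\Vert\cdot\Vert_{\Fb,\sigma}$. The underlying $\OK$-linear isomorphism $\hE/F'^\perp \lrasim F'^\vee$ is the map $I$ of Corollary~\ref{FSU}, and for each embedding $\sigma: K \hra \C$ the compatibility of hermitian norms follows from identifying the quotient norm on $E^\hilb_\sigma / (F'^\perp)_\sigma$ with the dual of the restricted norm on $F'_\sigma$ (the content of Hilbert-space duality between closed subspaces and quotients, the finite-dimensional $F'_\sigma$ being automatically closed in the Hausdorff pre-Hilbert space $F_\sigma$). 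Hence $\hot(\Eb_{F'^\perp}) = \hut(\Fb')$, and combining this with the definition of $\uhot$ and with~(\ref{uhutvar}) yields $\uhot(\Ebh) = \liminf_{F'\in \cFS(F)} \hut(\Fb') = \uhut(\Fb)$.

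For $\lhot(\Ebh) = \lhut(\Fb)$, I would establish an analogous bijection $F' \mapsto F'^\perp$ between $\cF\cQ(\Fb)$ and the finitely generated saturated submodules $P$ of $\hE \cap E^\hilb_\R$, with inverse $P \mapsto P^\perp$. The crux --- and the principal obstacle --- is that the closedness condition defining $\cF\cQ(\Fb)$ is precisely what is needed for $F'^\perp \subset E^\hilb_\R$: an element $\xi \in (F'^\perp)_\sigma$ factors through the quotient $F_\sigma / F'_\sigma$, and its continuity on $(F_\sigma, \Vert\cdot\Vert_\sigma)$ amounts to the quotient map being continuous with respect to a Hausdorff norm on the finite-dimensional quotient, which holds iff $F'_\sigma$ is closed. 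The bidualities $(F'^\perp)^\perp = F'$ and $(P^\perp)^\perp = P$ then follow from Proposition~\ref{prop:dualArith} applied to the pair $F \in CP_\OK$ and $\hE = F^\vee \in CTC_\OK$. Under this bijection, the same Hilbert-space duality identifies $\overline{P}$ (with norms inherited from $E^\hilb_\sigma$) with $\overline{F/F'}^\vee$, so $\hot(\overline{P}) = \hut(\overline{F/F'})$, and the description~(\ref{lhotvar}) of $\lhot(\Ebh)$ as a supremum over such $\overline{P}$ matches the definition of $\lhut(\Fb)$ term by term, completing the proof.
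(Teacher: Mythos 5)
Your proposal is correct and follows essentially the same route as the paper: both equalities are reduced, via the inclusion-reversing perpendicularity bijections ($\cF\cS(F)\lrasim \cU(\Eh)$ from Corollary~\ref{FSU} for the $\uhot$/$\uhut$ case, and its analogue $\cF\cQ(\Fb)\lrasim\{\mbox{finitely generated saturated submodules of }\Eh\cap E^\hilb_\R\}$ for the $\lhot$/$\lhut$ case), to the finite-dimensional isometric identifications $\Eb_{U}\lrasim \overline{U^\perp}{}^\vee$ and $\overline{F'^\perp}\lrasim\overline{F/F'}{}^\vee$, followed by passage to the $\liminf$ and the supremum. Your remark that the closedness condition in $\cF\cQ(\Fb)$ is exactly what forces $F'^\perp\subset E^\hilb_\R$ is a correct filling-in of a detail the paper explicitly leaves to the reader.
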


\begin{proof} According to the relations (\ref{lhut}) and (\ref{lhotvar}), the first equality may be written: 
\begin{multline}\label{firstdual}
 \sup \left\{ \hut(\overline{F/F'}); {F' \in \cF\cQ(\Fb)} \right\} = \\
 \sup \left\{ \hot(\overline{P}); \mbox{$P$ finitely generated saturated $\OK$-submodule of $\Eh \cap  E^\hilb_\R$} \right\}.
 \end{multline}
 and, according to (\ref{uhutvar}) and (\ref{uhot}), the second one may be written: 
 \begin{equation}\label{seconddual}
 \liminf_{F' \in \cFS(F)} \hut(\Fb') = \liminf_{U \in \cU(\hE)} \hot(\Eb_U).
 \end{equation}
 
 To establish (\ref{seconddual}), recall that, according to Corollary \ref{FSU}, there is an inclusion reversing bijection
$$.^\perp : \cF\cS(F) \lrasim \cU(\Eh).$$
and for any $U \in \cU(\Eh),$ there is a canonical isomorphism of finitely generated projective $\OK$-modules:
\begin{equation}
I_U: E_U = \Eh/U \lrasim (U^\perp)^\vee
\end{equation}(see diagram (\ref{Iperp})).
This isomorphism is easily seen to define an isometric isomorphism between the Hermitian vector bundles $\Eb_U$ and $\overline{U^\perp}^\vee$.
Consequently:
\begin{equation*}
\hot(\Eb_U) = \hot(\overline{U^\perp}^\vee) = \hut(\overline{U^\perp}).
\end{equation*}
The equality (\ref{seconddual}) follows by taking the lower limit over $U$ in the filtering ordered set $\cU(\Eh)$.

The proof of (\ref{firstdual}) is similar, and we shall leave some details to the reader. We may define an inclusion reversing bijection 
$$.^\perp: \cF\cQ(\Fb) \lrasim 
\left\{ \mbox{finitely generated saturated $\OK$-submodule of $\Eh \cap  E^\hilb_\R$} \right\}$$
by sending a submodule $F'$ in $\cF\cQ(\Fb)$ to
$$F'^\perp := \left\{ \xi \in \Eh := F^\vee \mid \xi_{\mid F'} = 0 \right \}.$$
Moreover, for any $F' \in \cF\cQ(\Fb)$, the $\OK$-linear map 
$$
\begin{array}{rcl}
 F'^\perp & \lra   & (F/F')^\vee   \\
 \xi & \longmapsto   & ([x] \mapsto \xi(x))  
\end{array}$$
defines an isomorphism of Hermitian vector bundles over $\Spec \OK$:
$$\Fb' \lrasim \overline{F/F'}^\vee.$$
In particular, we have:
$$\hut(\overline{F/F'}) = \hot(\overline{F/F'}^\vee) = \hot(\overline{F'^\perp}).$$
The equality (\ref{firstdual}) follows by taking the supremum over $F'$ in $\cF\cQ(\Fb)$.
\end{proof}

\begin{corollary} 
For any ind-Hermitian vector bundle $\Fb$ over  $\Spec \OK$, 
\begin{equation}\label{pistarluhut}
\lhut(\Fb) = \lhut(\pi_\ast \Fb) \,\,\,\,\mbox{ and }\,\,\,\, \uhut(\Fb) \geq \uhut(\pi_\ast \Fb).
\end{equation}
\end{corollary}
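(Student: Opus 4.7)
The proof will be obtained by combining the duality identities (\ref{dualencore}) established in the preceding Proposition with the direct-image relations (\ref{pistarlhot}) and (\ref{pistaruhot}) from Proposition \ref{vardef}, reducing both statements to facts already proved for $\lhot$ and $\uhot$.

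More precisely, I apply the duality (\ref{dualencore}) twice: once over $\Spec \OK$ to the ind-hermitian vector bundle $\Fb$, yielding
$$\lhut(\Fb) = \lhot(\Fb^\vee) \quad \text{and} \quad \uhut(\Fb) = \uhot(\Fb^\vee),$$
and once over $\Spec \Z$ applied to the ind-hermitian vector bundle $\pi_\ast \Fb$ (whose dual in ${\rm pro\overline{Vect}}(\Z)$ I will denote $(\pi_\ast \Fb)^\vee$), yielding
$$\lhut(\pi_\ast \Fb) = \lhot((\pi_\ast \Fb)^\vee) \quad \text{and} \quad \uhut(\pi_\ast \Fb) = \uhot((\pi_\ast \Fb)^\vee).$$
Then I apply relations (\ref{pistarlhot}) and (\ref{pistaruhot}) to the pro-hermitian vector bundle $\Fb^\vee$ over $\Spec \OK$, which gives $\lhot(\Fb^\vee) = \lhot(\pi_\ast \Fb^\vee)$ and $\uhot(\Fb^\vee) \geq \uhot(\pi_\ast \Fb^\vee)$. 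Combining these chains of (in)equalities reduces the corollary to comparing the invariants $\lhot$ and $\uhot$ of the two pro-hermitian vector bundles $\pi_\ast \Fb^\vee$ and $(\pi_\ast \Fb)^\vee$ over $\Spec \Z$.

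The final ingredient is thus the canonical identification $\pi_\ast(\Fb^\vee) \simeq (\pi_\ast \Fb)^\vee$ as pro-hermitian vector bundles over $\Spec \Z$, obtained by tracking through the definitions of the duality functor (Subsection \ref{dualindpro}) and of the direct image: both sides carry the same $\Z$-module, and the Hilbert space structures inherited from $\bigoplus_{\sigma: K \hra \C} F^{\vee, \hilb}_\sigma$ equipped with the $\ell^2$ direct-sum norm coincide, so that the resulting pro-hermitian vector bundles are isometrically isomorphic. Once this identification is in hand, one obtains $\lhot(\pi_\ast \Fb^\vee) = \lhot((\pi_\ast \Fb)^\vee) = \lhut(\pi_\ast \Fb)$ and similarly $\uhot(\pi_\ast \Fb^\vee) = \uhot((\pi_\ast \Fb)^\vee) = \uhut(\pi_\ast \Fb)$, which together with the previous chain yield the claimed equality $\lhut(\Fb) = \lhut(\pi_\ast \Fb)$ and the inequality $\uhut(\Fb) \geq \uhut(\pi_\ast \Fb)$.

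The main point requiring care is the verification of the compatibility $\pi_\ast(\Fb^\vee) \simeq (\pi_\ast \Fb)^\vee$; the rest is formal bookkeeping. This compatibility is however essentially tautological from the definitions, since the underlying $\OK$-module is viewed only as a $\Z$-module on both sides and the hermitian data on $\bigoplus_\sigma F^{\vee,\hilb}_\sigma$ are manifestly the same, so that the identification is the one induced by the defining adjunction of Proposition \ref{prop:dualArith}.
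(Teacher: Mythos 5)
Your overall strategy --- converting both sides to $\lhot$ and $\uhot$ of dual pro-hermitian bundles via (\ref{dualencore}) and then invoking (\ref{pistarlhot}) and (\ref{pistaruhot}) --- is the route the paper indicates, but your final step contains a genuine error: the identification $\pi_\ast(\Fb^\vee)\simeq(\pi_\ast\Fb)^\vee$ is \emph{not} tautological, and is in fact false whenever $K\neq\Q$. Contrary to what you assert, the two sides do not carry the same $\Z$-module: the module underlying $\pi_\ast(\Fb^\vee)$ is $\Hom_{\OK}(F,\OK)$, whereas the one underlying $(\pi_\ast\Fb)^\vee$ is $\Hom_{\Z}(\pi_\ast F,\Z)$. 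Comparing them is precisely the content of the relative duality isomorphism (\ref{reldualArakelov}), which reads $\pi_{\ast}(\Fb^\vee\otimes\overline{\omega}_{\OK/\Z})\simeq(\pi_{\ast}\Fb)^\vee$ and involves the non-trivial twist by the canonical module $\overline{\omega}_{\OK/\Z}$, of Arakelov degree $\log\vert\Delta_K\vert$. Already for $\Fb=\cOb_{\Spec \OK}$ the euclidean lattices $\pi_\ast(\Fb^\vee)=\pi_\ast\cOb_{\Spec\OK}$ and $(\pi_\ast\cOb_{\Spec\OK})^\vee$ have Arakelov degrees $-\frac{1}{2}\log\vert\Delta_K\vert$ and $+\frac{1}{2}\log\vert\Delta_K\vert$ respectively, so they cannot be isometric unless $\vert\Delta_K\vert=1$. (Your observation that the archimedean Hilbert-space data agree under the natural identification is correct; it is the integral structure that differs.)

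The corollary can be proved without this false identification by arguing directly from the definitions, using only that $\hut(\Eb)=\hut(\pi_\ast\Eb)$ holds by definition (\ref{hithetagen}) for finite-rank bundles. For $\uhut$: every finitely generated $\OK$-submodule of $F$ is a finitely generated $\Z$-submodule of $\pi_\ast F$, and these form a cofinal subfamily (any finitely generated $\Z$-submodule is contained in the $\OK$-submodule it generates); since the quantities $\hut(\Fb')$ being compared coincide on this subfamily, the $\liminf$ defining $\uhut(\Fb)$ is at least $\uhut(\pi_\ast\Fb)$. For $\lhut$: each $F'\in\cF\cQ(\Fb)$ gives $\pi_\ast F'\in\cF\cQ(\pi_\ast\Fb)$ with $\pi_\ast\overline{F/F'}=\overline{\pi_\ast F/\pi_\ast F'}$, whence $\lhut(\Fb)\leq\lhut(\pi_\ast\Fb)$; conversely, any $F''\in\cF\cQ(\pi_\ast\Fb)$ contains the $\OK$-submodule $F'=\{x\in F\mid \OK x\subseteq F''\}$, whose quotient $F/F'$ is still finitely generated, and the surjective admissible morphism $\overline{\pi_\ast F/\pi_\ast F'}\lra\overline{\pi_\ast F/F''}$ combined with Proposition \ref{ineqmortheta}, 2), yields $\hut(\overline{\pi_\ast F/F''})\leq\hut(\overline{F/F'})\leq\lhut(\Fb)$, giving the reverse inequality. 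If you insist on the duality route instead, you must carry the twist by $\overline{\omega}_{\OK/\Z}$ explicitly through both sides of the computation rather than suppress it.
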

\begin{proof} This follows from the duality relations (\ref{dualencore}) and from the similar properties (\ref{pistarlhot}) and (\ref{pistaruhot}) satisfied by $\lhot$ and $\uhot$.
\end{proof}

\subsection{Additivity. Comparing $\lhot$ and $\uhot$}

Observe that the expression (\ref{pistarlhot}) for $\lhot(\Ebh)$ implies that $\lhot$ is an additive invariants of pro-Hermitian vector bundles, and dually that $\lhut$ is an additive invariant of ind-Hermitian vector bundles:

\begin{corollary}\label{cor:addlhtheta} For any finite family $(\Ebh_i)_{1 \leq i \leq n}$ (resp. $(\Fb_i)_{1 \leq i \leq n}$) of pro-Hermitian (resp. of ind-Hermitian) vector bundles over $\Spec \OK,$
\begin{equation}\label{addlhtheta}
\lhot(\bigoplus_{1 \leq i \leq n} \Ebh_i) = \sum_{1\leq i \leq n} \lhot(\Ebh_i)  \,\,\,\,(\mbox{resp. } 
\lhut(\bigoplus_{1 \leq i \leq n} \Fb_i) = \sum_{1\leq i \leq n} \lhut(\Fb_i)).
\end{equation}
\end{corollary}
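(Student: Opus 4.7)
The plan is to deduce the statement directly from the ``pointwise'' formula (\ref{pistarlhot}) for $\lhot$ together with the duality relation (\ref{dualencore}), so that essentially no work beyond a factorization of a Gaussian sum over a product set is needed.

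First I would treat the case of $\lhot$. By (\ref{pistarlhot}), applied both to each $\Ebh_i$ and to their direct sum, the problem reduces to showing
\[
\sum_{v \in \widehat{E} \cap E_\R^{\hilb}} e^{-\pi \Vert v\Vert^2_{\pi_\ast (\oplus_i \Ebh_i)}}
= \prod_{1 \leq i \leq n} \sum_{v_i \in \widehat{E}_i \cap E_{i,\R}^{\hilb}} e^{-\pi \Vert v_i\Vert^2_{\pi_\ast \Ebh_i}},
\]
where $\widehat{E}:=\bigoplus_i \widehat{E}_i$ and $E_\R^{\hilb}:=\bigoplus_i E_{i,\R}^{\hilb}$. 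The key point is the identification of sets
\[
\widehat{E}\cap E_\R^{\hilb} \;=\; \bigoplus_{1\leq i\leq n}\bigl(\widehat{E}_i\cap E_{i,\R}^{\hilb}\bigr),
\]
which is immediate from the definition of the direct sum of pro-hermitian vector bundles (both the topological $\OK$-module structure and the Hilbert subspaces $E_{i,\R}^{\hilb}$ appear componentwise), together with the orthogonal decomposition of the direct sum norm,
\[
\Vert (v_i)_i \Vert^2_{\pi_\ast (\oplus_i \Ebh_i)} = \sum_{1\leq i \leq n} \Vert v_i\Vert^2_{\pi_\ast \Ebh_i}.
\]
The asserted equality is then obtained by distributing the product of sums over the direct-sum index set, and taking logarithms yields the first identity of (\ref{addlhtheta}). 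Here one uses the convention $\log(+\infty)=+\infty$, which is compatible with the product formula since a single infinite factor forces both sides to be $+\infty$.

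For the second identity of (\ref{addlhtheta}), I would invoke duality. The duality functor of Section \ref{dualindpro} is additive with respect to finite direct sums: if $\Fb=\bigoplus_i \Fb_i$ then $\Fb^\vee \simeq \bigoplus_i \Fb_i^\vee$ as pro-hermitian vector bundles (this is the elementary isometric version of $\Hom_\OK(\bigoplus_i F_i,\OK)\simeq \prod_i F_i^\vee$, which reduces to a finite direct sum since the index set is finite, together with the orthogonal decomposition of the dual hermitian structure). Combined with the duality relation $\lhut(\Fb)=\lhot(\Fb^\vee)$ from (\ref{dualencore}) and the additivity of $\lhot$ already established, this gives
\[
\lhut\Bigl(\bigoplus_i \Fb_i\Bigr) = \lhot\Bigl(\bigoplus_i \Fb_i^\vee\Bigr) = \sum_i \lhot(\Fb_i^\vee) = \sum_i \lhut(\Fb_i).
\]

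There is no real obstacle here: the only point requiring a moment's care is the set-theoretic identification $\widehat{E}\cap E_\R^{\hilb}=\bigoplus_i (\widehat{E}_i\cap E_{i,\R}^{\hilb})$, which must be checked against the definitions in paragraphs \ref{proinitial} and \ref{topologicalcomplexpro}. Everything else is a direct computation with an absolutely convergent (possibly divergent to $+\infty$) positive series.
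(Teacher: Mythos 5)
Your proposal is correct and is essentially the paper's own argument: the paper derives the corollary directly from the expression (\ref{pistarlhot}) for $\lhot$ as $\log$ of a Gaussian sum over $\hE \cap E_\R^\hilb$, which factors over a finite direct sum, and obtains the $\lhut$ statement by duality. The one point you flag for care — the identification $\hE\cap E_\R^{\hilb}=\bigoplus_i(\hE_i\cap E_{i,\R}^{\hilb})$ — does hold componentwise from the definitions, so there is no gap.
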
 \qed

\begin{proposition} For any pro-Hermitian vector bundle $\Ebh$ over  $\Spec \OK$, 
\begin{equation}\label{ineqluhot}
\lhot(\Ebh) \leq \uhot(\Ebh).
\end{equation}

For any ind-Hermitian vector bundle $\Fb$ over  $\Spec \OK$, 
\begin{equation}\label{ineqluhut}
\lhut(\Fb) \leq \uhut(\Fb). 
\end{equation}
\end{proposition}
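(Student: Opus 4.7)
My plan is to first establish the inequality $\lhot(\Ebh) \leq \uhot(\Ebh)$ directly from the definitions and the monotonicity of $\hot$ (Proposition \ref{ineqmortheta}, 1), and then deduce $\lhut(\Fb) \leq \uhut(\Fb)$ by duality from the relations (\ref{dualencore}).

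For the first inequality, I would fix a finitely generated saturated $\OK$-submodule $P$ of $\Eh \cap E^\hilb_\R$ (endowed with the hermitian structure $\overline{P}$ restricted from the Hilbert norms $\Vert.\Vert_{E^\hilb_\sigma}$), and compare $\hot(\overline{P})$ to $\hot(\Eb_U)$ for $U \in \cU(\Eh)$ small enough. The composition
$$\phi_U : P \hookrightarrow \Eh \twoheadrightarrow \Eh/U = E_U$$
satisfies, for each embedding $\sigma: K \hookrightarrow \C$, the factorization $\phi_{U,\sigma} = p_{U,\sigma} \circ i_\sigma \circ (\text{inclusion}_\sigma)$, where the inclusion $P_\sigma \hookrightarrow E^\hilb_\sigma$ is isometric by the very definition of $\overline{P}$ and the map $p_{U,\sigma}\circ i_\sigma : E^\hilb_\sigma \twoheadrightarrow E_{U,\sigma}$ is a co-isometry (hence of operator norm $\leq 1$). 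Thus $\phi_U \in \Hom_{\OK}^{\leq 1}(\overline{P}, \Eb_U)$.

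The key step is to produce an element $U_0 \in \cU(\Eh)$ such that $\phi_U$ is injective for every $U \in \cU(\Eh)$ with $U \subseteq U_0$. This is the point where Proposition \ref{injPN} enters: applied to the injective morphism $P \hookrightarrow \Eh$ (with $P$ a finitely generated projective $\OK$-module), it provides $U_0 \in \cU(\Eh)$ such that the composite $P \to E_{U_0}$ is injective; for any $U \in \cU(\Eh)$ with $U \subseteq U_0$, the map $\phi_U : P \to E_U$ then factors the injective map $P \to E_{U_0}$ through $E_U \twoheadrightarrow E_{U_0}$, hence is itself injective. Applying Proposition \ref{ineqmortheta}, 1) to $\phi_U$ yields $\hot(\overline{P}) \leq \hot(\Eb_U)$ for every $U \in \cU(\Eh)$ with $U \subseteq U_0$. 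Taking the infimum over such $U$ and then the supremum over $U_0$ gives
$$\hot(\overline{P}) \leq \liminf_{U \in \cU(\Eh)} \hot(\Eb_U) = \uhot(\Ebh),$$
and finally taking the supremum over $P$ (using the characterization of $\lhot(\Ebh)$ in the third line of (\ref{lhotvar})) gives (\ref{ineqluhot}).

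For the second inequality (\ref{ineqluhut}), I would simply apply (\ref{ineqluhot}) to the pro-hermitian vector bundle $\Ebh := \Fb^\vee$ and invoke the duality identities (\ref{dualencore}) which state $\lhut(\Fb) = \lhot(\Fb^\vee)$ and $\uhut(\Fb) = \uhot(\Fb^\vee)$. I expect no serious obstacle: the entire proof reduces to correctly identifying the operator-norm-$\leq 1$ morphism $\phi_U : \overline{P} \to \Eb_U$ and invoking the ``stabilization'' result of Proposition \ref{injPN} to ensure injectivity for a cofinal family of $U$'s.
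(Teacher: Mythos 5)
Your proof is correct and follows essentially the same route as the paper's: both reduce to a finitely generated submodule $P$ of $\Eh\cap E_\R^\hilb$, use Proposition \ref{injPN} to get injectivity of $P\to E_U$ for all sufficiently small $U$, deduce $\hot(\overline{P})\leq\hot(\Eb_U)$ (you via Proposition \ref{ineqmortheta}, 1), the paper via the equivalent explicit norm comparison $\hot(\overline{P})=\lim_U\hot(\overline{p_U(P)})\leq\hot(\Eb_U)$), and obtain (\ref{ineqluhut}) by duality from (\ref{dualencore}).
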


\begin{proof} To prove (\ref{ineqluhot}), let us consider a finitely generated $\OK$-submodule $P$ of $\Eh \cap E^\hilb_\R$.

 For $U$ small enough in $\cU(\Eh),$ the quotient morphism $p_U: \Eh \lra E_U$ has an injective restriction:
$$p_{U \mid P}: P\, \hlra E_U$$
(see Proposition \ref{injPN}).
Moreover, for any $x$ in $P$ and for any embedding $\sKC,$ the norm $\Vert p_U(x)\Vert_{\Eb_{U,\sigma}}$ is a non-decreasing function of $U \in \cU(\hE)$ and converges to $\Vert x \Vert_{E_\sigma^\hilb}$ when $U$ becomes arbitrarily small. 

This shows that 
$$\hot(\overline{P}) = \lim_{U \in \cU(\hE)} \hot(\overline{p_U(P)}).$$
Together with the obvious inequality
$$ \hot(\overline{p_U(P)}) \leq \hot(\Eb_U),$$
this implies the estimate
$$\hot(\overline{P}) \leq \liminf_{U \in \cU(\hE)} \hot(\Eb_U)$$
and establishes  (\ref{ineqluhot}).

The second inequality (\ref{ineqluhut}) follows from (\ref{ineqluhot}) and from the duality relations (\ref{dualencore}).
\end{proof}

The possible existence of pro-Hermitian vector bundles $\Eb$ such that 
\begin{equation}\label{intriguing}
\lhot(\Ebh) < \uhot(\Ebh)
\end{equation}
is an intriguing issue. A related issue is the additivity of $\uhot$, namely the validity of the first part of (\ref{addlhtheta}) for $\uhot$ instead of $\lhot$.\footnote{With the notation of Corollary \ref{cor:addlhtheta}, the inequality
$\uhot(\bigoplus_{1 \leq i \leq n} \Ebh_i) \leq \sum_{1\leq i \leq n} \uhot(\Ebh_i)$
holds as a straightforward consequence of the definition of $\uhot$ and of the additivity of $\hot$ stated in Proposition \ref{prop:htadd}. The additivity of $\lhot$ shows that examples of pro-Hermitian vector bundles $\Ebh_i$ for which this inequality is strict would lead to examples where (\ref{intriguing}) is strict.}

In the geometric situation  where one deals with pro-vector bundles $\Eh$ over a smooth projective curve $C$ over some field $k$, pro-vector bundles $\Eh$ whose invariants $h^0(C,\Eh)$ and $\overline{h}^0(C,\Eh)$ --- which are the geometric counterparts of the invariants $\lhot(\Eb)$ and $\uhot(\Eb)$
of a pro-Hermitian vector bundle --- satisfy 
$$h^0(C,\Eh)  <\overline{h}^0(C,\Eh)$$
may be constructed when $C$ is an elliptic curve and, at least under some mild technical assumption, on any $C$ of genus $g>1$. 
The existence of such ``wild" pro-vector bundles makes very likely the existence of pro-Hermitian vector bundles satisfying (\ref{intriguing}).  

\subsection{Monotonicity properties}
From now on, we will focus on the properties of the invariants $\lhot$ and $\uhot$ associated to pro-Hermitian vector bundles and we shall leave the formulation of the dual properties of the invariants $\lhut$ and $\uhut$ associated to ind-Hermitian vector bundles to the interested reader. 

\begin{proposition}\label{ineqmorthetapro}
Let $\Ebh$ and $\Fbh$ be two pro-Hermitian vector bundles over $\Spec \OK,$ and let $\phi : \Ebh \lra \Fbh$ be a morphism in $\Hom_{\OK}^{\leq 1}(\Ebh, \Fbh).$ 

 If $\hat{\phi}: \hE \lra \hF$ is injective, then
\begin{equation}\label{ineqlhotpro}
\lhot(\Ebh) \leq \lhot(\Fbh).
\end{equation}

 If $\hat{\phi}: \hE \lra \hF$ is injective and a strict morphism of topological $\OK$-modules, then
\begin{equation}\label{inequhotpro}
\uhot(\Ebh) \leq \uhot(\Fbh).
\end{equation}

 If  $\phi_K$ has a dense image\footnote{namely, if $\phi_K(\hE_K)$ is dense in $\hF_K\simeq\lim_{V\in\cU(\hF)} (F/V)\otimes_{\OK}K$ equipped with its natural pro-discrete topology.},  then
\begin{equation}\label{ineqhutpro}
\hut(\Ebh) \geq \hut(\Fbh).
\end{equation} 
\end{proposition}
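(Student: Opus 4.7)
The plan is to reduce each of the three assertions to the corresponding statement for finite-rank hermitian vector bundles, furnished by Proposition \ref{ineqmortheta}, via the appropriate characterization of each generalized $\theta$-invariant.

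For (1), I would start from the description of $\lhot(\Ebh)$ in the third line of (\ref{lhotvar}) as the supremum of $\hot(\overline{P})$ over the finitely generated saturated $\OK$-submodules $P$ of $\hE \cap E^\hilb_\R$. Given such a $P$, the compatibility of $\hphi$ with the maps $\phi_\sigma: E^\hilb_\sigma \to F^\hilb_\sigma$ (diagram (\ref{altdiag})) shows that $\hphi(P) \subset \hF \cap F^\hilb_\R$; moreover the induced map $\overline{P} \to \overline{\hphi(P)}$ between finite-rank hermitian vector bundles (equipped with the restricted Hilbert norms) is injective and lies in $\Hom^{\leq 1}_{\OK}$. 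Proposition \ref{ineqmortheta}(1) then yields $\hot(\overline{P}) \leq \hot(\overline{\hphi(P)}) \leq \lhot(\Fbh)$, and taking the supremum over $P$ proves (\ref{ineqlhotpro}).

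For (2), I intend to use the strictness of $\hphi$ to produce a neighborhood basis of $0$ in $\hE$ indexed by $\cU(\hF)$. For each $V \in \cU(\hF)$ set $U_V := \hphi^{-1}(V)$: the induced map $E/U_V \hookrightarrow F/V$ is injective (because $\hphi$ is), and its image is a finitely generated torsion-free submodule of the projective $\OK$-module $F/V$, hence projective over the Dedekind ring $\OK$. Thus $U_V$ lies in $\cU(\hE)$, and the strictness of $\hphi$ ensures that the family $\{U_V\}_{V \in \cU(\hF)}$ is a neighborhood basis of $0$ in $\hE$. By construction, the morphism $\overline{E}_{U_V} \to \overline{F}_V$ inherited from $\phi$ belongs to $\Hom^{\leq 1}_{\OK}$ and is injective, so Proposition \ref{ineqmortheta}(1) gives $\hot(\overline{E}_{U_V}) \leq \hot(\overline{F}_V)$. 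Given any $W \in \cU(\hE)$, fix $V_0 \in \cU(\hF)$ with $U_{V_0} \subset W$; then for every $V \subset V_0$ one has $U_V \subset W$, whence $\inf_{U \subset W} \hot(\overline{E}_U) \leq \inf_{V \subset V_0} \hot(\overline{F}_V)$. Taking the supremum over $W$ and $V_0$ yields (\ref{inequhotpro}).

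For (3), I would use the characterization $\hut(\Ebh) = \sup_{U \in \cU(\hE)} \hut(\overline{E}_U)$ of (\ref{easyhutsup}). Fix $V \in \cU(\hF)$; by the very definition (\ref{defhom1}) of $\Hom^{\leq 1}_{\OK}(\Ebh, \Fbh)$, $\phi$ is represented at level $V$ by some morphism $\phi_{UV} \in \Hom^{\leq 1}_{\OK}(\overline{E}_U, \overline{F}_V)$ for a suitable $U \in \cU(\hE)$. The key point is to deduce that $\phi_{UV,K}$ is surjective: the composition $\hE_K \to \hF_K \twoheadrightarrow (F/V)_K$ factors as $\hE_K \to (E/U)_K \xrightarrow{\phi_{UV,K}} (F/V)_K$, and since $\hF_K$ is the projective limit in its prodiscrete topology of the discrete $K$-vector spaces $(F/V')_K$, density of $\phi_K(\hE_K)$ in $\hF_K$ forces the image in the discrete quotient $(F/V)_K$ to be all of it, so $\phi_{UV,K}$ is surjective. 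Proposition \ref{ineqmortheta}(2) then gives $\hut(\overline{E}_U) \geq \hut(\overline{F}_V)$, and taking suprema over $U$ and $V$ establishes (\ref{ineqhutpro}). The main technical obstacle in the whole argument is thus this last derivation of pointwise surjectivity on the finite discrete quotients from the \emph{a priori} much weaker density hypothesis; once secured, all three parts reduce cleanly to the finite-rank Proposition \ref{ineqmortheta}.
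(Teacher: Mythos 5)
Your proposal is correct and follows essentially the same route as the paper: all three inequalities are reduced to Proposition \ref{ineqmortheta} for finite-rank bundles, using for (1) the characterization of $\lhot$ via finitely generated submodules (equivalently, the class $\cL(\Ebh)$ composed with $\phi$), and for (2) and (3) the quotients $\Eb_{\hphi^{-1}(V)} \to \Fb_V$, with strictness giving the cofinality of the $\hphi^{-1}(V)$ in $\cU(\hE)$ and density of $\phi_K(\hE_K)$ giving surjectivity onto the discrete quotients $(F/V)_K$. The only difference is one of emphasis: the density-to-surjectivity step you single out as the main obstacle is treated as immediate in the paper, and rightly so, since a dense subspace of a discrete quotient is the whole quotient.
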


\begin{proof} The inequality (\ref{ineqlhotpro}) when the morphism $\hat{\phi}$ is a straightforward consequences of the definitions of $\lhot(\Ebh)$ and $\lhot(\Fbh)$. Indeed for any $(\Ebh', \iota)$ in $\cL(\Ebh)$, the pair $(\Ebh', \phi \circ \iota)$ belongs to $\cL(\Fbh)$.

To complete the proof of the proposition, observe that, for any $V \in \cU(\hF),$ its inverse image $U:= \phi^{-1} (V)$ is an element of $\cU(\hE)$, and there exists a unique morphism of $\OK$-modules
$\phi_V: E_U:=\hE/U \lra F_V :=\hF/V$
such that the following diagram is commutative:
$$\begin{CD}
\hE @>{\hat{\phi}}>> \hF \\
@V{p^{\hE}_U}VV      @VV{p^{\hF}_V}V \\
E_U @>{\phi_V}>> F_V.
\end{CD}
$$
Since $\phi$ belongs to $\Hom_{\OK}^{\leq 1}(\Ebh, \Ft),$ the morphism $\phi_V$ belongs to 
$\Hom_{\OK}^{\leq 1}(\Eb_U, \Fb_V).$ Moreover, by construction, it is injective. Consequently, according to Proposition \ref{ineqmortheta}, 1), we have:
\begin{equation}\label{ineqhotV}
\hot(\Eb_{\hat{\phi}^{-1}(V)}) \leq \hot (\Fb_V).
\end{equation}

To prove (\ref{inequhotpro}), observe that, when $\hat{\phi}: \hE \lra \hF$ is injective and strict,  $\phi^{-1} (V)$ converges to $0$ in $\hE$ if $V$ converges to $0$ in $\hF.$ Consequently:
\begin{equation}\label{liminfUV}
\liminf_{U \in \cU(\hE)} \hot(\Eb_U) \leq 
\liminf_{V \in \cU(\hF)} \hot(\Eb_{\hat{\phi}^{-1}(V)}).
\end{equation}
Then inequality (\ref{inequhotpro}) follows from (\ref{ineqhotV}) and (\ref{liminfUV}).

Besides, when $\phi_K$ has a dense image, the $K$-linear map
$$\phi_{V,K}: E_{\hat{\phi}^{-1}(V),K} \lra F_{V,K}$$
is surjective for every $V$ in $\cU(\Fb)$. Consequently, according to 
Proposition \ref{ineqmortheta}, 2), we have:
\begin{equation}\label{ineqhutV}
\hut(\Eb_{\hat{\phi}^{-1}(V)}) \geq \hut (\Fb_V).
\end{equation}
The inequality 
follows by taking the supremum over $V$ in $\cU(\hF)$, thanks to the expression 
(\ref{easyhutsup}) of $\hut$ as a supremum.
\end{proof}

\section{Examples} 

\subsection{Countable products of Hermitian line bundles over $\Spec \Z$.}\label{examplecountable}

Let $\bm{\lambda} := (\lambda_i)_{i \in \N}$ be an element of $\R_+^{\ast \N}.$ 

For any integer $n \in \N$, we may consider the $n$-tuple 
$$\bm{\lambda}_{<n} := (\lambda_i)_{0 \leq i < n}$$
in $\R_+^n$ and the associated Hermitian vector bundle  $\Vb_{\bm{\lambda}_{<n} }$ of rank $n$ over $\Spec \Z$, as defined in \ref{directsumZ1} above. The morphisms
$$q_n: \Vb_{\bm{\lambda}_{<n+1} } \lra \Vb_{\bm{\lambda}_{<n} }$$
defined by 
$$q_n(x_0,\cdots,x_{n-1}, x_n) := (x_0,\cdots,x_{n-1})$$
are surjective admissible, and we may consider the projective limit 
$$\Vbh_{\bm{\lambda}} := \varprojlim_n \Vb_{\bm{\lambda}_{<n} }$$
of the projective system:
$$
\Vb_{\bm{\lambda}_{<0}}\stackrel{q_0}{\longleftarrow}\Vb_{\bm{\lambda}_{<1}} \stackrel{q_1}{\longleftarrow}\dots \stackrel{q_{n-1}}{\longleftarrow}\Vb_{\bm{\lambda}_{<n}} \stackrel{q_n}{\longleftarrow} \Vb_{\bm{\lambda}_{<n+1}}\stackrel{q_{n+1}}{\longleftarrow} \cdots .$$

The underlying topological $\Z$-module $\hV_{\bm{\lambda}}$ of the pro-Hermitian vector bundle $\tilde{V}_{\bm{\lambda}}$ may be identified with $\Z^{\N}$, equipped with the product topology of the discrete topology on each factor $\Z$.  The locally convex complex vector space $\hV_{\bm{\lambda},\C}$ may be identified with $\C^{\N}$, also equipped with is natural product topology, and the Hilbert space $V^{\hilb}_{\bm{\lambda},\C}$ with the subspace of $\C^{\N}$ consisting in the elements $(x_i)_{i \in \N}$ such that 
$$\sum_{i \in \N} \lambda_i \vert x_i \vert^2 < + \infty.$$
%\end{equation}
Moreover, for any  element $(x_i)_{i \in \N}$  of  $V^{\hilb}_{\bm{\lambda},\C}$, 
$$\Vert (x_i)_{i \in \N}\Vert^2_{\tilde{V}_{\bm{\lambda},\C}} = \sum_{i \in \N} \lambda_i \vert x_i \vert^2.$$

For any $n \in \N,$ we may also consider the injective isometric morphism
$$i_n: \Vb_{\bm{\lambda}_{<n}} \lra \Vbh_{\bm{\lambda}}$$
defined by:
$$i_n(x_0, \cdots, x_{n-1}) := (x_0, \cdots, x_{n-1}, 0, 0, \cdots).$$

\begin{proposition}\label{hVlambda}
With the above notation, the following equalities hold in $[0,+\infty]$:
\begin{equation}\label{hotlambda}
\lhot(\Vbh_{\bm{\lambda}}) =\uhot (\Vbh_{\bm{\lambda}}) = \sum_{i \in \N} \tau(\lambda_i)
\end{equation}
and 
\begin{equation}\label{hutlambda}
\hut (\Vbh_{\bm{\lambda}}) = \sum_{i \in \N} \tau(\lambda_i^{-1}).
\end{equation}
\end{proposition}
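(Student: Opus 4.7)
The plan rests on two preliminary observations about finite-rank truncations: by Proposition \ref{sumline} together with the functional equation $\tau(x) = \tau(x^{-1}) - \frac{1}{2}\log x$ from (\ref{Poissontau}), one has $\hot(\Vb_{\bm{\lambda}_{<n}}) = \sum_{i<n}\tau(\lambda_i)$ and $\hut(\Vb_{\bm{\lambda}_{<n}}) = \sum_{i<n}\tau(\lambda_i^{-1})$; and the submodules $U_n := \ker(\hat{V}_{\bm{\lambda}} \twoheadrightarrow V_{\bm{\lambda}_{<n}})$ form a defining sequence in $\cU(\hV_{\bm{\lambda}})$ with $\Eb_{U_n}$ canonically isometric to $\Vb_{\bm{\lambda}_{<n}}$.

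Identity (\ref{hutlambda}) then falls out almost immediately: by Proposition \ref{ineqmortheta}, 2), the function $U \mapsto \hut(\Eb_U)$ is monotone increasing as $U$ shrinks, so by cofinality of $\{U_n\}$,
$$\hut(\Vbh_{\bm{\lambda}}) = \sup_{U \in \cU(\hV_{\bm{\lambda}})} \hut(\Eb_U) = \sup_n \sum_{i<n}\tau(\lambda_i^{-1}) = \sum_{i \in \N} \tau(\lambda_i^{-1}).$$

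For (\ref{hotlambda}) I plan to treat $\lhot$ first. The lower bound $\lhot(\Vbh_{\bm{\lambda}}) \geq \sum_i \tau(\lambda_i)$ comes from plugging each admissible injection $i_n : \Vb_{\bm{\lambda}_{<n}} \hookrightarrow \Vbh_{\bm{\lambda}}$ into the supremum formula of Proposition \ref{vardef} and letting $n \to \infty$. For the matching upper bound, I fix any finitely generated saturated $\Z$-submodule $P$ of $\hV_{\bm{\lambda}} \cap V^{\hilb}_{\bm{\lambda},\R}$, equip it with the restricted Hilbert norm to obtain $\bar{P}$, and consider the composite $\phi_N : \bar{P} \hookrightarrow \Vbh_{\bm{\lambda}} \stackrel{p_N}{\longrightarrow} \Vb_{\bm{\lambda}_{<N}}$, where $p_N$ denotes the canonical projection with kernel $U_N$. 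The first arrow is an isometry on Hilbert spaces and the second satisfies $\Vert p_N(x) \Vert^2 = \sum_{i<N}\lambda_i |x_i|^2 \leq \Vert x \Vert^2$ for $x \in V^{\hilb}_{\bm{\lambda},\R}$, so $\phi_N \in \Hom_\Z^{\leq 1}(\bar{P},\Vb_{\bm{\lambda}_{<N}})$. Since $P_\R$ is finite-dimensional and $\bigcap_N \ker p_{N,\R} = 0$, the map $\phi_N$ is injective for all sufficiently large $N$, and Proposition \ref{ineqmortheta}, 1), yields $\hot(\bar{P}) \leq \hot(\Vb_{\bm{\lambda}_{<N}}) \leq \sum_i \tau(\lambda_i)$. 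Taking the supremum over $P$ via Proposition \ref{vardef} gives $\lhot(\Vbh_{\bm{\lambda}}) \leq \sum_i \tau(\lambda_i)$.

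The equality $\uhot(\Vbh_{\bm{\lambda}}) = \sum_i \tau(\lambda_i)$ then follows combinatorially. The lower bound is exactly the inequality (\ref{ineqluhot}). For the upper bound, given any $U_0 \in \cU(\hV_{\bm{\lambda}})$, cofinality provides $n$ with $U_n \subseteq U_0$, so $\inf_{U \subseteq U_0}\hot(\Eb_U) \leq \hot(\Eb_{U_n}) \leq \sum_i \tau(\lambda_i)$; the supremum over $U_0$ gives the desired bound. The only truly delicate step in the whole argument is the upper bound on $\lhot$: unlike $\hut$, the function $U \mapsto \hot(\Eb_U)$ is not monotone under refinement of $U$ (as illustrated by Proposition \ref{limsupinf}), so it cannot be extracted from the defining filtration $\{U_n\}$ by a direct limit, and the detour through Proposition \ref{ineqmortheta}, 1), applied to the projections $p_N$ is what makes the bound work.
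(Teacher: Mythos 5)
Your proof is correct and follows essentially the same route as the paper: the finite-rank computations from Proposition \ref{sumline} plus the functional equation, the lower bound on $\lhot$ via the isometric injections $i_n$, the upper bound on $\uhot$ via the liminf over the cofinal filtration $(U_n)$, and the sandwich $\sum_i\tau(\lambda_i)\leq\lhot\leq\uhot\leq\sum_i\tau(\lambda_i)$ (the paper handles $\hut$ by citing (\ref{easyhut}), which is your monotonicity-and-cofinality argument). The only difference is that where the paper simply invokes the general inequality $\lhot\leq\uhot$ from (\ref{ineqluhot}), you re-derive its content in this special case via the maps $\phi_N$ — a correct but redundant detour, since your $\phi_N$ argument is exactly the proof of (\ref{ineqluhot}) specialized to $\Vbh_{\bm{\lambda}}$.
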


\begin{proof} As shown in Section \ref{directsumZ1}, we have:
$$\hot(\tilde{V}_{\bm{\lambda}_{<n}}) = \sum_{0 \leq i < n} \tau(\lambda_{i})  \mbox{ and } \hut(\Vb_{\bm{\lambda}_{<n}}) = \sum_{0 \leq i < n} \tau(\lambda_{i}^{-1}).$$

Besides, the existence of the isometric injections $i_{n}$ shows that, for every $n\in \N$,
$$\hot(\Vb_{\bm{\lambda}_{<n}}) \leq \lhot(\Vbh_{\bm{\lambda}}),$$
and, by the very definition of $\uhot(\tilde{V}_{\bm{\lambda}})$,
$$\uhot(\tilde{V}_{\bm{\lambda}}) \leq \liminf_{n \ra + \infty} \hot(\tilde{V}_{\bm{\lambda}_{<n}}).$$
Together with the inequality $\lhot (\tilde{V}_{\bm{\lambda}}) \leq \uhot (\tilde{V}_{\bm{\lambda}}),$
this implies (\ref{hotlambda}).

Formula (\ref{hutlambda}) follows  from (\ref{easyhut}) applied to the projective system $(\Vb_{\bm{\lambda}_{<\bullet}}).$ \end{proof}

Observe that, as a straightforward consequence of (\ref{hotlambda}) and (\ref{hutlambda}) and of the asymptotic behavior (\ref{lthinfinity}) of the function $\lth$, we obtain the finiteness conditions:
\begin{equation}\label{hotlambdafin}
\lhot (\Vbh_{\bm{\lambda}}) < + \infty \Longleftrightarrow
\uhot (\Vbh_{\bm{\lambda}}) < + \infty \Longleftrightarrow  \sum_{i \in \N} e^{-\pi \lambda_i} < + \infty
\end{equation}
and 
\begin{equation}\label{hutlambdafin}
\hut (\Vbh_{\bm{\lambda}}) < + \infty \Longleftrightarrow  \sum_{i \in \N} e^{-\pi / \lambda_i} < + \infty.
\end{equation}

The content of Proposition \ref{hVlambda} may be reformulated as follows:
\begin{corollary} For any countable family $(\Lb_i)_{i \in \N}$ of Hermitian line bundles over $\Spec \Z,$  the $\theta$-invariants of the pro-Hermitian vector 
$$\hat{\bigoplus_{i \in \N}} \Lb_i := \varprojlim_n \bigoplus_{0\leq i \leq n} \Lb_i,$$
satisfy:
$$\lhot(\hat{\bigoplus}_{i \in \N} \Lb_i) =\uhot(\hat{\bigoplus}_{i \in \N} \Lb_i) = \sum_{i \in \N} \tau(e^{- 2 \dega \Lb_i}) = \sum_{i \in \N} ( \dega^+ \Lb_i + \eta( \dega \Lb_i))
$$ and $$
\hut(\hat{\bigoplus}_{i \in \N} \Lb_i) = \sum_{i \in \N} \tau(e^{2 \dega \Lb_i}) = \sum_{i \in \N} ( \dega^- \Lb_i + \eta( \dega \Lb_i)).
$$

\end{corollary}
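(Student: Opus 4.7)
The plan is to reduce this corollary to the concrete computation of $\theta$-invariants already carried out in Proposition \ref{hVlambda}, by matching the pro-hermitian vector bundle $\widehat{\bigoplus}_{i\in\N}\Lb_i$ with an explicit $\Vbh_{\bm{\lambda}}$ and then translating the resulting expressions between the functions $\tau$ and $\eta$.

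First, I would use that every hermitian line bundle over $\Spec\Z$ is, up to isometric isomorphism, of the form $\cOb(\delta)$ with $\delta = \dega\Lb_i$. Setting $\lambda_i := e^{-2\dega\Lb_i}$ for every $i\in\N$, the identification $\Lb_i \simeq \cOb(-\tfrac{1}{2}\log\lambda_i)$ yields an isometric isomorphism of hermitian vector bundles
\[
\bigoplus_{0\leq i\leq n}\Lb_i \;\simeq\; \Vb_{\bm{\lambda}_{<n+1}}
\]
compatible with the obvious projection morphisms (on the left, the projection forgetting the last summand; on the right, the map $q_n$ of \ref{examplecountable}). Passing to the projective limit in ${\rm pro\overline{Vect}}^{\leq 1}(\Z)$ then furnishes an isometric isomorphism of pro-hermitian vector bundles
\[
\widehat{\bigoplus}_{i\in\N}\Lb_i \;\simeq\; \Vbh_{\bm{\lambda}}.
\]

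Next, I would invoke Proposition \ref{hVlambda} directly to obtain
\[
\lhot\Bigl(\,\widehat{\bigoplus}_{i\in\N}\Lb_i\Bigr) = \uhot\Bigl(\,\widehat{\bigoplus}_{i\in\N}\Lb_i\Bigr) = \sum_{i\in\N}\tau(\lambda_i) = \sum_{i\in\N}\tau\bigl(e^{-2\dega\Lb_i}\bigr)
\]
and
\[
\hut\Bigl(\,\widehat{\bigoplus}_{i\in\N}\Lb_i\Bigr) = \sum_{i\in\N}\tau(\lambda_i^{-1}) = \sum_{i\in\N}\tau\bigl(e^{2\dega\Lb_i}\bigr).
\]
This is the content of the first equality in each displayed line of the corollary.

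Finally, the reformulation in terms of $\eta$ follows by the definition of $\eta$ recalled in Section \ref{taueta}. Indeed, for every $t\in\R$ one has $\tau(e^{-2t}) = t^{+} + \eta(t)$, so applying this with $t = \dega\Lb_i$ gives the asserted expression $\sum_{i}(\dega^{+}\Lb_i + \eta(\dega\Lb_i))$ for $\lhot$ and $\uhot$. For $\hut$, applying the same identity with $t = -\dega\Lb_i$, and using that $\eta$ is an even function of its argument (since $\eta(t) = \tau(e^{2|t|})$), yields $\tau(e^{2\dega\Lb_i}) = \dega^{-}\Lb_i + \eta(\dega\Lb_i)$, as required. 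There is no real obstacle here: the statement is essentially a dictionary translation from Proposition \ref{hVlambda}, the only points requiring a sentence of care being the compatibility of the isometric isomorphisms with the projection maps and the parity of $\eta$.
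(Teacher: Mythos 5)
Your proposal is correct and follows exactly the route the paper intends: the corollary is stated as an immediate reformulation of Proposition \ref{hVlambda} via the identification $\Lb_i\simeq\cOb(-\tfrac{1}{2}\log\lambda_i)$ with $\lambda_i=e^{-2\dega\Lb_i}$, followed by the dictionary $\tau(e^{-2t})=t^{+}+\eta(t)$ and the evenness of $\eta$. Nothing is missing.
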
 \qed

\subsection{Pro-Euclidean lattices defined by formal series and holomorphic functions on disks}\label{thetaHardyar}

The computation of the $\theta$-invariants of countable products of one dimensional Euclidean lattices in the previous paragraph, although quite elementary, allows one to investigate the $\theta$-invariants attached to the arithmetic avatars of the Hardy and Bergman spaces introduced in  paragraph \ref{ArHB}. 

\begin{proposition}\label{hRdelta}
 For every $(R,\delta) \in \R^\ast_+ \times \R,$ the following equalities hold in $[0, +\infty]$:
\begin{equation}\label{thetaHRlambda}\lhot(\Hbh(R)\otimes \cOb(\delta)) = \uhot(\Hbh(R)\otimes \cOb(\delta)) =\sum_{n \in \N} \tau (R^{2n} e^{-2\delta}).
\end{equation}
Moreover
$$h(R,\delta) := \sum_{n \in \N} \tau (R^{2n} e^{-2\delta})$$
is finite if and only if $R> 1$,
and, for any $R>1,$ we have:
\begin{equation}\label{thetaHRlambdaasympt}
h(R,\delta) = \frac{1}{2 \log R}\,\, \delta^2 + O(\delta) \,\, \mbox{ when $\delta \lra +\infty$.}
\end{equation}
\end{proposition}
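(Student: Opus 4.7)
The plan is to reduce everything to Proposition \ref{hVlambda} via the identification of $\Hbh(R)\otimes\cOb(\delta)$ with the pro-euclidean lattice $\Vbh_{\bm{\lambda}}$ associated with an explicit sequence $\bm{\lambda}$. Concretely, the basis $(X^n)_{n\in\N}$ identifies the underlying topological $\Z$-module $\Z[[X]]$ of $\Hbh(R)$ with $\Z^\N\simeq\varprojlim_n\Z^{n+1}$ (product of the discrete topology on each factor), and the formula $\Vert\sum a_nX^n\Vert_R^2=\sum R^{2n}|a_n|^2$ shows that the Hilbert norm defining $H^2(R)$ is precisely the one considered in \ref{examplecountable} for $\lambda_n=R^{2n}$. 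Since tensoring by $\cOb(\delta)$ scales the archimedean norm by $e^{-\delta}$, we obtain an isometric isomorphism of pro-hermitian vector bundles
$$\Hbh(R)\otimes\cOb(\delta)\;\simeq\;\Vbh_{\bm{\lambda}}\qquad\text{with}\qquad\lambda_n:=R^{2n}e^{-2\delta}.$$
The equality (\ref{thetaHRlambda}) then follows verbatim from (\ref{hotlambda}).

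For the finiteness criterion, I would first note that, by its very definition, $\tau(x)\geq\tau(1)=\eta>0$ for all $x\in\Rpa$, so a necessary condition for $\sum_n\tau(\lambda_n)<+\infty$ is $\lambda_n\to+\infty$, which forces $R>1$. Conversely, when $R>1$ the bound $\tau(x)\leq c e^{-\pi x}$ valid on $[1,+\infty[$ (see the end of Section \ref{taueta}) makes $\sum_n\tau(\lambda_n)$ doubly exponentially convergent from the index $n_0:=\delta/\log R$ onward, while the finitely many terms $n<n_0$ are harmless; alternatively, one may invoke directly (\ref{hotlambdafin}) applied to $\lambda_n=R^{2n}e^{-2\delta}$.

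The asymptotic (\ref{thetaHRlambdaasympt}) is the technical heart of the statement. Assume $R>1$ and set $N:=\lfloor\delta/\log R\rfloor$, so that $\lambda_n<1$ precisely when $n\leq N$. The key tool is the Poisson functional equation (\ref{Poissontau}), which gives for $n\leq N$
$$\tau(R^{2n}e^{-2\delta})=-\tfrac12\log(R^{2n}e^{-2\delta})+\tau(R^{-2n}e^{2\delta})=\delta-n\log R+\tau(R^{-2n}e^{2\delta}).$$
Using $\tau(x)\leq c e^{-\pi x}$ for $x\geq 1$, the ``error'' term $\sum_{n\leq N}\tau(R^{-2n}e^{2\delta})$ is geometrically summable and bounded by a constant depending only on $R$, and the tail $\sum_{n>N}\tau(R^{2n}e^{-2\delta})$ is likewise $O(1)$. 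Therefore
$$h(R,\delta)=\sum_{n=0}^{N}(\delta-n\log R)+O(1)=(N+1)\delta-\log R\cdot\tfrac{N(N+1)}{2}+O(1).$$
Substituting $N=\delta/\log R+O(1)$ yields $(N+1)\delta=\delta^2/\log R+O(\delta)$ and $\log R\cdot N(N+1)/2=\delta^2/(2\log R)+O(\delta)$, so $h(R,\delta)=\delta^2/(2\log R)+O(\delta)$, as claimed.

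The only genuine obstacle is the bookkeeping in the last step: one must keep track uniformly in $\delta$ of the two error sources (the rapidly decaying Poisson corrections for $n\leq N$, and the super-exponentially small tail for $n>N$), and verify that the $O(1)$ contributions do not interact badly with the rounding in $N=\lfloor\delta/\log R\rfloor$. Since both error series are dominated by convergent geometric series whose ratio is controlled by $R$ alone, this is purely a matter of elementary estimates, not a conceptual difficulty.
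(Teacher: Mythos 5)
Your proof is correct and follows essentially the same route as the paper's: the same identification $\Hbh(R)\otimes\cOb(\delta)\simeq\Vbh_{\bm{\lambda}}$ with $\lambda_n=R^{2n}e^{-2\delta}$ reducing (\ref{thetaHRlambda}) to Proposition \ref{hVlambda}, and the same decomposition of $\tau$ via the functional equation (\ref{Poissontau}) into the main term $\sum_{0\le n\le \delta/\log R}(\delta-n\log R)=\delta^2/(2\log R)+O(\delta)$ plus two error sums controlled by $\tau(x)\le c\,e^{-\pi x}$ on $[1,+\infty[$ (the paper packages exactly this as $\tau(x)=(-\tfrac12\log x)^+ +\eta(-\tfrac12\log x)$ with $\eta(t)\le 3e^{-\pi e^{2|t|}}$). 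One small slip in your finiteness argument: $\tau$ is \emph{decreasing}, so the inequality $\tau(x)\ge\tau(1)$ fails for $x>1$; the necessity of $R>1$ follows instead from the fact that $\tau$ is positive, decreasing, and tends to $0$ only as $x\to+\infty$, so convergence of $\sum_n\tau(\lambda_n)$ forces $\lambda_n\to+\infty$.
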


Similar results hold concerning the Bergman pro-Euclidean lattices $\Bbh(R)$, and are left as exercises for the reader.

\begin{proof} The isomorphism in $\CTC_\Z$
$$
\begin{array}{rcl}
 \Z^\N  & \lrasim  & \Z[[X]]  \\
(a_n)_{n \in \N}  & \longmapsto  & \sum_{n \in \N} a_n X^n.  
\end{array}
$$
``extends" to an isomorphism of pro-Hermitian vector bundles over $\Spec \Z$:
$$  V_{\bm{\lambda}_{H}(R,\delta)} \lrasim \Hbh(R)\otimes \cOb(\delta),$$
where
$$\bm{\lambda}_{H}(R,\delta) := (R^{2n}e^{-2\delta})_{n\in \N}.$$
The relations (\ref{thetaHRlambda}) therefore follows from Proposition \ref{hVlambda}, equation (\ref{hotlambda}).

As we discussed in Paragraph (\ref{directsumZ1}), for any $x \in \R^\ast_+,$ we may express $\tau(x)$ as the sum
$$\tau(x) = (-\frac{1}{2} \log x)^+ + \eta(-\frac{1}{2} \log x),$$
where $\eta$ denotes an even positive continuous function on $\R$ which satisfies
$$\eta(t) \leq 3 e^{- \pi e^{2\vert t \vert }} \mbox{ for any $t\in \R$,}$$
or equivalently,
$$\eta(-\frac{1}{2} \log x ) \leq 3 e^{- \pi \max(\vert x \vert, \vert x \vert^{-1})} \mbox{ for any $x\in \R^\ast_+.$} $$

Consequently we have:
$$h(R, \delta) = \sum_{n \in \N} (-n \log R + \delta)^+ + \sum_{n \in \N} \eta(-n \log R + \delta).$$

When $-\log R$ is positive (rest. vanishes), the first (rest. second)  sum is $+\infty$. Therefore $h(R,\delta) = +\infty$ when $R \in ]0,1].$

When $\log R$ is positive both sums are clearly finite. Actually, we have:
$$\sum_{n \in \N} \eta(-n \log R + \delta) \leq \sum_{n \in \Z} \eta(-n \log R + \delta) \leq 6 \sum_{n\in \N} e^{-\pi R^n} < +\infty.$$
Moreover, when $\delta$ goes to $+\infty,$
$$\sum_{n \in \N} (-n \log R + \delta)^+ = \sum_{\stackrel{n \in \N}{n \leq \delta/\log R}} (-n \log R + \delta) =\frac{1}{2 \log R}\,\, \delta^2 + O(\delta).  $$
This completes the proof of (\ref{thetaHRlambdaasympt}). \end{proof}

\subsection{Pro-Hermitian vector bundles of infinite rank with vanishing $\uhot$.}\label{Provanish}

For any positive integer $a$, we may consider the constant $a$-tuple $a^{\times a} := (a,\ldots,a)$
and, with the notation of Section \ref{directsumZ1}, the Hermitian vector bundle
$$\Eb_a := \Vb_{a^{\times a}},$$
namely the Hermitian vector bundle over $\Spec \Z$ defined by the lattice $\Z^a$ inside $\R^a$ equipped with the Euclidean norm
$$\Vert.\Vert_{\Eb_A}:=\Vert.\Vert_{a^{\times a}}: (x_i)_{1\leq i \leq a} \longmapsto (a \sum_{1\leq i \leq a} x_i^2)^{1/2}.$$

We leave the proof of the next lemma as an exercise for the reader.

\begin{lemma} For any two positive integers $a$ and $b$ such that $a \mid b,$ one defines an admissible surjective morphism of Hermitian vector bundles over $\Spec \Z$,
$$p_{ab}: \Eb_b \longmapsto \Eb_a,$$
by letting $$p_{ab}(x_i)_{1\leq i \leq b} = (y_j)_{1\leq j \leq a},
\mbox{ where } 
y_j := \sum_{i=1+(j-1)d}^{jd} x_i.$$

Moreover, for any three positive integers $a,$ $b,$ and $c$ such that $a \mid  b$ and $b \mid c,$ the following transitivity relation holds:
$$p_{ab} \circ p_{bc} = p_{ac}.$$ \qed

\end{lemma}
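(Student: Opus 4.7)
Let $d := b/a \in \N_{\geq 1}$, so that the $b$ indices $\{1,\dots,b\}$ are partitioned into $a$ consecutive blocks $B_j := \{1+(j-1)d,\dots,jd\}$, $j=1,\dots,a$, each of cardinality $d$. The map $p_{ab}$ sends $(x_i)_{1\leq i \leq b}$ to $(y_j)_{1\leq j \leq a}$ with $y_j = \sum_{i \in B_j} x_i$, so $p_{ab}$ is manifestly $\Z$-linear and surjective (a section is given, for instance, by $(y_j) \mapsto (x_i)$ with $x_i = y_j$ if $i = 1+(j-1)d$ and $x_i=0$ otherwise). The plan is thus to verify the two non-trivial points: (i) that the quotient norm on $\R^a = \Eb_{a,\R}$ deduced from $\Vert . \Vert_{\Eb_b}$ via $p_{ab,\R}$ coincides with $\Vert . \Vert_{\Eb_a}$, and (ii) the transitivity $p_{ab}\circ p_{bc} = p_{ac}$.

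For (i), given $y = (y_j) \in \R^a$, I would compute the infimum of
\[
\Vert x \Vert_{\Eb_b}^2 = b \sum_{j=1}^a \sum_{i\in B_j} x_i^2
\]
over all $x \in \R^b$ with $p_{ab,\R}(x) = y$. Since the constraints decouple across the blocks $B_j$, the problem reduces, on each block, to minimizing $\sum_{i\in B_j}x_i^2$ subject to $\sum_{i \in B_j} x_i = y_j$. By Cauchy--Schwarz (or equivalently, by orthogonal projection onto the line $\R\cdot(1,\ldots,1)\subset \R^d$), the minimum is $y_j^2/d$, attained uniquely at $x_i = y_j/d$ for $i\in B_j$. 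Summing and using $b/d=a$ gives $\inf \Vert x \Vert_{\Eb_b}^2 = (b/d)\sum_j y_j^2 = a\sum_j y_j^2 = \Vert y \Vert_{\Eb_a}^2$, proving that $p_{ab}$ is a surjective admissible morphism from $\Eb_b$ to $\Eb_a$.

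For (ii), set $d_1 := b/a$ and $d_2 := c/b$, and note $c/a = d_1 d_2$. A direct unwinding of the definitions gives, for $(x_i)_{1\leq i \leq c} \in \Z^c$ and $k\in\{1,\dots,a\}$,
\[
(p_{ab}\circ p_{bc})(x)_k = \sum_{j=1+(k-1)d_1}^{kd_1}\, \sum_{i=1+(j-1)d_2}^{jd_2} x_i = \sum_{i=1+(k-1)d_1d_2}^{kd_1d_2} x_i = p_{ac}(x)_k,
\]
since the inner index $i$ runs exactly once over the union of the $d_1$ consecutive blocks of length $d_2$ making up the $k$-th block of length $d_1d_2$ in $\{1,\dots,c\}$. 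Neither step involves any real obstacle; the only point that requires a small argument is the elementary minimization in (i), which I would phrase as the observation that the orthogonal complement of $\ker p_{ab,\R}$ in $\R^b$ (for the standard inner product) is the image of the isometric embedding $\R^a \hookrightarrow \R^b$ sending $(y_j)$ to the vector constant equal to $y_j/d$ on each block $B_j$, so that $\Vert p_{ab,\R}^{\perp}(y)\Vert^2_{\text{std}} = \sum_j y_j^2/d$, and multiplying by the overall scaling factor $b$ yields $a\sum_j y_j^2 = \Vert y\Vert^2_{\Eb_a}$.
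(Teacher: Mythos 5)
Your proof is correct and complete; it is exactly the computation the paper has in mind (the paper explicitly leaves this lemma as an exercise, so there is no written proof to compare against). The two substantive points — the per-block minimization $\inf\{\sum_{i\in B_j}x_i^2 : \sum_{i\in B_j}x_i=y_j\}=y_j^2/d$ giving the quotient norm $a\sum_j y_j^2=\Vert y\Vert_{\Eb_a}^2$, and the block-refinement argument for transitivity — are both handled correctly, and you rightly read the undefined $d$ in the statement as $b/a$.
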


Observe also that
$$\hot(\Eb_a) = a \tau(a) \sim 2 a e^{-\pi a} \mbox{ when $a$ goes to $+\infty.$}$$ 

Consequently, for any infinite set $\cA$ of positive integers in which the divisibility relation $\mid$ is filtering, we may consider the pro-Hermitian vector bundle (of infinite rank) over $\Spec \Z$:
$$\tilde{E}_{\cA} :=\varprojlim_{(\cA, \mid)} \Eb_a.$$
Since 
$\lim_{(\cA, \mid)} \hot(\Eb_a) = 0,$
it satisfies
$\uhot(\tilde{E}_{\cA}) = 0.$

This construction may be immediately generalized to produce pro-Hermitian vector bundles of infinite rank with vanishing $\uhot$ over $\Spec \OK$ for any number field $K$.
 
\subsection{A pro-Euclidean lattice with vanishing $\lhot$ and positive $\uhot$.}\label{WildEl}

We finally describe a construction of ind-Euclidean lattices, that are  special instances of Banasczyk's ``exotic groups" (\cite{Banaszczyk91}, Section 5), whose dual pro-Euclidean lattices $\Ebh$ will satisfy $$\lhot(\Ebh) = 0 \quad\mbox{and}\quad \uhot(\Ebh) > 0.$$

Let $(F_\R^\hilb, \Vert.\Vert)$ be some infinite dimensional separable real Hilbert space, and let $(e_n)_{n \in \N}$ be some orthonormal basis of $F_\R^\hilb$. We may construct a sequence $(f_n)_{n \in \N}$ of elements of $\bigoplus_{n \in \N} \Q e_n$ which is dense in $F_\R^\hilb$. Actually, by ``inserting zeroes" in this sequence, we may assume that it satisfies the following condition:
\begin{equation}\label{efn}
\mbox{for any $i \in \N$},\quad  f_i \in \bigoplus_{0 \leq i < n} \R e_i.
\end{equation}

\begin{proposition}[\cite{Banaszczyk91}, Theorem 5.1] \label{BanExo} With the previous notation,
$$F := \bigoplus_{n \in \N} \Z (e_n+ f_n)$$ is a discrete subgroup of $F_\R^\hilb$ equipped with the norm topology. Its $\R$-span $F_\R$ coincides with $\bigoplus_{n \in \N} \R e_n$ and is therefore dense in $F_\R^\hilb$.

Moreover, any continuous linear form $\xi: F_\R^\hilb \lra \R$ such that $\xi(F) \subset \Z$ is zero.

\end{proposition}

\begin{proof}
  Let $f$
   be a non-zero element of $F$. We may write it 
   $$f = \sum_{0\leq i \leq n} k_i (e_i + f_i)$$
   for some $n \in \N$, $(k_0,\ldots,k_n) \in \N^{n+1}$, and $k_n \neq 0.$ Then, according to (\ref{efn}), we have:
   $\langle e_n, f \rangle = k_n$
   and therefore
$\Vert f \Vert \geq \vert k_n \vert \geq 1.$ This establishes that $F$ is discrete subgroup of  $F^\hilb_\R$.
The equality $F_\R = \bigoplus_{n \in \N} \R e_n$ is a straightforward consequence of (\ref{efn}). 

Let us consider $\xi \in F_\R^{\hilb \vee}$ such that $\xi(F) \subset \Z.$ For any $f$ in $F^\hilb_\R$, we may find a subsequence $(f_{n(i)})_{i \in \N}$ of $(f_n)_{n \in \N}$ such that $\lim_{i \ra + \infty} f_{n(i)} = f.$ Then we have:
$$\xi(f) = \lim_{i \ra + \infty} \xi(f_{n(i)}) = \lim_{i \ra + \infty} \xi( e_{n(i)} + f_{n(i)}).$$
Therefore $\xi(f)$, like the $ \xi( e_{n(i)} + f_{n(i)})$'s, is an integer. This shows that $\xi(F^\hilb_\R) \subset \Z$ and implies that $\xi$ is zero.
\end{proof}

The first part of Proposition \ref{BanExo} shows that $\Fb := (F, \Vert. \Vert)$ defines some ind-Euclidean lattice $\Fb$, such that $(F_\R^\hilb, \Vert.\Vert)$ coincides with the Hilbert space completion of $(F_\R, \Vert. \Vert)$. Let $\Ebh := (\Eh, E_\R^\hilb)$ be the  pro-Euclidean lattice dual to $\Fb$. In other words,
$$\Eh := \Hom_\Z(F, \Z) \quad \mbox{ and } \quad E_\R^\hilb := \Hom^{\rm cont}_\R((F_\R, \Vert .\Vert), \R) \simeq F_\R^{\hilb \vee}.$$

The second part of Proposition (\ref{BanExo}) may be rephrased as:
$$\Eh \cap E^\hilb_\R = \{0\},$$
or equivalently as:
$$\lhot(\Ebh) =0.$$

We claim that
\begin{equation}\label{posuhot}
\uhot(\Ebh) \geq \eta := \hot(\Z, \vert.\vert).
\end{equation}
(\cf Section  \ref{taueta} \emph{supra} for more about the positive real number $\eta$.)

This will follow from the next Proposition, where, for any $n \in \N$, we let: 
$$\xi_n := \langle e_n, .\rangle \in E_\R^\hilb.$$

\begin{proposition}\label{gammazerovanishes}  For any non-zero finite dimensional $\Q$-vector subspace $V$ of $$F_\Q = \bigoplus_{n \in \N} \Q (e_n+ f_n),$$ 
the set
\begin{equation}\label{SetV}
\{ n\in \N \mid \xi_{n \mid V} \neq 0 \}
\end{equation}
is finite and non-empty. If $n(V)$ denotes its largest element,  then 
\begin{equation}\label{xinV}
\xi_{n(V)}(V \cap F) \in \Z.
\end{equation}
\end{proposition}

Indeed, for $U \in \cU(\Eh),$ the Euclidean lattice $\Eb_U$ may be identified with the dual of the Euclidean lattice $(V\cap F, \Vert.\Vert)$  for some suitable finite dimensional $\Q$-vector subspace of $F_\Q$. (With the notation of Corollary   \ref{FSU}, $V\cap F = U^\perp.$) With this notation, if $U \neq \Eh,$ then $V$ is non-zero, and $\xi_{n(V)\mid V \cap F}$ defines an element of $E_U$ of norm in $]0,1]$, or equivalently an injective morphism from $(\Z, \vert.\vert)$ to $\Eb_U$ of norm $\leq 1$. This implies that
$$\hot(\Z, \vert.\vert) \leq \hot(\Eb_U)$$ and establishes the lower bound (\ref{posuhot}).

\begin{proof}[Proof of Proposition \ref{gammazerovanishes}] The non-emptiness of  (\ref{SetV}) is clear. Its finiteness follows from the fact that, according to (\ref{efn}), $\xi_n(e_i + f_i) = 0$ if $n>i$.

Actually, (\ref{efn}) shows that, for every $n \in \N,$
$$W_n := \bigoplus_{o \leq i \leq n} \R e_i = \bigoplus_{o \leq i \leq n} \R(e_i+ f_i)$$
and that, over 
$$W_n\cap F = \bigoplus_{o \leq i \leq n} \Z (e_i+ f_i),$$
the linear form $\xi_n$ takes its values in $\Z$. Since
$$V \cap F \subset \bigcap_{i >n(V)} \ker \xi_i \cap F = W_{n(V)} \cap F,$$
this implies (\ref{xinV}).
\end{proof}

\medskip

\chapter[Summable projective systems of Hermitian vector bundles]{Summable projective systems of Hermitian vector bundles and finiteness  of $\theta$-invariants}\label{SectionSum}

\medskip

We still denote by $K$ a number field, by $\OK$ its ring of integers and by $\pi$ the morphism of schemes form $\Spec \OK$ to $\Spec \Z.$

\section{Main theorem}\label{SectionMainTheo} This chapter is mainly devoted to a proof of the following theorem, which shall play a key role in the sequel of this monograph for constructing pro-Hermitian vector bundles with finite and well-behaved invariants $\lhot(\Ebh)$ and $\uhot(\Ebh)$.

\begin{theorem}\label{maintheorem}Let 
$$\Eb_{\bullet} : 
\Eb_0 \stackrel{q_0}{\longleftarrow}\Eb_1 \stackrel{q_1}{\longleftarrow}\dots \stackrel{q_{i-1}}{\longleftarrow}\Eb_i \stackrel{q_i}{\longleftarrow} \Eb_{i+1} \stackrel{q_{i+1}}{\longleftarrow} \dots$$
be a projective system of surjective admissible morphisms of Hermitian vector bundles over $\Spec \OK$, and consider 
the associated pro-Hermitian vector bundle over $\Spec \OK$:
$$\Ebh := \varprojlim_i \Eb_i.$$

For every $i \in \N,$ let us denote by $\overline{\ker q_i}$ the Hermitian vector bundle over $\Spec \OK$ defined as the kernel of $q_i$ equipped with the Hermitian structure induced by the one of $\Eb_{i+1}.$

If there exists $\epsilon \in \R^\ast_+$ such that
\begin{equation}\label{sumepsiloninit}
\sum_{i\in \N} \hot(\overline{\ker q_i}\otimes_{\OK} \cOb_{\Spec \OK}(\epsilon)) < + \infty,
\end{equation}
then the limit $\lim_{i \rightarrow + \infty} \hot(\Eb_i)$ exists in $\R_+$ and
\begin{equation}\label{mainequation}
\lhot(\Ebh) = \uhot(\Ebh) = \uhot(\pi_\ast \Ebh) = \lim_{i \ra + \infty} \hot(\Eb_i).
\end{equation}

Moreover, for any $k \in \N$, the non-negative real number (\ref{mainequation}) admits the following upper bound:
\begin{equation}\label{htupperbound}
\lim_{i \rightarrow + \infty} \hot(\Eb_i) \leq \hot(\Eb_k) + \sum_{j=k}^{+\infty} \hot({\overline{\ker q_i}}).
\end{equation}

\end{theorem}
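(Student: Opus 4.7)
The plan is to combine iterated applications of the pointwise subadditivity of Lemma~\ref{gaussiansumpreimage} with a measure-theoretic construction of a limit of Banaszczyk measures on the Polish space $\Eh = \varprojlim_i E_i$, via the formalism of Appendix~\ref{measurespro}. Three inequalities must be established: (A) the iterated bound~(\ref{htupperbound}); (B) $\lhot(\Ebh) \leq \liminf_i \hot(\Eb_i)$; and (C) the reverse $\liminf_i \hot(\Eb_i) \leq \lhot(\Ebh)$. Combined with the trivial chain $\lhot(\Ebh) = \lhot(\pi_\ast\Ebh) \leq \uhot(\pi_\ast\Ebh) \leq \uhot(\Ebh)$ and with $\uhot(\Ebh) \leq \liminf_i \hot(\Eb_i)$ (since $U_i := \ker p_i$ is cofinal in $\cU(\Eh)$ with $\Eb_{U_i} = \Eb_i$), these will deliver existence of the limit and the four equalities in~(\ref{mainequation}).

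\textbf{The easy directions (A) and (B).} Applying Lemma~\ref{gaussiansumpreimage} to the admissible short exact sequence $0 \to \overline{\ker q_i} \to \Eb_{i+1} \to \Eb_i \to 0$ (after taking direct images to $\Spec\Z$) yields $\hot(\Eb_{i+1}) \leq \hot(\Eb_i) + \hot(\overline{\ker q_i})$; iterating gives (\ref{htupperbound}). Since $\hot(\overline{\ker q_i}) \leq \hot(\overline{\ker q_i} \otimes \cOb(\epsilon))$ by Corollary~\ref{cor:hotED}, the series $\sum_i \hot(\overline{\ker q_i})$ converges, and taking $\limsup_j$ then $k \to \infty$ in~(\ref{htupperbound}) gives $\limsup_j \hot(\Eb_j) \leq \liminf_k \hot(\Eb_k)$, so $\lim_i \hot(\Eb_i)$ exists. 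For (B), I use Proposition~\ref{vardef}: for any finite $P \subset \Eh \cap E_\R^\hilb$ and any $j$ large enough that $p_j|_P$ is injective, the quotient-norm bound $\Vert p_j(x)\Vert_{\Eb_j} \leq \Vert x \Vert_{\pi_\ast\Ebh}$ yields
\begin{equation*}
\sum_{x \in P} e^{-\pi \Vert x \Vert^2} \leq \sum_{x \in P} e^{-\pi \Vert p_j(x)\Vert^2} \leq \sum_{v \in E_j} e^{-\pi\Vert v\Vert^2} = e^{\hot(\Eb_j)},
\end{equation*}
whence $\sum_{x \in P} e^{-\pi\Vert x\Vert^2} \leq e^{\liminf_j \hot(\Eb_j)}$. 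Taking the supremum over $P$ via~(\ref{pistarlhot}) delivers (B).

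\textbf{The hard direction (C).} The plan is to construct a finite Borel measure $\mu$ on $\Eh$ with total mass $M := \lim_i e^{\hot(\Eb_i)}$, concentrated on the countable set $\Eh \cap E_\R^\hilb$, with atoms $\mu(\{x\}) = e^{-\pi\Vert x\Vert^2_{\pi_\ast\Ebh}}$. Starting from the Banaszczyk measures $\gamma_i := \sum_{v \in E_i}e^{-\pi\Vert v\Vert^2}\delta_v$, Lemma~\ref{gaussiansumpreimage} iterated shows that the family $\{(q_{ij})_\ast \gamma_j\}_{j \geq i}$ has uniformly bounded total mass and uniformly bounded density with respect to $\gamma_i$ (bound $\exp(\sum_{k \geq i} \hot(\overline{\ker q_k}))$); hence it is tight on the countable discrete set $E_i$. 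A diagonal extraction over $\bigcup_i E_i$ produces a subsequence along which $(q_{ij})_\ast\gamma_j \to \mu_i$ pointwise on each $E_i$; compatibility $(q_i)_\ast \mu_{i+1} = \mu_i$ follows from using the same subsequence. The countable-fiber Kolmogorov-type extension theorem of Appendix~\ref{measurespro} then glues the $\mu_i$ into a measure $\mu$ on $\Eh$ with $(p_i)_\ast \mu = \mu_i$ and total mass $M$. The strengthened hypothesis $\sum_i \hot(\overline{\ker q_i} \otimes \cOb(\epsilon)) < \infty$ enters a Chebyshev-type estimate: running the same construction with $\gamma_i$ replaced by $\gamma_i^\epsilon := \sum_v e^{-\pi e^{-2\epsilon}\Vert v\Vert^2}\delta_v$ produces a dominating finite measure $\mu^\epsilon$, which yields a bound of the form
\begin{equation*}
\mu\bigl(\{x \in \Eh : \sup_i \Vert p_i(x)\Vert^2 > R\}\bigr) \leq C\, e^{-\pi(1-e^{-2\epsilon}) R},
\end{equation*}
forcing $\mu$ to be concentrated on $\{x : \sup_i\Vert p_i(x)\Vert < \infty\} = \Eh \cap E_\R^\hilb$. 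Proposition~\ref{boundhot} makes this set discrete, so $\mu$ is atomic; a Fatou-type comparison of $(p_i)_\ast \mu = \mu_i$ with the $\gamma_i$ pins down the atomic masses to be $e^{-\pi\Vert x\Vert^2}$, and then~(\ref{pistarlhot}) gives $\lhot(\Ebh) = \log M = \lim_i \hot(\Eb_i)$, yielding~(C).

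\textbf{Main obstacle.} Step (C), and within it the concentration of $\mu$ on $\Eh \cap E_\R^\hilb$, is the principal analytic difficulty. The reinforced summability with the $\cOb(\epsilon)$-twist is indispensable here: without it the same construction still builds a limit measure of total mass $M$, but part of its mass could leak onto $\Eh \setminus E_\R^\hilb$ (the Fr\'echet but non-Hilbert locus), breaking the identification $\log M = \lhot(\Ebh)$.
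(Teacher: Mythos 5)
Your proposal is correct and follows the same overall strategy as the paper: subadditivity of $\hot$ along the admissible extensions $0\to\overline{\ker q_i}\to\Eb_{i+1}\to\Eb_i\to 0$ for the iterated bound, a limit of the Banaszczyk measures $\gamma_{\Eb_i}$ on the Polish space $\Eh$, and concentration of that limit on $\Eh\cap E_\R^\hilb$ forced by the $\cOb(\epsilon)$-twist. Three points of execution differ and are worth recording. First, your sandwich $\lhot(\Ebh)=\lhot(\pi_\ast\Ebh)\leq\uhot(\pi_\ast\Ebh)\leq\uhot(\Ebh)\leq\liminf_i\hot(\Eb_i)$, closed by the hard inequality $\lim_i\hot(\Eb_i)\leq\lhot(\Ebh)$, lets you bypass the paper's separate proof that $\uhot(\Ebh)=\lim_i\hot(\Eb_i)$ (the argument of Section \ref{ProofI} via the sub-systems $\Kb^U_\bullet$ and Lemma \ref{LemmaU}, which controls $\hot(\Eb_U)$ for \emph{arbitrary} $U\in\cU(\Eh)$); that argument is only needed for the stronger intermediate statement of Theorem \ref{propSum}, 1), valid under mere summability, not for the theorem as stated. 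Second, for the limit measure the paper renormalizes $\gamma_{\Eb_i}$ by $e^{-\sum_{j<i}\hot(\overline{\ker q_j})}$ to obtain a monotone decreasing sequence and hence convergence of the \emph{full} sequence (Propositions \ref{critconvmu} and \ref{limitmuDh}); your diagonal extraction plus domination-tightness only yields a subsequential limit, which suffices here because the total masses converge along the full sequence, but you should note that the identification of the atoms $\mu(\{x\})=e^{-\pi\Vert x\Vert^2}$ requires the two-sided estimate of Lemma \ref{lemmuivi} (lower bound from $x_j\in q_{ij}^{-1}(x_i)$, upper bound from the iterated density bound), which your phrase ``Fatou-type comparison'' compresses. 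Third, for the concentration step the paper proves $\int_{\Eh}e^{-\eta\pi\Vert\cdot\Vert^2}\,d\mu\geq\exp(M_{\Eb_\bullet}(1+\eta))$ and invokes continuity at $1$ of the convex function $M_{\Eb_\bullet}$, whereas your Gaussian tail bound $\mu_i(\{\Vert v\Vert^2>R\})\leq\bigl(\sup_j e^{\hot(\Eb_j\otimes\cOb(\epsilon))}\bigr)e^{-\pi(1-e^{-2\epsilon})R}$ (which uses only the uniform bound on the twisted total masses, not a genuine dominating measure $\mu^\epsilon$ on $\Eh$) is more elementary and direct; both mechanisms exploit the $\epsilon$-twist in the same essential way.
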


 When $\Spec \OK = \Spec \Z,$ this theorem will be established in a more precise form as Proposition \ref{propkeylim} and Theorem \ref{propSum}, 2), below. The simple derivation of Theorem \ref{maintheorem} for an arbitrary number field $K$ from its special case where $\Spec \OK = \Spec \Z$ is presented in Section \ref{completemain}.
 
The proof of Proposition \ref{propkeylim} and of Theorem \ref{propSum} are presented in the next sections (\ref{prel} to \ref{ProofII}) of this chapter. 

Section  \ref{sub:conv}  will  be devoted to establishing that Theorem \ref{maintheorem} is basically optimal : we  will show that any pro-Hermitian vector bundle $\Ebh$ such that $\lhot(\Ebh) = \uhot(\Ebh) < + \infty$ may be realized as the projective limit $\varprojlim_i \Eb_i$ of a projective system of Hermitian vector bundles $\Eb_{\bullet}$ satisfying a summability condition similar to (\ref{sumepsiloninit}). This additional result will finally allow us to give sensible definitions of \emph{strongly summable} and of \emph{$\theta$-finite} pro-Hermitian vector bundles in  Section \ref{sub:cons}. 

The proofs in this part will use some basic facts concerning measure theory on Polish spaces constructed as projective limits of countable systems of countable discrete sets. These facts are presented in a form suited to our need in Appendix \ref{measurespro}.

 \section{Preliminaries}\label{prel}
 We begin by introducing the notation and the definitions that will be used in the formulation and in the proofs of Proposition \ref{propkeylim} and Theorem \ref{propSum} in Sections \ref{sec:summableprojective}--\ref{ProofII}.
  
\subsection{Notation}
Let 
$$\Eb_{\bullet} : 
\Eb_0 \stackrel{q_0}{\longleftarrow}\Eb_1 \stackrel{q_1}{\longleftarrow}\dots \stackrel{q_{i-1}}{\longleftarrow}\Eb_i \stackrel{q_i}{\longleftarrow} \Eb_{i+1} \stackrel{q_{i+1}}{\longleftarrow} \dots$$
be a projective system of surjective admissible morphisms of Hermitian vector bundles over $\Spec \Z$, and let
$$\tilde{E} := \varprojlim_i \Eb_i = (\hE, E^\hilb_\R, \Vert.\Vert)$$ be the associated pro-Hermitian vector bundle over $\Spec \Z$.

For every $(i,j) \in \N^2$ such that $i \leq j,$ we let:
$$p_{ij}:= q_i \circ q_{i+1} \circ \cdots \circ q_{j-1}: E_j \lra E_i,$$
and we denote by
$$p_i: \hE := \varprojlim_{k} E_k \lra E_j$$
the $i$-th projection morphism, and by $$U_i := \ker p_i$$ its kernel.  

In this section, we shall apply the basic results concerning measures on countable sets and on their projective limits recalled in Appendix \ref{measurespro} to the projective system of countable sets 
$$E_{\bullet} : 
E_0 \stackrel{q_0}{\longleftarrow}E_1 \stackrel{q_1}{\longleftarrow}\dots \stackrel{q_{i-1}}{\longleftarrow}E_i \stackrel{q_i}{\longleftarrow} E_{i+1} \stackrel{q_{i+1}}{\longleftarrow} \dots$$
underlying $\Eb_{\bullet}$. A key point will be that, to the Hermitian vector bundle $\Eb_i$, is naturally attached a bounded positive measure of total mass $\exp \hot(\Eb_i)$ on the underlying set $E_i.$

Indeed, to any (finite dimensional) Hermitian vector bundle $\Vb:= (V, \Vert. \Vert)$ over $\Spec \Z,$ we may attach the positive measure on the countable set $V$:
$$\gamma_{\Vb} := e^{-\pi \Vert . \Vert_\Vb^2}  \sum_{v \in V} \delta_v = \sum_{v \in V} e^{-\pi \Vert v \Vert_{\Vb}^2}\, \delta_v.$$ 
This measure, which  occurs implicitly in the arguments \emph{à la} Banaszczyk of paragraph \ref{Banasc1} and section \ref{Banasc1}, has a finite total mass:
$$\gamma_{\Vb}(V) = \sum_{v \in V} e^{-\pi \Vert v \Vert_\Vb^2} = \exp(\hot(\Vb)).$$

For every $i\in \N,$ we may consider the continuous function $\Vert p_i\Vert_{\Eb_i}$ on $\hE_\R$. We thus define a non-decreasing sequence of functions on $\hE_\R$, which converges (pointwise) towards the function $\Vert. \Vert$ on $\hE_\R$ defined as the Hilbert norm on $E^\hilb_\R$ and as $+\infty$ on $\hE_\R \setminus  E^\hilb_\R$. In particular, 
$$\Vert . \Vert : \hE_\R \lra [0, +\infty]$$
is lower semi-continuous.

For any $\eta \in \R_+^\ast,$ we may also consider the functions
$$f_{i,\eta} := \exp (-\eta \pi \Vert p_i\Vert_{\Eb_i}^2)$$
and
$$f_{\eta} := \exp (-\eta \pi \Vert . \Vert^2)$$ 
on $\hE_\R$. Then $(f_{i,\eta})_{i \in \N}$ is a non-increasing sequence of continuous functions from $\hE_\R$ to $[0,1]$ and converges pointwise to $f_\eta$, which is therefore upper semi-continuous.

Observe that, for any $x \in \hE_\R$,
$$f_\eta(x)= 
\begin{cases}
 e^{-\eta \pi  \Vert x \Vert^2} & \mbox{if $x\in E^\hilb_\R$} \\
 0 & \mbox{if $x \in \hE_\R \setminus  E^\hilb_\R$}
\end{cases}
$$
and that consequently:
\begin{equation}\label{limfeta}
\lim_{\eta \ra 0_+} f_\eta(x) = 1_{E^\hilb_\R}(x).
\end{equation}

\subsection{Definitions}\label{sub:def}

For every $i \in \N,$ we  consider the Hermitian vector bundle $\overline{\ker q_i}$,  defined as the kernel of $q_i$ equipped with the Hermitian structure induced by the one of $\Eb_{i+1}.$ By construction, its fits into an admissible exact sequence
\begin{equation}\label{adkerq}
\overline{\cS}_i : 
0 \lra \overline{\ker q_i} \hlra \Eb_{i+1} \stackrel{q_i}{\lra} \Eb_i \lra 0.
\end{equation}
We shall say that the projective system $\Eb_{\bullet}$ is \emph{summable} when it satisfies the following condition:
$$\Sum (\Eb_{\bullet}) \, : \,  \sum_{i\in \N} \hot(\overline{\ker q_i}) < + \infty.$$
For any $\lambda \in \R,$ we may ``twist" $\Eb_{\bullet}$ by the Hermitian line bundle $\cOb(\lambda)$ and consider the attached summability condition:
$$\Sum (\Eb_{\bullet}\otimes \cOb(\lambda)) \, : \,  \sum_{i\in \N} \hot(\overline{\ker q_i}\otimes \cOb(\lambda)) < + \infty.$$

Observe that, when this condition is satisfied for some value $\lambda_0$ of $\lambda$, then it is also satisfied for any $\lambda$ in $]-\infty, \lambda_0].$ We shall say that $\Eb_{\bullet}$ is \emph{strongly summable} when $\Sum (\Eb_{\bullet}\otimes \cOb(\epsilon))$ is satisfied for some $\epsilon \in \R^\ast_+.$

For any $i\in \N$ and any $t\in \R^\ast_+,$ we may consider
$$\log \sum_{v \in E_i} e^{-\pi t \Vert v \Vert^2_{\Eb_i}} = \hot(\Eb_i \otimes \cOb (-(\log t)/2))).$$
As a function of $t$, it is convex on $\R^\ast_+.$ We shall say that $M_{\Eb_{\bullet}}(t)$ \emph{is defined} when the limit
$$M_{\Eb_{\bullet}}(t) = \lim_{i \ra +\infty} \log \sum_{v \in E_i} e^{-\pi t \Vert v \Vert^2_{\Eb_i}} = \lim_{i \ra +\infty} \hot(\Eb_i \otimes \cOb (-(\log t)/2)))$$
exists in $\R_+$. 

\subsection{Convexity of $M_{\Eb_{\bullet}}$} The following lemma, which is a straightforward consequence of the previous observations, turns out to play a key role in our  derivation of Theorem   \ref{maintheorem} (through the proof of Corollary \ref{MEcont} \emph{infra}).

\begin{lemma}\label{MEconv} 
 If $M_{\Eb_{\bullet}}(t)$ is defined for every $t$ in some compact interval $[a,b]$ in $\R_+^\ast,$ then the function
$$M_{\Eb_{\bullet}}: [a,b] \lra \R_+$$
is convex and non-increasing. In particular it is continuous on $]a,b]$. \qed
\end{lemma}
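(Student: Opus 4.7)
My plan is to deduce each of the three conclusions---convexity, monotonicity, and continuity---by transferring the corresponding property from the approximating functions
$$f_i(t) := \log \sum_{v \in E_i} e^{-\pi t \Vert v \Vert^2_{\Eb_i}} = \hot(\Eb_i \otimes \cOb(-(\log t)/2))$$
to the pointwise limit $M_{\Eb_{\bullet}}$. First I would observe that each $f_i$ is non-increasing on $\R^\ast_+$: this is immediate from the termwise inequality $e^{-\pi t \Vert v \Vert^2_{\Eb_i}} \leq e^{-\pi s \Vert v \Vert^2_{\Eb_i}}$ for $s \leq t$. Next I would recall, as stated in the excerpt just before the lemma, that each $f_i$ is convex on $\R^\ast_+$; this is the standard fact that a finite or countable sum of log-convex functions is log-convex, applied to the functions $t \mapsto e^{-\pi t \Vert v \Vert^2_{\Eb_i}}$, each of which is log-affine hence log-convex.

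Since $M_{\Eb_{\bullet}}(t) = \lim_{i \to +\infty} f_i(t)$ exists in $\R_+$ for every $t \in [a,b]$ by hypothesis, and since both convexity and the non-increasing property are preserved under pointwise limits, the function $M_{\Eb_{\bullet}} : [a,b] \to \R_+$ is convex and non-increasing.

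For the continuity statement, I would invoke the classical fact that a convex function on an interval is continuous at every interior point, which takes care of $]a,b[$. The only issue is continuity at the right endpoint $b$. Here I would combine the two properties just established: on the one hand, being non-increasing gives
$$\lim_{t \to b^-} M_{\Eb_{\bullet}}(t) \geq M_{\Eb_{\bullet}}(b);$$
on the other hand, convexity of $M_{\Eb_{\bullet}}$ on $[a,b]$ implies that $M_{\Eb_{\bullet}}$ is upper semicontinuous at $b$ from the left, that is,
$$\limsup_{t \to b^-} M_{\Eb_{\bullet}}(t) \leq M_{\Eb_{\bullet}}(b).$$
Together these yield continuity at $b$.

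The only step that is not completely formal is the upper semicontinuity from the left at $b$; I expect this to be the main (minor) obstacle, as one has to be slightly careful since convex functions on a closed interval can jump \emph{upward} at a boundary point (e.g.\ $f$ identically $0$ on $(a,b)$ with $f(b)=1$ is still convex). The correct statement is that for a convex $f$ on $[a,b]$ and any $t_0 \in (a,b)$, the chord inequality $f(t) \leq \frac{b-t}{b-t_0} f(t_0) + \frac{t-t_0}{b-t_0} f(b)$ holds for $t \in [t_0,b]$, from which $\limsup_{t\to b^-} f(t) \leq f(b)$ follows. This is precisely the ingredient that, combined with monotonicity, forces continuity at $b$ and closes the proof.
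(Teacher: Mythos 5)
Your proof is correct and follows exactly the route the paper intends: each $\log\theta_{\Eb_i}(t)=\hot(\Eb_i\otimes\cOb(-(\log t)/2))$ is convex and non-increasing in $t$ (facts the paper has already recorded), both properties pass to the pointwise limit $M_{\Eb_\bullet}$, and continuity on $]a,b]$ — including the delicate right endpoint — follows from combining the chord inequality (which rules out a downward jump at $b$) with monotonicity (which rules out an upward one). The paper leaves this as a "straightforward consequence" with no written proof, and your argument, including the careful treatment of the endpoint $b$, is precisely the one it omits.
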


\section[Summable projective systems and  associated measures]{Summable projective systems of Hermitian vector bundles and  associated measures}\label{sec:summableprojective}

\subsection{Existence of the limit $\lim_{i \rightarrow + \infty} \hot(\Eb_i)$}

\begin{proposition}\label{propkeylim} If the projective system $\Eb_{\bullet}$ is summable, then the limit
\begin{equation}\label{keylim}
\lim_{i \rightarrow + \infty} \hot(\Eb_i)
\end{equation}
exists in $[0, + \infty[.$ 

Moreover, for any $k \in \N$,
$$\lim_{i \rightarrow + \infty} \hot(\Eb_i) \leq \hot(\Eb_k) + \sum_{j=k}^{+\infty} \hot({\overline{\ker q_i}}).$$
 \end{proposition}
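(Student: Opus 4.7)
The plan is to build everything on the basic subadditivity estimate \eqref{keyineq} for $\hot$ applied to each of the admissible short exact sequences $\overline{\cS}_i$, combined with a telescoping/tail argument that exploits the summability hypothesis $\Sum(\Eb_\bullet)$.

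\textbf{Step 1 (elementary one-step inequality).} First I would apply Proposition~\ref{ineqshorttheta} to each admissible short exact sequence
$$\overline{\cS}_i :\; 0 \lra \overline{\ker q_i} \hlra \Eb_{i+1} \stackrel{q_i}{\lra} \Eb_i \lra 0,$$
which yields the basic estimate
\begin{equation}\label{eq:onestep}
\hot(\Eb_{i+1}) \;\leq\; \hot(\Eb_i) + \hot(\overline{\ker q_i}),\qquad i\in\N.
\end{equation}
Note that Proposition~\ref{hot positive} guarantees that each of these $\theta$-invariants is non-negative, a fact I shall use freely.

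\textbf{Step 2 (existence of the limit via a monotone auxiliary sequence).} Set
$$R_i := \sum_{j=i}^{+\infty} \hot(\overline{\ker q_j}),\qquad a_i := \hot(\Eb_i) + R_i.$$
By the summability hypothesis $\Sum(\Eb_\bullet)$, $R_0<+\infty$, so every $R_i$ is finite and moreover $R_i \to 0$ as $i\to+\infty$ (tail of a convergent non-negative series). Using \eqref{eq:onestep}, I would compute
$$a_{i+1} = \hot(\Eb_{i+1}) + R_{i+1} \leq \hot(\Eb_i) + \hot(\overline{\ker q_i}) + R_{i+1} = \hot(\Eb_i) + R_i = a_i,$$
so $(a_i)_{i\in\N}$ is non-increasing. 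Combined with $a_i \geq \hot(\Eb_i) \geq 0$, this shows that $(a_i)$ converges in $[0,+\infty[$. Since $\hot(\Eb_i) = a_i - R_i$ and $R_i\to 0$, the limit
$$\ell := \lim_{i\to+\infty} \hot(\Eb_i) = \lim_{i\to+\infty} a_i$$
exists in $[0,+\infty[$, which is the first assertion.

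\textbf{Step 3 (upper bound).} For the inequality, I would iterate \eqref{eq:onestep}: for any $k\leq \ell$ in $\N$, a straightforward induction on $\ell-k$ gives
$$\hot(\Eb_\ell) \;\leq\; \hot(\Eb_k) + \sum_{j=k}^{\ell-1} \hot(\overline{\ker q_j}).$$
Letting $\ell \to +\infty$ and using the non-negativity of each $\hot(\overline{\ker q_j})$ yields
$$\lim_{i\to+\infty}\hot(\Eb_i) \;\leq\; \hot(\Eb_k) + \sum_{j=k}^{+\infty} \hot(\overline{\ker q_j}),$$
which is the claimed estimate.

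There is no real obstacle here: the only structural input is the subadditivity of $\hot$ in admissible short exact sequences (Proposition~\ref{ineqshorttheta}), together with the non-negativity of $\hot$ (Proposition~\ref{hot positive}). The monotone auxiliary sequence $a_i$ is the clean device that turns a merely bounded sequence into a convergent one. The harder content of Theorem~\ref{maintheorem}, namely identifying this limit with $\lhot(\Ebh) = \uhot(\Ebh)$ and requiring \emph{strong} summability, is deferred to the subsequent results and will require the measure-theoretic machinery on $\varprojlim E_i$ outlined in the introduction to this section.
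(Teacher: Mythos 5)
Your proof is correct and follows essentially the same route as the paper: the one-step subadditivity estimate from Proposition~\ref{ineqshorttheta} applied to each $\overline{\cS}_i$, followed by a monotone auxiliary sequence (your $a_i = \hot(\Eb_i) + \sum_{j\geq i}\hot(\overline{\ker q_j})$ is the paper's sequence $\hot(\Eb_i) - \sum_{j<i}\hot(\overline{\ker q_j})$ shifted by the constant $\Sigma$). The final bound is likewise obtained in the paper by evaluating the monotone sequence at $k$, which is equivalent to your telescoping in Step~3.
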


\begin{proof} 
The subadditivity of $\hot$ (Proposition \ref{ineqshorttheta}) applied to the admissible short exact sequence $\overline{\cS}_i$ (see (\ref{adkerq})) shows that, for any $i \in \N$,
\begin{equation}
\hot(\Eb_{i+1}) \leq \hot(\Eb_i) + \hot(\overline{\ker q_i}).
\end{equation}
The sequence
$$(\hot(\Eb_i) - \sum_{0 \leq j < i} \hot(\overline{\ker q_j}))_{i \in \N}$$
is therefore non-increasing and bounded below by $-\Sigma,$
where
$\Sigma := \sum_{0 \leq j < +\infty} \hot(\overline{\ker q_j}),$
and consequently admits a limit $l$ in $[-\Sigma, +\infty[.$ This proves that $(\hot(\Eb_i))_{i\in \N}$ converges to  $l+\Sigma \in \R^+.$

Moreover, for any $k \in \N,$
$$l+\Sigma \leq (\hot(\Eb_k) - \sum_{0 \leq j < k} \hot(\overline{\ker q_j})) + \Sigma = \hot(\Eb_k) + \sum_{k \leq j < + \infty} \hot(\overline{\ker q_j}).$$ 
\end{proof}
 
\begin{corollary} If the projective system $\Eb_{\bullet}$ is summable (resp. strongly summable), then $M_{\Eb_{\bullet}}(t)$ is defined for any $t$ in $[1, +\infty[$ (resp. for any $t$ in some open interval containing $[1, +\infty[$). \qed
 \end{corollary}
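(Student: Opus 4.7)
The plan is to reduce both assertions to a direct application of Proposition \ref{propkeylim} by twisting the projective system by a suitable line bundle $\cOb(\mu)$, and then exploiting the monotonicity of $\hot$ with respect to the twist parameter established in Corollary \ref{cor:hotED}.

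First I would observe that, for any $\mu \in \R$, the twisted system
\[ \Eb_{\bullet}\otimes\cOb(\mu) : \Eb_0\otimes\cOb(\mu)\stackrel{q_0}{\longleftarrow}\Eb_1\otimes\cOb(\mu)\stackrel{q_1}{\longleftarrow}\cdots \]
is still a projective system of surjective admissible morphisms of hermitian vector bundles, and its $i$-th kernel is canonically isometric to $\overline{\ker q_i}\otimes\cOb(\mu)$. Moreover, by the very definition of the twist and of $M_{\Eb_{\bullet}}$, we have, for every $t \in \R_+^\ast$,
\[ \hot(\Eb_i\otimes\cOb(-(\log t)/2)) = \hot\bigl((\Eb_{\bullet}\otimes\cOb(-(\log t)/2))_i\bigr), \]
so that $M_{\Eb_{\bullet}}(t)$ is defined in $\R_+$ precisely when $\lim_{i\to+\infty}\hot((\Eb_{\bullet}\otimes\cOb(-(\log t)/2))_i)$ exists in $\R_+$. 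By Proposition \ref{propkeylim}, a sufficient condition for this is the summability $\Sum(\Eb_{\bullet}\otimes\cOb(-(\log t)/2))$, namely
\[ \sum_{i\in\N}\hot(\overline{\ker q_i}\otimes\cOb(-(\log t)/2)) < +\infty. \]

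Next I would use the monotonicity of $\hot$ under twisting. Corollary \ref{cor:hotED}, applied to each $\overline{\ker q_i}$, implies that the function $\lambda \longmapsto \hot(\overline{\ker q_i}\otimes\cOb(\lambda))$ is non-decreasing on $\R$; hence, as already recorded in Section \ref{sub:def}, if $\Sum(\Eb_{\bullet}\otimes\cOb(\lambda_0))$ holds for some $\lambda_0$, then $\Sum(\Eb_{\bullet}\otimes\cOb(\lambda))$ holds for every $\lambda \leq \lambda_0$. Under the summability assumption (corresponding to $\lambda_0 = 0$), this immediately yields $\Sum(\Eb_{\bullet}\otimes\cOb(\lambda))$ for every $\lambda \leq 0$, i.e.\ for every $t \in [1,+\infty[$ when $\lambda = -(\log t)/2$; by Proposition \ref{propkeylim}, $M_{\Eb_{\bullet}}(t)$ is then defined on $[1,+\infty[$, as required.

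Under the strong summability assumption, we have $\Sum(\Eb_{\bullet}\otimes\cOb(\epsilon))$ for some $\epsilon \in \R_+^\ast$. By the same monotonicity, $\Sum(\Eb_{\bullet}\otimes\cOb(\lambda))$ then holds for every $\lambda \leq \epsilon$, which corresponds, via $\lambda = -(\log t)/2$, to every $t$ in the open interval $]e^{-2\epsilon},+\infty[$, and this interval strictly contains $[1,+\infty[$ since $e^{-2\epsilon} < 1$. A last application of Proposition \ref{propkeylim} shows that $M_{\Eb_{\bullet}}(t)$ is defined on $]e^{-2\epsilon},+\infty[$, completing the proof. There is no real obstacle here: everything is a formal twist combined with the monotonicity of $\hot$, once the existence of the limit has been secured by Proposition \ref{propkeylim}.
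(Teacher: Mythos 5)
Your proof is correct and follows essentially the same route as the paper: reduce to the twisted systems $\Eb_{\bullet}\otimes\cOb(\lambda)$, note that summability is inherited for all smaller $\lambda$ by the monotonicity of $\hot$ under twisting, and invoke Proposition \ref{propkeylim} (i.e.\ the existence of the limit defining $M_{\Eb_{\bullet}}(e^{-2\lambda})$) for each such $\lambda$. The only cosmetic difference is that you explicitly cite Corollary \ref{cor:hotED} for the monotonicity, which the paper takes as already recorded in Section \ref{sub:def}.
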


\begin{proof} Observe that the validity of $\Sum (\Eb_{\bullet})$ implies the one of $\Sum (\Eb_{\bullet}\otimes \cOb(\lambda))$ for every $\lambda$ in $\R_-$ and that, for any $(t,\lambda) \in \R_+^\ast \times \R,$ $M_{\Eb_{\bullet}\otimes \cOb(\lambda)}(t)$ and $M_{\Eb_{\bullet}}(e^{-2 \lambda}t)$ are defined simultaneously
and are then equal. 

Therefore to establish the Corollary, it is enough to show that $M_{\Eb_{\bullet}}(1)$ is defined when $\Eb_{\bullet}$ is summable: this is precisely the existence of the limit (\ref{keylim}) established in Proposition \ref{propkeylim}.
\end{proof}

 Taking Lemma \ref{MEconv} into account, we finally obtain:
 \begin{corollary}\label{MEcont} When $\Eb_{\bullet}$ is strongly summable, the convex function $M_{\Eb_{\bullet}}$ on $[1, +\infty[$ is continuous at $1$. \qed
\end{corollary}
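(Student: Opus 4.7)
The plan is straightforward and essentially reduces to combining the strong summability assumption with Proposition~\ref{propkeylim} and Lemma~\ref{MEconv}. First I would unpack strong summability: by definition there exists some $\epsilon \in \R^\ast_+$ such that $\Sum(\Eb_\bullet \otimes \cOb(\epsilon))$ holds. Since the map $\lambda \mapsto \hot(\overline{\ker q_i} \otimes \cOb(\lambda))$ is non-decreasing, the summability $\Sum(\Eb_\bullet \otimes \cOb(\lambda))$ then holds for every $\lambda \in {]{-}\infty, \epsilon]}$.

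Next I would apply Proposition~\ref{propkeylim} to each twisted system $\Eb_\bullet \otimes \cOb(\lambda)$ with $\lambda \in {]{-}\infty,\epsilon]}$: this gives that the limit $\lim_{i \to +\infty} \hot((\Eb_\bullet \otimes \cOb(\lambda))_i)$ exists in $\R_+$, that is, $M_{\Eb_\bullet \otimes \cOb(\lambda)}(1)$ is defined. Using the elementary identity
\begin{equation*}
M_{\Eb_\bullet \otimes \cOb(\lambda)}(t) = M_{\Eb_\bullet}(e^{-2\lambda} t)
\end{equation*}
already recorded in the proof of the previous corollary, and specializing to $t=1$, we conclude that $M_{\Eb_\bullet}(s)$ is defined for every $s \in [e^{-2\epsilon}, +\infty[$. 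In particular, $M_{\Eb_\bullet}$ is defined on the compact interval $[e^{-2\epsilon}, 2]$, which contains $1$ in its interior (as $e^{-2\epsilon} < 1 < 2$).

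To finish, I would invoke Lemma~\ref{MEconv} applied with $[a,b] = [e^{-2\epsilon}, 2]$: the function $M_{\Eb_\bullet}$ is convex (and non-increasing) on $[e^{-2\epsilon}, 2]$, hence continuous on the open interval $]e^{-2\epsilon}, 2[$ since any convex function on an interval is continuous at every interior point. As $1$ lies in this open interval, $M_{\Eb_\bullet}$ is continuous at $1$, which is the claim.

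There is essentially no obstacle: the only subtle point is noticing that strong summability, in contrast with mere summability, guarantees $M_{\Eb_\bullet}$ is defined on an interval strictly larger than $[1, +\infty[$, so that $1$ becomes an interior point where the general continuity property of convex functions applies. All the real work has been done upstream in Proposition~\ref{propkeylim} and Lemma~\ref{MEconv}.
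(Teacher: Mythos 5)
Your proof is correct and follows essentially the same route as the paper: strong summability yields summability of the twisted systems $\Eb_\bullet \otimes \cOb(\lambda)$ for all $\lambda \le \epsilon$, Proposition~\ref{propkeylim} together with the rescaling identity $M_{\Eb_\bullet \otimes \cOb(\lambda)}(1) = M_{\Eb_\bullet}(e^{-2\lambda})$ shows $M_{\Eb_\bullet}$ is defined on an interval containing $1$ in its interior, and Lemma~\ref{MEconv} (convexity) gives continuity at the interior point $1$. You have also correctly isolated the one subtle point, namely that strong (rather than mere) summability is what makes $1$ an interior point of the domain of definition.
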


\subsection{The main technical result}
We may now formulate the main technical result of this chapter. Its proof will be the object of the next two sections, and will notably rely on the  results concerning measures on projective limits on countable sets  established in Appendix \ref{measurespro}.

\begin{theorem}\label{propSum} 1)  If the projective system $\Eb_{\bullet}$ is summable, then the pro-Hermitian vector bundle $\Ebh:= \varprojlim_i \Eb_i$ satisfies:
\begin{equation}\label{hotlim}
\uhot(\Ebh) = \lim_{i \rightarrow + \infty} \hot(\Eb_i).
\end{equation}

Moreover, for any $i\in \N,$ the sequence  
$(p_{ij\ast} \gamma_{\Eb_j})_{j \in \N_{\geq i}}$ converges to some limit $\mu_i$ in $\cM^b_+(E_i)$, and there exists a unique 
 measure $\mu_{\Eb_{\bullet}}$ in $\cM^b_+(\hE)$ such that, for any $i \in \N,$
$$\mu_i = p_{i\ast} \mu_{\Eb_{\bullet}}.$$
The mass of the measure $\mu_{\Eb_{\bullet}}$ is $\geq 1$ and satisfies: 
\begin{equation}\label{uhotmutot}
 \uhot( \Ebh) = \log \mu_{\Eb_{\bullet}} (\hE).
\end{equation}
Moreover, for any $v \in \hE \cap E^\hilb_\R,$ we have:
\begin{equation}\label{muv}
\mu_{\Eb_{\bullet}}(\{v \}) = e^{-\pi \Vert v \Vert^2}.
\end{equation}

2) If the projective system $\Eb_{\bullet}$ is summable and if the convex function $M_{\Eb_{\bullet}}: [1,+\infty[ \ra \R_+$ is continuous at 1 --- \emph{for instance, if  $\Eb_{\bullet}$ is strongly summable} ---
then:
\begin{equation}\label{muE}
\mu_{\Eb_{\bullet}}  
= \sum_{v \in \hE \cap E^\hilb_\R} e^{-\pi \Vert v \Vert^2}\, \delta_v
\end{equation}
and
\begin{equation}\label{hthetamuE}
\uhot(\Ebh) = \lhot(\Ebh)=\log \sum_{v \in \hE \cap E^\hilb_\R} e^{-\pi \Vert v \Vert^2}. 
\end{equation}
\end{theorem}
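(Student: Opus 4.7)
The equality $\uhot(\Ebh) = \lim_i \hot(\Eb_i)$ is essentially formal: since $(U_i := \ker p_i)_{i\in\N}$ is a defining sequence, it is cofinal in the filter $\cU(\hE)$ and $\Eb_{U_i}$ is canonically isomorphic to $\Eb_i$; the limit exists by Proposition~\ref{propkeylim}. The substance of the theorem is the measure-theoretic construction, whose starting point is the pointwise subadditivity (Lemma~\ref{gaussiansumpreimage}) applied to each short exact sequence $\overline{\cS}_j$. Iterating along the projective system yields, for every $v \in E_i$ and $j \geq i$, the key bound
\begin{equation}\label{plan:bound}
(p_{i,j+1\ast}\gamma_{\Eb_{j+1}})(\{v\}) \leq e^{\hot(\overline{\ker q_j})}\, (p_{ij\ast}\gamma_{\Eb_j})(\{v\}) \leq e^{-\pi\Vert v\Vert^2_{\Eb_i}}\,\exp\Bigl(\sum_{k=i}^{j}\hot(\overline{\ker q_k})\Bigr).
\end{equation}
Under summability, the rescaled sequence $(p_{ij\ast}\gamma_{\Eb_j})(\{v\})\exp(-\sum_{k=i}^{j-1}\hot(\overline{\ker q_k}))$ is non-negative and non-increasing in $j$, hence convergent; thus $(p_{ij\ast}\gamma_{\Eb_j})(\{v\})$ converges in $\R_+$ to a limit which we take as $\mu_i(\{v\})$, defining a positive measure $\mu_i$ on $E_i$.

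The second inequality in \eqref{plan:bound} provides a summable dominating function on $E_i$, uniformly in $j$, so dominated convergence yields both $\mu_i(E_i) = \lim_j e^{\hot(\Eb_j)} = e^{\uhot(\Ebh)}$ and the compatibility $q_{i\ast}\mu_{i+1} = \mu_i$ (from $q_{i\ast}\circ p_{i+1,j\ast} = p_{ij\ast}$). The Kolmogorov-type extension theorem for measures on projective limits of countable discrete sets established in Appendix~\ref{measurespro} then produces a unique $\mu_{\Eb_{\bullet}} \in \cM^b_+(\hE)$ with $p_{i\ast}\mu_{\Eb_{\bullet}} = \mu_i$, of total mass $e^{\uhot(\Ebh)}$, proving \eqref{uhotmutot}. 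For \eqref{muv}, fix $v \in \hE\cap E^\hilb_\R$: downward continuity on $p_i^{-1}(\{p_i(v)\}) = v+U_i \searrow \{v\}$ gives $\mu_{\Eb_{\bullet}}(\{v\}) = \lim_i\mu_i(\{p_i(v)\})$; the upper bound in \eqref{plan:bound} passed to $j\to\infty$ yields $\mu_i(\{p_i(v)\}) \leq e^{-\pi\Vert p_i(v)\Vert^2_{\Eb_i}}\exp(\sum_{k\geq i}\hot(\overline{\ker q_k}))$, which tends to $e^{-\pi\Vert v\Vert^2}$ as $i\to\infty$ (using $\Vert p_i(v)\Vert_{\Eb_i}\nearrow\Vert v\Vert$ by admissibility), while the single-term lower bound $(p_{ij\ast}\gamma_{\Eb_j})(\{p_i(v)\}) \geq e^{-\pi\Vert p_j(v)\Vert^2_{\Eb_j}}$ gives the reverse inequality.

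For part~2), the task reduces to showing $\mu_{\Eb_{\bullet}}$ is concentrated on $\hE\cap E^\hilb_\R$, after which \eqref{muE} and \eqref{hthetamuE} follow from \eqref{muv} together with \eqref{pistarlhot}. The core computation is
\[
\int_{\hE} f_{i,\eta}\, d\mu_{\Eb_{\bullet}} \;=\; \lim_j \sum_{w\in E_j} e^{-\eta\pi\Vert p_{ij}(w)\Vert^2_{\Eb_i}}\,e^{-\pi\Vert w\Vert^2_{\Eb_j}} \;\geq\; \lim_j \sum_{w\in E_j} e^{-(1+\eta)\pi\Vert w\Vert^2_{\Eb_j}} \;=\; e^{M_{\Eb_{\bullet}}(1+\eta)},
\]
where the first equality uses dominated convergence via \eqref{plan:bound} and the inequality uses the admissibility estimate $\Vert p_{ij}(w)\Vert_{\Eb_i}\leq \Vert w\Vert_{\Eb_j}$. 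Letting $i\to\infty$ (monotone convergence, $f_{i,\eta}\searrow f_\eta$), then $\eta\to 0_+$ (monotone convergence, $f_\eta \nearrow \mathbf{1}_{\hE\cap E^\hilb_\R}$), and invoking continuity of $M_{\Eb_{\bullet}}$ at $1$ (Corollary~\ref{MEcont}),
\[
\mu_{\Eb_{\bullet}}(\hE\cap E^\hilb_\R) \;\geq\; \lim_{\eta\to 0_+} e^{M_{\Eb_{\bullet}}(1+\eta)} \;=\; e^{M_{\Eb_{\bullet}}(1)} \;=\; e^{\uhot(\Ebh)} \;=\; \mu_{\Eb_{\bullet}}(\hE),
\]
forcing equality and hence concentration. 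The decisive step is this last one: the strong summability hypothesis is used essentially through the right-continuity of the ``arithmetic partition function'' $M_{\Eb_{\bullet}}$ at the critical value $t=1$, preventing the sequence $(p_{ij\ast}\gamma_{\Eb_j})_j$ from leaking mass off the Hilbert subspace $E^\hilb_\R$ in the limit.
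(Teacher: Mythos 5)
Most of your argument coincides with the paper's: the pointwise subadditivity bound from Lemma \ref{gaussiansumpreimage} (= Lemma \ref{submeasure}), the rescaling/monotonicity argument producing the limit measures $\mu_i$ and then $\mu_{\Eb_{\bullet}}$ (this is exactly Proposition \ref{limitmuDh} of Appendix \ref{measurespro}), the two-sided bound yielding \eqref{muv}, and, for part 2), the estimate $\int_{\hE} f_{i,\eta}\,d\mu_{\Eb_{\bullet}}\geq e^{M_{\Eb_{\bullet}}(1+\eta)}$ followed by the limits $i\to\infty$, $\eta\to 0_+$ and the continuity of $M_{\Eb_{\bullet}}$ at $1$ are all the paper's steps, correctly executed.

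There is, however, a genuine gap at your very first step. You dismiss \eqref{hotlim} as ``essentially formal'' because the defining sequence $(U_i)$ is cofinal in $\cU(\hE)$. But $\uhot(\Ebh)$ is by definition the lower limit of $\hot(\Eb_U)$ over the \emph{whole} directed set $\cU(\hE)$, and cofinality only yields the easy inequality $\uhot(\Ebh)\leq\lim_i\hot(\Eb_i)$: passing to a cofinal subnet can only increase a liminf. The reverse inequality requires a lower bound on $\hot(\Eb_U)$ for \emph{every} sufficiently small $U\in\cU(\hE)$, not merely for the $U_i$, and $\hot(\Eb_U)$ is not monotone in $U$ --- Proposition \ref{limsupinf} even shows $\limsup_{U}\hot(\Eb_U)=+\infty$ whenever $\Ebh$ has infinite rank, so the net genuinely oscillates. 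The paper closes this gap in Section \ref{ProofI}: for $U\supseteq U_{i_0}$ one forms the summable projective system of kernels $\Kb^U_i:=\overline{\ker(E_i\to E_U)}$ (whose successive kernels are again the $\overline{\ker q_i}$), applies subadditivity to $0\to\Kb^U_i\to\Eb_i\to\Eb_U\to 0$ to get $\hot(\Eb_i)\leq\hot(\Kb^U_i)+\hot(\Eb_U)$, hence $l\leq l(U)+\hot(\Eb_U)$ with $l(U):=\lim_i\hot(\Kb^U_i)$ existing by Proposition \ref{propkeylim}, and finally checks $l(U)\leq\hot(\Kb^U_{i_0})+\sum_{i\geq i_0}\hot(\overline{\ker q_i})\longrightarrow 0$ as $U\to 0$. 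Without this (or an equivalent) argument, \eqref{hotlim} --- and with it \eqref{uhotmutot} and the identification of $\uhot$ with $\lhot$ in \eqref{hthetamuE} --- is not established; the rest of your proof is sound.
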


Observe that, for any pro-Hermitian vector bundle $\Ebh$ such that $\lhot(\Ebh) < + \infty,$ the right-hand side of (\ref{muE}) defines a measure in $\cM^b_+(\hE)$, that we shall denote  by $\mu_{\Ebh}.$ It satisfies
\begin{equation*}
 \lhot( \hE) = \log \mu_{\Ebh} (\hE).
\end{equation*}
and the conclusion of Theorem \ref{propSum}, 2), may be rephrased as the equality:
$$\mu_{\Eb_{\bullet}} = \mu_{\Ebh}.$$

\subsection{Completion of the proof of Theorem \ref{maintheorem}}\label{completemain} Before we proceed to the proof of Theorem \ref{propSum}, we explain how, taking Proposition \ref{propkeylim} and Theorem \ref{propSum}, 2), for granted, one easily establishes   Theorem \ref{maintheorem}.

Let us use the notation of Theorem \ref{maintheorem}.
From Theorem \ref{propSum}, 2), applied to 
  the direct images $\pi_\ast \Eb_{\bullet}$ and $\pi_{\ast}\Ebh \simeq \varprojlim_i \pi_\ast\Eb_i$ of the projective system $\Eb_{\bullet}$ and of its limit $\Ebh$ by the morphism $\pi: \Spec \OK \ra \Spec \Z,$ we obtain the existence of the limit $\lim_{i \rightarrow + \infty} \hot(\Eb_i)$ and the relations:
  \begin{equation}\label{generalmain}
\lhot(\Ebh) =  \uhot(\pi_\ast \Ebh) = \lim_{i \ra + \infty} \hot(\Ebh_i).
\end{equation}
 (Indeed, according to (\ref{hithetagen}) and (\ref{pistarlhot}), we have:
  $$\lhot(\pi_\ast\Eb_i)=\lhot(\Eb_i), \,\,\, \hot(\pi_\ast\overline{\ker q_i}\otimes_\Z \cOb(\epsilon)) = \hot(\overline{\ker q_i}\otimes_{\OK} \cOb_{\Spec \OK}(\epsilon)),$$
 and $$\lhot(\pi_\ast\Ebh)=\lhot(\Ebh).)$$
Moreover, the inequality (\ref{pistaruhot}) and the very definition of $\uhot(\Ebh)$ show that:
 $$\uhot(\pi_\ast\Ebh) \leq 
 \uhot(\Ebh) \leq \lim_{i \rightarrow + \infty} \hot(\Eb_i).$$
 
 Combined with (\ref{generalmain}), this completes the proof of (\ref{mainequation}).  

\section[Proof of Theorem 7.3.4 -- I. The equality $\uhot({\hat{\overline{E}}}) = \lim_{i \rightarrow + \infty} \hot(\Eb_i)$]{Proof of Theorem \ref{propSum} -- I. The equality $\uhot({\hat{\overline{E}}}) = \lim_{i \rightarrow + \infty} \hot(\Eb_i)$}\label{ProofI}

In this paragraph, we consider a \emph{summable} projective  system $(\Eb_{\bullet})$ of surjective admissible morphisms of Hermitian vector bundles over $\Spec \Z.$ According to Proposition \ref{propkeylim}, the limit $$l:=\lim_{i \rightarrow + \infty} \hot(\Eb_i)$$ exists in $\R_+$.
Clearly, $$\liminf_{U \in \cU(\hE)} \hot(\Eb_U) \leq \lim_{i \lra + \infty} \hot(\Eb_i).$$ Therefore, to prove (\ref{hotlim}), it is enough to show that
\begin{equation}\label{geql}
\liminf_{U \in \cU(\hE)} \hot(\Eb_U) \geq l.
\end{equation}

To achieve this, observe that, for any open saturated submodule $U$ of $\hE,$ we may perform the following construction.

If  $i_0$ is a large enough positive integer (depending on $U$), then for any $i\geq i_0,$ the submodule $U$ contains $U_i$, and we may consider the quotient map:
$$p_{U,U_i}: E_i \simeq \hE/ U_i \lra E_U:= \hE/U.$$
Its kernel  defines a Hermitian vector bundle $\Kb^U_i$ which fits into an admissible short exact sequence 
\begin{equation}\label{KUi}
0 \lra \Kb^U_i \lra \Eb_i \lra \Eb_U \lra 0.
\end{equation}
Moreover, the morphism $q_i:=p_{U_i,U_{i+1}}: E_{i+1} \lra E_{i}$ defines by restriction an admissible surjective morphism
$$q^U_i:\Kb^{U}_{i+1} \lra \Kb^{U}_{i},$$
of kernel $$\ker q^U_i= \ker q_i.$$ 
Moreover the natural euclidian structures  on $\ker q^U_i$ and $\ker q_i$ (defined by the Euclidean structures
on $\Kb^U_{i+1}$ and on $\Eb_{i+1}$) clearly coincide: 
$$\overline{\ker q^U_i}= \overline{\ker q_i} \mbox{ for every $i\geq i_0$}.$$

Thus we may consider the summable projective system
$$\Kb^U_{\bullet} : 
 \Kb^{U}_{i_0}\stackrel{q_{i_0}}{\longleftarrow}\Kb^{U}_{i_0+1} \stackrel{q_{i_0+1}}{\longleftarrow}\dots
\stackrel{q_{i-1}}{\longleftarrow}\Kb^{U}_i \stackrel{q_i}{\longleftarrow} \Kb^{U}_{i+1} \stackrel{q_{i+1}}{\longleftarrow} \cdots \,\, .$$
(The pro-Hermitian vector bundle $\tilde{K}^U:= \varprojlim_i \Kb^U_i
= (\widehat{K}^U, K^{U,\hilb}_\R, \Vert.\Vert_{\tilde{K}_U})$ may also be directly defined by
$$ \widehat{K}^U = U, \,\, K^{U,\hilb_\R}:= \ker p_{U,\R\mid E^\hilb_\R}, \mbox{ and } \Vert.\Vert_{\tilde{K}_U} := \Vert . \Vert_{\mid K^{U,\hilb_\R}},$$
where we denote by  $p_U: \hE \lra E_U$ the quotient map from $\hE$ onto $\hE/U$ and  by $p_{U,\R}:\hE_\R \ra E_{U,\R}$ its $\R$-linear continuous extension.)

According to Proposition \ref{propkeylim}, the limit
$$l(U) := \lim_{i \ra + \infty} \hot(\Kb^{U}_{i})$$
exists in $\R_+$ and satisfies 
\begin{equation}\label{lui0}
l(U) \leq \hot(\Kb^{U}_{i_0}) + \sum_{i \geq i_0} \hot(\overline{\ker q_i}).
\end{equation}

\begin{lemma}\label{LemmaU} 1) For any open saturated submodule $U$ of $\hE$, we have:
\begin{equation}\label{KL1}
l \leq \hot(\Eb_U) + l(U).
\end{equation}
2) For any two open saturated submodules $U$ and $U'$ of $\hE$, 
\begin{equation}\label{KL2}
U' \subset U \Longrightarrow l(U') \leq l(U).
\end{equation}
3) For any $k \in \N,$
\begin{equation}\label{KL3}
l(U_k) \leq \sum_{i \geq k} \hot(\Sb_i).
\end{equation}
4) We have : $$\lim_{U \in \cU (\hE)} l(U) =0.$$
 \end{lemma}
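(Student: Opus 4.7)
The plan is to derive all four assertions from the subadditivity of $\hot$ (Proposition~\ref{ineqshorttheta}) and its monotonicity under admissible injections (Proposition~\ref{ineqmortheta}), together with the existence of the limit $l(U)$ guaranteed by Proposition~\ref{propkeylim} applied to the summable system $\Kb^U_\bullet$.

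For part~1), I would apply the subadditivity of $\hot$ to the admissible short exact sequence~(\ref{KUi}),
$$0 \lra \Kb^U_i \lra \Eb_i \lra \Eb_U \lra 0,$$
obtaining $\hot(\Eb_i) \leq \hot(\Kb^U_i) + \hot(\Eb_U)$ for every $i \geq i_0$, and pass to the limit $i \to +\infty$. For part~2), if $U' \subset U$ and $i$ is so large that $U_i \subset U' \subset U$, then the identifications $\Kb^{U'}_i \simeq U'/U_i$ and $\Kb^U_i \simeq U/U_i$ exhibit $\Kb^{U'}_i$ as a saturated $\OK$-submodule of $\Kb^U_i$, with hermitian metric induced (in both cases) from $\Eb_i$. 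The inclusion is therefore an admissible injective morphism, and Proposition~\ref{ineqmortheta},~1) gives $\hot(\Kb^{U'}_i) \leq \hot(\Kb^U_i)$; passing to the limit yields $l(U') \leq l(U)$.

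For part~3), observe that $E_{U_k} = E_k$ and $\Kb^{U_k}_i = \ker(p_{ki}\colon E_i \to E_k)$ for $i\geq k$. Restricting the admissible short exact sequence $\overline{\cS}_i$ from~(\ref{adkerq}) to the kernels of the projections to $E_k$ produces admissible short exact sequences
$$0 \lra \overline{\ker q_i} \lra \Kb^{U_k}_{i+1} \stackrel{q_i}{\lra} \Kb^{U_k}_i \lra 0$$
for every $i \geq k$. By subadditivity and an immediate induction on $i \geq k$ starting from $\Kb^{U_k}_k = 0$, we obtain
$$\hot(\Kb^{U_k}_i) \leq \sum_{j=k}^{i-1} \hot(\overline{\ker q_j}),$$
and letting $i \to +\infty$ gives the required bound $l(U_k) \leq \sum_{i \geq k}\hot(\Sb_i)$.

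Finally, part~4) combines parts~2) and 3): by summability of $\Eb_\bullet$, the tail $\sum_{i\geq k} \hot(\Sb_i)$ tends to $0$ as $k \to +\infty$, so part~3) yields $\lim_k l(U_k) = 0$; since the defining sequence $(U_k)_{k\in\N}$ is cofinal in $\cU(\hE)$ (ordered by $\supseteq$) and $l$ is non-increasing as $U$ shrinks by part~2), this forces $\lim_{U\in\cU(\hE)} l(U) = 0$. No step appears to pose a substantive obstacle — the main point to verify carefully is the compatibility in part~2) of the hermitian structures on $\Kb^{U'}_i$ and $\Kb^U_i$ as submodules of $\Eb_i$, which is immediate from the definitions and ensures that Proposition~\ref{ineqmortheta} applies directly to the inclusion morphism.
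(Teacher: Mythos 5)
Your proof is correct and follows essentially the same route as the paper: subadditivity of $\hot$ applied to~(\ref{KUi}) for part~1), monotonicity of $\hot$ under the admissible inclusion $\Kb^{U'}_i \subset \Kb^{U}_i$ for part~2), and the combination of 2) and 3) for part~4). The only cosmetic difference is in part~3), where you re-derive the telescoped bound by direct induction on the sequences $0 \to \overline{\ker q_i} \to \Kb^{U_k}_{i+1} \to \Kb^{U_k}_i \to 0$ instead of observing that, with $i_0 = k$ and $\Kb^{U_k}_{k}=0$, the claim is exactly the already-established inequality~(\ref{lui0}); the underlying computation is identical.
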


\begin{proof} 1) The subadditivity of $\hot$ applied to the admissible short exact sequence (\ref{KUi}) establishes the inequality
$$\hot(\Eb_i) \leq \hot(\Kb^{U}_{i}) + \hot(\Eb_U).$$
This yields (\ref{KL1}) by letting $i$ go to infinity.

2) For any large enough integer $i$, we have 
$K^{U'}_i \subset K^U_i,$
and consequently
$$\hot(\Kb^{U'}_i)\leq \hot(\Kb^{U}_i).$$
This yields (\ref{KL2}) by letting $i$ go to infinity.

3) In the above construction, when $U=U_k$, we may choose $i_0=k$. Then $\Kb^U_{i_0}=0$, and (\ref{KL3}) is nothing but (\ref{lui0}).

4) follows from 2) and 3).
 \end{proof}
 
The estimate (\ref{geql}) directly follows from assertions 1) and 4) in Lemma \ref{LemmaU}.

\section[Proof of Theorem 7.3.4 -- II. Convergence of measures]{Proof of Theorem \ref{propSum} -- II. Convergence of measures}\label{ProofII}

As in the previous paragraph, we consider a summable projective  system $\Eb_{\bullet}$ of Hermitian vector bundles over $\Spec \Z.$ 

\begin{lemma}\label{submeasure} For any $i \in \N,$ the measures $\gamma_{\Eb_i}$ and $q_{i\ast} \gamma_{\Eb_{i+1}}$ on $E_{i}$ satisfy:
$$q_{i\ast} \gamma_{\Eb_{i+1}} \leq e^{\hot(\overline{\ker q_i})} \gamma_{\Eb_i}.$$
\end{lemma}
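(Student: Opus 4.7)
The plan is to establish the claimed inequality of measures on the countable discrete set $E_i$ pointwise, reducing it directly to the pointwise subadditivity statement of Lemma \ref{gaussiansumpreimage}. Since both $\gamma_{\Eb_i}$ and $q_{i\ast}\gamma_{\Eb_{i+1}}$ are atomic measures supported on $E_i$, it suffices to compare their masses on each singleton $\{v\}$ for $v \in E_i$.

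First, I would unpack the two sides. By definition of push-forward,
\[
(q_{i\ast}\gamma_{\Eb_{i+1}})(\{v\}) \;=\; \gamma_{\Eb_{i+1}}(q_i^{-1}(v)) \;=\; \sum_{w \in q_i^{-1}(v)} e^{-\pi \Vert w\Vert_{\Eb_{i+1}}^2},
\]
while on the right
\[
e^{\hot(\overline{\ker q_i})}\,\gamma_{\Eb_i}(\{v\}) \;=\; e^{-\pi\Vert v\Vert_{\Eb_i}^2}\,\sum_{s \in \ker q_i} e^{-\pi \Vert s\Vert_{\overline{\ker q_i}}^2}.
\]
Thus the lemma reduces to the inequality
\[
\sum_{w \in q_i^{-1}(v)} e^{-\pi \Vert w\Vert_{\Eb_{i+1}}^2} \;\leq\; e^{-\pi\Vert v\Vert_{\Eb_i}^2}\sum_{s \in \ker q_i} e^{-\pi \Vert s\Vert_{\overline{\ker q_i}}^2}
\]
for every $v \in E_i$.

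Next, I would invoke the fact that $\overline{\cS}_i : 0 \to \overline{\ker q_i} \hookrightarrow \Eb_{i+1} \stackrel{q_i}{\to} \Eb_i \to 0$ is an admissible short exact sequence of hermitian vector bundles over $\Spec \Z$, which is the standing hypothesis of the projective system $\Eb_\bullet$. Consequently, Lemma \ref{gaussiansumpreimage} applies verbatim with $\Eb = \overline{\ker q_i}$, $\Fb = \Eb_{i+1}$, $\Gb = \Eb_i$, $p = q_i$, and $g = v$. Its conclusion is precisely the displayed inequality above. Summing over $v \in E_i$ recovers the global subadditivity estimate $\hot(\Eb_{i+1}) \leq \hot(\Eb_i) + \hot(\overline{\ker q_i})$, but the pointwise form is what is needed here for the subsequent measure-theoretic arguments.

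There is no real obstacle: the lemma is a direct packaging of Lemma \ref{gaussiansumpreimage} into measure-theoretic language, once one observes that the inequality of discrete positive measures on $E_i$ means exactly the inequality of masses on singletons. The only small point to double-check is that the hermitian structure on $\ker q_i$ used in Lemma \ref{gaussiansumpreimage} (the restriction of $\Vert.\Vert_{\Eb_{i+1}}$) coincides with the one defining $\overline{\ker q_i}$, which is true by the definition of $\overline{\ker q_i}$ given in Section \ref{sub:def}.
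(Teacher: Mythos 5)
Your proof is correct and follows exactly the paper's route: the paper proves this lemma by citing Lemma \ref{gaussiansumpreimage} applied to the admissible short exact sequence $\overline{\cS}_i$, which is precisely your reduction. You have merely spelled out the (correct) singleton-by-singleton unpacking that the paper leaves implicit.
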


\begin{proof} This is the content of Lemma \ref{gaussiansumpreimage}, applied to the admissible short exact sequence  $\overline{\cS}_i$ (defined in (\ref{adkerq})). \end{proof}

The existence of the limit measures $\mu_i$ on $E_i$ and of the existence and the unicity of the measure $\mu_{\Eb_{\bullet}}$
now follows from Proposition \ref{limitmuDh} of the Appendix, applied to $D_i = E_i$, $\gamma_i = \gamma_{\Eb_i},$ and $\lambda_i =\hot(\Eb_i).$ Indeed, according to the summability assumption on $\Eb_{\bullet},$ the sequence $(\hot(\Eb_i))_{i\in \N}$ belongs to $l^1(\N).$

According to (\ref{hotlim}), the equality (\ref{uhotmutot}) may also be written 
$$\mu_{\Eb_{\bullet}} (\hE) =  \lim_{i \rightarrow + \infty} e^{\hot(\Eb_i)}.$$
As
$e^{\hot(\Eb_i)} = \gamma_{\Eb_i}(E_i),$
it is nothing but a reformulation of (\ref{mulim}).

The equality (\ref{muv}) will follow from the following:

\begin{lemma}\label{lemmuivi} Let $v$ be an element of $\hE \cap E^\hilb_\R,$ and, for any $i \in \N,$ let $v_i := p_i(v)$ be its image in $E_i$.
Then, for any $i \in \N,$ we have
\begin{equation}\label{muivi}
e^{-\pi \Vert v \Vert^2} \leq \mu_i(\{v_i\}) \leq e^{-\pi \Vert v_i \Vert_{\Eb_i}^2 + \sum_{k\geq i} \hot(\overline{\ker q_i})}.
\end{equation}
\end{lemma}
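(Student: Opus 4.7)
\medskip

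\noindent\textbf{Plan of proof.} The starting point is that, by the construction of $\mu_i$ as the limit in $\cM^b_+(E_i)$ of the sequence $(p_{ij\ast}\gamma_{\Eb_j})_{j\geq i}$ (via Proposition \ref{limitmuDh} applied in the Appendix), one has, for any singleton in the countable discrete set $E_i$, the pointwise convergence
$$
\mu_i(\{v_i\}) \;=\; \lim_{j\to+\infty}\bigl(p_{ij\ast}\gamma_{\Eb_j}\bigr)(\{v_i\}).
$$
Both the upper and lower bounds in (\ref{muivi}) will therefore follow by establishing the corresponding estimates for $(p_{ij\ast}\gamma_{\Eb_j})(\{v_i\})$ uniformly in $j\geq i$ and passing to the limit.

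\smallskip

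\noindent\emph{Upper bound.} I would iterate Lemma \ref{submeasure}: since $q_{k\ast}\gamma_{\Eb_{k+1}} \leq e^{\hot(\overline{\ker q_k})}\gamma_{\Eb_k}$ on $E_k$, composing and using $p_{ij}=q_i\circ\cdots\circ q_{j-1}$ gives
$$
p_{ij\ast}\gamma_{\Eb_j} \;\leq\; \exp\Bigl(\sum_{k=i}^{j-1}\hot(\overline{\ker q_k})\Bigr)\,\gamma_{\Eb_i}
$$
as measures on $E_i$. Evaluating at $\{v_i\}$ and using $\gamma_{\Eb_i}(\{v_i\}) = e^{-\pi\Vert v_i\Vert_{\Eb_i}^2}$, then letting $j\to+\infty$, yields the claimed upper bound.

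\smallskip

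\noindent\emph{Lower bound.} Since $p_{ij}(v_j)=v_i$, the fibre $p_{ij}^{-1}(v_i)\subset E_j$ contains $v_j$, hence
$$
\bigl(p_{ij\ast}\gamma_{\Eb_j}\bigr)(\{v_i\}) \;\geq\; \gamma_{\Eb_j}(\{v_j\}) \;=\; e^{-\pi\Vert v_j\Vert_{\Eb_j}^2}.
$$
Because each $q_k$ is a surjective admissible morphism, its norm on $E_k$ is the quotient norm under $q_{k,\R}$, so $\Vert v_k\Vert_{\Eb_k}\leq \Vert v_{k+1}\Vert_{\Eb_{k+1}}$; the sequence $(\Vert v_j\Vert_{\Eb_j})_{j\geq i}$ is non-decreasing and, by the description of $E^\hilb_\R$ in Section~\ref{topologicalcomplexpro}(ii), converges to $\Vert v\Vert$ since $v\in \hE\cap E^\hilb_\R$. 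In particular $\Vert v_j\Vert_{\Eb_j}\leq \Vert v\Vert$ for every $j\geq i$, whence
$$
\bigl(p_{ij\ast}\gamma_{\Eb_j}\bigr)(\{v_i\}) \;\geq\; e^{-\pi\Vert v\Vert^2}.
$$
Passing to the limit $j\to+\infty$ gives $\mu_i(\{v_i\})\geq e^{-\pi\Vert v\Vert^2}$.

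\smallskip

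\noindent\emph{Main obstacle.} The only non-formal point is the passage from the convergence $p_{ij\ast}\gamma_{\Eb_j}\to\mu_i$ in $\cM^b_+(E_i)$ (as provided by the Appendix) to pointwise convergence on singletons; I expect this to be immediate from the construction of $\mu_i$ in Proposition \ref{limitmuDh} --- each $\{v_i\}$ is clopen in the discrete set $E_i$ --- so no genuine difficulty arises, and the lemma reduces to the two straightforward monotonicity arguments above.
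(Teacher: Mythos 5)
Your proof is correct and follows essentially the same route as the paper: the lower bound comes from the fibre $p_{ij}^{-1}(v_i)$ containing $v_j$ together with the monotone convergence $\Vert v_j\Vert_{\Eb_j}\nearrow\Vert v\Vert$, and the upper bound from iterating Lemma \ref{submeasure}, both evaluated on the singleton $\{v_i\}$ and passed to the limit in $j$. The point you flag as the "main obstacle" is indeed immediate, since convergence in $\cM^b_+(E_i)$ implies vague (hence pointwise on singletons) convergence.
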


Indeed, with the notation of  Lemma \ref{lemmuivi}, the set $\{v\}$ may be described as the countable decreasing intersection
$$\{v \} = \bigcap_{i \in \N} p_i^{-1} (v_i),$$
and therefore:
$$\mu_{\Eb_{\bullet}}(\{v\}) = \lim_{i \ra + \infty} \mu_{\Eb_{\bullet}}(p_i^{-1} (v_i)) = \lim_{i \ra + \infty} \mu_i(\{v_i\}).$$

\begin{proof}[Proof of Lemma \ref{lemmuivi}] We have, by the very definition of $\mu_i$:
$$\mu_i(\{v_i\})= \lim_{j \ra +\infty} p_{ij\ast} \gamma_{\Eb_j} (\{v_i\}).$$

For any integer $j \geq i,$ the preimage $p_{ij}^{-1}(v_i)$ contains $v_j$. Therefore
\begin{equation}\label{muivi1}
p_{ij\ast} \gamma_{\Eb_j} (\{v_i\}) = 
 \gamma_{\Eb_j} (p_{ij}^{-1}(v_i)) \geq  \gamma_{\Eb_j}(v_j) = e^{- \pi \Vert v_j \Vert_{\Eb_j}^2}.
 \end{equation}
 
 Besides, the inequality $q_{i\ast} \gamma_{\Eb_{i+1}} \leq e^{\hot(\overline{\ker q_i})} \gamma_{\Eb_i}$ established in Lemma \ref{submeasure} implies that:
 $$p_{ij\ast} \gamma_{\Eb_j} \leq e^{\sum_{i \leq k < j} \hot(\Eb_k)} \gamma_{\Eb_i}.$$
 Therefore,
 \begin{equation}\label{muivi2}
p_{ij\ast} \gamma_{\Eb_j} (\{v_i\}) \leq e^{\sum_{i \leq k < j} \hot(\Eb_k)} \gamma_{\Eb_i} (\{v_i\}) = e^{-\pi \Vert v_i\Vert^2_{\Eb_i} +\sum_{i \leq k < j} \hot(\Eb_k)}.
\end{equation}

The estimates (\ref{muivi}) follow from (\ref{muivi1}) and (\ref{muivi2}) by taking the limit $j\ra +\infty.$
\end{proof}

This completes the proof of part 1) of Theorem \ref{propSum}. The proof of part 2) will be based on the following:

\begin{lemma}
For any $\eta$ in $\R^\ast_+,$
\begin{equation}\label{intfeta}
\int_{\hE} f_\eta(x) d\mu_{\Eb_{\bullet}}(x) \geq \exp({M_{\Eb_{\bullet}}(1+\eta)}).
\end{equation}
\end{lemma}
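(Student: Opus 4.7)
The plan is to establish the inequality by comparing, for fixed $i \in \N$, the integrals $\int_{E_i} f_{i,\eta}\, d(p_{ij\ast}\gamma_{\Eb_j})$ with $\theta_{\Eb_j}(1+\eta)$, then passing to the limit in $j$, and finally to the limit in $i$. First, I would exploit the fact that for every $j \geq i$ and every $v \in E_j$, the map $p_{ij}:\Eb_j \to \Eb_i$ is an admissible surjection, hence $\Vert p_{ij}(v)\Vert_{\Eb_i} \leq \Vert v\Vert_{\Eb_j}$. Since $\eta > 0$, this gives $\exp(-\eta\pi\Vert p_{ij}(v)\Vert_{\Eb_i}^2) \geq \exp(-\eta\pi\Vert v\Vert_{\Eb_j}^2)$, and unfolding the definition of $p_{ij\ast}\gamma_{\Eb_j}$ yields
\begin{equation*}
\int_{E_i} f_{i,\eta}\, d(p_{ij\ast}\gamma_{\Eb_j}) \;=\; \sum_{v \in E_j} e^{-\eta\pi\Vert p_{ij}(v)\Vert^2_{\Eb_i}}\, e^{-\pi\Vert v\Vert^2_{\Eb_j}} \;\geq\; \sum_{v \in E_j} e^{-(1+\eta)\pi\Vert v\Vert^2_{\Eb_j}} \;=\; \theta_{\Eb_j}(1+\eta).
\end{equation*}

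Next, I would fix $i$ and pass to the limit $j \to +\infty$ inside the left-hand side, using that $\mu_i = \lim_j p_{ij\ast}\gamma_{\Eb_j}$ pointwise on singletons of the countable set $E_i$. The main obstacle in the argument is securing dominated convergence here, because $f_{i,\eta}$ is not finitely supported and the total masses $\gamma_{\Eb_j}(E_j) = \exp(\hot(\Eb_j))$ need not be monotone in $j$. Here Lemma \ref{submeasure}, iterated from $i$ to $j$, provides the uniform bound
\begin{equation*}
p_{ij\ast}\gamma_{\Eb_j}(\{x\}) \;\leq\; \exp\!\Big(\sum_{k \geq i} \hot(\overline{\ker q_k})\Big)\, e^{-\pi\Vert x\Vert_{\Eb_i}^2} \qquad (x \in E_i, \; j \geq i),
\end{equation*}
and the summability of $\Eb_\bullet$ makes the constant finite. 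The dominating function $x \mapsto C_i\, e^{-\pi\Vert x\Vert_{\Eb_i}^2}$ multiplied by $f_{i,\eta}(x)$ has finite sum $C_i\,\theta_{\Eb_i}(1+\eta)$, so the dominated convergence theorem on the discrete set $E_i$ yields
\begin{equation*}
\int_{E_i} f_{i,\eta}\, d\mu_i \;=\; \lim_{j \to +\infty}\int_{E_i} f_{i,\eta}\, d(p_{ij\ast}\gamma_{\Eb_j}) \;\geq\; \lim_{j \to +\infty} \theta_{\Eb_j}(1+\eta) \;=\; \exp\bigl(M_{\Eb_\bullet}(1+\eta)\bigr).
\end{equation*}

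Finally, since $f_{i,\eta} = f_{i,\eta} \circ p_i$ and $\mu_i = p_{i\ast}\mu_{\Eb_\bullet}$, the left-hand side equals $\int_{\hE} f_{i,\eta}\, d\mu_{\Eb_\bullet}$. The sequence $(\Vert p_i(x)\Vert_{\Eb_i})_{i \in \N}$ is non-decreasing and converges to $\Vert x\Vert$ in $[0,+\infty]$ for every $x \in \hE_\R$, so $f_{i,\eta}$ is a non-increasing sequence dominated by $1$ that converges pointwise to $f_\eta$. As $\mu_{\Eb_\bullet}$ is a bounded measure, dominated (or monotone) convergence gives
\begin{equation*}
\int_{\hE} f_\eta\, d\mu_{\Eb_\bullet} \;=\; \lim_{i \to +\infty} \int_{\hE} f_{i,\eta}\, d\mu_{\Eb_\bullet} \;\geq\; \exp\bigl(M_{\Eb_\bullet}(1+\eta)\bigr),
\end{equation*}
which is the desired inequality (\ref{intfeta}).
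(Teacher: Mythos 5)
Your proof is correct and follows essentially the same route as the paper: the key inequality $e^{-\eta\pi\Vert p_{ij}(v)\Vert^2_{\Eb_i}}\geq e^{-\eta\pi\Vert v\Vert^2_{\Eb_j}}$ coming from the operator norm bound on $p_{ij}$, then the limit in $j$, then the limit in $i$ by dominated convergence. The only difference is cosmetic: where you invoke Lemma \ref{submeasure} to build a dominating function for the $j$-limit, the paper simply uses that $f_{i,\eta}$ is bounded on $E_i$ and that $p_{ij\ast}\gamma_{\Eb_j}\to\mu_i$ in the narrow (equivalently, total-variation) sense, so your extra domination step is sound but not needed.
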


\begin{proof} For any $(i,\eta) \in \N \times \R_+^\ast,$ we have:
\begin{equation*}
\begin{split}
 \int_{\hE} f_{i,\eta} (x) d\mu_{\Eb_{\bullet}}(x) & = \int_{\hE} e^{-\eta \pi \Vert p_i(x) \Vert^2_{\Eb_i}} d\mu_{\Eb_{\bullet}}(x) \\
 & = \int_{E_i} e^{-\eta \pi \Vert v \Vert^2_{\Eb_i}} d\mu_i(v) \\
 & = \lim_{j \ra + \infty} \int_{E_i} e^{-\eta\pi \Vert v \Vert^2_{\Eb_i}} dp_{ij\ast} \gamma_{\Eb_j}(v) \\
 & = \lim_{j \ra + \infty} \int_{E_j} e^{-\eta\pi \Vert p_{ij}(w) \Vert^2_{\Eb_i}} d\gamma_{\Eb_j}(w).
\end{split}
\end{equation*}
As the linear map $p_{ij,\R}: E_{j,\R} \ra E_{i,\R}$ has an operator norm $\leq 1$ with respect to the Euclidean norms $\Vert.\Vert_{\Eb_j}$ and  $\Vert.\Vert_{\Eb_i}$, we have:
$$\int_{E_j} e^{-\eta\pi \Vert p_{ij}(w) \Vert^2_{\Eb_i}} d\gamma_{\Eb_j}(w) \geq 
\int_{E_j} e^{-\eta\pi \Vert w \Vert^2_{\Eb_j} }d\gamma_{\Eb_j}(w) = \sum_{w \in E_j} e^{-\pi(1+\eta) \Vert w \Vert^2_{\Eb_j}}.$$
 
 By taking the limit when $j \in \N_{\geq i}$ goes to $+\infty,$ we obtain:
 $$\int_{\hE} f_{i,\eta}(x) d\mu_{\Eb_{\bullet}}(x) \geq \exp({M_{\Eb_{\bullet}}(1+\eta)}).$$
 
 Finally, we obtain (\ref{intfeta}) by taking the limit $i\ra + \infty$, since by dominated convergence:
 $$\lim_{i \ra + \infty} \int_{\hE} f_{i,\eta}(x) d\mu_{\Eb_{\bullet}}(x) =  \int_{\hE} f_\eta(x) d\mu_{\Eb_{\bullet}}(x).$$
 \end{proof}

To complete the proof of part 2) of Theorem \ref{propSum}, observe that, when $\eta$ goes to $0$, $f_\eta$ converges pointwise towards $ 1_{E^\hilb_\R}$ (see (\ref{limfeta})), and therefore, by dominated convergence:
$$\lim_{\eta \ra 0+} \int_{\hE} f_\eta(x) d\mu_{\Eb_{\bullet}}(x) = \mu_{\Eb_{\bullet}}(\hE \cap E^\hilb_\R).$$
If we now assume that  $\lim_{t \ra 1+} M_{\Eb_{\bullet}} (t) = M_{\Eb_{\bullet}}(1),$  we therefore deduce from (\ref{intfeta}):
$$\mu_{\Eb_{\bullet}}(\hE \cap E^\hilb_\R) \geq \exp(M_{\Eb_{\bullet}}(1)).$$
The definition of $M_{\Eb_{\bullet}}(1)$, together with (\ref{hotlim}) and (\ref{uhotmutot}), show that
$$\exp(M_{\Eb_{\bullet}}(1))=\mu_{\Eb_{\bullet}}(\hE),$$
and the previous inequality may also be written:
$$\mu_{\Eb_{\bullet}}(\hE \cap E^\hilb_\R) \geq \mu_{\Eb_{\bullet}}(\hE).$$

This shows that 
$$\mu_{\Eb_{\bullet}}(\hE \cap E^\hilb_\R) = \mu_{\Eb_{\bullet}}(\hE),$$
or equivalently:
\begin{equation}\label{munull}
\mu_{\Eb_{\bullet}}(\hE \setminus \hE \cap E^\hilb_\R) = 0.
\end{equation}

Besides, as observed just before Proposition \ref{boundhot}, the finiteness of $\uhot(\Ebh)$, hence of $\lhot(\Ebh)$ implies that the set $\hE \cap E^\hilb_\R$ is countable. Together with (\ref{munull}), this shows that
$$
\mu_{\Eb_{\bullet}} =  \sum_{v \in \hE \cap E^\hilb_\R} \mu_{\Eb_{\bullet}}(\{v\}) \delta_v.$$

Combined with our previous computation (\ref{uhotmutot}) of $\mu_{\Eb_{\bullet}}(\{v\})$, this proves the equality (\ref{muE}):
$$
\mu_{\Eb_{\bullet}} =  \sum_{v \in \hE \cap E^\hilb_\R} e^{-\pi \Vert v \Vert^2}\, \delta_v.$$
In particular
$$\mu_{\Eb_{\bullet}}(\hE) =  \sum_{v \in \hE \cap E^\hilb_\R} e^{-\pi \Vert v \Vert^2}.$$
By taking its logarithm, this relation becomes the equality (\ref{hthetamuE}): $\uhot(\Ebh) = \lhot(\Ebh).$

\section{A converse theorem}\label{sub:conv}

In this section, we establish a %(partial) 
converse to the results in the preceding paragraphs. Before formulating it, let us formulate some simple observations concerning the invariant $\uhot(\Ebh)$ associated to some pro-Hermitian vector bundle $\Ebh$.

Observe that, if $\Ebh$ is the projective limit of some projective system $\Eb_{\bullet}$ of surjective admissible morphism of Hermitian vector bundles over $\Spec \Z,$ then, according to the very definition of $\uhot(\Ebh),$ we have:
$$\uhot(\Ebh) \leq \liminf_{i \ra + \infty} \hot(\Eb_{i}).$$ 

Moreover, this inequality is a sense optimal. Indeed there exists a decreasing sequence $(U_{i})_{i \in \N}$ in $\cU(\hE)$, which constitutes a  neighborhood  basis of $0$ in $\hE,$ such that
$$\lim_{i \ra +\infty} \hot(\Eb_{U_{i}}) = \uhot(\Ebh).$$
The pro-Hermitian vector bundle $\Ebh \simeq \varprojlim_{i} \Eb_{U_{i}}$ may therefore be realised as the projective limit of some admissible projective system $\Eb_{\bullet}$ such that
$$\lim_{i \ra +\infty} \hot(\Eb_{i}) = \uhot(\Ebh).$$

\begin{theorem}\label{converse} Let $\Ebh:=(\hE, E^\hilb_\R, \Vert.\Vert)$ be a pro-Hermitian vector bundle over $\Spec \Z$ such that
$$\lhot(\Ebh) = \uhot(\Ebh) < + \infty.$$

If $\Eb$ is  the projective limit of some admissible projective system $\Eb_{\bullet}$ such that
$$\lim_{i \ra +\infty} \hot(\Eb_{i}) = \uhot(\Ebh),$$
then there exist an increasing sequence of positive integers $i_{\bullet}:=(i_{k})_{k \in \N}$ such that the admissible projective system
\begin{equation*}%\label{Dproj}
\Eb_{i_\bullet} : \Eb_{i_0} \stackrel{p_{i_0 i_{1}}}{\longleftarrow}\Eb_{i_1} \stackrel{p_{i_1 i_{2}}}{\longleftarrow}\ldots \stackrel{p_{i_{k-1} i_{k}}}{\longleftarrow}\Eb_{i_k} \stackrel{p_{i_k i_{k+1}}}{\longleftarrow} \Eb_{i_{k+1}} \stackrel{p_{i_{k+1} i_{k+2}}}{\longleftarrow} \ldots,
\end{equation*}
is summable.
\end{theorem}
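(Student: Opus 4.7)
The plan is as follows. For every $i \in \N$, let $\beta_i(j) := \hot(\overline{\ker p_{ij}})$ for $j \geq i$. Restricting $q_j$ to $\ker p_{i,j+1}$ yields admissible short exact sequences of hermitian vector bundles
\begin{equation*}
0 \lra \overline{\ker q_j} \lra \overline{\ker p_{i,j+1}} \lra \overline{\ker p_{i,j}} \lra 0,
\end{equation*}
and Proposition \ref{ineqmortheta} (monotonicity of $\hot$ under admissible injections) makes $j \mapsto \beta_i(j)$ non-decreasing; denote its limit by $\alpha_i \in [0,+\infty]$. Let $\tilde U_i$ be the pro-hermitian vector bundle carried by the closed submodule $U_i \subset \hE$, equipped with the pro-hermitian structure induced from $\Ebh$ (so that its associated Hilbert space is $U_i \cap E^\hilb_\R$ with the restricted norm). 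The three steps of the plan are: (a) prove $\alpha_i = \lhot(\tilde U_i)$; (b) prove $\lhot(\tilde U_i) \to 0$ as $i \to \infty$; (c) extract $(i_k)_{k\in\N}$ diagonally so that $\sum_k \alpha_{i_k} < \infty$.

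For (a), the inequality $\alpha_i \geq \lhot(\tilde U_i)$ follows from Proposition \ref{vardef}: any finitely generated $\Z$-submodule $P \subset U_i \cap E^\hilb_\R$ injects into $\ker p_{ij}$ for $j$ large (Proposition \ref{injPN}), and since $\|p_j(v)\|_{\Eb_j} \nearrow \|v\|_{E^\hilb_\R}$ for each $v \in P$, one has $\lim_j \beta_i(j) \geq \hot(\overline P)$; taking the supremum over $P$ yields the bound. The reverse inequality is where the hypothesis $L := \lhot(\Ebh) = \uhot(\Ebh)$ is essential. Introduce the bounded positive measure $\mu_{\Ebh} := \sum_{v \in \hE \cap E^\hilb_\R} e^{-\pi\|v\|^2}\,\delta_v$ on $\hE$ (of total mass $e^L$) and its push-forward $\tilde\mu_i := p_{i\ast}\mu_{\Ebh}$ on $E_i$, which satisfies $\tilde\mu_i(\{0\}) = e^{\lhot(\tilde U_i)}$ and $\tilde\mu_i(E_i) = e^L$. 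The same injectivity/norm-convergence argument as above shows that $\liminf_j p_{ij\ast}\gamma_{\Eb_j}(\{x\}) \geq \tilde\mu_i(\{x\})$ for every $x \in E_i$; applying Fatou's lemma to the counting measure on $E_i \setminus \{0\}$, together with $\gamma_{\Eb_j}(E_j) = e^{\hot(\Eb_j)} \to e^L$, gives
\begin{equation*}
\limsup_{j \to \infty} p_{ij\ast}\gamma_{\Eb_j}(\{0\}) \;\leq\; e^L - \bigl(e^L - e^{\lhot(\tilde U_i)}\bigr) \;=\; e^{\lhot(\tilde U_i)}.
\end{equation*}
Since $p_{ij\ast}\gamma_{\Eb_j}(\{0\}) = e^{\beta_i(j)}$, this yields $\alpha_i \leq \lhot(\tilde U_i)$.

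For (b), $\hE$ being Hausdorff with $(U_i)$ a neighborhood basis of $0$ gives $\bigcap_i (U_i \cap E^\hilb_\R) = \{0\}$, and dominated convergence applied to the absolutely convergent series $\sum_{v \in \hE \cap E^\hilb_\R} e^{-\pi\|v\|^2} = e^L$ produces $\sum_{v \in U_i \cap E^\hilb_\R} e^{-\pi\|v\|^2} \to 1$, hence $\lhot(\tilde U_i) \to 0$ and, by (a), $\alpha_i \to 0$. For (c), choose recursively $i_0 \in \N$ with $\alpha_{i_0} \leq 1$ and, given $i_k$, an integer $i_{k+1} > i_k$ with $\alpha_{i_{k+1}} \leq 2^{-(k+1)}$. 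Since $\hot(\overline{\ker p_{i_k i_{k+1}}}) = \beta_{i_k}(i_{k+1}) \leq \alpha_{i_k}$, the subsystem $\Eb_{i_\bullet}$ satisfies $\sum_k \hot(\overline{\ker p_{i_k i_{k+1}}}) \leq \sum_k \alpha_{i_k} < \infty$, as required.

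The main obstacle is the upper bound $\alpha_i \leq \lhot(\tilde U_i)$ in step (a): Fatou's lemma on non-zero coordinates alone only yields a lower bound on the mass of $p_{ij\ast}\gamma_{\Eb_j}$ away from $0$, and promoting it to an upper bound on $\limsup_j p_{ij\ast}\gamma_{\Eb_j}(\{0\})$ requires that no mass of $\gamma_{\Eb_j}$ escapes to infinity in the $E_i$ coordinate. This ``no escape'' is precisely the total-mass equality $\lim_j \gamma_{\Eb_j}(E_j) = \mu_{\Ebh}(\hE)$, which encodes the hypothesis $\lhot(\Ebh) = \uhot(\Ebh)$; its failure would be exactly what could prevent a summable subsystem from existing.
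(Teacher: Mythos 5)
Your overall strategy is the same as the paper's: the hypothesis $\lhot(\Ebh)=\uhot(\Ebh)$ is converted into a ``no escape of mass'' statement for the measures $\gamma_{\Eb_j}$, one deduces that $\exp\hot(\overline{\ker p_{ij}})$ converges as $j\to\infty$ to $\mu_{\Ebh}(p_i^{-1}(0))$, one lets $i\to\infty$ to make this limit tend to $\mu_{\Ebh}(\{0\})=1$, and one extracts a summable subsystem diagonally. The paper packages the first two points into Proposition \ref{limitgammaC} of Appendix \ref{measurespro} (whose proof, via Lemma \ref{FC}, is a tightness argument equivalent to your Fatou computation), so your step (a) is essentially a self-contained proof of the special case of that proposition that is needed here; steps (b) and (c) coincide with the end of the paper's proof of Lemma \ref{lemconverse}.

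There is, however, one false intermediate claim. The admissible short exact sequence $0 \lra \overline{\ker q_j} \lra \overline{\ker p_{i,j+1}} \lra \overline{\ker p_{ij}} \lra 0$ exhibits $\overline{\ker p_{ij}}$ as an admissible \emph{quotient} of $\overline{\ker p_{i,j+1}}$, not as an admissible sub-bundle, and $\hot$ is not monotone under admissible surjections: the only general inequality in that direction is (\ref{morse4'}), namely $\hot(\Gb)\leq\hot(\Fb)-\dega\Eb$, whose correction term $-\dega\overline{\ker q_j}$ can be positive (one can make $\hot(\Fb)<\hot(\Gb)$ by twisting an extension of $\cOb$ by a line of very negative degree). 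So $j\mapsto\beta_i(j)$ need not be non-decreasing, $\alpha_i$ need not equal $\sup_j\beta_i(j)$, and the inequality $\beta_{i_k}(i_{k+1})\leq\alpha_{i_k}$ invoked in step (c) is not guaranteed. The gap is harmless because your sandwich in step (a) establishes the existence of the limit $\alpha_i=\lim_j\beta_i(j)$ without any monotonicity, and step (c) is repaired by choosing $i_{k+1}>i_k$ large enough that both $\beta_{i_k}(i_{k+1})\leq\alpha_{i_k}+2^{-k}$ and $\alpha_{i_{k+1}}\leq 2^{-(k+1)}$; but as written the monotonicity assertion should be removed.
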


As before, we have denoted by $p_{ij}:\Eb_{i}\ra \Eb_{j}$ the admissible morphisms defining the projective system $\Eb_{\bullet}.$

Theorem \ref{converse} is actually a consequence of the following more precise result:

\begin{lemma}\label{lemconverse} Under the assumptions of Theorem \ref{converse}, for any $\epsilon \in \R_{+}^\ast,$ there exists  $(i(\epsilon),j(\epsilon))$ in $\N^2$ for which  the following condition is satisfied, for any $(i,j) \in \N^2$ such that $i \leq j$:
\begin{equation*}
 i \geq i(\epsilon) \mbox{ and } j \geq j(\epsilon) \Longrightarrow \hot(\overline{\ker p_{ij}}) < \epsilon.
\end{equation*}
 \end{lemma}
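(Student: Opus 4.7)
The strategy is to compare the measures $\gamma_{\Eb_j}$ on $E_j$ with the canonical measure $\mu_{\Ebh}$ on $\hE$ supplied by the hypothesis $\lhot(\Ebh) = \uhot(\Ebh) < +\infty$, reading off $\hot(\overline{\ker p_{ij}})$ as the mass at $\{0\} \in E_i$ of the push-forward $p_{ij\ast}\gamma_{\Eb_j}$. Since $\lhot(\Ebh)$ is finite, the countable set $\hE \cap E^\hilb_\R$ supports the finite measure
\[
\mu_{\Ebh} := \sum_{v \in \hE \cap E^\hilb_\R} e^{-\pi\Vert v \Vert^2} \delta_v
\]
of total mass $e^{\lhot(\Ebh)}$. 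For each $i \in \N$, set $\mu_i^{\Ebh} := p_{i\ast} \mu_{\Ebh}$, and for $j \geq i$ let $f_j := p_{ij\ast}\gamma_{\Eb_j}$ on $E_i$; this is a measure of total mass $e^{\hot(\Eb_j)}$ whose value at $\{0\}$ equals $\sum_{w \in \ker p_{ij}} e^{-\pi\Vert w\Vert^2_{\Eb_j}} = e^{\hot(\overline{\ker p_{ij}})}$ by admissibility of the kernel of $p_{ij}$.

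The core step is to prove $\lim_{j \to \infty} f_j(\{v\}) = \mu_i^{\Ebh}(\{v\})$ pointwise on $E_i$. For any finite subset $F \subset p_i^{-1}(v) \cap E^\hilb_\R$, Proposition \ref{injPN} applied to the $\OK$-submodule generated by $F$ guarantees that for $j$ large enough $p_j$ is injective on $F$, sending $w \in F$ to a preimage $p_j(w) \in p_{ij}^{-1}(v)$ of norm $\Vert p_j(w) \Vert_{\Eb_j} \leq \Vert w \Vert$ (since $p_{j,\R}$ restricts to a co-isometry on $E^\hilb_\R$). Hence $f_j(\{v\}) \geq \sum_{w \in F} e^{-\pi\Vert w\Vert^2}$, and taking the supremum over $F$ yields $\liminf_j f_j(\{v\}) \geq \mu_i^{\Ebh}(\{v\})$. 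Summing over $v$, Fatou's lemma together with the hypothesis $\lhot(\Ebh) = \uhot(\Ebh)$ gives
\[
\sum_v \mu_i^{\Ebh}(\{v\}) \leq \sum_v \liminf_j f_j(\{v\}) \leq \liminf_j \sum_v f_j(\{v\}) = e^{\uhot(\Ebh)} = \sum_v \mu_i^{\Ebh}(\{v\}),
\]
forcing pointwise equality $\liminf_j f_j(\{v\}) = \mu_i^{\Ebh}(\{v\})$. An analogous argument at any fixed $v_0$, applied to the complementary sum $\sum_{v \neq v_0}$ via Fatou, rules out $\limsup_j f_j(\{v_0\}) > \mu_i^{\Ebh}(\{v_0\})$ and so upgrades the liminf to a genuine limit. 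Specializing at $v=0$ yields
\[
\lim_{j \to \infty} e^{\hot(\overline{\ker p_{ij}})} = \mu_{\Ebh}(U_i), \qquad U_i := \ker p_i.
\]

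To conclude, I would invoke the monotonicity of $\hot(\overline{\ker p_{ij}})$ in the first index at fixed $j$: for $i \leq i' \leq j$, the inclusion $\ker p_{i'j} \subset \ker p_{ij}$ of saturated sub-$\OK$-modules of $E_j$ carrying the same induced hermitian norms yields $e^{\hot(\overline{\ker p_{i'j}})} \leq e^{\hot(\overline{\ker p_{ij}})}$ by term-by-term comparison. Since $\bigcap_i U_i = \{0\}$ and $\mu_{\Ebh}$ is finite, $\mu_{\Ebh}(U_i) \to \mu_{\Ebh}(\{0\}) = 1$. Given $\epsilon > 0$, I would choose $i(\epsilon)$ with $\mu_{\Ebh}(U_{i(\epsilon)}) < e^{\epsilon/2}$, then $j(\epsilon) \geq i(\epsilon)$ with $e^{\hot(\overline{\ker p_{i(\epsilon)\, j}})} < e^\epsilon$ for all $j \geq j(\epsilon)$, which is possible by the limit just established at $i = i(\epsilon)$. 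For $(i,j)$ with $i \leq j$, $i \geq i(\epsilon)$ and $j \geq j(\epsilon)$, monotonicity in $i$ gives $\hot(\overline{\ker p_{ij}}) \leq \hot(\overline{\ker p_{i(\epsilon)\, j}}) < \epsilon$.

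The main obstacle is the pointwise convergence $f_j \to \mu_i^{\Ebh}$: the liminf estimate is constructive, but upgrading it to a limit crucially uses the hypothesis $\lhot(\Ebh) = \uhot(\Ebh)$ to rule out mass escape in the finite push-forward measures $f_j$. With this measure-theoretic identification in hand, the elementary monotonicity of $\hot(\overline{\ker p_{ij}})$ in $i$ cleanly supplies the requested uniform estimate.
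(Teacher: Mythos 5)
Your proof is correct and follows essentially the same route as the paper: identify $e^{\hot(\overline{\ker p_{ij}})}$ with $p_{ij\ast}\gamma_{\Eb_j}(\{0\})$, use the hypothesis $\lhot(\Ebh)=\uhot(\Ebh)<+\infty$ to show these push-forwards converge to $p_{i\ast}\mu_{\Ebh}$, deduce $\lim_j \hot(\overline{\ker p_{ij}})=\log\mu_{\Ebh}(U_i)$ with $\mu_{\Ebh}(U_i)\to\mu_{\Ebh}(\{0\})=1$, and finish with the monotonicity $\ker p_{ij}\subset\ker p_{i(\epsilon)j}$. The only difference is that you prove the needed pointwise convergence in-line via the liminf estimate plus Fatou and conservation of total mass, where the paper invokes Proposition~\ref{limitgammaC} of Appendix~\ref{measurespro}, whose proof rests on the same no-mass-escape mechanism.
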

 
 Indeed, taking Lemma \ref{lemconverse} for granted, for any sequence $(\eta_{k})_{k \in \N}$ in $\R_{+}^{\ast\N}$ such that 
 $\sum_{k \in \N} \eta_{k} < +\infty,$ we may find an increasing sequence $(i_{k})_{k \in \N}$ of positive integers such that, for any $k\in \N,$ $i_{k} \geq i(\eta_{k})$ and $i_{k+1} \geq j(\eta_{k})$. Then, for any $k \in \N$, 
 $\hot(\overline{\ker p_{i_{k}i_{k+1}}}) < \eta_{k},$
 and the admissible projective system $\Eb_{i_\bullet}$ is summable.

\begin{proof}[Proof of Lemma \ref{lemconverse}] We shall rely on Proposition \ref{limitgammaC} of Appendix \ref{measurespro}, which we shall apply to the projective system of countable sets equipped with a finite measure:
$$(D_{i}, \gamma_{i}) := (E_{i}, \gamma_{\Eb_{i}}).$$

The sequence of total masses 
$$\gamma_{i}(D_{i})= \gamma_{\Eb_{i}}(E_{i}) = \exp \hot(\Eb_{i})$$
is indeed convergent (of limit $\exp \uhot(\Ebh)$) in $\R_{+}$ and, inside $\hD = \hE,$ we may consider the subset
$$\cC := \hE \cap E_{\R}^\hilb.$$
Since $\lhot(\Ebh) < + \infty,$ it is countable. For any $x\in \cC,$ we have 
$$\gamma_{j}(p_{j}(x)) = e^{- \pi \Vert p_{j}(x)\Vert^2_{\Eb_{j}}},$$
which converges to 
$$\gamma(x) := e^{- \pi \Vert x \Vert^2}$$
when $j$ goes to $+\infty$.
Moreover
$$\sum_{x\in \cC}\gamma(x) = \sum_{x \in \hE \cap E_{\R}^\hilb} e^{- \pi \Vert x \Vert^2} = \exp \lhot(\Ebh)$$
is equal, by assumption, to 
$$\exp \uhot(\Ebh) = \lim_{i \ra +\infty} \gamma_{i}(D_{i}).$$

This shows that the hypotheses of Proposition \ref{limitgammaC} are fulfilled. Consequently, the sequence of measures $(\gamma_{\Eb_{i}})_{i \in \N}$ satisfies the condition $\mathbf{Conv}$, and the associated limit measure on $\hE$ is 
$$\sum_{x \in \hE \cap E_{\R}^\hilb} e^{- \pi \Vert x \Vert^2} \delta_{x}=: \mu_{\Ebh}.$$

As before, let us denote by $p_{i}: \hE \ra E_{i}$ the canonical quotient map. The subset $\{0\}$ of $\hE$ may be written as a countable decreasing intersection:
$$\{0\} = \bigcap_{i \in \N} p_{i}^{-1}(\{0\}),$$
and therefore
\begin{equation}\label{muzero}
\mu_{\Ebh}(\{0\})= \lim_{i \ra +\infty}\mu_{\Ebh}(p_{i}^{-1}(0)).
\end{equation}

Besides, since $\mu_{\Ebh}$ is the limit measure associated to $(\gamma_{\Eb_{i}})_{i\in \N},$ we have, for every $i\in \N$:
\begin{equation}\label{limtantetplus}
\begin{split}
\mu_{\Ebh}(p_{i}^{-1}(0))  & = p_{i\ast}\mu_{\Ebh}(\{0\}) = \lim_{j\in \N_{i}, j\ra +\infty} p_{ij\ast}\gamma_{\Eb_{j}}(\{0\}) \\
& =  \lim_{j\in \N_{i}, j\ra +\infty} \gamma_{\Eb_{j}}(p_{ij}^{-1}(0)) =  \lim_{j\in \N_{i}, j\ra +\infty}  \exp \hot(\overline{\ker p_{ij}}).
\end{split}
\end{equation}

Since $\mu_{\Ebh}(\{0\}) =1,$ from (\ref{muzero}) we derive the existence, for every $\epsilon \in \R_{+}^\ast$, of some $i(\epsilon) \in \N$ such that
$$\mu_{\Ebh}(p_{i(\epsilon)}^{-1}(0)) < e^\epsilon.$$
Then (\ref{limtantetplus}) shows the existence of an integer $j(\epsilon) \geq i(\epsilon)$ such that, for any $j\geq j(\epsilon),$
$$\hot(\overline{\ker p_{i(\epsilon)j}}) < \epsilon.$$
The pair $(i(\epsilon), j(\epsilon))$ satisfies the conclusion of Lemma \ref{lemconverse}. Indeed, for any $(i,j) \in \N^2$ such that
$i(\epsilon) \leq i\leq j$ and $j\geq j(\epsilon),$ we have $\ker p_{ij} \subset \ker p_{i(\epsilon)j}$
and therefore  
\begin{equation*}\hot(\overline{\ker p_{ij}}) \leq \hot(\overline{\ker p_{i(\epsilon)j}}). \qedhere
\end{equation*}
\end{proof}

\section{Strongly summable and $\theta$-finite pro-Hermitian vector bundles}\label{sub:cons} In this section, we formulate a few consequences of the main results in this chapter, Theorems  \ref{maintheorem} and \ref{converse}. 

These consequences, and the related definitions that follow them, are presented with a view toward Diophantine applications to be developed in the sequel of this monograph.

\subsection{Strongly summable pro-Hermitian vector bundles} 

\begin{corollary}\label{strongsummableeq} For any pro-Hermitian vector bundle $\Ebh$ over $\Spec \OK,$ the following conditions $\mathbf {StS_1}$ and $\mathbf {StS_2}$ are equivalent:
 
$\mathbf {StS_1 :}$ There exists $\epsilon \in \R^\ast_+$ such that
$$\lhot(\Ebh \otimes \cOb(\epsilon)) = \uhot(\Ebh \otimes \cOb(\epsilon)) < + \infty.$$
 
$\mathbf {StS_2 :}$ There exists $\eta \in \R^\ast_+$ and a projective system of surjective admissible morphisms of Hermitian vector bundles over $\Spec \OK$
$$\Eb_{\bullet} : 
\Eb_0 \stackrel{q_0}{\longleftarrow}\Eb_1 \stackrel{q_1}{\longleftarrow}\dots \stackrel{q_{i-1}}{\longleftarrow}\Eb_i \stackrel{q_i}{\longleftarrow} \Eb_{i+1} \stackrel{q_{i+1}}{\longleftarrow} \dots$$
 such that $\Ebh \simeq  \varprojlim_i \Eb_i$ and such that the projective system $\Eb_{\bullet}\otimes \cOb(\eta)$ is summable.
\end{corollary}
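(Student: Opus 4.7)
The plan is to derive both implications from Theorem \ref{maintheorem} and Theorem \ref{converse}, together with the observation preceding Theorem \ref{converse} that allows one to realize any pro-hermitian vector bundle with finite $\uhot$ as a projective limit whose $\theta$-invariants of the terms tend to $\uhot$. The implication $\mathbf{StS_2} \Rightarrow \mathbf{StS_1}$ is essentially immediate: given $\Ebh \simeq \varprojlim_i \Eb_i$ with $\Eb_\bullet \otimes \cOb(\eta)$ summable, I pick any $\epsilon \in (0,\eta)$ and apply Theorem \ref{maintheorem} to the projective system $\Eb_\bullet \otimes \cOb(\epsilon)$. Its kernels are $\overline{\ker q_i}\otimes \cOb(\epsilon)$, and the auxiliary parameter $\epsilon':=\eta-\epsilon>0$ in the summability hypothesis of Theorem \ref{maintheorem} reduces to the summability of $\Eb_\bullet\otimes \cOb(\eta)$, which is granted. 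The conclusion yields $\lhot(\Ebh\otimes\cOb(\epsilon)) = \uhot(\Ebh\otimes\cOb(\epsilon)) < +\infty$.

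For the converse $\mathbf{StS_1}\Rightarrow\mathbf{StS_2}$, fix $\epsilon>0$ satisfying the hypothesis and consider $\Ebh\otimes\cOb(\epsilon)$. By the observation preceding Theorem \ref{converse} (which is formulated over $\Spec\Z$ but the argument applies over $\Spec\OK$ verbatim, since $\uhot$ is defined as a $\liminf$ over $\cU(\hE)$), there exists a decreasing sequence $(U_i)_{i\in\N}$ in $\cU(\hE)$ that forms a neighborhood basis of $0$ and satisfies
\begin{equation*}
\lim_{i\to+\infty}\hot(\Eb_{U_i}\otimes\cOb(\epsilon)) = \uhot(\Ebh\otimes\cOb(\epsilon)).
\end{equation*}
Setting $\Eb'_i := \Eb_{U_i}\otimes\cOb(\epsilon)$ gives a projective system over $\Spec\OK$ of surjective admissible morphisms with $\varprojlim_i \Eb'_i \simeq \Ebh\otimes\cOb(\epsilon)$.

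To invoke Theorem \ref{converse} (which is stated only over $\Spec\Z$), I push forward by $\pi_\ast$. The functor $\pi_\ast$ preserves surjective admissible morphisms and kernels (\cf Proposition \ref{piextun}), and $\hot$ is by definition invariant under $\pi_\ast$ (see (\ref{hithetagen})); hence $\pi_\ast\Eb'_\bullet$ is a projective system of surjective admissible morphisms over $\Spec\Z$ whose terms have the same $\theta$-invariants as those of $\Eb'_\bullet$. Combining (\ref{pistarlhot}), the inequality (\ref{pistaruhot}), the hypothesis $\lhot=\uhot$ for $\Ebh\otimes\cOb(\epsilon)$, and the universal inequality $\lhot\leq\uhot$, one gets $\lhot(\pi_\ast\Ebh\otimes\cOb(\epsilon)) = \uhot(\pi_\ast\Ebh\otimes\cOb(\epsilon)) < +\infty$ and $\lim_i\hot(\pi_\ast\Eb'_i) = \uhot(\pi_\ast\Ebh\otimes\cOb(\epsilon))$. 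Theorem \ref{converse} then produces an increasing sequence $(i_k)_{k\in\N}$ such that $\pi_\ast\Eb'_{i_\bullet}$ is summable over $\Spec\Z$. By the same invariance of $\hot$ under $\pi_\ast$, the subsystem $\Eb'_{i_\bullet}$ is summable over $\Spec\OK$. Finally, defining $\Eb_k := \Eb'_{i_k}\otimes\cOb(-\epsilon) = \Eb_{U_{i_k}}$ yields a projective system of surjective admissible morphisms of hermitian vector bundles over $\Spec\OK$ with $\Ebh \simeq \varprojlim_k\Eb_k$ and $\Eb_\bullet\otimes\cOb(\epsilon)=\Eb'_{i_\bullet}$ summable; taking $\eta=\epsilon$ proves $\mathbf{StS_2}$.

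The main obstacle is the reduction to $\Spec\Z$ needed to invoke Theorem \ref{converse}; once the compatibilities of $\pi_\ast$ with kernels of surjective admissible morphisms, with the $\theta$-invariants, and with the twists $\cOb(\delta)$ are spelled out (all of which have been established earlier in the paper), the translation between the $\Spec\OK$ and $\Spec\Z$ settings is transparent and the argument reduces to stitching the two main theorems together.
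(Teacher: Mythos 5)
Your proposal is correct and follows essentially the same route as the paper: Theorem \ref{maintheorem} applied to $\Eb_\bullet\otimes\cOb(\epsilon)$ for $\epsilon\in\,]0,\eta[$ gives $\mathbf{StS_2}\Rightarrow\mathbf{StS_1}$, and the converse is obtained by realizing $\Ebh\otimes\cOb(\epsilon)$ as a projective limit whose terms have $\theta$-invariants converging to $\uhot(\Ebh\otimes\cOb(\epsilon))$, pushing forward by $\pi_\ast$, and extracting a summable subsystem via Theorem \ref{converse}. Your explicit verification of the compatibilities of $\pi_\ast$ with the invariants (via (\ref{pistarlhot}) and (\ref{pistaruhot})) is exactly the reduction the paper performs, only spelled out in more detail.
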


\begin{proof} Theorem \ref{maintheorem} shows that, when Condition (ii) holds for some $\eta \in \R^\ast_+,$ then Condition (i) holds for any $\epsilon \in ]0, \eta[.$

Conversely, assume that Condition (i) is satisfied for some $\epsilon \in \R^\ast_+.$ As already observed, by the very definition of $\lhot(\Ebh \otimes \cOb(\epsilon))$, there exists a projective system  $\Fb_{\bullet}$ of surjective admissible morphisms of Hermitian vector bundles over $\Spec \OK$ such that $\Ebh \otimes \cOb(\epsilon) \simeq  \varprojlim_i \Fb_i$ and 
$$\lim_{i \ra + \infty} \lhot(\Fb_i ) = \lhot(\Ebh \otimes \cOb(\epsilon)).$$
Theorem \ref{converse}, applied to the pro-Hermitian vector bundle $\pi_\ast \Ebh \otimes \cOb(\epsilon)$ and to the  projective system $\pi_\ast \Fb_{\bullet}$ over $\Spec \Z$, shows the existence of some increasing sequence of positive integers $i_{\bullet}$ such that  the projective sysytem $\pi_\ast \Fb_{i_\bullet}$ --- or equivalently $\Fb_{i_\bullet}$ --- is summable.  

Finally Condition (ii) is satisfied by $\eta := \epsilon$ and by
$\Eb_{\bullet} := \Fb_{i_\bullet} \otimes  \cOb(-\epsilon).$
\end{proof}

When the conditions $\mathbf {StS_1}$ and $\mathbf {StS_2}$  in Corollary \ref{strongsummableeq} are satisfied, we shall say that the pro-Hermitian vector bundle $\Ebh$ is \emph{strongly summable.} According to  $\mathbf {StS_2}$, the strongly summable pro-Hermitian vector bundles are precisely those that can be realized as projective limits of the  strongly summable  projective systems of surjective admissible morphisms of Hermitian vector bundles defined in paragraph \ref{sub:def}.  

Clearly, if $\Ebh$ is strongly summable, then, for any $\delta$ in $\R_+,$ $\Ebh \otimes \cOb(-\delta)$ is strongly summable and 
$$\lhot(\Ebh \otimes \cOb(-\delta)) = \uhot(\Ebh \otimes \cOb(-\delta)) < + \infty.$$
Then we shall write $\hot(\Ebh)$ instead of $\lhot(\Ebh)$ or $\uhot(\Ebh)$.

\subsection{$\theta$-finite pro-Hermitian and Hilbertisable vector bundles}

\begin{corollary}\label{thetafinite} Let $\Ebh:=(\Eh, (E_\sigma^\hilb, \Vert . \Vert_{\sigma})_{\sigma: K \hra \C})$ be a pro-Hermitian vector bundle over $\Spec \OK$, and let $\Et:= (\Eh, (E_\sigma^\hilb)_{\sigma: K \hra \C})$ be the associated object in ${\rm pro\widetilde{Vect}}(\OK)$.

The following conditions are equivalent:

%\noindent (i) 

$\mathbf{\theta-Fin_1:}$ For any pro-Hermitian vector bundle $\Ebh' :=(\Eh, (E_\sigma^\hilb, \Vert . \Vert'_{\sigma})_{\sigma: K \hra \C})$ admitting $\Et$ as associated object in ${\rm pro\widetilde{Vect}}(\OK)$, 
\begin{equation}\label{Ebh'}
\lhot(\Ebh') =\uhot(\Ebh') < + \infty.
\end{equation}

%\noindent (ii) 
$\mathbf{\theta-Fin_2:}$ For any $\delta \in \R,$
$$\lhot(\Ebh \otimes \cOb(\delta)) =\uhot(\Ebh \otimes  \cOb(\delta)) < + \infty.$$
\end{corollary}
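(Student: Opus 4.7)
The plan is to treat the two implications separately. The direction (i) $\Rightarrow$ (ii) will be immediate: for every $\delta \in \R$, the pro-hermitian vector bundle $\Ebh \otimes \cOb(\delta)$ has the same underlying topological $\OK$-module $\Eh$ and the same underlying Hilbert spaces $E_\sigma^\hilb$ as $\Ebh$, merely with norms rescaled by the positive factor $e^{-\delta}$; these rescaled norms induce the same Hilbertisable topology on each $E_\sigma^\hilb$, so $\Ebh \otimes \cOb(\delta)$ has the same associated object $\Et$ in ${\rm pro\widetilde{Vect}}(\OK)$. Applying (i) to $\Ebh \otimes \cOb(\delta)$ yields (ii).

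For the converse (ii) $\Rightarrow$ (i), I would fix an arbitrary pro-hermitian vector bundle $\Ebh' := (\Eh, (E_\sigma^\hilb, \Vert \cdot \Vert'_\sigma))$ whose associated Hilbertisable pro-vector bundle is $\Et$, and show that $\Ebh'$ is strongly summable in the sense of Corollary \ref{strongsummableeq}; the remark following that corollary then gives $\lhot(\Ebh') = \uhot(\Ebh') < +\infty$. The first step is to compare the two norm families: both $\Vert \cdot \Vert_\sigma$ and $\Vert \cdot \Vert'_\sigma$ induce the unique Hilbertisable Fr\'echet topology on $E_\sigma^\hilb$ coming from $\Et$, so by the open mapping theorem they are equivalent. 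Since $K$ has only finitely many embeddings into $\C$, a single constant $\delta_0 > 0$ can be chosen such that
\begin{equation*}
e^{-\delta_0}\Vert x \Vert_\sigma \leq \Vert x \Vert'_\sigma \leq e^{\delta_0}\Vert x \Vert_\sigma
\end{equation*}
for every embedding $\sigma: K \hra \C$ and every $x \in E_\sigma^\hilb$.

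The second step uses condition (ii) applied with $\delta := \delta_0 + 1$ together with the implication $\mathbf{StS_1} \Rightarrow \mathbf{StS_2}$ of Corollary \ref{strongsummableeq}: this furnishes a projective system $\Eb_\bullet$ of surjective admissible morphisms $q_i: \Eb_{i+1} \twoheadrightarrow \Eb_i$ with $\Ebh \simeq \varprojlim_i \Eb_i$ and
\begin{equation*}
\sum_{i \in \N} \hot(\overline{\ker q_i} \otimes \cOb(\delta_0 + 1)) < +\infty.
\end{equation*}
I would then define a ``twin'' system $\Eb'_\bullet$ by equipping the same underlying $\OK$-modules $E_i = \Eh / U_i$ and the same surjections $q_i =: q'_i$ with the quotient hermitian structures of the family $(\Vert \cdot \Vert'_\sigma)_\sigma$ rather than $(\Vert \cdot \Vert_\sigma)_\sigma$; one checks that $\Ebh' \simeq \varprojlim_i \Eb'_i$. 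The norm equivalence with constant $\delta_0$ descends routinely to the quotient norms on each $E_{i,\sigma}$ and then to the induced norms on the kernel subspaces $(\ker q_i)_\sigma$, so the identity map of $\ker q_i$ defines an injective element of $\Hom_\OK^{\leq 1}(\overline{\ker q'_i} \otimes \cOb(1), \overline{\ker q_i} \otimes \cOb(\delta_0 + 1))$. Proposition \ref{ineqmortheta}, 1), then yields
\begin{equation*}
\hot(\overline{\ker q'_i} \otimes \cOb(1)) \leq \hot(\overline{\ker q_i} \otimes \cOb(\delta_0 + 1)),
\end{equation*}
and summing over $i \in \N$ shows that $\Eb'_\bullet \otimes \cOb(1)$ is summable, i.e.\ $\Ebh'$ satisfies $\mathbf{StS_2}$ with $\eta = 1$ and is strongly summable.

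The main care needed will be in setting up the twin projective system $\Eb'_\bullet$ and verifying that the Hilbert norm equivalence is preserved by the quotients modulo $U_i$ and by restriction to $\ker q_i$ with the same constant $\delta_0$ — a routine but necessary check that underlies the application of the $\hot$-monotonicity of Proposition \ref{ineqmortheta}.
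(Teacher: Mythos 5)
Your proof is correct and follows essentially the same route as the paper: the paper also reduces (ii) $\Rightarrow$ (i) to showing that $\Ebh'$ is strongly summable, by noting that the identity defines a morphism in $\Hom_{\OK}^{\leq 1}(\Ebh', \Ebh\otimes\cOb(\log\lambda))$ for a suitable $\lambda$ and then invoking the permanence property of Proposition \ref{permStTetc}, whose proof is exactly your ``twin system'' comparison of the kernels via Proposition \ref{ineqmortheta}. The one point to tidy is that $\mathbf{StS_2}$ only yields summability after twisting by some unspecified $\eta>0$, so to obtain the twist $\delta_0+1$ on the nose you should apply Corollary \ref{strongsummableeq} to $\Ebh\otimes\cOb(\delta_0+1)$ (which satisfies $\mathbf{StS_1}$ by (ii)) and then untwist --- or simply cite Proposition \ref{permStTetc} directly.
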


\begin{proof}
 For any $\delta \in \R,$ the pro-Hermitian vector bundles $\Ebh$ and $\Ebh\otimes \cOb(\delta)$ define the same object in ${\rm pro\widetilde{Vect}}(\OK)$. Therefore $\mathbf{\theta-Fin_1}$ implies $\mathbf{\theta-Fin_2}$.
 
 Conversely, let us assume that $\mathbf{\theta-Fin_2}$ holds and consider a pro-Hermitian vector bundle $\Ebh' :=(\Eh, (E_\sigma^\hilb, \Vert . \Vert'_{\sigma})_{\sigma: K \hra \C})$ admitting $\Et$ as associated object in ${\rm pro\widetilde{Vect}}(\OK)$. There exists $\lambda \in \R^\ast_+$ such that the identity maps on $\hE$ and on the $E_\sigma^\hilb$'s 
   define a morphism in 
 $\Hom^{\leq \lambda}_{\OK}(\Ebh',\Ebh),$ or equivalently in $\Hom^{\leq 1}_{\OK}(\Ebh',\Ebh \otimes \cOb(\log \lambda)).$ According to (ii), $\Ebh \otimes \cOb(\log \lambda)$ is satisfies $\mathbf{StS_1}$, and therefore, by Corollary \ref{strongsummableeq}, it satisfies $\mathbf{StS_2}$. This implies that $\Ebh'$ also satisfies $\mathbf{StS_2}$, and is therefore is strongly summable,  and consequently that (\ref{Ebh'}) holds. This establishes $\mathbf{\theta-Fin_1}$.
\end{proof}

When the equivalent conditions in Corollary \ref{thetafinite} are realized --- that is, when $\Ebh\otimes \cOb (\delta)$ is strongly summable for every $\delta \in \R$ --- we shall say that $\Ebh$ and $\Et$ are \emph{$\theta$-finite.} 

This terminology makes reference to the fact that,  to any $\theta$-finite $\Ebh$ as above, one may associate its theta function $$\theta_{\Ebh}: \R^\ast_+ \lra [1, + \infty[ $$
defined, for any $t\in \R^\ast_+$, by: 
$$\theta_{\Ebh}(t):=\exp \left({\hot(\Ebh \otimes \cOb(-(\log t)/2))}\right).$$
According to (\ref{hthetamuE}), we have:
$$\theta_{\Ebh}(t)= \sum_{v \in \hE \cap E^\hilb_\R} e^{-\pi t \Vert v \Vert^2}, $$
where as usual, $\Vert. \Vert$ denotes the Hilbert norm on $E^\hilb_\C \simeq \bigoplus_{\sigma: K \hra \C} E^{\hilb}_\sigma$ defined by
$$\Vert (v_\sigma)_{\sigma: K \hra \C} \Vert^2 := \sum_{\sigma: K \hra \C} \Vert v_\sigma \Vert^2_\sigma.$$ 

Observe  that, for any $\theta$-finite pro-Hermitian vector bundle $\Ebh,$ the subgroup $\hE \cap E^\hilb_\R$ is discrete in the Hilbert space $(E_\R^\hilb, \Vert. \Vert)$. (This directly follows from Proposition \ref{boundhot}.) Consequently, as discussed in Section \ref{HilbertSubgroups} (see notably Corollary \ref{corHilbertSubgroups}),  it defines an \emph{ind}-Euclidean lattice:   $$
\Ebh^{\rm ind} := (\hE \cap E^\hilb_\R, \Vert. \Vert).$$  This ind-Euclidean lattice is easily seen to satisfy the equivalent conditions in Proposition \ref{indthetafinite}. Actually, for any $\delta \in \R,$ the following equality holds:
$$\hot( \Ebh^{\rm ind}\!\!\otimes \cOb (\delta)) = \hot(\Ebh \otimes \cOb(\delta)) = \log \theta_{\Ebh}(e^{-2\delta}) .$$

\subsection{Examples} From the computations in Subsections \ref{examplecountable}, \ref{thetaHardyar} and \ref{Provanish}, we immediately obtain explicit examples of $\theta$-summable pro-Hermitian vector bundles:

\begin{proposition}
 1) With the notation of paragraph \ref{examplecountable}, for any element $\bm{\lambda} := (\lambda_i)_{i \in \N}$ of $\R_+^{\ast \N},$ the pro-Euclidean lattice $\Vbh_{\bm{\lambda}}$ is $\theta$-summable if and only if, for every $t\in \R_+^\ast,$
 $$\sum_{i \in \N} e^{-t \lambda_i} < + \infty.$$
 
 2) For any positive real number $R$, the pro-Euclidean lattice $\Hbh(R)$ is $\theta$-summable if and only if $R>1.$ \qed
\end{proposition}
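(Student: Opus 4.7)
The plan is to deduce both statements directly from the explicit formulas for $\lhot$ (and $\uhot$) already established in Propositions \ref{hVlambda} and \ref{hRdelta}, combined with the scaling behavior of these pro-euclidean lattices under twisting by $\cOb(\delta)$.

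For part 1), I would first observe that twisting by $\cOb(\delta)$ scales all the hermitian norms by $e^{-\delta}$; consequently, under the identification $\Vbh_{\bm{\lambda}} \otimes \cOb(\delta) \simeq \Vbh_{e^{-2\delta}\bm{\lambda}}$, where $e^{-2\delta}\bm{\lambda} := (e^{-2\delta}\lambda_i)_{i \in \N}$, we may apply Proposition \ref{hVlambda} to obtain
$$\lhot(\Vbh_{\bm{\lambda}} \otimes \cOb(\delta)) = \uhot(\Vbh_{\bm{\lambda}} \otimes \cOb(\delta)) = \sum_{i \in \N} \tau(e^{-2\delta}\lambda_i).$$
By the finiteness criterion (\ref{hotlambdafin}), this sum is finite if and only if $\sum_{i \in \N} e^{-\pi e^{-2\delta}\lambda_i} < +\infty$. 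Since $\theta$-finiteness requires finiteness of $\lhot(\Vbh_{\bm{\lambda}} \otimes \cOb(\delta))$ for \emph{every} $\delta \in \R$, and since the map $\delta \mapsto \pi e^{-2\delta}$ is a bijection from $\R$ onto $\R^\ast_+$, this is equivalent to the condition that $\sum_{i \in \N} e^{-t\lambda_i} < +\infty$ for every $t \in \R^\ast_+$. The equality $\lhot = \uhot$ holds unconditionally by Proposition \ref{hVlambda}, so no additional work is needed.

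For part 2), the argument is even more direct: by Proposition \ref{hRdelta}, we have
$$\lhot(\Hbh(R) \otimes \cOb(\delta)) = \uhot(\Hbh(R) \otimes \cOb(\delta)) = h(R, \delta),$$
and the cited proposition asserts that $h(R, \delta)$ is finite for some (equivalently, every) $\delta$ if and only if $R > 1$. So if $R > 1$, then $\lhot(\Hbh(R) \otimes \cOb(\delta)) < +\infty$ for every $\delta \in \R$, showing that $\Hbh(R)$ is $\theta$-finite; conversely, if $R \leq 1$, then already $\lhot(\Hbh(R)) = h(R, 0) = +\infty$, so $\Hbh(R)$ fails to be $\theta$-finite.

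There is essentially no obstacle here, since all the hard work has been carried out in Propositions \ref{hVlambda} and \ref{hRdelta}; the only subtlety worth emphasizing is the passage from the single-parameter finiteness statement in (\ref{hotlambdafin}) to the requirement over all $\delta$, which accounts for the quantifier ``for every $t\in\R^\ast_+$'' appearing in part 1).
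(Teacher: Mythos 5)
Your argument is correct and is exactly the deduction the paper has in mind: the proposition is stated there as an immediate consequence of Propositions \ref{hVlambda} and \ref{hRdelta} (with no written proof), and your proof simply makes explicit the identification $\Vbh_{\bm{\lambda}}\otimes\cOb(\delta)\simeq\Vbh_{e^{-2\delta}\bm{\lambda}}$, the finiteness criterion (\ref{hotlambdafin}), and the surjectivity of $\delta\mapsto\pi e^{-2\delta}$ onto $\R^\ast_+$. No gaps.
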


Observe also that, for any positive integer $a$ and any $\delta \in \R,$ the Euclidean lattice $\Eb_a$ considered in \ref{Provanish} satisfies:
$$\hot(\Eb_a \otimes \cO(\delta)) = a\, \tau(a e^{-2\delta}).$$
As $\lim_{a \ra +\infty} a \,\tau(a e^{-2\delta}) =0,$  we immediately obtain:

\begin{proposition}
 For any infinite set $\cA$ of positive integers in which the divisibility relation $\mid$ is filtering, the pro-Euclidean lattice $\Et_{\cA}$ defined in \ref{Provanish} is $\theta$-summable and satisfies 
 $\theta_{\Et_{\cA}} = 0.$
\end{proposition}

\subsection{A permanence property} We conclude this section by showing that the property, for a pro-Hermitian vector bundle, of being strongly summable or $\theta$-finite is inherited by its ``closed sub-bundles". 

\begin{proposition}\label{permStTetc}
 Let $f: \Ebh \lra \Fbh$ be a morphism of pro-Hermitian vector bundles over $\Spec \OK$ such that the underlying morphism of topological $\OK$-modules
 $\hat{f}: \Eh \lra \Fh$ is injective and strict.
 
 1) If $f$ belongs to $\Hom_\OK^{\leq 1}(\Ebh, \Fbh)$ and if $\Fbh$ is strongly summable, then $\Ebh$ is strongly summable.
 
 2) If $\Fbh$ is $\theta$-finite, then $\Ebh$ is $\theta$-finite.
 \end{proposition}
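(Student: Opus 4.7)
The plan is to reduce both parts to the characterization $\mathbf{StS_2}$ of strong summability provided by Corollary \ref{strongsummableeq}, using the strict injectivity of $\hat{f}$ together with Proposition \ref{strictinjective} to transport a summable realization of $\Fbh$ back to one of $\Ebh$.

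For part 1), I would start from a realization $\Fbh \simeq \varprojlim_i \Fb_i$ produced by $\mathbf{StS_2}$ applied to $\Fbh$, with $\Fb_\bullet \otimes \cOb(\eta)$ summable for some $\eta \in \R^\ast_+$. Setting $V_i := \ker(\Fh \lra F_i) \in \cU(\Fh)$ and $U_i := \hat{f}^{-1}(V_i)$, Proposition \ref{strictinjective} combined with the fact that submodules of finitely generated projective $\OK$-modules are again finitely generated projective guarantees that $(U_i)_{i \in \N}$ is a defining sequence in $\cU(\Eh)$ and that $\hat{f}$ induces injective morphisms $f_i: E_i := \Eh/U_i \hra F_i$. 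Taking $\Eb_i := \Eb_{U_i}$ then realizes $\Ebh \simeq \varprojlim_i \Eb_i$, and since $f \in \Hom_\OK^{\leq 1}(\Ebh, \Fbh)$ Proposition \ref{Altmor} yields continuous Hilbert-space maps $f_\sigma: E^\hilb_\sigma \lra F^\hilb_\sigma$ of operator norm $\leq 1$; passing to the relevant quotient norms makes every $f_i: \Eb_i \lra \Fb_i$ a morphism in $\Hom_\OK^{\leq 1}$.

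Next I would pass to the kernels $\overline{S^E_i}$ and $\overline{S^F_i}$ of the transition morphisms $\Eb_{i+1} \lra \Eb_i$ and $\Fb_{i+1} \lra \Fb_i$. Since $\hat{f}$ sends $U_j$ into $V_j$ for each $j$, restriction gives a morphism $\overline{S^E_i} \lra \overline{S^F_i}$; the identity $U_{i+1} = \hat{f}^{-1}(V_{i+1})$ forces this restriction to be injective, and it inherits membership in $\Hom_\OK^{\leq 1}$ from $f_{i+1}$ because the hermitian structures on the kernels are induced from those of $\Eb_{i+1}$ and $\Fb_{i+1}$. Tensoring by $\cOb(\eta)$ preserves all these properties, so Proposition \ref{ineqmortheta}, part 1), yields $\hot(\overline{S^E_i} \otimes \cOb(\eta)) \leq \hot(\overline{S^F_i} \otimes \cOb(\eta))$ for every $i$. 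Summing over $i$ and invoking the summability of $\Fb_\bullet \otimes \cOb(\eta)$ shows that $\Eb_\bullet \otimes \cOb(\eta)$ is summable, so $\Ebh$ satisfies $\mathbf{StS_2}$ and is strongly summable.

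For part 2), I would deduce it from part 1) by rescaling. Since $f \in \Hom_\OK^{\rm cont}(\Ebh, \Fbh)$, there exists $\lambda \in \R^\ast_+$ with $f \in \Hom_\OK^{\leq \lambda}(\Ebh, \Fbh)$; setting $\nu := \log \lambda$, the same underlying map $f$ lies in $\Hom_\OK^{\leq 1}(\Ebh, \Fbh \otimes \cOb(\nu))$, and for every $\delta \in \R$ it gives a morphism $\Ebh \otimes \cOb(\delta) \lra \Fbh \otimes \cOb(\delta + \nu)$ in $\Hom_\OK^{\leq 1}$ whose underlying topological $\OK$-module map is still $\hat{f}$, hence still injective and strict. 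The $\theta$-finiteness of $\Fbh$ gives strong summability of $\Fbh \otimes \cOb(\delta + \nu)$ for each $\delta$, and part 1) then yields strong summability of $\Ebh \otimes \cOb(\delta)$; since $\delta$ was arbitrary, $\Ebh$ is $\theta$-finite.

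The main obstacle I anticipate is the careful bookkeeping in part 1) showing that $\hat{f}$ induces a morphism $\overline{S^E_i} \hra \overline{S^F_i}$ that genuinely lies in $\Hom_\OK^{\leq 1}$: this requires checking that the hermitian structure on $\ker(\Eb_{i+1} \lra \Eb_i)$ (defined as a subobject of $\Eb_{i+1} = \Eb_{U_{i+1}}$ via its projective-limit pro-hermitian structure) is compatible with the one on $\ker(\Fb_{i+1} \lra \Fb_i)$ in the appropriate sense, so that the restricted $\C$-linear maps on complexifications have operator norm $\leq 1$. Once this compatibility is spelled out via the Hilbert-space picture of Proposition \ref{Altmor} and the fact that quotient norms behave well under restriction to kernels of surjective admissible morphisms, the remaining arguments are essentially formal.
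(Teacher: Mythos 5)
Your proof is correct and follows essentially the same route as the paper: pull back a summable defining sequence via $\hat{f}^{-1}$ using Proposition \ref{strictinjective}, observe that the induced maps on quotients and on kernels of the transition morphisms are injective and of operator norm $\leq 1$ so that Proposition \ref{ineqmortheta} transfers the summability bound, and deduce part 2) by twisting with $\cOb(\nu)$ for $\nu$ bounding the operator norms of the $f_\sigma$. The compatibility of the induced hermitian structures on the kernels, which you flag as the main obstacle, is handled in the paper exactly as you suggest (the kernel norms are simply the restrictions of those of $\Eb_{i+1}$ and $\Fb_{i+1}$), so no gap remains.
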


\begin{proof} 1) Let us assume that $\Fbh$ is strongly summable, and hence satisfies $\mathbf{StS_2}$. Then we may choose a defining sequence $(V_i)_{i \in \N}$ in $\cU(\Fh)$ and $\epsilon >0$ such that, if we let $\Fb_i:= \Fb_{V_i}$, the admissible surjective morphisms 
$$r_i := p_{V_i V_{i+1}}: \Fb_{i+1} \lra \Fb_i$$
satisfy
$$\sum_{i \in \N} \hot(\overline{\ker r_i}\otimes \overline{\cO(\epsilon)}) < + \infty.$$ 

We may apply Proposition \ref{strictinjective}, part 1), to  the strict injective morphism $\hat{f}: \Eh \lra \Fh$ in $\CTC_{\OK}$ and to the defining sequence $(V_i)_{i \in \N}$. Therefore the sequence $(U_i)_{i \in \N} := (\hat{f}^{-1}(V_i))_{i \in \N}$ is a defining sequence in $\cU(\Eh)$, and we may consider the Hermitian vector bundles $\Eb_i:=\Eb_{U_i}$ and the injective morphisms of finitely generated projective $\OK$-modules
$$f_i := E_i := \Eh/U_i \lra F_i := \Fh/V_i$$
induced by $\hat{f}.$

Let us also assume that $f$ belongs to $\Hom_\OK^{\leq 1}(\Ebh, \Fbh)$. Then, for every $i\in \N,$ $f_i$ belongs to $\Hom_\OK^{\leq 1}(\Eb_i, \Fb_i)$. Indeed the Hermitian norms defining $\Eb_i$ and  $\Fb_i$ are the quotient norms of the Hilbert space norms defining $\Ebh$ and $\Fbh$.

Let us consider the surjective admissible morphisms of Hermitian vector bundles over $\Spec \OK$:
$$q_i := p_{U_i U_{i+1}}: \Eb_{i+1} \lra \Eb_i.$$ 
For any $i \in \N$, the commutativity of the diagram 
 \begin{equation}\label{CDqr}
\begin{CD}
 \Eb_{i+1} @>{q_i}>> \Eb_i \\
 @V{f_{i+1}}VV                @VV{f_i}V              \\
 \Fb_{i+1} @>{r_i}>> \Fb_i
 \end{CD}
\end{equation}
shows that $f_{i+1}$ defines a morphism
$$f_{i+1}: \ker q_i \lra \ker r_i$$
which is injective and belongs to $\Hom_\OK^{\leq 1} (\overline{\ker q_i}, \overline{\ker r_i})$.

Therefore
$$\hot(\overline{\ker q_i}\otimes \overline{\cO(\epsilon)}) \leq  \hot(\overline{\ker r_i}\otimes \overline{\cO(\epsilon)})$$
(by Proposition \ref{ineqmortheta}, 1)) and finally, we obtain:
$$\sum_{i \in \N} \hot(\overline{\ker q_i}\otimes \overline{\cO(\epsilon)}) \leq \sum_{i \in \N} \hot(\overline{\ker r_i}\otimes \overline{\cO(\epsilon)}) < + \infty.$$
Thus $\Ebh$ also satisfies $\mathbf{StS_2}$.

2) For any embedding $\sigma: K \hlra \C,$ let us denote by $\Vert f_\sigma \Vert$ the operator norm of the continuous linear map $f_\sigma$ between the Hilbert spaces $(E_\sigma^\hilb, \Vert.\Vert_{\Ebh, \sigma})$ and $(F_\sigma^\hilb, \Vert.\Vert_{\Fbh, \sigma})$, and let us choose a real number such that
$$\lambda \geq \max_{\sigma: K \hra \C} \log \Vert f_\sigma \Vert.$$ 
 Then $f$ belongs to $\Hom_\OK^{\leq 1}(\Ebh, \Fbh\otimes \cOb(\lambda))$ and therefore, for any $\delta \in \R,$ to $\Hom_\OK^{\leq 1}(\Ebh\otimes\cOb(\delta), \Fbh\otimes\cOb(\lambda+\delta))$.
 
 Let us assume that $\Fbh$ is $\theta$-finite. Then, for any $\delta \in \R,$  $\Fbh\otimes\cOb(\lambda+\delta)$ is strongly summable, and therefore, according to Part 1), $\Ebh \otimes\cOb(\delta)$ is strongly summable. This proves that $\Ebh$ is $\theta$-finite. 
\end{proof}

\medskip

\chapter[Exact sequences of 
 infinite dimensional Hermitian vector bundles]{Exact sequences of 
 infinite dimensional Hermitian vector bundles and 
subadditivity of their $\theta$-invariants}\label{subaddinfinite} 
\medskip

In this  chapter, we investigate the properties of \emph{short exact sequences} and \emph{admissible short exact sequences} of pro- and ind-Hermitian vector bundles.

We define these short exact sequences in Section \ref{shortinfdim}. Then, in Section \ref{SortExTheta}, we extend the subadditivity properties of the $\theta$-invariants, previously considered in Section  \ref{subadd} and Subsection \ref{alternating} for Hermitian vector bundles, to the infinite dimensional setting, and  we 
give some applications of these subadditivity properties to strongly summable and $\theta$-finite pro-Hermitian vector bundles in Section \ref{ShortStrong}.  

In Section \ref{VanCrit}, we apply these properties of $\theta$-invariants of $\theta$-finite pro-Hermitian vector bundles to derive a criterion for the vanishing of connecting maps arising in some systems of exact sequences of pro-Hermitian vector bundles that will play a crucial role in Diophantine applications.

Finally, in Section \ref{proexact} --- that can be read immediately after the  definition of short exact sequences of pro-Hermitian vector bundles introduced in \ref{shortpro} --- we investigate more closely the formal structure of the additive category $\proVectOK$ equipped with the class of short exact sequences introduced in this chapter. We prove notably that it constitutes an idempotent complete \emph{exact category} in the sense of Quillen. 

From the perspective of applications,   the interest of this result stems from that fact that, according to  some  classical constructions of Heller, Deligne, Keller,..., 
it shows that the category $\proVectOK$ admits an associated derived category which basically satisfies the same formal properties as the derived category of an abelian category.  

\bigskip
In this chapter, we focus on short exact sequences in categories of pro-Hermitian vector bundles, and we leave the formulation of the dual results, concerning short exact sequences of ind-Hermitian vector bundles, to the interested reader.

\bigskip

We still denote by $K$ a number field, by $\OK$ its ring of integers, and by $\pi: \Spec \OK \lra \Spec \Z$ the morphism of schemes from $\Spec \OK$ to $\Spec \Z.$ . 

\section{Short exact sequences of infinite dimensional Hermitian vector bundles}\label{shortinfdim}

\subsection{Short exact sequences of pro-Hermitian vector bundles}\label{shortpro} We define a \emph{short exact sequence of pro-Hermitian vector bundles over $\Spec \OK$} as a diagram
\begin{equation}\label{eq:shortpro}
0 \lra \Ebh \stackrel{i}{\lra} \Fbh \stackrel{q}{\lra} \Gbh \lra 0
\end{equation}
in ${\rm pro\overline{Vect}}^{\rm cont}(\OK)$ such that the following two conditions are satisfied:
 
 $\bf ProSE_1 :$ \emph{the diagram of $\OK$-modules
\begin{equation}\label{shorthat}
0 \lra \Eh \stackrel{\hat{i}}{\lra} \Fh \stackrel{\hat{q}}{\lra} \Gh \lra 0
\end{equation}
is a short exact sequence;} 

 $\bf ProSE_2 :$ \emph{for every embedding $\sigma: K\hra\C,$ the complex of $\C$-vector spaces
\begin{equation}\label{shorthilb}
0 \lra E_\sigma^\hilb \stackrel{i_\sigma}{\lra} F_\sigma^\hilb \stackrel{q_\sigma}{\lra} G_\sigma^\hilb \lra 0.
\end{equation}
is a short exact sequence.}

Let us formulate a few observations concerning this definition:

(i) It is straightforward that the diagram (\ref{eq:shortpro}) in ${\rm pro\overline{Vect}}^{\rm cont}(\OK)$ is a short exact sequence of pro-Hermitian vector bundles over $\Spec \OK$ if and only if the diagram  
$$0 \lra \pi_\ast\Ebh \stackrel{i}{\lra} \pi_\ast\Fbh \stackrel{q}{\lra} \pi_\ast\Gbh \lra 0$$ 
is a short exact sequence of pro-Hermitian vector bundles over $\Spec \Z$.

(ii) The exactness of the diagram (\ref{shorthat}) implies it is actually a \emph{split} short exact sequence of topological $\OK$-module. Namely there exists a continuous morphism of topological $\OK$-modules
$$\hat{s}: \Gh \lra \Fh$$
such that
$$\hat{q}\circ \hat{s} = Id_{\hG}.$$
This follows from the results on strict morphisms in $\CTC_A$ established in Section  \ref{StrictCTC}, applied to the ring $A= \OK$: this rings satisfies  $\mathbf{Ded_3}$ and therefore, according to Proposition \ref{strictCTCDed3}, the morphisms $\hat{i}$ and $\hat{q}$ are strict; therefore, by Proposition \ref{STsplit}, the short exact sequence (\ref{shorthat}) is split. 

 In particular, the diagram  (\ref{shorthat}) remains exact --- actually split --- after any ``completed base change". Notably, for every embedding $\sigma: K \hra \C,$ the diagram 
\begin{equation*}\label{shortfrech}
0 \lra \Eh_\sigma\stackrel{\hat{i}_\sigma}{\lra} \Fh_\sigma \stackrel{\hat{q}_\sigma}{\lra} \Gh_\sigma \lra 0
\end{equation*}
is a split short exact sequence of complex Fr\'echet spaces.

(iii) Similarly, according to Banach's open mapping theorem and to basic results on Hilbert spaces, the morphism $i_\sigma$ and $p_\sigma$ in the diagram (\ref{shorthilb}) are strict, and this diagram is actually split in the category of topological vector spaces. 

We shall say that the short exact sequence (\ref{eq:shortpro}) in ${\rm pro\overline{Vect}}^{\rm cont}(\OK)$ is \emph{admissible} when the maps $i_\sigma: E_\sigma^\hilb \lra F_\sigma^\hilb$ (resp.  $q_\sigma: F_\sigma^\hilb \lra G_\sigma^\hilb$) are isometries (resp. co-isometries\footnote{In other words, their adjoints $q_\sigma^\ast$ are isometries.})  from $(E_\sigma^\hilb, \Vert. \Vert_\sigma^\hilb)$ to $(F_\sigma^\hilb, \Vert. \Vert_\sigma^\hilb)$ (resp. from $(F_\sigma^\hilb, \Vert. \Vert_\sigma^\hilb)$ to $(G_\sigma^\hilb, \Vert. \Vert_\sigma^\hilb)$).

For simplicity, we will often call a short exact sequence ({eq:shortpro}) as defined above \emph{a short exact sequence in $\proVectOK$}. This notion is clearly compatible with isomorphisms in $\proVectOK$, and immediately leads to a notion of short exact sequence in $OK.$

\subsection{Short exact sequences of ind-Hermitian vector bundles. Duality} 

We define a \emph{short exact sequence of ind-Hermitian vector bundles over $\Spec \OK$} as a diagram 
\begin{equation}\label{shortind}
0 \lra \Eb \stackrel{i}{\lra} \Fb \stackrel{q}{\lra} \Gb \lra 0
\end{equation}
in ${\rm ind\overline{Vect}}^{\rm cont}(\OK)$ such that the following two conditions are satisfied:

$\bf IndSE_1 :$ \emph{the diagram of $\OK$-modules
\begin{equation}\label{shortshort}
0 \lra E \stackrel{{i}}{\lra} F \stackrel{{q}}{\lra} G \lra 0
\end{equation}
is a short exact sequence;}

$\bf IndSE_2 :$  \emph{for every embedding $\sigma: K\hra\C,$ the diagram of $\C$-vector spaces 
\begin{equation}\label{shortsigma}
0 \lra E_\sigma \stackrel{i_\sigma}{\lra} F_\sigma \stackrel{q_\sigma}{\lra} G_\sigma \lra 0
\end{equation}
is a short exact sequence, and $i_\sigma$ and $q_\sigma$ are strict morphisms of topological vector spaces} when $E_\sigma,$ $F_\sigma,$ and $G_\sigma$ are equipped with the topology defined by the Hermitian norms $\Vert .\Vert_{E,\sigma}$, $\Vert .\Vert_{F,\sigma}$, and 
$\Vert .\Vert_{G,\sigma}$ that define $\Eb := (E, (\Vert .\Vert_{E,\sigma})_{\sKC})$, $\Fb := (F, (\Vert .\Vert_{F,\sigma})_{\sKC})$ and
$\Gb := (G, (\Vert .\Vert_{G,\sigma})_{\sKC})$.

The last conditions means that the norms $\Vert .\Vert_{E,\sigma}$ and $\Vert i_\sigma(.)\Vert_{F,\sigma}$ on $E_\sigma$ are equivalent, and that the norm $\Vert .\Vert_{G,\sigma}$ and the norm quotient of $\Vert .\Vert_{F,\sigma}$ \emph{via} $q_\sigma$ on $G_\sigma$ are equivalent.

We shall say that the short exact sequence (\ref{shortind}) is \emph{admissible} when the norms $\Vert .\Vert_{E,\sigma}$ and $\Vert i_\sigma(.)\Vert_{F,\sigma}$ on $E_\sigma$ coincide, and also  the norm $\Vert .\Vert_{G,\sigma}$ and the norm quotient of $\Vert .\Vert_{F,\sigma}$ on $G_\sigma$.

 These definitions are compatible, via the duality between ${\rm ind\overline{Vect}}^{\rm cont}(\OK)$ and ${\rm pro\overline{Vect}}^{\rm cont}(\OK)$, with the definitions of short exact sequences  and short exact sequences in ${\rm pro\overline{Vect}}^{\rm cont}_\OK$ introduced in the previous paragraph. 
 
 Namely, by combining the basic facts concerning the duality of ind- and pro-Hermitian vector bundles presented in Section  \ref{dualindpro}, the results on exact sequences in $\CP_A$ and $\CTC_A$ established paragraph \ref{stricshortdual} (in the special case where $A= \OK$, and therefore satisfies $\mathbf{Ded_3}$), and the basic theory of Hilbert spaces, one may establish the following proposition, the proof of which is left to the reader:

\begin{proposition}
 Let $\Eb,$ $\Fb$ and $\Gb$ be three ind-Hermitian vector bundles over $\Spec \OK,$ and let $\Eb^\vee$, $\Fb^\vee$ and $\Gb^\vee$ be the dual pro-Hermitian vector bundles over $\Spec \OK$.
 
 For any two morphisms $i : \Eb \lra \Fb$ and $q: \Fb \lra \Gb$ of ind-Hermitian vector bundles over $\Spec \OK,$ the diagram
 $$0 \lra \Eb \stackrel{i}{\lra} \Fb \stackrel{q}{\lra} \Gb \lra 0$$
  %in ${\rm ind\overline{Vect}}^{\rm cont}(\OK)$ 
  is a short exact sequence (resp. an admissible short exact sequence) of ind-Hermitian vector bundles over $\Spec \OK$ if and only if the dual diagram   
  $$0 \lra \Gb \stackrel{q^\vee}{\lra} \Fb^\vee \stackrel{i^\vee}{\lra} \Eb^\vee \lra 0$$
 is a short exact sequence (resp. an admissible short exact sequence) of pro-Hermitian vector bundles over $\Spec \OK$. \qed
\end{proposition}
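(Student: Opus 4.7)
The plan is to unpack the definitions of short (resp.\ admissible short) exact sequences in both categories and verify the duality claim separately on the algebraic layer (underlying $\OK$-modules) and on the analytic layer (Hilbert/prehilbertian completions indexed by $\sigma : K \hra \C$), then re-assemble.

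For the algebraic layer, observe that the underlying $\OK$-module duality functors between $CP_\OK$ and $CTC_\OK$ are precisely the functors analyzed in Proposition~\ref{STsplit} (with $A= \OK$, which satisfies $\mathbf{Ded_3}$). Applied to $M_1 := G$, $M_2 := F$, $M_3 := E$, this proposition gives a bijection $\delta : \cS \lrasim \cT$ between short exact sequences of $\OK$-modules
$$0 \lra E \stackrel{i}{\lra} F \stackrel{q}{\lra} G \lra 0$$
and strict short exact sequences in $CTC_\OK$
$$0 \lra G^\vee \stackrel{q^\vee}{\lra} F^\vee \stackrel{i^\vee}{\lra} E^\vee \lra 0,$$
and moreover asserts that both are automatically split. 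So the ``underlying $\OK$-module part'' of the claim is immediate.

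For the analytic layer, fix $\sigma : K \hra \C$ and consider the prehilbertian norms $\Vert.\Vert_{E,\sigma}$, $\Vert.\Vert_{F,\sigma}$, $\Vert.\Vert_{G,\sigma}$ on $E_\sigma$, $F_\sigma$, $G_\sigma$, with dual Hilbert spaces $(E_\sigma^{\vee\hilb}, \Vert.\Vert_\sigma^\vee)$, etc. I would proceed in three steps. First, if $i_\sigma$ and $q_\sigma$ are strict with respect to the prehilbertian norms, then the dual sequence of Hilbert spaces
$$0 \lra G_\sigma^{\vee\hilb} \stackrel{q_\sigma^\vee}{\lra} F_\sigma^{\vee\hilb} \stackrel{i_\sigma^\vee}{\lra} E_\sigma^{\vee\hilb} \lra 0$$
is exact: injectivity of $q_\sigma^\vee$ comes from surjectivity of $q_\sigma$; exactness at $F_\sigma^{\vee\hilb}$ comes from the fact that strict surjectivity of $q_\sigma$ makes its quotient norm equivalent to $\Vert.\Vert_{G,\sigma}$, so every continuous linear form on $F_\sigma$ killing $i_\sigma(E_\sigma)=\ker q_\sigma$ factors continuously through $q_\sigma$; surjectivity of $i_\sigma^\vee$ follows from Hahn--Banach applied to the strict injection $i_\sigma$. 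Conversely, if the dual Hilbert-space sequence is exact, then the open mapping theorem combined with the biduality identifications $(E_\sigma,\Vert.\Vert_{E,\sigma})^{\vee\vee} \simeq (E_\sigma,\Vert.\Vert_{E,\sigma})$ (and similarly for $F_\sigma$, $G_\sigma$, as the prehilbertian spaces are countable-dimensional) let us recover strictness of $i_\sigma$ and $q_\sigma$ and exactness of the original sequence. Second, the refinement to \emph{admissibility} follows by tracking norms: the adjoint of an isometric embedding is a co-isometry (classical Hilbert-space fact), and the adjoint of a co-isometry (surjective linear map whose target norm is the quotient norm) is an isometric embedding; the equalities of norms on one side translate to the analogous equalities on the other.

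Finally, I would combine the two layers: Proposition~\ref{STsplit} ensures that the module-level conditions in the two definitions are equivalent, and the Hilbert-space duality above ensures that the analytic conditions match. The main obstacle, and the only step that needs some care, is the analytic part in the \emph{prehilbertian} setting of ind-hermitian vector bundles: one must check that the duality between ``strict short exact sequences of countable-dimensional prehilbert spaces'' and ``short exact sequences of their Hilbert completions'' is indeed a bijection preserving isometric/co-isometric properties. Once that is in hand, the claimed equivalence, and its admissible variant, are immediate.
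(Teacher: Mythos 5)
Your proposal is correct and follows essentially the route the paper intends: the paper leaves this proof to the reader, explicitly naming as ingredients the duality functors of Section~\ref{dualindpro}, Proposition~\ref{STsplit} for the module-level exact sequences in $CP_{\OK}$ and $CTC_{\OK}$ (using that $\OK$ satisfies $\mathbf{Ded_3}$), and basic Hilbert-space duality — exactly the decomposition into an algebraic layer and an analytic layer that you carry out. The only cosmetic remark is that in the converse analytic step the relevant biduality is the isometric Hahn--Banach embedding of a prehilbert space into the dual of its continuous dual (as packaged in Proposition~\ref{prop:dualArith}), rather than countable-dimensionality per se.
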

 
\section{Short exact sequences and $\theta$-invariants of pro-Hermitian vector bundles}\label{SortExTheta}
\subsection{}

This section is devoted to the following extension to infinite dimensional Hermitian vector bundles of the sub-additivity of the $\theta$-invariants established for finite rank Hermitian vector bundles in Section \ref{subadd} and Subsection \ref{alternating}. 

\begin{theorem}\label{subaddpro} Consider an admissible short exact sequence of pro-Hermitian vector bundles over the arithmetic curve $\Spec \OK$:
\begin{equation*}
0 \lra \Ebh \stackrel{i}{\lra} \Fbh \stackrel{q}{\lra} \Gbh \lra 0
\end{equation*}

1) The following inequalities hold in $[0, + \infty]$:
\begin{equation}\label{subaddlhotpro}
\lhot(\Fbh) \leq \lhot(\Ebh) + \lhot(\Gbh),
\end{equation}
\begin{equation}\label{subadduhotpro}
\uhot(\Fbh) \leq \uhot(\Ebh) + \uhot(\Gbh),
\end{equation}
and
\begin{equation}\label{subaddhutpro}
\hut(\Fbh) \leq \hut(\Ebh) + \hut(\Gbh),
\end{equation}

2) If  $\Ebh$ has finite rank \emph{(or equivalently, is defined by some Hermitian vector bundle $\Eb$ over $\Spec \OK$)}, then we also have:
\begin{equation}\label{subaddproEfinitel}
\lhot(\Gbh) \leq \lhot(\Fbh) - \dega \pi_\ast \Eb.
\end{equation}
%and
%\begin{equation}\label{subaddproEfiniteu}
%??? \uhot(\Gbh) \leq \uhot(\Fbh) - \dega \pi_\ast \Eb.
%\end{equation}

3) If $\Gbh$ has finite rank \emph{(or equivalently, is defined by some Hermitian vector bundle $\Gb$ over $\Spec \OK$)}, then
\begin{equation}\label{subaddprohut}
\hut(\Ebh) \leq \hut(\Fbh) + \dega \pi_\ast \Gb.
\end{equation}

\end{theorem}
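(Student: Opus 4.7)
My plan is to combine the finite-rank subadditivity of Proposition~\ref{ineqshorttheta} and its sharper variant (\ref{morse4'}) with approximation arguments in the pro- and ind-settings. By the observation in~\ref{shortpro} that admissibility of short exact sequences is preserved by $\pi_\ast$, together with (\ref{pistarlhot}) and (\ref{pistarluhut}), the inequalities (\ref{subaddlhotpro}) and (\ref{subaddhutpro}) reduce to the case $\OK = \Z$; the remaining inequalities will be handled over $\Spec \OK$ directly.

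To prove (\ref{subaddlhotpro}), I would establish a pro-hermitian analogue of Lemma~\ref{gaussiansumpreimage}. Admissibility yields an orthogonal splitting $F_\R^\hilb = \hat{i}(E_\R^\hilb) \oplus^\perp s^\perp(G_\R^\hilb)$ together with the identity $\hat{i}(\hat{E}) \cap F_\R^\hilb = \hat{i}(\hat{E} \cap E_\R^\hilb)$. For any $g \in \hat{q}(\hat{F} \cap F_\R^\hilb)$, picking $v_0 \in \hat{q}^{-1}(g) \cap \hat{F} \cap F_\R^\hilb$ and writing $v_0 = s^\perp(g) + \hat{i}(\delta)$ with $\delta \in E_\R^\hilb$, the fiber is $v_0 + \hat{i}(\hat{E} \cap E_\R^\hilb)$, and orthogonality gives
$$\sum_{v \in \hat{q}^{-1}(g) \cap \hat{F} \cap F_\R^\hilb} e^{-\pi\|v\|^2} = e^{-\pi\|g\|^2} \sum_{e \in \hat{E} \cap E_\R^\hilb} e^{-\pi\|\delta + e\|^2} \leq e^{-\pi\|g\|^2} \sum_{e \in \hat{E} \cap E_\R^\hilb} e^{-\pi\|e\|^2},$$
the last step being an ind-euclidean extension of (\ref{PoissonGaussineq}): decomposing $\delta$ orthogonally with respect to the real span of each finite-rank saturated sublattice $\Lambda \subset \hat{E} \cap E_\R^\hilb$, the orthogonal component contributes a factor $\leq 1$, while the classical Poisson inequality on $\Lambda$ applies to the projection; one then takes the supremum over $\Lambda$. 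Summing over $g$ and using (\ref{pistarlhot}) yields (\ref{subaddlhotpro}).

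For (\ref{subaddhutpro}), I would first prove the dual ind-version: for an admissible short exact sequence $0 \to \Eb \to \Fb \to \Gb \to 0$ of ind-hermitian vector bundles, $\hot(\Fb) \leq \hot(\Eb) + \hot(\Gb)$. Given $F' \in \cF\cS(F)$, set $E' := F' \cap i(E)$ and let $\overline{G'}^{\rm quot}$ denote $G' := q(F')$ equipped with the quotient norm from $\Fb'$; then $0 \to \Eb' \to \Fb' \to \overline{G'}^{\rm quot} \to 0$ is an admissible short exact sequence of hermitian vector bundles, and Proposition~\ref{ineqshorttheta} gives $\hot(\Fb') \leq \hot(\Eb') + \hot(\overline{G'}^{\rm quot})$. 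Since the quotient norm dominates the norm induced from $\Gb$, Proposition~\ref{ineqmortheta}.1 yields $\hot(\overline{G'}^{\rm quot}) \leq \hot(\overline{G'}) \leq \hot(\Gb)$; taking the supremum over $F'$ via (\ref{easyhotsup}) finishes the ind-case. The inequality (\ref{subaddhutpro}) then follows by dualizing through Proposition~\ref{prop:dualArith} and (\ref{dualencore}), which carry admissible sequences to admissible sequences and identify $\hut$ with $\hot$ of the dual.

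The main obstacle is (\ref{subadduhotpro}) together with (\ref{subaddproEfinitel}) and (\ref{subaddprohut}): all three require constructing, for small $U \in \cU(\hat{E})$ and $W \in \cU(\hat{G})$, a small $V \in \cU(\hat{F})$ with $\hat{i}^{-1}(V) = U$, $\hat{q}(V) = W$, and such that the induced finite-rank short exact sequence $0 \to \Eb_U \to \Fb_V \to \Gb_W \to 0$ is \emph{admissible}. The surjection is automatically admissible (since $q_\sigma$ is a coisometry), but for the algebraic choice $V := \hat{i}(U) + \hat{s}(W)$ associated to a continuous section $\hat{s}$ (paragraph~\ref{shortpro}) the inclusion is only norm-decreasing, as $\hat{s}$ need not be isometric on Hilbert spaces. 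I would circumvent this by refining $\hat{s}$ at the finite-rank level: on representatives $w$ of a basis of $\hat{G}/W$, I correct $\hat{s}(w)$ by elements of $\hat{i}(\hat{E})$ so as to match the orthogonal splitting modulo $\hat{i}(U)$, using that the relevant correction terms live in $\hat{i}(E_\R^\hilb)$ and may be approximated by elements of $\hat{i}(\hat{E} \cap E_\R^\hilb)$ modulo $\hat{i}(U)$ for $U$ sufficiently small. Once the finite-rank sequence is admissible, Proposition~\ref{ineqshorttheta} gives $\hot(\Fb_V) \leq \hot(\Eb_U) + \hot(\Gb_W)$ and taking $\liminf$ yields (\ref{subadduhotpro}); the sharper bound (\ref{morse4'}) in the limit, using that $\hot(\Eb_U) = \hot(\Eb)$ for all $U$ small enough when $\Eb$ has finite rank, gives (\ref{subaddproEfinitel}), and (\ref{subaddprohut}) follows by the dual argument on the quotient side.
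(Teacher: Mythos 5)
Your arguments for (\ref{subaddlhotpro}) and (\ref{subaddhutpro}) are correct and take routes genuinely different from the paper's. For (\ref{subaddlhotpro}) the paper reduces to finite-rank subquotients $0 \lra \overline{i^{-1}(P)} \lra \overline{P} \lra \overline{q(P)}^{\rm quot}\lra 0$ attached to a finitely generated $P\subset \hF\cap F^\hilb_\R$ and invokes Proposition \ref{ineqshorttheta}, whereas you prove the pro-analogue of the pointwise Lemma \ref{gaussiansumpreimage} directly; your identification of the fibre $\hat q^{-1}(g)\cap\hF\cap F^\hilb_\R$ with $v_0+\hat i(\hE\cap E^\hilb_\R)$ (which rests on the cartesian squares of Lemma \ref{CartfromCD}) and your extension of (\ref{PoissonGaussineq}) to the ind-euclidean lattice $\hE\cap E^\hilb_\R$ by exhausting it with finite-rank saturated sublattices are both sound. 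For (\ref{subaddhutpro}) the paper argues directly on the pro side via the twisted bundles $\Eb_U^W$, while your detour through the dual admissible ind-sequence and the finite-rank estimate for $\hot$ over $\cFS(F)$ is a clean alternative.

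The genuine gap is in (\ref{subadduhotpro}). You propose to repair the non-admissibility of $0\lra\Eb_U\lra\Fb_V\lra\Gb_W\lra 0$, for $V=\hat i(U)+\hat s(W)$, by correcting $\hat s$ on representatives of a basis of $\hG/W$ by elements of $\hat i(\hE)$ so as to match the orthogonal splitting. This cannot be done exactly: the available corrections range over the countable set $\hat i(\hE)$, while orthogonality to $i_\sigma(E^\hilb_\sigma)$ is a closed condition of positive codimension, so generically no integral correction achieves it (and $s^\perp_\sigma$ is not even defined on representatives $w\in\hG$ lying outside $G^\hilb_\sigma$). Once you retreat to approximate matching "for $U$ sufficiently small", the entire difficulty reappears: you must quantify how $\hot$ of $E_U$ equipped with the norm induced from $\Fb_V$ differs from $\hot(\Eb_U)$, and that estimate is precisely what is missing. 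The paper's device is to leave $V$ alone and change the metric instead: it defines $\Eb_U^W$ as $E_U$ with the norm induced from $\Fb_V$, so that (\ref{CDUWadm}) is admissible by construction, and then proves $\lim_{W}\hot(\Eb_U^W)=\hot(\Eb_U)$ (Lemma \ref{UWrecap}) via the Hilbert-space projection statement of Proposition \ref{Hilbertincr}; an $\epsilon/3$ argument along filtrations realizing the two lower limits then yields (\ref{subadduhotpro}). Some convergence statement of this kind is indispensable and is absent from your proposal.

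Separately, your derivation of (\ref{subaddproEfinitel}) targets the wrong invariant: that inequality concerns $\lhot$, a supremum over finite-rank sub-objects, whereas applying (\ref{morse4'}) to the quotient sequences and passing to the limit would at best produce the $\uhot$-variant (\ref{subaddproEfiniteu}), which the paper notes requires an additional $\epsilon$-twist. The correct route stays on the submodule side: for a finitely generated $P\subset\hF\cap F^\hilb_\R$ containing $i(E)$, the sequence $0\lra\Eb\lra\overline P\lra\overline{q(P)}\lra 0$ is admissible with $\overline{q(P)}$ carrying the norm induced from $G^\hilb_\R$ (since $i(E_\sigma)$ is the whole kernel, the quotient norm on $q(P)_\sigma$ is computed by orthogonal projection), inequality (\ref{morse4'}) gives $\hot(\overline{q(P)})\leq\hot(\overline P)-\dega\pi_\ast\Eb$, and every finitely generated submodule of $\hG\cap G^\hilb_\R$ arises as such a $q(P)$. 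Your treatment of (\ref{subaddprohut}) is too compressed to assess, but the statement does follow from the admissible finite-rank sequences $0\lra\Eb_U\lra\Fb_{\hat i(U)}\lra\Gb\lra 0$ combined with Proposition \ref{ineqmortheta} and the Poisson--Riemann--Roch formula (\ref{PoissonZ}).
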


By means of arguments similar to the one in the proof of part 1) of Theorem \ref{permStS} \emph{infra}, one may also show that, in the situation of part 2) of Theorem  \ref{subaddpro} (that is when  $\Ebh$ is some Hermitian vector bundle $\Eb$), we also have:
\begin{equation}\label{subaddproEfiniteu}
\uhot(\Gbh) \leq \lim_{\epsilon \rightarrow 0_+} \uhot(\Fbh\otimes \cOb(\epsilon)) - \dega \pi_\ast \Eb.
\end{equation}
We leave this to the interested reader.

For later reference, let us record some straightforward consequences of Theorem  \ref{subaddpro} and of the estimates previously established in Proposition  \ref{ineqmorthetapro}. 

\begin{corollary}\label{finite rankcor} Let us keep the notation of Theorem \ref{subaddpro}.

1) Let us assume that $\Ebh$ has finite rank. Then $\lhot(\Fbh)  < + \infty$ if and only if $\lhot(\Gbh)  < + \infty.$

2) Let us assume that $\Gbh$ has finite rank. Then $\lhot(\Ebh)  < + \infty$ if and only if $\lhot(\Fbh)  < + \infty,$ and
$\uhot(\Ebh)  < + \infty$ if and only if $\uhot(\Fbh)  < + \infty.$ \qed 
\end{corollary}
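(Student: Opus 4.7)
The plan is to deduce each equivalence by combining the subadditivity statements in Theorem~\ref{subaddpro} (used to bound the ``large'' invariant above by the ``small'' one) with the monotonicity statements in Proposition~\ref{ineqmorthetapro} (used to bound $\lhot(\Ebh)$ or $\uhot(\Ebh)$ by $\lhot(\Fbh)$ or $\uhot(\Fbh)$ through the injection $i$). In each case one direction uses Theorem~\ref{subaddpro} and the other uses Proposition~\ref{ineqmorthetapro}; the key observation that makes this go through is that in any admissible short exact sequence of pro-hermitian vector bundles (see Section~\ref{shortpro}) the underlying map $\hat{\imath}:\Eh\to\Fh$ is automatically an injective strict morphism in $CTC_{\OK}$.

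For assertion 1), assume $\Ebh$ has finite rank, so it is given by a hermitian vector bundle $\Eb$ with $\lhot(\Ebh)=\hot(\Eb)<+\infty$ and $\dega\pi_\ast\Eb\in\R$. If $\lhot(\Gbh)<+\infty$, then (\ref{subaddlhotpro}) of Theorem~\ref{subaddpro} gives
$$\lhot(\Fbh)\leq \lhot(\Ebh)+\lhot(\Gbh)=\hot(\Eb)+\lhot(\Gbh)<+\infty.$$
Conversely, if $\lhot(\Fbh)<+\infty$, then (\ref{subaddproEfinitel}) of Theorem~\ref{subaddpro} (which is precisely the statement tailored to this finite-rank-kernel situation) yields
$$\lhot(\Gbh)\leq \lhot(\Fbh)-\dega\pi_\ast\Eb<+\infty.$$

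For assertion 2), assume $\Gbh$ has finite rank, given by a hermitian vector bundle $\Gb$ with $\lhot(\Gbh)=\uhot(\Gbh)=\hot(\Gb)<+\infty$. If $\lhot(\Ebh)<+\infty$ (resp. $\uhot(\Ebh)<+\infty$), the subadditivity inequalities (\ref{subaddlhotpro}) and (\ref{subadduhotpro}) of Theorem~\ref{subaddpro} immediately give
$$\lhot(\Fbh)\leq \lhot(\Ebh)+\hot(\Gb)<+\infty\quad\text{and}\quad\uhot(\Fbh)\leq \uhot(\Ebh)+\hot(\Gb)<+\infty.$$
For the converse direction, I would apply Proposition~\ref{ineqmorthetapro} to the morphism $i:\Ebh\to\Fbh$ of the admissible short exact sequence. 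Since the sequence is admissible the map $\hat\imath:\Eh\hookrightarrow\Fh$ is injective and strict in $CTC_{\OK}$ (this is the key structural point from Section~\ref{shortpro}, (ii)--(iii)); hence Proposition~\ref{ineqmorthetapro} gives both $\lhot(\Ebh)\leq \lhot(\Fbh)$ and $\uhot(\Ebh)\leq \uhot(\Fbh)$, from which the two implications follow.

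There is no substantive obstacle: the whole argument is a bookkeeping exercise assembling previously stated results. The only point that requires a moment of care is checking that the injective map in an admissible short exact sequence of pro-hermitian vector bundles is a strict injection in $CTC_{\OK}$, so that the \emph{strictness} hypothesis required by Proposition~\ref{ineqmorthetapro} for the $\uhot$-monotonicity (\ref{inequhotpro}) is indeed fulfilled; this follows at once from the definition of short exact sequence recalled in~\ref{shortpro}, since $\hat\imath$ and $\hat q$ are automatically strict by Proposition~\ref{strictCTCDed3} applied to the Dedekind ring $\OK$ (which satisfies $\mathbf{Ded_3}$).
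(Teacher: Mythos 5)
Your proposal is correct and follows exactly the route the paper intends: the corollary is stated with no written proof, being introduced as a "straightforward consequence of Theorem \ref{subaddpro} and of the estimates previously established in Proposition \ref{ineqmorthetapro}", and your assembly of (\ref{subaddlhotpro}), (\ref{subadduhotpro}), (\ref{subaddproEfinitel}) with the monotonicity of Proposition \ref{ineqmorthetapro} is precisely that. Your explicit check that $\hat{\imath}$ is an injective strict morphism (via $\mathbf{Ded_3}$ and Proposition \ref{strictCTCDed3}), so that the hypothesis of (\ref{inequhotpro}) is satisfied, is the one point worth spelling out, and you did.
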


\subsection{Proof of Theorem \ref{subaddpro}: I. Preliminary}

Let us consider 
an admissible short exact sequence of pro-Hermitian vector bundles over the arithmetic curve $\Spec \OK$:
\begin{equation}\label{shortencore}
0 \lra \Ebh \stackrel{i}{\lra} \Fbh \stackrel{q}{\lra} \Gbh \lra 0
\end{equation}

By the very definition of a short exact sequence in ${\rm pro\overline{Vect}}^{\rm cont}(\OK)$, from (\ref{shortencore}), we derive a commutative diagram with exact lines, where the vertical arrows denote the inclusion maps defining the pro-Hermitian vector bundles $\Ebh,$ $\Fbh,$ and $\Gbh$:
\begin{equation}\label{bigCD}
\begin{CD}
 0 @>>> \Eh @>\hat{i}>> \Fh @>\hat{q}>> \Gh @>>> 0 \\
 @.          @VVV                @VVV                 @VVV       @. \\
  0 @>>> \Eh_\R @>\hat{i}_\R>> \Fh_\R @>\hat{q}_\R>> \Gh_\R @>>> 0 \\
   @.          @AAA               @AAA                 @AAA       @. \\
  0 @>>> E^\hilb_\R @>{i}_\R>> F^\hilb_\R @>q_\R>> G^\hilb_\R @>>> 0 
\end{CD}
\end{equation}

\begin{lemma}\label{CartfromCD}
 The commutative diagrams
 \begin{equation}\label{smallCD1}
\begin{CD}
 \Eh @>\hat{i}>> \Fh \\
 @VVV                @VVV              \\
 \Eh_\R @>\hat{i}_\R>> \Fh_\R 
 \end{CD}
\end{equation}
and
\begin{equation}\label{smallCD2}
\begin{CD}
 \Eh_\R @>\hat{i}_\R>> \Fh_\R  \\
     @AAA               @AAA                \\
 E^\hilb_\R @>{i}_\R>> F^\hilb_\R 
\end{CD}
\end{equation}
extracted from (\ref{bigCD}) are cartesian squares\footnote{of sets, and also respectively of topological $\Z$-modules and of topological $\R$-vector spaces.}.
\end{lemma}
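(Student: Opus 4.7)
The strategy is to verify the two squares are cartesian by unwinding the definitions and repeatedly using injectivity. The cartesian property for square (\ref{smallCD1}) says that $\hat{E}$ equals the preimage in $\hat{F}$ of $\hat{i}_\R(\hat{E}_\R) \subset \hat{F}_\R$, and likewise for square (\ref{smallCD2}) with $F^{\rm hilb}_\R$ replacing $\hat{F}$ and $E^{\rm hilb}_\R$ replacing $\hat{E}$. The key input is that every vertical arrow in (\ref{bigCD}) is injective: the vertical arrows $\hat{E}\hookrightarrow \hat{E}_\R$, $\hat{F}\hookrightarrow\hat{F}_\R$, $\hat{G}\hookrightarrow\hat{G}_\R$ are injective because the underlying $\OK$-modules are torsion-free (being projective limits of finitely generated projective $\OK$-modules, which are torsion-free), and the inclusions $E^{\rm hilb}_\R \hookrightarrow \hat{E}_\R$, $F^{\rm hilb}_\R\hookrightarrow\hat{F}_\R$, $G^{\rm hilb}_\R\hookrightarrow\hat{G}_\R$ are injective by the very definition of pro-hermitian vector bundle.

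For square (\ref{smallCD1}): given $f \in \hat{F}$ whose image in $\hat{F}_\R$ lies in $\hat{i}_\R(\hat{E}_\R) = \ker \hat{q}_\R$, the element $\hat{q}(f) \in \hat{G}$ maps to $\hat{q}_\R(\text{image of }f) = 0$ in $\hat{G}_\R$; since $\hat{G}\hookrightarrow\hat{G}_\R$ is injective, $\hat{q}(f)=0$, so $f \in \ker \hat{q} = \hat{i}(\hat{E})$. Combined with the injectivity of $\hat{i}$, this shows the canonical map $\hat{E} \to \hat{F}\times_{\hat{F}_\R} \hat{E}_\R$ is a set-theoretic bijection. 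The identical argument, with the bottom two rows of (\ref{bigCD}) in place of the top two and with $G^{\rm hilb}_\R\hookrightarrow\hat{G}_\R$ playing the role of $\hat{G}\hookrightarrow\hat{G}_\R$, handles square (\ref{smallCD2}).

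The topological refinement is the second, slightly more delicate, point. In square (\ref{smallCD1}), the morphism $\hat{i}: \hat{E} \to \hat{F}$ is injective with image $\ker \hat{q}$, which is closed in $\hat{F}$; since $\OK$ satisfies $\mathbf{Ded_3}$, Proposition \ref{strictCTCDed3} implies $\hat{i}$ is strict, so the topology of $\hat{E}$ coincides with the subspace topology induced from $\hat{F}$. The fiber product $\hat{F}\times_{\hat{F}_\R}\hat{E}_\R$, viewed as a subspace of $\hat{F}\times\hat{E}_\R$, projects homeomorphically onto its image in $\hat{F}$ (since the second coordinate is determined by the first via the injection $\hat{E}_\R\hookrightarrow\hat{F}_\R$), and this image is exactly $\hat{i}(\hat{E})$ with the subspace topology. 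Hence the canonical map is a homeomorphism of topological $\Z$-modules. For square (\ref{smallCD2}), the same argument applies: $i_\R: E^{\rm hilb}_\R \to F^{\rm hilb}_\R$ is an isometric embedding (by admissibility of the exact sequence), so it is a strict injection of topological $\R$-vector spaces onto $\ker q_\R$, and the same second-coordinate-determined-by-first reasoning identifies the fiber product with $E^{\rm hilb}_\R$.

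The main (very minor) obstacle is simply to keep straight which injections one is invoking at each step and to verify the topological match; once this bookkeeping is done, the lemma is essentially a diagram chase powered by the fact that in an exact sequence of topological modules with torsion-free terms, kernels are detected after tensoring with $\R$ (or after passing to the Hilbert completion) precisely because the relevant vertical maps into the ambient space are injective.
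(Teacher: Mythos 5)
Your proof is correct and follows essentially the same route as the paper, whose entire argument is that the cartesian character of each square "follows from the exactness of the first (resp.\ last) two lines" of the big commutative diagram — i.e.\ exactly the diagram chase you carry out, hinging on the injectivity of the right-hand vertical arrows $\Gh\hookrightarrow\Gh_\R$ and $G^\hilb_\R\hookrightarrow\Gh_\R$. You simply make explicit the bookkeeping (and the topological refinement via strictness of $\hat{i}$ and $i_\R$) that the paper leaves to the reader.
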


\begin{proof}
 The cartesian character of (\ref{smallCD1}) (resp.,  of  (\ref{smallCD2}))  follows from the exactness of the first (resp., of the last) two lines in the commutative diagram (\ref{bigCD}) and the injectivity of the vertical map $\Gh \lra \Gh_\R$ (resp. $G^\hilb_\R \lra \Gh_\R$). 
\end{proof}

As already observed in \ref{shortpro}, we may choose a continuous $\OK$-linear splitting 
$$\hat{s}: \Gh \lra \Fh$$
of the map $\hat{p}: \Fh \lra \Gh$.

For any $U \in \cU(\Eh)$ and any $W \in \cU(\Gh),$ the (direct) sum
$$V:= \hat{i}(U) + \hat{s}(W)$$
belongs to $\cU(\Fh).$ The quotients $E_U := \Eh /U,$ $F_V:= \Fh/V$ and $G_W:= \Gh/W$ fit into a commutative diagram of $\OK$-modules

\begin{equation}\label{bigCDUW}
\begin{CD}
 0 @>>> \Eh @>\hat{i}>> \Fh @>\hat{q}>> \Gh @>>> 0 \\
 @.          @V{p_U}VV                @V{p_V}VV                 @V{p_W}VV       @. \\
  0 @>>> E_U @>i_{UW}>> F_V @>q_{UW}>> G_W @>>> 0, \end{CD}
\end{equation}
where the vertical arrows denote the canonical surjections.

Besides, $E_U,$ $F_V$ and $G_W$ are the  finitely generated  projective $\OK$-modules underlying the Hermitian vector bundles $\Eb_U$, 
$\Fb_V$ and $\Gb_W$ defined from the pro-Hermitian vector bundles $\Ebh,$ $\Fbh$ and $\Gbh$. The real vector spaces $E_{U, \R}$, 
$F_{V,\R}$ and $G_{W,\R}$ may be endowed with the Euclidean norms $\Vert.\Vert_{\pi_\ast \Eb_U}$, $\Vert.\Vert_{\pi_\ast \Fb_V}$ and $\Vert.\Vert_{\pi_\ast \Gb_W}$. 

The norm $\Vert.\Vert_{\pi_\ast \Eb_U}$ is characterized by the fact that the composite map
$$p'_{U,\R}: E^\hilb_\R \hlra \Eh_\R \stackrel{p_{U,\R}}\lra  E_{U, \R}$$
--- which is onto since the image of $E^\hilb_\R$ is dense in $\Eh_\R$ ---  is a co-isometry when $E^\hilb_\R$ is equipped with the Hilbert norm attached to $\Ebh$ and $E_{U, \R}$ is equipped with $\Vert.\Vert_{\pi_\ast \Eb_U}$. Similar remarks apply to $\Vert.\Vert_{\pi_\ast \Fb_V}$ and $\Vert.\Vert_{\pi_\ast \Gb_W}$. Finally, from the last two lines of (\ref{bigCD}), we deduce a commutative diagram
\begin{equation}\label{bigCDHilbR}
\begin{CD}
  0 @>>> E_{U,\R} @>i_{UW,\R}>> F_{V,\R} @>q_{UW,\R}>> G_{W,\R} @>>> 0 \\
   @.          @A{p'_{U,\R}}AA               @A{p'_{V,\R}}AA                 @A{p'_{W,\R}}AA       @. \\
  0 @>>> E^\hilb_\R @>{i}_\R>> F^\hilb_\R @>q_\R>> G^\hilb_\R @>>> 0 
\end{CD}
\end{equation}
where the vertical arrows are co-isometries.

\begin{lemma}\label{UWnorms}
Let us keep the previous notation.

1) For any $e \in E_{U,\R}$, we have
\begin{equation}\label{iUWdecreases}
\Vert i_{UW,\R}(e) \Vert_{\pi_\ast \Fb_V} \leq \Vert e \Vert_{\pi_\ast \Eb_U}.
\end{equation}
Moreover $\Vert i_{UW,\R}(e) \Vert_{\pi_\ast \Fb_V}$ is a non-increasing function of $W \in \cU(\Gh)$ and
\begin{equation}\label{iUWlim}
\lim_{W \in \cU(\Gh)} \Vert i_{UW,\R}(e) \Vert_{\pi_\ast \Fb_V} = \Vert e \Vert_{\pi_\ast \Eb_U}. 
\end{equation}

2) The map $q_{UW,\R}: F_{V,\R} \lra G_{W,\R}$ is a co-isometry.
\end{lemma}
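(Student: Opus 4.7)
First I would reduce to the case $\OK=\Z$ by applying the direct image $\pi_\ast$, which preserves admissible short exact sequences of pro-hermitian vector bundles (observation (i) of paragraph \ref{shortpro}); all norms then become those appearing in the statement. Admissibility translates into $i_\R:E^\hilb_\R\to F^\hilb_\R$ being an isometry and $q_\R:F^\hilb_\R\to G^\hilb_\R$ a co-isometry of real Hilbert spaces, while the vertical maps $p'_{U,\R}$, $p'_{V,\R}$, $p'_{W,\R}$ in (\ref{bigCDHilbR}) are co-isometries by the very construction of the hermitian structures on $\Eb_U$, $\Fb_V$, $\Gb_W$. All computations take place inside this commutative diagram of real Hilbert spaces.

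I would attack part 2) first, by unwinding two nested infima. For $g\in G_{W,\R}$, using the co-isometry $p'_{V,\R}$ to parametrize lifts of $f\in F_{V,\R}$ together with the relation $q_{UW,\R}\circ p'_{V,\R}=p'_{W,\R}\circ q_\R$, the quotient norm on $G_{W,\R}$ induced by $q_{UW,\R}$ equals
$$\inf_{\psi\in G^\hilb_\R,\,p'_{W,\R}(\psi)=g}\;\inf_{\varphi\in F^\hilb_\R,\,q_\R(\varphi)=\psi}\Vert\varphi\Vert_{F^\hilb_\R};$$
the co-isometry $q_\R$ collapses the inner infimum to $\Vert\psi\Vert_{G^\hilb_\R}$, and $p'_{W,\R}$ then collapses the outer one to $\Vert g\Vert_{\pi_\ast\Gb_W}$. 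Surjectivity of $q_{UW,\R}$ is inherited from that of $p'_{W,\R}\circ q_\R$.

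For inequality (\ref{iUWdecreases}), I would take an orthogonal lift $\xi_0\in(\ker p'_{U,\R})^\perp$ of $e$ with $\Vert\xi_0\Vert=\Vert e\Vert_{\pi_\ast\Eb_U}$; then $\varphi_0:=i_\R(\xi_0)$ satisfies $p'_{V,\R}(\varphi_0)=i_{UW,\R}(e)$ by commutativity and $\Vert\varphi_0\Vert=\Vert\xi_0\Vert$ by the isometry of $i_\R$, whence the inequality. Monotonicity in $W$ follows from $W\subset W'\Rightarrow V\subset V'$, which produces a surjective admissible morphism $\Fb_V\twoheadrightarrow\Fb_{V'}$ intertwining $i_{UW,\R}$ and $i_{UW',\R}$, and therefore norm-decreasing from $F_{V,\R}$ to $F_{V',\R}$.

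The main obstacle will be the limit (\ref{iUWlim}). Setting $V^\hilb_\R:=\ker p'_{V,\R}=F^\hilb_\R\cap V_\R$, the Pythagorean identity combined with the co-isometry $p'_{V,\R}$ yields
$$\Vert i_{UW,\R}(e)\Vert^2_{\pi_\ast\Fb_V}=\Vert\varphi_0\Vert^2-\Vert P_{V^\hilb_\R}(\varphi_0)\Vert^2,$$
so it suffices to show $P_{V^\hilb_\R}(\varphi_0)\to 0$. To identify $\bigcap_W V^\hilb_\R$, I would observe that for any $v\in V_\R=\hat i_\R(U_\R)+\hat s_\R(W_\R)$ one has $\hat q_\R(v)\in W_\R$ (using $\hat q\circ\hat i=0$ and $\hat q\circ\hat s=\mathrm{Id}$); intersecting over a countable cofinal sequence in $\cU(\Gh)$ coming from the product decomposition of Proposition \ref{prodfin} gives $\hat q_\R(v)\in\bigcap_W W_\R=0$, hence $v\in\hat i_\R(\Eh_\R)$; writing $v=u_W+\hat s_\R(w_W)$ with $w_W=\hat q_\R(v)=0$ then forces $v\in\hat i_\R(U_\R)$. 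The cartesian property of (\ref{smallCD2}) yields $\bigcap_W V^\hilb_\R=i_\R(\ker p'_{U,\R})$, to which $\varphi_0=i_\R(\xi_0)$ is orthogonal by the choice of $\xi_0$ and the isometry of $i_\R$. The conclusion follows from the classical Hilbert-space fact that, for a decreasing sequence of closed subspaces $C_n\supset C_{n+1}\supset\cdots$ with $C=\bigcap_n C_n$, the orthogonal projections $P_{C_n}$ converge strongly to $P_C$, applied to the cofinal sequence of $V^\hilb_\R$'s.
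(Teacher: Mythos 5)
Your proof is correct and follows essentially the same route as the paper: the inequality and part 2) come from the isometry/co-isometry structure of the diagram (\ref{bigCDHilbR}), and the limit (\ref{iUWlim}) is obtained by identifying $\bigcap_W V_\R$ with $\hat i_\R(U_\R)$ and invoking the strong convergence of orthogonal projections along a decreasing sequence of closed subspaces, which is exactly the paper's Proposition \ref{Hilbertincr}. Your write-up merely spells out details (the choice of the orthogonal lift $\varphi_0=i_\R(\xi_0)$ and the cartesian square (\ref{smallCD2})) that the paper leaves implicit.
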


The proof of (\ref{iUWlim}) will rely on the following simple proposition concerning orthogonal projections in Hilbert spaces, that we shall leave as an exercise:
\begin{proposition}\label{Hilbertincr}
 Let $(H, \Vert . \Vert)$ be a (real of complex) Hilbert space and let
 $$H_0 \supseteq H_1 \supseteq H_2 \supseteq H_3 \supseteq \ldots \supseteq H_n \supseteq H_{n+1}  \ldots$$
 be a decreasing sequence of closed vector subspaces of $H$.
 
 For any $n \in \N,$ let $$p_n: H \lra H_n^\perp$$ denote the orthogonal projection from $H$ onto the orthogonal complement  $H_n^\perp$ of $H_n$ in $H$, and let $$p_\infty : H \lra (\bigcap_{n \in \N} H_n)^\perp$$ be the orthogonal projection onto the orthogonal complement of $\bigcap_{n \in \N} H_n$. 
 
 Then, for any $x \in H,$
 $$\lim_{n \ra + \infty} \Vert p_n(x) - p_\infty(x) \Vert = 0,$$
 and  $(\Vert p_n(x)\Vert)_{n \in \N}$ is a non-decreasing sequence in $\R_=$ of limit $\Vert p_\infty(x) \Vert$.
 \qed 
\end{proposition}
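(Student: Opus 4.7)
My plan is to reduce the statement to the standard fact that, for an increasing family of closed subspaces whose union is dense, the corresponding orthogonal projections converge strongly to the identity on the closure. Concretely, set $K_n := H_n^\perp$; since $(H_n)$ is decreasing, $(K_n)$ is an increasing sequence of closed subspaces. I would first check the identity
\begin{equation*}
\Bigl(\bigcap_{n\in\N} H_n\Bigr)^\perp = \overline{\bigcup_{n\in\N} K_n},
\end{equation*}
by a one-line duality argument: a vector is orthogonal to $\bigcup_n K_n$ iff it belongs to every $H_n$, i.e.\ to $\bigcap_n H_n$, so the right-hand side equals $(\bigcap_n H_n)^{\perp\perp\perp} \cap H$ computed via the bi-orthogonal theorem. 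Call this common subspace $K_\infty$, so that $p_\infty$ is the orthogonal projection onto $K_\infty$.

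Next, I would establish the tower relation $p_n = p_n \circ p_{n+1}$, and more generally $p_n = p_n \circ p_\infty$. The point is that for any $y\in K_{n+1}$, the vector $y - p_n(y)$ lies in $K_n^\perp$, and for any $z\in K_\infty$, one has $z - p_n(z) \in K_n^\perp$ by the same argument applied to the limit space. In particular, writing $x = p_\infty(x) + (x - p_\infty(x))$ with $x - p_\infty(x)\in K_\infty^\perp \subseteq K_n^\perp$, we get $p_n(x) = p_n(p_\infty(x))$. From the tower relation $p_n(p_{n+1}(x)) = p_n(x)$ together with the Pythagorean identity in $K_{n+1}$,
\begin{equation*}
\Vert p_{n+1}(x) \Vert^2 = \Vert p_n(x) \Vert^2 + \Vert p_{n+1}(x) - p_n(x)\Vert^2,
\end{equation*}
the monotonicity of $(\Vert p_n(x)\Vert)_{n\in\N}$ follows at once, and the same identity iterated yields $\Vert p_\infty(x)\Vert^2 = \Vert p_n(x)\Vert^2 + \Vert p_\infty(x) - p_n(x)\Vert^2$ once the strong convergence is known.

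To prove the strong convergence $p_n(x) \to p_\infty(x)$, I would replace $x$ by $p_\infty(x) \in K_\infty$ using $p_n(x) = p_n(p_\infty(x))$, and then invoke the following density argument. Let $y\in K_\infty = \overline{\bigcup_m K_m}$ and fix $\varepsilon > 0$. Pick $m_0$ and $y' \in K_{m_0}$ with $\Vert y - y'\Vert < \varepsilon$. For any $n\geq m_0$, the vector $y'$ lies in $K_n$, and since $p_n(y)$ is the nearest point of $K_n$ to $y$,
\begin{equation*}
\Vert y - p_n(y)\Vert \leq \Vert y - y'\Vert < \varepsilon.
\end{equation*}
Hence $p_n(y) \to y = p_\infty(x)$, as desired. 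The last assertion on norms then follows from continuity of $\Vert\cdot\Vert$.

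I do not expect any genuine obstacle: the only subtlety is to verify the identification of $K_\infty$ as $(\bigcap H_n)^\perp$ and the tower relation $p_n = p_n \circ p_\infty$, both of which are immediate from the orthogonal decomposition $H = K_\infty \oplus K_\infty^\perp = K_\infty \oplus \bigcap_n H_n$.
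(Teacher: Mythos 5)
Your proof is correct. The paper states Proposition \ref{Hilbertincr} without proof (it is explicitly left as an exercise, hence the \qed right after the statement), and your argument supplies exactly the standard details one would expect: pass to the increasing sequence $K_n:=H_n^\perp$, identify $\bigl(\bigcap_{n\in\N} H_n\bigr)^\perp$ with $\overline{\bigcup_{n\in\N}K_n}$, use the tower relation $p_n=p_n\circ p_{n+1}=p_n\circ p_\infty$ together with Pythagoras for the monotonicity of $\Vert p_n(x)\Vert$, and conclude the strong convergence by the nearest-point property of projections applied to an approximant $y'\in K_{m_0}$ of $p_\infty(x)$. The only cosmetic point is the identification step: rather than the triple-orthogonal phrasing, it is cleaner to note that $\bigl(\bigcup_n K_n\bigr)^\perp=\bigcap_n H_n$ and hence $\overline{\bigcup_n K_n}=\bigl(\bigl(\bigcup_n K_n\bigr)^\perp\bigr)^\perp=\bigl(\bigcap_n H_n\bigr)^\perp$; this is what your duality remark amounts to, and no content is missing.
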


\begin{proof}[Proof of Lemma \ref{UWnorms}] 
 
1) The estimate (\ref{iUWdecreases}) follows from the fact that, in the commutative diagram (\ref{bigCDHilbR}), the map $i_\R$ is an isometry and $p'_{U,\R}$ and $p'_{V,\R}$ are co-isometries. 

When $W$ decreases to $\{0\}$, $V$ decreases to $\hat{i}(U)$ and $V_\R$ decreases to $\hat{i}(U)_\R.$ More precisely, if $(W_n)_{n\in\N}$ is a defining sequence in $\cU(\Gh)$ and if $V_n := \hat{i}(U) + \hat{s}(W_n),$ then 
$$\bigcap_{n \in \N} V_{n,\R} = \hat{i}(U)_\R.$$ 

This shows that  $\Vert i_{UW,\R}(e) \Vert_{\pi_\ast \Fb_V}$ is a non-increasing function of $W \in \cU(\Gh)$. Moreover the equality (\ref{iUWlim}) follows from Proposition \ref{Hilbertincr} applied to $H= F_\R^\hilb$ and $H_n := V_{n,\R} \cap F_\R^\hilb.$

2) In the commutative diagram (\ref{bigCDHilbR}), the maps $p'_{V,\R}$, $q_\R$ and $p'_{W,\R}$ are co-isometries. Therefore $q_{UW,\R}$ is a co-isometry. \end{proof}

We shall  denote by $\Eb_U^W$  the Hermitian vector bundle over $\Spec \OK$ defined by the $\OK$-modules $E_U$ equipped with the Hermitian structure induced by the one of $\Fb_V$. In other words, the Hermitian structure  on $\Eb_U^W$% and $\Gb_W^U$ are 
is defined in a way that makes the second line of (\ref{bigCDUW}) an admissible short exact sequence of Hermitian vector bundles over $\Spec \OK$:
\begin{equation}\label{CDUWadm}
\begin{CD}
0 @>>> \Eb_U^W @>i_{UW}>> \Fb_V @>q_{UW}>> \Gb_W @>>> 0.
\end{CD}
\end{equation}
(Recall that, according to Lemma \ref{UWnorms}, 2), the map  $q_{UW,\R}$ is a co-isometry.)

\begin{lemma}\label{UWrecap} For any $(U,W)$ in $\cU(\Eh) \times \cU(\Gh),$ we have:
 \begin{equation}\label{UV1}
\hot(\Fb_{\hat{i}(U) + \hat{s}(W)}) \leq \hot(\Eb_U^W) + \hot(\Gb_W).
\end{equation}

 Moreover, for any $U$ in $\cU(\Eh),$
$\hot(\Eb_U^W)$ is a non-decreasing function of $W \in \cU(\Gh)$ and 
\begin{equation}\label{UV3}
\lim_{W \in \cU(\Gh)} \hot(\Eb_U^W) = \hot(\Eb_U).
\end{equation}
\end{lemma}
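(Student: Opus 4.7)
The proof splits naturally into the two assertions, which I plan to treat in order.

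For the inequality (\ref{UV1}), the plan is simply to invoke the admissible short exact sequence of (finite-rank) hermitian vector bundles
\[
0 \lra \Eb_U^W \stackrel{i_{UW}}{\lra} \Fb_V \stackrel{q_{UW}}{\lra} \Gb_W \lra 0
\]
displayed in (\ref{CDUWadm}), with $V = \hat{i}(U) + \hat{s}(W)$, and apply the subadditivity of $\hot$ established in Proposition \ref{ineqshorttheta}. This yields immediately
$\hot(\Fb_{\hat{i}(U)+\hat{s}(W)}) = \hot(\Fb_V) \leq \hot(\Eb_U^W) + \hot(\Gb_W),$
which is precisely (\ref{UV1}). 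Nothing more is required here.

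For the second assertion, the key input is that, since $i_{UW}$ is by construction an admissible injection in (\ref{CDUWadm}), the hermitian norm defining $\Eb_U^W$ is characterized by
\[
\Vert e \Vert_{\pi_\ast \Eb_U^W} = \Vert i_{UW,\R}(e) \Vert_{\pi_\ast \Fb_V} \qquad \text{for every } e \in E_{U,\R}.
\]
By Lemma \ref{UWnorms}, 1), the right-hand side is a non-increasing function of $W \in \cU(\Gh)$ (ordered by inclusion), and its filtered limit as $W$ decreases to $\{0\}$ is $\Vert e \Vert_{\pi_\ast \Eb_U}$. Consequently, for any $W_1 \subseteq W_2$ in $\cU(\Gh)$ and any $e\in E_U$,
\[
\Vert e \Vert_{\pi_\ast \Eb_U^{W_1}} \geq \Vert e \Vert_{\pi_\ast \Eb_U^{W_2}} \geq \Vert e \Vert_{\pi_\ast \Eb_U^{\hat{s}^{-1}(0)}}\ \text{in spirit},
\]
and therefore $e^{-\pi \Vert e \Vert_{\pi_\ast \Eb_U^{W_1}}^2} \leq e^{-\pi \Vert e \Vert_{\pi_\ast \Eb_U^{W_2}}^2} \leq e^{-\pi \Vert e \Vert_{\pi_\ast \Eb_U}^2}$. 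Summing these inequalities over the countable set $E_U$ and taking logarithms establishes the monotonicity
$\hot(\Eb_U^{W_1}) \leq \hot(\Eb_U^{W_2}) \leq \hot(\Eb_U).$

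Finally, to identify the limit in (\ref{UV3}), I will pick a defining sequence $(W_n)_{n \in \N}$ in $\cU(\Gh)$, reducing the filtered limit to a countable monotone one. For each $e \in E_U$, the pointwise monotone convergence
$e^{-\pi \Vert e \Vert_{\pi_\ast \Eb_U^{W_n}}^2} \nearrow e^{-\pi \Vert e \Vert_{\pi_\ast \Eb_U}^2}$
furnished by Lemma \ref{UWnorms}, 1), together with the Gaussian upper bound $e^{-\pi \Vert e \Vert_{\pi_\ast \Eb_U}^2}$ whose sum over $E_U$ is finite (since $\Eb_U$ has finite rank), allows me to apply the monotone (equivalently, dominated) convergence theorem for the counting measure on $E_U$. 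This gives
$\lim_{n} \sum_{e\in E_U} e^{-\pi \Vert e \Vert_{\pi_\ast \Eb_U^{W_n}}^2} = \sum_{e \in E_U} e^{-\pi \Vert e \Vert_{\pi_\ast \Eb_U}^2},$
whence $\lim_{W} \hot(\Eb_U^W) = \hot(\Eb_U)$, as required. I do not expect any serious obstacle: the whole statement is essentially a bookkeeping of the finite-rank subadditivity of $\hot$ together with the monotonicity and continuity properties of the subquotient norms already packaged in Lemma \ref{UWnorms}.
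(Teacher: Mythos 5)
Your handling of \eqref{UV1} coincides with the paper's: one applies the subadditivity of $\hot$ (Proposition \ref{ineqshorttheta}) to the admissible short exact sequence \eqref{CDUWadm}, and nothing more is needed. For the second assertion the paper also simply invokes Lemma \ref{UWnorms}, 1), so your plan of summing the Gaussians and passing to the limit is the intended argument; however, all of your comparisons with $\Eb_U$ point the wrong way. By \eqref{iUWdecreases} one has $\Vert e\Vert_{\pi_\ast\Eb_U^W}=\Vert i_{UW,\R}(e)\Vert_{\pi_\ast\Fb_V}\leq\Vert e\Vert_{\pi_\ast\Eb_U}$, and by \eqref{iUWlim} these norms \emph{increase} to $\Vert e\Vert_{\pi_\ast\Eb_U}$ as $W$ shrinks. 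Hence $e^{-\pi\Vert e\Vert^2_{\pi_\ast\Eb_U^W}}\geq e^{-\pi\Vert e\Vert^2_{\pi_\ast\Eb_U}}$: your chain ending in ``$\leq e^{-\pi\Vert e\Vert^2_{\pi_\ast\Eb_U}}$'' and the resulting claim $\hot(\Eb_U^{W_2})\leq\hot(\Eb_U)$ are false (in fact $\hot(\Eb_U^{W})\geq\hot(\Eb_U)$, and the limit in \eqref{UV3} is approached from above). Correspondingly the summands converge \emph{downwards}, $e^{-\pi\Vert e\Vert^2_{\pi_\ast\Eb_U^{W_n}}}\searrow e^{-\pi\Vert e\Vert^2_{\pi_\ast\Eb_U}}$, and the function $e^{-\pi\Vert\cdot\Vert^2_{\pi_\ast\Eb_U}}$ you offer as a dominating function is a minorant, not a majorant, so neither upward monotone convergence nor dominated convergence applies as you have stated them.

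The repair is immediate: either use downward monotone convergence, which is legitimate because the initial term $\sum_{e\in E_U}e^{-\pi\Vert e\Vert^2_{\pi_\ast\Eb_U^{W_0}}}=\exp\hot(\Eb_U^{W_0})$ is finite ($\Eb_U^{W_0}$ being a finite-rank hermitian vector bundle), or dominate the whole sequence by the summable function $e^{-\pi\Vert\cdot\Vert^2_{\pi_\ast\Eb_U^{W_0}}}$ attached to the largest member $W_0$ of the defining sequence. Note that the monotonicity assertion of the lemma itself, $W_1\subseteq W_2\Rightarrow\hot(\Eb_U^{W_1})\leq\hot(\Eb_U^{W_2})$, is correctly derived from the first inequality in your chain; only the (unneeded) comparison with $\hot(\Eb_U)$ and the convergence mechanism are affected.
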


\begin{proof} The estimate (\ref{UV1}) follows the subadditivity of $\hot$ for admissible short exact sequences of Hermitian vector bundles (see Proposition \ref{ineqshorttheta}) applied to (\ref{CDUWadm}).

The second assertion in Lemma \ref{UWrecap}
follows from part 1) of Lemma \ref{UWnorms}.
\end{proof}

\begin{lemma}\label{UWrecapbis} For any $(U,W)$ in $\cU(\Eh) \times \cU(\Gh),$ we have
 \begin{equation}\label{UV1BIS}
\hut(\Fb_{\hat{i}(U) + \hat{s}(W)}) \leq \hut(\Eb_U^W) + \hut(\Gb_W).
\end{equation}
and 
\begin{equation}\label{UV2BIS}
\hut(\Eb_U^W) \leq \hut(\Eb_U).
\end{equation}
\end{lemma}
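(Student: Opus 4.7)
The plan is to prove the two inequalities of Lemma~\ref{UWrecapbis} by dualizing, in essence, the arguments used for Lemma~\ref{UWrecap}, exploiting the machinery we have already set up in diagram (\ref{CDUWadm}) and in Lemma~\ref{UWnorms}.

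For the first inequality (\ref{UV1BIS}), I would simply apply the subadditivity of $\hut$ for admissible short exact sequences of hermitian vector bundles to the sequence (\ref{CDUWadm}). This subadditivity is the content of Proposition~\ref{ineqshorttheta} combined with the identity (\ref{hothutshort}): indeed, for any admissible short exact sequence of hermitian vector bundles, the quantity $\hut(\Eb) - \hut(\Fb) + \hut(\Gb)$ equals $\hot(\Eb) - \hot(\Fb) + \hot(\Gb)$ (thanks to Poisson-Riemann-Roch together with the additivity of the Arakelov degree), and the latter is non-negative. Applied to the admissible short exact sequence $0 \to \Eb_U^W \to \Fb_V \to \Gb_W \to 0$ with $V=\hat{i}(U)+\hat{s}(W)$, this yields exactly (\ref{UV1BIS}).

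For the second inequality (\ref{UV2BIS}), the key observation is that $\Eb_U$ and $\Eb_U^W$ share the same underlying $\OK$-module $E_U$, and only their hermitian structures differ. The first part of Lemma~\ref{UWnorms} provides precisely the comparison we need: for every $e \in E_{U,\R}$,
$$\Vert e \Vert_{\pi_\ast \Eb_U^W} = \Vert i_{UW,\R}(e)\Vert_{\pi_\ast \Fb_V} \leq \Vert e \Vert_{\pi_\ast \Eb_U}.$$
This inequality says that the identity map $\mathrm{Id}_{E_U} : E_U \to E_U$ defines a morphism in $\Hom_{\OK}^{\leq 1}(\Eb_U, \Eb_U^W)$, and of course $(\mathrm{Id}_{E_U})_K$ is surjective. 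The second part of Proposition~\ref{ineqmortheta} then gives $\hut(\Eb_U^W) \leq \hut(\Eb_U)$, which is (\ref{UV2BIS}).

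There is no real obstacle here: both inequalities reduce to results already established, namely the subadditivity of $\hut$ in admissible short exact sequences and the monotonicity of $\hut$ under surjective morphisms of operator norm at most $1$. The only point that requires minimal care is the reformulation of the norm inequality of Lemma~\ref{UWnorms}, 1), as the assertion that $\mathrm{Id}_{E_U}$ lies in $\Hom_{\OK}^{\leq 1}(\Eb_U, \Eb_U^W)$, which is immediate from the definitions.
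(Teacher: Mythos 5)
Your proof is correct and follows exactly the paper's own argument: (\ref{UV1BIS}) by applying the subadditivity of $\hut$ (Proposition \ref{ineqshorttheta} together with (\ref{hothutshort})) to the admissible sequence (\ref{CDUWadm}), and (\ref{UV2BIS}) by reading (\ref{iUWdecreases}) as the statement that $Id_{E_U}$ is norm-decreasing from $\Eb_U$ to $\Eb_U^W$ and invoking Proposition \ref{ineqmortheta}, 2). Nothing to add.
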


\begin{proof} The estimate (\ref{UV1BIS}) follows from the subadditivity of $\hut$ for admissible short exact sequences of Hermitian vector bundles (see Proposition \ref{ineqshorttheta} and equation (\ref{hothutshort}))
applied to (\ref{CDUWadm}). 

The estimate (\ref{iUWdecreases}) shows that the identity map $Id_{E_U}: \Eb_U \lra \Eb_U^W$ is norm decreasing. This implies (\ref{UV2BIS}) (see Proposition \ref{ineqmortheta}, 2)).
\end{proof}

\subsection{Proof of Theorem \ref{subaddpro}: II. Completion of the proof}

We keep the notation of the previous paragraph.

1) (i) Let $P$ be a finitely generated $\OK$-submodule of $\Fh \cap F_\R^\hilb.$

According to Lemma (\ref{CartfromCD}), the inverse images of $P$ by $\hat{i}$ and by $i_\R$ coincide and define a (finitely generated) $\OK$-submodule of $\Eh \cap E_\R^\hilb.$

Similarly, by the commutativity of (\ref{bigCD}), the images of $P$ by $\hat{q}$ and by $q_\R$ coincide and define a (finitely generated) $\OK$-submodule of $\Gh \cap G_\R^\hilb.$

Let $\overline{i^{-1}(P)}$, $\overline{P}$ and $\overline{q(P)}$ the Hermitian vector bundles over $\Spec \OK$ defines by $i^{-1}(P)$, $P$ and $q(P)$ equipped with the restrictions of the Hermitian structures on $\Ebh$, $\Fbh,$ and $\Gbh$. Let $\overline{q(P)}^{\rm quot}$ denotes the Hermitian vector bundle over $\Spec \OK$ defined by $q(P)$ equipped with the Hermitian structures quotient, via the map $q: P \lra q(P)$, of  the ones on $\overline{P}$.

By construction,
\begin{equation}\label{Pqadm}
0 \lra \overline{i^{-1}(P)} \stackrel{i}{\lra} \overline{P} \stackrel{q}{\lra} \overline{q(P)}^{\rm quot} \lra 0
\end{equation}
is an admissible short exact sequence of Hermitian vector bundles over $\Spec \OK$, and accordingly, by Proposition \ref{ineqshorttheta}, we have
\begin{equation}\label{Pquot}
\hot(\overline{P}) \leq \hot(\overline{i^{-1}(P)}) + \hot(\overline{q(P)}^{\rm quot}).
\end{equation}

Besides the map $q: \overline{P} \lra \overline{q(P)}$ has operator norms $\leq 1$, and therefore the identity map $Id_{q(P)}: \overline{q(P)}^{\rm quot} \lra \overline{q(P)}$ also. Accordingly, by Proposition \ref{ineqmortheta}, 1), we have
\begin{equation}\label{qPquot}
\hot(\overline{q(P)}^{\rm quot}) \leq \hot(\overline{q(P)}).
\end{equation}

From (\ref{Pquot}) and (\ref{qPquot}), we derive:
\begin{align*}
\hot(\overline{P}) &  \leq \hot(\overline{i^{-1}(P)}) + \hot(\overline{q(P)})\\
                           &  \leq \lhot(\Ebh) + \lhot(\Gbh).
\end{align*}
This proves (\ref{subaddlhotpro}).

(ii) Let $(U_j)_{j \in \N}$ (resp. $(W_k)_{k \in \N}$) be a defining filtration $\cU(\Eh)$ (resp. in $\cU(\Gh)$) such that
\begin{equation}\label{Uj}
\lim_{j\ra +\infty} \hot(\Eb_{U_j}) = \uhot(\Ebh)
\end{equation}
and
\begin{equation}\label{Wk}
\lim_{k \ra +\infty} \hot(\Eb_{W_k}) = \uhot(\Gbh).
\end{equation}

Let us choose $\epsilon$ a positive real number. There exists $j_0$ (resp. $k_0$) in $\N$ such that, for any integer $j\geq j_0$ (resp. $k\geq k_0$), we have 
\begin{equation}\label{Ujbis}
 \hot(\Eb_{U_j}) \leq \uhot(\Ebh) + \epsilon/3
\end{equation}
and
\begin{equation}\label{Wkbis}
\hot(\Gb_{W_k}) \leq \uhot(\Gbh) + \epsilon/3.
\end{equation}
Moreover, according to (\ref{UV3}), for any $j\in \N,$ there exists $k(j) \in \N$ such that, for any $k \in \N_{\geq k(j)},$
\begin{equation}\label{UjWk}
 \hot(\Eb_{U_j}^{W_k}) \leq \hot(\Ebh_{U_j}) + \epsilon/3.
\end{equation}

Together with (\ref{UV1}), %and (\ref{UV2}), 
these three estimates (\ref{Ujbis})--(\ref{UjWk}) show that, for any $(j,k) \in \N^2,$
$$j \geq j_0 \mbox{ and } k \geq \max(k_0, k(j)) \Longrightarrow
\hot(\Fb_{\hat{i}(U_j) + \hat{s}(W_k)}) \leq \hot(\Eb) + \hot(\Gb)+ \epsilon.$$
Since the submodules $\hat{i}(U_j) + \hat{s}(W_k)$ constitute a basis of neighborhoods of $0$ in $\cU(\Fb)$, this shows that
$$\liminf_{V \in \cU(\Fh)} \hot(\Fb_V) \leq \hot(\Eb) + \hot(\Gb)+ \epsilon.$$
As $\epsilon$ is arbitrary in $\Rpa$, this establishes (\ref{subadduhotpro}).

(iii) Let us recall that $\hut(\Ebh_U)$ (resp. $\hut(\Gbh_W)$) is a non-increasing function of $U$ in $\cU(\Eh)$ (resp. of  $W$ in $\cU(\Gh)$) and that 
\begin{equation}\label{hutEG}
\hut(\Eb) = \lim_{U\in\cU(\Eh)} \hut(\Eb_{U}) \mbox{ (resp. } 
\hut(\Gb) = \lim_{W\in\cU(\Gh)} \hut(\Gb_{W})).
\end{equation}
Similarly $\hut(\Fb_{\hat{i}(U) + \hat{s}(W)})$ is a non-increasing function of $U$ in $\cU(\Eh)$ and  $W$ in $\cU(\Gh),$ and 
\begin{equation}\label{hutF}
\hut(\Fb) = \lim_{U\in\cU(\Eh), V\in\cU(\Gh)}\hut(\Fb_{\hat{i}(U) + \hat{s}(W)}).
\end{equation}

The estimates (\ref{UV1BIS}) and (\ref{UV2BIS}) show that, for any $(U,W)$ in $\cU(\Eh) \times \cU(\Gh),$
\begin{equation}\label{UWeak}
\hut(\Fb_{\hat{i}(U) + \hat{s}(W)}) \leq \hut(\Eb_U) + \hut(\Gb_W).
\end{equation}

The estimate (\ref{subaddhutpro})
$$\hut(\Fbh) \leq \hut(\Ebh) + \hut(\Gbh)$$
directly follows from this estimate, together with the  expressions (\ref{hutEG}) and (\ref{hutF}) for $\hut(\Ebh),$ $\hut(\Gbh)$, and $\hut(\Fbh)$.%, and with the last assertion of Lemma (\ref{UWrecapbis}).

2) Let us now assume that $\Ebh$ is a Hermitian vector bundle $\Eb.$

 Then $i(E)$ is contained in $\Fh \cap F_\R^\hilb$. Therefore, with the notation of 1) (i), $\lhot(\Fb)$ is the supremum of the $\hot(\overline{P})$, where $P$ denotes a finitely generated $\OK$-submodule of 
 $\Fh \cap F_\R^\hilb$ containing  $i(P).$

For any such $P$, its image $q(P)$ is contained in $\Gh \cap G_\R^\hilb$ and the admissible short exact sequence (\ref{Pqadm}) takes the form: 
\begin{equation}\label{EPadm}
0 \lra \Eb \stackrel{i}{\lra} \overline{P} \stackrel{q}{\lra} \overline{q(P)} \lra 0.
\end{equation}

Conversely, for any finitely generated $\OK$-submodule $Q$ of $\Gh \cap G_\R^\hilb$, its inverse image by $q$, $P:= q^{-1}(Q),$ is a finitely generated $\OK$-submodule  $\Fh \cap F_\R^\hilb$ containing  $i(P)$ such that 
$Q = q(P).$

The inequality (\ref{morse4}) (or equivalently (\ref{morse4'})) applied to (\ref{EPadm}) takes the form:
$$\hot(\overline{q(P)}) \leq \hot(\overline{P}) - \dega \pi_\ast \Eb.$$
From this inequality, by taking the supremum over the submodules $P$ as above, we obtain  (\ref{subaddproEfinitel}).

3) Let us now assume that $\Gbh$ is a Hermitian vector bundle $\Gb$.

For any $U \in \cU(\Eh),$ from the admissible short exact sequence of pro-Hermitian vector bundles
\begin{equation*}
0 \lra \Ebh \stackrel{i}{\lra} \Fbh \stackrel{q}{\lra} \Gb \lra 0, 
\end{equation*}
we derive an admissible short exact sequence of Hermitian vector bundles:
\begin{equation}\label{shortU}
0 \lra \Eb_U {\lra} \Fbh_{\hat{i}(U)} {\lra} \Gbh \lra 0. 
\end{equation}

From (\ref{shortU}), we derive the relations
$$\hot(\Eb_U) \leq \hot(\Fbh_{\hat{i}(U)})$$
(by Proposition \ref{ineqmortheta}, 1))   and 
$$ \hot(\Fbh_{\hat{i}(U)}) - \hut(\Fbh_{\hat{i}(U)}) =  \hot(\Eb_U) - \hut(\Eb_U) + \dega \pi_\ast \Gb$$
(by the additivity of the Arakelov degree (\ref{degaddadm}) and the Poisson-Riemann-Roch formula (\ref{PoissonZ})).

Thus we obtain:
$$\hut(\Eb_U) \leq \hut(\Fbh_{\hat{i}(U)}) + \dega \pi_\ast \Gb,$$
and the estimate (\ref{subaddprohut}) follows by taking the limit when $U \in \cU(\Eh)$ shrinks to $\{0\}.$

\section{Short exact sequences and strongly summable pro-Hermitian vector bundles}\label{ShortStrong}

\subsection{} In this section, we establish the following permanence properties of strongly summable pro-Hermitian vector bundles, which may be seen as refinements of Corollary  \ref{finite rankcor}:

\begin{theorem}\label{permStS} Consider an admissible short exact sequence of pro-Hermitian vector bundles over the arithmetic curve $\Spec \OK$:
\begin{equation}
0 \lra \Ebh \stackrel{i}{\lra} \Fbh \stackrel{q}{\lra} \Gbh \lra 0.
\end{equation}

 1) When $\Ebh$ has finite rank, $\Fbh$ is strongly summable if and only if $\Gbh$ is strongly summable.
 
 2) When $\Gbh$ has finite rank, $\Fbh$ is strongly summable if and only if $\Ebh$ is strongly summable.
 \end{theorem}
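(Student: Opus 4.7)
The theorem has four implications to establish (two in each part); one is immediate from earlier results and the remaining three are handled by explicit constructions of summable projective systems. The key structural input is that the short exact sequence of topological $\OK$-modules extracted from the admissible sequence, namely $0\to\Eh\to\Fh\to\Gh\to 0$, is strict (by the formalism of $CTC_\OK$ recalled in paragraph \ref{shortpro}, which uses that $\OK$ satisfies $\mathbf{Ded_3}$) and hence \emph{split} in $CTC_\OK$ (Proposition \ref{STsplit}). Fix once and for all a continuous $\OK$-linear splitting $\hat s:\Gh\to\Fh$, yielding a topological direct sum decomposition $\Fh\simeq\hat i(\Eh)\oplus\hat s(\Gh)$ in which one summand becomes discrete thanks to the finite rank hypothesis on $\Eh$ (part 1)) or $\Gh$ (part 2)).

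The easy implication is ``$\Fbh$ SS $\Rightarrow$ $\Ebh$ SS'' of part 2): the morphism $\hat i:\Eh\to\Fh$ is injective and strict (as the first arrow of a strict short exact sequence in $CTC_\OK$), and $i\in\Hom_\OK^{\leq 1}(\Ebh,\Fbh)$ by admissibility, so Proposition \ref{permStTetc}, part 1), applies directly. For ``$\Ebh$ SS $\Rightarrow$ $\Fbh$ SS'' of part 2), the factor $\hat s(G)$ is discrete, so the sets $\hat i(U)$ with $U\in\cU(\Eh)$ form a basis of neighborhoods of $0$ in $\Fh$; given a defining sequence $(U_i)_i$ witnessing summability of $\Ebh\otimes\cOb(\eta)$ for some $\eta>0$, the sequence $V_i:=\hat i(U_i)$ lies in $\cU(\Fh)$, and an explicit calculation of the quotient norm on $\Fb_{V_i}=\Fh/V_i\simeq E_{U_i}\oplus G$, relying on the orthogonal decomposition $F_\R^\hilb\simeq E_\R^\hilb\oplus_\perp G_\R$ supplied by admissibility, identifies $\Fb_{V_i}$ \emph{isometrically} with the orthogonal direct sum $\Eb_{U_i}\oplus\Gb$. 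Consequently $\overline{\ker(\Fb_{V_{i+1}}\to\Fb_{V_i})}\simeq\overline{\ker(\Eb_{U_{i+1}}\to\Eb_{U_i})}$ isometrically, the summability assumption transfers, and $\Fbh$ satisfies $\mathbf{StS_2}$. The direction ``$\Gbh$ SS $\Rightarrow$ $\Fbh$ SS'' of part 1) is symmetric: now $\hat i(E)$ is discrete, the sets $\hat s(W)$ with $W\in\cU(\Gh)$ form a basis of neighborhoods of $0$ in $\Fh$, and for $(W_i)_i$ witnessing summability of $\Gbh$, the sequence $V_i:=\hat s(W_i)$ yields $\Fb_{V_i}\simeq\Eb\oplus\Gb_{W_i}$ isometrically, with matching kernels.

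The remaining implication, ``$\Fbh$ SS $\Rightarrow$ $\Gbh$ SS'' of part 1) with $\Eb$ of finite rank, is the main obstacle. The finiteness $\lhot(\Gbh\otimes\cOb(\epsilon))<+\infty$ follows at once from Theorem \ref{subaddpro}, inequality (\ref{subaddproEfinitel}), applied to the twisted sequence; but the equality $\lhot=\uhot$ required by $\mathbf{StS_1}$ (equivalently, the existence of a summable realization of $\Gbh$) must be established separately. Starting from a summable realization $\Fbh\otimes\cOb(\epsilon)\simeq\varprojlim\Fb_{V'_j}$ provided by $\mathbf{StS_2}$, for $j$ large the discreteness of $\hat i(E)$ in $\Fh$ forces $V'_j\cap\hat i(E)=\{0\}$; one then defines $W_j$ as the saturation in $\Gh$ of $\hat q(V'_j)$ (so that $W_j\in\cU(\Gh)$) and $V''_j:=\hat q^{-1}(W_j)$, obtaining admissible surjective morphisms $\Fb_{V'_j}\twoheadrightarrow\Gb_{W_j}$ of finite-rank kernel $V''_j/V'_j$ (whose $\OK$-rank equals that of $\hat i(E)$). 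The technical crux, which I would address via the snake lemma combined with the subadditivity estimates of Theorem \ref{subaddpro} applied to the short exact sequences $0\to V''_j/V'_j\to\Fb_{V'_j}\to\Gb_{W_j}\to 0$ (and analogously to the kernels of successive transitions $\Fb_{V'_{j+1}}\to\Fb_{V'_j}$ and $\Gb_{W_{j+1}}\to\Gb_{W_j}$), is to show that the $\theta$-invariants of the correction terms $V''_j/V'_j$, controlled by those of the fixed finite-rank $\hat i(E)$, do not destroy summability after a further small twist. The resulting summable system $\Gb_{W_\bullet}$ then witnesses $\Gbh\otimes\cOb(\epsilon')$ as strongly summable via criterion $\mathbf{StS_2}$.
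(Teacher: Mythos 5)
Three of your four implications follow essentially the paper's route (a continuous splitting $\hat s$ of $\hat q$, transport of defining sequences, and Proposition \ref{permStTetc} for the easy direction of part 2), but note that your claimed isometric identifications $\Fb_{V_i}\simeq\Eb_{U_i}\oplus\Gb$ and $\Fb_{V_i}\simeq\Eb\oplus\Gb_{W_i}$ are overstatements: admissibility provides an orthogonal splitting at the level of the Hilbert spaces $F_\sigma^\hilb$, but the induced admissible extensions of finite-rank quotients need not split isometrically (their classes in $\Exthun$ are in general non-zero). The weaker facts you actually use do hold --- when $\Gbh=\Gb$ and $V_k=\hat i(U_k)$ the kernels of the transition maps are genuinely isometric, and when $\Ebh=\Eb$ the induced bijection $\ker r_k\to\ker s_k$ is norm-non-increasing, which is the direction of the inequality needed for ``$\Gbh$ strongly summable $\Rightarrow\Fbh$ strongly summable'' --- so those three implications survive.

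The genuine gap is in ``$\Fbh$ strongly summable $\Rightarrow\Gbh$ strongly summable'' of part 1), which you explicitly defer as a ``technical crux'', and the route you sketch does not address the actual difficulty. The snake lemma and the subadditivity estimates of Theorem \ref{subaddpro} control the $\theta$-invariants of the correction terms $V''_j/V'_j$, but what is required is a comparison of the \emph{metrics} on the kernels of the transition maps of the two projective systems: the isomorphism of $\OK$-modules $\phi_k:\ker r_k\lrasim\ker s_k$ induced by $q_{k+1}$ is norm-non-increasing, which by Proposition \ref{ineqmortheta} yields only the wrong-way inequality $\hot(\overline{\ker r_k}\otimes\cOb(\epsilon))\leq\hot(\overline{\ker s_k}\otimes\cOb(\epsilon))$; to transfer summability from the $\overline{\ker r_k}$ to the $\overline{\ker s_k}$ one must show that $\Vert\phi_k^{-1}\Vert\lra 1$, so that $\phi_k^{-1}$ lies in $\Hom_\OK^{\leq 1}(\overline{\ker s_k},\overline{\ker r_k}\otimes\cOb(\eta))$ for $k$ large and every $\eta>0$. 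This is the content of the paper's Lemma \ref{PrelimEncore}, 3), resting on Proposition \ref{Hilbertencore}: the operator norm of the orthogonal projection of the fixed finite-dimensional subspace $i_\sigma(E_\sigma)$ onto the shrinking subspaces $V_{k,\sigma}\cap F_\sigma^\hilb$ tends to $0$. Nothing in your sketch supplies this estimate, and without it the convergence of $\sum_k\hot(\overline{\ker s_k}\otimes\cOb(\epsilon'))$ does not follow. (Your appeal to (\ref{subaddproEfinitel}) does give $\lhot(\Gbh\otimes\cOb(\epsilon))<+\infty$, but, as you yourself observe, $\mathbf{StS_1}$ also requires $\lhot=\uhot$, and that is exactly what remains unproved.)
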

 
\begin{corollary}\label{CorpermitStS} Consider a short exact sequence of pro-Hermitian vector bundles over the arithmetic curve $\Spec \OK$:
\begin{equation}\label{encoreshort}
0 \lra \Ebh \stackrel{i}{\lra} \Fbh \stackrel{q}{\lra} \Gbh \lra 0.
\end{equation}

1) When $\Ebh$ has finite rank, $\Fbh$ is $\theta$-finite  if and only if $\Gbh$ is $\theta$-finite.
 
 2) When $\Gbh$ has finite rank, $\Fbh$ is $\theta$-finite  if and only if $\Ebh$ is $\theta$-finite. 
 \end{corollary}
 
 The fact that $\Ebh$ is $\theta$-finite  when $\Fbh$ is $\theta$-finite is actually a special case of Proposition \ref{permStTetc}, 2).

\begin{proof}[Proof of Corollary \ref{CorpermitStS}] For any choice, compatible with complex conjugation, of Hilbert norms $\Vert . \Vert'_{F, \sigma}$ on the Hilbert spaces $F_\sigma^\hilb$, equivalent to the Hilbert norms $\Vert . \Vert_{F, \sigma}$ defining $\Fbh,$ there exists Hilbert norms, compatible with complex conjugation,  $\Vert . \Vert'_{E, \sigma}$ and $\Vert . \Vert'_{G, \sigma}$ on the Hilbert spaces $E_\sigma^\hilb$ and $G_\sigma^\hilb$, such that the diagram (\ref{encoreshort}) becomes an \emph{admissible} short exact sequence 
\begin{equation}\label{encoreshortprime}
0 \lra \Ebh' \stackrel{i}{\lra} \Fbh' \stackrel{q}{\lra} \Gbh' \lra 0.
\end{equation}
relating the pro-Hermitian vector bundles $$\Ebh':=(\Eh, (E_\sigma^\hilb, \Vert . \Vert'_{E,\sigma})_{\sigma: K \hra \C}),$$ $$\Fbh':=(\Fh, (F_\sigma^\hilb, \Vert . \Vert'_{F,\sigma})_{\sigma: K \hra \C}),$$ and $$\Gbh':=(\Gh, (G_\sigma^\hilb, \Vert . \Vert'_{\sigma})_{G, \sigma: K \hra \C}).$$
Namely, the norms $\Vert . \Vert'_{E, \sigma}$ and $\Vert . \Vert'_{G, \sigma}$ induced and quotient of the norms $\Vert . \Vert'_{F, \sigma}$.

Conversely, for any choice, compatible with complex conjugation,  of Hilbert norms $\Vert . \Vert'_{E, \sigma}$ and $\Vert . \Vert'_{G, \sigma}$ on $E_\sigma^\hilb$ and $G_\sigma^\hilb$,  equivalent to the Hilbert norms $\Vert . \Vert_{E, \sigma}$ and $\Vert . \Vert_{G, \sigma}$ defining $\Ebh$ and $\Gbh$, there exists Hilbert norms $\Vert . \Vert'_{F, \sigma}$ on the Hilbert spaces $F_\sigma^\hilb$, equivalent to the Hilbert norms $\Vert . \Vert_{F, \sigma}$ defining $\Fbh,$ such that (\ref{encoreshortprime}) is an admissible short exact sequence. 

Therefore 1) (resp. 2)) follows from Theorem   \ref{permStS}, 1) (resp. 2))  applied to the admissible short exact sequences  (\ref{encoreshortprime}), together with the characterization of $\theta$-finite pro-Hermitian  by Condition $\mathbf{\theta-Fin_1}$ in Corollary \ref{thetafinite}. 
\end{proof}
 
\subsection{Proof of Theorem \ref{permStS}: I. Preliminary}

The following lemma gathers various facts that will be needed in the proof of part 1) of Theorem \ref{permStS}.

\begin{lemma}\label{PrelimEncore} Consider an admissible short exact sequence of pro-Hermitian vector bundles over $\Spec \OK,$
 \begin{equation*}
0 \lra \Eb \stackrel{i}{\lra} \Fbh \stackrel{q}{\lra} \Gbh \lra 0,
\end{equation*}
with $\Eb$ of finite rank \emph{(or, equivalently, a Hermitian vector bundle over $\Spec \OK$).}

Let $(V_k)_{k \in \N}$ be a defining sequence in $\cU(\Fh)$ and let $(W_k)_{k \in \N}:=(\hat{q}(V_k))_{k \in \N}$ be its image by $\hat{q}.$ 

1) There exists $k_0 \in \N$ such that, for any $k \in \N_{\geq k_0},$
$$V_k \cap \hat{i}(E) =\{0\}$$
and the image of $\hat{i}(E)$ in $\Fh/V_k$ is saturated. % \emph{(or, equivalently, $W_k$ is saturated in $\Gh$).}
Moreover, for any such integer $k_0$, the sequence $(W_k)_{k \geq k_0}$ is a defining sequence in $\cU(\Gh)$.

2) For any $k \in \N,$ let us consider the Hermitian vector bundle $\Fb_k:= \Fb_{V_k}$ and  the admissible surjective morphism
$$r_k:=p_{V_k V_{k+1}}: \Fb_{k+1} \lra \Fb_k.$$ Similarly, for any $k \in\N_{\geq k_0},$ let us consider $\Gb_k:= \Gb_{W_k}$ and 
$$s_k:=p_{W_k W_{k+1}}: \Gb_{k+1} \lra \Gb_k.$$

For any $k \in \N_{k \geq k_0},$ the map $\hat{q}$ induces  a map
$$q_k: F_k:= \Fh/V_k \lra G_k:= \Gh/W_k,$$
which fits into an exact sequence:
 \begin{equation*}
0 \lra E \stackrel{i_k}{\lra} F_k \stackrel{q_k}{\lra} G_k \lra 0,
\end{equation*}
where $i_k:= p_{V_k} \circ i.$

Moreover, $q_k$ belongs to $\Hom_\OK^{\leq 1} (\Fb_k, \Gb_k)$.

3) For any $k\in \N_{\geq k_0},$ the map $q_k$ defines an isomorphism
$$\phi_k := q_{k \mid \ker q_k} : \ker r_k \lrasim \ker s_k.$$
Moreover $\phi_k$ belongs to $\Hom_\OK^{\leq 1} (\overline{\ker r_k}, \overline{\ker s_k})$ and, for any $\eta \in \R_+^\ast,$ there exists $k(\eta) \in \N_{\geq k_0}$ such that, for any $k \in \N_{\geq k(\eta)},$ $\phi_k^{-1}$ belongs to  
$\Hom_\OK^{\leq 1} (\overline{\ker s_k}, \overline{\ker r_k}\otimes \overline{\cO(\eta)})$.
\end{lemma}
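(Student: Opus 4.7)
\textbf{Proof plan for Lemma \ref{PrelimEncore}.} For part 1), the key ingredients are Proposition \ref{finitesaturated} and the fact that $\OK$ satisfies $\mathbf{Ded_3}$. Since $\Fbh/\hat i(\Eb) \simeq \Gbh$ is an object of $CTC_\OK$, it is torsion free, so $\hat i(E)$ is a saturated submodule of $\Fh$; Proposition \ref{finitesaturated} then furnishes some $U \in \cU(\Fh)$ such that the composite $E \to \Fh/U$ is injective with saturated image, and choosing $k_0$ with $V_{k_0} \subseteq U$ gives the same property for every $V_k$ with $k\ge k_0$. Next, Proposition \ref{strictCTCDed3} shows that the surjection $\hat q$ is strict, hence open, so each $W_k = \hat q(V_k)$ is open in $\Gh$; the identification $\Gh/W_k \simeq \Fh/(V_k+\hat i(E))$ together with the saturation just established shows $W_k \in \cU(\Gh)$ for $k \geq k_0$. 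That $(W_k)_{k\ge k_0}$ is still a defining sequence is immediate: for any $W\in\cU(\Gh)$, the open set $\hat q^{-1}(W)$ contains some $V_k$, and then $W_k\subseteq W$.

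Part 2) is essentially diagram chasing: $q_k$ is induced by $\hat q$ because $\hat q(V_k)=W_k$, and the exactness of $0\to E\to F_k\to G_k\to 0$ follows from $\ker\hat q=\hat i(E)$ and $V_k\cap\hat i(E)=0$ for $k\ge k_0$. To see that $q_k$ belongs to $\Hom_\OK^{\le 1}(\Fb_k,\Gb_k)$ --- in fact, that $q_k$ is an admissible surjection --- I note that the admissibility of (\ref{shortencore}) makes $q_\sigma:F^\hilb_\sigma\to G^\hilb_\sigma$ a co-isometry for every $\sKC$, and that the defining quotient maps $F^\hilb_\sigma \to F_{k,\sigma}$ and $G^\hilb_\sigma \to G_{k,\sigma}$ are co-isometries by construction; the compatibility $q_{k,\sigma}\circ p_{k,\sigma}= p'_{k,\sigma}\circ q_\sigma$ then forces $q_{k,\sigma}$ to be a co-isometry. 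The algebraic part of 3) is formal: the commutative diagram with rows $0\to E\to F_{k+1}\to G_{k+1}\to 0$ and $0\to E\to F_k\to G_k\to 0$ connected by $\id_E$, $r_k$, $s_k$, together with the snake lemma, yields the $\OK$-linear isomorphism $\phi_k=q_{k+1}|_{\ker r_k}\colon \ker r_k\xrightarrow{\sim}\ker s_k$, and $\phi_k\in\Hom^{\le1}_\OK(\overline{\ker r_k},\overline{\ker s_k})$ since it is a restriction of $q_{k+1}\in\Hom^{\le1}_\OK(\Fb_{k+1},\Gb_{k+1})$.

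The main obstacle is the bound on $\phi_k^{-1}$, and I will handle it via a Hilbert-space argument over each $\sKC$. Set $V^\hilb_{k,\sigma}:=V_{k,\sigma}^{\rm top}\cap F^\hilb_\sigma$, a closed subspace of $F^\hilb_\sigma$, and note that $\bigcap_kV^\hilb_{k,\sigma}=0$ (the injection $F^\hilb_\sigma\hookrightarrow\Fh_\sigma=\varprojlim_k F_{k,\sigma}$ has trivial kernel). Using the orthogonal decomposition $F^\hilb_\sigma=i_\sigma(E_\sigma)\oplus q_\sigma^\ast(G^\hilb_\sigma)$ coming from admissibility, one checks that $q_\sigma$ restricts for $k\ge k_0$ to a norm-decreasing bijection $V^\hilb_{k,\sigma}\to W^\hilb_{k,\sigma}$ (surjectivity: for $w\in W^\hilb_{k,\sigma}$, the orthogonal lift $q_\sigma^\ast(w)\in E^\perp$ can be corrected by a suitable $i_\sigma(e)$ to land in $V^\hilb_{k,\sigma}$; injectivity uses $V^\hilb_{k,\sigma}\cap i_\sigma(E_\sigma)=0$). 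The crucial estimate is
\[
\alpha_{k,\sigma}\;:=\;\|P_{i_\sigma(E_\sigma)}|_{V^\hilb_{k,\sigma}}\|_{\rm op}\;\longrightarrow\;0\qquad(k\to\infty),
\]
which I will prove by contradiction using weak compactness: any unit sequence $v_k\in V^\hilb_{k,\sigma}$ has a weakly convergent subsequence whose weak limit lies in $\bigcap_k V^\hilb_{k,\sigma}=0$ (since each $V^\hilb_{k,\sigma}$ is norm-closed, hence weakly closed), and weak convergence becomes strong convergence after applying $P_{i_\sigma(E_\sigma)}$ onto the finite-dimensional space $i_\sigma(E_\sigma)$. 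Decomposing $v\in V^\hilb_{k,\sigma}$ orthogonally yields $\|q_\sigma(v)\|^2\ge(1-\alpha_{k,\sigma}^2)\|v\|^2$, so the inverse of $q_\sigma\colon V^\hilb_{k,\sigma}\to W^\hilb_{k,\sigma}$ has operator norm at most $(1-\alpha_{k,\sigma}^2)^{-1/2}$. Finally, given $\bar v'\in(\ker s_k)_\sigma$, I take an optimal lift $v\in W^\hilb_{k,\sigma}$ with $\|v\|_{G^\hilb_\sigma}=\|\bar v'\|_{G_{k+1,\sigma}}$ (all lifts of $\bar v'$ lie in $W^\hilb_{k,\sigma}$ since $\bar v'\in\ker s_k$); then $q_\sigma^{-1}(v)\in V^\hilb_{k,\sigma}$ projects to $\phi_k^{-1}(\bar v')$ in $F_{k+1,\sigma}$ and has norm at most $(1-\alpha_{k,\sigma}^2)^{-1/2}\|\bar v'\|$. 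Choosing $k(\eta)$ so large that $(1-\alpha_{k,\sigma}^2)^{-1/2}\le e^\eta$ for every $\sKC$ and every $k\ge k(\eta)$ concludes the proof.
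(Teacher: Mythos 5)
Your proposal is correct and follows the same overall architecture as the paper: part 1) via Propositions \ref{injPN} and \ref{finitesaturated} together with the openness of the strict surjection $\hat{q}$, part 2) by diagram chasing and the fact that quotients of co-isometries are co-isometries, and part 3) by the snake lemma plus a Hilbert-space estimate on the operator norm of the orthogonal projection of $V^\hilb_{k,\sigma} := V_{k,\sigma}\cap F^\hilb_\sigma$ onto the finite-dimensional subspace $i_\sigma(E_\sigma)$. The paper isolates exactly this analytic content as Proposition \ref{Hilbertencore} (applied with $H=F^\hilb_\sigma$, $H_n=V_{n,\sigma}\cap F^\hilb_\sigma$, $P=i_\sigma(E_\sigma)$), so the quantity you call $\alpha_{k,\sigma}$ is precisely the paper's $\Vert p_{\mid H_n}\Vert$. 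The one genuinely different sub-step is your proof that $\alpha_{k,\sigma}\to 0$: you argue by contradiction via weak compactness of the unit ball and the weak closedness of the $V^\hilb_{k,\sigma}$, whereas the paper observes that $p_{\mid H_n}$ is the adjoint of $\pi_{n\mid P}$ and deduces the operator-norm convergence from the strong convergence $\pi_n\to 0$ of the projections onto a decreasing family of closed subspaces with trivial intersection, upgraded to norm convergence by the finite-dimensionality of $P$. Both arguments are valid and of comparable length; the paper's avoids any compactness extraction, while yours makes the role of finite-dimensionality (weak-to-strong convergence after projecting onto $i_\sigma(E_\sigma)$) slightly more explicit.
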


The proof of the last part of Lemma \ref{PrelimEncore} will rely on the following proposition concerning the geometry of sub-quotients of Hilbert spaces.

\begin{proposition}\label{Hilbertencore}
  Let $(H, \Vert . \Vert)$ be a (real of complex) Hilbert space and let
  $$H_0 \supseteq H_1 \supseteq H_2 \supseteq H_3 \supseteq \ldots \supseteq H_n \supseteq H_{n+1}  \ldots$$
 be a decreasing sequence of closed vector subspaces of $H$ such that 
 $$\bigcap_{n \in \N} H_n = \{0\}.$$
 
 Let $P$ be a \emph{(necessarily closed)} finite dimensional vector subspace of $H$.
 
 1) There exists $n_0 \in \N$ such that, for any $n$ in $\N_{\geq n_0}$,
 $$H_n \cap P = \{0\}.$$
 Then, for any such integer $n \geq \N_{\geq n_0}$, the linear map 
 $$\phi_n : H_n/H_{n+1} \lra (H_n + P)/(H_{n+1} + P),$$
 induced by the inclusion $H_n \hlra H_n+ P$, is bijective.
 
 2) For every $n\in \N$, let us equip $H_n/H_{n+1}$ (resp. $(H_n + P)/(H_{n+1} + P)$) with the norm quotient of the norm $\Vert. \Vert$ on $H_n$ (resp. on $H_n +P$)\footnote{This is indeed a norm since $H_{n+1}$ (resp. $H_{n+1} + P$) is closed in $H_{n}$ (resp. $H_{n} + P$).}. Then, for any $n\in \N$,
 \begin{equation}\label{phin}
\Vert \phi_n \Vert := \sup_{x\in H_n/H_{n+1}, \Vert x \Vert \leq 1} \Vert \phi_n(x) \Vert \leq 1.
\end{equation}
Moreover the sequence $(\Vert \phi_n^{-1} \Vert)_{n \geq n_0}$, defined
by
$$\Vert \phi_n^{-1} \Vert := \sup_{x\in (H_n+P)/(H_{n+1}+P), \Vert x \Vert \leq 1} \Vert \phi_n^{-1}(x) \Vert,$$
satisfies
\begin{equation}\label{phininv}
\lim_{n \lra \infty} \Vert \phi_n^{-1} \Vert =1.
\end{equation}
\end{proposition}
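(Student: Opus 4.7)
For part 1, I would argue that $(H_n \cap P)_{n \in \N}$ is a decreasing sequence of linear subspaces of the finite-dimensional space $P$ whose intersection equals $P \cap \bigcap_n H_n = \{0\}$; this sequence therefore stabilizes at $\{0\}$, giving the smallest integer $n_0$ such that $H_n \cap P = \{0\}$ for all $n \geq n_0$. For such $n$, bijectivity of $\phi_n$ is a straightforward verification: surjectivity is clear since every class in $(H_n + P)/(H_{n+1} + P)$ has a representative in $H_n$ modulo $P$, and injectivity follows because if $h \in H_n$ belongs to $H_{n+1} + P$, then $h - h' = p \in H_n \cap P = \{0\}$ for some $h' \in H_{n+1}$, forcing $h \in H_{n+1}$.

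For part 2, the bound $\Vert \phi_n \Vert \leq 1$ is immediate, since the inclusion $j : H_n \hookrightarrow H_n + P$ is an isometry and hence induces a contractive map on quotient norms. The reverse inequality $\Vert \phi_n^{-1} \Vert \geq 1$ then follows formally from $\Vert \phi_n \Vert \leq 1$ together with $\phi_n \phi_n^{-1} = {\rm Id}$.

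The real content is the limit (\ref{phininv}), and the main obstacle is extracting uniform quantitative control from the asymptotic triviality of the $H_n$. The key ingredient, which I would invoke as a standard Hilbert space fact, is that the orthogonal projections $\Pi_n : H \to H_n$ converge strongly to $0$: the orthogonal complements $H_n^\perp$ form an increasing sequence of closed subspaces with dense union $(\bigcap_n H_n)^\perp = H$, so the projections onto $H_n^\perp$ converge strongly to ${\rm Id}_H$. Since $P$ is finite-dimensional, this convergence is uniform on its unit sphere, and the quantity
\begin{equation*}
\epsilon_n := \sup\{ \Vert \Pi_n(p) \Vert : p \in P, \ \Vert p \Vert \leq 1 \}
\end{equation*}
satisfies $\lim_{n \to \infty} \epsilon_n = 0$.

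Given this, for $n \geq n_0$ with $\epsilon_n < 1$ and a unit-norm $y \in (H_n+P)/(H_{n+1}+P)$, I would exploit the strict convexity of the Hilbert norm to select the unique norm-minimizing representative $z \in H_n + P$, of norm $1$, and decompose it uniquely as $z = h + p$ with $h \in H_n$ and $p \in P$ (uniqueness thanks to $H_n \cap P = \{0\}$). Writing $z = (h + \Pi_n p) + (p - \Pi_n p)$ in the orthogonal decomposition $H = H_n \oplus H_n^\perp$, Pythagoras gives $\Vert h + \Pi_n p \Vert \leq 1$ and $\Vert p - \Pi_n p \Vert \leq 1$. The second inequality forces $\Vert p \Vert \leq 1 + \epsilon_n \Vert p \Vert$, i.e.\ $\Vert p \Vert \leq (1-\epsilon_n)^{-1}$; since $\phi_n^{-1}(y) = [h]$, a final application of the triangle inequality in $H_n$ yields
\begin{equation*}
\Vert \phi_n^{-1}(y) \Vert \leq \Vert h \Vert \leq \Vert h + \Pi_n p \Vert + \Vert \Pi_n p \Vert \leq 1 + \frac{\epsilon_n}{1 - \epsilon_n} = \frac{1}{1 - \epsilon_n},
\end{equation*}
and passing to the supremum over $y$ and then to the limit delivers (\ref{phininv}).
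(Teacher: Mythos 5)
Your proof is correct and follows essentially the same route as the paper's: both hinge on the fact that the orthogonal projections onto $H_n$, restricted to the finite-dimensional space $P$, tend to $0$ in operator norm, and then convert this into the bound $\Vert \phi_n^{-1}\Vert \leq (1-\epsilon_n)^{-1}$. The only difference is that you spell out the final estimate (via the norm-minimizing representative and the orthogonal decomposition $z=(h+\Pi_n p)+(p-\Pi_n p)$) which the paper compresses into ``this easily implies (\ref{phininv})'' after noting the adjoint relation $\Vert p_{\mid H_n}\Vert=\Vert \pi_{n\mid P}\Vert$; your version is a welcome, fully detailed variant of the same computation.
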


\begin{proof}[Proof of Proposition \ref{Hilbertencore}] Assertion 1) is clear, and the upper bound (\ref{phin}) also.

For every $n\in \N$, let us denote by 
$$\pi_n: H \lra H_n$$
the orthogonal projection onto $H_n$, and let $p: H \lra P$ denote the orthogonal projection onto $P$.

Since $\bigcap_{n \in \N} H_n = \{0\},$ for every $x \in H,$
$$\lim_{n \lra +\infty} \Vert \pi_n(x)\Vert = 0.$$
As $P$ is finite dimensional, the operator norm of the restriction to $P$ of $\pi_n$,  
$$\Vert \pi_{n \mid P} \Vert := \max_{x \in P, \Vert x \Vert \leq 1} \Vert \pi_n(x) \Vert,$$
converges also to $0$ when $n$ goes to $+\infty$.

The restriction to $H_n$ of $p$,
$$p_{\mid H_n}: H_n \lra P$$
is the adjoint of $$\pi_{n \mid P}: P \lra H_n,$$
and therefore its operator norm
$$\Vert p_{\mid H_n} \Vert = \Vert \pi_{n \mid P} \Vert $$
converges also to $0$. This easily implies (\ref{phininv}).
\end{proof}

\begin{proof}[Proof of Lemma \ref{PrelimEncore}] Assertion 1) easily follows from 
Propositions \ref{injPN} and \ref{finitesaturated}.

Assertion 2) is a straightforward consequence of the definitions.

Assertion 3) follows from Proposition \ref{Hilbertencore} applied to $H= F_\sigma^\hilb,$ $H_n = V_{n,\sigma}\cap F_\sigma^\hilb,$ and $P = i_\sigma(E_\sigma)$. 
 \end{proof}

\subsection{Proof of Theorem \ref{permStS}: II. Completion of the proof} 
$\,$

1) Let us assume that $\Ebh$ has finite rank or, in other words, that it is defined by some Hermitian vector bundle $\Eb$ over $\Spec \OK.$ Then the conclusions of Lemma \ref{PrelimEncore} hold, and we shall use its notation.

 From Part 3) of Lemma  \ref{PrelimEncore}, it follows that, for any $\epsilon \in \R$ and any $k \in \N_{k \geq k_0},$
\begin{equation}\label{rksk}
\hot(\overline{\ker r_k} \otimes \overline{\cO(\epsilon)}) \leq \hot(\overline{\ker s_k} \otimes \overline{\cO(\epsilon)})
\end{equation}
and that, for any $(\epsilon, \epsilon') \in \R^2$ and any $k \in \N,$
\begin{equation}\label{skrk}
\epsilon'< \epsilon \mbox{ and } k \geq k(\epsilon -\epsilon') \Longrightarrow 
\hot(\overline{\ker s_k} \otimes \overline{\cO(\epsilon')}) \leq \hot(\overline{\ker r_k} \otimes \overline{\cO(\epsilon)}).
\end{equation}

Let us assume that $\Fbh$ is strongly summable. Then $\Fbh$ satisfies $\mathbf{StS_2}$ and we may choose a defining sequence $(V_k)_{k \in \N}$ in $\cU(\Fh)$ and $\epsilon >0$ such that
$$\sum_{k \in \N} \hot(\overline{\ker r_k} \otimes \overline{\cO(\epsilon)}) < + \infty.$$
For any $\epsilon' \in]0, \epsilon[,$ the upper bound  (\ref{skrk}) shows that
$$\sum_{k \in \N} \hot(\overline{\ker s_k} \otimes \overline{\cO(\epsilon')}) < +\infty.$$
Consequently $\Gbh$ also satisfies $\mathbf{StS_2}$ and is therefore strongly summable.

Conversely, let us assume that $\Gbh$ is strongly summable, and let us choose a defining sequence $(W_k)_{k \in \N}$ in $\cU(\Gh)$ and $\epsilon \in \R^\ast_+$ such that,
if we let $\Gb_k := \Gb_{W_k}$, the admissible surjective morphisms
 $$s_k := p_{W_k W_{k+1}}: \Gb_{k+1} \lra \Gb_k$$
 satisfy:
 $$\sum_{k \in \N} \hot(\overline{\ker s_k}\otimes \overline{\cO(\epsilon)}) < + \infty.$$
 
  There exists a continuous $\OK$-linear splitting $\hat{s}: \Gh \lra \Fh$ of the short exact sequence 
 $$0 \lra \Eh \stackrel{\hat{i}}{\lra} \Fh \stackrel{\hat{q}}{\lra} \Gh \lra 0$$
 (sees \ref{shortpro}, Remark (ii)), and we may define, for every $k \in \N,$ 
 $$V_k := \hat{s}(W_k).$$
 Then $W_k = \hat{q}(V_k),$ and we may apply Lemma \ref{PrelimEncore}. Notably, the estimate (\ref{rksk}) imply that
 $$\sum_{k \in \N} \hot(\overline{\ker r_k} \otimes \overline{\cO(\epsilon)}) \leq \sum_{k \in \N} \hot(\overline{\ker s_k} \otimes \overline{\cO(\epsilon)}) < +\infty.$$
 Therefore satisfies $\mathbf{StS_2}$ and is  strongly summable.
 
 2) The morphism $i$ belongs to $\Hom_\OK^{\leq 1} (\Ebh, \Fbh)$ and $\hat{i}:\Eh \lra \Fh$ is a strict morphism of topological $\OK$-modules. Consequently, according to Proposition \ref{permStTetc}, if $\Fbh$ is strongly summable, then $\Ebh$ is strongly summable.
 
 Conversely, let us assume that $\Ebh$ is strongly summable and that $\Gbh$ has finite rank, or equivalently, that $\Gbh$ is defined by some Hermitian vector bundle $\Gb$ over $\Spec \OK$.
 
 Then $\Ebh$ satisfies $\mathbf{StS_2}$ and we may choose a defining sequence $(U_k)_{k \in \N}$ in $\cU(\Eh)$ and $\epsilon >0$ such that, if we let $\Eb_k := \Eb_{U_k}$, the admissible surjective morphisms
 $$q_k := p_{U_k U_{k+1}}: \Eb_{k+1} \lra \Eb_k$$
 satisfy:
 $$\sum_{k \in \N} \hot(\overline{\ker q_k}\otimes \overline{\cO(\epsilon)}) < + \infty.$$
 
Let us consider the sequence $(V_k)_{k\in \N}:=(\hat{i}(U_k))_{k\in \N}$. As
 $$0 \lra \Eh \stackrel{\hat{i}}{\lra} \Fh \stackrel{\hat{q}}{\lra} G \lra 0$$
 is a strict short exact sequence in $\CTC_{\OK}$ and $G$ is a discrete $\OK$-module, it is a defining sequence in $\cU(\Fh).$ 
 
 Moreover, if we let $\Fb_k := \Fb_{V_k}$ and if we introduce the admissible surjective morphisms
 $$r_k := p_{V_k V_{k+1}}: \Fb_{k+1} \lra \Fb_k,$$
 it is straightforward that the morphisms 
 $$i_k: E_k := \Eh/V_k \lra F_k /U_k$$
 induced by $\hat{i}$  fit into commutative diagrams of $\OK$-module with exact lines:
 \begin{equation*}\label{CDEFGkk+1}
\begin{CD}
 0 @>>> E_{k+1} @>i_{k+1}>> F_{k+1} @>>> G @>>> 0 \\
 @.          @VVq_kV                @VVr_kV                 @VVId_GV       @. \\
0 @>>> E_{k} @>i_{k}>> F_{k} @>>> G @>>> 0 .
\end{CD}
\end{equation*}
Therefore, for any $k \in \N,$ $i_{k+1}$ defines an isomorphism
$$i_{k+1} : \ker q_k \lra \ker r_k,$$
that is easily seen to be an isometric isomorphism of Hermitian vector bundles:
$$i_{k+1}: \overline{\ker q_k} \lrasim \overline{\ker r_k}.$$
Consequently, 
$$\hot(\overline{\ker r_k}\otimes \overline{\cO(\epsilon)}) = \hot(\overline{\ker q_k}\otimes \overline{\cO(\epsilon)}).$$

Finally, 
 $$\sum_{k \in \N} \hot(\overline{\ker r_k}\otimes \overline{\cO(\epsilon)}) = \sum_{k \in \N} \hot(\overline{\ker q_k}\otimes \overline{\cO(\epsilon)}) < + \infty$$
and $\Fbh$ also satisfies $\mathbf{StS_2}$. 

\section{A vanishing criterion}\label{VanCrit}
In this section, we apply the properties of $\theta$-finite pro-Hermitian vector bundles and of their $\theta$-invariants previously established to derive a criterion for the vanishing of the ``connecting maps" arising in some systems of short exact sequences of pro-Hermitian vector bundles.

This criterion has a rather technical formulation. However it will play an important role in the Diophantine applications of the formalism of pro-Hermitian vector bundles. In these applications, the vanishing criterion will be applied to pro-Hermitian vector bundles defined by the coherent cohomology groups of some formal  and complex analytic vector bundles, and the connecting maps (denoted by $\delta_k$ in the Notation below) will be the connecting maps between cohomology groups associated to short exact sequences of formal and complex analytic vector bundles.

\subsection{Notation}\label{NotVan} Let $k_0$ be some integer, and let us consider the following data:
\begin{enumerate}
\item \emph{a sequence $(\Hbh_k)_{k \geq k_0}$ of pro-Hermitian vector bundles over $\Spec \OK$ and a sequence $(p_k)_{k > k_0}$ of morphisms
$$p_k: \Hbh_{k-1} \lra \Hbh_k$$
in $\Hom_\OK^{\leq 1} (\Hbh_{k-1},\Hbh_k)$;
\item a sequence $(\Fb_k)_{k > k_0}$ of \emph{(finite rank)} Hermitian vector bundles and a sequence $(\delta_k)_{k > k_0}$ of ``connecting maps"  $$\delta_k : \Fb_k \lra \Hbh_{k-1}$$ in $\Hom_\OK^{\rm cont}(\Fb_k, \Hbh_{k-1})$.}
\end{enumerate}

We shall assume that, \emph{for every $k \in \N_{> k_0}$, the diagram}
\begin{equation}\label{FHexact}
\Fb_k \stackrel{\delta_k}{\lra} \Hbh_{k-1} \stackrel{p_k}{\lra} \Hbh_k \lra 0
\end{equation}
\emph{is an exact sequence of pro-Hermitian vector bundles over $\Spec \OK$,} in the sense that for any  $k$ in $\N_{> k_0}$, the diagrams
\begin{equation}\label{Exhat}
F_k \stackrel{\hat{\delta}_k}{\lra} \Hh_{k-1} \stackrel{\hat{p}_k}{\lra} \Hh_k \lra 0
\end{equation}
and, for every $\sKC$,
\begin{equation}\label{Exsigma}
F_{k,\sigma} \stackrel{\delta_{k,\sigma}}{\lra} H_{k-1,\sigma} \stackrel{p_{k,\sigma}}{\lra} H_{k,\sigma} \lra 0
\end{equation}
are exact sequences of $\OK$-modules and of $\C$-vector spaces, respectively.

Observe that, for every $k \in \N_{> k_0}$, we may introduce the Hermitian vector bundle $\overline{F_k/\ker \delta_k}$ over $\Spec \OK$, so that the morphism $\delta_k$ factorizes through $\overline{K/\ker \delta_k}$: 
$$\delta_k = \tilde{\delta}_k \circ \pi_k : \Fb_k \stackrel{\pi_k}{\lra}  \overline{F_k/\ker \delta_k} \stackrel{\tilde{\delta_k}}{\lra} \Hbh_{k-1}.$$
The exactness of  (\ref{FHexact}) is then easily seen to be equivalent to the fact that the diagram
\begin{equation}\label{FHexactbis}
0 \lra \overline{F_k/\ker \delta_k} \stackrel{\tilde{\delta}_k}{\lra}  \Hbh_{k-1} \stackrel{p_k}{\lra} \Hbh_k \lra 0
\end{equation}
is  a short exact sequence of pro-Hermitian vector bundles over $\Spec \OK$.

For any $k \in \N_{\geq k_0},$ we shall also consider the morphism
$$q_k : \Hbh_{k_0} \lra \Hbh_k$$
in $\Hom_\OK^{\leq 1} (\Hbh_{k-1},\Hbh_k)$ defined by composing the morphisms $p_j$:
$$
\begin{array}{cll}
  q_k & := id  & \mbox{ if $k=k_0$}  \\
        &:= p_k \circ \cdots \circ p_{k_0 +1}   & \mbox{ if $k>k_0$.}   
\end{array}
$$

For any $k \in \N_{\geq k_0}$ and any embedding $\sigma: K \hra \C,$ the $\C$-linear map
$$q_{k, \sigma} : H^\hilb_{k_0,\sigma} \lra H^\hilb_{k,\sigma}$$
is norm decreasing and surjective. We shall denote by
$$\tilde{q}_{k,\sigma}: H^\hilb_{k_0,\sigma}/\ker q_{k,\sigma} \lra H^\hilb_{k,\sigma}$$
the induced isomorphism of topological vector spaces.

\subsection{The conditions $\mathbf{Fin }$, $\mathbf{Amp }$, $\mathbf{An_1 }$ and $\mathbf{An_2}$} With the above notation, we may introduce the following conditions on the (pro)-Hermitian vector bundles $\Hbh_k$ and $\Fb_k$:

\smallskip

$\mathbf{Fin }$ ($\theta$-finiteness) : \emph{For every $k \in \N_{\geq k_0},$ the pro-Hermitian vector bundle $\Hbh_k$ is $\theta$-finite.} 

According to Corollary \ref{encoreshort} and to the exactness of (\ref{FHexactbis}), this condition is satisfied as soon as $\Hbh_k$ is $\theta$-finite for \emph{some} $k \in \N_{\geq k_0}.$

\smallskip

$\mathbf{Amp }$ (asymptotic ampleness): $$\liminf_{k \ra + \infty} \frac{1}{k} \, \mua_{\rm min}(\Fb_k) > 0.$$
    
Recall that, for every $k\in \N_{k>k_0},$ $p_k$ and $q_k$ are morphisms in ${\rm pro\overline{Vect}}^{\leq 1}(\OK)$. Therefore, for every embedding $\sKC,$ the operator norms $\Vert p_{k,\sigma}\Vert$ (resp. $\Vert q_{k,\sigma}\Vert$) of the continuous $\C$-linear map $p_{k,\sigma}$ (resp. $q_{k,\sigma}$) from the Hilbert space $(H_{k-1,\sigma}^\hilb, \Vert.\Vert_{\Hbh_{k-1}, \sigma})$ (resp. $(H_{k_0,\sigma}^\hilb, \Vert.\Vert_{\Hbh_{k_0}, \sigma})$) to the Hilbert space $(H_{k,\sigma}^\hilb, \Vert.\Vert_{\Hbh_k, \sigma})$ satisfies:
$$\Vert p_{k,\sigma}\Vert \leq 1 \mbox{ (resp. $\Vert q_{k,\sigma}\Vert \leq 1$)}.$$

We shall also consider the following conditions on the operator norms of the morphisms $\delta_{k,\sigma}$ and $\tilde{q}_{k,\sigma}^{-1}$:  

\smallskip

$\mathbf{An_1 :}$ \emph{For every embedding $\sKC,$ the operator norm $\Vert \delta_{k,\sigma}\Vert$ of the  $\C$-linear map $\delta_{k,\sigma}$ from the normed space $(F_{k,\sigma}, \Vert. \Vert_{\Fb_k, \sigma})$ to the Hilbert space $(H^\hilb_{k-1,\sigma}, \Vert.\Vert_{\Hbh_{k-1},\sigma})$ satisfies}
$$\limsup_{k \ra +\infty} \frac{1}{k} \log \Vert \delta_{k,\sigma} \Vert \leq 0.$$

This condition may be rephrased as the existence, for every $\epsilon \in \Rpa,$ of $C_\epsilon \in \Rpa$ such that, for any $k > k_0$ and any embedding $\sKC,$
\begin{equation}\label{An1bis}
\Vert \delta_{k,\sigma} \Vert \leq C_\epsilon e^{\epsilon k}.
\end{equation}

\smallskip

$\mathbf{An_2 :}$ \emph{For every embedding $\sKC,$ the operator norm $\Vert \tilde{q}_{k,\sigma}^{-1}\Vert$ of the  continuous $\C$-linear map $\tilde{q}_{k,\sigma}^{-1}$ from the Hilbert space $(H^\hilb_{k,\sigma}, \Vert.\Vert_{\Hbh_{k},\sigma})$ to the Hilbert space $H^\hilb_{k_0,\sigma}/\ker q_{k,\sigma}$, equipped with the quotient norm deduced from  $\Vert.\Vert_{\Hbh_{k_0},\sigma}$, satisfies}
$$\limsup_{k \ra +\infty} \frac{1}{k} \log \Vert \tilde{q}_{k,\sigma}^{-1} \Vert \leq 0.$$

Similarly to  Condition $\mathbf{An_1},$ Condition $\mathbf{An_2}$
 may be rephrased as the existence, for every $\epsilon \in \Rpa,$ of $C_\epsilon \in \Rpa$ such that, for any $k > k_0$ and any embedding $\sKC,$
\begin{equation}\label{An2bis}
\Vert \tilde{q}_{k,\sigma}^{-1} \Vert \leq C'_\epsilon e^{\epsilon k}.
\end{equation}

\subsection{A vanishing criterion}

\begin{theorem}\label{ConnectV} Let us consider some sequences of (pro)-Hermitian vector bundles $(\Hbh_k)_{k \geq k_0}$ and $(\Fb_k)_{k > k_0}$ of and of morphisms  $(p_k)_{k > k_0}$ and $(\delta_k)_{k > k_0}$ in ${\rm pro\overline{Vect}}^{\rm cont}(\OK)$ as in \ref{NotVan} above.

If the conditions $\mathbf{Fin }$, $\mathbf{Amp }$, $\mathbf{An_1 }$ and $\mathbf{An_2}$ are satisfied, then  there exists $k_1 \in \N_{> k_0}$ such that the connecting map $\delta_k$ vanishes for any $k \in \N_{\geq k_1}$, or equivalently, such that, for any $k \in \N_{\geq k_1}$, 
$$p_k: \Hbh_{k-1} \lra \Hbh_k$$
is an isomorphism in ${\rm pro\overline{Vect}}^{\rm cont}(\OK)$.
 \end{theorem}

\begin{proof} For every $k \in \N_{\geq k_0}$ and every embedding $\sKC$, we may consider the kernel of 
$$\hat{q}_k : \Hh_{k_0} \lra \Hh_k$$
and the one of
$$q_{k,\sigma} : H_{k_0,\sigma} \lra H_{k,\sigma}.$$

They define a non-decreasing sequence of $\OK$-submodules of $\Hh_{k_0}$:
$$\ker \hat{q}_{k_0} = 0 \hra \ker \hat{q}_{k_0 +1} \hra \ldots \hra \ker \hat{q}_k \hra \ker \hat{q}_{k+1} \hra \ldots,$$
and a non-decreasing sequence of $\C$-vector subspaces of $H^\hilb_{k_0,\sigma}$:
$$\ker {q}_{k_0,\sigma} = 0 \hra \ker {q}_{k_0 +1,\sigma} \hra \ldots \hra \ker {q}_{k,\sigma} \hra \ker {q}_{k+1,\sigma} \hra \ldots.$$

The first assertion in the following lemma is a straightforward consequence of the exactness of the diagrams (\ref{Exhat}) and (\ref{Exsigma}). The second one then follows by induction on the integer $k$.

\begin{lemma}\label{ConnectV1}
1) For any $k \in \N_{> k_0}$ and for any embedding $\sKC,$ the maps $\hat{q}_{k-1}$ and $q_{k-1,\sigma}$ define isomorphisms
$$\ker \hat{q}_k/\ker \hat{q}_{k-1} \lrasim \ker \hat{p}_k = \im \hat{\delta}_k$$
and
$$\ker {q}_{k,\sigma}/\ker {q}_{k-1,\sigma} \lrasim \ker {p}_{k,\sigma} = \im {\delta}_{k,\sigma}$$
 of $\OK$-modules and $\C$-vector spaces, respectively.
 
 2) For any $k\in \N_{k\geq k_0},$ the $\OK$ module $\ker \hat{q}_k$ is finitely generated and projective and, for any embedding $\sigma: K \hra \C,$ the complex vector subspace $(\ker \hat{q}_k)_\sigma$ of $\Hh_{k_0,\sigma}$ coincides with $\ker {q}_{k,\sigma}$ (and notably is contained in $H^\hilb_{k_0,\sigma}$).   \qed
\end{lemma}

For any $k\in \N_{\geq k_0},$ we shall define the Hermitian vector bundle $\overline{\ker} q_k$ over $\Spec \OK$ as $\ker \hat{q}_k$ equipped with the Hermitian structure induced by the Hilbert spaces $(H^\hilb_{k_0,\sigma}, \Vert.\Vert_{\Hbh_{k_0},\sigma})$.

 For any $k \in \N_{>k_0},$ we let $n_k := \rk \im \delta_k.$
 
 \begin{lemma}\label{ConnectV2} For any $k \in \N_{\geq n_0},$
 \begin{equation}\label{rkqn}
\rk \ker q_{k+1} - \rk \ker q_k = n_{k+1},
\end{equation}
and
\begin{equation}\label{degqn}
\dega \overline{\ker q_{k+1}} - \dega \overline{\ker q_{k}} \geq n_{k+1} A_{k+1} 
\end{equation}
where
$$A_{k+1} := \mua_{\rm min}(\Fb_{k+1}) - \sum_{\sKC} (\log \Vert \tilde{q}_{k,\sigma}^{-1} \Vert + \log \Vert \delta_{k+1,\sigma} \Vert).$$ 
\end{lemma}
 
 \begin{proof}[Proof of Lemma \ref{ConnectV2}] The relation (\ref{rkqn}) follows from Lemma \ref{ConnectV1}, 1).
 
 For any $k \in \N_{>k_0}$, we may consider the admissible short exact of Hermitian vector bundles over $\Spec \OK$:
 \begin{equation}\label{degq1}
0 \lra \overline{\ker q_k} \hlra \overline{\ker q_{k+1}} \stackrel{q_k}{\lra} \Cb_{k+1} \lra 0,
\end{equation}
where $\Cb_{k+1}$ is defined by the $\OK$-module
$$C_{k+1} := \ker \hat{p}_{k+1} = \im \hat{\delta}_{k+1}$$
equipped with the quotient metrics deduced, via the surjective $\C$-linear maps
$$q_{k,\sigma}: \ker q_{k+1,\sigma} \lra C_{k+1,\sigma},$$
from the Hermitian metrics $\Vert.\Vert_{\overline{\ker q_{k+1}},\sigma}$.  In other words, $C_{k+1}$ is $\im \hat{\delta}_{k+1}$ equipped with the Hermitian metric such that the map
$$\tilde{q}_k^{-1} : (\im \hat{\delta}_{k+1})_\sigma = \im \delta_{k+1, \sigma} \hlra H_{k_0,\sigma}$$
becomes an isometry when $H_{k_0,\sigma}$ is equipped with its Hilbert norm $\Vert.\Vert_{\Hbh_{k_0},\sigma}.$

Consequently the operator norm $\Vert \delta_{k+1, \sigma} \Vert_{\Fb_{k+1}, \Cb_{k+1}}$ of the surjective morphism of Hermitian vector bundles
 $$\delta_{k+1}: \Fb_{k+1} \lra \Cb_{k+1}$$
 satisfies: 
 \begin{equation}\label{degq2}
  \Vert \delta_{k+1, \sigma} \Vert_{\Fb_{k+1}, \Cb_{k+1}} = \Vert\tilde{q}_{k,\sigma}^{-1} \circ \delta_{k+1,\sigma}  \Vert_{\Fb_{k+1}, \Cb_{k+1}} \leq \Vert \tilde{q}_{k,\sigma}^{-1}\Vert \, \Vert \delta_{k+1,\sigma}\Vert. 
 \end{equation}
 
According to the additivity of the Arakelov degree in admissible short exact sequences (see \ref{AdmShort}), from (\ref{degq1}), we derive the equality: 
\begin{equation}\label{degq3}
 \dega \overline{\ker q_{k+1}} - \dega \overline{\ker q_{k}} = \dega \Cb_{k+1}.
 \end{equation}
 
 Moreover, applied to $\delta_{k+1},$ the slope inequality for surjective morphisms of Hermitian vector bundles (Proposition \ref{slopeineq}, 2)) shows that
 \begin{equation}\label{degq4}
\dega \Cb_{k+1} \geq \rk \delta_{k+1} [\mua_{\rm min}(\Fb_{k+1}) - \sum_{\sKC} \log  \Vert \delta_{k+1, \sigma} \Vert_{\Fb_{k+1}, \Cb_{k+1}}].
 \end{equation}
 
 The lower bound (\ref{degqn}) follows from (\ref{degq3}), (\ref{degq4}), and (\ref{degq2}).
 \end{proof}

To complete the proof of Theorem \ref{ConnectV}, observe that the validity of  $\mathbf{Amp }$, $\mathbf{An_1 }$ and $\mathbf{An_2}$ implies that
$$\liminf_{k \ra + \infty} \frac{1}{k} A_k > 0.$$
In other words, there exists $\eta$ in $\R^\ast_+$ and $c$ in $\R_+$ such that, for any $k \in \N_{>k_0}$, 
$$A_k \geq  \eta\, k - c.$$ 

Then, for the relations (\ref{rkqn}) and (\ref{degqn}), we derive that, for every $k \in \N_{> k_0},$
$$\rk \ker q_k = \sum_{k_0< i \leq k} n_i$$
and
$$\dega \overline{\ker q_k} \geq \sum_{k_0< i \leq k} n_i (\eta\, i - c).$$
Therefore, for any $k \in \N_{> k_0},$ we have:
\begin{align}
\hot(\Hbh_{k_0}) & \geq \hot(\overline{\ker q_k}) \label{EV1} \\
 & \geq \dega \pi_\ast \overline{\ker q_k} = \dega \overline{\ker q_k} - \rk q_k .(\log \vert \Delta_K\vert)/2  \label{EV2} \\
 & \geq \sum_{k_0< i \leq k} n_i \left[\eta\, i - c  - (\log \vert \Delta_K \vert)/2\right]. \notag
\end{align}
(The lower bounds (\ref{EV1}) and (\ref{EV2}) follows from the monotonicity of $\hot$, established in Proposition \ref{ineqmorthetapro}, and from ``Riemann inequality" (\ref{ThetaRIK})
applied to $\overline{\ker q_k}$.)

According to $\mathbf{Fin }$, $h:= \hot(\Hbh_{k_0})$ is finite, and, if we let $c':= c + (\log \vert \Delta_K \vert)/2,$ we obtain that, for any $k \in \N_{> k_0},$
\begin{equation}\label{EV3}
\sum_{k_0< i \leq k} n_i (\eta\, i - c') \leq h.
\end{equation}
This implies the vanishing of $n_i$ for $i$ large enough. Actually, if we define 
$$i_0 := \max (k_0, \lceil c'/\eta \rceil),$$
the inequalities (\ref{EV3}) show that
$$\eta \sum_{i > i_0} n_i (i-i_0) \leq h - \sum_{k_0 < i \leq i_0} n_i (\eta \, i - c'),$$
and therefore that $n_i = 0$ if 
$$i > \max(i_0, \eta^{-1}[h - \sum_{k_0 < i \leq i_0} n_i (\eta \, i - c')]).$$
\end{proof}

\subsection{Variants} One might observe that the only point of this section \ref{VanCrit} that relies on the results in the previous sections \ref{shortinfdim}-\ref{ShortStrong} is the observation, after the formulation of Condition $\mathbf{Fin }$, that  this $\theta$-finitenesss condition is satisfied as soon as $\Hbh_k$ is $\theta$-finite for some integer $k\geq k_0.$

Actually, the proof of Theorem \ref{ConnectV} proper only uses the elementary monotonicity properties of the $\theta$-invariants of pro-Hermitian  vector bundles, together with the ``Riemann inequality" for (finite rank) Hermitian vector bundles. Accordingly, Theorem \ref{ConnectV} still holds when Condition $\mathbf{Fin }$ is replaced by the following weaker one:  
$$\mathbf{Fin' } \mbox{ : } \lhot(\Hbh_{k_0}) < + \infty.$$

Conditions $\mathbf{An_1 }$ and $\mathbf{An_1 }$ may also be replaced by the following one:

$\mathbf{An :}$ \emph{For every embedding $\sKC,$ the operator norm $\Vert \tilde{q}_{k,\sigma}^{-1} \circ \delta_{k+1,\sigma}\Vert$ of the  $\C$-linear map $\tilde{q}_{k,\sigma}^{-1} \circ \delta_{k+1,\sigma}$  from the normed space $(F_{k+1,\sigma}, \Vert. \Vert_{\Fb_{k+1}, \sigma})$ to the Hilbert space $H^\hilb_{k_0,\sigma}/\ker q_{k,\sigma}$, equipped with the quotient norm deduced from  $\Vert.\Vert_{\Hbh_{k_0},\sigma}$,  satisfies}
$$\limsup_{k \ra +\infty} \frac{1}{k} \log \Vert \tilde{q}_{k,\sigma}^{-1} \circ \delta_{k+1,\sigma} \Vert \leq 0.$$

\section{The category $\proVectOK$
as an exact category}\label{proexact}

In this last section, we investigate the formal properties of the category $\proVectOK$ equipped with the class of short exact sequences considered in this chapter. We shall prove that, equipped with this class of exact sequences, the additive category  $\proVectOK$ is  an \emph{exact category} in the sense of Quillen \cite{Quillen73}, § 2. 

For the commodity of the reader, we have summarized in Appendix \ref{ExactCat} the basic definitions and results concerning exact categories that we will rely on in this section. We refer the reader to the expository articles of Keller  \cite{Keller1996} and B\"uhler  \cite{Buehler2010} for further details and references.

As the terminology \emph{admissible morphism} is already used in this monograph to specify morphisms that are compatible with Hermitian structures --- as is customary in Arakelov geometry --- and as we have also dealt with \emph{strict morphisms} between topological modules, we will call \emph{allowable} (mono- or epi-) \emph{morphism} what is often called an admissible or a strict (mono- or epi-)morphism in some exact category.

\medskip

Let us denote by $\Hilbcont$ the $\C$-linear additive category of complex separable Hilbert spaces, with morphisms the continuous $\C$-linear maps.

The additive category $\CTC_\OK$ (resp. $\CTC_\C$, resp. $\Hilbcont$) becomes an exact category when one  takes as allowable short exact sequences the class $\cEh$ (resp. $\cEh_\C$, resp. $\cE_\C^\hilb$) of diagrams 
$$0 \lra M_1 \stackrel{f}{\lra} M_2 \stackrel{g}{\lra} M_3 \lra 0$$
in $\CTC_\OK$ (resp. in $\CTC_\C$, resp. in $\Hilbcont$) that are exact sequences of $\OK$-modules (resp. of $\C$-vector spaces).  These exact sequences are precisely the ones that are isomorphic to split short exact sequences in $\CTC_\OK$ (resp. in $\CTC_\C$, resp. in $\Hilbcont$). For $\CTC_\OK$ (resp. for $\CTC_\C$), this follows from Proposition   \ref{strictCTCDed3} (resp. from Proposition \ref{strictCTCDed1}) and Proposition \ref{STsplit}. For $\Hilbcont$, it is a consequence of the Banach open mapping theorem and of the existence of orthogonal supplements of closed subspaces in Hilbert spaces.

Let us denote by 
$$\widehat{\digamma} : \proVectOK \lra \CTC_\OK$$
the forgetful functor that maps an object 
$$\Ebh:= (\hE, \left(E_{\sigma}^{\hilb},\Vert. \Vert_\sigma, i_\sigma \right)_{\sigma: K \hra \C})$$
(resp. a morphism $\phi := (\widehat{\phi}, (\phi_\sigma)_{\sKC})$) of $\proVectOK$ to the object $\Eh$ (resp. the morphism $\widehat{\phi}$)  of $\CTC_\OK$.

For any field embedding $\sKC,$ we shall also denote by 
$$\widehat{\digamma}_\sigma : \proVectOK \lra \CTC_\C \quad \mbox{and} \quad
{\digamma}^\hilb : \proVectOK \lra \Hilbcont$$
the functors that map respectively 
$\Ebh$ and $\phi$ to $\Eh_\sigma$ and $\widehat{\phi}_\sigma$, and to $E_\sigma^\hilb$ and $\phi_\sigma.$

The short exact sequences in $\proVectOK$ as defined in \ref{shortpro} above by Conditions $\bf ProSE_1$ and $\bf ProSE_2$ are precisely 
the diagrams in $\proVectOK$ the images of which under the functors $\widehat{\digamma}$ and ${\digamma}^\hilb$ are allowable short exact sequences in $\cEh$ and in $\cE_\C^\hilb$, and therefore appear as some ``liftings" of the latter. However, in general,  short exact sequences in $\proVectOK$ are not isomorphic to split exact sequences in  $\proVectOK$, and accordingly the structure of exact category they define on $\proVectOK$ is much richer than the structures of exact categories on $\CTC_\OK$ and $\Hilbcont$.

\subsection{Short exact sequences, kernels and cokernels, push-out and pull-back in $\proVectOK$}

Let us begin with some properties of short exact sequences in $\proVectOK$ that are formal consequences of similar properties in the categories $\CTC_\OK$, $\CTC_\C$, and $\Hilbcont.$

\begin{proposition}\label{exactkercokerpopb} 1) For any short exact sequence of pro-Hermitian vector bundles over the arithmetic curve $\Spec \OK,$
\begin{equation}\label{exiq}
 0 \lra \Ebh \stackrel{i}{\lra} \Fbh \stackrel{q}{\lra} \Gbh \lra 0,
\end{equation}
the morphism $i$ is a kernel of $q$, and $q$ is a cokernel of $i$, in the additive category 
${\rm pro\overline{Vect}}^{\rm cont}(\OK)$.

2) For any commutative diagram in $\proVectOK$ of the form
 \begin{equation}\label{poexactbis}
\begin{CD}
 0 @>>> \Ebh @>f>> \Fbh @>g>> \Gbh @>>> 0 \\
 @.          @VV{\phi}V                @VV{\tilde{\phi}}V                 @VV{\rm Id}_{\Gbh}V       @. \\
0 @>>> \Ebh' @>f'>> \Fbh' @>g'>> \Gbh @>>> 0 ,
\end{CD}
\end{equation}
the lines of which are short exact sequences in $\proVectOK$, the left-hand square
 \begin{equation}\label{pobisprime}
\begin{CD}
  \Ebh @>f>> \Fbh  \\
  @VV{\phi}V                @VV{\tilde{\phi}}V         \\
 \Ebh' @>f'>> \Fbh' 
\end{CD}
\end{equation}
is cocartesian in $\proVectOK$.

3) For any commutative diagram in $\proVectOK$ of the form
 \begin{equation*}\label{pbexact}
\begin{CD}
 0 @>>> \Ebh @>f'>> \Fbh' @>g'>> \Gbh' @>>> 0 \\
 @.          @VV{{\rm Id}_\Ebh}V                @VV{\tilde{\psi}}V                 @VV{\psi}V       @. \\
0 @>>> \Ebh @>f>> \Fbh @>g>> \Gbh @>>> 0 ,
\end{CD}
\end{equation*}
the lines of which are short exact sequences in $\proVectOK$, the right-hand square
 \begin{equation*}\label{pbbis}\begin{CD}
   \Fbh' @>g'>> \Gbh'  \\
            @VV{\tilde{\psi}}V                 @VV{\psi}V   \\
\Fbh @>g>> \Gbh 
\end{CD}
\end{equation*}
is cartesian in $\proVectOK$.
\end{proposition}

The proof of Proposition  \ref{exactkercokerpopb} relies on the following observation, that is a straightforward consequence of the definition of the category $\proVectOK$:

\begin{lemma}\label{cartcocart}
 Let us consider a commutative diagram in $\proVectOK$:
 \begin{equation}\label{prosquare}
\begin{CD}
  \Abh @>\alpha_1>> \Bbh_1  \\
  @VV{\alpha_2}V                @VV{\beta_1}V         \\
 \Bbh_2@>{\beta_2}>> \Cbh. 
\end{CD}
\end{equation}

If the commutative diagram 
 \begin{equation}\label{prosquareOK}
\begin{CD}
  \Ah @>\widehat{\alpha}_1>> \Bh_1  \\
  @VV{\widehat{\alpha}_2}V                @VV{\widehat{\beta}_1}V         \\
 \Bh_2@>{\widehat{\beta}_2}>> \Ch. 
\end{CD}
\end{equation}
 and, for every embedding $\sKC,$ the diagrams
 \begin{equation}\label{prosquareC}
\begin{CD}
\Ah_\sigma @>\widehat{\alpha}_{1,\sigma}>> \Bh_{1,\sigma}  \\
  @VV{\widehat{\alpha}_{2,\sigma}}V                @VV{\widehat{\beta}_{1,\sigma}}V         \\
 \Bh_{2,\C}@>{\widehat{\beta}_{2,\sigma}}>> \Ch
\end{CD}
\quad\mbox{ and } \quad
\begin{CD}
 A^\hilb_\sigma @>{\alpha}_{1,\sigma}>> B^\hilb_{1,\sigma}  \\
  @VV{{\alpha}_{2,\sigma}}V                @VV{{\beta}_{1,\sigma}}V         \\
 B^\hilb_{2,\sigma}@>{{\beta}_{2,\sigma}}>> C_\sigma^\hilb. 
\end{CD}
\end{equation}
are cartesian  in $\CTC_\OK$, $\CTC_\C$ and $\Hilbcont$ respectively, then the diagram (\ref{prosquareOK}) is cartesian in $\proVectOK$.

Dually, if the diagrams (\ref{prosquareOK}) and (\ref{prosquareC}) are cocartesian in $\CTC_\OK$, $\CTC_\C$ and $\Hilbcont$ respectively, then the diagram  (\ref{prosquare}) is cocartesian in $\proVectOK$.  \qed
\end{lemma}

 \begin{proof}[Proof of Proposition  \ref{exactkercokerpopb}] Let us prove 1). 
 
 From the exact sequence (\ref{exiq}), we deduce the short exact sequence
 \begin{equation}\label{exiqh}
 0 \lra \Eh \stackrel{\widehat{i}}{\lra} \Fh \stackrel{\widehat{q}}{\lra} \Gh \lra 0
\end{equation}
 in $\CTC_\OK$
 and, for every embedding $\sKC,$ the short exact sequence
 \begin{equation}\label{exiqsigh}
 0 \lra E^\hilb_\sigma \stackrel{{i}_\sigma}{\lra} F^\hilb_\sigma \stackrel{{q}_\sigma}{\lra} G^\hilb_\sigma \lra 0
\end{equation}
in $\Hilbcont$. 

Actually, as already observed, these exact sequences are split in $\CTC_\OK$ and $\Hilbcont$ respectively. This  implies that the diagram
  \begin{equation}\label{exiqsig}
 0 \lra \Eh_\sigma \stackrel{\widehat{i}_\sigma}{\lra} \Fh_\sigma \stackrel{\widehat{q}_\sigma}{\lra} \Gh_\sigma \lra 0
\end{equation}
is a split short exact sequence in $\CTC_\C$. Moreover, $\widehat{i}$ (resp. $\widehat{i}_\sigma$, resp. $i_\sigma$) is a kernel of $\widehat{q}$ (resp. $\widehat{q}_\sigma$, resp. $q_\sigma$)
in $\CTC_\OK$ (resp. $\CTC_\C$, resp. $\Hilbcont$). Dually, $\widehat{q}$ (resp. $\widehat{q}_\sigma$, resp. $q_\sigma$) is a cokernel of $\widehat{i}$ (resp. $\widehat{i}_\sigma$, resp. $i_\sigma$) in $\CTC_\OK$ (resp. $\CTC_\C$, resp. $\Hilbcont$).

As in any additive category, the fact that $i$ is kernel of $q$ in $\proVectOK$ may be rephrased as the fact that the diagram
\begin{equation}\label{kercart}
\begin{CD}
  \Ebh @>i>> \Fbh  \\
  @VVV                @VV{q}V         \\
 0@>>> \Gbh. 
\end{CD}
\end{equation}
is cartesian in $\proVectOK$.
According to Lemma \ref{cartcocart}, this follows from the fact that the diagrams deduced from (\ref{kercart}) by applying the functors $\widehat{\digamma}$, $\widehat{\digamma}_\sigma$ and $\digamma_\sigma$ are cartesian in the additive categories $\CTC_\OK$, $\CTC_\C$ and $\Hilbcont$. These diagrams are indeed cartesian since, as observed above, $\widehat{i}$ (resp. $\widehat{i}_\sigma$, resp. $i_\sigma$) is a kernel of $\widehat{q}$ (resp. $\widehat{q}_\sigma$, resp. $q_\sigma$)
in $\CTC_\OK$ (resp. $\CTC_\C$, resp. $\Hilbcont$).

Dually, we prove that $q$ is a cokernel of $i$ by showing that (\ref{kercart}) is cocartesian; this  follows by Lemma \ref{cartcocart} from the fact that $\widehat{q}$ (resp. $\widehat{q}_\sigma$, resp. $q_\sigma$) is a cokernel of $\widehat{i}$ (resp. $\widehat{i}_\sigma$, resp. $i_\sigma$) in $\CTC_\OK$ (resp. $\CTC_\C$, resp. $\Hilbcont$).

The proof of 2) and 3) are similar.  For instance, to prove that (\ref{pobisprime}) is cartesian, it is enough to show that the diagram deduced from (\ref{pobisprime}) by applying the functors $\widehat{\digamma}$, $\widehat{\digamma}_\sigma$ and $\digamma_\sigma$ are cartesian in the additive categories $\CTC_\OK$, $\CTC_\C$ and $\Hilbcont$. This follows from the commutative diagrams with exact lines in these categories obtained by applying these functors to the diagram (\ref{poexactbis}).
\end{proof}
 
\subsection{Allowable monomorphisms and epimorphisms in $\proVectOK$} 

\begin{proposition}\label{AM} Let $f: \Ebh \lra \Fbh$ be a morphism in $\proVectOK$. The following two conditions on $f$ are equivalent:

(i) The morphism $f := (\hat{f}, (f_\sigma)_{\sKC})$ satisfies the three properties:

 $\bf AM_1 :$ the map $\hat{f}: \Eh \lra \Fh$ is injective and strict\footnote{Recall that a morphism if $\CTC_\OK$ is strict if and only if its image is closed.}, and $\coker \hat{f}$ satisfies $\bf CTC_2$ \emph{(and therefore defines an object of $\CTC_\OK$)};
 
 $\bf AM_2 :$ for every field embedding $\sKC,$ the continuous $\C$-linear map $f_\sigma: E_\sigma^{\hilb} \lra F_\sigma^{\hilb}$ is injective and strict\footnote{Recall that a continuous linear maps between Banach spaces is strict if and only if its image is closed.};
 
 $\bf AM_3 :$ for every embedding $\sKC,$ the natural map
 $$\coker  f_\sigma := F_\sigma^{\hilb}/f_\sigma(E_\sigma^{\hilb}) \lra
 \coker  \hat{f}_\sigma := \Fh_\sigma/\hat{f}_\sigma(\Eh_\sigma)$$
 is injective.
 
 (ii) There exists some pro-Hermitian vector bundle $\Gbh$ and some morphism $g: \Fbh \lra \Gbh$ in $\proVectOK$  such that the diagram
 \begin{equation*}\label{exactfg}
0 \lra \Ebh \stackrel{f}{\lra} \Fbh \stackrel{g}{\lra} \Gbh \lra 0,
\end{equation*}
is a short exact sequence of pro-Hermitian vector bundles. 
\end{proposition}

Recall that, according to Proposition \ref{EF}, 2), Condition $\bf AM_1$ is equivalent to

$\bf AM'_1 :$ \emph{The dual map $\hat{f}^\vee: \Fh^\vee \lra \Eh^\vee$ is surjective.}

\noindent Moreover, when it is satisfied, the short exact sequence in $CTC_\OK$
$$0 \lra \Eh \stackrel{\hat{f}}{\lra} \Fh  \lra \coker \hat{f} \lra 0$$
is split, and the maps
$$\hat{f}_\sigma: \Eh_\sigma \lra \Fh_\sigma$$
are injective. 

Condition $\bf AM_2$ may be rephrased as:

 $\bf AM'_2 :$ \emph{for every field embedding $\sKC,$ the continuous $\C$-linear map $f_\sigma: E_\sigma^{\hilb} \lra F_\sigma^{\hilb}$ is injective with closed image.}
 
Observe also that, when $\bf AM_1$ and $\bf AM_2$ are satisfied, Condition $\bf AM_3$ is equivalent to:

$\bf AM'_3 :$ \emph{for every field embedding $\sKC$, the following diagram is cartesian:}
\begin{equation*}%\label{ }
\begin{CD}
 E^{\hilb}_\sigma @>f_\sigma>> F^{\hilb}_\sigma \\
 @V{i^{\Ebh}_\sigma}VV                @VV{i^{\Fbh}_\sigma}V              \\
 \Eh_\sigma @>\hat{f}_\sigma>> \Fh_\sigma.
 \end{CD}
\end{equation*}

\begin{proposition}\label{AE} Let $g: \Fbh \lra \Gbh$ be a morphism in $\proVectOK$. The following two conditions on $g$ are equivalent:

(i) The morphism $g := (\hat{g}, (g_\sigma)_{\sKC})$ satisfies the three properties:

 $\bf AE_1 :$ the map $\hat{g}: \Fh \lra \Gh$ is surjective; 
  
 $\bf AE_2 :$ for every field embedding $\sKC,$ the  map $g_\sigma: F_\sigma^{\hilb} \lra G_\sigma^{\hilb}$ is surjective;
 
 $\bf AE_3 :$ for every field embedding $\sKC,$ the inclusion map
 $$\beta_\sigma : = i^{\Gh}_{\sigma \mid \ker g_\sigma} : \ker g_\sigma  \hlra \ker \hat{g}_\sigma$$
 has a dense image.
 
 (ii) There exists some pro-Hermitian vector bundle $\Ebh$ and some morphism $f: \Ebh \lra \Fbh$ in $\proVectOK$  such that the diagram
 \begin{equation*}\label{exactfgbis}
0 \lra \Ebh \stackrel{f}{\lra} \Fbh \stackrel{g}{\lra} \Gbh \lra 0,
\end{equation*}
is a short exact sequence of pro-Hermitian vector bundles. 
\end{proposition}

Let us observe that, when Conditions $\bf AE_1$ and $\bf AE_2$ hold, then the morphisms $\hat{f}$ and $f_\sigma$ are automatically strict (by Banach open image theorem), and the Hahn-Banch theorem shows that Condition $\bf AE_3$ is equivalent to

$\bf AE'_3 :$ \emph{for every field embedding $\sKC,$ the map 
$$\beta_\sigma^\vee : (\ker \hat{g}_\sigma)^\vee = \coker \hat{g}_\sigma^\vee \lra
(\ker {g}_\sigma)^\vee = \coker {g}_\sigma^\vee$$
is injective.}

In turn, still assuming $\bf AE_1$ and $\bf AE_2$ satisfied (and therefore the maps $\hat{g}_\sigma^\vee$ and $g_\sigma^\vee$ injective), Condition $\bf AE'_3$ may be rephrased as:

$\bf AE''_3 :$ \emph{for every field embedding $\sKC$, the following diagram is cartesian:}
\begin{equation*}%\label{ }
\begin{CD}
 \Gh^\vee_\sigma @>\hat{g}^\vee_\sigma>> \Fh^\vee_\sigma \\
 @V{i^{\Gbh \vee}_\sigma}VV                @VV{i^{\Fbh \vee}_\sigma}V              \\
 G^{\hilb \vee}_\sigma @>g_\sigma^\vee>> F^{\hilb \vee}_\sigma .
 \end{CD}
\end{equation*}

\medskip
A morphism in ${\rm pro\overline{Vect}}^{\rm cont}(\OK)$ which satisfies the equivalent conditions in Proposition \ref{AM} (resp. in Proposition \ref{AE}) will be called an \emph{allowable monomorphism} (resp. an \emph{allowable epimorphism}).

\begin{proof}[Proof of Proposition \ref{AM}] Let us assume that Condition (i) is satisfied. Then we may consider
$$\Gh := \coker \widehat{f} = \Fh/ \widehat{f}(\Eh).$$
By  $\bf AM_1$, it is an object of $\CTC_\OK$. Moreover, for every embedding $\sKC,$ we have a natural identification
$$\Gh_\sigma \lrasim  \coker \widehat{f}_\sigma:= \Fh_\sigma/ \widehat{f}_\sigma(\Eh_\sigma).$$
(Indeed, the quotient map $\Fh \lra \Gh$ is split surjective by Proposition   \ref{strictCTCDed3}.)

For every embedding $\sKC,$ we may also define $G_\sigma^\hilb$ as the quotient of the Hilbert space $(F^\hilb_\sigma, \Vert. \Vert_{\Fb,\sigma})$ by the subspace $f_\sigma(E_\sigma)$:
$$G_\sigma^\hilb := \coker f_\sigma = F_\sigma^\hilb /f_\sigma(E_\sigma).$$
According to $\bf AM_2$,  $f_\sigma(E_\sigma)$ is closed in $F_\sigma^\hilb$, and therefore, equipped with the quotient norm deduced from $\Vert. \Vert_{\Fbh,\sigma}$, $G_\sigma^\hilb$ becomes a Hilbert space $(G_\sigma^\hilb, \Vert. \Vert_{\Gb,\sigma}).$ 

According to $\bf AM_3$, the natural maps 
$$i_\sigma^{\Gbh} : G_\sigma^\hilb := \coker f_\sigma \lra \Gh_\sigma := \Fh_\sigma/ \widehat{f}_\sigma(\Eh_\sigma)$$
are injective. Moreover, as $i^{\Fbh}_\sigma(F_\sigma^\hilb)$ is dense in $\Fh_\sigma$, its image is dense. Moreover the construction of the Hilbert spaces  $(G_\sigma^\hilb, \Vert. \Vert_{\Gb,\sigma})$ and of the maps $i_\sigma^{\Gbh}$ is clearly compatible with complex conjugation. This shows that
$$\Gbh := (\Gh, (G_\sigma^\hilb, \Vert. \Vert_{\Gb,\sigma}, i_\sigma^{\Gbh})_\sKC)$$
is an object of $\proVectOK$.

The quotient maps
$$\widehat{g}: \Fh \lra \Gh \quad \mbox{ and } \quad {g}_\sigma: \Fh_\sigma^\hilb \lra G_\sigma^\hilb$$
define a morphism in $\proVectOK$
$$g: \Fbh \lra \Gbh$$
and, by construction, the diagram
$$0 \lra \Ebh \stackrel{f}{\lra} \Fbh \stackrel{g}{\lra} \Gbh \lra 0$$
is a short exact sequence of pro-Hermitian vector bundles over $\Spec \OK.$ This proves that Condition (ii) holds.

The converse implication (ii) $\Rightarrow$ (i) is easy and left to the reader.
 \end{proof}

\begin{proof}[Proof of Proposition \ref{AE}] Let us assume that Condition (i) is satisfied. 

According to Proposition \ref{subCTC}, the closed $\OK$-submodule
$\Eh := \ker \widehat{g}$ of $\Fh$ is an object of $\CTC_\OK$. According to $\bf AE_1$, we may form the exact sequence 
$$0 \lra \Eh := \ker \widehat{g} \;\hlra \Fh \stackrel{\widehat{g}}{\lra} \Gh \lra 0.$$
By Proposition  \ref{strictCTCDed3}, it is split in $\CTC_\OK$, and therefore we get a natural identification:
$$\Eh_\sigma \simeq \ker \widehat{g}_\sigma.$$

For every embedding $\sKC,$ we may introduce the Hilbert space $(E_\sigma^\hilb, \Vert.\Vert_{\Ebh, \sigma})$ defined by $\ker g_\sigma$ equipped with the restriction of the Hilbert norm $\Vert.\Vert_{\Fbh, \sigma}$ on $F_\sigma$. According to $\bf AE_2$, it fits into the following short exact sequence in $\Hilbcont$:
$$0 \lra E^\hilb_\sigma \; \hlra F^\hilb_\sigma \stackrel{g_\sigma}{\lra} G^\hilb_\sigma \lra 0.$$

Finally, $\bf AE_3$ shows that 
$$\Ebh := (\Eh, (E^\hilb_\sigma, \Vert.\Vert_{\Ebh, \sigma}, \beta_\sigma)_\sKC)$$
defines an object of $\proVectOK$. It is straightforward that the inclusion maps
$$\Eh \; \hlra \Fh \quad \mbox{ and } \quad E_\sigma^\hilb \; \hlra F_\sigma^\hilb$$
define a morphism
$f: \Ebh \lra \Fbh$
in $\proVectOK$ and that the diagram  
$$0 \lra \Ebh \stackrel{f}{\lra} \Fbh \stackrel{g}{\lra} \Gbh \lra 0$$
is a short exact sequence of pro-Hermitian vector bundles over $\Spec \OK.$ This proves that Condition (ii) holds.

 Here again, the converse implication (ii) $\Rightarrow$ (i) is easy and left to the reader.
\end{proof}

\subsection{The category $\proVectOK$ is an exact category}

\begin{theorem}\label{proVectEx}
 The category $\proVectOK$, equipped with the class $\cE$ of kernel-cokernel pairs $(f,g)$ defined by the short exact sequences
 \begin{equation}\label{shortfg}
0 \lra \Ebh \stackrel{f}{\lra} \Fbh \stackrel{g}{\lra} \Gbh \lra 0
\end{equation}
defined in \ref{shortpro} above is an exact category.
\end{theorem}

We refer the reader to Appendix \ref{ExactCat} for the basic definitions and basic properties of exact categories we will use in this section.

Recall that we have already observed that the pair of morphisms $(f,g)$ attached to short exact sequences (\ref{shortfg}) are kernel-cokernel pairs (Proposition \ref{exactkercokerpopb}). It is also clear that they form a class closed under isomorphisms. Theorem \ref{proVectEx} may therefore be rephrased as the fact  that the allowable monomorphisms and epimorphisms, as defined in the previous subsection, satisfy the axioms $\mathbf{E_0},$ $\mathbf{E_0^{\rm op}}$, 
$\mathbf{E_1},$ $\mathbf{E_1^{\rm op}}$, and
$\mathbf{E_2},$ $\mathbf{E_2^{\rm op}}$ recalled in Appendix \ref{ExactCat}.

The identity morphisms are  both allowable monomorphism and allowable epimorphisms; indeed, for any object $\Ebh$ of $\proVectOK$, the diagrams
$$ 0 \lra \Ebh \stackrel{{\rm Id}_\Ebh}{\lra} \Ebh \lra 0 \lra 0 
\quad \mbox{and} \quad
0 \lra 0 \lra \Ebh \stackrel{{\rm Id}_\Ebh}{\lra} \Ebh \lra 0$$
are short exact sequences in $\proVectOK$. Therefore Axioms $\mathbf{E_0}$ and $\mathbf{E_0^{\rm op}}$ hold.

The validity of Axioms $\mathbf{E_1}$ and $\mathbf{E_1^{\rm op}}$ may be stated as the following proposition:

\begin{proposition} 1) The composition of two allowable monomorphisms in $\proVectOK$ is an allowable monomorphism.

2) The composition of two allowable epimorphisms in $\proVectOK$ is an allowable epimorphism. 
\end{proposition}

\begin{proof}Assertion 1) follows from the observation that Conditions $\bf AM'_1,$ $\bf AM_2$, and $\bf AM'_3$ are stable under composition.

To prove 2), let us consider a diagram
$$\Fbh_1 \stackrel{g}{\lra} \Fbh_2 \stackrel{g'}{\lra} \Fbh_3$$
in $\proVectOK$, with $g$ and $g'$ some allowable epimorphisms.
The conditions $\bf AE_1$ and $\bf AE_2$ are clearly stable under compositions, and therefore the morphism
$$g' \circ g : \Fbh_1 \lra \Fbh_3$$
also satisfy them. Moreover, for every field embedding $\sKC,$ we may consider the commutative diagram: 
\begin{equation*}
\begin{CD} \Fh^\vee_{3,\sigma} @>\widehat{g'}^\vee_\sigma>> \Fh^\vee_{2,\sigma} @>\widehat{g}^\vee_\sigma>> \Fh^\vee_{1,\sigma} \\
 @V{i^{\Fbh_3 \vee}_\sigma}VV                @VV{i^{\Fbh_2 \vee}_\sigma}V           @VV{i^{\Fbh_1 \vee}_\sigma}V       \\
  F^{\hilb \vee}_{3,\sigma} @>g_\sigma^{'\vee}>> F^{\hilb \vee}_{2,\sigma}  @>g_\sigma^\vee>> F^{\hilb \vee}_{1,\sigma}.
 \end{CD}
\end{equation*}
Since $g'$ and $g$ satisfy Condition $\bf AE''_3$, both its left-hand and its right-hand sides are cartesian. This implies that its exterior rectangle is cartesian too.  As $\hat{g}_\sigma\circ \hat{g}_\sigma^{'\vee} = \widehat{(g' \circ g)}^\vee_\sigma$ and ${g}_\sigma\circ {g}_\sigma^{'\vee} = {(g' \circ g)}^\vee_\sigma$, this implies that $g' \circ g$ satisfies $\bf AE''_3$. 
\end{proof}

The validity of Axioms $\mathbf{E_2}$ and $\mathbf{E_2^{\rm op}}$ is a consequence of (actually, equivalent to) the following proposition:

\begin{proposition}\label{E2proVect}
 Let
  \begin{equation}\label{Exfg} 
 0 \lra \Ebh \stackrel{f}{\lra} \Fbh \stackrel{g}{\lra} \Gbh \lra 0
\end{equation}
be a short exact sequence in $\proVectOK$.

1) For any morphism $$\phi: \Ebh \lra \Ebh'$$
in $\proVectOK$ with source $\OK,$ one may form a commutative diagram in $\proVectOK$ 
\begin{equation}\label{popro}
\begin{CD}
 0 @>>> \Ebh @>f>> \Fbh @>g>> \Gbh @>>> 0 \\
 @.          @VV{\phi}V                @VV{\tilde{\phi}}V                 @VV{\rm Id}_{\Gbh}V       @. \\
0 @>>> \Ebh' @>f'>> \Fbh' @>g'>> \Gbh @>>> 0 ,
\end{CD}
\end{equation}
the second line of which is a short exact sequence in $\proVectOK$.

2) For any morphism $$\psi: \Gbh' \lra \Gbh$$ in $\proVectOK$ with range $\Gb,$ one may form a commutative diagram in $\proVectOK$
 \begin{equation}\label{pbpro}
\begin{CD}
 0 @>>> \Ebh @>f'>> \Fbh' @>g'>> \Gbh' @>>> 0 \\
 @.          @VV{{\rm Id}_\Ebh}V                @VV{\tilde{\psi}}V                 @VV{\psi}V       @. \\
0 @>>> \Ebh @>f>> \Fbh @>g>> \Gbh @>>> 0 ,
\end{CD}
\end{equation}
the first line of which is a short exact sequence in $\proVectOK$.
\end{proposition}

Indeed, according to Proposition \ref{exactkercokerpopb}, 2), with the notation of 1), the diagram 
 \begin{equation*}\label{pobis}
\begin{CD}
  \Ebh @>f>> \Fbh  \\
  @VV{\phi}V                @VV{\tilde{\phi}}V         \\
 \Ebh' @>f'>> \Fbh' 
\end{CD}
\end{equation*}
is cocartesian, and therefore \emph{``the" push-out of the allowable monomorphism $f$ along  the morphism $\phi$ exists} --- it is $f'$ --- \emph{ and is an allowable monomorphism}.

Similarly, according to Proposition \ref{exactkercokerpopb}, 3), with the notation of 2), the diagram 
 \begin{equation*}\label{pbbisprime}\begin{CD}
   \Fbh' @>g'>> \Gbh'  \\
            @VV{\tilde{\psi}}V                 @VV{\psi}V   \\
\Fbh @>g>> \Gbh 
\end{CD}
\end{equation*}
is cartesian, and therefore \emph{``the" pull-back of the allowable epimorphism $g$ along the morphism $\psi$ exists} --- it is $g'$ --- \emph{and is an allowable epimorphism.}
 
\begin{proof}[Proof of Proposition \ref{E2proVect}]
Let us prove 2).

 In each of the additive categories $\CTC_\OK$, $\CTC_\C$ and $\Hilbcont$, we may construct the ``pull-back" of the exact sequence deduced from (\ref{Exfg}) by the forgetful functor $\widehat{\digamma}$, $\widehat{\digamma}$ and $\digamma^\hilb$ by the morphisms $\widehat{\psi}$, $\widehat{\psi}_\sigma$ and $\psi_\sigma$ that define $\psi.$ 

Namely, we may define an object $\Fh'$ in $\CTC_\OK$ by letting
$$\Fh' := \Fh \oplus_{\Gh} \Gh' := \ker \left(\Fh \oplus \Gh' \stackrel{\binom{\hat{f}}{-\hat{\psi}}}{\lra} \Gh\right).$$
By restriction to $\Fh \oplus_{\Gh} \Gh'$, the projections from $\Fh \oplus \Gh'$ to $\Fh$ and $\Gh'$ define morphisms
$$\widehat{\tilde{\psi}}: \Fh' \lra \Fh \quad \mbox{ and } \quad \hat{g}': \Fh' \lra \Gh'.$$
Besides, from the strict injection $\hat{f}: \Eh \lra \Fh,$ we may construct another strict injection:
$$\hat{f}' := (\hat{f}, 0): \Eh \lra \Fh' (\hlra \Fh \oplus \Gh').$$ 
Moreover, it is straightforward that the following diagram in $\CTC_\OK$ is commutative
 \begin{equation}\label{pbprohat}
\begin{CD}
 0 @>>> \Eh @>\hat{f}'>> \Fh' @>\hat{g}'>> \Gh' @>>> 0 \\
 @.          @VV{{\rm Id}_\Eh}V                @VV{\widehat{\tilde{\psi}}}V                 @VV{\widehat{\psi}}V       @. \\
0 @>>> \Eh @>\hat{f}>> \Fh @>\hat{g}>> \Gh @>>> 0
\end{CD}
\end{equation}
and that its first line is exact.

Similarly, for very embedding $\sKC,$ we may define
$$\Fh'_\sigma := \Fh_\sigma \oplus_{\Gh_\sigma} \Gh'_\sigma := \ker \left(\Fh_\sigma \oplus \Gh'_\sigma \stackrel{\binom{\hat{f}_\sigma}{-\hat{\psi}_\sigma}}{\lra} \Gh_\sigma\right)$$
and
$$F^{'\hilb}_\sigma := F^\hilb_\sigma \oplus_{G^\hilb_\sigma} G^{'\hilb}_\sigma := \ker \left(F^{\hilb}_\sigma \oplus G^{'\hilb}_\sigma \stackrel{\binom{{f}_\sigma}{-{\psi}_\sigma}}{\lra} G^\hilb_\sigma\right),$$
and form the following commutative diagrams with exact lines in $\CTC_\C$ and $\Hilbcont$: 
 \begin{equation}\label{pbprohatsigma}
\begin{CD}
 0 @>>> \Eh_\sigma @>\hat{f}'_\sigma>> \Fh' @>\hat{g}'_\sigma>> \Gh'_\sigma @>>> 0 \\
 @.          @VV{{\rm Id}_{\Eh_\sigma}}V                @VV{\widehat{\tilde{\psi}}_\sigma}V                 @VV{\widehat{\psi}_\sigma}V       @. \\
0 @>>> \Eh_\sigma @>\hat{f}_\sigma>> \Fh_\sigma @>\hat{g}_\sigma>> \Gh_\sigma @>>> 0
\end{CD}
\end{equation}
and
\begin{equation}\label{pbprohatsigmahilb}
\begin{CD}
 0 @>>> E^\hilb_\sigma @>{f}^{'\hilb}_\sigma>> F^{'\hilb} @>\hat{g}^{'\hilb}_\sigma>> G^{'\hilb}_\sigma @>>> 0 \\
 @.          @VV{{\rm Id}_{E^\hilb_\sigma}}V                @VV{{\tilde{\psi}}^\hilb_\sigma}V                 @VV{{\psi}^\hilb_\sigma}V       @. \\
0 @>>> E^\hilb_\sigma @>{f}^\hilb_\sigma>> F^\hilb_\sigma @>{g}^\hilb_\sigma>> G^\hilb_\sigma @>>> 0.
\end{CD}
\end{equation}
Here again, the morphisms $\widehat{\tilde{\psi}}_\sigma$ and  $\hat{g}'_\sigma$ (resp. ${\tilde{\psi}}_\sigma$ and ${g}'_\sigma)$) are deduced from 
the projections from $\Fh_\sigma \oplus \Gh'_\sigma$
(resp. from $F^\hilb_\sigma \oplus G^{'\hilb}_\sigma$) to
its two factors, and $\hat{f}'_\sigma$ (resp. ${f}^{'\hilb}_\sigma$) is defined as $(\hat{f}_\sigma,0)$ (resp. as $({f}^{\hilb}_\sigma,0)$).

The short exact sequence
  \begin{equation}\label{Exfghat} 
 0 \lra \Eh \stackrel{\hat{f}}{\lra} \Fh \stackrel{\hat{g}}{\lra} \Gh \lra 0
\end{equation}
 may be split in $\CTC_\OK$ (see Proposition \ref{strictCTCDed3}). A choice of splitting for (\ref{Exfghat}) determines an isomorphism
\begin{equation}\label{splithat}
\Fh \simeq \Eh \oplus \Gh,
\end{equation}
a splitting of the short exact sequence in $\CTC_\OK$
  \begin{equation*}\label{Exfghatprime} 
 0 \lra \Eh \stackrel{\hat{f}'}{\lra} \Fh' \stackrel{\hat{g}'}{\lra} \Gh' \lra 0
\end{equation*}
and a corresponding isomorphism:
\begin{equation}\label{splithatprime}
\Fh' \simeq \Eh \oplus \Gh'.
\end{equation}

Moreover, for every embedding $\sKC,$ the splitting of (\ref{Exfghat}) also induces splittings of 
  \begin{equation*}\label{Exfghatsigma} 
 0 \lra \Eh_\sigma \stackrel{\hat{f}_\sigma}{\lra} \Fh_\sigma \stackrel{\hat{g}_\sigma}{\lra} \Gh_\sigma \lra 0
\end{equation*}
and 
  \begin{equation*}\label{Exfghatprimesigma} 
 0 \lra \Eh_\sigma \stackrel{\hat{f}'_\sigma}{\lra} \Fh'_\sigma \stackrel{\hat{g}'_\sigma}{\lra} \Gh'_\sigma \lra 0
\end{equation*}
in $\CTC_\C$, and an associated isomorphism
\begin{equation}\label{splithatprimesigma}
\Fh'_\sigma \simeq \Eh_\sigma \oplus \Gh'_\sigma.
\end{equation}

Besides, the inclusion maps $\Fh \hlra \Fh_\sigma,$ $\Gh \hlra \Gh_\sigma,$ and $\Gh' \hlra \Gh'_\sigma$ define a canonical continous $\OK$-linear map
$$\Fh' := \Fh \oplus_{\Gh} \Gh' \lra \Fh'_\sigma := \Fh_\sigma \oplus_{\Gh_\sigma} \Gh'_\sigma.$$
This map is compatible with the splittings (\ref{splithatprime}) and (\ref{splithatprimesigma}). Therefore it induces a canonical isomorphisms:
$$(\Fh')_\sigma := \Fh' \widehat{\otimes}_\sigma \C \lrasim \Fh'_\sigma.$$

The maps $i_\sigma^{\Fbh},$ $i_\sigma^{\Gbh},$ and $i_\sigma^{\Gbh'}$ satisfy
$$\hat{g}_\sigma \circ i_\sigma^{\Fbh} = i_\sigma^{\Gbh} \circ g_\sigma \quad \mbox{and} \quad 
\widehat{\psi}_\sigma \circ i_\sigma^{\Fbh} = i_\sigma^{\Gbh'} \circ \psi_\sigma,$$
and therefore define a $\C$-linear map:
$$
\begin{array}{rrcl}
 i_\sigma: &  F^{'\hilb}_\sigma := F^\hilb_\sigma \oplus_{G^\hilb_\sigma} G^{'\hilb}_\sigma & \lra   &  \Fh'_\sigma := \Fh_\sigma \oplus_{\Gh_\sigma} \Gh'_\sigma  \\
 & (v, w) & \longmapsto  & (i_\sigma^{\Fbh}(v), i_\sigma^{\Gbh'}(w)).   
\end{array}
$$
Moreover the diagram
\begin{equation}\label{EFpGp}
\begin{CD}
 0 @>>> E^\hilb_\sigma @>{f}^{'\hilb}_\sigma>> F^{'\hilb} @>\hat{g}^{'\hilb}_\sigma>> G^{'\hilb}_\sigma @>>> 0 \\
 @.          @VV{i_\sigma^{\Ebh}}V                @VV{i_\sigma}V                 @VV{i_\sigma^{\Gbh'}}V       @. \\
 0 @>>> \Eh_\sigma @>\hat{f}'_\sigma>> \Fh' @>\hat{g}'_\sigma>> \Gh'_\sigma @>>> 0
 \end{CD}
\end{equation}
is clearly commutative.

\begin{lemma}\label{ibon} The $\C$-linear maps $i_\sigma: F_\sigma^{'\hilb} \lra \Fh_\sigma'$ are continuous and injective, with dense image.
\end{lemma}
\begin{proof}[Proof of Lemma \ref{ibon}] The continuity and the injectivity of $i_\sigma$ are clear. 

To prove that its image is dense, observe that in the commutative diagram (\ref{EFpGp}), the lines are short exact sequences in $\CTC_\C$ and in $\Hilbcont$ respectively, and that the vertical maps are continuous $\C$-linear maps of locally convex complex vector spaces. 

By duality, from (\ref{EFpGp}), we get the following commutative diagram with exact lines:
\begin{equation}\label{EFpGpdual}
\begin{CD}
 0 @<<< E^{\hilb\vee}_\sigma @<{f}^{'\hilb\vee}_\sigma<< F^{'\hilb\vee} @<\hat{g}^{'\hilb\vee}_\sigma<< G^{'\hilb\vee}_\sigma @<<< 0 \\
 @.          @AA{i_\sigma^{\Gbh'\vee}}A                @AA{i_\sigma^\vee}A                 @AA{i_\sigma^{\Ebh\vee}}A       @. \\
 0 @<<< \Eh^\vee_\sigma @<\hat{f}^{'\vee}_\sigma<< \Fh^{'\vee} @<\hat{g}{'\vee}_\sigma<< \Gh^{'\vee}_\sigma @<<< 0. 
\end{CD}
\end{equation}
The  maps  $i_\sigma^{\Gbh'\vee}$ and $i_\sigma^{\Ebh\vee}$ are injective, since $i_\sigma^{\Gbh'}$ and $i_\sigma^{\Ebh}$ have dense images.  The exactness of the lines of (\ref{EFpGpdual}) now imply that $i_\sigma^\vee$ is injective and therefore, by Hahn-Banach theorem, that $i_\sigma$ has a dense image.
\end{proof}

The construction of $F^{'\hilb}_\sigma$, ${\Fh}'_\sigma$ and $i_\sigma$ is clearly compatible with complex conjugation. Together with Lemma \ref{ibon}, this shows that
$$\Fbh' := (\Fh', (F_\sigma^{'\hilb}, i_\sigma)_\sKC)$$
is an object of $\proVectOK.$ It is now straightforward that
$$g' := (\hat{g}', (g'_\sigma)_\sKC) \quad (\mbox{resp. } \tilde{\psi}:= (\widehat{\tilde{\psi}}, (\tilde{\psi}_\sigma)_\sKC))$$
is a morphism from $\Fbh'$ to $\Fbh$ (resp. from $\Fbh'$ to $\Gbh'$) in $\proVectOK$ and that  (\ref{pbpro}) is a commutative diagram in $\proVectOK$, the first line of which is a short exact sequence. 

The proof of 1) is similar: now $\Fbh':= (\Fh', (F^\hilb_\sigma, i_\sigma)_\sKC)$ is constructed from the ``push-out"  $\Fh$ (resp. $F_\sigma^\hilb$ in the categorie $\CTC_\OK$ (resp. $\Hilbcont$) of the exact sequence deduced from from (\ref{Exfg}) by the forgetful functor $\widehat{\digamma}$ (resp. $\digamma^\hilb$) by the morphism $\widehat{\phi}$ (resp. $\psi_\sigma$) underlying $\phi$. The details of the proof are slightly simpler than in the proof of 2) and will be left to the reader.  
\end{proof}  

\subsection{Examples and complements}

\subsubsection{The conditions $\bf AM_3$ and $\bf AE_3$} Let us emphasize the role of the third condition $\bf AM_3$ (resp. $\bf AE_3$) in the characterization of allowable monomorphisms (resp. epimorphisms) of the exact category $\proVectOK$  in Proposition \ref{AM} (resp. in Proposition \ref{AE}): this condition does \emph{not} follow in general from the first two conditions $\bf AM_1$ and  $\bf AM_2$ (resp. $\bf AE_1$ and $\bf AE_2$).

Indeed, as shown in Paragraph \ref{v} \emph{supra}, one may construct a morphism $f: \Ebh \lra \Fbh$ in $\proVectZ$ such that
$$\mbox{$\hat{f}: \Eh \lra \Fh$ is an isomorphism}$$
and
$$\mbox{$f_\C: E_\C^\hilb \lra F_\C^\hilb$ is injective and strict, and not surjective.}$$
Such a morphism satisfies $\bf AM_1$ and  $\bf AM_2$, but not $\bf AM_3$. 

Besides, we may construct a morphism $g: \Fbh \lra \Gbh$ in $\proVectZ$ such that
\begin{equation}\label{gAE1}
\mbox{$\hat{g}: \Fh \lra \Gh$ is surjective, but not injective}
\end{equation}
and 
\begin{equation}\label{gAE2}
\mbox{$g_\C: F^\hilb_\C \lra G^\hilb_\C$ is an isomorphism.}
\end{equation}
Such a morphism satisfies $\bf AE_1$ and  $\bf AE_2$, but not $\bf AE_3$.

For instance, we may choose $R$ in $\R^\ast_+$ and $\alpha$ in $]0, R[$, and take for $\Gbh$ the ``arithmetic Hardy space" defined in Section \ref{ArHB}:
$$\Gbh := \Hbh(R):= (\Z[[X]], H^2(R), j_0)$$
where $j_0: H^2(R) \lra \C[[X]] = \Z[[X]] \widehat{\otimes}_\Z \C$ maps an holomorphic function $\phi \in H^2(R)$ to its Taylor series at the origin, and for $\Fbh$ the pro-Hermitian vector bundles over $\Spec \Z$
$$\Fbh := (\Z[[X]] \oplus \Z[[X]], H^2(R), i)$$
associated to the injective map with dense image
$$i : H^2(R) \lra \C[[X]] \oplus \C[[X]]$$
defined by 
$$i(\phi) := (j_0 (\phi), j_\alpha(\phi)),$$
where $j_\alpha(\phi) := j_0( \phi(. +\alpha))$ denotes the Taylor series  of $\phi$ at $\alpha$.

Then the maps
$$
\begin{array}{rrcl}
 \hat{g}: & \Z[[X]] \oplus \Z[[X]] & \lra   & \Z[[X]]  \\
& (\widehat{\phi}, \widehat{\psi})  &  \lra  & \widehat{\phi}  
\end{array}
$$
and
$$g_\C := Id_{H^2(R)}$$
define a morphism $g: \Fbh \lra \Gbh$ which satisfies  (\ref{gAE1}) and (\ref{gAE2}). 

\subsection{Allowable morphisms in $\proVectOK$}
As in any exact category, an \emph{allowable morphism} in $\proVectOK$ is defined as a morphism that may be written as the composition $m\, e$ of some allowable epimorphism $e$ and  some allowable monomorphism $m$ (see \ref{Amorex}).

It is possible to give a characterization of allowable morphism in $\proVectOK$ similar to the characterizations of allowable monomorphisms and epimorphisms in Propositions \ref{AM} and \ref{AE}.

\begin{proposition}\label{AMor} A morphism 
$$f := (\hat{f}, (f_\sigma)_\sKC) : \Ebh \lra \Fbh$$
in $\proVectOK$ is an allowable morphism if and only if the following conditions are satisfied:

$\bf AMor_1 :$ the map $\hat{f}: \Eh \lra \Fh$ is strict, and $\coker \hat{f}$ satisfies $\bf CTC_2$ \emph{(and therefore defines an object of $\CTC_\OK$)};

$\bf AMor_2 :$ for every field embedding $\sKC,$ the continuous $\C$-linear map $f_\sigma: E_\sigma^{\hilb} \lra F_\sigma^{\hilb}$ is strict;

$\bf AMor_ 3 (= AM_3):$ for every embedding $\sKC,$ the natural map
 $$\coker  f_\sigma := F_\sigma^{\hilb}/f_\sigma(E_\sigma^{\hilb}) \lra
 \coker  \hat{f}_\sigma := \Fh_\sigma/\hat{f}_\sigma(\Eh_\sigma)$$
 is injective.

$\bf AMor_ 4 (= AE_3):$  for every field embedding $\sKC,$ the inclusion map
 $$i^{\Eh}_{\sigma \mid \ker f_\sigma} : \ker f_\sigma  \hlra \ker \hat{f}_\sigma$$
 has a dense image.
 \end{proposition}
The proof of Proposition \ref{AMor} is a variation on the proofs of Propositions \ref{AM} and \ref{AE}, and will be left to the reader. Let us only indicate that he canonical factorization
$$f=  m\, e : \Ebh \stackrel{e}{\lra} \widehat{\overline{I}} \stackrel{m}{\lra} \Fbh$$
of $f$ is constructed as follows: the object $\widehat{\overline{I}}$ and the epimorphism $e$ of $\proVectOK$ are defined as $$\widehat{\overline{I}} := (\im \hat{f}, (\im f_\sigma, i_{\sigma \mid \im f_\sigma}^{\Fbh})_\sKC)$$
and 
$$e:= (\hat{f}: \Eh \ra \im \hat{f}, (f_\sigma: E_\sigma^\hilb \ra \im f_\sigma)_\sKC),$$ and the monomorphism $m$ is defined by the inclusion morphism $\im \hat{f} \hra \Fh$ and $\im f_\sigma \hra F_\sigma^\hilb.$

By applying Corollary \ref{quotfinCTC}, one easily derive from Proposition \ref{AMor}:

\begin{corollary}
 Let $f: \Ebh \lra \Fbh$ be a morphism in $\proVectOK$ such that either $\Ebh$ or $\Fbh$ has finite rank. Then $f$ is an allowable morphism if and only if $\hat{f} (\Eh)$ is a saturated $\OK$-submodule of $\Fh.$  \qed
\end{corollary}

Using that a morphism in $\CTC_\OK$ or $\Hilbcont$ is strict if its image is closed, an easy diagram chasing argument (that will be left to the reader) allows one to derive from Proposition \ref{AMor}:

\begin{corollary}\label{All4} Let us consider a diagram in $\proVectOK$:
$$\Ebh \xrightarrow{u} \Fbh \xrightarrow{v} \Gbh \xrightarrow{w} \Hbh.$$
If $\hat{w}: \Gh \lra \Hh$ is strict and if the diagram
 $$\Eh \xrightarrow{\hat{u}} \Fbh \xrightarrow{\hat{v}} \Gbh \xrightarrow{\hat{w}} \Hbh$$
 is an exact sequence of $\OK$-modules,  and if, for every embedding $\sKC$, the diagram
 $$E^\hilb_\sigma \xrightarrow{u_\sigma} F^\hilb_\sigma \xrightarrow{v_\sigma} G^\hilb_\sigma \xrightarrow{w_\sigma} H^\hilb_\sigma$$
 is an exact sequence of $\C$-vector spaces, then $v$ is an allowable morphism in $\proVectOK$. \qed
\end{corollary}

\subsection{Acyclic complexes and quasi-isomorphisms}

\subsubsection{}The additive category $\proVectOK$ is easily seen to be \emph{idempotent complete} (\cf \ref{DefNot}). Indeed, if $p: \Ebh \lra \Ebh$ is an idempotent endomorphism in $\proVectOK$, then one defines an object $\Kbh$ of $\proVectOK$ by letting:
$$\Kbh := (\ker \hat{p}, (\ker p_\sigma, i^{\Ebh}_{\sigma \mid \ker p_\sigma})_\sKC),$$
and the inclusion morphisms $\ker \hat{p} \hra \Eh$ and $\ker p_\sigma \hra E_\sigma^\hilb$ define a morphism 
$$\iota: \Kbh \lra \Ebh$$
that is easily checked to be a kernel of $p$.

\subsubsection{}  The structure of exact category of $\proVectOK$ allows one to define acyclic complexes in  $\proVectOK$ and quasi-isomophisms among the chain maps in $\mathbf{Ch}(\proVectOK)$, and then to construct the derived category $\bf{D}(\proVectOK)$ (see \ref{acyclicquasiiso} and  \ref{defder}).  Since  $\proVectOK$ is  idempotent complete, they satisfy the good  formal properties summarized in \ref{Karoub}.
 
 The  acyclic complexes  in $\proVectOK$ actually admit a simple description:

\begin{proposition}\label{proacyclic} A complex in $\proVectOK$
%\begin{equation}
%\ldots @)))\Ebh^{i-1} @>{d^{i-1}}>> \Ebh^{i} @>{d^{i}}>> \Ebh^{i+1}@>>> \dots
%\end{equation}
 $$\Ebh^\bullet : \quad \ldots \lra \Ebh^{i-1} \xrightarrow{d^{i-1}} \Ebh^{i} \xrightarrow{d^{i}} \Ebh^{i+1}{\lra} \dots$$
 is acyclic if and only if the complex
 $$\Eh^\bullet=\widehat{\digamma}( \Ebh^\bullet) :  \quad \ldots \lra \Eh^{i-1} \xrightarrow{\hat{d}^{i-1}} \Eh^{i} \xrightarrow{\hat{d}^{i}} \Eh^{i+1}{\lra} \dots$$
 and, for every embedding $\sKC,$ the complex
 $$E_\sigma^{\hilb , \bullet}= \digamma_\sigma(\Ebh^\bullet) :  \quad \ldots \lra E_\sigma^{\hilb, i-1} \xrightarrow{d_\sigma^{i-1}} E_\sigma^{\hilb, i} \xrightarrow{d_\sigma^{i}} E_\sigma^{\hilb, i+1}{\lra} \dots$$
 are exact \emph{(as complexes of $\OK$-modules and of $\C$-vector spaces, respectively).} 
\end{proposition}

\begin{proof} According to the description of the canonical factorization of allowable morphisms indicated after Proposition \ref{AMor} above in $\proVectOK$, it is enough to show that the boundary morphisms $d^i$ are allowable morphisms in $\proVectOK$  when $\Eh^\bullet$ and $E_\sigma^{\hilb , \bullet}$ are exact. This directly follows from Corollary \ref{All4}.
 \end{proof}

The additive categories $\CTC_\OK$ and $\Hilbcont$ may be equipped with the ``obvious" structures of exact category alluded to in the introduction to this Section \ref{proexact}. A simpler variant of the last proof shows that the exactness of $\Eh^\bullet$ (resp. of $E_\sigma^{\hilb , \bullet}$) as a complex of $\OK$-modules (resp. of $\C$-vector space) is equivalent to its acyclicity in $\CTC_\OK$ (resp. in $\Hilbcont$).

Applied to the mapping cone of a chain map between two complexes in $\proVectOK$, Proposition \ref{proacyclic} implies:
\begin{corollary} A chain map $f^\bullet: \Ebh^\bullet \lra \Fbh^\bullet$ in $\mathbf{Ch}(\proVectOK)$ is a quasi-isomorphism if and only if the chain map
$$\hat{f}^\bullet = \widehat{\digamma}(f^\bullet): \Eh^\bullet \lra \Fh^\bullet$$
and, for every embedding $\sKC,$ the chain map
$$f_\sigma^\bullet = \digamma_\sigma(f^\bullet): E_\sigma^{\hilb, \bullet} \lra F_\sigma^{\hilb, \bullet}$$
are quasi-isomorphisms of complexes of $\OK$-modules and of $\C$-vector spaces, respectively. \qed
 \end{corollary}

\chapter{Infinite dimensional vector bundles \\ over smooth projective curves}\label{bundlesovercurves}\label{geoman}

\medskip

Let $C$ be smooth, projective, geometrically irreducible curve over some field $k$, and let $K:= k(C)$ be its field of rational functions, and $g$ its genus.

In this chapter, we discuss various results concerning infinite rank vector bundles over $C$, that constitute ``geometric" counterparts, concerning infinite rank vector bundles over the curve $C$, of the main results of this monograph, which concern infinite rank vector bundles over the  ``compactified arithmetic curve"  attached to a number field $K$.

In this context, the real valued invariants $\dega \Eb$ and $\hot(\Eb)$ associated to some Hermitian vector bundles are replaced by their classical integral valued versions $\deg_C E$ and 
$$h^0(C,E) := \dim_k \Gamma(C,E),$$
associated to a vector bundle $E$ over $C$. The ``infinite rank" avatars of $h^0(C,E)$ take their value in $\N \cup \{+\infty\}$, and the measure theoretic arguments on which rely the proofs of the main theorems in Chapter \ref{SectionSum} are replaced by elementary combinatorial arguments, involving algebraic Mittag-Leffler conditions. 
Typically, arguments relying on the fact that a non-increasing sequence in $\R_+$ is convergent are replaced by an appeal to the fact that a non-increasing sequence in $\N$ is eventually constant. 

In this simplified geometric framework, the relation between the invariants $h^0(C,\Eh)$ and $\overline{h}^0(C,\Eh)$ attached to some pro-vector bundle $\Eh$ over $C$ --- which play the role of the invariants $\lhot(\Ebh)$ and $\uhot(\Ebh)$ attached to some pro-Hermitian vector bundle
$\Ebh$ over $\Spec \OK$ --- may be more easily investigated. We notably establish a simple geometric counterpart of the results of Chapter  \ref{SectionSum} (see Proposition \ref{GeomSectionSum}). 

In Section \ref{wildpro} we  construct examples of ``wild" pro-vector bundles over smooth projective curves such that the invariants $h^0(C,\Eh)$ and $\overline{h}^0(C,\Eh)$ do \emph{not} coincide. Such examples exist only when $g>0$, and may be seen as geometric analogues of the pro-Euclidean lattices considered in Paragraph \ref{WildEl}. They are however constructed by a different technique, starting from a sequence $(L_i)_{i \geq 1}$ of line bundles over $C$ and of successive extensions of $L_i$ by $L_{i+1}$.

\section{Pro-vector bundles over smooth curves}\label{provectsmoothcurve}

In this section, we consider some smooth (separated integral) curve $C$ over some base field $k$. (This curve $C$ may possibly be affine, although we are chiefly interested in the projective case.)

\subsection{Definitions}
 Recall that the structure sheaf $\cO_C$ of $C$ becomes a sheaf of topological rings when, for every open subscheme $U$ of $C$, the ring $\cO_C(U)$ is equipped with the discrete topology. Indeed the topology of $C$ is noetherian, hence any open subscheme of $C$ is quasi-compact.

This observation allows us to consider \emph{sheaves of topological $\cO_C$-modules} over $C$ and over its open subschemes.

We shall say that a sheaf of topological $\cO_U$-modules $\Eh$ over 
$C$ is a \emph{pro-vector bundle} over $C$ when it satisfies the following two conditions:

$\mathbf{Pro_1 :}$ \emph{For any affine open subscheme $V$ of $C$, the topological $\cO_C(V)$-module $\Eh(V)$ is an object of $CTC_{\cO_C(V)}$.}

$\mathbf{Pro_2 :}$ \emph{For any two non-empty open affine subschemes $V$ and $V'$ of $C$ such that $V' \subset V,$ the restriction morphism
$$\rho_{V' V}: \Eh(V) \lra \Eh(V')$$
induces an isomorphism}
\begin{equation}\label{EbVVprime}
\Eh(V) \hat{\otimes}_{\cO_C(V)} \cO_C(V') \lrasim \Eh(V').
\end{equation} 

The full subcategory of the $k$-linear category of sheaves of topological $\cO_C$-modules the object of which are the so defined  pro-vector bundles over $C$ will be denoted $\rm{proVect}_C$.

It follows from Proposition \ref{Utens} that, for any two open affine subschemes $V$ and $V'$ of $U$ such that $V' \subset V,$ the isomorphism (\ref{EbVVprime}) determines a canonical bijection:
$$ .\, \widehat{\otimes}_{\cO_C(V)} \cO_C(V'):  \cU(\Eh(V))  \lrasim \cU(\Eh(V'))$$
between the set of open saturated submodules of $\cU(\Eh(V))$ and $\cU(\Eh(V')).$ We shall denote by $\cU(\Eh)$ the limit of the essentially constant inductive system of the $\cU(\Eh(V))$, where $V$ varies over the non-empty affine subschemes of $C$. 

The set $\cU(\Eh)$ may also be identified with the set $\cU(\Eh_{k(C)})$ of the open $k(C)$-vector spaces in the linearly compact $k(C)$-vector space $\Eh(k(C))$ deduced from $\Eh(V)$ by the base change $k(C) \hra \cO_C(V)$ for any non-empty affine subscheme $V$ of $C$.

We shall say that a sequence $(O_i)_{i \in \N}$ in $\cU(\Eh)^\N$ is a \emph{filtration defining the topology of $\Eh$}, or shortly a \emph{defining filtration} in $\cU(\Eh)^\N$, when for some (or equivalently, for every) non-empty affine open subscheme $V$ of $U$, the sequence $(O_i)$, seen as a sequence in $\cU(\Eh(V))$, it is a
nested sequence
\begin{equation*}\label{Oi}
O_0 \hookleftarrow O_1 \hookleftarrow O_2 \hookleftarrow \dots
\end{equation*}
of submodules in $\cU(\Eh(V))$ which constitutes a basis of neighborhoods of $0$ in $\Eh(V)$, or equivalently, is cofinal in  $\cU(\Eh(V))$.

For any closed point $P$ in $C$, we may consider the local ring $\cO_{C,P}$ and its completion $\widehat{\cO}_P$. If denotes some open affine neighborhood of $P$ in $C$, to any pro-Hermitian vector bundle $\Eh$ over $C,$ we may associate the objects
$$\Eh_{\cO_{C,P}} := \Eh(V) \widehat{\otimes}_{\cO(V)} \cO_{C,P}$$
and $$\Eh_{\widehat{\cO}_P} := \Eh(V) \widehat{\otimes}_{\cO(V)} \widehat{\cO}_P$$
in $\CTC_{\cO_{C,P}}$ and $\CTC_{\widehat{\cO}_P}$ respectively.

\subsection{Pro-vector bundles as projective limits of vector bundles}

One easily checks that any element $O$ in $\cU(\Eb)$ defines a subsheaf of (open) $\cO_C$-submodules of $\Eh$: for any non-empty affine open subscheme $V$ of $C$, $O(V)$ is the submodule of $\Eh(V)$ defined by $O$ considered as an element of $\cU(\Eh(V)).$

By the very definition of $\cU(\Eh)$, the quotient topological $\cO_C$-module $\Eh/O$ is a vector bundle, namely a sheaf of discrete $\cO_\C$-modules that is locally free of finite rank. Moreover this construction allows us to identify $\cU(Eh)$ with the set of open subsheaves of $\cO_C$-submodules in $\Eh$ which satisfy this property. 

For any  filtration $(O_i)_{i \in \N}$ in $\cU(\Eh)^\N$ defining the topology of $\Eh,$ we may form the projective system of surjective morphisms
\begin{equation*}
\Eh / O_0 \longleftarrow \Eh / O_1 \longleftarrow \Eh / O_2 \longleftarrow \cdots \longleftarrow \Eh / O_i\longleftarrow \Eh / O_{i+1} \longleftarrow \cdots
\end{equation*}
defined by the canonical quotient maps between the vector bundles $\Eh/O_i$. It is straighforward that $\Eh$ (equipped with the quotient morphisms $\Eh \lra \Eh/O_i$) is ``the"  projective limit of this system in the category of topological $\cO_C$-Modules.

Conversely, if we are given a projective system of surjective morphisms of vector bundles over the curve $C$,
$$E_\bullet: E_0 \stackrel{q_0}{\longleftarrow} E_1 \stackrel{q_1}{\longleftarrow} E_2 \stackrel{q_2}{\longleftarrow} \cdots
\stackrel{q_{i_1}}{\longleftarrow} E_i \stackrel{q_{i-1}}{\longleftarrow} E_{i+1} \stackrel{q_{i+1}}{\longleftarrow} \cdots,$$
we may form its projective limit 
$$\Eh := \varprojlim_k E_k$$
in this category, and one easily checks that it is an object of $\rm{proVect}_C$.
Moreover, for every $i \in \N,$ the kernel 
$$O_i := \ker p_i$$ of the projection
$$p_i: \Eh = \varprojlim_k E_k \lra E_i$$
 belongs to $\cU(\Eh)$ and $(O_i)_{i\in \N}$ is a filtration defining the topology of $\Eh.$
 
 This description of the objects of $\rm{proVect}_C$ in terms of projective systems of vector bundles over $C$ extends to morphisms in $\rm{proVect}_C$. Indeed, if 
 $$E'_\bullet: E'_0 \stackrel{q'_0}{\longleftarrow} E'_1 \stackrel{q'_1}{\longleftarrow} E'_2 \stackrel{q'_2}{\longleftarrow} \cdots
\stackrel{q'_{i_1}}{\longleftarrow} E'_i \stackrel{q'_{i-1}}{\longleftarrow} E'_{i+1} \stackrel{q'_{i+1}}{\longleftarrow} \cdots$$
denotes another projective system of surjective morphisms of vector bundles over $C$, of projective limit
$$\Eh' := \varprojlim_k E'_k,$$
then we have a canonical isomorphism:
$$\Hom_{\rm{proVect}_C} (\Eh, \Eh') \lrasim \varprojlim_{i'}\varinjlim_i(E_i, E'_{i'}).$$

The following lemma is a simple application of the description of pro-vector bundles over $C$ as projective limits of vector bundles.

\begin{lemma}\label{finitesub}
 For any pro-vector bundle $\Eh$ on $C$ and for any finite family $(s_1, \ldots, s_n)$ of elements of $\Gamma(U,\Eh),$ the subsheaf
 $$\cF:= \sum_{k=1}^n \cO_C\, s_k$$
 of $\cO_C$-modules of $\Eh$-generated by $s_1, \ldots s_n$ is locally free. 
\end{lemma}

\begin{proof} We may assume that $\Eh$ is the projective limit $\varprojlim_i E_i$ of a projective system of surjective morphisms of vector bundles $E_\bullet$, and denote by $p_i: \Eh \lra E_i$ the projection morphisms. 

For every $i\in \N,$ the image
$$p_i(\cF)  = \sum_{k=1}^n \cO_C \,p_i(s_k)$$
of $\cF$ in $E_i$ is a torsion free coherent subsheaf of $E_i$. It is therefore locally free of rank at most $n$. Moreover the canonical maps $q_i : E_i \lra E_{i-1}$ define by restrictions some surjective morphisms of vector bundles
$q_{i\mid F_i} : F_i \lra F_{i-1}.$
The sequence of integers $(\rk F_i)_{i \in \N}$ is therefore non-decreasing and bounded. It is thefore eventually constant. 

This shows that there exists $i_0$ in $\N$ such that, for any $i > i_0,$ $q_i$ defines an isomorphism
\begin{equation}\label{isoizero}
q_{i\mid F_i} : F_i \lrasim F_{i-1}.
\end{equation}
For any point $P$ of $C,$ we may find a subset $A$ of $\{1,\ldots, n\}$ such that $(p_{i_0}(s_\alpha))_{\alpha \in A}$ is a basis of the fiber $F_{i_0, P}$. Then, if $V$ is a small enough open neighborhood of $P$ in $C$, the map 
\begin{eqnarray*}
  \cO_V^A & \lra  & F_{i_0 \mid V}  \\
(f^\alpha)_{\alpha \in A}  & \longmapsto & \sum_{\alpha \in A} f^\alpha p_{i_0}(s_\alpha)   
\end{eqnarray*}
is an isomorphism of $\cO_V$-modules. The isomorphisms (\ref{isoizero}) now imply that
\begin{eqnarray*}
  \cO_V^A & \lra  & \cF_{\mid V}  \\
(f^\alpha)_{\alpha \in A}  & \longmapsto & \sum_{\alpha \in A} f^\alpha s_\alpha   
\end{eqnarray*}
is an isomorphism of $\cO_V$-modules.
\end{proof}

\subsection{Descent properties and pro-vector bundles over smooth projective curves}\label{desprovectpro} 

When the curve $C$ is affine, the functor $\Gamma(C,.)$ establishes an equivalence between the category $\rm{proVect}_C$ of pro-vector bundles over $C$ and the category $\CTC_A$, where $A$ denotes the Dedekind ring $A:= \Gamma(C, \cO_C)$. 

When $C$ is projective, the descent properties of the categories $\CTC_.$ discussed in Section \ref{LocDes} allow one to give some ``concrete" descriptions of  $\rm{proVect}_C$. 

 For instance,  if $U$ and $V$ are two open affine (necessarily dense) subscheme of $C$ such that $C = U \cup V,$ then $\rm{proVect}_C$,  by using the descent result in Paragraph \ref{LocDesUV}, one may describe the category  $\rm{proVect}_C$ in terms of the categories $\CTC_{A_U}$, $\CTC_{A_V}$, $\CTC_{A_{U \cap \V}}$ associated to the Dedkind rings $A_U:= \Gamma(U, \cO_C),$ $A_V:= \Gamma(V, \cO_C),$ and $A_{U\cap V}:= \Gamma(U \cap V, \cO_C).$ 
 
 Namely, the datum of some pro-vector bundle $\Eh$ over $C$ is equivalent to the data of the objects
 $N_U := \Gamma(U, \Eh)$  and $N_V:= \Gamma(V, \Eh)$
 in $\CTC_{A_U}$ in $\CTC_{A_V}$ respectively, together with the glueing isomorphism in $\CTC_{A_{U \cap \V}}$:
 $$N_U \widehat{\otimes}_{A_U} A_{U\cap V} \lrasim \Gamma(U\cap V, \Eh) \lrasim N_V \widehat{\otimes}_{A_V} A_{U\cap V}.$$
 
When the smooth  projective curve $C$ is equipped with a non-empty finite set $\Sigma$ of closed points, by means of the descent results in paragraph \ref{LocDesUp}, we may give a description of pro-vector bundles over $C$ that is formally similar to the definition of pro-Hermitian vector bundles over $\Spec \OK$ when $K$ is a number field. 

Indeed, starting from $C$ and $\Sigma,$ we may introduce the Dedekind ring
$$A:= \Gamma(C \setminus \Sigma, \cO_C)$$
of field of fractions $K:= k(C)$, and the completions $\cOh_P$ of the local ring $\cO_{C,P}$ of $C$ at the points $P$ of $\Sigma$. The field of fractions $\Kh_P$ of $\cOh_P$ may also be equipped with the non-archimedean absolute value
$$\vert . \vert_p := e^{-v_P} : \Kh_P \lra \R_+,$$
and may  be identified with the $P$-adic completion of $K$. (The ring $A$ and the normed fields $(\Kh_P, \vert.\Vert_P)$ play the role of the ring of integers of some number field $K$ and of its archimedean completions.)

Using the descent result in \ref{LocDesUp}, one easily shows that the datum of some pro-vector bundle $\Eh$ over $C$ is equivalent to the data of the object
$$N: = \Gamma(C\setminus \Sigma, \Eh)$$
in $CTC_A$, of the objects $\Eh_{\cOh_P}$ in $CTC_{\cOh_P}$ for $P \in \Sigma$, and of the ``glueing isomorphisms"
$$N_{\Kh_P} := N \widehat{\otimes}_A \Kh_P \lrasim \Eh_{\Kh_P} := \Eh_{\cOh_P} \widehat{\otimes}_{\cOh_P} \Kh_P.$$

The topological $A$-module $N$ plays the role of the topological $\OK$-module $\Eh$ underlying a pro-Hermitian vector bundle $\Ebh$ over $\Spec \OK,$ and the inclusions $\Eh_{\cOh_P} \hra   N_{\Kh_P}$ the role of the  inclusions  of the Hilbert spaces $(E_\sigma^\hilb, \Vert .\Vert_\sigma)$ into the spaces  $\Eh_\sigma:= \Eh \widehat{\otimes}_{\OK,\sigma} \C$.

\section{The invariants $h^0(C,\Eh)$ and $\overline{h}^0(C,\Eh)$}

In the following sections of this chapter, we denote by 
$C$ a smooth \emph{projective} geometrically irreducible curve over some base field $k$. We also denote by $K:= k(C)$ be its field of rational functions, and by $g$ its genus.
For any coherent sheaf of $\cO_C$-modules $F$ over $C$, notably for any vector bundle, the dimension $\dim_k \Gamma(C, F)$ of its space of sections will be denoted $h^0(C, F)$.

\subsection{Definitions} We may define  generalizations of $h^0(C, F)$ that make sense for any pro-vector bundle $\Eh$ over $C$, and not only for vector bundles, and constitute the ``function field" counterparts of the invariants $\lhot(\Ebh)$ and $\uhot(\Ebh)$ attached to pro-Hermitian vector bundles investigated in this monograph. 

These invariants will take their value in $\N \cup \{+\infty\}$, and will be defined as follows.

Firstly, we may consider the $k$-vector space $\Gamma(C, \Eh)$ of global sections of $\Eh$ and  its dimension in  $\N \cup \{+\infty\}$:
$$h^0(C, \Eh) := \dim_k \Gamma(C, \Eh).$$
Indeed, it follows from Lemma \ref{finitesub} that $h^0(C, \Eh)$ coincides with the supremum of the integers $h^0(C, E')$ where $E'$ varies over the vector bundles $E'$ over $C$ such that there exists an injective morphism of $\cO_C$-modules $\iota: E' \lra \Eh$, and is therefore the analog of $\lhot(\Ebh)$ defined by (\ref{lhot}).

Secondly, we may consider the lower limit of the dimension of the $k$-vector space $\Gamma(C, \Eh/O)$ of sections of the vector bundle $\Eh/O$, where $O$ belongs to the filtered set $\cU(\Eh)$:
$$\overline{h}^0(C, \Eh) := \liminf_{O \in \cU(\Eh)} h^0(C, \Eh/O).$$
This is a straightforward analogue of the definition (\ref{uhot}) of $\uhot(\Ebh)$.
 
\begin{proposition}
 For any pro-vector bundle $\Eh$ over $C,$ we have:
 \begin{equation}\label{houho}
h^0(C, \Eh) \leq \overline{h}^0(C, \Eh).
\end{equation}
\end{proposition}

\begin{proof} We have to show that, for any finite family $(s_1, \ldots, s_n)$ of $k$-linearly independent elements of $\Gamma(C,\Eh),$
we have:
\begin{equation}\label{ineq:houho}
\overline{h}^0(C, \Eh) \geq n.
\end{equation}

To achieve this, we consider the subsheaf $$\cF:= \sum_{i=1}^n \cO_U s_i$$
 of $\cO_C$-modules of $\Eh$-generated by $s_1, \ldots s_n$. According to Lemma \ref{finitesub}, it is a vector bundle over $C$.
 
 Let us also consider the injection
 $$\iota: \cF \hlra \Eh$$
 and the quotient morphisms
 $$p_U: \Eh \lra E_U,$$
 for $U \in \cU(\Eh) (:=\cU(\Eh_K)).$
 By base change to the generic point $\Spec K$ of $C,$ these maps  become the $K$-linear injection
 $$\iota_K: \cF_K \hlra \Eh_K$$
 and surjections
$$p_{U,K}: \Eh_K \lra E_{U,K} = \Eh_K/U$$
of $K$-vector spaces.

As $\bigcap_{U \in \cU(\Eh_K)} U = \{0\}$ and $\cF_K$ is a finite dimensional $K$-vector space, when $U$ in $\cU(\Eh_K)$ is small enough --- say when $U\subset U_0$ --- we have:
$$\iota_K(\cF_K) \cap U = \{0\}.$$
Then 
$$p_{U,K} \circ \iota_K : \cF_K \lra E_{U,K},$$
and therefore
$$p_U\circ \iota: \cF \lra E_U,$$
is an injective morphism, and 
$$p_U\circ \iota : \Gamma(C,\cF) \lra \Gamma(C, E_U)$$
is an injective $k$-linear map.

This shows that, for any $U$ in $\cU(\Eh_K)$ such that $U\subset U_0$, we have
$$h^0(C, E_U) := \dim_k \Gamma(C, E_U) \geq h^0(C, \cF) := \dim_k \Gamma(C,\cF) \geq n.$$
This establishes (\ref{ineq:houho}).
\end{proof}

\subsection{A geometric analogue of strongly summable pro-Hermitian vector bundles} 

The following proposition is a straightforward consequence of  definitions:

\begin{proposition}\label{lhofinite}
 Let $\Eh$ be a pro-vector bundle over $C$. The following conditions are equivalent:
 
 1) $\overline{h}^0(C, \Eh) < + \infty.$

2) There exists a defining filtration $(O_i)_{i \in \N}$ in $\cU(\Eh)$ such that the sequence $(h^0(C, \Eh/O_i))_{i \in \N}$ is bounded.

3) There exists an integer $n \in \N$ and a defining filtration $(O_i)_{i \in \N}$ in $\cU(\Eh)$  such that, for every $i \in \N,$ 
\begin{equation}\label{3a}
h^0(C, \Eh/O_i) = n
\end{equation}
and, for any $O'$ in $\cU(\Eh)$ contained in $O_0$, $$h^0(C, \Eh/O) \geq n.$$

When these conditions are realized, $\overline{h}^0(C, \Eh) = n.$ \qed
\end{proposition}

\begin{lemma}\label{filtrationlemma}
  Let $\Eh$ be a pro-vector bundle over $C$ such that $\overline{h}^0(C, \Eh) < + \infty, $ and let $(O_i)_{i \in \N}$ be a defining filtration in $\cU(\Eh)$ that satisfies the conditions in Proposition \ref{lhofinite}, 3).
  
 1)  For any $i \in \N,$ the maximal slope\footnote{Recall that the maximal slope $\mu_{\rm max}(F)$ of some vector bundle $F$ on $C$ is defined as the supremum of the slopes $\mu(F'):= \deg_C F' /\rk F'$ of the sub-vector bundles $F'$ of positive rank of $F$. It is $-\infty$ if $\rk F =0.$} of the vector bundle 
  $$T_i := O_0/O_i = \ker (\Eh/O_i \lra \Eh/O_0)$$
  satisfies
  \begin{equation}\label{mumaxTi}
\mu_{\rm max}(T_i) \leq g-1.
\end{equation}

2) If moreover $h^0(C,\Eh) = \overline{h}^0(C, \Eh),$ then, for every large enough $i \in \N,$ the quotient map 
$$q_i: \Eh/O_{i+1} \lra \Eh/O_i$$
induces an isomorphism between spaces of sections:
\begin{equation*}
q_i: \Gamma(C,\Eh/O_{i+1}) \lrasim \Gamma(C,\Eh/O_i).
\end{equation*}
\end{lemma}

\begin{proof} 1) We have to show that, for every non-zero sub-vector bundle $T$ of $T_i,$
$$\mu(T) \leq g-1;$$
or equivalently by Riemann-Roch,
\begin{equation}\label{chineg}
\chi(C, T) := h^0(C,T) - h^1(C,T) \leq 0.
\end{equation}

In the short exact sequence
\begin{equation}\label{shorT}
0 \lra T \lra E_i := \Eh/O_i \lra E_i /T \lra 0,
\end{equation}
the quotient $E_i/T$ may be identified with the vector bundle $\Eh/O$ for some $O$ in $\cU(\Eh)$ satisfying
$$O_i \subset O \subset O_0.$$
Consequently,
\begin{equation}\label{etdetrois}
h^0(C, E_i/T) = h^0(C, \Eh/O) \geq n = h^0(C, E_i). 
\end{equation}

Besides, the long exact sequence of cohomology groups deduced from (\ref{shorT}) shows that
\begin{equation}\label{etdequatre}
h^1(C,E_i) \geq h^1(C,E_i/T)
\end{equation}
and
\begin{equation}\label{etdecinq}
\chi(C,T) = \chi(C,E_i) - \chi(C, E_i/T) = h^0(C,E_i) - h^1(C, E_i) - h^0(C,E_i/T) + h^1(C, E_i/T).
\end{equation}

The inequality (\ref{chineg}) follows from (\ref{etdetrois}), (\ref{etdequatre}), and (\ref{etdecinq}). 

2) The morphisms 
$$p_i: \Gamma(C, \Eh) \lra \Gamma(C, \Eh/O_i)$$ define an isomorphism
$$\Gamma (C, \Eh) \lrasim \varprojlim_i \Gamma(C, \Eh/O_i).$$
Therefore
$$\bigcap_{i \in \N} \ker p_i = \{ 0\}.$$
Since $(\ker p_i)_{i \in \N}$ is a non-increasing sequence of vector subspaces of the finite dimensional $k$-vector space $\Gamma(C, \Eh)$, this shows that, for $i$ large enough, $\ker p_i = \{ 0\}$, and therefore $p_i$ is an injective linear map between two $k$-vector spaces of the same dimension, hence an isomorphism.
 \end{proof}
 
 The following proposition may be seen as an analogue, in the function field case, of Corollary \ref{strongsummableeq}, that led us to define the strongly summable pro-Hermitian vector bundles.

\begin{proposition}\label{GeomSectionSum}
 For any pro-vector bundle $\Eh$ over $C$, the following two conditions are equivalent:
 
 1) $h^0(C,\Eh) = \overline{h}^0(C, \Eh) < + \infty.$
 
 2) There exists a defining filtration $(O_i)_{i \in \N}$ in $\cU(\Eh)$ such that the  quotient morphisms 
 $$q_i: \Eh/O_{i+1} \lra \Eh/O_i$$
 satisfy
 \begin{equation}\label{Gammakerqi}
\Gamma(C, \ker q_i) = 0 \mbox{ for $i$ large enough.}
\end{equation}
 
\end{proposition}

\begin{proof} The implication $1) \Rightarrow 2)$ follows from Lemma \ref{filtrationlemma}, 2).

Conversely, let us assume that 2) is satisfied. Then, for $i$ large enough --- say for $i \geq i_0$ --- the $k$-linear maps
\begin{equation}\label{qiGamma}
q_i: \Gamma(C,\Eh/O_{i+1}) \lra\Gamma(C,\Eh/O_i)
\end{equation}
are injective. Consequently the sequence of non-negative integers $(h^0(C,\Eh/O_i))_{i \geq i_0}$ is non-in\-crea\-sing. Therefore there exists some integer $i_1 \geq i_0$ such that the sequence $(h^0(C,\Eh/O_i))_{i \geq i_1}$ is constant. Then the maps (\ref{qiGamma}) are isomorphisms for $i \geq i_1.$

Therefore the maps in the projective systems
$$\Gamma(C, \Eh/O_{i_1}) \longleftarrow \Gamma(C, \Eh/O_{i_1 +1}) \longleftarrow \cdots$$
are isomorphisms, and therefore define an isomorphism
$$\Gamma(C, \Eh) \lrasim \Gamma(C,\Eh/O_{i_1}).$$
This shows that
$$h^0(C, \Eh) = h^0(C, \Eh/O_{i_1})$$
and that
$$\overline{h}^0(C,\Eh) \leq \liminf_{i \ra +\infty} h^0(C, \Eh/O_{i}) = h^0(C, \Eh/O_{i_1}).$$

Together with the inequality (\ref{houho})
$$h^0(C, \Eh) \leq \overline{h}^0(C, \Eh),$$
this establishes 1).
\end{proof}

\begin{proposition}\label{gzeroeq} When $g=0,$ for any pro-vector bundle $\Eh$ over $C$ such that $\overline{h}^0(C, \Eh) < + \infty,$ we have:
$$h^0(C,\Eh) = \overline{h}^0(C, \Eh).$$
 \end{proposition}
 
 \begin{proof} 
 We choose a defining filtration $(O_i)_{i \in \N}$ in $\cU(\Eh)$ that satisfies Condition 3) of Proposition \ref{lhofinite} and we apply Lemma \ref{filtrationlemma}, 1). The latter shows that the vector bundles $T_i$ defined by the exact sequences
 \begin{equation*}
0 \lra T_i \hlra E_i := \Eh/O_i \stackrel{p_i}{\lra} E_0 := \Eh/O_0 \lra 0
\end{equation*}
satisfy 
$$\mu_{\rm max}(T_i) \leq -1.$$

Therefore $\Gamma(C,T_i) = 0,$ and the map
\begin{equation}\label{pinj}
p_i:\Gamma(C, E_i) \lra \Gamma(C, E_0) 
\end{equation}
is injective. According to (\ref{3a}), the $k$-vector spaces $\Gamma(C, E_i)$ all have the same dimension $n := h^0(C, E_0),$ and therefore the maps (\ref{pinj}) are isomorphisms. This shows that the maps in the projective systems
$$\Gamma(C, \Eh/O_0) \longleftarrow \Gamma(C, \Eh/O_i) \longleftarrow \cdots$$
are isomorphisms, and therefore define an isomorphism
$$\Gamma(C, \Eh) \lrasim \Gamma(C,\Eh/O_0).$$
Finally,
$$h^0(C, \Eh) = \dim_k \Gamma(C, \Eh) = \dim_k \Gamma(C,\Eh/O_0) =\overline{h}^0(C, \Eh).$$
\end{proof}

\section{Successive extensions and  wild pro-vector bundles over projective curves}\label{wildpro}

In this section, we assume that we are given:% the following data:

\noindent $\bullet$ \emph{a sequence $(L_i)_{i \geq 1}$ of line bundles over $C$;} %, and} 

\noindent $\bullet$
\emph{  for every integer $i \geq 2,$ a class $\alpha_i$ in }
$\Ext^1_{\cO_C}(L_{i-1}, L_i) \lrasim H^1(C, L^\vee_{i-1} \otimes L_i).$ 
\subsection{A construction}\label{Construct} 

From the above data,  we may construct a projective system of vector bundles over $C$:
$$ E_\bullet : \;\; E_0 \stackrel{q_0}{\longleftarrow} E_1 \stackrel{q_1}{\longleftarrow} E_2 \stackrel{q_2}{\longleftarrow} \cdots
\stackrel{q_{i-1}}{\longleftarrow} E_i \stackrel{q_{i}}{\longleftarrow} E_{i+1} \stackrel{q_{i+1}}{\longleftarrow} \cdots$$
such that the following two conditions are satisfied:

\noindent  $\mathbf{Cons_1 :}$ \emph{for any $i \in \N,$ the vector bundle $E_i$ has rank $i$ and $q_i$ is surjective;}

\noindent $\mathbf{Cons_2 :}$  \emph{for any integer $i\geq 1,$ there exists an isomorphism
$$j_i: L_i \lrasim \ker q_i;$$
moreover, when  $i\geq 2,$ the class $\tilde{\alpha}_i$ in $\Ext^1_{\cO_C}(E_{i-1}, L_i)$ of the $1$-extension 
$$\cS_i: \;\; 0 \lra L_i \stackrel{j_i}{\lra} E_i \stackrel{q_{i-1}}{\lra} E_{i-1} \lra 0$$
is sent to $\alpha_i$ by the morphism
\begin{equation}\label{morphalpha}
. \circ j_{i-1} : \Ext^1_{\cO_C}(E_{i-1}, L_i)  \lra \Ext^1_{\cO_C}(L_{i-1}, L_i) 
\end{equation}
induced by the morphism} $j_{i-1}: L_{i-1} \lra E_{i-1}.$

Indeed, we may simply construct the vector bundles $E_i$ and the morphisms $q_i$ and $j_i$ inductively as follows:

\noindent $\bullet$ $E_0 := 0;$

\noindent $\bullet$ $E_1 := L_1, j_1 := Id_{L_1}, q_0:= 0.$

\noindent $\bullet$ Let us assume that $E_k$, $j_k$ and $q_{k-1}$ have been constructed for some $k \in \N_{\geq 1}$. Then we may consider the map (\ref{morphalpha}) for $i =k+1$:
$$ . \circ j_{k} : \Ext^1_{\cO_C}(E_{k}, L_{k+1})  \lra \Ext^1_{\cO_C}(L_{k}, L_{k+1}).$$
This map is surjective. (Indeed its cokernel maps injectively into $$\Ext^2_{\cO_C}( \coker j_k, L_{k+1}) \simeq \Ext^2_{\cO_C}( E_{k-1}, L_{k+1}),$$ which vanishes since $\dim C =1.$)
Therefore we may choose a class $\tilde{\alpha}_{k+1}$ in $\Ext^1_{\cO_C}(E_{k}, L_{k+1})$ sent to $\alpha_k$ by $ . \circ j_{k}$. The class $\tilde{\alpha}_{k+1}$ may be realized by some $1$-extension (of $\cO_C$-modules) of $E_k$ by $L_{k+1}$,
$$\cS_{k+1}: \;\; 0 \lra L_{k+1} \stackrel{j_{k+1}}{\lra} E_{k+1} \stackrel{q_{k}}{\lra} E_{k} \lra 0,$$
which defines $E_{k+1}, j_k$ and $q_k$.

Clearly, when the above conditions (i) and (ii) are satisfied, for every $i\geq 1,$ the morphism $j_i$ induces an injective map
$$j_i^0: H^0(C, L_i) \lra H^0(C, E_i).$$
Moreover, for every $i\geq 2,$ we may  consider the long exact sequence of cohomology groups deduced from $\cS_i$:
\begin{multline}\label{LESSi}
 0 \lra H^0(C,L_i) \stackrel{j_i^0}{\lra} H^0(C,E_i) \stackrel{q_{i-1}^0}{\lra} H^0(C,E_{i-1})
 \stackrel{\tilde{\alpha}_i \cup .}{\lra}  H^1(C,L_i) \\ \stackrel{j_i^1}{\lra} H^1(C,E_i) \stackrel{q_{i-1}^1}{\lra} H^1(C,E_{i-1})  \lra 0.
\end{multline}

\subsection{``Wild" pro-vector bundles over projective curves of genus $g \geq1$.} 
\begin{proposition}\label{horresco}
Let $E_\bullet$ be a projective system of vector bundles over $C$ that satisfies Conditions $\mathbf{Cons_1}$ and $\mathbf{Cons_2}$ above.
Let us also assume that, for every $i \geq 2,$ the cup-product by $\alpha_i$ defines an isomorphism
\begin{equation}\label{alphaisom}
\alpha_i \cup . : H^0(C, L_{i-1}) \lrasim H^1(C, L_i).
\end{equation}

1) For every $i\geq 1,$
$${\mathbf A_i :}  \;\; \mbox{ $j_i^0$ is an isomorphism}$$ 
and,  for every $i\geq 2,$ the morphisms in (\ref{LESSi}) satisfy
$${\mathbf B_i :}  \;\; \mbox{ $\tilde{\alpha}_i \cup .$ is an isomorphism, and therefore $q_{i-1}^0$ and $j_i^1$ vanish and $q_{i-1}^1$ is an isomorphism.}$$ 

2) The pro-vector bundle  $\Eh := \varprojlim_i E_i$ over $C$ satisfies:
\begin{equation}\label{howild}
h^0(C, \Eh) = 0
\end{equation}
and
\begin{equation}\label{uhowild}
\overline{h}^0(C, \Eh) = \liminf_{k \lra +\infty} h^0(C, L_k).
\end{equation}
\end{proposition}

\begin{proof}
1)  The validity of  $({\mathbf A_i})_{i\geq 1}$ and of $({\mathbf B_i})_{i\geq 2}$ is established by induction.

As $j_1$ is an isomorphism,  Condition ${\mathbf A_1}$ is satisfied. 
 
 For any $i \geq 2,$ the following diagram is commutative, as a consequence of Condition $\mathbf{Cons_1}$: 
  \begin{equation}\label{triang}
\xymatrix{H^0(C,E_{i-1}) \ar[r]^{\tilde{\alpha}_i \cup .} & H^1(C,L_i) \\
H^0(C,L_{i-1})\ar[u]^{j_{i-1}^0} \ar[ur]^{\alpha_i \cup.} &. }
\end{equation}
As ${\alpha}_i \cup .$ is assumed to be an isomorphism, this establishes the implication ${\mathbf A_{i-1}} \Rightarrow  {\mathbf B_i}$.

Finally, the implication ${\mathbf B_{i}} \Rightarrow  {\mathbf  A_i}$ follows from the exactness of (\ref{LESSi}).

2) All the morphisms 
$q^0_k: H^0(C, E_{k+1}) \lra H^0(C, E_k)$
vanish, and consequently the projective limit
$\Gamma (C, \Eh)$ of the  system 
$$\Gamma(C, E_0) \stackrel{q^0_0}{\longleftarrow} \Gamma(C, E_1) \stackrel{q^0_1}{\longleftarrow} \Gamma (C, E_2) \stackrel{q^0_2}{\longleftarrow} \cdots
\stackrel{q^0_{i_1}}{\longleftarrow} \Gamma(C, E_i) \stackrel{q^0_{i}}{\longleftarrow} \Gamma(C, E_{i+1}) \stackrel{q^0_{i+1}}{\longleftarrow} \cdots$$
is zero. This establishes (\ref{howild}).

Let $(O_n)_{n \in \N}$ be the defining filtration in $\cU(\Eh)$ defined by the kernels of the projection maps $p_n: \Eh \lra E_n.$ Recall that, for any $O$ in $\cU(\Eh)$, we denote by $p_O$ the quotient morphism from $\Eh$ to $E_O$.

  The following lemma is straightforward:
\begin{lemma} Let $O$ be an element of $\cU(\Eh)$. There exists a smallest integer $n\in \N$ such that $O_n \subset O$. 

If  $n\geq 1$ and if $p: E_n \lra E_O$ denotes the unique morphism of vector bundles over $C$
such that the following diagram is commutative:
  \begin{equation}\label{triangbis}
\xymatrix{\hE \ar[d]_{p_n} \ar[r]^{p_O} 
& E_O \\
E_n \ar[ur]_{p} &}
\end{equation}
then the composition $$p \circ j_n : L_n \lra E_O$$
 is a non-zero morphism, and therefore induces an injective map from $H^0(C, L_n)$ to  $H^0(C, E_O)$. In particular,
 \begin{equation}\label{hoineq}
h^0(C, L_n) \leq h^0(C, E_O).
\end{equation}
\qed
\end{lemma}

From (\ref{hoineq}), we derive that
$$\overline{h}^0(C, \Eh) := \liminf_{O \in \cU(Eh)} h^0(C, E_O) \geq \liminf_{n \lra +\infty} h^0(C, L_n).$$

Besides, for every $n\geq 1,$ as the maps $j^0_n$ are isomorphisms, we have:
$$h^0(C, E_n) = h^0(C, L_n).$$
Therefore, 
$$\overline{h}^0(C, \Eh) := \liminf_{O \in \cU(\Eh)} h^0(C, E_O) \leq \liminf_{n \lra +\infty} h^0(C, E_n) = \liminf_{n \lra +\infty} h^0(C, L_n).$$
\end{proof}

Using Proposition \ref{horresco}, one may easily produce ``wild" pro-vector bundles such that
$$h^0(C, \Eh) < \overline{h}^0(C,\Eh)$$ 
when the genus $g$ of $C$ is positive (at least when the base field $k$ is an algebraically closed field, or a finite field of cardinal larger than some function of $g$).

For instance, when $g=1,$ we may perform the above construction with $L_i:= \cO_C$ for every $i \geq 1,$ and with $\alpha_i$ any non-zero element in 
$$H^1(C, L_{i-1}^\vee \otimes L_i) = H^1(C, \cO_C) \simeq k.$$
Indeed, the condition  (\ref{alphaisom}) is then satisfied and therefore, according to  (\ref{howild}) and  (\ref{uhowild}), we get:
\begin{equation}\label{wildconcrete}
h^0(C, \Eh) = 0 \;\mbox{ and }\; \overline{h}^0(C, \Eh) =1.
\end{equation}

More generally, let us assume that $g$ is positive and that there exists some line bundle $L$ of degree $g-1$ over $C$ such that
$h^0(C,L)=1$ (and therefore  $h^1(C,L)=1$). This condition is satisfied for instance when $C$ and the smooth locus of its theta divisor $\Theta$ in ${\rm Pic}^{g-1}_{X/k}$ possess some $k$-rational point.

 If $s$ (resp. $t$) is a non-zero element in $H^0(C,L)$ (resp. in $H^0(C, L^\vee \otimes \omega_C) \simeq H^1(C,L)^\vee$), then their product $s\otimes t$ is a non-zero element in $H^0(C, \omega_C)$. Therefore there exists $\alpha$ in $H^1(C,\cO_C)$ such that the Serre duality pairing $\langle \alpha, s\otimes t \rangle$ is non-zero.  Then the cup-product 
 $$\alpha \cup . : H^0(C, L) \lra H^1(C,L)$$
 is non-zero, hence an isomorphism.
 
 The above construction applied with $L_i := L$ (resp. $\alpha_i := \alpha$) for every $i\geq 1$ (resp. for every $i\geq 2$) produces a pro-vector bundle $\Eh$ over $C$ which again satisfies (\ref{wildconcrete}).
 
 \subsection{A remark concerning direct images}
 Let $\pi: C \lra C'$ be a finite $k$-morphisms between two smooth, projective, geometrically irreducible curves over $k$.
 
 For any pro-vector bundle $\Eh$ over $C,$ its direct image $\pi_\ast \Eh$ defines a pro-vector bundle over $C'$. Clearly,
 $$h^0(C', \pi_\ast \Eh) =  h^0(C, \Eh).$$
 Moreover, one easily see that
 $$\overline{h}^0(C', \pi_\ast \Eh) \leq   \overline{h}^0(C, \Eh).$$
 
The examples of ``wild" pro-vector bundles constructed above demonstrate that the  last inequality may indeed be strict. For instance, if $C$ is an elliptic curve, $C'$ is $\PP_k^1$, and $\Eh$ is a pro-vector bundle over $C$ which satisfies (\ref{wildconcrete}),  
  Proposition \ref{gzeroeq} shows that
  $$\overline{h}^0(C',\pi_\ast \Eh) = h^0(C',\pi_\ast \Eh) = h^0(C, \Eh)  =0,$$
  whereas
  $$\overline{h}^0(C, \Eh) =1.$$

  \section{A vanishing criterion}
  
  In this section, we briefly discuss a geometric analogue of the vanishing criterion established as Theorem \ref{ConnectV}. 
  We hope this will shed some light on the basic principles that underly the proof in Section \ref{VanCrit}.  
  
  Let us consider the following data:
  \begin{enumerate}
  \item \emph{a sequence $(\Hh_k)_{k \geq k_0}$ of pro-vector bundles over $C$ and a sequence $(p_k)_{k > k_0}$ of morphisms
$$p_k: \Hh_{k-1} \lra \Hh_k$$
of topological $\cO_C$-modules;
\item a sequence $(F_k)_{k > k_0}$ of (finite rank)  vector bundles over $C$ and a sequence $(\delta_k)_{k > k_0}$ of maps of $\cO_C$-modules}  $$\delta_k : F_k \lra \Hh_{k-1};$$ 
\end{enumerate}
and let us assume that, \emph{for every $k \in \N_{> k_0}$, the diagram}
\begin{equation*}
F_k \stackrel{\delta_k}{\lra} \Hh_{k-1} \stackrel{p_k}{\lra} \Hh_k \lra 0
\end{equation*}
\emph{is an exact sequence of $\cO_C$-modules.}

\begin{proposition}
 With the above notation, if the following two conditions are satisfied:
 $$\mathbf{Fin :} \quad \mbox{ for every $k \in \N_{\geq k_0},$ }  h^0(C, \Hh_k) < + \infty,$$
and  
 $$\mathbf{Amp :} \quad \liminf_{k \ra + \infty} \frac{1}{k} \, \mu_{\rm min}(F_k) > 0,$$
 then there exists $k_1 \in \N_{> k_0}$ such that the map $\delta_k$ vanishes for any $k \in \N_{\geq k_1}$, or equivalently, such that, for any $k \in \N_{\geq k_1}$, 
$p_k: \Hh_{k-1} \lra \Hh_k$
is an isomorphism in ${\rm proVect}_C$.
\end{proposition}

One easily see that $\ker \delta_k$ is saturated coherent subsheaf of $F_k$, and therefore that
$$\im \delta_k \lrasim F_k /\ker \delta_k$$
is a vector bundle over $C$. 

Since the cohomology groups of $\im \delta_k$ are finite dimensional  $k$-vector spaces, the long exact sequence of cohomology
$$\Gamma(C, \im \delta_k) \lra \Gamma(C, \Hh_{k-1}) \lra \Gamma(C,\Hh_k) \lra H^1(C, \im \delta_k)$$
associated to the short exact sequence of $\cO_C$-modules 
$$0 \lra \ker p_k = \im \delta_k \hlra \Hh_{k-1}  \stackrel{p_k}{\lra} \Hh_k \lra 0 $$ 
 shows that Condition $\mathbf{Fin}$ is satisfied as soon as  $h^0(C, \Hh_k) < + \infty$ for \emph{some} $k \in \N_{\geq k_0}.$
  
  In Condition $\mathbf{Amp}$, $\mu_{\rm min}(F_k)$ denotes the minimal slope of $F_k$, namely the infimum of the slopes $\mu(V) := \deg_C V/ \rk V$ of the vector bundles $V$ of positive rank that arise as quotients of $F_k$. (It is $+ \infty$ if $\rk F_k =0.$)
\begin{proof}
 For any $k \in \N_{\geq k_0},$ we may consider the surjective morphism of topological $\cO_C$-Modules 
 $$q_k : H_{k_0} \lra H_k$$
 defined as the composition of the morphisms $(p_i)_{k_0 < i \leq k},$
 and the non-decreasing sequence of their kernels:
 $$\ker {q}_{k_0} = 0 \hra \ker {q}_{k_0 +1 } \hra \ldots \hra \ker {q}_{k} \hra \ker {q}_{k+1} \hra \ldots.$$
For any $k\in \N_{>k_0},$ we have an exact sequence of $\cO_C$-modules:
$$0 \lra \ker q_{k-1} \hlra \ker q_k \stackrel{q_{k-1}}{\lra} \ker p_k = \im \delta_k \lra 0.$$

As $\ker q_{k_0} = 0,$ this implies inductively that the $\cO_C$-Modules $\ker q_k$ are vector bundles over $C$ and that, if for every $k \in \N_{\geq k_0}$ we let
$$n_k := \rk \im \delta_k,$$
we have:
\begin{equation}\label{rkk}
\rk \ker q_k - \rk \ker q_{k-1} = n_k
\end{equation}
 and
 \begin{equation}\label{degk}
\deg_C \ker q_k - \deg_C \ker q_{k-1} \geq  n_k \, \mu_{\rm min}(F_k).
\end{equation}
(If $F_k =0$, then $n_k =0$ and  $n_k \, \mu_{\rm min}(F_k) =0.$)

The validity of $\mathbf{Amp}$ may be rephrased as the existence of $\eta$ in $\R^\ast_+$ and of $c$ in $\R_+$ such that, for any $k \in \N_{\geq k_0}$,
$$\mu_{\rm min}(F_k) \geq \eta k -c.$$

From this lower bound on $\mu_{\rm min}(F_k)$ combined with the relations (\ref{rkk}) and (\ref{degk}), we derive that, for every  $k \in \N_{\geq k_0}$:
$$\rk \ker q_k = \sum_{k_0< i \leq k} n_i$$
and
$$\deg_C {\ker q_k} \geq \sum_{k_0< i \leq k} n_i (\eta\, i - c).$$
The Riemann inequality now shows that
$$h^0(C, \ker q_k) \geq \deg_C \ker q_k  + (1-g)\,  \rk\ker q_k \geq \sum_{k_0< i \leq k} n_i (\eta\, i - c- g+1).$$

According to $\mathbf{Fin},$ $h^0(C, \Hh_{k_0})$ is finite. The 
estimate
$$\sum_{k_0< i \leq k} n_i (\eta\, i - c- g+1) \leq h^0(C, \ker q_k) \leq h^0(C, \Hh_{k_0}),$$
valid for any $k \geq k_0$, implies that $n_i$ vanishes when $i$ is large enough.
\end{proof}

    \chapter{Epilogue: formal-analytic arithmetic surfaces and algebraization}\label{Epi}

    \medskip
    
  The formalism presented in this monograph has been developed with a view towards applications to Diophantine geometry and transcendence theory. In Diophantine geometry, one often encounters situations that combine \emph{formal geometry over the integers} and \emph{complex analytic geometry}.
 The formalism of pro-vector bundles turns out to be especially well adapted to handle such situations: to such ``formal-analytic data", one may attach natural pro-Hermitian vector bundles 
 $$\Ebh := (\Eh, (E_\sigma^{\hilb}, \Vert.\Vert_\sigma)_\sKC)$$
 whose ``algebraic" part $\Eh$ (resp. ``analytic" part $(E_\sigma^{\hilb}, \Vert.\Vert_\sigma)_\sKC$) encode the formal geometric (resp. the complex analytic) data under investigation --- for instance by considering spaces of sections of suitable ``formal-analytic" vector bundles. The $\theta$-invariants of these pro-Hermitian vector bundles turn out to control significant properties of the ``formal-analytic data" under investigation. 
 
 In this last chapter, we present a simple illustration of this general philosophy.  
 
  \section{An algebraicity criterion for smooth formal curves over $\Q$}\label{EpiIntro}

 \subsection{}    A classic example of the combination of formal geometry over the integers and complex analytic geometry alluded to above is provided by E. Borel's theorem (\cite{Borel94}):

\begin{theorem}\label{thBorel} Let $f \in \Z[[X]]$ be a formal series with integral coefficients. If the complex radius of convergence of $f$ is positive, and if $f$ extends to a meromorphic function on some open disk $\mathring{D}(0, R) := \{ z \in \C \mid \vert z \vert < R \}$ of radius $R >1,$ then $f$ is the expansion of some rational function in $\Q(X).$
 \end{theorem}
 Borel's theorem admits generalizations due to P\'olya, Dwork, and Betrandias, that provide rationality criteria for element of the algebra $K[[X]]$ of formal series in one variable over some number field $K$ (\cite{Polya1928}, \cite{Dwork60}, \cite{amice75}, Chapter 5; see also \cite{Cantor1980}).
 
 In the 1980's, D. V. and G. V. Chudnovsky have established some far reaching extensions of the rationality theorems \emph{à la} Borel, asserting the \emph{algebraicity  of} suitable germs of \emph{formal curves} (\cite{ChudnovskysGroth85}, \cite{ChudnovskysAcad85}). Their work has led to diverse developments, concerning arithmetic algebraicity criteria and their consequences in Diophantine geometry (see  \cite{Andre89}, Chapter VIII, \cite{Andre04}, Section 5, \cite{Graftieaux2001a}, \cite{Graftieaux2001a},  \cite{Bost01}, \cite{Bost04}). We refer notably to \cite{Chambert01} and \cite{Bost06} for surveys and additional references, and to \cite{BostChambert-Loir07} for applications of these algebraicity theorems to rationality results extending the classical theorems à la Borel, valid over the projective line to arbitrary algebraic curves over a number field).
 
\subsection{}\label{SimpleArAlG} Let us state a simple but significant version of the algebraicity criteria alluded to above. 

Let $N$ be a positive integer, and let
$$\phi := (\phi_1, \ldots, \phi_N) \in \Z[[X]]^N$$
be a $N$-tuple of formal series in one variable with integral coefficients.

We let 
$$P:= \phi(0) \in \A^N(\Z)$$
and we assume  that
$$\phi'(0) := (\phi'_1(0), \ldots, \phi'_N(0)) \neq 0.$$
Then, as an element of $\Q[[X]],$ the series $\phi$ defines an isomorphism of formal schemes 
\begin{equation}\label{defChat}\hat{\phi}_\Q := {\rm Spf}\, \Q[[X]] \lrasim \hat{C} \hra \hat{\A}^N_{\Q, P}
\end{equation}
 between the ``formal affine curve over $\Q$"  ${\rm Spf}\, \Q[[X]]$ and a smooth formal curve $\hat{C}$ in the formal completion $\hat{\A}^N_{\Q, P}$ of $\hat{\A}^N_{\Q}$ at the point $P.$ By construction, the tangent space
 $T_P {\hat{C}_\Q}$ admits $\phi'(0)$ as a basis vector.
 
 Besides,  let us consider a compact connected Riemann surface $V$ with (non-empty) boundary $\bV$, and $O$ a point in $\Vo := V \setminus \bV.$
 
 Recall that the \emph{Green's function} $g_{V,O}$ of $O$ in $V$ is the unique continuous function on $V \setminus \{O\}$ which vanishes on $\bV$, is harmonic on $V \setminus \{ O\}$, and admits a logarithmic singularity at $O$; namely, if $z$ denotes some local analytic coordinate on some open neighborhood $U$ of $O$ in $V$, 
 \begin{equation}\label{greengros}
g_{V,O}(P) = \log \vert z(P) - z(O) \vert^{-1} + O(1) \quad\mbox{ when $P$ goes to $O$}.
\end{equation}
Actually the function $g_{V,O} - \log \vert z - z(O) \vert^{-1}$ ---  \emph{a priori} defined and harmonic on $U \setminus \{O\}$ --- extends to some harmonic function on $U$. The \emph{capacitary metric} $\Vert.\Vert^{\rm cap}_{V,O}$ on $T_O V$ is defined by the relation:
 \begin{equation}\label{greenfin}
g_{V,O}(P) = \log \vert z(P) - z(O) \vert^{-1} - \log \Vert \partial / \partial z \Vert^{\rm cap}_{V,O} + o(1) \quad\mbox{ when $P$ goes to $O$}.
\end{equation}

Finally, let us consider a map $$\gamma : V \lra \PP^N(\C)$$   that is $\C$-analytic (up to the boundary of $V$\footnote{This assumption could be replaced by the mere analyticity of $\gamma$ on $\mathring{V}$. This follows from the derivation of Theorem  \ref{ArAlgPM} from Theorem \ref{ArAlgK} in paragraph \ref{DiophantAlg}, together with the relation (\ref{capepsilon}), \emph{infra}.
}) and which defines an isomorphism 
\begin{equation}\label{jform}
\hat{\gamma}_O : \Vh_0 \lrasim \hat{C}_\C
\end{equation}
between the formal completion of $V$ at $O$ and the smooth formal complex curve $\hat{C}_\C$, deduced from $\hat{C}$ by extending the scalars from $\Q$ to $\C$. In particular,
$$\gamma(O) = P \quad\mbox{and}\quad D\gamma(O) : T_OV \lrasim T_P \hat{C}_\C.$$

\begin{theorem}\label{ArAlgPM}
With the above notation, if 
\begin{equation}\label{ArAmplNaive}
\Vert D\gamma(O)^{-1} \phi'(0) \Vert^{\rm cap}_{V,O} < 1,
\end{equation}
the formal curve $\hat{C}$ --- \emph{and consequently the image of $\gamma$} --- is algebraic; namely there exists a closed integral subscheme $X$ of $\PP^N_\Q$ of dimension $1$ such that 
\begin{equation}\label{inclalg}
\hat{C} \subset \Xh_P \quad\mbox{and}\quad \gamma(V) \subset X(\C).
\end{equation}
 \end{theorem}
 
 In other words, the formal curve $\hat{C}$ is a branch of $X$ through $P$.\footnote{This may be  conveniently expressed in terms of the normalization $\nu: \tilde{X} \lra X$ of $X$. Indeed,  the conclusion (\ref{inclalg}) of Theorem
 \ref{ArAmplNaive} and the universal property of the normalization, in formal and in analytic geometry, imply that their exists a (unique) point $\tilde{P} \in \tilde{X} (\Q)$ such that $\nu(\tilde{P}) = P$ and $\nu$ defines an isomorphism of formal completions $\hat{\nu}_{\tilde{P}} : \widehat{\tilde{X}}_{\tilde{P}} \lrasim \hat{C}_\Q (\hra \Xh_P);$ moreover, there exists a unique holomorphic map $\tilde{\gamma}: V \lra \tilde{X}(\C)$ such that $\gamma = \nu_\C \circ\tilde{\gamma}$.} 
 
 This theorem admits Borel's original theorem as a straightforward consequence (see \ref{AppIBorel} \emph{infra}). As already pointed out by D.V. and G.V. Chudnovsky (\cite{ChudnovskysAcad85}), it also allows one to recover the theorem of Serre and Faltings asserting that two elliptic curves over $\Q$ are $\Q$-isogeneous if their $a_p$ invariants coincide for almost every prime $p$ (see \ref{AppIIIsog} \emph{infra}).
 
 Theorem \ref{ArAlgPM} may be understood as an arithmetic criterion for the algebraicity of the smooth formal curve $\hat{C}_\Q$ in $\PP_\Q^N.$ It asserts that it is algebraic when it admits  suitable integral and analytic parameterizations (encoded by $\phi$ and $\gamma$ respectively) and when the analytic parameterization $\gamma$ of $\hat{C}_\C$ has a large enough range in comparison to its integral formal parametrization $\phi$ (this is the intuitive meaning of condition   (\ref{ArAmplNaive})).
 
 One may actually show that these sufficient conditions for the algebraicity of $\hat{C}_\Q$  are actually necessary. The existence, when $\hat{C}_\Q$ is algebraic, of the formal integral parameterization $\phi$ is known as Eisenstein Lemma (see \cite{DworkvdPoorten92} for a sharp version and references), and indeed goes back to Eisenstein's last memoir (\cite{Eisenstein52}). Once some formal parametrization $\phi$ of an algebraic formal curve  $\hat{C}_\Q$ is chosen, the existence of $(V,O)$ and $\gamma$ satisfying conditions (\ref{ArAmplNaive}) and (\ref{inclalg}) is a straightforward application of potential theory on Riemann surfaces. (It follows from the fact that finite subsets in the compact Riemann surface defined as the normalization of the Zariski closure of $\hat{C}_\C$ in $\PP^N_\C$ are polar.)

\subsection{}\label{descriptionEpil}  In this last part, we use the formalism of  infinite dimensional Hermitian vector bundles and the properties of their $\theta$-invariants investigated in this monograph to give a natural proof of the Diophantine algebraization theorem \ref{ArAlgPM}. We shall actually establish some more general version of it, valid over an arbitrary number field $K$ instead of $\Q$ (see Theorem \ref{ArAlgK} \emph{infra}).

To provide some geometric background to this proof, in the next section (Section \ref{EstimatesAlgGeo}), we discuss similar algebraization statements, valid in the context of analytic and formal geometry, namely Chow's theorem and an algebraicity theorem \emph{à la} Andreotti-Hartshorne concerning ``pseudo-concave" formal surfaces.

These ``geometric" algebraization theorems admit simple parallel proofs based on the asymptotic behavior of the dimension of spaces of sections of large powers of some (ample) line bundle $L$ on some compact analytic manifold $M$ (resp. on some ``pseudo-concave" formal surface $\Vh$ over some base field $k$). The key point of these algebraization proofs will be the estimates, valid when the positive integer $D$ goes to infinity:
\begin{equation}\label{upperDan}
\dim_\C \Gamma(M, L^{\otimes D}) = O(D^{\dim M})
\end{equation}
\begin{equation}\label{upperDfor}
( \mbox{ resp.} \dim_k \Gamma (\Vh, L^{\otimes D}) = O(D^2) \;).
\end{equation}

In the remaining part of this chapter, we present Diophantine counterparts of these algebraization arguments, that will notably lead to a proof of Theorem \ref{ArAlgPM} and to its generalized version Theorem \ref{ArAlgK}.

Firstly, in Section \ref{AmpleTheta}, we present some basic results concerning the $\theta$-invariants of spaces of sections of large powers of ample line bundles over projective schemes over $\Spec \Z$ that will play the role, in  a Diophantine framework, of asymptotic control of the dimension of spaces of sections of ample line bundles on projective varieties over a field recalled in Proposition \ref{algbnd} and in (\ref{AsGeo}) and (\ref{lowerbencore}). 

The content of Section  \ref{AmpleTheta} is of interest, independently of the algebraization results established in this Section, for the relations it establishes between $\theta$-invariants and higher dimensional Arakelov geometry. It relies  only on the basic formalism of $\theta$-invariants of finite rank Hermitian vector bundles described in Chapter \ref{thetaone}. (We refer the reader to the recent work of Charles \cite{Charles2017}, Section 4, for another illustration of the fact that this basic formalism constitutes a remarkably flexible tool when dealing with classical problems of  Arakelov geometry.)

Then, after some preliminary concerning smooth formal relative curve (Section \ref{SmForCur}) and compact Riemann surfaces with boundary (Section \ref{boundaryGreen}), in Section \ref{FormAnOK} we introduce  the class of ``Diophantine spaces" that we will work with: the \emph{smooth formal-analytic surfaces} $\cVt$ over $\Spec \OK$. 

They are defined by a smooth formal relative curve $\cVh$ over $\Spec \OK$, admitting $\Spec \OK$ as scheme of definition, and by a family of compact Riemann surfaces with boundary $(V_\sigma)_{\sKC}$ (indexed by the field embeddings $\sKC$), that are ``glued together over the archimedean places of $K$" by means of immersions $i_\sigma: \cVh_\sigma \hlra V_\sigma$. The family $(V_\sigma, i_\sigma)_\sKC$ is also required to be compatible with complex conjugation.

For instance, the data in paragraph \ref{SimpleArAlG} define such a formal-analytic surface over $\Spec \Z$, provided $V$ may be equipped with some ``real structure" compatible with $\phi$ and $\gamma$\footnote{Namely, with some antiholomorphic involution $j$ such that $j(O)=O$ and such that the formal isomorphism $\widehat{\phi}_\C^{-1} \circ \widehat{\gamma}: \Vh_O \lrasim  {\rm Spf}\, \C[[T]]$ satisfies: $(\widehat{\phi}_\C^{-1} \circ \widehat{\gamma}) \circ \hat{j}_O = \overline{\widehat{\phi}_\C^{-1} \circ \widehat{\gamma}}.$} : we define $\cVh$ as ${\rm Spf\,} \Z[[X]]$ and $V_\sigma$ as $V$ (for $\sigma$ the unique embedding of $\Q$ in $\C$); the isomorphisms (\ref{defChat}) and (\ref{jform})
provide the additional glueing data:
$$i_\sigma := \widehat{\gamma}_O^{-1}  \circ \widehat{\phi}_\C : \cVh_\C \simeq {\rm Spf}\, \C[[T]] \lrasim \Vh_O.$$

There is a natural notion of \emph{formal-analytic vector bundle} $\cEt$ over such a formal-analytic surface $\cVt$ over $\Spec \OK$: $\cEt$ is defined by some vector bundle $\cEh$ on the formal scheme $\Vh$ and some $\C$-analytic vector bundles $\cE_\sigma$ on the Riemann surfaces $V_\sigma$, and by some suitable glueing data. 

Moreover, to any such formal-analytic vector bundle $\cEt$ is naturally attached the \emph{Hilbertisable pro-vector bun\-dle $\Gamma_{L^2}(\tilde{\cV}; \tilde{\cE})$} of its ``global sections": it is defined by the pro-vector bundle $\Gamma(\cVh; \cEh)$ over $\Spec \OK$ of formal sections of $\cEh$ and by the Hilbertisable spaces of $L^2$-analytic sections $\Gamma_{L^2}(V_\sigma, \cE_\sigma)$ of the analytic vector bundles $\cE_\sigma$ on the Riemann surfaces with boundary $V_\sigma$; the space  $\Gamma_{L^2}(V_\sigma, \cE_\sigma)$ may be embedded in $\Gamma(\cVh; \cEh)_\sigma$
by means of the glueing data alluded to above. 

When the Riemann surfaces $V_{\sigma}$ are equipped with some volume forms $\nu:=(\nu_\sigma)_{\sKC}$ and the vector bundles $\cE_\sigma$ with some Hermitian metric $(\Vert.\Vert_\sigma)_{\sKC}$ over these Riemann surfaces --- and therefore $\cEbh := (\cEt, (\Vert.\Vert_\sigma)_{\sKC}$ becomes a formal-analytic Hermitian vector bundle over $\cVt$ --- the Hilbertisable spaces  $\Gamma_{L^2}(V_\sigma, \cE_\sigma)$ becomes endowed with some canonical $L^2$-norms, and the Hilbertisable pro-vector bundle $\Gamma_{L^2}(\tilde{\cV}; \tilde{\cE})$ becomes a well-defined \emph{pro-Hermitian vector bundle} $\Gamma_{L^2}(\tilde{\cV}, \nu; \cEbh)$.

It turns out that the Hilbertisable pro-vector bundles $\Gamma_{L^2}(\tilde{\cV}; \tilde{\cE})$ are $\theta$-finite, under a suitable ``arithmetic pseudo-concavity" condition on $\cVh$, of which assumption  (\ref{ArAmplNaive}) in Theorem \ref{ArAlgPM} is a reformulation in the framework of paragraph \ref{SimpleArAlG}. Moreover, under this assumption, the $\theta$-invariants $h^0_\theta(\Gamma_{L^2}(\tilde{\cV}, \nu; \cLbh^{\otimes D}))$ associated to the powers of some formal-analytic Hermitian line bundle $\cLbh$ over $\cVt$ satisfy the asymptotic estimates
 \begin{equation}\label{upperDAr}
h^0_\theta(\Gamma_{L^2}(\tilde{\cV}, \nu; \cLbh^{\otimes D}))= O(D^2),
\end{equation}
similar to the  estimates (\ref{upperDan}) and (\ref{upperDfor}) valid in analytic and formal geometry.

The proof of these bounds on $\theta$-invariants in Section \ref{ArPsConcaveFin} relies on the basic properties of $\theta$-invariants associated to pro-Hermitian vector bundles established in Chapter \ref{SectionSum}, combined with some analytic estimates, concerning analytic sections of vector bundles on compact Riemann surfaces with boundary and their Green's functions, that have been presented in the preliminary section \ref{boundaryGreen}. These estimates are avatars of the Schwarz lemma used in traditional Diophantine approximation arguments, and of the First Main Theorem in Nevalinna theory (compare \cite{Bost01}, 4.3.2 and 4.3.3). It is striking that the Schwarz Lemma plays also a key role in the proof of the asymptotic estimates (\ref{upperDan}), valid in the complex analytic setting (see Appendix, \ref{ProofHolMult}).    

Having (\ref{upperDAr}) at one's disposal, one may establish an algebraicity property concerning the image of any $\OK$-morphism $\tilde{f} : \Vt \lra \PP^N_\OK$ from $\Vt$ to the projective space by mimicking the proofs of Chow's and Andreotti-Hartshorne's theorems in Section  \ref{EstimatesAlgGeo} (see Theorem \ref{ArAlgK}). 

We conclude this chapter by discussing the applications of Theorem \ref{ArAlgPM} to Borel's theorem (Subsection \ref{AppIBorel}) and to isogenies of elliptic curves over $\Q$ (Section \ref{AppIIIsog}).

        \section[Sections of line bundles and algebraization]{Dimension of spaces of sections of line bundles and algebraization in analytic and formal geometry}\label{EstimatesAlgGeo}
        
    \subsection{A simple proof of Chow's theorem}\label{ProofChow}
    
    The classical theorem of Chow (\cite{Chow49}) asserts that any closed complex analytic subsets of the complex projective space is a closed algebraic subset.
    In this paragraph, 
   we present a proof of Chow's that makes conspicuous
   its relation with estimates on the dimension of spaces of sections of line bundles over analytic and algebraic varieties.
 
 For later comparison with Diophantine situations --- where our understanding often lags behind the one we have of geometric situations, concerning algebraic varieties over some field --- we have tried to formulate our arguments in as elementary terms as possible.

  For simplicity, we will consider only the ``smooth case" of Chow's theorem, concerning closed complex submanifolds of the projective space:

\begin{theorem}[Chow]\label{Chowsmooth} Any closed complex analytic submanifold $V$ of the complex projective space $\PP^N(\C)$ is algebraic. In other words, it is the set of complex points of some closed subscheme of $\PP^N_\C$.
 \end{theorem}
 
 The proof of Theorem \ref{Chowsmooth} will rely on the following two Propositions.

\begin{proposition}\label{algbnd} For any closed complex analytic algebraic subset $Z$ in some complex projective space $\PP^N(\C)$, there exists some algebraic line bundle $L_{\rm alg}$ over $Z$ and some $c$ in $\R^\ast_+$ such that, for any positive integer $D$, the dimension of the vector space $\Gamma(Z, L_{\rm alg}^{\otimes D})$ of (algebraic) regular sections of $L_{\rm alg}^{\otimes D}$ over $M$ 
  satisfies:
 \begin{equation}\label{ineq:algbnd} \dim_{\C} \Gamma(Z, L_{\rm alg}^{\otimes D}) \geq c. D^{\dim Z}.
 \end{equation}
\end{proposition}

\begin{proposition}\label{holbnd}
  
For any complex analytic  line bundle $L_{\rm an}$ over a compact complex manifold $M$, there exists $C$ in $\R^\ast_{+}$ such that, for any 
  positive integer $D$, the dimension of the vector space
  $\Gamma(M, L^{\otimes D})$ of analytic sections of $L_{\rm an}^{\otimes D}$ over $M$ 
  satisfies:
   \begin{equation}\label{ineq:holbnd} {\dim}_{\C} \,  \Gamma(M,L_{\rm an}^{\otimes D}) \leq C. D^{\dim M}.
   \end{equation}
\end{proposition}

The key point in the derivation of Theorem \ref{Chowsmooth} is  the following lemma:

\begin{lemma}\label{keyChow}
 With the notation of Theorem \ref{Chowsmooth}, if $V$ is a non-empty complex manifold of dimension $d$, then its Zariski closure $\overline{V}^{\rm Zar}$ in $\PP^N(\C)$ also has dimension $d$. 
\end{lemma}

The lower bound 
$$\dim \overline{V}^{\rm Zar} \geq d$$
on the dimension of $\overline{V}^{\rm Zar}$ is straightforward\footnote{The regular points $(\overline{V}^{\rm Zar})_{\rm reg}$ of $\overline{V}^{\rm Zar}$ are Zariski open and dense in $\overline{V}^{\rm Zar}$. Consequently the intersection $(\overline{V}^{\rm Zar})_{\rm reg} \cap V$ is non-empty, and contains some point $P$ where the local dimension of $\overline{V}^{\rm Zar}$ is $\dim \overline{V}^{\rm Zar}$. Besides, $(\overline{V}^{\rm Zar})_{\rm reg}$ constitutes a complex manifold. The germ of $\overline{V}^{\rm Zar}$ at $P$ contains the germ of $V$ at $P$, and therefore the dimension of the former is at least the dimension of the latter.}. The actual content of Lemma \ref{keyChow} is the opposite inequality:
\begin{equation}\label{keyineqChow}
\dim \overline{V}^{\rm Zar} \leq d.
\end{equation}

Let us take Propositions \ref{algbnd} and \ref{holbnd} for granted, and let us derive Lemma \ref{keyChow}.

Let $V$ be a non-empty closed complex analytic submanifold in $\PP^N(\C)$, of dimension $d$. Let us denote by $Z:= \overline{V}^{\rm Zar}$ its Zariski closure in $\PP^N(\C)$, and let $L_{\rm alg}$ be an algebraic line bundle over $Z$ that satisfies the conclusion of Proposition \ref{algbnd}.

By restriction to $V$ and analytification, this line bundle defines a complex analytic line bundle $L_{\rm an}$ on $V$. Moreover, for any positive integer $D$, a regular section $s$ of $L_{\rm alg}^{\otimes D}$ over $Z$ defines an analytic section $s_{\mid V}$ of $L^{\otimes D}_{\rm an}$ over $V$.

Observe that, since $V$ is Zariski dense in $Z$, the linear map
\begin{equation*}
\begin{array}{rcl}
 \Gamma(Z, L_{\rm alg}^{\otimes D})  & \lra  & \Gamma(V, L_{\rm an}^{\otimes D})   \\
s  & \longmapsto  & s_{\mid V}    
\end{array}
\end{equation*}
is \emph{injective}. Therefore, for every positive integer $D,$
\begin{equation}\label{dimdim}\dim_\C \Gamma(Z, L_{\rm alg}^{\otimes D}) \leq \dim_\C\Gamma(V, L_{\rm an}^{\otimes D}).\end{equation} 

Applied to the compact complex manifold $M=V$, Proposition \ref{holbnd} establishes the existence of $C$ such that, for every positive integer $D$,
\begin{equation}\label{dimC}\dim_\C\Gamma(V, L_{\rm an}^{\otimes D}) \leq C. D^d.
\end{equation}
Besides, by the very choice of $L_{\rm alg}$, there exists $c>0$ such that, for every positive integer $D$,
\begin{equation}\label{dimc}
\dim_\C \Gamma(Z, L_{\rm alg}^{\otimes D}) \geq c. D^{\dim Z}.
\end{equation}
 
 From (\ref{dimdim})-(\ref{dimc}), we derive the required inequality (\ref{keyineqChow})
 %$$\dim Z \leq d$$
 by letting $D$ grow to infinity.
 
 \medskip
 
 To complete the proof of Chow's Theorem \ref{Chowsmooth} --- that asserts that $V= \overline{V}^{\rm Zar}$ --- one may assume that $V$ is non-empty and connected. The $\overline{V}^{\rm Zar}$ is (the set of $\C$-points of) a closed integral subscheme of $\PP^N_(\C)$ of dimension $d:= \dim V$, and the equality $V= \overline{V}^{\rm Zar}$ follows by a standard connectedness argument. We recall it in paragraph \ref{ChowConnect} below for the sake of completeness.
 
\subsection{Upper and lower bounds on the dimension of spaces of sections of line bundles}\label{CommentsDim} Let us formulate a few comments
about the lower (resp. upper) bounds on the dimension of spaces of sections of line bundles in the algebraic (resp. analytic) framework of Proposition \ref{algbnd} (resp. \ref{holbnd}).

\smallskip

Concerning the ``algebraic bounds" in Proposition \ref{algbnd}, recall that the lower bound (\ref{ineq:algbnd}) hold \emph{for any ample line bundle $L$ over $Z$} --- for instance for $L:= \cO(1)_{\mid Z}$ ---  provided the integer $D$ is large enough. Actually, in this situation, a more precise version of this lower bound holds: \emph{when $D$ is large enough, the dimension
$$P_L(D) := \dim_{\C} \Gamma(Z, L^{\otimes D})$$
is a polynomial in $D$, of degree $d:= \dim Z$ and of leading coefficient}
$$\frac{1}{d !} \deg_L Z := \frac{1}{d !} \deg c_1(L)^d.[Z].$$
This follows form the basic properties of ample line bundles and numerical intersections (see for instance \cite{stacks-project}, Section 32.43, notably Tag 0BEW).

In particular, when $D$ goes to $+\infty$,
\begin{equation}\label{AsGeo}
\dim_\C \Gamma(Z, L^{\otimes D}) = \frac{1}{d !} \deg_L Z. D^d + O(D^{d-1}).
\end{equation}

The weaker version in Proposition \ref{algbnd} --- that suffices for the derivation of Chow's theorem --- follows from very elementary considerations. Indeed, to prove it, we may observe that, by Noether's normalization theorem, there exists some linear projection
$$p: \PP^N_\C \dashrightarrow \PP^d_\C$$
that is defined on $Z$ and such that $p_{\mid Z}: Z \lra \PP^d_\C$ is a \emph{surjective} morphism. Then, for any positive integer $D$, the pull-back along $p_{\mid Z}$ defines a \emph{injective} linear map:
$$p^\ast_{\mid Z} : \Gamma (\PP^d_\C, \cO(D)) \hlra \Gamma(Z, p^\ast_{\mid Z} \cO(D)).$$

As $\Gamma (\PP^d_\C, \cO(D))$ clearly contains\footnote{actually, is equal to --- but we do not need this more precise fact.} the homogenous component of degree $D$ of $\C[X_0, \ldots, X_N]$, this leads to the lower bound:
$$\dim_\C  \Gamma(Z, p^\ast_{\mid Z} \cO(D)) \geq \dim_\C \C[X_0, \ldots, X_N]_D = \binom{D+d}{d}
\geq \frac{1}{d!} D^d.$$

This shows that the conclusion of Proposition \ref{algbnd} is satisfied when $L_{\rm alg} = p_{\mid Z}^\ast \cO(1).$ (This line bundle is easily seen to be isomorphic to the restriction to $Z$ of the line bundle $\cO(1)$ on $\PP^N_\C$.)

\medskip
 Let us now turn to the upper bounds of the dimension of spaces of analytic sections of powers of some line bundles on a compact analytic manifold that form the content of Proposition \ref{holbnd}. 
 
 Such estimates on dimensions of spaces of analytic sections go back to Serre's expos\'e \cite{Serre53} in S\'eminaire Cartan; closely related results appear in Siegel's work on  meromorphic functions on compact complex manifolds (\cite{Siegel55}).

 In Appendix \ref{ApUpBd}, we present a modernized version of Serre's arguments, which are based on the use of Schwarz Lemma. (The introduction of some Hermitian metric on the line bundle $L$ allows one to present them in an especially elementary form.) We also give an alternative proof of Proposition \ref{holbnd} when $M$ is a K\"ahler manifold. As any complex submanifold of some projective space is clearly K\"ahler,  having at one's disposal Proposition \ref{holbnd} under this additional assumption  is actually sufficient for completing the proof of Theorem \ref{Chowsmooth} presented below. %, actually sufficient to  

\medskip 

Let us finally point out that the above line of arguments for establishing the algebraicity of some analytically defined complex manifolds originates in Poincar\'e's memoir \cite{Poincare02} on abelian functions\footnote{ See Section 2, p. 53--56. We refer the reader to Thimm's historical report in the Weierstrass Festband \cite{Thimm66} for the relations of Poincaré's work to the earlier results and conjectures of Riemann and Weierstrass on abelian functions and theta functions.}. Besides, as shown by Andreotti (\cite{Andreotti63}), the estimates in Proposition \ref{holbnd} hold, not only when $V$ is a compact complex manifold, but more generally when $V$, possibly non-compact, satisfies a suitable pseudo-concavity condition. Accordingly, the arguments in the proof of Lemma \ref{keyChow} allows one to extend it to an algebraicity statement concerning pseudo-concave complex manifolds embedded in a complex projective space (\cite{Andreotti63}, Th\'eor\`eme 6). 

{\small  \subsection{Completing the proof of Chow theorem: connectedness arguments}\label{ChowConnect}

To complete the proof of Chow's Theorem \ref{Chowsmooth} starting for the key Lemma \ref{keyChow},  we shall rely on some basic topological properties of complex algebraic varieties. 
    
    Let 
  $X$ be a quasi-projective complex scheme, which, without loss of 
  generality, we shall assume reduced. Its set of complex points 
  $X(\C)$ is naturally endowed with the \emph{analytic topology}: for 
  any embedding of $\C$-schemes $i: X \hookrightarrow \PP^N_{C}$, it 
  is the topology induced on $X(\C)$ by the topology of the analytic 
  manifold $\PP^N(\C)$ via the associated inclusion
  $i: X(\C) \hookrightarrow \PP^N(\C).$
  
   Specifically, to complete our proof of Theorem \ref{Chowsmooth}, we will use the following compatibility between the Zariski and the analytic topology:

\begin{proposition}\label{density}
 For any  
 subset $U$ of $X$, open and dense in the Zariski topology, its set 
 of complex points $U(\C)$ is dense in $X(\C)$ equipped with the analytic 
 topology.  \qed 
 \end{proposition}
 
 \begin{proposition}\label{connectivity}
If $X$ is connected (for instance, irreducible) in the Zariski 
topology, then $X(\C)$ is connected in the analytic 
 topology. \qed
 \end{proposition}
 
These propositions may be established by a ``reduction to curves", relying on some basic facts of algebraic geometry (Bertini's Theorems) and some elementary knowledge of complex analytic geometry.
 In \cite{Shafarevich77}, Section VII.2,  the reader may find alternative elementary derivations of these Propositions, in a more analytic vein.

Let $V$ be a closed complex analytic submanifold of the complex projective space $\PP^N(\C)$.  To complete the proof of Theorem \ref{Chowsmooth} --- that asserts that $V$ is algebraic, or equivalently, that $V$ coincides with its closure $\overline{V}^{\rm Zar}$  in the Zariski topology on $\PP^N(\C)$ --- one may clearly assume that $V$ is non-empty and connected.  Then $\overline{V}^{\rm Zar}$ is easily seen to be (the set of complex points of) some closed integral subscheme of $\PP^N(\C)$. According to Lemma \ref{keyChow}, its dimension is $d:=\dim V$. Moreover the set $\overline{V}^{\rm Zar}_{\rm reg}$ of regular points of $\overline{V}^{\rm Zar}$ satisfies the following properties:

(i) The complement $\overline{V}^{\rm Zar}_{\rm sing}:= \overline{V}^{\rm Zar} \setminus \overline{V}^{\rm Zar}_{\rm reg}$ is a closed algebraic subset of $\PP^N(\C)$ strictly contained in $\overline{V}^{\rm Zar}$, and therefore cannot contain $V$. In other words, $V \cap \overline{V}^{\rm Zar}_{\rm reg}$ is \emph{not empty}.

(ii) The intersection $V \cap \overline{V}^{\rm Zar}_{\rm reg}$ is (obviously) \emph{closed} in $\overline{V}^{\rm Zar}_{\rm reg}$ equipped with the analytic topology.

(iii) This intersection is also open in $V$ (in the analytic topology), and therefore a complex analytic submanifold of dimension $d$ of $\PP^N(\C)$. As $\overline{V}^{\rm Zar}_{\rm reg}$ also is complex analytic submanifold of dimension $d$ of $\PP^N_\C,$ the intersection $V \cap \overline{V}^{\rm Zar}_{\rm reg}$ is \emph{open} in $\overline{V}^{\rm Zar}_{\rm reg}$ equipped with the analytic topology.

(iv) According to Proposition \ref{connectivity}, $\overline{V}^{\rm Zar}_{\rm reg}$  is \emph{connected} in the analytic topology.

From the above observations, we derive the equality:
$$V \cap \overline{V}^{\rm Zar}_{\rm reg} = \overline{V}^{\rm Zar}_{\rm reg}.$$
Finally, since $\overline{V}^{\rm Zar}_{\rm reg}$ is dense in $\overline{V}^{\rm Zar}$ equipped with the analytic topology 
(by Proposition \ref{density}), this entails the inclusion $\oli{V}^{\rm Zar} \subset V,$ and 
therefore the equality $\oli{V}^{\rm Zar} = V.$
}

  \subsection{Algebraization of smooth formal surfaces over a field}\label{algformalpseudoconcave}  
  
  Let $k$ be a field and let $C$ be a smooth projective geometrically connected curve over $k$.
  
  Let us consider a formal noetherian scheme $\cVh$ over $k$ admitting $C$   as scheme of definition. Let us assume that $\cVh$ is formally smooth over $k$, of dimension 2, and let $N_C \cVh$ denote the normal bundle of $C$ in $\cVh$ --- it is a line bundle over $C$.
  
  When $N_C \cVh$ is ample --- that is, when
  \begin{equation}\label{Nample}
  \deg_C N_C \cVh > 0
\end{equation}
 --- the formal surface $\cVh$ may be seen as an analogue, in formal geometry, of the pseudo-concave complex analytic manifolds alluded to above.
 
 Actually the dimensions of spaces of sections of vector bundles over $\cVh$ satisfy the following upper bounds:

\begin{proposition}\label{formalbounds}Let us assume that the ampleness condition (\ref{Nample}) holds.

 Then, for any vector bundle $\Eh$ on $\cVh,$ the $k$-vector space 
$\Gamma(\cVh, \Eh)$ is finite dimensional.
Moreover, for any line bundle $\Lh$ over $\cVh,$ when the positive integer $D$ goes to infinity, we have:
\begin{equation}\label{eq:formalbounds}
\dim_k \Gamma(\cVh, \Lh^{\otimes D}) = O(D^2).
\end{equation}
\end{proposition}

\proof Let $\cI$ be the ideal sheaf in $\cO_\cVh$ that defines $C$ as a closed subscheme of the formal scheme $\cVh$. It is an Ideal of definition of $\cVh$. Moreover, as $\cVh$ is smooth, $C$ defines a Cartier divisor in $\cVh$, and $\cI$ may be identified the invertible sheaf $\cO_{\cVh}(-C)$. Its restriction $\cO_{\cVh}(-C)_{\mid C}$ to $C$ is the dual $(N_C \cVh)^\vee$ of the normal bundle $N_C \cVh$.

For every $n \in \N$, let $\cV_n$ denote the $n$-th infinitesimal neighborhood of $C$ in $\cVh$, namely the closed subscheme of $\cVh$ defined by the ideal sheaf $\cI^{n+1}$. In other words,
$$\vert \cV_n \vert := \vert \cVh \vert = \vert C \vert \quad\mbox{and}\quad \cO_{\cV_n} := \cO_{\cVh}/ \cI^{n+1}.$$
It will be convenient to let $\cV_{-1} := \emptyset$.

The $\cV_n$'s define an inductive system of $k$-schemes:
$$\cV_{0} = C \hra \cV_1 \hra \ldots \hra \cV_n \hra \cV_{n+1} \ldots,$$
from which we derive an identification of topologically ringed spaces:
\begin{equation}\label{limV}
\cVh \lrasim \varinjlim_{n} {\cV}_n.
 \end{equation}
 For any coherent sheaf $\cFh$ on $\cVh$, from (\ref{limV}), we deduce an isomorphism of (topological) $k$-vector spaces:
\begin{equation}\label{limGamma}
\Gamma(\cVh, \cFh) \lrasim \varprojlim_{n} \Gamma({\cV}_n, \cFh_{{\cV}_n}). 
\end{equation}

For every $n \in \N$, the short exact sequence of $\cO_{\cVh}$-modules
\begin{equation*}
0 \lra \cI^{n}/\cI^{n+1} \lra \cO_{\cVh}/\cI^{n+1} \lra \cO_{\cVh}/\cI^{n}  \lra 0
\end{equation*}
may be written
\begin{equation}\label{increment}
0 \lra i_\ast (N_C \cVh)^{\vee \otimes n} \lra \cO_{\cV_n} \lra  \cO_{\cV_{n-1}} \lra 0,
\end{equation}
where $i: C \hra \cVh$ denotes the inclusion morphism.

When $\Eh$ is a locally free coherent sheaf on $\cVh,$ this exact sequences of $\cO_{\cVh}$-modules becomes, after tensoring with $\Eh,$
$$ 0 \lra i_\ast (\Eh_{\mid C} \otimes N_C \cVh)^{\vee \otimes n} \lra \Eh_{\mid \cV_n} \lra  \Eh_{\mid \cV_{n-1}} \lra 0,$$
and induces a short exact sequence of finite dimensional $k$-vector spaces 
\begin{equation}\label{incrementGamma}
0 \lra \Gamma( C, \Eh_{\mid C}\otimes N_C \cVh^{\vee \otimes n}) \lra \Gamma( \cV_n, \Eh_{\mid\cV_n}) \lra
\Gamma(\cV_{n-1}, \Eh_{\mid \cV_{n-1}}).
\end{equation}
This implies the inequality:
$$\dim_k \Gamma( \cV_n, \Eh_{\mid\cV_n}) \leq 
\dim_k \Gamma( C, \Eh_{\mid C}\otimes N_C \cVh^{\vee \otimes n}) + \dim_k \Gamma(\cV_{n-1}, \Eh_{\mid \cV_{n-1}}).$$ 

The ampleness of the line bundle $N_C\cVh$ implies that, when $n$ is large enough, every section of $\Eh_{\mid C}\otimes N_C \cVh^{\vee \otimes n}$ over $C$ vanishes, and therefore 
$$\dim_k \Gamma( C, \Eh_{\mid C}\otimes N_C \cVh^{\vee \otimes n}) = 0 \quad\mbox{ and }\quad \dim_k \Gamma( \cV_n, \Eh_{\mid\cV_n}) \leq \dim_k \Gamma(\cV_{n-1}, \Eh_{\mid \cV_{n-1}}).$$
The sequence of non negative integers $(\dim_k \Gamma( \cV_n, \Eh_{\mid\cV_n}) )_{n \in \N}$ is therefore stationary, and consequently, when $n \in \N$ is large enough, the injective restriction morphism
$$\Gamma( \cV_n, \Eh_{\mid\cV_n}) \lra
\Gamma(\cV_{n-1}, \Eh_{\mid \cV_{n-1}})$$
is actually an isomorphism. This shows that, when $n \in \N$ is large enough, the inclusion morphism $\cV_n \hra \cVh$ induces an isomorphism
$$\Gamma(\cVh, \Eh) \lrasim \Gamma( \cV_n, \Eh_{\mid\cV_n}),$$
and %notably 
that $\Gamma(\cVh, \Eh)$ is a finite dimensional $k$-vector space, of dimension at most
$$\sum_{n \in \N} \dim_k \Gamma( C, \Eh_{\mid C}\otimes N_C \cVh^{\vee \otimes n}).$$

This notably applies to $\Eh = \Lh^{\otimes D}$, when $\Lh$ is some line bundle over $\cVh$ and $D$ some positive integer. This establishes the upper bounds:
$$\dim_k \Gamma(\cVh, \Lh^{\otimes D}) \leq \sum_{n \in \N} \delta(D,n),$$
where, for any $(D,n) \in \N^2$,
$$\delta(D,n) := \dim_k \Gamma( C, \Lh_{\mid C}^{\otimes D}\otimes N_C \cVh^{\vee \otimes n}).$$ 

The dimension $\delta(D,n)$ may be easily bounded from above by means of Riemann inequality (\cf (\ref{RiemannGeom}) \emph{supra}).  Indeed, if we let
 $$a:= \deg_{C} N_C \cVh \quad\mbox{ and }\quad a':= \deg_C \Lh_{\mid C},$$
   For any $(D,n) \in \N^2,$
 we immediately obtain from the latter:
 $$\delta(D,n) \leq \left( 1 + \deg_C \Lh_{\mid C}^{\otimes D}\otimes N_C \cVh^{\vee \otimes n}\right)^+ =(- an + a'D +1)^+.$$

Finally, we obtain:
$$\dim_k \Gamma(\cVh, \Lh^{\otimes D}) \leq \sum_{n \in \N} (- an + a'D +1)^+.$$

According to the ampleness condition (\ref{Nample}), $a$ is positive. Therefore, 
when $a'\leq 0,$ the last sum stays bounded (by 1) when $D$ goes to $+\infty.$ Moreover, when $a'>0,$ we have:
$$\sum_{n \in \N} (- an + a'D +1)^+ = \int_0^{+\infty} (- ax + a'D +1)^+ \, dx + O(D) = (a'^2/2a)\, D^2 + O(D)$$
when $D$ goes to $+\infty.$
 \qed

Let us now consider some quasi-projective $k$-scheme $X$ and a morphism of formal schemes over $k$
$$f: \cVh \lra X.$$
Such a morphism is nothing but a morphism of $k$-locally ringed spaces form $(\vert \cVh \vert, \cO_{\cVh}) := (\vert C \vert, \cO_{\cVh})$ to $(\vert \cX \vert, \cO_\cX)$.) One easily see that there exists a smallest closed subscheme $Z$ of $X$ such that $f$ factorizes through the inclusion $Z \hra X.$\footnote{Indeed, for any closed subscheme $Y$ of $X$, $f$ factorizes through $Y$ if and only if  the inverse image $f^\ast\cI_Y$ of its ideal sheaf vanishes. If $\{Y_\alpha\}$ is the family of such closed subschemes of $X$, the sum $\cI:= \sum_{\alpha} \cI_\alpha$ of their ideal sheaves is a quasi-coherent (hence coherent) ideal sheaf in $\cO_X$ that defines the smallest closed subscheme of $X$ through which $f$ factorizes.} We shall call it \emph{the Zariski closure of the image of $f$}, and denote by $\overline{\im f}$. 

Observe that this construction satisfies the following two properties:

(i) If $U$ is some open subscheme of $X$ such that $f$ factorizes through the inclusion $U \hra X$ --- in other words, if the image of the underlying continuous map of topological spaces $f: \vert \cVh \vert = \vert C \vert \lra \vert X \vert$ is contained in $\vert U \vert$ --- and if $f^U$ denotes $f$ seen as a morphism from $\cVh$ to $U$, then we have:
\begin{equation}\label{restU}
\overline{\im f^U} = \overline{\im f} \cap U.
\end{equation}

(ii) The scheme $\overline{\im f}$ is an integral scheme, since $\cVh$ is an integral formal scheme.

An argument similar to the proof of the key algebraicity lemma (Lemma \ref{keyChow}) in paragraph \ref{ProofChow} allows one to deduce the following algebraization result from Proposition \ref{formalbounds}:

\begin{theorem}\label{formalAlgebr} Let $\cVh$ be a smooth formal surface over $k$, with scheme of definition $C$, as above.

If the ampleness condition 
$$\deg_C N_C \cVh > 0$$
holds, then, for any $k$-morphism 
$f: \cVh \lra X$
from $\cVh$ to some quasi-projective $k$-scheme $X$, we have:
\begin{equation}\label{imf2}
\dim \overline{\im f} \leq 2.
\end{equation}
\end{theorem}
 
 The conclusion (\ref{imf2}) of Theorem \ref{formalAlgebr} may be intuitively understood as asserting that ``the image of $f$ is algebraic".
 
\proof With no restriction of generality, we may assume that $X$ is projective (use (i) above), and then, by replacing $X$ by $\overline{\im f}$, that
\begin{equation}\label{imfX}
X =\overline{\im f}. 
\end{equation}

In particular, $X$ is an integral projective $k$-scheme. Let 
$$d := \dim X = \dim \overline{\im f}.$$

If $\cO_X(1)$ denotes the restriction to $X$ of the line bundle $\cO(1)$ in some projective embedding of $X$, as discussed in paragraph \ref{CommentsDim}, we know that there exists some positive constant $c$ such that, for any $D \in \N,$
\begin{equation}\label{lowerbencore}
\dim_k \Gamma(X, \cO_X(D)) \geq c. D^d.
\end{equation}

For any $D\in \N$ and any $s\in \Gamma(X, \cO_X(D)) \setminus \{0\},$ the morphism $f$ does \emph{not} factorizes through the inclusion $\div s \hra X$ (by (\ref{imfX})). This shows that, if we let $L := f^\ast \cO_X(1),$ then, for any $D \in \N,$ the pull-back map
$$
\begin{array}{rcl}
\Gamma(X, \cO_X(D)) 
& \lra & \Gamma(\cVh, L^{\otimes D}) \\
f  &  \longmapsto & f^\ast s
\end{array}
$$
is injective, and therefore 
 \begin{equation}\label{injineq}
\dim_k \Gamma(X, \cO_X(D)) \leq \dim_k  \Gamma(\cVh, L^{\otimes D}).
\end{equation} 

From   (\ref{injineq}), (\ref{lowerbencore}), and (\ref{eq:formalbounds}), we derive the required inequality $d \leq 2$ by letting $D$ grow to infinity.
 \qed 
   
  Observe that, if the restriction $f_{\mid C}$ of $f$ to $C$ is a constant morphism (with value some $k$-rational point $P$ in $X$), then the line bundle $\cO_X(1)$ may be trivialized on some open neighborhood of $P$, and therefore $L:= f^\ast(\cO_X(1))$ may be trivialized on $\cVh$. Consequently, in this situation, $\dim_k  \Gamma(\cVh, L^{\otimes D})$ stays constant (actually equals to 1) when $D$ goes to infinity, and the above arguments shows that $d=0$.
  
  In other words, we have:
   
\begin{scholium}
 With the notation of Theorem \ref{formalAlgebr},  one of the following three exclusive assertions holds:
\begin{enumerate}
\item the morphism $f$ is constant (with values some $k$-rational point of $X$);
\item $\overline{\im f} = f(C)$ is a projective curve;
\item $f(C)$ is an integral projective curve, and $\overline{\im f}$ is an integral surface (containing $f(C)$). \qed
\end{enumerate}
\end{scholium}

\section{Arithmetically ample Hermitian line bundles and $\theta$-invariants}\label{AmpleTheta}

This section is devoted to some counterparts, in the framework of Arakelov geometry, of  Proposition \ref{algbnd} \emph{supra}, that asserts the existence, on any projective variety, of some line bundle such that the dimension of the space of sections of its large powers grows ``fast enough".

As discussed in paragraph \ref{CommentsDim}, Proposition \ref{algbnd} is a weak version of a more general and precise result concerning the asymptotic expression of the dimension  of the space of sections of  large powers of some ample line bundle over a projective variety in terms of its Hilbert polynomial and the intersection theoretic interpretation of its leading term. In the Diophantine situation considered in this chapter, a counterpart of this ``precise result" may be established by using Gillet-Soul\'e's arithmetic intersection theory and the associated theory of heights, and Zhang's theory of ample Hermitian line bundles (\cf \cite{BostGilletSoule94} and \cite{Zhang95}). 

In this section, we first  present this counterpart (see Theorem \ref{AmpleThetaAs} \emph{infra}): it describes the asymptotic behavior of the $\theta$-invariants of the Hermitian vector bundles of sections of large powers of some arithmetic ample Hermitian line bundle over some integral scheme, projective and flat over $\Spec \Z.$  This theorem is of independent interest, and its proof illustrates how the basic formalism of $\theta$-invariants (described in Sections \ref{thetaone} and \ref{thetatwo}) naturally combines with some classical developments of Arakelov geometry.

Then we present a weak version (Corollary \ref{AmpleThetaPM}) of Theorem \ref{AmpleThetaAs}, that appears as the proper counterpart of the ``weak" Proposition \ref{algbnd} and will be sufficient for the applications to algebraization in a Diophantine framework
derived in the next section. We finally give a self-contained proof of this corollary, that relies on some basic results of algebraic geometry and some elementary analytic estimates.

\subsection{Sections of Hermitian vector bundles and John's norms}

We denote by $K$ a number field and by $\OK$ its ring of integers.

Let $\pi: \cX \lra \Spec \OK$ be a reduced scheme, flat and projective over $\Spec \OK$.

Let $\cEb := (\cE, \Vert. \Vert)$ be some Hermitian vector bundle over $\cX$. In other words, $\cE$ is a vector bundle (that is, a locally free coherent sheaf) over $\cX$, and $\Vert .\Vert$ is a continuous Hermitian metric, invariant under complex conjugation, on the complex analytic vector bundle $\cE^{\an}_\C$ over 
$$\cX(\C) = \coprod_{\sKC} \cX_\sigma (\C)$$
deduced from $\cE$ by base change to $\C$ and analytification.

The direct image $\pi_\ast \cE$ of $\cE$ under $\pi$ is a vector bundle over $\Spec \OK$. (It is the sheaf over $\Spec \OK$ attached to the finitely generated torsion free, hence projective, $\OK$-module $\Gamma(\cX, \cE).$) Any field embedding $\sKC$ makes $\C$ a flat $\OK$-algebra, and the base change morphism
$$(\pi_\ast \cL)_\sigma := \Gamma (\cX, \cE) \otimes_{\OK, \sigma} \C \lra \Gamma (\cX_\sigma, \cE_\sigma)$$
is therefore an isomorphism of (finite dimensional) complex vector spaces. 

In particular $(\pi_\ast \cL)_\sigma$ may be identified to a subspace of the space $\Gamma_{\rm cont} (\cX_\sigma(\C)^\an, \cE_\sigma^\an)$ of continuous sections of the analytic vector bundle $\cE^{\an}_\C$ over $\cX_\sigma(\C)$.\footnote{Actually, according's to Serre's GAGA Theorem, this subspace is precisely the space $\Gamma_{\an} (\cX_\sigma(\C)^\an, \cE_\sigma^\an)$ of $\C$-analytic sections of $\cE^{\an}_\C$ over $\cX_\sigma(\C)$. However we shall not use this fact.}
Accordingly it may be endowed with the $L^\infty$-norm $\Vert .\Vert_{L^\infty, \sigma}$ defined by the equality
$$\Vert s \Vert_{L^\infty, \sigma} := \max_{x \in \cX_\sigma(\C)} \Vert s(x) \Vert,$$
for every $s \in (\pi_\ast \cE)_\sigma \simeq \Gamma (\cX_\sigma, \cE_\sigma)$.

We may also introduce the John's Hermitian norm $\Vert . \Vert_{J, \sigma}$ on $(\pi_\ast \cL)_\sigma$ associated to the $L^\infty$-norm $\Vert .\Vert_{L^\infty, \sigma}$ (see Appendix \ref{ApJohn}). The family of norms $(\Vert .\Vert_{L^\infty, \sigma})_\sKC$  and $(\Vert . \Vert_{J, \sigma})_\sKC$ are clearly invariant under complex conjugation, and we may consider the Hermitian vector bundle over $\Spec \OK$
$$\pi_\ast \cEb_J := (\pi_\ast \cE, (\Vert . \Vert_{J, \sigma})_\sKC).$$

Let us denote by 
$$\pi_\Z := \cX \lra \Spec \Z$$
the (unique) morphism from $\cX$ to $\Spec \Z$. It coincides with the composition $\pi_{\OK/\Z}\circ \pi$ of $\pi$ and of the morphism
$$\pi_{\OK/\Z} : \Spec \OK \lra \Spec \Z.$$
The direct image $\pi_{\Z \ast} \cE$ of $\cE$ under $\pi_\Z$ is a vector bundle over $\Spec \Z$, associated to the free $\Z$-module of finite rank $\Gamma(\cX, \cE)$, and may be identified  with $\pi_{\OK/\Z \ast}( \pi_\ast \cE)$. 

The complex vector space
$$\Gamma(\cX, \cE) \otimes_\Z \C \simeq \Gamma(\cX(\C), \cL_C) \simeq \bigoplus_{\sKC} \Gamma (\cX_\sigma, \cE_\sigma)$$ is naturally endowed with the 
$L^\infty$-norm $\Vert.\Vert_{L^\infty}$, defined by
$$\Vert s \Vert_{L^\infty} := \max_{\sKC} \Vert s_{\mid \cX_\sigma(\C)} \Vert_{L^\infty, \sigma},$$
and we may introduce the associated John's Hermitian norm $\Vert. \Vert_J$. It is invariant under complex conjugation, and allows us to define the hermtian vector bundle over $\Spec \Z$
$$\pi_{\Z\ast} \cEb_J := (\pi_{\Z\ast} \cE, \Vert . \Vert).$$

Observe that, from the compatibility of John's norms with finite products (see Appendix \ref{ApJohn}, paragraph \ref{CompaJohnProd}), we derive that the identification
$$\pi_{\Z \ast} \cE \simeq  \pi_{\OK/\Z \ast}( \pi_\ast \cE)$$
``extends" to an identification of Hermitian vector bundles over $\Spec \Z$: 
$$\pi_{\Z \ast} \cEb_J \simeq  \pi_{\OK/\Z \ast} ( \pi_\ast \cEb_J).$$
(Recall that the direct image under $\pi_{\OK/\Z}$ of some Hermitian vector bundle over $\Spec \OK$ has been defined in Section \ref{Dircan}.)

Finally, we may consider the ``characteristic" $\chi_{\Vert.\Vert_{L^\infty}}( \pi_{\Z \ast} \cE)$ of the free $\Z$-module $\Gamma(\cX, \cE)$ equip\-ped with the norm $\Vert.\Vert_{L^\infty}$, as defined in Appendix  \ref{ApJohn}, paragraph \ref{defChi}. According to the estimates (\ref{chidegaJ}) and (\ref{ineqvn}), we have:
\begin{equation}\label{compchideg}
- (n/2) \log n + n \log 2 \leq \chi_{\Vert.\Vert_{L^\infty}}( \pi_{\Z \ast} \cE) - \dega \pi_{\Z \ast} \cEb_J \leq (n/2) (1 + \log 2\pi),
\end{equation}
where $$n:= \rk_\Z \pi_{\Z \ast} \cE = [K:\Q] \dim_K \Gamma(\cX_K, \cE_K).$$

\subsection{Asymptotic of the $\theta$-invariants associated to sections of powers of arithmetically ample line bundles}

 Let $\cX$ be some reduced scheme, projective and flat over $\Spec \OK$, and let $\cLb:=(\cL, \Vert. \Vert)$ be some Hermitian line bundle over $\cX$, such that the metric $\Vert.\Vert$ is $C^\infty$.% arithmetically ample in the sense of Zhang.
 
 To any integral subscheme $Z$ of $\cX$, by means of the arithmetic intersection theory of Gillet--Soul\'e, one may attach its  \emph{height  with respect to $\cLb$}
 $$h_{\cLb}(Z) := \dega (\hat{c}_1(\cLb)^{\dim Z} \mid Z) \in \R$$
 (see for instance \cite{BostGilletSoule94}, 3.1.1).
 
 Recall that the Hermitian line bundle $\cLb$ is said to be \emph{arithmetically ample}, in the sense of Zhang (\cf \cite{Zhang95}), when the first Chern form $c_1(\cLb)$ is semi-positive on $\cX(\C) = \coprod_{\sKC} \cX_\sigma(\C)$ and when the height $h_{\cLb}(Z)$ of every integral subscheme $Z$ of $\cX$ is positive. (This implies that the line bundle $\cL$ on $\cX$ is ample.)

In particular, if 
$$d := \dim \cX = \dim \cX_K + 1$$ denotes the Krull dimension of $\cX$, the height
$$h_{\cLb}(\cX) := \dega ( \hat{c}_1(\cLb)^d \mid \cX)$$ of $\cX$ with respect to $\cLb$  is positive when $\cLb$ is arithmetically ample (in \cite{Zhang95}, this height is denoted by $\hat{c}_1(\cLb)^d$.)

The following theorem summarizes the basic results on arithmetic ampleness established in  \cite{Zhang95}.

\begin{theorem}[Zhang] \label{recallArAmp} 1) If the line bundle $\cL_K$ on $\cX_K$ is ample and if the first Chern form $c_1(\cLb)$ is semi-positive,  
then, when the positive integer $D$ goes to infinity, we have:
\begin{equation}\label{ZhangAmp1}
\chi_{\Vert.\Vert_{L^\infty}} \left(\pi_{\Z \ast} \cL^{\otimes D}\right) = \frac{1}{d !} h_{\cLb}(\cX) D^d + o(D^d).
\end{equation}

2) If moreover $\cLb$ is arithmetically ample, 
there exists $D_0\in \N$ and  $\eta \in R^\ast_+$ such that, for any integer $D \geq D_0$, there exists a $\Z$-basis of $\pi_{\Z \ast} \cL^{\otimes D}$ consisting of elements $s$ such that
$$\Vert s \Vert_{L^\infty} \leq e^{-\eta D}.$$
\end{theorem}

The first assertion in Theorem \ref{recallArAmp} is actually an extension to a possibly singular scheme $\cX$ of some more precise asymptotic expression for the Arakelov degree of $\pi_{\Z\ast} \cL^{\otimes D}$ equipped with some $L^2$-norm, that is a consequence of the work of Gillet and Soul\'e on higher dimensional Arakelov geometry, and of Bismut, Lebeau, and Vasserot on analytic torsion.

The following theorem may be seen as an arithmetic counterpart of the asymptotic expression (\ref{AsGeo}) in the geometric case, complemented by the vanishing of the higher cohomology groups $H^i(Z, L^{\otimes D})$ for $i>0,$ when $D$ is large enough.

\begin{theorem}\label{AmpleThetaAs} If the Hermitian line bundle $\cLb$ is arithmetically ample on $X$, then there exists $\kappa \in \R^\ast_+$ such that
\begin{equation}\label{eq:AmpleThetaAs0}
\hot(\pi_\ast \cLb^{\otimes D}_J ) = \frac{1}{d !} h_{\cLb}(\cX) D^d + o(D^d) \quad \mbox{when $D \lra + \infty$,}
 \end{equation}
 and
 \begin{equation}\label{eq:AmpleThetaAs1}
\hut(\pi_{\Z\ast} \cLb^{\otimes D}_J ) \leq e^{-e^{\kappa D}} \quad \mbox{for any large enough positive integer $D$.}
 \end{equation}
\end{theorem}
\begin{proof} Observe that
\begin{equation}\label{pfthampl1}
n(D) := \rk_\Z \pi_{\Z \ast} \cL^{\otimes D} = [K:\Q] \dim_K \Gamma(\cX_k, \cL_K^{\otimes D}) = O(D^{\dim \cX_K}) = O(D^{d-1}).
\end{equation}

Let $D_0$ and $\eta$ be as in the second part of Theorem \ref{recallArAmp}. For any $D\geq D_0,$ $\pi_{\Z\ast} \cL^{\otimes D}$ admits a $\Z$-basis consisting of elements $s$ whose John's norms satisfy
$$\Vert s \Vert_J \leq (2n(D))^{1/2} \Vert s \Vert_{L^\infty} \leq (2n(D))^{1/2} e^{- \eta D}.$$  
Therefore then last of the successive minima of the Euclidean lattice $\pi_{\Z\ast} \cLb^{\otimes D}_J$ satisfies:
\begin{equation}\label{pfthampl2}
\lambda_{n(D)}(\pi_{\Z\ast} \cLb^{\otimes D}_J ) \leq (2n(D))^{1/2} e^{- \eta D}.
\end{equation}

The existence of $\kappa >0$ such that (\ref{eq:AmpleThetaAs1}) holds follows from (\ref{pfthampl1}), (\ref{pfthampl2}), and from the upper bound on $\hut$ in terms of the last of the successive minima established in Corollary \ref{Grbdhut}.

According to (\ref{compchideg}) applied to $\cEb := \cLb^{\otimes D}$ and to (\ref{pfthampl1}), we also have:
$$\vert \dega \pi_{\Z\ast} \cLb^{\otimes D}_J  - \chi_{\Vert.\Vert_{L^\infty}} \left(\pi_{\Z \ast} \cL^{\otimes D}\right) \vert = O(n(D) \log n(D)) = O(D^{d-1} \log D).$$
Together with the first assertion in Theorem \ref{recallArAmp}, this shows that
\begin{equation}\label{pfthampl3}
\dega \pi_{\Z\ast} \cLb^{\otimes D}_J = \frac{1}{d !} h_{\cLb}(\cX) D^d + o(D^d) \quad \mbox{when $D \lra + \infty$.}
\end{equation}

Finally, the Poisson-Riemann-Roch formula
$$\hot( \pi_{\Z\ast} \cLb^{\otimes D}_J  ) -\hut (\pi_{\Z\ast} \cLb^{\otimes D}_J ) = \dega \pi_{\Z\ast} \cLb^{\otimes D}_J$$
and the asymptotic expressions (\ref{pfthampl3}) and (\ref{eq:AmpleThetaAs1}) imply (\ref{eq:AmpleThetaAs0}).
\end{proof}

\subsection{Lower bounds on the invariant $\hot$ associated to sections of $\cO(D)$} 

For the proof of the Diophantine  algebraization criteria (Theorems \ref{ArAlgPM} and \ref{ArAlgK}), we will rely on the following weaker variant of Theorem \ref{AmpleThetaAs}:

\begin{corollary}\label{AmpleThetaPM} Let $N$ be a positive integer and let $\cX$ be a closed integral subscheme of $\PP^N_{\OK}$, flat over $\Spec \OK$. Let $\pi: \cX \ra \Spec \OK$ denote its structural morphism.

There exists some Hermitian line bundle $\cLb$ over $\cX$ and two sequences $(\Eb_D)_{D \in \N}$, of Hermitian vector bundles over $\Spec \OK$, and $(\iota_D)_{D \in \N}$, of injective morphisms of $\OK$-modules
$$\iota_D: E_D \hlra \pi_\ast \cL^{\otimes D},$$
such that, for some $D_0 \in \N$ and some $c \in \R_+^\ast,$ the following conditions are satisfied for every integer $D \geq D_0$ and every field  embedding $\sKC$:
\begin{equation}\label{comphermunif}
\Vert s \Vert_{L^\infty, \sigma} \leq \Vert s \Vert_{\Eb_{D},\sigma} \quad \mbox{ for every  section $s\in (\pi_\ast \cL^{\otimes D})_\sigma \simeq \Gamma(\cX_\sigma, \cL_\sigma^{\otimes D})$;}
\end{equation}
and
\begin{equation}\label{lowerDioph}
\hot(\Eb_D) \geq c. D^{\dim \cX}.
\end{equation}
 \end{corollary}
 
 This statement may be seen as a Diophantine counterpart of the geometric lower bounds in Proposition \ref{algbnd}, concerning the dimension of spaces of sections of powers of (ample) line bundles on projective varieties over some field. It is a straightforward consequence of Theorem  
 \ref{AmpleThetaAs}, which actually shows that Corollary   \ref{AmpleThetaPM} holds for any arithmetically ample Hermitian line bundle $\cLb$ over $\cX$, by letting
 $$\Eb_D := \pi_\ast \cLb^{\otimes D}_J$$ 
 for every positive integer $D$.
      
      However, like its geometric counterpart in Proposition \ref{algbnd}, it also admits a direct 
       proof, using Noether's normalization and some basic results concerning sections  of the line bundles $\cO(D)$ on the projective spaces (compare paragraph  \ref{CommentsDim}).
       
    This direct proof will use some elementary properties of natural Hermitian norms on symmetric powers of Hermitian vector spaces and on spaces of homogeneous polynomials that we now recall.
       
       Let $k$ be some natural integer. For any $\delta \in \R,$ let us equip 
       $$\C^{k+1 \vee} := \bigoplus_{0\leq i \leq k} \C X_i$$
       with the Hermitian norm $\Vert.\Vert_\delta$ such that $(X_i)_{0\leq i \leq k}$ becomes an orthogonal basis and 
       $$\Vert X_0 \Vert_\delta = \ldots =\Vert X_k \Vert_\delta = e^{- \delta}.$$ 
       Then, for any $D \in \N,$ we may equip 
       $$\C[X_0, \ldots, X_k]_D \simeq S^D(\C^{k+1 \vee})$$
       with the  Hermitian norm $\Vert.\Vert_{D,\delta}$ deduced from $\Vert.\Vert_\delta$ by taking its $D$-th symmetric power; by definition, it is the quotient norm of the Hermitian norm on $(\C^{k+1 \vee})^{\otimes D}$ deduced from $\Vert.\Vert_\delta$ by tensor product. 
       
       Let $$M_D := \{ I:= (i_0, \ldots, i_k) \in \N^{k+1}
       \mid  \vert I \vert := i_0 + \ldots i_k = D \},$$ and, as usual, let
       $$X^I := X_0^{i_0}\ldots X_k^{i_k} \quad\mbox{and}\quad I! := i_0! \ldots i_k ! \quad\mbox{for any $I=(i_0, \ldots,i_k) \in \N^{k+1}.$}$$
A straightforward computation shows that
$(X^I)_{I \in M_D}$ is an orthogonal basis of $\C[X_0, \ldots, X_k]_D$ equipped with $\Vert.\Vert_{D, \delta}$ and that 
\begin{equation}\label{XIdelta}
\Vert X^I \Vert_{D,\delta}^2 = \frac{I!}{D!} e^{-2 \delta D} \leq e^{-2 \delta D}.
\end{equation}

Besides, we may equip the canonical quotient bundle $\cO(1)$ on $\PP^k_\C:= \PP(\C^{k+1 \vee})$ with the Hermitian norm deduced from the norm $\Vert.\Vert_\delta$ on  $\C^{k+1 \vee}$. For any $D$ in $\N$, this Hermitian norm induces, by tensor product, some Hermitian norm $\Vert.\Vert_{\cO(D), \delta}$ on $\cO(D)$ over $\PP^k_\C.$

For any $P$ in $\C[X_0, \ldots, X_k]_D$, identified to some section in $\Gamma(\PP^k_\C, \cO(D))$, and any point $$\mathbf{x} := (x_0: \ldots : x_k)$$ in the projective space $\PP^k(\C),$ the norm of the value $P(\mathbf{x}) \in\cO(D)_{\mathbf{x}}$ of $P$ at  $\mathbf{x}$ satisfies, as a straightforward consequence of the definitions:
\begin{equation*}
\Vert P(\mathbf{x}) \Vert_{\cO(D), \delta} = e^{-\delta D} \frac{\vert P(x_0, \ldots, x_k) \vert}{(\sum_{i= 0}^k \vert x_i \vert^2)^{D/2}}.
\end{equation*}

Observe that, if 
$$P = \sum_{ I \in M_D} a_I X^I,$$
then, according to (\ref{XIdelta}), 
$$\Vert P \Vert^2_{D, \delta} = \sum_{I \in M_D} \vert a_I \vert^2 \Vert X^I \Vert^2_{D, \delta} = \sum_{I \in M_D} \vert a_I \vert^2 \frac{I!}{D!} e^{-2 \delta D}.$$
Therefore, by Cauchy-Schwarz inequality,
%\begin{multline*}
$$ \vert P(x_0, \ldots, x_k) \vert^2 = \left\vert \sum_{ I \in M_D} a_I x^I \right\vert^2 \leq  
 \Vert P \Vert^2_{D, \delta} \sum_{I \in M_D} \vert x^{2I} \vert \frac{D!}{I!} e^{2\delta D} 
 = \Vert P \Vert^2_{D, \delta} \, e^{2 \delta D} (\sum_{i=0}^n \vert x_i \vert^2)^D.$$
%\end{multline*}
This shows that
\begin{equation}\label{boundDdelta}
\Vert P(\mathbf{x}) \Vert_{\cO(D), \delta} \leq \Vert P \Vert_{D,\delta}.
\end{equation}

 \proof[Direct proof of Corollary \ref{AmpleThetaPM}] Let $d:= \dim \cX$. It is a positive integer and 
      $$\dim \cX_K = d-1.$$
After possibly  composing the embedding $\cX \hra \PP^N_\OK$ by the automorphism of $\PP^N_\OK$ defined by some element $g \in SL_{k+1}(\OK),$ we may assume that $\cX_K$ is disjoint from the linear subspace
$$\PP^{N-d}_K = \bigcap_{i=0}^{d-1} \div X_i$$
defined by the vanishing of the first $d$ homogeneous coordinates $X_0, \ldots, X_{d-1}$\footnote{Indeed, by properness of $\PP_K^{N-d}$, the condition $g.\cX_K \cap \PP_K^{N-d} =\emptyset$ on $g$ defines some open subscheme in $SL_{N+1,K}$. By Noether's normalization, this open subscheme is not empty, and therefore meets $SL_{N+1}(\OK)$, which is Zariski dense in $SL_{N+1,K}$.}.  Then the linear projection
$$
\begin{array}{rrcl}
p:  &   \PP^N_{\OK} \setminus \PP^{N-d}_{\OK} & \lra &\PP_{\OK}^{d-1}    \\
& (x_0: \ldots :x_N)  & \longmapsto   & (x_0: \ldots x_{d-1})  
\end{array}
$$
has a well defined restriction
$$p_{\mid \cX_K}: \cX_K \lra \PP^{d-1}_K,$$
which is a finite surjective morphism.
     
     For any $D \in \N,$ we define
     $$E_D := \Gamma(\PP^{d-1}_\OK, \cO(D)) \simeq \OK[X_0, \ldots, X_{d-1}]_D.$$
    The pull-back  $p^\ast\cO(1)$ may be identified with the restriction $\cO(1)_{\mid  \PP^N_\OK \setminus \PP^{N-d}_\OK},$ and the pull-back map
    $$p^\ast: E_D \lra \Gamma(\PP^N_\OK \setminus \PP^{N-d}_\OK, p^\ast \cO(D)) \simeq \Gamma(\PP^N_\OK \setminus \PP^{N-d}_\OK, \cO(D))$$  
    has its image contained in   $\Gamma(\PP^N_{\OK}, \cO(D))$ and may actually   
 be identified with the inclusion morphism 
$$\OK[X_0, \ldots, X_{d-1}]_D \hlra \OK[X_0, \ldots, X_{N}]_D.$$

Besides, let us define
$$\cL := \cO(1)_{\mid \cX}.$$
By composing $p^\ast$ and the map ``restriction to $\cX$", we define a morphism of $\OK$-modules.
$$i_D : E_D \stackrel{p^\ast}{\lra} \Gamma(\PP^N_\OK, \cO(D)) \stackrel{\mbox{\tiny restriction}}{\lra} \Gamma(\cX, \cO(D)) \simeq \Gamma(\cX, \cL^{\otimes D}).$$

The pull-back $p_{\mid \cX_K}^\ast \cO(1)$ coincides with $\cO(1)_{\cX_K} \simeq \cL_K$ and therefore the map
$$i_{D, K} : E_{D,K} = \Gamma (\PP^{d-1}_K, \cO(D)) \lra \Gamma(\cX_K, \cL^{\otimes D}_K)$$
may be identified with the pull-back by the finite surjective morphism $p_{\mid \cX_K}$, and is therefore injective.

To complete the proof of Corollary \ref{AmpleThetaPM}, we are left to show that there exist Hermitian metrics on $\cL$ and on the $E_D$'s such that the conditions (\ref{comphermunif}) and (\ref{lowerDioph}) are satisfied.

To achieve this, let us choose $\delta \in \R^\ast_+,$ and let us use the above construction of the metrics $\Vert.\Vert_{\cO(1), \delta}$ (on the line bundle $\cO(1)$ over $\PP_\C^k$) and $\Vert.\Vert_{D, \delta}$ (on  $\Gamma(\PP^k_\C, \cO(D))$) in the special case $k= d-1.$

For every embedding $\sKC,$ we have an identification
$\cL_\sigma \simeq p^\ast_{\sigma \mid \cX_\sigma} \cO(1)$
of line bundles over $\cX_\sigma$. We define $\cLb$ as $\cL$ equipped with the metric defined on $\cX_\sigma$ as the pull-back of $\Vert.\Vert_{\cO(1), \delta}$ by $p_\sigma$. Besides, we define $\Eb_D$  as $E_D$ equipped with the metric $\Vert.\Vert_{\Eb_D, \sigma} := \Vert.\Vert_{D, \delta}$ for every embedding $\sKC.$

With these choices of Hermitian structures, the validity of (\ref{comphermunif}) follow from the norm estimates (\ref{boundDdelta}).

To establish (\ref{lowerDioph}), let us observe that $\Eb_D$ may be written as the direct sum of Hermitian line bundles
$$\Eb_D = \bigoplus_{I \in M_D} \oli{\OK X^I}$$
and that, according to (\ref{XIdelta}), 
$$\dega \oli{\OK X^I}  = - [K:\Q] (1/2) \log(\frac{I!}{D!} e^{-2 \delta D}) \geq [K:\Q] \delta D.$$
Therefore
$$\dega \Eb_D \geq [K:\Q] \delta D \,\rk E_D,$$
and, by the ``Riemann inequality" (\ref{ThetaRIK}), 
\begin{equation*}\begin{split}\hot(\Eb_D) & \geq \dega \Eb_D - \frac{1}{2} \log \vert \Delta_{K} \vert   \cdot \rk E_D\\
& \geq ([K:\Q] \,\delta D - \frac{1}{2} \log \vert \Delta_{K})\, \rk E_D.
\end{split}
\end{equation*}
Since
$$\rk E_D = \binom{D+d-1}{d-1}
\geq \frac{1}{(d-1)!} D^{(d-1)},$$
this proves that, for any given $c < [K:\Q]\, \delta /(d-1)!,$ we have: 
$$\hot(\Eb_D) \geq c. D^d$$
when $D$ is large enough. 
\qed

   \section{Pointed smooth formal curves}\label{SmForCur}
   
   Let us consider some noetherian affine base scheme $S:= \Spec A.$
   
  \subsection{Definitions} We define a \emph{pointed smooth formal curve over $S$} as the data $({\cCh}, \pi, \cP)$ of some noetherian affine formal scheme\footnote{See EGA I (\cite{EGAI}), Section 1.10.} $\cCh = \Spf B$, of a morphism of (formal) schemes 
   $$\pi : \cCh \lra S,$$
   and of some section 
   $$\cP : S \lra \cCh$$
   of $\pi$ such that the following conditions are satisfied:
   
   $\mathbf{PSFC_1 :}$ \emph{The ideal $I$ of $B$ defined as the kernel of the morphism of rings
   $$\cP^\ast :   B= \Gamma(\cCh, \cO_{\cCh}) \lra A = \Gamma(S, \cO_S)$$ 
   is an ideal of definition of the} (noetherian adic) \emph{ring $B$}.
    
   $\mathbf{PSFC_2 :}$ \emph{The morphism of topological rings
   $$\pi^\ast:  A = \Gamma(S, \cO_S) \lra B= \Gamma(\cCh, \cO_{\cCh})$$
   makes the} (noetherian adic) \emph{ring $B$ a formally smooth\footnote{See EGA ${\rm{IV}_1}$ (\cite{EGAIV1}), Chapitre $\mathbf{0}$, 19.3.1.} 
   algebra over $A$} (equipped with the discrete topology).
   
    $\mathbf{PSFC_3 :}$ \emph{The fiber of $\pi$ over any point of $S$ with value in a field is one-dimensional}. 
    
    \smallskip
    
    When   $\mathbf{PSFC_1}$ is satisfied, we may introduce $\cI := I^{\Delta}$ the Ideal of definition of $\cCh$ corresponding to $I$. For any $n \in \N,$ we may consider the affine $S$-schemes of finite type 
    $$C_n := (\vert \cCh\vert, \cO_{\cCh}/\cI^{n+1}) \simeq \Spec B/I^{n+1}$$
    and $\cCh$ may be identified with their direct limit in the category of formal schemes:
    $$\cCh \simeq \varinjlim_n C_n.$$
    Moreover the morphisms $\pi_{\vert X_0}$ and $\cP$ define isomorphisms, inverse to each other, between $C_0$ and $S$.
    
    Still assuming that $\mathbf{PSFC_1}$ holds, Conditions $\mathbf{PSFC_2}$ and $\mathbf{PSFC_3}$ are equivalent to the following one:
    
    $\mathbf{PSFC'_{2-3} :}$ \emph{The coherent sheaf $\cI/\cI^2$ over $C_0$ ($\simeq \cP)$ is a line bundle and, for any positive integer $n$, the canonical morphism of $\cO_{C_0}$-Modules\footnote{For every local section $s$ of $\cI$, of class $[s]$ in $\cI/\cI^2$, $\phi_n$ maps $[s]^{\otimes n}$ to the class of $s^n$ in $\cI^n/\cI^{n+1}$.}
    $$\phi_n : (\cI/\cI^2)^{\otimes n} \lra \cI^n/\cI^{n+1}$$
    is an isomorphism.}
    
    This follows from EGA ${\rm{IV}_1}$ (\cite{EGAIV1}),  Chapitre $\mathbf{0}$, 19.5.4, equivalence of a) and b).
    
    When Conditions $\mathbf{PSFC_{1-3}}$ hold, the line bundle $\cI/\cI^2$ (or its pull-back $\cP^\ast \cI/\cI^2$ over $S$) may be called the conormal bundle of $\cP$ in $\cCh$. We will also consider its dual, 
    $$N_\cP \cCh := \cP^\ast (\cI/\cI^2)^\vee,$$
    the \emph{normal bundle of $\cP$ in $\cCh$.}
    
   \subsection{Examples and remarks}\label{ExRemSFC} {\bf (i)} Observe that, if 
    $f : C \lra S$
    is a smooth relative curve over $S$ (namely, a smooth morphism of schemes of relative dimension $1$), any section $P: S \lra C$ of $f$ becomes, after possibly replacing  $C$ by an open subscheme\footnote{This is not needed if $f$, or equivalently $C$, is separated.}, a closed immersion. We may therefore consider the formal completion $\cCh$ of $C$ along the image of $P$. By completion, from $f$ and $\cP$, one deduces morphisms of formal schemes
    $$\pi: \cCh \lra S \quad \mbox{and} \quad \cP: S \lra \cCh,$$
    and it is straightforward that $(\cCh, \pi, \cP)$ defines a pointed smooth formal curve over $S$. If $T_f$ denote the relative tangent bundle of $f$, we have a canonical isomorphism:
    $$N_\cP \cCh \lrasim \cP^\ast T_f.$$
    
    \smallskip
    
  {\bf (ii)}    For any line bundles $\cL$ over $S$, we may consider the affine $S$-scheme
    $${\V}_S(\cL^\vee) := {\rm Spec}_S\,  {\rm Sym}_{\cO_S} \cL^{\vee}.$$
 (It is defined as the spectrum of the quasi-coherent $\cO_S$-algebra
    $${\rm Sym}_{\cO_S} \cL^{\vee} := \bigoplus_{n \in \N} \cL^{\vee \otimes n},$$
    and may be thought as the ``total space" of the line bundle $\cL$.) It is smooth over $S$, of relative dimension $1$, and equipped with the ``zero-section" $\epsilon$, defined by the augmentation ideal 
    $$\cI_\epsilon := 0 \oplus \bigoplus_{n \in \N_{>0}} \cL^{\vee \otimes n} = \cL^\vee . {\rm Sym}_{\cO_S} \cL^{\vee}$$
    of the symmetric algebra ${\rm Sym}_{\cO_S} \cL^{\vee}.$ 
    
    Applied to $C := {\V}_S(\cL^\vee)$ and $P:= \epsilon$, the previous construction defines a pointed smooth formal curve $(\widehat{\V}_S(\cL^\vee), \pi, \epsilon)$.
     By construction, we have a canonical isomorphism
    $$N_{\epsilon} \widehat{\V}_S(\cL^\vee) \lrasim \cL.$$
    
    In concrete terms, the $S$-formal scheme $\widehat{\V}_S(\cL^\vee)$ may be described as follows in terms of the projective $A$-module of rank one $L^\vee := \Gamma(S, \cL^\vee)$ : it is the affine formal scheme defined by the $A$-algebra 
    $$B:= \widehat{\rm Sym}_A L^\vee,$$
    defined as the completion of the symmetric algebra
    $${\rm Sym}_A L^\vee := \bigoplus_{n \in\N} L^{\vee \otimes_A n}$$
    with respect to the adic topology defined by the augmentation ideal
    $$I:= L^\vee. {\rm Sym}_A L^\vee = 0 \oplus \bigoplus_{n \in\N_{>0}} L^{\vee \otimes_A n}.$$
    As a topological $A$-module, 
    $$B \lrasim \prod_{n \in\N} L^{\vee \otimes_A n},$$
    where the right-hand side is equipped with product topology of the discrete topology on each of the factors  $L^{\vee \otimes_A n}$.
    
    It turns out that any pointed smooth formal curve $(\cCh, \pi, \cP)$ over $S$ is isomorphic (non-canonically in general) to $(\widehat{\V}_S(\cL^\vee), \pi, \epsilon)$ where $\cL:= N_{\cP} \cCh$ (see EGA ${\rm{IV}_1}$ (\cite{EGAIV1}),  Chapitre $\mathbf{0}$, 19.5.4, equivalence of a) and c)).
    
   When $S$ is the spectrum $\Spec k$ of a field $k$, a pointed smooth formal curve over $S$ is often called \emph{a smooth formal germ of curve over $k$}, and the previous classification result boils down to the classical fact that any of them is isomorphic to $\Spf k[[T]].$
    
     \smallskip
   
{\bf (iii)} Let us finally observe that, for any morphism of affine noetherian schemes $S' \lra S$, we may ``base change" any pointed  smooth formal curve $(\cCh, \pi, \cP)$ over $S$ to $S'$, and define  a  pointed  smooth formal curve $(\cCh_{S'}, \pi_{S'}, \cP_{S'})$ over $S'$.
     
      \section[Green's functions, capacitary metrics and Schwarz lemma]{Green's functions, capacitary metrics and Schwarz lemma on compact Riemann surfaces with boundary}\label{boundaryGreen}
      
 \subsection{Compact Riemann surfaces with boundary}\label{DefCRB}     A \emph{Riemann surface with boundary} is the data $(V,V^+)$ of some Riemann surface $V^+$ (without boundary) and of some closed $C^\infty$ submanifold with boundary (of codimension 0) $V$ of $V^+$.
 
 The interior $\mathring{V}$ of $V$ in $V^+$ is then a Riemann surface (without boundary), its boundary 
 $$\partial V := V \setminus \mathring{V}$$
 is a closed $1$-dimensional real $C^\infty$ submanifold of $V^+,$
 and we may consider the open and closed immersions:
      $$\mathring{V} \stackrel{i_V}{\hlra} V \stackrel{j_V}{\hlra} V^+.$$
      
      Actually we shall consider that shrinking the ``ambient" Riemann surface $V^+$ to some smaller open neighborhood $V^{'+}$ of $V$ does not change the surface with boundary $(V, V^+)$. In other words, $V^+$ should be understood as a germ of Riemann surface around $V$. 
      
      An alternative definition, formally more satisfactory, would be to define the Riemann surface with boundary associated to $(V,V^+)$ as the ringed space $(V, \cO^{\an}_V)$,  where $\cO^{\an}_V$ denotes the inverse image $j_V^{-1} \cO^{\an}_{V^+}$ of the sheaf of $\C$-analytic functions $\cO^{\an}_{V^+}$ on $V^+.$ 

           An \emph{analytic vector bundle} over the Riemann surface with boundary $V$ is, by definition, a germ of analytic vector bundle along $V$ in $V^+$. In other words, by its very definition,  an analytic vector bundle over $V$ extends to an analytic vector bundle on any small enough open neighborhood of $V$ in $V^+$. (Such analytic vector bundles precisely correspond to locall free sheaves of finite rank over the ringed space $(V, \cO^{\an}_V)$.)
      
 A $C^\infty$ Hermitian metric $\Vert. \Vert$ on some analytic vector bundle  $E$ on $V$ is, by definition, a $C^\infty$ Hermitian metric on the $C^\infty$ vector bundle $E_{\mid V}$ on the $C^\infty$ manifold (with boundary) $V$. The pair $(E, \Vert. \Vert)$ will the be called a \emph{Hermitian analytic vector bundle} over $V$. 
      
       Clearly, usual tensor operations (such as direct sums, or tensor products) make sense  for analytic vector bundles and Hermitian analytic vector bundles over Riemann surfaces with boundary.

       A \emph{volume form} $\nu$ on the Riemann surface with boundary $V$ is a $C^\infty$ $2$-form on the $C^\infty$ manifold (with boundary) $V$ which is everywhere positive. 
       
       \subsection{Green's functions and capacitary metrics} In this paragraph, we recall some well-known facts concerning Green's functions on compact Riemann surfaces with boundary. For a detailed discussion and references on this topic, we refer the reader to \cite{Bost99}, Section 3.1 and Appendix. We shall actually not need the ``refined" theory (that allows domains with rough boundaries) presented in \emph{loc. cit.}, and the results below may also be obtained as special cases of some basic results concerning elliptic boundary problems (see for instance \cite{Taylor2011}, Sections 5.1-2).
       
         Let $V$ be a \emph{connected compact} Riemann surface with boundary $V$ such that $\partial V$ is \emph{non-empty.}
       
       For any point $O$ in $\mathring{V},$ one defines the \emph{Green's function $g_{V,O}$ of $O$ in $V$} as the unique function $$g_{V,O}: V \setminus\{O\} \lra \R$$
       that satisfies the following conditions:
       
      $\mathbf{Gr_1 :}$ \emph{$g_{V,O}$ is continuous on $V\setminus\{O\}$ and $g_{V,O\mid \partial V} = 0.$}
      
      $\mathbf{Gr_2 :}$ \emph{$g_{V,O}$ is harmonic on $\mathring{V}\setminus\{O\}$.}
      
      $\mathbf{Gr_3 :}$ \emph{$g_{V,O}$ admits a logarithmic singularity at $O$;} namely, if $z: U \hlra U$ denotes some analytic chart of domain some open neighborhood $U$ of $O$ in  $\mathring{V}$, we have:
      \begin{equation*}
g_{V,O}(P) = \log \vert z(P) -z(O) \vert^{-1} + O(1) \quad \mbox{when $P\ra O$}.
\end{equation*}

When conditions $\mathbf{Gr_1}$ and $\mathbf{Gr_2}$ are satisfied, the function of $P$
$$g_{V,O}(P) - \log \vert z(P) -z(O) \vert^{-1}$$
is harmonic on $U\setminus\{O\}$ and stays bounded when $P$ goes to $O$, hence extends to some harmonic function $h$ on $U$. By construction, we have, for every $P \in U \setminus\{O\},$
     \begin{equation*}
g_{V,O}(P) = \Vert \partial/\partial z \Vert^{\rm cap}_{V,O} + h(P).
\end{equation*}
In particular, $g_{V,O}$ defines some locally $L^1$ function on $\mathring{V}$, hence a current on $\mathring{V}$. Together with the Poincar\'e-Lelong equation on $\C$
 $$(i/2\pi) \,\partial \overline{\partial} \log \vert . \vert^2  =\delta_0,$$
 these observations show that, conditions $\mathbf{Gr_2}$ and $\mathbf{Gr_3}$ imply:
 
   $\mathbf{Gr_{2-3} :}$ \emph{$g_{V,O}$ defines a current on $\mathring{V}$ which satisfies the equation of currents:}
 \begin{equation}\label{Greendd}
       i\, \partial \overline{\partial} g_{V,O} = - \pi\, \delta_O 
\end{equation}

Conversely, using that any current in the kernel of $\partial \overline{\partial}$ is actually some $C^\infty$ (harmonic) function, one easily see that $\mathbf{Gr_{2-3}}$ implies $\mathbf{Gr_2}$ and $\mathbf{Gr_3}$.

  With the above notation, the  \emph{capacitary metric} $\Vert . \Vert_{V,O}^{\rm cap}$ on the tangent space $T_O V$ of $V$ at $O$ is the Hermitian metric on this complex line defined by
  \begin{equation}\label{capmet1}
\Vert \partial/\partial z \Vert^{\rm cap}_{V,O} := e^{-h(P)}.
\end{equation}
In other words,
\begin{equation}\label{capmet2}
\log \Vert \partial/\partial z \Vert^{\rm cap}_{V,O} = \lim_{P \ra O} \left[ \log \vert z(P) -z(O) \vert^{-1} -
g_{V,O}(P)\right].
\end{equation}
The relation (\ref{capmet2}) makes clear that the definition (\ref{capmet1}) of the capacitary metric is actually independent of the choice of the local coordinate $z$. 

Let us finally recall that $g_{V,O}$ is positive on $\mathring{V} \setminus\{O\}$ (this is a straightforward consequence of the maximum principle for harmonic functions on Riemann surfaces). Moreover, $g_{V,O}$ is $C^\infty$ up to the boundary --- that is $C^\infty$ on the surface with boundary $V\setminus\{O\}$ --- and its differential $dg_{V,O}$ does not vanish on $\partial V$  (see for instance \cite{Taylor2011}, 5.1-2).

       \subsection{Examples}\label{ExampGreen}
       
       (i) For any $R\in \R_+,$ we let
       $$\mathring{D}(0, R) := \{ z \in \C \mid \vert z \vert < R \}$$
       and
       $$\overline{D}(0,R) := \{ z \in \C \mid \vert z \vert \leq  R \}.$$

      When $R>0,$ $\overline{D}(0,R)$ is a compact connected Riemann surface with non-empty boundary. It is straightforward that, for any $z \in \overline{D}(0,R)\setminus \{0\},$
      $$g_{\overline{D}(0,R), 0}(z) = \log \frac{R}{\vert z \vert}.$$
      Consequently, we have: 
      $$\Vert \partial / \partial z \Vert_{\overline{D}(0,R), 0}^{\rm cap} = 1/R.$$
    \smallskip
      
      (ii) Let $V \hra \PP^1(\C)$ be a closed submanifold with boundary (of codimension $0$) of the complex projective line. Let us assume that $V$ is connected and contains the point $\infty := (0:1)$.
      
      On $\PP^1_\C := {\rm Proj} (\C X_0 \oplus \C X_1)$, we may consider the rational function $z:= X_1/X_0$ --- it defines the usual identification $\PP^1_\C \setminus \{\infty\} \lrasim \A^1_\C$ --- and its inverse $t:= X_0/X_1$, which defines a local coordinate on $\PP^1(\C) \setminus \{0\}$, that vanishes at $\infty$.
      
      By the very definition of the Green's function and the capacitary metric attached to the point $\infty$ of $\mathring{V},$ we have, when $P\in \A^1(\C)$ goes to $\infty$:
\begin{align*}
 g_{V, \infty}(P) & = \log \vert t(P) \vert^{-1} - \log \Vert \partial/\partial t \Vert^{\rm cap}_{V,\infty} + o(1) \\
 & = \log \vert z(P) \vert - \log \Vert \partial/\partial t \Vert^{\rm cap}_{V,\infty} + o(1).
\end{align*}
 
 This shows that $g_{V,\infty}$ coincides with the classical Green's function of the compact set $K: = \A^1(\C) \setminus \mathring{V}$, and  that $-\log \Vert \partial/\partial t \Vert^{\rm cap}_{V,\infty}$ is the so-called Robin constant of $K$ and $\Vert \partial/\partial t \Vert^{\rm cap}_{V,\infty}$ its two-dimensional capacity $c(K)$ (see for instance \cite{Ransford95}, Chapter 5).
 
 \smallskip
      
      (iii) Let $F:=\{a_1, \ldots, a_n\}$ be a finite subset of $\mathring{D}(0,R)\setminus \{0\}$ and let 
      $$\boldsymbol{\epsilon} := (\epsilon_1, \ldots, \epsilon_n)$$ be an element of $\R_+^{\ast n}$ such that
      $$ \epsilon_i < R -\vert a_i \vert \quad \mbox{ for any $i \in \{1, \ldots, n\}$}$$
      and
      $$ \epsilon_i + \epsilon_j <\vert a_i - a_j \vert \quad \mbox{ for any $(i,j) \in \{1, \ldots, n\}^2$ such that $i \neq j$.}$$
      
      Then the disks $\overline{D}(a_i, \epsilon_i)$ are contained in $\overline{D}(0,R)$ and pairwise disjoint, and 
      $$V_{\boldsymbol{\epsilon}} := \overline{D}(0,R) \setminus \bigcup_{1 \leq i \leq n} \mathring{D}(a_i, \epsilon_i)$$
      is a compact Riemann surface with boundary.
      
      When $\boldsymbol{\epsilon}$ goes to $(0,\ldots,0)$ in $\R^n,$ the surface $V_{\boldsymbol{\epsilon}}$ shrinks to  $\overline{D}(0,R) \setminus F$. Moreover, the fact that the finite subset $F$ is $\emph{polar}$ in $\C$ implies that, for any $z \in \overline{D}(0,R)\setminus \{0\},$
      $$\lim_{\boldsymbol{\epsilon}\ra (0,\ldots,0)} g_{V_{\boldsymbol{\epsilon}}, 0} (z) = g_{\overline{D}(0,R), 0}(z) = \log \frac{R}{\vert z \vert}$$
    and 
  \begin{equation}\label{caplim}
\lim_{\boldsymbol{\epsilon}\ra (0,\ldots,0)} \Vert \partial / \partial z \Vert_{V_{\boldsymbol{\epsilon}}, 0}^{\rm cap}
= \Vert \partial / \partial z \Vert_{\overline{D}(0,R), 0}^{\rm cap} =1/R.
\end{equation}
  (See \cite{Ransford95}, notably the properties of the capacity discussed in Chapter 5.)

  \subsection{Hilbert spaces of analytic sections and Fr\'echet spaces of formal sections}\label{HilFre}
  
  Let   $V$ be a compact Riemann surface with boundary, $\nu$ a volume form on $V$, and $(E, \Vert.\Vert)$ some Hermitian analytic vector bundle over $\mathring{V}$, namely some analytic vector bundle $E$ over $\mathring{V}$, equipped with some Hermitian metric $\Vert.\Vert$, $C^\infty$ on $\mathring{V}$.
  
  The space of analytic sections $s$ of $E$ over $\mathring{V}$ such 
  $$\Vert s \Vert_{L^2} := \int_{\mathring{V}} \Vert s \Vert^2 \, \nu < + \infty,$$
when equipped with the $L^2$-norm $\Vert . \Vert_{L^2}$, defines a Hilbert space, that we shall denote by 
$$\Gamma_{L^2} (V, \nu; E, \Vert. \Vert).$$

Let us choose a point $O$ in $\mathring{V}$. We may consider its successive infinitesimal neighborhoods\footnote{By definition, for any $n\in \N$, $O_n$ is defined by the sheaf of ideals $\cI_O^{n+}$, where $\cI_O$ denotes the ideal sheaf of $O$. It is convenient to extend this definition to $n=-1,$ so that $O_{-1} = \emptyset$.} $O_n$ in $V$, and the formal completion 
$$\hat{V}_O :=  \varinjlim_n O_n$$
of $V$ at $O$. The space of sections of $E$ over $\hat{V}_O$,
$$\Gamma(\hV_O, E) \lrasim \varprojlim_n E_{\mid O_n},$$
is equipped  with a natural Fr\'echet space topology, as projective limit of the finite dimensional complex vector spaces $E_{\mid O_n}$ endowed with their natural topology of (separated) topological vector spaces.

We may finally introduce the restriction map
$$\begin{array}{crcl}
\hat{\eta}: & \Gamma_{L^2} (V, \nu; E, \Vert. \Vert)   &  \lra  & \Gamma(\hV_O, E)  \\
 & s & \longmapsto  & s_{\mid \hV_O},
 \end{array}
$$
which maps some $L^2$ holomorphic section $s$ of $E$ over $\mathring{V}$ to its ``jet" $s_{\mid \hV_O}$ at the point $O$.

\begin{proposition}\label{propeta} Let us assume that $V$ is connected, and that its boundary $\partial V$ is non-empty.
 Then the linear map $\hat{\eta}$ is injective, and continuous  when $\Gamma_{L^2} (V, \nu; E, \Vert. \Vert)$ is equipped with its Hilbert space topology and $\Gamma(\hV_O, E)$ with is natural Fr\'echet space topology.
 
 If moreover, the vector bundle $E$ is the restriction to $\mathring{V}$ of some analytic vector bundle $\tilde{E}$ on $V$ and if the Hermitian metric $\Vert.\Vert$ extends to some $C^\infty$ metric on $\tilde{E}$ over $V$, then the image of $\hat{\eta}$ is dense in $\Gamma(\hV_O, E)$.
\end{proposition}

\begin{proof}
 The injectivity of $\hat{\eta}$ follows from the connectedness of $\mathring{V}$ by analytic continuation.
 
 The continuity of $\hat{\eta}$ is equivalent to the continuity of the maps 
 $$\begin{array}{crcl}
{\eta}_n: & \Gamma_{L^2} (V, \nu; E, \Vert. \Vert)   &  \lra  & E_{\mid O_n} \\
 & s & \longmapsto  & s_{\mid O_n},
 \end{array}
$$
when $n \in \N.$ This directly follows for Cauchy estimates.

The density in $\Gamma(\hV_O, E)$ of the image of $\hat{\eta}$ is equivalent to the surjectivity of the maps $\eta_n$. This surjectivity follows from the surjectivity of the composite maps
$$\Gamma(V^+, \tilde{E}) \hlra \Gamma_{L^2} (V, \nu; E, \Vert. \Vert)   \stackrel{\eta_n}{\lra} E_{O_n},$$
which in turn is a consequence of the fact that $V$ is a Stein compact subset (that is, admits a basis of open Stein neighborhoods) in $V^+$, since any connected non-compact Riemann surface is a Stein manifold (theorem of Behnke-Stein; see also \cite{GuenotNarasimhan76}, Chapitre V, Th\'eor\`eme 1 for a simple proof).
\end{proof}

Let us observe that, for any $n \in \N$, if $s$ denotes some analytic section of $E$ on some open neighborhood of  $O$ in $\mathring{V}$ that vanishes at order at least $n$ at $O$ --- in other words, that satisfies 
$${\eta}_{n-1}(s) := s_{\mid O_{n-1}} = 0$$ --- then its jet of order $n$ at $O$, 
$${\eta}_{n}(s) := s_{\mid O_{n}}$$ may be identified with some element of $T_O^{\vee \otimes n} \otimes E_O$.

Let us also recall that, if $\Lb := (L, \Vert .\Vert)$ denotes some Hermitian analytic line bundle\footnote{that is some Hermitian analytic vector bundle of rank one} over $\mathring{V}$, then its \emph{first Chern form} $c_1(\Lb)$ is a real $(1,1)$-form defined by the equality
\begin{equation*}
c_1(\Lb) =\frac{1}{2\pi i} \partial\overline{\partial} \log \Vert s \Vert^2 
\end{equation*}
 for any local non-vanishing analytic section of $L$. More generally, if $s$ denotes a non-zero meromorphic section of $L$ over some connected open subset $U$ of $\mathring{V}$ and $\div s$ its divisor, the function $\log \Vert s \Vert$ is locally $L^1$ on $U$ and satisfies the following relation of currents on $U$:
\begin{equation}\label{PL}
\frac{i}{\pi} \partial\overline{\partial} \log \Vert s \Vert =  \delta_{\div s} - c_1(\Lb)
\end{equation}
(Poincar\'e-Lelong equation).

\subsection{Poisson-Jensen formula and Schwarz lemma over a compact Riemann surface with boundary}\label{SchwarzLemma}

Let $V$ be a connected compact Riemann surface with non-empty boundary, and let $O$ be some point of $\mathring{V}$.

Let $\eta$ be a positive real number and let $\rho: \R \lra \R_+$ be $C^\infty$ function with compact support such that
\begin{equation}\label{rhoeta}
{\rm supp\,} \rho \subset ]0,\eta[ \quad\mbox{and}\quad \int_\R \rho(t) = 1.
\end{equation} 
Let us denote by $\chi$ the ``second primitive" of $\rho$, namely the $C^\infty$ function from $\R$ to $\R$ defined by
$$\chi(0) = \chi'(0) =0 \quad\mbox{and}\quad \chi'' = \rho.$$
It is a non-decreasing convex function with values in $\R_+$. Moreover, there exists $c\in \R$ such that\footnote{A straigtforward computation shows that $c=-\int_0^{+\infty} t \rho(t) dt \in ]-\eta, 0[.$} 
\begin{equation}\label{chiinf}
\mbox{for any $t \in [\eta, +\infty[$}, \quad \chi(t) = t + c.
\end{equation}

Like $g_{V,O}$, the function $\chi \circ g_{V,O}$ has a logarithmic singularity at $O$ and is $C^\infty$ on $\mathring{V} \setminus \{O\}.$ It defines a current (of degree $0$) on $V^+$ with compact support in $\mathring{V}$ and with singular support $\{O\}$. 

\begin{proposition}\label{muchi}
 The following equality of currents holds on $\mathring{V}$:
 \begin{equation}\label{muchi1}
\frac{i}{\pi} \partial \overline{\partial}(\chi \circ g_{V,O}) = - \delta_O + \mu_\chi,
\end{equation}
where
\begin{equation}\label{muchi2}
\mu_\chi := (\chi''\circ g_{V,O}) \frac{i}{\pi} \partial g_{V,O} \wedge \overline{\partial} g_{V,O}
\end{equation}
is a $C^\infty$ positive $(1,1)$-form with compact support in $\mathring{V} \setminus \{O\}$ and satisfies
\begin{equation}\label{muchi3}
\int_{\mathring{V}} \mu_\chi =1.
\end{equation}
\end{proposition}

\begin{proof}
 On the open neighborhood $U:= g_{V,O}^{-1}(]\eta, +\infty])$ of $O$ in $\mathring{V}$, we have:
 $$\chi \circ g_{V,O} = g_{V,O} + C,$$
 and therefore
 $$\frac{i}{\pi} \partial \overline{\partial} (\chi \circ g_{V,O}) =- \delta_O.$$
 Moreover, on $\mathring{V}\setminus \{O\},$ the Green's function $g_{V,O}$ is $C^\infty$ and $\partial \overline{\partial} g_{V,O} =0$, and therefore:
 $$\overline{\partial} (\chi \circ g_{V,O}) = (\chi' \circ g_{V,O} )\,  \overline{\partial} g$$
 and
 \begin{align*}
 \partial \overline{\partial}(\chi \circ g_{V,O}) & = \partial (\chi' \circ g_{V,O}) \wedge \overline{\partial} g_{V,O} + 
 (\chi' \circ g_{V,O})\,  \partial \overline{\partial} g_{V,O} \\
 & = (\chi'' \circ g_{V,O})\,  \partial g_{V,O} \wedge \overline{\partial} g_{V,O}. 
 \end{align*}
 
To complete the proof of the proposition, we are left to establish the equality (\ref{muchi3}). It follows from (\ref{muchi1}) and from the fact that, since $\chi \circ g_{V,O}$ has compact support in $\mathring{V}$, we have:
$$\int_{\mathring{V}} \partial \overline{\partial} (\chi \circ g_{V,O}) =
\int_{\mathring{V}} d\,\overline{\partial}(\chi \circ g_{V,O})= 0$$
by Stokes formula.
\end{proof}

\begin{theorem}\label{PJ} Let $\Lb:= (L, \Vert.\Vert)$ be some Hermitian analytic line bundle $L$ over $\mathring{V}$, and  
let $s$ be a non-zero analytic section of $L$ on $\mathring{V}$ and $n:= v_O(s)$ its vanishing order at the point $O$.  

Let us denote by $\eta_n (s):= s_{\mid O_n}$ the jet of order $n$ of $s$ at $O$ ---  a non-zero element of the complex line $T_O^{\vee \otimes n} \otimes L_0$ --- and by $\Vert.\Vert^{\rm cap}_{M,O,\Lb}$ the Hermitian metric on $T_O^{\vee \otimes n} \otimes L_0$ deduced from the metrics $\Vert.\Vert^{\rm cap}_{M,O}$ on $T_OM$ and $\Vert.\Vert$ on $L_O$ by duality and tensor product.

Then we have:
\begin{multline}\label{eqPJ}
\log \Vert {\eta}_n(s)\Vert^{\rm cap}_{M,O,\Lb} = \int_{\mathring{V}} \log \Vert s \Vert\, \mu_\chi - 
 \int_{\mathring{V}}  (\chi\circ g_{V,O} ) \, \delta_{\div s - nO}
 + n \int_{\mathring{V}} g_{V,O}\,  \mu_\chi + \int_{\mathring{V}} (\chi\circ g_{V,O}) \,  c_1(\Lb).
\end{multline}
\end{theorem}

Let ${\bf{1}}_{\cO(O)}$ denote the tautological section, with divisor $O$, of the analytic line bundle $\cO(O)$ over $V$. Applied to the line bundle $\Lb := (\cO(O), \Vert {\bf{1}}_{\cO(O)} \Vert := e^{-\chi \circ g_{V,O}})$ and its section $s:=  {\bf{1}}_{\cO(O)},$ Theorem \ref{PJ} shows that the penultimate integral in the right-hand side of (\ref{eqPJ}) takes the value:
\begin{equation}\label{crho}
\int_{\mathring{V}} g_{V,O}\,  \mu_\chi  = -c = \int_0^{+\infty} t \rho(t) \, dt.
\end{equation}

The equality (\ref{eqPJ}) may be seen as  an avatar of the classical formulas of Poisson and Jensen. 

To clarify the relation between Theorem \ref{PJ} and these  formulas, let us observe that, using the Green fucntion $g_{V,O}$, we may construct a family of Riemann surfaces with boundary $V_\epsilon$ as follows.

Let us recall that the Green's function $g_{V,O}$ is $C^\infty$ on $V\setminus\{O\}$, positive on $\mathring{V}\setminus\{O\}$, and vanishes on $\partial V$ and that its differential $dg_{V,O}$ does not vanish on $\partial V$. Moreover $g: V\setminus\{O\} \ra [0, +\infty[$ is proper. This implies the existence of $\epsilon_0 >$ such that any element of $[0, \epsilon_0[$ is a regular value of $g$. Therefore, for any $\epsilon \in [0, \epsilon_0[$, $$V_\epsilon := g^{-1}([\epsilon, +\infty]) $$ is a connected Riemann surface with non-empty boundary; moreover $\mathring{V}_\epsilon$ contains $O$, and admits 
$$\partial V_\epsilon := g^{-1}(\epsilon)$$ as boundary. Finally, it is straighforward that
$$g_{V_\epsilon,O} = g_{V,O} - \epsilon \quad\mbox{on $V_\epsilon$}$$
and
\begin{equation}\label{capepsilon}
\Vert .\Vert_{V_\epsilon, O}^{\rm cap} = e^\epsilon \; \Vert .\Vert_{V, O}^{\rm cap}. 
\end{equation}
For any $\epsilon \in ]0, \epsilon_0[$, the ``limit case" of (\ref{eqPJ}) when $\rho$ is replaced by $\delta_\epsilon$ --- and accordingly $\chi(t)$ by $(t-\epsilon)^+$ --- becomes the following formula : 
\begin{equation}\label{PJeps}
\log \Vert {\eta}_n(s)\Vert^{\rm cap}_{V,O,\Lb} = \int_{\partial V_\epsilon} \log \Vert s \Vert\, d^c g_{V,O} - 
 \int_{{V}_\epsilon}  (g_{V,O}-\epsilon) \, \delta_{\div s - nO}
 + n \epsilon + \int_{{V}_\epsilon} (g_{V,O} -\epsilon) \;  c_1(\Lb),
\end{equation}
where $d^c := (i/2\pi) (\partial -\overline{\partial}).$

This relation may also be written as the following
Poisson-Jensen formula on the Riemann surface with boundary $V_\epsilon$:
\begin{equation}\label{PJepsbis}
\log \Vert {\eta}_n(s)\Vert^{\rm cap}_{V_\epsilon,O,\Lb} = \int_{\partial V_\epsilon} \log \Vert s \Vert\, d^c g_{V_\epsilon,O} - 
 \int_{{V}_\epsilon}  g_{V_\epsilon,O} \, \delta_{\div s - nO}
 + \int_{{V}_\epsilon} g_{V_\epsilon,O}  \,  c_1(\Lb).
\end{equation}

We leave the details of the derivation of (\ref{PJeps}) and (\ref{PJepsbis}) to the reader. We simply observe that, conversely, when $\rm{supp}\, \rho \subset \, ]0, \epsilon_0[,$ we recover (\ref{eqPJ}) from (\ref{PJeps}) by mutliplying both sides by $\rho(\epsilon)$ and integrating.

\begin{proof}[Proof of Theorem \ref{PJ}] The current $\log \Vert s \Vert + n g_{V,O}$ is clearly $C^\infty$ on $\mathring{V} \setminus \{O\}$. Actually the logarithmic singularities of $\log \Vert s \Vert$ and $n g_{V,O}$ at $O$ cancel, and $\log \Vert s \Vert + n g_{V,O}$ is  $C^\infty$ on some open neighborhood of $O$. Moreover, by the very definition (\ref{capmet2}) of the capacitary metric $\Vert.\Vert^{\rm cap}_{M,O}$, its value at $O$ is $\log \Vert {\eta}_n(s)\Vert^{\rm cap}_{M,O,\Lb}$. In other words,
\begin{equation}\label{capappar}
 \int_{\mathring{V}} (\log \Vert s \Vert + n g_{V,O})\, \delta_O = \log \Vert {\eta}_n(s)\Vert^{\rm cap}_{M,O,\Lb}.
\end{equation}

The current $\chi \circ g_{V,O}$ has a compact support in $\mathring{V}$ and its singular support and the one of $\log \Vert s \Vert + n g_{V,O}$ are disjoint, so that:
\begin{equation}\label{greenstokes}
\int_{\mathring{V}} (\chi\circ g_{V,O})  \frac{i}{\pi} \partial\overline{\partial}(\log \Vert s \Vert + n g_{V,O}) =
 \int_{\mathring{V}} (\log \Vert s \Vert + n g_{V,O}) \,  \frac{i}{\pi} \partial\overline{\partial}(\chi\circ g_{V,O})
\end{equation}
(Green-Stokes formula). 

Besides, by the defining property (\ref{Greendd}) of the Green's function $g_{V,O}$ and the Poincar\'e-Lelong equation (\ref{PL}), the following equality of currents on $\mathring{V}$ holds: 
\begin{equation}
\frac{i}{\pi} \partial\overline{\partial}(\log \Vert s \Vert + n g_{V,O}) = \delta_{\div s - nO} - c_1(\Lb).
\end{equation}
By means of this relation and of the expression (\ref{muchi1}) for $\frac{i}{\pi} \partial \overline{\partial}(\chi \circ g)$, the Green-Stokes formula (\ref{greenstokes}) becomes:
$$\int_{\mathring{V}} (\chi\circ g_{V,O}). (\delta_{\div s - nO} - c_1(\Lb)) 
= \int_{\mathring{V}} (\log \Vert s \Vert + n g_{V,O}) (-\delta_O + \mu_\chi).$$

Taking (\ref{capappar}) into account, this is precisely the formula (\ref{eqPJ}) to be proved. \end{proof}

From the Poisson-Jensen formula (\ref{eqPJ}), one easily derives the following version of the Schwarz lemma over a compact Riemann surface with boundary.

\begin{corollary}\label{SchV} Let $\Lb$ be as in Theorem \ref{PJ}. 
For any $n \in \N$ and any analytic section $s$ of $L$ over $\mathring{V}$ of vanishing order $v_O(s)$ at $O$ at least $n,$ we have:
\begin{equation}\label{eqSchV}
\Vert \eta_n (s)\Vert^{\rm cap}_{M,O,\Lb} \leq e^{\alpha(\Lb) + n \eta} \Vert s \Vert_{L^2(\mu_\chi, \Lb)},
\end{equation}
where the $L^2$-norm $\Vert s \Vert_{L^2(\mu_\chi, \Lb)}$ is defined by
$$\Vert s \Vert_{L^2(\mu_\chi, \Lb)}^2 := \int_{\mathring{V}} \Vert s \Vert^2 \, \mu_\chi$$
and
$$\alpha(\Lb) := \int_{\mathring{V}} (\chi\circ g_{V,O}) \,  c_1(\Lb).$$
\end{corollary}

\begin{proof}When $v_O(s) > n,$ the jet $j_n s$ vanishes and (\ref{eqPJ}) is clear. When $v_O(s) =n,$ it follows from the Poisson-Jensen formula (\ref{PJ}), together with the following elementary estimates:
\begin{equation}\label{ScV1}
\int_{\mathring{V}} \log \Vert s \Vert \,\mu_\chi \leq \log  \Vert s \Vert_{L^2(\mu_\chi, \Lb)},
\end{equation}
\begin{equation}\label{ScV2}
\int_{\mathring{V}}  (\chi\circ g_{V,O})\,  \delta_{\div s - nO} \geq 0,
\end{equation}
and
\begin{equation}\label{ScV3}
\int_{\mathring{V}} g_{V,O}\,  \mu_\chi  \leq \eta.
\end{equation}

Indeed (\ref{ScV1}) follows from Jensen's inequality applied to the concave function $\log$, since $\int_{\mathring{V}} \mu_\chi  =1.$ The lower bound (\ref{ScV2}) follows from the non-negativity of $\chi \circ g$. Finally, as the support of $\chi''$ is contained in $]0,\eta[,$ the expression (\ref{muchi2}) for $\mu_\chi$ shows that its support is contained in $g_{V,O}^{-1}([0,\eta])$. As $\int_{\mathring{V}} \mu_\chi  =1,$ this implies (\ref{ScV3}). The estimate (\ref{ScV3}) also follows from the relation (\ref{crho}).
 \end{proof}
 
 For later reference, let us state a straightforward consequence of Corollary \ref{SchV}.
 
 \begin{scholium}\label{SchoSchw} Let $V$ be a compact connected Riemann surface with (non-empty) boundary, $O$ a point in $\mathring{V}$ and $\nu$ a volume form on $V$.
 
 1) Let $\Eb:= (E, \Vert.\Vert)$ be some Hermitian analytic vector bundle over $V$. For any $\eta >0,$ there exists $C_\eta$ in $\R_+$ such that the following condition holds: for any $n\in \N$ and any analytic section $s$ of $E$ over $\mathring{V}$ of vanishing order at $O$ at least $n$, the capacitary norm of its jet $\eta_n(s)$ of  order $n$ at $O$\footnote{As observed at the end of \ref{HilFre}, it may be seen as an element $T_O^{\vee \otimes n} \otimes E_O$. The capacitary norm 
 $\Vert . \Vert^{\rm cap}_{M,O,\Eb}$ is the norm deduced from the norm $\Vert.\Vert^{\rm cap}_{M,O}$ on $T_OM$ and $\Vert.\Vert$ on $L_O$ by duality and tensor product.}
satisfies the  upper bound
 \begin{equation}\label{SchSchwE}
\Vert \eta_n (s)\Vert^{\rm cap}_{M,O,\Eb} \leq C_\eta e^{n \eta} \Vert s \Vert_{L^2(\nu, \Eb)},
\end{equation}
where
$$\Vert s \Vert^2_{L^2(\nu, \Eb)} := \int_{\mathring{V}} \Vert s\Vert^2 \, \nu \,(\in [0, +\infty]).$$

2) Let $\Lb:= (L, \Vert.\Vert)$ be some Hermitian analytic vector bundle over $V$. For any $\eta >0,$ there exists $C_\eta$ in $\R_+$ such that the following condition holds: for any $(D, n)\in \N_{>0}\times \N$ and any analytic section $s$ of $L^{\otimes D}$ over $\mathring{V}$ of vanishing order at $O$ at least $n$, the capacitary norm of its jet $\eta_n(s)$ of  order $n$ at $O$ satisfies the  upper-bound
\begin{equation}\label{SchSchwLD}
\Vert \eta_n (s)\Vert^{\rm cap}_{M,O,\Lb^{\otimes D}} \leq C_\eta^{D+1} e^{n \eta} \Vert s \Vert_{L^2(\nu, \Lb^{\otimes D})}.
\end{equation}
\end{scholium}

\begin{proof} Let us first establish 2). 

For any $\eta >0,$ we may choose $\rho$ such that Condition (\ref{rhoeta}) is satisfied. Then Corollary  \ref{SchV}, applied to $\Lb^{\otimes D}$ instead of $\Lb$, shows that, with eyh notation of 2), we have:
$$ 
\Vert \eta_n (s)\Vert^{\rm cap}_{M,O,\Lb^{\otimes D}} \leq C_\eta^{D} e^{n \eta} \Vert s \Vert_{L^2(\mu_\chi, \Lb^{\otimes D})}, 
 $$
 where 
 $$C_\eta := e^{\alpha(\cLb)}.$$
 (Indeed, $c_1(\Lb^{\otimes D}) =D c_1(\Lb),$ and therefore $\alpha(\Lb^{\otimes D}) =D \alpha(\Lb).$)
 
 After possibly increasing $C_\eta,$ we may also assume that
 $\mu_\chi \leq C_\eta^2 \nu.$
 Then 
 $$\Vert s \Vert_{L^2(\mu_\chi, \Lb^{\otimes D})} \leq C_\eta \Vert s \Vert_{L^2(\nu, \Lb^{\otimes D})},$$ 
 and (\ref{SchSchwLD}) follows.
 
 Assertion 1) when $\rk E =1$ follows from 2) with $D=1$. One reduces to this situation, thanks to the following observations:
\begin{enumerate}
\item 
the validity of 1) for some Hermitian analytic vector bundles $\Eb_1,\ldots,\Eb_N$ over $V$ is equivalent to its validity for the direct sum
 $\Eb_1\oplus\cdots \oplus\Eb_N$;
 \smallskip
 \item  for any analytic vector bundle $E$ over $V$ and any two $C^\infty$ Hermitian metrics $\Vert.\Vert_1$ and $\Vert.\Vert_2$ on $E$, the validity of 1) for $(E,\Vert.\Vert_1)$ and  $(E,\Vert.\Vert_2)$ are equivalent;
 \smallskip
 \item any analytic vector bundle $E$ over $V$ may be trivialized, and is therefore isomorphic to a direct sum $L_1 \oplus\cdots \oplus L_{\rk E}$ of analytic line bundles over $V$. 
\end{enumerate}
  (Assertions (1) and (2) are straightforward; (3) is  a consequence of the Theorem of Behnke-Stein; see for instance \cite{GuenotNarasimhan76}, Chapitre V, Th\'eor\`eme 3.) 
\end{proof}

  \section{Smooth formal-analytic surfaces over $\Spec \OK$}\label{FormAnOK}
  
  \subsection{Basic definitions}\label{SmForSurf}
  
  We shall define a \emph{smooth formal-analytic surface} over $\Spec \OK$ as a pair
  $$\tilde{\cV} := (\cVh, (V_\sigma, P_\sigma, i_\sigma)_\sKC)$$
  where:
  \begin{itemize}
\item $\tilde{\cV}$ is a pointed smooth formal curve over $\Spec \OK$; we shall denote by 
$\pi: \tilde{\cV} \lra \Spec \OK$
its structural morphism, and by $\cP$ the canonical section\footnote{If ${\cVh}_{\rm red}$ denotes $\vert {\cVh} \vert$ equipped with its structure of reduced scheme  (or equivalently the subscheme of ${\cVh}$ defined by its largest Ideal of definition), the map $\pi$ defines an isomorphism $\pi_{\mid {\cVh}_{\rm red}}:  {\cVh}_{\rm red} \lrasim \Spec \OK$, and $\cP$ coincides with the inverse of $\pi_{{\cVh}_{\rm red}}$.} of $\pi$; 
\smallskip
\item for every field embedding $\sKC$, $V_\sigma$ is a compact connected Riemann surface with non-empty boundary,  $P_\sigma$ is a point in its interior $\mathring{V}_\sigma$, and $i_\sigma$ is an isomorphism
$$i_\sigma : \cVh_\sigma \lrasim \widehat{V_\sigma}_{,P_\sigma}$$ between the smooth formal germs of  complex curves defined by the base change 
$$ \cVh_\sigma  := \cVh \otimes_{\OK, \sigma}\C$$
of $\cVh$ from $\OK$ to $\C$ through the embedding $\sigma: \OK \hra \C$ and by the formal completion $\widehat{V_\sigma}_{,P_\sigma}$ of $V_\sigma$ at the point $P_\sigma$.
\end{itemize}

These data are moreover assumed to be compatible with complex conjugation. Namely, we are given a family $(j_\sigma)_\sKC$ of  \emph{antiholomorphic} isomorphisms of Riemann surfaces with boundary
$$j_\sigma: V_\sigma \lrasim V_{\overline{\sigma}}$$
such that 
$$j_{\overline{\sigma}} = j_\sigma^{-1},$$
$$j_\sigma(P_\sigma)= P_{\overline{\sigma}},$$
and the following diagram is commutative:
\begin{equation}\label{compV}\begin{CD}
 \cVh_\sigma  := \cVh \otimes_{\OK, \sigma}\C @>i_\sigma>> \widehat{V_\sigma}_{,P_\sigma} \\
@V{id \otimes \overline{.}}VV      @VV{\widehat{j_\sigma}_{,P_\sigma}}V \\
\ \cVh_{\overline{\sigma}}  := \cVh \otimes_{\OK, \overline{\sigma}}\C @>{i_{\overline{\sigma}}}>> \widehat{V_{\overline{\sigma}}}_{,P_{\overline{\sigma}}}.
\end{CD}
\end{equation} 

It is convenient to see the isomorphisms $j_\sigma$ as identifications $V_{\overline{\sigma}} \simeq V_\sigma^{c.c.}$ of the Riemann surfaces $V_{\overline{\sigma}}$ with the complex conjugates of the Riemann surfaces $V_\sigma,$ and to think of them as defining some real structure on $$V_\C := \coprod_\sKC V_\sigma.$$

We shall define a \emph{vector bundle} $\tilde{\cE}$ over the smooth formal-analytic surface $\tilde{\cV}$ as a pair 
$$\tilde{\cE}:= (\cEh, (\cE_\sigma, \phi_\sigma)_\sKC)$$
where:
\begin{itemize}
\item $\cEh$ is a vector bundle (that is, a locally free coherent sheaf)   over $\cVh$; 
\smallskip
\item for every field embedding $\sKC$, $\cE_\sigma$  is an analytic vector bundle over $V_\sigma$ and  $\phi_\sigma$ is an isomorphism of formal vector bundles over $\cVh_\sigma$:
$$\phi_\sigma : \cEh_\sigma := \cEh \hat{\otimes}_{\OK, \sigma} \C \lrasim i_\sigma^\ast {\cE_\sigma}_{\mid \widehat{V_\sigma}_{,P_\sigma}}.$$
\end{itemize}
(Recall that we are given an isomorphism $\cVh_\sigma\stackrel{i_\sigma}{\lrasim} \widehat{V_\sigma}_{,P_\sigma}$.)

The family $(\cE_\sigma, \phi_\sigma)_\sKC$ is moreover assumed to be compatible with complex conjugation. Namely, we are given a family $(J_\sigma)_\sKC$ of isomorphisms 
$$J_\sigma : \cE_\sigma \lrasim \cE_{\overline{\sigma}}^{c.c.}$$
over $V_\sigma \simeq V_{\overline{\sigma}}^{c.c.}$ that satisfy the relations
$$J_{\overline{\sigma}} = J_\sigma^{c.c. \, -1}$$
and make the diagram
\begin{equation}\label{compE}\begin{CD}
 \cEh_\sigma  := \cEh \otimes_{\OK, \sigma}\C @>\phi_\sigma>> i_\sigma^\ast\widehat{V_\sigma}_{,P_\sigma} \\
@V{id \otimes \overline{.}}VV      @VV{\widehat{J_\sigma}}V \\
\ \cEh_{\overline{\sigma}}  := \cEh \otimes_{\OK, \overline{\sigma}}\C @>{\phi_{\overline{\sigma}}}>> i^\ast_{\overline{\sigma}}\widehat{V_{\overline{\sigma}}}_{,P_{\overline{\sigma}}}.
\end{CD}
\end{equation} 
commutative.

The family $(J_\sigma)$ may be understood as defining some ``real structure" on the $\C$-analytic vector bundle over $V_\C$, the restriction of which to $V_\sigma$ is $\cE_\sigma$ for any field embedding $\sKC$.  In practice, the definition of the isomorphisms $J_\sigma$ is often clear from the context and therefore not explicitly given.

Finally, a \emph{Hermitian vector bundle} $\cEbt$ over $\tilde{\cV}$ is defined as a pair 
$$\tilde{\cEb}:= (\cEh, (\cE_\sigma, \phi_\sigma, \Vert.\Vert_\sigma)_\sKC)$$
where:
\begin{itemize}
\item  $\tilde{\cE}:= (\cEh, (\cE_\sigma, \phi_\sigma)_\sKC)$ is a vector bundle over $\cVt$, as above; 
\smallskip
\item $(\Vert. \Vert_\sigma)_\sKC$ is a family of $C^\infty$ Hermitian metrics on the $C^\infty$ vector bundles $\cE_{\sigma \mid V_\sigma}$ on the $C^\infty$ manifolds with boundary $V_\sigma$, that is invariant under complex conjugation\footnote{Namely, the isomorphisms $J_\sigma$ are isometries when $\cE_{\sigma\mid V_\sigma}$ and $\cE_{\overline{\sigma}_{\mid V_{\overline{\sigma}}}}$ (which coincides with $\cE^{c.c.}_{\overline{\sigma}}$ as $C^\infty$ vector bundle) are equipped with the $C^\infty$ metrics $\Vert.\Vert_\sigma$ and $\Vert.\Vert_{\overline{\sigma}}$.}.
\end{itemize}

Then, for every $\sKC,$ the pair $(\cE_\sigma, \Vert.\Vert_\sigma)$ is a Hermitian analytic vector bundle over $V_\sigma$ in the sense of \ref{DefCRB}.

 \subsection{The pro-Hermitian vector bundle $\Gamma_{L^2}(\tilde{\cV}, \nu; \cEbh)$}\label{GammaL2}
  
  Let us consider a smooth formal-analytic surface over $\Spec \OK$
   $$\tilde{\cV} := (\cVh, (V_\sigma, P_\sigma, i_\sigma)_\sKC)$$
   and some Hermitian vector bundle over $\cVt$,
   $$\tilde{\cE}:= (\cEh, (\cE_\sigma, \phi_\sigma)_\sKC).$$
  
  Let us moreover assume that, for every embedding $\sKC,$ the Riemann surface $V_\sigma$ is connected and its boundary $\partial V_\sigma$ is non-empty.
  
  Finally, let $\nu$ be a $C^\infty$ volume form on $V_\C$, invariant under complex conjugation. 
  For every embedding $\sKC,$ we shall denote its restriction to $V_\sigma$ by
  $$\nu_\sigma:= \nu_{\vert V_\sigma}.$$
  (The family $(\nu_\sigma)_\sKC$ is invariant under complex conjugation; namely, for every field embedding $\sKC,$ we have
  \begin{equation}\label{jnu}
j_\sigma^\ast \nu_{\overline{\sigma}} = \nu_\sigma.
\end{equation}
Conversely, any family $(\nu_\sigma)_\sKC$ satisfying the conditions (\ref{jnu}) defines a volume form $\nu$ invariant under conjugation on $V_\C$.)

  We may consider the $\OK$-module $\Gamma (\cVh, \cEh)$
  of global sections of the locally free coherent sheaf $\cEh$ on the formal scheme $\cVh$ over $\OK$. By construction, it is endowed with a structure of topological module over $\OK$ (endowed with the discrete topology).
  
  Indeed, by the very definition of $\cVh$ as a pointed smooth formal curve over $\Spec \OK,$ we have:
  $$\cVh = \Spf B$$
  where $B$ is some noetherian adic topological $\OK$-algebra. Actually, as recalled in \ref{ExRemSFC} {\bf (ii)}, if we denote
  $$N:= \Gamma (\Spec \OK, N_\cP \cVh),$$
  there exists an isomorphism of  topological algebras 
  \begin{equation}\label{Bisom}
  B \lrasim \widehat{\rm Sym}_A N^\vee \simeq  \prod_{n \in\N} N^{\vee \otimes_{A} n},
  \end{equation}
where $\widehat{\rm Sym}_A N^\vee$ is equipped with the $I$-adic topology associated to the ideal $I:= N^\vee. B$. Moreover, the vector bundle $\cE$ over $\cVh$ may be written (canonically) as the coherent sheaf $M^\Delta$ over $\Spf B$ associated to some finitely generated projective $B$-module $M$ equipped with the $I$-adic topology. The $\OK$-module of sections $\Gamma (\cVh, \cEh)$ may be identified to $M$, and as such, is a topological $\OK$-module.
   
   Observe that, as a topological $\OK$-module, $M$ is a direct summand of the topological $\OK$-module $B^{\oplus N}$ for some $N \in \N.$ The isomorphism (\ref{Bisom}) shows that $B$ is an object of $CTC_\OK$ (see (\ref{CTCbasics}). Consequently, the topological $\OK$-module $B^{\oplus N}$ and all its closed submodules also are objects of $CTC_\OK$ (\cf Proposition (\ref{subCTC}). This establishes:

\begin{lemma}\label{GammaCTC} The topological $\OK$-module $\Gamma(\cVt, \cEt)$ is an object of $CTC_\OK$. \qed
 \end{lemma}
 
 For any field embedding $\sKC,$ we may form the completed tensor product
 $$\Gamma(\cVt, \cEt)_\sigma := \Gamma(\cVt, \cEt) \hat{\otimes}_{K,\sigma} \C.$$
 It may be identified with the space $\Gamma (\cVh_\sigma, \cEh_\sigma)$
of sections of the vector bundle $\cEh_\sigma$ over the smooth formal germ of complex curve $\cVh_\sigma$ deduced from $\cEh$ and $\cVh$ by the base change $\sigma.$

Moreover the isomorphisms
$$i_\sigma :\cVh_\sigma \lrasim \Vh_{\sigma,O_\sigma} \quad\mbox{ and }\quad \phi_\sigma: \cEh_\sigma \lrasim
 i_\sigma^\ast {\cE_\sigma}_{\mid \widehat{V_\sigma}_{,O_\sigma}}$$
 provide an identification
 \begin{equation}\label{GammaGamma}
\Gamma (\cVh_\sigma, \cEh_\sigma) \simeq \Gamma(\widehat{V_\sigma}_{,P_\sigma}, \cE_\sigma)
\end{equation}
compatible with the canonical topology of Fr\'echet space on these two complex vector spaces (\cf  \ref{topologicalcomplexpro} and \ref{HilFre} \emph{supra}).

Finally, we may consider the Hilbert space of analytic sections $\Gamma_{L^2}(V_\sigma, \nu_\sigma; \cE_\sigma, \Vert.\Vert_\sigma)$ of the Hermitian analytic vector bundle $(\cE_\sigma, \Vert.\Vert_\sigma)$ on the Riemann surface $\mathring{V}_\sigma$ equipped with the volume form $\nu_\sigma$, and the evaluation maps:
$$\begin{array}{crcl}
\hat{\eta}_\sigma: & \Gamma_{L^2} (V_\sigma, \nu_\sigma; \cE_\sigma, \Vert. \Vert_\sigma)   &  \lra  & \Gamma(\widehat{V_\sigma}_{,P_\sigma}, \cE_\sigma)  \\
 & s & \longmapsto  & s_{\mid \widehat{V_\sigma}_{,P_\sigma}}.
 \end{array}
$$

According to Proposition \ref{propeta}, these maps  
are injective, and continuous with dense image.
Moreover the construction of the maps  $\hat{\eta}_\sigma$ is clearly compatible with the complex conjugation isomorphisms
$j_\sigma: V_\sigma \lrasim V_{\overline{\sigma}}^{c.c.}$.

Together with Lemma \ref{GammaCTC}, this proves:

\begin{proposition}\label{defGammatilde} The pair
$$\Gamma_{L^2}(\tilde{\cV}, \nu; \cEbh):= (\Gamma(\cVt, \cEt), ( \Gamma_{L^2} (V_\sigma, \nu_\sigma; \cE_\sigma, \Vert. \Vert_\sigma), \hat{\eta}_\sigma)_\sKC)$$
defines a pro-Hermitian vector bundle over $\Spec \OK.$ \qed
\end{proposition}
  
 As every compact surface with boundary $V_\sigma$ is compact, any two volume forms (resp., any two Hermitian metric over $\cE_\sigma$) over $V_\sigma$ are comparable. This implies that the \emph{Hilbertisable} pro-vector bundle over $\Spec \OK$ defined by $\Gamma_{L^2}(\tilde{\cV}, \nu; \cEbh)$ is  independent of the choice of the volume form $\nu$ and of the Hermitian metrics $(\Vert . \Vert_\sigma)_{\sKC}$ on the complex vector bundles $(\cE_\sigma)_\sKC$. We will denote it by 
  $\Gamma_{L^2}(\tilde{\cV}; \tilde{\cE}).$ 
  
  \smallskip
  
  With the above notation, the ideal $I$ is the largest ideal of definition of the adic algebra $B$, since $B/I \simeq \OK$ is reduced. The corresponding coherent Ideal $\cI := I^\Delta$ of $\cO_{\Vh}$ is its largest Ideal of definition, and the corresponding reduced subscheme $\cVh_{\rm red}$ is the image of the closed immersion $\cP$. We shall often denote it by $\cP.$
  
  If $\cP_i$ denotes the $i$-th infinitesimal neighborhood of $\cP$ in $\cVh$ --- namely the $\OK$-subscheme of $\cVh$ defined by the Ideal $\cI^{i+1}$ --- we may describe $\cVh$ as the inductive limit (in the category of formal schemes)
  \begin{equation}\label{IndV}
\cVh = \varinjlim_i \cP_i
\end{equation}
of the inductive system of schemes finite and flat over $\Spec \OK$:
$$
\cP_0 := \cP \hlra \cP_1 \hlra \ldots \hlra \cP_i \hlra \cP_{i+1} \hlra \ldots \quad .$$

Actualy, when we identify $B$ with ${\widehat{\rm Sym}}_{\OK} N^\vee$, the $\OK$-schemes $\cP_i$ becomes naturally isomorphic to 
$$\Spec ({\widehat{\rm Sym}}_{\OK}) N^\vee/ I^{i+1} = \Spec ({\rm Sym}_{\OK} N^\vee)/ I^{i+1},$$
and we have an isomorphism of $\OK$-modules:
$$({\rm Sym}_{\OK} N^\vee)/ I^{i+1} \simeq \bigoplus_{k=0}^i  N^{\vee \otimes k}.$$

The pro-Hermitian vector bundle $\Gamma_{L^2}(\tilde{\cV}, \nu; \cEbh)$ over $\Spec \OK$ admits a natural description as a projective of (finite rank) of some projective system $\Eh_\bullet$ of Hermitian vector bundles that reflects the description of $\cVh$ as the inductive limit (\ref{IndV}).

Indeed, for every $i \in \N,$ we have an exact sequence of $\OK$-modules
$$0 \lra U_k := \Gamma(\cVh, \cI^{i+1}) \hlra \Gamma(\cVh, \cEh) \stackrel{\eta_i}{\lra} \Gamma(\cV_i, \cEh_{\mid \cV_i}) \lra 0.$$
Each module $U_i$ is open in $\Gamma(\cVh, \cEh)$, and the sequence $(U_i)_{i \in \N}$is non-decreasing and constitutes a fundamental system of neighborhood of zero in $\Gamma(\cVh, \cEh)$. 
Therefore we may apply the general construction of a pro-Hermitian vector bundle as a projective limit of Hermitian vector bundles discussed in paragraph \ref{Consprojective}.

We obtain that 
$\Ebh :=\Gamma_{L^2}(\tilde{\cV}, \nu; \cEbh)$r
may be identified with the projective limit $\varprojlim_i \Eb_i$, where the projective system
$$\Eb_{\bullet} : \Eb_0 \stackrel{q_0}{\longleftarrow}\Eb_1 \stackrel{q_1}{\longleftarrow}\dots \stackrel{q_{i-1}}{\longleftarrow}\Eb_i \stackrel{q_i}{\longleftarrow} \Eb_{i+1} \stackrel{q_{i+1}}{\longleftarrow} \dots$$
is defined by the  Hermitian vector bundles
$\Eb_i := \Eb_{U_i}$ over $\Spec \OK$  and  the surjective admissible quotient morphisms $q_i:=\Eb_{U_{i+1}}\lra \Eb_{U_i}$.
  
In concrete terms, we have canonical isomorphisms
$$E_i := \Eh/U_i = \Gamma((\cVh, \cEh) / \Gamma(\cVh, \cI^{i+1}) \lrasim \Gamma(\cV_i, \cEh_{\mid \cV_i})$$
and, for every embedding $\sKC,$
$$E_{i,\sigma} \lrasim  \Gamma(\cV_i, \cEh_{\mid \cV_i})_\sigma \lrasim \Gamma(\cV_{i,\sigma}, \cEh_{\mid \cV_i, \sigma}) \lrasim \cE_{\sigma\mid P_{\sigma i}}.$$
(We denote by $P_{\sigma i}$ the $i$-th infinitesimal neigborhood of $P_\sigma$ in $V_\sigma$. The last isomorphism is induced by the isomrophisms $i_\sigma$ and $\phi_\sigma$.)

Accordingly, $\Eb_i$ may be identified with
$$(\Gamma(\cV_i, \cEh_{\mid \cV_i}), (\Vert.\Vert_{i,\sigma})_\sKC),$$
where $\Vert.\Vert_{i,\sigma}$ is the Hermitian norm on $\Gamma(V_{\sigma, P_\sigma}, \cE_\sigma)$ which makes the jet map
$$\eta_{i,\sigma}: \Gamma_{L^2}(\mathring{V}_\sigma, \nu_\sigma; \cE_\sigma, \Vert.\Vert_\sigma) \lra  \cE_{\sigma\mid P_{\sigma i}}$$
a co-isometry\footnote{In other words, $\Vert.\Vert_{i,\sigma}$ is the quotient norm of $\Vert.\Vert_{L^2(\nu_\sigma; \cE_\sigma, \Vert.\Vert_\sigma)}$ \emph{via} the continuous surjective map $\eta_{i,\sigma}.$} when $\Gamma_{L^2}(\mathring{V}_\sigma, \nu_\sigma; \cE_\sigma, \Vert.\Vert_\sigma)$ is equipped with its $L^2$-norm $\Vert.\Vert_{L^2(\nu_\sigma; \cE_\sigma, \Vert.\Vert_\sigma)}$.

\subsection{Morphisms from a smooth formal-analytic surface to some $\OK$-scheme}\label{MorForAnScheme}
  Let $\cX$ be a separated scheme of finite type over $\Spec \OK$. For any $\tilde{\cV} := (\cVh, (V_\sigma, P_\sigma, i_\sigma)_\sKC)$ as above, we define a \emph{$\OK$-morphism from $\tilde{\cV}$ to $\cX$} 
  $${f} : \tilde{\cV} \lra \cX$$ as a pair
  $${f} := (\hat{f}, (f_\sigma)_{\sKC}),$$
  where $$\hat{f}: \cVh \lra \cX$$
  is a morphism of formal schemes overs $\OK$, and where, for every embedding $\sKC$, 
  $$f_\sigma: V^+_\sigma \lra \cX_\sigma(\C)$$
  is a $\C$-analytic morphism, or equivalently, a map of $\C$-locally ringed  spaces from  $(V_\sigma, \cO^\an_{\mid V_\sigma})$ to the $\C$-scheme $\cX_\sigma$. 
  
  These maps are moreover assumed to satisfy the following compatibility conditions: for every embedding $\sKC$, the diagram of ringed spaces
\begin{equation}\label{compMorVX}\begin{CD}
\quad\quad\cVh_\sigma\;\;\;\;\;\;\; @>{\hat{f}_\sigma}>> \cX_\sigma \\
@V{i_\sigma}V{\simeq\;\;\;\;\;\;\;\;\;}V      @VV{=}V \\
\quad\quad\quad\widehat{V_\sigma}_{, P_\sigma} \stackrel{\kappa_\sigma}{\hlra} V_\sigma @>f_\sigma>> \cX_\sigma 
\end{CD}
\end{equation} 
is commutative. This implies that the family $(f_\sigma)_\sKC$ is compatible with complex conjugation.

Observe that, for any vector bundle $\cE$ over $\cX,$ we may define the \emph{inverse image $\cE$ by $f$} as the following vector bundle over $\cVt$:
$$f^\ast \cE := (\hat{f}^\ast \cE, (f_\sigma, \cE_{\cX_\sigma})_\sKC),$$
where the isomorphism $\phi_\sigma$ is deduced form the base change isomorphism\footnote{We denote by $\hat{f}_\sigma: \cVh_\sigma \lra \cX_\sigma$ the morphism deduced from $\hat{f}: \cVh \lra \cX$ by the base change $\sigma: \OK \hlra \C.$}
$$\hat{f}^\ast_\sigma \cE \otimes_{\OK,\sigma} \C \lrasim \widehat{f_\sigma}^\ast \cE_\sigma$$
and the equality of morphisms
$$\hat{f}_\sigma = f_\sigma \circ \kappa_\sigma \circ i_\sigma : \widehat{V_\sigma}_{, P_\sigma}  \lra \cX_\sigma.$$
Namely, it is defined as the composition:
$$\phi_\sigma : (\hat{f}^\ast \cE)_\sigma = \hat{f}^\ast_\sigma \cE \otimes_{\OK,\sigma} \C \simeq \widehat{f_\sigma}^\ast \cE_\sigma 
\simeq i_\sigma^\ast \kappa_\sigma^\ast f_\sigma^\ast \cE_\sigma = i_\sigma^\ast (f_\sigma^\ast \cE_\sigma)_{\mid  \widehat{V_\sigma}_{, P_\sigma} }.$$

Observe also that, if $\cX_K$ is reduced and if $\cEb := (\cE, \Vert.\Vert)$ is some Hermitian vector bundle over $\cX$, defined by some vector bundle $\cE$ on $\cX$ and some $C^\infty$ metric $\Vert.\Vert$ on $\cE^{\rm an}_\C$ over $\cX(\C)= \coprod_\sKC \cX_\sigma(\C)$ (invariant under complex conjugation), then, for every embedding $\sKC,$ we may equip the analytic vector bundle $f_\sigma^\ast \cE_\sigma$ over $V_\sigma$ with the pull-back of the Hermitian metric $\Vert.\Vert$ on $\cX_\sigma(\C)$ by $f_\sigma$. The vector bundle $f^\ast \cE$, endowed with these Hermitian metrics, defines some Hermitian vector bundle $f^\ast \cEb$ over $\cVt$, the \emph{inverse image of $\cEb$ by} $f$.

\subsection{Schematic image and algebraicity}\label{AlgAr}

In paragraph \ref{algformalpseudoconcave}, we considered the Zariski closure $\overline{\im f}$ of the image of 	 a morphism $f: \cVh \lra X$ from a formal surface $\cVh$ over some field $k$ to some quasi-projective $k$-scheme $X$. The dimension bound
$$\dim \overline{\im f} \leq 2$$
provided a formal definition of the algebraicity of the image of $f$.

It is possible to formulate a similar algebraicity property concerning a morphism
$$f: \cVt \lra \cX$$
from a formal-analytic surface $\cVt$ to some $\OK$-scheme as above.

Actually, with the notation of the previous subsection \ref{MorForAnScheme}, to each of the morphisms of locally ringed spaces
$\hat{f}: \cVh \lra \cX,$
$\hat{f}_K: \cVh_K \lra \cX_K,$
$\hat{f}_\sigma: \cVh_\sigma  
\lra \cX_\sigma,$
and
$f_\sigma: V_\sigma \lra \cX_\sigma,$
we may attach the Zariski closure of its image $\overline{\im\hat{f}}$, 
$\overline{\im\hat{f}_K}$, $\overline{\im\hat{f}_\sigma}$,  and $\overline{\im f_\sigma}$, namely the smallest closed subscheme of the range of the morphism such that it factorizes through this closed subscheme.

The formation of the Zariski closure of the image of these morphisms  is compatible with the replacement of their range scheme by some open subscheme through which the morphism factorizes (as in  \ref{algformalpseudoconcave}, Property (i) and (\ref{restU})). Moreover, each of these Zariski closures is an integral scheme. For instance, we have:

\begin{lemma}\label{CompShrink}
If $U$ is some open subscheme of $\cX$ such that $\hat{f}$ factorizes through the inclusion $U \hra \cX$ --- \emph{in other words, if the image of the continuous map of topological spaces from $\vert \cVh \vert = \cP$ to $\vert \cX \vert$ defined by $\hat{f}$ is contained in $\vert U \vert$} --- and if $\hat{f}^U$ denotes $\hat{f}$ seen as a morphism from $\cVh$ to $U$, then we have:  
$$\overline{\im \hat{f}^U} = \overline{\im \hat{f}} \cap U.$$ \qed
\end{lemma}

Actually, $\overline{\im\hat{f}}$ contains the $\OK$-point $\hat{f}\circ \cP$ of $\cX$ and $\overline{\im\hat{f}_\sigma}$ the associated $K$-rational point of $\cX_K$. This implies that the integral scheme $\overline{\im\hat{f}}$ is  flat over $\Spec \OK$ and that $\overline{\im\hat{f}_K}$ is geometrically integral over $K$.

\begin{proposition}\label{propclosureimage}
We the above notation, we have:
\begin{equation}\label{closureU1}
\overline{\im\hat{f}_K} = \left(\overline{\im\hat{f}}\right)_K,
\end{equation} 
\begin{equation}\label{closureU2}
\overline{\im\hat{f}_\sigma} = \left(\overline{\im\hat{f_K}}\right)_\sigma,
\end{equation} 
and
\begin{equation}\label{closureU3}
\overline{\im{f}_\sigma} = \overline{\im\hat{f}_\sigma}.
\end{equation} 
\end{proposition}
        
\begin{proof} By the compatibility of the formation of the Zariski closure of the image of a morphism with the ``shrinking" of the range scheme alluded to above, we may assume that $\cX$ is a closed subscheme of $\PP_\OK^N$. 

Then the graded ideal in 
$$\OK[X_0,\cdots,X_N] \simeq \bigoplus_{D \in \N} \Gamma(\PP^N_\OK, \cO(D))$$
that defines $\overline{\im \hat{f}}$ is the direct sum 
$\bigoplus_{D \in \N} \ker \hat{\eta}_D$ where $\hat{\eta}_D$ denotes the evaluation map:
$$
\begin{array}{crcl}
\hat{\eta}_{D} : & \Gamma(\PP^N_\OK, \cO(D))  & \lra    &   \Gamma(\cVh, \hat{f}^\ast \cO(D))\\
& s & \longmapsto    & \hat{f}^\ast s.  \end{array}
$$
Similarly, $\overline{\im \hat{f}_K}$ is defined by the graded ideal $\bigoplus_{D \in \N} \ker \hat{\eta}_{D,K}$ in $K[X_0,\ldots,X_N]$ defined by the evaluation maps
$$
\begin{array}{crcl}
\hat{\eta}_{D,K} : & \Gamma(\PP^N_K, \cO(D))  & \lra    &   \Gamma(\cVh_K, \hat{f}^\ast \cO(D))\\
& s & \longmapsto    & \hat{f}_K^\ast s.  \end{array}
$$
Since $\hat{\eta}_{D,K}$ is deduced from $\hat{\eta}_{D}$ by the base change $\Spec K \hra \Spec \OK$ and $\cVh$ is flat over $\Spec \OK,$  we have:
$$\ker \hat{\eta}_{D} = \hat{\eta}_{D,K} \cap \Gamma(\PP^N_\OK, \cO(D)) \quad \mbox{ for every $D \in \N$.}$$
This establishes (\ref{closureU1}).

The Zariski closure $\overline{\im\hat{f}_\sigma}$ is defined by the graded ideal $\bigoplus_{D \in \N} \ker \hat{\eta}_{D,\sigma}$ in $\C[X_0,\ldots,X_N]$ defined by the evaluation maps
$$
\begin{array}{crcl}
\hat{\eta}_{D,\sigma} : & \Gamma(\PP^N_\C, \cO(D))  & \lra    &   \Gamma(\cVh_\sigma, \hat{f}_\sigma^\ast \cO(D))\\
& s & \longmapsto    & \hat{f}_\sigma^\ast s.  \end{array}
$$
The morphism $\hat{\eta}_{D,\sigma}$ may be identified with the morphism deduced from $\hat{\eta}_{D,K}$ by the field extension $\sigma: K \hlra \C$, and  (\ref{closureU2}) follows.

Finally $\overline{\im{f}_\sigma} $ is defined by the graded ideal $\bigoplus_{D \in \N} \ker \hat{\eta}^{\rm an}_{D,\sigma}$ in $\C[X_0,\ldots,X_N]$ defined by the evaluation maps
$$
\begin{array}{crcl}
\hat{\eta}_{D,\sigma}^{\rm an} : & \Gamma(\PP^N_\C, \cO(D))  & \lra    &   \Gamma(V_\sigma, {f}_\sigma^\ast \cO(D))\\
& s & \longmapsto    & {f}_\sigma^\ast s.  \end{array}
$$

As the Riemann surface  $V_\sigma$ is connected, by analytic continuation, we have:
$$ \ker \hat{\eta}_{D,\sigma}^{\rm an} = \ker \hat{\eta}_{D,\sigma} \quad \mbox{ for every $D \in \N$.}$$
This establishes (\ref{closureU3}).
\end{proof}

\begin{corollary} The following four conditions are equivalent:
\begin{equation}\label{algAr1}
\dim \overline{\im\hat{f}} \leq 2,
\end{equation}
\begin{equation}\label{algAr2}
\dim \overline{\im\hat{f}_K} \leq 1,
\end{equation}
\begin{equation}\label{algAr3}
\dim \overline{\im\hat{f}_\sigma} \leq 1,
\end{equation}
and
\begin{equation}\label{algAr4}
\dim \overline{\im f_\sigma} \leq 1.
\end{equation}
\end{corollary}
\qed 

When Conditions (\ref{algAr1})-(\ref{algAr4}) are satisfied, we shall say that \emph{the image of $f$ is algebraic.} 

Actually, when this holds, either the morphism $\hat{f}: \cVh \lra \cX$ is constant (that is, factorizes through the structural  morphism $\pi: \cVh \lra \Spec \OK$) and the morphisms $f_\sigma$ also for every embedding $\sKC$, or the equality holds in the relations (\ref{algAr1})-(\ref{algAr4}).     

When the latter holds, we may consider the normalization $\nu: C \lra \overline{\im\hat{f}_K}$ of the projective curve $\overline{\im\hat{f}_K}$. Then, by the compatibility of normalization and completion, the morphism $\hat{f}_K$  factorizes as
$\hat{f}_K = \nu \circ \hat{g}$ 
for some uniquely determined $K$-morphism
$\hat{g}: \cVh_K \lra C.$
The image $Q:=\hat{g}(\cP_K)$ of the $K$-point $\cP_K =(\cVh_K)_{\rm red}$ defines a $K$-rational point of the 
smooth integral curve $C$, and $\hat{g}$ defines a finite morphism of smooth pointed curves over $K$:
   $\hat{g}: \cVh_K \lra \widehat{C}_{Q}.$
   
   In brief, $\hat{f}_K$ may be written as the composition of the (ramified) morphism of smooth formal germs of curves over $K$,
    $$\hat{g}: \cVh_K \lra \widehat{C}_{Q},$$
    and of the finite morphism of $K$-schemes
    $$\nu: C \lra \cX_K.$$
   On may actually show that, for every embedding $\sKC,$ the morphism of complex formal germs
   $$\hat{g}_\sigma: \cVh_\sigma \lra \widehat{C}_{Q, \sigma}$$ 
   ``extends" to some analytic map
   $$g_\sigma: V_\sigma^+ \lra \cX_\sigma(\C)$$  
   and that
   $$f_\sigma = \nu_\sigma \circ g_\sigma.$$
   We leave this to the interested reader.
     
  \section{Arithmetic pseudo-concavity and finiteness}\label{ArPsConcaveFin}
 
\subsection{}Let $\cVt := (\cVh, (V_\sigma, P_\sigma, i_\sigma)_\sKC)$ be some smooth formal-analytic surface over $\Spec \OK$. As before, we shall denote by $\pi$ the structural morphism of $\cVh$, and by $\cP$ the canonical section of $\pi.$

  The normal bundle $N_{\cP}{\cVh}$ of $\cP$ in the smooth formal surface $\cVh$ is a line bundle over $\Spec \OK$. For any field embedding $\sKC$, the complex line $N_{\cP}{\cVh}$ may be identified with the tangent line $T_{O_\sigma}$ at the point $O_\sigma$ of the Riemann surface $V_\sigma,$ and may therefore be equipped with the capacitary metric $\Vert. \Vert^{\rm cap}_{V_\sigma, P_\sigma}.$ 
  
  This construction defines some Hermitian line bundle over $\Spec \OK$:
  $$\overline{N}_{\cP}\tilde{\cV} := (N_{\cP}{\cVh}, (\Vert. \Vert^{\rm cap}_{V_\sigma, P_\sigma})_\sKC).$$
  
\begin{theorem}\label{finitewhenNArample}
 Let us keep the above notation, and let us assume that
 \begin{equation}\label{NArample}
\dega \overline{N}_{\cP}\tilde{\cV} > 0.
\end{equation}

Then,
for any  vector bundle $\tilde{\cE}$ over $\cVt$, the Hilbertisable pro-vector bundle $\Gamma_{L^2}(\tilde{\cV}; \tilde{\cE})$ is $\theta$-finite.  

Moreover, for any Hermitian line bundle $\cLbh$ over $\cVt$ and any conjugation invariant volume form on $V_\C$, when the integer $D$ goes to infinity, we have:
\begin{equation}\label{hotD2}
\hot( \Gamma_{L^2}(\tilde{\cV}, \nu; {\cLbh} \,^{\otimes D})) = O(D^2).  
\end{equation}
\end{theorem}

Condition (\ref{NArample}) is an arithmetic avatar of the pseudo-concavity condition  (\ref{Nample}) considered in the geometric framework of smooth formal surfaces over a field in paragraph \ref{algformalpseudoconcave}, and Theorem \ref{finitewhenNArample} is an arithmetic counterpart of Proposition \ref{formalbounds}. The basic properties of the $\theta$-invariant $\hot$  developed in Chapters \ref{thetainfinite} and \ref{SectionSum}, together with the Schwarz Lemma on compact Riemann surfaces with boundary established in Section   \ref{SchwarzLemma} will allows us to give a proof of this Theorem formally similar to the one of Proposition \ref{formalbounds}.

\subsection{}We shall also rely on some elementary properties of the $\theta$-invariants of (finite rank) Hermitian vector bundles which we now recall in a form suitable for the proof of Theorem \ref{finitewhenNArample}.

From Proposition \ref{scholiumhot}, it follows that the theta-invariant $\hot(\Lb)$ of some Hermitian line bundle $\Lb$ over $\Spec \OK$ admits the following upper bound in terms of its Arakelov degree:
\begin{equation}\label{hotchi}
\hot(\Lb) \leq \chi (\dega \Lb),
\end{equation}
 where 
 $$
\begin{array}{ccll}
\chi(x)  & :=   & 1 + x & \mbox{ if $x \geq 0,$}  \\
  & :=  & \exp x  &  \mbox{ if $x \leq 0.$}
 \end{array}
$$

Actually $\hot(\Lb)$ satisfies much stronger estimates when $\dega \Lb < 0$. A technical advantage of the above choice for $\chi$ is that it is a convex function. As a consequence, for any $x \in \R$ and any $a\in \R^\ast_+,$ we have:
\begin{equation}\label{chiconvex}
\chi(x) \leq \frac{1}{a} \int_{x-a/2}^{x+a/2} \chi(t) \, dt.
\end{equation}

Let $\Eb$ be some Hermitian vector bundle over $\Spec \OK,$ of rank $N$, and let ${\bf \xi}:= (\xi_1, \ldots, \xi_N)$ be some $N$-tuple of elements of $E^\vee$ that constitutes a $K$-basis of $E_K^\vee.$  The morphism
$${\bf \xi}: E \lra \OK^{\oplus N}$$ is injective and defines an element of $\Hom_\OK^{\leq \lambda}(\Eb, \cOb_{\Spec \OK}^{\oplus N})$  provided
$$\lambda \geq M := \max_{\sKC} \left(\sum_{i=1}^n \Vert \xi_i \Vert_{\Eb^\vee, \sigma}\right).$$
Therefore, for any Hermitian line bundle $\Lb$ over $\Spec \OK,$ the tensor product $  Id_L \otimes{\bf \xi}$ defines an injective morphism in $\Hom_\OK^{\leq 1}( \Lb  \otimes \Eb, \Lb \otimes \cOb_{\Spec \OK}(\log M)^{\oplus N})$. Together with the monotonicity of $\hot$ and (\ref{hotchi}), this implies the following upper-bound: 
\begin{equation}\label{hotchiVect}
\hot(\Lb \otimes \Eb) \leq \rk E. \hot(\Lb \otimes \cOb_{\Spec \OK}(\log M)) \leq \rk E . \chi (\dega \Lb + \log M).
\end{equation}

\subsection{Proof of Theorem \ref{finitewhenNArample}-I}
Let us consider some Hermitian vector bundle over $\cVt,$
$$\tilde{\cEb}:= (\cEh, (\cE_\sigma, \phi_\sigma, \Vert.\Vert_\sigma)_\sKC),$$
and let $\nu$ be some volume form on $V_\C$ invariant under complex conjugation.

As explained at the end of Section \ref{GammaL2},  we may describe the pro-Hermitian vector bundle 
$$\Ebh := \Gamma_{L^2}(\tilde{\cV}, \nu; \cEbh)$$
as the projective limit $\varprojlim_i \Eb_i$ of the system of admissible surjective morphisms
$$\Eb_{\bullet} : \Eb_0 \stackrel{q_0}{\longleftarrow}\Eb_1 \stackrel{q_1}{\longleftarrow}\dots \stackrel{q_{i-1}}{\longleftarrow}\Eb_i \stackrel{q_i}{\longleftarrow} \Eb_{i+1} \stackrel{q_{i+1}}{\longleftarrow} \dots$$
where
$$\Eb_i := (\Gamma(\cP_i, \cEh_{\mid \cP_i}), (\Vert. \Vert_{i,\sigma})_\sKC),$$
and the Hermitian norm $\Vert. \Vert_{i,\sigma}$ is defined has the quotient of the $L^2$-norm on $\Gamma_{L^2}(V_\sigma, \nu_\sigma; \cE_\sigma, \Vert.\Vert_\sigma)$ by the $i$-th jet map
$$\eta_{i, \sigma}: \Gamma_{L^2}(V_\sigma, \nu_\sigma; \cE_\sigma, \Vert.\Vert_\sigma) \lra  \cE_{\sigma\mid P_{\sigma i}}.$$
The morphisms $q_i$ are the obvious restriction morphisms, induced by the inclusion $\cP_i \hlra \cP_{i+1}$.

As in Section \ref{SectionMainTheo}, for every $i \in \N,$ we may consider the consider the Hermitian vector bundle over $\Spec \OK$:
$$\overline{\ker q_i}:=(\ker q_i, (\Vert.\Vert_{i,\sigma})_\sKC).$$

Recall that, if $\cI$ denotes the Ideal of $\cP$ in $\cO_{\cVh}$, there exists canonical isomorphisms of $\cO_\cVh$-Modules:
\begin{equation}\label{IsomA}
\cO_{\cP_i} \lrasim \cO_{\cVh}/\cI^{i+1} \quad \mbox{for every $i\in \N$}
\end{equation}
and
\begin{equation}\label{IsomB}
\cI^i/\cI^{i+1} \lrasim (N_\cP \cVh)^{\vee{\otimes i}}.
\end{equation}
(Indeed,  (\ref{IsomA}) holds by the very definition of $\cP_i$; (\ref{IsomB}) holds because of the smoothness assumption on $\cVh$, and shows that the quotient $\cI^i/\cI^{i+1}$ is supported by $\cP.$) From these isomorphisms, we deduce short exact sequences of $\cO_\cVh$-modules
$$0 \lra \cI^i/\cI^{i+1} \lra \cO_{\cP_i} \lra \cO_{\cP_{i-1}} \lra 0,$$
and, after taking  tensor products with $\cEh$ and spaces of global sections, short exact sequences of $\OK$-modules:
$$0 \lra \Gamma(\Spec \OK,  (N_\cP \cVh)^{\vee{\otimes i}} \otimes \cP^\ast \cEh)
\hlra \Gamma(\cP_i, \cEh_{\mid \cP_i}) \stackrel{q_{i-1}}{\lra} \Gamma(\cP_i, \cEh_{\mid \cP_{i-1}}) \lra 0.$$
This shows that, for every $i \geq 1,$ the $\OK$-module $\ker q_{i-1}$ may be identified with (the global sections of) $(N_\cP \cVh^\vee)^{\otimes i} \otimes \cP^\ast \cEh.$

Observe that, for every $\sKC,$ the complex vector space
$$(\cP^\ast \cE)_\sigma \simeq \cE_{\sigma, P_\sigma}$$
is endowed with the Hermitian norm $\Vert. \Vert_{\sigma, P_\sigma}$, restriction over $P_\sigma$ of the Hermitian metric $\Vert.\Vert_\sigma$ over $\cE_\sigma$. We shall denote by $\cP^\ast \cEb$ the Hermitian vector bundle $(\cP^\ast \cE, (\Vert. \Vert_{\sigma, P_\sigma})_{\sKC})$ over $\Spec \OK.$

Then $(N_\cP \cVh)^{\vee{\otimes i}} \otimes \cP^\ast \cEh$ appears as the underlying $\OK$-module of the Hermitian vector bundle $(\overline{N}_\cP \cVh)^{\vee{\otimes i}} \otimes \cP^\ast \cEb$. We shall denote by
$\Vert.\Vert^{\rm cap}_{i, \sigma}$ the corresponding metric on 
$$ [(N_\cP \cVh)^{\vee{\otimes i}} \otimes \cP^\ast \cEh]_\sigma \simeq (T_{P_\sigma}V_\sigma)^{\vee\otimes i} \otimes \cE_{\sigma,P_\sigma}.$$ By construction, it is the metric
deduced from the metrics $\Vert.\Vert^{\rm ca
p}_{V_\sigma, P_\sigma}$ on  $T_{P_\sigma}V_\sigma$ and $\Vert.\Vert_\sigma$ on $\cE_\sigma$ by duality and tensor product.  

Under the identification 
$$\ker q_{i-1} \simeq  (N_\cP \cVh^\vee)^{\otimes i} \otimes \cP^\ast \cEh,$$
the Hermitian structures on $\overline{\ker q_{i-1}}$ and on $(\overline{N}_\cP \cVh)^{\vee{\otimes i}} \otimes \cP^\ast \cEb$ do \emph{not} coincide in general. Indeed, the Hermitian norms $\Vert.\Vert_{i,\sigma}$ on $\overline{\ker q_{i-1}}$ are defined as quotients of $L^2$-norms, while the Hermitians norms $\Vert.\Vert^{\rm cap}_{i, \sigma}$ on  $(\overline{N}_\cP \cVh)^{\vee{\otimes i}} \otimes \cP^\ast \cEb$ are defined in terms of capacitary metrics.

However, in paragraph \ref{SchwarzLemma}, we have established a version of Schwarz Lemma on compact Riemann surfaces with boundary that allows one to compare these norms. Indeed, as a reformulation of the estimates (\ref{SchSchwE}) in Scholium \ref{SchoSchw} applied to the compact connected Riemann surfaces with boundary $(V_\sigma)_\sKC$, we obtain the following comparison estimates:

\begin{lemma}\label{CapQuotEst}
For any $\eta >0,$ there exists $C_\eta \in \R_+$ such that, for any field embedding $\sKC$ and any $i \in \N$:
\begin{equation}\label{capquot}
\Vert .\Vert_{i,\sigma}^{\rm cap} \leq C_\eta e^{\eta i} \Vert.\Vert_{i,\sigma}.
\end{equation}
\end{lemma}
\qed

From the estimates (\ref{capquot}) and the monotonicity of $\hot$ (Proposition \ref{ineqmortheta}, 1)), we derive that, for any integer $i \geq 1,$
\begin{equation}\label{capquothot}
\hot(\overline{\ker q_{i-1}}) \leq \hot( (\overline{N}_\cP \cVh)^{\vee{\otimes i}} \otimes \cP^\ast \cEb \otimes \cO(\eta\, i + \log C_\eta)).
\end{equation}

Moreover, the upper-bound (\ref{hotchiVect}) on the $\theta$-invariant of the tensor product of some fixed Hermitian vector bundle and some Hermitian line bundle shows the existence of some $M >0$, depending only on $\cP^\ast \cEb$, such that
 \begin{equation*}
\hot( (\overline{N}_\cP \cVh)^{\vee{\otimes i}} \otimes \cP^\ast \cEb \otimes \cO(\eta i + \log C_\eta))
\leq \rk \cE . \chi( - i \dega \overline{N}_\cP \cVh + [K:\Q] (\eta \, i + \log C_\eta) + \log M).
\end{equation*}

When the arithmetic ampleness condition (\ref{NArample}) is satisfied, we may choose $\eta$ in the interval $]0, [K:\Q]^{-1}\dega \overline{N}_\cP \cVh[$. Then
$$a: = \dega \overline{N}_\cP \cVh - [K:\Q] \eta$$
is positive, and, if we let:
$$b := [K:\Q] \log C_\eta + \log M,$$
the previous estimates imply that, for every integer $i\geq 1,$
$$\hot(\overline{\ker q_{i-1}}) \leq  \rk \cE \, \chi(-a i +b).$$

Finally, we obtain:
$$\sum_{n=0}^{+\infty}\hot(\overline{\ker q_n}) \leq \rk \cE  \sum_{i =1}^{\infty} \chi( -ai +b) < +\infty.
$$
 This shows that the projective system $\Eb_\bullet$ is summable.
 
 More generally, the estimates (\ref{capquot}) show that, for any $\delta \in \R,$  
 \begin{equation}
\hot(\overline{\ker q_{i-1}}\otimes \cO(\delta)) \leq \hot( (\overline{N}_\cP \cVh)^{\vee{\otimes i}} \otimes \cP^\ast \cEb \otimes \cO(\eta\, i + \log C_\eta + \delta)).
\end{equation}
Replacing (\ref{capquothot}) by this ``twisted version", we now obtain that
$$\hot(\overline{\ker q_{i-1}}\otimes \cO(\delta)) \leq  \rk \cE \, \chi(-a i +b+ [K:\Q] \delta),$$
and consequently,
$$\sum_{n=0}^{+\infty}\hot(\overline{\ker q_n}\otimes \cO(\delta)) < +\infty.$$

This establishes that $\Eb_\bullet\otimes \cO(\delta)$ is summable for every $\delta \in \R$, and finally that $\Gamma_{L^2}(\tilde{\cV}, \nu; \cEbh) \simeq \varprojlim_i \Eb_i$ is $\theta$-summable.

\subsection{Proof of Theorem \ref{finitewhenNArample}-II}
Let us know consider some Hermitian line bundle $\tilde{\cLb}$ over $\cVt$. For any $D \in \N,$ we may apply the previous construction to 
$\tilde{\cEb} := \tilde{\cLb}^{\otimes D}.$ In this way, we get the upper bound:
 \begin{equation}\label{boundhotLD}\hot( \Gamma_{L^2}(\tilde{\cV}, \nu; \cEbh)) \leq \hot(\Eb_0) + \sum_{n =0}^{+\infty}\hot(\overline{\ker q_n}) = \sum_{i=0}^{+\infty}\hot((N_\cP \cVh^\vee)^{\otimes i} \otimes \cP^\ast \cLh^{\otimes D}, (\Vert.\Vert_{D,i,\sigma})_\sKC),
 \end{equation}
where $\Vert.\Vert_{D,i,\sigma}$ denotes the metric on 
$$[(N_\cP \cVh^\vee)^{\otimes i} \otimes \cP^\ast \cLh^{\otimes D}]_\sigma 
\simeq (T_{P_\sigma}V_\sigma)^{\vee\otimes i} \otimes \cL^{\otimes D}_{\sigma,P_\sigma}$$ 
deduced (by ``subquotient" as above) from the $L^2$-norm on $\Gamma_{L^2}(V_\sigma, \nu_\sigma; \cL^{\otimes D}_\sigma, \Vert.\Vert_\sigma)$.

By using the estimates (\ref{SchSchwLD}) in Scholium \ref{SchoSchw}, we now obtain the following variant of Lemma \ref{CapQuotEst}:
 
\begin{lemma}\label{CapQuotEstLD}
For any $\eta >0,$ there exists $C_\eta \in \R_+$ such that, for any field embedding $\sKC$ and any $(D,i) \in \N_{>0} \times \N$:
\begin{equation}\label{capquotLD}
\Vert .\Vert_{i,\sigma}^{\rm cap} \leq C^{D+1}_\eta e^{\eta i} \Vert.\Vert_{i,\sigma}.
\end{equation}
\end{lemma}
\qed

The estimates (\ref{capquotLD}) and the monotonicity of $\hot$now imply that, for any $i \in \N,$
\begin{multline}\label{LD1}
\hot((N_\cP \cVh^\vee)^{\otimes i} \otimes \cP^\ast \cLh^{\otimes D}, (\Vert.\Vert_{D,i,\sigma})_\sKC)  \\ \leq
\hot( (\overline{N}_\cP \cVh)^{\vee{\otimes i}} \otimes \cP^\ast \cLb^{\otimes D} \otimes \cO(\eta\, i + (D+1)\log C_\eta)).
\end{multline}

Besides, the upper-bound (\ref{hotchi}) on the $\theta$-invariant of an Hermitian line bundle in terms of its Arakelov degree show that
\begin{multline}\label{LD2}
\hot( (\overline{N}_\cP \cVh)^{\vee{\otimes i}} \otimes \cP^\ast \cLb^{\otimes D} \otimes \cO(\eta\, i + (D+1)\log C_\eta)) \\ \leq
\chi( - i \dega \overline{N}_\cP \cVh + D \dega \cP^\ast \cLb + [K:\Q] (\eta \, i + (D+1)\log C_\eta)).
\end{multline}

When the arithmetic ampleness condition (\ref{NArample}) is satisfied and $\eta$ belongs to the interval $]0, [K:\Q]^{-1}\dega \overline{N}_\cP \cVh[$, we obtain from (\ref{LD1}) and (\ref{LD2}):
\begin{equation}\label{LD3}
\hot((N_\cP \cVh^\vee)^{\otimes i} \otimes \cP^\ast \cLh^{\otimes D}, (\Vert.\Vert_{D,i,\sigma})_\sKC) \leq \chi (-ai + a'D +b),
\end{equation}
where
$$a: = \dega \overline{N}_\cP \cVh - [K:\Q] \eta >0,$$
$$a' := \dega \cP^\ast \cLb + [K:\Q] \log C_\eta,$$
and
$$b := [K:\Q] \log C_\eta.$$

The asymptotic bound (\ref{hotD2})
$$\hot( \Gamma_{L^2}(\tilde{\cV}, \nu; {\cLbh} \,^{\otimes D})) = O(D^2)$$
now follows from (\ref{boundhotLD}) and (\ref{LD3}), combined with the following elementary lemma:

\begin{lemma}
 For any $(a,a',b) \in \R^\ast_+ \times \R^2$ and any $D \in \N$, the series 
 $\sum_{n \in \N} \chi( -an + a'D+b)$ is convergent. Moreover, when $D$ goes to $+\infty,$
 \begin{equation*}
\sum_{n \in \N} \chi(-a n + a'D+b) = O(D^2).
\end{equation*}
\end{lemma}
\begin{proof} According to the convexity estimate (\ref{chiconvex}), we have:
\begin{equation}\label{chiint}
\sum_{n \in \N} \chi( -an + a'D+b) \leq \frac{1}{a} \int_{-\infty}^{a/2+ a'D +b} \chi (t) \, dt.
\end{equation}
This establishes the convergence of the series. Moreover, when $a'\leq 0,$ the right-hand side of (\ref{chiint}) stays bounded when $D$ goes to $+\infty$. When $a' >0,$ it is bounded from above by
$$\frac{1}{a} + \frac{1}{a} \int_0^{a/2+ a'D +b} (1+t) \, dt = \frac{a'^2}{2a} D^2  + O(D).$$
 \end{proof}
 
  \section{Arithmetic pseudo-concavity and algebraization}\label{ArPsConcaveAlg}

 \subsection{A Diophantine algebraization theorem}\label{DiophantAlg} 
 
   We may now establish the Diophantine algebraization theorem that constitutes the main result of this chapter.  It may be seen as some Diophantine counterpart of the algebraization theorem concerning ``pseudo-concave" formal surfaces discussed in paragraph \ref{algformalpseudoconcave} (\cf Theorem \ref{formalAlgebr}). 
  
  Indeed, as explained in paragraph \ref{descriptionEpil}, its proof  will be  similat to the one of Theorem \ref{formalAlgebr}: it will combine the upper-bound for the theta-invariants of pro-Hermitian vector bundles of sections of line bundles over a formal-analytic surface (Theorem \ref{NArample}) and the lower-bound for the $\theta$-invariants of Hermitian vector bundles of arithmetically ample Hermitian line bundles on projective schemes over $\Spec \OK$ (Theorem \ref{AmpleThetaAs} and Corollary \ref{AmpleThetaPM}).

\begin{theorem}\label{ArAlgK} Let $\cVt := (\cVh, (V_\sigma, O_\sigma, i_\sigma)_\sKC)$ be some smooth formal-analytic surface over $\Spec \OK$ as above.  

 When the ``arithmetic pseudo-concavity" condition (\ref{NArample})
 $$
\dega \overline{N}_{\cP}\tilde{\cV} > 0.
$$
 is satisfied,  
for any $\OK$-morphism
$$f := (\hat{f}, (f_\sigma)_{\sKC}) : \cVt \lra \cX$$
 with range some quasi-projective $\OK$-scheme $\cX$, the image of $f$ is algebraic.
\end{theorem}

Recall that the meaning of the algebraicity of the image of $f$ has been discussed in paragraph \ref{AlgAr}.

\proof[Proof of Theorem  \ref{ArAlgK}]  We have to show that the closed integral subscheme $\oli{\im \hat{f}}$ satisfies 
\begin{equation}\label{dimleq2}
\dim \oli{\im \hat{f}} \leq 2.
\end{equation}

With no restriction of generality, we may assume that $\cX$ is projective (this follows from Lemma \ref{CompShrink}), and moreover, by replacing $\cX$ by $\oli{\im f}$, that
\begin{equation}\label{Ximf}
\cX = \oli{\im \hat{f}}.
\end{equation}

In particular, $\cX$ is an integral projective scheme over $\Spec \OK$; it is clearly flat over $\Spec \OK$ (indeed, it contains the $\OK$-point $\hat{f}(\cP)$). Let
$$d := \dim \cX = \dim \oli{\im \hat{f}}.$$

Let us apply Corollary \ref{AmpleThetaPM} to $\cX$. We shall keep the notations of this corollary, and denote by $\cLb$ 
and by $(\Eb_D, i_D)_{D \in \N}$ a Hermitian line bundle over $\cX$ and a sequence of Hermitian vector bundles over $\Spec \OK$ and of injections 
$$i_D : E_D \hlra \pi_\ast \cL^{\otimes D}$$ satisfying the conclusions (\ref{comphermunif}) and (\ref{lowerDioph}) of Corollary \ref{AmpleThetaPM}. Notably, there exists $c>0$ such that, for any large enough integer $D$,
\begin{equation}\label{recalllowhot}
\hot(\Eb_D) \geq c. D^{\dim \cX}.
\end{equation}

Besides, let us choose a family
$$\nu:= (\nu_{\sigma})_\sKC$$
of positive $C^\infty$ volume forms on the Riemann surfaces $V_\sigma$ that is invariant under complex conjugation and satisfies the normalization conditions 
\begin{equation}\label{normnu}
\int_{V_\sigma} \nu_\sigma = 1 \quad\mbox{ for every embedding $\sKC$.}
\end{equation}

For any $D \in \N,$ we may consider the proHermitian vector bundle over $\Spec \OK$
$$\Gamma_{L^2}(\cVt, \nu; f^\ast \cL^{\otimes D}) := (\Gamma(\cVh, \hat{f}^\ast \cL^{\otimes D}), (\Gamma_{L^2}(V_\sigma, \nu_\sigma; \cLb_\sigma)_{\sKC})).$$

According to Theorem \ref{finitewhenNArample}, the ``arithmetic pseudo-concavity" of $\cVt$ implies that the pro-Hermitian vector bundle $\Gamma_{L^2}(\cVt, \nu; f^\ast \cL^{\otimes D})$ is $\theta$-finite and that, when $D$ goes to infinity,
\begin{equation}\label{hotD2fast}
\hot(\Gamma_{L^2}(\cVt, \nu; f^\ast \cL^{\otimes D})) = O(D^2).
\end{equation}

Besides, we may consider the pull-back maps
$$
\begin{array}{rcl}
 \hat{\phi}_D :\Gamma(\cX, \cL^{\otimes D}) & \lra   &  \Gamma(\cVh, \hat{f}^\ast \cL^{\otimes D})  \\
  s & \longmapsto  & \hat{f}^\ast s.   
\end{array}
$$
For any $D \in \N$ and any $s\in \Gamma (\cX, \cL^{\otimes D}) \setminus \{0\}$, the morphism $\hat{f}$ does \emph{not} factorizes through the inclusion $\div s \hra \cX$ (by (\ref{Ximf}). This shows that the maps $\hat{\phi}_D$ are injective.

We may also consider the maps 
$$
\begin{array}{rcl}
 {\phi}_{D,\sigma} :\Gamma(\cX_\sigma, \cL_\sigma^{\otimes D}) & \lra   &  \Gamma_{L^2}(V_\sigma, f_\sigma^\ast \cL_\sigma^{\otimes D})  \\
  s & \longmapsto  & f_\sigma^\ast s.   
\end{array}
$$
Clearly, they satisfy:
\begin{equation}\label{boundphiDsigma}
\Vert \phi_{D,\sigma} s\Vert_{L^2(V_\sigma,\nu_\sigma)} \leq \Vert \phi_{D,\sigma} s\Vert_{L^\infty(V_\sigma)} \leq \Vert s \Vert_{L^\infty(\cX_\sigma)}. 
\end{equation}
(The first inequality holds because of the normalization conditions (\ref{normnu}).)

 Moreover the maps $\hat{\phi}_{D}$ and ${\phi}_{D,\sigma}$ are compatible --- the base change
 $$ \hat{\phi}_{D, \sigma} :\Gamma(\cX, \cL^{\otimes D})_\sigma  \lra     \Gamma(\cVh, \hat{f}^\ast \cL^{\otimes D})_\sigma$$
 may be identified with the composition of $\phi_{D,\sigma}$ and of the restriction map
 $$\Gamma_{L^2}(V_\sigma, f_\sigma^\ast \cL_\sigma^{\otimes D}) \lra (f_\sigma^\ast \cL_\sigma^{\otimes D})_{\mid  \widehat{V_\sigma}_{, P_\sigma} }$$ 
 --- and therefore define a morphism of Hilbertisable vector bundles 
$$\phi_D: \pi_\ast \cL^{\otimes D} \lra \Gamma_{L^2} (\cVt, f^\ast \cL^{\otimes D}).$$

The norm estimates (\ref{boundphiDsigma}) and (\ref{comphermunif}) show that the composite morphism
$$\phi_D \circ i_D : \Eb_D \lra \Gamma_{L^2}(\cVt, \nu; f^\ast \cLb^{\otimes D})$$
belongs to
$\Hom_{\OK}^{\leq 1} (\Eb_D,\Gamma_{L^2}(\cVt, \nu; f^\ast \cLb^{\otimes D})).$
Moreover, like $\hat{\phi}_D,$ the composite map $\hat{\phi}_D \circ i_D$ is injective. Consequently, we obtain:
\begin{equation}\label{hotEhotGamma}
\hot(\Eb_D) \leq \hot(\Gamma_{L^2}(\cVt, \nu; f^\ast \cLb^{\otimes D})) 
\end{equation}
(\cf Proposition \ref{ineqmorthetapro}).

From  (\ref{recalllowhot}), 
(\ref{hotD2fast}),
and (\ref{hotEhotGamma}), we derive the requested inequality $$\dim \cX \leq 2$$ by letting $D$ go to infinity.
\qed

\subsection{Proof of Theorem \ref{ArAlgPM}}.

Let us now explain how, from Theorem \ref{ArAlgK}, one simply derives the ``naive" algebraicity criterion stated at the beginning of this chapter as Theorem \ref{ArAlgPM}. 

Let us go back to the notation of this theorem, introduced in paragraph \ref{SimpleArAlG}, and let us choose some imaginary quadratic field $K$. We shall denote by $\{\sigma_0, \overline{\sigma}_0 \}$ the field embeddings of $K$ in $\C$.

From the data of $\phi \in \Z[[X]]^N$, of the pointed Riemann surface with boundary $V$, and of the map $j:V \lra \PP^N(\C),$ we may construct a smooth formal-analytic surface $\cVt$ over $\Spec \OK$ by letting
$$\cVh := {\rm Spf\,} \OK[[X]],$$
$$V_{\sigma_0} := V \quad\mbox{ and } \quad V_{\overline{\sigma}_0} := V^{c.c.},$$
and
$$i_{\sigma_0} := \widehat{\gamma_O}^{-1} :\hV_{\sigma_0}\simeq {\rm Spf} \, \C[[T]] \lrasim \hV_O \quad\mbox{and}\quad 
i_{\overline{\sigma}_0} := \widehat{\overline{\gamma}_O}^{-1} :\hV_{\overline{\sigma}_0}\simeq {\rm Spf \,} \C[[T]] \lrasim \hV_O^{c.c.}.$$

Then one defines a morphism
$$f := (\hat{f},(f_{\sigma_0}, f_{\overline{\sigma}_0}): \cVt \lra \PP^N_{\OK}$$
by
$$\hat{f} := \phi, \quad f_{\sigma_0}:= \gamma, \quad \mbox{and} \quad f_{\overline{\sigma}_0} := \overline{\gamma}.$$

By the very definition of the Arakelov degree, we have
$$\dega \overline{N}_{\cP}\tilde{\cV} = - 2 \log \Vert D\gamma(O)^{-1} \phi'(0) \Vert^{\rm cap}_{V,O}.$$
This is positive precisely when the condition (\ref{ArAmplNaive}) in Theorem \ref{ArAlgPM} is satisfied. According to Theorem \ref{ArAlgK}, this condition implies the algebraicity of the image of $f$. In particular $Y:= \overline{\im \hat{f}_K}$ is a closed integral scheme of dimension 1 in $\PP^N_K$ such that 
\begin{equation}\label{CK}
\hat{C}_K \subset \hat{Y}_P \quad \mbox{ and  } \quad
\gamma(V) \subset Y_{\sigma_0}(\C).
\end{equation} 

The fact that the series $\phi_1, \ldots, \phi_N$ that define $\hat{f}$ belongs to $\Q[[X]]$ implies, by a straightforward descent argument, that $Y$ may be written $X_K$ for some closed integral curve $X$ in $\PP^N_\Q$. From (\ref{CK}), it follows that the curves $X$ satisfies  the conclusion  (\ref{inclalg}) of Theorem \ref{ArAlgPM}, namely:
$$\hat{C}_\Q \subset \Xh_P \quad\mbox{and}\quad \gamma(V) \subset X(\C).$$

\subsection{Borel's rationality criterion}\label{AppIBorel}
As a first application of Theorem \ref{ArAlgK}, let us explain how one may derive Borel's rationality criterion (Theorem \ref{thBorel}, \emph{supra}) from its naive variant Theorem \ref{ArAlgPM}. 

We shall actually establish Borel's criterion in the following more general form:

\begin{theorem}\label{thBoreless}
 Let $f$ be some formal series in $\Z[[X]].$
 
 If the complex radius of convergence of $f$ is positive, and if $f$ extends, as a $\C$-analytic function, to $\mathring{D}(0,R) \setminus F$ for some $R \in ] 1, +\infty[$ and some finite subset $F$ of  $\mathring{D}(0,R) \setminus \{0\}$,  then $f$ is Taylor expansion at $0$ of some rational function in $\Q(X)$. 
\end{theorem}

Theorem \ref{thBorel} is the special case of Theorem \ref{thBoreless} where the singularities  $F$ of $f$ are assumed to be (at worst) poles. Theorem \ref{thBoreless} is actually a consequence of the main result of P\'olya \cite{Polya1928} (see notably Section 11). 

Let us start with some simple observation.

\begin{lemma}\label{Borelnaive} Theorem \ref{thBoreless} holds when $f$ satisfies the additional assumption that the  $f$ has a pole at each point of $F$ and when $F$ is contained in the algebraic closure $\Qbar$ of $\Q$ in $\C$. 
 \end{lemma}

\begin{proof}
 When this additional assumption holds, we may find some non-zero polynomial $P \in \Z[X]$ which admits every point of $F$ as a zero. 
 
 Then, if $N$ denotes the maximum order of the poles of $f$ in the closed unit disk $\Db(0;1)$,  the product $P^N f$ is a formal series $\sum_{n \in \N} a_n X^n$ in $Z[[X]]$ that is the expansion of some holomorphic function on some open disk $\mathring{D}(0,R')$ of radius  $R'>1$. 
 This implies that $\sum_{n \in \N} \vert a_n \vert < + \infty,$ and therefore, as the $a_n's$ are integers, that $a_n$ vanishes for $n$ large enough. 

This shows that $P^N f$ is a polynomial $Q$ in $\Z[X]$, and finally that $f = Q/P^N$ belongs to $\Q(X)$.
\end{proof}

Let us now consider $f$ as in the statement of Theorem \ref{thBoreless}, and let us choose $R' \in ]1,R[$ such that $F \subset \mathring{D}(0,R')$.

According \ref{ExampGreen}, Example (iii), for any $\epsilon >0$ small enough, 
$$V_\epsilon := \Db(0,R') \setminus \bigcup_{a\in F} \mathring{D}(a ; \epsilon)$$
is a compact Riemann surface with boundary, containing $0$ in its interior, such that
\begin{equation}\label{smallcap}
\Vert \partial /\partial z \Vert_{V_\epsilon, 0}^{\rm cap} < 1.
\end{equation}
By construction, $f$ defines an analytic function --- that we shall denote by $f^{\rm an}$ --- on some open neighborhood of $V_\epsilon$ in $\C$.

We may apply Theorem \ref{ArAlgPM} to the following data:
$$V := V_\epsilon \quad \mbox{ and } \quad O:= 0,$$
$$\phi := (X, f) \in \Z[[X]]^2,$$
and 
$$\gamma: V \lra \C^2 \quad\mbox{ defined by $\gamma(z) := (z, f^{\an}(z)).$}$$

The condition (\ref{jform})
$$\hat{\gamma}_O : \Vh_0 \lrasim \hat{C}_\C$$
is clearly satisfied. Moreover
$$\phi'(0) = \partial/\partial x_1 + f'(0) \partial/\partial x_2 = D\gamma(0) \partial/\partial t.$$
Therefore
$$D\gamma(0)^{-1} \phi'(0) = \partial/\partial t$$
and, according to (\ref{smallcap}), the condition (\ref{ArAmplNaive}) 
$$\Vert D\gamma(O)^{-1} \phi'(0) \Vert^{\rm cap}_{V,O} < 1$$
is satisfied. 

The conclusion of Theorem \ref{ArAlgPM} asserts that  the image of $\gamma$ is algebraic, and is actually contained in (the complex points of) some algebraic curve defined over $\Q$. It may be rephrased as the existence of some closed integral curve $C$ in $\A^1_\Q \times \PP^1_\Q$ such that 
$$\gamma(V) \subset C(\C).$$
By analytic continuation, we have
$$\gamma(\mathring{D}(0,R) \setminus F) \subset C(\C).$$
In other words, the graph of $f^{\an}: \mathring{D}(0,R) \setminus F) \lra \C$ is contained in the affine complex curve $C(\C) \cap \A^2(\C).$ 

This implies that the singularities of $f$ at the points of $F$ are (at worst) poles, and that, if some point of $F$ is actually  a pole of $f$, it belongs to $\Qbar$. 

Indeed  the projection ${\rm pr}_1 : C_\C \lra \A^1_\C
$ is a finite morphism, and by Riemann's theorem, its analytic section $\gamma$ over $\mathring{D}(0,R) \setminus \{0\}$ extends analytically to $\mathring{D}(0,R)$. This proves that $f^{\an}$ is meromorphic on $\mathring{D}(0,R)$. Moreover the poles of this function are contained in ${\rm pr}_1 (C(\C) \cap (\C \times \{\infty\})),$ and therefore belong to $\Qbar$ since $C$ is defined over $\Q$.

Finally we can conclude the proof of Theorem \ref{thBoreless} by applying Lemma \ref{Borelnaive}.

\section{The isogeny theorem for elliptic curves over $\Q$}\label{AppIIIsog}

In this final section, we want to demonstrate that the arithmetic algebraization criterion established in Theorem \ref{ArAlgK} admits significant Diophantine applications, in spite of its unsophisticated formulation when compared to the more general and flexible criteria established in \cite{Andre89}, Chapter VIII or in \cite{Bost01}. Namely, we  use  Theorem \ref{ArAlgK} to derive to derive a classical isogeny criterion,  due to Serre and Faltings, for elliptic curves over $\Q$.

The relevance of algebraicity theorems such as Theorems  \ref{ArAlgPM} and \ref{ArAlgK} for the construction of isogenies between elliptic curves is one of the striking discoveries presented in the seminal article \cite{ChudnovskysAcad85} by D.V. and G.V. Chudnovsky. Like the one in \emph{loc. cit.}, the construction of isogenies presented in this section will rely on some famous result of Honda (\cite{Honda68Formal}) concerning formal groups of elliptic curves over $\Q$ and of their models over $\Z$. (Stronger form of the algebraicity criterion, such as the ones referred to above, would allow one to rely on less precise results concerning these formal groups than Honda's; see for instance \cite{Bost01}, Corollary 2.5. We shall actually not use the full strength of Honda results, and avoid any reference to the theory of minimal and N\'eron models of elliptic curves.)   

In this section, we assume some basic knowledge of the geometry and arithmetic of elliptic curves, say at the level of Robert's Lecture Notes  \cite{Robert73} or Silverman's textbook \cite{Silverman92}.

\subsection{The isogeny theorem} Let $E$ be some elliptic curve over $\Q$. If $N$ denotes some ``sufficiently divisible" positive integer, there exists some elliptic curve over $\Z[1/N]$
\begin{equation*}
\pi: \cE \lra \Spec \Z[1/N] 
\end{equation*}
that is a model of $E$ --- that is, an elliptic curve whose generic fiber is isomorphic to $E_\Q$\footnote{The structure of $\cE$ as an elliptic curve over $\Z[1/N]$ is determined by its $\Z[1/N]$-scheme structure and its zero section $\epsilon$. Formally, a specific isomorphism $\iota : \cE_\Q \lrasim E$ is part of the data that define a model of $E$ over $\Spec \Z[1/N]$, and $\iota$ maps $\epsilon_\Q$ to the zero element $0_E$ in the abelian group $E(\Q)$.}. Moreover any two such models 
$$ \pi: \cE \lra \Spec \Z[1/N] \quad \mbox{ and } \quad \pi': \cE' \lra \Spec \Z[1/N']$$
of $E$ become isomorphic over some open subscheme $\Spec \Z[1/ \tilde{N}]$ (where $\tilde{N}$ denotes some common positive multiple of $N$ and $N'$; actually, one may take $\tilde{N} := {\rm lcm} (N, N')$).

For any prime number $p$ such that $(p,N) =1,$ we may consider the elliptic curve $\cE_{\F_p}$ over the prime field $\F_p$ and its $a_p$ invariant:
$$a_p(\cE_{\F_p}) := p+1 - \vert \cE_{\F_p} (\F_p) \vert =  p+1 - \vert \cE(\F_p) \vert.$$
The integer $a_p(\cE_{\F_p})$ is defined, and independent of the chosen model $\cE$ of $E$, for any large enough prime number $p$, and is denoted by $a_p(E)$.

Our aim in this section is to prove the following theorem on elliptic curves over $\Q$, due to Serre under some additional hypothesis of bad reduction (\cite{Serre68}, IV.2.3) and to Faltings in general (\cite{Faltings83}, §5, Corollary 2; Faltings actually establishes a considerably more general result, concerning abelian varieties over arbitrary number fields). 

\begin{theorem}\label{IsogenQ}
Let $E$ and $E'$ be two elliptic curves over $\Q$. If, for any large enough prime $p,$
$$a_p(E) = a_p(E'),$$
then $E$ and $E'$ are isogenous over $\Q$. 
\end{theorem}
 
 The converse implication is classically known to hold: it follows from the fact that two isogenous elliptic curves over $\F_p$ have the same $a_p$ invariant.

\subsection{Honda's theorem} Let $E$ be some elliptic curve over $\Q$ and let $$\pi:\cE \lra \Spec \Z[1/N]$$ be a model of $E$ as above. We may consider the formal completion $\cEh$ of $\cE$ along its zero section $\epsilon$. It is a pointed smooth formal curve over $\Spec \Z[1/N]$. Moreover, the group scheme structure of $\cE$ induces on $\cEh$ a structure of formal group scheme over $\Spec \Z[1/N].$ 

We shall use Honda's results on the formal groups associated to elliptic curves over $\Q$ and to their models in the following form:

\begin{theorem}[\cite{Honda68Formal}, Section 4, Theorem 4 and Corollary 2]\label{HondaTh} Let $E_1$ and $E_2$ be two elliptic curves over $\Q$, and let $\cE_1$ and $\cE_2$ be elliptic curves over $\Spec \Z[1/N_0]$ (for some positive integers $N_0$) that are models of $E_1$ and $E_2$ respectively.

If, for any large enough prime $p$, 
$$a_p(E_1) =a_p(E_2),$$
there exists some multiple $N$ of $N_0$ such that $\cEh_{1, \Z[1/N]}$ and $\cEh_{2, \Z[1/N]}$ are isomorphic, as formal groups over $\Spec \Z[1/N].$
 \end{theorem}
 
 Actually Honda establishes a more precise result, concerning normalized isomorphisms of the formal groups of the N\'eron models of $E_1$ and $E_2$. His proof is a beautiful application of the basic results concerning minimal models of elliptic curves and of some classical theorems of Lazard, Lubin and Tate  concerning one parameter formal group laws (see \cite{Lazard55}, \cite{Lubin64}, and \cite{LubinTate65})\footnote{A reader familiar with the content of \cite{Robert73} or \cite{Silverman92} should have no difficulty in getting acquainted with the content of the classical articles \cite{Lazard55}, \cite{Lubin64} and \cite{LubinTate65}, and then reading Honda's proof of Theorem \ref{HondaTh} in \cite{Honda68Formal}.}. Let us also indicate that a motivation behind Honda's results in \ref{HondaTh} --- that actually relate the $L$-functions and the formal groups 	attached to elliptic curves over $\Q$ --- was the conjecture of Shimura-Taniyama-Weil on the modularity of elliptic curves over $\Q$ (see also \cite{Honda70}), and that this circle of idea had also been explored by Cartier (\cite{Cartier71}). 
 
 \subsection{Formal and analytic groups associated to elliptic curves}
 
 Besides Honda's Theorem recalled above, we shall also rely on some elementary results concerning the formal and analytic groups associated to some elliptic cuve over $\Q$ and its base changes to local fields. These results, in contrast to Honda's Theorem, actually admit straightforward generalizations concerning commutative algebraic groups over (local) fields of characteristic zero. For the sake of simplicity, we will state them for elliptic curves only.
 
 Let $E$ be some elliptic curve over some field $k$ of characteristic zero, and let $\omega$ be a non-zero element in $\Omega^1(E/k) := \Gamma(E, \Omega^1_{E/k}).$
 
 The formal completion $\hE$ of $E$ at its zero element $O_E$ is a formal group scheme of dimension $1$ over $k$. Moreover, if $\G_{a,k} \simeq \Spec k[X]$ denotes the additive group over $k$ and $\widehat{\G}_{a,k}\simeq \Spf k[[X]]$ the associated formal group (deduced from $\G_{a,k}$ by completion at $0$), there exists a unique isomorphism of formal groups over $k$
 $$\Exph_{E,\omega} : \widehat{\G}_{a,k} \lrasim \Eh$$
which satisfies the normalization condition:
$$ \Exph_{E,\omega}^\ast \omega_{\mid \Eh} = dX.$$

The construction of $\Exph_{E,\omega}$ is compatible with extensions of the base field $k$: if $k'$ is an extension of $k$, the base change $\Exph_{E,\omega, k'}$  of  $\Exph_{E,\omega}$ coincides with $\Exph_{E_{k'},\omega_{k'}} : \widehat{\G}_{a,k'} \lrasim \Eh_{k'}$.

Moreover, when $k$ is a local field --- like the $p$-adic field $\Q_p$, or $\R$, or $\C$ ---  the formal isomorphism is actually analytic. Let us spell out these analyticity properties in elementary terms.

When $k=\Q_p$ and $E$ is embedded in some projective space $\PP^N_{\Q_p}$, say with $O_E$ mapped to the origin 
$$O:=(0,\ldots,0) \in \A^N_{\Q_p} \subset \PP^N_{\Q_p},$$
the map 
$$\Exph_{E,\omega} : \widehat{\G}_{a,\Q_p} \simeq  \Spf \Q_p[[X]] \lra \Eh\,  \hlra  \A^N_{\Q_p} $$
may be described by some $N$-tuple of formal series $(\exp^1_{E,\omega}, \ldots, \exp^N_{E,\omega})$ 
in $\Q_p[[X]]$, with vanishing constant terms. Then the analyticity of $\Exph_{E,\omega}$ may be expressed as the fact that the series $\exp^1_{E,\omega}\ldots, \exp^N_{E,\omega}$ have positive $p$-adic radii of convergence. %In particular, for any large enough positive integer $k$, the series 
%$\exp^1_{E,\omega}(p^k T),\ldots, \exp^N_{E,\omega}(p^k T)$ belong to $\Z_p[[T]].$
 
 When $k=\C$, a stronger version of this analyticity holds. Namely there exists a (unique) $\C$-analytic map 
 $$\Exp_{E,\omega} := \Ga(\C) (:= \C) \lra E(\C),$$
 the formal germ of which at $0$ is $\Exph_{E,\omega}.$  The fact that, over archimedean places, the formal isomorphism $\Exp_{E,\omega}$ not only defines some $\C$-analytic map on some open neighborhood of the origin, but indeed  extends analytically to the whole complex line $\Ga(\C)$ (and not only to some analytic neighborhood of the origin) will play a key role in our proof of the isogeny theorem.

 Actually the map $\Exp_{E,\omega}$ is a surjective morphism of complex analytic Lie groups, and satisfies:
 $$\Exp_{E,\omega}^\ast \omega = dz.$$
 Its kernel is the lattice of periods of $\omega$,
 $$\Lambda := \left\{ \int_\gamma \omega \; ; \gamma \in H_1(E(\C), \Z) \right\},$$
 and $\Exp_{E,\omega}$ defines an isomorphism of complex analytic Lie groups:
 $$\C / \Lambda \lrasim E(\C).$$
 
 For instance, when $E$ is the complex elliptic curve in $\PP^2_\C$ of equation
 $$X_0 X_2^2 = 4 X_1^3 -g_2 X_0^2 X_1 -g_3 X_0^3,$$
 with origin $O_E:= (0:0:1),$
 and when
 $\omega := dx/y$ (where $x:= X_1/X_0$ and $y:=X_2/X_0$),
 the map $\Exp_{E,\omega}$ may be expressed in terms of the Weierstrass function $\wp_\Lambda$ associated to the lattice of periods $\Lambda$ of $\omega$. Namely, for any $z \in \C\setminus\Lambda,$ we have:
 $$\wp_\Lambda(z) := z^{-2} + \sum_{\lambda \in \Lambda\setminus\{0\}} [(z-\lambda)^{-2} - \lambda^{-2}]$$
 and 
 $$\Exp_{E,\omega}(z) = (1: \wp_\Lambda(z): \wp_\Lambda'(z)).$$
  
\subsection{Proof of Theorem \ref{IsogenQ} -- I. Construction of formal morphisms} In the proof of Theorem \ref{IsogenQ}, we shall use the following notation.

For any $n \in \Z,$ we denote by  
 $$\lambda_n : \Ahm^1_\Z:= \Spf \Z[[T]] \lra \Ahm^1_\Z$$ the morphism of formal schemes (over $\Spec \Z$)
 attached to the ``multiplication by $n$". Formally it is defined by:
 $$\lambda_n^\ast T = n\,T.$$
 Similarly, for any $q \in \Q$, we may consider the ``multiplication by $q$"
 $$\lambda_q := \widehat{\G}_{a,\Q} = \Spf \Q[[X]] \lra \widehat{\G}_{a,\Q} = \Spf \Q[[X]].$$ 
 These morphisms $\lambda_q$ are precisely the endomorphisms of the formal group $\widehat{\G}_{a,\Q}$ over $\Q$.

 \subsubsection{} Let $E$ and $E'$ be two elliptic curves over $\Q$ satisfying the assumptions of Theorem \ref{IsogenQ}.
 
 We may assume that both of them are embedded in some projective space, say $\PP^2_\Q$, and that their origins $O_E$ and $O_{E'}$ are mapped to 
 $$0 = (0,0) \in \A^2_\Q \hra \PP^2_\Q.$$
 We may take as models of $E$ and $E'$ their closures in $\PP^2_{\Z[1/N]}$, for $N$ sufficiently divisible. We will denote them by 
 $$\pi: \cE \lra \Spec \Z[1/N] \quad \mbox{ and } \quad  \pi': \cE' \lra \Spec \Z[1/N].$$
By construction $\cE$ and $\cE'$ are embedded in $\PP^2_{\Z[1/N]}$, with their zero sections $\epsilon_{\cE}$ and $\epsilon_{\cE'}$ sent to the section $O= (0,0)$ of $\A^2_{\Z[1/N]} (\hra \PP^2_{\Z[1/N]})$ over $\Z[1/N].$

After possibly replacing $N$ by some positive multiple, we may also assume that the following two conditions are satisfied:

(1) There exists some uniformizing parameter $t$ at the origin $O_E$ of $E$ and some Zariski neighborhood $U$ of the image of $\epsilon_\cE$ in $\cE$ such that $t$ belongs to $\cO_\cE(U)$ and its differential $dt \in \Gamma(U, \Omega^1_{\cE/\Z[1/N]})$ does not vanish on $U$.
 
 Then the morphism 
 $$t: U \lra \A^1_{\Z[1/N]}$$
 is \'etale and defines an isomorphism of pointed smooth formal curves over $\Spec \Z[1/N]$:
 $$\hat{t} : \cEh \lrasim \Spf \Z[1/N][[T]]=: \widehat{\A}^1_{\Z[1/N]}.$$
 This isomorphism induces an isomorphism of formal curves over $\Q$:
 $$\hat{t}_\Q : \Eh \lrasim \Spf \Q[[T]]=: \widehat{\A}^1_{\Q}.$$
 
 (2) The exists an isomorphism of formal groups over $\Spec \Z[1/N]$
 $$\widehat{\phi}: \cEh \lrasim \cEh'.$$

 Assertion (1) follows from basic principles of algebraic geometry, and (2) from Honda's Theorem \ref{HondaTh}. From now on, we assume that $t$ and $\widehat{\phi}$ as above have been chosen.
 We also choose differential forms 
 $$\omega \in \Omega^1(E/\Q) \setminus \{0\} \quad \mbox{ and } \quad \omega' \in \Omega^1(E'/\Q) \setminus \{0\}.$$
 
 By evaluating $\omega$ and $dt$ at the point $E$ of $E$, we get non-zero elements $\omega_{O_E}$ and $dt_{O_E}$ in the fiber at $O_E$ of $\Omega^1_{E/\Q}$. For some $\alpha \in \Q^\ast,$ we have:
 $$\omega_{O_E} = \alpha\, dt_{O_E}.$$
 In other words, the differential $D\Exph_{E,\omega}(0)$ of $\Exph_{E,\omega}$ maps $\partial/\partial z$ to $\alpha^{-1} \partial/\partial t$.
 
 The isomorphism of formal groups $\widehat{\phi}$ defines an isomorphism 
 $$\widehat{\phi}_\Q: \Eh \lrasim \Eh'$$
 between the formal groups of $E$ and $E'$, and there exists a unique $\beta \in \Q^\ast$ such that the following diagram of (isomorphisms of) formal groups is commutative:
 \begin{equation*}%\label{compMorVX}
 \begin{CD}
\Eh @>{\widehat{\phi}_\Q}>> \Eh'\\
@V{\Exph_{E,\omega}}VV      @VV{\Exph_{E',\omega'}}V \\
 \widehat{\G}_{a,\Q} @>\lambda_\beta>>  \widehat{\G}_{a,\Q}.
\end{CD}
\end{equation*} 
  
 \subsubsection{}  We may define the following isomorphisms of formal curves over $Q$:
  $$\widehat{\psi}_\Q := \widehat{t}^{-1}_\Q : \widehat{\A}^1_{\Q} \lrasim \Eh$$
  and 
   $$\widehat{\psi}'_\Q := \widehat{\phi}_\Q \circ \widehat{t}^{-1}_\Q : \widehat{\A}^1_{\Q} \lrasim \Eh',$$
   and, for any $a \in \N,$
     $$\widehat{\psi}_{a,\Q} := \widehat{\psi}_\Q \circ \lambda_{N^a}: \widehat{\A}^1_{\Q} \lrasim \Eh$$
  and 
   $$\widehat{\psi}'_{a,\Q} := \widehat{\phi}_\Q \circ \widehat{\psi}_{a,\Q}  =  \widehat{\psi}'_\Q \circ \lambda_{N^a}: \widehat{\A}^1_{\Q} \lrasim \Eh'.$$
   
 For any $a \in \N,$ we may consider the commutative diagram  of isomorphisms of formal curves over $\Q$:
 \begin{equation}\label{comphipsia}
 \begin{CD}
\Ahm^1_\Q @>{\psi_{a,\Q}}>> 
 \Eh @<{\Exph_{E,\omega}}<< \widehat{\G}_{a,\Q} \\ 
@V{\rm Id}VV  @V{\widehat{\phi}_\Q}VV      @VV{\lambda_\beta}V \\
\Ahm^1_\Q @>{\psi'_{a,\Q}}>>  \Eh' @<{\Exph_{E',\omega'}}<<  \widehat{\G}_{a,\Q}.
\end{CD}
\end{equation}  
 
\begin{lemma}\label{integralform}
 1) For any $a \in \N,$ $\widehat{\psi}_{a,\Q}$ and $\widehat{\psi}'_{a,\Q}$ extend to isomorphisms of formal curves over $\Spec \Z[1/N]$:
 $$\widehat{\psi}_{a,\Z[1/N]} : \widehat{\A}^1_{\Z[1/N]} \lrasim \cEh \;(\hra \PP^2_{\Z[1/N]})$$
  and 
   $$\widehat{\psi}'_{a,\Z[1/N]} = {\widehat{\phi}} \circ \widehat{\psi}_{a,\Z[1/N]} : \widehat{\A}^1_{\Z[1/N]} \lrasim \cEh' \;(\hra \PP^2_{\Z[1/N]}).$$
   2) There exists $a_0 \in \N$ such that, for any integer $a \geq a_0,$ $\widehat{\psi}_{a,\Q}$ and $\widehat{\psi}'_{a,\Q}$ extends to morphisms of (formal) schemes over $\Spec \Z$:
     $$\widehat{\psi}_{a} : \widehat{\A}^1_{\Z} \lra \A^2_{\Z} \quad \mbox{and} \quad
\widehat{\psi}'_{a}  : \widehat{\A}^1_{\Z} \lra \A^2_{\Z}.$$
\end{lemma}
 
 \begin{proof} Assertion 1) is straightforward. Indeed, all morphisms in the  diagram 
 $$\widehat{\psi}_{a,\Z[1/N]} \stackrel{\lambda_{N^a}}{\lra} \widehat{\psi}_{a,\Z[1/N]} 
 \stackrel{\widehat t}{\longleftarrow} \cEh \stackrel{\widehat{\phi}}{\lra} \cEh'$$ are isomorphisms of smooth formal curves over $\Spec \Z$, and  $\widehat{\psi}_{a,\Z[1/N]}$ (resp. $\widehat{\psi}'_{a,\Z[1/N]}$) may be defined as $\widehat{t}^{-1} \circ \lambda_{N^a}$ (resp. $\widehat{\phi} \circ \widehat{t}^{-1} \circ \lambda_{N^a}$).
 
 Assertion 2) will follow from 1) and from the analyticity of $\widehat{t}_\Q$, $\Exp_{E,\omega},$ and
 $\Exp_{E',\omega'}$ over the $p$-adic field $\Q_p$, for the primes $p$ dividing $N$.
 
 Recall that the $\Q$-morphism 
 $t_\Q: U_\Q \lra \A^1_\Q$
 is \'etale and maps $O_E$ to $0$. This implies, not only that it defines an isomorphism of formal curves over $\Q$
 $$\widehat{\psi}_\Q = \widehat{t}^{-1}_\Q : \widehat{\A}^1_{\Q} \lrasim \widehat{U}_{\Q, O_E} = \Eh_Q (\hra \A^2_\Q),$$
 but also that, for any prime number $p$, it defines an isomorphism between the germ of $\Q_p$-analytic curves of $\A^1(\Q_p)$ and $E(\Q_p)$ at $0$ and $O_E$. 
 
 In elementary terms, this means that the power series $(\psi^1, \psi^2)$ with vanishing constant terms in $\Q[[X]]$, defined as the two components of $\widehat{\psi}_\Q$, have positive $p$-adic radii of convergence. 
 In other words, for every prime $p$, there exists $a_0(p) \in \N$ such that, for any integer $a \geq a_0(p)$, the coefficients of $\psi^1(p^a X)$ and $\psi^2(p^a X)$ are $p$-adic integers.
 
 Since $\psi^1$ and $\psi^2$ belong to $\Z[1/N][[X]]$ by 1), this implies that, for any integer
 $a \geq a_0 := \max_{p \mid N} a_0(p),$
 the series $\psi^1(N^a X)$ and $\psi^2(N^a X)$ belong to $\Z[[X]]$. This precisely means that $\widehat{\psi}_{a,\Q}$  extends to a morphism $\widehat{\psi}_{a} : \widehat{\A}^1_{\Z} \lra \A^2_{\Z}.$
 
 Similarly, the $\Q_p$-analyticity of 
 $$\widehat{\phi}_\Q = \widehat{\Exp}_{E',\omega'} \circ \lambda_\beta \circ \widehat{\Exp}_{E,\omega}^{-1}
$$
 implies the one of $\widehat{\psi}'_\Q := \widehat{\phi}_\Q \circ \widehat{t}^{-1}_\Q $ and allows one to prove that $\widehat{\psi}'_{a,\Q}$  extends to a morphism $\widehat{\psi}'_{a} : \widehat{\A}^1_{\Z} \lra \A^2_{\Z}$ when $a$ is large enough.
  \end{proof}

 \subsection{Proof of Theorem \ref{IsogenQ} -- II. Algebraization}
 \subsubsection{}
For any $a \in \N,$ 
from the isomorphisms in the first line of the diagram (\ref{comphipsia}), we deduce an isomorphism:
$$i_a := \Exph_{E,\omega}^{-1} \circ\, \psi_{a,\Q}: \Ahm^1_\Q \lrasim \widehat{\G}_{a,\Q} = \Ahm^1_\Q.$$

By construction, 
\begin{equation}\label{psicomp}
\Exph_{E,\omega} \circ\,  i_a = \psi_{a,\Q}.
\end{equation}
Moreover, from the commutativity of (\ref{comphipsia}), we deduce:
\begin{equation}\label{psiprcomp}
\Exph_{E',\omega'} \circ \, \lambda_\beta \circ i_a = \widehat{\phi}_\Q \circ \psi_{a,\Q} = \widehat{\psi}'_{a,\Q}.
\end{equation}

 For any $a \in \N$ and $R \in \R_+^\ast,$ we may define the  smooth formal-analytic surface over $\Spec \Z$
  $$\cVt_{a,R}:= (\Ahm^1_\Z, \overline{D}(0, R), i_{a,\C}),$$
 where $i_{a,\C}$ denotes the isomorphism from $\Ahm^1_\C$ to $\widehat{\overline{D}(0, R)}_0 = 
\Ahm^1_\C$ deduced from $i_a$ by the base field extension $\Q \hra \C.$ In other words,
$$i_{a, \C} = \Exp_{E, \omega, \C}^{-1} \circ \;\widehat{t}_\C^{-1} \circ \lambda_{N^a, \C}.$$

Observe that a small enough open analytic neighborhood $V$ of $0$ in $\C\, (= \A^1(\C))$ is contained in the inverse image $\Exp_{E_\C, \omega_\C}^{-1}(U(\C))$. Then the composite map 
$$t_\C \circ \Exp_{E_\C, \omega_\C} : V \lra \C$$ is analytic and \'etale, and maps $0$ to $0$. In intuitive terms, it describe the ``Weierstrass uniformization" $\Exp_{E_\C, \omega_\C}$ of $E(\C)$ in terms of the ``local algebraic coordinate" $t_\C$ on the Zariski open neighborghood $U(\C)$ of $O_{E_\C}$ in $E(\C).$ (The function $t_\C \circ \Exp_{E_\C, \omega_\C}$ is actually an elliptic function, in the classical sense, attached to the lattice of periods of $(E_\C, \omega_\C).$)

This remark shows that, when $N=1$, the formal-analytic surface $\cVt_{a,R}$ may be understood as obtained by ``glueing" the formal scheme $\cEh$ (identified $\Ahm^1_\Z$ though $\widehat{t}$) and the complex disk of radius $R$ in its Lie algebra (identified to $C$ via $\langle \omega_\C, .\rangle$) by means of the complex uniformization of $E_\C$. 
When $N>1,$ this description is still valid over $\Spec \Z[1/N]$. 

 \subsubsection{} The Hermitian line bundle $\overline{N}_\cP \cVt_{a,R}$ is easily described. Indeed the normal bundle 
${N}_\cP \cVh_{a,R}$ is the $\Z$-module $\Z \,\partial/\partial T$ \footnote{The largest Ideal of definition $\cI$ of $\cVh_{a,R} := \Spf \Z[[T]]$ is defined by the ideal $I := T\Z[[T]]$ and ${N}_\cP \cVh_{a,R}$ by the module $(I/I^2)^\vee$. It is a free $\Z$-module with generator the ``inverse" $\partial/\partial T$ of the class $dT$ of $T$ in $I/I^2.$}. The differential 
$$Di_{a,\C}(O)= D\!\Exp_{E, \omega, \C}(0)^{-1} \circ D\,\widehat{t}(O_{E_\C})^{-1} \circ D\lambda_{N^a, \C}(0)$$
 of $i_{a,\C}$ maps the generator $\partial/\partial T$ of  ${N}_\cP \cVh_{a,R, \C}$ to the  vector $N^a \alpha\,\partial/\partial z$ in the tangent space at the origin of the disk $\overline{D}(0,R)$. Moreover 
 $$\Vert \partial/\partial z\Vert^{\rm cap}_{\overline{D}(0,R),0} = 1/R.$$
 (\cf \ref{ExampGreen}, Example (i)). These observations establish the following Lemma:

\begin{lemma}\label{NPcalcul} 
 The capacitary metric of the generator $\partial /\partial T$ of ${N}_\cP \cVh_{a,R} = N_0 \Ahm^1_\Z$ is given by
 $$\Vert \partial /\partial T \Vert^{\rm cap}_{\cVt_{a,R}} = N^a \vert \alpha \vert /R.$$
 The Arakelov degree of $\overline{N}_\cP \cVt_{a,R}$ is given by:
 \begin{equation}\label{degaNR}\dega \overline{N}_\cP \cVt_{a,R} = \log R - a \log N - \log \vert \alpha \vert.
 \end{equation}
\end{lemma}
 \qed
 
 \subsubsection{} Let us now assume that $a \geq a_0$, so that the formal morphisms 
 $$\widehat{\psi}_a: \widehat{\A}^1_{\Z} \lra \A^2_\Z \hra \PP^2_\Z \quad \mbox{and} \quad
 \widehat{\psi}'_a: \widehat{\A}^1_{\Z} \lra \A^2_\Z \hra \PP^2_\Z$$
 are defined. Since the formal-analytic surface $\cVt_{a,R}$ is defined by the ``glueing map" $i_{a,\C}$,  the relations (\ref{psicomp}) and (\ref{psiprcomp}), after extending the base field from $\Q$ to $\C$, immediately imply:

\begin{lemma}\label{psian} With the above notation, for any $R \in \R^\ast_+,$ if we let
$$\psi_a^{\rm an} := \Exp_{E_\C, \omega_\C \mid \overline{D}(0,R)} : \overline{D}(0,R) \lra E(\C) \hlra \PP^2(\C)$$
and 
 $${\psi'_a}^{\rm an} := \Exp_{E'_\C, \omega'_\C}(\beta \, .)_{\mid \overline{D}(0,R)} : \overline{D}(0,R) \lra E'(\C) \hlra \PP^2(\C),$$
then 
$\psi_a := (\widehat{\psi}_a, \psi_a^{\rm an})$ and  $\psi'_a := (\widehat{\psi}'_a, {\psi'_a}^{\rm an})$
are morphisms \emph{(as defined in \ref{MorForAnScheme})} of the formal-analytic surface $\cVt_{a,R}$ to $\PP^2_\Z$. \qed
\end{lemma}

We may consider the product morphism
$$f_a := (\psi_a, \psi'_a) : \cVt_{a,R} \lra \PP^2_\Z \times \PP^2_\Z$$
defined by 
$$\widehat{f}_a:= (\widehat{\psi}_a, \widehat{\psi}'_a): \Ahm^1_\Z \lra \PP^2_\Z \times \PP^2_\Z$$
and 
$$f^{\rm an}_a := ({\psi_a}^{\rm an},{\psi'_a}^{\rm an}): 
\overline{D}(0,R) \lra E(\C) \times E'(\C) \hlra \PP^2(\C) \times \PP^2(\C).$$

For any given $a \geq a_0,$ let us choose $R \in \R^\ast_+$ such that
$$\log R > a \log N + \log \vert \alpha \vert.$$ 
Then, according to the expression (\ref{degaNR}) for $\dega \overline{N}_\cP \cVt_{a,R},$
the 
``arithmetic pseudo-concavity" condition (\ref{NArample})
 $$
\dega \overline{N}_{\cP}\tilde{\cV} > 0.
$$
 is satisfied. Therefore, according to Theorem \ref{ArAlgPM}, the image of $f_a$ is algebraic. 
 
 Let us consider
 $C:= \overline{\im f_\Q}.$
 It is an integral closed subscheme of dimension 1 of $\PP^2_\Q \times \PP^2_\Q,$ containing $(O_E, O_{E'})$. According to Proposition  \ref{propclosureimage}, by extending the base field from $\Q$ to $\C$, it becomes
 $C_\C = \overline{\im f^{\rm an}_a},$
 the Zariski closure of
 $$\left\{ (\Exp_{E_\C, \omega_\C}(z), \Exp_{E'_\C, \omega'_\C}(\beta z)); z \in \overline{D}(0,R) \right\}.$$
 By analytic continuation, the complex curve $C_\C$ is also the Zariski closure of the map
$$(\Exp_{E_\C, \omega_\C}(z), \Exp_{E'_\C, \omega'_\C}(\beta z)): \Ga(\C) \lra E(\C) \times E'(\C),$$
which is a morphism of $\C$-analytic Lie groups. This shows that $C(\C)$ is a subgroup of $E(\C) \times E'(\C)$ and therefore that $C_\C$ is an algebraic subgroup of $(E \times E')_\C$, and thus an elliptic curve of origin $(O_{E,\C}, O_{E',\C})$).

This implies that $C$ is an elliptic curve (of origin $(O_E, O_{E'})$), and that the projections from $C$ to $E$ and $E'$ are dominant (since the projections from $C(\C)$ to $E(\C)$ and $E'(\C)$ are surjective), and finally that $E,$ $C,$ and $E'$ are isogenous.  
  
\appendix

\medskip

\chapter{Large deviations and Cram\'er's theorem}\label{Append:LD}

In this Appendix, we present some basic results in the theory of large deviations in a form suited to the application to Euclidean lattices discussed in Section \ref{asymptoticho}.

In particular, we formulate a general version of Cram\'er's theory of large deviation (Theorem \ref{theoCramer}). Recall that, 
a measurable function
$$H: \cE \lra \R$$ being given 
on some probability space $(\cE, \cT, \mu)$, this theory describes the asymptotic behavior, when the positive integer $n$ goes to infinity, of the values of the function $$H_n: \cE^n \lra \R$$
defined by 
\begin{equation}\label{Fn}
H_n (e_1, \ldots, e_n) :=  H(e_1) + \ldots + H(e_n).
\end{equation}
This description is formulated in terms of the integral 
$$\log \int_X e^{\xi H} \, d\mu,$$ 
as a function of $\xi \in \R$ with values in $]-\infty, +\infty]$, and of its Legendre-Fenchel transform.

Actually, for the application to Euclidean lattices of the theory of large deviations, we rely on some extension of Cram\'er's theorem covering the situation where $\mu$ is an arbitrary $\sigma$-finite positive measure, assuming that $H$ is non-negative.\footnote{Indeed, in Section \ref{asymptoticho}, to investigate the properties of the ``asymptotic invariants" $\hont(\Eb, t)$ attached to some Euclidean lattice $\Eb := (E, \Vert.\Vert)$, 
we consider the situation where $\cE$ is the free $\Z$-module $E$ underlying  $\Eb$, where $\mu$ is the counting measure $\sum_{e\in E} \delta_e$, and where $H$ is the squared Euclidean norm $\Vert. \Vert^2.$}

Such extensions of Cram\'er's theorem are possibly known to some experts, but for lack of  references, in Sections \ref{Cramerinfinite} and  \ref{ReforComp}, we   formulate and establish suitable versions of Cram\'er's theorem (Theorems \ref{theoCramermeasurepos} and \ref{thermo}) covering the case where $\mu(\cE)$ is possibly $+\infty$.
We achieve this by a simple reduction to the case where $\mu$ is a probability measure.

In the first two sections (\ref{NotPrem} and \ref{sub:Lanford}) of this Appendix, we also extend  various preliminary results concerning the asymptotic behavior of the values of $H_n$ on $\cE^n$ when $n$ goes to infinity to this more general setting where $\mu(\cE)$ is possibly $+\infty$. Here again, for lack of suitable references, we have included some details concerning these ``well-known" results. Then, in Section \ref{CramerTH}, we recall diverse  forms of the ``classical" Cram\'er's theorem, concerning the situation where $\mu$ is a probability measure.

Let us finally point out that the generalized version of Cram\'er's theorem presented  in this Appendix has close relations to the so-called \emph{canonical ensembles} in statistical mechanics and with the existence of thermodynamic limits. We do not discuss this in details, but we simply refer the reader to \cite{Khinchin49} and \cite{Schroedinger62} for presentations of statistical thermodynamics that emphasize the points of contact between statistical mechanics and limit theorems in probability\footnote{See also \cite{Lanford73} and \cite{Ellis85} for related mathematical results and references.}, and we indicate that the notation in Section \ref{ReforComp} has been chosen to express these relations.

The first paragraph \ref{AppendixScholium} of the final Section \ref{ReforComp} of this Appendix summarizes some of its main results in a form suitable for their applications in Section \ref{asymptoticho}. Section \ref{ReforComp} has been written to be accessible without a knowledge of the formalism previously developed in this Appendix, and could be read immediately after this introduction.

\section{Notation and preliminaries}\label{NotPrem}
\subsection{Notation} In this Appendix, we consider a measure space $(\cE, \cT,\mu)$ defined by a set $\cE$, a $\sigma$-algebra $\cT$ over $\cE$, and   a non-zero $\sigma$-finite  non-negative measure $\mu : \cT \lra [0,+\infty]$.

Besides, we consider a $\cT$-measurable function $$H : \cE \lra \R.$$
We shall denote by ${\inf}_\mu H$ (resp. ${\sup}_\mu H$) its essential infimum (resp. supremum) with respect to the measure $\mu.$

For every positive integer $n$, we may introduce the $n$-th power of the measure space $(\cE,\cT, \mu)$ --- it is defined as the product $\cE^n$ of $n$ copies of $\cE,$ equipped with the $\sigma$-algebra
$$\cT^{\otimes n} := \cT \otimes \cdots \otimes \cT  \,\,\, \mbox{($n$-times)}$$
over  $\cE^n$ and with the product $\sigma$-finite measure
$$\mu^{\otimes n} := \mu \otimes \cdots \otimes \mu  \,\,\, \mbox{($n$-times)}$$
on $\cT^{\otimes n}$ --- and we may consider the fonction $H_n: \cE^n \lra \R$ defined by (\ref{Fn}).

\subsection{Log-Laplace transform} Recall that a function $p:\R \lra ]-\infty, + \infty]$ is lower semi-continuous and convex if and only if 
$$\Gr^{\geq}(p) := \{ (x,y) \in \R  ^2 \mid y \geq p(x) \}$$ is a closed convex subset of $\R^2.$
One easily shows that this holds precisely when there exists an interval $I \subset \R$ such that the following conditions are satisfied:
\begin{enumerate}
\item $p_{\mid I}$ is real valued, continuous and convex;
\item $p_{\R \setminus I} \equiv +\infty;$
\item if $\mathring{I} \neq \emptyset$ and if $a := \inf I \notin I \cup \{-\infty \},$ then $\lim_{x \ra a_+} = + \infty;$

\noindent if $\mathring{I} \neq \emptyset$ and if $b := \sup I \notin I \cup \{+\infty \},$ then $\lim_{x \ra b_-} = + \infty.$
\end{enumerate}

If $p:\R \lra ]-\infty, + \infty]$ is non-increasing and convex, then there exists a unique $c \in [-\infty,+\infty]$ such that,  
for any $x \in \R,$
$$x < c \Longrightarrow p(x) \in \R \,\,\text{ and }\,\, x > c \Longrightarrow p(x) = +\infty,$$
and $p$ is lower semi-continuous if and only if 
$$\lim_{x \ra c_-} p(x) = p(c).$$

Similar remarks apply \emph{mutatis mutandis} to non-decreasing convex functions, and to concave functions from $\R$ to $[-\infty, + \infty[.$

 The following proposition is a straightforward consequence of the basic results of Lebesgue integration theory:

\begin{proposition}\label{defp} For every $\xi \in \R,$ the integral $\int_X \exp(\xi H)\, d\mu$ belongs to $]0, +\infty]$ and the function 
$$ \ell : \R \lra ]-\infty, +\infty]$$
defined by 
$$\ell(\xi) := \log \int_X e^{\xi H} \, d\mu$$
is lower semi-continuous and convex.

Moreover the restriction $\ell_{\mid \mathring{I}}$ of $\ell$ to the interior $\mathring{I}$ of the interval $I := \ell^{-1}(\R)$ is real analytic. Unless $H$ is constant $\mu$-almost everywhere, it satisfies $\ell''_{\mid \mathring{I}} > 0$ and defines an increasing real analytic diffeomorphism  $\ell': \mathring{I} \lrasim \ell'(\mathring{I})$ between the open intervals  $\mathring{I}$ and $\ell'(\mathring{I})$ in $\R.$ \qed
 \end{proposition}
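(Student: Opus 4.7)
The plan is to verify each assertion in turn, using standard tools of measure theory and complex analysis, after noting that the positivity of the exponential together with $\sigma$-finiteness of $\mu \neq 0$ guarantees $\int_{\cE} e^{\xi H} d\mu > 0$ for every $\xi \in \R$: indeed there exists $A \in \cT$ with $\mu(A) \in\,]0,+\infty[$, and writing $A = \bigcup_{n \in \N} A \cap \{|H| \leq n\}$ we find some $n$ with $\mu(A \cap \{|H| \leq n\}) > 0$, on which $e^{\xi H} \geq e^{-|\xi| n}$, yielding a positive lower bound for the integral. This settles assertion 1.

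For convexity of $\ell$, I would invoke H\"older's inequality: for $\xi_1, \xi_2 \in \R$ and $t \in [0,1]$, writing $e^{(t\xi_1 + (1-t)\xi_2) H} = (e^{\xi_1 H})^{t} (e^{\xi_2 H})^{1-t}$ and applying H\"older with conjugate exponents $1/t$ and $1/(1-t)$ gives
\begin{equation*}
\int_{\cE} e^{(t\xi_1 + (1-t)\xi_2) H} d\mu \leq \Bigl(\int_{\cE} e^{\xi_1 H} d\mu\Bigr)^{t} \Bigl(\int_{\cE} e^{\xi_2 H} d\mu\Bigr)^{1-t},
\end{equation*}
and taking logarithms yields $\ell(t\xi_1 + (1-t)\xi_2) \leq t \ell(\xi_1) + (1-t) \ell(\xi_2)$. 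For lower semi-continuity, given $\xi_n \to \xi$ in $\R$, the functions $e^{\xi_n H}$ converge pointwise to $e^{\xi H}$, and Fatou's lemma gives $\int_{\cE} e^{\xi H} d\mu \leq \liminf_n \int_{\cE} e^{\xi_n H} d\mu$; since $\log$ is non-decreasing and lower semi-continuous on $]0, +\infty]$, this yields $\ell(\xi) \leq \liminf_n \ell(\xi_n)$.

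For assertion 3, I would show that $\ell$ extends to a holomorphic function on a complex neighborhood of $\mathring{I}$. Fix $\xi_0 \in \mathring{I}$ and choose $r > 0$ with $[\xi_0 - r, \xi_0 + r] \subset \mathring{I}$. For any complex $z$ with $|\Re z - \xi_0| \leq r$ one has the domination $|e^{z H}| = e^{(\Re z) H} \leq e^{(\xi_0 - r) H} + e^{(\xi_0 + r) H}$, which is $\mu$-integrable by convexity of $\ell$ at the endpoints. Standard holomorphy under the integral sign (differentiating under $\int$ via dominated convergence, with $|H|^{k} e^{(\Re z) H}$ estimated by $C_k (e^{(\xi_0 - r')H} + e^{(\xi_0 + r')H})$ for any $r' \in\,]0, r[$) makes $L(z) := \int_{\cE} e^{z H} d\mu$ holomorphic and non-vanishing on the strip $|\Re z - \xi_0| < r$ (non-vanishing on the real axis by assertion 1, hence near $\xi_0$), so $\ell = \log L$ is real analytic on $\mathring{I}$.

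For assertion 4, a direct computation gives
\begin{equation*}
\ell''(\xi) = \frac{\int H^{2} e^{\xi H} d\mu}{\int e^{\xi H} d\mu} - \Bigl( \frac{\int H e^{\xi H} d\mu}{\int e^{\xi H} d\mu} \Bigr)^{2},
\end{equation*}
which is the variance of $H$ with respect to the probability measure $e^{\xi H} d\mu / L(\xi)$. Since $e^{\xi H} > 0$ everywhere, this probability measure is equivalent to $\mu$, so its variance vanishes if and only if $H$ is $\mu$-a.e.\ constant; this proves $\ell''_{\mid \mathring{I}} > 0$ in the non-degenerate case. The conclusion that $\ell'$ is an increasing real analytic diffeomorphism from $\mathring{I}$ onto $\ell'(\mathring{I})$ then follows from the inverse function theorem together with the fact that the image of an open interval by a continuous strictly increasing map is an open interval. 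The main obstacle, although minor, is justifying the interchange of differentiation and integration (and the holomorphy on a complex neighborhood) with the uniform domination described above; everything else is either a standard convexity argument or a direct calculation.
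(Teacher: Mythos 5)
Your proof is correct. The paper itself supplies no argument for this proposition --- it declares it ``a straightforward consequence of the basic results of Lebesgue integration theory'' and closes the statement with a tombstone --- and the steps you provide (positivity from $\sigma$-finiteness of the non-zero measure, H\"older's inequality for convexity, Fatou's lemma for lower semi-continuity, holomorphic extension with dominated differentiation under the integral sign for real analyticity on $\mathring{I}$, and the variance identity for $\ell''>0$ together with the equivalence of the tilted measure with $\mu$) are precisely the standard arguments the author is implicitly invoking.
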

 
 The function $\ell$ is appears in the litterature  under various names. It is sometimes called the \emph{log-Laplace transform} of the Borel measure $H_\ast \mu$ on $\R$, that is nothing but the ``law" of $H$ when $\mu$ is a probability measure. In this situation, it is also called the \emph{logarithmic moment generating function} of $H_\ast \mu$ (in
\cite{Stroock2011}, for instance).

\subsection{The functions $A^>_n$ and $A^<_n$, and $s_+$ and $s_-$}\label{AAss} To every $x \in \R,$ we may also attach the sequences $(A_{+,n}(x))_{n \geq 1}$ and $(A_{+,n}(x))_{n \geq 1}$ in $[0,+\infty]$ defined by:
 $$ A^>_{n}(x)=\mu^{\otimes n}(H_n^{-1}([nx, +\infty[)$$
 and 
 $$ A^<_{n}(x)=\mu^{\otimes n}(H_n^{-1}(\,]\!\!-\infty, nx])).$$
 
We shall be interested in situations where these sequences have an exponential asymptotic behavior, and accordingly, 
for every $x \in \R,$ we consider the following elements of $[-\infty, +\infty]$:
\begin{equation}\label{splus}
s_+(x) := \sup_{n \geq 1} \frac{1}{n} \log A^>_{n}(x) 
\end{equation}
and 
 \begin{equation}\label{smoins}
s_-(x) := \sup_{n \geq 1} \frac{1}{n} \log A^<_{n}(x).
\end{equation}

Clearly, for any positive integer $n$, the function  $ A^>_{n}: \R \lra [0,+\infty]$ is non-increasing and the function $ A^<_{n}: \R \lra [0,+\infty]$
is non-decreasing. Accordingly, the functions $s_+$ and $s_-: \R \lra [-\infty, +\infty]$ are respectively non-increasing and non-decreasing.

Observe also, that for any $x\in \R$, the following alternative holds: either (i) the function $H$ is $<x$ $\mu$-almost everywhere on $\cE$, and then $A^>_n(x) = 0$ for every positive integer $n$; or (ii) the function $H$ takes values $\geq x$ on some subset of positive $\mu$-measure, and then $A^>_n(x) > 0$ for every positive integer $n$. 

When $x={\sup}_\mu H,$ either case may occur, but we always have:
$$A^>_n({\sup}_\mu H) = \mu^{\otimes n}(H^{-1}({\sup}_\mu H)^n) = \mu(H^{-1}({\sup}_\mu H))^n,$$
and consequently:
$$s_+({\sup}_\mu H) = \log \mu(H^{-1}({\sup}_\mu H)).$$

A similar alternative holds for the sequence $(A^<_n(x))_{n \geq 1},$ and
$$s_-({\inf}_\mu H) = \log \mu(H^{-1}({\inf}_\mu H)).$$

\section{Lanford's inequalities}\label{sub:Lanford}
 In the present general framework (that does not require the finiteness of the measure $\mu$ and of the log-Laplace transform $\ell$), the following inequalities play a crucial role in the study of the asymptotic behavior of the functions $A^>_n$ and $A^<_n$ when $n$ goes to $+\infty$:

\begin{lemma}\label{Lanfordsub}
For any $(x,y)$ in $\R^2$ and any $(p,q)$ in $\N_{\geq 1}^2,$ we have:
\begin{equation}\label{subplus}
 A^>_p(x) . A^>_q(y) \leq A^>_{p+q}((px+qy)/(p+q))
\end{equation}
and
\begin{equation}\label{submoins}
 A^<_p(x) . A^<_q(y) \leq A^<_{p+q}((px+qy)/(p+q)).
\end{equation}
 \end{lemma}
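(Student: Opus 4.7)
The plan is to exploit the additive structure $H_{p+q}(e_1,\ldots,e_{p+q}) = H_p(e_1,\ldots,e_p) + H_q(e_{p+1},\ldots,e_{p+q})$ together with the canonical identification $(\cE^{p+q}, \cT^{\otimes (p+q)}, \mu^{\otimes (p+q)}) = (\cE^p \times \cE^q, \cT^{\otimes p} \otimes \cT^{\otimes q}, \mu^{\otimes p} \otimes \mu^{\otimes q})$.

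To establish (\ref{subplus}), I would set
$$P := H_p^{-1}([px, +\infty[) \subset \cE^p, \qquad Q := H_q^{-1}([qy, +\infty[) \subset \cE^q,$$
so that $\mu^{\otimes p}(P) = A^>_p(x)$ and $\mu^{\otimes q}(Q) = A^>_q(y)$. For any $(e_1,\ldots,e_{p+q}) \in P \times Q$, the additivity of $H_{p+q}$ gives
$$H_{p+q}(e_1,\ldots,e_{p+q}) = H_p(e_1,\ldots,e_p) + H_q(e_{p+1},\ldots,e_{p+q}) \geq px + qy,$$
which is precisely $(p+q)$ times $(px+qy)/(p+q)$. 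Hence
$$P \times Q \subset H_{p+q}^{-1}([px+qy, +\infty[).$$
Applying Tonelli's theorem to the non-negative measurable indicator $\mathbf{1}_{P \times Q} = \mathbf{1}_P \otimes \mathbf{1}_Q$ (this is where the $\sigma$-finiteness of $\mu$ matters, so that $\mu^{\otimes p} \otimes \mu^{\otimes q}$ factors correctly on product sets, including when one of the factors has infinite measure), one obtains
$$A^>_p(x) \cdot A^>_q(y) = \mu^{\otimes p}(P) \cdot \mu^{\otimes q}(Q) = (\mu^{\otimes p} \otimes \mu^{\otimes q})(P \times Q) \leq A^>_{p+q}\bigl((px+qy)/(p+q)\bigr).$$

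The inequality (\ref{submoins}) is proved by the same argument with $P' := H_p^{-1}(\,]\!-\!\infty, px])$ and $Q' := H_q^{-1}(\,]\!-\!\infty, qy])$, the only change being that the additivity of $H_{p+q}$ now yields $H_{p+q} \leq px + qy$ on $P' \times Q'$, so $P' \times Q' \subset H_{p+q}^{-1}(\,]\!-\!\infty, px+qy])$.

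There is no real obstacle here; the one small point requiring care is the handling of the value $+\infty$ in the product $A^>_p(x) \cdot A^>_q(y)$ (with the convention $0 \cdot \infty = 0$, which is consistent both with the measure-theoretic convention and with the inclusion argument above, since if either factor has measure zero then $P \times Q$ has product measure zero, and otherwise the inclusion forces the right-hand side to be infinite whenever the left-hand side is).
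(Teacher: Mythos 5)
Your proof is correct and follows exactly the paper's argument: the inclusion $H_p^{-1}([px,+\infty[)\times H_q^{-1}([qy,+\infty[)\subset H_{p+q}^{-1}([px+qy,+\infty[)$ coming from the additivity of $H_{p+q}$, followed by applying the product measure. The extra remarks on Tonelli and the $0\cdot(+\infty)$ convention are fine but not a departure from the paper's route.
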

In (\ref{subplus}) and (\ref{submoins}), we use the standard measure theoretical convention: $$0.(+ \infty) = (+\infty).0 =0.$$

Together with 
the  subadditivity and  concavity arguments leading to the proofs of Proposition \ref{trichotomy} and Theorem \ref{existlim} \emph{infra},
this lemma originates in Lanford's work 
\cite{Lanford73} on the rigorous derivation of ``thermodynamic limits" in statistical mechanics.

\begin{proof} Observe that the following inclusions hold between $\cT^{\otimes n}$-measurable subsets of $\cE^n$:
\begin{equation}\label{incplus}
 H_p^{-1}([px, +\infty[)\times H_{q}^{-1}([qy, +\infty[) \subset H_{p+q}^{-1}([px+qy, +\infty[) 
\end{equation}
and
\begin{equation}\label{incmoins}
H_p^{-1}(]-\infty, px]) \times H_{q}^{-1}(]-\infty, qy]) \subset H_{p+q}^{-1}(]-\infty, px+qy]).
\end{equation}

By applying the measure $\mu^{\otimes p+q}$ to both sides of (\ref{incplus}) (resp., of (\ref{incmoins})), we get the estimate  (\ref{subplus}) (resp. the estimate (\ref{submoins})).
\end{proof}
 
When $y=x,$ Lemma \ref{Lanfordsub} asserts that
\begin{equation}\label{subpluseasy}
 A^>_p(x) A^>_p(x) \leq A^>_{p+q}(x)
\end{equation}
and
\begin{equation}\label{submoinseasy}
 A^<_p(x) A^<_q(x) \leq A^<_{p+q}(x).
\end{equation}
 In other words, the sequences $(\log A^>_n(x))_{n \geq 1}$ and $(\log A^<_n(x))_{n \geq 1}$ are superadditive.
 
Actually, combined with the alternative in \ref{AAss} above concerning the vanishing or the non-vanishing of the $A^>_n(x)$ and with the  Lemma of Fekete on superadditive sequences (Lemma \ref{lemFekete}), the estimates (\ref{subpluseasy}) easily imply the following:

\begin{proposition}\label{trichotomy}
 For every $x \in \R$, precisely one of the following three conditions is satisfied:

${\bf O}^> :$ for every positive  integer $n$, $A^>_n(x) = 0$;

${\bf F}^> :$ for every positive integer $n$, $A^>_n(x) \in ]0, +\infty[;$ then the sequence 
 $((\log A^>_n(x))/n)_{n\geq 1}$ admits a limit:
\begin{equation}
\lim_{n \ra +\infty} \frac{1}{n} \log A^>_{n}(x) = \sup_{n \geq 1} \frac{1}{n} \log A^>_{n}(x) =:s_+(x) \in ]-\infty, +\infty].
\end{equation}
 
 ${\bf I}^> :$   for some positive integer $n_0,$ $A_{n_0}^>(x) = + \infty.$ Then, for every integer $n \geq n_0,$ $A_{n}^>(x) = + \infty.$
 
 A similar trichotomy holds with the sequence $(A^>_n(x))_{n\geq 1}$ replaced by $(A^<_n(x))_{n\geq 1}$, and $s_+(x)$ by $s_-(x),$ and defines conditions ${\bf O}^<$, ${\bf F}^<$ and ${\bf I}^<$.\qed
\end{proposition}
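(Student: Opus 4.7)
The starting point is the dichotomy recorded just before the proposition (in the paragraph following the definitions~(\ref{splus})--(\ref{smoins})): for every fixed $x\in\R$, either $H<x$ holds $\mu$-almost everywhere on $\cE$, in which case $H_n<nx$ holds $\mu^{\otimes n}$-a.e.\ on $\cE^n$ for every $n\geq 1$ and hence $A^>_n(x)=0$ for every $n$; or else $\mu(H^{-1}([x,+\infty[))>0$, and taking an $n$-fold product of such a set shows $A^>_n(x)>0$ for every $n\geq 1$. Thus the first alternative is condition $\mathbf{O}^>$, and we may assume from now on that $A^>_n(x)\in\,]0,+\infty]$ for every $n\geq 1$. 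Within this case, either all the $A^>_n(x)$ lie in $]0,+\infty[$ (condition $\mathbf{F}^>$) or there exists $n_0\geq 1$ with $A^>_{n_0}(x)=+\infty$ (condition $\mathbf{I}^>$). The three conditions are then tautologically mutually exclusive, so the trichotomy is established.

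Next I would verify the assertion in case $\mathbf{I}^>$: if $A^>_{n_0}(x)=+\infty$, then for every integer $n\geq n_0$, applying (\ref{subpluseasy}) with $p:=n_0$ and $q:=n-n_0$ (the inequality is trivial when $n=n_0$) yields
\[
A^>_{n_0}(x)\cdot A^>_{n-n_0}(x) \;\leq\; A^>_n(x).
\]
Since we are in the regime where $A^>_k(x)>0$ for every $k\geq 1$, the factor $A^>_{n-n_0}(x)$ is strictly positive, and the standard measure-theoretic convention then forces the left-hand side to equal $+\infty$, whence $A^>_n(x)=+\infty$.

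For case $\mathbf{F}^>$, the sequence $(\log A^>_n(x))_{n\geq 1}$ is real-valued, and inequality~(\ref{subpluseasy}), after taking logarithms, reads
\[
\log A^>_{p+q}(x) \;\geq\; \log A^>_p(x)+\log A^>_q(x)\quad\text{for all }p,q\geq 1,
\]
so $(\log A^>_n(x))_{n\geq 1}$ is superadditive. Fekete's lemma (Lemma~\ref{lemFekete}) then gives
\[
\lim_{n\to+\infty}\frac{1}{n}\log A^>_n(x) \;=\; \sup_{n\geq 1}\frac{1}{n}\log A^>_n(x) \;=\; s_+(x),
\]
the last equality being the definition~(\ref{splus}); the limit lies in $]-\infty,+\infty]$ because the supremum is bounded below by $\log A^>_1(x)>-\infty$. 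The proof of the dual trichotomy for $(A^<_n(x))_{n\geq 1}$ is entirely analogous, using (\ref{submoinseasy}) in place of (\ref{subpluseasy}) and the parallel dichotomy for $A^<_n(x)$ that follows from $H\leq x$ a.e.\ versus $\mu(H^{-1}(]\!-\!\infty,x]))>0$. No step poses a genuine obstacle; the only delicate point is the bookkeeping in case $\mathbf{I}^>$, where one must invoke the dichotomy to know that $A^>_{n-n_0}(x)>0$ so that the convention $0\cdot(+\infty)=0$ does not apply.
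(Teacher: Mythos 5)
Your proof is correct and follows exactly the route the paper indicates: the dichotomy of paragraph \ref{AAss} to separate case ${\bf O}^>$ from the regime where all $A^>_n(x)$ are positive, the Lanford inequality (\ref{subpluseasy}) to propagate the value $+\infty$ in case ${\bf I}^>$, and Fekete's lemma applied to the superadditive real sequence $(\log A^>_n(x))_{n\geq 1}$ in case ${\bf F}^>$. The paper leaves these details as an "easily imply" assertion, and your write-up supplies them faithfully, including the one delicate point about the convention $0\cdot(+\infty)=0$.
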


We may consider the subsets $I_{{\bf I}^>}$,  $I_{{\bf F}^>}$ and $I_{{\bf O}^>}$   of $\R$ defined by each the conditions  ${{\bf F}^>}$,  ${{\bf I}^>}$ and ${{\bf O}^>}$. Clearly they constitute disjoint consecutive intervals, and we have: 
$$\R = I_{{\bf I}^>} \cup I_{{\bf F}^>}     \cup I_{{\bf O}^>}$$
and
$$s_+^{-1}(-\infty) = I_{{\bf O}^>} = \{ x \in \R \mid \mu(H^{-1}([x, +\infty[)) = 0\}.$$

Similarly, we define disjoint consecutive intervals $I_{{\bf O}^<}$, $I_{{\bf F}^<}$ and $I_{{\bf I}^<}$ such that 
$$\R= I_{{\bf O}^<} \cup I_{{\bf F}^<} \cup I_{{\bf I}^<}$$
and we have:
$$s_-^{-1}(-\infty) = I_{{\bf O}^<} = \{ x \in \R \mid \mu(H^{-1}(]-\infty, x])) = 0\}.$$

\begin{lemma} For any two points $x$ and $y$ in  $ s_+^{-1}(]-\infty, +\infty]) = I_{{\bf I}^>} \cup I_{{\bf F}^>}$ (resp., in $ s_-^{-1}(]-\infty, +\infty]) = I_{{\bf I}^<} \cup I_{{\bf F}^<}$) and any two positive rational number $\alpha$ and $\beta$ such that $\alpha + \beta = 1,$ we have:
\begin{equation}\label{concaverat}
\alpha s_+(x) + \beta s_+(y) \leq s_+ (\alpha x + \beta y)
\end{equation}      
$$(\text{resp., } \alpha s_-(x) + \beta s_-(y) \leq s_- (\alpha x + \beta y)).$$
 \end{lemma}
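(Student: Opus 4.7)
The plan is to read off both inequalities directly from Lanford's inequality (Lemma \ref{Lanfordsub}) combined with the existence of the exponential limits established in Proposition \ref{trichotomy}. I describe the argument for $s_+$; the case of $s_-$ is identical, using (\ref{submoins}) in place of (\ref{subplus}).

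Write $\alpha = p/(p+q)$ and $\beta = q/(p+q)$ with $p, q$ positive integers, so that $\alpha x + \beta y = (px + qy)/(p+q)$. For each positive integer $N$, applying (\ref{subplus}) to the pair $(Np, Nq)$ at the points $x, y$ yields
\[
A^>_{Np}(x) \cdot A^>_{Nq}(y) \leq A^>_{N(p+q)}(\alpha x + \beta y).
\]
Taking the logarithm (with the conventions $\log 0 = -\infty$ and $\log(+\infty) = +\infty$) and dividing by $N(p+q)$ gives
\[
\alpha \cdot \frac{1}{Np}\log A^>_{Np}(x) + \beta \cdot \frac{1}{Nq}\log A^>_{Nq}(y) \leq \frac{1}{N(p+q)}\log A^>_{N(p+q)}(\alpha x + \beta y).
\]

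I would then let $N \to \infty$. By Proposition \ref{trichotomy}, for any $z \notin I_{{\bf O}^>}$, the sequence $(n^{-1}\log A^>_n(z))_{n \geq 1}$ converges to $s_+(z) \in {]{-\infty}, +\infty]}$: either $z \in I_{{\bf F}^>}$ and Fekete's lemma applies, or $z \in I_{{\bf I}^>}$ and the sequence is eventually $+\infty = s_+(z)$. In particular, the limit persists along any subsequence of the form $(Nk)_{N \geq 1}$ for a fixed $k$. Applied to $z = x$ and $z = y$, which lie outside $I_{{\bf O}^>}$ by hypothesis, this shows the left-hand side converges to $\alpha s_+(x) + \beta s_+(y)$ in ${]{-\infty}, +\infty]}$; no $\infty - \infty$ indeterminacy arises. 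For the right-hand side, it suffices to check that $\alpha x + \beta y \notin I_{{\bf O}^>}$: the set $I_{{\bf O}^>}$ is an upper interval in $\R$ (since $z \in I_{{\bf O}^>}$ amounts to $\mu(\{H \geq z\}) = 0$, and $z \leq z'$ gives $\{H \geq z'\} \subset \{H \geq z\}$), and $\alpha x + \beta y \leq \max(x, y) \notin I_{{\bf O}^>}$, whence $\alpha x + \beta y \notin I_{{\bf O}^>}$. Thus the right-hand side also converges, to $s_+(\alpha x + \beta y)$, and passing to the limit establishes (\ref{concaverat}).

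The argument contains no genuine obstacle: the only point warranting care is the bookkeeping of infinities in the limit, and this is handled exactly by the hypothesis $x, y \in s_+^{-1}({]{-\infty}, +\infty]})$, which also guarantees via the monotonicity of $I_{{\bf O}^>}$ that the conclusion makes sense at the point $\alpha x + \beta y$.
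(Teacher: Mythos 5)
Your proof is correct and follows essentially the same route as the paper: apply Lanford's inequality (\ref{subplus}) (resp. (\ref{submoins})) to the pair of integers $(Np,Nq)$ with $\alpha=p/(p+q)$, $\beta=q/(p+q)$, divide by $N(p+q)$, and pass to the limit using the convergence established in Proposition \ref{trichotomy}. The paper phrases this as choosing $n$ with $n\alpha, n\beta\in\N$ and letting $n\to\infty$, which is the same argument; your extra bookkeeping of the infinities and of the fact that $\alpha x+\beta y\notin I_{{\bf O}^>}$ is a welcome but inessential refinement.
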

 \begin{proof}
 Consider positive a positive integer $n$ such that $p:= n\alpha$ and $q:=n\beta$ are integers. Clearly $p$ and $q$ satisfy $p+q =n,$ and from (\ref{subplus}), by applying $(1/n) \log$, we derive:
 $$\alpha \frac{1}{p} \log A^>_p(x) + \beta \frac{1}{q} A^>_q(y) \leq  \frac{1}{n} \log A^>_n (\alpha x + \beta y).$$
 The estimate (\ref{concaverat}) follows by taking the limit when $n$ goes to infinity.
\end{proof}

The inequality (\ref{concaverat}) implies that, if $s_+^{-1}(+\infty)$ is not empty, then $ s_+^{-1}(]-\infty, +\infty[)$ contains at most one point. 
When one investigates the asymptotic behavior of the numbers $(A_n^>(x))_{n\geq 1}$, it is sensible to exclude this case and to assume that the following condition is satisfied:

$\mathbf{B}^> :$ \emph{ For every $x \in \R,$ $s_+(x) < +\infty.$}
 
\noindent Clearly, this condition implies that $I_{{\bf I}^>}$ is empty.

A similar discussion applies to  $s_-^{-1}(+\infty)$ and l$ s_-^{-1}(]-\infty, +\infty[)$, and leads one to introduce the condition:
 
$\mathbf{B}^< :$ \emph{ For every $x \in \R,$ $s_-(x) < +\infty.$} 

\begin{lemma}\label{BBis}
 The conditions $\mathbf{B}^>$ and $\mathbf{B}^<$ are  satisfied if $\mu(\cE) < +\infty.$
 
 More generally, if there exists $\xi$ in $\R_+$ (resp., in $\R_-$) such that $\ell(\xi) < +\infty,$ then $\mathbf{B}^>$ (resp., $\mathbf{B}^<$) is  satisfied.
 \end{lemma}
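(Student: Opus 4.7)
The plan is to apply the exponential Markov (Chernoff) inequality to the sums $H_n$, combined with the factorization $\int_{\cE^n} e^{\xi H_n}\,d\mu^{\otimes n} = e^{n\ell(\xi)}$ which follows from Tonelli's theorem (the integrand is non-negative and measurable, and $\mu$ is $\sigma$-finite). The first assertion will appear as the degenerate case $\xi=0$ of the second.

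For the more general statement, suppose $\xi \in \R_+$ satisfies $\ell(\xi) < +\infty$, and fix $x \in \R$. On the measurable set $H_n^{-1}([nx,+\infty[)$ one has $\xi H_n \geq \xi n x$ (here $\xi \geq 0$ is crucial), so
\[
e^{\xi n x} A^>_n(x) \leq \int_{\cE^n} e^{\xi H_n}\,d\mu^{\otimes n} = \left(\int_{\cE} e^{\xi H}\,d\mu\right)^n = e^{n\ell(\xi)}.
\]
Taking $\tfrac{1}{n}\log$ and then the supremum over $n \geq 1$ yields $s_+(x) \leq \ell(\xi) - \xi x < +\infty$, which is exactly $\mathbf{B}^>$. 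The case of $\mathbf{B}^<$ with $\xi \in \R_-$ proceeds symmetrically: on $H_n^{-1}(\,]{-}\infty, nx])$ the inequality $\xi H_n \geq \xi n x$ again holds because now $\xi \leq 0$, and the identical computation gives $s_-(x) \leq \ell(\xi) - \xi x < +\infty$.

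For the first assertion, if $\mu(\cE) < +\infty$, then the trivial bounds $A^>_n(x), A^<_n(x) \leq \mu(\cE)^n$ immediately force $s_\pm(x) \leq \log \mu(\cE) < +\infty$. Alternatively, this is the $\xi = 0$ instance of the general argument, since $\ell(0) = \log \mu(\cE)$ is finite under this hypothesis.

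There is no real obstacle here: the argument is a direct Chernoff-type estimate, and the only point requiring attention is matching the sign of $\xi$ with the direction of the cut-off so that the exponential substitution on the super- or sub-level set $\{H_n \gtrless nx\}$ goes the right way.
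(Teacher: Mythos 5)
Your proof is correct and follows essentially the same route as the paper's: the Chernoff-type bound $e^{n\xi x}A^>_n(x)\le \int_{\cE^n}e^{\xi H_n}\,d\mu^{\otimes n}=e^{n\ell(\xi)}$ on the super-level set, giving $s_+(x)\le \ell(\xi)-\xi x<+\infty$, with the symmetric argument for $\mathbf{B}^<$ and the trivial bound $A^{\gtrless}_n(x)\le\mu(\cE)^n$ for the finite-mass case. Nothing to add.
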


\begin{proof}
 When $\mu(\cE) < +\infty,$ $s_+(x)$ and $s_-(x)$ are  bounded from above by $\log \mu(\cE)$ for every $x \in \R.$
 
 Let us assume that $\xi$ is an element of $\R_+$ such that $p(\xi) < +\infty.$ Then, for every positive integer $n$, we have:
 $$\int_{\cE^n} e^{\xi H_n} \, d\mu^{\otimes n} = \left(\int_\cE e^{\xi H} \, d\mu \right)^n = e^{n p(\xi)} < +\infty.$$
 Besides, for every $x \in \R$ and every $P \in H_n^{-1}([nx, +\infty[),$
 $$e^{\xi H_n(P)} \geq e^{n \xi x}.$$
 Therefore
 $$\mu^{\otimes n}(H_n^{-1}([nx, +\infty[) e^{n \xi x} \leq e^{n p(\xi)}$$
 and
 $$\frac{1}{n}\log A^>_n(x) \leq p(\xi) -\xi x.$$
 This shows that
 $$s_+(x) \leq p(\xi) -\xi x < + \infty.$$
 
 The proof of $\mathbf{B}^<$ when there exists $\xi$ in $\R_-$ such that $p(\xi) < +\infty$ is similar.
\end{proof}
 
 The following theorem summarizes and completes some of the results  obtained so far concerning the asymptotic behavior of the functions 
 $A^>_n$ and $A^<_n$:

\begin{theorem}\label{existlim}
 1) If Condition $\mathbf{B}^>$ is satisfied, then, for every $x \in \R$, 
 either :
 \begin{itemize}
\item $\mu^{\otimes n}(H_n^{-1}([nx, +\infty[))$ $=0$ for every positive integer 
   $n$ and $s_+(x) = -\infty$, or
   \item  the sequence $(\log \mu^{\otimes n}(H_n^{-1}([nx, +\infty[)))_{n\geq 1}$ lies in $\R$, is superadditive, and satisfies:
   \begin{equation}\label{existlimplus}\lim_{n \ra + \infty} \frac{1}{n} \log \mu^{\otimes n}(H_n^{-1}([nx, +\infty[)) = s_+(x) \in \R.
\end{equation}
\end{itemize}

 The function $s_+ : \R \lra [-\infty, +\infty[$  is   non-increasing and concave.
 
  Moreover, for any $x \in \R,$
$$x < {\sup}_\mu H \Longrightarrow s_+(x) \in \R \,\,\text{ and }\,\, x >{\sup}_\mu H \Longrightarrow s_+(x)= -\infty, $$
 and $$s_+({\sup}_\mu H) = \log \mu(H^{-1}({\sup}_\mu H)).$$
 
 2) Symetrically, if Condition $\mathbf{B}^<$ is satisfied, then, for every $x \in \R$, either 
 \begin{itemize}
\item $\mu^{\otimes n}(H_n^{-1}(]-\infty, nx])) =0$ for every positive integer $n$ and $s_-(x) = -\infty$, or 
\item the sequence $(\log \mu^{\otimes n}(H_n^{-1}(]-\infty, nx])))_{n\geq 1}$ lies in $\R$, is superadditive, and  satisfies:
   \begin{equation}\label{existlimmoins}
   \lim_{n \ra + \infty} \frac{1}{n} \log \mu^{\otimes n}(H_n^{-1}(]-\infty,nx])) = s_-(x) \in \R.
\end{equation}
\end{itemize}
 
 The function $s_- : \R \lra [-\infty, +\infty[$  is   non-decreasing and concave. 
 
 Moreover, for any $x \in \R,$
$$x < {\inf}_\mu H \Longrightarrow s_-(x) = -\infty \,\,\text{ and }\,\, x >{\inf}_\mu H \Longrightarrow s_-(x) \in \R, $$
and $$s_-({\inf}_\mu H) = \log \mu(H^{-1}({\inf}_\mu H)).$$
\end{theorem}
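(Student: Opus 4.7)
The strategy is to derive Part 1 by stitching together the structural results already developed in this section, and then obtain Part 2 by the symmetric argument: replacing $H$ by $-H$ and $x$ by $-x$ exchanges $A^>_n$ with $A^<_n$, $s_+$ with $s_-$, and $\mathbf{B}^>$ with $\mathbf{B}^<$, while sending ${\sup}_\mu H$ to $-{\inf}_\mu H$; so the whole argument reduces to Part 1.

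For the existence of the limit (\ref{existlimplus}), I would first observe that $\mathbf{B}^>$ rules out case $\mathbf{I}^>$ of Proposition \ref{trichotomy}, since in case $\mathbf{I}^>$ some $A^>_n(x)$ equals $+\infty$, forcing $s_+(x) = +\infty$. Hence the trichotomy collapses to the dichotomy between $\mathbf{O}^>$ (in which $s_+(x) = -\infty$ and there is nothing further to prove) and $\mathbf{F}^>$. In case $\mathbf{F}^>$ each $\log A^>_n(x)$ is a real number, and the sequence $(\log A^>_n(x))_{n \geq 1}$ is superadditive: taking $\log$ of (\ref{subpluseasy}) gives $\log A^>_p(x) + \log A^>_q(x) \leq \log A^>_{p+q}(x)$. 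Fekete's Lemma (Lemma \ref{lemFekete}) then yields that $(1/n) \log A^>_n(x)$ converges to its supremum $s_+(x)$, which is finite by $\mathbf{B}^>$.

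Monotonicity of $s_+$ is immediate from the monotonicity of each $A^>_n(\cdot)$. For concavity, I would start from the $\Q$-concavity inequality (\ref{concaverat}), already derived in the text from Lanford's inequality (\ref{subplus}) by dividing by $p+q$, taking logarithms, and letting $p, q \to \infty$ along multiples of the denominators of the rational weights; under $\mathbf{B}^>$ it applies to all $x, y \in \R$. I would then promote $\Q$-concavity to full $\R$-concavity by exploiting monotonicity: given $x < y$ in $\R$ and $\lambda \in (0,1)$, set $t := (1-\lambda)x + \lambda y$, pick rationals $\lambda_n \in (\lambda, 1) \cap \Q$ with $\lambda_n \searrow \lambda$, and let $t_n := (1-\lambda_n)x + \lambda_n y$, so $t_n \searrow t$ with $t_n > t$. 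The $\Q$-concavity combined with monotonicity gives
\[
(1-\lambda_n)\, s_+(x) + \lambda_n\, s_+(y) \;\leq\; s_+(t_n) \;\leq\; s_+(t),
\]
and passing to the limit $n \to \infty$ yields the desired inequality $(1-\lambda) s_+(x) + \lambda s_+(y) \leq s_+(t)$. (If $s_+(x)$ or $s_+(y)$ equals $-\infty$, the inequality is automatic under the convention $\alpha \cdot (-\infty) = -\infty$.)

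Finally, for the behavior near ${\sup}_\mu H$: if $x > {\sup}_\mu H$, then $H_n < nx$ holds $\mu^{\otimes n}$-almost everywhere on $\cE^n$, so $A^>_n(x) = 0$ and $s_+(x) = -\infty$. If $x < {\sup}_\mu H$, the definition of the essential supremum gives $\mu(S) > 0$ for $S := H^{-1}([x, +\infty[)$, whence $S^n \subset H_n^{-1}([nx, +\infty[)$ yields $A^>_n(x) \geq \mu(S)^n > 0$, so $s_+(x) \geq \log \mu(S) > -\infty$; together with $\mathbf{B}^>$ this gives $s_+(x) \in \R$. The value $s_+({\sup}_\mu H) = \log \mu(H^{-1}({\sup}_\mu H))$ was already computed in Paragraph \ref{AAss}. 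The most delicate point is the extension of $\Q$-concavity to $\R$-concavity, since $\Q$-concavity alone does not imply $\R$-concavity without some regularity hypothesis; here the monotonicity of $s_+$ (ensuring $s_+(t_n) \leq s_+(t)$ for $t_n \searrow t$) is the essential ingredient that makes the one-sided rational approximation work.
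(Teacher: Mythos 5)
Your proof is correct and takes essentially the same route as the paper's: the limit and the dichotomy come from Proposition \ref{trichotomy} (Fekete plus the superadditivity (\ref{subpluseasy})), the endpoint behaviour from the remarks in \ref{AAss}, concavity from the rational inequality (\ref{concaverat}), and Part 2 by applying Part 1 to $-H$. The one point the paper leaves as a ``straightforward approximation argument'' --- upgrading $\Q$-concavity to $\R$-concavity using the monotonicity of $s_+$ and one-sided rational approximation --- is exactly what you spell out, and your version of it is correct.
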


\begin{proof}
 At this stage, to complete the proof of 1), we simply need to observe that the inequality (\ref{concaverat}) holds  for any two points $x$ and $y$ in $s_+^{-1}(]-\infty, +\infty[)$, not only when the coefficient $(\alpha, \beta)$ are positive rational number such that $\alpha + \beta =1$, but more generally for any two positive real numbers $(\alpha, \beta)$ such  $\alpha + \beta =1$ : as $s_+$ is non-increasing, this follows from a straightforward approximation argument. This establishes  the concavity of $s_+$.
 
 The proof of 2) is similar, or follows from 1) applied to $-H$ instead of $H$. 
\end{proof}
 
\section{Cram\'er's theorem}\label{CramerTH}

In this section, we assume that \emph{the measure $\mu$ is a probability measure}. Observe that it implies that
$$\ell(0) = 0.$$
In particular, the interval $I:=\ell^{-1}(\R)$ is non-empty and contains $0$. Moreover, the conditions $\mathbf{B}^>$ and $\mathbf{B}^<$ are  satisfied.

\subsection{}The following theorem is the formulation, in measure theoretic language, of a general version of Cramer's theorem concerning the ``empirical means" $$\overline{X}_n := \frac{1}{n} \sum_{i=1}^n X_i$$
attached to a sequence $(X_i)_{i\geq 1}$ of independent and identically distributed real-valued random variables. 
We refer the reader to the article \cite{CerfPetit2011} of Cerf and Petit for  a short and elegant proof.\footnote{The function $s$ in  \emph{loc. cit.} corresponds to the function $s_+$ defined above. The assertions concerning $s_-$ in Theorem \ref{theoCramer} follow from the ones concerning $s_+$ applied to the function  $-H$ instead of $H$.}

\begin{theorem}\label{theoCramer} 
For every $x \in \R,$ we have:
\begin{equation}\label{Cramersplus}
s_+(x) = \inf_{\xi \in \R_+} (\ell(\xi) - \xi x)
\end{equation}
and 
\begin{equation}\label{Cramersmoins}
s_-(x) = \inf_{\xi \in \R_-} (\ell(\xi) - \xi x).
\end{equation}

Moreover, for any $\xi \in \R_+$ (resp., for any $\xi \in \R_-$), we have:
\begin{equation}\label{Cramerpplus}
 \ell(\xi) = \sup_{x \in \R} (\xi x + s_+(x))
 \end{equation}
 \begin{equation}\label{Cramerpmoins}
\mbox{\emph{(}resp.  $\ell(\xi) = \sup_{x \in \R} (\xi x + s_-(x))$\emph{)}}. \qedhere
 \end{equation}
\end{theorem}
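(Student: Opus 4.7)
The proof splits into two independent pieces: first establishing the Legendre-type formulas \eqref{Cramersplus}--\eqref{Cramersmoins} for $s_\pm$, then deducing \eqref{Cramerpplus}--\eqref{Cramerpmoins} by duality from the fact (Proposition \ref{defp}) that $\ell$ is lower semi-continuous and convex on $\R$, with $\ell(0)=0$ since $\mu$ is a probability measure. By symmetry (apply the argument to $-H$), it suffices to treat $s_+$; I will describe the approach for \eqref{Cramersplus}.

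The easy half, the Chernoff upper bound, is immediate from Markov's inequality: for any $\xi\geq 0$ and any $x\in\R$, on the event $\{H_n\geq nx\}\subset\cE^n$ one has $e^{\xi H_n}\geq e^{n\xi x}$, so
\begin{equation*}
e^{n\xi x}A^>_n(x)\;\leq\;\int_{\cE^n}e^{\xi H_n}\,d\mu^{\otimes n}\;=\;\Bigl(\int_\cE e^{\xi H}\,d\mu\Bigr)^{n}\;=\;e^{n\ell(\xi)},
\end{equation*}
using Fubini and the independence built into $\mu^{\otimes n}$. Taking $(1/n)\log$, passing to the limit via Theorem \ref{existlim}, and then to the infimum over $\xi\geq 0$ yields $s_+(x)\leq \inf_{\xi\geq 0}(\ell(\xi)-\xi x)$.

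For the matching lower bound -- the main obstacle -- the strategy is \emph{exponential tilting} (Cram\'er's change of measure). Fix $\xi$ in the interior $\mathring I$ of $I:=\ell^{-1}(\R)$ with $\xi\geq 0$, and introduce the tilted probability measure $d\mu_\xi := e^{\xi H-\ell(\xi)}\,d\mu$ on $\cE$. Under $\mu_\xi$ the function $H$ is integrable with mean $\ell'(\xi)$ (by Proposition \ref{defp}) and finite variance on compacts in $\mathring I$. Rewriting
\begin{equation*}
A^>_n(x)\;=\;\int_{\{H_n\geq nx\}}\!e^{-\xi H_n+n\ell(\xi)}\,d\mu_\xi^{\otimes n}\;\geq\;e^{n(\ell(\xi)-\xi(x+\epsilon))}\,\mu_\xi^{\otimes n}\bigl(nx\leq H_n\leq n(x+\epsilon)\bigr)
\end{equation*}
for $\xi\geq 0$ and any $\epsilon>0$. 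For $x=\ell'(\xi)$, the weak law of large numbers applied to the i.i.d.\ sequence of coordinate projections on $(\cE^\N,\mu_\xi^{\otimes\N})$ forces the $\mu_\xi^{\otimes n}$-probability above to converge to a positive limit (in fact to $1/2$ up to lower-order terms, by the central limit theorem or even a direct Chebyshev argument). Hence $s_+(\ell'(\xi))\geq \ell(\xi)-\xi\ell'(\xi)-\xi\epsilon$; letting $\epsilon\downarrow 0$ and varying $\xi\in\mathring I\cap\R_+$ covers every $x$ in the open interval $\ell'(\mathring I\cap\R_+)$. The remaining values of $x$ are handled by the monotonicity and concavity of $s_+$ (Theorem \ref{existlim}) together with the behaviour of $\ell$ at the endpoints of $I$: on $(\sup_\mu H,+\infty)$ both sides equal $-\infty$, at $x=\sup_\mu H$ both sides equal $\log\mu(H^{-1}(\sup_\mu H))$, and on the interval below $\inf\ell'(\mathring I\cap\R_+)$ the infimum in $\xi\geq 0$ is attained as $\xi\to 0$, giving $\ell(0)-0=0$, which matches $s_+(x)$ by the law of large numbers applied to $\mu$ itself. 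The delicate boundary behaviour is precisely where Cerf--Petit's streamlined argument is useful, and I would follow their reduction via \eqref{subplus} to avoid the separate case analysis.

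Finally, granted \eqref{Cramersplus}, the formula \eqref{Cramerpplus} is Fenchel--Moreau biduality. Indeed $s_+$ is concave, non-increasing and upper semi-continuous (Theorem \ref{existlim}), so $-s_+(\cdot)$ restricted to the appropriate half-line is a proper lsc convex function; \eqref{Cramersplus} identifies $-s_+$ with the Legendre transform, restricted to $\R_+$, of the lsc convex function $\ell$. Applying the Fenchel--Moreau theorem recovers $\ell(\xi)$ on $\R_+$ as the Legendre transform of $-s_+$, which is exactly \eqref{Cramerpplus}. The only subtlety is that the infimum in \eqref{Cramersplus} ranges over $\R_+$ rather than all of $\R$, but this is harmless because $\ell$ is convex with $\ell(0)=0$: for $\xi\geq 0$ fixed, any $\xi'<0$ contributes $\ell(\xi')-\xi' x\geq \xi'x\geq 0$ for $x\leq 0$ and, in the relevant range $x\geq 0$, the supremum in \eqref{Cramerpplus} is controlled by the values $x\geq \ell'(0^+)$ where $s_+$ is governed by $\xi\geq 0$. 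The symmetric argument with $H$ replaced by $-H$ gives \eqref{Cramersmoins} and \eqref{Cramerpmoins}.
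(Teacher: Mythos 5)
The paper does not actually prove Theorem \ref{theoCramer}: it delegates the whole statement to Cerf--Petit \cite{CerfPetit2011}, adding only (in a footnote) that the $s_-$ assertions follow from the $s_+$ ones applied to $-H$. Your argument is therefore necessarily a different route, namely the classical one: Chernoff's bound for $s_+(x)\leq\inf_{\xi\in\R_+}(\ell(\xi)-\xi x)$, exponential tilting plus the central limit theorem for the reverse inequality at $x=\ell'(\xi)$ with $\xi\in\mathring{I}\cap\R_+$, and convex biduality for \eqref{Cramerpplus}. Those steps are sound; in particular the passage from \eqref{Cramersplus} to \eqref{Cramerpplus} is cleaner than your ``only subtlety'' paragraph suggests, since \eqref{Cramersplus} says exactly that $-s_+$ is the conjugate of the proper, convex, lower semi-continuous function $\ell+\iota_{\R_+}$ (Proposition \ref{defp}), and Fenchel--Moreau applied to that function gives \eqref{Cramerpplus} with no further case analysis. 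By contrast, Cerf--Petit argue in the opposite order: using only the Lanford inequality \eqref{subplus} (hence Theorem \ref{existlim}: existence of the limit and concavity of $s_+$) and a slicing of $\cE^n$ by the level sets of $H_n$, they prove the ``sup'' identity \eqref{Cramerpplus} first and then recover \eqref{Cramersplus} by concave biduality --- no change of measure, no limit theorems, no moment hypotheses.

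The genuine weak point in your write-up is the boundary of the effective domain. The tilting argument only reaches $x\in\ell'(\mathring{I}\cap\R_+)$; the remaining cases --- $x$ between $\sup\ell'(\mathring{I}\cap\R_+)$ and ${\sup}_\mu H$ when $I\cap\R_+$ is a bounded interval, and the case $I\cap\R_+^\ast=\emptyset$, where one must show that $A^>_n(x)$ decays subexponentially for heavy-tailed $H$ whose negative part need not be integrable --- are exactly where Cram\'er's theorem without moment assumptions is delicate. Invoking ``their reduction via \eqref{subplus}'' does not patch this: in Cerf--Petit the subadditivity is not a supplement to a tilting argument but the backbone of a differently ordered proof, so it cannot be grafted onto yours. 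To keep your architecture you would need a separate truncation argument for those ranges of $x$; otherwise the honest course is the paper's, namely to cite \cite{CerfPetit2011} for the full statement.
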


Observe also that
functions $s_+$ and $s_-$ take their values in $[-\infty, 0]$. Moreover, according to (\ref{Cramerpplus}) and (\ref{Cramerpmoins}), they are upper semi-continuous and satisfy:
\begin{equation}\label{ApA} \lim_{x \ra -\infty} s_+(x) = 0, 
\end{equation} %\,\,\text{ and }\,\, 
$$\lim_{x \ra ({\sup}_\mu H)_-} s_+(x) = s_+({\sup}_\mu H) = \log \mu(H^{-1}({\sup}_\mu H))$$
and 
$$\lim_{x \ra ({\inf}_\mu H)_+} s_-(x) = s_-({\inf}_\mu H) = \log \mu(H^{-1}({\inf}_\mu H)),$$
\begin{equation}\label{ApB}% \,\,\text{ and }\,\,  
\lim_{x \ra +\infty} s_-(x) = 0.
\end{equation}

Under the terminology ``Cram\'er's Theorem", one usually means the conjunction of the existence of the limits (\ref{existlimplus}) and (\ref{existlimmoins}) for every $x\in \R$ that is established  in Theorem \ref{existlim}, together with the expressions (\ref{Cramersplus}) and (\ref{Cramersmoins}) of these limits in terms of the log-Laplace transform $\ell$ stated in Theorem \ref{theoCramer}. 

   The reader will also refer to \cite{CerfPetit2011} for additional references about large deviations and Cram\'er's theorem,  notably   concerning  the successive contributions, starting from Cram\'er's seminal article (\cite{Cramer38}), which have led to the present simple and general formulation of Cram\'er's theorem.  
  
Let us also indicate that Chernoff's version of Cram\'er's theorem (\cite{Chernoff52}, Theorem 1) would be enough to derive Theorem \ref{theoCramermeasurepos} below.

\subsection{} The  version  of Cram\'er's theorem formed by Theorems \ref{existlim} and  \ref{theoCramer}, when $\mu$ is a probability measure,  may be supplemented by the following observations, intended to clarify its relation with other presentations of Cram\'er's theory of large deviations (see for instance
\cite{Stroock2011}, Section 1.3).

Let us define  $m_+ \in [-\infty, +\infty]$ and $m_- \in [-\infty, +\infty]$ as the ``right and left derivatives"
\begin{equation*}\label{mplus}
m_+ := \inf_{\xi \in \R_+^\ast} \ell(\xi)/\xi = \lim_{\xi \ra 0_+} \ell(\xi)/\xi =: \ell'_r (0).
\end{equation*}
and
\begin{equation*}\label{mmoins}
m_- := \sup_{\xi \in \R_-^\ast} \ell(\xi)/\xi = \lim_{\xi \ra 0_-} \ell(\xi)/\xi =: \ell'_l (0).
\end{equation*}
of the lower semi-continuous convex function $\ell$ at $0$.

 We clearly have: 
$$m_+ \leq m_-$$
and,  from Theorem \ref{theoCramer}, we easily get:

\begin{corollary}\label{corCramer} For any $x\in \R,$ we have:
\begin{equation}\label{ApC}
s_+(x) = 0 \Longleftrightarrow x \leq m_+
\end{equation}
and 
\begin{equation}\label{ApD}
 s_-(x) = 0 \Longleftrightarrow x \geq m_-.
 \end{equation}
Moreover the function
\begin{equation}\label{ApE}
s := \min (s_-,s_+) : \R \lra [-\infty, +\infty]
\end{equation}
is upper semi-continuous and concave, and the functions $\ell$ and $-s$ may be deduced from each other by Legendre-Fenchel duality:
\begin{equation*}\label{Cramers}
\text{ for every $x \in \R,$ } s(x) = \inf_{\xi\in\R} (\ell(\xi) - \xi x)
\end{equation*}
and 
\begin{equation*}\label{Cramerp}
\text{ for every $\xi \in \R,$ } \ell(\xi) = \sup_{x \in \R} (\xi x + s(x)).
\end{equation*}
\qed 
\end{corollary}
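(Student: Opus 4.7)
The plan is to unpack the four assertions of the corollary in order, using only Theorem~\ref{theoCramer} together with the fact that $\ell(0)=0$ and $\ell$ is convex and lower semi-continuous.

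First I would prove the equivalences (\ref{ApC}) and (\ref{ApD}) by plugging $\xi=0$ into (\ref{Cramersplus}) and (\ref{Cramersmoins}). Since $\ell(0)=0$, both $s_+(x)$ and $s_-(x)$ are $\leq 0$ for every $x$. The equality $s_+(x)=0$ therefore amounts to $\ell(\xi)-\xi x\geq 0$ for all $\xi>0$, i.e.\ $x\leq \ell(\xi)/\xi$ for all $\xi>0$, which is exactly $x\leq m_+$. For $s_-$, the same argument with $\xi<0$ produces the sign flip upon dividing by $\xi$, yielding $\ell(\xi)/\xi \leq x$ for all $\xi<0$, equivalently $m_-\leq x$.

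Next I would combine (\ref{Cramersplus}) and (\ref{Cramersmoins}) to get the compact formula
\[
s(x)=\min(s_+(x),s_-(x))=\min\bigl(\inf_{\xi\geq 0}(\ell(\xi)-\xi x),\ \inf_{\xi\leq 0}(\ell(\xi)-\xi x)\bigr)=\inf_{\xi\in\R}\bigl(\ell(\xi)-\xi x\bigr),
\]
which proves (\ref{ApE}) and the first Legendre identity. Because for each fixed $\xi$ the map $x\mapsto \ell(\xi)-\xi x$ is affine, hence continuous and concave, the infimum $s$ is upper semi-continuous and concave as claimed.

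The remaining step — the reverse Legendre formula $\ell(\xi)=\sup_x(\xi x+s(x))$ — is the one I expect to be the main obstacle, because $s=\min(s_+,s_-)$ is \emph{smaller} than either half-line rate function and one must show that this shrinkage does not lose information in the supremum. Setting $\ell^{\ast}:=-s$, one has $\ell^{\ast}(x)=\sup_{\xi\in\R}(\xi x-\ell(\xi))$, so $\ell^{\ast}$ is the Legendre--Fenchel conjugate of $\ell$; the identity to be proved is precisely $(\ell^{\ast})^{\ast}=\ell$, the Fenchel--Moreau biconjugation theorem, which applies because $\ell$ is proper (it vanishes at $0$), convex, and lower semi-continuous (Proposition~\ref{defp}). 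The ``$\leq$'' direction is in any case obvious from $s\leq s_{\pm}$ combined with (\ref{Cramerpplus})--(\ref{Cramerpmoins}). For readers who prefer a self-contained argument, I would instead establish the reverse inequality directly: for $\xi>0$ one splits the supremum in (\ref{Cramerpplus}) according to whether $x<m_-$ or $x\geq m_-$. On the region $x\geq m_-$ one has $s_-(x)=0\geq s_+(x)$, hence $s(x)=s_+(x)$ and that portion of the supremum equals the corresponding portion of $\sup_x(\xi x+s(x))$. On the region $x<m_-\leq m_+$ one has $s_+(x)=0$, so $\xi x+s_+(x)=\xi x$ is bounded above by $\xi m_-=\xi m_-+s(m_-)$ using $s(m_-)\leq 0$ and the non-empty overlap $m_-\leq m_+$ forced by convexity of $\ell$, and this latter value is again dominated by $\sup_x(\xi x+s(x))$. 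A symmetric splitting handles $\xi<0$, and the case $\xi=0$ reduces to noting that the overlap interval $[m_-,m_+]$ makes $\sup_x s(x)=0=\ell(0)$.
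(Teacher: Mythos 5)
Your proof is correct, and it is essentially the argument the paper has in mind: the corollary is stated with no written proof beyond ``from Theorem \ref{theoCramer}, we easily get,'' and your write-up is precisely the natural unpacking of (\ref{Cramersplus})--(\ref{Cramerpmoins}) using $\ell(0)=0$ and the convexity and lower semi-continuity of $\ell$. The equivalences (\ref{ApC})--(\ref{ApD}), the identity $s=\min(s_-,s_+)=\inf_{\xi\in\R}(\ell(\xi)-\xi x)$, and the concavity/upper semi-continuity of $s$ as an infimum of affine functions are all handled correctly, and the Fenchel--Moreau biconjugation argument for $\ell(\xi)=\sup_x(\xi x+s(x))$ is clean and complete since $\ell$ is proper, convex and lower semi-continuous by Proposition \ref{defp}.

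Two remarks. First, your case analysis rests on $m_-\leq m_+$, which you correctly attribute to convexity (the difference quotient $\ell(\xi)/\xi$ is non-decreasing, so $\sup_{\xi<0}\ell(\xi)/\xi\leq\inf_{\xi>0}\ell(\xi)/\xi$). The displayed inequality ``$m_+\leq m_-$'' in the paragraph of the paper introducing $m_\pm$ has the two quantities interchanged; your version is the right one, and indeed the corollary itself forces it, since $m_+<m_-$ would give a point $x$ with $s_+(x)<0$ and $s_-(x)<0$, hence $\sup_x s(x)<0=\ell(0)$, contradicting the case $\xi=0$ of the duality formula. Second, in the self-contained alternative to Fenchel--Moreau, the step ``$\xi x$ is bounded above by $\xi m_-=\xi m_-+s(m_-)$'' tacitly assumes $m_-$ finite (it is then fine, because $m_-\leq m_+$ gives $s_+(m_-)=s_-(m_-)=0$, hence $s(m_-)=0$); when $m_-=-\infty$ the region $x<m_-$ is empty, and when $m_-=+\infty$ (e.g.\ $H\geq 0$ with $\int_\cE H\,d\mu=+\infty$) one should instead note directly that $s=s_-\to 0$ at $+\infty$ forces $\sup_x(\xi x+s(x))=+\infty=\ell(\xi)$ for $\xi>0$. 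Since the biconjugation argument you give first already settles the identity in all cases, I would keep that one as the proof and treat the splitting argument as an optional illustration.
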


Observe finally that, if we denote by $H^+$ and $H^-$ the positive and negative parts\footnote{namely the $\R_+$-valued functions on $\cE$ defined by $H^+ := \max (H,0)$ and $H^-:= \max(-H,0).$}
  of $H$, then the following conditions are equivalent:
\begin{enumerate}
\item $m_+ < +\infty,$
\item $I \cap \R_+^\ast \neq \emptyset,$
\item for some $\epsilon \in \R^\ast_+,$ the function $\exp(\epsilon H_+)$ is $\mu$-integrable,
\end{enumerate}
and the following ones as well:
\begin{enumerate}
\item $m_- > -\infty,$
\item $I \cap \R_-^\ast \neq \emptyset,$
\item for some $\epsilon \in \R^\ast_+,$ the function $\exp(\epsilon H_-)$ is $\mu$-integrable.
\end{enumerate}
In particular, if $m_+ < +\infty$ and $m_- > -\infty$, then $I$ contains some neighborhood of $0$ in $\R,$ the function $H$ is $\mu$-integrable, and
$$m_+ = m_-= \ell'(0) = \int_E H \, d\mu.$$

More generally, if  $m_+ < +\infty$ (resp., $m_- > -\infty$), the  integral
$$\int_E H \, d\mu := \int_E H^+ \, d\mu - \int_E H^- \, d\mu$$
is well-defined in $[-\infty, +\infty [$ (resp., in $]-\infty, +\infty]$) and is easily seen to be equal to $m_+$ (resp., to $m_-$).

\section{An extension of Cram\'er's Theorem concerning general positive measures}\label{Cramerinfinite}

In this section, the measure $\mu$ is allowed to have a total mass $\mu(E)$ different of $1$. 

When $\mu(\cE)$ is finite, one may apply the results in the previous section with $\mu$ replaced by the probability measure
$$\mu_0 := \mu(\cE)^{-1} \mu.$$
In this way, one easily sees that Theorem  \ref{theoCramer}  still holds \emph{ne varietur}, and that its consequences remain valid with minor modifications (in \eqref{ApA}-\eqref{ApB} and in \eqref{ApC}-\eqref{ApE}, $0$ has to be replaced by $\log \mu(\cE)$).

When the total mass $\mu(\cE)$ is $+\infty,$ then for any $x \in \R,$
$$A_1^>(x) + A_1^<(x)= +\infty$$
and therefore $s_+(x) = +\infty$ or $s_-(x) = + \infty.$ Therefore the conditions ${\bf B}^>$ and ${\bf B}^<$ cannot be simultaneously satisfied, and only one of the two functions $s_+$ and $s_-$ may have an interesting behavior. 

In this section, we shall focus on $s_-$, and we will show that the results in Theorem \ref{theoCramer} concerning $s_-$  admit a sensible generalization when $H$ is bounded from below --- say when ${\inf}_\mu H$ is non-negative.\footnote{The interested reader will have no difficulty in extending the results of this section to the more general situation where ${\inf}_\mu H > -\infty.$}

\subsection{} In this paragraph, we assume that \emph{the interval $I:= \ell^{-1}(\R)$ is not empty}, and we choose some element $\xi_0$ in $I$.

Then the integral
$$\int_\cE e^{\xi_0 H} d\mu = e^{\ell(\xi_0)}$$
belongs to $]0,+\infty[$ and the measure $$\mu_0 := \left( \int_\cE e^{\xi_0 H} d\mu \right)^{-1}  e^{\xi_0 H }\, \mu$$
is a probability measure on the measurable space $(\cE,\cT)$. Therefore, we may apply the constructions and the results of the previous sections to the probability space $(\cE,\cT, \mu_0)$ and to the function $H$. 

Let us denote by $\ell_0$, $s_{0 +},$ and $s_{0 -}$ the functions $\ell$, $s_+$, and $s_-$ associated to these new data. The following lemma is a straightforward consequence of the definitions:

\begin{lemma}\label{mumuo}
 For every non-negative integer $n,$ and every $(\xi, x)$ in $\R^2,$ we have:
 \begin{equation}\label{L1}
\mu_0^{\otimes n} = e^{\xi_0 H_n - n p(\xi_0)}\; \mu^{\otimes n},
\end{equation}
 \begin{equation}\label{L2}
\ell_0(\xi) = \ell(\xi + \xi_0) - \ell(\xi_0),
\end{equation}
and
\begin{equation}\label{L3}
\frac{1}{n} \log \mu_{0}^{\otimes n} (H_n^{-1}(]-\infty, nx ])) = \frac{1}{n} \log \int_{H_n^{-1}(]-\infty, nx ])} e^{\xi_0 H_n} \, d\mu^{\otimes n} - \ell(\xi_0). 
\end{equation}
\qed
\end{lemma}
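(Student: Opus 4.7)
The plan is to verify the three identities (\ref{L1}), (\ref{L2}), (\ref{L3}) as essentially formal computations, using only the defining formula $\mu_0 = e^{\xi_0 H - \ell(\xi_0)} \mu$ and the definition of product measures and of the log-Laplace transform. None of the three asks for more than unwinding definitions, so I do not expect any serious obstacle; the work is purely bookkeeping.

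First I would establish (\ref{L1}). The measure $\mu_0$ admits the density $e^{\xi_0 H - \ell(\xi_0)}$ with respect to $\mu$ by definition. The product measure $\mu_0^{\otimes n}$ on $(\cE^n, \cT^{\otimes n})$ is then characterized on ``rectangles'' $A_1 \times \cdots \times A_n$ by
$$\mu_0^{\otimes n}(A_1 \times \cdots \times A_n) = \prod_{i=1}^n \int_{A_i} e^{\xi_0 H - \ell(\xi_0)}\, d\mu = \int_{A_1 \times \cdots \times A_n} e^{\xi_0 H_n - n \ell(\xi_0)}\, d\mu^{\otimes n},$$
using Fubini's theorem and the definition $H_n(e_1,\dots,e_n) = \sum_i H(e_i)$. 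Since both sides define $\sigma$-finite measures on $\cT^{\otimes n}$ agreeing on a $\pi$-system generating $\cT^{\otimes n}$, they agree, giving (\ref{L1}).

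Next, for (\ref{L2}), I would simply compute, using (\ref{L1}) for $n=1$:
$$\ell_0(\xi) := \log \int_\cE e^{\xi H}\, d\mu_0 = \log \int_\cE e^{\xi H} e^{\xi_0 H - \ell(\xi_0)}\, d\mu = \log \int_\cE e^{(\xi + \xi_0) H}\, d\mu - \ell(\xi_0),$$
which is exactly $\ell(\xi + \xi_0) - \ell(\xi_0)$. Finally, for (\ref{L3}), I would apply (\ref{L1}) with the measurable set $A := H_n^{-1}(]-\infty, nx])$:
$$\mu_0^{\otimes n}(A) = \int_A e^{\xi_0 H_n - n \ell(\xi_0)}\, d\mu^{\otimes n} = e^{-n \ell(\xi_0)} \int_A e^{\xi_0 H_n}\, d\mu^{\otimes n},$$
then take the logarithm and divide by $n$ to obtain (\ref{L3}). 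The only thing to note is that the integral $\int_A e^{\xi_0 H_n} d\mu^{\otimes n}$ is finite (bounded by $e^{n\ell(\xi_0)} < +\infty$), so the logarithm makes sense unless $\mu_0^{\otimes n}(A) = 0$, in which case both sides are $-\infty$ with the usual convention.
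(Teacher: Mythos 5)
Your proof is correct and is exactly the "straightforward consequence of the definitions" that the paper intends (it gives no written proof of this lemma): the Radon–Nikodym identity for the product measure via Fubini on rectangles, then specialization to $n=1$ for (\ref{L2}) and to the set $H_n^{-1}(]-\infty,nx])$ for (\ref{L3}). The remark that both sides of (\ref{L3}) are $-\infty$ when the set is $\mu^{\otimes n}$-null is the right convention and consistent with the paper's usage.
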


By means of the relation (\ref{L1})-(\ref{L3}),  the assertions involving $s_{0-}$ and $p_0$ in  Theorems  \ref{existlim} and   \ref{theoCramer} applied to  the probability space $(E,\cT, \mu_0)$ and to the function $H$ may be reformulated as follows, without reference to the measures $\mu_0^{\otimes n}$:

\begin{corollary}\label{corxio}
Let $\xi_0$ be a real number such that $\ell(\xi_0) < + \infty.$

For every $x$ in $\R$, we have:
\begin{equation}\label{L4}
 \lim_{n \ra + \infty} \frac{1}{n} \log \int_{H_n^{-1}(]-\infty, nx ])} e^{\xi_0 H_n} %\exp(\xi_0 H_n)
  \, d\mu^{\otimes n} 
 = \inf_{\xi \in \R_-} (\ell(\xi + \xi_0) - \xi x). 
\end{equation}

Moreover, for every $\xi \in \R_-,$ 
\begin{equation}\label{L5}
\ell(\xi + \xi_0) = \sup_{x \in \R} \left(\xi x +  \lim_{n \ra + \infty} \frac{1}{n} \log \int_{H_n^{-1}(]-\infty, nx ])} e^{\xi_0 H_n} \, d\mu^{\otimes n}\right). 
\end{equation} \qed
\end{corollary}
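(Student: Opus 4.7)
The plan is to reduce Corollary \ref{corxio} to the probability-measure case treated in Theorems \ref{existlim} and \ref{theoCramer}, using the tilted probability measure $\mu_0$ and the translation identities provided by Lemma \ref{mumuo}. All the ``work" has already been done in the previous section and in that lemma; what remains is essentially a book-keeping transfer.

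First, I would note that, since $\ell(\xi_0) \in \R$, the measure $\mu_0 := e^{-\ell(\xi_0)} e^{\xi_0 H} \mu$ is a well-defined probability measure on $(\cE, \cT)$. As such, both $\mathbf{B}^>$ and $\mathbf{B}^<$ hold for $(\cE, \cT, \mu_0, H)$. Theorem \ref{existlim}, part 2), applied to this probability space, then guarantees that, for every $x \in \R$, the limit
$$
s_{0-}(x) := \lim_{n \to +\infty} \frac{1}{n} \log \mu_0^{\otimes n}\bigl(H_n^{-1}(]-\infty, nx])\bigr)
$$
exists in $[-\infty, 0]$. Combining this with identity (\ref{L3}) of Lemma \ref{mumuo}, namely
$$
\frac{1}{n} \log \int_{H_n^{-1}(]-\infty, nx])} e^{\xi_0 H_n}\, d\mu^{\otimes n} \;=\; \frac{1}{n} \log \mu_0^{\otimes n}\bigl(H_n^{-1}(]-\infty, nx])\bigr) + \ell(\xi_0),
$$
shows that the limit on the left-hand side of (\ref{L4}) exists and equals $s_{0-}(x) + \ell(\xi_0)$.

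Second, I would apply Theorem \ref{theoCramer} to $(\cE, \cT, \mu_0, H)$. This yields, for every $x \in \R$,
$$
s_{0-}(x) = \inf_{\xi \in \R_-}\bigl(\ell_0(\xi) - \xi x\bigr),
$$
and, for every $\xi \in \R_-$,
$$
\ell_0(\xi) = \sup_{x \in \R}\bigl(\xi x + s_{0-}(x)\bigr).
$$
Now I substitute identity (\ref{L2}), $\ell_0(\xi) = \ell(\xi + \xi_0) - \ell(\xi_0)$, into both formulas. For the first, adding $\ell(\xi_0)$ to both sides gives
$$
s_{0-}(x) + \ell(\xi_0) = \inf_{\xi \in \R_-}\bigl(\ell(\xi + \xi_0) - \xi x\bigr),
$$
which, combined with the previous step, is exactly (\ref{L4}). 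For the second, the equality $\ell(\xi + \xi_0) = \ell_0(\xi) + \ell(\xi_0)$ and the fact that $s_{0-}(x) + \ell(\xi_0)$ coincides with the limit appearing in (\ref{L5}) immediately yield (\ref{L5}).

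There is no real obstacle here: the nontrivial content (superadditivity/concavity leading to existence of limits, and Cram\'er duality for probability measures) was established in Theorems \ref{existlim} and \ref{theoCramer}, and the exponential tilt identities (\ref{L1})--(\ref{L3}) are recorded in Lemma \ref{mumuo}. The proof is simply the algebraic combination of these ingredients, with care taken to keep track of the additive constant $\ell(\xi_0)$ and of the shift $\xi \mapsto \xi + \xi_0$ induced by the tilt.
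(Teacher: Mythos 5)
Your proof is correct and follows exactly the route the paper takes: the paper derives Corollary \ref{corxio} by applying Theorems \ref{existlim} and \ref{theoCramer} to the tilted probability space $(\cE,\cT,\mu_0)$ and translating back through the identities (\ref{L1})--(\ref{L3}) of Lemma \ref{mumuo}. The book-keeping with the additive constant $\ell(\xi_0)$ and the shift $\xi\mapsto\xi+\xi_0$ is handled correctly.
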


Observe that every term of sequence in the left-hand side of \eqref{L4}, and consequently its limit, belongs to $[-\infty, \ell(\xi_0)]$. 
\subsection{An application} In this paragraph, we  apply the previous results to the situation where 
$H$ assumes only non-negative values on $\cE$.

We shall use the non-negativity of $H$ through the following simple observation:

\begin{lemma} Let us assume that $H(\cE) \subset \R_+$, and let us consider a positive integer $n$ and some elements $x$ of $\R_+$ and $\eta$ of $\R^\ast_+.$

Then $H_n^{-1} (]-\infty, nx])$ is empty if $x<0$ and equals  $H_n^{-1}([0,nx])$ if $x\geq 0.$ Therefore, every $P \in H_n^{-1} (]-\infty, nx])$ satisfies:
$$e^{-\eta n x} \leq e^{-\eta H_n(P)} \leq 1,$$
and consequently:
\begin{equation}\label{L6}
\frac{1}{n} \log  A_n^<(x) - \eta x
\leq
\frac{1}{n} \log \int_{H_n^{-1}(]-\infty, nx ])} e^{-\eta H_n} \, d\mu^{\otimes n} 
\leq \frac{1}{n} \log  A_n^<(x). 
\end{equation}\qed
\end{lemma}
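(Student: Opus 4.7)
The plan is to proceed by direct verification, since the statement is essentially a bookkeeping exercise combining non-negativity of $H$ with elementary estimates for the exponential.

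First, I would observe that since $H$ takes values in $\R_+$, the function $H_n = H \circ \pr_1 + \cdots + H \circ \pr_n$ also takes values in $\R_+$ on $\cE^n$. Consequently, for any real $x$, the preimage $H_n^{-1}(]-\infty, nx])$ coincides with $H_n^{-1}([0, nx])$ when $x \geq 0$, and is empty when $x < 0$. This justifies the first assertion of the lemma.

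Next, for $P \in H_n^{-1}(]-\infty, nx])$ with $x \geq 0$, we have $0 \leq H_n(P) \leq nx$, hence $-\eta n x \leq -\eta H_n(P) \leq 0$ and therefore $e^{-\eta n x} \leq e^{-\eta H_n(P)} \leq 1$, since $\eta > 0$. Integrating this double inequality against $\mu^{\otimes n}$ over the measurable set $H_n^{-1}(]-\infty, nx])$, and using the definition $A_n^<(x) = \mu^{\otimes n}(H_n^{-1}(]-\infty, nx]))$, I obtain
\begin{equation*}
e^{-\eta n x}\, A_n^<(x) \;\leq\; \int_{H_n^{-1}(]-\infty, nx ])} e^{-\eta H_n}\, d\mu^{\otimes n} \;\leq\; A_n^<(x).
\end{equation*}
Taking $\frac{1}{n}\log$ of all three quantities then yields \eqref{L6}. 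The only minor point to address is the degenerate cases: if $A_n^<(x) = 0$ the integral also vanishes and all three terms equal $-\infty$, while if $A_n^<(x) = +\infty$ the upper bound degenerates to $+\infty$, and in both cases the displayed inequality holds vacuously in $[-\infty, +\infty]$.

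There is no substantial obstacle here; the statement is a preparatory estimate designed to feed into Corollary~\ref{corxio} with $\xi_0 := -\eta$, so that the middle term of \eqref{L6} matches the integral appearing in \eqref{L4}. The lemma will then be used to squeeze $(1/n)\log A_n^<(x)$ between two quantities whose limits are computed via the version of Cram\'er's theorem established in the previous paragraph.
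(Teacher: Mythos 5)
Your proof is correct and is essentially identical to the paper's own justification, which is given inline in the statement of the lemma itself (non-negativity of $H_n$, the pointwise bound $e^{-\eta nx} \leq e^{-\eta H_n(P)} \leq 1$, integration over $H_n^{-1}(]-\infty,nx])$, and taking $\frac{1}{n}\log$). Your extra remark about the degenerate cases $A_n^<(x) \in \{0, +\infty\}$ is a harmless and correct addition that the paper leaves implicit.
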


We may now formulate the main result of this Appendix:

\begin{theorem}\label{theoCramermeasurepos} Let us assume that the following conditions are satisfied: 
\begin{equation*}%
H(\cE) \subset \R_+
\end{equation*}
and 
\begin{equation}\label{ellfiniteonRminus}
\text{ for any $\xi \in \R^\ast_-,$} \int_\cE e^{\xi H}\, d\mu < + \infty.
\end{equation}

Then, for every $x$ in $\R$,  we have:
 \begin{equation}\label{equCramerinfinites}
s_-(x)  = \inf_{\xi \in \R^\ast_-} (p(\xi) - \xi x).
\end{equation}
In particular, Condition $\mathbf{B}^<$ is satisfied and  the concave function $s_-: \R \lra [-\infty , +\infty[$ is upper-semicontinuous.

Moreover, for every $\xi \in \R^\ast_-,$ we have: 
\begin{equation}\label{equCramerinfinitep}
\ell(\xi) = \sup_{x \in \R} (\xi x +  s_-(x)).
\end{equation}
\end{theorem}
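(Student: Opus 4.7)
The overall strategy is to reduce to the probability-measure case of Cramér's theorem (already stated as Theorem \ref{theoCramer}) by twisting $\mu$ with $e^{\xi_0 H}$ for some $\xi_0 \in \R^\ast_-$, then to exploit the non-negativity of $H$ via the elementary squeeze \eqref{L6} to relate twisted and untwisted quantities, and finally to recover \eqref{equCramerinfinites} by letting $\xi_0 \to 0^-$. The sup-formula \eqref{equCramerinfinitep} will then follow from Fenchel--Moreau biduality. Lemma \ref{BBis} ensures Condition $\mathbf{B}^<$ is automatic from \eqref{ellfiniteonRminus}, so Theorem \ref{existlim}.2) gives the limit representation $s_-(x) = \lim_n \frac{1}{n}\log A^<_n(x) \in [-\infty,+\infty[$ we shall use.

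I first dispose of $x < 0$. Since $H(\cE) \subset \R_+$ forces $A^<_n(x)=0$, we get $s_-(x) = -\infty$. For the right-hand side of \eqref{equCramerinfinites}, fix any $\xi_0 \in \R^\ast_-$: since $e^{\xi H}$ is pointwise non-increasing in $|\xi|$ on $\{H \geq 0\}$, $\ell$ is non-increasing on $]-\infty,\xi_0]$, so $\ell(\xi) - \xi x \leq \ell(\xi_0) + |\xi|x \to -\infty$ as $\xi \to -\infty$, yielding $\inf_{\xi \in \R^\ast_-}(\ell(\xi)-\xi x) = -\infty$ as required.

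For $x \geq 0$ I fix $\xi_0 \in \R^\ast_-$ and apply Corollary \ref{corxio}. The substitution $\xi' = \xi + \xi_0$ in \eqref{L4} rewrites the limit on the left as $\inf_{\xi' \leq \xi_0}(\ell(\xi') - \xi' x) + \xi_0 x$. Meanwhile, inequality \eqref{L6} with $\eta = -\xi_0 > 0$, combined with the existence of $s_-(x) = \lim_n \frac{1}{n}\log A^<_n(x)$, gives after passage to the limit
\[
s_-(x) + \xi_0 x \;\leq\; \inf_{\xi' \leq \xi_0}(\ell(\xi')-\xi' x) + \xi_0 x \;\leq\; s_-(x),
\]
that is, $s_-(x) \leq \inf_{\xi' \leq \xi_0}(\ell(\xi')-\xi' x) \leq s_-(x) - \xi_0 x$. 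The main obstacle is now to let $\xi_0 \to 0^-$ carefully: the intervals $\{\xi' \leq \xi_0\}$ exhaust $\R^\ast_-$ from below, and monotone convergence of the infimum together with $\xi_0 x \to 0$ squeezes both outer bounds to $s_-(x)$, forcing $s_-(x) = \inf_{\xi' \in \R^\ast_-}(\ell(\xi')-\xi' x)$. This establishes \eqref{equCramerinfinites}; as an infimum of affine functions $s_-$ is concave and upper semi-continuous.

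For the dual formula \eqref{equCramerinfinitep}, define $\tilde{\ell} : \R \to ]-\infty,+\infty]$ by $\tilde{\ell}(\xi) := \ell(\xi)$ for $\xi < 0$ and $\tilde{\ell}(\xi) := +\infty$ for $\xi \geq 0$. Then \eqref{equCramerinfinites} reads $-s_-(x) = \sup_{\xi}(\xi x - \tilde{\ell}(\xi))$, i.e. $-s_- = \tilde{\ell}^\ast$ in the Legendre--Fenchel pairing. The function $-s_-$ is convex and lower semi-continuous (from the preceding paragraph), so Fenchel--Moreau biduality gives $(-s_-)^\ast = \tilde{\ell}^{\ast\ast}$, which is the largest lower semi-continuous convex minorant of $\tilde{\ell}$. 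The remaining step is to verify that $\tilde{\ell}^{\ast\ast}(\xi) = \ell(\xi)$ for every $\xi \in \R^\ast_-$: this is immediate from the fact that $\tilde{\ell}$ is already convex and continuous on the open set $\R^\ast_-$, while the only possible discrepancy between $\tilde{\ell}$ and $\tilde{\ell}^{\ast\ast}$ occurs at $\xi=0$ (where the envelope equals $\log \mu(\cE) \in [0,+\infty]$). Hence for every $\xi \in \R^\ast_-$, $\ell(\xi) = (-s_-)^\ast(\xi) = \sup_{x \in \R}(\xi x + s_-(x))$, which is \eqref{equCramerinfinitep}.
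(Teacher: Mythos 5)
Your proof is correct. For the Legendre formula \eqref{equCramerinfinites} your argument coincides with the paper's: both apply Corollary \ref{corxio} with $\xi_0\in\R^\ast_-$ (the paper writes $\xi_0=-\eta$), use the elementary squeeze \eqref{L6} coming from $H\geq 0$ to identify the twisted limit with $\inf_{\xi'\leq\xi_0}(\ell(\xi')-\xi'x)$ up to an error $\xi_0x$, and let $\xi_0\to 0^-$; your separate treatment of $x<0$ is harmless (the paper's squeeze handles it implicitly since both sides are $-\infty$ there). Where you genuinely diverge is the dual formula \eqref{equCramerinfinitep}: the paper goes back to the probabilistic machinery, combining \eqref{L5} for the twisted measure with \eqref{L7} and then invoking the continuity of $\ell$ on $\R^\ast_-$ to pass to the limit $\eta\to 0^+$ on both sides of a two-sided bound; you instead observe that \eqref{equCramerinfinites} exhibits $-s_-$ as the conjugate $\tilde\ell^{\,\ast}$ of the extension $\tilde\ell$ of $\ell|_{\R^\ast_-}$ by $+\infty$, so that $\sup_x(\xi x+s_-(x))=\tilde\ell^{\,\ast\ast}(\xi)$, and then use the standard fact that the biconjugate of a proper convex function agrees with it on the interior of its effective domain. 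Your route is cleaner and purely convex-analytic, at the price of importing Fenchel--Moreau; the paper's stays self-contained within the large-deviation estimates already established. Two trivial remarks: the identity $(-s_-)^\ast=\tilde\ell^{\,\ast\ast}$ holds by definition of the biconjugate, so you do not actually need the lower semicontinuity of $-s_-$ for that step; and in your parenthetical the value of the closed envelope at $\xi=0$ is $\log\mu(\cE)\in\;]-\infty,+\infty]$, not $[0,+\infty]$ (the measure $\mu$ need not have total mass $\geq 1$), but this plays no role in the argument.
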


\begin{proof} Let us consider  $x$ in $\R$ and $\eta$ in $\R^\ast_+.$

We may apply Corollary \ref{corxio} with $\xi_0 = -\eta,$ and observe that, when $n$ goes to $+\infty,$ the inequalities  
\eqref{L6} become
 \begin{equation}\label{L7}
s_-(x) - \eta x \leq  \lim_{n \ra + \infty} \frac{1}{n} \log \int_{H_n^{-1}(]-\infty, nx ])} e^{-\eta H_n} %\exp(\xi_0 H_n)
  \, d\mu^{\otimes n} 
 \leq s_-(x).
\end{equation}

According to \eqref{L4}, the middle term in \eqref{L7} 
is
$$ \inf_{\xi \in \R_-} (\ell(\xi - \eta) - \xi x) = \inf_{\xi \in ]-\infty, - \eta]} (\ell(\xi) - \xi x) - \eta x,$$
so that \eqref{L7} may be also be written:
$$s_-(x) - \eta x  \leq \inf_{\xi \in ]-\infty, - \eta]} (\ell(\xi) - \xi x) - \eta x \leq s_-(x).
$$
By taking the limit of these inequalities when $\eta$ goes to $0+$, we obtain \eqref{equCramerinfinites}.

Let $\xi$ be an element of $\R_-^\ast.$ The relation \eqref{L5} for $\xi_0:= -\eta,$ together with \eqref{L7}, shows that:
\begin{equation*}
\sup_{x \in \R} (\xi x + s_-(x) -\eta x) \leq \ell(\xi - \eta) \leq \sup_{x\in \R} (\xi x + s_-(x)).
\end{equation*}
Since $\ell$ is continuous on $\R_-^\ast,$ \eqref{equCramerinfinitep} follows.
 \end{proof}

\section{Reformulation and complements}\label{ReforComp}
In this section, for the convenience of the reader, we reformulate some of the results previously established in this Appendix without making reference to the formalism introduced in the previous sections. 

To emphasize the possible thermodynamical interpretation of these results, we will introduce  some new notation, related to the previous one through the following formulas:
$$\Psi(\beta) = \ell (-\beta) \;\; \mbox{ and } \;\; S(x) = s_-(x).$$
 
\subsection{A scholium}\label{AppendixScholium} Let us recall that we consider a measure space $(\cE, \cT,\mu)$ defined by a set $\cE$, a $\sigma$-algebra $\cT$ over $\cE$, and   a non-zero $\sigma$-finite  non-negative measure $\mu : \cT \lra [0,+\infty]$. For every positive integer $n,$ we  also consider the product $\mu^{\otimes n}$ of $n$ copies of the measure $\mu$ on $\cE^n$ equipped with the $\sigma$-algebra $\cT^{\otimes n}$.

Besides, we consider a $\cT$-measurable function $$H : \cE \lra \R_+.$$

For every $\beta \in \R_+^\ast$ such that $e^{-\beta H}$ is $\mu$-integrable, the integral $\int_\cE e^{-\beta H} d\mu$ is a positive real number, and we let:
\begin{equation}\label{Psidef}
\Psi(\beta) := \log \int_\cE e^{-\beta H} d\mu \;\; (\in \R).
\end{equation}

Under this integrability assumption, we also define:
$$F(\beta) := -\beta^{-1} \Psi(\beta).$$
Equivalently, $F(\beta)$ may be defined by the relation:
$$e^{-\beta F(\beta)} = \int_\cE e^{-\beta H} d\mu.$$

The $\mu$-integrability of the function $e^{-\beta H}$ for every $\beta \in \R^\ast_+$ is easily seen to be equivalent to the  {\bf s}ub-{\bf e}xponential growth of the function 
$$N: \R_+ \lra [0, +\infty]$$
defined by $$N(x) := \mu (H^{-1}[0,x]),$$ 
namely to the condition:

{\bf SE} \emph{For every $x \in \R_+,$ $N(x)$ is finite, and, when $x$ goes to} $+\infty,$ 
$$\log N(x) = o(x).$$

We shall also consider   the essential infimum $${\inf}_\mu H :=\inf \{ x \in \R_+ \mid N(x) > 0 \}$$ of $H$ with respect to the measure space $(\cE,\cT, \mu)$.

\begin{theorem}\label{thermo}
Let us keep the above notation and let us assume that Condition {\bf SE} is satisfied and that $\mu( \cE) = + \infty.$ 

1) For every $x$ in $]{\inf}_\mu H, +\infty[,$ the limit 
\begin{equation*}
S(x) := \lim_{n\ra + \infty} \frac{1}{n} \log \mu^{\otimes n}\left(\left\{ (e_1, \ldots, e_n) \in \cE^n \mid H(e_1) + \ldots + H(e_n) \leq n x \right\}\right)
\end{equation*}
exists in $\R$.

The function $S: \, ]{\inf}_\mu H, +\infty[ \lra \R$ is non-decreasing and concave, and satisfies
\begin{equation}\label{pourNernst}
\lim_{ x \ra  ({\inf}_\mu H)_+} S(x) = \log \mu(H^{-1}(\inf_\mu H)) \;\; (\in [-\infty, + \infty[).
\end{equation}

2)  The function $\Psi: \R_+^\ast \lra \R$ is real analytic and convex. Its derivative up to a sign 
$$U:=-\Psi'$$
satisfies, for every $\beta \in \R^\ast_+,$
\begin{equation}\label{UH}
U(\beta) = \frac{\int_\cE H \, e^{-\beta H} \, d\mu}{\int_\cE e^{-\beta H} \, d\mu}
\end{equation}
and defines a decreasing real analytic diffeomorphism:
\begin{equation}\label{Udiff}
U: \R_+^\ast \lrasim \;]{\inf}_\mu H, +\infty[.
\end{equation}

3) The functions $-S(- .)$ and $\Psi$ are Legendre-Fenchel transforms of each other. 

Namely, for every $x \in ]{\inf}_\mu H, +\infty[,$
\begin{equation}\label{LegendreS}
S(x) = \inf_{\beta \in \R^\ast_+} (\beta x + \Psi(\beta)),
\end{equation}
and, for every $\beta \in \R^\ast_+,$
\begin{equation}\label{LegendrePsi}
\Psi(\beta) = \sup_{x \in ]{\inf}_\mu H, +\infty[} (S(x) -\beta x).
\end{equation}
\end{theorem}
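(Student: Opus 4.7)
\proof[Proof plan] The strategy is to derive Theorem~\ref{thermo} from Theorem~\ref{theoCramermeasurepos} via the translation $\Psi(\beta) = \ell(-\beta)$ and $S(x) = s_-(x)$. The key preliminary observation is that Condition~$\mathbf{SE}$ is equivalent to the integrability hypothesis~\eqref{ellfiniteonRminus} of Theorem~\ref{theoCramermeasurepos}; in particular, by Lemma~\ref{BBis}, Condition~$\mathbf{B}^<$ holds, so Theorem~\ref{existlim}, Part~2, applies.

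For Part~1, I would first justify that $S(x) \in \R$ for $x > \inf_\mu H$: the trichotomy in Proposition~\ref{trichotomy} together with the Chernoff-type bound $A_n^<(x) \leq e^{n(\Psi(\beta)+\beta x)}$ (any $\beta > 0$) rules out $A_n^<(x) = +\infty$, while $A_n^<(x) \geq N(x)^n > 0$ for $x > \inf_\mu H$ rules out the case $s_-(x) = -\infty$. The existence of the limit, monotonicity and concavity are then given by Theorem~\ref{existlim}, Part~2. For the boundary limit~\eqref{pourNernst}, setting $a := \inf_\mu H$, monotone convergence gives $\lim_{x \to a_+} N(x) = \mu(H^{-1}(a))$, hence $\liminf_{x \to a_+} S(x) \geq \log \mu(H^{-1}(a))$. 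The matching upper bound comes from taking $\beta \to +\infty$ in $S(x) \leq \beta x + \Psi(\beta)$: since $H - a \geq 0$, dominated convergence (dominated by $e^{-\beta_0 H}$ for any fixed $\beta_0 > 0$) shows that $\beta a + \Psi(\beta) = \log \int_\cE e^{-\beta(H-a)}\, d\mu$ decreases to $\log \mu(H^{-1}(a))$ as $\beta \to \infty$.

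For Part~2, real analyticity and convexity of $\Psi$ follow from Proposition~\ref{defp}, and differentiation under the integral gives~\eqref{UH}. Strict convexity $\Psi'' > 0$ holds because $H$ cannot be $\mu$-a.e.\ constant: if $H \equiv c$ $\mu$-a.e., then $\int e^{-\beta H} d\mu = e^{-\beta c}\,\mu(\cE) = +\infty$, contradicting~$\mathbf{SE}$. Hence $U = -\Psi'$ is strictly decreasing and real analytic. The crucial step is establishing the two boundary limits of $U$. For $\beta \to 0_+$: monotone convergence gives $\int e^{-\beta H} d\mu \nearrow \mu(\cE) = +\infty$, so $\Psi(\beta) \to +\infty$; since $\Psi$ is convex and finite on $\R_+^\ast$, a standard argument (a finite right derivative at some $\beta_0$ would give a linear upper bound on $\Psi$ near $0$) forces $\Psi'(\beta) \to -\infty$, i.e.\ $U(\beta) \to +\infty$. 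For $\beta \to +\infty$: fix $\epsilon > \delta > 0$; split $\int (H-a) e^{-\beta H} d\mu$ over $\{H \leq a+\epsilon\}$ (where it is $\leq \epsilon \int e^{-\beta H} d\mu$) and $\{H > a+\epsilon\}$; on the latter, bound $e^{-\beta H}$ by $e^{-(\beta - \beta_0)(a+\epsilon)} e^{-\beta_0 H}$, while lower-bounding $\int e^{-\beta H} d\mu \geq e^{-\beta(a+\delta)}\,\mu(H^{-1}[a,a+\delta])$; the ratio on $\{H > a+\epsilon\}$ decays as $e^{-\beta(\epsilon-\delta)}$, yielding $\limsup_\beta (U(\beta) - a) \leq \epsilon$. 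As $\epsilon$ is arbitrary, $U(\beta) \to a$. Combined with continuity and strict monotonicity, $U$ is the announced diffeomorphism.

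Part~3 is then immediate: \eqref{equCramerinfinites} rewritten under the notational translation yields~\eqref{LegendreS}, and \eqref{equCramerinfinitep} yields $\Psi(\beta) = \sup_{x \in \R}(S(x) - \beta x)$; the supremum may be restricted to $]\inf_\mu H, +\infty[$ since $S \equiv -\infty$ on $]-\infty, \inf_\mu H[$ and since the limit~\eqref{pourNernst} shows that the value of $S(x) - \beta x$ at $x = \inf_\mu H$ (if that value is finite) is already a limit of values with $x > \inf_\mu H$. The main obstacle in the whole argument will be the concentration estimate yielding $\lim_{\beta \to +\infty} U(\beta) = \inf_\mu H$, since it requires handling both the decay of the numerator far from the essential infimum and a usable lower bound on the denominator, where Condition~$\mathbf{SE}$ (giving a dominating integrable function on the ``tail'' region) plays a decisive role.
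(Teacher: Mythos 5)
Your proposal is correct and follows essentially the same route as the paper: everything is reduced to Theorem~\ref{theoCramermeasurepos}, Theorem~\ref{existlim}, 2), and Proposition~\ref{defp}, the only substantive work being the two boundary limits of $U$ (where your convexity argument for $\lim_{\beta \ra 0_+} U(\beta) = +\infty$ matches the paper's). The single local difference is your proof that $\lim_{\beta \ra +\infty} U(\beta) = {\inf}_\mu H$ by directly splitting the Gibbs quotient over $\{H \leq a + \epsilon\}$ and its complement; the paper obtains the same limit more softly from the convexity of $\Psi$ together with the linear lower bound $\Psi(\beta) \geq \log N(\eta) - \beta \eta$ for $\eta > {\inf}_\mu H$ --- both arguments are valid.
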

\begin{proof} Assertion 1) follows from Theorem \ref{theoCramermeasurepos} and from Theorem \ref{existlim}, 2).

Observe that $H$ is not $\mu$-almost everywhere constant --- otherwise the conditions {\bf SE} and $\mu(\cE) = +\infty$ could not be both satisfied. Therefore  $\Psi '' >0$ on $\R_+^\ast$  by Proposition \ref{defp}.  

The expression (\ref{UH}) \emph{à la Gibbs}   for $U:= -\Psi'$ is a straightforward consequence of Lebesgue integration theory. To complete the proof of 2), we are thus left to show that
\begin{equation}\label{U0}
\lim_{\beta \ra 0_+} U(\beta) = +\infty
\end{equation}
and
\begin{equation}\label{Uinfty}
\lim_{\beta \ra + \infty} U(\beta) = {\inf}_\mu H.
\end{equation}

Since the function $\Psi$ satisfies 
$$\lim_{\beta \ra 0+} \Psi(\beta) = \int_\cE d\mu = +\infty,$$
its derivative cannot stay bounded near zero. This proves (\ref{U0}). 

To prove (\ref{Uinfty}), simply observe that, according to (\ref{UH}), for any $\beta \in \R^\ast_+,$
$$U(\beta) \geq {\inf}_\mu H$$
and that, for any $\eta > {\inf}_\mu H$ and for any $\beta \in \R^\ast_+,$
\begin{equation*}
\begin{split}
 \Psi(\beta) & \geq \log \int_\cE e^{-\beta H} \, d\mu \\
 & \geq \log \int_{H^{-1}([0,\eta])}  e^{-\beta \eta} \, d\mu \\
 & \geq \log N(\eta)  - \beta \eta,
\end{split}
\end{equation*}
 so that $\lim_{\beta \ra +\infty} \Psi'(\beta) \geq -\eta.$
 
 Assertion 3) directly follows from Theorem \ref{theoCramermeasurepos}.
\end{proof}
 
 The following corollary is now a consequence of the elementary theory of Legendre-Fenchel transforms of convex functions of one real variable\footnote{Basically it follows from the fact that the Legendre-Fenchel transform $g$ of real analytic function $f$ with positive second derivative is itself real analytic with positive second derivative, as it may be expressed by the classical Legendre duality relation : $g(\xi) + f(x) = x \xi$ where $\xi = f'(x)$ or, equivalently, $x=g'(\xi)$.}:  
 
\begin{corollary}\label{gentildual}
 The function $S$ is increasing and real analytic on $]{\inf}_\mu H, +\infty[$. Moreover its derivative establishes a decreasing real analytic diffeomorphism
 $$S': \;]{\inf}_\mu H, +\infty[ \lrasim \R^\ast_+$$
 inverse of the diffeomorphism (\ref{Udiff}).
 
 For any $x \in \;]{\inf}_\mu H, +\infty[$, the infimum in the right-hand side of (\ref{LegendreS}) is attained  at a unique $\beta \in \R^\ast_+$, namely 
 \begin{equation}\label{betax}
\beta = S'(x).
\end{equation}

Dually, for any $\beta \in \R^\ast_+$, the supremum in the right-hand side of (\ref{LegendrePsi}) is attained  at a unique any $x \in \;]{\inf}_\mu H, +\infty[$, namely
\begin{equation}\label{xbeta}
x = U(\beta).
\end{equation}
\end{corollary}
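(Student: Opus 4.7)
The plan is to exploit the Legendre--Fenchel duality relations (\ref{LegendreS}) and (\ref{LegendrePsi}) already established in Theorem \ref{thermo}, together with the fact that $U=-\Psi'$ is known to be a decreasing real analytic diffeomorphism from $\R_+^\ast$ onto $\,]{\inf}_\mu H,+\infty[\,$ with $\Psi''>0$ everywhere on $\R_+^\ast$.

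First I would fix $x\in\,]{\inf}_\mu H,+\infty[\,$ and analyze the smooth function $\varphi_x:\beta\mapsto\beta x+\Psi(\beta)$ on $\R_+^\ast$. Its derivative $\varphi_x'(\beta)=x-U(\beta)$ vanishes at a unique point $\beta_0(x):=U^{-1}(x)\in\R_+^\ast$, and since $\varphi_x''=\Psi''>0$ this critical point is the unique global minimum of $\varphi_x$. Combined with (\ref{LegendreS}), this proves that the infimum defining $S(x)$ is attained at the unique value $\beta=U^{-1}(x)$, establishing (\ref{betax}) once we show $\beta_0(x)=S'(x)$.

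Next, from $\beta_0=U^{-1}$ being real analytic (as the inverse of a real analytic diffeomorphism with non-vanishing derivative), and from the identity
\begin{equation*}
S(x)=\beta_0(x)\,x+\Psi(\beta_0(x)),
\end{equation*}
I would conclude that $S$ is real analytic on $\,]{\inf}_\mu H,+\infty[\,$. Differentiating and using the first-order condition $\Psi'(\beta_0(x))=-x$, the envelope computation gives
\begin{equation*}
S'(x)=\beta_0(x)+\beta_0'(x)\bigl(x+\Psi'(\beta_0(x))\bigr)=\beta_0(x)=U^{-1}(x).
\end{equation*}
Thus $S'=U^{-1}$ is a decreasing real analytic diffeomorphism from $\,]{\inf}_\mu H,+\infty[\,$ onto $\R_+^\ast$; in particular $S'>0$ so $S$ is increasing, and $S''<0$ so $S$ is strictly concave.

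Finally, for the dual statement, fix $\beta\in\R_+^\ast$ and consider $\psi_\beta:x\mapsto S(x)-\beta x$ on $\,]{\inf}_\mu H,+\infty[\,$. By what precedes, $\psi_\beta'(x)=U^{-1}(x)-\beta$ vanishes uniquely at $x=U(\beta)$, which is a strict maximum by the strict concavity of $S$; together with (\ref{LegendrePsi}), this establishes (\ref{xbeta}). The only delicate point in this plan is the envelope-style differentiation of $S$, but the diffeomorphism property of $U$ confines $\beta_0(x)$ to the open interior of $\R_+^\ast$ for every $x$ in the open interval $\,]{\inf}_\mu H,+\infty[\,$, so no boundary issue arises and the computation is purely local and classical.
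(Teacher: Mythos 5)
Your proposal is correct and follows exactly the route the paper intends: the paper dispatches this corollary by citing the "elementary theory of Legendre--Fenchel transforms of convex functions of one real variable" (with a footnote sketching the classical duality relation $g(\xi)+f(x)=x\xi$ with $\xi=f'(x)$, $x=g'(\xi)$), and your argument is precisely that classical computation spelled out — unique interior critical point of $\beta\mapsto\beta x+\Psi(\beta)$ via the diffeomorphism $U$, the envelope identity giving $S'=U^{-1}$, and strict concavity for the dual statement. No gaps; you have simply supplied the details the paper leaves to the reader.
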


Observe that, when $x \in \;]{\inf}_\mu H, +\infty[$ and $\beta \in \R^\ast_+$ satisfy the equivalent relations (\ref{betax}) and (\ref{xbeta}), then
\begin{equation}\label{SPsi}
S(x) = \beta x + \Psi(\beta),
\end{equation}
or equivalently:
$$F(\beta) := -\beta^{-1} \Psi(\beta) = U(\beta) -\beta^{-1} S(x).$$
(``Expression of the free energy $F$ in terms of the energy $U$, the temperature $\beta^{-1}$, and the entropy $S$").

Observe also that, according to (\ref{pourNernst}),  the following two conditions are equivalent:
\begin{equation*}
\mu(\{e \in \cE \mid H(e) = {\inf}_\mu H \}) = 1
\end{equation*}
and 
\begin{equation*}
\lim_{\beta \ra +\infty} S(U^{-1}(\beta)) = 0
\end{equation*}
(``Nernst's principle").

\subsection{Products and thermal equilibrium}\label{prodtherm}

The formalism summarized in the previous paragraph --- that attaches functions $\Psi$ and  $S$  to a measure space  $(\cE, \cT,\mu)$ and to a non-negative function $H$ on $\cE$ satisfying {\bf SE} --- satisfies a simple but remarkable compatibility with finite products, that we want to discuss briefly.

Assume that, for any element $i$ in some finite set $I$, we are given a measure space $(\cE_i, \cT_i, \mu_i)$ and a measurable function $H_i: \cE_i \lra \R_+$ as in the previous paragraph \ref{AppendixScholium} above.

Then we may form the product measure space $(\cE, \cT, \mu)$ defined by the set
$\cE := \prod_{i \in I} \cE_i$ equipped with the $\sigma$-algebra $\cT := \bigotimes_{i \in I} \cT_i$ and the product measure $\mu := \bigotimes_{i \in I} \mu_i$. 

We may also define a measurable function
$$H: \cE \lra \R_+$$
by the formula
$$H := \sum_{i \in I} {\rm pr}_i^\ast H_i,$$
where ${\rm pr}_i: \cE \lra \cE_i$ denotes the projection on the $i$-th factor.

Let us assume that, for every $i\in I$,  $(\cE_i, \cT_i, \mu_i)$ and $H_i$ satisfy the condition {\bf SE}, or equivalently that the functions $e^{-\beta H_i}$ is $\mu_i$-integrable for every $\beta \in\R^\ast_+.$ 

Then $(\cE, \cT, \mu)$ and $H$ are easily seen to satisfy {\bf SE} also, as a consequence of Fubini's Theorem.
Actually Fubini's Theorem shows that the function $\Psi: \R^\ast_+ \lra \R$ attached to the above data, defined as in \ref{AppendixScholium} by the formula 
$$\Psi(\beta) := \log \int_\cE e^{-\beta H} d\mu,$$
and the ``partial functions" $\Psi_i$, $i \in I,$ attached to each measure space $(\cE_i, \cT_i, \mu_i)$ equipped with the function $H_i$ by the similar formula
$$\Psi_i(\beta) := \log \int_{\cE_i} e^{-\beta H_i} d\mu_i,$$
satisfy the additivity relation:
\begin{equation}\label{addPsi}
\Psi = \sum_{i \in I} \Psi_i.
\end{equation}

Let us also assume that $\mu_i(\cE_i) = +\infty$ for every $i\in I.$ Then we also have $\mu(\cE) = +\infty$, and we may apply Theorem \ref{thermo}  and Corollary \ref{gentildual} to the data $(\cE_i, \cT_i, \mu_i, H_i)$, $i \in I,$ and $(\cE, \cT, \mu, H)$.

Notably, we may define some concave functions
$$S_i: \, ]{\inf}_{\mu_i} H_i, +\infty[ \lra \R, \;\;\; \mbox{ for $i \in I$,}$$
and 
$$S: \, ]{\inf}_\mu H, +\infty[ \lra \R.$$

Observe also that, as a straightforward consequence of the definitions, we have:
$${\inf}_\mu H = \sum_{i \in I} {\inf}_{\mu_i} H_i.$$

The following proposition may be seen as a mathematical interpretation of the second law of thermodynamics:

\begin{proposition}\label{Prop:secondlaw} 1) For each $i \in I,$ let $x_i$ be a real number in $]{\inf}_{\mu_i} H_i, + \infty [.$

Then the following inequality is satisfied:
 \begin{equation}\label{Secondlaw}
\sum_{i \in I} S_i(x_i) \leq S(\sum_{i \in I} x_i).
\end{equation}
 
 Moreover equality holds in (\ref{Secondlaw}) if and only if the positive real numbers $S'(x_i),$ $i \in I,$ are all equal. When this holds, if $\beta$ denotes their common value, we also have: 
 $$\beta = S'(\sum_{i \in I} x_i).$$ 
 
 2) Conversely, for any $x \in ] {\inf}_\mu H, +\infty[,$ there exists a unique family $(x_i)_{i\in I} \in \prod_{i \in I} ]{\inf}_{\mu_i} H_i, + \infty [$ such that $$x= \sum_{i \in I} x_i \;\; {\mbox  and }\;\; S(x) = \sum_{i \in I} S_i(x_i).$$
 Indeed, if $\beta = S'(x),$ it is given by $$(x_i)_{i \in I} = (U_i(\beta))_{i \in I},$$
where $U_i = -\Psi'_i.$ \end{proposition}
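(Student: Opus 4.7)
The plan is to deduce everything from the Legendre--Fenchel duality of Theorem~\ref{thermo} (formulas (\ref{LegendreS}) and (\ref{LegendrePsi})) combined with the additivity $\Psi = \sum_{i \in I} \Psi_i$ noted in (\ref{addPsi}) and with the strict convexity statements of Corollary~\ref{gentildual}. The observation that makes the proposition essentially a formality is that, while the Legendre transform of a sum of convex functions is not the sum of their Legendre transforms in general, the inequality
$$\sum_{i \in I} S_i(x_i) \leq S\Big(\sum_{i \in I} x_i\Big)$$
holds cheaply because, for a fixed $\beta > 0$, summing the pointwise upper bounds $S_i(x_i) \leq \beta x_i + \Psi_i(\beta)$ produces $\beta \sum_i x_i + \Psi(\beta)$, and one may then take the infimum over $\beta \in \R_+^\ast$ on the right.

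First I would establish (\ref{Secondlaw}) exactly by the display above: for every $\beta \in \R_+^\ast$, (\ref{LegendreS}) applied to each factor gives $S_i(x_i) \leq \beta x_i + \Psi_i(\beta)$; summing over $i \in I$ and using (\ref{addPsi}) yields $\sum_{i \in I} S_i(x_i) \leq \beta \sum_{i \in I} x_i + \Psi(\beta)$, and the infimum over $\beta$ of the right-hand side is $S(\sum_i x_i)$ by (\ref{LegendreS}) again. Next I would analyze the equality case. If the common value $\beta := S'_i(x_i)$ exists for all $i \in I$, then by Corollary~\ref{gentildual} each inequality $S_i(x_i) \leq \beta x_i + \Psi_i(\beta)$ is an equality, so summing produces $\sum_i S_i(x_i) = \beta \sum_i x_i + \Psi(\beta) \geq S(\sum_i x_i)$, which together with (\ref{Secondlaw}) forces equality, and the identification $\beta = S'(\sum_i x_i)$ then follows from the uniqueness statement in Corollary~\ref{gentildual}. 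Conversely, if (\ref{Secondlaw}) is an equality, setting $\beta := S'(\sum_i x_i)$ one writes $\sum_i S_i(x_i) = \beta \sum_i x_i + \Psi(\beta) = \sum_i (\beta x_i + \Psi_i(\beta))$; since each individual term satisfies $S_i(x_i) \leq \beta x_i + \Psi_i(\beta)$, the sum being an equality forces termwise equality, and the uniqueness clause of Corollary~\ref{gentildual} then gives $\beta = S'_i(x_i)$ for all $i$.

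For part 2), given $x \in \, ]{\inf}_\mu H, +\infty[$, I would set $\beta := S'(x)$ and define $x_i := U_i(\beta)$, which lies in $]{\inf}_{\mu_i} H_i, +\infty[$ by the diffeomorphism property (\ref{Udiff}). By (\ref{addPsi}), differentiation gives $U = \sum_i U_i$, so $\sum_i x_i = U(\beta) = x$ by Corollary~\ref{gentildual} (the characterization $x = U(S'(x))$). Since $S'_i(x_i) = \beta$ is common across $i$, the equality case of part 1) applies and yields $\sum_i S_i(x_i) = S(x)$. Uniqueness of the family $(x_i)_{i \in I}$ is immediate from part 1): any family realizing equality with sum $x$ must have common $S'_i(x_i) = S'(x) = \beta$, hence $x_i = U_i(\beta)$.

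There is no real obstacle; the argument is a bookkeeping exercise in Legendre duality. The one point that deserves care is verifying that the candidate family $(U_i(\beta))_{i \in I}$ in part 2) actually sums to $x$, which is where one uses that differentiating the additivity relation $\Psi = \sum_i \Psi_i$ gives $U = \sum_i U_i$ and that $U$ inverts $S'$. All other steps are direct applications of the Legendre--Fenchel formulas and the strict convexity/analyticity content of Corollary~\ref{gentildual}.
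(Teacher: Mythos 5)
Your proof is correct and follows essentially the same route as the paper's: Legendre--Fenchel duality (\ref{LegendreS}) applied termwise, the additivity $\Psi=\sum_{i\in I}\Psi_i$ to identify $\beta x+\Psi(\beta)$ with $\sum_i(\beta x_i+\Psi_i(\beta))$, the uniqueness of the minimizing $\beta$ from Corollary~\ref{gentildual} for the equality case, and differentiation of the additivity relation (giving $U=\sum_i U_i$) for part 2). The paper's own proof is merely a terser version of the same bookkeeping.
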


\begin{proof} Let $(x_i)_{i\in I}$ be an element of  $\prod_{i \in I} ]{\inf}_{\mu_i} H_i, + \infty [$  According to Corollary \ref{gentildual}, for every $i \in I,$ 
\begin{equation}
S(x_i) = \inf _{\beta > 0} (\beta x_i + \Psi_i(\beta)).
\end{equation}
Moreover, the infimum is attained for a unique positive $\beta,$ namely $S'(x_i)$.

Similarly, for $x := \sum_{i \in I} x_i,$
\begin{equation}
S(x) = \inf _{\beta > 0} (\beta x + \Psi(\beta)),
\end{equation}
and the infimum is attained for a unique positive $\beta$, namely $S'(x)$.

Besides, the additivity relation (\ref{addPsi}) shows that, for every $\beta$ in $\R^\ast_+,$
$$\beta x + \Psi(\beta) = \sum_{i \in I} \left(\beta x_i + \Psi_i(\beta)\right).$$

Part 1) of the proposition  
directly follows from these observations.
Part 2) follows from Part 1) and from the relation $\Psi'= \sum_{i \in I} \Psi'_i.$
 \end{proof}

 \subsection{An example: Gaussian integrals and Maxwell velocity distribution}\label{Maxwell}
 
 In this paragraph, we discuss a simple but significant instance of the formalism summarized in paragraph \ref{AppendixScholium} and in Theorem \ref{thermo}. This example may be seen as a mathematical counterpart of Maxwell's statistical approach to the theory of ideal gases. It is also included for comparison with the application in Section \ref{asymptoticho} of the above formalism to Euclidean lattices --- the present example appears as a ``classical limit" of the discussion of Section \ref{asymptoticho}.
 
 Let $V$ be a finite dimensional real vector space equipped with some Euclidean norm $\Vert. \Vert.$
 
 We shall denote by $\lambda$ the Lebesgue measur on $V$ attached to this Euclidean norm. It may be defined as the unique translation invariant Radon measure on $V$ such that
 \begin{equation*}
\int_V  e^{-\pi \Vert x \Vert^2} \, d\lambda(x) = 1.
\end{equation*}
(Compare \ref{PoissonSchwartz} and equation (\ref{gaussint}).)

We may apply the formalism of this appendix to the measure space $(V, \cB, \lambda)$, defined by $V$ equipped with the Borel $\sigma$-algebra $\cB$ and with the Lebesgue measure $\lambda$, and to the function
$$H := (1/2m) \Vert. \Vert^2$$
where $m$ denotes some positive real number.

Then, for every $\beta$ in $\R^\ast_+,$ we have:
$$\int_v e^{- \beta \Vert p \Vert^2/2m} \, d\lambda(p) = (2 \pi m/\beta)^{\dim V / 2}.$$
Therefore
\begin{equation}\label{PsiMaxw}\Psi(\beta) =  (\dim V/2)  \, \log (2 \pi m/\beta)
\end{equation}
and 
\begin{equation}\label{UMaxw}
U(\beta) = -\Psi'(\beta) = \dim V/(2\beta).
\end{equation}
 
The equations $(\ref{betax})$ and $(\ref{xbeta})$, that relates the ``energy"
$x$ and the ``inverse temperature" $\beta$, takes the following form, for any $x\in \,]{\inf}_\lambda H, +\infty[ = \R^\ast_+$ and any $\beta \in  \R^\ast_+$:
\begin{equation}\label{betaxMaxw}
\beta x = \dim V/2.
\end{equation}

The function $S(x)$ may be computed directly from its definition. 

Indeed, for any $x \in \R^\ast_+$ and any positive integer $n$, we have:
\begin{equation}\label{S1}
\lambda^{\otimes n} \left(\{(e_1, \ldots, e_n) \in V^n \mid (1/2m)(\Vert e_1\Vert^2+\ldots+\Vert e_n\Vert^2) \leq n x \}\right)
= v_{n\dim V} (2mnx)^{n (\dim V)/2}.
\end{equation} 
Here $v_{n\dim V}$ denotes the volume of the unit ball in the Euclidean space of dimension $n\dim V$. It is given by:
\begin{equation}\label{S2} v_{n\dim V} = \frac{\pi^{n (\dim V)/2}}{\Gamma(1 + {n (\dim V)/2})}.
\end{equation} 

From (\ref{S1}) and (\ref{S2}), by a simple application of Stirling's formula, we get:
$$S(x) = \lim_{n \ra + \infty} \frac{1}{n} \log\left[ v_{n\dim V} (2mnx)^{n (\dim V)/2} \right] = (\dim V/2) [1 + \log(4 \pi m x /\dim V)].$$
In particular, $$S'(x) = \dim V/(2x)$$ and we recover (\ref{betaxMaxw}).

Conversely,  combined with the expression (\ref{PsiMaxw}) for the function $\Psi,$ Part 3) of Theorem \ref{thermo} allows one to recover the asymptotic behaviour of the volume $v_n$ of the $n$-dimensional unit ball, in the form:
$$v_n^{1/n} \sim \sqrt{2e\pi/n} \;\; \mbox{ when $n \ra + \infty.$} $$ 

Finally, observe that when $m = (2\pi)^{-1}$ --- the  case relevant for the comparison with the application to Euclidean lattices in Section  \ref{asymptoticho} --- the expressions for $\Psi$ and $S$ take the following simpler forms: 
$$\Psi(\beta) = (\dim V/2)  \, \log (1/\beta)$$
and 
$$S(x) = (\dim V/2) [1 + \log(2x /\dim V)].$$

\subsection{Relations with probability measures of maximal entropy}\label{maxentrop}

In this paragraph, we keep the notation recalled in paragraph \ref{AppendixScholium}, and we assume that the hypotheses of Theorem \ref{thermo} are satisfied --- namely we assume that the growth condition {\bf SE} holds and that $\mu(\cE) = +\infty.$

Let $\cC$ be the space of \emph{probability measures on $(\cE, \cT)$ absolutely continuous with respect to $\mu$.}

By sending such a measure $\nu$ to its Radon-Nikodym derivative $f = d\mu/d\nu,$ one establishes a bijection from $\cC$ to the convex subset 
$$\left\{ f \in L^1(\cE, \mu) \mid \mbox{$f \geq 0$ $\mu$-a.e. and $\int_\cE f d\mu =1$} \right\}$$
of $L^1(\cE, \mu)$.

To any measure $\nu = f \mu$ in $\cC,$ we may attach its ``energy":
$$\epsilon(\nu) := \int_\cE H \, d\nu = \int_\cE H f \,d\mu \in [0, +\infty].$$

\begin{lemma}\label{Gibbsestimate}
 Let $f: \cE \lra \R_+$ and $g:\cE \lra \Rpa$ be two $\cT$-measurable functions.
 
 1) For every $x \in \cE,$
 \begin{equation}\label{eq:Gibbsestimate}
f(x) \log \frac{f(x)}{g(x)} \geq f(x) -g(x).
\end{equation}
Moreover, equality holds in (\ref{eq:Gibbsestimate}) if and only if $f(x) = g(x)$.

2) If $g$ is $\mu$-integrable, then the negative part $(f \log(f/g))^-$ of $f \log(f/g)$ is $\mu$-integrable, and therefore $$\int_\cE f\log(f/g)\, d\mu$$ is well-defined in $]-\infty, +\infty].$

3) If $f$ and $g$ belongs to $\cC$, then $$\int_\cE f\log(f/g)\, d\mu$$ belongs to $[0, +\infty]$ and vanishes if and only if $f = g$ $\mu$-almost everywhere.
\end{lemma}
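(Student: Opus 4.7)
The plan is to derive everything from the elementary one-variable inequality
$$\phi(t) := t \log t - t + 1 \geq 0 \text{ for all } t \in \R_+,$$
with the convention $0 \log 0 = 0$, and with equality if and only if $t = 1$. This follows from the fact that $\phi$ is $C^\infty$ on $\Rpa$ with $\phi''(t) = 1/t > 0$, that $\phi(1) = \phi'(1) = 0$, and that $\phi$ extends continuously to $0$ with value $\phi(0) = 1 > 0$. Strict convexity gives uniqueness of the minimum.

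To establish part (1), I would consider two cases according to whether $f(x) = 0$ or $f(x) > 0$. If $f(x) = 0$, the left-hand side of (\ref{eq:Gibbsestimate}) is $0$ (by our convention) while the right-hand side equals $-g(x) < 0$, so the inequality is strict. If $f(x) > 0$, I apply the inequality $\phi(t) \geq 0$ to $t := f(x)/g(x) \in \Rpa$ (possible since $g(x) > 0$), which yields
$$\frac{f(x)}{g(x)} \log \frac{f(x)}{g(x)} \geq \frac{f(x)}{g(x)} - 1;$$
multiplying through by the positive number $g(x)$ gives (\ref{eq:Gibbsestimate}), with equality if and only if $f(x)/g(x) = 1$, that is, $f(x) = g(x)$. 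The two cases combine to give the equality criterion in (1).

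Part (2) is then immediate: from part (1) the pointwise bound $f \log(f/g) \geq f - g \geq -g$ holds on $\cE$, so the negative part $(f \log(f/g))^-$ is dominated by $g$; when $g$ is $\mu$-integrable, so is this negative part, and the integral $\int_\cE f \log(f/g)\, d\mu$ is well-defined in $]-\infty,+\infty]$. For part (3), I would integrate (\ref{eq:Gibbsestimate}) over $\cE$ with respect to $\mu$; since $f$ and $g$ both belong to $\cC$, we have $\int_\cE (f - g)\, d\mu = 1 - 1 = 0$, whence $\int_\cE f \log(f/g)\, d\mu \geq 0$. For the equality case, the only delicate point---and the one I expect to require the most care---is to pass from equality of the integrals to pointwise equality: since the integrand $f \log(f/g) - (f - g)$ is a non-negative measurable function with vanishing integral, it is zero $\mu$-almost everywhere, and the strict version of the equality criterion in (1) then forces $f = g$ $\mu$-almost everywhere.
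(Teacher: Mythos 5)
Your proof is correct and follows essentially the same route as the paper, which also reduces everything to the elementary inequality $t\log t \geq t-1$ (with equality iff $t=1$) applied to $t = f(x)/g(x)$ and then deduces 2) and 3) by integration. You have merely filled in the details (the case $f(x)=0$, the domination of the negative part by $g$, and the a.e.\ equality argument) that the paper leaves implicit.
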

 
\begin{proof} For any $t \in \R_+,$ we have:
$$t \log t \geq t-1,$$
and equality holds if and only if $t=1$. Applied to $t =f(x)/g(x),$ this  implies 1). 

From 1), assertions 2) and 3) immediately follow.
\end{proof}

\begin{proposition}\label{prop:maxentrop} Let $\nu = f \mu$ be an element of $\cC$.

1) If $\epsilon(\nu) < + \infty,$ then $(f \log f)^-$ is $\mu$-integrable, and therefore the ``information theoretic entropy" of $\nu$ with respect to $\mu$
$$I(\nu\mid \mu) := - \int_{\cE} \log (d\mu/ d\nu) d\nu = - \int_{\cE} f \log f \, d\mu$$
is well defined in $[-\infty, +\infty[$.

2) Let $u$ and $\beta$ be two positive real numbers such that $u =U(\beta).$

If $\epsilon(\nu) = u,$ then
\begin{equation}\label{IS}
I(\nu \mid \mu) \leq S(u).
\end{equation}
Moreover the equality is achieved in (\ref{IS}) for a unique measure $\nu$ of $\cC$ in $\epsilon^{-1}(u),$ namely for the measure
$$\nu_\beta := Z(\beta)^{-1}  e^{-\beta H} \mu,$$
where $$Z(\beta) := \int_\cE  e^{-\beta H}\,  d\mu.$$
 \end{proposition}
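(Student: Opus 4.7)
The plan is to deduce everything from the pointwise Gibbs inequality of Lemma \ref{Gibbsestimate}, together with the Legendre identity (\ref{SPsi}) relating $S(u)$, $U(\beta)$ and $\Psi(\beta)$.

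For assertion 1), I would fix once and for all some $\beta \in \R_+^\ast$ (its precise value is irrelevant here) and apply part 2) of Lemma \ref{Gibbsestimate} with $g := e^{-\beta H}$, which is $\mu$-integrable by condition {\bf SE}. This gives that $(f \log(f/g))^- = (f\log f + \beta H f)^-$ is $\mu$-integrable. The assumption $\epsilon(\nu) < +\infty$ says that $\beta H f$ is itself $\mu$-integrable, and since
\[
(f\log f)^- \;\leq\; (f\log f + \beta H f)^- + \beta H f,
\]
we conclude that $(f \log f)^-$ is $\mu$-integrable, so $I(\nu \mid \mu) = -\int_\cE f \log f\, d\mu$ is well defined in $[-\infty, +\infty[$.

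For assertion 2), let $u,\beta \in \R_+^\ast$ satisfy $u = U(\beta)$, and let $g := Z(\beta)^{-1} e^{-\beta H}$ be the density of $\nu_\beta$ with respect to $\mu$; it is itself an element of $\cC$. For any $\nu = f\mu$ in $\cC$ with $\epsilon(\nu) = u$, part 3) of Lemma \ref{Gibbsestimate} applied to $f$ and $g$ gives
\[
0 \;\leq\; \int_\cE f \log(f/g)\, d\mu \;=\; -I(\nu\mid\mu) \;-\; \int_\cE f \log g \, d\mu,
\]
with equality if and only if $f = g$ $\mu$-almost everywhere. Since $\log g = -\Psi(\beta) - \beta H$, the last integral equals $-\Psi(\beta) - \beta u$, so the inequality becomes $I(\nu\mid\mu) \leq \Psi(\beta) + \beta u$. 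By (\ref{SPsi}) the right-hand side is exactly $S(u)$, which yields (\ref{IS}).

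The equality case of the Gibbs inequality already shows that $\nu_\beta$ is the unique element of $\cC$ attaining the bound, provided $\nu_\beta$ itself lies in $\epsilon^{-1}(u)$; this last check reduces to the Gibbs formula (\ref{UH}), which gives $\epsilon(\nu_\beta) = Z(\beta)^{-1}\int_\cE H e^{-\beta H}\, d\mu = U(\beta) = u$, and a direct computation $I(\nu_\beta\mid\mu) = -\int g\log g\, d\mu = \Psi(\beta) + \beta U(\beta) = S(u)$. No step presents a real obstacle; the only point requiring care is that $(f\log f)^-$ is \emph{a priori} not integrable in the infinite-measure setting, which is precisely why we needed the detour through $g = e^{-\beta H}$ in part 1) before one is allowed to write $\int f \log g\, d\mu$ as $-\Psi(\beta) - \beta u$ in part 2).
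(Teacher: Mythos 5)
Your argument is correct and follows essentially the same route as the paper: both rest on the Gibbs inequality of Lemma \ref{Gibbsestimate} applied to $f$ and the density of $\nu_\beta$, combined with the Legendre identity (\ref{SPsi}) to identify $\Psi(\beta)+\beta u$ with $S(u)$. The only (harmless) variation is in part 1), where you use the unnormalized $g=e^{-\beta H}$ for an arbitrary $\beta$ together with the pointwise bound $(a)^-\leq (a+b)^-+b$, whereas the paper works directly with $g_\beta$ and the specific $\beta=U^{-1}(u)$; both yield the integrability of $(f\log f)^-$.
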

 
 In substance, the content of Proposition \ref{prop:maxentrop} goes back to the seminal work of Boltzmann and Gibbs on statistical mechanics\footnote{See  for instance Boltzmann's memoirs \cite{Boltzmann72} and  \cite{Boltzmann77}, Chapter V.  Our presentation is a straightforward generalization, in the framework of general measure theory, of the ``axiomatic" approach of Gibbs in  \cite{Gibbs05}, Chapter XI (Lemma \ref{Gibbsestimate} above notably appears in \emph{loc. cit.}, p. 130).}.  
 Similar results play also a central role in information theory and in statistics (see for instance \cite{Kullback97}, notably Chapter 3). We refer the reader to \cite{Georgii03} (notably § 3.4) for additional informations and references.

\begin{proof} Let $\nu$ be an element of $\cC$ such that $u:= \epsilon(\nu)$ is finite, and let $\beta:= U_\Eb^{-1}(u).$

We may consider the measurable function 
$$g_\beta := Z(\beta)^{-1} e^{- \beta H}.$$
 It is everywhere positive on $\cE$, and satisfies:
 \begin{equation}
\int_\cE g_\beta\, d\mu = 1
\end{equation}
and
 \begin{equation*}
\log g_\beta = -\log Z(\beta) - \beta H = -\Psi(\beta) -\beta H.
\end{equation*}

In particular, we have:
\begin{align}
f \log f & = f \log(f/g_\beta) + f \log g_\beta  \notag \\
& =f \log (f/g_\beta)  -\Psi(\beta) f -\beta Hf. \label{flogf}
\end{align}

Also recall that
$$\int_\cE f \,d\mu = 1$$
and
$$\int_\cE Hf d\mu = u = U_\Eb(\beta) < +\infty.$$
Consequently, using (\ref{SPsi}), we get:
\begin{equation}\label{voilalentropie}
\int_\cE \left(\Psi(\beta) f  + \beta Hf\right) d\mu = \Psi(\beta) + \beta u = S (u).
\end{equation}

According to Lemma \ref{Gibbsestimate}, 2), the negative part $(f \log (f/g_\beta))^-$ of  $f \log (f/g_\beta)$ is $\mu$-integrable. Together with (\ref{flogf}), this shows that the negative part $(f \log f)^-$ of $f \log f$ is $\mu$-integrable. This establishes 1).

Moreover, according to Lemma \ref{Gibbsestimate},  3), the integral
%\begin{equation}
$\int_\cE f\, \log (f/g_\beta) \, d\mu$ 
%\end{equation}
is non-negative and vanishes if and only if $f=  g_\beta$ $\mu$-almost everywhere. Together with (\ref{flogf}) and (\ref{voilalentropie}), this establishes 2).
\end{proof}

\medskip

\chapter[Continuity of linear forms on prodiscrete modules]{Non-complete discrete valuation rings and continuity of linear forms on prodiscrete modules}\label{prodiscretemod}

\medskip

This Appendix is devoted to a discussion of the results of ``automatic continuity" of Specker (\cite{Specker50}) and Enochs (\cite{Enochs64}), in a setting adapted to their application to the categories $CTC_A$ in Section \ref{CTCbasics}.

\section{Preliminary: maximal ideals, discrete valuation rings, and  completions} Let $R$ be a ring, and let $\fm$ be a maximal ideal of $R$ such that the local ring
$$R_{(\fm)} := (R\setminus \fm)^{-1} R$$
is a discrete valuation ring. Its maximal ideal is $\fm_{(\fm)} := (R\setminus \fm)^{-1} \fm.$ We shall denote by 
$$\iota: R \lra R_{(\fm)}$$
the canonical morphism of rings, which send an element $x$ of $R$ to $\iota(x) := x/1.$ 

We may consider the $\fm$-adic completion of $R$ at $\fm$:
$$\hR_{\fm} := \varprojlim_n R/{\fm}^n.$$
It is a local ring, of maximal ideal $\hat{\fm} :=  \varprojlim_n \fm/{\fm}^n.$

As $\fm$ is a maximal ideal, the localization $R_{(\fm)}$ may be identified with a subring of $\hR_{\fm}$, so that the canonical morphism from $R$ to $\hR_{\fm}$ becomes the composition
%\begin{equation*}
  $R \stackrel{\iota}{\lra} R_{(\fm)} \hlra \hR_{\fm}.$
%\end{equation*}
Moreover the $\fm$-adic (resp. $\fm_{(\fm)}$-adic, resp. $\hat{\fm}$-adic) filtrations on $R$ (resp. $R_{(\fm)}$,  resp. $\hR_{\fm}$)
are strictly compatible.

In particular, $\hR_{\fm}$ may be identified with the $\fm_{(\fm)}$-adic completion of $R_{(\fm)}$, and therefore is a complete discrete valuation ring.

We shall denote by $v$ the $\fm$-adic\footnote{or, more correctly $\fm_{(\fm)}$-adic  or $\hat{\fm}$-adic.} valuation on $R_{(\fm)}$ and $\hR_{\fm}$ and by $\vert . \vert = e^{-v}$ the associated absolute value. For simplicity, we shall also denote by $v$ and $\vert. \vert$  their composition with the morphism 
$\iota: R \lra \hR_{\fm}.$

Let us consider: 
$$\cL= \{ (\lambda_i)_{i \in \N}\in \hR_{\fm}^\N \mid \lim_{i \ra + \infty} \vert \lambda_i \vert = 0 \}.$$
For any $\bm{\lambda} = (\lambda_i)_{i \in \N}$ in $\cL,$  
we may define a $R$-linear map
$$\Sigma_{\bm{\lambda}}:  R^\N \lra \hR_{\fm}$$
by letting:
\begin{equation*}
\Sigma_{\bm{\lambda}}((x_i)_{i\in \N}) := \sum_{i \in \N} \lambda_i \iota(x_i).
\end{equation*}

\begin{proposition}\label{propSigma} Let $\bm{\lambda}:=(\lambda_i)_{i \in \N}$ be an element of  $\cL$ such that $I:=\{ i \in \N \mid \lambda_i \neq 0 \}$ is infinite. 

If we define
$$n:=\min_{i \in \N} v(\lambda_i),$$
then we have:
\begin{equation}\label{Sigman}
\Sigma_{\bm{\lambda}}(R^{\N}) = \hat{\fm}^{n}. 
\end{equation}
\end{proposition}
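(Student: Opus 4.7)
The plan is to prove both inclusions of \eqref{Sigman}. The inclusion $\Sigma_{\bm{\lambda}}(R^{\N}) \subseteq \hat{\fm}^{n}$ is straightforward: for any $(x_i)_{i \in \N} \in R^{\N}$, since $\iota(x_i)$ lies in $\hR_{\fm}$ we have $v(\iota(x_i)) \geq 0$, hence $v(\lambda_i \iota(x_i)) \geq v(\lambda_i) \geq n$; the series defining $\Sigma_{\bm{\lambda}}((x_i))$ converges in $\hR_{\fm}$ because $|\lambda_i \iota(x_i)| \leq |\lambda_i| \to 0$, and its limit lies in the closed subgroup $\hat{\fm}^{n}$.

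For the reverse inclusion, I would first extract a subsequence of $I$. Since $v(\lambda_i) \to +\infty$ along $I$ while $I$ is infinite, the set $\{i \in I : v(\lambda_i) \leq m\}$ is finite for every $m$, so one can pick indices $i_0 < i_1 < i_2 < \dots$ in $I$ such that the valuations $n_k := v(\lambda_{i_k})$ form a strictly increasing sequence with $n_0 = n$. Each $\lambda_{i_k}$ is then of valuation exactly $n_k$, so it generates $\hat{\fm}^{n_k}$ in $\hR_{\fm}$. Given $y \in \hat{\fm}^n$, I would construct the preimage recursively, exploiting that the natural map $R \to \hR_{\fm}/\hat{\fm}^{N}$ is surjective for every $N \geq 0$ (this comes from the strict compatibility of the $\fm$-adic filtrations, which identifies $R/\fm^{N}$, $R_{(\fm)}/\fm_{(\fm)}^{N}$ and $\hR_{\fm}/\hat{\fm}^{N}$). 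At step $k = 0$, write $y = \lambda_{i_0} \cdot a_0$ for some $a_0 \in \hR_{\fm}$, and pick $x_{i_0} \in R$ with $\iota(x_{i_0}) \equiv a_0 \pmod{\hat{\fm}^{n_1 - n_0}}$, so that $y - \lambda_{i_0}\iota(x_{i_0}) \in \hat{\fm}^{n_1}$. Inductively, having chosen $x_{i_0}, \dots, x_{i_{k-1}}$ with $y - \sum_{j<k} \lambda_{i_j} \iota(x_{i_j}) \in \hat{\fm}^{n_k}$, write this remainder as $\lambda_{i_k} \cdot a_k$ and pick $x_{i_k} \in R$ with $\iota(x_{i_k}) \equiv a_k \pmod{\hat{\fm}^{n_{k+1} - n_k}}$, which forces the new remainder into $\hat{\fm}^{n_{k+1}}$. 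Setting $x_i := 0$ for $i \notin \{i_0, i_1, \dots\}$ defines an element of $R^{\N}$, and since $n_k \to +\infty$ the partial sums converge to $y$ in $\hR_{\fm}$, giving $\Sigma_{\bm{\lambda}}((x_i)) = y$.

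The argument is not genuinely difficult but involves two small technical points that must be handled together: ensuring that the overall series $\sum_i \lambda_i \iota(x_i)$ converges in $\hR_{\fm}$ (granted since $|\lambda_i| \to 0$ and $|\iota(x_i)| \leq 1$), and ensuring that at each recursive step the error drops into a strictly deeper ideal than the preceding one (granted by the strict increase of $n_k$, which is what made the subsequence extraction necessary). The crux of the proof is precisely the simultaneous use of the strict compatibility of the $\fm$-adic filtrations (which supplies the approximations by elements of $\iota(R)$) and of the fact that $v(\lambda_i) \to +\infty$ along the infinite set $I$ (which supplies enough indices to absorb the errors to arbitrarily high valuation).
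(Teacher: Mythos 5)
Your proof is correct and takes essentially the same route as the paper's: after the easy inclusion $\Sigma_{\bm{\lambda}}(R^{\N})\subseteq\hat{\fm}^{n}$, both arguments recursively divide the remainder by a $\lambda_{i}$ of the appropriate valuation, approximate the quotient by an element of $\iota(R)$ (your surjectivity of $R\to\hR_{\fm}/\hat{\fm}^{N}$ is the paper's density of $\lambda_{k}\iota(R)$ in $\hat{\fm}^{n_{k}}$), and use $v(\lambda_{i})\to+\infty$ to force convergence. The only cosmetic difference is that you run the recursion along an extracted subsequence with strictly increasing valuations and set the remaining coordinates to zero, whereas the paper reindexes $I$ by a bijection with $\N$ and uses every index.
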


\begin{proof}[Proof of Proposition \ref{propSigma}] Let us choose a bijection $\psi : \N \lrasim I$. The integer $n$ and the image of $\Sigma_{\bm{\lambda}}$ are clearly unchanged if we replace the sequence $\bm{\lambda}:=(\lambda_i)_{i \in \N}$ by $(\lambda_{\psi(i)})_{i \in \N}$. Therefore, to establish Proposition \ref{propSigma}, we may assume that $\bm{\lambda}$ belongs to $(R\setminus \{0\})^\N$ and that the sequence of non-negative integers
$n_i := v(\lambda_i)$ $(i \in \N)$ is such that $n:= \min_{i \in \N} n_i = n_0.$

For any $(x_i)_{i \in \N}$ in $R^\N$ and any $i$ in $\N$, the product $\lambda_i  \iota(x_i)$ belongs to $\hat{\fm}^{n_i},$ and \emph{a fortiori} to $\hat{\fm}^n$. This implies that $\Sigma_{\bm{\lambda}}(R^{\N})$ is contained in $\hat{\fm}^{n}$.

Conversely, let $\alpha$ be an element of $\hat{\fm}^{n}$. Observe that, for any $k \in \N,$ $\lambda_k \iota(R)$ is dense in $\lambda_k \hR_\fm = \hat{\fm}^{n_k}.$ Therefore we may inductively construct a sequence $(x_i)_{i \in \N}$ such that, for any $k\in \N,$
$$v(\alpha - \sum_{i= 0}^k \lambda_i \iota(x_i)) \geq n_{k+1}.$$ Then $$\alpha = \sum_{i \in \N} \lambda_i \iota(x_i) =\Sigma_{\bm{\lambda}}((x_i)_{i \in \N}).$$\end{proof}

Observe that, for any natural integer $k$, by applying Proposition \ref{propSigma} to the sequence $(\lambda_{i+k})_{i \in \N}$, that still belongs to $\cL$, we obtain:

\begin{corollary} Under the assumption of Proposition \ref{propSigma}, the 
map $\Sigma_{\bm{\lambda}}$ is continuous and open from $R^{\N}$, equipped with the product topology of the discrete topology on each of the factors $R$, onto $\hat{\fm}^{n}$ equipped with the $\hat{\fm}$-adic topology.

Actually,   for any $k \in \N$, if  we define $n_k := \min_{i \in \N_{\geq k}} v(\lambda_i),$
then
\begin{equation*}\label{Sigmank}
 \Sigma_{\bm{\lambda}}(\{0\}^k \oplus R^{\N_{\geq k}}) = \hat{\fm}^{n_k}, 
\end{equation*}
while 
$\lim_{k \ra + \infty} n_k = + \infty.$ \qed 
\end{corollary}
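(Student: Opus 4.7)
The plan is to derive the Corollary as a direct consequence of Proposition \ref{propSigma} applied to shifted sequences, together with the summability hypothesis $\lim_{i\to+\infty}|\lambda_i|=0$.

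First I would verify the asserted equality $\Sigma_{\bm{\lambda}}(\{0\}^k\oplus R^{\N_{\geq k}})=\hat{\fm}^{n_k}$. Given $k\in\N$, the restriction of $\Sigma_{\bm{\lambda}}$ to the subgroup $\{0\}^k\oplus R^{\N_{\geq k}}$ coincides, after reindexing by the translation $i\mapsto i+k$, with the map $\Sigma_{\bm{\lambda}^{(k)}}:R^{\N}\to\hR_{\fm}$ associated with the shifted sequence $\bm{\lambda}^{(k)}:=(\lambda_{i+k})_{i\in\N}$. This shifted sequence still belongs to $\cL$ (the assumption $\lim|\lambda_i|=0$ is preserved under a finite shift), and since the original index set $\{i\in\N\mid\lambda_i\neq 0\}$ is infinite, the set $\{i\in\N\mid\lambda_{i+k}\neq 0\}$ is also infinite. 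Moreover $\min_{i\in\N}v(\lambda_{i+k})=\min_{i\in\N_{\geq k}}v(\lambda_i)=n_k$. Hence Proposition \ref{propSigma} applied to $\bm{\lambda}^{(k)}$ yields $\Sigma_{\bm{\lambda}^{(k)}}(R^{\N})=\hat{\fm}^{n_k}$, which is the desired equality.

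Second, the fact that $\lim_{k\to+\infty}n_k=+\infty$ is immediate: the condition $\lim_{i\to+\infty}|\lambda_i|=0$ is equivalent to $\lim_{i\to+\infty}v(\lambda_i)=+\infty$, so for any $N\in\N$ there are only finitely many indices with $v(\lambda_i)<N$, whence $n_k\geq N$ for all $k$ large enough.

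Finally, continuity and openness of $\Sigma_{\bm{\lambda}}$ follow formally. A fundamental system of neighborhoods of $0$ in $R^{\N}$ (for the product of discrete topologies) is given by the subgroups $U_k:=\{0\}^k\oplus R^{\N_{\geq k}}$, while a fundamental system of neighborhoods of $0$ in $\hat{\fm}^n$ (for the $\hat{\fm}$-adic topology) is given by the subgroups $\hat{\fm}^m$ for $m\geq n$. For openness, the step above shows $\Sigma_{\bm{\lambda}}(U_k)=\hat{\fm}^{n_k}$, which is open in $\hat{\fm}^n$; since $R$-linear maps of topological $R$-modules that are open at $0$ are open, this gives openness (and in particular surjectivity onto $\hat{\fm}^n$, recovering the $k=0$ case of Proposition \ref{propSigma}). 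For continuity, given $m\geq n$, by the third step we may choose $k$ so that $n_k\geq m$; then $\Sigma_{\bm{\lambda}}(U_k)=\hat{\fm}^{n_k}\subseteq\hat{\fm}^m$, proving continuity at $0$, and hence everywhere by $R$-linearity.

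There is no real obstacle here: the Corollary is essentially a formal repackaging of Proposition \ref{propSigma}, the only substantive input being the observation that the shifted sequences $\bm{\lambda}^{(k)}$ remain in $\cL$ with infinite support and have minimum valuation $n_k$ tending to $+\infty$.
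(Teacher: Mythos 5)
Your proposal is correct and follows exactly the route the paper intends: the paper states the corollary as an immediate consequence of applying Proposition \ref{propSigma} to the shifted sequences $(\lambda_{i+k})_{i\in\N}$, which is precisely your first step, and your remaining steps (that $n_k\to+\infty$ and the standard neighborhood-basis argument for continuity and openness) just supply the routine details the paper leaves implicit.
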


\section{Continuity of linear forms on prodiscrete modules}

We keep the notation of the previous section, and we consider a topological module $M$ over the ring $R$ equipped with the discrete topology.

The topological $R$-module $M$ is complete and prodiscrete, with a countable basis of neighborhoods of $0$ precisely if it isomorphic (as a topological module) to the projective limit $\varprojlim_k M_k$ of some projective system 
$$M_0 \stackrel{q_0}{\longleftarrow}M_1 \stackrel{q_1}{\longleftarrow} M_2 \stackrel{q_2}{\longleftarrow}\dots$$
of discrete $R$-modules. (The maps $q_i$ may actually be assumed surjective.)

Then, if we denote by $$p_k: M \lra M_k$$ the canonical projection maps, a sequence $(m_i)_{i\in \N} \in M^\N$ converges to zero in $M$ if and only if, for every $k \in \N,$ the projection $p_k(m_i)$ vanishes for $i$ large enough (depending on $k$).

Moreover, for any such sequence $(m_i)_{i\in \N}$ in $M^\N$ that converges to zero and for any sequence $(r_i)_{i \in \N}$, the series 
$\sum_{i \in \N} r_i m_i$ converges in $M$.  

\begin{proposition}\label{contuitautom}

Let $M$ be a complete prodiscrete topological $R$-module, with a countable basis of neighborhoods of $0$, and let $\phi : M \lra \hR_{\fm}$ be a $R$-linear map.

If $\phi$ is not continuous when $M$ is equipped with its prodiscrete topology and $\hR_{\fm}$ with the discrete topology, then there exists $k \in \N$ such that $\phi(M) = \hat{\fm}^k$.
\end{proposition}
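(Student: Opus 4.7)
The plan is to combine the non-continuity hypothesis with Proposition \ref{propSigma}, exploiting the standard characterization that a map to a discrete target is continuous iff its kernel is open. First I would fix a decreasing basis $(U_i)_{i \in \N}$ of open $R$-submodule neighborhoods of $0$ in $M$. Since $\hR_\fm$ is discrete and $\phi$ is not continuous, $\ker \phi$ is not open, so no $U_i$ is contained in $\ker \phi$; hence for each $i \geq 1$ I can pick $m_i \in U_i$ with $\lambda_i := \phi(m_i) \neq 0$. I would also set $k := \min\{v(\phi(m)) : m \in M,\, \phi(m) \neq 0\} \in \N$ (well defined since $\phi \neq 0$) and choose $m^* \in M$ with $v(\phi(m^*)) = k$. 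By definition of $k$, the inclusion $\phi(M) \subseteq \hat{\fm}^k$ is clear, so the goal reduces to proving the reverse inclusion $\hat{\fm}^k \subseteq \phi(M)$.

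To apply Proposition \ref{propSigma}, I would pick $\pi \in \fm \setminus \fm^2 \subseteq R$, so that $v(\iota(\pi)) = 1$ (such $\pi$ exists because $R_{(\fm)}$ is a DVR, forcing $\fm \supsetneq \fm^2$ by Nakayama). Then, for $i \geq 1$, I rescale by setting $N_i := \max(k,i)$ and $\tilde{m}_i := \pi^{N_i} m_i \in U_i$, and I define $\mu_0 := \phi(m^*)$ and $\mu_i := \phi(\tilde{m}_i) = \iota(\pi)^{N_i} \lambda_i$ for $i \geq 1$. Since $\hR_\fm$ is a domain, all $\mu_i$ are nonzero; moreover $v(\mu_0) = k$ and $v(\mu_i) \geq N_i$ for $i \geq 1$, so $\bm{\mu} := (\mu_i)_{i \in \N}$ belongs to $\cL$ and satisfies $\min_i v(\mu_i) = k$. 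Proposition \ref{propSigma} then gives $\Sigma_{\bm{\mu}}(R^\N) = \hat{\fm}^k$, so it will suffice to show $\Sigma_{\bm{\mu}}(R^\N) \subseteq \phi(M)$.

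For any $(x_i) \in R^\N$, the series $s := x_0 m^* + \sum_{i \geq 1} x_i \tilde{m}_i$ converges in $M$: each term $x_i \tilde{m}_i$ lies in $U_i$, so the partial sums $s_n$ are Cauchy and $s$ is well defined by completeness. The main claim, and the main obstacle, is that $\phi(s) = \Sigma_{\bm{\mu}}((x_i))$; because $\phi$ is only algebraically $R$-linear, one cannot naively commute it with the limit. The key trick is that the rescaling by $\pi^{N_i}$, together with the monotonicity of $(N_i)_{i \geq 1}$, allows me to factor each tail: for each $n$,
$$s - s_n = \sum_{i > n} x_i \pi^{N_i} m_i = \pi^{N_{n+1}} w_n, \quad \text{where } w_n := \sum_{i > n} x_i \pi^{N_i - N_{n+1}} m_i \in M$$
(the series for $w_n$ converges in $M$ for the same submodule reason). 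Applying $\phi$ and using its $R$-linearity gives $\phi(s) - \phi(s_n) = \iota(\pi)^{N_{n+1}} \phi(w_n)$, an element of valuation at least $N_{n+1} \geq n+1$. Hence $\phi(s_n) \to \phi(s)$ in the $\fm$-adic topology of $\hR_\fm$. On the other hand $\phi(s_n) = \sum_{i=0}^n \iota(x_i) \mu_i$ is exactly the $n$-th partial sum defining $\Sigma_{\bm{\mu}}((x_i))$ in the $\fm$-adic topology, so by uniqueness of the $\fm$-adic limit one concludes $\phi(s) = \Sigma_{\bm{\mu}}((x_i))$. This yields $\hat{\fm}^k \subseteq \phi(M)$ and completes the proof that $\phi(M) = \hat{\fm}^k$.
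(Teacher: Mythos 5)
Your proof is correct and follows essentially the same route as the paper's: produce a null sequence $(m_i)$ on which $\phi$ does not vanish from the failure of openness of $\ker\phi$, rescale by powers of $\pi\in\fm\setminus\fm^2$ to land in $\cL$, apply Proposition \ref{propSigma}, and transfer $\Sigma_{\bm\mu}$ into $\phi(M)$ via the tail factorization $s-s_n=\pi^{N_{n+1}}w_n$ combined with $R$-linearity. The only (welcome) refinement is that by inserting $\mu_0=\phi(m^*)$ of minimal valuation $k$ and taking $N_i=\max(k,i)$ you identify $k$ exactly and conclude by double inclusion, whereas the paper only obtains $\phi(M)\supseteq\hat{\fm}^n$ and then invokes the classification of $R$-submodules of $\hR_{\fm}$ containing $\hat{\fm}^n$.
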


\begin{proof}
Let us denote by $\pi$ an element of $\fm \setminus \fm^2$. (The set $\fm \setminus \fm^2$ is indeed not empty, since  $\fm_{(\fm)} \neq \fm^2_{(\fm)}$ and therefore $\fm \neq \fm^2$.) 

If $\phi$ is not continuous, then there exists a sequence $(m_i)_{i \in \N}$ in $M^\N$ which converges to $0$ in $M$ and such that $(\phi(m_i))_{i \in \N}$
has an infinity of non-zero terms. 

Then the  family $$\bm(\lambda) = (\lambda_i)_{i \in \N}:=(\iota(\pi)^i\phi(m_i))_{i \in \N}$$ satisfies the assumptions of Proposition \ref{propSigma}, and therefore, for some non-negative integer $n$, $\Sigma_{\bm{\lambda}}(R^\N) = \hat{\fm}^n.$ 

We are going to prove that any element of $\Sigma_{\bm{\lambda}}(R^\N)$ belongs to the image of $\phi$. This will show that $\phi(M)$ contains $\hat{\fm}^n$. Since $\phi(M)$ is a $R$-submodule of $\hR_{\fm}$, this will complete the proof.

To achieve, let $(x_i)_{i \in \N}$ be any sequence in $R^\N.$
We may form  the following sum,  convergent  in $M$:
 $$m := \sum_{i \in \N} \pi ^i x_i\, m_i.$$
 For any non-negative integer $k$, we have:
 \begin{equation}\label{sumconvm}
m = \sum_{0 \leq i \leq k} \pi ^i x_i \,m_i + \pi^{k+1} r_k,
\end{equation}
 where $r_k$ is defined as the convergent sum in $M$:
 $$r_k := \sum_{i \in \N} \pi^i x_{i+k+1}\, m_{i+k+1}.$$ 
By applying $\phi$ to the relation (\ref{sumconvm}), we see that, for every $k \in \N,$
$$\phi(m) - \sum_{0 \leq i \leq k} \iota(\pi ^i x_i) \phi(m_i) \in \fm^{k+1}.$$ 
 Finally, 
 $$\Sigma_{\bm{\lambda}}((x_i)_{i \in \N}) = \sum_{i \in \N} \iota(\pi)^i \phi(m_i) \iota(x_i) = \phi(m).$$
\end{proof}

 Observe that, when $R$ is a domain, the morphism $\iota: R \lra R_{(\fm)}$ is injective and $\iota(R)$ contains $\hat{\fm}^k$ for some $k \in \N$ if and only if $R$ is $\fm$-adically complete.  Therefore, from Proposition \ref{contuitautom}, we immediately derive:

 \begin{corollary}\label{contuitautombis} Let $M$ be a complete prodiscrete topological $R$-module, with a countable basis of neighborhoods of $0$. 
If $R$ is a domain and is not $\fm$-adically complete,  then 
 any $R$-linear map $\phi : M \lra R$ is continuous when $M$ is equipped with its prodiscrete topology and $R$ with the discrete topology.\qed

 \end{corollary}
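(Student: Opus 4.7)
The plan is to reduce the statement to Proposition \ref{contuitautom} by composing with the canonical ring morphism $\iota: R \lra \hR_{\fm}$. Arguing by contradiction, I would suppose that some $R$-linear map $\phi: M \lra R$ fails to be continuous when $R$ carries the discrete topology, and set $\tilde{\phi} := \iota \circ \phi: M \lra \hR_{\fm}$. The idea is to apply Proposition \ref{contuitautom} to $\tilde{\phi}$ in order to force $\iota(R)$ to contain some $\hat{\fm}^k$, and then to show that this last condition is incompatible with $R$ not being $\fm$-adically complete.

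First I would verify that $\tilde{\phi}$ also fails to be continuous when $\hR_{\fm}$ is given the discrete topology. Continuity of an $R$-linear map into a discrete target amounts to saying that preimages of singletons are open. Since $R$ is a domain and $\fm \neq 0$ (as $R_{(\fm)}$ is a DVR), the morphism $\iota$ is injective, so for any $r \in R$ one has $\tilde{\phi}^{-1}(\{\iota(r)\}) = \phi^{-1}(\{r\})$; if the latter is not open in $M$, neither is the former. Proposition \ref{contuitautom} then produces an integer $k \in \N$ such that $\tilde{\phi}(M) = \hat{\fm}^k$, and since $\tilde{\phi}(M) \subseteq \iota(R)$ by construction, this yields the inclusion $\hat{\fm}^k \subseteq \iota(R)$.

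The final step will be to show that, for a domain $R$, the inclusion $\hat{\fm}^k \subseteq \iota(R)$ implies that $R$ is $\fm$-adically complete, which contradicts the hypothesis. Here I would use maximality of $\fm$ together with the strict compatibility of the $\fm$-adic and $\hat{\fm}$-adic filtrations (recalled at the beginning of the Appendix), which gives the canonical isomorphism $R/\fm^k \lrasim \hR_{\fm}/\hat{\fm}^k$. Combined with $\hat{\fm}^k \subseteq \iota(R)$, this means that any $x \in \hR_{\fm}$ may be written as $\iota(r) + y$ with $r \in R$ and $y \in \hat{\fm}^k \subseteq \iota(R)$, so $\iota(R) = \hR_{\fm}$, and injectivity of $\iota$ then identifies $R$ with its completion. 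There is essentially no technical obstacle: the substance of the argument is entirely carried by Proposition \ref{contuitautom}, and the remainder is routine commutative-algebra bookkeeping.
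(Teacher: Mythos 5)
Your proposal is correct and follows exactly the route the paper intends: compose with $\iota$, apply Proposition \ref{contuitautom} to conclude $\hat{\fm}^k \subseteq \iota(R)$, and observe (as the paper does in the sentence preceding the corollary) that this forces $R$ to be $\fm$-adically complete. You have merely written out the details the paper leaves implicit, including the elementary verification that non-continuity passes to $\iota\circ\phi$ via injectivity of $\iota$.
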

 
\medskip

\chapter{Measures on countable sets and their projective limits}\label{measurespro}

\medskip

This Appendix is devoted to various results concerning measure theory on the Polish spaces defined as projective limits of countable systems of countable discrete sets that are used in the proofs of  Section  \ref{SectionSum}.

\section{Finite measures on countable sets}

Let $D$ be a (possibly finite) countable set.
 The real vector space $\cM^b(D)$ of real bounded measures on $D$ (equipped with the $\sigma$-algebra $\cP(D)$ of all subsets of $D$)   may be identified with the vector space $l^1(D)$:
 \begin{equation}\label{Ml1}
\begin{array}{rcl}
 \cM^b(D) & \lrasim   & l^1(D)   \\
 \mu & \longmapsto   & (\mu(\{x\})_{x \in D}. 
\end{array}
 \end{equation}
 This isomorphism maps the cone $\cM^b_+(D)$ of positive bounded measures onto
 $$l^1_+(D) = l^1(D) \cap \R_+^D.$$
 Moreover the total mass of some measure $\mu \in  \cM^b(D)$ coincides with the $l^1$-norm of its image by the isomorphism (\ref{Ml1}):
 $$\Vert \mu \Vert  = \sum_{x \in D} \vert \mu(\{x\}) \vert.$$
 
 On the real vector space $\cM^b(D)$, or equivalently on $l^1(D)$, we may consider the following separated locally convex topologies:
 
 (i) the topology of vague convergence of measures in $\cM^b(D)$, or equivalently the topology on $l^1(D)$ induced by the topology of pointwise convergence on $\R^D,$ or the $\sigma(l^1(D), \R^{(D)})$-topology on $l^1(D);$
 
 (ii) the topology of narrow convergence of measures in $\cM^b(D)$,  that is the $\sigma(l^1(D), l^{\infty}(D))$-topology on $l^1(D);$
 
 (iii) the topology defined by the ``total mass norm"  on $\cM^b(D)$ (which coincides with  the $l^1$-norm on $l^1(D)$).
 
 The first of these topologies is strictly finer than the second one, and the second one strictly finer than the third one. 
 The following proposition compares the induced topologies on the cone $\cM^b_+(D)$, and notably asserts that the topology of narrow convergence and the topology of norm convergence on  $\cM^b_+(D)$ coincide:
 
 \begin{proposition}\label{convnarrow} Let $(\mu_i)$ be a sequence, or more generally a net, of elements of $\cM^b_+(D)$ which converges vaguely to an element $\mu$ of $\cM^b_+(D)$.
 Then the following conditions are equivalent:
 
 %\noindent
  \emph{(1)} $(\mu_i)$  converges to $\mu$  in the topology of narrow convergence on $\cM^b(D)$;
 
% \noindent 
 \emph{(2)} $\lim_i\mu_i({D}) = \mu(D);$
 
   \emph{(3)}  the total mass $\mu_i(D)$ stays bounded (for $i$ large enough); moreover, for every $\epsilon \in \R^\ast_+,$ there exists a finite subset $F$ of $D$ such that, for $i$ large enough,
   $\mu_i(D \setminus F) < \epsilon$;
 
 %\noindent
  \emph{(4)}  $\lim_i \Vert \mu_i - \mu \Vert = 0.$

\end{proposition}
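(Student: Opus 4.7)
The plan is to prove the cycle of implications $(4) \Rightarrow (1) \Rightarrow (2) \Rightarrow (3) \Rightarrow (4)$, exploiting the positivity hypothesis to bridge the gap between vague and norm convergence.

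First I would dispatch the easy implications. For $(4) \Rightarrow (1)$: for any $f \in l^\infty(D)$, $|\int f\,d(\mu_i-\mu)| \leq \|f\|_\infty \|\mu_i-\mu\|$, giving narrow convergence. For $(1) \Rightarrow (2)$: the constant function $1$ belongs to $l^\infty(D)$, so narrow convergence evaluated at $1$ yields $\mu_i(D) \to \mu(D)$. For $(2) \Rightarrow (3)$: the convergence $\mu_i(D) \to \mu(D)$ implies that $\mu_i(D)$ is eventually bounded. Given $\epsilon>0$, pick a finite subset $F \subset D$ with $\mu(D\setminus F) < \epsilon/3$ (possible since $\mu$ is a bounded positive measure on a countable set, i.e., $(\mu(\{x\}))_{x\in D} \in l^1_+(D)$). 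Vague convergence applied to the indicator of each point of $F$ gives $\mu_i(F) = \sum_{x \in F}\mu_i(\{x\}) \to \mu(F)$ (finite sum). Subtracting from $\mu_i(D)\to\mu(D)$ yields $\mu_i(D\setminus F) \to \mu(D\setminus F) < \epsilon/3$, so for $i$ large $\mu_i(D\setminus F) < \epsilon$.

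The main implication is $(3) \Rightarrow (4)$, and this is where the positivity hypothesis genuinely matters. Given $\epsilon > 0$, apply $(3)$ to obtain a finite subset $F_0 \subset D$ and an index $i_0$ such that $\mu_i(D \setminus F_0) < \epsilon$ for all $i \succeq i_0$. Enlarging $F_0$ to a finite $F \supseteq F_0$, we may also arrange $\mu(D\setminus F) < \epsilon$, by the $l^1$-summability of $(\mu(\{x\}))_{x \in D}$. Then, using $\|\mu_i - \mu\| = \sum_{x \in D}|\mu_i(\{x\}) - \mu(\{x\})|$ together with the triangle inequality for the tail,
\begin{equation*}
\|\mu_i - \mu\| \leq \sum_{x \in F}|\mu_i(\{x\}) - \mu(\{x\})| + \mu_i(D\setminus F) + \mu(D\setminus F).
\end{equation*}
The first term is a finite sum in which each summand $\to 0$ by vague convergence, so it is $< \epsilon$ for $i$ large; the other two are each $< \epsilon$. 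Hence $\limsup_i \|\mu_i - \mu\| \leq 3\epsilon$, and letting $\epsilon \to 0$ gives $(4)$.

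The only subtlety is that we are dealing with nets rather than sequences: in the passage $(2)\Rightarrow(3)$ and in the tail-control step, one must interpret ``for $i$ large'' in the directed-set sense, but all the arguments above (finite sums, $l^1$-tail control, and the standard $\limsup$ manipulation) work verbatim for nets since they only use the filtering property of the index set. The key conceptual point — and the reason positivity is essential — is that positivity turns the abstract tail condition in $(3)$ into genuine $l^1$-equicontinuity of the family $\{\mu_i\}$, which is exactly what bridges pointwise/vague convergence to $l^1$-norm convergence (a Schur-type phenomenon on a discrete space).
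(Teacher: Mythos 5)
Your proof is correct and follows exactly the cycle of implications $(1)\Rightarrow(2)\Rightarrow(3)\Rightarrow(4)\Rightarrow(1)$ that the paper indicates (and leaves to the reader as ``easily established''), with the positivity of the $\mu_i$ entering precisely where it must, namely in bounding the $l^1$-tail $\sum_{x\in D\setminus F}\vert\mu_i(\{x\})-\mu(\{x\})\vert$ by $\mu_i(D\setminus F)+\mu(D\setminus F)$. Nothing further is needed.
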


This is well-known, and the implications $(1) \Rightarrow (2)  \Rightarrow (3)  \Rightarrow (4)  \Rightarrow (1)$ are indeed easily established. 

In this monograph, we shall say that \emph{a sequence $(\mu_i)$ in $\cM^b_+(D)$ converges to some measure $\mu \in \cM^b_+(D)$} when the above equivalent conditions are satisfied.

The following convergence criterion plays a central role in our study of the $\theta$-invariants of infinite dimensional Hermitian vector bundles.

\begin{proposition}\label{critconvmu}
Let $(\mu_i)_{i \in \N}$ be a sequence in $\cM^b_+(D)$. If there exists $(t_i)_{i\in \N}$ in $l^1(\N)$ such that, for every $i\in \N,$
\begin{equation}\label{ineqtmu}
\mu_{i+1} \leq e^{t_i} \mu_i, 
\end{equation}
then the sequence $(\mu_i)_{i \in \N}$ converges to some $\mu \in \cM^b_+(D)$.
 \end{proposition}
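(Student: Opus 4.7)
The plan is to renormalize the $\mu_i$ so as to obtain a pointwise decreasing sequence, then apply dominated convergence.

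For every $i \in \N$, set
$$S_i := \sum_{j=0}^{i-1} t_j \quad \text{and} \quad \nu_i := e^{-S_i}\,\mu_i \in \cM^b_+(D).$$
Since $(t_i) \in l^1(\N)$, the sequence $(S_i)_{i \in \N}$ converges to $S := \sum_{j \in \N} t_j \in \R$. The hypothesis $\mu_{i+1} \leq e^{t_i} \mu_i$ translates, after multiplying by $e^{-S_{i+1}}$, into $\nu_{i+1}(\{x\}) \leq \nu_i(\{x\})$ for every $x \in D$. Hence, for each $x \in D$, the sequence $(\nu_i(\{x\}))_{i \in \N}$ is non-increasing in $\R_+$, and therefore converges to some $\nu(\{x\}) \in \R_+$.

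Next, I would identify $\cM^b_+(D)$ with $l^1_+(D)$ via the isomorphism (\ref{Ml1}). The function $x \mapsto \nu_0(\{x\})$ belongs to $l^1(D)$, and by construction $0 \leq \nu_i(\{x\}) - \nu(\{x\}) \leq \nu_0(\{x\})$ for every $(i,x) \in \N \times D$. Lebesgue's dominated convergence theorem (applied to the counting measure on $D$) then yields
$$\lim_{i \ra +\infty} \Vert \nu_i - \nu \Vert = \lim_{i \ra +\infty} \sum_{x \in D} \bigl(\nu_i(\{x\}) - \nu(\{x\})\bigr) = 0,$$
so in particular $\nu \in l^1_+(D) = \cM^b_+(D)$.

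Finally, setting $\mu := e^{S} \nu \in \cM^b_+(D)$, the triangle inequality gives
$$\Vert \mu_i - \mu \Vert = \Vert e^{S_i} \nu_i - e^{S} \nu \Vert \leq e^{S_i} \,\Vert \nu_i - \nu \Vert + \vert e^{S_i} - e^{S} \vert \,\Vert \nu \Vert,$$
which tends to $0$ since $S_i \to S$ and $\Vert \nu_i - \nu \Vert \to 0$. By Proposition \ref{convnarrow} (implication $(4) \Rightarrow (1)$), this norm convergence implies that $(\mu_i)_{i \in \N}$ converges to $\mu$ in $\cM^b_+(D)$. There is no serious obstacle here: the only real content is the renormalization trick that turns the multiplicative estimate into pointwise monotonicity, after which countability of $D$ and the $l^1$ framework make dominated convergence do all the work.
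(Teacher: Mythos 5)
Your proof is correct and takes essentially the same route as the paper's: the renormalization $\nu_i := e^{-S_i}\mu_i$ reducing the problem to a pointwise non-increasing sequence is exactly the paper's reduction, and your dominated-convergence step simply fills in the details that the paper dismisses as ``straightforward'' in the monotone case.
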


\begin{proof} When the condition (\ref{ineqtmu}) is satisfied by $t_i=0$ for every $i \in \N$ --- that is, when
$\mu_{i+1} \leq \mu_i $ 
for every $i\in \N$ 
--- then, to any subset $X$ of $D$, we may associate the limit
$$\mu(X) := \lim_{i \rightarrow + \infty} \mu_i(X)$$
of the non-increasing sequence $( \mu_i(X))_{i\in \N}$ in $\R_+.$ It is straightforward that this defines an element $\mu$ of $\cM^b_+(D)$ and that  $(\mu_i)_{i \in \N}$ converges to  $\mu$.

One reduces the general case of the Proposition to this special case by letting, for every $i \in \N$,
$$\nu_i := e^{-\sum_{0\leq j < i} t_j}  \mu_i.$$
Indeed, $(\nu_i)_{i \in \N}$ is then a sequence in $\cM^b_+(D)$ such that
$\nu_{i+1} \leq \nu_i $ 
for every $i\in \N$, and therefore admits a limit $\nu$ in $\cM^b_+(D)$. Consequently, $(\mu_i)_{i \in \N}$ converges to 
\begin{equation*}\mu := e^{\sum_{j\in \N} t_j} \nu. \qedhere
\end{equation*} 
\end{proof}

Recall that the construction which associates, to some countable set $D$, the cone $\cM^b_+(D)$ equipped with the topology of narrow convergence (or equivalently, of norm convergence) is functorial. 

Namely, for any map 
$f: D \lra D'$
between two countable sets $D$ and $D'$, we may defined a $\R$-linear map
$$f_\ast: \cM^b(D) \lra \cM^b(D')$$
by letting, for any $\mu \in \cM^b(D)$ and any $X' \subset D',$
$$f_\ast \mu(X'):= \mu(f^{-1}(X')).$$
The map $f_\ast$ maps $ \cM^b_+(D)$ to $\cM^b_+(D')$ and is continuous when $\cM^b(D)$ and $\cM^b(D')$ are equipped both with the topology of narrow convergence or of norm convergence. Moreover, if $f': D' \lra D''$ is a second map, with range some countable set $D''$, we have:
$$(f'\circ f)_\ast = f'_\ast \circ f_\ast.$$
\section{Finite measures on projective limits of countable sets}

Consider a projective system
\begin{equation}\label{Dproj}
D_{\bullet} : D_0 \stackrel{q_0}{\longleftarrow}D_1 \stackrel{q_1}{\longleftarrow}\dots \stackrel{q_{i-1}}{\longleftarrow}D_i \stackrel{q_i}{\longleftarrow} D_{i+1} \stackrel{q_{i+1}}{\longleftarrow} \dots
\end{equation}
of countable sets. We may equip each of them with the discrete topology, and the projective limit of this system
$$\hD := \varprojlim_i D_i$$
with the associated projective limit topology. It is a Polish space, and we shall denote by $\cM^b(\hD)$ the real vector space of real bounded Borel measures on $\hD$ and by $\cM^b_+(\hD)$ the cone in $\cM^b(\hD)$ of positive bounded Borel measures on $\hD$. 

Recall that every measure in $\cM^b_+(\hD)$ is regular and tight. Moreover the cone  $\cM^b_+(\hD)$ may be identified with the projective limit of the projective system 
$$\cM^b_+(D_{\bullet}) : \cM^b_+(D_0) \stackrel{q_{0\ast}}{\longleftarrow}\cM^b_+(D_1) \stackrel{q_{1\ast}}{\longleftarrow}\dots \stackrel{q_{i-1, \ast}}{\longleftarrow}\cM^b_+(D_i) \stackrel{q_{i\ast}}{\longleftarrow} \cM^b_+(D_{i+1}) \stackrel{q_{i+1,\ast}}{\longleftarrow} \dots$$
deduced from (\ref{Dproj}) by application of the functor $\cM_+^b$. Indeed, if $$p_i: \hD \lra D_i$$ denotes, for any $i \in \N$, the canonical map from $\hD$ to $D_i$, then the map which sends a measure  in $\cM^b_+(\hD)$ to the family  of its direct images by the $p_i$'s defines a bijection 
\begin{equation}\label{Kolmogorov}
 \begin{array}{rcl}
 \cM^b_+(\hD) & \lrasim  &  \varprojlim_i  \cM^b_+(D_i) (\subset \prod_{i \in \N} \cM^b_+(D_i) ) \\
 \mu & \longmapsto  & (p_{i\ast} \mu)_{i \in \N},   
\end{array}
\end{equation}
according to a classical theorem of Kolmogorov (\cite{Kolmogorov33},  Section III.4; see also \cite{BourbakiINT9}, Section 4.3).

For every $(i,j) \in \N^2$ such that $i \leq j,$ we let:
$$p_{ij}:= q_i \circ q_{i+1} \circ \cdots \circ q_{j-1}: D_j \lra D_i.$$

\begin{proposition}\label{limitmuDh}
Let $(\gamma_i)_{i\in \N}$ be an element of $\prod_{i\in \N} \cM^b_+(D_i)$ and let $(\lambda_i)_{i \in \N} \in l^1(\N)$ such that, for every $j\in \N$,
\begin{equation}\label{ineqlambdagamma}
 q_{j\ast} \gamma_{j+1} \leq e^{\lambda_j}\gamma_j.
\end{equation}

Then, for every $i \in \N,$ the sequence $(p_{ij\ast} \gamma_j)_{j \in \N_{\geq i}}$ converges to some limit $\mu_i$ in $\cM^b_+(D_i)$. Moreover there exists a unique measure $\mu \in \cM^b_+(\hD)$ such that, for any $i \in \N,$
$$\mu_i = p_{i\ast} \mu.$$

\end{proposition}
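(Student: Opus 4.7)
The plan is to reduce the two parts of the statement to results already established in the Appendix. For part 1, I would fix an index $i \in \N$ and study the sequence $\nu_j := p_{ij\ast}\gamma_j$ in $\cM^b_+(D_i)$ for $j \geq i$. Applying $p_{ij\ast}$ to the hypothesis $q_{j\ast}\gamma_{j+1} \leq e^{\lambda_j}\gamma_j$ and using the identity $p_{i,j+1} = p_{ij}\circ q_j$, I would obtain
$$\nu_{j+1} = p_{i,j+1\ast}\gamma_{j+1} = p_{ij\ast}q_{j\ast}\gamma_{j+1} \leq e^{\lambda_j}\, p_{ij\ast}\gamma_j = e^{\lambda_j}\nu_j.$$
Since the tail $(\lambda_j)_{j\geq i}$ still lies in $\ell^1$, Proposition \ref{critconvmu} applies directly and yields the existence of the limit $\mu_i \in \cM^b_+(D_i)$ in the topology of narrow (equivalently, norm) convergence.

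For part 2, the key is to verify that the family $(\mu_i)_{i\in\N}$ so obtained is compatible with the projective system, i.e.\ that $q_{i\ast}\mu_{i+1} = \mu_i$ for every $i\in\N$. This will follow from the continuity of the pushforward operator $q_{i\ast}: \cM^b_+(D_{i+1}) \to \cM^b_+(D_i)$ with respect to the topology of narrow/norm convergence, together with the functoriality $q_i \circ p_{i+1,j} = p_{ij}$ for $j\geq i+1$:
$$q_{i\ast}\mu_{i+1} = q_{i\ast}\lim_{j\to+\infty} p_{i+1,j\ast}\gamma_j = \lim_{j\to+\infty} p_{ij\ast}\gamma_j = \mu_i.$$
Once this compatibility is established, the element $(\mu_i)_{i\in\N}$ lies in $\varprojlim_i \cM^b_+(D_i)$, and the existence and uniqueness of $\mu \in \cM^b_+(\hD)$ satisfying $p_{i\ast}\mu = \mu_i$ for all $i$ is precisely the content of Kolmogorov's theorem, recalled as the bijection (\ref{Kolmogorov}).

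Since both Proposition \ref{critconvmu} and the Kolmogorov bijection are already available, the proof is essentially a bookkeeping argument. There is no genuine obstacle; the only point that requires a moment's care is the continuity of $q_{i\ast}$ for narrow convergence, but this is immediate from the fact that narrow convergence coincides with norm convergence on the positive cone (Proposition \ref{convnarrow}) and that $\|f_\ast\mu\| = \|\mu\|$ for positive $\mu$ under any measurable map $f$ between countable discrete sets, so $\|q_{i\ast}\nu_j - q_{i\ast}\mu_{i+1}\| \leq \|\nu_j - \mu_{i+1}\|$.
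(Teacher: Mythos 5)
Your proposal is correct and follows essentially the same route as the paper: apply $p_{ij\ast}$ to the hypothesis to get a sequence satisfying the hypotheses of Proposition \ref{critconvmu}, deduce the limits $\mu_i$, check compatibility via continuity of $q_{i\ast}$ in the norm topology, and conclude with the Kolmogorov bijection (\ref{Kolmogorov}). Your closing remark justifying the continuity of $q_{i\ast}$ is a detail the paper leaves implicit, but it is the same argument.
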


In particular, the sequence 
$$(\gamma_j(D_j))_{j \in \N} = (p_{0,j\ast} \gamma_j(D_0))_{j \in \N}$$
converges in $\R_+$ and, for any $i \in N$,
\begin{equation}\label{mulim}
\mu(\hD) = \mu_i(D_i) = \lim_{j \ra + \infty} \gamma_j(D_j).
\end{equation} 

\begin{proof} From (\ref{ineqlambdagamma}), by application of $p_{ij\ast},$ we derive that, for any $(i,j) \in \N^2$ such that $i \leq j,$ 
$$p_{i,j+1 \ast} \gamma_{j+1}  \leq e^{\lambda_j}\,p_{ij,\ast} \gamma_j.$$
The convergence of the sequence $(p_{ij\ast} \gamma_j)_{j \in \N_{\geq i}}$ to some limit $\mu_i$ in  $\cM^b_+(D_i)$ therefore follows from Proposition \ref{critconvmu}.  Moreover the continuity of the maps $q_{i\ast}$ with respect to the norm topology shows that, for every $i \in \N,$
$$\mu_i = q_{i\ast} \mu_{i+1}.$$
The existence and the unicity of $\mu$ then follows from Kolmogorov's theorem (\ref{Kolmogorov}).
\end{proof}

The following Proposition provides an alternative interpretation of the convergence condition on the sequence $(\gamma_{i})_{i \in \N}$ which appears in Proposition \ref{limitmuDh}. (We refer the reader to \cite{Schwartz73}, Appendix, or to
\cite{Billingsley99}, Chapter 1, for basic results concerning the topology of narrow convergence --- also called topology of weak convergence in \cite{Billingsley99} ---  on the space of bounded measures on the Polish space $\hD$.)

\begin{proposition}\label{limitgammatilde}
Let $(\gamma_i)_{i\in \N}$ be a family of measures in $\prod_{i\in \N} \cM^b_+(D_i)$ and, for every $i \in \N,$ let $\tilde{\gamma}_i$ be a measure in $\cM^b_+(\hD)$ such that
\begin{equation}\label{pig}
p_{i\ast} \tilde{\gamma}_i = \gamma_i.
\end{equation}
Then the following two conditions are equivalent:

$\mathbf{Conv_1:}$ For every $i \in \N,$ the sequence $(p_{ij\ast} \gamma_j)_{j \in \N_{\geq i}}$ converges to some measure $\mu_i$ in $\cM_+^b(D_i).$

$\mathbf{Conv_2:}$ The sequence $(\tilde{\gamma}_j)_{j \in \N}$ convergence to some measure $\mu$ in the topology of narrow convergence on $\cM_+^b(\hD).$

When these conditions hold, the measure $\mu$ is the unique element of $\cM^b_+(\hD)$ such that $p_{i\ast}\mu = \mu_i$ for any $i\in \N.$
\end{proposition}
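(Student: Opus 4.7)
The easy direction $\mathbf{Conv_2} \Rightarrow \mathbf{Conv_1}$ proceeds as follows. For each $i \in \N$ the map $p_i : \hD \lra D_i$ is continuous (with $D_i$ discrete), so the pushforward $p_{i\ast} : \cM^b_+(\hD) \lra \cM^b_+(D_i)$ is continuous in the narrow topologies on both sides. If $\tilde{\gamma}_j \to \mu$ narrowly then $p_{i\ast}\tilde{\gamma}_j$ converges to $\mu_i := p_{i\ast}\mu$ in $\cM^b_+(D_i)$, and for $j \geq i$ the identity $p_i = p_{ij} \circ p_j$ together with (\ref{pig}) gives $p_{i\ast}\tilde{\gamma}_j = p_{ij\ast}\gamma_j$. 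This is $\mathbf{Conv_1}$.

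For the converse direction I would first construct the candidate limit $\mu$. Assuming $\mathbf{Conv_1}$, the norm-continuity of $q_{i\ast}$ yields $q_{i\ast}\mu_{i+1} = \lim_j q_{i\ast}p_{i+1,j\ast}\gamma_j = \lim_j p_{ij\ast}\gamma_j = \mu_i$, so $(\mu_i)_{i\in\N}$ is a compatible family and Kolmogorov's theorem (see the bijection \eqref{Kolmogorov}) produces a unique $\mu \in \cM^b_+(\hD)$ with $p_{i\ast}\mu = \mu_i$ for every $i$, which also establishes the last assertion of the proposition. Total mass convergence is then immediate: $\tilde{\gamma}_j(\hD) = \gamma_j(D_j) = p_{0,j\ast}\gamma_j(D_0) \to \mu_0(D_0) = \mu(\hD)$.

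The main obstacle is proving tightness of the family $(\tilde{\gamma}_j)_{j \in \N}$ on the Polish space $\hD$, uniformly in $j$. My plan is to build, for any given $\varepsilon > 0$, a compact set of the form $K := \bigcap_{i \in \N} p_i^{-1}(F'_i)$, with $F'_i \subseteq D_i$ finite; such a $K$ is compact because it is closed and satisfies $p_i(K) \subseteq F'_i$ for every $i$, hence sits inside the compact product $\prod_i F'_i$. For each $i$, $\mathbf{Conv_1}$ at level $i$ combined with condition (3) of Proposition \ref{convnarrow} (applied to the discrete countable set $D_i$) produces a finite $F_i \subseteq D_i$ and an index $N_i$ with $p_{ij\ast}\gamma_j(D_i \setminus F_i) < \varepsilon/2^{i+2}$ for $j \geq N_i$. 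For the finitely many indices $j < N_i$, the individual tightness of each $\tilde{\gamma}_j$ on the Polish space $\hD$ allows one to enlarge $F_i$ to a finite $F'_i$ so that $\tilde{\gamma}_j(\hD \setminus p_i^{-1}(F'_i)) < \varepsilon/2^{i+2}$ holds for \emph{all} $j \in \N$. Summing over $i$ gives $\sup_j \tilde{\gamma}_j(\hD \setminus K) < \varepsilon/2$.

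With tightness in hand, narrow convergence $\tilde{\gamma}_j \to \mu$ follows by approximating bounded continuous test functions by cylinder functions. Given $f : \hD \to \R$ bounded continuous and $\varepsilon > 0$, choose $K$ as above with $\mu(\hD \setminus K) < \varepsilon$ also; using uniform continuity of $f$ on the compact $K$ and the fact that the clopen cylinders $p_i^{-1}(\{x\})$ form a neighborhood basis of every point, compactness supplies an $i$ such that $|f(y) - f(y')| < \varepsilon$ whenever $y, y' \in K$ satisfy $p_i(y) = p_i(y')$. Picking a representative in each nonempty $K \cap p_i^{-1}(\{x\})$ defines $g : D_i \to \R$ with $|f - g \circ p_i| < \varepsilon$ on $K$, and the identity $\int g \circ p_i\, d\tilde{\gamma}_j = \int g\, d p_{ij\ast}\gamma_j$ (valid for $j \geq i$) together with the norm convergence $p_{ij\ast}\gamma_j \to \mu_i$ from $\mathbf{Conv_1}$ shows that $\bigl|\int f\, d\tilde{\gamma}_j - \int f\, d\mu\bigr|$ is bounded by a constant multiple of $\varepsilon$ for $j$ large. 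As $\varepsilon$ is arbitrary, this completes the proof.
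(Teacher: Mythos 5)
Your proof is correct, and for the substantive implication it takes a genuinely different route from the paper. The easy direction ($p_{i\ast}\tilde{\gamma}_j = p_{ij\ast}\gamma_j$ plus narrow continuity of $p_{i\ast}$) and the construction of $\mu$ from the compatible family $(\mu_i)_{i\in\N}$ via Kolmogorov's theorem coincide with the paper's argument, as does the treatment of uniqueness. For $\mathbf{Conv_1}\Rightarrow\mathbf{Conv_2}$, however, you prove \emph{uniform tightness} of $(\tilde{\gamma}_j)_{j\in\N}$ — assembling a compact $K=\bigcap_i p_i^{-1}(F'_i)$ from the finite sets supplied by condition (3) of Proposition \ref{convnarrow} at each level, and padding each $F_i$ using the individual tightness of the finitely many early terms — and then approximate a bounded continuous $f$ by a cylinder function $g\circ p_i$ uniformly on $K$, controlling the complement by tightness and the boundedness of the total masses. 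The paper avoids tightness entirely: it equips $\hD$ with a metric whose closed balls are exactly the fibers $p_i^{-1}(p_i(x))$, notes that the cylinder functions $\bigcup_i p_i^\ast(l^\infty(D_i))$ are then uniformly dense in the space of bounded \emph{uniformly} continuous functions, and invokes the standard fact that narrow convergence may be tested on that space. Your version is longer and requires the tightness of Borel measures on Polish spaces (which the paper records as known), but is otherwise self-contained; the paper's is shorter but leans on the Portmanteau-type reduction to $C^b_u(\hD,\R)$. Both arguments are sound, and all the identities you use ($p_i=p_{ij}\circ p_j$, $\int g\circ p_i\,d\tilde{\gamma}_j=\int g\,dp_{ij\ast}\gamma_j$ for $j\geq i$) check out.
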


 When the conditions $\mathbf{Conv_1}$ and $\mathbf{Conv_2}$ of Proposition \ref{limitgammatilde} are satisfied, we shall say that \emph{the sequence $(\gamma_i)_{i\in \N}$  satisfies condition $\mathbf{Conv}$} and that $\mu$ is the \emph{limit measure} associated to $(\gamma_i)_{i\in \N}$. 
 
 Observe that, when the maps $q_i: D_{i+1} \ra D_i$, or equivalently the maps $p_i:\hD \ra D_i$, are all surjective then for any sequence $(\gamma_i)_{i\in \N}$ in $\prod_{i\in \N} \cM^b_+(D_i)$ as above, there exists a sequence $(\tilde{\gamma}_i)_{i\in \N}$ in $\cM^b_+(\hD)^\N$ such that the conditions (\ref{pig}) hold. (Indeed, if for any $x \in D_i$, we denote by $\tilde{x}$ a point in $p_i^{-1}(x)$, we may simply define $\tilde{\gamma}_i$ as $\sum_{x \in D_i} \gamma_i(x) \delta_{\tilde{x}}.$)
 
 \begin{proof} Observe that, for any $(i,j) \in \N^2$ such that $i\leq j,$ we have
 $$p_{ij\ast}\gamma_{j}= p_{ij\ast}p_{j\ast}\tilde{\gamma}_{j}= p_{i\ast}\tilde{\gamma}_{j}.$$
 
 The continuity of the maps $p_{i\ast}:\cM^b(\hD) \ra \cM^b(D_{i})$ (for the topology of narrow convergence) therefore establishes the implication $\mathbf{Conv_1} \Longrightarrow \mathbf{Conv_2}$, with $\mu_{i}= p_{i\ast}\mu$ for every $i\in \N.$  The unicity of the measure $\mu$ satisfying these conditions follows from the injectivity assertion in Kolmogorov's theorem (\ref{Kolmogorov}).
 
 Conversely, let us assume that $\mathbf{Conv_1}$ is satisfied. The continuity of the maps $p_{ii'\ast}:\cM^b(D_{i'}) \ra \cM^b(D_{i})$, $(0\leq i \leq i')$ in the topology of narrow convergence shows that the measures $\mu_{i}$ satisfy the conditions
 $$p_{ii'\ast}\mu_{i'}= \mu_{i}.$$
 According to Kolmogorov's theorem (\ref{Kolmogorov}), there exists a (unique) measure $\mu \in \cM^b_{+}(\hD)$ such that 
 $\mu_{i}= p_{i\ast}\mu$ for every $i\in \N.$ 
 
 To complete the proof, we are left to show that $(\tilde{\gamma}_{i})_{i \in \N}$ converges to $\mu$ in the topology of narrow convergence. Recall that this means that, for every function $f$ in the space $C^b(\hD, \R)$ of bounded continuous real valued functions on $\hD$, we have:
 \begin{equation}\label{narrowdef}
 \lim_{j \ra + \infty}\int_{\hD} f \,d\tilde{\gamma}_{j} = \int_{\hD} f \,d\mu.
\end{equation}
Also recall that, if $d$ denotes a metric on $\hD$ that defines its topology, this condition is satisfied as soon as it is satisfied by any $f$ in the subspace $C_{u}^b(\hD, \R)$ of of bounded  real valued functions on $\hD$ which are \emph{uniformly} continuous with respect to $d$ (see for instance \cite{Billingsley99}, Theorem 2.1\footnote{In \emph{loc. cit.}, only the narrow (a.k.a. weak) convergence of \emph{probability} measures is considered. The case of \emph{bounded positive} measures easily reduces to this one.}).

In turn, condition  (\ref{narrowdef}) is satisfied for every $f$ in $C_{u}^b(\hD, \R)$ if it is satisfied for every $f$ in some subset $\cE$ of $C_{u}^b(\hD, \R)$ dense in the topology of uniform convergence.

To prove that this last condition is indeed fulfilled, choose a decreasing sequence $(\epsilon_{i})_{i\in \N}$ in $\R_{+}^{\ast\N}$ such that 
$\lim_{i \ra + \infty}\epsilon_{i} = 0$ and equip $\hD$ with a metric $d$ such that, for any $x\in \hD$ and any $i\in \N$:
$$\{ x' \in \hD \mid d(x',x) \leq \epsilon_{i} \} = p_{i}^{-1}(p_{i}(x)).$$ 
(For instance, we may choose $\epsilon_{-1}$ in $]\epsilon_{0}, + \infty[$ and consider the metric $d$ defined by $$d(x,y) := \epsilon_{k-1}, \mbox{ where } k:= \min\{i \in \N \mid p_{i}(x) \neq p_{j}(y) \}$$
for any $(x,y) \in \hD^2$ such that $x \neq y.$) Any such metric defines the topology of $\hD$. Moreover, for every $i\in \N,$ the image of 
$$p_{i}^\ast := . \circ p_{i}: l^\infty (D_{i}) \lra  C^b(\hD, \R)$$
belongs to the subspace $C_{u}^b(\hD, \R)$ of functions uniformly continuous with respect to $d$, and the union
$$\cE := \bigcup_{i \in \N}  p_{i}^\ast(l^\infty (D_{i}))$$
is dense in $C_{u}^b(\hD, \R)$ equipped with the topology of uniform convergence.

Finally, if $f$ is an element of $\cE,$ of the form $p_{i}^\ast \phi$ for some $i \in \N$ and some $\phi \in l^\infty(D_{i}),$ then, for any $j\in \N_{\geq i}$,
$$\int_{\hD}f \, d\tilde{\gamma}_{j} = \int_{\hD}p_{i}^\ast \phi \, d\tilde{\gamma}_{j} = \int_{D_{i}} \phi\, dp_{i\ast}\tilde{\gamma}_{j}= \int_{D_{i}} \phi\, dp_{ij\ast}{\gamma}_{j}.$$
When $j$ goes to $+\infty,$ this converges to
\begin{equation*}
\int_{D_{i}}\phi \, d\mu_{i}= \int_{\hD}p_{i}^\ast \phi \, d\mu = \int_{\hD} f \, d\mu. %\qedhere
\end{equation*}
Consequently, condition (\ref{narrowdef}) is satisfied by $f$.
\end{proof}
 
 \begin{proposition}\label{limitgammaC}
 Let $(\gamma_i)_{i\in \N}$ be a family of measures in $\prod_{i\in \N} \cM^b_+(D_i)$.
 
 If the sequence $(\gamma_i(D_{i)}))_{i\in \N}$ converges in $\R_+$ and if $\cC$ is a countable subset of $\hD$ such that, for any $x \in \cC,$ the sequence $(\gamma_j(\{p_j(x)\}))_{j\in \N}$ admits a limit $\gamma(x)$ in $\R_+$ and if
 \begin{equation}\label{sumgammabis}
 \sum_{x\in \cC} \gamma(x) = \lim_{i \ra + \infty} \gamma_i(D_i),
 \end{equation}
 then the sequence $(\gamma_i)_{i\in \N}$ satisfies condition $\mathbf{Conv}$ and the associated limit measure is
 $$\mu:= \sum_{x \in \cC} \gamma(x) \delta_x.$$ 
 \end{proposition}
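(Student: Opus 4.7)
The plan is to verify Condition $\mathbf{Conv_1}$ of Proposition \ref{limitgammatilde}, namely that for every $i \in \N$, the sequence $(p_{ij\ast}\gamma_j)_{j \in \N_{\geq i}}$ converges in $\cM_+^b(D_i)$ to the measure $\mu_i := p_{i\ast}\mu$, where $\mu := \sum_{x\in\cC} \gamma(x)\,\delta_x$. Once this is established, Proposition \ref{limitgammatilde} together with the uniqueness part of Kolmogorov's theorem (\ref{Kolmogorov}) identifies the narrow limit of any compatible lift $(\tilde\gamma_j)$ as $\mu$ itself, since $p_{i\ast}\mu = \mu_i$ for every $i$. Note that $\mu$ is indeed a bounded positive measure of mass $\sum_{x \in \cC}\gamma(x) = \lim_i \gamma_i(D_i) < +\infty$, so this makes sense.

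The first key step is a semicontinuity lower bound: I would fix $i \in \N$ and $Y \subset D_i$, and claim that
\begin{equation*}
\liminf_{j \to +\infty} p_{ij\ast}\gamma_j(Y) \;\geq\; \mu_i(Y) \;=\; \sum_{x \in \cC,\, p_i(x) \in Y} \gamma(x).
\end{equation*}
To prove this, pick any finite subset $F \subset \cC \cap p_i^{-1}(Y)$. Since the points of $F$ are pairwise distinct in $\hD = \varprojlim_j D_j$, there exists $j_0 \geq i$ such that $p_j$ is injective on $F$ for every $j \geq j_0$. For such $j$, the points $\{p_j(x)\}_{x\in F}$ are distinct elements of $p_{ij}^{-1}(Y)$ (using $p_{ij}\circ p_j = p_i$), so by positivity
\begin{equation*}
p_{ij\ast}\gamma_j(Y) \;=\; \gamma_j(p_{ij}^{-1}(Y)) \;\geq\; \sum_{x \in F} \gamma_j(\{p_j(x)\}).
\end{equation*}
Letting $j \to +\infty$ and using hypothesis (2) gives $\liminf_j p_{ij\ast}\gamma_j(Y) \geq \sum_{x \in F}\gamma(x)$, and taking the supremum over finite $F$ yields the claim.

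The second key step is a mass-saturation argument. Apply the above inequality simultaneously to $Y$ and to its complement $D_i \setminus Y$:
\begin{equation*}
\liminf_j p_{ij\ast}\gamma_j(Y) + \liminf_j p_{ij\ast}\gamma_j(D_i\setminus Y) \;\geq\; \mu_i(Y) + \mu_i(D_i \setminus Y) \;=\; \mu_i(D_i).
\end{equation*}
On the other hand, $p_{ij\ast}\gamma_j(Y) + p_{ij\ast}\gamma_j(D_i \setminus Y) = \gamma_j(D_j)$, whose limit as $j \to +\infty$ is, by hypothesis (1) and (\ref{sumgammabis}), exactly $\sum_{x\in\cC}\gamma(x) = \mu(\hD) = \mu_i(D_i)$. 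Comparing, both $\liminf$'s must equal the corresponding $\mu_i$-values and both are actually limits, i.e.\ $\lim_j p_{ij\ast}\gamma_j(Y) = \mu_i(Y)$ for every $Y \subset D_i$. Specializing to singletons gives vague convergence $p_{ij\ast}\gamma_j \to \mu_i$, and since the total masses also converge to $\mu_i(D_i)$, Proposition \ref{convnarrow} upgrades this to convergence in $\cM_+^b(D_i)$. This verifies $\mathbf{Conv_1}$.

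The main obstacle is the first step, because one has to be careful that distinct points of $\cC$ may be projected to the same point under $p_j$ for small $j$; this is handled by restricting to finite subsets $F$ on which eventual injectivity is automatic from the very definition of $\hD$ as a projective limit. After that, the argument is a standard ``mass does not escape'' manipulation, entirely analogous to the Scheff\'e/Portmanteau style reasoning already encoded in Proposition \ref{convnarrow}.
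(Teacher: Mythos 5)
Your proof is correct. It rests on the same two essential ingredients as the paper's argument --- the eventual injectivity of $p_j$ on finite subsets of $\cC$, and the total-mass identity (\ref{sumgammabis}) which prevents mass from escaping --- but it packages them differently. The paper's proof (via Lemma \ref{FC}) constructs, for each $\epsilon>0$, a single finite set $F_\epsilon\subset\cC$ carrying all but $\epsilon$ of the mass of $\mu$ \emph{and} all but $\epsilon$ of the mass of $\gamma_j$ for $j$ large, and then concludes by a direct three-term triangle inequality in the total-variation norm, $\Vert p_{ij\ast}\gamma_j - p_{i\ast}\mu\Vert\leq \Vert \gamma_j - 1_{p_j(F_\epsilon)}\gamma_j\Vert + \dots$. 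You instead first prove the Portmanteau-type lower bound $\liminf_j p_{ij\ast}\gamma_j(Y)\geq \mu_i(Y)$ for \emph{every} subset $Y\subset D_i$, then use complementation and the convergence of total masses to force equality and upgrade the liminf to a genuine limit, and finally invoke Proposition \ref{convnarrow} (vague convergence plus convergence of masses implies norm convergence) --- a Scheff\'e-style conclusion. Your route delegates the final $\epsilon$-bookkeeping to Proposition \ref{convnarrow}, which is already available, and the setwise convergence statement $\lim_j p_{ij\ast}\gamma_j(Y)=\mu_i(Y)$ for all $Y$ is a slightly stronger intermediate output than what the paper records; the paper's route is more self-contained at this point but has to redo the mass-capture estimate by hand inside Lemma \ref{FC}. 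Both are complete; there is no gap in your argument.
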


 \begin{proof} This will follow from the following
 
 \begin{lemma}\label{FC} For every $\epsilon \in \R^\ast_{+},$ there exists $i(\epsilon) \in \N$ and a finite subset $F_{\epsilon}\subset \cC$ such that
 \begin{equation}\label{LA}\mu(\cC \setminus F_{\epsilon}) < \epsilon
 \end{equation}
 and, for any $j\in \N_{\geq i(\epsilon)}$, 
 $p_{j\mid F_{\epsilon}}: F_{\epsilon} \lra D_{j}$ is injective
 and 
 \begin{equation}\label{LB}\gamma_{j}(D_{j}\setminus p_{j}(F_{\epsilon})) < \epsilon.
 \end{equation}
\end{lemma}

Indeed, taking this lemma for granted, we have, for any $\epsilon >0$ and any $j\in \N_{\geq i(\epsilon)},$
\begin{equation}\label{alpha}
\Vert \gamma_{j}- 1_{p_{j}(F_{\epsilon})} \gamma_{j}\Vert < \epsilon.
\end{equation}
Moreover, for any $x$ in the finite set $F_{\epsilon},$
$$\lim_{j \ra +\infty} \gamma_{j}(p_{j}\{x\}) -\mu(\{x\})= \lim_{j \ra +\infty} \gamma_{j}(p_{j}\{x\}) -\gamma(x) = 0.$$
Therefore 
 \begin{equation}\label{beta}
\lim_{j \ra + \infty} \Vert 1_{p_{j}(F_{\epsilon})}\gamma_{j} -p_{j\ast}(1_{F_{\epsilon}}\mu)\Vert =0.
\end{equation}
Finally,
\begin{equation}\label{gamma}
\Vert 1_{F_{\epsilon}}\mu -\mu\Vert = \mu (\cC \setminus F_{\epsilon}) \leq \epsilon.
\end{equation}

For any $(i,j) \in \N^2$ such that $i\leq j,$ we have:
\begin{equation*}
\begin{split}
\Vert p_{ij\ast}\gamma_{j} - p_{i\ast}\mu \Vert & \leq \Vert p_{ij\ast}(\gamma_{j}-1_{p_{j}(F_{\epsilon})} \gamma_{j}) \Vert
+ \Vert p_{ij\ast}(1_{p_{j}(F_{\epsilon})} \gamma_{j} -p_{j\ast}(1_{F_{\epsilon}}\mu))\Vert + \Vert p_{i\ast}(1_{F_{\epsilon}}\mu -\mu)\Vert \\
 & \leq \Vert \gamma_{j}-1_{p_{j}(F_{\epsilon})} \gamma_{j} \Vert
+ \Vert 1_{p_{j}(F_{\epsilon})} \gamma_{j} -p_{j\ast}(1_{F_{\epsilon}}\mu)\Vert + \Vert 1_{F_{\epsilon}}\mu -\mu\Vert
\end{split}
\end{equation*}

From (\ref{alpha}-\ref{gamma}), we deduce that,  when $j$ goes to $+\infty$, the upper limit of the last sum is $\leq 2 \epsilon.$ As $\epsilon \in \R^\ast_{+}$ is arbitrary, this shows that
\begin{equation*}\lim_{j \ra +\infty} \Vert p_{ij\ast}\gamma_{j} - p_{i\ast}\mu \Vert = 0. \qedhere
\end{equation*}

\end{proof}

\begin{proof}[Proof of Lemma \ref{FC}] We first choose a finite subset $F_{\epsilon}$ of $\cC$ so large that
\begin{equation}\label{La}
\sum_{x \in \cC \setminus F_{\epsilon}} \gamma(x) \leq \epsilon/2.
\end{equation}
Then, since $F_{\epsilon}$ is finite, if the integer $j$ is large enough --- say $j \geq i(\epsilon)$ --- the map
 $p_{j\mid F_{\epsilon}}: F_{\epsilon} \lra D_{j}$ is injective and 
\begin{equation}\label{Lb}
\sum_{x \in F_{\epsilon}} \vert \gamma_{j}(\{p_{j}(x)\}) - \gamma(x)\vert \leq \epsilon/4.
\end{equation}
According to (\ref{sumgammabis}), we may also assume that, when $j\geq i(\epsilon),$
\begin{equation}\label{Lc}
\gamma_{j}(D_{j}) \leq \sum_{x \in \cC} \gamma(x) + \epsilon/8.
\end{equation}

Then the estimate (\ref{LA}) follows from (\ref{La}). To establish (\ref{LB}), write
$$\gamma_{j}(D_{j} \setminus p_{j}(F_{\epsilon})) = \gamma_{j}(D_{j}) - \gamma_{j}(p_{j}(F_{\epsilon}))$$
and observe that
$$\gamma_{j}(p_{j}(F_{\epsilon})) \geq \sum_{x \in F_{\epsilon}} \gamma(x) - \sum_{x \in F_{\epsilon}}  \vert \gamma_{j}(\{p_{j}(x)\}) - \gamma(x)\vert = 
\sum_{x \in \cC} \gamma(x)  - \sum_{x \in \cC \setminus F_{\epsilon}} \gamma(x)- \sum_{x \in F_{\epsilon}}  \vert \gamma_{j}(\{p_{j}(x)\}) - \gamma(x)\vert.$$
Together with the estimates (\ref{La}-\ref{Lb}), this shows that
\begin{equation*}\gamma_{j}(D_{j} \setminus p_{j}(F_{\epsilon})) \leq \epsilon /2 + \epsilon/4 + \epsilon/8. \qedhere
\end{equation*}
\end{proof}

\medskip

\chapter{Exact categories}\label{ExactCat}

Exact categories play a key role in Quillen's foundational work \cite{Quillen73} on higher algebraic $K$-theory, who introduced the terminology. As shown notably by Deligne and Keller, exact categories admit a formalism of derived categories that includes the formalism of derived categories of abelian categories, and is especially convenient for applications.  

This Appendix gathers the basic definitions and summarizes, without proof, some basic properties of exact categories in the sense of Quillen, for the commodity of the reader of Section \ref{proexact}. We refer the reader to the expositions by  Keller  \cite{Keller1996} and B\"uhler  \cite{Buehler2010} for the relevant proofs and for references.

We only emphasize that the historical development of the formalism of exact categories has been rather intricate. Notably, the axioms defining exact categories in \cite{Quillen73} may be significantly streamlined, as shown by the works of Yoneda and Keller (see \cite{Buehler2010}, Section 2). Besides, an important forerunner of Quillen's notion of exact category has been the theory of ``abelian categories" developed by Heller \cite{Heller58}. These categories are precisely the exact categories, in Quilllen's sense, the underlying additive category of which are idempotent complete (\cf  \cite{Buehler2010}, Appendix B), and Heller already showed that these idempotent complete exact categories did constitute a convenient framework for homological algebra.  

\section{Definitions and basic properties} 

\subsection{Definitions}
Let $\cA$ be an additive category. A \emph{kernel-cokernel} pair $(i,p)$ in $\cA$ is a pair of composable morphisms
$$A' \stackrel{i}{\lra} A  \stackrel{p}{\lra} A''$$
in $\cA$ such that $i$ is a kernel of $p$ and $p$ is a cokernel of $i$.

Let $\cE$ be a class of kernel-cokernel pairs in $\cA$.
An \emph{allowable monomorphism} (with respect to $\cE$) is a morphism $i$ in $\cA$ for which there exists a morphism $p$ in $\cA$ such that $(i,p)$ belongs to $\cE$.
An \emph{allowable epimorphism} (with respect to $\cE$) is a morphism $p$ in $\cA$ for which there exists a morphism $i$ in $\cA$ such that $(i,p)$ belongs to $\cE$.

An \emph{exact structure} on $\cA$ is a class $\cE$ of kernel-cokernel pairs which is closed under isomorphisms and satisfies the following conditions:

\emph{$\mathbf{E_0 :}$ For every object $A$ of $\cA,$ the identity morphism $1_A$ is an allowable monomorphism.}

\emph{$\mathbf{E_0^{\rm op} :}$ For every object $A$ of $\cA,$ the identity morphism $1_A$ is an allowable epimorphism.}

\emph{$\mathbf{E_1:}$ The class of allowable monomorphisms is closed under composition.}

\emph{$\mathbf{E_1^{\rm op} :}$ The class of allowable epimorphisms is closed under composition.}

\emph{$\mathbf{E_2:}$ The push-out of an allowable monomorphism along an arbitrary morphism exists and yields an allowable monomorphism.}

\emph{$\mathbf{E_2^{\rm op} :}$ The pull-back of an allowable epimorphism along an arbitrary morphism exists and yields an allowable epimorphism.}

An \emph{exact category} is a pair $(\cA,\cE)$ consisting of an additive category and of some exact structure $\cE$ in $\cA$. Elements $(i,p)$ of $\cE$ are often displayed as
$$0 \lra A' \stackrel{i}{\lra} A  \stackrel{p}{\lra} A'' \lra 0$$
and called \emph{allowable short exact sequences} of the exact category, or simply \emph{short exact sequences} when no confusion may arise.

\subsection{Some properties of allowable monomorphisms and epimorphisms}

In any exact category $\cA$ as above, the following properties are satisfied (see \cite{Buehler2010}, Section 2):

(i) Any split exact sequence in $\cA$ --- namely any diagram in $\cA$ isomorphic to a diagram of the form
$$0 \lra M' \stackrel{(1_{M'},0)}{\lra} M' \oplus M'' \stackrel{\pr_2}{\lra} M'' \lra 0$$
for some objects $M'$ and $M''$ in $\cA$ --- is a short exact sequence.

(ii) The direct sum of two exact sequences is a short exact sequence.

(iii) The pull-back of an allowable monomorphism along an admissible epimorphism yields an allowable monomorphism. 

Dually, the push-out of an allowable epimorphism along an allowable monomorphism yields an allowable epimorphism.
 
 (iv) Suppose that $i: A \lra B$ is a morphism in $\cA$ admitting a cokernel. If there exists a morphism $j: B \lra C$ in $\cA$ such that $ji: A \lra C$ is an allowable monomorphism, then $i$ is an allowable monomorphism. 
 
 Dually, let $p: B' \lra A'$ be a morphism in $\cA$ admitting a kernel. If there exists a morphism $q:C' \lra B'$ in $\cA$ such that $pq: C' \lra A'$ is an allowable epimorphism, then $p$ is an allowable epic.
 %\smallskip
 
 In Quillen's definition of exact categories (\cite{Quillen73}, § 2), Property (iv) appears as an extra axiom besides (a simple variant of) axioms $\mathbf{E_{0-2}^{\rm (op)}}$.
 
 \subsection{Allowable morphisms and exactness}\label{Amorex} 
 
 In some exact category $\cA$, a morphism $f: A \lra B$ is said to be an \emph{allowable morphism} if it admits a factorization 
 \begin{equation}\label{fact}
 f = m\, e : A \stackrel{e}{\lra} I \stackrel{m}{\lra} B
 \end{equation}
 where $e$ is an allowable epimorphism and $m$ and allowable monomorphism. The factorization (\ref{fact}) is then unique, up to unique isomorphism.
 
 A diagram of allowable morphisms 
 $$A' \stackrel{f}{\lra} A \stackrel{f'}{\lra}A''$$
 is called \emph{exact} (at $A$) when the factorizations 
 $$ f = m\, e : A' \stackrel{e}{\lra} I \stackrel{m}{\lra} A \quad \mbox{ and }
  f' = m'\, e' : A \stackrel{e'}{\lra} I'\stackrel{m'}{\lra} A''$$
  of $f$ and $f'$ as products of allowable monomorphisms and epimorphisms are such that the diagram
  $$0 \lra I   \stackrel{m}{\lra} A \stackrel{e'}{\lra} I' \lra 0$$
  is a short exact sequence. 
 
 \section{The derived category of an exact category} 

\subsection{Definitions and Notation}\label{DefNot}

Let us recall that an additive category $\cA$ is said to be \emph{idempotent complete} when it satisfies the following equivalent conditions (see for instance \cite{Buehler2010}, 6.1-2):

$\mathbf{IC} :$ \emph{For every object $A$ of $\cA$ and any endomorphism $p: A \lra A$ which is idempotent --- \emph{that is, which satisfies $p^2 =p$} --- there is decomposition 
$$\phi : A \lrasim K \oplus I$$
of $A$ in $\cA$ such that}
$$\phi \, p \, \phi^{-1} = 
\begin{bmatrix}
 0 & 0 \\ 0 & {\rm Id}_A
\end{bmatrix}.$$
  
$\mathbf{IC'} :$ \emph{Every idempotent endomorphism in $\cA$ has a kernel}.

For any additive category $\cA$, we shall denote the additive category of complexes and chain maps in $\cA$ by $\mathbf{Ch}(\cA)$, and the homotopy category of $\cA$ by $\mathbf{K}(\cA)$. 

The category $\mathbf{K}(\cA)$ has the same objects as  $\mathbf{Ch}(\cA)$, and its morphisms are the chain maps, modulo the chain maps homotopic to zero. It is equipped with a natural structure of triangulated category, with suspension functor the ``shift" functor $\cdot[1]$, and with exact triangles the mapping cones of chain maps in  $\mathbf{Ch}(\cA)$, up to isomorphisms in $\mathbf{K}(\cA)$.

\subsection{The derived category $\mathbf{D}(\cA)$}\label{Dex} We finally review the definition and some basic properties of the derived category of some exact category. We refer the reader to \cite{Keller1996} and \cite{Buehler2010}, Section 10, for details of this construction.

  Let $\cA$ be some exact category. 

\subsubsection{}\label{acyclicquasiiso} A complex
\begin{equation}\label{compA}
(A^\bullet, d^\bullet):  \quad \cdots \lra A^{n-1} \stackrel{d^{n-1}}{\lra} A^n  \stackrel{d^{n}}{\lra} A^{n+1} \lra \cdots
\end{equation}
in $\mathbf{Ch}(\cA)$ is called \emph{acyclic}, or \emph{exact}, when the morphisms $(d^n)_{n \in \Z}$ are admissible and when, for every $n \in \Z,$ the diagram 
$$A^{n-1} \stackrel{d^{n-1}}{\lra} A^n  \stackrel{d^{n}}{\lra} A^{n+1}$$
is exact (at $A^n$).

Equivalently, the complex  (\ref{compA}) is acyclic when it may be obtained by ``splicing together" a sequence of short exact sequences in $\cA$:
$$0 \lra Z^n A \lra A^n \lra Z^{n+1} A \lra 0, \quad n \in \Z.$$

The mapping cone of a chain map between acyclic complexes is acyclic, and the class $\mathbf{Ac}(\cA)$ of acyclic complexes in $\cA$ is a triangulated subcategory of $\mathbf{K}(\cA)$.

A chain map in $\mathbf{Ch}(\cA)$ is called a \emph{quasi-isomorphism} when its mapping cone is homotopy equivalent to an acyclic complex. 

\subsubsection{}\label{defder}  The \emph{derived category} $\mathbf{D}(\cA)$ of the exact category is defined as the Verdier quotient 
$$\mathbf{D}(\cA) := \mathbf{K}(\cA)/\mathbf{Ac}(\cA).$$

If $\ast$ is an element of $\{+, -, b\}$, one may also define the full subcategory $\mathbf{D}^\ast(\cA)$ of $\mathbf{D}(\cA)$ formed by the complexes which are acyclic in degree $n$ for all $n<<0$, resp. all $n>>0,$ resp. all $n>>0$ and all $n<<0.$  It may be identified with the Verdier quotient $\mathbf{K}^\ast(\cA)/\mathbf{Ac}^\ast(\cA)$ where $\mathbf{K}^\ast(\cA)$ (resp. $\mathbf{Ac}^\ast(\cA)$) is the full subcategory of $\mathbf{K}(\cA)$ (resp. $\mathbf{Ac}(\cA)$) defined by complexes satisfying the boundedness condition $\ast.$

\subsubsection{}\label{Karoub}  When the exact category $\cA$ is \emph{idempotent complete}, the following properties hold:

(i) a retract in $\mathbf{K}(\cA)$ of an acyclic complex is acyclic;

(ii) the class of acyclic complexes is closed under isomorphisms in  $\mathbf{K}(\cA)$;  notably, 
any null homotopic complex in $\mathbf{Ch}(\cA)$ is acyclic;

(iii) a chain map in is a $\mathbf{Ch}(\cA)$ becomes an isomorphism in $\mathbf{D}(\cA)$ if and only if  it is a quasi-isomorphism, and if and only if  its mapping cone is acyclic.

\chapter[Holomorphic sections of line bundles over compact complex manifolds]{Upper bounds on the dimension of spaces of holomorphic sections of line bundles over compact complex manifolds}\label{ApUpBd}

\medskip

\section{Spaces of sections of analytic line bundles and multiplicity bounds} In this Appendix, we present two proofs of Proposition \ref{holbnd}, that we rephrase as follows:

\begin{proposition}\label{holbndBis}
  
For any analytic line bundle $L$ over a compact complex manifold $M$ of
 complex  dimension $n$, there exists $C$ in $\R^\ast_{+}$ such that, for any 
  positive integer $D$, the dimension of the vector space
  $\Gamma(M,L^{\otimes D})$ of analytic sections of $L^{\otimes D}$ 
  satisfies:
  $$\dim_{\C} \Gamma(M,L^{\otimes D}) \leq C. D^n.$$

 \end{proposition}
 
 The first proof follows the arguments of Serre (\cite{Serre53}) and Siegel (\cite{Siegel55}), and relies on the use of Schwarz Lemma,
after introducing suitable coordinate charts on $M$.

The second proof will assume that the manifold $M$ is K\"ahler and relies on some simple computations of intersection numbers.

To establish Proposition \ref{holbndBis}, we may clearly --- and we shall --- assume that $M$ is connected. For any $R\in \R_+,$ we denote
$$\B^n(R) := \{(z_1,\ldots,z_n) \in \C^n \mid \vert z_1 \vert^2 + \ldots + \vert z_n \vert^2 < R^2 \}.$$

By compactness, there exists a finite family of analytic charts on $M$
$$\varphi_{\alpha}:U_{\alpha}\stackrel{\sim}{\longrightarrow} \B^n(1),
\;\;\; 1 \leq \alpha \leq N$$
--- the $U_{\alpha}$ are open subsets of $M,$ and the $\varphi_{\alpha}$ 
analytic diffeomorphisms --- such that 
$$X = \bigcup_{1\leq \alpha \leq A} U_{\alpha}.$$
For every $\alpha$ in $\{1,\ldots,A\},$ we may consider the ``center
of the chart $\varphi_{\alpha}$":
$$P_{\alpha}:=\varphi_{\alpha}^{-1}(0).$$

Proposition \ref{holbndBis} will be a straightforward consequence of the following result, of independent interest:

\begin{proposition}\label{holbmult}
    With the above notation, for every holomorphic line bundle $L$
    over $M$, there exists $c$ in $\R_{+}$ such that, for any
    non-negative integer $D$ and any holomorphic section $s$ of
    $L^{\otimes D}$ over $M$, if
    $$\mult_{P_{\alpha}} s > c.D \;\;\;\mbox{ for every $\alpha$ in
    }\{1,\ldots,A\},$$
    then $s$ vanishes everywhere on $M.$
\end{proposition}

When $M$ is K\"ahler, the following stronger variant of Proposition \ref{holbmult} holds:

\begin{proposition}\label{holbmultK}
    Let us assume that the connected compact complex $M$ is K\"ahler.  
    
    For every point $P$ of $M$ and every holomorphic line bundle $L$
    over $M$, there exists $c$ in $\R_{+}$ such that, for any
    non-negative integer $D$ and any holomorphic section $s$ of
    $L^{\otimes D}$ over $M$, if
    $$\mult_{P} s > c.D,$$
    then $s$ vanishes everywhere on $M.$
\end{proposition}

\begin{proof}[Proof of Proposition \ref{holbndBis} from Proposition \ref{holbmult}] Again, we may introduce charts $(\varphi_{\alpha})_{1\leq
     \alpha \leq A}$ and their centers $(P_{\alpha})_{1\leq \alpha
     \leq A}$ as above. If  $M_{P_{\alpha},i}$ denotes the infinitesimal
      neighborhood of order $i$ of $P_{\alpha}$ in $M$, the space
      $\Gamma(M_{P_{\alpha},i}, L^{\otimes D})$ of
      sections of $L^{\otimes D}$ over $M_{P_{\alpha},i}$ --- that is,
      the space of jets of order $i$ of sections of $L^{\otimes D}$
      at $P_{\alpha}$ --- Proposition \ref{holbmult} precisely asserts
      the existence of $c$ in $\R^\ast_{+}$ such
      that the natural evaluation map:
      $$\eta_{D}^i: \Gamma(M, L^{\otimes D}) \longrightarrow
      \bigoplus_{1\leq \alpha \leq A} \Gamma(M_{P_{\alpha},i}, L^{\otimes
      D})$$
      is injective if $i > \lfloor cD\rfloor.$
      
      Consequently, if we let $i(D):=\lfloor cD\rfloor +1,$ we get
      $$\dim_{\C} \Gamma(M,L^{\otimes D}) \leq \sum_{1\leq \alpha \leq A}
      \dim_{\C} \Gamma(M_{P_{\alpha},i}, L^{\otimes
      D}) = A \binom{n+i(D)}{n}.$$
      
      Since, when $D$ goes to $+\infty,$
      $$\binom{n+i(D)}{n} \sim \frac{c^n}{n!} D^n,$$
      this establishes the announced upper bound.
      \end{proof}

\section[Proof of Proposition E.1.2]{Proof of Proposition \ref{holbmult}}\label{ProofHolMult} We will rely on the following higher
dimensional version of the well-known Schwarz Lemma, combined with
compactness arguments:

\begin{lemma}\label{Schwarz}
    Let $R$ be a positive real number and $i$ a non-negative integer. 
    For any holomorphic function $f$ on $\B^n(R)$ such that 
    $$\mult_{0}f \geq i$$
    and for any $z$ in $\B^n(R),$ we have:
    $$|f(z)| \leq  \left(\frac{\|z\|}{R}\right)^i \sup_{w \in
    \B^n(R)}|f(w)|.$$
    \end{lemma}

    \Proof{} Let $f$ be such a holomorphic function satisfying $\mult_{0}f \geq
    i$, and let $z$ be a point in $\B^n(R).$
    
    When $z=0,$ the inequality is immediate. Let us
    therefore assume $z$ non-zero, and let us introduce the function
    $g_{z}$ of
    one complex $t$ defined by
    $$g_{z}(t):= t^{-i} f(t .z/\|z\|).$$
    It is \emph{a priori} defined and analytic on the punctured disk
    $D(0,R)\setminus\{0\}.$ According to our assumption on
    $\mult_{0}f,$ it actually extends to an analytic function on
    $D(0,R).$ Consequently, using the maximum modulus principle and the very definition
    of $g_{z},$ we obtain, for every $t$
    in $D(0,R)$:
    $$|g_{z}(t)|\leq \limsup_{|w|\rightarrow R}|g_{z}(w)|
    \leq R^{-i}\sup_{w \in
    \B^n(R)}|f(w)|.$$
    The required upper-bound on $|f(z)|$ follows from this inequality 
    applied to $t=\Vert z\Vert.$
    \qed %\BOX{Lemma \ref{Schwarz}}

%\begin{proof}[Proof of Proposition \ref{holbmult}] 
\medskip
Let us choose a continuous Hermitian metric
$\left\|.\right\|_{L}$ on $L,$ and, for every $\alpha$ in
$\{1,\ldots,A\},$ a non-vanishing holomorphic section $s_{\alpha}$ of 
$L$ over $U_{\alpha}$\footnote{The existence of such a section is equivalent 
to the triviality of the holomorphic line bundle $L$ over
$U_{\alpha}$. This follows from the isomorphism
$$\varphi_{\alpha}^\ast: H^1(U_{\alpha}, \cO^\ast) \simeq H^1(\B^n,
\cO^\ast),$$
 and from the vanishing of $H^1(\B^n,\cO^\ast),$ itself a
 straightfoward consequence of the vanishing of $H^1(\B^n,\cO)$ and of
 $H^2(\B^n, \Z)$, thanks to the short exact sequence of sheaves on $M$:
 $$0\longrightarrow Z \longrightarrow \cO \stackrel{\exp (2\pi
 i.)}{\longrightarrow} \cO^\ast \longrightarrow 0$$
 and to the associated long exact sequence of cohomology groups.
 
 Observe also that, for a given line bundle $L$, it is straightforward that such
 $s_{\alpha}$ exist if the domains $U_{\alpha}$ of the charts
 $\varphi_{\alpha}$ are  small enough. This observation makes
 particularly elementary the proof of  
 vanishing statement in
 Proposition \ref{holbmult} for one specific line bundle $L$ and for 
 a suitable choice of the $P_{\alpha}$ only --- this weaker version is
 sufficient to derive  Corollary \ref{holbnd} below and Chow's Theorem.}.
 
 By compactness of $M$ again, there exist $r$ in $]0,1[$ such that
 $$M= \bigcup_{1\leq \alpha \leq A} U_{\alpha}(r),$$
 where $U_{\alpha}:=\varphi_{\alpha}^{-1}(\B^n(r)).$
 
Finally, let us choose a real number $r'$ in the interval $]r,1[.$
 
Let $D$ be a non-negative integer, and $s$ a holomorphic section of 
$L^{\otimes D}$ over $L$. We shall denote 
$\left\|.\right\|_{L^{\otimes D}}$ the continous Hermitian metric on 
$L^{\otimes D}$ defined as the $D$-th tensor power of the metric 
$\left\|.\right\|_{L}$ on $L$.

For any $\alpha$ in $\{1,\ldots, A\}$, we may write
$$S_{U_{\alpha}} = f_{\alpha}\circ \varphi_{\alpha}. 
s_{\alpha}^{\otimes D},$$
where $\varphi_{\alpha}$ denotes a holomorphic function on $\Bn$. 
The following estimates are straightforward:
for any $P$ in $U_{\alpha}(r),$
\begin{equation}\label{etdeun}
    \left\|s(P)\right\|_{L^{\otimes D}} \leq \sup_{Q \in 
    U_{\alpha}(r)} \left\|s_{\alpha}(Q)\right\|_{L}^D 
    |f_{\alpha}(\phi_{\alpha}(P))|,
    \end{equation}
and for any $w$ in  $\Bn (r')$,
\begin{equation}\label{etdedeux}
  |f_{\alpha}(w)| \leq \sup_{Q \in 
    U_{\alpha}(r')} \left\|s_{\alpha}(Q)\right\|_{L}^{-D} 
    \left\|s_{\alpha}(\varphi_{\alpha}^{-1}(w))\right\|_{L^{\otimes D}}.
    \end{equation}
    
    Moreover, if $\mult_{0}f_{\alpha}$ --- or equivalently 
    $\mult_{P_{\alpha}} s$ --- is at least $i,$ the Schwarz Lemma 
    \ref{Schwarz} shows that
    \begin{equation*} %\label{etdetrois}
|f_{\alpha}(\phi_{\alpha}(P))| \leq \left(\frac{r}{r'}\right)^i
\sup_{w \in \Bn(r)}  |f_{\alpha}(w)|.
\end{equation*}
Combining this inequality with (\ref{etdeun}) and (\ref{etdedeux}), 
we then have:
$$\left\|s(P)\right\|_{L^{\otimes D}} \leq
\left(\frac{r}{r'}\right)^i
\left(\frac{\sup_{Q \in 
    U_{\alpha}(r)} \left\|s_{\alpha}(Q)\right\|_{L}}{
    \inf_{Q \in 
    U_{\alpha}(r')} \left\|s_{\alpha}(Q)\right\|_{L}}
    \right)^D
    \sup_{Q \in U_{\alpha}(r')} \left\|s(Q)\right\|_{L^{\otimes D}}.$$
    
    Finally we obtain that, if
$\mult_{P_{\alpha}} s \geq i$ for every $\alpha$ in
    $\{1,\ldots,A\},$
    then
    \begin{equation}\label{lm}
	\max_{P \in M} \left\|s(P)\right\|_{L^{\otimes D}}
	\leq \lambda^i M^D \max_{P \in M} 
	\left\|s(P)\right\|_{L^{\otimes D}},
	\end{equation}
	where
	$\lambda := r/r'$ 	and
	$$ M:= \max_{1\leq \alpha \leq A} \frac{\sup_{Q \in 
    U_{\alpha}(r)} \left\|s_{\alpha}(Q)\right\|_{L}}{
    \inf_{Q \in 
    U_{\alpha}(r')} \left\|s_{\alpha}(Q)\right\|_{L}}.$$
    
    By their very definition, $\lambda$ belongs to $]0,1[$, and $M$ 
    to $[1,+\infty[$, and
    $$c:=\frac{\log M}{\log \lambda^{-1}}$$ is a well-defined non-negative real number.
    The upper-bound (\ref{lm}) implies the vanishing of $s$ when
    $\lambda^i M^D < 1,$
    that is, when $ i > c. D.$
   %\end{proof}%Proposition \ref{holbmult}}

   \section[Proof of Proposition E.1.3]{Proof of Proposition \ref{holbmultK}}
   
   Let $M$ be a compact connected complex manifold of dimension $n$, equipped with some K\"ahler form $\omega$.
   
   Let $P$ be a point in $M$ and let
   $$\nu: \tilde{M} \lra M$$ 
   denote the blow-up of $P$ in $M$, and 
   $$E:= \nu^{-1}(P)$$
   its exceptional divisor.

\begin{lemma}\label{exlambda} There exists a $C^\infty$ Hermitian metric $\Vert.\Vert$ on the line bundle $\cO(-E)$ over $\tilde{M}$ and a  positive real number $\lambda$ such the first Chern form
$$\alpha:= c_1(\cO(-E), \Vert.\Vert)$$
satisfies:
\begin{equation}\label{alphaomega}
\alpha + \lambda \, \nu^\ast \omega \geq 0.
\end{equation}
 \end{lemma}
 
 \proof  Let us consider  the blow-up 
 $$\nu_0: \widetilde{\A_\C^n} \lra \A_\C^n$$
at the origin $0$ of the $n$-dimensional complex affine space $\A_\C^n$.

The variety $\widetilde{\A_\C^n}$ may be identified with the smooth subscheme of $\A^n_\C \times \PP^{n-1}_\C$ defined by
$$((z_1,\ldots,z_n), (w_1:\ldots:w_n)) \in \tilde{\A_\C^n} \Longleftrightarrow \mbox{ for every $1\leq i < j \leq n, $} 
\begin{vmatrix} z_i & w_i \\z_j & w_j \end{vmatrix} =0.$$
The map $\nu_0$ coincides with the restriction of the first projection 
${\rm pr}_1 :  \A^n_\C \times \PP^{n-1}_\C \lra   \PP^{n-1}_\C,$
and the exceptional divisor $E:= \nu_0^{-1}(0)$ with $\{0\} \times \PP^{n-1}_\C.$  Moreover, the line bundle $\cO_{\widetilde{\A_\C^n}} (-E)$ is canonically isomorphic to the pull-back ${\rm pr}_{2\mid \widetilde{\A_\C^n}}^\ast \cO_{\PP^{n-1}_\C}(1)$ of the tautological line bundle  $\cO_{\PP^{n-1}_\C} (1)$ by the restriction of the second projection
 ${\rm pr}_2 :  \A^n_\C \times \PP^{n-1}_\C \lra \A^n_\C.$
 (The canonical isomorphism maps ${\rm pr}_{2\mid \widetilde{\A_\C^n}}X_i$ to the pull-back under ${\rm pr}_{1\mid \widetilde{\A_\C^n}}$ of the $i$-th coordinate on $\A^n_\C$, for every $i \in \{1, \ldots, n\}.$)

 Consequently, like $\cO_{\PP^{n-1}_\C(1)}$, the line bundle  $\cO_{\widetilde{\A_\C^n}} (-E)$ may be endowed with some $C^\infty$ Hermitian metric $\Vert.\Vert_0$ such that
 $c_1(\cO_{\widetilde{\A_\C^n}} (-E), \Vert.\Vert_0) \geq 0$
 over $\widetilde{\A_\C^n}$. 
 
 Let finally $\Vert.\Vert_1$ be a $C^\infty$ metric on $\cO_{\widetilde{\A_\C^n}} (-E)$ which coincides with $\Vert.\Vert_0$ over 
 $\nu_0^{-1}(\B^n(1/2))$ and satisfies
 $\Vert \mathbf{1}_{\cO_{\widetilde{\A_\C^n}}(-E)} \Vert_1 = 1$
 over $\nu_O^{-1}(\A_\C^n \setminus \B^n(3/4)).$
 
 Let us choose a $\C$-analytic chart $\varphi: U \lrasim \B^n(1)$ on $M$,
 such that $\varphi^{-1}(O)=P.$ By means of $\varphi,$ we may transport the metric $\Vert.\Vert_1$ on $\cO_{\widetilde{\A_\C^n}} (-E)$ over ${\nu_0^{-1}(\B^n(1))}$ to a metric on $\cO_{\tilde{M}}(-E)$ over $\nu^{-1}(U)$.  The metric $\Vert . \Vert$ satisfies 
 \begin{equation}\label{eq:11}
\Vert \mathbf{1}_{\cO_{\tilde{M}}(-E)} \Vert = 1
\end{equation}
over $U \setminus \varphi^{-1}(\B^n(3/4))$, and therefore may be extended to a metric (still denoted $\Vert.\Vert$)  over $\tilde{M}$ by requiring (\ref{eq:11}) to hold over $\tilde{M} \setminus
\varphi^{-1}(\B^n(3/4))$.

By construction, the first Chern form $c_1(\cO_{\tilde{M}}(-E), \Vert. \Vert)$ is $\geq 0$ on $\tilde{M} \setminus \nu^{-1}(K)$, where 
$$K := \varphi^{-1} (\overline{\B^n}(3/4) \setminus \B^n(1/2)).$$
Since $\nu$ is an isomorphism over $M\setminus \{P\}$ --- that contains the compact $K$ --- and $\omega$ is everywhere $>0$, the condition (\ref{alphaomega}) holds for any $\lambda$ large enough. 
 \qed
 
 Let us consider the cohomology class $[\omega]$ of $\omega$. For any line bundle $L$ over $M$, we may consider its
 first Chern class $c_1(L)$ in the real cohomology group $H^2(M, \R)$ and form the intersection number:
 $$c_1(L). [\omega]^{d-1}
  \in H^{2n}(M, \R) \simeq \R.$$
 
We may now establish Proposition \ref{holbmultK} in the following more precise form:

\begin{lemma} Let $\lambda$ be as in Lemma \ref{exlambda}. For any analytic line bundle $L$ over $M$, any positive integer $D$, and any analytic section $s$ of $L^{\otimes D}$ over $M$ that does not vanish identically, the order of vanishing $\mult_P s $ of $s$ at $P$ satisfies the following upper bound:
\begin{equation}\label{upmultP}
\mult_P s \leq \frac{c_1(L).[\omega]^{d-1}}{\lambda^{d-1}} D.
\end{equation}
\end{lemma}
         
 \proof  Consider some section $s$ in $\Gamma(M, L^{\otimes D})\setminus \{0\}$ and its divisor $\div s $ on $M$. It is effective, and $\mult_P$ is the multiplicity of the exceptional divisor $E$ in its inverse image $\nu^\ast \div s$ on $\tilde{M}$. 
 
  The divisor on $\tilde{M}$
  $$Z := \nu^\ast \div s - \mult_P s. E$$
  is therefore effective. Together with  (\ref{alphaomega}), this shows that
  \begin{equation}\label{intpos}
  \int_{\tilde{M}} (\alpha + \lambda \nu^\ast)^{d-1} \delta_Z \geq 0.
  \end{equation}
  
If $[E]$ and $[Z]$ denote the cohomology classes in $H^2(\tilde{M}, \R)$ of the divisors $E$ and $Z$, the following equality of cohomology classes hold:
$$[\alpha + \lambda \nu^\ast \omega] = -[E]+ \lambda \nu^\ast [\omega] \mbox{ and }[\delta_\Z] = \nu^\ast c_1(L) -\mult_P s .[E],$$
and the integral in (\ref{intpos}) may be written as an intersection number:
\begin{equation}\label{bigint}
\int_{\tilde{M}} (\alpha + \lambda \nu^\ast)^{d-1} \delta_Z  = (-[E]+ \lambda \nu^\ast [\omega])^{d-1}.(D \,\nu^\ast c_1(L) -\mult_P s .[E]).
\end{equation}

Since $\nu$ is a birational morphism, and 
$$\nu_\ast [E]^i =0 \mbox{ if $i<d$}$$
 and $$(-1)^{d-1}[E]^{d} = c_1(\cO(_E))^{d-1}.[E]= 1,$$
the intersection number in the left-hand side of (\ref{bigint}) also equals:
$$D \, c_1(L).[\omega]^{d-1} - \lambda^{d-1} \mult_P S.$$

Together with (\ref{intpos}), this establishes the upper-bound (\ref{upmultP}).
\qed 

    The previous proof is a counterpart, in the framework of K\"ahler geometry, of the derivation of the basic properties of the Seshadri constants of nef line bundles on algebraic varieties 
     (see \cite{Lazarsfeld04}, Chapter 5, notably Proposition 5.1.9, and \cite{Bost04}, Lemma 2.3).
 
 \medskip

 \chapter{John ellipsoids and finite dimensional normed spaces}\label{ApJohn}
 
 \medskip

 \section{John ellipsoids and John Euclidean norms} Let $E$ be a finite dimensional real vector space, equipped with some norm $\Vert .\Vert,$ and let 
 $$B:= \{ x \in E \mid \Vert x \Vert \leq 1 \}$$
 be its closed unit ball.
 
 A simple compactness argument shows that, among the ellipsoids of center 0 in $E$ contained in $B$, there is one of maximal volume. As shown by John 
\cite{John48}, this ellipsoid $J$ satisfies 
$$B \subset (\dim_\R E)^{1/2} J.$$
Moreover this ellipsoid is unique\footnote{This unicity result appears to have been observed independently by Loewner (in a dual form, see \cite{Busemann50}, p. 159-160), Danzer, Laugwitz and Lenz (\cite{DanzerLaugwitzLenz57}) and Zaguskin (\cite{Zaguskin58}).}. It is called the \emph{John ellipsoid} of $B$.

These results may be translated in terms of Euclidean norms as follows:

\begin{proposition}\label{prop:John}
Among the Euclidean norms  $\vert . \vert$ on $E$ such that $\Vert .\Vert \leq \vert. \vert,$ there exists a unique one --- the \emph{John Euclidean norm} $\Vert.\Vert_J$ attached to $\Vert.\Vert$ --- for which the associated Euclidean norm on $\wedge^{\dim E} E$ is minimal. The norm $\Vert .\Vert_J$ satisfies:
$$\Vert.\Vert \leq \Vert .\Vert_J \leq (\dim_\R)^{1/2} \Vert.\Vert.$$ 
\end{proposition}

\section{Properties of the John norm} Let us indicate a few properties of the John norm that are direct consequences of Proposition \ref{prop:John}.

\subsection{Invariance under automorphisms}

Let $K := {\rm Isom}(E, \Vert. \Vert)$ be the set of elements of $GL_\R(E)$ that preserve the norm $\Vert.\Vert.$ It is a compact subgroup of $GL_\R(E)$.
\emph{The Euclidean norm $\Vert. \Vert_J$ is  fixed under the action of $K$.}

This follows from the unicity assertion in Proposition  \ref{prop:John} and from the fact that the action of the compact group $K$ on $\wedge^{\dim E} E$ factors through $\{1, -1\} \hra \R^\ast = GL_\R (\wedge^{\dim E} E)$.

\subsection{The John norm attached to a normed complex vector spaces} Assume that $E$ is a complex vector space and that $\Vert.\Vert$ is a norm on this complex vector space. Then \emph{the John norm $\Vert.\Vert_J$} associated to $(E,\Vert.\Vert)$ seen as a normed real vector space \emph{is a Hermitian norm on $E$.}

Indeed, in this situation, the group  ${\rm Isom}(E, \Vert. \Vert)$ contains the operation $[i]$ of multiplication by $i$ on $E$, and therefore $\Vert .\Vert_J$ is invariant under the action of $[i]$, hence a Hermitian norm on $E$.

\subsection{Compatibility with finite products}\label{CompaJohnProd} Let $(E_1, \Vert.\Vert_1), \ldots, (E_N, \Vert.\Vert_N)$ be a finite family of finite dimensional normed vector spaces (over $\R$ or over $\C$). Let us consider the direct sum
$$E:= E_1 \oplus \ldots \oplus E_N$$
and the norm $\Vert .\Vert$ on $E$ defined by:
$$\Vert(e_1, \ldots, e_n)\Vert := \max_{1\leq i \leq n} \Vert e_i \Vert_i \quad \mbox{ for every $(e_1, \ldots, e_n) \in  E_1 \oplus \ldots \oplus E_N.$}$$

Then \emph{the John norms $\Vert.\Vert_J$ and $\Vert.\Vert_{1,J}, \ldots, \Vert.\Vert_{N,J}$ associated to $\Vert.\Vert$ and $\Vert.\Vert_{1}, \ldots, \Vert.\Vert_{N}$, satisfy the relation:}
\begin{equation}\label{JohnProd}
\Vert(e_1, \ldots, e_n)\Vert^2_J := \sum_{1\leq i \leq n} \Vert e_i \Vert_{i,J}^2.
\end{equation}

Indeed the group  ${\rm Isom}(E, \Vert. \Vert)$ contains the group $\{1,-1\}^N$ acting diagonally on $E_1 \oplus \ldots \oplus E_N$. Therefore the (Euclidean or hermitain) norm $\Vert \Vert$ is invariant by this subgroup, and therefore may be written:
$$\Vert(e_1, \ldots, e_n)\Vert_J := \left(\sum_{1\leq i \leq n} \Vert e_i \Vert_{J}^2\right)^{1/2}.$$
The expression (\ref{JohnProd}) readily follows from this observation.

\section{Application to lattices in normed real vector spaces}\label{defChi}

Let $\Gamma$ be a free $\Z$-module of finite rank, and let $\Vert. \Vert$ be some norm on the finite dimensional real vector space $\Gamma_\R := \Gamma \otimes_\Z \R.$

Let $\covol_{\Vert. \Vert} \Gamma$ denote the covolume of $\Gamma$  in $\Gamma_\R$ with respect to the Lebesgue measure\footnote{A Lebesgue measure on $\Gamma_\R$ is a non-zero, translation invariant Radon measure on $\Gamma_\R.$} on $\Gamma_R$ that gives the volume $1$ to the unit ball
$$B := \{ x \in \Gamma_\R \mid \Vert x \Vert \leq 1 \}.$$
In other words, for any Lebesgue measure $\lambda$ on $\Gamma_\R$ and for any Borel subset $\Delta$ of $\Gamma_\R$ that is a fundamental domain for the action of $\Gamma$,
\begin{equation}\label{covolnorm}
\covol_{\Vert. \Vert} \Gamma = \frac{\lambda (\Delta)}{\lambda(B)}.
\end{equation}

To the pair $(\Gamma, \Vert.\Vert),$ we may associate the real number
\begin{equation*}
\chi_{\Vert .\Vert} (\Gamma) := -\log \covol_{\Vert. \Vert} \Gamma .
\end{equation*}

When the norm $\Vert.\Vert$ is Euclidean, the pair $(\Gamma, \Vert.\Vert)$ define an Euclidean lattice $\oli{\Gamma}$, and from the expression (\ref{covolnorm}), applied to the Lebesgue measure $\lambda := \lambda_{\oli{\Gamma}}$ 
that gives the volume $1$ to the unit cube in the Euclidean space $(\Gamma_\R, \Vert.\Vert)$ (\cf Section \ref{PoissonSchwartz}), shows that:
\begin{equation*}
\covol_{\Vert .\Vert} \Gamma = v_{\rk \Gamma}^{-1}. \covol \oli{\Gamma},
\end{equation*}
where $v_{\rk E}$ denotes the volume of the unit ball in dimension $\rk E$.
Therefore
\begin{equation}\label{chidega}
\chi_{\Vert .\Vert}( \Gamma) = \dega \oli{\Gamma} + \log v_{\rk \Gamma}.
\end{equation}

In general, we may consider the John Euclidean norm $\Vert.\Vert_J$ on $\Gamma_\R$ associated to $\Vert.\Vert$ and form the Euclidean lattice
$$\oli{\Gamma}_J := (\Gamma, \Vert.\Vert_J).$$ 
If $B$ and $J$ denote the unit balls in $\Gamma_\R$ associated to $\Vert.\Vert$ and $\Vert.\Vert_J$ respectively, we have:
$$ J \subset B \subset (\rk \Gamma)^{1/2} J.$$
From these inclusions, we derive the following estimates, for any Lebesgue measure $\lambda$ on $\Gamma_\R$:
$$\lambda(J) \leq \lambda (B) \leq (\rk \Gamma)^{\rk \Gamma /2} \lambda (J).$$
Consequently, we have:
$$\chi_{\Vert . \Vert_J} (\Gamma) \leq  \chi_{\Vert.\Vert} (\Gamma) \leq (\rk \Gamma/2) \log \rk \Gamma + \chi_{\Vert.\Vert} (\Gamma).$$
Using the relation (\ref{chidega}), these estimates may also be written:
\begin{equation}\label{chidegaJ}
\dega \oli{\Gamma}_J + \log v_{\rk \Gamma} \leq \chi_{\Vert. \Vert}(\Gamma) \leq \dega \oli{\Gamma}_J + \log v_{\rk \Gamma} +  
(\rk \Gamma/2) \log \rk \Gamma.
\end{equation}

In practice, when using these estimates, it is useful to notice that, as a consequence of Stirling's formula, when $n$ goes to infinity:
\begin{equation}\label{asympvn}
\log v_n = \log\frac{\pi^{n/2}}{\Gamma(1+n/2)} = -(n/2) \log n + (n/2) (1+ \log 2\pi) + O(\log n).
\end{equation}
Actually, for every positive integer $n$, we have:
\begin{equation}\label{ineqvn} -(n/2) \log n + n \log 2 \leq \log v_n \leq -(n/2) \log n + (n/2) (1+ \log 2\pi).
\end{equation}

%\backmatter

%\printindex

\begin{thebibliography}{{Bou}74}

\bibitem[Ami75]{amice75}
Y.~Amice.
\newblock {\em Les nombres {$p$}-adiques}.
\newblock Presses Universitaires de France, Paris, 1975.

\bibitem[And63]{Andreotti63}
A.~Andreotti.
\newblock Th\'eor\`emes de d\'ependance alg\'ebrique sur les espaces complexes
  pseudo-concaves.
\newblock {\em Bull. Soc. Math. France}, 91:1--38, 1963.

\bibitem[And89]{Andre89}
Y.~Andr{\'e}.
\newblock {\em ${G}$-functions and geometry}.
\newblock Friedr. Vieweg \& Sohn, Braunschweig, 1989.

\bibitem[And04]{Andre04}
Y.~Andr\'e.
\newblock Sur la conjecture des {$p$}-courbures de {G}rothendieck-{K}atz et un
  probl\`eme de {D}work.
\newblock In {\em Geometric aspects of {D}work theory. {V}ol. {I}, {II}}, pages
  55--112. Walter de Gruyter, Berlin, 2004.

\bibitem[{Ban}31]{Banach31}
S.~{Banach}.
\newblock {\"Uber metrische Gruppen.}
\newblock {\em {Stud. Math.}}, 3:101--113, 1931.

\bibitem[Ban91]{Banaszczyk91}
W.~Banaszczyk.
\newblock {\em Additive subgroups of topological vector spaces}, volume 1466 of
  {\em Lecture Notes in Mathematics}.
\newblock Springer-Verlag, Berlin, 1991.

\bibitem[Ban93]{Banaszczyk93}
W.~Banaszczyk.
\newblock New bounds in some transference theorems in the geometry of numbers.
\newblock {\em Math. Ann.}, 296(4):625--635, 1993.

\bibitem[BCL09]{BostChambert-Loir07}
J.-B. Bost and A.~Chambert-Loir.
\newblock Analytic curves in algebraic varieties over number fields.
\newblock In {\em Algebra, arithmetic, and geometry: in honor of {Y}u. {I}.
  {M}anin. {V}ol. {I}}, volume 269 of {\em Progr. Math.}, pages 69--124.
  Birkh\"auser, Boston, MA, 2009.

\bibitem[BGS94]{BostGilletSoule94}
J.-B. Bost, H.~Gillet, and C.~Soul{\'e}.
\newblock Heights of projective varieties and positive {G}reen forms.
\newblock {\em J. Amer. Math. Soc.}, 7(4):903--1027, 1994.

\bibitem[Bil99]{Billingsley99}
P.~Billingsley.
\newblock {\em Convergence of probability measures}.
\newblock Wiley Series in Probability and Statistics: Probability and
  Statistics. John Wiley \& Sons, Inc., New York, second edition, 1999.

\bibitem[BK10]{BK10}
J.-B. Bost and K.~K{\"u}nnemann.
\newblock Hermitian vector bundles and extension groups on arithmetic schemes.
  {I}. {G}eometry of numbers.
\newblock {\em Adv. Math.}, 223(3):987--1106, 2010.

\bibitem[Bol72]{Boltzmann72}
L.~Boltzmann.
\newblock {Weitere Studien \"uber W\"armegleichgewicht unter Gasmolek\"ulen}.
\newblock {\em Wien. Ber.}, 66:275--370, 1872.

\bibitem[Bol77]{Boltzmann77}
L.~Boltzmann.
\newblock {\"Uber die Beziehungen zwischen dem zweiten Hauptsatz der
  W\"armetheorie und der Warscheinlichkeitsrechnung respektive den S\"atzen
  \"uber das W\"armegleichgewicht}.
\newblock {\em Wien. Ber.}, 76:373--435, 1877.

\bibitem[Bor94]{Borel94}
E.~Borel.
\newblock Sur une application d'un th\'eor\`eme de {M}. {H}adamard.
\newblock {\em Bulletin des sciences math\'ematiques}, 18:22--25, 1894.

\bibitem[Bos96]{Bost96Bourbaki}
J.-B. Bost.
\newblock P\'eriodes et isog\'enies des vari\'et\'es ab\'eliennes sur les corps
  de nombres (d'apr\`es {D}. {M}asser et {G}. {W}\"ustholz). {S}\'eminaire
  {B}ourbaki 1994/95, {E}xp.\ no.\ 795.
\newblock {\em Ast\'erisque}, 237:115--161, 1996.

\bibitem[Bos99]{Bost99}
J.-B. Bost.
\newblock Potential theory and {L}efschetz theorems for arithmetic surfaces.
\newblock {\em Ann. Scient. \'{E}c. Norm. Sup.}, 32:241--312, 1999.

\bibitem[Bos01]{Bost01}
J.-B. Bost.
\newblock Algebraic leaves of algebraic foliations over number fields.
\newblock {\em Publ. Math. I.H.E.S.}, 93:161--221, 2001.

\bibitem[Bos04]{Bost04}
J.-B. Bost.
\newblock {Germs of analytic varieties in algebraic varieties: canonical
  metrics and arithmetic algebraization theorems.}
\newblock In {\em {A. Adolphson et al. (ed.), Geometric aspects of Dwork
  theory}}, volume~II, pages 371--418. Walter de Gruyter, Berlin, 2004.

\bibitem[Bos06]{Bost06}
J.-B. Bost.
\newblock Evaluation maps, slopes, and algebraicity criteria.
\newblock In {\em International {C}ongress of {M}athematicians. {V}ol. {II}},
  pages 537--562. Eur. Math. Soc., Z\"urich, 2006.

\bibitem[Bou65]{BourbakiAC7}
N.~Bourbaki.
\newblock {\em \'{E}l\'ements de math\'ematique. {F}asc. {XXXI}. {A}lg\`ebre
  commutative. {C}hapitre 7: {D}iviseurs}.
\newblock Actualit\'es Scientifiques et Industrielles, No. 1314. Hermann,
  Paris, 1965.

\bibitem[Bou69]{BourbakiINT9}
N.~Bourbaki.
\newblock {\em \'{E}l\'ements de math\'ematique. {F}asc. {XXXV}. {L}ivre {VI}:
  {I}nt\'egration. {C}hapitre {IX}: {I}nt\'egration sur les espaces
  topologiques s\'epar\'es}.
\newblock Actualit\'es Scientifiques et Industrielles, No. 1343. Hermann,
  Paris, 1969.

\bibitem[{Bou}74]{BourbakiTGII74}
N.~{Bourbaki}.
\newblock {\em {\'El\'ements de math\'ematique. Topologie g\'en\'erale. Chap. 5
  \`a 10. Nouvelle \'edition}}.
\newblock Hermann, Paris, 1974.

\bibitem[B{\"u}h10]{Buehler2010}
Th. B{\"u}hler.
\newblock Exact categories.
\newblock {\em Expo. Math.}, 28(1):1--69, 2010.

\bibitem[Bus50]{Busemann50}
H.~Busemann.
\newblock The foundations of {M}inkowskian geometry.
\newblock {\em Comment. Math. Helv.}, 24:156--187, 1950.

\bibitem[Can80]{Cantor1980}
D.~G. Cantor.
\newblock On an extension of the definition of transfinite diameter and some
  applications.
\newblock {\em Jour. Reine Angew. Math.}, 316:160--207, 1980.

\bibitem[Car71]{Cartier71}
P.~Cartier.
\newblock Groupes formels, fonctions automorphes et fonctions zeta des courbes
  elliptiques.
\newblock In {\em Actes du {C}ongr\`es {I}nternational des {M}ath\'ematiciens
  ({N}ice, 1970), {T}ome 2}, pages 291--299. Gauthier-Villars, Paris, 1971.

\bibitem[CC85a]{ChudnovskysGroth85}
D.~V. Chudnovsky and G.~V. Chudnovsky.
\newblock Applications of {P}ad\'e approximations to the {G}rothendieck
  conjecture on linear differential equations.
\newblock In {\em Number theory, Semin. New York 1983-84}, volume 1135 of {\em
  Lectures Notes in Mathematics}, pages 52--100. Springer, Berlin, 1985.

\bibitem[CC85b]{ChudnovskysAcad85}
D.~V. Chudnovsky and G.~V. Chudnovsky.
\newblock Pad\'e approximations and {D}iophantine geometry.
\newblock {\em Proc. Nat. Acad. Sci. U.S.A.}, 82(8):2212--2216, 1985.

\bibitem[Cha17]{Charles2017}
F.~Charles.
\newblock Arithmetic ampleness and an arithmetic {B}ertini theorem.
\newblock arXiv: 1703.02481 [math.AG], 2017.

\bibitem[Che52]{Chernoff52}
H.~Chernoff.
\newblock A measure of asymptotic efficiency for tests of a hypothesis based on
  the sum of observations.
\newblock {\em Ann. Math. Statistics}, 23:493--507, 1952.

\bibitem[Cho49]{Chow49}
W.-L. Chow.
\newblock On compact complex analytic varieties.
\newblock {\em Amer. J. Math.}, 71:893--914, 1949.

\bibitem[CL02]{Chambert01}
A.~Chambert-Loir.
\newblock Th\' eor\`emes d'alg\'ebricit\' e en g\'eom\'etrie diophantienne.
  {S}\'eminaire {B}ourbaki, {V}ol.\ 2000/2001, {E}xpos\'e 886.
\newblock {\em Ast\'erisque}, 282:175--209, 2002.

\bibitem[CP11]{CerfPetit2011}
R.~Cerf and P.~Petit.
\newblock A short proof of {C}ram\'er's theorem in {$\mathbb R$}.
\newblock {\em Amer. Math. Monthly}, 118(10):925--931, 2011.

\bibitem[{Cra}38]{Cramer38}
H.~{Cram\'er}.
\newblock {Sur un nouveau th\'eor\`eme-limite de la th\'eorie des
  probabilit\'es.}
\newblock In {\em Conf\'erences internationales de sciences mathématiques
  (Universit\'e de Gen\`eve 1937). Th\'eorie des probabilit\'es. III: Les
  sommes et les fonctions de variables al\'eatoires}, volume 736 of {\em
  Actualit\'es scientifiques et industrielles}, pages 5--23. Hermann, Paris,
  1938.

\bibitem[CS93]{ConwaySloane1993}
J.~H. Conway and N.~J.~A. Sloane.
\newblock {\em Sphere packings, lattices and groups}, volume 290 of {\em
  Grundlehren der Mathematischen Wissenschaften}.
\newblock Springer-Verlag, New York, second edition, 1993.
\newblock With additional contributions by E. Bannai, R. E. Borcherds, J.
  Leech, S. P. Norton, A. M. Odlyzko, R. A. Parker, L. Queen and B. B. Venkov.

\bibitem[Die50]{DieudonneCrelle50}
J.~Dieudonn{\'e}.
\newblock Matrices semi-finies et espaces localement lin\'eairement compacts.
\newblock {\em J. Reine Angew. Math.}, 188:162--166, 1950.

\bibitem[DLL57]{DanzerLaugwitzLenz57}
L.~Danzer, D.~Laugwitz, and H.~Lenz.
\newblock {\"U}ber das {L}\"ownersche {E}llipsoid und sein {A}nalogon unter den
  einem {E}ik\"orper einbeschriebenen {E}llipsoiden.
\newblock {\em Arch. Math. (Basel)}, 8:214--219, 1957.

\bibitem[Dri06]{Drinfeld2006}
V.~Drinfeld.
\newblock Infinite-dimensional vector bundles in algebraic geometry: an
  introduction.
\newblock In {\em The unity of mathematics}, volume 244 of {\em Progr. Math.},
  pages 263--304. Birkh\"auser Boston, Boston, MA, 2006.

\bibitem[DvdP92]{DworkvdPoorten92}
B.~M. Dwork and A.~J. van~der Poorten.
\newblock The {E}isenstein constant.
\newblock {\em Duke Math. J.}, 65(1):23--43, 1992.

\bibitem[Dwo60]{Dwork60}
B.~Dwork.
\newblock On the rationality of the zeta function of an algebraic variety.
\newblock {\em Amer. J. Math.}, 82:631--648, 1960.

\bibitem[Eis52]{Eisenstein52}
G.~Eisenstein.
\newblock Eine allgemeine {E}igenschaft der {R}eihen-{E}ntwicklungen aller
  algebraischen {F}unktionen.
\newblock {\em Bericht der K{\"o}nigl. Preuss. Akademie der Wissenschaften zu
  Berlin}, pages 441--443, 1852.

\bibitem[Ell85]{Ellis85}
R.~S. Ellis.
\newblock {\em Entropy, large deviations, and statistical mechanics}, volume
  271 of {\em Grundlehren der Mathematischen Wissenschaften}.
\newblock Springer-Verlag, New York, 1985.

\bibitem[Eno64]{Enochs64}
E.~E. Enochs.
\newblock A note on reflexive modules.
\newblock {\em Pacific J. Math.}, 14:879--881, 1964.

\bibitem[Fal83]{Faltings83}
G.~Faltings.
\newblock Endlichkeitss\"atze f\"ur abelsche {V}ariet\"aten \"uber
  {Z}ahlk\"orpern.
\newblock {\em Invent. Math.}, 73(3):349--366, 1983.

\bibitem[Fek23]{Fekete23}
M.~Fekete.
\newblock \"{U}ber die {V}erteilung der {W}urzeln bei gewissen algebraischen
  {G}leichungen mit ganzzahligen {K}oeffizienten.
\newblock {\em Math. Z.}, 17(1):228--249, 1923.

\bibitem[Gas99]{Gasbarri99}
C.~Gasbarri.
\newblock Hermitian vector bundles of rank two and adjoint systems on
  arithmetic surfaces.
\newblock {\em Topology}, 38(6):1161--1174, 1999.

\bibitem[GD71]{EGAI}
A.~Grothendieck and J.~A. Dieudonn{\'e}.
\newblock {\em El\'ements de g\'eom\'etrie alg\'ebrique. {I}}, volume 166 of
  {\em Grundlehren der Mathematischen Wissenschaften}.
\newblock Springer-Verlag, Berlin, 1971.

\bibitem[Geo03]{Georgii03}
H.-O. Georgii.
\newblock Probabilistic aspects of entropy.
\newblock In {\em Entropy}, Princeton Ser. Appl. Math., pages 37--54. Princeton
  Univ. Press, Princeton, NJ, 2003.

\bibitem[Gib60]{Gibbs05}
J.~W. Gibbs.
\newblock {\em Elementary principles in statistical mechanics: developed with
  especial reference to the rational foundation of thermodynamics}.
\newblock Dover publications, Inc., New York, 1960.
\newblock Republication of the work originally published by Yale University
  Press in 1902.

\bibitem[GMS91]{GilletMazurSoule1991}
H.~Gillet, B.~Mazur, and C.~Soul{\'e}.
\newblock A note on a classical theorem of {B}lichfeldt.
\newblock {\em Bull. London Math. Soc.}, 23(2):131--132, 1991.

\bibitem[GN76]{GuenotNarasimhan76}
J.~Guenot and R.~Narasimhan.
\newblock {\em Introduction \`a la th\'eorie des surfaces de {R}iemann}.
\newblock L'Enseignement Math\'ematique, Universit\'e de Gen\`eve, Geneva,
  1976.
\newblock Extrait de l'Enseignement Math. (2) {{\bf{2}}1} (1975), no. 2-4,
  123--328, Monographie No. 23 de L'Enseignement Math\'ematique.

\bibitem[Gra01]{Graftieaux2001a}
Ph. Graftieaux.
\newblock Formal groups and the isogeny theorem.
\newblock {\em Duke Math. J.}, 106(1):81--121, 2001.

\bibitem[Gro64]{EGAIV1}
A.~Grothendieck.
\newblock \'{E}l\'ements de g\'eom\'etrie alg\'ebrique. {IV}. \'{E}tude locale
  des sch\'emas et des morphismes de sch\'emas. {I}.
\newblock {\em Inst. Hautes \'Etudes Sci. Publ. Math.}, (20):259, 1964.

\bibitem[Gro01]{Groenewegen2001}
R.~P. Groenewegen.
\newblock An arithmetic analogue of {C}lifford's theorem.
\newblock {\em J. Th\'eor. Nombres Bordeaux}, 13(1):143--156, 2001.
\newblock 21st Journ{\'e}es Arithm{\'e}tiques (Rome, 2001).

\bibitem[GS91]{GilletSoule1991}
H.~Gillet and C.~Soul{\'e}.
\newblock On the number of lattice points in convex symmetric bodies and their
  duals.
\newblock {\em Israel J. Math.}, 74(2-3):347--357, 1991.

\bibitem[GS09]{GilletSoule2009}
H.~Gillet and C.~Soul{\'e}.
\newblock Erratum for: ``{O}n the number of lattice points in convex symmetric
  bodies and their duals'', {I}srael {J}ournal of {M}athematics {\bf 74}
  (1991), 347--357.
\newblock {\em Israel J. Math.}, 171:443--444, 2009.

\bibitem[Has12]{Hasse2012}
H.~Hasse.
\newblock {\em {Die mathematischen Tageb\"ucher von Helmut Hasse 1923--1935.}}
\newblock G\"ottingen: Universit\"atsverlag G\"ottingen, 2012.
\newblock Herausgegeben und kommentiert von Franz Lemmermeyer und Peter
  Roquette.

\bibitem[{Hec}17]{Hecke17}
E.~{Hecke}.
\newblock {\"Uber die Zetafunktion beliebiger algebraischer Zahlk\"orper.}
\newblock {\em {Nachr. Ges. Wiss. G\"ottingen, Math.-Phys. Kl.}}, 1917:77--89,
  1917.

\bibitem[{Hec}19]{Hecke19}
E.~{Hecke}.
\newblock {Reziprozit\"atsgesetz und Gau\ss 'sche Summen in quadratischen
  Zahlk\"orpern.}
\newblock {\em {Nachr. Ges. Wiss. G\"ottingen, Math.-Phys. Kl.}},
  1919:265--278, 1919.

\bibitem[{Hec}23]{Hecke23}
E.~{Hecke}.
\newblock {\em {Vorlesungen \"uber die Theorie der algebraischen Zahlen.}}
\newblock Akademische Verlagsgesellschaft, Leipzig, 1923.

\bibitem[Hel58]{Heller58}
A.~Heller.
\newblock Homological algebra in abelian categories.
\newblock {\em Ann. of Math. (2)}, 68:484--525, 1958.

\bibitem[{Hil}00]{Hilbert1900}
D.~{Hilbert}.
\newblock {Mathematische Probleme. Vortrag, gehalten auf dem internationalen
  Mathematiker-Congress zu Paris 1900.}
\newblock {\em {Nachr. Ges. Wiss. G\"ottingen, Math.-Phys. Kl.}},
  1900:253--297, 1900.

\bibitem[Hon68]{Honda68Formal}
T.~Honda.
\newblock Formal groups and zeta-functions.
\newblock {\em Osaka J. Math.}, 5:199--213, 1968.

\bibitem[Hon70]{Honda70}
T.~Honda.
\newblock On the theory of commutative formal groups.
\newblock {\em J. Math. Soc. Japan}, 22:213--246, 1970.

\bibitem[Joh48]{John48}
F.~John.
\newblock Extremum problems with inequalities as subsidiary conditions.
\newblock In {\em Studies and {E}ssays {P}resented to {R}. {C}ourant on his
  60th {B}irthday, {J}anuary 8, 1948}, pages 187--204. Interscience Publishers,
  Inc., New York, N. Y., 1948.

\bibitem[Kap52]{Kaplansky52}
I.~Kaplansky.
\newblock Modules over {D}edekind rings and valuation rings.
\newblock {\em Trans. Amer. Math. Soc.}, 72:327--340, 1952.

\bibitem[Kel96]{Keller1996}
Bernhard Keller.
\newblock Derived categories and their uses.
\newblock In {\em Handbook of algebra, {V}ol.\ 1}, volume~1 of {\em Handb.
  Algebr.}, pages 671--701. Elsevier/North-Holland, Amsterdam, 1996.

\bibitem[Khi49]{Khinchin49}
A.~I. Khinchin.
\newblock {\em Mathematical {F}oundations of {S}tatistical {M}echanics}.
\newblock Dover Publications, Inc., New York, N. Y., 1949.
\newblock Translated by G. Gamow.

\bibitem[{Kol}33]{Kolmogorov33}
A.~{Kolmogoroff}.
\newblock {\em {Grundbegriffe der Wahrscheinlichkeitsrechnung}}, volume~3 of
  {\em Ergebnisse der Math. und ihrer Grenzgebiete. 2}.
\newblock Julius Springer, Berlin, 1933.

\bibitem[K{\"o}t49]{Koethe49}
G.~K{\"o}the.
\newblock Eine axiomatische {K}ennzeichnung der linearen {R}\"aume vom {T}ypus
  {$\omega$}.
\newblock {\em Math. Ann.}, 120:634--649, 1949.

\bibitem[Kul97]{Kullback97}
S.~Kullback.
\newblock {\em Information theory and statistics}.
\newblock Dover Publications, Inc., Mineola, NY, 1997.
\newblock Reprint of the second (1968) edition.

\bibitem[Lan73]{Lanford73}
O.~E. Lanford.
\newblock Entropy and equilibrium states in classical statistical mechanics.
\newblock In A.~Lenard, editor, {\em Statistical mechanics and mathematical
  problems (Battelle Seattle 1971 Rencontres)}, pages 1--113. Lecture Notes in
  Physics, Vol. 20. Springer-Verlag, Berlin Heidelberg, 1973.

\bibitem[Laz55]{Lazard55}
M.~Lazard.
\newblock Sur les groupes de {L}ie formels \`a un param\`etre.
\newblock {\em Bull. Soc. Math. France}, 83:251--274, 1955.

\bibitem[Laz04]{Lazarsfeld04}
R.~Lazarsfeld.
\newblock {\em {Positivity in algebraic geometry. I. Classical setting: line
  bundles and linear series.}}
\newblock {Ergebnisse der Mathematik und ihrer Grenzgebiete 48. Berlin:
  Springer}, 2004.

\bibitem[Lef42]{Lefschetz42}
S.~Lefschetz.
\newblock {\em Algebraic {T}opology}, volume~27 of {\em American Mathematical
  Society Colloquium Publications}.
\newblock American Mathematical Society, New York, 1942.

\bibitem[LT65]{LubinTate65}
J.~Lubin and J.~Tate.
\newblock Formal complex multiplication in local fields.
\newblock {\em Ann. of Math. (2)}, 81:380--387, 1965.

\bibitem[Lub64]{Lubin64}
Jonathan Lubin.
\newblock One-parameter formal {L}ie groups over {${\mathfrak p}$}-adic integer
  rings.
\newblock {\em Ann. of Math. (2)}, 80:464--484, 1964.

\bibitem[Man85]{Manin85}
Yu.~I. Manin.
\newblock New dimensions in geometry.
\newblock In {\em Workshop {B}onn 1984 ({B}onn, 1984)}, volume 1111 of {\em
  Lecture Notes in Math.}, pages 59--101. Springer, Berlin, 1985.

\bibitem[MO90]{MazoOdlyzko90}
J.~E. Mazo and A.~M. Odlyzko.
\newblock Lattice points in high-dimensional spheres.
\newblock {\em Monatsh. Math.}, 110(1):47--61, 1990.

\bibitem[Mor95]{Morishita95}
M.~Morishita.
\newblock Integral representations of unramified {G}alois groups and matrix
  divisors over number fields.
\newblock {\em Osaka J. Math.}, 32(3):565--576, 1995.

\bibitem[MR58]{MacbeathRogers58}
A.~M. Macbeath and C.~A. Rogers.
\newblock Siegel's mean value theorem in the geometry of numbers.
\newblock {\em Proc. Cambridge Philos. Soc.}, 54:139--151, 1958.

\bibitem[MR07]{MicciancioRegev2007}
D.~Micciancio and O.~Regev.
\newblock Worst-case to average-case reductions based on {G}aussian measures.
\newblock {\em SIAM J. Comput.}, 37(1):267--302, 2007.

\bibitem[{Neu}92]{Neukirch92}
J.~{Neukirch}.
\newblock {\em {Algebraische Zahlentheorie.}}
\newblock Berlin etc.: Springer-Verlag, 1992.

\bibitem[Poi02]{Poincare02}
H.~Poincar\'e.
\newblock Sur les fonctions ab\'eliennes.
\newblock {\em Acta Math.}, 26:43--98, 1902.

\bibitem[P{\'o}l28]{Polya1928}
G.~P{\'o}lya.
\newblock {\"U}ber gewisse notwendige {D}eterminantenkriterien f\"ur die
  {F}ortsetzbarkeit einer {P}otenzreihe.
\newblock {\em Math. Ann.}, 99:687--706, 1928.

\bibitem[Qui]{QuillenNotebooks}
D.~Quillen.
\newblock {\emph{Quillen Notebooks 1968--2003}, edited by G.Luke and G. Segal}.
\newblock Published online by the Clay Mathematics Institute.
  {\tt{http://www.claymath.org/publications/quillen-notebooks}}.

\bibitem[Qui73]{Quillen73}
D.~Quillen.
\newblock Higher algebraic {$K$}-theory. {I}.
\newblock In {\em Algebraic {$K$}-theory, {I}: {H}igher {$K$}-theories ({P}roc.
  {C}onf., {B}attelle {M}emorial {I}nst., {S}eattle, {W}ash., 1972)}, volume
  341 of {\em Lectures Notes in Mathematics}, pages 85--147. Springer, Berlin,
  1973.

\bibitem[Ran95]{Ransford95}
T.~Ransford.
\newblock {\em Potential theory in the complex plane}.
\newblock Cambridge University Press, Cambridge, 1995.

\bibitem[RG71]{GrusonRaynaud71}
M.~Raynaud and L.~Gruson.
\newblock Crit\`eres de platitude et de projectivit\'e. {T}echniques de
  ``platification'' d'un module.
\newblock {\em Invent. Math.}, 13:1--89, 1971.

\bibitem[Rob73]{Robert73}
A.~Robert.
\newblock {\em Elliptic curves}.
\newblock Lecture Notes in Mathematics, Vol. 326. Springer-Verlag, Berlin-New
  York, 1973.
\newblock Notes from postgraduate lectures given in Lausanne 1971/72.

\bibitem[Roe93]{Roessler93}
D.~Roessler.
\newblock The {R}iemann-{R}och theorem for arithmetic curves.
\newblock Diplomarbeit, ETH Z\"urich, 1993.

\bibitem[{Sch}31]{Schmidt31}
F.~K. {Schmidt}.
\newblock {Analytische Zahlentheorie in K\"orpern der Charakteristik $p$.}
\newblock {\em {Math. Z.}}, 33:1--32, 1931.

\bibitem[Sch62]{Schroedinger62}
E.~Schr{\"o}dinger.
\newblock {\em Statistical thermodynamics}.
\newblock A course of seminar lectures delivered in January-March 1944, at the
  School of Theoretical Physics, Dublin Institute for Advanced Studies. Second
  edition, reprinted. Cambridge University Press, New York, 1962.

\bibitem[Sch73]{Schwartz73}
L.~Schwartz.
\newblock {\em Radon measures on arbitrary topological spaces and cylindrical
  measures}, volume~6 of {\em Tata Institute of Fundamental Research Studies in
  Mathematics}.
\newblock Oxford University Press, London, 1973.

\bibitem[Ser54]{Serre53}
J.~P. Serre.
\newblock Fonctions automorphes: quelques majorations dans le cas o\`u
  ${X}/{G}$ est compact.
\newblock {\em S\'eminaire H. Cartan}, 6, 1953--1954.
\newblock Expos\'e 2.

\bibitem[Ser68]{Serre68}
J.-P. Serre.
\newblock {\em Abelian $l$-adic representations and elliptic curves}.
\newblock W. A. Benjamin, Inc., New York-Amsterdam, 1968.

\bibitem[Sha77]{Shafarevich77}
I.R. Shafarevich.
\newblock {\em {Basic algebraic geometry. 2nd ed.}}
\newblock {Springer Study Edition. Berlin-Heidelberg-New York:
  Springer-Verlag}, 1977.

\bibitem[Sie45]{Siegel45}
C.~L. Siegel.
\newblock A mean value theorem in geometry of numbers.
\newblock {\em Ann. of Math. (2)}, 46:340--347, 1945.

\bibitem[Sie55]{Siegel55}
C.~L. Siegel.
\newblock Meromorphe {F}unktionen auf kompakten analytischen
  {M}annigfaltigkeiten.
\newblock {\em Nachr. Akad. Wiss. G\"ottingen. Math.-Phys. Kl. IIa.},
  1955:71--77, 1955.

\bibitem[Sil92]{Silverman92}
J.~H. Silverman.
\newblock {\em The arithmetic of elliptic curves}, volume 106 of {\em Graduate
  Texts in Mathematics}.
\newblock Springer-Verlag, New York, 1992.
\newblock Corrected reprint of the 1986 original.

\bibitem[Sou97]{Soule97}
C.~Soul{\'e}.
\newblock Hermitian vector bundles on arithmetic varieties.
\newblock In {\em Algebraic geometry---Santa Cruz 1995}, pages 383--419. Amer.
  Math. Soc., Providence, RI, 1997.

\bibitem[Spe50]{Specker50}
E.~Specker.
\newblock Additive {G}ruppen von {F}olgen ganzer {Z}ahlen.
\newblock {\em Portugaliae Math.}, 9:131--140, 1950.

\bibitem[{Sta}17]{stacks-project}
The {Stacks Project Authors}.
\newblock {\emph{Stacks Project}}.
\newblock {\tt{http://stacks.math.columbia.edu}}, 2017.

\bibitem[{Str}11]{Stroock2011}
D.~W. {Stroock}.
\newblock {\em {Probability theory. An analytic view. 2nd ed.}}
\newblock Cambridge: Cambridge University Press, 2nd ed. edition, 2011.

\bibitem[Szp85]{Szpiro85}
L.~Szpiro.
\newblock Degr\'es, intersections, hauteurs.
\newblock {\em Ast\'erisque}, (127):11--28, 1985.

\bibitem[Tat67]{Tate67}
J.~T. Tate.
\newblock Fourier analysis in number fields, and {H}ecke's zeta-functions.
\newblock In {\em Algebraic {N}umber {T}heory ({P}roc. {I}nstructional {C}onf.,
  {B}righton, 1965)}, pages 305--347. Thompson, Washington, D.C., 1967.
\newblock Unaltered reproduction of Tate's doctoral thesis (Princeton, May
  1950).

\bibitem[Tay11]{Taylor2011}
M.~E. Taylor.
\newblock {\em Partial differential equations {I}. {B}asic theory}, volume 115
  of {\em Applied Mathematical Sciences}.
\newblock Springer, New York, second edition, 2011.

\bibitem[Thi66]{Thimm66}
W.~Thimm.
\newblock Der {W}eierstra\ss sche {S}atz der algebraischen {A}bh\"angigkeit von
  {A}belschen {F}unktionen und seine {V}erallgemeinerungen.
\newblock In {\em Festschr. {G}ed\"achtnisfeier {K}. {W}eierstrass}, pages
  123--154. Westdeutscher Verlag, Cologne, 1966.

\bibitem[TLX14]{TianLiuXu2014}
C.~Tian, M.~Liu, and G.~Xu.
\newblock Measure inequalities and the transference theorem in the geometry of
  numbers.
\newblock {\em Proc. Amer. Math. Soc.}, 142(1):47--57, 2014.

\bibitem[Toe09]{Toeplitz1909}
O.~Toeplitz.
\newblock \"{U}ber die {A}ufl\"{o}sung unendlichvieler linearer {G}leichungen
  mit unendlichvielen {U}nbekannten.
\newblock {\em Palermo Rend.}, 28:88--96, 1909.

\bibitem[vdGS00]{vanderGeerSchoof2000}
G.~van~der Geer and R.~Schoof.
\newblock Effectivity of {A}rakelov divisors and the theta divisor of a number
  field.
\newblock {\em Selecta Math. (N.S.)}, 6(4):377--398, 2000.

\bibitem[{Wei}39]{Weil39}
A.~{Weil}.
\newblock {Sur l'analogie entre les corps de nombres alg\'ebriques et les corps
  de fonctions alg\'ebriques.}
\newblock {\em {Rev. Sci.}}, 77:104--106, 1939.

\bibitem[Wei46]{Weil46}
A.~Weil.
\newblock Sur quelques r\'esultats de {S}iegel.
\newblock {\em Summa Brasil. Math.}, 1:21--39, 1946.

\bibitem[Wei82]{Weil82}
A.~Weil.
\newblock {\em Adeles and algebraic groups}, volume~23 of {\em Progress in
  Mathematics}.
\newblock Birkh\"auser, Boston, Mass., 1982.
\newblock (I.A.S. Lectures 1959-1960). With appendices by M. Demazure and
  Takashi Ono.

\bibitem[Zag58]{Zaguskin58}
V.~L. Zaguskin.
\newblock Circumscribed and inscribed ellipsoids of extremal volume.
\newblock {\em Uspehi Mat. Nauk}, 13(6 (84)):89--93, 1958.

\bibitem[Zha95]{Zhang95}
S.~Zhang.
\newblock Positive line bundles on arithmetic varieties.
\newblock {\em J. Amer. Math. Soc.}, 8(1):187--221, 1995.

\end{thebibliography}
\end{document}